\tikzset{math3D/.style= {x= {(-0.5cm,-0.5cm)}, z={(0cm,1cm)},y={(1cm,0cm)}}}
\newtheorem{proposition}{Proposition}[subsection]
\newtheorem{theorem}[proposition]{Theorem}
\newtheorem{lemma}[proposition]{Lemma}
\newtheorem{main lemma}[proposition]{Main Lemma}
\newtheorem{corollary}[proposition]{Corollary}
\newtheorem*{conjecture*}{Conjecture}
\newtheorem*{corollary*}{Corollary}
\newtheorem*{lemma*}{Lemma}
\newtheorem{question}[proposition]{Question}
\newtheorem*{question*}{Question}
\theoremstyle{definition}
\newtheorem{definition}[proposition]{Definition}
\theoremstyle{remark}
\newtheorem*{problem*}{Problem}
\newtheorem*{remark*}{Remark}
\newtheorem{remark}[proposition]{Remark}
\newtheorem*{remarks*}{Remarks}
\newtheorem{remarks}[proposition]{Remarks}
\newtheorem{example}[proposition]{Example}
\DeclareMathOperator{\Aut}{Aut}
\DeclareMathOperator{\Et}{Et}
\DeclareMathOperator{\Out}{Out}
\DeclareMathOperator{\SAut}{SAut}
\DeclareMathOperator{\Mor}{Mor}
\DeclareMathOperator{\Mat}{M}
\DeclareMathOperator{\Linear}{\mathcal L}
\DeclareMathOperator{\End}{End}
\DeclareMathOperator{\Dom}{Dom}
\DeclareMathOperator{\codim}{codim}
\DeclareMathOperator{\SL}{SL}
\DeclareMathOperator{\Sp}{Sp}
\DeclareMathOperator{\PSL}{PSL}
\DeclareMathOperator{\SLtwo}{{SL}_2}
\DeclareMathOperator{\GL}{GL}
\DeclareMathOperator{\PGL}{PGL}
\DeclareMathOperator{\Hom}{Hom}
\DeclareMathOperator{\Ker}{Ker}
\DeclareMathOperator{\Coker}{Coker}
\DeclareMathOperator{\Image}{Im}
\DeclareMathOperator{\Cent}{Cent}
\DeclareMathOperator{\id}{id}
\DeclareMathOperator{\tr}{tr}
\DeclareMathOperator{\Span}{span}
\DeclareMathOperator{\Spec}{Spec}
\DeclareMathOperator{\Tame}{Tame}
\DeclareMathOperator{\Lie}{Lie}
\DeclareMathOperator{\lgt}{\ell}
\DeclareMathOperator{\jac}{jac}
\DeclareMathOperator{\VEC}{Vec}
\DeclareMathOperator{\Der}{Der}
\DeclareMathOperator{\LNV}{\VEC^{\text{\it ln}}}
\DeclareMathOperator{\Akplus}{\mathfrak{Act}_{\kplus}}
\DeclareMathOperator{\Act}{\mathfrak{Act}}
\DeclareMathOperator{\res}{res}
\DeclareMathOperator{\lt}{lt}
\DeclareMathOperator{\ad}{ad}
\DeclareMathOperator{\Alg}{Alg}
\DeclareMathOperator{\Inj}{Inj}
\DeclareMathOperator{\Aff}{Aff}
\DeclareMathOperator{\SAff}{SAff}
\DeclareMathOperator{\Int}{Int}
\DeclareMathOperator{\Ad}{Ad}
\DeclareMathOperator{\M}{M}
\DeclareMathOperator{\NZ}{NZ}
\DeclareMathOperator{\XX}{\mathcal X}
\DeclareMathOperator{\IndVar}{Ind-Var}
\DeclareMathOperator{\Sets}{Sets}
\DeclareMathOperator{\Hilb}{Hilb}
\DeclareMathOperator{\pr}{pr}
\DeclareMathOperator{\Div}{Div}
\DeclareMathOperator{\Cplus}{\CC^{+}}
\DeclareMathOperator{\eval}{\eps}
\DeclareMathOperator{\Imm}{Imm}
\DeclareMathOperator{\Tr}{Tr}
\DeclareMathOperator{\sL}{\mathfrak{sl}}
\DeclareMathOperator{\AAA}{\mathcal A}
\DeclareMathOperator{\LLL}{\mathcal L}
\DeclareMathOperator{\VVV}{\mathcal V}
\DeclareMathOperator{\OOO}{\mathcal O}
\DeclareMathOperator{\NNN}{\mathcal N}
\DeclareMathOperator{\R}{\mathcal R}
\DeclareMathOperator{\JJJ}{\mathcal J}
\DeclareMathOperator{\Stab}{Stab}
\DeclareMathOperator{\Mod}{Mod}
\DeclareMathOperator{\rad}{rad}
\DeclareMathOperator{\tdeg}{tdeg}
\DeclareMathOperator{\Quot}{Quot}
\renewcommand{\AA}{{\mathbb A}}
\newcommand{\An}{{\AA^{n}}}
\newcommand{\Am}{{\AA^{m}}}
\newcommand{\C}{{\mathbb C}}
\newcommand{\CC}{\kk}
\newcommand{\Cn}{\CC^{n}}
\newcommand{\FF}{{\mathbb F}}
\newcommand{\NN}{{\mathbb N}}
\newcommand{\PP}{{\mathbb P}}
\newcommand{\QQ}{{\mathbb Q}}
\newcommand{\RR}{{\mathbb R}}
\newcommand{\ZZ}{{\mathbb Z}}
\newcommand{\ktwo}{\kk^{2}}
\newcommand{\GLn}{\GL(n)}
\newcommand{\SLn}{\SL_{n}}
\renewcommand{\aa}{{\mathfrak a}}
\renewcommand{\gg}{{\mathfrak g}}
\newcommand{\mm}{{\mathfrak m}}
\newcommand{\pp}{{\mathfrak p}}
\newcommand{\rr}{{\mathfrak r}}
\newcommand{\sltwo}{\sL_{2}}
\newcommand{\bsm}{\left[\begin{smallmatrix}}
\newcommand{\esm}{\end{smallmatrix}\right]}
\newcommand{\Cb}[1]{\overline{\CC(#1)}}
\newcommand{\dcup}{\,\dot\cup\,}
\newcommand{\kinfty}{\Bbbk^{\infty}}
\newcommand{\kn}{\Bbbk^{n}}
\newcommand{\BBB}{\mathcal B}
\newcommand{\CCC}{\mathcal C}
\newcommand{\FFF}{\mathcal F}
\newcommand{\GGG}{\mathcal{G}}
\newcommand{\HHH}{\mathcal H}
\newcommand{\KKK}{\mathcal K}
\newcommand{\MMM}{\mathcal M}
\newcommand{\PPP}{\mathcal P}
\newcommand{\TTT}{\mathcal T}
\newcommand{\UUU}{\mathcal U}
\newcommand{\WWW}{\mathcal W}
\newcommand{\YYY}{\mathcal Y}
\newcommand{\ZZZ}{\mathcal Z}
\newcommand{\RRR}{\mathcal R}
\newcommand{\SSS}{\mathcal S}
\newcommand{\XXX}{\mathcal X}
\newcommand{\III}{\mathcal I}
\newcommand{\Glf}{\GGG^{\text{\it lf}}}
\newcommand{\Gss}{\GGG^{\text{\it ss}}}
\newcommand{\Gu}{\GGG^{\text{\it u}}}
\renewcommand{\a}{\mathbf a}
\renewcommand{\b}{\mathbf b}
\renewcommand{\c}{\mathbf c}
\renewcommand{\d}{\mathbf d}
\newcommand{\f}{\mathbf f}
\newcommand{\g}{\mathbf g}
\newcommand{\h}{\mathbf h}
\newcommand{\n}{\mathbf n}
\newcommand{\s}{\mathbf s}
\renewcommand{\t}{\mathbf t}
\renewcommand{\u}{\mathbf u}
\newcommand{\vbold}{\mathbf v}
\newcommand{\wbold}{\mathbf w}
\newcommand{\x}{\mathbf x}
\newcommand{\z}{\mathbf z}
\newcommand{\ii}{\mathbf i}
\newcommand{\jj}{\mathbf j}
\newcommand{\balpha}{\boldsymbol\alpha}
\newcommand{\bbeta}{\boldsymbol\beta}
\newcommand{\bgamma}{\boldsymbol\gamma}
\newcommand{\bG}{\mathcal F}
\newcommand{\tbG}{\tilde\bG}
\newcommand{\into}{\hookrightarrow}
\newcommand{\quot}{/\!\!/}
\newcommand{\eps}{\varepsilon}
\renewcommand{\phi}{\varphi}
\newcommand{\Ft}{F^{t}}
\newcommand{\tG}{\GGG^{t}}
\newcommand{\Gz}{\GGG_{z}}
\newcommand{\tGz}{\GGG^{t}_{z}}
\newcommand{\simto}{\xrightarrow{\sim}}
\newcommand{\wc}[1]{\overline{#1}^{w}}
\newcommand{\name}[1]{\textsc{#1\/}}
\newcommand{\tT}{{\tilde T}}
\newcommand{\lgr}{\overline{\langle g\rangle}}
\newcommand{\Ju}{\JJJ^{u}}
\newcommand{\Cst}{{{\kk}^*}}
\newcommand{\kst}{{{\kk}^*}}
\newcommand{\Cxn}{\kk[x_{1},\ldots,x_{n}]}
\newcommand{\dx}{\frac{\partial}{\partial x}}
\newcommand{\dxi}{\frac{\partial}{\partial x_{i}}}
\newcommand{\dxj}{\frac{\partial}{\partial x_{j}}}
\newcommand{\dxk}{\frac{\partial}{\partial x_{k}}}
\newcommand{\dy}{\frac{\partial}{\partial y}}
\newcommand{\ddx}{\partial_x}
\newcommand{\ddxi}{\partial_{x_{i}}}
\newcommand{\ddy}{\partial_y}
\newcommand{\ab}[2]{\frac{\partial#1}{\partial#2}}
\newcommand{\hotimes}{{\widehat{\otimes}}}
\newcommand{\kplus}{{\kk^{+}}}
\newcommand{\alg}{\text{\it alg}}
\newcommand{\A}[1]{\AA^{#1}}
\newcommand{\EndA}[1]{\End(\AA^{#1})}
\newcommand{\AutA}[1]{\Aut(\AA^{#1})}
\newcommand{\Autgr}{\Aut_{\text{\it gr}}}
\newcommand{\SAutA}[1]{\SAut(\AA^{#1})}
\newcommand{\AutlfA}[1]{\Aut^{\text{\it lf}}(\AA^{#1})}
\newcommand{\Endlf}{\End^{\text{\it lf}}}
\newcommand{\Autlf}{\Aut^{\text{\it lf}}}
\newcommand{\AutUA}[1]{\Aut^{\text{\it u}}(\AA^{#1})}
\newcommand{\AutU}{\Aut^{\text{\it u}}}
\newcommand{\AutTrA}[1]{\Aut^{\text{\it tr}}(\AA^{#1})}
\newcommand{\TameA}[1]{\Tame(\AA^{#1})}
\newcommand{\J}[1]{{\JJJ}(#1)}
\newcommand{\SA}{\mathcal{SA}}
\newcommand{\SJ}{\mathcal{SJ}}
\newcommand{\Aone}{\AA^{1}}
\newcommand{\Atwo}{\AA^{2}}
\newcommand{\Atwod}{\dot\AA^{2}}
\newcommand{\Athree}{\AA^{3}}
\newcommand{\Ainfty}{\AA^{\infty}}
\newcommand{\iii}{\langle \ii_{1},\ii_{2},\ii_{3}\rangle}
\newcommand{\Ques}[2]{\ps\noindent{\bf Question~\ref{#1}. {\it\ignorespaces#2}}\par\smallskip}
\newcommand{\Autorb}{\Aut_{\text{{\it orb}}}}
\newcommand{\Autinv}{\Aut_{\text{{\it inv}}}}
\newcommand{\gn}{\gg^{\text{\it nil}}}
\newcommand{\gln}{\gg^{\text{\it ln}}}
\newcommand{\Lieln}{\Lie^{\text{\it ln}}}
\newcommand{\LLie}{\LLL_{\Lie}}
\newcommand{\kk}{\Bbbk}
\newcommand{\KK}{\mathbb K}
\newcommand{\tX}{{\tilde X}}
\newcommand{\tphi}{{\tilde\phi}}
\newcommand{\onto}{\twoheadrightarrow}
\newcommand{\surto}{\twoheadrightarrow}
\newcommand{\psmall}{\par\smallskip}
\newcommand{\ps}{\par\smallskip}
\newcommand{\pmed}{\par\medskip}
\newcommand{\bfit}[1]{\textbf{\textit{#1}}}
\newcommand{\Aoned}{{\dot\AA}^{1}}
\newcommand{\be}{\begin{enumerate}}
\newcommand{\ee}{\end{enumerate}}
\newcommand{\dis}{\displaystyle}
\newcommand{\idx}[1]{\index{#1}}
\newcommand{\itind}[1]{{\it #1}\idx{#1}}
\title[On the geometry of automorphism groups]{On the geometry of the \\automorphism groups of affine varieties}
\author{Jean-Philippe Furter and Hanspeter Kraft}
\address{Jean-Philippe Furter \newline
\indent Department of Mathematics, University of La Rochelle, \newline
\indent av. Cr\'epeau, F-17000 La Rochelle, France}
\email{jpfurter@univ-lr.fr}
\address{Hanspeter Kraft \newline
\indent Departement Mathematik und Informatik, Universit\"at Basel, \newline
\indent Spiegelgasse 1, CH-4051 Basel, Switzerland}
\email{hanspeter.kraft@unibas.ch}
\date{Version from August, 2018}
\thanks{The first author gratefully acknowledges the French National Research Agency Grant "BirPol" ANR-11-JS01-004-01 as well as the  R\'egion Poitou-Charentes. 
The second author was partially supported by the SNF (Schweizerischer Nationalfonds)}
\begin{document}
\begin{abstract}
This article is a survey on ind-varieties and ind-groups introduced by \name{Shafarevich} in 1965, with a special emphasis on automorphism groups of affine varieties and actions of ind-groups on ind-varieties. We give precise definitions and complete proofs, including several known results. The survey contains many examples and also some questions which came up during our work on the subject. 

Among the new results we show that for an affine variety $X$ the automorphism group $\Aut(X)$ is always locally closed in the ind-semigroup $\End(X)$ of all endomorphisms, and we give an example of a strict closed subgroup of a connected ind-group which has the same Lie algebra, based on the work of
\name{Shestakov-Umirbaev} on the existence of non-tame automorphisms of $\Athree$.
\end{abstract}

\maketitle
{\footnotesize
\tableofcontents}

\section*{Introduction}

In a lecture given in Rome in 1965 \name{Shafarevich} \cite{Sh1966On-some-infinite-d} introduced the concepts of an {\it infinite-dimensional algebraic variety} and an {\it infinite-dimensional algebraic group}\footnote{We will use the short notation {\it ind-variety} and {\it ind-group}.}, and he announced some striking results. He remarks that a number of interesting groups, like the automorphism group of affine $n$-space $\An$ or the group $\GL(\CC[t])$ (cf. the later paper \cite{Sh2004On-the-group-rm-GL}) have a natural structure of an ind-group.

About 15 years later, in \cite{Sh1981On-some-infinite-d} (with some corrections in \cite{Sh1995Letter-to-the-edit}), \name{Shafarevich} offers a detailed exposition of some material from that lecture. Among other things 
he shows that the tangent space $T_{e}\GGG$ at the unit element $e$ of an ind-group $\GGG$ has a natural structure of a Lie algebra, and  in his main results he proves a very strong connection between the ind-group $\GGG$ and its Lie algebra $\Lie\GGG$ in case of a connected ind-group $\GGG$, similar to what is known for algebraic groups.

\name{Kumar}'s book \cite{Ku2002Kac-Moody-groups-t} contains in Section~IV an introduction to ind-varieties and ind-groups, with complete and detailed proofs, and with \cite{Sh1981On-some-infinite-d} as a basic reference. The author also gives a proof of the main results of \name{Shafarevich}, but under additional assumptions. Let us also mention here the two papers  of \name{Kambayashi} \cite{Ka1996Pro-affine-algebra,Ka2003Some-basic-results} about this subject where the author  points out some minor flaws in the papers of \name{Shafarevich} and thus tries to use a different approach.

\par\smallskip
One of the starting points of our paper is an example which implies that the two main results of \cite{Sh1981On-some-infinite-d}, namely Theorems~1 and 2, are not correct. This example is based on \cite{ShUm2004The-tame-and-the-w} showing that not all automorphisms of affine $3$-space $\Athree$ are tame. The example is given with all details in Section~\ref{tame-is-closed.subsec}. As a consequence, it became unclear how a connected ind-group $\GGG$ is related to its Lie algebra $\Lie\GGG$. 

Another unclear point was the definition of the ind-group structure on $\Aut(\An)$, or more generally, on $\Aut(X)$ for any affine variety $X$. Clearly, $\Aut(X)$ is a subset of $\End(X)$, the set of all morphisms $X \to X$, and the latter is easily seen to have a natural structure of an  ind-semigroup. As a consequence, one can define the ind-structure on $\Aut(X)$ by identifying $\Aut(X)$ with the pairs $(\phi,\psi)$ of endomorphisms such that $\phi \circ \psi=\id_{X}=\psi \circ \phi$. But this definitions does not say anything about the embedding $\Aut(X) \into \End(X)$. 

This problem was solved by our second main discovery. We show that $\Aut(X)$ is closed in $\Dom(X)$, the dominant endomorphisms of $X$, and that $\Dom(X)$ is open in $\End(X)$, see Theorem~\ref{AutX-locally-closed-in-EndX.thm}. Moreover, we show that the tangent space $T_{\id}\End(X)$ embeds into $\VEC(X)$, the Lie algebra of all vector fields on $X$ which explains why $\Lie\Aut(X)$ can always be understood, in a natural way, as a Lie subalgebra of $\VEC(X)$.

Although the counterexample above shows that the relation between $\GGG$ and its Lie algebra $\Lie\GGG$ is not as strong as in the case of algebraic groups, we discovered that for a connected ind-group $\GGG$ two homomorphisms $\phi,\psi\colon \GGG \to \HHH$ of ind-groups are equal if and only if the differentials $d\phi_{e}$ and $d\psi_{e}$ are equal. It is well known that this has important consequences. Together with a careful study of the adjoint representation $\Ad\colon \GGG \to \GL(\Lie\GGG)$ we can show that a connected ind-group $\GGG$ is commutative if and only if its Lie algebra $\Lie\GGG$ is commutative.

\par\smallskip
Working on these problems we came up with many questions, problems, examples and counterexamples. And because of the controversial discussions of some of the results above we decided to include our new results into a survey on ind-groups, automorphism groups, ind-group actions, and the relations to its Lie algebra and to the vector fields, giving precise definitions, detailed proofs, many examples and extensive references, together with some questions which we could not answer. We hope that this survey is going to be useful.

\pmed
\section*{Outline and  Main Results}

\subsection{Ind-varieties}
Our base field $\kk$ is algebraically closed of characteristic zero.
For a variety $X$ we denote by \itind{$\OOO(X)$} the \itind{algebra of regular functions} on $X$, i.e. the global sections of the sheaf $\OOO_{X}$ of regular functions on $X$. 

We start with the definition of an \itind{ind-variety} and an \itind{admissible filtration} of it. An ind-variety $\VVV$ is given by a sequence of closed immersions  $\VVV_{1}\subseteq \VVV_{2} \subseteq \VVV_{3}\cdots$ of algebraic varieties such that $\VVV = \bigcup_{k}\VVV_{k}$. We say that another such filtration is admissible if it endows $\VVV$ with the same structure of an ind-variety (Section~\ref{ind.subsec}).
A typical example is the {\it infinite-dimensional affine space} $\AA^{\infty} := \varinjlim\AA^{n}$ (Example~\ref{A-infinity.exa}). 

An ind-variety carries a natural topology, the \itind{Zariski topology}, and we can define the \itind{dimension} $\dim\VVV$\idx{dimension@$\dim\VVV$} of an ind-variety $\VVV=\bigcup_{k}\VVV_{k}$ as $\dim\VVV := \sup_{k}\dim \VVV_{k}$ (Section~\ref{ind.subsec}).

\begin{theorem} 
Let $\VVV$ be an ind-variety.
\be
\item
$\VVV$ is connected if and only if there is an admissible filtration consisting of connected varieties (Proposition~\ref{connected.prop}).
\item 
$\VVV$ is curve-connected if and only if there is an admissible filtration consisting of irreducible varieties (Proposition~\ref{curve-connected.prop}).
\item
The connected components of $\VVV$ are open and closed, and the number of connected components is countable (Proposition~\ref{connected-components-are-open.prop}).
\ee
\end{theorem}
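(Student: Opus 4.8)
The plan is to handle the three statements in turn, relying throughout on two standard facts about an algebraic variety: it has only finitely many connected (resp.\ irreducible) components, these are closed, and the connected ones are moreover open.

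\textbf{Part (1).} For the easy direction, if $\VVV=\bigcup_k\VVV_k$ is admissible with each $\VVV_k$ connected, then the $\VVV_k$ form a nested family of connected sets with common points, so $\VVV$ is connected. For the converse I fix $p\in\VVV$, say $p\in\VVV_{k_0}$, and for $k\ge k_0$ let $C_k$ be the connected component of $\VVV_k$ containing $p$; since $C_k$ is connected and contains $p$ it lies in $C_{k+1}$, so $W:=\bigcup_{k\ge k_0}C_k$ is connected. The crucial point is that $W$ is closed and open in $\VVV$: for each $k$ the set $W\cap\VVV_k$ is a union of connected components of $\VVV_k$ (each component of $\VVV_k$ is connected, hence is contained in, or disjoint from, every $C_l$), and such a union is clopen in $\VVV_k$. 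As $\VVV$ is connected and $W\ni p$ is nonempty and clopen, $W=\VVV$. Finally $(C_k)$ is admissible: $C_k\subseteq\VVV_k$ is closed, while $\VVV_k=\VVV_k\cap W=\bigcup_l(\VVV_k\cap C_l)$ is an increasing union of closed subsets of the Noetherian space $\VVV_k$, hence $\VVV_k\subseteq C_l$ for $l\gg 0$.

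\textbf{Part (3).} The same construction yields this at once. For an arbitrary connected component $C$ of $\VVV$ and $p\in C$, the set $W=W_p$ built above is clopen and connected and, by maximality of $C$, satisfies $W\subseteq C$; being nonempty and clopen inside the connected set $C$ forces $W=C$, so every component is clopen. For countability, each $\VVV_k$ has finitely many connected components, each lying in a single component of $\VVV$; since every component of $\VVV$ meets some $\VVV_k$, the components of $\VVV$ form a countable union of finite sets.

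\textbf{Part (2).} The implication "irreducible filtration $\Rightarrow$ curve-connected" is immediate, since two points lie in a common irreducible $\VVV_k$, and any two points of an irreducible variety lie on an irreducible curve. For the converse I would reduce everything to an \emph{Absorption Lemma}: any two irreducible closed subvarieties $A,B\subseteq\VVV$ lie in a common irreducible closed subvariety of some $\VVV_l$. Granting it, enumerate the irreducible components $Z_1,Z_2,\dots$ of all the $\VVV_k$, set $W_1=Z_1$, and inductively absorb $W_i\cup Z_{i+1}$ into an irreducible $W_{i+1}$; the increasing irreducible varieties $W_i$ then exhaust $\VVV$ and are cofinal with $(\VVV_k)$, hence admissible. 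To prove the lemma I pick \emph{general} points $a\in A$, $b\in B$, join them by an irreducible curve $C\subseteq\VVV$ (this is exactly curve-connectedness), choose $l$ with $C\subseteq\VVV_l$, and let $W$ be an irreducible component of $\VVV_l$ containing $C$; genericity of $a,b$ then forces $A\cup B\subseteq W$. The main obstacle is precisely this genericity step, which is what replaces the clopen-ness that made Part (1) easy: "general" must mean avoiding, for \emph{all} later $l$ at once, the finitely many components of $\VVV_l$ that do not contain $A$, i.e.\ a countable union of proper closed subsets of $A$. Choosing such a point is possible because a positive-dimensional variety over $\kk$ is not a union of countably many proper closed subvarieties (here one uses that $\kk$ is uncountable), and pinning down this Baire-type input is the heart of the argument.
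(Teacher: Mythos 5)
Your Parts (1) and (3) are essentially the paper's own proofs (Propositions~\ref{connected.prop} and \ref{connected-components-are-open.prop}): the nested components $C_k$, the clopen union $W$, and the count of components of the $\VVV_k$ all appear there in the same form. One small repair in (1): the stabilization $\VVV_k\subseteq C_l$ for $l\gg 0$ does \emph{not} follow from ``an increasing union of closed subsets of a Noetherian space stabilizes'' --- that principle is false (the Noetherian space $\AA^1$ over a countable field is an increasing union of finite closed subsets, none equal to $\AA^1$). It does follow from the observation you made one sentence earlier: each $\VVV_k\cap C_l$ is a union of the \emph{finitely many} connected components of $\VVV_k$, so the increasing chain takes only finitely many values; this is exactly the paper's argument.

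Part (2) contains a genuine gap. Your Absorption Lemma, proved by choosing a single point $a\in A$ avoiding, for all $l$ simultaneously, the countably many irreducible components of the $\VVV_l$ that do not contain $A$, is correct and in fact a slightly cleaner mechanism than the paper's: there, one fixes $b\in Y_2$, covers $Y_1$ by the countably many sets $\overline{S(Z)}=\overline{\{a\mid D_a\subseteq Z\}}$, and invokes Lemma~\ref{constructible.lem} to find one $Z$ with $Y_1=\overline{S(Z)}$ (then repeats the argument to absorb $Y_2$ as well). Both mechanisms rest on the same Baire-type fact and therefore only function when $\kk$ is uncountable --- which you acknowledge, but the theorem carries no such hypothesis: the paper's base field is an arbitrary algebraically closed field of characteristic zero, e.g.\ $\overline{\QQ}$, over which an irreducible curve \emph{is} a countable union of points and your generic point need not exist. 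The missing step is the reduction to the uncountable case, which the paper carries out explicitly: choose an uncountable algebraically closed $\KK\supseteq\kk$, replace each $\VVV_k$ by $(\VVV_k)_\KK$, and use that the irreducible components of $(\VVV_\ell)_\KK$ are exactly the $C_\KK$ for $C$ an irreducible component of $\VVV_\ell$, so that an inclusion $(\VVV_k)_\KK\subseteq C_\KK$ descends to $\VVV_k\subseteq C$. Without this (or some substitute), your argument proves Part (2) only under an uncountability assumption the statement does not make; with it, your absorption scheme (whose enumeration-and-chain step matches the paper's inductive construction of the cofinal chain $C_1\subseteq C_2\subseteq\cdots$) goes through.
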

An ind-variety $\VVV$ is {\it affine} if every closed algebraic subset $X \subset \VVV$ is affine. Equivalently, there is an admissible filtration $\VVV = \bigcup_{k}\VVV_{k}$ with affine varieties $\VVV_{k}$ 
and all admissible filtrations $\VVV = \bigcup_{k}\VVV_{k}$ are such that the $\VVV_{k}$  are affine varieties (Section~\ref{affine-ind.subsec}).

\begin{theorem}
An ind-variety $\VVV$ is affine if and only if there exists a closed 
immersion $\VVV \into \AA^{\infty}$ (Theorem~\ref{embedding-into-Ainfty.thm}).\idx{affine ind-variety}
\end{theorem}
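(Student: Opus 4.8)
The plan is to prove the two implications separately. For the implication ``$\Leftarrow$'', suppose we are given a closed immersion $\phi\colon\VVV\into\AA^{\infty}$ and identify $\VVV$ with the closed ind-subvariety $\phi(\VVV)\subseteq\AA^{\infty}$. Then the induced filtration $\VVV_{k}:=\VVV\cap\AA^{k}$ exhibits each member as a closed subvariety of the affine variety $\AA^{k}$, hence as an affine variety. Since this is an admissible filtration of $\VVV$ by affine varieties, the characterization of affineness recalled above immediately yields that $\VVV$ is affine; I expect this direction to be essentially formal.

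For the implication ``$\Rightarrow$'', I would start from an admissible filtration $\VVV=\bigcup_{k}\VVV_{k}$ by affine varieties and construct, by induction on $k$, closed immersions $\iota_{k}\colon\VVV_{k}\into\AA^{n_{k}}$ that are compatible with the standard inclusions $\AA^{n_{k}}\into\AA^{n_{k+1}}$, in the sense that $\iota_{k+1}|_{\VVV_{k}}=(\iota_{k},0,\dots,0)$; passing to the limit then gives a morphism $\iota\colon\VVV\to\AA^{\infty}$. To perform the inductive step I would first lift the coordinate functions $f_{1},\dots,f_{n_{k}}$ of $\iota_{k}$ to $\tilde f_{1},\dots,\tilde f_{n_{k}}\in\OOO(\VVV_{k+1})$, using that the restriction $\OOO(\VVV_{k+1})\to\OOO(\VVV_{k})$ is surjective because $\VVV_{k}$ is closed in the affine variety $\VVV_{k+1}$. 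Next I would choose algebra generators $h_{1},\dots,h_{s}$ of $\OOO(\VVV_{k+1})$ and correct each to $h_{l}-\rho_{l}$, where $\rho_{l}:=P_{l}(\tilde f_{1},\dots,\tilde f_{n_{k}})$ and $P_{l}$ is any polynomial with $h_{l}|_{\VVV_{k}}=P_{l}(f_{1},\dots,f_{n_{k}})$; this makes $h_{l}-\rho_{l}$ vanish on $\VVV_{k}$ while keeping the whole family generating, since the subalgebra generated by the $\tilde f_{i}$ and the $h_{l}-\rho_{l}$ already contains every $\rho_{l}$ and hence recovers each $h_{l}$.

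The delicate point — the one I expect to be the main obstacle — is not the construction of the $\iota_{k}$ but the verification that the limit map $\iota$ is a genuine closed immersion, i.e. that $\iota(\VVV)$ is closed in $\AA^{\infty}$ with induced filtration $\iota_{k}(\VVV_{k})$. The coordinates above only guarantee $\iota(\VVV)\cap\AA^{n_{k}}\supseteq\iota_{k}(\VVV_{k})$, and the reverse inclusion can fail, since a lift of a function vanishing on $\VVV_{k}$ need not vanish only on $\VVV_{k}$ higher up in the filtration. To force the reverse inclusion I would, at the $k$-th step, additionally include among the new coordinates a finite generating set $g_{1},\dots,g_{p}$ of the ideal $I(\VVV_{k})\subseteq\OOO(\VVV_{k+1})$, finite by Noetherianity; as their common zero locus in $\VVV_{k+1}$ is exactly $\VVV_{k}$, this gives $\iota_{k+1}(\VVV_{k+1})\cap\AA^{n_{k}}=\iota_{k}(\VVV_{k})$ directly. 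An induction on the filtration level, intersecting the identities $\iota_{j+1}(\VVV_{j+1})\cap\AA^{n_{j}}=\iota_{j}(\VVV_{j})$ down to level $k$, then yields $\iota(\VVV)\cap\AA^{n_{k}}=\iota_{k}(\VVV_{k})$ for every $k$, so $\iota(\VVV)$ is closed and $\iota$ is a closed immersion. Finally I would note that one can arrange $n_{k}\to\infty$, padding with zero coordinates if some $\VVV_{k+1}$ is too small, so that the target is genuinely $\AA^{\infty}$; the finite-dimensional case embeds into a single $\AA^{N}\subseteq\AA^{\infty}$ and is trivial.
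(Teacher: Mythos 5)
Your proposal is correct and follows essentially the paper's own route: the paper likewise builds the embedding inductively via an extension lemma, lifting the coordinate functions along the surjection $\OOO(\VVV_{k+1})\to\OOO(\VVV_{k})$ and adjoining finitely many elements of $I(\VVV_{k})$ that generate this ideal, precisely so that $\psi_{k+1}(\VVV_{k+1})\cap\AA^{n_{k}}=\psi_{k}(\VVV_{k})$ (its condition $(Q_{k})$), which is your key closedness identity. Your only deviation is cosmetic: you adjoin the corrected algebra generators and the ideal generators as two separate batches, where the paper chooses a single set of elements of $I(\VVV_{k})$ doing both jobs at once.
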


We then define \itind{morphisms} between ind-varieties (Section~\ref{ind.subsec}) and \itind{tangent spaces} of ind-varieties (Section~\ref{tangent-space.subsec}). Some results only hold for uncountable base fields $\kk$; the method of {\it base field extensions} $\KK/\kk$ where $\KK$ is algebraically closed will help out in certain cases (Section~\ref{base-field-extension.subsec}). 

\begin{proposition}
Every ind-variety $\VVV$ is defined over an algebraically closed countable base field $\kk$ (Proposition~\ref{defined-over-countable-field.prop}).
\end{proposition}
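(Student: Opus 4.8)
The plan is to reduce the statement to the classical fact that a single algebraic variety, together with finitely many morphisms between such varieties, is already defined over a finitely generated subfield of $\kk$, and then to exploit the fact that an ind-variety carries only \emph{countably} much defining data. Write $\VVV = \bigcup_{k}\VVV_{k}$ for an admissible filtration with closed immersions $\iota_{k}\colon \VVV_{k}\into\VVV_{k+1}$. Each $\VVV_{k}$ is covered by finitely many affine charts, each chart being $\Spec$ of a finitely generated $\kk$-algebra presented by finitely many generators and relations, with gluing given by finitely many regular functions; likewise each $\iota_{k}$ is described by finitely many regular functions. Hence there is a finite set $S_{k}\subseteq\kk$ of coefficients such that $\VVV_{k}$ and $\iota_{k}$ are defined over the subfield generated by $S_{k}$.

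First I would collect all of this data at once. Since the filtration is indexed by $\NN$ and each index contributes only a finite set, the union $S := \bigcup_{k}S_{k}\subseteq\kk$ is countable. As $\operatorname{char}\kk = 0$, the prime field is $\QQ$, so $\QQ(S)$ is a countable field, and I set $\kk_{0} := \overline{\QQ(S)}$, the algebraic closure of $\QQ(S)$ taken inside $\kk$. Then $\kk_{0}$ is algebraically closed by construction, and it is countable because the algebraic closure of a countable field is again countable. By the choice of $S$, every $\VVV_{k}$ admits a model $(\VVV_{k})_{0}$ over $\kk_{0}$ with $(\VVV_{k})_{0}\otimes_{\kk_{0}}\kk \cong \VVV_{k}$, and every $\iota_{k}$ descends to a morphism $(\iota_{k})_{0}$ over $\kk_{0}$.

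It remains to assemble these into a single ind-variety $\VVV_{0} = \bigcup_{k}(\VVV_{k})_{0}$ over $\kk_{0}$ whose base change to $\kk$ recovers $\VVV$. Here one checks two points: that each descended map $(\iota_{k})_{0}$ is again a closed immersion, which holds because the property of being a closed immersion is preserved and detected under the faithfully flat base change $\kk/\kk_{0}$; and that the base change $\VVV_{0}\otimes_{\kk_{0}}\kk$ agrees with $\VVV$ as an ind-variety, i.e. that the descended inclusions are compatible and yield an admissible filtration. Both are routine once all models are chosen coherently.

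The main obstacle is precisely this coherence across all $k$ simultaneously: one must choose the models $(\VVV_{k})_{0}$ and the immersions so that $(\iota_{k})_{0}$ lands in $(\VVV_{k+1})_{0}$ and base-changes back to $\iota_{k}$. This is what forces us to enlarge the ground field to contain \emph{all} coefficients at once before descending, which is legitimate exactly because the total data is countable; the remaining verifications are then formal applications of faithfully flat descent, and no genuinely new difficulty beyond this bookkeeping arises.
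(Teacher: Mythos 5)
Your proof is correct, and it rests on the same two pillars as the paper's argument for Proposition~\ref{defined-over-countable-field.prop} --- that a single variety together with finitely many morphisms is defined over a finitely generated (hence, after algebraic closure, countable) subfield, and that countably much such data remains countable --- but it organizes the descent differently. The paper proceeds by induction: it builds models $\WWW_{1}\subseteq\cdots\subseteq\WWW_{\ell}$ step by step, enlarging the base field at each stage from $\kk_{0}$ to a larger countable algebraically closed field $\kk_{1}$, base-changing the previously constructed model $(\WWW_{\ell})_{\kk_{1}}$ before descending the next inclusion, and relying at the end on the preliminary remark that an increasing union of countable fields is countable. You instead fix presentations (charts, gluing data, and formulas for the immersions $\iota_{k}$) once and for all, collect the countable coefficient set $S$, and descend everything simultaneously to $\kk_{0}=\overline{\QQ(S)}$; coherence is then automatic because all the descended objects are cut out by literally the same equations, and the fact that each $(\iota_{k})_{0}$ is again a closed immersion is exactly the descent statement of Lemma~\ref{fieldextension.lem}(6), equivalently your faithfully flat descent argument. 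What your route buys is the elimination of the chain of fields and of the compatibility conditions $\phi_{\ell}((\WWW_{i})_{\kk})=\VVV_{i}$ that the paper must thread through the induction; what the paper's route buys is chart-freeness --- it only ever invokes the abstract fact that a variety and a morphism are defined over a finitely generated extension, never choosing explicit presentations, which is precisely why it must pass inductively through the previously constructed model. One small remark: your appeal to $\QQ$ uses the paper's standing characteristic-zero convention, but your argument is characteristic-free if one replaces $\QQ$ by the prime field.
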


\ps
\subsection{Ind-groups}
An \itind{ind-group $\GGG$} is an ind-variety with a group structure such that multiplication and inverse are morphisms (Section~\ref{Liealgebra.subsec}). For an ind-group, the properties curve-connected, connected and irreducible are equivalent (Remark~\ref{irreducible-curve-connected.rem}).

\begin{proposition}
The connected component $\GGG^{\circ}$ of the neutral element of an ind-group $\GGG$ is an open and closed normal subgroup of countable index (Proposition~\ref{connected-component.prop}).
\end{proposition}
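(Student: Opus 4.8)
The plan is to deduce everything from the fact, already established for ind-varieties, that the connected components of $\GGG$ are open and closed and countable in number. In particular $\GGG^{\circ}$, the component of the neutral element $e$, is automatically open and closed; so it remains only to show that it is a normal subgroup and that its cosets account for all the connected components.

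First I would check that $\GGG^{\circ}$ is a subgroup. The key point is that $\GGG^{\circ} \times \GGG^{\circ}$ is connected: choosing an admissible filtration $\GGG^{\circ} = \bigcup_{k} X_{k}$ by connected varieties, the products $X_{k} \times X_{k}$ are connected varieties and form an admissible filtration of $\GGG^{\circ} \times \GGG^{\circ}$, so the latter is connected by the filtration criterion for connectedness (the characterization that an ind-variety is connected iff it admits an admissible filtration by connected varieties). Since multiplication $m \colon \GGG \times \GGG \to \GGG$ is a morphism, the image $m(\GGG^{\circ} \times \GGG^{\circ})$ is connected and contains $m(e,e) = e$, hence lies in $\GGG^{\circ}$. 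Likewise the inverse morphism sends the connected set $\GGG^{\circ}$, which contains $e$, into $\GGG^{\circ}$. Thus $\GGG^{\circ}$ is closed under multiplication and inversion, and therefore a subgroup.

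For normality I would use that conjugation $c_{g} \colon x \mapsto g x g^{-1}$ by a fixed $g \in \GGG$ is a morphism, being a composite of multiplication morphisms with a fixed factor. As $c_{g}(\GGG^{\circ})$ is connected and contains $c_{g}(e) = e$, it is contained in $\GGG^{\circ}$; since $g$ was arbitrary this gives $g\, \GGG^{\circ} g^{-1} \subseteq \GGG^{\circ}$, i.e. $\GGG^{\circ}$ is normal.

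Finally, for the index I would observe that left translation $L_{g} \colon x \mapsto g x$ is an isomorphism of ind-varieties (with inverse $L_{g^{-1}}$), hence a homeomorphism permuting the connected components of $\GGG$. Therefore the component of an arbitrary point $g$ is $L_{g}(\GGG^{\circ}) = g\, \GGG^{\circ}$, so the cosets of $\GGG^{\circ}$ are exactly the connected components of $\GGG$; as these are countable in number, the index $[\GGG : \GGG^{\circ}]$ is countable. The only genuinely delicate step is the connectedness of $\GGG^{\circ} \times \GGG^{\circ}$, where one must resist arguing with the product topology (which is not the Zariski topology of the ind-product) and instead invoke the characterization of connectedness via admissible filtrations by connected varieties.
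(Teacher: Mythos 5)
Your proof is correct and follows essentially the same route as the paper: openness and closedness of $\GGG^{\circ}$ come from Proposition~\ref{connected-components-are-open.prop}, the subgroup and normality claims from connectedness of images of $\GGG^{\circ}$ under the group morphisms, and the countable index from the countability of the set of components, whose identification with the cosets $g\,\GGG^{\circ}$ you make via left translation. The only difference is one of detail: where the paper dismisses the connectedness of $\GGG^{\circ}\cdot\GGG^{\circ}$, $(\GGG^{\circ})^{-1}$ and $g\,\GGG^{\circ}g^{-1}$ with a ``clearly'', you justify the product step properly via the admissible-filtration characterization of connectedness (Proposition~\ref{connected.prop}), rightly avoiding any appeal to the product topology.
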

\idx{$\GL$@$\GGG^{\circ}$}

Examples are $\GL_{\infty}(\kk) = \varinjlim \GL_{n}(\kk)$\idx{$\GL_{\infty}(\kk)$}, the rational points $G(R)$\idx{$\GL$@$G(R)$} of a linear algebraic group $G$ over a finitely generated $\kk$-algebra $R$, and, as we will see below, the automorphism group \itind{$\Aut(X)$} of an affine variety $X$ (Examples~\ref{examples-of-ind-groups.exa}).

The \itind{tangent space} $T_{e}\GGG$ of an affine ind-group has the structure of a \itind{Lie algebra}, denoted by  \itind{$\Lie\GGG$}, and for every homomorphism $\phi\colon \GGG \to \HHH$ of ind-groups the differential $d\phi_{e}\colon \Lie\GGG \to \Lie\HHH$ is a homomorphism of Lie algebras (Section~\ref{Liealgebra.subsec}).\idx{$\Tr$@$T_{e}\GGG$}

Because of the problems concerning closed ind-subgroups mentioned in the introduction we discuss bijective morphisms of ind-varieties (Section~\ref{small-fibers.subsec}) and homomorphisms of ind-groups with ``small'' kernels (Section~\ref{small-kernels.subsec}). 

\begin{proposition}
Let $\GGG$ be an ind-group, $G$ a linear algebraic group, and $\phi\colon \GGG \to G$ a homomorphism of ind-groups.
\be
\item
If $\dim\Ker\phi<\infty$, then $\GGG^{\circ}$ is an algebraic group. In particular, $\dim\GGG<\infty$.
\item
If $\GGG$ is connected, then $\phi(\GGG) \subseteq G$ is a closed subgroup.
\item
If $\GGG$ is connected and $\phi$ surjective, then $d\phi_{e}\colon \Lie\GGG \to \Lie G$ is surjective, and $\Ker d\phi_{e}\supseteq \Lie\Ker\phi$.
\ee
(Proposition~\ref{hom-to-algebraic.prop})
\end{proposition}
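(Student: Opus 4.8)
The plan is to treat the three assertions in turn, fixing an admissible filtration $\GGG=\bigcup_{k}\GGG_{k}$ and writing $\phi_{k}:=\phi|_{\GGG_{k}}\colon\GGG_{k}\to G$, which is a genuine morphism of algebraic varieties to which I can apply the standard finite-dimensional machinery (fiber dimension, Chevalley's theorem, generic smoothness in characteristic zero). For part (1), the key observation is that the nonempty fibers of $\phi$ are translates of $\Ker\phi$, so every fiber of every $\phi_{k}$ has dimension at most $d:=\dim\Ker\phi<\infty$. The fiber dimension theorem then gives $\dim\GGG_{k}\le\dim\overline{\phi_{k}(\GGG_{k})}+d\le\dim G+d$, uniformly in $k$, whence $\dim\GGG=\sup_{k}\dim\GGG_{k}<\infty$. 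To upgrade finite dimension to algebraicity of $\GGG^{\circ}$, I would use that a connected ind-group is irreducible, so $\GGG^{\circ}$ admits a filtration by irreducible varieties of bounded dimension; an increasing chain of irreducible closed subsets of bounded dimension must stabilize, and once $\GGG_{k}$ is irreducible of maximal dimension it equals all later $\GGG_{j}$, so the filtration terminates and $\GGG^{\circ}$ is an algebraic group.

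For part (2), I would argue by Noetherian stabilization in $G$. Taking the $\GGG_{k}$ irreducible, each $\phi_{k}(\GGG_{k})$ is an irreducible constructible subset of $G$ by Chevalley's theorem, and the closures $\overline{\phi_{k}(\GGG_{k})}$ form an increasing chain of irreducible closed subsets; since $G$ is Noetherian this chain stabilizes to some irreducible closed subgroup $H$ for $k\ge k_{0}$. A dense constructible subset contains a nonempty open subset of $H$, so $\phi(\GGG)\supseteq\phi(\GGG_{k_{0}})$ contains a nonempty open $U\subseteq H$. Finally, a subgroup of a connected algebraic group that contains a nonempty open set is the whole group (for $h\in H$ the open dense sets $U$ and $hU^{-1}$ meet, giving $h\in\phi(\GGG)$), so $\phi(\GGG)=H$ is closed.

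For part (3), the inclusion $\Lie\Ker\phi\subseteq\Ker d\phi_{e}$ is immediate: $\phi$ is constant on $\Ker\phi$, so $d\phi_{e}$ vanishes on $T_{e}\Ker\phi=\Lie\Ker\phi$. For surjectivity of $d\phi_{e}$ I would combine generic smoothness with the homogeneity coming from the group law. By the argument of part (2), $\phi(\GGG_{k})$ is dense in $G$ for large $k$, so $\phi_{k}$ restricted to the smooth locus of $\GGG_{k}$ is a dominant morphism of smooth varieties; generic smoothness in characteristic zero yields a point $x\in\GGG_{k}$ at which $d(\phi_{k})_{x}\colon T_{x}\GGG_{k}\to T_{g}G$ is surjective, where $g:=\phi(x)$. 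Since $\GGG_{k}\hookrightarrow\GGG$ is a closed immersion, $d\phi_{x}\colon T_{x}\GGG\to T_{g}G$ is then also surjective. Differentiating the homomorphism identity $\phi\circ L_{x^{-1}}=L_{g^{-1}}\circ\phi$ at $x$ gives $d\phi_{e}\circ d(L_{x^{-1}})_{x}=d(L_{g^{-1}})_{g}\circ d\phi_{x}$; the right side is surjective and $d(L_{x^{-1}})_{x}$ is an isomorphism, so $d\phi_{e}$ is surjective.

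The main obstacle is the surjectivity statement in part (3). Parts (1) and (2) reduce cleanly to standard finite-dimensional facts applied level by level, but for the differential I must produce a \emph{single} point where the differential of some $\phi_{k}$ hits all of $\Lie G$ and then transport that surjectivity to the identity. This is exactly where characteristic zero is essential (through generic smoothness) and where the interplay between the ind-structure and the group structure—via the translation isomorphisms $L_{y}$ and the compatibility $\phi\circ L_{y}=L_{\phi(y)}\circ\phi$—does the real work; care is needed to check that the relevant tangent maps of translations are isomorphisms of the infinite-dimensional tangent spaces $T_{x}\GGG$.
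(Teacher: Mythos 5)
Your proof is correct and essentially reproduces the paper's argument: part (1) is the fiber-dimension bound plus stabilization of an irreducible admissible filtration (the paper simply cites Proposition~\ref{small-fibers-gives-variety.prop}, applied to the curve-connected $\GGG^{\circ}$), part (2) is the same stabilization of the chain $\overline{\phi(\GGG_{k})}$ followed by the standard fact that a dense open subset $U$ of the irreducible group $H$ satisfies $U\cdot U=H$, and part (3) is the same char-zero generic-smoothness point transported to $e$ via the translation identity $\phi\circ L_{x^{-1}}=L_{\phi(x)^{-1}}\circ\phi$, with the trivial inclusion $\Lie\Ker\phi\subseteq\Ker d\phi_{e}$. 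The one imprecise step is your inference ``irreducible $\Rightarrow$ admits an admissible filtration by irreducible varieties'', which fails for general ind-varieties (Example~\ref{irred-not-curve-connected.exa}; the correct hypothesis is curve-connectedness, Proposition~\ref{curve-connected.prop}), but it is harmless here because $\GGG^{\circ}$ is curve-connected by Proposition~\ref{connected-component.prop}, and for ind-groups connectedness, irreducibility and curve-connectedness all coincide.
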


We finish the first part by defining {\it families of endomorphisms and automorphisms} and giving  some important properties (Section~\ref{fam-morph.sec}) which will be used in the following sections. Here is an example.

\begin{proposition}
Let $X$ and $Y$ be varieties, and let $\Phi=(\Phi_{y})_{y\in Y}$ be a family of endomorphisms of $X$ parametrized by $Y$. If every $\Phi_{y}$ is an automorphism, then so is $\Phi$ (Proposition~\ref{fam-end-aut.prop}).
\end{proposition}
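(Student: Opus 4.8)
The plan is to encode the whole family as a single morphism over the base and then to recognize the assertion as a fibrewise criterion for isomorphisms. Concretely, a family $\Phi=(\Phi_{y})_{y\in Y}$ of endomorphisms of $X$ is the same datum as the $Y$-morphism
\[
\tilde\Phi\colon Y\times X \longrightarrow Y\times X,\qquad (y,x)\longmapsto (y,\Phi_{y}(x)),
\]
and to say that $\Phi$ is a family of \emph{automorphisms} means exactly that $\tilde\Phi$ is an isomorphism of $Y$-varieties: its inverse is then automatically of the form $(y,x)\mapsto(y,\Psi_{y}(x))$ and produces the inverse family $\Psi=(\Phi_{y}^{-1})_{y\in Y}$. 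So the entire content reduces to proving that $\tilde\Phi$ is an isomorphism, knowing that each fibre $\tilde\Phi_{y}=\Phi_{y}$ is one.

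First I would observe that the structure morphism $Y\times X\to Y$ makes both the source and the target of $\tilde\Phi$ flat and of finite presentation over $Y$: indeed $Y\times X\to Y$ is the base change of $X\to\Spec\kk$ along $Y\to\Spec\kk$, hence flat, and it is of finite type over the noetherian base $Y$. This puts us in position to apply the fibrewise criterion for isomorphisms: if $f\colon V\to W$ is a morphism of $S$-schemes with $V$ and $W$ flat and of finite presentation over $S$, then the locus of points $s\in S$ over which $f$ induces an isomorphism on geometric fibres is open, and if it equals all of $S$ then $f$ is an isomorphism. In our situation the fibre of $\tilde\Phi$ over a closed point $y\in Y$ is $\Phi_{y}$, an isomorphism by hypothesis, so this open locus contains every closed point of $Y$. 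Since $Y$ is a variety, hence a Jacobson scheme, an open set containing all closed points is everything; thus $\tilde\Phi$ is an isomorphism on all fibres and therefore an isomorphism.

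To see where the difficulty is concentrated, I would record that on the affine level $X=\Spec A$, $Y=\Spec B$ the \emph{injectivity} of the comorphism is elementary: $\tilde\Phi^{*}\colon B\otimes A\to B\otimes A$ is $B$-linear, $B\otimes A$ is a free $B$-module, and reduction modulo each maximal ideal $\mm_{y}$ turns $\tilde\Phi^{*}$ into the isomorphism $\Phi_{y}^{*}$; hence any $f\in\Ker\tilde\Phi^{*}$ lies in $\mm_{y}(B\otimes A)$ for every $y$, and $\bigcap_{y}\mm_{y}(B\otimes A)=0$ because $B$ is reduced. The genuine obstacle is \emph{surjectivity}, equivalently the regularity of the inverse $(y,x)\mapsto\Phi_{y}^{-1}(x)$: fibrewise one controls $\Coker\tilde\Phi^{*}$ only after tensoring with the residue fields, and since $\Coker\tilde\Phi^{*}$ is not a finitely generated $B$-module a naive Nakayama argument is unavailable. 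This is exactly the point at which flatness and finite presentation are indispensable, so I expect the heart of the proof to be the surjectivity (isomorphism) step supplied by the fibrewise criterion, together with the Jacobson reduction from closed points to all scheme-theoretic points of $Y$.
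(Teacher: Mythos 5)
Your reduction of the statement to ``$\tilde\Phi\colon Y\times X\to Y\times X$ is an isomorphism of $Y$-schemes'' is correct and matches the paper's definitions, and the endgame you aim for (flat $+$ finite presentation $+$ isomorphism on \emph{every} fibre $\Rightarrow$ isomorphism) is a true theorem. But the tool you actually invoke is false: without a properness hypothesis, the locus $\{y\in Y \mid \Phi_y \text{ is an isomorphism}\}$ is in general \emph{not} open in the base, even when source and target are both flat and of finite presentation over it. A counterexample inside the very situation at hand: the family of endomorphisms of $\AA^{1}$ given by $\tilde\Phi(t,x)=(t,\,x+tx^{2})$, a morphism $\AA^{1}\times\AA^{1}\to\AA^{1}\times\AA^{1}$ over $\AA^{1}$. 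Here $\Phi_t$ is an automorphism exactly when $t=0$ (for $t\neq 0$ it has degree $2$ and is not injective), so the isomorphism locus is the closed point $\{0\}$. Consequently your Jacobson step --- ``an open set containing all closed points is everything'' --- has nothing to stand on, and this is precisely the crux: your hypothesis gives isomorphisms only over the \emph{closed} points of $Y$, while the fibrewise criterion needs them at all scheme-theoretic points, including the generic ones; openness of the locus was your only bridge, and it fails.

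The gap is repairable, but not by openness on the base. One fix: the isomorphism locus is \emph{constructible} (this is among the standard constructibility results of EGA~IV, \S 9), and a constructible subset of a Jacobson scheme containing all closed points is the whole space; then apply the correct form of the criterion (EGA~IV, 17.9.5: flat, finite presentation, isomorphism on all fibres $\Rightarrow$ isomorphism). Alternatively one can argue on the total space, where Jacobson arguments are legitimate because the relevant loci are open or closed there: the flat locus of $\tilde\Phi$ is open in the source and contains all closed points (fibrewise flatness criterion), the support of $\Omega_{(Y\times X)/(Y\times X)}$ relative to $\tilde\Phi$ is closed and misses all closed points, so $\tilde\Phi$ is \'etale; injectivity and surjectivity on closed points then force it to be a surjective open immersion. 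Note also that the paper's proof is entirely different: it observes that the differentials $d\Phi_{(x,y)}$ are isomorphisms (so $\Phi$ is birational), passes to normalizations where Zariski's Main Theorem in its original form makes the lifted map an isomorphism, deduces via ZMT in Grothendieck's form that $\Phi$ is finite, and concludes with the lemma that a bijective finite morphism with reduced fibres is an isomorphism --- and the paper's Proposition~\ref{fam-end-aut.prop} assumes $X$ irreducible, an assumption your scheme-theoretic route would avoid once the gap above is closed.
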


\ps
\subsection{Automorphism groups}
The main result is the following.

\begin{theorem}
Let $X$ be an affine variety. There exists a universal structure of an affine ind-group on $\Aut (X)$, and   $\Aut (X)$ is locally closed in $\End(X)$. More precisely, 
$$
\begin{CD}
\Aut(X) @>{\subseteq}>{\text{closed}}> \Dom(X) @>{\subseteq}>{\text{open}}> \End(X)
\end{CD}
$$\idx{$\Aut(X)$}\idx{$\End(X)$}\idx{$\Dom(X)$}
where $\Dom (X)$ denotes the ind-semigroup of dominant endomorphisms of $X$ (Theorems~\ref{AutX.thm} and \ref{AutX-locally-closed-in-EndX.thm}).
\end{theorem}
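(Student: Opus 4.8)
The plan is to realise all three objects inside a single affine space of coordinates. Fix a closed embedding $X\into\AA^{N}$, so that $\OOO(X)=\kk[x_{1},\dots,x_{N}]/I(X)$ and an endomorphism of $X$ is an $N$-tuple $\phi=(f_{1},\dots,f_{N})$ with $f_{i}\in\OOO(X)$ subject to the (closed) requirement that substitution carries $I(X)$ into itself. Filtering by the degree of representatives gives closed subvarieties $\End(X)_{\le d}$ with $\End(X)=\bigcup_{d}\End(X)_{\le d}$, on which composition is a morphism $\End(X)_{\le d}\times\End(X)_{\le e}\to\End(X)_{\le de}$; this is the ind-semigroup structure recorded in the first cited theorem. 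For the group structure I would identify $\Aut(X)$ with the closed sub-ind-variety $\Gamma=\{(\phi,\psi):\phi\circ\psi=\psi\circ\phi=\id\}\subseteq\End(X)\times\End(X)$ via the first projection, the universal property being inherited from that of $\End(X)$. What then remains, and is the real content, is to compare this intrinsic structure with the one induced from $\End(X)$, i.e.\ to establish local closedness.

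\textbf{Openness of $\Dom(X)$.} On each piece I would use the universal morphism $\Psi\colon X\times\End(X)_{\le d}\to X\times\End(X)_{\le d}$, $(x,\phi)\mapsto(\phi(x),\phi)$, over the base $\End(X)_{\le d}$. By upper semicontinuity of fibre dimension the locus $E$ where $\Psi$ has positive-dimensional fibres is closed, and $\phi$ fails to be dominant exactly when the slice $X\times\{\phi\}$ is contained in $E$; applying upper semicontinuity once more to $E\to\End(X)_{\le d}$ shows that this set of $\phi$ is closed. Hence $\Dom(X)_{\le d}$ is open in each $\End(X)_{\le d}$, these opens are compatible with the inclusions, and since a composite of dominant maps is dominant, $\Dom(X)$ is an open sub-ind-semigroup of $\End(X)$.

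\textbf{A uniform degree bound for inverses.} The key step toward closedness is a bound $\deg\phi^{-1}\le C(d)$ valid for all automorphisms $\phi$ with $\deg\phi\le d$. First, $\Aut(X)_{\le d}:=\Aut(X)\cap\End(X)_{\le d}$ is constructible in $\Dom(X)_{\le d}$, since in the universal family the condition that $\Psi_{\phi}$ be an isomorphism is constructible; endow it with its reduced structure. Restricting the universal family to $\Aut(X)_{\le d}$ gives a family all of whose members are automorphisms, so by the cited proposition on families of automorphisms its inverse is again a morphism $X\times\Aut(X)_{\le d}\to X$. As $\Aut(X)_{\le d}$ is of finite type, the components of this inverse family have bounded degree in the variables of $X$, which yields the desired uniform bound $C(d)$.

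\textbf{Closedness in $\Dom(X)$ and the main obstacle.} With the bound in hand, $\Aut(X)_{\le d}=\pr_{1}(\Gamma_{d,C(d)})$, where $\Gamma_{d,C(d)}=\{(\phi,\psi):\deg\phi\le d,\ \deg\psi\le C(d),\ \phi\psi=\psi\phi=\id\}$ is closed in the product. It remains to see that this image is closed in $\Dom(X)_{\le d}$; by constructibility it suffices to check stability under specialisation. Realising a specialisation by a family $(\phi_{t})$ over a smooth curve with $\phi_{t}\in\Aut(X)$ for $t\ne0$ and $\phi_{0}\in\Dom(X)_{\le d}$, the inverses $(\psi_{t})_{t\ne0}$ have degree $\le C(d)$, and I would argue that they cannot degenerate as $t\to0$: the relation $\psi_{t}\circ\phi_{t}=\id$ together with the density of $\phi_{0}(X)$ pins down the values of $\psi_{t}$ on a dense set, so a bounded-degree $\psi_{t}$ cannot escape to infinity; hence $(\psi_{t})$ extends to $t=0$ and $\phi_{0}\circ\psi_{0}=\psi_{0}\circ\phi_{0}=\id$, so $\phi_{0}\in\Aut(X)$. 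This last step—showing that a limit of automorphisms that remains dominant is again an automorphism, equivalently that the inverse cannot acquire poles—is exactly where dominance is indispensable (it is false in $\End(X)$, where automorphisms degenerate to non-dominant maps), and I expect it to be the main technical obstacle; making the interpolation argument precise, rather than the surrounding semicontinuity and constructibility statements, is the heart of the proof.
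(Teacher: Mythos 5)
Your architecture coincides with the paper's skeleton (Theorems~\ref{AutX.thm} and \ref{AutX-locally-closed-in-EndX.thm}): the graph $\Gamma\subseteq\End(X)\times\End(X)$ is exactly the paper's $\AAA(X)$, and the chain ``uniform bound on $\deg\phi^{-1}$, then closedness of $\Aut(X)$ in $\Dom(X)$'' is also theirs. But your technical inputs are genuinely different. The paper is effective and algebraic throughout: the Hermann--Dub\'e degree bound for reduced Gr\"obner bases (Proposition~\ref{prop: degree bound for Grobner bases}) yields an explicit $e=e(\deg\phi)$ such that $\phi^*$ is injective iff it is injective on $\OOO(X)_e$, and such that $\deg\phi^{-1}\le e$ for automorphisms (Lemma~\ref{lem:bounds for injectivity and surjectivity}); both openness of $\Dom(X)$ and closedness of $\Aut(X)$ then reduce to finite-dimensional linear algebra, the one delicate point being that ``$\bar x_i\in\Image(h)$'' is a closed condition only among \emph{injective} linear maps $h$ (Lemma~\ref{lem: linear algebra}) --- the linear-algebra shadow of your correct remark that dominance is indispensable. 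Your substitutes are soft and ineffective but legitimate: the bound $C(d)$ via EGA-constructibility of the isomorphism locus, Proposition~\ref{fam-end-aut.prop}, and finite type of the parameter space works (note that Proposition~\ref{fam-end-aut.prop} is proved for $X$ irreducible; for reducible affine $X$ invoke instead the remark preceding it, that injective endomorphisms of affine varieties are automorphisms); and your specialization step is completable exactly as sketched: if some coefficient of $\psi_t$ had a pole at $t=0$, of maximal order $m$, then $h:=\lim_{t\to0}t^m\psi_t$ would be a nonzero tuple in $\OOO(X)_{\le C(d)}^N$ with $h\circ\phi_0=\lim_{t\to0}t^m(\psi_t\circ\phi_t)=0$, i.e.\ $\phi_0^*$ kills each component of $h$, contradicting injectivity of $\phi_0^*$; so $\psi_t$ extends and the limit is inverse to $\phi_0$. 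For the final assertion that $\mathrm{pr}_1\colon\Gamma\to\Aut(X)$ is an isomorphism onto the locally closed ind-subvariety, you should add one line: once closedness is known, Proposition~\ref{fam-end-aut.prop} applied to the tautological family over $\Aut(X)_{\le d}$ makes $\phi\mapsto\phi^{-1}$ a morphism, which the paper phrases via the universal property.

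There is, however, a genuine error in your openness step: the criterion ``$\phi$ fails to be dominant exactly when $X\times\{\phi\}\subseteq E$'' is false for reducible $X$, while the theorem is asserted for arbitrary affine $X$. Take $X=L_1\sqcup L_2$ two disjoint lines and $\phi$ mapping each $L_i$ isomorphically onto $L_1$: all fibres of $\phi$ are finite, so $X\times\{\phi\}\not\subseteq E$, yet $\phi$ is not dominant. (For irreducible $X$ the criterion is correct, since non-dominance forces every nonempty fibre to be positive-dimensional.) A second, smaller slip: ``applying upper semicontinuity once more to $E\to\End(X)_{\le d}$'' is not a valid tool, since fibre dimension is not upper semicontinuous on the base of a non-proper morphism; what rescues the conclusion is that the projection $X\times\End(X)_{\le d}\to\End(X)_{\le d}$ is flat of finite type, hence open, so $\{\phi\mid X\times\{\phi\}\subseteq E\}$ is the complement of the image of the open set $E^{c}$. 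The cleanest repair, covering reducible $X$ and needing no bound at all: $\phi$ is dominant iff $\phi^*$ is injective, and the non-dominant locus in $\End(X)_{\le d}$ is the increasing union over $e$ of the closed subsets $\{\phi\mid \Ker(\phi^*|_{\OOO(X)_e})\neq 0\}$; since $\End(X)_{\le d}$ is a Noetherian topological space this chain stabilizes, so the union is closed and $\Dom(X)_{\le d}$ is open. (The paper obtains the same stabilization, with an explicit $e$, from Dub\'e's bound, which it then reuses for the inverse bound; you derive that bound separately, which is why your route can afford to be ineffective here.)
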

We have the following important relation between the Lie algebra of the ind-group $\Aut(X)$ and the Lie algebra $\VEC(X)$ of vector fields on $X$.\idx{$\VEC(X)$}

\begin{theorem} There is a canonical inclusion $\xi\colon T_{\id}\End(X) \into \VEC(X)$ which induces
injective antihomomorphism of Lie algebras $\xi\colon \Lie\Aut(X) \into \VEC(X)$ (Propositions~\ref{End(X)-and-Vec(X).prop} and \ref{Liealg-VF.prop}).
\end{theorem}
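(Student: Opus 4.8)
The plan is to realize tangent vectors at $\id$ as algebraic derivations, so that $\xi$ becomes transparent, and then to read the bracket off the adjoint action of the ind-group $\Aut(X)$, where a single order-reversal produces the sign.

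First I would construct $\xi$ on all of $T_{\id}\End(X)$. Write $A := \OOO(X)$, so that an endomorphism of $X$ is a $\kk$-algebra homomorphism $A \to A$, and, by the formalism of families of endomorphisms developed in the preceding section, a family over an affine base $Y$ is a homomorphism $A \to A\otimes\OOO(Y)$. Applying this with $Y=\Spec\kk[\eps]$, $\eps^{2}=0$ (which is legitimate after base change from the universal family on each piece of the admissible filtration of $\End(X)$), a tangent vector $v\in T_{\id}\End(X)$ is represented by a homomorphism $\Phi^{*}\colon A \to A[\eps]/(\eps^{2})$ reducing to $\id_{A}$ modulo $\eps$, say $\Phi^{*}(a)=a+\eps\,\delta_{v}(a)$ for a unique $\kk$-linear map $\delta_{v}\colon A\to A$. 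Comparing the $\eps$-terms of $\Phi^{*}(ab)=\Phi^{*}(a)\Phi^{*}(b)$ shows that $\Phi^{*}$ being an algebra homomorphism is equivalent to the Leibniz rule for $\delta_{v}$, i.e. $\delta_{v}\in\Der(A)=\VEC(X)$. Setting $\xi(v):=\delta_{v}$ gives a $\kk$-linear map $\xi\colon T_{\id}\End(X)\to\VEC(X)$, injective because $\delta_{v}=0$ forces $\Phi^{*}=\id_{A[\eps]}$, hence $v=0$.

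Second, since $\Aut(X)\subseteq\End(X)$ is locally closed and contains $\id$ (by the main theorem quoted above), the locally closed immersion induces an injection $\Lie\Aut(X)=T_{\id}\Aut(X)\into T_{\id}\End(X)$, and composing with $\xi$ yields the injective $\kk$-linear map $\xi\colon\Lie\Aut(X)\into\VEC(X)$. Note that for $v\in\Lie\Aut(X)$ the family $\Phi^{*}$ is an \emph{automorphism} of $A[\eps]/(\eps^{2})$ (by the proposition that a family of automorphisms is an automorphism), which is what makes the conjugations below well defined.

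Third, I would verify that $\xi$ reverses brackets. The bracket on $\Lie\Aut(X)$ is the ind-group bracket $[v,w]=\ad(v)(w)$, with $\ad=d(\Ad)_{e}$ and $\Ad_{g}=d(\Int_{g})_{e}$ for $\Int_{g}(h)=ghg^{-1}$. The one essential input is the contravariance of $\phi\mapsto\phi^{*}$, namely $(\phi\circ\psi)^{*}=\psi^{*}\circ\phi^{*}$. Representing $w$ by $\psi_{t}$ with $\psi_{t}^{*}=\id_{A}+t\,\eta$ (so $\eta=\xi(w)$) and $v$ by automorphisms $\phi_{s}$ with $\phi_{s}^{*}=\id_{A}+s\,\delta$ (so $\delta=\xi(v)$), and using $(\phi_{s}^{*})^{-1}=\id_{A}-s\,\delta$ together with $s^{2}=0$, one computes
\[
(\phi_{s}\circ\psi_{t}\circ\phi_{s}^{-1})^{*}=(\phi_{s}^{*})^{-1}\circ\psi_{t}^{*}\circ\phi_{s}^{*}=\id_{A}+t\bigl(\eta+s\,[\eta,\delta]\bigr),
\]
where $[\,\cdot\,,\cdot\,]$ is the commutator of operators on $A$ (a derivation, since $\eta,\delta$ are). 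The coefficient of $t$ identifies $\Ad_{\phi_{s}}(w)$ with the vector field $\eta+s\,[\eta,\delta]$, and the coefficient of $s$ in the latter is $\ad(v)(w)=[v,w]$. Hence $\xi([v,w])=[\eta,\delta]=[\xi(w),\xi(v)]=-[\xi(v),\xi(w)]$, which is exactly the assertion that $\xi$ is an injective antihomomorphism of Lie algebras.

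The main obstacle I expect is not the sign — it is \emph{forced} by the order-reversal in $(\phi\circ\psi)^{*}=\psi^{*}\circ\phi^{*}$ — but the bookkeeping in the first step: one must check that the dual-numbers description genuinely computes $T_{\id}\End(X)$ of the ind-variety $\End(X)$ compatibly with its admissible filtration, and that $\Ad$ may be differentiated and evaluated by pulling back along $\phi\mapsto\phi^{*}$ just as in the finite-dimensional case. Granting the families-of-morphisms formalism and the ind-group structure on $\Aut(X)$ established earlier, both points reduce to the $\eps^{2}=0$ computations above.
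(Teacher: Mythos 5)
Your first two steps are correct and essentially coincide with the paper's own argument: the dual-number identification of $v\in T_{\id}\End(X)$ with a derivation $\delta_{v}$ of $\OOO(X)$ is exactly how Propositions~\ref{End(X)-and-Vec(X).prop} and \ref{TEnd(R)-into-Der(R).prop} are proved (the paper writes the same $\eps$-computation as $F(\bar x_{1}+\eps h_{1},\ldots,\bar x_{n}+\eps h_{n})=0$ for $F\in I(X)$ after choosing an embedding $X\subseteq\An$), and the injectivity of $\xi$ on $\Lie\Aut(X)$ via the locally closed embedding $\Aut(X)\subseteq\End(X)$ of Theorem~\ref{AutX-locally-closed-in-EndX.thm} is precisely how the second claim of Proposition~\ref{Liealg-VF.prop} is established. (A small remark: you do not need the family-of-automorphisms proposition to invert $\Phi^{*}$ over $\kk[\eps]$ — the paper's families are parametrized by reduced ind-varieties, not by $\Spec\kk[\eps]$ — but $\id_{A}+\eps\,\delta$ is trivially invertible with inverse $\id_{A}-\eps\,\delta$, so nothing is lost.)

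The genuine gap is in your third step, at the sentence ``the bracket on $\Lie\Aut(X)$ is the ind-group bracket $[v,w]=\ad(v)(w)$.'' In this paper the bracket on $\Lie\GGG$ is \emph{defined} via continuous left-invariant derivations of $\OOO(\GGG)$ (Definition~\ref{Lie-algebra-of-an-affine-ind-group.def}), and the identity $\ad(A)(B)=[A,B]$ is Proposition~\ref{adjoint.prop} — in the ind-group setting a nontrivial theorem proved only \emph{later}, using the linearization Lemma~\ref{linearization.lem}, the tangent representation at a fixed point (Theorem~\ref{rep-in-fixed-point.thm}, Proposition~\ref{rep-in-fixed-point.prop}), and the relations $\xi_{A}=\lambda_{A}-\rho_{A}$ and $[\lambda_{A},\lambda_{B}]=\lambda_{[B,A]}$ of Lemma~\ref{actions-on-G.lem}. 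The last of these relations is exactly the anti-homomorphism statement for the action of $\GGG$ on itself by left multiplication, so invoking $\ad(v)(w)=[v,w]$ to prove the anti-homomorphism is circular within this paper's architecture; the authors explicitly warn that the classical finite-dimensional proofs of these facts ``do not carry over.'' Your operator identity $(\phi_{s}^{*})^{-1}\circ\psi_{t}^{*}\circ\phi_{s}^{*}=\id_{A}+t\bigl(\eta+s[\eta,\delta]\bigr)$ is correct, but by itself it only computes $d(\Ad)$; it does not connect that to the left-invariant-derivation bracket, which is the whole content to be proved.

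Two repairs are available, both cheaper than re-proving Proposition~\ref{adjoint.prop}. Either (i) argue as the paper does in Proposition~\ref{Liealg-VF.prop}: the $\GGG$-equivariant isomorphism $\GGG\times X\simto\GGG\times X$, $(g,x)\mapsto(g,gx)$, carries the vector field $(-\delta_{A},\xi_{A})$ to $(-\delta_{A},0)$, so the definitional bracket relations among the invariant vector fields on $\GGG$ transport to $\xi_{[A,B]}=-[\xi_{A},\xi_{B}]$; or (ii) use that the action of $\Aut(X)$ on $\OOO(X)$ is a representation (Proposition~\ref{locally-finite-action-on-Vec(X).prop}) whose differential is a homomorphism of Lie algebras by Lemma~\ref{rep-of-G-and-LieG.lem}, together with the formula $Af=-\xi_{A}f$ of \eqref{formulas-for-A}; then $-\xi_{[A,B]}=[-\xi_{A},-\xi_{B}]=[\xi_{A},\xi_{B}]$ yields the sign reversal at once.
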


\ps
\subsection{Homomorphisms of groups}
If $G$ and $H$ are linear algebraic groups, then the set $\Hom(G,H)$ of algebraic group homomorphisms has a natural structure of an ind-variety (see Section~\ref{Hom.sec}). If $H = \GL(V)$ where $V$ is a finite-dimensional vector space of dimension $n$, then $\Hom(G,\GL(V))$ can be understood as the  representations of $G$ on $V$, or as the $G$-module structures on $V$. In this case, the $\GL(V)$-orbits on $\Hom(G,\GL(V))$ are the equivalence classes of $n$-dimensional representations. Here is our main result (Theorem~\ref{Main-Hom.prop}).

\begin{theorem}
Let $G, H$ be linear algebraic groups.
\be
\item 
The ind-variety $\Hom(G,H)$ is finite-dimensional.
\item 
If the radical of $G$ is unipotent, then $\Hom(G,H)$ is an affine variety.
\item 
If $G$ is reductive, then $\Hom(G,\GL(V))$ is a
countable union of closed $\GL(V)$-orbits, hence it is strongly smooth of dimension $\leq (\dim V)^{2}$.
\item
If $G^{\circ}$ is semisimple or if $G$ is finite, then $\Hom(G,\GL(V))$ is a finite union of closed $\GL(V)$-orbits and thus a smooth affine algebraic variety of dimension $\leq(\dim V)^{2}$.
\item
If $U$ is a unipotent group, then $\Hom(U,H)$ is an affine algebraic variety of dimension $\leq \dim U \cdot \dim H^{u}$.
\ee
\end{theorem}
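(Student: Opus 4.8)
The plan is to treat the five assertions in the order $(5),(3),(4)$ and then to deduce $(1)$ and $(2)$ by reducing an arbitrary $G$ to its Levi decomposition. Throughout I use that in characteristic zero every linear algebraic group admits a closed embedding into some $\GL(W)$, and that $\Hom(G,H)$ is a sub-ind-variety of the ind-variety $\Mor(G,H)$ on which restriction along a closed subgroup $G'\subseteq G$ is a morphism. For $(5)$, let $U$ be unipotent. Since $\operatorname{char}\kk=0$, the maps $\exp$ and $\log$ are mutually inverse morphisms between $U$ and its Lie algebra $\mathfrak u$, and $\exp$ is likewise a morphism from the variety $\NNN\subseteq\mathfrak h=\Lie H$ of nilpotent elements onto the unipotent variety $H^{u}$. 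As the image of a unipotent group under a homomorphism is again unipotent, every $\phi\in\Hom(U,H)$ satisfies $\phi=\exp\circ\,d\phi_{e}\circ\log$, so $\phi\mapsto d\phi_{e}$ identifies $\Hom(U,H)$ with the set of Lie algebra homomorphisms $\psi\colon\mathfrak u\to\mathfrak h$ whose image consists of nilpotent elements. This is a closed subvariety of $\Hom_{\kk}(\mathfrak u,\mathfrak h)$, hence an affine variety, and evaluation on a basis of $\mathfrak u$ gives an injective morphism $\Hom(U,H)\to\NNN^{\dim U}$; since an injective morphism cannot raise dimension, $\dim\Hom(U,H)\le\dim U\cdot\dim\NNN=\dim U\cdot\dim H^{u}$.

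For $(3)$ and $(4)$, set $n=\dim V$ and recall that the $\GL(V)$-orbits on $\Hom(G,\GL(V))$ are exactly the isomorphism classes of $n$-dimensional $G$-modules. If $G$ is reductive, then every such module is semisimple (complete reducibility in characteristic zero), so its isomorphism class is determined by the multiplicities of the irreducible $G$-modules occurring in it; as there are only countably many irreducibles, this exhibits $\Hom(G,\GL(V))$ as a countable union of orbits. Each orbit is $\GL(V)/\Stab$, and the stabilizer of a semisimple module is a product of general linear groups over its isotypic components, hence reductive; by Matsushima's criterion the orbit is therefore affine, and as a homogeneous space it is smooth. Distinct orbits are disjoint, and each is closed, since the orbit of a module is closed precisely when the module is semisimple. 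Filtering by finite unions of orbits then displays $\Hom(G,\GL(V))$ as a strongly smooth ind-variety of dimension $\le\dim\GL(V)=n^{2}$, proving $(3)$. For $(4)$ it remains to see that only finitely many orbits occur: if $G^{\circ}$ is semisimple the Weyl dimension formula shows that only finitely many irreducible $G^{\circ}$-modules have dimension $\le n$, and passing through the finite group $G/G^{\circ}$ leaves only finitely many $n$-dimensional $G$-modules up to isomorphism; if $G$ is finite there are finitely many irreducibles to begin with. A finite disjoint union of closed smooth affine orbits is a smooth affine variety of dimension $\le n^{2}$.

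I then reduce the general case to these. Writing the Levi decomposition $G=R_{u}(G)\rtimes L$ with $L$ reductive, the subgroups $R_{u}(G)$ and $L$ generate $G$, so restriction gives an injective morphism
$$\Hom(G,H)\longrightarrow \Hom(L,H)\times\Hom(R_{u}(G),H)$$
whose image is the closed set of pairs satisfying the semidirect-product compatibility relations; I claim this is a closed immersion of ind-varieties. A closed embedding $H\into\GL(W)$ realizes $\Hom(L,H)$ as a closed sub-ind-variety of the finite-dimensional ind-variety $\Hom(L,\GL(W))$ of $(3)$, so $\Hom(L,H)$ is finite-dimensional, while $\Hom(R_{u}(G),H)$ is a finite-dimensional affine variety by $(5)$; hence $\Hom(G,H)$ is finite-dimensional, proving $(1)$. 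If moreover the radical of $G$ is unipotent, then $R(G)=R_{u}(G)$ and $L\cong G/R(G)$ is semisimple, so by $(4)$ the variety $\Hom(L,\GL(W))$ is a genuine affine variety and its closed subset $\Hom(L,H)$ is affine; as $\Hom(R_{u}(G),H)$ is affine by $(5)$, the product and hence its closed subvariety $\Hom(G,H)$ are affine, proving $(2)$.

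The main obstacle is the interface between the group-theoretic and the ind-variety structures: one must verify that restriction along $R_{u}(G)$ and $L$ is not merely injective on points but a closed immersion of ind-varieties, so that finite-dimensionality and affineness genuinely descend from the product to $\Hom(G,H)$. The second delicate input is the closedness of the $\GL(V)$-orbits of semisimple modules, for which I would invoke the Hilbert–Mumford criterion: any point in the boundary of an orbit is a degeneration computed by the associated graded module, which for a semisimple module returns the module itself, so no proper degeneration exists and the orbit is closed.
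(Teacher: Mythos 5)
Your parts (3), (4) and (5) are sound and essentially reproduce the paper's own arguments: (5) is exactly the paper's second proof via the correspondence $\phi \mapsto d\phi_{e}$ between $\Hom(U,H)$ and Lie algebra homomorphisms with nilpotent image (Proposition~\ref{Hom-and-Lie.prop}(3), resting on the functoriality of $\exp$ in Lemma~\ref{exp-hom.lem}), and (3), (4) rest on the same closed-orbit criterion for semisimple modules (Lemma~\ref{Artin.lem}, proved there by the same Hilbert--Mumford/associated-graded argument you sketch; your appeal to Matsushima is superfluous, since a closed orbit in an affine piece is automatically affine). Part (1) also goes through, and in fact more cheaply than you state: an injective ind-morphism into a finite-dimensional ind-variety already bounds the dimension of every algebraic subset of the source, so no closed immersion is needed there.

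The genuine gap is in (2). You assert that the restriction map $\Delta\colon \Hom(G,H) \to \Hom(L,H)\times\Hom(\rad_{u}G,H)$ is a closed immersion of ind-varieties, and you flag this yourself as ``the main obstacle'' --- but you never prove it, and you cannot easily: whether $\Delta$ is a closed immersion is precisely the paper's open Question~\ref{Delta.ques}. Nor does a bijective ind-morphism onto an affine variety by itself force the source to be an affine variety (compare the identity map from $\kk$ with the discrete ind-structure to $\AA^{1}$ over a countable field, or the bijective non-isomorphism $\Aut(\kk\langle x,y\rangle)\to\Aut(\kk[x,y])$ of Proposition~\ref{diff-not-bijective.prop}). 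The repair uses a strictly weaker statement, which is what the paper supplies: one checks that $\Delta$ is injective with \emph{closed image} (your compatibility relations do give closedness, and for a semidirect product a compatible pair reassembles to a regular homomorphism since $\rad_{u}G\times L \to G$ is an isomorphism of varieties --- here your route is actually cleaner than the paper's general two-generating-subgroups argument in Proposition~\ref{restriction-to-subgroups.prop}); then, over an \emph{uncountable} field, Lemma~\ref{bijective-morphisms.lem} shows that the preimage of an algebraic set under a bijective ind-morphism is algebraic, so $\Hom(G,H)$ is an algebraic subset of $\Mor(G,H)$, hence an affine variety; the general field is handled by base extension via Proposition~\ref{field-extensions-for-morphisms.prop}. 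With this inserted, your Levi-decomposition route to (2) works and is a genuine alternative to the paper's, which instead characterizes groups with unipotent radical as generated by finitely many copies of $\kplus$ (Proposition~\ref{Hom-algebraic.prop}) and inducts on generating subgroups via Corollary~\ref{G-generated-by-m-subgroups.cor} and Lemma~\ref{Hom(G,H)-and-Hom(G0,H).lem}; also note that for disconnected $G$ your Levi factor $L$ need only have $L^{\circ}$ semisimple rather than be semisimple, which is exactly why (4) is stated for $G^{\circ}$ semisimple.
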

The question whether $\Hom(G,H)$ is algebraic or not is answered by the following result (Proposition~\ref{Hom-algebraic.prop}).

\begin{proposition}
For a connected  linear algebraic group $G$ the following assertions are equivalent.
\be
\item[(i)]
The radical $\rad G$ is unipotent;
\item[(ii)]
$G$ is generated by unipotent elements;
\item[(iii)]
The character group of $G$ is trivial;
\item[(iv)]
$\Hom(G,H)$ is an affine variety for any linear algebraic group $H$.
\ee
\end{proposition}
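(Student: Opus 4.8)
The plan is to prove the cycle $(\mathrm{i})\Rightarrow(\mathrm{iv})\Rightarrow(\mathrm{iii})\Rightarrow(\mathrm{i})$ together with the separate equivalence $(\mathrm{ii})\Leftrightarrow(\mathrm{iii})$, since the implications among the purely group-theoretic conditions (i), (ii), (iii) are classical and only the link with (iv) requires the ind-variety machinery developed above. The implication $(\mathrm{i})\Rightarrow(\mathrm{iv})$ is nothing but assertion (2) of the preceding theorem (Theorem~\ref{Main-Hom.prop}): if $\rad G$ is unipotent, then $\Hom(G,H)$ is an affine variety for every linear algebraic group $H$. So the real work is the converse direction and the standard structural equivalences.

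For the group-theoretic part I would argue as follows. Every character $\chi\colon G\to\kst$ kills unipotent elements (the group $\kst$ contains none), so it is trivial on the unipotent radical $\rad_u G$ and on the subgroup $G^{+}$ generated by all unipotent elements; hence the character group $X^{\ast}(G)=\Hom(G,\kst)$ coincides with the character group of $G/\rad_u G$ and with that of $G/G^{+}$. Now $G^{+}$ is closed, connected and normal, and $G/G^{+}$ is a connected group containing no nontrivial unipotent element, i.e.\ a torus; this gives $(\mathrm{ii})\Leftrightarrow(\mathrm{iii})$, since $X^{\ast}(G)$ is trivial exactly when this torus is trivial, i.e.\ $G=G^{+}$. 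For $(\mathrm{iii})\Leftrightarrow(\mathrm{i})$ I would pass to the reductive quotient $\bar G=G/\rad_u G$, write $\bar G=Z^{\circ}\cdot[\bar G,\bar G]$, and use that the restriction $X^{\ast}(\bar G)\to X^{\ast}(Z^{\circ})\cong\ZZ^{\dim Z^{\circ}}$ has finite cokernel; thus $X^{\ast}(G)=X^{\ast}(\bar G)$ vanishes iff $Z^{\circ}$ is trivial iff $\bar G$ is semisimple iff $\rad G=\rad_u G$ is unipotent.

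The key new step is $(\mathrm{iv})\Rightarrow(\mathrm{iii})$, which I would prove by contraposition: assuming $X^{\ast}(G)\neq 0$ I would exhibit one $H$ for which $\Hom(G,H)$ is not a finite type variety. The natural choice is $H=\kst$, so that $\Hom(G,\kst)=X^{\ast}(G)$ as a set. First I would show that this ind-variety is rigid, i.e.\ it carries no nonconstant morphism from an irreducible variety $Y$: such a morphism is a family $\Phi\colon Y\times G\to\kst$ that is a character in the second variable, hence an invertible regular function on the irreducible variety $Y\times G$; by Rosenlicht's description of the units of a product, $\Phi(y,g)=f(y)\,h(g)$ with $f\in\OOO(Y)^{\times}$ and $h\in\OOO(G)^{\times}=\kst\cdot X^{\ast}(G)$, and evaluating at $g=e$ (where a character takes the value $1$) forces $f$ to be constant, so $y\mapsto\chi_{y}$ is constant. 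Hence $\Hom(G,\kst)$ is a zero-dimensional ind-variety whose distinct points cannot be joined by an irreducible curve. Since $G$ is connected its character group is torsion-free, so $X^{\ast}(G)\neq 0$ gives $X^{\ast}(G)\cong\ZZ^{r}$ with $r\geq 1$, an infinite set; a zero-dimensional variety of finite type is finite, so $\Hom(G,\kst)$ cannot be an affine variety, contradicting (iv). This closes the cycle.

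I expect the main obstacle to be exactly this rigidity claim for $\Hom(G,\kst)$: one must make sure that the ind-structure on $\Hom(G,H)$ introduced earlier really separates distinct characters into different components, and the Rosenlicht unit computation is what makes this precise (a naive Zariski-topology argument on differentials fails, since the character lattice is Zariski-dense in its span). Once rigidity is in hand, the remaining point is purely definitional, namely that ``affine variety'' in (iv) means a finite type variety, so that an infinite discrete ind-variety is indeed excluded.
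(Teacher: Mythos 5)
Your proof has one genuine gap: the implication (i)$\Rightarrow$(iv). You dispose of it by citing item (2) of Theorem~\ref{Main-Hom.prop}, but in the paper that theorem is a summary placed \emph{after} Proposition~\ref{Hom-algebraic.prop}, and its item (2) has no independent proof --- it is exactly the implication (i)$\Rightarrow$(iv) of the proposition you are proving, so the citation is circular. This is where the real work sits. The paper proves (ii)$\Rightarrow$(iv) as follows: if $G$ is generated by unipotent elements, it is generated by finitely many closed subgroups isomorphic to $\kplus$; the varieties $\Hom(\kplus,H)$ are affine (Proposition~\ref{Hom-kplus.prop}, or Proposition~\ref{Hom(G,H).prop}(4)); and then Corollary~\ref{G-generated-by-m-subgroups.cor} --- which rests on Proposition~\ref{restriction-to-subgroups.prop} (the restriction map $\Delta\colon\Hom(G,L)\to\Hom(H,L)\times\Hom(K,L)$ has closed image, identified via a multiplication-fibered equivalence relation) together with the bijective-morphism criterion of Lemma~\ref{bijective-morphisms.lem} over an uncountable field and a base-field-extension argument --- yields affineness of $\Hom(G,H)$. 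None of this machinery appears in your proposal, so your cycle (i)$\Rightarrow$(iv)$\Rightarrow$(iii)$\Rightarrow$(i) does not close. (To repair it within your architecture, prove (i)$\Rightarrow$(ii) directly --- the semisimple quotient $G/\rad_u G$ is generated by its root subgroups and unipotent elements lift --- and then supply the (ii)$\Rightarrow$(iv) argument above.)

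The rest is sound, and your (iv)$\Rightarrow$(iii) is a genuinely different and valid route. The paper passes to an uncountable base field and observes that the countably infinite set $\{\chi^{n}\}$, the image of $\Hom(\kst,\kst)\into\Hom(G,\kst)$, is closed (being cut out by $\ker\phi\supseteq\ker\chi$) and so cannot lie in an affine variety; your argument instead shows rigidity of $\Hom(G,\kst)$ over any algebraically closed field, using Rosenlicht's splitting of units on $Y\times G$ and evaluation at $g=e$ to force any family of characters over an irreducible $Y$ to be constant, whence $\Hom(G,\kst)\simeq X^{*}(G)\simeq\ZZ^{r}$, $r\geq 1$, is an infinite discrete ind-variety. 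That avoids the base change entirely and is a nice improvement. Your purely group-theoretic equivalences (ii)$\Leftrightarrow$(iii) via the closed connected normal subgroup $G^{+}$ and (iii)$\Leftrightarrow$(i) via $\bar G=Z^{\circ}\cdot[\bar G,\bar G]$ are correct and agree in substance with the paper's treatment of (ii)$\Rightarrow$(iii)$\Rightarrow$(i).
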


\ps
\subsection{Ind-group actions and fixed points}
An {\it action of an ind-group $\GGG$ on an affine variety $X$} is the same as a homomorphism $\rho\colon \GGG \to \Aut(X)$ of ind-groups. It then follows that the differential $d\rho_{e}\colon \Lie\GGG \to \VEC(X)$, $A \mapsto \xi_{A}$, is a anti-homomorphism of Lie algebras.

\begin{proposition}
Let $\GGG$ be a connected ind-group acting on an affine variety $X$, and let $Y \subseteq X$ be a closed subvariety. Then  $Y$ is $\GGG$-stable if and only if $Y$ is $\Lie\GGG$-invariant, i.e., $Y$ is $\xi_{A}$-invariant for all $A \in \Lie\GGG$ (Proposition~\ref{G-stable-is-LieG-invariant.prop}).\idx{Lie@$\Lie\GGG$-invariant}\idx{G@$\GGG$-stable}
\end{proposition}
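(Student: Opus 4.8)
The plan is to prove the two implications separately; the direction ``$\GGG$-stable $\Rightarrow$ $\Lie\GGG$-invariant'' is routine, while ``$\Lie\GGG$-invariant $\Rightarrow$ $\GGG$-stable'' is the real content, and it must avoid any appeal to ``a closed subgroup with the same Lie algebra must be everything'', which is exactly what fails for ind-groups by the counterexample of Section~\ref{tame-is-closed.subsec}. For the easy direction, suppose $Y$ is $\GGG$-stable. For $y\in Y$ the orbit map $\mu_y\colon\GGG\to X$, $g\mapsto g\cdot y$, is a morphism (a restriction of the action) whose image lies in $Y$, hence factors through $Y\hookrightarrow X$. Since $\xi_A(y)=(d\mu_y)_e(A)$ by the definition of $d\rho_e$, we get $\xi_A(y)\in\Image\big((d\mu_y)_e\big)\subseteq T_yY\subseteq T_yX$ for every $A\in\Lie\GGG$. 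For $f$ in the radical ideal $I_Y\subseteq\OOO(X)$ this gives $(\xi_Af)(y)=df_y(\xi_A(y))=0$, because $df_y$ kills $T_yY$ by definition of the Zariski tangent space; as $y$ was arbitrary, $\xi_A(I_Y)\subseteq I_Y$, i.e. $Y$ is $\Lie\GGG$-invariant.

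For the hard direction I would first reduce to a statement about curves. Set $S:=\{g\in\GGG\mid g\cdot Y\subseteq Y\}=\bigcap_{y\in Y}\mu_y^{-1}(Y)$, which is closed and contains $e$. Since $\GGG$ is connected it is curve-connected, so every $g\in\GGG$ is joined to $e$ by a finite chain of irreducible curves in $\GGG$, which (after normalizing and passing to affine opens) we may take smooth and affine. Thus it suffices to prove the following \emph{curve lemma}: for a morphism $\gamma\colon C\to\GGG$ from a smooth irreducible affine curve with $\gamma(c_0)\in S$ for some point $c_0$, one has $\gamma(C)\subseteq S$. Propagating along the connecting chain starting from $e\in S$ then gives $S=\GGG$, i.e. $g\cdot Y\subseteq Y$ for all $g$; applying this to $g^{-1}$ yields $g\cdot Y=Y$, so $Y$ is $\GGG$-stable.

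To prove the curve lemma, note that since $C$ is a variety the morphism $\gamma$ factors through some filtration step $\GGG_k$, so $\Psi\colon C\times X\to X$, $\Psi(c,x)=\gamma(c)\cdot x$, is an honest morphism and $\Psi^*f\in\OOO(C)\otimes\OOO(X)$ for $f\in I_Y$. Let $D$ be a local derivation (uniformizer) at $c_0$. Writing $\gamma(c)=\big(\gamma(c)\gamma(c_1)^{-1}\big)\gamma(c_1)$ and differentiating in $c$ gives the basic identity $D(\Psi^*h)=\Psi^*\big(Dh+\xi_{A_c}h\big)$ for a $c$-family $h$, where $A_c\in\Lie\GGG$ is the logarithmic derivative of $\gamma$. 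Hence $D^n(\Psi^*f)=\Psi^*(g_n)$ with $g_0=f$ and $g_{n+1}=Dg_n+\xi_{A_c}g_n$. As $\OOO(C)\otimes I_Y$ is stable under $D$ and under each $\xi_{A_c}$ (the $\Lie\GGG$-invariance hypothesis, applied to $A_c\in\Lie\GGG$), induction gives $g_n\in\OOO(C)\otimes I_Y$ for all $n$. Evaluating at $c_0$ and restricting to $Y$, the function $(D^n\Psi^*f)|_{c_0}$ is $y\mapsto g_n(c_0)(\gamma(c_0)\cdot y)$, which vanishes because $\gamma(c_0)\in S$ forces $\gamma(c_0)\cdot y\in Y$ while $g_n(c_0)\in I_Y$. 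Thus the image $\overline{\Psi^*f}\in\OOO(C)\otimes\OOO(Y)=\OOO(C\times Y)$ vanishes to infinite order at $c_0$; since $C$ is smooth irreducible and $\kk$ has characteristic zero, the injection $\OOO(C)\hookrightarrow\widehat{\OOO_{C,c_0}}\cong\kk[[t]]$ (tensored with $\OOO(Y)$) forces $\overline{\Psi^*f}=0$, i.e. $f(\gamma(c)\cdot y)=0$ for all $c,y$. As $f\in I_Y$ was arbitrary, $\gamma(C)\subseteq S$.

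The main obstacle is precisely the curve lemma, and more specifically dissolving an apparent circularity: one is tempted to say ``the derivative of the orbit function vanishes because $\gamma(c)\cdot y\in Y$'', but that is the very conclusion we want. The recursion $g_{n+1}=Dg_n+\xi_{A_c}g_n$ is what resolves this, since it keeps every derivative of $\Psi^*f$ in the form $\Psi^*(\text{element of }I_Y)$, so that we only ever evaluate at the \emph{single} base point $c_0$, where $\gamma(c_0)\in S$ is already known. The two features doing the real work are the $\Lie\GGG$-invariance of $I_Y$, which closes the recursion inside $I_Y$, and characteristic zero, where the Taylor coefficients are the $\tfrac1{n!}D^n|_{c_0}$ and infinite-order vanishing on a smooth curve implies vanishing.
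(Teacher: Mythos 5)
Your proof is correct, but it takes a genuinely different route from the paper's. The paper argues by dimension: after reducing to irreducible $Y$ via Proposition~\ref{xi-invariant.prop}, it considers $\overline{\GGG Y}=\overline{\GGG_{k}Y}$ for a filtration with irreducible pieces (Proposition~\ref{curve-connected.prop}), uses generic smoothness of the dominant morphism $\phi\colon\GGG_{k}\times Y\to\overline{\GGG Y}$ together with the formula $d\phi_{(e,y)}(A,v)=\xi_{A}(y)+v$ (Remark~\ref{formula.rem}) and the hypothesis $\xi_{A}(y)\in T_{y}Y$ to get $T_{y}\overline{\GGG Y}=T_{y}Y$ on a dense set, hence $\dim\overline{\GGG Y}=\dim Y$ and $\GGG Y=Y$. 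You instead localize to curves via curve-connectedness and integrate formally: the recursion $g_{n+1}=Dg_{n}+\xi_{A_{c}}g_{n}$ keeps every $c$-derivative of $\Psi^{*}f$ inside $\OOO(C)\otimes I_{Y}$, and your identification of $\OOO(C)\otimes I_{Y}$ with the set of $h$ satisfying $h(c,\cdot)\in I_{Y}$ for all $c$ is exactly what lets the slice-wise hypothesis close the induction; infinite-order vanishing at $c_{0}$ on a smooth irreducible curve in characteristic zero then forces identical vanishing. The trade-off: the paper's proof is shorter and avoids the logarithmic-derivative machinery, but it leans on dimension theory, generic smoothness, and the reduction to irreducible $Y$; yours avoids all three (it treats arbitrary closed $Y$ directly), makes the role of characteristic zero explicit, is in the spirit of the Taylor-expansion identity of Lemma~\ref{exp-for-functions.lem}, and yields a slightly stronger curve lemma along the way (if one point of a curve in $\GGG$ maps $Y$ into $Y$, the whole curve does), while correctly steering clear of the ``same Lie algebra implies equality'' trap of Section~\ref{tame-is-closed.subsec}.

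Two small points to tighten. First, the base point in your factorization should be the moving point itself: differentiate $\gamma(c')=\bigl(\gamma(c')\gamma(c)^{-1}\bigr)\gamma(c)$ at $c'=c$ (your $c_{1}$ is a slip of notation). Second, you should justify in a sentence that $(c,x)\mapsto(\xi_{A_{c}})_{x}$ is an \emph{algebraic} vector field on $C\times X$, so that the recursion really stays in $\OOO(C)\otimes\OOO(X)$: since $C$ is a variety, $\gamma$ factors through some $\GGG_{\ell}$, the map $(c',c)\mapsto\gamma(c')\gamma(c)^{-1}$ is a morphism of varieties into some $\GGG_{m}$, so applying $D$ in the first slot and restricting to the diagonal gives an algebraic family $c\mapsto A_{c}\in T_{e}\GGG_{m}\subseteq\Lie\GGG$, and $\xi_{A_{c}}h$ is then regular via the comorphism of the action morphism $\GGG_{m}\times X\to X$. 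With these additions the argument is complete.
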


An {\it action of an ind-group $\GGG$ on an ind-variety $\VVV$} is a homomorphism
$\GGG \to \Aut(\VVV)$ such that the action map $\GGG\times \VVV \to \VVV$ is a morphism of ind-varieties (Section~\ref{group-actions-orbits.subsec}).
In the same way we define a \itind{representation} of an ind-group on a $\kk$-vector space $V$ of countable dimension (Section~\ref{reps-of-ind-groups.subsec}), namely as a homomorphism $\rho\colon \GGG \to \GL(V)$ such that the linear action $\GGG \times V \to V$ is a morphism. 
Note that $\GL(V)$ is not an ind-group if $V$ is not finite-dimensional, so that we cannot define a representation as a homomorphism $\GGG \to \GL(V)$ of ind-groups.

For a representation $\rho$  the differential $d\rho_{e}\colon \Lie\GGG \to \LLL(V)$ is well defined and is a homomorphism of Lie algebras. Here \itind{$\LLL(V)$} denotes the \itind{linear endomorphisms} 
of the $\kk$-vector space $V$. 

If $v_{0} \in \VVV$ is a fixed point, then every $ g \in\GGG$ induces a linear automorphism $d g_{v_{0}}$ of $T_{v_{0}}\VVV$  (Section~\ref{tangent-rep-fixed-points.subsec}) which defines the \itind{tangent representation} $\tau_{v_{0}}\colon \GGG \to \GL(T_{v_{0}}\VVV)$.

\begin{theorem}
Let the affine ind-group $\GGG$ act on the ind-variety $\VVV$, and assume that $v_{0}\in\VVV$ is a fixed point. Then the action of $\GGG$ on $T_{v_{0}}\VVV$ is a linear representation (Theorem~\ref{rep-in-fixed-point.thm}).
\end{theorem}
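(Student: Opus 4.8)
The plan is to show that the tangent representation $\tau_{v_{0}}\colon \GGG \to \GL(T_{v_{0}}\VVV)$, $g \mapsto dg_{v_{0}}$, is a representation in the sense of Section~\ref{reps-of-ind-groups.subsec}, i.e. that the induced linear action $\mu\colon \GGG \times T_{v_{0}}\VVV \to T_{v_{0}}\VVV$, $(g,\xi)\mapsto dg_{v_{0}}(\xi)$, is a morphism of ind-varieties. That $\tau_{v_{0}}$ is a group homomorphism and that $\mu$ is linear in the second variable are immediate: writing $\alpha_{g}\colon \VVV \to \VVV$, $v \mapsto g\cdot v$, the fixed-point hypothesis $\alpha_{h}(v_{0})=v_{0}$ together with the chain rule gives $d(\alpha_{gh})_{v_{0}} = d(\alpha_{g})_{v_{0}}\circ d(\alpha_{h})_{v_{0}}$, so $\tau_{v_{0}}(gh)=\tau_{v_{0}}(g)\,\tau_{v_{0}}(h)$, and each $dg_{v_{0}}$ is $\kk$-linear and invertible (with inverse $d(g^{-1})_{v_{0}}$). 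The entire content is therefore the morphism property.

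Next I would reduce to the algebraic level using the filtrations. Fix admissible filtrations $\GGG = \bigcup_{m}\GGG_{m}$ and $\VVV = \bigcup_{k}\VVV_{k}$ with $v_{0}\in\VVV_{k}$ for all $k$, so that $T_{v_{0}}\VVV = \bigcup_{k}T_{v_{0}}\VVV_{k}$ is the associated filtration of the countable-dimensional space $T_{v_{0}}\VVV$ by its finite-dimensional subspaces $T_{v_{0}}\VVV_{k}$. Since the action map $\alpha$ is a morphism of ind-varieties, for each pair $(m,k)$ there is an $\ell$ with $\alpha(\GGG_{m}\times\VVV_{k})\subseteq\VVV_{\ell}$ and a morphism of varieties $\beta\colon \GGG_{m}\times\VVV_{k}\to\VVV_{\ell}$ satisfying $\beta(g,v_{0})=v_{0}$ for all $g\in\GGG_{m}$. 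It then suffices to show that $(g,\xi)\mapsto d(\beta_{g})_{v_{0}}(\xi)$ defines a morphism $\GGG_{m}\times T_{v_{0}}\VVV_{k}\to T_{v_{0}}\VVV_{\ell}$; for these maps are compatible with the filtrations and glue to $\mu$, and $d(\beta_{g})_{v_{0}}$ is the restriction of $dg_{v_{0}}$ to $T_{v_{0}}\VVV_{k}$ because $\beta_{g}$ is $\alpha_{g}$ restricted to $\VVV_{k}$.

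The heart of the argument is the following claim: for a morphism $\beta\colon Z\times X\to X'$ of varieties with $\beta(z,x_{0})=x_{0}'$ independent of $z$, the assignment $z\mapsto d(\beta_{z})_{x_{0}}$ is a morphism $Z\to\Hom(T_{x_{0}}X,T_{x_{0}'}X')$ into the finite-dimensional space of linear maps. To prove it I would pick functions $f_{1},\dots,f_{q}\in\OOO(X')$ vanishing at $x_{0}'$ whose differentials span $\mm_{x_{0}'}/\mm_{x_{0}'}^{2}$, and functions $x_{1},\dots,x_{p}\in\OOO(X)$ generating $\mm_{x_{0}}$. Since $\beta^{*}f_{j}$ vanishes on $Z\times\{x_{0}\}$, we may write $\beta^{*}f_{j}=\sum_{i}a_{ji}\,x_{i}$ with $a_{ji}\in\OOO(Z\times X)$; then $c_{ji}(z):=a_{ji}(z,x_{0})\in\OOO(Z)$ are regular in $z$ and are precisely the matrix entries of $d(\beta_{z})_{x_{0}}$ in the induced bases, which proves the claim. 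Evaluation $\Hom(T_{x_{0}}X,T_{x_{0}'}X')\times T_{x_{0}}X\to T_{x_{0}'}X'$ being a morphism, it follows that $(z,\xi)\mapsto d(\beta_{z})_{x_{0}}(\xi)$ is a morphism, linear in $\xi$.

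The main obstacle is exactly this claim: establishing that the Jacobian of a family of morphisms at a common --- and possibly singular --- fixed point depends algebraically on the parameter. The care needed is in working with the Zariski cotangent space $\mm_{x_{0}}/\mm_{x_{0}}^{2}$ rather than with local coordinates (which need not exist at a singular $x_{0}$), and in checking that the morphisms obtained on the finite levels are independent of the auxiliary choices of generators and compatible across the admissible filtrations, so that they genuinely assemble into a morphism of ind-varieties. Once the claim is in hand, the reduction above yields the theorem.
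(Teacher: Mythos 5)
Your proposal is correct and takes essentially the same approach as the paper: reduce via admissible filtrations to families of morphisms $\GGG_{m}\times\VVV_{k}\to\VVV_{\ell}$ fixing $v_{0}$, then show the differential at the fixed point depends regularly on the parameter. Your key claim is precisely a family version of the paper's Lemma~\ref{mor-to-tangent.lem}, and your explicit computation $\beta^{*}f_{j}=\sum_{i}a_{ji}x_{i}$ with matrix entries $a_{ji}(z,x_{0})$ re-derives inline what the paper obtains by factoring the family through the ind-variety $\Mor_{0}(\VVV_{k},\VVV_{\ell})$ and using the linear quotient $\mm_{x_{0}}\otimes V\to\mm_{x_{0}}/\mm_{x_{0}}^{2}\otimes V$.
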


If  $\GGG$ acts on $\VVV$ and if $v \in \VVV$ we denote by $\mu_{v}\colon \GGG \to \VVV$ the \itind{orbit map} $ g \mapsto  g v$. The next lemma is crucial.

\begin{lemma}
Assume that $\GGG$ is connected. If $(d\mu_{v})_{e}\colon \Lie\GGG \to T_{v}\VVV$ is the zero map, then $\mu_{v}$ is constant, i.e. $v$ is a fixed point of $\GGG$ (Lemma~\ref{orbitmap.lem}).
\end{lemma}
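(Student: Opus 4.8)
The plan is to reduce the statement to a classical fact about morphisms of algebraic varieties in characteristic zero — that a morphism out of a connected variety whose differential vanishes at every point is constant — and to deduce this everywhere-vanishing of $d\mu_v$ from the single hypothesis $(d\mu_v)_e=0$ by exploiting the equivariance of the orbit map under translations.

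First I would record the equivariance of $\mu_v$. For $h\in\GGG$ write $L_h\colon\GGG\to\GGG$, $g\mapsto hg$, for left translation and $\lambda_h\colon\VVV\to\VVV$, $x\mapsto hx$, for the action of $h$; both are isomorphisms of ind-varieties, with inverses $L_{h^{-1}}$ and $\lambda_{h^{-1}}$. The identity $\mu_v(hg)=h(gv)=\lambda_h(\mu_v(g))$ reads $\mu_v\circ L_h=\lambda_h\circ\mu_v$. Differentiating at $e$ and using $L_h(e)=h$, $\mu_v(e)=v$ yields
$$(d\mu_v)_h\circ (dL_h)_e=(d\lambda_h)_v\circ (d\mu_v)_e.$$
Since $(dL_h)_e\colon T_e\GGG\to T_h\GGG$ and $(d\lambda_h)_v\colon T_v\VVV\to T_{hv}\VVV$ are isomorphisms, the hypothesis $(d\mu_v)_e=0$ forces $(d\mu_v)_h=0$ for every $h\in\GGG$. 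Thus the differential of $\mu_v$ vanishes identically.

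Next I would pass to a suitable admissible filtration of $\GGG$. Since $\GGG$ is a connected ind-group it is curve-connected (Remark~\ref{irreducible-curve-connected.rem}), so by Proposition~\ref{curve-connected.prop} there is an admissible filtration $\GGG=\bigcup_k\GGG_k$ by irreducible varieties; discarding an initial segment I may assume $e\in\GGG_k$ for all $k$. By the definition of a morphism of ind-varieties, each restriction $\mu_v|_{\GGG_k}$ factors through a morphism of varieties $\GGG_k\to\VVV_m$ for a suitable $m$. The closed immersion $\GGG_k\hookrightarrow\GGG$ induces an injection $T_h\GGG_k\hookrightarrow T_h\GGG$ compatible with differentials, so $d(\mu_v|_{\GGG_k})_h$ is the restriction of $(d\mu_v)_h=0$; hence the differential of the morphism $\mu_v|_{\GGG_k}$ vanishes at every point of the irreducible variety $\GGG_k$.

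Finally I would invoke the finite-dimensional fact: if $f\colon Z\to W$ is a morphism of varieties over an algebraically closed field of characteristic zero with $Z$ irreducible and $df_z=0$ for all $z$, then $f$ is constant. Indeed, after replacing $W$ by $\overline{f(Z)}$ and applying generic smoothness, $df$ would be surjective onto the tangent space at a smooth point of the image on a dense open subset, which is impossible unless that image is a single point. Applying this to each $\mu_v|_{\GGG_k}$ shows it is constant, with value $\mu_v(e)=v$; therefore $gv=v$ for all $g\in\bigcup_k\GGG_k=\GGG$, i.e. $v$ is a fixed point. The only substantive ingredient is the characteristic-zero constancy criterion — which genuinely fails in positive characteristic, as the Frobenius map $x\mapsto x^p$ shows — while everything else is a formal consequence of the ind-group structure and the equivariance of the orbit map; so that constancy step is where I expect the essential content of the argument to lie.
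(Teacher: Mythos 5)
Your proof is correct and takes essentially the same route as the paper: both arguments rest on the left-translation equivariance of the orbit map together with the characteristic-zero fact (generic smoothness) that a morphism of an irreducible variety with identically vanishing differential is constant, applied on an admissible filtration of $\GGG$ by irreducible varieties containing $e$. The only difference is presentational — the paper argues by contradiction, transporting a nonzero differential $(d\mu_v)_g$ back to $e$ via $(d\lambda_{g^{-1}})$, whereas you propagate the vanishing at $e$ outward to every point; these are the same argument read in opposite directions.
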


There are a number of important consequences.

\begin{proposition}
Let $\phi,\psi \colon \GGG \to \HHH$ be two homomorphisms of ind-groups. If $\GGG$ is connected and 
$d\phi_e=d\psi_e$, then  $\phi=\psi$.
In particular, $\phi$ is trivial if and only if $d\phi_e$
is trivial (Proposition~\ref{phi-dphi.prop}).
\end{proposition}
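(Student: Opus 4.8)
The plan is to reduce the statement to the crucial Lemma~\ref{orbitmap.lem} by means of a standard ``difference'' trick. Given the two homomorphisms $\phi,\psi\colon\GGG\to\HHH$, I would first define an action of $\GGG$ on the ind-variety $\HHH$ by the rule $g\cdot h := \phi(g)\,h\,\psi(g)^{-1}$. Since $\phi$ and $\psi$ are morphisms and since multiplication $m$ and inversion $\iota$ on $\HHH$ are morphisms, the map $\GGG\times\HHH\to\HHH$ is a morphism of ind-varieties. The action axioms are then formal: $e\cdot h=\phi(e)h\psi(e)^{-1}=h$ because $\phi(e)=\psi(e)=e_{\HHH}$, and the homomorphism property of $\phi,\psi$ gives $(gg')\cdot h=\phi(g)\phi(g')h\psi(g')^{-1}\psi(g)^{-1}=g\cdot(g'\cdot h)$. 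Hence this is genuinely an action of the ind-group $\GGG$ on $\HHH$ in the sense of Section~\ref{group-actions-orbits.subsec}.

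Next I would examine the orbit map of the neutral element $e_{\HHH}\in\HHH$, namely $\mu_{e_{\HHH}}\colon\GGG\to\HHH$, $g\mapsto \phi(g)\psi(g)^{-1}$, and compute its differential at $e$. Writing $\mu_{e_{\HHH}}=m\circ(\phi,\iota\circ\psi)$ and using the standard identities $dm_{(e,e)}(A,B)=A+B$ and $d\iota_{e}=-\id$ on $\Lie\HHH$ coming from the Lie algebra construction of Section~\ref{Liealgebra.subsec}, the chain rule yields $(d\mu_{e_{\HHH}})_{e}=d\phi_{e}-d\psi_{e}$. By hypothesis $d\phi_{e}=d\psi_{e}$, so $(d\mu_{e_{\HHH}})_{e}$ is the zero map.

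Now the hypotheses of Lemma~\ref{orbitmap.lem} are satisfied: $\GGG$ is connected and the differential of the orbit map $\mu_{e_{\HHH}}$ at $e$ vanishes. The lemma therefore forces $\mu_{e_{\HHH}}$ to be constant, and since $\mu_{e_{\HHH}}(e)=e_{\HHH}$ we conclude $\phi(g)\psi(g)^{-1}=e_{\HHH}$ for all $g$, that is, $\phi=\psi$. For the ``in particular'' clause I would apply this to the trivial homomorphism $\psi\colon g\mapsto e_{\HHH}$, whose differential is zero: then the assumption $d\phi_{e}=0$ is exactly the equality $d\phi_{e}=d\psi_{e}$, so $\phi=\psi$ is trivial; the converse implication is immediate.

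The conceptual content lies entirely in the reduction to the orbit-map lemma, after which everything is routine. The one place I would expect to have to be careful is the differential computation $(d\mu_{e_{\HHH}})_{e}=d\phi_{e}-d\psi_{e}$, since it rests on the compatibility of the tangent functor with products and with inversion for ind-groups; I would therefore make sure that the identities $dm_{(e,e)}(A,B)=A+B$ and $d\iota_{e}=-\id$ are indeed available from the construction of $\Lie\HHH$, and that the chain rule is legitimately applied to the relevant morphisms of ind-varieties.
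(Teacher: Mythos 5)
Your proposal is correct and is essentially the paper's own proof: the paper likewise defines the action $g*h:=\psi(g)\,h\,\phi(g)^{-1}$ (your version merely swaps the roles of $\phi$ and $\psi$), observes that the orbit map of $e\in\HHH$ is $g\mapsto \psi(g)\phi(g)^{-1}$ with differential $d\psi_{e}-d\phi_{e}=0$, and concludes by Lemma~\ref{orbitmap.lem}. Your explicit verification of the chain-rule computation via $dm_{(e,e)}(A,B)=A+B$ and $d\iota_{e}=-\id$ is a correct filling-in of a step the paper asserts without comment.
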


\begin{corollary}
If $\GGG$ is connected, then the canonical homomorphism (of abstract groups) 
$\omega\colon\Aut(\GGG) \to \Aut(\Lie\GGG)$, $\phi\mapsto d\phi_{e}$, is injective (Corollary~\ref{AutG-into-AutLie.cor}).
\end{corollary}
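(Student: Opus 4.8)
The plan is to deduce the corollary directly from the preceding Proposition~\ref{phi-dphi.prop}, which carries all the actual content. First I would verify that $\omega$ is a well-defined homomorphism of abstract groups. Any homomorphism of ind-groups sends the neutral element to the neutral element, so an automorphism $\phi\in\Aut(\GGG)$ fixes $e$, and its differential $d\phi_{e}$ is a well-defined linear endomorphism of $T_{e}\GGG=\Lie\GGG$. Applying the chain rule to $\phi\circ\phi^{-1}=\id_{\GGG}$, together with $d(\id_{\GGG})_{e}=\id_{\Lie\GGG}$, shows that $d\phi_{e}$ is invertible with inverse $d(\phi^{-1})_{e}$; since the differential at $e$ of an ind-group homomorphism is a Lie algebra homomorphism, $d\phi_{e}$ is in fact an automorphism of $\Lie\GGG$. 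The same chain-rule identity $d(\phi\circ\psi)_{e}=d\phi_{e}\circ d\psi_{e}$ shows that $\omega$ is multiplicative, hence a group homomorphism $\Aut(\GGG)\to\Aut(\Lie\GGG)$.

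For injectivity it suffices to prove that $\Ker\omega$ is trivial. So I would take $\phi\in\Aut(\GGG)$ with $\omega(\phi)=d\phi_{e}=\id_{\Lie\GGG}$ and apply Proposition~\ref{phi-dphi.prop} with $\HHH:=\GGG$ to the pair of ind-group homomorphisms $\phi$ and $\psi:=\id_{\GGG}$. One has $d\psi_{e}=\id_{\Lie\GGG}=d\phi_{e}$, and $\GGG$ is connected by hypothesis, so the Proposition forces $\phi=\psi=\id_{\GGG}$. Therefore $\Ker\omega=\{\id_{\GGG}\}$, and $\omega$ is injective.

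There is essentially no obstacle beyond the Proposition itself: the corollary is a formal consequence of the principle that two homomorphisms out of a connected ind-group which agree at the level of differentials must coincide. The only points requiring (routine) care are the bookkeeping verifications that $\omega$ takes values in $\Aut(\Lie\GGG)$ and respects composition, both of which rest on the elementary facts that homomorphisms fix $e$ and that differentiation at $e$ is functorial, i.e. the chain rule holds for morphisms of ind-varieties.
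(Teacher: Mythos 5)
Your proof is correct and follows exactly the route the paper intends: the corollary is an immediate consequence of Proposition~\ref{phi-dphi.prop} applied to $\phi$ and $\psi=\id_{\GGG}$ (equivalently, to any two automorphisms with equal differentials), and your preliminary verification that $\omega$ is a well-defined group homomorphism via the chain rule is the same routine bookkeeping the paper takes for granted.
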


\ps
\subsection{Adjoint representation}
Consider the action of the affine ind-group 
$\GGG$ on itself by conjugation: $ g \mapsto \Int g: h \mapsto  g\cdot h\cdot  g^{-1}$. This defines the \itind{adjoint representation} (Section~\ref{tangent-adjoint.subsec})
$$
\Ad \colon \GGG \to \GL(\Lie\GGG).
$$

\begin{corollary}
Let $\GGG$ be connected and let $h \in \GGG$. Then $h$ belongs to the center of $\GGG$ if and only if $\Ad(h)$ is trivial:
$$
Z(\GGG) = \Ker\Ad\colon\GGG \to \GL(\Lie\GGG)
$$
(Corollary~\ref{center.cor}).
\end{corollary}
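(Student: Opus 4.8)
The plan is to reduce the statement to Proposition~\ref{phi-dphi.prop}, applied to the two ind-group homomorphisms $\Int h$ and $\id_{\GGG}$ from $\GGG$ to itself. First I would record that for a fixed $h\in\GGG$ the conjugation $\Int h\colon\GGG\to\GGG$, $g\mapsto h\,g\,h^{-1}$, is an automorphism of the ind-group $\GGG$: it is visibly a group homomorphism, and it is a morphism of ind-varieties since multiplication and inversion are morphisms. By the definition of the adjoint representation, its differential at $e$ is $\Ad(h)$, that is, $\Ad(h)=d(\Int h)_{e}$.

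With this identification the two inclusions become transparent. To obtain $Z(\GGG)\subseteq\Ker\Ad$, I would note that $h\in Z(\GGG)$ says exactly $\Int h=\id_{\GGG}$; differentiating at $e$ then yields $\Ad(h)=d(\Int h)_{e}=d(\id_{\GGG})_{e}=\id_{\Lie\GGG}$, so $\Ad(h)$ is trivial. For the reverse inclusion I would assume $\Ad(h)$ trivial, so that $d(\Int h)_{e}=\id_{\Lie\GGG}=d(\id_{\GGG})_{e}$. Because $\GGG$ is connected and $\Int h,\id_{\GGG}\colon\GGG\to\GGG$ are homomorphisms of ind-groups sharing the same differential at $e$, Proposition~\ref{phi-dphi.prop} forces $\Int h=\id_{\GGG}$, i.e. $h\,g\,h^{-1}=g$ for all $g\in\GGG$, whence $h\in Z(\GGG)$. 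Combining the two inclusions gives $Z(\GGG)=\Ker\Ad$.

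I expect no genuine obstacle here, since all the weight is carried by Proposition~\ref{phi-dphi.prop}, the rigidity principle that a homomorphism out of a connected ind-group is determined by its differential at the identity; this corollary is merely its specialization to inner automorphisms. The only points I would check carefully are that $\Int h$ is truly a \emph{morphism} of ind-varieties, so that the proposition is applicable, and the bookkeeping identity $\Ad(h)=d(\Int h)_{e}$ relating conjugation to the adjoint representation --- both immediate from the constructions of Section~\ref{tangent-adjoint.subsec}.
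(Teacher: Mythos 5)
Your proof is correct and is precisely the paper's intended argument: the paper derives Corollary~\ref{center.cor} immediately from Proposition~\ref{phi-dphi.prop} applied to the homomorphisms $\Int h$ and $\id_{\GGG}$, using the identity $\Ad(h)=d(\Int h)_{e}$ from the construction of the adjoint representation in Section~\ref{tangent-adjoint.subsec}. Your two side checks (that $\Int h$ is a morphism of ind-varieties and the bookkeeping identity) are exactly the right ones and are immediate, as you say.
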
\idx{center $Z(\GGG)$}

Finally, one has the following important result.
\begin{proposition}
The differential $\ad \colon \Lie\GGG \to \End(\Lie\GGG)$ is given by
$$
\ad(A)(B) = [A,B] \text{ for } A,B \in \Lie\GGG
$$
(Proposition~\ref{adjoint.prop}).
\end{proposition}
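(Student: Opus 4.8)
The plan is to compute the differential $\ad=(d\Ad)_{e}$ directly on the dual numbers and to match the outcome with the definition of the bracket on $\Lie\GGG$ given in Section~\ref{Liealgebra.subsec}. Recall first that a tangent vector $A\in T_{e}\GGG=\Lie\GGG$ lies in some $T_{e}\GGG_{k}$ for an admissible filtration $\GGG=\bigcup_{k}\GGG_{k}$, hence is represented by a $\kk[\eps]/(\eps^{2})$-point of $\GGG_{k}$ reducing to $e$ modulo $\eps$. Since $\Spec\kk[\eps]/(\eps^{2})$ is artinian local, this point, and likewise all the finitely many products and inverses formed below, factor through a common algebraic \emph{variety} $\GGG_{m}$ (not a subgroup, but that is irrelevant), so that every manipulation takes place inside honest algebraic varieties where the classical infinitesimal calculus applies.

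First I would fix $A,B\in\Lie\GGG$ and realize them by points $\alpha\in\GGG(\kk[\eps_{1}])$ and $\beta\in\GGG(\kk[\eps_{2}])$ with $\alpha\equiv e$ modulo $\eps_{1}$ and $\beta\equiv e$ modulo $\eps_{2}$, all computations being carried out over $R=\kk[\eps_{1},\eps_{2}]/(\eps_{1}^{2},\eps_{2}^{2})$. By the definition of $\Ad$ as $g\mapsto d(\Int g)_{e}$ (Section~\ref{tangent-adjoint.subsec}), the conjugate $\alpha\,\beta\,\alpha^{-1}\in\GGG(R)$ represents the tangent vector $\Ad(\alpha)(B)$; and by the definition of $\ad$ as $(d\Ad)_{e}$, together with $\alpha\equiv e$, one has $\Ad(\alpha)=\id+\eps_{1}\,\ad(A)$ and hence $\Ad(\alpha)(B)=B+\eps_{1}\,\ad(A)(B)$. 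Multiplying $\alpha\,\beta\,\alpha^{-1}$ on the right by the point $\beta^{-1}$, which reduces to $e-\eps_{2}B$, cancels the pure $\eps_{2}$-term and leaves
$$
\alpha\,\beta\,\alpha^{-1}\,\beta^{-1}\;\equiv\;e+\eps_{1}\eps_{2}\,\ad(A)(B)\pmod{(\eps_{1}^{2},\eps_{2}^{2})}.
$$

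On the other hand, the group commutator of the two infinitesimal points is the intrinsic description of the bracket: truncating the defining formula for the Lie algebra structure of Section~\ref{Liealgebra.subsec} to second order gives $\alpha\,\beta\,\alpha^{-1}\,\beta^{-1}\equiv e+\eps_{1}\eps_{2}\,[A,B]$. Comparing the two expressions for the $\eps_{1}\eps_{2}$-coefficient yields $\ad(A)(B)=[A,B]$, as claimed. Note that this is coherent with the already established fact that $\ad=(d\Ad)_{e}$ is a homomorphism of Lie algebras, since $\ad([A,B])=[\ad(A),\ad(B)]$ is then exactly the Jacobi identity.

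I expect the main obstacle to be this last matching step, namely identifying the $\eps_{1}\eps_{2}$-term of the group commutator with the bracket $[A,B]$ as it is \emph{actually} defined in Section~\ref{Liealgebra.subsec}. If that definition proceeds through left- or right-invariant vector fields rather than through the commutator, one must insert the standard lemma relating the two descriptions, and here the left/right convention fixes the sign --- the same convention responsible for the anti-homomorphism $d\rho_{e}\colon\Lie\GGG\to\VEC(X)$ appearing earlier in the paper. The rest is bookkeeping: one has to check that the two-parameter reduction modulo $(\eps_{1}^{2},\eps_{2}^{2})$ is legitimate on the ind-group, i.e. that forming $\alpha\,\beta\,\alpha^{-1}\,\beta^{-1}$ through the multiplication and inversion morphisms keeps us inside a single algebraic piece $\GGG_{m}$ on which these truncated expansions hold.
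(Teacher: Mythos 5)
Your argument is correct in outline, but it follows a genuinely different route from the paper. The paper first proves a general fixed-point formula: for any action of an affine ind-group on an affine ind-variety with fixed point $v_{0}$, one has $A\delta_{v_{0}}=-[\xi_{A},\delta]_{v_{0}}$ (Proposition~\ref{rep-in-fixed-point.prop}), whose proof hinges on the non-trivial linearization Lemma~\ref{linearization.lem} (embedding the action into a ``linear'' one on a vector space of countable dimension) plus the elementary Lemma~\ref{linear-VF.lem}; it then specializes to conjugation, writing $\xi_{A}=\lambda_{A}-\rho_{A}$ and using the relations $[\lambda_{A},\lambda_{B}]=\lambda_{[B,A]}$, $[\lambda_{A},\rho_{B}]=0$ of Lemma~\ref{actions-on-G.lem}. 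You instead compute with $R$-points over $\kk[\eps_{1},\eps_{2}]/(\eps_{1}^{2},\eps_{2}^{2})$, which avoids the linearization lemma entirely; your expansion $\alpha\beta\alpha^{-1}\beta^{-1}\equiv e+\eps_{1}\eps_{2}\,\ad(A)(B)$ is right, and the identification of $\alpha\beta\alpha^{-1}$ with $\Ad(\alpha)(B)$ is justified by the paper's definition of $\ad$ via orbit maps (Lemma~\ref{rep-of-G-and-LieG.lem}) together with functoriality of $R$-points (Proposition~\ref{X(R).prop}), since all morphisms involved factor through algebraic pieces. The one place where you correctly sense the real work is the ``standard lemma'' you defer: since the paper defines the bracket through left-invariant continuous derivations (Definition~\ref{Lie-algebra-of-an-affine-ind-group.def}), you must show that the $\eps_{1}\eps_{2}$-term of the group commutator is that bracket, and in the ind-setting this is not a citation but a short proof: using the description $\delta_{A}=(\id\hotimes A)\circ\mu^{*}$ of Remark~\ref{left-invariant-VF.rem}, one checks $\eval_{e}\circ\delta_{A}\circ\delta_{B}=(A\hotimes B)\circ\mu^{*}$ by coassociativity and continuity of $\mu^{*}\colon\OOO(\GGG)\to\OOO(\GGG)\hotimes\OOO(\GGG)$, so that group multiplication of infinitesimal points is convolution of functionals and the commutator expands to $e+\eps_{1}\eps_{2}(A\star B-B\star A)$ with $A\star B-B\star A=[A,B]$ --- in particular your sign is consistent with the paper's left-invariant convention. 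In exchange for this Hopf-algebraic bookkeeping you get a proof that is self-contained at $e$; the paper's longer detour buys the more general formula $A\delta_{v_{0}}=-[\xi_{A},\delta]_{v_{0}}$, which it reuses elsewhere.
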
\idx{$\Ad$@$\ad$}

\begin{corollary}
Let $\GGG$ be a connected ind-group. Then $\GGG$ is commutative if and only if 
$\Lie\GGG$ is commutative (Corollary~\ref{G-commut-LieG.cor}).
\end{corollary}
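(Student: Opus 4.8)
The plan is to route everything through the adjoint representation and reduce both implications to the formula $\ad(A)(B)=[A,B]$. Two ingredients from the preceding sections carry the argument: the description of the differential of $\Ad$, namely $\ad=d\Ad_{e}$ with $\ad(A)(B)=[A,B]$, and the identification of the center $Z(\GGG)=\Ker(\Ad\colon\GGG\to\GL(\Lie\GGG))$, valid for connected $\GGG$. Thus the whole statement becomes a dictionary between the triviality of $\Ad$ and the triviality of $\ad$.

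For the forward direction, suppose $\GGG$ is commutative. Then $\Int g=\id_{\GGG}$ for every $g\in\GGG$, so $\Ad(g)=d(\Int g)_{e}=\id_{\Lie\GGG}$, i.e. $\Ad$ is the constant homomorphism with value $\id$. Differentiating gives $\ad=d\Ad_{e}=0$, and the formula $\ad(A)(B)=[A,B]$ then yields $[A,B]=0$ for all $A,B\in\Lie\GGG$. Hence $\Lie\GGG$ is commutative; note that connectedness is not used here.

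For the reverse direction, suppose $\Lie\GGG$ is commutative. By the same formula, $\ad=d\Ad_{e}=0$. The goal is to upgrade this vanishing of the differential to $\Ad=\id$, after which $\Ker\Ad=\GGG$ and the center description force $Z(\GGG)=\GGG$, so that $\GGG$ is commutative. To perform the upgrade, fix $B\in\Lie\GGG$ and consider the orbit map $\mu_{B}\colon\GGG\to\Lie\GGG$, $g\mapsto\Ad(g)(B)$, for the linear adjoint action on the ind-variety $\Lie\GGG$. Its differential at $e$ sends $A$ to $\bigl(d\Ad_{e}(A)\bigr)(B)=\ad(A)(B)=[A,B]=0$, so $(d\mu_{B})_{e}=0$. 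Since $\GGG$ is connected, the orbit-map lemma forces $\mu_{B}$ to be constant, equal to $\mu_{B}(e)=B$; hence $\Ad(g)(B)=B$ for every $g\in\GGG$. As $B$ was arbitrary, $\Ad(g)=\id$ for all $g$, i.e. $\Ad$ is trivial.

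The main obstacle is the temptation to deduce $\Ad=\id$ from $\ad=0$ directly via the uniqueness result that equal differentials imply equal homomorphisms of ind-groups (for a connected source). That result is stated for homomorphisms $\GGG\to\HHH$ of \emph{ind-groups}, whereas $\GL(\Lie\GGG)$ is not an ind-group once $\Lie\GGG$ is infinite-dimensional, so it does not apply to $\Ad$ as such. The device above sidesteps this by testing one vector $B$ at a time: each orbit map $\mu_{B}$ does land in an honest ind-variety, and the orbit-map lemma is precisely the tool that converts $(d\mu_{B})_{e}=0$ into constancy of $\mu_{B}$. This reduction from the whole representation to individual orbit maps is the only delicate point; everything else is formal.
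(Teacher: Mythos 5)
Your proof is correct and takes essentially the same route as the paper: both directions run through the adjoint representation and the formula $\ad(A)(B)=[A,B]$ of Proposition~\ref{adjoint.prop}, with connectedness upgrading $\ad=0$ to triviality of $\Ad$, and the identification $Z(\GGG)=\Ker\Ad$ (Corollary~\ref{center.cor}), respectively a second application of the uniqueness principle to $\Int(g)$ and $\id_{\GGG}$, finishing the argument. Your one-vector-at-a-time orbit-map device is precisely Lemma~\ref{orbitmap.lem} (equivalently Proposition~\ref{trivial-rep.prop}), and your caveat is apt: the paper's proof cites Proposition~\ref{phi-dphi.prop} for the step $\ad=0\Rightarrow\Ad=\id$, whose stated hypotheses require an ind-group target while $\GL(\Lie\GGG)$ is not one in general, but since that proposition's proof rests on the same orbit-map lemma you invoke, the two arguments coincide in substance.
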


\ps   
\subsection{Large subgroups and modifications of \texorpdfstring{$\kplus$}{kplus}-actions}
Let $X$ be an affine variety with an action
of a linear algebraic group $G$. Denote by $\rho \colon G \to \Aut (X)$ the corresponding homomorphism of ind-groups. 
A morphism $\alpha \colon X \to G$ is called {\it $G$-invariant}\idx{G@$G$-invariant}  if it satisfies $\alpha(g x) = \alpha(x)$ for $g \in G$ and $x \in X$. For every such a $G$-invariant morphism $\alpha$
we can define an automorphism $\rho_{\alpha}\in \Aut(X)$  in the following way (see Section~\ref{large-subgroups.subsec}): 
$$
\rho_{\alpha}(x):=\alpha(x) x \text{ \ for \ }x \in X.
$$ 
Note that the $G$-invariant morphisms $\alpha$ form a group, namely
$$
\Mor(X,G)^G = G ( \OOO (X) )^G = G ( \OOO (X)^G ).
$$
One shows the following (see Proposition~\ref{Large-subgroups.prop} and Proposition~\ref{tilde-rho-injective.prop}).
\begin{proposition} 
\be
\item
The map $\tilde\rho\colon G(\OOO(X)^{G}) \to \Aut(X)$, $\alpha\mapsto \rho_{\alpha}$, is a homomorphism of ind-groups.
\item
The ind-group $G(\OOO(X)^{G})$ has the same orbits in $X$ than $G$.
\item
The image of $\tilde\rho$ is contained in the  subgroup 
\begin{multline*}
\Aut_{\rho}(X):= \\ \{\phi\in\Aut(X)\mid \phi(Gx) = Gx \text{ for all } x \in X, \,\phi|_{Gx}=\rho(g_{x}) \text{ for some }g_{x}\in G\}.
\end{multline*}
\item
The subgroup $\Aut_{\rho}(X) \subseteq \Aut(X)$ is closed, and we have the following inclusions
\[
\Aut_{\rho}(X) \subseteq \Autorb(X) \subseteq \Autinv(X) \subseteq  \Aut(X).
\]
(see Definition~\ref{Some-subgroups-of-Aut(X).def})
\item
If $G$ acts faithfully on $X$ and if $\OOO(X)^{G}\neq\kk$,  then the image of $\tilde\rho$ is strictly larger than $\rho(G)$. 
\item
Assume that $\bigcap_{g \in G}G_{g x}=\{e\}$ for all $x$ from a dense subset of $X$, 
then $\tilde\rho$ is injective. This holds in particular for a faithful action of a reductive group $G$ on an irreducible $X$
(Lemma~\ref{NG-for-reductive.lem}).
\ee
\end{proposition}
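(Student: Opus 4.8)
The plan is to treat the six assertions in the order $(1),(3),(2),(6),(5),(4)$, since the group-theoretic identities feed the later, more geometric statements, and to single out the closedness in $(4)$ as the real difficulty. First I would record the computation on which everything rests. For $G$-invariant morphisms $\alpha,\beta\colon X\to G$ and $x\in X$ one has $\rho_\alpha(\rho_\beta(x))=\alpha(\beta(x)\,x)\,\beta(x)\,x=\alpha(x)\,\beta(x)\,x=\rho_{\alpha\beta}(x)$, the middle equality being exactly the $G$-invariance of $\alpha$ evaluated at the point $\beta(x)\,x\in Gx$. Hence $\tilde\rho$ is a homomorphism of abstract groups and each $\rho_\alpha$ is invertible with inverse $\rho_{\alpha^{-1}}$, so $(1)$ reduces to checking that $\tilde\rho$ is a morphism. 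For this I would view $G(\OOO(X)^G)=\Mor(X,G)^G$ as a closed sub-ind-variety of $\Mor(X,G)$ and use the universal property of $\End(X)$: the family $(\alpha,x)\mapsto\alpha(x)\,x$ is the composite of evaluation $G(\OOO(X)^G)\times X\to G\times X$ with the action $G\times X\to X$, hence a morphism, so $\tilde\rho$ lands in $\End(X)$; since every member is an automorphism, Proposition~\ref{fam-end-aut.prop} together with Theorem~\ref{AutX-locally-closed-in-EndX.thm} promotes it to a morphism into $\Aut(X)$. For $(3)$, the same invariance gives $\rho_\alpha(gx)=\alpha(x)\,g\,x$, so $\rho_\alpha|_{Gx}=\rho(\alpha(x))$ and $\rho_\alpha\in\Aut_\rho(X)$ with $g_x=\alpha(x)$. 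Assertion $(2)$ is then immediate: the constant morphisms $\alpha\equiv g$ lie in $G(\OOO(X)^G)$ with $\rho_\alpha=\rho(g)$, so $\rho(G)\subseteq\Image\tilde\rho$, and combined with $\rho_\alpha(x)\in Gx$ this yields $G(\OOO(X)^G)\,x=G\,x$.

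Next I would compute the kernel. Since $\rho_\alpha=\id$ means $\alpha(x)\in G_x$ for all $x$, and $\alpha$ is $G$-invariant, we get $\alpha(x)=\alpha(gx)\in G_{gx}$ for every $g$; hence $\Ker\tilde\rho=\{\alpha:\alpha(x)\in\bigcap_{g\in G}G_{gx}\text{ for all }x\}$. Under the hypothesis of $(6)$ this intersection is trivial on a dense subset, so $\alpha$ vanishes there and therefore identically, giving injectivity; the reductive case is exactly Lemma~\ref{NG-for-reductive.lem}. I would then note that faithfulness already forces the generic ineffectivity kernel to vanish: $x\mapsto\bigcap_g G_{gx}$ is a normal subgroup of $G$ whose generic (minimal) value is contained in $\bigcap_{y}G_y=\{e\}$, so the hypothesis of $(6)$ holds whenever the action is faithful. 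Assertion $(5)$ then follows: when $\OOO(X)^G\neq\kk$ and $\dim G\geq1$ (the only case with content) I would pick a non-constant invariant $f$ and a nontrivial one-parameter subgroup $\lambda\colon\kplus\to G$ or $\kst\to G$, forming the non-constant invariant morphism $\alpha=\lambda\circ f$; by injectivity $\rho_\alpha\notin\tilde\rho(\{\text{constants}\})=\rho(G)$, so $\Image\tilde\rho\supsetneq\rho(G)$.

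There remains $(4)$. The three inclusions are formal from Definition~\ref{Some-subgroups-of-Aut(X).def}: an element of $\Aut_\rho(X)$ preserves every orbit by construction, an orbit-preserving automorphism fixes every invariant function and so lies in $\Autinv(X)$, and the last inclusion is trivial; that $\Autinv(X)$ is closed is clear, being cut out by the closed conditions $f\circ\phi=f$ for $f\in\OOO(X)^G$. The hard part will be the closedness of $\Aut_\rho(X)$ itself, because the datum $g_x$ is determined only modulo $\bigcap_g G_{gx}$ and is a priori not a morphism in $x$, so that $\Aut_\rho(X)$ need not coincide with $\Image\tilde\rho$. The characterization I would work with is that $\phi\in\Aut_\rho(X)$ precisely when $\phi\in\Autorb(X)$ and, for every $x$, the composite orbit map $\phi\circ\mu_x\colon G\to X$ equals $\mu_x\circ L_{g_x}$ for a single $g_x\in G$, i.e.\ the restriction of $\phi$ to $Gx$ is realised by one group element simultaneously at all points of the orbit. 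The plan is to encode this as a closed condition: using the morphism $a\colon G\times X\to X\times X$, $(g,x)\mapsto(x,gx)$, together with the graph of $\phi$, the requirement of a single element for the whole orbit becomes an agreement of constructible sets, which I would show is closed by Chevalley's theorem and the properties of families from Section~\ref{fam-morph.sec}. I expect this encoding, rather than any of the identities above, to be the crux of the proof.
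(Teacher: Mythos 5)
Your handling of (1)--(3) and of the injectivity statement in (6) agrees in substance with the paper, but the bridge you build from (6) to (5) is a genuine gap. You claim that faithfulness alone forces $N_{G}(x):=\bigcap_{g\in G}G_{gx}$ to be trivial for $x$ in a dense subset, on the grounds that the ``generic (minimal) value'' of this normal subgroup is contained in $\bigcap_{y\in X}G_{y}=\{e\}$. This is false: $N_{G}(x)$ is indeed a closed normal subgroup of $G$ for each $x$, but it varies with $x$, and faithfulness only makes the intersection over \emph{all} $x$ trivial, not any individual $N_{G}(x)$. The paper's own example, given right after Lemma~\ref{NG-for-reductive.lem}, refutes your claim: the faithful action of $U=(\kk^{2},+)$ on $\Atwo$ by $(a,b)\colon (x,y)\mapsto (x+ay+b,\,y)$ has $N_{U}((x_{0},y_{0}))=\{(a,-ay_{0})\mid a\in\kk\}\neq\{e\}$ at \emph{every} point, and $\tilde\rho$ is not injective there, since $\alpha(y)=(a(y),-ya(y))$ lies in the kernel for every polynomial $a$. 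Hence injectivity is simply unavailable under the hypotheses of (5), and your final inference ``by injectivity $\rho_{\alpha}\notin\rho(G)$'' collapses. The paper proves (5) without any injectivity, by contraposition: if the image of $\tilde\rho$ were equal to $\rho(G)$, then every $G$-invariant morphism $\alpha\colon X\to G$ would have to be constant, and this forces $\OOO(X)^{G}=\kk$. Note that in the example above the nonconstant invariant $\alpha=(y,0)$ does give $\rho_{\alpha}=(x+y^{2},y)\notin\rho(U)$, even though other nonconstant choices lie in the kernel --- so the choice of $\alpha$ matters, which is exactly the subtlety your injectivity shortcut was hiding.

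For the closedness in (4), which you correctly identify as the crux, your Chevalley encoding stops short of a proof: an agreement of constructible sets is a priori only constructible, and nothing in your sketch explains why the locus is closed. The paper's route dissolves your worry that $g_{x}$ need not depend algebraically on $x$ by imposing the condition one orbit closure at a time: for each $x$, restriction gives a homomorphism of ind-groups $\gamma_{x}\colon \Autorb(X)\to \Aut(\overline{Gx})$ (Proposition~\ref{Aut(X,Y).prop}), the image $\rho_{x}(G)$ of the algebraic group $G$ in $\Aut(\overline{Gx})$ is \emph{closed} because the image of an algebraic group under a homomorphism into an ind-group is a closed algebraic subgroup (Proposition~\ref{Hom-G-to-ind-group.prop}), and then $\Aut_{\rho}(X)=\bigcap_{x\in X}\gamma_{x}^{-1}(\rho_{x}(G))$ is closed in $\Autorb(X)$, which is itself closed by Proposition~\ref{Autorb-is-closed.prop}. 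The closedness of $\rho_{x}(G)$ is precisely the key lemma your sketch is missing. Finally, a small repair in (1): Proposition~\ref{fam-end-aut.prop} assumes $X$ irreducible and a \emph{variety} as parameter space, so it does not apply as you quote it; it is also unnecessary, since $\rho_{\alpha}\circ\rho_{\alpha^{-1}}=\id_{X}$ shows directly that $\alpha\mapsto(\rho_{\alpha},\rho_{\alpha^{-1}})$ is a morphism into $\AAA(X)\simeq\Aut(X)$, which is how the paper argues.
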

For $G=\kplus$, we get an injection $\OOO(X)^{\kplus} \into \Aut(X)$ of the invariant ring considered as an additive ind-group. In particular, every invariant $f\in\OOO(X)^{\kplus}$ defines a $\kplus$-action $\rho_{f}\colon \kplus = \kk f \to \Aut(X)$, often called a {\it modification} of the action $\rho$ (Section~\ref{modification.subsec}).

\begin{lemma}
The modification $\rho_{f}$ commutes with $\rho$, and the $\rho_{f}$-orbits are contained in the $\rho$-orbits. If $X_{f} := \{ x \in X \mid f(x) \neq 0 \}$ is dense in $X$, then both actions have the same invariants. Moreover, the fixed point sets of the two actions are related by 
$$
X^{\rho_{f}}= X^{\rho}\cup\{f=0\}
$$
(Lemma~\ref{modification.lem}).
\end{lemma}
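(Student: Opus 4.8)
The plan is to derive all four assertions from one explicit formula. Unwinding the definition of $\tilde\rho$ for the group $\kplus$ and the invariant morphism $s f\colon X\to\kplus$, $x\mapsto s f(x)$, and abbreviating the original action by $t\cdot x:=\rho(t)(x)$, one obtains
$$
\rho_{f}(s)(x) = \rho\bigl(s f(x)\bigr)(x) = \bigl(s f(x)\bigr)\cdot x \qquad(s\in\kplus,\ x\in X).
$$
Throughout I would use only two structural facts: additivity, $s\cdot(t\cdot x)=(s+t)\cdot x$, and the $\kplus$-invariance of $f$, i.e.\ $f(t\cdot x)=f(x)$. That $\rho_{f}$ is genuinely a $\kplus$-action is already granted by the preceding proposition.

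First I would check commutativity by computing both composites. Using invariance of $f$ and then additivity, $\rho_{f}(s)\bigl(\rho(t)(x)\bigr)=\bigl(s f(t\cdot x)\bigr)\cdot(t\cdot x)=\bigl(sf(x)+t\bigr)\cdot x$, whereas $\rho(t)\bigl(\rho_{f}(s)(x)\bigr)=t\cdot\bigl((sf(x))\cdot x\bigr)=\bigl(t+sf(x)\bigr)\cdot x$; the two agree. Next, the $\rho_{f}$-orbit of $x$ is $\{(sf(x))\cdot x\mid s\in\kplus\}$, which is contained in the $\rho$-orbit $\{t\cdot x\mid t\in\kplus\}$; here the decisive dichotomy appears, since this orbit equals the whole $\rho$-orbit when $f(x)\neq 0$ and collapses to $\{x\}$ when $f(x)=0$.

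The fixed-point identity is then immediate from this dichotomy: $x\in X^{\rho_{f}}$ means $(sf(x))\cdot x=x$ for all $s$, which holds automatically if $f(x)=0$, and which, since $sf(x)$ then runs through all of $\kk$, is equivalent to $t\cdot x=x$ for all $t$, i.e.\ $x\in X^{\rho}$, if $f(x)\neq 0$; this gives exactly $X^{\rho_{f}}=X^{\rho}\cup\{f=0\}$. Likewise the inclusion $\OOO(X)^{\rho}\subseteq\OOO(X)^{\rho_{f}}$ follows with no hypothesis from the orbit containment.

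The only step that genuinely uses the density of $X_{f}$, and the one I would treat most carefully, is the reverse inclusion $\OOO(X)^{\rho_{f}}\subseteq\OOO(X)^{\rho}$. For $h\in\OOO(X)^{\rho_{f}}$ and $x\in X_{f}$ the $\rho_{f}$-orbit of $x$ is the full $\rho$-orbit, so $h(t\cdot x)=h(x)$ for all $t\in\kk$ and all $x\in X_{f}$. Fixing $t$, the regular function $x\mapsto h(t\cdot x)-h(x)$ then vanishes on the dense subset $X_{f}$ and hence on all of $X$; letting $t$ vary shows $h\in\OOO(X)^{\rho}$. The only point needing attention is that $x\mapsto h(t\cdot x)$ is regular, which holds because $\rho$ is an action by the morphism $\kplus\times X\to X$. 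I do not expect any serious obstacle beyond this density argument; the heart of the lemma is the elementary computation packaged in the displayed formula.
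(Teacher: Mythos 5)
Your proof is correct and fills in exactly the routine verification the paper has in mind: the paper states this lemma without proof (it is introduced by ``The following lemma is clear''), and your explicit formula $\rho_{f}(s)(x)=\rho\bigl(sf(x)\bigr)(x)$, combined with the $\kplus$-invariance of $f$, the dichotomy $f(x)\neq 0$ versus $f(x)=0$ for orbits and fixed points, and the density argument for the reverse inclusion of invariants, is precisely the intended argument. In particular your careful treatment of the only nontrivial step, $\OOO(X)^{\rho_{f}}\subseteq\OOO(X)^{\rho}$ via vanishing of the regular function $x\mapsto h(\rho(t)(x))-h(x)$ on the dense set $X_{f}$, is sound and uses the right hypothesis in the right place.
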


\ps
\subsection{Bijective homomorphisms}
An important result in the theory of algebraic groups in characteristic zero  is that bijective homomorphisms are isomorphisms. This does not carry over to ind-groups. 
\begin{proposition}
Denote by $\CC \langle x,y \rangle$ the free associative algebra in two generators.
The canonical map $\Aut(\CC \langle x,y \rangle) \to \Aut(\CC[x,y])$ is a bijective homomorphism of ind-groups, but not an isomorphism. More precisely, the induced homomorphism $\Lie\Aut(\CC\langle x,y\rangle) \to \Lie \Aut(\CC[x,y])$ is surjective with a nontrivial kernel (Proposition~\ref{diff-not-bijective.prop}).
\end{proposition}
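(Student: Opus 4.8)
The plan is to prove the three parts—that $\pi$ is a homomorphism of ind-groups, that it is bijective, and that it fails to be an isomorphism because $d\pi_e$ has nonzero kernel—in that order, with the last carrying all the weight. I record a derivation $D$ of either algebra by the pair $(D(x),D(y))$, so $T_{\id}\End(\CC\langle x,y\rangle)$ is the space of pairs of noncommutative polynomials, $T_{\id}\End(\CC[x,y])=\VEC(\Atwo)$, and $d\pi_e$ is induced by abelianization $(F,G)\mapsto(\overline F,\overline G)$. For the formal points: the commutator ideal $\cc$ (generated by $xy-yx$) is the kernel of abelianization, hence characteristic, so every automorphism of $\CC\langle x,y\rangle$ descends to $\CC[x,y]$, giving $\pi$; since abelianization does not raise degree, $\pi$ carries $\Aut_{\le N}(\CC\langle x,y\rangle)$ into $\Aut_{\le N}(\CC[x,y])$ and is a morphism there, so it is a homomorphism of ind-groups. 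Bijectivity I take from the theorem of \name{Czerniakiewicz} and \name{Makar-Limanov}: every automorphism of $\CC\langle x,y\rangle$ is tame and $\pi$ is an isomorphism of abstract groups. As an isomorphism of ind-groups induces one of Lie algebras, it now suffices to show $d\pi_e$ is surjective with nonzero kernel.

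For surjectivity I would lift generators. Differentiating the identity that an automorphism of $\Atwo$ has constant Jacobian gives $\Lie\Aut(\CC[x,y])=\{f\partial_x+g\partial_y:\partial_x f+\partial_y g\in\CC\}$. Each elementary one-parameter subgroup $x\mapsto x+tp(y),\,y\mapsto y$ and $x\mapsto x,\,y\mapsto y+tq(x)$, and each affine one, lifts verbatim to $\Aut(\CC\langle x,y\rangle)$, so the derivations $p(y)\partial_x$, $q(x)\partial_y$ and all affine ones lie in the image of $d\pi_e$; as this image is a Lie subalgebra, a direct computation (the Poisson relations $\{x^a,y^b\}=ab\,x^{a-1}y^{b-1}$ show these generate the Hamiltonian fields, and the Euler field supplies the divergence) shows it is all of $\Lie\Aut(\CC[x,y])$.

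The kernel is produced by a bracket that behaves differently upstairs and downstairs. In $\Lie\Aut(\CC\langle x,y\rangle)$ put $P=(y^2,0)$, $Q=(0,x^2)$ (elementary differentials) and $R=[L,P]=(xy+yx,-y^2)$, $S=[L',Q]=(-x^2,xy+yx)$, where $L=(0,x)$ and $L'=(y,0)$ are linear; all four lie in the Lie algebra generated by the lifted one-parameter subgroups. A short calculation gives
\[
[P,Q]=(-(x^2y+yx^2),\,xy^2+y^2x),\qquad
[R,S]=(-x^2y-4xyx-yx^2,\,xy^2+4yxy+y^2x),
\]
hence
\[
[R,S]-3[P,Q]=2\,([x,[x,y]],\,[y,[x,y]])=-2\,\ad_{[x,y]}.
\]
On the other hand $\overline{[P,Q]}=(-2x^2y,2xy^2)$ and $\overline{[R,S]}=(-6x^2y,6xy^2)=3\,\overline{[P,Q]}$, so $d\pi_e([R,S]-3[P,Q])=0$. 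Thus the inner derivation $\ad_{[x,y]}$ lies in $\Lie\Aut(\CC\langle x,y\rangle)$, is nonzero (as $[x,y]$ is not central) and abelianizes to $0$ (its values are commutators); it is the desired nonzero element of $\ker d\pi_e$.

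The main obstacle is precisely finding such an element—a commutator-valued derivation genuinely in $\Lie\Aut(\CC\langle x,y\rangle)$—and seeing where it hides. It cannot come from a single one-parameter subgroup: a nonzero inner derivation $\ad_\ell$ with $\ell$ linear is anti-invariant under the transpose anti-automorphism $\iota$ (reversal of words), whereas every automorphism, being tame, commutes with $\iota$, so $\Lie\Aut(\CC\langle x,y\rangle)$ consists of $\iota$-symmetric derivations. This is why $\ker d\pi_e$ vanishes in degree $2$ and the first kernel element, the $\iota$-symmetric $\ad_{[x,y]}=[\ad_x,\ad_y]$, surfaces only in degree $3$, as the failure of the relation $[\overline R,\overline S]=3[\overline P,\overline Q]$ to lift. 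Once one knows to work in the degree-$3$ graded piece, the whole proof reduces to the two bracket computations displayed above, which I would carry out there where everything is finite-dimensional.
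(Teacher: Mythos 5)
Your proposal is correct and follows essentially the same route as the paper: the heart of both arguments is to exhibit the inner derivation $\ad_{[x,y]}$ as an iterated Lie bracket of the liftable elementary derivations $y^2\partial_x,\,x^2\partial_y,\,y\partial_x,\,x\partial_y$ (your identity $[R,S]-3[P,Q]=-2\,\ad_{[x,y]}$ checks out, and agrees with the paper's displayed formula up to bracket-sign conventions), so that it lies in $\Lie\Aut(\CC\langle x,y\rangle)$ while visibly abelianizing to zero. The only difference is cosmetic: you spell out surjectivity of $d\pi_e$ via Poisson-bracket generation of the divergence-zero fields plus the Euler field, which the paper obtains from its earlier result that $\Lie\Aut(\AA^n)$ is generated by the Lie algebras of the affine and de Jonqui\`eres subgroups.
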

\noindent
(This example is already mentioned in \cite[Section~11, last paragraph]{BeWi2000Automorphisms-and-}.)
If we make stronger assumptions for the target group, then this cannot happen.

\begin{proposition}
Let $\phi\colon \GGG \to \HHH$ be a bijective homomorphism of ind-groups. Assume that $\GGG$ is connected and $\HHH$ strongly smooth in $e$. Then $\phi$ is an isomorphism (Proposition~\ref{bijective-Hom.prop}).
\end{proposition}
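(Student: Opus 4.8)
The plan is to show that the set-theoretic inverse $\psi := \phi^{-1}$, which is automatically a homomorphism of abstract groups, is a morphism of ind-varieties; this is exactly what it means for $\phi$ to be an isomorphism of ind-groups. By the definition of a morphism of ind-varieties, it suffices to fix an admissible filtration $\HHH = \bigcup_m \HHH_m$ and to prove, for every $m$, that $\psi|_{\HHH_m}$ is a morphism landing in some member $\GGG_k$ of a filtration of $\GGG$. I will take $\{\HHH_m\}$ to be a filtration witnessing strong smoothness, so that each $\HHH_m$ is a smooth variety. Here I first use the hypotheses: since $\phi$ is surjective and $\GGG$ is connected, $\HHH = \phi(\GGG)$ is connected, and then, by homogeneity (translations are automorphisms of the ind-variety $\HHH$), smoothness at $e$ propagates to every point of $\HHH$, which justifies working with smooth pieces.

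The heart of the argument is to show that $Z^{(m)} := \phi^{-1}(\HHH_m)$ is a single finite-dimensional variety, contained in one member $\GGG_k$ of an admissible filtration $\GGG = \bigcup_k \GGG_k$. Writing $Z_k := Z^{(m)} \cap \GGG_k$, each $Z_k$ is closed in $\GGG_k$, the restriction $\phi|_{Z_k}$ is injective, and the images $\phi(Z_k)$ form an increasing sequence of constructible subsets (Chevalley) whose union is $\HHH_m$ (by surjectivity of $\phi$). Since in characteristic zero an injective morphism of varieties satisfies $\dim Z_k = \dim \overline{\phi(Z_k)} \le \dim \HHH_m$, all these dimensions are bounded. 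A Baire-category argument on the Noetherian space $\HHH_m$, combined with a descending induction on dimension applied to the constructible complements, produces a single index $k$ with $\phi(Z_k) = \HHH_m$. Injectivity of $\phi$ then forces $Z^{(m)} = Z_k$: any point of $Z^{(m)}$ has the same image as some point of $Z_k$, hence coincides with it. Thus $Z^{(m)} = Z_k$ is a closed subvariety of $\GGG_k$, in particular finite-dimensional.

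It remains to invert at the finite level. The restriction $\phi|_{Z^{(m)}} \colon Z^{(m)} \to \HHH_m$ is a bijective morphism onto the smooth, hence normal, variety $\HHH_m$. Matching up irreducible components (which for the smooth target are its connected components) reduces us to a bijective morphism onto an irreducible normal variety; in characteristic zero an injective dominant morphism between irreducible varieties is birational, and Zariski's Main Theorem then makes a bijective birational morphism onto a normal variety an isomorphism. Consequently $(\phi|_{Z^{(m)}})^{-1} = \psi|_{\HHH_m} \colon \HHH_m \to Z^{(m)} \subseteq \GGG_k$ is a morphism. As $m$ was arbitrary, $\psi$ is a morphism of ind-varieties and $\phi$ is an isomorphism.

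The step I expect to be the main obstacle is the finite-dimensional reduction, namely proving that $\phi^{-1}(\HHH_m)$ is contained in a single $\GGG_k$: preimages under morphisms of ind-varieties need not be finite-dimensional in general, and it is precisely here that bijectivity must be exploited twice — surjectivity to feed the Chevalley and dimension bounds, and injectivity to identify $Z^{(m)}$ with the single piece $Z_k$. The preceding proposition shows that without smoothness of the target the analogous conclusion fails already at the level of Lie algebras, so strong smoothness is genuinely used, and it enters in the concluding Zariski's-Main-Theorem step.
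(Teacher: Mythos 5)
Your overall route is the paper's own: the statement is deduced there from Proposition~\ref{bijective-morphism.prop}, whose proof does exactly what you outline --- show that the preimages of the filtration pieces of $\HHH$ are algebraic subsets of $\GGG$ (so that they refilter $\GGG$), invert piecewise via Zariski's Main Theorem on normal pieces, and glue. But two of your steps are not sound as written. First, the finite-dimensional reduction: there is no ``Baire-category argument on a Noetherian space''. Over a countable algebraically closed field every variety is a countable union of finite closed subsets (the paper remarks that $\AA^1$ can be filtered by finite sets), so a Noetherian space can perfectly well be a countable union of nowhere dense constructible subsets. The fact you need is Lemma~\ref{constructible.lem}, which is valid only for \emph{uncountable} $\kk$ --- this is precisely why Proposition~\ref{bijective-morphism.prop} carries that hypothesis, the remaining cases being treated by base field extension (Section~\ref{base-field-extension.subsec}). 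So your argument at best covers uncountable fields, and note that the reduction to that case is itself not free, since by Proposition~\ref{fieldextension.prop}(6) surjectivity of $\phi$ need not ascend to $\phi_{\KK}$ when $\kk$ is countable. (Your dimension bound via injectivity is harmless but superfluous: once the covering lemma applies, $\HHH_m\subseteq\phi(\GGG_k)$ for a single $k$ and injectivity immediately gives $\phi^{-1}(\HHH_m)\subseteq\GGG_k$, which is how Lemma~\ref{bijective-morphisms.lem} proceeds.)

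Second, and more seriously, your concluding inversion fails as stated. A bijective morphism onto an irreducible smooth variety need not be an isomorphism when the source is reducible: $(\AA^1\setminus\{0\})\sqcup\{\ast\}\to\AA^1$, with $\ast\mapsto 0$, is bijective but not even a homeomorphism. Your phrase ``matching up irreducible components'' presumes that the preimage of each component of $\HHH_m$ is irreducible, which is exactly what can go wrong; Zariski's Main Theorem only produces \emph{one} distinguished connected component of the source on which $\phi$ is an open immersion (this is the unnamed lemma following Proposition~\ref{bijective-morphism.prop}). The paper excludes spurious components by using the connectedness of the source a second time: the distinguished components $\VVV_k^{\circ}$ form an open and closed subset, hence all of the connected ind-group, after which Lemma~\ref{crit-iso.lem} (again resting on Lemma~\ref{bijective-morphisms.lem}) finishes the proof. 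You invoke connectedness of $\GGG$ only to conclude that $\HHH$ is connected, which does not suffice. A smaller gap of the same flavour: strong smoothness in $e$ gives a smooth admissible filtration only on an open \emph{neighborhood} of $e$; translating produces such filtrations near every point, but not one global smooth filtration of $\HHH$ (finite unions of translated smooth pieces need not be smooth). The paper circumvents this by proving the local statement, Proposition~\ref{connected-and-smooth-gives-iso.prop}, and then globalizing; your choice of the filtration $\{\HHH_m\}$ needs the same maneuver, exploiting that being a morphism can be checked locally on each piece.
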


\ps
\subsection{Nested ind-groups}
An ind-group $\GGG$ is \itind{nested} if $\GGG$ has an admissible filtration $\GGG = \bigcup_{k}\GGG_{k}$ consisting of closed algebraic subgroups $\GGG_{k}$.
The following result generalizes \cite[Remark 2.8]{KoPeZa2016On-Automorphism-Gr}, see Lemma~\ref{injective-homomorphism-into-nested.lem}.

\begin{lemma}
Let $\phi\colon \GGG \to \HHH$ be an injective homomorphism of ind-groups where $\GGG$ is connected and $\HHH$ nested. Then $\GGG$ is nested.
\end{lemma}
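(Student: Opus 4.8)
The plan is to pull the nested filtration of $\HHH$ back along $\phi$ and then pass to identity components, using the connectedness of $\GGG$ to guarantee that these identity components already exhaust $\GGG$. The two inputs I expect to use are the proposition on homomorphisms into algebraic groups (Proposition~\ref{hom-to-algebraic.prop}, part~(1)) and the fact that a connected ind-group admits an admissible filtration by irreducible subvarieties (Proposition~\ref{curve-connected.prop} together with Remark~\ref{irreducible-curve-connected.rem}).

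Write $\HHH = \bigcup_k \HHH_k$ with each $\HHH_k$ a closed algebraic subgroup. First I would set $\GGG_k := \phi^{-1}(\HHH_k)$. Since $\phi$ is a morphism, hence continuous, and $\HHH_k$ is closed in $\HHH$, each $\GGG_k$ is a closed ind-subgroup of $\GGG$; they increase and $\bigcup_k \GGG_k = \phi^{-1}(\HHH) = \GGG$. The restriction $\phi|_{\GGG_k}\colon \GGG_k \to \HHH_k$ is a homomorphism of ind-groups into an algebraic group whose kernel is trivial (as $\phi$ is injective), in particular finite-dimensional; hence Proposition~\ref{hom-to-algebraic.prop}(1) applies and the identity component $\GGG_k^{\circ}$ is a linear algebraic group. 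Put $\GGG'_k := \GGG_k^{\circ}$. By Proposition~\ref{connected-component.prop} these are open and closed subgroups of $\GGG_k$, hence closed in $\GGG$; they are algebraic; and they increase, since $\GGG_k \subseteq \GGG_{k+1}$ forces $\GGG'_k \subseteq \GGG'_{k+1}$ (as $\GGG'_k$ is connected and contains $e$).

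It remains to see that the $\GGG'_k$ exhaust $\GGG$ and form an admissible filtration, and this is where connectedness is essential. Choose an admissible filtration $\GGG = \bigcup_m V_m$ by irreducible (hence connected) varieties, arranged so that $e \in V_m$ for all $m$ (possible after discarding finitely many initial terms, since $e$ lies in some $V_{m_0}$). Because $V_m$ is an algebraic variety and $\phi$ a morphism of ind-varieties, $\phi(V_m) \subseteq \HHH_{k_m}$ for some index $k_m$, so $V_m \subseteq \GGG_{k_m}$; being connected and containing $e$, the set $V_m$ lies in the identity component, i.e.\ $V_m \subseteq \GGG'_{k_m}$. Hence $\GGG = \bigcup_m V_m \subseteq \bigcup_k \GGG'_k \subseteq \GGG$, so $\GGG = \bigcup_k \GGG'_k$. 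Moreover each $\GGG'_k$ is a closed algebraic subvariety of $\GGG$ and therefore, being quasi-compact, is contained in some $V_m$ as a closed subvariety, while conversely $V_m \subseteq \GGG'_{k_m}$ as a closed subvariety. The two filtrations thus interleave by closed immersions, which shows $\{\GGG'_k\}$ is admissible. Consequently $\GGG = \bigcup_k \GGG'_k$ exhibits $\GGG$ as a nested ind-group.

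The main obstacle, and the reason the argument is not purely formal, is that $\GGG_k = \phi^{-1}(\HHH_k)$ need not be algebraic on its own: a priori it can have infinitely many connected components, so that only its identity component $\GGG'_k$ is a genuine algebraic group. Connectedness of $\GGG$ is exactly what rescues this, by letting me trap each irreducible piece $V_m$ inside the identity component $\GGG'_{k_m}$ rather than inside the possibly larger $\GGG_{k_m}$. The remaining care is in the admissibility check, where the interleaving of $\{V_m\}$ and $\{\GGG'_k\}$ by closed immersions must be verified; injectivity of $\phi$ is used only once, to make the kernel of $\phi|_{\GGG_k}$ finite-dimensional so that Proposition~\ref{hom-to-algebraic.prop}(1) is available.
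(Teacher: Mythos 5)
Your argument is essentially the paper's proof: both pull back the nested filtration to $\GGG_k=\phi^{-1}(\HHH_k)$, invoke Proposition~\ref{hom-to-algebraic.prop}(1) (injectivity giving a finite-dimensional kernel) to make the identity components $\GGG_k^{\circ}$ algebraic groups, and use curve-connectedness of the connected group $\GGG$ to trap each element --- the paper via an irreducible curve through $e$ and $g$, you via the admissible filtration by irreducible varieties containing $e$ --- inside some $\GGG_k^{\circ}$. Your explicit interleaving check that $\{\GGG_k^{\circ}\}$ is an admissible filtration (modulo the loose appeal to quasi-compactness, where one should instead use that $\GGG_k^{\circ}$ is an algebraic \emph{subset} of $\GGG$, hence contained in some $V_m$ by definition) is a welcome extra step that the paper leaves implicit.
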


It is clear that any element of a nested ind-group is locally finite. We prove a partial converse of this (see Proposition~\ref{locally-finite-commutative-ind-group.prop}).

\begin{proposition} Assume that $\kk$ is uncountable.
Let $X$ be an affine variety, and  let $\GGG \subseteq \Aut(X)$ be a commutative closed connected subgroup. If every element of $\GGG$ is locally finite, then $\GGG$ is nested.
\end{proposition}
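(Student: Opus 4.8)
The plan is to exploit commutativity together with local finiteness to decompose $\GGG$ into its semisimple and unipotent parts, and then realize $\GGG$ as an increasing union of algebraic subgroups. First I would recall that since every element $g\in\GGG$ is locally finite, it admits a Jordan decomposition $g=g_{s}g_{u}$ with $g_{s}$ semisimple and $g_{u}$ unipotent, both lying in the closure $\lgr$ of the cyclic group generated by $g$; in particular $g_{s},g_{u}\in\GGG$ provided $\GGG$ is closed, which is given. Because $\GGG$ is commutative, I expect the semisimple parts to assemble into a closed subgroup $\Gss\subseteq\GGG$ and the unipotent parts into a closed subgroup $\Gu\subseteq\GGG$, with $\GGG=\Gss\cdot\Gu$. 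Commutativity is what makes $g\mapsto g_{s}$ and $g\mapsto g_{u}$ multiplicative, so that these really are subgroups rather than mere subsets.

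Next I would treat the two factors separately. For the unipotent part $\Gu$, the key point is that a commutative unipotent ind-group in characteristic zero should be isomorphic, via the logarithm/exponential correspondence, to a vector group, i.e. to $\Lie\Gu$ viewed as an additive ind-group. Since $\Lie\Gu\subseteq\VEC(X)$ is a Lie subalgebra of countable dimension, I can exhaust it by finite-dimensional subspaces that are stable under the bracket (here trivially, by commutativity) and under the relevant structure, and exponentiate each to obtain an increasing chain of algebraic unipotent subgroups whose union is $\Gu$. For the semisimple part $\Gss$, every element is semisimple and they all commute, so $\Gss$ is a commutative ind-group of semisimple (hence diagonalizable) locally finite elements; such a group I expect to be an increasing union of algebraic tori (times finite groups), and the uncountability of $\kk$ is what I would use to guarantee that the simultaneous diagonalization and the filtration by algebraic subgroups behaves well—specifically to rule out pathologies where a countable union fails to stabilize inside any single algebraic subgroup.

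The precise mechanism for producing the filtration is where I would invoke the earlier machinery. Using an admissible filtration $\GGG=\bigcup_{k}\GGG_{k}$ by closed algebraic \emph{subsets} (not a priori subgroups), I would consider for each $k$ the algebraic subgroup $H_{k}\subseteq\Aut(X)$ generated by $\GGG_{k}$; the content is to show each $H_{k}$ is genuinely an algebraic subgroup contained in $\GGG$ and of finite dimension. This is precisely the situation controlled by the results on homomorphisms with small kernels and on commutative groups: because $\GGG$ is commutative and connected, its Lie algebra $\Lie\GGG\subseteq\VEC(X)$ is commutative by the corollary relating $\GGG$ commutative to $\Lie\GGG$ commutative, and I can use finite-dimensional $\ad$-stable (here arbitrary) subspaces of $\Lie\GGG$ together with the fixed-point and orbit-map lemmas to integrate them to algebraic subgroups. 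The union of these algebraic subgroups gives the desired nested structure.

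The main obstacle I anticipate is proving that $\Gss$ is nested, and more specifically that a commutative closed connected ind-group consisting of semisimple locally finite elements is an increasing union of algebraic tori. Producing a single algebraic torus through finitely many commuting semisimple elements is routine, but organizing \emph{all} of them into a countable increasing chain of \emph{closed} tori inside $\GGG$—so that the filtration is admissible and each step is algebraic—requires care. This is exactly where the uncountability hypothesis on $\kk$ should be essential: I would use it (via the base-field-extension techniques and the countability of the number of connected components) to ensure that the weight/character decomposition does not spread across infinitely many independent directions in a way that escapes every finite-dimensional algebraic subgroup. Verifying the admissibility of the resulting filtration—that the algebraic subgroups $\GGG_{k}$ endow $\GGG$ with its given ind-structure—is the remaining technical check.
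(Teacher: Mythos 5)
There is a genuine gap, and it sits exactly where you locate it yourself: your plan hinges on showing that $\Gss$ is an increasing union of algebraic subgroups, and you offer no mechanism for this beyond a vague appeal to the uncountability of $\kk$ to control characters. Two additional problems compound this. First, the decomposition $\GGG = \Gss\times\Gu$ with \emph{closed} factors is proved in the paper (Proposition~\ref{ss-unipotent-for-nested.prop}) only for commutative \emph{nested} ind-groups, i.e.\ after the conclusion you are trying to establish; closedness of the semisimple locus is delicate in general (cf.\ Remark~\ref{ss-and-triang-not-wclosed.rem}, where the semisimple elements of $\Aut(\Athree)$ are not even weakly closed), so using this splitting as a first step risks circularity. (Your treatment of $\Gu$, via $\log\Gu$ being a $\kk$-subspace of $\LNV(X)$ by commutativity, is salvageable.) Second, your ``precise mechanism'' $H_k=\langle\GGG_k\rangle$ is unsubstantiated: a bound on the degree of $g\in\GGG_k$ bounds neither the degrees of its powers nor the group $\lgr$, so a priori $\sum_{g\in\GGG_k}\Lie\,\overline{\langle g\rangle}^{\,\circ}$ can be infinite-dimensional, and none of the machinery you cite (small kernels, orbit-map lemmas) shows $H_k$ is algebraic.

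The paper's proof avoids the semisimple/unipotent splitting entirely and rests on three ingredients absent from your plan. (i) For each $g$ set $G_g:=\overline{\langle g^m\mid m\in\ZZ\rangle}^{\,\circ}$ and $L_g:=\Lie G_g$; since $\Lie\GGG$ has countable dimension, countably many $g_i$ suffice to span $\sum_g L_g$, and one forms the connected commutative algebraic subgroups $\GGG_n:=G_{g_1}\cdots G_{g_n}\subseteq\GGG$. The key step converting the Lie-algebra containment $L_g\subseteq\Lie\GGG_n$ into the group containment $G_g\subseteq\GGG_n$ is Corollary~\ref{locally-finite-VF-in-Liealg.cor} (uniqueness of the minimal connected commutative algebraic subgroup realizing a locally finite vector field, via Proposition~\ref{Integration-of-VF.prop}); this ``integration inside $\GGG$'' has no substitute in your sketch. (ii) This only yields $g^m\in\GGG_n$ for some $m\geq 1$; divisibility of the connected algebraic group $\GGG_n$ then writes $g$ as a finite-order element times an element of $\GGG_n$, and Lemma~\ref{subgroups-with-finite-order-elements.lem} (simultaneous diagonalization on a finite generating set of eigenfunctions in $\OOO(X)$, using commutativity and faithfulness) shows the torsion subgroup $F\subset\GGG$ is \emph{countable} --- this torsion/divisibility reduction is where the whole semisimple difficulty you flag gets absorbed, not in any weight-spreading argument. (iii) Uncountability of $\kk$ enters only through Theorem~\ref{closed-algebraic-filtration.thm}, making the filtration by the closed algebraic subgroups $H_n=F_n\cdot\GGG_n$ admissible; curve-connectedness of $\GGG$ then discards the finite parts and gives $\GGG=\bigcup_n\GGG_n$. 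So your proposal mislocates the role of the uncountability hypothesis and, more seriously, leaves the central step unproved.
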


The following result is well known for commutative linear algebraic groups (Proposition~\ref{ss-unipotent-for-nested.prop}).
\begin{proposition}
Let $\GGG$ be a commutative nested ind-group.
\be
\item 
The subsets $\Gss$ of semisimple elements and $\Gu$ of unipotent elements of $\GGG$ are closed subgroups, and $\GGG = \Gss \times \Gu$.
\item 
$\Gu$ is a nested unipotent ind-group isomorphic to the additive group of a vector space of 
countable dimension.
\item 
$(\Gss)^{\circ}$ is a \itind{nested torus}, i.e. a finite dimensional torus or isomorphic to
$(\kk^{*})^{\infty}:=\varinjlim \,(\kst)^{k}$.
\item
There is a closed discrete subgroup $\FFF \subset \Gss$ such that $\Gss = \FFF\cdot (\Gss)^{\circ}$.
\ee
\end{proposition}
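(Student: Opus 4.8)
The plan is to run the classical structure theory for commutative linear algebraic groups along a nested filtration and pass to the limit. Fix an admissible filtration $\GGG = \bigcup_{k}\GGG_{k}$ by closed commutative (linear) algebraic subgroups. For each $k$ the Jordan decomposition gives $\GGG_{k} = (\GGG_{k})_{s}\times(\GGG_{k})_{u}$, where $(\GGG_{k})_{s}$ and $(\GGG_{k})_{u}$ are the closed subgroups of semisimple and of unipotent elements. The essential compatibility is that a closed immersion $\GGG_{m}\into\GGG_{k}$ of commutative algebraic groups respects Jordan decomposition, so that $(\GGG_{m})_{s} = \GGG_{m}\cap(\GGG_{k})_{s}$ and $(\GGG_{m})_{u} = \GGG_{m}\cap(\GGG_{k})_{u}$ for $m\le k$. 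I would then set $\Gss := \bigcup_{k}(\GGG_{k})_{s}$ and $\Gu := \bigcup_{k}(\GGG_{k})_{u}$. The compatibility shows that these are admissible filtrations and that $\Gss\cap\GGG_{m} = (\GGG_{m})_{s}$ and $\Gu\cap\GGG_{m} = (\GGG_{m})_{u}$, whence $\Gss$ and $\Gu$ are closed in $\GGG$. Since the two projections $\GGG_{k}\to(\GGG_{k})_{s}$ and $\GGG_{k}\to(\GGG_{k})_{u}$ are morphisms of algebraic groups compatible with the filtration, the resulting map $\GGG\to\Gss\times\Gu$ is a morphism inverse to multiplication; as multiplication is visibly bijective, this proves the direct product decomposition of part (1).

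For part (2), each $(\GGG_{k})_{u}$ is a commutative unipotent group, hence isomorphic to a vector group $(\kplus)^{n_{k}}$. In characteristic zero every homomorphism of vector groups is linear, so the closed immersions $(\GGG_{k})_{u}\into(\GGG_{k+1})_{u}$ are injective linear maps; equivalently, taking logarithms identifies $\Gu$ with $\bigcup_{k}\Lie((\GGG_{k})_{u})$, the additive group of a direct limit of finite-dimensional vector spaces under linear injections. This is the additive group of a $\kk$-vector space of countable dimension, and it is nested by construction.

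Part (3) is where I expect the real work. The component $(\Gss)^{\circ}$ is closed and connected, and being a connected subgroup of the nested group $\Gss$ it is again nested (by the Lemma on injective homomorphisms into nested ind-groups). Passing to the identity components of a nested filtration, together with the characterization of connectedness by admissible filtrations consisting of connected pieces, then produces an admissible filtration $(\Gss)^{\circ} = \bigcup_{k}\HHH_{k}$ by connected algebraic subgroups (the verification that $\bigcup_k\HHH_k$ is open and closed, hence all of $(\Gss)^{\circ}$, is routine). Each $\HHH_{k}$ is connected, commutative, and consists of semisimple elements, hence is a torus, so $(\Gss)^{\circ}$ is a direct limit of tori under closed immersions. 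Dualizing to character lattices turns each $\HHH_{k}\into\HHH_{k+1}$ into a surjection of finitely generated free abelian groups; such a surjection splits, giving $\HHH_{k+1}\cong\HHH_{k}\times S_{k}$ for a torus $S_{k}$. Consequently $(\Gss)^{\circ}\cong\varinjlim(\kst)^{r_{k}}$, which is a finite-dimensional torus if the ranks $r_{k}$ stabilize and is $(\kk^{*})^{\infty}$ otherwise. The main obstacle is precisely this identification: one must avoid the naive guess $(\Gss)^{\circ} = \bigcup_{k}((\GGG_{k})_{s})^{\circ}$, since a finite-order (e.g.\ root-of-unity) element of $(\GGG_{k})_{s}$ may only become connected to the identity inside a larger $\GGG_{k'}$; routing through the Lemma and the connected filtration sidesteps this.

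Finally, for part (4) I would use divisibility. Each $\kst$ is divisible, hence so is the nested torus $(\Gss)^{\circ}$, and divisible abelian groups are injective as $\ZZ$-modules. Therefore the extension of abstract abelian groups $0\to(\Gss)^{\circ}\to\Gss\to Q\to 0$, with $Q := \Gss/(\Gss)^{\circ}$ countable (by the result that the number of connected components is countable), splits; let $\FFF\cong Q$ be a complement, so that $\Gss = \FFF\cdot(\Gss)^{\circ}$. It remains to see that $\FFF$ is closed and discrete. Since $(\Gss)^{\circ}$ is open and closed, each coset is open and meets $\FFF$ in exactly one point, so every point of $\FFF$ is isolated, i.e.\ $\FFF$ is discrete; and since points are closed in an ind-variety, the complement of $\FFF$ is a union of open sets, so $\FFF$ is closed. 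This completes the plan.
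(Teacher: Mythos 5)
Your proof is correct, but it takes a genuinely different route from the paper's, which is extremely terse: the paper declares (1)--(3) ``easy consequences of the finite dimensional linear case'' and leaves them to the reader (your executions via Lemma~\ref{JD.lem} for the compatibility of Jordan decomposition with the filtration, and via linearity of homomorphisms of vector groups in characteristic zero, are exactly the intended reductions), and it proves (4) by induction along the filtration, using the extension property that a finite subgroup $F'$ of a closed subgroup $D'\subseteq D$ of a diagonalizable group with $D'=F'\cdot(D')^{\circ}$ extends to a finite subgroup $F\subseteq D$ with $D=F\cdot D^{\circ}$ and $F\supseteq F'$; the resulting $\FFF=\bigcup_{k}F_{k}$ is an increasing union of finite groups, hence automatically closed and discrete. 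You instead split the extension $0\to(\Gss)^{\circ}\to\Gss\to Q\to 0$ abstractly, using that a nested torus is divisible and hence an injective $\ZZ$-module, and then verify closedness and discreteness of the complement topologically (both verifications are sound: each coset of the open subgroup $(\Gss)^{\circ}$ meets $\FFF$ once, so $\FFF$ has open complement and meets each algebraic piece in finitely many points). Your route even buys slightly more than the statement asks, namely $\FFF\cap(\Gss)^{\circ}=\{e\}$, i.e.\ an abstract complement --- in the direction of the Question the paper poses right after the proposition, though $\Gss\simeq\FFF\times(\Gss)^{\circ}$ as \emph{ind-groups} would still require checking that the inverse of multiplication is an ind-morphism. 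One correction to your commentary on (3): the obstacle you flag is illusory. By Proposition~\ref{connected-component.prop}(1) the ``naive guess'' is in fact true, $(\Gss)^{\circ}=\bigcup_{k}\bigl((\GGG_{k})_{s}\bigr)^{\circ}$, since the union over all $k$ absorbs any element that becomes connected to $e$ only in a larger filtration piece, and admissibility follows because each connected component of $\Gss\cap\GGG_{k}$ is a coset of $\bigl((\GGG_{k})_{s}\bigr)^{\circ}$ and is swallowed by some $\bigl((\GGG_{\ell})_{s}\bigr)^{\circ}$; so your detour through Lemma~\ref{injective-homomorphism-into-nested.lem} is harmless but unnecessary, while the rest of your (3) --- surjectivity of character restriction for subtori, splitting of surjections of finitely generated free abelian groups, and the cofinality identification with a finite-dimensional torus or $(\kst)^{\infty}$ --- is sound.
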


\ps
\subsection{Normalization}
Let $X$ be an irreducible affine variety and $\eta\colon \tX \to X$ its normalization\idx{normalization}. It is well known that every automorphism of $X$ lifts to an automorphism of $\tX$. Thus we have an injective  homomorphism of groups $\iota\colon\Aut(X) \to \Aut(\tX)$. Moreover, any action of an algebraic group $G$ on $X$ lifts to an action on $\tX$.
More generally, if $\phi\colon X \to X$ is a dominant morphism, then there is a unique ``lift'' $\tphi\colon\tX \to\tX$, i.e. a morphism $\tphi$ such that $\eta\circ\tphi = \phi\circ\eta$, and $\tphi$ is also dominant. This shows that we get an injective map $\iota\colon\Dom(X) \to \Dom(\tX)$. 

The following general result holds (see Proposition~\ref{normalization.prop}).

\begin{proposition}
The map $\iota\colon\Dom(X)\to\Dom(\tX)$ is a closed immersion of ind-semigroups, and $\iota\colon\Aut(X) \to \Aut(\tX)$ is a closed immersion of ind-groups.
\end{proposition}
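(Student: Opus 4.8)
The plan is to prove the statement for $\Dom$ first and to deduce the one for $\Aut$ formally at the end. Write $A=\OOO(X)$ and $\tilde A=\OOO(\tX)$, so that $A\subseteq\tilde A$ is the normalization, a finite extension of domains with common fraction field $K$. On comorphisms a dominant $\phi$ is an injective $\phi^{*}\colon A\to A$, and its unique lift $\tphi$ corresponds to the restriction to $\tilde A$ of the extension of $\phi^{*}$ to $K$; one checks immediately that $\tphi^{*}|_{A}=\phi^{*}$, that $\iota$ is injective, and that by uniqueness of the lift $\widetilde{\phi\circ\psi}=\tphi\circ\tilde\psi$ and $\iota(\id_{X})=\id_{\tX}$. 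Thus $\iota$ is an injective homomorphism of abstract semigroups, and the whole content is that it is a closed immersion of ind-semigroups.

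To make this explicit I would fix generators $x_{1},\dots,x_{n}$ of $A$ and module generators $w_{1}=1,w_{2},\dots,w_{m}$ of $\tilde A$ over $A$, and embed $\tX\hookrightarrow\AA^{N}$ using $x_{1},\dots,x_{n}$ together with $w_{2},\dots,w_{m}$, so that $X\hookrightarrow\AA^{n}$ is cut out by the first $n$ coordinates. With these compatible coordinates three of the four points are immediate. First, $\iota(\Dom(X))=\{\psi\in\Dom(\tX)\mid\psi^{*}(A)\subseteq A\}$, and since $A$ is generated by the $x_{i}$ this is the condition $\psi^{*}(x_{i})\in A$; as $A_{\le\ell}$ is a linear subspace of the finite dimensional space $\tilde A_{\le\ell}$, this is a closed linear condition, so $\iota(\Dom(X))$ meets every $\Dom(\tX)_{\ell}$ in a closed set and is closed. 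Second, on this image the inverse of $\iota$ sends $\psi$ to $\phi$ with $\phi^{*}(x_{i})=\psi^{*}(x_{i})$, i.e. it is the projection of $\End(\tX)$ onto its first $n$ coordinates, hence a morphism. Third, once $\iota$ is known to be a morphism of ind-varieties it is automatically a homomorphism of ind-semigroups.

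The heart of the matter is therefore to show that $\iota$ is a morphism. Using the description of morphisms into $\Dom(\tX)$ by families (Section~\ref{fam-morph.sec}), this amounts to proving that for every variety $Y$ and every family $\Phi\colon Y\times X\to X$ of \emph{dominant} endomorphisms the fibrewise lifts assemble into one family $\tilde\Phi\colon Y\times\tX\to\tX$. Choosing $0\neq c$ in the conductor $\cc$ of $A$ in $\tilde A$ and setting $u_{j}:=c\,w_{j}\in A$, the only non-obvious coordinates of the lift are the $\tphi^{*}(w_{j})$, and applying $\tphi^{*}$ to $c\,w_{j}=u_{j}$ yields $\phi^{*}(c)\,\tphi^{*}(w_{j})=\phi^{*}(u_{j})$ in $\tilde A$. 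Hence over the family the candidate coordinate is the rational function $R_{j}:=\Phi^{*}(u_{j})/\Phi^{*}(c)$ on $Y\times X$, and the claim is that its pullback to $Y\times\tX$ is regular.

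I expect this regularity to be the main obstacle, precisely because $Y$ (for instance a term $\Dom(X)_{d}$ of the tautological filtration) need not be normal, and $R_{j}$ is defined by a division. I would argue in two steps. Over the normalization $Y'\to Y$ the variety $Y'\times\tX$ is normal (a product of normal varieties over the algebraically closed field $\kk$); the integral equation of $w_{j}$ shows $R_{j}$ is integral over $\OOO(Y)\otimes A$ and satisfies $P_{j}^{\Phi^{*}}(R_{j})=0$ with coefficients in $\OOO(Y)\otimes A$; and $R_{j}$ restricts to the regular function $\widetilde{\Phi_{y'}}^{*}(w_{j})\in\tilde A$ on every fibre $\{y'\}\times\tX$ by dominance, so it has no pole on $Y'\times\tX$ and is therefore regular, giving the lift as a family over $Y'$. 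It then remains to descend from $Y'$ to $Y$, i.e. to show that the class of $R_{j}$ in $(\OOO(Y')/\OOO(Y))\otimes\tilde A$ vanishes; this is the delicate point. Here dominance is used essentially: for each $y\in Y$ the relation $\phi^{*}(c)\,R_{j}=\phi^{*}(u_{j})$ with $\phi^{*}(c)\neq0$ in the domain $\tilde A$ pins the value of $R_{j}$ on all points of $\tX$ and on all points of $Y'$ over $y$ to the \emph{same} element of $\tilde A$, and feeding this into the algebra relations $P_{j}^{\Phi^{*}}(R_{j})=0$ forces the lower order terms that could obstruct descent to vanish, since otherwise some $\Phi_{y}$ would fail to be dominant. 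Finally the $\Aut$ statement follows with no further analysis: $\Aut(X)$ is closed in $\Dom(X)$ and $\Aut(\tX)$ in $\Dom(\tX)$ by Theorem~\ref{AutX-locally-closed-in-EndX.thm}, and $\iota(\phi)\iota(\phi^{-1})=\iota(\id)=\id$ gives $\iota(\Aut(X))\subseteq\Aut(\tX)$; hence $\iota|_{\Aut(X)}$ is a composite of closed immersions $\Aut(X)\hookrightarrow\Dom(X)\hookrightarrow\Dom(\tX)$, its image is closed in $\Dom(\tX)$ and a fortiori in $\Aut(\tX)$, and $\iota$ restricts to an isomorphism onto it, so $\iota\colon\Aut(X)\to\Aut(\tX)$ is a closed immersion of ind-groups.
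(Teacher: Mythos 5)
Most of your proof is sound, but the descent step from $Y'$ to $Y$ is a genuine gap, and the mechanism you invoke to close it does not exist. A regular function on $Y'\times\tX$ that is constant on the fibres of the normalization $Y'\times\tX\to Y\times\tX$ and satisfies a monic equation with coefficients in $\OOO(Y)\otimes\tilde A$ need \emph{not} descend: already for $Y=\Spec\kk[t^2,t^3]$ with normalization $Y'=\AA^1$, the function $t\in\OOO(Y')$ is pinned pointwise (the normalization is bijective) and satisfies the monic relation $T^2-t^2=0$ with coefficient $t^2\in\OOO(Y)$, yet $t\notin\OOO(Y)$. So "pinning the value of $R_j$" plus the integral relations $P_j^{\Phi^*}(R_j)=0$ is precisely the situation in which descent can fail, and your sentence claiming that dominance "forces the lower order terms that could obstruct descent to vanish" is not an argument: dominance has been fully spent in defining $R_j$ fibrewise (nonvanishing of $\Phi_y^*(c)$ in the domain $\tilde A$) and plays no role in the putative vanishing of a class in $(\OOO(Y')/\OOO(Y))\otimes\tilde A$.

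The repair is available in the paper itself, and your step over $Y'$ supplies exactly the missing hypothesis for it. Since $R_j\in\OOO(Y')\otimes\tilde A$ is a finite sum $\sum_s h_s\otimes b_s$ and $Y'\to Y$ is surjective, the lifts $\widetilde{\Phi_y}^*(w_j)$, $y\in Y$, all lie in a fixed finite-dimensional subspace $\tilde A_\ell$. Now forget descent and apply the Division Lemma (Lemma~\ref{division-biss.lem}, in the form of Corollary~\ref{bilinear.cor}) directly over $Y$, with $\mu(y)=\Phi_y^*(c)$ and $\rho(y)=\Phi_y^*(u_j)$: uniqueness of the solution holds because $\tilde A$ is a domain and $\Phi_y^*(c)\neq 0$ by dominance, and Cramer's rule — which is insensitive to the non-normality of $Y$ — gives that $y\mapsto\widetilde{\Phi_y}^*(w_j)$ is a morphism into $\tilde A_\ell$. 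With this patch your proof is complete and is genuinely different from the paper's: there, the degree bound on the lift is obtained by noting that restriction gives a \emph{bijective} ind-morphism from $\{\psi\in\Dom(\tX)\mid\psi^*(\OOO(X))\subseteq\OOO(X)\}$ onto $\Dom(X)$ and invoking Lemma~\ref{bijective-morphisms.lem}, which requires $\kk$ uncountable, followed by a base field extension (Proposition~\ref{field-extensions-for-morphisms.prop}) to handle general $\kk$; your conductor-plus-normalization-of-the-parameter-space argument yields the bound for every algebraically closed field at once, eliminating both the uncountability step and the base change. Your remaining steps — closedness of the image, the projection as inverse, and the reduction of the $\Aut$ statement to the $\Dom$ statement via Theorem~\ref{AutX-locally-closed-in-EndX.thm} — agree with the paper.
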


\ps
\subsection{Automorphisms of \texorpdfstring{$\A{n}$}{An}}\label{Aut-An.subsec}
In the third part we study the automorphisms of affine
$n$-spaces $\An$.

The so-called \itind{locally finite} automorphisms turn out to play a central role.
An automorphism $\g$ of $\An$ is called locally finite when there exists a constant $C$ such that all iterates $\g^k$, $k \geq 1$, satisfy $\deg ( \g^k) \leq C$, see Definition~\ref{locfinite.def}. Equivalently, this means that $\g$ belongs to a linear algebraic group included into $\AutA{n}$.

The group $\AutA{n}$ contains two important closed subgroups, the group \itind{$\Aff(n)$} of \itind{affine automorphisms} and the \name{de Jonqui\`eres}\footnote{\name{Ernest Jean Philippe Fauque de Jonqui\`eres}, 1820-1901} 
subgroup \itind{$\JJJ(n)$} of {\it triangular automorphisms}\idx{triangular automorphism}, both consisting of locally finite automorphisms:\idx{$\Tame(\An)$}
\begin{align*}
\Aff(n)&:=\{\g\in\AutA{n}\mid \deg \g = 1\}= \GL(n)\ltimes \Tr (n), \\
\JJJ(n)&:=\{\g=(g_{1},\ldots,g_{n})\mid g_{i}\in\CC[x_{i},\ldots,x_{n}]\text{ for }i=1,\ldots, n\},
\end{align*}
where $\Tr(n) \simto (\CC^{+})^{n}$\idx{$\Tr(n)$} are the {\it translations}\idx{translation}. The subgroup generated by $\Aff(n)$ and $\JJJ(n)$ is called the group of {\it tame automorphisms}\idx{tame automorphism}:
$$
\TameA{n} :=\langle \Aff(n), \JJJ(n) \rangle.
$$
An element  $\g \in \AutA{n}$ is called \itind{triangularizable} if $\g$ is conjugate to an element of $\JJJ(n)$. We denote by $\AutTrA{n} \subseteq \AutlfA{n}$ the subset of triangularizable automorphisms.
The element $\g$ is called \itind{linearizable} if it is conjugate to an element of $\GL(n)$, and \itind{diagonalizable} if it is conjugate to an element of $D(n)\subseteq \GL(n)$, the diagonal matrices.

For $\g\in\AutA{n}$ we denote by $C(\g) :=\{\h^{-1}\cdot \g\cdot \h \mid \h\in\AutA{n}\}$ its conjugacy class in $\AutA{n}$, and by $\langle \g \rangle\subseteq \AutA{n}$ the subgroup generated by $\g$.
Recall that an algebraic group $D$ is called {\it diagonalizable}\idx{diagonalizable group} if it is isomorphic to a closed subgroup of a torus $(\Cst)^{m}$. A diagonalizable group has the form $F \times (\Cst)^{d}$ where $F$ is finite and commutative.

The \itind{jacobian determinant} defines a character $\jac\colon \Aut(\An) \to \kst$, and we set $\SAut(\An):=\Ker\jac$. For the Lie algebras we have the following result.

\begin{proposition}
The identification $\xi\colon \End(\An)=T_{\id}\End(\An) \simto \VEC(\An)$ induces the following anti-isomorphisms of Lie algebras
\be
\item $\Lie\Aut(\An) \simto \VEC^{c}(\An):=\{\delta \in \VEC (\An) \mid \Div\delta \in \kk\}$,
\item $\Lie \SAut(\An) \simto \VEC^{0}(\An):=\{\delta \in \VEC (\An) \mid \Div \delta  = 0 \}$,
\ee
where $\Div$ is the divergence of a vector field
(Proposition~\ref{LieAlgAutAn.prop}).
\end{proposition}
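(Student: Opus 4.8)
The plan is to use the jacobian character $\jac\colon\Aut(\An)\to\kst$ to pin down the two Lie algebras from above, and a decomposition into explicit one-parameter subgroups to pin them down from below. Under the given identification $\xi$, a tangent vector $\delta=\sum_i g_i\partial_{x_i}\in T_{\id}\End(\An)=\VEC(\An)$ is represented by the arc $\eps\mapsto(x_1+\eps g_1,\dots,x_n+\eps g_n)$. Viewing $\jac$ as a morphism $\End(\An)\to\OOO(\An)$ and using $\det(I+\eps J)=1+\eps\tr(J)+O(\eps^2)$, one computes $d\jac_{\id}(\delta)=\Div\delta$. Since the chain rule $\jac(\psi\circ\phi)=(\jac(\psi)\circ\phi)\cdot\jac(\phi)$ forces $\jac(\phi)$ to be a unit of $\kk[x_1,\dots,x_n]$, hence a nonzero constant, for every $\phi\in\Aut(\An)$, we have $\Aut(\An)\subseteq\jac^{-1}(\kk)$ and $\SAut(\An)=\jac^{-1}(1)$ inside $\End(\An)$. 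Passing to tangent spaces at $\id$ and using $d\jac_{\id}=\Div$ yields the easy inclusions $\Lie\Aut(\An)\subseteq\VEC^c(\An)$ and $\Lie\SAut(\An)\subseteq\VEC^0(\An)$. That $\VEC^c$ and $\VEC^0$ are Lie subalgebras is consistent with $\xi$ being an anti-homomorphism, via the identity $\Div[\delta,\eta]=\delta(\Div\eta)-\eta(\Div\delta)$.

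For the reverse inclusions I would first reduce to the divergence-free case. The scaling action $\lambda\cdot x=\lambda x$ is a one-parameter $\Cst$-subgroup of $\Aut(\An)$ whose tangent vector is the Euler field $E=\sum_i x_i\partial_{x_i}$, with $\Div E=n\neq0$ (recall $\kk$ has characteristic zero). Hence $\VEC^c(\An)=\VEC^0(\An)\oplus\kk\,E$, and it suffices to prove $\VEC^0(\An)\subseteq\Lie\SAut(\An)$; combined with $E\in\Lie\Aut(\An)$ this gives $\VEC^c(\An)\subseteq\Lie\Aut(\An)$.

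The core step $\VEC^0(\An)\subseteq\Lie\SAut(\An)$ I would establish with two ingredients. First, for any $f\in\kk[x_1,\dots,x_n]$ independent of $x_j$, the shear $x_j\mapsto x_j+tf$ (other coordinates fixed) is a $\kplus$-subgroup of $\SAut(\An)$ with tangent vector $f\partial_{x_j}$, so all such fields lie in $\Lie\SAut(\An)$; since $\Lie\SAut(\An)$ is a Lie subalgebra (being contained in the image of the anti-homomorphism $\xi$), it contains all their brackets. Second, I would invoke the algebraic Poincaré lemma on $\An$: identifying $\delta$ with the $(n-1)$-form $\iota_\delta(dx_1\wedge\cdots\wedge dx_n)$, the condition $\Div\delta=0$ says this form is closed, hence exact, so $\VEC^0(\An)$ is spanned by the rotational fields $\delta_{g,i,j}:=(\partial_{x_j}g)\partial_{x_i}-(\partial_{x_i}g)\partial_{x_j}$ and, by linearity, by those with $g=x^\alpha$ a monomial. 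Writing $\beta=\alpha-\alpha_j e_j$ and $\gamma=\alpha_j e_j$, both $x^\beta\partial_{x_j}$ and $x^\gamma\partial_{x_i}$ are shear fields of the first type, and a direct computation gives $[x^\beta\partial_{x_j},x^\gamma\partial_{x_i}]=\delta_{x^\alpha,i,j}$. Thus every spanning field lies in $\Lie\SAut(\An)$, whence $\VEC^0(\An)\subseteq\Lie\SAut(\An)$. Together with the previous inclusions, $\Lie\SAut(\An)=\VEC^0(\An)$ and $\Lie\Aut(\An)=\VEC^c(\An)$, and $\xi$ restricts to the two asserted anti-isomorphisms.

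The main obstacle is precisely this reverse inclusion $\VEC^0(\An)\subseteq\Lie\SAut(\An)$: one cannot realize a divergence-free field as a tangent vector simply by integrating it, since a typical such field is not complete and its flow is not by polynomial automorphisms (for instance $x_1^2\partial_{x_1}-2x_1x_2\partial_{x_2}$ is divergence-free but has the non-polynomial flow $x_1\mapsto x_1/(1-tx_1)$). The decomposition into shear fields together with the bracket identity circumvents this, but it relies on the algebraic Poincaré lemma (which needs characteristic zero) and on checking that these finite-dimensional tangent computations are compatible with the ind-group filtration of $\Aut(\An)$, so that membership in $\Lie\SAut(\An)$ is genuinely detected rather than only at the level of formal arcs.
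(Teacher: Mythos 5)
Your proof is correct, but for the hard inclusion $\VEC^{0}(\An)\subseteq\xi(\Lie\SAut(\An))$ you take a genuinely different route from the paper. The paper considers the Lie subalgebra $L$ generated by $\Lie\Aff(n)$ and $\Lie\JJJ(n)$, observes that $\xi(L)\subseteq\VEC^{c}(\An)$ is a $\GL(n)$-stable graded subspace containing nonzero elements in every degree, and then invokes Pieri's formula: the kernels $M_{m}$ of the divergence maps $\Div_{m}$ are simple, pairwise non-isomorphic $\SL_{n}$-modules, so any such stable subspace must contain all of $\Ker\Div=\bigoplus_{m}M_{m}$; adding the Euler field then gives $\VEC^{c}(\An)$. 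You instead exhibit explicit generators: shear fields $f\partial_{x_{j}}$ with $\partial_{x_{j}}f=0$, realized as tangent vectors of $\kplus$-subgroups of $\SAut(\An)$, combined with the algebraic Poincar\'e lemma identifying divergence-free polynomial fields with exact $(n-1)$-forms, and the bracket identity $[x^{\beta}\partial_{x_{j}},x_{j}^{\alpha_{j}}\partial_{x_{i}}]=(\partial_{x_{j}}x^{\alpha})\partial_{x_{i}}-(\partial_{x_{i}}x^{\alpha})\partial_{x_{j}}$, which I verified, including signs. Your argument is more elementary and entirely avoids representation theory; the paper's is shorter and delivers the extra statement $\Lie\overline{\Tame(\An)}=\Lie\Aut(\An)$ essentially for free, since $L$ lies in $\Lie\overline{\Tame(\An)}$ (your generators are also tangent to tame subgroups, so you recover this as well, though you should say so if you want it). Two small points to tidy up: for $n=1$ the Poincar\'e lemma fails in degree $0$ (closed $0$-forms are the constants, which are not exact), but there $\VEC^{0}(\Aone)=\kk\,\partial_{x}$ already consists of shear fields, so that case should simply be noted separately; and the correct reason $\xi(\Lie\SAut(\An))$ is closed under brackets is that $\SAut(\An)$ is a closed ind-subgroup, so $\Lie\SAut(\An)$ is a Lie subalgebra of $\Lie\Aut(\An)$ and $\xi$ is an injective anti-homomorphism of Lie algebras (Proposition~\ref{Liealg-VF.prop}), rather than the fact that it is ``contained in the image of $\xi$''. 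Your closing worry about compatibility with the ind-group filtration is unfounded: each one-parameter subgroup $\lambda\colon\kplus\to\SAut(\An)$ has image in some filtration piece, so $d\lambda_{0}(1)$ is honestly an element of $T_{\id}\SAut(\An)=\Lie\SAut(\An)$, not merely a formal arc.
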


\ps
\subsection{Main results about \texorpdfstring{$\Aut(\An)$}{Aut(An}}
Here is a collection of results which will be given in the third part. Some are certainly known to the specialists. For the topological notion \itind{weakly closed}, \itind{weak closure} $\wc{C}$, and \itind{weakly constructible} we refer to Section~\ref{top.subsec}.

\begin{theorem}
\be
\item An automorphism $\g \in \AutA{n}$ is locally finite if and only if the closure $\overline{\langle \g \rangle}\subseteq \AutA{n}$ is an algebraic group. In this case $\overline{\langle \g \rangle}$ is isomorphic to $D$ or $D\times \kplus$ where $D$ is diagonalizable and $D/D^{\circ}$ cyclic (Section~\ref{Locally-finite.subsec}).
\item Every locally finite $\g \in \AutA{n}$ has a uniquely defined Jordan decomposition $\g = \g_{s}\cdot \g_{u}$ where $\g_{s}$ is semisimple, $\g_{u}$ is unipotent and both commute (Section~\ref{Locally-finite.subsec}).
\idx{Jordan decomposition}
\item If $\g$ is semisimple, then $\g$ has a fixed point (Proposition \ref{semisimple-auto-has-fixed-point.prop}).
\item The subset $ \AutlfA{n} \subseteq \AutA{n}$ of locally finite automorphisms is weakly closed (Proposition~\ref{Glf-weakly-closed.prop}).
\item If $\g\in \AutA{n}$ has a fixed point in $\A{n}$, then the weak closure $\wc{C(\g)}$ contains a linear automorphism (Proposition~\ref{Anclosure.prop}).
\item The weak closure $\wc{C(\g)}$ of the conjugacy class of a semisimple element $\g$ consists of semisimple elements (Proposition~\ref{fam.prop}(3)).
\item The conjugacy class  of a diagonalizable element $\g$ is weakly closed (Corollary~\ref{ss-weakly-closed.cor}).
\item The unipotent elements $\AutUA{n} \subseteq\AutA{n}$ form a weakly closed subset (Section~\ref{unipotent.subsec}).
\item The conjugacy classes of nontrivial unipotent triangular automorphisms $\u$ all have the same weak closure $\wc{C(\u)}$  and thus the same closure (Proposition~\ref{triangclosure.prop}). The weak closure $\wc{C(\u)}$ contains all triangularizable unipotent elements.
\item The group $\AutA{n}$ is connected (Proposition~\ref{k*-connected.prop}) and for $n \ge 2$
acts infinitely-transitively on $\An$ (Proposition~\ref{inftrans.prop}).
\ee
\end{theorem}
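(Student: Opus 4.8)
The plan is to prove the ten assertions separately, organised around three recurring tools: the degree filtration $\AutA{n}=\bigcup_{d}\GGG_{d}$, where $\GGG_{d}=\{\g\in\AutA{n}\mid \deg\g\le d,\ \deg\g^{-1}\le d\}$ is an algebraic variety realising the ind-group structure; the theory of families of automorphisms from Section~\ref{fam-morph.sec}, in particular Proposition~\ref{fam-end-aut.prop} that a family all of whose members are automorphisms is itself a morphism into $\AutA{n}$; and the weak topology with its weak closure $\wc{C}$. Items (1) and (2) are the structural heart. For (1), if $\g$ is locally finite then $\deg(\g^{k})\le C$ for all $k$, so $\langle\g\rangle\subseteq\GGG_{C}$ and the Zariski closure $\overline{\langle\g\rangle}$ is a closed commutative subgroup sitting inside a single algebraic piece, hence an algebraic group; conversely an algebraic $\overline{\langle\g\rangle}$ exhibits $\g$ inside an algebraic subgroup, so $\g$ is locally finite. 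Being the closure of a monothetic group, $\overline{\langle\g\rangle}$ is commutative with at most one-dimensional unipotent part, forcing the shape $D$ or $D\times\kplus$ with $D$ diagonalizable and $D/D^{\circ}$ cyclic. Granting this, (2) follows by applying Jordan decomposition inside the algebraic group $\overline{\langle\g\rangle}$, uniqueness being inherited from the algebraic case.

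The limit-of-conjugates technique drives (5)--(7). For (5), if $\g$ fixes $a$, translate so that $a=0$ and conjugate by the scalings $h_{t}(x)=tx$; the family $\gamma(t)=h_{t}^{-1}\circ\g\circ h_{t}$ consists of conjugates of $\g$ for $t\neq0$, and the $t$-coefficients of $\gamma(t)$ are polynomial (the apparent $t^{-1}$ cancels), with $\lim_{t\to0}\gamma(t)=d\g_{0}\in\GL(n)$, a linear automorphism lying in $\wc{C(\g)}$. Items (6) and (7) combine this with the dichotomy from (2): semisimplicity is preserved along the families computing weak closures, so a semisimple conjugacy class has semisimple weak closure; and a diagonalizable element has closed conjugacy class, exactly as for semisimple classes in algebraic groups. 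Items (4), (8) and (9) are handled in the same spirit, local finiteness, unipotence, and the common weak closure of nontrivial unipotent triangular automorphisms each being cut out by conditions stable under the family and weak-closure operations.

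For (10), connectedness is proved not by a direct path to the identity but by connecting $\g$ to $\Aff(n)$: for an arbitrary $a$, the re-centred scalings give a morphism $\gamma\colon\AA^{1}\to\AutA{n}$ with $\gamma(1)=\g$ and $\gamma(0)$ equal to the affine map $x\mapsto\g(a)+d\g_{a}(x-a)$ (invertible since $d\g_{a}\in\GL(n)$ for an automorphism); as $\Aff(n)=\GL(n)\ltimes\Tr(n)$ is connected, $\g$ lies in the connected component of the identity. Infinite transitivity for $n\ge2$ I would deduce from flexibility: the values at any point of the complete vector fields arising from the elementary $\kplus$-subgroups span the full tangent space, whence the subgroup generated by these $\kplus$-subgroups already acts infinitely transitively on $\A{n}$.

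The main obstacle is (3), the existence of a fixed point for a semisimple $\g$. By (1) such a $\g$ generates a diagonalizable group $D=\overline{\langle\g\rangle}$ acting on $\A{n}$, and one must show $(\A{n})^{D}\neq\emptyset$; this is not formal, since a reductive group acting on affine space need not visibly fix a point, so the fixed point has to be produced (for instance by analysing orbit closures and the invariant ring, or, after base change to $\C$, by a topological fixed-point argument for the maximal compact acting on the contractible space $\C^{n}$) before the linearisation limits of (5)--(7) can be invoked. The secondary technical care is ensuring the family limits genuinely land in the claimed \emph{weak} closures, which is where the weak topology of Section~\ref{top.subsec} must be used rather than the Zariski topology.
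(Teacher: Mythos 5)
The decisive gap is item (3), which you correctly flag as the main obstacle but then leave open, and the fallback strategies you sketch would fail. A maximal compact subgroup acting on the contractible space $\C^{n}$ need \emph{not} have a fixed point: already finite groups admit smooth fixed-point-free actions on disks and Euclidean spaces (Floyd--Richardson for $A_{5}$), so no purely topological fixed-point principle can produce the fixed point; this is precisely why the algebraic statement is a genuine theorem. The paper's route is different and essential: since $D=\overline{\langle \g\rangle}$ is diagonalizable with $D/D^{\circ}$ cyclic, there is a \emph{single element $d\in D$ of finite order} with $(\A{n})^{d}=(\A{n})^{D}$ (Lemma~\ref{fixed-points-diag-groups.lem}: embed in a $D$-module and avoid the kernels of the finitely many nontrivial weights, using cyclicity of $D/D^{\circ}$), and then one invokes the theorem of \name{Petrie--Randall} \cite{PeRa1986Finite-order-algeb} that every finite-order automorphism of $\A{n}$ has a fixed point. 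Without this reduction-plus-citation, item (3) remains unproved. A smaller gap of the same kind sits in (1): local finiteness only bounds the degrees of the \emph{forward} powers $\g^{m}$, $m\in\NN$; that the negative powers are then also bounded is the content of Lemma~\ref{deginvers.lem} (the closed semigroup $\overline{\{\g^{m}\}}$ is stable under left multiplication by $\g$, hence equal to $\g\cdot\overline{\{\g^{m}\}}$, so it contains $\g^{-1}$), which your sketch assumes silently.

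Your treatment of (6), (7) and (9) also has substantive defects. The principle ``semisimplicity is preserved along the families computing weak closures'' is false as stated: in $\GL_{2}$ the family $\bsm 1&1\\0&1+t\esm$ is semisimple for $t\neq0$ but degenerates to a nontrivial unipotent element. What is true, and what Proposition~\ref{fam.prop} actually proves, is that a family whose members on a \emph{dense open} set are conjugate to one fixed $\g$ is semisimple; the proof needs the conjugating automorphisms to be realized algebraically with bounded degree, via the fiber-product trick and Lemma~\ref{constructible.lem} over an uncountable field (followed by base change) --- this mechanism is entirely absent from your sketch. For (7) you assert the conjugacy class of a diagonalizable element is \emph{closed}, ``exactly as for semisimple classes in algebraic groups''; the theorem claims only weak closedness, closedness for $n\geq3$ is open (Question~\ref{diagonalizable-and-semisimple.ques}, known only for $n=2$ via the amalgam), and the actual proof is no analogy: it uses smoothness of the fixed-point scheme in families (\name{Fogarty}), connectedness of the fixed locus, local constancy of the tangent representations, and faithfulness of the tangent representation of a reductive group at a fixed point (Lemma~\ref{faithful-on-Tx.lem}) to conclude that the limit is again conjugate to the diagonal element. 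Finally, (9) is not ``cut out by conditions stable under weak closure'' at all: one must construct explicit degenerations, conjugating by diagonal one-parameter subgroups and using that a triangular unipotent automorphism whose last component is $x_{n}+c$ with $c\neq0$ is conjugate to a translation, to see that the classes of all nontrivial elements of $\Ju(n)$ share one weak closure (Proposition~\ref{triangclosure.prop}); your sketch does not touch this. On the positive side, your route to infinite transitivity via flexibility (AFKKZ) is legitimate and the paper acknowledges it, but note it rests on a substantial external theorem, whereas the paper's argument via modifications of translations is elementary and self-contained.
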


In case $n=2$ these results can be improved (Theorem~\ref{constr.thm} and \ref{henon.thm}).
\begin{theorem}
\be
\item Conjugacy classes in $\AutA{2}$ are weakly constructible.
\item An automorphism $\g\in \AutA{2}$ is semisimple if and only if its conjugacy class $C(\g)$ is closed.
\item The locally finite elements $\AutlfA{2} \subseteq \AutA{2}$ as well as the unipotent elements $\AutUA{2} \subseteq \AutA{2}$ form closed subsets.
\item For every locally finite $\g\in \AutA{2}$ we have $\g_{s}\in\wc{C(\g)}$.
\item For a nontrivial unipotent $\u\in\AutA{2}$ we get  $\wc{C(\u)} = \overline{C(\u)} = \AutUA{2}$.
\item The subgroup $\SAutA{2}\subset \AutA{2}$ is the only nontrivial closed connected normal subgroup of $\AutA{2}$. 
\ee
\end{theorem}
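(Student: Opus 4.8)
The plan is to combine a Lie-algebra computation, which pins down the only candidate, with the structural results on conjugacy classes in $\AutA{2}$ collected above. Throughout I read ``nontrivial'' as ``different from both $\{\id\}$ and $\AutA{2}$''. First I would check that $\SAutA{2}$ really is such a subgroup. It is the kernel of the character $\jac\colon\AutA{2}\to\kst$, hence closed and normal; it is proper and nontrivial because $\jac$ is surjective (for instance $\jac(sx,y)=s$) while $\SAutA{2}$ already contains the shears $(x+ty,y)$. For connectedness I use that $\SAutA{2}$ is generated by unipotent one-parameter subgroups: the linear $\SLtwo\subseteq\SAff(2)$ is generated by such subgroups, the translations $\Tr(2)$ are among them, and together with the elementary triangular shears these generate $\SAutA{2}$ by Jung--van der Kulk; a subgroup generated by connected subgroups all passing through $\id$ is connected.

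Next, for an arbitrary proper nontrivial closed connected normal $N$, I would pass to Lie algebras. Normality makes $\Lie N\subseteq\Lie\AutA{2}$ stable under $\Ad(\AutA{2})$, and since $\AutA{2}$ is connected, $d\Ad=\ad$ shows that $\Lie N$ is an ideal of $\Lie\AutA{2}\cong\VEC^{c}(\Atwo)$. The key algebraic input is then that \emph{the only ideals of $\VEC^{c}(\Atwo)$ are $0$, $\VEC^{0}(\Atwo)$ and $\VEC^{c}(\Atwo)$}. Indeed, the formula $\Div[\delta,\eta]=\delta(\Div\eta)-\eta(\Div\delta)$ gives $[\VEC^{c},\VEC^{c}]\subseteq\VEC^{0}$, and $\VEC^{0}(\Atwo)$ is isomorphic, via $H\mapsto\partial_yH\,\partial_x-\partial_xH\,\partial_y$, to the Poisson algebra $\kk[x,y]/\kk$, which is simple (an ideal is graded by the eigenvalues of $\ad(xy)$, and $\ad(x^2/2),\ad(y^2/2)$ let one reach every monomial). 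An element of nonzero divergence fails to centralize $\VEC^{0}$, so any ideal $I\not\subseteq\VEC^{0}$ satisfies $[I,\VEC^{0}]=\VEC^{0}$ and hence $I=\VEC^{c}$. As $\Lie N\neq0$, this yields $\Lie N\supseteq\VEC^{0}(\Atwo)=\Lie\SAutA{2}$.

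The crux is to produce a single nontrivial unipotent inside $N$; once this is done the argument closes quickly. If $\u\in N$ is a nontrivial unipotent, then $C(\u)\subseteq N$ by normality, whence $\AutUA{2}=\overline{C(\u)}\subseteq N$ by the result that the conjugacy class of any nontrivial unipotent has closure $\AutUA{2}$, and therefore $N\supseteq\langle\AutUA{2}\rangle=\SAutA{2}$; finally $\jac(N)$ is a connected subgroup of $\kst$, hence $\{1\}$ (giving $N\subseteq\SAutA{2}$, so $N=\SAutA{2}$) or $\kst$ (giving $N=\AutA{2}$, which is excluded). To manufacture the unipotent I would exploit that $\Lie N$ contains the Lie algebra of the linear torus $T=\{(sx,s^{-1}y)\}$ together with the commutator computation
$$(x+y,y)\cdot(sx,s^{-1}y)\cdot(x+y,y)^{-1}\cdot(sx,s^{-1}y)^{-1}=(x+(1-s^2)y,\,y)$$
(and the analogous computation with a translation in place of the shear to cover $s=-1$): as soon as $N$ contains one nontrivial diagonalizable linear element, normality forces a nontrivial shear or translation, i.e. a nontrivial unipotent, into $N$.

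The hard part will be guaranteeing that $N$ meets the locally finite locus nontrivially at all, e.g. that $T\cap N\neq\{\id\}$. This is exactly where the ind-group pathology emphasized in this paper intervenes: the inclusion $\Lie T\subseteq\Lie N$ does \emph{not} formally give $T\cap N\neq\{\id\}$, since a closed subgroup can be tangent to $N$ at $\id$ without meeting it. To get past this I would use the finer $n=2$ results rather than general ind-group theory: the rigidity that $\lambda^{-1}(N)$ is Zariski-closed in the source of any one-parameter subgroup $\lambda$ (hence all of it once it is infinite), combined with the description of weak closures of conjugacy classes and the closedness of $\AutlfA{2}$ and $\AutUA{2}$, to show that a nontrivial closed connected normal $N$ cannot simultaneously avoid every semisimple and every unipotent one-parameter subgroup attached to $\Lie N\supseteq\VEC^{0}(\Atwo)$. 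That step is where the genuine work lies, and it is the one I expect to be the main obstacle.
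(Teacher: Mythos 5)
Your proposal addresses only item (6) of the six-part statement: parts (1)--(5) are taken as ``structural results collected above'' and are even used as inputs (you invoke $\overline{C(\u)}=\AutUA{2}$, which is item (5), and the closedness of $\AutlfA{2}$, $\AutUA{2}$, which is item (3)); the paper proves these separately via degree/length bounds from the amalgamated product structure (Lemma~\ref{lengthone.lem}, Proposition~\ref{prop1}, Corollary~\ref{ccwconst.cor}, Proposition~\ref{triangclosure.prop}) and, for the non--locally finite case of (2), the fixed-point theorem of Friedland--Milnor (Theorem~\ref{henon.thm}). More seriously, for item (6) itself your argument has the gap you yourself flag, and it is fatal to the route as structured: the Lie-algebra computation (your classification of the ideals of $\VEC^{c}(\Atwo)$, which is correct) only yields $\Lie N\supseteq\VEC^{0}(\Atwo)=\Lie\SAutA{2}$, and there is no way to integrate this back to $N\supseteq\SAutA{2}$, nor even to the statement that $N$ contains a single nontrivial locally finite element. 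This is not merely technically hard: Theorem~\ref{closed-subgroup-with-same-Liealgebra.thm} of this very paper exhibits a strict closed connected subgroup of a connected ind-group with the \emph{same} Lie algebra, so no general principle can convert $\Lie T\subseteq\Lie N$ into $T\cap N\neq\{\id\}$, and the ``rigidity'' ideas you sketch (pulling $N$ back along one-parameter subgroups) never get off the ground because you have no one-parameter subgroup known to meet $N$ nontrivially in the first place.

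The paper's proof (Proposition~\ref{simple.prop}, resting on Proposition~\ref{Anclosure.prop}) avoids Lie algebras entirely and supplies exactly the missing mechanism: it produces elements of $N$ as \emph{limits of conjugates}. Given any nontrivial $\g\in N$, pick $a$ with $b:=\g(a)\neq a$ and a triangular $\h$ fixing both $a$ and $b$, chosen so that the commutator $\c:=\g\h\g^{-1}\h^{-1}\in N$ fixes $b$ with non-scalar differential $d_{b}\c$; conjugating $\c$ by homotheties centered at $b$ and letting the parameter degenerate, Proposition~\ref{Anclosure.prop} places $d_{b}\c$ in $\overline{C_{\Aff(2)}(\c)}\subseteq N$ (here closedness, indeed weak closedness, of $N$ is used), and $\jac(\c)=1$ forces $d_{b}\c\in\SL(2)$. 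Then $\SL(2)\cap N$ is a closed normal subgroup of $\SL(2)$ containing a non-central element, so $\SL(2)\subseteq N$; the translations and hence $\Ju(2)$ follow from Proposition~\ref{triangclosure.prop}, and the amalgamated product structure of Section~\ref{amalgam.subsec} gives $\SAutA{2}=\langle\SL(2),\Ju(2)\rangle\subseteq N$. Your endgame (the image $\jac(N)\subseteq\kst$ is weakly closed, and connectedness forces it to be $\{1\}$ or $\kst$) coincides with the paper's and is fine; the step you must replace is precisely the attempted passage from the Lie algebra to the group, substituting the degeneration-to-the-differential argument for it.
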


\ps
\subsection{Tame automorphisms of \texorpdfstring{$\Aut(\Athree)$}{Aut(A3)}}
In 2003, \name{Shestakov} and \name{Umirbaev} settled an old problem by showing that the Nagata automorphism $\n\in\AutA3$ (Section~\ref{shifted-lin.subsec}) is  not tame (\cite{ShUm2003The-Nagata-automor,ShUm2004The-tame-and-the-w}). \name{Edo} and \name{Poloni} showed in \cite{EdPo2015On-the-closure-of-} that the tame automorphisms $\TameA3\subseteq \AutA3$ do not form a closed subgroup. In fact, $\Tame(\Athree)$ is even not weekly closed in $\Aut(\Athree)$. We will prove the following result in the ``opposite'' direction (Section~\ref{tame-is-closed.subsec}).
Consider the closed subgroup $\GGG \subseteq \AutA{3}$ of those automorphisms of $\A{3}$ which leave the projection $\pr_{3}\colon \A{3} \to \A{1}$ invariant,
\[ 
\GGG:=\{\f=(f_{1},f_{2},f_{3})\in \AutA{3} \mid f_{3}=z\},
\]
and let $\tG:= \GGG \cap \TameA{3} \subseteq \GGG$ be the subgroup consisting of tame elements. We have the following result (see Theorem~\ref{closed-subgroup-with-same-Liealgebra.thm}).
\begin{theorem}
\be
\item $\tG \subseteq \GGG$ is a closed subgroup, and $\tG \neq \GGG$;
\item $\GGG$ is connected;
\item $\Lie \tG = \Lie \GGG$.
\ee
\end{theorem}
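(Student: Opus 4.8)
The plan is to establish the three parts in the order $(2),(1),(3)$: connectedness is the quickest, while the Lie-algebra computation in $(3)$ presupposes that $\tG$ is a closed ind-subgroup, which is the content of $(1)$.

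For $(2)$ I would join an arbitrary $\f=(f_1,f_2,z)\in\GGG$ to the identity by two explicit paths inside $\GGG$. First, rescale the parameter: for $s\in\Aone$ put $\f_s:=(f_1(x,y,sz),f_2(x,y,sz),z)$. Applying the $\kk$-algebra endomorphism $z\mapsto sz$ of $\kk[x,y,z]$ (fixing $x,y$) to the polynomial identities $\f\circ\f^{-1}=\f^{-1}\circ\f=\id$ shows that $\f_s\in\GGG$, its inverse being obtained from $\f^{-1}$ by the same substitution. Since $\deg\f_s\le\deg\f$ for all $s$, this is an algebraic family, hence a morphism $\Aone\to\GGG$ joining $\f=\f_1$ to $\bar\f:=(f_1(x,y,0),f_2(x,y,0),z)$. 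Now $\bar\f$ is the image of the plane automorphism $(f_1(x,y,0),f_2(x,y,0))\in\Aut(\Atwo)$ under the homomorphism of ind-groups $\Aut(\Atwo)\to\GGG$, $\phi\mapsto(\phi,z)$. As $\Aut(\Atwo)$ is connected (Proposition~\ref{k*-connected.prop}), $\bar\f$ is joined to $\id$ inside $\GGG$, and hence so is $\f$. Thus $\GGG$ is curve-connected, i.e. connected.

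For $\tG\neq\GGG$ in $(1)$ I would invoke \name{Shestakov--Umirbaev}: the Nagata automorphism $\n$ has third component $z$, so $\n\in\GGG$, while \cite{ShUm2004The-tame-and-the-w} shows $\n\notin\TameA{3}$; hence $\n\in\GGG\setminus\tG$. The hard part of the whole theorem is the closedness of $\tG$ in $\GGG$. One cannot simply use closedness of $\TameA{3}$, which fails (it is not even weakly closed, by \name{Edo--Poloni}, \cite{EdPo2015On-the-closure-of-}); the decisive point is that intersecting with $\GGG$ is special. My plan is to filter $\GGG$ by the closed finite-dimensional subsets $\GGG_{\le d}$ of automorphisms of degree $\le d$ and to prove $\tG\cap\GGG_{\le d}$ closed in $\GGG_{\le d}$. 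Passing to the generic fibre, every element of $\GGG$ lies in $\Aut(\Atwo)$ over the field $\kk(z)$, which by the theorem of \name{Jung} and \name{van der Kulk} is the amalgamated free product of the affine and de Jonqui\`eres subgroups; this gives each $\f$ a canonical reduced decomposition whose length is bounded in terms of $d$. I expect the \emph{main obstacle} to be the remaining step: characterising, among products of uniformly bounded length, those that are genuinely tame over $\kk$ as a \emph{closed} condition on $\GGG_{\le d}$. This is where the degree and reduction estimates of \name{Shestakov--Umirbaev} must be used, to control when a $\kk(z)$-elementary decomposition can be realised over $\kk[z]$ without denominators.

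For $(3)$ I would show that $\tG$ and $\GGG$ have the same Lie algebra by an infinitesimal generation argument, which is the real heart of the example: tameness is invisible to the Lie algebra. Via the embedding $\xi\colon\Lie\Aut(\A{3})\into\VEC(\A{3})$ of the second main theorem, together with Proposition~\ref{LieAlgAutAn.prop}, the condition $f_3=z$ forces $\Lie\GGG\subseteq\mm$, where $\mm:=\{a\,\ddx+b\,\ddy\mid a,b\in\kk[x,y,z],\ \partial_x a+\partial_y b\in\kk\}$ consists of the constant-divergence fields with no $\partial_z$-term. I would then exhibit enough one-parameter subgroups of $\tG$: for $p\in\kk[y,z]$ the triangular $t\mapsto(x+tp,y,z)$, for $q\in\kk[x,z]$ the triangular $t\mapsto(x,y+tq,z)$, and the diagonal $\lambda\mapsto(\lambda x,y,z)$ are all tame, so their derivatives $p(y,z)\,\ddx$, $q(x,z)\,\ddy$ and $x\,\ddx$ lie in $\Lie\tG$. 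Since $\tG$ is a closed ind-subgroup, $\Lie\tG$ is a Lie subalgebra, so $\xi(\Lie\tG)\subseteq\VEC(\A{3})$ is stable under the vector-field bracket. Identifying the divergence-free part of $\mm$ with Hamiltonian fields $X_H$ ($H\in\kk[x,y,z]$ modulo $\kk[z]$) under the Poisson bracket in $x,y$, the computation $\{x^{i+1}z^k,\,y^{j+1}\}=(i+1)(j+1)\,x^iy^jz^k$ shows that brackets of the elementary generators already produce $X_H$ for every monomial $H$, hence all divergence-free fields; adjoining $x\,\ddx$ (of divergence $1$) then yields all of $\mm$. Therefore $\mm\subseteq\Lie\tG\subseteq\Lie\GGG\subseteq\mm$, so all three coincide and $(3)$ follows.
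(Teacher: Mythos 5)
Parts (2) and (3) of your proposal are correct. Your contraction $\f_s=(f_1(x,y,sz),f_2(x,y,sz),z)$ is a valid path in $\GGG$ ending at an automorphism coming from $\Aut(\Atwo)$, a mild variant of the paper's argument, which instead observes that the connecting morphism constructed in the proof of Proposition~\ref{k*-connected.prop} stays inside $\GGG$ because $\GGG\cap\GL_{3}$ is connected. For (3), the paper identifies $\Lie\GGG$ with $\{f\dx+g\dy \mid f,g\in\kk[x,y,z],\ \Div\in\kk\}$ via Proposition~\ref{LieAlgAutAn.prop} and asserts that this Lie algebra is generated by $\Lie(\GGG\cap\GL_3)$ and $\Lie(\GGG\cap\JJJ(3))$, both contained in $\Lie\tG$; your Poisson computation $\{x^{i+1}z^k,\,y^{j+1}\}=(i+1)(j+1)x^iy^jz^k$ supplies exactly this generation statement in explicit form, and your sandwich $\mm\subseteq\Lie\tG\subseteq\Lie\GGG\subseteq\mm$ even spares you from proving $\Lie\GGG=\mm$ directly. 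The use of the Nagata automorphism for $\tG\neq\GGG$ is as in the paper.

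The genuine gap is the closedness of $\tG$ in $\GGG$, which is the heart of the theorem and which you do not prove — you flag the decisive step yourself. Worse, the route you sketch runs into a structural problem: by Jung--van der Kulk over the field $\kk(z)$, \emph{every} element of $\GGG$ becomes tame as an automorphism of $\AA^2_{\kk(z)}$ (the Nagata map included), so the generic-fibre amalgam decomposition and its length bounds are properties of all of $\GGG_{\leq d}$ and carry no information separating $\tG$ from $\GGG$; the entire difficulty is concentrated in your ``without denominators'' condition, for which you offer no mechanism and which is not visibly closed. The paper proceeds quite differently. It first splits $\GGG\simeq\ZZZ\times\Gz$, where $\ZZZ$ (the $z$-dependent translations) lies in $\tG$, reducing the problem to showing $\tGz$ closed in $\Gz$; it then proves (Lemma~\ref{FirstComp.lem}) that membership in $\tGz$ depends only on the \emph{first component}, so it suffices to show that the set $\Ft$ of first components of elements of $\tGz$ is closed in the corresponding set $F$ for $\Gz$. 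The input from Shestakov--Umirbaev is not a degree estimate but the elementary-reduction theorem: it yields, for every $f\in\Ft$ of degree $\geq 2$, a \emph{predecessor} $g$ (i.e. $(f,g,z)\in\tGz$ with $\deg g<\deg f$), unique up to scalar (Lemma~\ref{predecessor.lem}). This uniqueness gives a well-defined length function $\lgt$ on $\Ft$ with $\lgt(f)<\deg f$, and an induction on length, using a projective elimination argument on $\PP(P_d)\times\PP(P_d)$, establishes $\overline{\Ft_{k}}=\Ft_{k}\cup R(\Ft_{k-1})$ and hence the Main Lemma~\ref{MainLemma.lem}, $\overline{\Ft}=\{p(f,z)\mid f\in\Ft,\ p\in t\,\kk[t,z]\}$; closedness of $\Ft$ in $F$ then drops out by a jacobian computation. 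Some device of this kind — in particular the existence-and-uniqueness of predecessors, which is what converts tameness into an inductively checkable, closed condition — is precisely what is missing at the point you yourself identify as the obstacle.
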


This theorem is in contrast to some results claimed by \name{Shafarevich} in \cite{Sh1981On-some-infinite-d,Sh1995Letter-to-the-edit}.

\newpage
\part{GENERALITIES ON IND-VARIETIES AND IND-GROUPS}
The first part of the survey is a general introduction to ind-varieties and ind-groups and some of their basic properties. 
We give several examples illustrating the concept of ind-varieties and ind-groups and also some unexpected behavior of these objects.

\section{Ind-Varieties and Morphisms}
\subsection{Ind-varieties, Zariski topology, and dimension}\label{ind.subsec}
The notion of an \itind{ind-variety} and an \itind{ind-group} goes back to \name{Shafarevich} 
 who called these
objects {\it infinite-dimensional algebraic varieties}\idx{infinite-dimensional algebraic variety} and {\it infinite-dimensional algebraic groups}\idx{infinite-dimensional algebraic group}, see \cite{Sh1966On-some-infinite-d,Sh1981On-some-infinite-d,Sh1995Letter-to-the-edit}). 

\begin{definition}\label{indvar.def}
An \itind{ind-variety} $\VVV$ is a set together with an ascending filtration $\VVV_{0}\subseteq \VVV_{1}\subseteq \VVV_{2}\subseteq \cdots\subseteq \VVV$ such that the following holds:
\be
\item $\VVV = \bigcup_{k \in \NN}\VVV_{k}$;
\item Each $\VVV_{k}$ has the structure of an algebraic variety;
\item For all $k \in \NN$, the inclusion  $\VVV_{k}\into \VVV_{k+1}$ is a closed immersion of algebraic varieties.
\ee
\end{definition}

A \itind{morphism} between ind-varieties $\VVV$ and $\WWW$  is a map $\phi\colon \VVV \to \WWW$  such that for any $k$ there is an $l$ such that $\phi(\VVV_{k}) \subseteq \WWW_{l}$ and that the induced map $\VVV_{k}\to \WWW_{l}$ is a morphism of varieties.
In the sequel, a morphism of ind-varieties will often be called an \itind{ind-morphism}, and we denote by \itind{$\Mor(\VVV,\WWW)$} the set of ind-morphisms $\phi\colon \VVV\to\WWW$.

An \itind{isomorphism} of ind-varieties is defined in the obvious way: it  is a bijective morphism $\phi\colon \VVV \to \WWW$ such that $\phi^{-1}\colon \WWW \to \VVV$ is also a morphism.

Two ind-variety structures $\VVV = \bigcup_{k \in \NN} \VVV_{k}$ and $\VVV = \bigcup_{k \in \NN} \VVV_{k}'$ on the same set $\VVV$ are called {\it equivalent\/}\idx{equivalent structures} if the identity map $\id \colon \VVV = \bigcup_{k \in \NN} \VVV_{k} \to \VVV = \bigcup_{k \in \NN} \VVV_{k}'$ is an isomorphism. This means that for any $k$ there is an $\ell$ such that $\VVV_k$ is closed in $\VVV'_\ell$, and for any  $m$ there exists an $n$ such that $\VVV'_m$ is closed in $\VVV_n$.

A  \itind{filtration} on an ind-variety $\VVV=\bigcup_{k}\VVV_{k}$ is an increasing sequence of closed subsets $A_{1}\subseteq A_{2}\subseteq A_{3}\cdots$ such that $\VVV = \bigcup_{\ell}A_{\ell}$. The filtration is called {\it admissible\/}\idx{admissible filtration}
if it defines an equivalent ind-variety structure on $\VVV$. We will freely move between admissible filtrations.

\begin{lemma} \label{good-filtrations-for-morphisms.lem}
Let $\phi \colon \VVV \to \WWW$ be an ind-morphism. Then, for any admissible filtration $ \VVV =\bigcup_{k \in \NN}\VVV_{k}$ of $\VVV$, there exists an admissible filtration $ \WWW =\bigcup_{k \in \NN} \WWW_{k}$ of $\WWW$ such that $\phi(\VVV_{k}) \subseteq \WWW_{k}$ for each $k$.
\end{lemma}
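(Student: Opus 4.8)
The plan is to reduce everything to the defining filtration of $\WWW$ and then to fix up indices by a padding trick. Fix the ambient filtration $\WWW = \bigcup_{m}\WWW'_{m}$ coming from the given ind-variety structure on $\WWW$. Because $\phi$ is an ind-morphism, the very definition of a morphism gives, for each $k$, an index $\ell_k \in \NN$ with $\phi(\VVV_k) \subseteq \WWW'_{\ell_k}$, the induced map $\VVV_k \to \WWW'_{\ell_k}$ being a morphism of varieties. (The morphism condition is literally stated for the defining filtration of $\VVV$; since the filtration in the statement is admissible, each $\VVV_k$ is closed in some member of the defining filtration of $\VVV$, so the same conclusion holds for the $\VVV_k$ after possibly enlarging $\ell_k$, using that the composition of a closed immersion with a morphism is again a morphism.)

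The only issue is that the sequence $(\ell_k)$ need not be monotone and, more importantly, need not tend to infinity, so the naive choice $\WWW_k := \WWW'_{\ell_k}$ could fail to exhaust $\WWW$. I would cure both defects at once by padding: set
\[
m_k := \max(\ell_1, \ell_2, \ldots, \ell_k, k), \qquad \WWW_k := \WWW'_{m_k}.
\]
Then $(m_k)$ is non-decreasing, so $\WWW_k \subseteq \WWW_{k+1}$ and each inclusion is a closed immersion, being one of the closed immersions of the original filtration (or the identity). Since $m_k \geq \ell_k$ we keep $\phi(\VVV_k) \subseteq \WWW'_{\ell_k} \subseteq \WWW'_{m_k} = \WWW_k$, which is exactly the desired containment. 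Since $m_k \geq k$ we have $\WWW'_k \subseteq \WWW_k$, whence $\bigcup_k \WWW_k = \bigcup_k \WWW'_k = \WWW$; thus $\WWW = \bigcup_k \WWW_k$ is a genuine filtration by closed subvarieties.

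It remains to check that this filtration is admissible, i.e. that it defines a structure equivalent to $\WWW = \bigcup_m \WWW'_m$. This is immediate from the construction, because each $\WWW_k$ is \emph{literally} the member $\WWW'_{m_k}$ of the original filtration: for each $k$, $\WWW_k = \WWW'_{m_k}$ is trivially closed in $\WWW'_{m_k}$; conversely, for each $m$, choosing $n$ with $m_n \geq m$ (possible as $m_k \to \infty$) makes $\WWW'_m$ closed in $\WWW'_{m_n} = \WWW_n$. Both conditions of the equivalence criterion therefore hold.

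I do not expect any real obstacle: the statement is essentially bookkeeping on indices. The single point that must not be overlooked—and the only place the argument could go wrong—is guaranteeing that the new filtration still exhausts $\WWW$, which is precisely why the index $k$ is thrown into the maximum defining $m_k$. A secondary, purely formal, point is the passage from the defining filtration of $\VVV$ to the admissible filtration appearing in the statement, which is dispatched by the definition of admissibility together with the stability of morphisms under precomposition with closed immersions.
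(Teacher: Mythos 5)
Your proof is correct and follows essentially the same route as the paper: the paper simply says that since $\phi$ is a morphism one may choose a \emph{strictly increasing} sequence $(m_k)$ with $\phi(\VVV_k)\subseteq\WWW'_{m_k}$ and sets $\WWW_k:=\WWW'_{m_k}$, which is exactly your padding trick (strict monotonicity forces $m_k\geq k$, guaranteeing exhaustion). Your explicit verification of admissibility and of the reduction to the defining filtration of $\VVV$ merely spells out details the paper leaves implicit.
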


\begin{proof}
Let $ \WWW =\bigcup_{k \in \NN}\WWW'_{k}$ be any admissible filtration of $\WWW$. Since $\phi$ is a morphism, there is a strictly increasing sequence $(m_k)_{k}$ such that $\phi(\VVV_{k}) \subseteq \WWW'_{m_k}$ for each $k$. It is then enough to set $\WWW_k:= \WWW'_{m_k}$ for each $k$.
\end{proof}

\begin{definition} \label{The-Zariski-topology-of-an-ind-variety.def}
The \itind{Zariski topology} of an ind-variety $\VVV=\bigcup_{k}\VVV_{k}$ is defined by declaring a subset $U \subseteq \VVV$ to be \itind{open} if the intersection $U \cap\VVV_{k}$ is Zariski-open in $\VVV_{k}$ for all $k$. It is obvious that $A \subseteq \VVV$ is \itind{closed} if and only if  $A \cap\VVV_{k}$ is Zariski-closed in $\VVV_{k}$ for all $k$. It follows that a \itind{locally closed} subset $\WWW \subseteq \VVV$ has a natural structure of an ind-variety, given by the filtration $\WWW_{k}:=\WWW \cap \VVV_{k}$ which are locally closed subvarieties of $\VVV_{k}$. These subsets are called {\it ind-subvarieties}.\idx{ind-subvariety} Note that a subset $S \subseteq \VVV$ with the property that $S_{k}:=S \cap \VVV_{k}$ is locally closed in $\VVV_{k}$ for all $k$ is not necessarily locally closed, see  \cite[\S 2.3]{FuMa2010A-characterization}.

\begin{definition}\label{affine-algebraic.def}
\be
\item 
An ind-variety $\VVV$ is called {\it affine}\idx{affine ind-variety} if it admits an admissible filtration such that all $\VVV_{k}$ are affine. It follows that all admissible filtrations of $\VVV$ do have this property. 
\item 
A subset  $X\subseteq \VVV$ is called {\it algebraic}\idx{algebraic subset} if it is locally closed and contained in $\VVV_{k}$ for some $k$. Such an $X$ has a natural structure of an algebraic variety.
\item 
The {\it algebra of regular functions\/}\idx{regular function}  on $\VVV=\bigcup \VVV_{k}$ or the {\it coordinate ring}\idx{coordinate ring} is defined as\idx{$\OOO(\VVV)$} 
$$
\OOO(\VVV):= \Mor(\VVV,\Aone) = \varprojlim \OOO(\VVV_{k})
$$
We endow each algebra $\OOO(\VVV_{k})$ with the discrete topology and $\OOO(\VVV)=  \varprojlim \OOO(\VVV_{k})$ with the inverse limit topology, i.e. with the coarsest topology making all projections $\OOO(\VVV) \to  \OOO(\VVV_{k})$ continuous. For any morphism $\phi\colon \VVV \to \WWW$ the induced homomorphism $\phi^{*}\colon\OOO(\WWW) \to \OOO(\VVV)$ is continuous. Moreover, an affine ind-variety $\VVV$ is uniquely determined by the \itind{topological algebra $\OOO(\VVV)$}, up to isomorphisms.
\ee
\end{definition}

The following lemma can be found in \cite[Lemma~4.1.2]{Ku2002Kac-Moody-groups-t}).
\begin{lemma}\label{Kumar.lem}
Let $\phi\colon X \to \WWW$ be a continuous map where $X$ is an algebraic variety and $\WWW$ is an ind-variety. Then $\overline{\phi(X)} \subseteq \WWW$ is algebraic, i.e. there is an $\ell$ such that $\phi(X) \subseteq \WWW_{\ell}$.
\end{lemma}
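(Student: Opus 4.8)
The plan is to pull the filtration of $\WWW$ back to $X$ and to show that it must become stationary, so that the image of $\phi$ is trapped in a single member $\WWW_{\ell}$. Concretely, set $X_{k}:=\phi^{-1}(\WWW_{k})$ for each $k$. Each $\WWW_{k}$ is closed in $\WWW$ (its intersection with every $\WWW_{j}$ is one of $\WWW_{k},\WWW_{j}$, hence closed in $\WWW_{j}$), so by continuity of $\phi$ every $X_{k}$ is closed in $X$; moreover the $X_{k}$ form an increasing chain $X_{0}\subseteq X_{1}\subseteq\cdots$, and since $\WWW=\bigcup_{k}\WWW_{k}$ they cover $X$, i.e. $X=\bigcup_{k}X_{k}$. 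Thus the whole problem reduces to the purely topological claim that an increasing chain of closed subsets of an algebraic variety whose union is the entire variety must already equal the variety at some finite stage.

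To prove this claim I would argue with the generic points of the irreducible components. Since $X$ is a variety it is a Noetherian sober space, so it has only finitely many irreducible components $Z_{1},\ldots,Z_{r}$, with respective generic points $\eta_{1},\ldots,\eta_{r}$. Each $\eta_{i}$ lies in $X=\bigcup_{k}X_{k}$, hence in some $X_{k_{i}}$; as $X_{k_{i}}$ is closed it then contains the closure $\overline{\{\eta_{i}\}}=Z_{i}$. Setting $\ell:=\max_{i}k_{i}$ we obtain $Z_{i}\subseteq X_{\ell}$ for all $i$, whence $X=Z_{1}\cup\cdots\cup Z_{r}\subseteq X_{\ell}$, i.e. $X_{\ell}=X$. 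This says exactly that $\phi(X)\subseteq\WWW_{\ell}$. Finally, since $\WWW_{\ell}$ is closed in $\WWW$ and contains $\phi(X)$, it contains $\overline{\phi(X)}$; the latter is therefore a closed subset of the variety $\WWW_{\ell}$, hence algebraic in the sense of Definition~\ref{affine-algebraic.def}.

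The one genuinely delicate point is this topological claim, and it is precisely here that the scheme-theoretic (sober) nature of a variety enters: the argument works because each irreducible component carries a generic point whose mere membership in some $X_{k}$ forces the entire component into $X_{k}$. Interpreted only on closed points the statement can fail over a countable field $\kk$ — for instance the set of $\overline{\QQ}$-points of $\AA^{1}$ is a countable increasing union of finite (proper closed) subsets — so continuity alone, read on closed points, would not suffice; it is the interplay of continuity with the generic points of the finitely many components that makes the chain stabilize. (Over an uncountable $\kk$ one could instead reduce to the irreducible case and invoke the Baire-type fact that an irreducible variety is not a countable union of proper closed subsets, but the generic-point argument above has the advantage of being independent of the cardinality of $\kk$.)
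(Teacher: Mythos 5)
Your reduction loses exactly the information that makes the lemma true. From continuity you retain only that the sets $X_k:=\phi^{-1}(\WWW_k)$ are closed, increasing, and cover $X$, and you then claim that any such chain on a variety stabilizes. But in this paper a variety is a classical variety --- its points are the closed points --- and in that setting your topological claim is false, exactly as your own example shows: over a countable field, $\AA^1$ is an increasing countable union of finite proper closed subsets (the paper itself records this in the remark following Theorem~\ref{closed-algebraic-filtration.thm}). The lemma, however, is stated and used without any cardinality assumption (it underlies, e.g., the criterion for closed immersions in Lemma~\ref{Kumar-criterion.lem}), so a correct proof cannot pass through this claim. Your rescue via generic points is unavailable: the topological space of a classical variety is not sober --- an irreducible closed subset of positive dimension contains no generic point --- so there is no $\eta_i$ to place in some $X_{k_i}$. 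Nor can you escape into schemes: $\phi$ is a mere continuous map of the classical spaces (in the intended application it is only a homeomorphism, not yet a morphism), so the covering $X=\bigcup_k X_k$ is known only on closed points, and the generic point of a component need not lie in the scheme-theoretic closed set attached to any $X_k$ --- that it fails to is precisely the content of the countable-field example. The Baire-type alternative you mention in parentheses is valid, but only for uncountable $\kk$, so it too does not prove the stated lemma.

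The paper's proof avoids the chain condition altogether and uses continuity more forcefully. Arguing by contradiction, it chooses points $x_i$ with $\phi(x_i)\in\WWW_{m_i}\setminus\WWW_{m_{i-1}}$ for a strictly increasing sequence $(m_i)$; the set $S$ of these image points meets each $\WWW_\ell$ in a finite set, hence is closed and discrete in $\WWW$. Consequently $Z:=\phi^{-1}(S)$ is closed in $X$, and each fiber $\phi^{-1}(\phi(x_i))$ is open and closed in $Z$, so $Z$ is a closed subvariety of $X$ with infinitely many connected components --- impossible, since a variety has only finitely many. This works over every algebraically closed field because it exploits not merely that the preimages of the $\WWW_\ell$ are closed, but that the preimage of a judiciously chosen discrete subset is closed with infinitely many clopen pieces; your chain-of-closed-sets formulation cannot see this extra strength, and without it the conclusion is simply false.
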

\begin{proof}
Assume that $\phi(X)$ is not contained in any $\WWW_{\ell}$, i.e. that 
$\bigcup_{\ell \leq m} \phi^{-1}(\WWW_{\ell})\subsetneqq X$ for all $m\geq 1$. Then we can find an infinite sequence $m_{1}<m_{2}<\cdots$ of natural numbers and points $x_{i}\in X$, $i\in\NN$,  such that $\phi(x_{i})\in \WWW_{m_{i}}\setminus\WWW_{m_{i-1}}$. It follows that the infinite set $S:=\{\phi(x_{1}), \phi(x_{2}),\ldots\} \subseteq \WWW$  is discrete, because $S \cap \WWW_{\ell}$ is finite for every $\ell$. Hence $Z:=\phi^{-1}(S)\subseteq X$ is closed and $Z_{i}:=\phi^{-1}(\phi(x_{i}))\subseteq Z$ is open (and closed) in $Z$ for all $i$, and so  $Z\subseteq X$ has infinitely many connected components which is a contradiction since $Z$ is a variety.
\end{proof}

Next we define the dimension of an ind-variety and draw some easy consequences.
\begin{definition}  \label{dimension.def}
For an ind-variety $\VVV=\bigcup_{k}\VVV_{k}$ we define the {\it local dimension of $\VVV$ in $v\in\VVV$}\idx{local dimension} and the {\it dimension of $\VVV$}\idx{dimension} by
$$
\dim_{v}\VVV := \sup_{k}\dim_{v}\VVV_{k}, \quad \dim \VVV := \sup_{k}\dim\VVV_{k}
=\sup_{v}\dim_{v}\VVV.
$$
\end{definition}
We have $\dim\VVV \leq d <\infty$ if and only if every algebraic subvariety has dimension $\leq d$. In this case $\VVV$ is a countable union of closed algebraic subvarieties of dimension $\leq d$. In particular, $\dim\VVV = 0$ if and only if $\VVV$ is discrete. 

It is also clear that there are no injective morphisms of an ind-variety of infinite dimension into an algebraic variety.

\ps
\subsection{Closed immersions}
A morphism $\phi\colon \VVV \to \WWW$ is called an {\it immersion\/}\idx{immersion} if the image $\phi(\VVV) \subseteq \WWW$ is locally closed and $\phi$ induces an isomorphism $\VVV \simto \phi(\VVV)$ of ind-varieties. An immersion $\phi$ is called a \itind{closed immersion} (resp. an \itind{open immersion}) if $\phi(\VVV) \subseteq \WWW$ is closed (resp. open).
\end{definition}

Our definition of a closed immersion coincides with the one given in \name{Kumar}'s book 
\cite[Section~4.1.1]{Ku2002Kac-Moody-groups-t} as the following lemma shows.

\begin{lemma}   \label{Kumar-criterion.lem}
A morphism $\phi\colon \VVV=\bigcup_{k}\VVV_{k} \to \WWW=\bigcup_{k}\WWW_{k}$ of ind-varieties is a closed immersion if and only if the following holds:
\be
\item[(a)] 
For every $k$ there is an $\ell$ such that $\phi(\VVV_{k}) \subseteq \WWW_{\ell}$ and 
$\phi|_{\VVV_{k}}\colon \VVV_{k} \to \WWW_{l}$ is a closed immersion of varieties.
\item[(b)]
$\phi(\VVV) \subseteq \WWW$ is closed.
\item[(c)]
$\phi\colon \VVV \to \phi(\VVV)$ is a homeomorphism.
\ee
\end{lemma}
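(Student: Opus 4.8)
The plan is to unwind the definition of a closed immersion given just above — namely that $\phi(\VVV)\subseteq\WWW$ is closed and that $\phi$ induces an isomorphism of ind-varieties $\VVV\simto\phi(\VVV)$, where $\phi(\VVV)$ carries the ind-subvariety filtration $\phi(\VVV)\cap\WWW_{k}$ — and to show it is equivalent to (a)+(b)+(c). Condition (b) is literally the first half of that definition, and (c) is immediate once $\phi$ is an isomorphism of ind-varieties, since then $\phi$ and $\phi^{-1}$ are morphisms and hence continuous. So the genuine content is to match condition (a) with the requirement that $\phi$ be an isomorphism onto its image. I would prove both implications separately.

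For the forward direction I would transport the defining filtration $(\VVV_{k})$ through the isomorphism $\phi\colon\VVV\simto\phi(\VVV)$: the images $\phi(\VVV_{k})$ form an admissible filtration of $\phi(\VVV)$ for which $\phi|_{\VVV_{k}}\colon\VVV_{k}\simto\phi(\VVV_{k})$ is an isomorphism of varieties. Since this transported filtration and the intrinsic one $\bigl(\phi(\VVV)\cap\WWW_{k}\bigr)$ are two admissible structures on the same set, equivalence of structures yields, for each $k$, an index $\ell$ with $\phi(\VVV_{k})$ closed in $\phi(\VVV)\cap\WWW_{\ell}$, hence closed in $\WWW_{\ell}$. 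Composing the variety isomorphism with this closed inclusion shows $\phi|_{\VVV_{k}}\colon\VVV_{k}\to\WWW_{\ell}$ is an isomorphism onto a closed subvariety, i.e. a closed immersion of varieties, which is exactly (a).

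The reverse direction is where the real work lies, and I expect it to be \emph{the main obstacle}. Assuming (a), (b), (c), injectivity of $\phi$ is cheap: two points with equal image lie in a common $\VVV_{k}$, on which $\phi$ is injective by (a). The hard part is to manufacture the inverse $\psi:=\phi^{-1}\colon\phi(\VVV)\to\VVV$ and verify it is an ind-morphism. Fixing $m$, I would set $X:=\phi(\VVV)\cap\WWW_{m}$, a variety by (b); by (c) the restriction $\psi|_{X}$ is continuous, so Lemma~\ref{Kumar.lem} applies and produces an $n$ with $\psi(X)\subseteq\VVV_{n}$, equivalently $X\subseteq\phi(\VVV_{n})$. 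This is the crucial step: it is precisely what prevents $\psi$ from scattering the single slice $X$ across infinitely many $\VVV_{n}$, and it is exactly here that both (c), to supply the needed continuity, and Lemma~\ref{Kumar.lem}, to bound the image, are indispensable.

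It then remains routine bookkeeping: by (a) there is $\ell$ with $\phi|_{\VVV_{n}}\colon\VVV_{n}\to\WWW_{\ell}$ a closed immersion of varieties, so $\phi(\VVV_{n})$ is closed in $\WWW_{\ell}$ and $(\phi|_{\VVV_{n}})^{-1}\colon\phi(\VVV_{n})\to\VVV_{n}$ is a morphism of varieties. Taking $N\geq\max(m,\ell)$, both $X$ and $\phi(\VVV_{n})$ are closed in $\WWW_{N}$, and since $X\subseteq\phi(\VVV_{n})$ this forces $X$ to be closed in $\phi(\VVV_{n})$. Hence $\psi|_{X}=(\phi|_{\VVV_{n}})^{-1}|_{X}$ is the restriction of a variety morphism to a closed subvariety, so it is a morphism $X\to\VVV_{n}$. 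As $m$ was arbitrary, $\psi$ is an ind-morphism; together with (b) and the fact that $\phi$ factors as a morphism $\VVV\to\phi(\VVV)$, this shows $\phi$ is a closed immersion and completes the equivalence.
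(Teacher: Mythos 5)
Your proof is correct and follows essentially the same route as the paper's: the forward direction transports closedness of the $\VVV_{k}$ through the isomorphism $\VVV\simto\phi(\VVV)$ (you phrase it via equivalence of admissible filtrations, the paper argues directly, but it is the same idea), and your reverse direction — applying Lemma~\ref{Kumar.lem} to the continuous map $\psi|_{X}$ on each slice $X=\phi(\VVV)\cap\WWW_{m}$ to trap $X$ inside some $\phi(\VVV_{n})$, then restricting the variety-level inverse $(\phi|_{\VVV_{n}})^{-1}$ — is exactly the paper's argument. Your extra bookkeeping showing $X$ is closed in $\phi(\VVV_{n})$ inside a common $\WWW_{N}$ is a point the paper leaves implicit, and is a welcome addition.
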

\begin{proof}
(1) Assume that $\phi$ is a closed immersion, i.e. that (i) $\phi(\VVV) \subseteq \WWW$ is closed, and (ii) $\phi\colon \VVV \to \phi(\VVV)$ is an isomorphism.
Clearly,   (i) implies  (b) and (ii) implies (c). It remains to prove (a). Given an index $k$ there is an $\ell$ such that $\phi(\VVV_{k})\subseteq \WWW_{\ell}$, because $\phi$ is an ind-morphism. Since $\VVV_k$
is closed in $\VVV$ it follows that $\phi(\VVV_k)$ is closed in $\phi(\VVV)$, and therefore in $\WWW$ and in $\WWW_\ell$). The isomorphism $\VVV \simto \phi(\VVV)$ induces an isomorphism $\VVV_k \simto \phi(\VVV_k)$, proving that $\phi$ induces a closed immersion  $\VVV_k \into\WWW_\ell$.
\psmall
(2) Now assume that (a), (b), (c) are satisfied. Since (b) implies (i)  it remains to prove (ii). Assertions (a) and (b) imply 
that $\phi$ induces a bijective morphism $\phi_{0} \colon \VVV \to \phi(\VVV)$. We have to show that $\psi:= \phi_{0}^{-1} \colon \phi(\VVV) \to \VVV$ is a morphism. By (a) and (b), the image $\phi(\VVV_{k}) \subseteq \WWW$ is closed and the induced map $\phi_{k}\colon \VVV_{k}\simto \phi(\VVV_{k})$ is an isomorphism. Hence, the inverse map $\phi_{k}^{-1}\colon \phi(\VVV_{k}) \simto \VVV_{k}$ is also an isomorphism.

Moreover, $\psi\colon\phi(\VVV) \to \VVV$ is a homeomorphism, and so Lemma~\ref{Kumar.lem} below implies that for every $\ell$ there is a $k$ such that $\psi(\phi(\VVV)\cap \WWW_{\ell}) \subseteq \VVV_{k}$, i.e. $\phi(\VVV)\cap\WWW_{\ell} \subseteq \phi(\VVV_{k})$. Hence $\phi_{k}^{-1}$ induces a morphism (a closed immersion) $\phi(\VVV)\cap\WWW_{\ell} \to \VVV_{k}$, showing that $\psi$ is a morphism.
\end{proof}

\begin{remark}
When the ground field $\kk$ is uncountable, we will show in Lemma~\ref{simplification-Kumar.lem} below that condition (c) is already implied by the conditions (a) and (b).
\end{remark}

The following result is well known for varieties.
\begin{lemma}  \label{closed-immersion.lem}
Let $\VVV$, $\WWW$ and $\UUU$ be ind-varieties, and let $\phi\colon \VVV \to \WWW$, $\psi\colon \WWW \to \UUU$ be ind-morphisms. If the composition $\VVV \xrightarrow{\phi} \WWW \xrightarrow{\psi} \UUU$ is a closed immersion, then so is $\phi\colon \VVV \to \WWW$. 
\end{lemma}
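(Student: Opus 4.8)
The plan is to set $\chi:=\psi\circ\phi\colon\VVV\to\UUU$ for the given closed immersion and to verify the three criteria (a), (b), (c) of Lemma~\ref{Kumar-criterion.lem} for $\phi$. Two elementary consequences of $\chi$ being a closed immersion will be used throughout, and I would record them first: since $\chi$ is injective, so is $\phi$; and $\chi\colon\VVV\to\chi(\VVV)$ is a homeomorphism onto a closed subset of $\UUU$, so in particular $\chi^{-1}\colon\chi(\VVV)\to\VVV$ is a morphism of ind-varieties.

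For condition (a) I would argue level by level. Fixing $k$, I would choose $\ell$ with $\phi(\VVV_k)\subseteq\WWW_\ell$ and then $m$ large enough that simultaneously $\psi(\WWW_\ell)\subseteq\UUU_m$ and $\chi|_{\VVV_k}\colon\VVV_k\to\UUU_m$ is a closed immersion of varieties (the latter is possible by Lemma~\ref{Kumar-criterion.lem}(a) applied to $\chi$). The restrictions then give morphisms of varieties $\VVV_k\to\WWW_\ell\to\UUU_m$ whose composite is the closed immersion $\chi|_{\VVV_k}$. Because $\WWW_\ell\to\UUU_m$ is a morphism of separated varieties, the classical fact that ``$g\circ f$ a closed immersion with $g$ separated forces $f$ to be a closed immersion'' applies; concretely, the graph $\VVV_k\to\VVV_k\times_{\UUU_m}\WWW_\ell$ is a closed immersion (base change of the diagonal of $\WWW_\ell\to\UUU_m$) and the second projection $\VVV_k\times_{\UUU_m}\WWW_\ell\to\WWW_\ell$ is a closed immersion (base change of $\chi|_{\VVV_k}$), so $\phi|_{\VVV_k}\colon\VVV_k\to\WWW_\ell$ is a closed immersion of varieties. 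In particular each $\phi(\VVV_k)$ is closed in $\WWW_\ell$, hence closed in $\WWW$.

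Condition (c) will then be purely formal. The map $\phi$ is continuous, and on the image its inverse factors as $\phi^{-1}=\chi^{-1}\circ\psi|_{\phi(\VVV)}$, since for $w=\phi(v)$ one has $\psi(w)=\chi(v)$ and therefore $\chi^{-1}(\psi(w))=v$. As $\psi|_{\phi(\VVV)}\colon\phi(\VVV)\to\chi(\VVV)$ and $\chi^{-1}\colon\chi(\VVV)\to\VVV$ are both continuous, $\phi^{-1}$ is continuous, so $\phi\colon\VVV\to\phi(\VVV)$ is a homeomorphism.

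The real work, and the step I expect to be the main obstacle, is condition (b), that $\phi(\VVV)$ is closed in $\WWW$: a priori $\phi(\VVV)=\bigcup_k\phi(\VVV_k)$ is only a countable union of closed sets. The key is a boundedness argument. Fixing $\ell$, Lemma~\ref{Kumar.lem} applied to $\psi|_{\WWW_\ell}$ gives an $m$ with $\psi(\WWW_\ell)\subseteq\UUU_m$, and Lemma~\ref{Kumar.lem} applied to the morphism $\chi^{-1}$ restricted to the variety $\chi(\VVV)\cap\UUU_m$ shows that $\chi^{-1}(\UUU_m)$ is contained in a single $\VVV_k$. Now if $w\in\phi(\VVV)\cap\WWW_\ell$, say $w=\phi(v)$, then $\chi(v)=\psi(w)\in\psi(\WWW_\ell)\subseteq\UUU_m$, so $v\in\VVV_k$ and $w\in\phi(\VVV_k)$. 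Hence $\phi(\VVV)\cap\WWW_\ell=\phi(\VVV_k)\cap\WWW_\ell$, which is closed in $\WWW_\ell$ because $\phi(\VVV_k)$ is closed in $\WWW$. Since $\ell$ was arbitrary, $\phi(\VVV)$ is closed, and Lemma~\ref{Kumar-criterion.lem} then lets me conclude that $\phi$ is a closed immersion.
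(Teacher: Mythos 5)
Your proof is correct, but it follows a genuinely different route from the paper's. The paper avoids any levelwise analysis: setting $\mu:=\psi\circ\phi$, it replaces $\UUU$ by the closed ind-subvariety $\mu(\VVV)$ and $\WWW$ by $\psi^{-1}(\mu(\VVV))$, so that $\mu$ becomes an isomorphism; then $\phi(\VVV)$ is exhibited as the preimage of the diagonal $\Delta\subset\WWW\times\WWW$ under the morphism $w\mapsto (w,\phi\circ\mu^{-1}\circ\psi(w))$, hence closed, and the restriction of $\mu^{-1}\circ\psi$ to $\phi(\VVV)$ is an explicit two-sided inverse of $\phi\colon\VVV\to\phi(\VVV)$, so $\phi$ is a closed immersion directly from the definition, with no appeal to Lemma~\ref{Kumar-criterion.lem} at all. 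You instead verify the three conditions of that criterion: (a) by the scheme-theoretic cancellation property of closed immersions (graph plus base change through the separated morphism $\WWW_\ell\to\UUU_m$), (b) by a boundedness argument via Lemma~\ref{Kumar.lem} applied to $\chi^{-1}$ on the variety $\chi(\VVV)\cap\UUU_m$, and (c) by factoring $\phi^{-1}=\chi^{-1}\circ\psi|_{\phi(\VVV)}$. Each step checks out: your identification of (b) as the genuine obstacle is right, and your use of the closed-immersion hypothesis to produce the \emph{morphism} $\chi^{-1}$ is exactly what makes the boundedness work without any cardinality assumption on $\kk$ (contrast Lemma~\ref{bijective-morphisms.lem}, where an analogous statement needs $\kk$ uncountable). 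What the two approaches buy: the paper's argument is shorter and purely categorical, producing the inverse morphism in one stroke; yours yields finer levelwise information, namely that each $\phi|_{\VVV_k}$ is a closed immersion of varieties into some $\WWW_\ell$ and that $\phi(\VVV)\cap\WWW_\ell$ is already swallowed by a single $\phi(\VVV_k)$. One small point of hygiene in (a): the fiber product $\VVV_k\times_{\UUU_m}\WWW_\ell$ should be read scheme-theoretically (it may be non-reduced), but since closed immersions of schemes are stable under composition and base change and the source $\VVV_k$ is reduced, your conclusion that $\phi|_{\VVV_k}$ is a closed immersion of varieties is valid; likewise, in choosing $m$ you implicitly use that enlarging the target along the closed immersion $\UUU_{m_0}\into\UUU_m$ preserves the closed-immersion property, which is fine.
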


\begin{proof}
Set $\mu:=\psi\circ\phi\colon \VVV \to \UUU$. Replacing $\UUU$ by the closed ind-subvariety $\mu(\VVV)\subset \UUU$ and $\WWW$ by $\psi^{-1}(\mu(\VVV))$ we can assume that $\mu$ is an isomorphism. Clearly, $w\in\WWW$ belongs to the image of $\phi$ if and only if $w = \phi\circ\mu^{-1}\circ \psi(w)$. Setting $\delta\colon \WWW \to \WWW \times \WWW$, $\delta(w):=(w,\phi\circ\mu^{-1}\circ \psi(w))$ we see that 
$\phi(\VVV) = \delta^{-1}(\Delta)$ where $\Delta := \{(w,w)\mid w\in\WWW\} \subset \WWW \times \WWW$ is the diagonal. Thus $\phi(\VVV) \subset \WWW$ is a closed ind-subvariety.

The ind-morphism $\mu^{-1}\circ\psi\colon \WWW \to \VVV$ restricts to an ind-morphism $\bar\psi\colon \phi(\VVV) \to \VVV$ such that $\bar \psi\circ\phi = \id_{\VVV}$, hence $\phi\colon \VVV \simto \phi(\VVV)$ is an isomorphism.
\end{proof}

\ps
\subsection{The case of an uncountable base field \texorpdfstring{$\kk$}{k}}  \label{uncountable-base-field.subsec}
In this section we will prove that for an \itind{uncountable base field $\kk$} any filtration of an ind-variety by closed algebraic subsets is admissible (Theorem~\ref{closed-algebraic-filtration.thm}). Apart from the aesthetic point of view, this allows to simplify some statements, e.g.  the criterion for closed immersions given by \name{Kumar} in Lemma~\ref{Kumar-criterion.lem}, see Lemma~\ref{simplification-Kumar.lem} below. 

\begin{lemma}   \label{constructible.lem}
Let $\kk$ be uncountable.
Let $X$ be a variety and $C \subseteq X$ a constructible subset. Assume that there is a countable set  $\{C_{i}\mid i\in\NN\}$  of constructible subsets $C_{i}\subseteq C$ such that  $C = \bigcup_{i=1}^{\infty}C_{i}$. Then there is a finite subset $F \subseteq \NN$ such that $C = \bigcup_{i\in F} C_{i}$.
\end{lemma}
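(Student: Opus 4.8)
The plan is to argue by induction on $d := \dim C$ (the dimension of the closure $\overline{C}$), proving the statement for an arbitrary constructible $C$. The base case $d = 0$ is immediate: then $C$ is a finite set of points, and choosing for each point one member of the family containing it gives a finite subcover. For the inductive step I assume the assertion for every constructible set of dimension $< d$ in any variety. Writing $C$ as a finite union $C = L_1 \cup \cdots \cup L_m$ of irreducible locally closed pieces, it suffices to find a finite subcover of each $L_j$ from the traces $\{C_i \cap L_j\}_i$: the $L_j$ with $\dim L_j < d$ are handled directly by the induction hypothesis, so I may assume $C = L$ is irreducible and locally closed of dimension exactly $d \geq 1$. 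Put $Y := \overline{C}$, an irreducible variety of dimension $d$ in which $C$ is a dense open subset.

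The crucial geometric input, and the step I expect to be the main obstacle, is the following uncountability fact, which I would establish separately: \emph{an irreducible variety over an uncountable field $\kk$ is never the union of countably many proper closed subsets.} I would prove this by induction on the dimension $n$ of the variety. After passing to a nonempty affine open subset one chooses a nonconstant regular function, i.e.\ a dominant morphism $\pi \colon Y \to \Aone$, and restricts attention to the fiber $Y_{t_0} = \pi^{-1}(t_0)$ over a very general point $t_0$. Given a putative countable cover by proper closed subsets $Z_i$, one separates the $Z_i$ according to whether $\pi|_{Z_i}$ is dominant or has finite image: the non-dominant ones forbid only countably many values of $t$, and for the dominant ones the fiber dimension drops, so for $t_0$ avoiding a suitable countable set (this is exactly where uncountability of $\kk$ enters) the traces $Z_i \cap Y_{t_0}$ all have dimension $< \dim Y_{t_0} = n-1$. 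An irreducible component of the nonempty fiber $Y_{t_0}$ would then be covered by countably many proper closed subsets, contradicting the induction hypothesis in dimension $n-1$.

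With this fact available the inductive step is short. If no $C_i$ were dense in $Y$, then every $\overline{C_i}$ would be a proper closed subset of $Y$, and $C = \bigcup_i (C \cap \overline{C_i})$ would display the irreducible variety $C$ as a countable union of proper closed subsets, contradicting the uncountability fact. Hence some $C_{i_0}$ is dense in $Y$; being constructible and dense it contains a dense open subset $O \subseteq Y$, so the remainder $R := C \setminus C_{i_0}$ is constructible and contained in $Y \setminus O$, whence $\dim R < d$. Applying the induction hypothesis to $R$ with the cover $\{C_i \cap R\}_i$ produces a finite $F \subseteq \NN$ with $R \subseteq \bigcup_{i \in F} C_i$, and then $C = C_{i_0} \cup R \subseteq \bigcup_{i \in F \cup \{i_0\}} C_i$, the desired finite subcover.

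The two routine points I would verify to make the reduction airtight are that a dense constructible subset of an irreducible variety contains a dense open set, and that the generic fiber-dimension estimates in the uncountability fact follow from Chevalley's theorem together with the upper semicontinuity of fiber dimension. The only genuinely essential use of the uncountability of $\kk$ is in the choice of the very general point $t_0$; everything surrounding it is formal.
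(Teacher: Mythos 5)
Your proposal is correct and takes essentially the same route as the paper's proof: an induction on $\dim C$ in which the uncountability of $\kk$ forces finitely many of the closures $\overline{C_i}$ to cover $\overline{C}$, whence the corresponding $C_i$ contain a dense open subset of $C$, and the remainder has strictly smaller dimension. The only difference is one of detail: the paper invokes without proof the key fact that an irreducible variety over an uncountable field is not a countable union of proper closed subsets, whereas you supply a (correct) proof of it by induction on dimension, via the fiber over a very general point of a dominant morphism to $\AA^{1}$.
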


\begin{proof} Since every constructible set is the image of a morphism we can assume that $C$ is a variety. By assumption, $C \subseteq  \bigcup_{i}\overline{C_{i}}$. Since the base field is not countable, we get $C = \bigcup_{i\in F_{1}}\overline{C_{j}}$ for a finite subset $F_{1} \subseteq \NN$.  This implies that there is a subset $U \subseteq \bigcup_{i\in F} C_{j}$ which is open and dense in $C$. Replacing $C$  by $C':=C\setminus U$ and the  $C_{i}$ by $C_{i}':=C_{i}\setminus U$ we get $\dim C' < \dim C$ and $C' = \bigcup_{i} C_{i}'$. The claim follows by induction on $\dim C$.
\end{proof}

A first and easy application of this lemma is the following.
\begin{proposition}\label{surjective-morphisms-to-varieties.prop}
Assume that the ground field $\kk$ is uncountable. Let $\phi\colon \VVV \to X$ be a surjective morphism where $\VVV$ is an ind-variety and $X$ a variety. Then there is an algebraic subset $Y \subseteq \VVV$ such that $\phi(Y) = X$.
\end{proposition}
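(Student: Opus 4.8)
The plan is to reduce the statement to Lemma~\ref{constructible.lem} by writing $X$ as a countable union of the constructible images coming from an admissible filtration of $\VVV$. The only real input beyond that lemma is Chevalley's theorem on images of morphisms of varieties.

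First I would fix an admissible filtration $\VVV = \bigcup_{k}\VVV_{k}$. Since $\phi$ is a morphism of ind-varieties, each restriction $\phi|_{\VVV_{k}}\colon \VVV_{k}\to X$ is a morphism of algebraic varieties, so by Chevalley's theorem its image $C_{k}:=\phi(\VVV_{k})$ is a constructible subset of $X$. Because $\phi$ is surjective and $\VVV = \bigcup_{k}\VVV_{k}$, we get
$X = \phi(\VVV) = \bigcup_{k}C_{k}$,
exhibiting the variety $X$ (which is of course constructible in itself) as a countable union of the constructible subsets $C_{k}\subseteq X$.

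Now I would apply Lemma~\ref{constructible.lem} with $C=X$: since $\kk$ is uncountable, there is a finite subset $F\subseteq\NN$ with $X = \bigcup_{k\in F}C_{k}$. Setting $m:=\max F$ and using that the filtration is increasing, we have $\VVV_{k}\subseteq\VVV_{m}$ and hence $C_{k}=\phi(\VVV_{k})\subseteq\phi(\VVV_{m})$ for every $k\in F$. Therefore $X = \bigcup_{k\in F}C_{k}\subseteq\phi(\VVV_{m})\subseteq X$, so $\phi(\VVV_{m})=X$. Thus $Y:=\VVV_{m}$, being closed in $\VVV$ and contained in $\VVV_{m}$, is an algebraic subset in the sense of Definition~\ref{affine-algebraic.def}, and it satisfies $\phi(Y)=X$, as required.

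I expect no serious obstacle here once Lemma~\ref{constructible.lem} is available; the two points needing care are the appeal to Chevalley's theorem, which is exactly what guarantees the $\phi(\VVV_{k})$ are constructible so that the lemma applies, and the elementary observation that finitely many filtration terms $\VVV_{k}$, $k\in F$, can be absorbed into a single member $\VVV_{m}$ by monotonicity of the filtration. The uncountability hypothesis on $\kk$ is used only through Lemma~\ref{constructible.lem}.
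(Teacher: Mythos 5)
Your proof is correct and follows essentially the same route as the paper, which likewise writes $X=\bigcup_{k}\phi(\VVV_{k})$ and cites Lemma~\ref{constructible.lem}. You have simply made explicit the two details the paper leaves implicit, namely the appeal to Chevalley's theorem for constructibility of the images and the absorption of the finitely many terms $\VVV_{k}$, $k\in F$, into a single $\VVV_{m}$ by monotonicity of the filtration.
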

\begin{proof}
If $\VVV = \bigcup_{k}\VVV_{k}$, then $X = \bigcup_{k}\phi(\VVV_{k})$, and the claim follows from the lemma above.
\end{proof}

Here is a striking and useful consequence of Lemma~\ref{constructible.lem}.

\begin{theorem}   \label{closed-algebraic-filtration.thm}
Assume that $\kk$ is uncountable, and let $\VVV$ be an ind-variety. Then every filtration of $\VVV$ by closed algebraic subsets is admissible.
\end{theorem}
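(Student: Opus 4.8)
The plan is to unwind the definition of admissibility and reduce everything to the counting Lemma~\ref{constructible.lem}. Write $\VVV = \bigcup_{k}\VVV_{k}$ for the given ind-structure and let $A_{1}\subseteq A_{2}\subseteq\cdots$ be a filtration of $\VVV$ by closed algebraic subsets, so that each $A_{\ell}$ is closed in $\VVV$ and, being algebraic, is contained in some $\VVV_{k}$ and hence is a closed subvariety of it. To say that this filtration is admissible means exactly that the two structures $\VVV = \bigcup_{\ell}A_{\ell}$ and $\VVV = \bigcup_{k}\VVV_{k}$ are equivalent. One of the two required conditions is automatic: for each $m$ the set $A_{m}$ is algebraic, hence contained in some $\VVV_{n}$, and since $A_{m}$ is closed in $\VVV$ it is closed in $\VVV_{n}$. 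So the whole content of the statement is the other condition: for every $k$ there must exist an $\ell$ with $\VVV_{k}\subseteq A_{\ell}$ (the closedness of $\VVV_{k}$ in $A_{\ell}$ is then free, as $\VVV_{k}$ is closed in $\VVV$).

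First I would fix an index $k$ and observe that, since $\VVV = \bigcup_{\ell}A_{\ell}$, we have the increasing union
$$
\VVV_{k} = \bigcup_{\ell}(\VVV_{k}\cap A_{\ell}),
$$
in which each $\VVV_{k}\cap A_{\ell}$ is closed in the variety $\VVV_{k}$ (because $A_{\ell}$ is closed in $\VVV$), hence constructible. Now I would apply Lemma~\ref{constructible.lem} with $X = C = \VVV_{k}$ and $C_{\ell}:=\VVV_{k}\cap A_{\ell}$: since $\kk$ is uncountable, the countable covering of the variety $\VVV_{k}$ by the constructible sets $C_{\ell}$ must already reduce to a finite subcovering, so $\VVV_{k} = \bigcup_{\ell\in F}(\VVV_{k}\cap A_{\ell})$ for some finite $F\subseteq\NN$. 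As the $A_{\ell}$ increase, this union equals $\VVV_{k}\cap A_{\ell_{0}}$ with $\ell_{0}:=\max F$, giving $\VVV_{k}\subseteq A_{\ell_{0}}$, exactly the missing condition.

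Assembling the two conditions for all indices shows that the identity map $\VVV = \bigcup_{k}\VVV_{k}\to\VVV=\bigcup_{\ell}A_{\ell}$ is an isomorphism of ind-varieties, i.e.\ the $A_{\ell}$-filtration is admissible. The only real input, and the step where uncountability is indispensable, is the application of Lemma~\ref{constructible.lem}: over a countable field a variety can be a strictly increasing countable union of proper closed subsets, so the argument, and indeed the theorem itself, would break down. I do not expect any further obstacle, since the reduction above is purely formal once the correct instance of the lemma has been identified.
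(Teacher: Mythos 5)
Your proof is correct and follows essentially the same route as the paper: the paper likewise reduces admissibility to the single nontrivial containment $\VVV_{k}\subseteq A_{\ell}$ and obtains it by applying Lemma~\ref{constructible.lem} to the countable union $\VVV_{k}=\bigcup_{\ell}(\VVV_{k}\cap A_{\ell})$ of closed (hence constructible) subsets. Your write-up is in fact slightly more careful than the paper's, since you spell out both conditions in the definition of equivalent filtrations and the monotonicity step reducing the finite union to a single $A_{\ell_{0}}$.
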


\begin{proof}
Let $\VVV = \bigcup_{k}\VVV_{k}$ and assume that $\VVV = \bigcup_{\ell}A_{\ell}$ where $A_{1}\subseteq A_{2}\subseteq A_{3}\cdots$ are closed algebraic subsets.
By definition, each $A_{k}$ is contained in some $\VVV_{\ell}$. On the other hand, $\VVV_{k} = \bigcup_{\ell} (\VVV_{k}\cap A_{\ell})$, and so $\VVV_{k}=A_{\ell}\cup\VVV_{k}$ by Lemma~\ref{constructible.lem}, i.e. $\VVV_{k}\subseteq A_{\ell}$.
\end{proof}

\begin{remark}
This result does not hold for a countable field $\kk$, since the affine line $\AA^1$ can be filtered by a sequence of finite sets.
\end{remark}

If $\kk$ is uncountable, it follows from the theorem above that condition (c) of Lemma~\ref{Kumar-criterion.lem} is already implied by the conditions (a) and (b):

\begin{lemma}   \label{simplification-Kumar.lem}
Assume that the ground field $\kk$ is uncountable. Then  a morphism $\phi\colon \VVV=\bigcup_{k}\VVV_{k} \to \WWW=\bigcup_{k}\WWW_{k}$ of ind-varieties is a closed immersion if and only if the following holds:
\be
\item[(a)] 
For every $k$ there is an $\ell$ such that $\phi(\VVV_{k}) \subseteq \WWW_{\ell}$ and 
$\phi|_{\VVV_{k}}\colon \VVV_{k} \to \WWW_{l}$ is a closed immersion of varieties.
\item[(b)]
$\phi(\VVV) \subseteq \WWW$ is closed.
\ee
\end{lemma}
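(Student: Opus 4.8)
The plan is to reduce everything to the criterion of Lemma~\ref{Kumar-criterion.lem}, which characterizes closed immersions by the three conditions (a), (b), (c). The forward implication is then immediate: if $\phi$ is a closed immersion, that lemma gives (a), (b), (c), so in particular (a) and (b) hold. Thus the entire content lies in the converse: assuming (a) and (b), together with the hypothesis that $\kk$ is uncountable, I must produce condition (c), namely that $\phi\colon\VVV\to\phi(\VVV)$ is a homeomorphism. Once (c) is established, Lemma~\ref{Kumar-criterion.lem} finishes the argument.

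First I would record the easy consequences of (a). Since each $\phi|_{\VVV_k}\colon \VVV_k\to\WWW_\ell$ is a closed immersion of varieties, it is injective with closed image; compatibility of these maps then shows $\phi$ is injective on all of $\VVV$, because any two points lie in a common $\VVV_k$. Hence $\phi$ induces a bijective morphism $\phi_0\colon \VVV\to\phi(\VVV)$, and each restriction $\phi|_{\VVV_k}\colon \VVV_k\simto \phi(\VVV_k)$ is an isomorphism onto the closed subvariety $\phi(\VVV_k)\subseteq\WWW_\ell$.

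The key step, and the place where the uncountability hypothesis enters, is to understand the ind-structure of the image. By (b) the set $\phi(\VVV)$ is closed in $\WWW$, hence a closed ind-subvariety carrying the filtration $\phi(\VVV)\cap\WWW_\ell$. By (a) each $\phi(\VVV_k)$ is closed in some $\WWW_\ell$, hence a closed algebraic subset of $\phi(\VVV)$; these sets increase and exhaust $\phi(\VVV)$. I would then invoke Theorem~\ref{closed-algebraic-filtration.thm}: over an uncountable field, any filtration of an ind-variety by closed algebraic subsets is admissible. Therefore $\phi(\VVV)=\bigcup_k\phi(\VVV_k)$ is an admissible filtration of $\phi(\VVV)$, defining the same ind-structure as the one induced from $\WWW$.

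Finally I would verify (c) using this admissible filtration. Given a closed $Z\subseteq\VVV$, I want $\phi_0(Z)$ closed in $\phi(\VVV)$, which by admissibility may be tested on each $\phi(\VVV_k)$; and $\phi_0(Z)\cap\phi(\VVV_k)=\phi_0(Z\cap\VVV_k)$ is closed because $\phi|_{\VVV_k}\colon\VVV_k\simto\phi(\VVV_k)$ is in particular a homeomorphism. Thus $\phi_0$ is a closed bijective morphism, hence a homeomorphism, which is exactly (c). (Equivalently, $\phi_0^{-1}$ restricts to a morphism on each $\phi(\VVV_k)$, and since this filtration is admissible, $\phi_0$ is already an isomorphism, bypassing (c) altogether.) The main obstacle is precisely the identification of the two filtrations of $\phi(\VVV)$: a priori the $\phi(\VVV_k)$ need not be comparable with the $\WWW_\ell$-induced filtration, and it is Theorem~\ref{closed-algebraic-filtration.thm}—hence the uncountability of $\kk$—that removes this difficulty.
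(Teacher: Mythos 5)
Your proposal is correct and follows the paper's own argument essentially step for step: reduce to condition (c) of Lemma~\ref{Kumar-criterion.lem}, use (a) and (b) to see that the $\phi(\VVV_{k})$ form a filtration of $\phi(\VVV)$ by closed algebraic subsets which is admissible by Theorem~\ref{closed-algebraic-filtration.thm} (the only place uncountability of $\kk$ enters), and then conclude that $\phi$ is a closed map via $\phi(Z)\cap\phi(\VVV_{k})=\phi(Z\cap\VVV_{k})$. Your parenthetical shortcut--checking that $\phi_{0}^{-1}$ is a morphism on each $\phi(\VVV_{k})$ and invoking admissibility directly--is a harmless minor variant of the same idea.
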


\begin{proof}
It is enough to show that condition (c) of Lemma~\ref{Kumar-criterion.lem} is satisfied. Note that the two conditions above imply that for each $k$
\be
\item $\phi (\VVV_k)$ is closed in $\phi (\VVV)$ and in $\WWW$, and 
\item $\phi$ induces an isomorphism $\VVV_k \simto \phi (\VVV_k)$.
\ee
In order to prove that $\phi \colon \VVV \to \phi (\VVV)$ is a homeomorphism it is enough to show that $\phi$ is a closed map, i.e. that it sends  closed sets to  closed sets. Let $Z \subseteq \VVV$ be any closed subset. This means that $Z \cap \VVV_k$ is closed in $\VVV_k$ for each $k$.  The conditions (1)-(2) above imply that $\phi (Z \cap \VVV_k)= \phi (Z) \cap \phi (\VVV_k) $ is closed in $\phi (\VVV_k)$. Since the filtration of $\phi (\VVV)$ by the closed algebraic subsets $\phi (\VVV_k)$ is admissible, this means that $\phi (Z)$ is closed in $\phi (\VVV)$.
\end{proof}

\ps
\subsection{Examples of ind-varieties}  \label{examples-of-ind-varieties.subsec}

\begin{example}
Every {\it variety $X$} is an ind-variety in a canonical way where we take $X_{k}:=X$ for all $k$. On the other hand, it is clear that an ind-variety $\VVV=\bigcup \VVV_{k}$ is a variety if and only if there is a $k_0$ such that  $\VVV_{k} = \VVV_{k_0}$ for $k \geq k_0$.
\end{example}

\begin{example} \label{countable-vector-space.exa}
Any {\it $\kk$-vector space $V$ of countable dimension} is given the structure of an (affine) ind-variety by choosing an increasing sequence of finite-dimensional subspaces $V_{k}$ such that $V = \bigcup_{k} V_{k}$. Clearly, all filtrations by finite-dimensional subspaces are admissible. 
\end{example}

\begin{example}\label{subst-R-algebra.exa}
If $R$ is a commutative $\kk$-algebra of countable dimension, $\aa \subseteq R$ a subspace (e.g. an ideal), and $P\subseteq\kk[x_{1},\ldots,x_{n}]$ a set of polynomials, then the subset 
\[ 
\{(a_{1},\ldots,a_{n})\in R^{n}\mid f(a_{1},\ldots,a_{n})\in\aa \text{ for all }f\in P\}\subseteq R^{n}
\] 
is a closed ind-subvariety of $R^{n}$. A special case of this is the set $X(R)$ of $R$-rational points of an affine 
variety $X$. We will discuss this in Section~\ref{R-rational-points.subsec}, see Proposition~\ref{X(R).prop}.
\end{example}

\begin{example}\label{discrete-indvar.exa}
Every countable set $S$\idx{countable set} is an ind-variety in a canonical way: $S = \bigcup S_{k}$ where all $S_{k}$ are finite. Clearly, all these filtrations are admissible. These ind-varieties are called {\it discrete}\idx{discrete ind-variety}.
\end{example}

\begin{example} \label{countable-family-of-closed-algebraic-subsets.exa}
Let $(X_{i})_{i\in\NN}$ be a countable set of $\kk$-varieties. Then the disjoint union $\XXX:=\bigcup_{i\in\NN} X_{i}$ has a natural structure of an ind-variety where the filtration is given by the finite disjoint unions $\XXX_{k} = \bigcup_{i=1}^{k}X_{i} = X_{1}\dcup X_{2}\dcup\cdots\dcup X_{k}$. Other admissible filtrations can be obtained by writing $\NN$ as a union of finite subsets $N_{i}$ where $N_{i} \subseteq N_{i+1}$, and setting $\XXX_{k}:=\bigcup_{i\in N_{k}}X_{i}$. 

This ind-variety has a countable set of connected components, namely the connected components of the $X_{i}$. In particular, $\XXX$ is not an algebraic variety. Clearly, $\XXX$ is affine if and only if all $X_{i}$ are affine. Moreover, every $X_{i}\subset \XXX$ is open and closed in $\XXX$. Note that $\dim\XXX = \max_{i}\dim X_{i}$. 

If all the $X_{i}$ are closed subsets  of some variety $X$, then we obtain a natural ind-morphism $\phi\colon \XXX \to X$ in the obvious way. If the $X_{i}\subseteq X$ are disjoint, then $\phi$ is injective, but it is not a closed immersion, because $\XXX$ is not a variety. 

If all the $X_{i}$ are closed algebraic subsets  of some ind-variety $\VVV$, then again we get a natural ind-morphism $\phi\colon \XXX \to \VVV$. But even if the  $X_{i}\subseteq\VVV$ are disjoint, one cannot expect that $\phi$ is a closed immersion since, in general, the union $\bigcup_{i\in\NN}X_{i} \subset \VVV$ is not closed.
\end{example}

\begin{example}\label{A-infinity.exa}
The {\it infinite-dimensional affine space\/}\idx{infinite-dimensional affine space $\A{\infty}$} 
\[
\A{\infty}:=\{(x_{1},x_{2},\ldots) \mid  \text{there is a } k\geq 1\text{ such that }x_{i}=0 \text{ for } i \geq k \}
\]
is an ind-variety in a natural way, given by 
$\A{\infty}:= \bigcup_{n \geq 1} \A{n}$ 
where $\A{n}$ is the \itind{affine $n$-space} $\kk^{n}$ embedded into $\A{n+1}$ via the map $(a_{1},\ldots,a_{n})\mapsto (a_{1},\ldots,a_{n},0)$.
\end{example}\idx{$\AAA$@$\AA^{\infty}$}\idx{$\AAA$@$\An$}

\begin{example}\label{inductive-limit.exa}
If $X_{k}$, $k\in\NN$, are varieties and $\phi_{k}\colon X_{k} \to X_{k+1}$ closed immersions, then the \itind{inductive limit} $\varinjlim X_{k}$ is an ind-variety in a canonical way. If all $X_{k}$ are affine, then $\varinjlim X_{k}$ is affine.
\end{example}
\idx{$\LLL$@$\underset{\rightarrow}{\lim} X_{k}$}

\begin{proposition}  \label{indlimit-of-affine spaces.prop}
For any sequence of closed immersions $\phi_k \colon \A{k} \to \A{k+1}$, $k \geq 1$, we have 
$\varinjlim \A{k}  \simeq \Ainfty$.
\end{proposition}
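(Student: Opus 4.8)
The plan is to reduce everything to a single rectification statement about embeddings of affine spaces and then telescope automorphisms along a rapidly growing subsequence. Write $W_{k}:=\AA^{k}$ and let $\phi_{k}\colon W_{k}\to W_{k+1}$ be the given closed immersions, so that $\varinjlim\AA^{k}=\bigcup_{k}W_{k}$ as an ind-variety; for $k<l$ denote by $h_{k,l}:=\phi_{l-1}\circ\cdots\circ\phi_{k}\colon W_{k}\to W_{l}$ the structural composite. Since passing to a cofinal subfiltration yields an equivalent (hence admissible) ind-structure, I may replace the index set by any increasing sequence $m_{1}<m_{2}<\cdots$, and I will choose it to grow fast, namely $m_{k+1}\ge 2m_{k}+2$. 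The goal is then to produce automorphisms $\theta_{k}\in\Aut(\AA^{m_{k}})$ such that each conjugated map $\theta_{k+1}\circ h_{m_{k},m_{k+1}}\circ\theta_{k}^{-1}\colon \AA^{m_{k}}\to\AA^{m_{k+1}}$ is the standard inclusion $\iota_{m_{k},m_{k+1}}$. The commuting squares $\theta_{k+1}\circ h_{m_{k},m_{k+1}}=\iota_{m_{k},m_{k+1}}\circ\theta_{k}$ show that the family $(\theta_{k})_{k}$ is an isomorphism of directed systems, hence an isomorphism of ind-varieties from $\varinjlim(\AA^{m_{k}},h_{m_{k},m_{k+1}})$ onto $\varinjlim(\AA^{m_{k}},\iota)=\Ainfty$, which is exactly the assertion.

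The engine is the rectifiability of a closed immersion of affine space into a higher-dimensional affine space. The elementary input, which I would state and prove first, is the \emph{stable} form: if $f\colon\AA^{a}\to\AA^{n}$ is a closed immersion, then the composite $\AA^{a}\xrightarrow{f}\AA^{n}\hookrightarrow\AA^{n+a}$ with the standard inclusion is equivalent, under $\Aut(\AA^{n+a})$, to the standard inclusion $\AA^{a}\hookrightarrow\AA^{n+a}$. This is the classical two-shear (graph) argument: surjectivity of $f^{*}$ provides $g_{1},\dots,g_{a}\in\OOO(\AA^{n})$ with $g_{i}\circ f=x_{i}$; writing coordinates $(y,w)$ on $\AA^{n+a}$ with $y\in\AA^{n}$, $w\in\AA^{a}$, the shear $(y,w)\mapsto(y,\,w+\mathbf g(y))$ sends $\iota\circ f$ to the graph embedding $x\mapsto(f(x),x)$, and the second shear $(y,w)\mapsto(y-f(w),w)$ straightens the graph to $(0,x)$, i.e. to the standard inclusion. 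The reason for the fast subsequence is that, because $m_{k+1}\ge 2m_{k}+2$, I may instead invoke the \emph{in-place} rectification theorem: any closed immersion $\AA^{a}\hookrightarrow\AA^{N}$ with $N\ge 2a+2$ is equivalent to the standard one under $\Aut(\AA^{N})$ (\name{Jelonek}, \name{Kaliman}, \name{Srinivas}).

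With this in hand the telescope is a short induction. Set $\theta_{1}:=\id$. Assuming $\theta_{k}$ constructed, the map $h_{m_{k},m_{k+1}}\circ\theta_{k}^{-1}\colon\AA^{m_{k}}\to\AA^{m_{k+1}}$ is a closed immersion whose target has dimension $m_{k+1}\ge 2m_{k}+2$; by the in-place rectification theorem there is $\theta_{k+1}\in\Aut(\AA^{m_{k+1}})$ with $\theta_{k+1}\circ h_{m_{k},m_{k+1}}\circ\theta_{k}^{-1}=\iota_{m_{k},m_{k+1}}$. This is precisely the compatibility the colimit comparison needs, and the induction closes with no further choices. Finally $\varinjlim(\AA^{m_{k}},\iota)$ is a cofinal subfiltration of the standard filtration of $\Ainfty$, hence equals $\Ainfty$, completing the identification.

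The main obstacle—and the reason one must pass to a subsequence rather than treat the $\phi_{k}$ one at a time—is that an individual hypersurface embedding $\AA^{k}\hookrightarrow\AA^{k+1}$ cannot be rectified in place: whether every such embedding is equivalent to the standard one is the Abhyankar–Sathaye problem, which is open. Raising the codimension (via $m_{k+1}\ge 2m_{k}+2$) is exactly what buys rectifiability. If one insists on a self-contained proof using only the elementary stable rectification above, the difficulty resurfaces in a different guise: the shear trick straightens $\iota\circ f$ only after enlarging the target, so it yields directly a closed immersion $F\colon\varinjlim\AA^{k}\into\Ainfty$ (extend each $F_{k}$ across $\phi_{k}$ by recording the new coordinate in a fresh slot of $\Ainfty$), and the remaining point is to prove $F$ surjective. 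This is then settled by a back-and-forth in which, while extending $F$, one steers the images $F(W_{k})$ so that every standard $\AA^{n}\subseteq\Ainfty$ is eventually engulfed; a bijective closed immersion is an isomorphism, and the claim follows. Either way the single nontrivial ingredient is the rectification of affine-space embeddings in high codimension.
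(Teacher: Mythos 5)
Your proof is correct and is essentially the paper's own argument: both pass to a rapidly growing subsequence (your $m_{k+1}\ge 2m_k+2$ matches the paper's $d_{k+1}>2d_k+1$) and inductively telescope automorphisms across the commuting squares, with your ``in-place rectification theorem'' being exactly the special case of \name{Srinivas}'s embedding theorem (the paper's sole ingredient) comparing a given closed immersion $\AA^a\into\AA^N$, $N>2a+1$, with the standard inclusion. Your supplementary remarks on the stable two-shear trick, the Abhyankar--Sathaye problem, and the back-and-forth alternative are sensible context but play no role in the main line, which coincides with the paper's proof.
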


\begin{proof}
The proof relies on the following statement proved by \name{Srinivas} in \cite[Theorem 2]{Sr1991On-the-embedding-d}. Assume that $\psi_1, \psi_2 \colon V \to \AA^n$ are  two closed embeddings of a smooth affine irreducible variety $V$ of dimension $d$  into an affine space $\AA^n$ of dimension $n > 2d+1$. {\it Then there exists an automorphism $\phi$ of $\AA^n$ such that $\psi_2 =  \phi \circ \psi_1$.}

Let $d_k$ be any sequence of non-negative integers satisfying $d_{k+1} > 2 d_k +1$.
For each $k \geq 0$, set $V_k:= \A{d_k}$ and let $\widetilde{\psi}_k$, $\tilde{\iota}_k \colon V_k \to V_{k+1}$ be the morphisms defined by the following compositions:
\[
\begin{CD}
\widetilde{\psi}_k \colon V_{k}=\A{d_k} @>{\psi_{d_k}}>>  \A{d_k+1} 
@>{\psi_{d_k + 1}}>>\cdots@>{\psi_{d_{k+1} - 1}}>>\A{d_{k+1}} = V_{k+1},
\end{CD}
\]
\[
\begin{CD}
\tilde{\iota}_k \colon V_{k}=\A{d_k} @>{\iota_{d_k}}>>\A{d_k+1} @>{\iota_{d_k + 1}}>>\cdots @>{\iota_{d_{k+1} - 1}}>> \A{d_{k+1}}=V_{k+1},
\end{CD}
\]
where $\iota_k \colon \A{k} \hookrightarrow \A{k+1}$ is the natural injection sending $(x_1,\ldots,x_k)$ to $(x_1,\ldots,x_k,0)$. Note that ${\dis \varinjlim_{\psi}\A{k} \simeq \varinjlim_{\tilde \psi}V_k}$ and that ${\dis  \A{\infty}= \varinjlim_{\iota}\A{k} \simeq \varinjlim_{\tilde \iota}V_k}$. Therefore, it is enough to construct a sequence of automorphisms $\alpha_k \colon V_k \simto V_k$ making the following diagram commutative:
\[\tag{$S_k$}
\begin{CD}
V_k @>{{\widetilde \psi}_k}>> V_{k+1} \\
@V{\simeq}V{\alpha_k}V  @V{\simeq}V{\alpha_{k+1}}V\\
V_k @>{{\tilde \iota}_k}>> V_{k+1}
\end{CD}
\]
Assume that $\alpha_0:=\id,\ldots,\alpha_k$ are constructed such that the squares $(S_0),\ldots,(S_{k-1})$ are commutative. Since ${\widetilde \psi}_k$ and ${\tilde \iota}_k \circ \alpha_k$ are two closed immersions of $V_{k}$ into $V_{k+1}$ and since $\dim V_{k+1}=d_{k+1}>2d_{k}+ 1 = 2\dim V_{k}+1$, the existence of $\alpha_{k+1}$ making $(S_{k})$ commutative follows from the above mentioned result of \name{Srinivas}.
\end{proof}

Generalizing Example~\ref{inductive-limit.exa} we get the following.

\begin{lemma}\label{lim-of-indvar.lem}
Let $(\MMM_k)_{k \in \NN}$ be a sequence of ind-varieties together with closed immersions $\iota_k \colon \MMM_k \to \MMM_{k+1}$ for $k\in \NN$. Then, the inductive limit $\MMM:=\varinjlim \MMM_k$ exists in the category of ind-varieties. If all $\MMM_{k}$ are affine, then $\MMM$ is affine.
\end{lemma}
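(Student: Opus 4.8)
The plan is to realize $\MMM$ as a set-theoretic union and to equip it with a filtration obtained by a diagonal procedure. Since each $\iota_k$ is a closed immersion, it is in particular injective, so I may identify $\MMM_k$ with its image $\iota_k(\MMM_k) \subseteq \MMM_{k+1}$ and form the union $\MMM := \bigcup_k \MMM_k$, together with the inclusions $j_k \colon \MMM_k \into \MMM$. The task is then to produce an admissible filtration of $\MMM$ by algebraic varieties and to check the universal property of the inductive limit.

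First I would align the given filtrations. Write $\MMM_k = \bigcup_j \MMM_{k,j}$. Starting from the filtration of $\MMM_1$ and applying Lemma~\ref{good-filtrations-for-morphisms.lem} to $\iota_k$ at each step, I can inductively replace the filtration of each $\MMM_{k+1}$ by an admissible one so that $\iota_k(\MMM_{k,j}) \subseteq \MMM_{k+1,j}$ for all $j$. Since $\MMM_{k,j}$ is closed in $\MMM_k$ and $\iota_k$ is a closed immersion of ind-varieties, the restriction $\iota_k|_{\MMM_{k,j}}\colon \MMM_{k,j} \to \MMM_{k+1,j}$ is then a closed immersion of algebraic varieties (its image is closed in $\iota_k(\MMM_k)$, hence in $\MMM_{k+1,j}$, and $\iota_k$ is an isomorphism onto its image); this uses the defining property recorded in Lemma~\ref{Kumar-criterion.lem}(a).

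Next I set $D_m := \MMM_{m,m} \subseteq \MMM$. I claim that $\MMM = \bigcup_m D_m$ and that each inclusion $D_m \subseteq D_{m+1}$ is a closed immersion of varieties, which endows $\MMM$ with the structure of an ind-variety. For exhaustion, any $x \in \MMM$ lies in some $\MMM_{k,j}$; with $m := \max(k,j)$ one has $\MMM_{k,j} \subseteq \MMM_{k,m}$ and, applying $\iota_{m-1}\circ\cdots\circ\iota_k$, also $\MMM_{k,m} \subseteq \MMM_{m,m} = D_m$, so $x \in D_m$. For the consecutive inclusions, $D_m = \MMM_{m,m}$ maps by the closed immersion $\iota_m$ into $\MMM_{m+1,m}$, which is closed in $\MMM_{m+1,m+1} = D_{m+1}$; a composite of closed immersions is a closed immersion.

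Finally I would verify the universal property and the affine statement. The inclusions $j_k\colon\MMM_k \into \MMM$ are closed immersions, since each $\MMM_{k,j}$ sits as a closed subvariety of $D_{\max(k,j)}$; and given compatible morphisms $f_k\colon \MMM_k \to \NNN$ with $f_{k+1}\circ\iota_k = f_k$, the map $f\colon \MMM \to \NNN$ defined by $f|_{\MMM_k} = f_k$ is well defined, restricts on each $D_m$ to the morphism $f_m|_{\MMM_{m,m}}$, hence is an ind-morphism, and is clearly the unique such map. If all $\MMM_k$ are affine, then every admissible filtration of $\MMM_k$ consists of affine varieties, so the $D_m$ are affine and $\MMM$ is affine by definition. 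I expect the main obstacle to be the diagonalization of the second step: one must align infinitely many filtrations simultaneously and ensure that the resulting diagonal $\bigcup_m D_m$ still exhausts $\MMM$, which is exactly what the absorption argument via $m = \max(k,j)$ secures.
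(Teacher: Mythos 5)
Your proof is correct and follows essentially the same route as the paper's: both align the filtrations via Lemma~\ref{good-filtrations-for-morphisms.lem} so that $\MMM_{k,j}\subseteq\MMM_{k+1,j}$ after identifying each $\MMM_k$ with a closed ind-subvariety of $\MMM_{k+1}$, and then take the diagonal $\bigcup_m \MMM_{m,m}$ as the limit. The only difference is that you spell out the exhaustion argument via $m=\max(k,j)$, the closed-immersion checks, the universal property, and the affine case, all of which the paper compresses into ``one easily checks.''
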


\begin{proof}
We may assume that $\MMM_k$ is a closed subset of $\MMM_{k+1}$ for all $k$, and that the admissible filtrations $\MMM_k =\bigcup_{\ell}\MMM_{k,\ell}$ satisfy $\MMM_{k,\ell}\subseteq \MMM_{k+1,\ell}$ for each $k,\ell$ (Lemma~\ref{good-filtrations-for-morphisms.lem}). Then one easily checks that $\varinjlim \MMM_{k,k}$ is the inductive limit of the $(\MMM_k)_{k \in \NN}$.
\end{proof}

\ps
\subsection{Affine ind-varieties} \label{affine-ind.subsec}\idx{affine ind-variety}
Extending a classical result for varieties, we want to
show that every affine ind-variety $\VVV$ admits a closed immersion into $\AA^{\infty}$.

\begin{theorem}\label{embedding-into-Ainfty.thm}
An ind-variety $\VVV$ is affine if and only if it is isomorphic to a closed subvariety of $\AA^{\infty}$.
\end{theorem}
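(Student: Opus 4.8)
The plan is to prove both implications, the nontrivial one being that an affine ind-variety admits a closed immersion into $\Ainfty$. The easy direction is immediate: if $\VVV$ is isomorphic to a closed ind-subvariety of $\Ainfty=\bigcup_n\A{n}$ (Example~\ref{A-infinity.exa}), then the induced admissible filtration is $\VVV_n=\VVV\cap\A{n}$, and each $\VVV_n$ is closed in the affine variety $\A{n}$, hence affine; so $\VVV$ is affine by definition.

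For the converse, I would fix an admissible filtration $\VVV=\bigcup_k\VVV_k$ with all $\VVV_k$ affine and each $\VVV_k\into\VVV_{k+1}$ a closed immersion, so that on coordinate rings the restriction $\OOO(\VVV_{k+1})\to\OOO(\VVV_k)$ is surjective. The strategy is to build, by induction on $k$, closed immersions $\iota_k\colon\VVV_k\into\A{n_k}$ that are compatible in the sense that the square formed by $\iota_k$, $\iota_{k+1}$ and the standard inclusion $\A{n_k}\into\A{n_{k+1}}$ commutes. Passing to the limit then yields a morphism $\Phi\colon\VVV=\varinjlim\VVV_k\to\varinjlim\A{n_k}=\Ainfty$, the right-hand limit being $\Ainfty$ because the transition maps are the standard inclusions of a cofinal subsequence.

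The inductive step is the heart of the matter. Given $\iota_k$, whose coordinate functions generate $\OOO(\VVV_k)$, I first extend each such function to $\OOO(\VVV_{k+1})$ using surjectivity of the restriction. To produce $\iota_{k+1}$ I then adjoin a finite block $B_{k+1}$ of new coordinate functions all lying in the ideal $I:=\ker(\OOO(\VVV_{k+1})\to\OOO(\VVV_k))$ of functions vanishing on $\VVV_k$; this vanishing is exactly what makes the square commute with the standard inclusion. The block must simultaneously accomplish two things: (i) together with the extended old functions it generates $\OOO(\VVV_{k+1})$ as an algebra, and (ii) its common zero locus in $\VVV_{k+1}$ is exactly $\VVV_k$. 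For (i), writing algebra generators $t_\alpha$ of $\OOO(\VVV_{k+1})$ as $t_\alpha=a_\alpha+w_\alpha$ with $a_\alpha$ in the subalgebra $A$ generated by the old functions and $w_\alpha\in I$ (possible since $A+I=\OOO(\VVV_{k+1})$), the elements $w_\alpha$ suffice, as $t_\alpha\in A[\{w_\alpha\}]$. For (ii), I additionally throw in a finite set of ideal generators of $I$; enlarging $B_{k+1}$ this way preserves (i) and forces the common zero locus to be $\VVV_k$.

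Finally I would verify that $\Phi$ is a closed immersion using the criterion of Lemma~\ref{Kumar-criterion.lem}. Condition~(a) holds since each $\iota_k$ is a closed immersion. The main obstacle is condition~(b), closedness of the image, and property~(ii) is precisely what resolves it: using the cofinal filtration $\Ainfty=\bigcup_k\A{n_k}$ I claim $\Phi(\VVV)\cap\A{n_k}=\Phi(\VVV_k)$. Indeed, if $\Phi(v)\in\A{n_k}$ with $v\in\VVV_m$, then all blocks $B_{k+1},\dots,B_m$ vanish at $v$; applying property~(ii) of $B_m$ gives $v\in\VVV_{m-1}$, then of $B_{m-1}$ gives $v\in\VVV_{m-2}$, and this descending induction forces $v\in\VVV_k$. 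Since $\Phi(\VVV_k)$ is closed in $\A{n_k}$, the image is closed. Condition~(c), that $\Phi$ is a homeomorphism onto its image, then follows because each $\VVV_k\simto\Phi(\VVV_k)=\Phi(\VVV)\cap\A{n_k}$ is an isomorphism and these identifications are compatible with the filtrations. This yields the desired closed immersion $\VVV\into\Ainfty$.
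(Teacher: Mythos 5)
Your proof is correct and follows essentially the same route as the paper: your commuting square and zero-locus condition are exactly the paper's properties ($P_k$) and ($Q_k$), and your inductive step (extending the old coordinates through the surjection $\OOO(\VVV_{k+1})\to\OOO(\VVV_k)$ and adjoining a finite block of ideal generators of the kernel $I$) is precisely the content of the paper's Lemma~\ref{extending-closed-immersion.lem}. The only difference is cosmetic: you spell out the descending-induction verification that $(Q_k)$ forces $\Phi(\VVV)\cap\AA^{n_k}=\Phi(\VVV_k)$ and hence closedness of the image, a point the paper compresses into a parenthetical remark.
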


\begin{proof}
It is clear that a closed subvariety of $\AA^{\infty}$ is affine. Conversely, assume that the ind-variety $\VVV = \bigcup_{k \geq 0} \VVV_k$ is affine, i.e. that each $V_k$ is an affine variety. It is enough to prove that 
there exists a sequence of integers $0\leq n_0 \leq n_1 \leq \cdots $ and a sequence of closed immersions $\psi_k \colon \VVV_k \hookrightarrow \AA^{n_k}$ satisfying the two following two properties where  $\iota_k \colon \AA^{n_k} \to \AA^{n_{k+1}}$ denotes the natural inclusion sending $(x_1, \ldots, x_{n_k})$ to $(x_1, \ldots, x_{n_k}, 0, \ldots, 0)$:
\begin{enumerate}
\item [($P_k$)] 
The following diagram is commutative:
\[
\begin{CD}
\VVV_{k} @>{\psi_{k}}>{\subseteq}> \AA^{n_{k}} \\
@V{\subseteq}V{\iota_{k}}V @V{\subseteq}V{\iota_{k+1}}V \\
\VVV_{k+1} @>{\psi_{k+1}}>{\subseteq}> \AA^{n_{k+1}}
\end{CD}
\]
\item  [($Q_k$)] 
We have $\psi_{k+1} (\VVV_{k+1} ) \cap \AA^{n_k}  = \psi_k (\VVV_k)$.
\end{enumerate}
(Note that condition $Q_{k}$ is needed to make sure that the induced morphism $\psi\colon \VVV \to \AA^{\infty}$ is a closed immersion.)

Assume that the integers $n_k$ as well as the closed immersions $\psi_k \colon V_k \hookrightarrow \AA^{n_k}$  have been constructed for $k \leq \ell$ such that the properties ($P_k$) and ($Q_k$) are satisfied for $k \leq \ell-1$. By Lemma~\ref{extending-closed-immersion.lem} below there exists an integer $n_{l+1} \geq n_l$ and a closed immersion $\psi_{l+1} \colon \VVV_{l+1} \hookrightarrow \AA^{n_{l+1}}$ such that the properties ($P_\ell$) and ($Q_\ell$) are satisfied. By induction it follows that $\psi\colon \VVV \to \AA^{\infty}$ is well-defined and is a closed immersion.
\end{proof}

\begin{lemma} \label{extending-closed-immersion.lem}
Let $X$ be a closed subvariety of an affine variety $Y$ and  $\psi  \colon X \hookrightarrow  \AA^n$ a closed immersion. For any $m$ consider the natural embedding  $\AA^{n} \subseteq \AA^{n+m}$ given by $(x_{1},\ldots,x_{n})\mapsto (x_{1},\ldots,x_{n},0,\ldots,0)$. Then there exists an $m$ such that the composition $\psi\colon X \into \AA^{n}\subseteq \AA^{n+m}$  extends to a closed immersion $\tilde\psi\colon Y \into \AA^{n+m}$ with $\tilde\psi(Y) \cap \AA^{n}= \psi(X)$.
\[
\begin{CD}
X @>\psi>> \AA^{n} \\
@VV{\subseteq}V   @VV{\subseteq}V \\
Y @>\tilde\psi>> \AA^{n+m}
\end{CD}
\]
\end{lemma}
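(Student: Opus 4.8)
The plan is to translate everything into commutative algebra. Write $A := \OOO(Y)$ for the coordinate ring of the affine variety $Y$, and let $I \subseteq A$ be the (radical) ideal of functions vanishing on the closed subvariety $X$, so that $\OOO(X) = A/I$. A closed immersion $\psi \colon X \into \AA^{n}$ is the same datum as a surjective $\kk$-algebra homomorphism $\kk[x_{1},\ldots,x_{n}] \onto A/I$, i.e. a choice of algebra generators $\bar f_{1},\ldots,\bar f_{n}$ of $A/I$; lift each $\bar f_{i}$ to some $f_{i} \in A$. Constructing the desired $\tilde\psi \colon Y \into \AA^{n+m}$ then amounts to producing elements $g_{1},\ldots,g_{n+m} \in A$ generating $A$ as a $\kk$-algebra (which is exactly closedness of the immersion), subject to $g_{i} = f_{i}$ for $i \le n$ and $g_{i} \in I$ for $i > n$ (which is exactly the extension condition $\tilde\psi|_{X} = \iota \circ \psi$, since reduction modulo $I$ must send the tuple to $(\bar f_{1},\ldots,\bar f_{n},0,\ldots,0)$).

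First I would arrange algebra generation. Because $Y$ is affine, $A$ is finitely generated, say by $a_{1},\ldots,a_{N}$. Their images in $A/I$ are polynomials $\bar a_{j} = P_{j}(\bar f_{1},\ldots,\bar f_{n})$, so the differences $h_{j} := a_{j} - P_{j}(f_{1},\ldots,f_{n})$ lie in $I$. By construction $f_{1},\ldots,f_{n},h_{1},\ldots,h_{N}$ generate $A$ as a $\kk$-algebra, and each $h_{j}$ lies in $I$ as required.

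The subtle point, and the only real content, is the intersection condition $\tilde\psi(Y) \cap \AA^{n} = \psi(X)$, where $\AA^{n} \subseteq \AA^{n+m}$ is cut out by the vanishing of the last $m$ coordinates. A point $y$ maps into $\AA^{n}$ precisely when the extra coordinate functions vanish at $y$; since these lie in $I$ we automatically get $\psi(X) \subseteq \tilde\psi(Y) \cap \AA^{n}$, but the reverse inclusion requires the extra coordinates to cut out $X$ set-theoretically, which the $h_{j}$ alone need not do (generating $A$ as an algebra does not control the zero locus of the $h_{j}$). I would fix this by enlarging the list: since $A$ is Noetherian, $I$ is finitely generated as an ideal, say $I = (b_{1},\ldots,b_{r})$, and I take the extra coordinates to be $h_{1},\ldots,h_{N},b_{1},\ldots,b_{r}$, setting $m := N + r$. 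Then $f_{1},\ldots,f_{n}$ together with these still generate $A$ (the $h_{j}$ already suffice), while the ideal they generate is exactly $I$, so by the Nullstellensatz their common zero set over the algebraically closed field $\kk$ is $V(I) = X$. Hence $\tilde\psi(Y) \cap \AA^{n} = \tilde\psi(X) = \iota(\psi(X))$, as desired.

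The main obstacle, such as it is, is recognizing that the condition $\tilde\psi(Y) \cap \AA^{n} = \psi(X)$ is what forces the construction: without it one could extend $\psi$ trivially by completing a set of algebra generators, but to pin down the intersection one must also ensure the extra coordinates generate the full vanishing ideal $I$, not merely complete the algebra. Everything else is routine finite-generation bookkeeping.
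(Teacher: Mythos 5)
Your proof is correct and follows essentially the same route as the paper: lift the components of $\psi$ to $\OOO(Y)$, complete them to algebra generators by elements of the vanishing ideal $I(X)$, and — this is the key point you correctly identified — make sure those extra elements also generate $I(X)$ as an ideal, so that their common zero set is exactly $X$ and the intersection condition $\tilde\psi(Y)\cap\AA^n=\psi(X)$ holds. The only cosmetic difference is that you split the extra coordinates into two batches (the $h_j$ for algebra generation, the $b_k$ for ideal generation) where the paper chooses one set doing both jobs at once; also, since $I$ is by definition the full vanishing ideal of $X$, the appeal to the Nullstellensatz is unnecessary.
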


\begin{proof}
Let $f_1, \ldots, f_n \in \OOO (X)$ be the components of $\psi$, i.e. $\psi(x) = ( f_1(x), \ldots, f_n(x) )$ for any $x \in X$. We have a short exact sequence
\[ 
0 \to I(X) \to \OOO (Y) \xrightarrow{\res} \OOO (X) \to 0, 
\]
where $\res \colon \OOO (Y) \to \OOO (X)$ denotes the restriction map $f\mapsto f|_{X}$, and $I(X)$ is the ideal of regular functions on $Y$ vanishing on $X$. For each $i$ let $g_i \in \OOO (Y)$ be a regular function such that $\res g_i = f_i$. Since $\OOO (Y) = \kk [ g_1, \ldots, g_m] + I(X) $, we have $\OOO (Y) = \kk [g_1,\ldots, g_m, I(X) ]$, and so there exists a nonnegative integer $n$ and elements $g_{m+1}, \ldots, g_{m+n} \in I(X)$ satisfying the following two conditions:
\begin{enumerate}
\item The ideal $I(X)$ is generated by $\{g_{m+1}, \ldots, g_{m+n}\}$;
\item $\OOO (Y) =\kk [g_1, \ldots, g_{m+n} ]$.
\end{enumerate}
It follows from (2) that the map $\tilde\psi \colon Y \to \AA^{m+n}$ defined by $y \mapsto (g_1(y), \ldots, g_{m+n}( y) )$ is a closed immersion extending $\psi$. Furthermore,  
$\tilde\psi(y)$ belongs to $\AA^n$ if and only if $g_{m+1}(y) = \cdots = g_{m+n}(y) =0$, i.e. if and only if $y \in X$. The claim follows.
\end{proof}

\ps
\subsection{Connectedness and curve-connectedness}\label{connectedness-and-curve-connectedness.subsec}
We will use the following definition.

\begin{definition}\label{curve-connected.def}
An ind-variety $\VVV$ is called {\it curve-connected}\idx{curve-connected} if for any two points $a,b \in\VVV$ there is an irreducible algebraic curve $D$ and a morphism $D \to \VVV$ whose image contains  $a$ and $b$. Equivalently, there is a closed irreducible algebraic curve $D' \subseteq \VVV$ which contains $a$ and $b$.
\end{definition}
Clearly, a curve-connected ind-variety is connected. It is also known that an irreducible variety is curve-connected (see  \cite[Lemma on page 56]{Mu2008Abelian-varieties}). Here are the two main results of this section:

\begin{proposition}\label{connected.prop}
An ind-variety $\VVV$ is connected if and only if there exists an admissible  filtration $\VVV = \bigcup_{k}\VVV_{k}$ such that all $\VVV_{k}$ are connected.
\end{proposition}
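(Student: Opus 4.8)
The plan is to prove both implications, the forward one being routine and the converse requiring the real work.

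For the easy direction, suppose $\VVV = \bigcup_k \VVV_k$ is an admissible filtration with all $\VVV_k$ connected. Discarding the empty initial terms I may assume every $\VVV_k$ is nonempty, so that $\VVV_1 \subseteq \VVV_k$ for all $k$ exhibits a common nonempty subset. Since each $\VVV_k$ is closed in $\VVV$, its subspace topology coincides with its variety topology, so each $\VVV_k$ is a connected subspace of $\VVV$; as they all contain $\VVV_1$, their union $\VVV$ is connected. (The case $\VVV=\emptyset$ is trivial.)

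For the converse, assume $\VVV \neq \emptyset$ is connected, fix any admissible filtration $\VVV = \bigcup_k \VVV_k$ and a base point $v_0 \in \VVV$, reindexing so that $v_0 \in \VVV_1$. Let $\WWW_k \subseteq \VVV_k$ denote the connected component of $v_0$ in $\VVV_k$. Each $\WWW_k$ is connected and closed in $\VVV_k$, hence a closed algebraic subset; and since $\WWW_k$ is connected, contains $v_0$, and sits inside $\VVV_{k+1}$, it lies in the component of $v_0$ there, so $\WWW_k \subseteq \WWW_{k+1}$. Thus $\WWW_1 \subseteq \WWW_2 \subseteq \cdots$ is an increasing chain of connected closed algebraic subsets, and I set $\WWW := \bigcup_k \WWW_k$.

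The key step, and the main obstacle, is to show $\WWW = \VVV$. I would argue that $\WWW$ is open and closed in $\VVV$, so that connectedness forces $\WWW = \VVV$ (it is nonempty, as $v_0 \in \WWW$). For this it suffices to check that $\WWW \cap \VVV_k$ is a union of connected components of $\VVV_k$ for every $k$. Writing $\WWW \cap \VVV_k = \bigcup_j (\WWW_j \cap \VVV_k)$, the terms with $j \le k$ contribute exactly $\WWW_k$ (they are nested), while for $j > k$ I use the crucial fact that a variety has only finitely many connected components, so each $\WWW_j$ is not merely closed but clopen in $\VVV_j$; intersecting the clopen set $\WWW_j$ with the closed subvariety $\VVV_k$ yields a clopen subset of $\VVV_k$, i.e. a union of connected components of $\VVV_k$. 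Hence $\WWW \cap \VVV_k$ is a finite union of connected components of $\VVV_k$, in particular clopen in $\VVV_k$; this gives $\WWW$ clopen in $\VVV$ and therefore $\WWW = \VVV$.

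Finally I must verify that the filtration $\VVV = \bigcup_k \WWW_k$ is admissible, i.e. equivalent to $\{\VVV_k\}$. One inclusion is immediate since $\WWW_k$ is closed in $\VVV_k$. For the other I must produce, for each $m$, an $N$ with $\VVV_m \subseteq \WWW_N$ (then $\VVV_m = \VVV_m \cap \WWW_N$ is automatically closed in $\WWW_N$, being closed in $\VVV_N \supseteq \WWW_N$). Here I again use finiteness of components: write $\VVV_m = D_1 \cup \cdots \cup D_r$ as its finitely many connected components and pick $d_i \in D_i$. Since $\WWW = \VVV$ there is $k_i$ with $d_i \in \WWW_{k_i}$, and then for $k \ge \max(m,k_i)$ the connected set $D_i \subseteq \VVV_k$ lies in the component of $d_i$ in $\VVV_k$, which equals $\WWW_k$ because $d_i \in \WWW_{k_i} \subseteq \WWW_k$. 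Taking $N = \max_i k_i$ gives $\VVV_m \subseteq \WWW_N$, completing the verification of admissibility and hence the proof.
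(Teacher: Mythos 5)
Your proof is correct and follows essentially the same route as the paper's: build an increasing chain of connected components $\WWW_{k}$ (the paper starts from an arbitrary component of $\VVV_{1}$ rather than a base point, which is equivalent), show that the union meets each $\VVV_{k}$ in a finite union of connected components and is therefore clopen, so connectedness forces $\WWW = \VVV$, and then use the finiteness of the components of each $\VVV_{m}$ to verify admissibility. The only quibble is your final choice of index: you need $N = \max\bigl(m, \max_{i} k_{i}\bigr)$ rather than $N = \max_{i} k_{i}$ to guarantee $\VVV_{m} \subseteq \VVV_{N}$, a trivial repair consistent with the condition $k \geq \max(m, k_{i})$ you had already stated.
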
\idx{connected}

\begin{proposition}  \label{curve-connected.prop}
An ind-variety $\VVV$ is curve-connected if and only if there exists an admissible filtration $\VVV = \bigcup_k \VVV_{k}$ such that all $\VVV_{k}$ are irreducible.
\end{proposition}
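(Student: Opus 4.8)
The two directions are of very different depth: the converse is immediate, while the direct implication carries all the content. Throughout I would fix an arbitrary admissible filtration $\VVV=\bigcup_k\VVV_k$.

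For the implication ``irreducible filtration $\Rightarrow$ curve-connected'', I would argue straight from the definition. Given $a,b\in\VVV$, choose $k$ with $a,b\in\VVV_k$ (possible since the $\VVV_k$ are increasing and cover $\VVV$). As $\VVV_k$ is an irreducible variety, it is curve-connected by the classical fact quoted before Proposition~\ref{connected.prop}, so there is a closed irreducible algebraic curve $D'\subseteq\VVV_k\subseteq\VVV$ containing $a$ and $b$. Hence $\VVV$ is curve-connected, and this direction needs nothing more.

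For the direct implication the plan is to reduce everything to an \emph{engulfing} statement and then to prove the latter by a genericity argument; here I would work over an uncountable $\kk$, so that by Theorem~\ref{closed-algebraic-filtration.thm} \emph{any} filtration of $\VVV$ by closed algebraic subsets is automatically admissible. Thus it suffices to exhaust $\VVV$ by an increasing chain $A_1\subseteq A_2\subseteq\cdots$ of \emph{irreducible} closed algebraic subsets: admissibility is then free, and $\bigcup_\ell A_\ell=\VVV$ is guaranteed as soon as each $\VVV_k$ lies in some $A_\ell$. The key lemma I would isolate is: \emph{any two irreducible closed algebraic subsets $W_1,W_2\subseteq\VVV$ are contained in a common irreducible closed algebraic subset.} Granting this, I would build the $A_k$ by induction on $k$: having produced an irreducible $A_{k-1}$, I merge it successively with the finitely many irreducible components of $\VVV_k$ via the key lemma, each step yielding a larger irreducible closed algebraic set, and take $A_k$ to be the result; then $A_{k-1}\cup\VVV_k\subseteq A_k$, and the chain exhausts $\VVV$.

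It remains to prove the key lemma, and this is where curve-connectedness and uncountability enter. The first observation is that an irreducible subset of a variety lies in a \emph{single} irreducible component; so if $W_1\cup W_2\subseteq\VVV_{m_0}$, then for each $m\ge m_0$ the set $W_1\cap\bigcup\{E'\mid E' \text{ a component of }\VVV_m,\ W_1\not\subseteq E'\}$ is a \emph{proper} closed subset of $W_1$, and likewise for $W_2$. Over an uncountable $\kk$ an irreducible variety is not a union of countably many proper closed subsets (a consequence of Lemma~\ref{constructible.lem}), so I may pick $w\in W_1$ and $z\in W_2$ avoiding all these bad loci for every $m$ simultaneously; the case $\dim W_i=0$ is trivial. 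By curve-connectedness choose a closed irreducible curve $C\subseteq\VVV$ through $w$ and $z$, and let $m$ be such that $C\subseteq\VVV_m$. Since $C$ is irreducible it lies in one component $E$ of $\VVV_m$, whence $w,z\in E$; and by the faithful choice of $w$ and $z$, any component of $\VVV_m$ through $w$ contains $W_1$ and any through $z$ contains $W_2$, so $W_1\cup W_2\subseteq E$, an irreducible closed algebraic subset. This proves the lemma and completes the construction.

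The main obstacle is exactly this engulfing/merging step, and specifically its reliance on the uncountability of $\kk$: the genericity used to force the connecting curve's component to absorb all of $W_1$ and $W_2$ breaks down when $\VVV$ is built from countably many low-dimensional pieces (for instance the $1$-dimensional ind-variety obtained, over a countable field, as the union of all lines in the plane filtered by finite unions is curve-connected yet admits no irreducible filtration). Consequently the serious point of the proof is to secure the uncountable setting — either by assuming it outright, using Theorem~\ref{closed-algebraic-filtration.thm} to make admissibility automatic, or by a base-field extension as prepared in Section~\ref{base-field-extension.subsec} — after which the argument above runs cleanly.
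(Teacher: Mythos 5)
Your easy direction and the core of your hard direction match the paper. Over an uncountable field, your merging lemma runs on the same mechanism as the paper's proof: an irreducible variety cannot be covered by countably many proper closed (or constructible) pieces (Lemma~\ref{constructible.lem}), combined with the fact that a connecting irreducible curve must land in a single irreducible component of some $\VVV_m$. The packaging differs mildly: the paper fixes $b\in Y_2$, considers the whole family of curves $D_a$, $a\in Y_1$, and the loci $S(Z)=\{a\in Y_1\mid D_a\subseteq Z\}$, and applies the countability argument twice (once to absorb $Y_1$, once while varying $b$ over $Y_2$), whereas you choose both endpoints generically at the outset and use a single curve; your version is slightly more economical. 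One simplification you missed: since the irreducible sets your key lemma actually produces are components of various $\VVV_m$, your chain $A_1\subseteq A_2\subseteq\cdots$ consists of closed algebraic subsets, each contained in some $\VVV_m$ and containing $\VVV_k$, so admissibility is immediate from the definition of equivalent filtrations, exactly as for the paper's chain of components $C_i$; the appeal to Theorem~\ref{closed-algebraic-filtration.thm} is unnecessary.

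The genuine defect is your closing paragraph, which is self-contradictory: you exhibit a countable-field counterexample and, in the same breath, claim that a base-field extension will ``secure the uncountable setting''. Your example is in fact correct: over a countable $\kk$, let $\VVV=\bigcup_k(L_1\cup\cdots\cup L_k)$, where $L_1,L_2,\ldots$ enumerate all $\kk$-lines of $\Atwo$. Any two points of $\VVV$ lie on a common $\kk$-line, and each line is a closed algebraic subset of $\VVV$, so $\VVV$ is curve-connected; yet every irreducible closed algebraic subset of $\VVV$ is a point or a line, so no increasing chain of irreducible closed algebraic subsets exhausts $\VVV$, and no admissible irreducible filtration exists. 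Hence the unrestricted statement fails over countable fields, curve-connectedness does not ascend to $\VVV_{\KK}$ (two general $\KK$-points on distinct lines lie on no common $(L_i)_{\KK}$), and the reduction via Section~\ref{base-field-extension.subsec} is not merely unjustified but impossible. Note that your example strikes the paper's own proof at exactly this spot: the step ``we can assume that $\kk$ is uncountable'' tacitly uses curve-connectedness of $\VVV_{\KK}$, which is precisely what fails. So the honest conclusion of your write-up should have been that your argument proves the proposition for uncountable $\kk$, and that your own example refutes both the general statement and the base-change reduction you propose; you cannot have the counterexample and the reduction at once.
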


\begin{proof}[Proof of Proposition~\ref{connected.prop}]
One direction is clear. For the other implication, let us start with an admissible filtration $\VVV = \bigcup_k \VVV_k$ of $\VVV$.
Choose any connected component $\VVV_1^0$ of $\VVV_1$. Then there is a unique connected component $\VVV_2^0$ of $\VVV_2$ which contains $\VVV_1^0$, and so on. In this way, we construct connected closed algebraic subsets
$\VVV_1^0 \subseteq \VVV_2^0 \subseteq  \cdots$.
Set $\VVV^{\circ}:=\bigcup_{k}\VVV_{k}^{\circ}$. Since $\VVV_{\ell}^{\circ}$ is a connected component of $\VVV_{\ell}$ it follows that for $k\leq \ell$ the intersection $\VVV_{\ell}^{\circ}\cap \VVV_{k}$ is a union of connected components of $\VVV_{k}$. Hence, $\VVV^{\circ}\cap \VVV_{k}$ is also a union of connected components of $\VVV_{k}$, and is
therefore open and closed in $\VVV_{k}$. Thus $\VVV^{\circ} = \VVV$, because $\VVV$ is connected. 

It remains to see that $\VVV=\bigcup_{k}\VVV_{k}^{\circ}$ is an admissible filtration.
We know that $V_k = \bigcup_{\ell  \geq k}  (V_\ell^0 \cap V_k)$. Since the intersection $V_\ell^0 \cap V_k$ is a finite union of connected components of $V_k$ it is clear that there exists an $\ell$ such that  $V_k = V_\ell^0 \cap V_k$, and the claim follows.
\end{proof}

\begin{proof}[Proof of Proposition~\ref{curve-connected.prop}]
One direction is clear. For the other implication we claim that for any $k$ there is an $\ell$ and an irreducible component $C$ of $\VVV_{\ell}$ such that $\VVV_{k}\subseteq C$. This implies, by induction, that there is an infinite sequence $\ell_{1}<\ell_{2}<\ell_{3}< \cdots$ and irreducible components $C_{i}\subseteq \VVV_{\ell_{i}}$ such that $C_{i}\supseteq \VVV_{\ell_{i-1}}$. It follows that $\bigcup_{i}C_{i} = \VVV$, finishing the proof of the proposition.

For the proof of the claim we can assume  that $\kk$ is uncountable. In fact, choose  an uncountable algebraically closed field $\KK \supseteq \kk$ and replace all $\VVV_{k}$ by $(\VVV_{k})_{\KK}$. If $\tilde C$ is an irreducible component of $(\VVV_{\ell})_{\KK}$ containing $(\VVV_{k})_{\KK}$, then $\tilde C = C_{\KK}$ for an irreducible component $C$ of $\VVV_{\ell}$, and $C$ contains $\VVV_{k}$.
(See Section~\ref{base-field-extension.subsec} for a detailed discussion of base field extensions; here we only use the obvious fact that if $X_1, \ldots, X_r$ are the irreducible components of a $\kk$-variety $X$, then $(X_1)_{\KK}, \ldots, (X_r)_{\KK}$ are the irreducible components of $X_{\KK}$.)

It now suffices to show that for any $k$ and any two irreducible components $Y_{1}$ and $Y_{2}$ of $\VVV_{k}$ there is an $\ell$ and an irreducible component of $\VVV_{\ell}$ containing $Y_{1}\cup Y_{2}$. Fix $b\in Y_{2}$ and choose for any $a\in Y_{1}$ an irreducible curve $D_{a}$ containing $a$ and $b$.
\[
\begin{tikzpicture}[scale=1]
\draw (-2,0) ..controls +(-0.5,1) and +(1, -0.5).. (-4,2);
\draw (-3, -1) ..controls +(-0.5,1) and +(1, -0.5).. (-5,1);
\draw (-3, -1) ..controls +(0.2, 0.6) and +(-0.5, -0.2).. (-2,0);
\draw (-5, 1) ..controls +(0.2, 0.6) and +(-0.5, -0.2).. (-4, 2);
\draw (-1,0) ..controls +(0.5,1) and +(-1, -0.5).. (1,2);
\draw (0, -1) ..controls +(0.5,1) and +(-1, -0.5).. (2,1);
\draw (0, -1) ..controls +(-0.2, 0.6) and +(0.5, -0.2).. (-1,0);
\draw (2, 1) ..controls +(-0.2, 0.6) and +(0.5, -0.2).. (1, 2); 
\draw (-3.5, 0.7) ..controls +(0.5, 1.2) and +(-0.5, 1.2).. (0.3, 0.7);
\draw [dashed] (-3.7, 0.1) ..controls +(0.1, 0.4) and +(-0.125, -0.3).. (-3.5, 0.7);
\draw (-3.8, -1) ..controls +(0, 0.5) and +(-0.1, -0.4).. (-3.7, 0.1);
\draw [dashed] ( 0.3, 0.7) ..controls +(0.125, -0.3) and +(-0.1, 0.7).. (0.5, -0.17);
\draw (0.57, -1) ..controls +(0, 0.3) and +(0.05, -0.35).. (0.5, -0.17); 
\node at (-4.6 , 1.1) {$Y_1$};
\node at (-3.5 , 0.7) {$\cdot$};
\node at (-3.7 , 0.8) {$a$};
\node at (0.3 , 0.7) {$\cdot$};
\node at (0.5 , 0.8) {$b$};
\node at (1.65 , 1.15) {$Y_2$};
\node at (-1.5, 1.8) {$D_a$};
\end{tikzpicture}
\]
This curve is contained in an irreducible component $Z$ of some $\VVV_{\ell}$. For such an irreducible component $Z$ define the subset $S(Z):=\{a\in Y_{1}\mid Z \supseteq D_{a}\}\subseteq Y_{1}$. We have $\bigcup_{Z} S(Z) = Y_{1}$, hence $\bigcup_{Z}\overline{S(Z)} = Y_{1}$.
Because the number of irreducible components of all the $\VVV_{\ell}$, $\ell \in \NN$, is countable, and the base field $\kk$ is uncountable, we can find finitely many $Z_{i}$ such that $\bigcup_{i}\overline{S(Z_{i})} = Y_{1}$ (see Lemma~\ref{constructible.lem}). Since $Y_{1}$ is irreducible, we have $Y_{1}=\overline{S(Z)}$ for some $Z$. Thus $S(Z)$ is dense in $Y_{1}$, hence $Z$ contains $Y_{1}$ and $b$, because $Z \supseteq S(Z)$. Call this irreducible component $Z_{b}$. Then $\bigcup_{b \in Y_{2}}Z_{b}\supseteq Y_{1}\cup Y_{2}$.
Now we repeat the same argument to show that  $Z_{b}\supseteq Y_{1}\cup Y_{2}$ for a suitable $Z_{b}$.
\end{proof}

\begin{remark}\label{irreducible.rem}
There is also the (topological) notion of an \itind{irreducible ind-variety}. It is easy to see that a curve-connected ind-variety is irreducible, but the other implication does not hold as shown by the following example taken from
\cite[Remark~4.3]{BlFu2013Topologies-and-str}. This example also implies that \cite[Proposition~2]{Sh1981On-some-infinite-d} is not correct.
\end{remark}

\begin{example}\label{irred-not-curve-connected.exa}
Consider the closed subvarieties 
$$
X_{k}:=V_{\Atwo}((x-1)\cdots(x-k)(y-1)\cdots (y-k))\subseteq \Atwo
$$ 
consisting of $k$ horizontal and $k$ vertical lines in $\Atwo$. {\it Then $\VVV:=\bigcup_{k} X_{k}$ is an irreducible ind-variety which is not curve-connected.}
\[
\begin{tikzpicture}[scale = 0.5];
\def \l {0.3mm};
\draw[line width= \l ] (0,-1) -- (0,6);
\draw[line width= \l ] (1,-1) -- (1,6);
\draw[line width= \l ] (2,-1) -- (2,6);
\draw[line width= \l ] (-1,0) -- (6, 0);
\draw[line width= \l ] (-1,1) -- (6,1);
\draw[line width= \l ](-1,2) -- (6,2);
\draw [dashed] (3,-1) -- (3,6);
\draw [dashed] (4,-1) -- (4,6);
\draw [dashed] (5,-1) -- (5,6);
\draw [dashed] (-1,3) -- (6,3);
\draw [dashed] (-1,4) -- (6,4);
\draw [dashed] (-1,5) -- (6,5);
\end{tikzpicture}
\]
\begin{proof}
If a closed subvariety $\WWW \subseteq \VVV$ contains infinitely many lines, then $\WWW = \VVV$. In fact, assume that it contains infinitely many horizontal lines. Then $\WWW$ meets every vertical line in infinitely many points and thus $\WWW$ contains all vertical lines which implies,  with the same argument, that it also contains all horizontal lines. If $\VVV = \VVV' \cup \VVV''$ with two closed subset $\VVV', \VVV'' \subseteq \VVV$, then one of them contains infinitely many lines, hence equals $\VVV$. Thus $\VVV$ is irreducible, and it is clear that $\VVV$ is not curve-connected.
\end{proof}
\end{example}

The next result is a direct consequence of Proposition~\ref{curve-connected.prop}. The Example~\ref{irred-not-curve-connected.exa} above shows that the conclusion does not hold for irreducible ind-varieties.

\begin{corollary} \label{finite-dim-indvar.cor}
If an ind-variety $\VVV$ is curve-connected and satisfies $\dim \VVV < \infty$, then it is an algebraic variety.
\end{corollary}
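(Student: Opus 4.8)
The plan is to invoke Proposition~\ref{curve-connected.prop} to replace the given filtration by one whose members are irreducible, and then to observe that such a filtration is forced to stabilize once the dimension is known to be finite.

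First I would apply Proposition~\ref{curve-connected.prop}: since $\VVV$ is curve-connected, there is an admissible filtration $\VVV = \bigcup_{k}\VVV_{k}$ in which every $\VVV_{k}$ is irreducible. Along this filtration the inclusions $\VVV_{k}\into\VVV_{k+1}$ are closed immersions of irreducible varieties, and the sequence of dimensions $\dim\VVV_{1}\leq\dim\VVV_{2}\leq\cdots$ is nondecreasing and, by hypothesis, bounded above by $d:=\dim\VVV<\infty$.

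The key observation is then that whenever the inclusion $\VVV_{k}\subsetneq\VVV_{k+1}$ is strict, the dimension must strictly increase, because a proper closed subvariety of an irreducible variety has strictly smaller dimension. Hence each strict inclusion raises the dimension by at least one, and since the dimensions never exceed $d$, only finitely many strict inclusions can occur. Consequently there is a $k_{0}$ with $\VVV_{k}=\VVV_{k_{0}}$ for all $k\geq k_{0}$, which, as noted earlier, means precisely that $\VVV=\VVV_{k_{0}}$ is an algebraic variety.

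There is no substantial obstacle here; the entire content is supplied by Proposition~\ref{curve-connected.prop}. The only point worth flagging is \emph{why} irreducibility (rather than mere connectedness) of the $\VVV_{k}$ is essential: without it a proper closed subvariety need not drop in dimension, the dimension bound gives no stabilization, and the conclusion genuinely fails. This is exactly the gap between curve-connected and (topologically) irreducible ind-varieties illustrated by Example~\ref{irred-not-curve-connected.exa}, whose $\VVV=\bigcup_{k}X_{k}$ has $\dim\VVV=1$ yet is not a variety.
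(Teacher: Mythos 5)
Your proof is correct and is exactly the argument the paper intends: it states the corollary as a direct consequence of Proposition~\ref{curve-connected.prop}, and the very same stabilization argument (irreducible filtration plus the fact that a strict closed inclusion of irreducible varieties drops dimension) appears verbatim in step (a) of the proof of Proposition~\ref{small-fibers-gives-variety.prop}. Your remark on why irreducibility rather than irreducibility-of-$\VVV$ or mere connectedness is needed, with Example~\ref{irred-not-curve-connected.exa}, also matches the paper's own commentary.
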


\begin{remark}
If $X$ is a variety, and $ Y_1 \subseteq Y_2 \subseteq \cdots \subseteq X$ a countable increasing sequence of irreducible closed subsets, then $Y:= \bigcup_{n} Y_n$ 
is a closed subset of $X$. Indeed, there exists an integer $n_0\geq 1$ such that $\dim Y_n = \dim Y_{n_0}$ for $n \geq n_0$. Hence, we get $Y_n = Y_{n_0}$ for $n\geq n_{0}$, and so  $Y= Y_{n_0}$ is closed in $X$. By contrast, we now give an example of a countable increasing sequence   $Y_{1}\subseteq Y_{2}\subseteq \cdots \subseteq \VVV$ of irreducible closed algebraic subsets of an ind-variety $\VVV = \bigcup_{n} \VVV_n$ for which $Y:= \bigcup_{n} Y_n$ is not closed in $\VVV$.
\end{remark}

\begin{example}
Set $\VVV = \AA^{\infty} = \bigcup_{n} \VVV_n$, where $\VVV_n= \An$ for each $n$. Let $Y_n \subset \VVV_n = \An$ be the hypersurface defined by the equation $f_n = 0$, where $f_n \in \kk [x_1, \ldots, x_n]$ is defined inductively by $f_1 := x_1 -1$ and $f_n := x_n + (x_1-n) f_{n-1}$. E.g. $f_2= x_2+ (x_1-1)(x_1-2)$. The inclusion $Y_n \subset Y_{n+1}$ is obvious, and since there exists a polynomial $g_n$ such that $f_n =x_n + g_n (x_1, \ldots, x_{n-1} )$ the hypersurface $Y_n$ is isomorphic to $\AA^{n-1}$. The equality $f_n (x_1, 0, \ldots, 0) = (x_1-1) (x_1-2) \ldots (x_1 - n)$ implies that $Y_n \cap \AA^1 = \{ 1,2, \ldots, n \} \subset \AA^1$. Therefore,  $\bigcup_{n} Y_n \cap \Aone = \{ 1,2,3, \ldots  \}$,
proving that $Y:=\bigcup_{n}Y_{n}$ is not closed in $\VVV = \AA^{\infty}$.

By Proposition \ref{indlimit-of-affine spaces.prop}, we have $\varinjlim Y_k  \simeq \A{\infty}$. Therefore, this example also provides an injective morphism $\phi \colon \A{\infty} \into \Ainfty$ such that the following holds.
\be
\item
For any closed algebraic subset $Z \subset \Ainfty$, $\phi$ induces a closed immersion $Z \into \Ainfty$.
\item 
The image $\phi(\Ainfty) \subset \Ainfty$ is not closed, and so $\phi$ is not a closed immersion.
\ee
\end{example}

\ps
\subsection{Connected and irreducible components}\label{connected-comp.subsec}
For a general topological space $M$ the {\it connected components of $M$}\idx{connected component} are always closed, but not necessarily open. For an algebraic variety $X$, there are finitely many connected components, and they are open and closed. This carries over to ind-varieties in the following form. 

\begin{proposition} \label{connected-components-are-open.prop} 
The connected components of an ind-variety $\VVV$ are open and closed, and the number of connected components is countable.
\end{proposition}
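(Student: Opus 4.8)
The plan is to construct, for each point $v\in\VVV$, its connected component explicitly as an increasing union of connected components of the $\VVV_k$, in the spirit of the proof of Proposition~\ref{connected.prop}. Fix an admissible filtration $\VVV=\bigcup_k\VVV_k$ and a point $v$, say $v\in\VVV_{k_0}$. For each $\ell\ge k_0$ let $C_\ell$ be the connected component of the variety $\VVV_\ell$ containing $v$; since each $\VVV_\ell$ is an algebraic variety, $C_\ell$ is open and closed in $\VVV_\ell$, and there are only finitely many such components. Because $C_\ell$ is connected and contains $v$, it is contained in the component of $\VVV_{\ell+1}$ through $v$, so $C_{k_0}\subseteq C_{k_0+1}\subseteq\cdots$ is an increasing chain of connected sets. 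I would then set $Z_v:=\bigcup_{\ell\ge k_0}C_\ell$.

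The first main step is to check that $Z_v$ is exactly the connected component of $\VVV$ containing $v$. As an increasing union of connected sets all containing $v$, $Z_v$ is connected. The key point is that $Z_v$ is open and closed in $\VVV$. Since the $C_\ell$ form an increasing chain, for every $m$ the intersection $Z_v\cap\VVV_m$ equals the increasing union of the sets $C_\ell\cap\VVV_m$ over the tail indices $\ell\ge m$; for such $\ell$ the variety $\VVV_m$ is closed in $\VVV_\ell$ while $C_\ell$ is open and closed in $\VVV_\ell$, so each $C_\ell\cap\VVV_m$ is open and closed in $\VVV_m$, and as $\VVV_m$ has only finitely many components this increasing union stabilizes and is itself open and closed in $\VVV_m$. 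By the definition of the Zariski topology on $\VVV$ (Definition~\ref{The-Zariski-topology-of-an-ind-variety.def}) this makes $Z_v$ open and closed. A connected, open and closed subset containing $v$ must coincide with the connected component of $v$, since any connected set meeting an open-and-closed set is contained in it. Hence the connected components of $\VVV$ are precisely the sets $Z_v$, and they are open and closed.

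Finally I would settle countability. Every connected component $Z$ of $\VVV$ meets some $\VVV_m$, and then $Z\cap\VVV_m$ is a nonempty union of connected components of $\VVV_m$; conversely, each connected component $C$ of $\VVV_m$ lies in a unique component $Z_v$ of $\VVV$ (for any $v\in C$). This produces a surjection from $\coprod_m\{\text{components of }\VVV_m\}$ onto the set of components of $\VVV$. Since each $\VVV_m$ has only finitely many connected components and there are countably many $m$, the source is countable, and therefore so is the set of components of $\VVV$. The only slightly delicate point — and the step I expect to require the most care — is the verification that $Z_v\cap\VVV_m$ is open and closed in $\VVV_m$ for \emph{all} $m$ (in particular for $m<k_0$ and for the tail indices $\ell\ge m$), since this is exactly what upgrades the merely connected set $Z_v$ to a genuine open-and-closed component; once this is in place, the remaining assertions are formal.
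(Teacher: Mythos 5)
Your proof is correct and rests on exactly the same ingredients as the paper's: that each $\VVV_k$ has only finitely many connected components, each open and closed in $\VVV_k$, and that a subset of $\VVV$ is open (resp.\ closed) precisely when its trace on every $\VVV_k$ is. The only difference is the direction of the argument --- you build the component bottom-up as $Z_v=\bigcup_{\ell}C_\ell$ and identify it afterwards, while the paper argues top-down, observing that the component $\VVV^{(x)}$, being connected and closed, meets each $\VVV_k$ in a union of connected components of $\VVV_k$ and is therefore open and closed in $\VVV$; the countability arguments coincide.
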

If $M$ is a topological space and $p \in M$ we define $M^{(p)}\subseteq M$ to be the connected component of $M$ containing $p$.
\begin{proof}
Since $\VVV^{(x)}$ is connected and closed, it follows that $\VVV^{(x)} \cap \VVV_k$ is the union of connected components of $\VVV_{k}$, hence open and closed in $\VVV_{k}$,  and so $\VVV^{(x)} \subseteq \VVV$ is open and closed.

For every $k$ we can find finitely many points $\{x_{k,1},x_{k,2},\ldots,x_{k,r_{k}}\}$ representing the connected components of $\VVV_{k}$. It follows that every connected component of $\VVV$ is  of the form $\VVV^{(x_{k,j} )}$ for  some $k,j$. Hence, their number is countable.
\end{proof}

\begin{remark}
It is easy to see, using \name{Zorn}'s Lemma, that every irreducible subset of an ind-variety $\VVV$ is contained in a maximal irreducible subset, and that the maximal ones are closed. (This holds for every topological space.) Similarly, every curve-connected subset is contained in a maximal curve-connected subset, but we do not know if these are closed.

Since an ind-variety is a countable union of irreducible algebraic subvarieties, it is also a countable union of maximal curve-connected subsets.
\end{remark}

\begin{example}\label{non-countable-components.exa} In this example
we construct a closed ind-subvariety $\WWW\subseteq \AA^{\infty}$ with the following properties:
{\it \be
\item
The maximal irreducible subsets of $\WWW$ are closed, curve-connected, and isomorphic to $\AA^{\infty}$.
\item
The union of any number of maximal irreducible subsets is closed;
\item
The set of maximal irreducible subsets of $\WWW$ is not countable;
\item
Every point of  $\WWW$ is contained in uncountably many maximal irreducible subsets.
\ee}
Consider the free monoid $M= \langle a, b \rangle$ in the two letters $a$ and $b$,
\[ 
M := \{ 1 , a, b, a^2, ab, b a, b^2, a^3, a^2b, a b a, \ldots \},
\]
and the $\kk$-vector space $F$ whose (countable) basis is given by the elements of $M$. If $w$ is an element of $M$, its length $| w |$ is defined as the number of its letters. For example, $| 1| = 0$, $|a | = |b | = 1$ and $|a^2| = |a b| = | b a | = 2$. Denote by $\leq$ the {\it prefix partial order} on $M$, i.e.  $ u \leq v$ if and only if there exists an element $u' \in M$ such that $uu' =v$. For $n\in\NN$ set $F_n := \Span \{ w \in M \mid | w | \leq n \}\subseteq F$, and for $w \in M$ define $E_{w} := \Span \{ v \in M \mid v \leq w \}\subseteq F$. Note that  $E_{u} \subseteq E_{v}$ if and only if $u \leq v$. In particular, if $v = uu'$ and $|u| =n$, then $E_{v}\cap F_{n}=E_{u}$.

The inclusions between the $E_{w}$ are demonstrated graphically in the diagram below.
\[
\begin{tikzpicture}[scale=0.8]
\draw (0,0) -- (-2,2);
\draw (0,0) -- (2,2);
\draw (-2,2) -- (-3,3);
\draw (-2,2) -- (-1,3);
\draw [dashed] (-3,3) -- (-3.5,3.5);
\draw [dashed] (-3,3) -- (-2.5,3.5);
\draw [dashed] (-1,3) -- (-1.5,3.5);
\draw [dashed] (-1,3) -- (-0.5,3.5);
\draw (2,2) -- (1,3);
\draw (2,2) -- (3,3);
\draw [dashed] (1,3) -- (0.5,3.5);
\draw [dashed] (1,3) -- (1.5,3.5);
\draw [dashed] (3,3) -- (2.5,3.5);
\draw [dashed] (3,3) -- (3.5,3.5);
\node at (0.1 , -0.3) {$E_1$};
\node at ( -2.2 , 1.8) {$E_a$};
\node at ( -3.2 , 2.8) {$E_{a^2}$};
\node at ( -0.7 , 2.8) {$E_{ab}$};
\node at ( 2.2 , 1.8) {$E_b$};
\node at ( 0.8 , 2.8) {$E_{ba}$};
\node at ( 3.3 , 2.8) {$E_{b^2}$};
\end{tikzpicture}
\]
Define the closed subsets
\[ 
\WWW_n := \bigcup_{ | w | \leq n} E_w \subseteq F_{n}
\]
as the union of the finite-dimensional subspaces $E_{w}\subseteq F$, $|w|\leq n$. For example, $\WWW_0 = E_1 = F_0=  \Span (1)$,
\[ \WWW_1 = E_a \cup E_b =\Span (1,a) \cup \Span (1,b) \subseteq F_{1}=\Span(1,a,b), \quad \text{and}\]
\[
\begin{array}{ll}
\WWW_2  &= E_{a^2} \cup E_{ab} \cup E_{ba} \cup E_{b^2} \\
 & = \Span (1, a, a^2) \cup \Span (1,a, ab) \cup \Span (1,b,ba) \cup \Span (1,b, b^2).
 \end{array}
\]
By construction,  $\WWW_{n}\subseteq \WWW_{n+1}$, and $\WWW_{n+1}\cap F_{n}=\WWW_{n}$, as we have seen above. This implies that $\WWW:=\bigcup_{n} \WWW_{n} \subseteq F \simeq \AA^{\infty}$ is closed, because $\WWW \cap F_{n}=\WWW_{n}$.

\begin{proof}[Proof of the statements {\rm (1)-(4)}]
For each infinite word $x =x_1x_2 \ldots x_n \ldots$  in the letters $a$ and $b$, the following subset of $\WWW$
\[  
\WWW_{(x)} := \bigcup_{n \geq 0} E_{(x_1 x_2 \cdots x_n)} = \Span (1, \, x_1, \, x_1x_2, \ldots, x_1 \dots x_n, \ldots ) \subseteq \WWW
\]
is closed, curve-connected, and isomorphic to $\AA^{\infty}$. In fact, one has $\WWW_{(x)}\cap\WWW_{n} = E_{x_{1}x_{2}\cdots x_{n}}$, and  since the  $E_{x_{1}x_{2}\cdots x_{n}}$ are $\kk$-vector spaces, the rest is clear. We want to show that the $\WWW_{(x)}$ are the maximal irreducible subsets of $\WWW$ which implies (1), (3) and (4).

We first remark that if $I$ is any set of infinite words $x$, then the same argument as above shows that $\bigcup_{x\in I} \WWW_{(x)} \subseteq \WWW$ is closed, proving (2).

Let $C \subseteq \WWW$ be an irreducible subset. If $w \in M$ we set $\WWW_{(\geq w)}:=\bigcup_{x=wy} \WWW_{(x)}$. Clearly, $\WWW_{(\geq w)}= \WWW_{(\geq wa)}\cup\WWW_{(\geq wb)}$, and all these subsets are closed, by (2). If $C$ is not contained in a $\WWW_{(x)}$, then, by induction, there is a word $w$ such that $C \subseteq \WWW_{(\geq w)}$, but $C \nsubseteq \WWW_{(\geq wa)}$ and $C \nsubseteq \WWW_{(\geq wb)}$, and we have a contradiction.
\end{proof}
\end{example}

\ps
\subsection{Morphisms with small fibers}   \label{small-fibers.subsec}
A well-known result in the category  of varieties is that a bijective morphism  $\phi\colon X \to Y$ is an isomorphism if $X$ is irreducible and $Y$ is normal. This is a special case of the original form of \name{Zariski}'s  Main Theorem, see \cite[Chap.~III, \S9, p. 209]{Mu1999The-red-book-of-va}. (See also Lemma~\ref{Igusa.lem}.)

We can prove a similar statement for a bijective morphism $\phi\colon \VVV \to \WWW$ where $\VVV$ is connected, but only under a strong normality assumption for $\WWW$, namely that there exists an admissible
filtration consisting of normal varieties, see Proposition~\ref{bijective-morphism.prop} below. 

If the base field $\kk$ is countable, we have some very strange examples, e.g. take $\VVV$ to be $\kk$ considered as a discrete countable set, and take $\phi\colon \VVV \to \Aone$ to be the identity map.
Then, the inverse image of the algebraic set $\Aone$ by the bijective morphism $\phi$ is not algebraic!
However, for an uncountable base field $\kk$ such a behavior cannot occur as the following lemma shows.

\begin{lemma}  \label{bijective-morphisms.lem} Assume that $\kk$ is uncountable, and 
let $\phi\colon \VVV \to \WWW$ be a bijective ind-morphism. 
\be
\item \label{inverse-image-by-a-bijection-of-an-algebraic-set-is-algebraic}
For every algebraic subset $X \subseteq \WWW$ the inverse image $\phi^{-1}(X)$ is algebraic.
\item
If $\WWW = \bigcup_{k}\WWW_{k}$ is an admissible filtration of $\WWW$,
then $\VVV = \bigcup_{k}\phi^{-1}(\WWW_{k})$ is an admissible filtration of $\VVV$.
\ee
\end{lemma}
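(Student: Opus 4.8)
The plan is to prove part (1) first, by a \emph{push-forward} argument resting on Lemma~\ref{constructible.lem}, and then to deduce part (2) from (1) together with Theorem~\ref{closed-algebraic-filtration.thm}. Fix an admissible filtration $\VVV = \bigcup_k \VVV_k$. The two elementary facts that make everything run are: since $\phi$ is an ind-morphism, for each $k$ there is an index $m_k$ with $\phi(\VVV_k)\subseteq \WWW_{m_k}$ and $\phi|_{\VVV_k}\colon \VVV_k \to \WWW_{m_k}$ a morphism of varieties; and an algebraic subset $X \subseteq \WWW$, being locally closed and contained in some $\WWW_\ell$, is itself a variety (Definition~\ref{affine-algebraic.def}). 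Consequently each $\phi^{-1}(X)\cap\VVV_k = (\phi|_{\VVV_k})^{-1}(X\cap\WWW_{m_k})$ is locally closed, hence constructible, in $\VVV_k$, and its image $C_k := \phi\big(\phi^{-1}(X)\cap\VVV_k\big)$ is a constructible subset of the variety $X$ by Chevalley's theorem.

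The heart of the matter is to push the decomposition forward rather than pull it back. Since $\phi$ is surjective, $X = \phi(\phi^{-1}(X)) = \bigcup_k C_k$, a countable union of constructible subsets of the variety $X$. Lemma~\ref{constructible.lem}, which is exactly where the uncountability of $\kk$ enters, then yields a finite set $F$ with $X = \bigcup_{k\in F}C_k = C_N$, where $N := \max F$ and we use that the $\VVV_k$ are increasing. Now injectivity of $\phi$ converts ``$X$ is covered'' into ``the preimage is small'': from $X = \phi(\phi^{-1}(X)\cap\VVV_N) = \phi(\phi^{-1}(X))$ and injectivity I get $\phi^{-1}(X) = \phi^{-1}(X)\cap\VVV_N \subseteq \VVV_N$. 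Finally $\phi^{-1}(X) = (\phi|_{\VVV_N})^{-1}(X\cap\WWW_{m_N})$ is locally closed in $\VVV_N$, so it is an algebraic subset of $\VVV$, proving (1).

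For part (2), let $\WWW = \bigcup_k \WWW_k$ be an admissible filtration; each $\WWW_k$ is a closed algebraic subset of $\WWW$. Since $\phi$ is continuous, $\phi^{-1}(\WWW_k)$ is closed in $\VVV$, and by part (1) it is also algebraic, hence a closed algebraic subset. These sets increase with $k$, and bijectivity gives $\bigcup_k \phi^{-1}(\WWW_k) = \phi^{-1}(\WWW) = \VVV$. Thus $\VVV = \bigcup_k \phi^{-1}(\WWW_k)$ is a filtration of $\VVV$ by closed algebraic subsets, and Theorem~\ref{closed-algebraic-filtration.thm}, again using that $\kk$ is uncountable, shows at once that it is admissible.

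I expect the main obstacle, and the one genuinely new idea, to be the passage in part (1) from ``$\phi^{-1}(X)\cap\VVV_k$ is constructible in $\VVV_k$ for every $k$'' to ``$\phi^{-1}(X)$ lies in a single $\VVV_N$''. A naive pull-back gives no finiteness, since the level $m_k$ may grow with $k$; the trick is to transport the problem to the \emph{fixed} variety $X$ via the images $C_k$, apply the uncountable-field finiteness of Lemma~\ref{constructible.lem} there, and only then invoke injectivity to descend the conclusion back to $\VVV$. Everything else, namely continuity of $\phi$, local closedness of preimages, and Chevalley constructibility, is routine.
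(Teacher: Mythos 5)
Your proof is correct and follows essentially the same route as the paper: your sets $C_k=\phi\bigl(\phi^{-1}(X)\cap\VVV_k\bigr)$ coincide with the paper's $\phi(\VVV_k)\cap X$, and both arguments combine surjectivity, the finiteness Lemma~\ref{constructible.lem} applied inside the variety $X$, and injectivity to conclude $\phi^{-1}(X)\subseteq\VVV_N$, with part (2) then following from Theorem~\ref{closed-algebraic-filtration.thm} exactly as in the paper. Your extra remarks (Chevalley for constructibility of the $C_k$, and the explicit local closedness of $\phi^{-1}(X)$ in $\VVV_N$) merely spell out steps the paper leaves implicit.
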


\begin{proof}
(1) Since $\phi$ is surjective, we have
$X = \bigcup_{k}\phi(\VVV_{k})\cap X$, and the subsets $\phi(\VVV_{k})\cap X\subseteq X$ are all constructible. By Lemma~\ref{constructible.lem} we have $X \subseteq \phi(\VVV_{k})$ for some $k$, hence $\phi^{-1}(X) \subseteq \VVV_{k}$,  and the claim follows.
\ps
(2)
By (1) the subsets $\phi^{-1}(\WWW_{k})$ are closed algebraic subsets, and the result follows from Theorem~\ref{closed-algebraic-filtration.thm}.
\end{proof}

Let $\VVV$ be an ind-variety, $X$ a variety, and let $\phi\colon \VVV \to X$ be a morphism. Even if the fibers are ``small''  this does not imply that $\VVV$ is a variety.
If the base field $\kk$ is countable, such an example was already given just before Lemma~\ref{bijective-morphisms.lem}. 
An example not assuming that $\kk$ is countable is the embedding of $\ZZ$ into $\Aone$. This is an injective morphism and thus has finite fibers, but $\ZZ$ is not a variety.
Even if $\VVV$ is connected we cannot conclude that $\VVV$ is a variety, as the following example shows.

\begin{example}
Let $\VVV = \bigcup_{k}\VVV_{k}$ where 
$$
\VVV_{k}:=V_{\Atwo}(y(x-1)(x-2)\cdots(x-k))\subseteq \Atwo,
$$ 
with the obvious immersions $\VVV_{k}\subseteq\VVV_{k+1}$. 
\[
\begin{tikzpicture}[scale = 0.5];
\def \l {0.3mm};
\draw[line width= \l ] (0,-1) -- (0,6);
\draw[line width= \l ] (1,-1) -- (1,6);
\draw[line width= \l ] (2,-1) -- (2,6);
\draw[line width= \l ] (-1,0) -- (6, 0);
\draw [dashed] (3,-1) -- (3,6);
\draw [dashed] (4,-1) -- (4,6);
\draw [dashed] (5,-1) -- (5,6);
\end{tikzpicture}
\]
Then the projection $\phi_{k}\colon\VVV_{k}\to \Aone$ onto the $x$-axis defines a morphism $\phi\colon \VVV \to \Aone$ whose fibers are either points or lines. However, $\VVV$ is not a variety.
\end{example}
Note that in this example $\VVV$ is connected, but not curve-connected. In fact, with this stronger assumption we get what we want.

\begin{proposition} \label{small-fibers-gives-variety.prop}
Let $\phi\colon \VVV \to X$ be a morphism where $X$ is a variety. 
Assume that there is an integer  $d \in \NN$ such that the fibers are ind-varieties  of dimension $\leq d$. Then $\VVV$ is finite-dimensional of dimension $\leq \dim X + d$. If $\VVV$ is curve-connected, then $\VVV$ is an algebraic variety.
\end{proposition}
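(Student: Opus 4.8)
The plan is to bound the dimension of each algebraic piece $\VVV_k$ by $\dim X + d$ using the classical theorem on the dimension of the fibers of a morphism of ordinary varieties, and then to deduce the second assertion directly from Corollary~\ref{finite-dim-indvar.cor}.

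First I would fix an admissible filtration $\VVV = \bigcup_k \VVV_k$ and recall that $\dim \VVV = \sup_k \dim \VVV_k$, so that it suffices to prove $\dim \VVV_k \leq \dim X + d$ for every $k$. Decomposing the variety $\VVV_k$ into its finitely many irreducible components, I would fix one such component $C$. The restriction $\phi|_C \colon C \to X$ is a morphism of varieties whose image has irreducible closure $Z := \overline{\phi(C)} \subseteq X$, and $\phi|_C \colon C \to Z$ is dominant by construction.

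The key point is to control the fibers of $\phi|_C$. For $x \in X$ the fiber $(\phi|_C)^{-1}(x)$ is contained in $\phi^{-1}(x) \cap \VVV_k$, which is a closed algebraic subset of the fiber $\phi^{-1}(x)$; since $\phi^{-1}(x)$ is by hypothesis an ind-variety of dimension $\leq d$, every algebraic subset of it has dimension $\leq d$. Now I would invoke the theorem on the dimension of fibers of a dominant morphism of irreducible varieties: a general (nonempty) fiber of $\phi|_C \colon C \to Z$ has dimension exactly $\dim C - \dim Z$. As this general fiber has dimension $\leq d$ by the previous remark, we obtain $\dim C \leq \dim Z + d \leq \dim X + d$. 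Taking the maximum over the components of $\VVV_k$ gives $\dim \VVV_k \leq \dim X + d$, and hence $\dim \VVV \leq \dim X + d < \infty$, which proves the first assertion.

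For the second assertion, once $\VVV$ is known to be finite-dimensional the hypothesis that $\VVV$ is curve-connected lets me invoke Corollary~\ref{finite-dim-indvar.cor} verbatim to conclude that $\VVV$ is an algebraic variety. I expect the only delicate step to be the fiber-dimension estimate above, specifically the bookkeeping that passes from the assumption on the ind-variety fibers $\phi^{-1}(x)$ to a uniform bound on the dimension of the general fiber of each restricted morphism $\phi|_C$; the classical fiber-dimension theorem then does the rest, and the curve-connected case follows immediately from the earlier corollary.
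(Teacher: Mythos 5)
Your proposal is correct and follows essentially the same route as the paper: both bound $\dim C$ for each irreducible algebraic piece $C$ via the classical fiber-dimension theorem applied to the dominant morphism $C \to \overline{\phi(C)}$, using that the fibers of $\phi|_C$ are algebraic subsets of the ind-variety fibers and hence of dimension $\leq d$. The only cosmetic difference is organizational — the paper treats the curve-connected case first by taking an admissible filtration with irreducible $\VVV_k$ and observing that it stabilizes, while you prove the dimension bound componentwise and then cite Corollary~\ref{finite-dim-indvar.cor} (which encapsulates exactly that stabilization argument and appears earlier, so there is no circularity).
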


\begin{proof}
(a) 
Assume that $\VVV$ is curve connected, and
choose an admissible filtration $\VVV=\bigcup_{k}\VVV_{k}$ where all $\VVV_{k}$ are irreducible (see Proposition~\ref{curve-connected.prop}). Since the fibers of $\VVV_{k}\to \overline{\phi(\VVV_{k})}$ have dimension $\leq d$, we see that $\dim\VVV_{k}\leq \dim  \overline{\phi(\VVV_{k})} + d\leq \dim X +d$. Thus, there is a $k_{0}$ such that $\VVV_{k}=\VVV_{k_{0}}$ for $k\geq k_{0}$, hence $\VVV = \VVV_{k_{0}}$ is an algebraic variety.
\ps
(b)
In general, $\VVV$ is a countable union of irreducible algebraic varieties, $\VVV = \bigcup_{k} Z_{k}$. By (a), $\dim Z_{k}\leq \dim X + d$ for all $k$. Hence $\dim \VVV \leq \dim X + d$.
\end{proof}

\begin{remark} 
The proof above shows that the assumptions for  $\VVV$ can be weakened in order to get that $\VVV$ is an algebraic variety. It suffices to assume that there is an admissible filtration  $\VVV=\bigcup_{k}\VVV_{k}$ and an integer $m$ such that the number of irreducible components of each $\VVV_{k}$ is $\leq m$.
\end{remark}

We finally prove the statement about bijective morphisms announced at the beginning.

\begin{proposition}\label{bijective-morphism.prop}
Assume that $\kk$ is uncountable. Let $\phi\colon \VVV \to \WWW$ be a bijective ind-morphism. Assume that $\VVV$ is connected and that
there exists an admissible filtration $\WWW=\bigcup_{k}\WWW_{k}$
such that all $\WWW_{k}$ are irreducible and normal. Then $\phi$ is an isomorphism.
\end{proposition}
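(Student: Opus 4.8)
The plan is to reduce the statement to a finite-dimensional question at each stage of a well-chosen filtration and then to invoke the classical form of Zariski's Main Theorem. Since $\kk$ is uncountable, Lemma~\ref{bijective-morphisms.lem} (equivalently Theorem~\ref{closed-algebraic-filtration.thm}) applies: putting $\VVV_{k}:=\phi^{-1}(\WWW_{k})$ yields an admissible filtration of $\VVV$ by closed algebraic subsets, and $\phi$ restricts to bijective morphisms $\phi_{k}\colon\VVV_{k}\to\WWW_{k}$. Because $\phi^{-1}(\WWW_{k})=\VVV_{k}$ and $\phi^{-1}|_{\WWW_{k}}=\phi_{k}^{-1}$, the inverse map $\phi^{-1}$ is a morphism as soon as each $\phi_{k}$ is an isomorphism of varieties; so it suffices to prove that every $\phi_{k}$ is an isomorphism, and for this (given that $\WWW_{k}$ is normal) it is enough to show that each $\VVV_{k}$ is irreducible.

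First I would isolate the \emph{dominant part} of each stage. As $\phi_{k}$ is injective it has finite fibers, so every irreducible component $Z$ of $\VVV_{k}$ satisfies $\dim Z=\dim\overline{\phi_{k}(Z)}\le\dim\WWW_{k}$, with equality exactly when $Z$ dominates $\WWW_{k}$. A dimension count (two distinct top-dimensional components would force a nonempty open subset of $\VVV_{k}$ to lie in their intersection) shows that there is a \emph{unique} component $Z_{k}$ with $\dim Z_{k}=\dim\WWW_{k}$, and it dominates $\WWW_{k}$. Then $\phi_{k}|_{Z_{k}}\colon Z_{k}\to\WWW_{k}$ is dominant and injective, hence birational (we are in characteristic zero); since $\WWW_{k}$ is normal, Zariski's Main Theorem (Lemma~\ref{Igusa.lem}) upgrades this to an open immersion onto a dense open $\Omega_{k}\subseteq\WWW_{k}$. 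Bijectivity of $\phi_{k}$ gives $\phi_{k}^{-1}(\Omega_{k})=Z_{k}$, so $Z_{k}$ is open in $\VVV_{k}$; being an irreducible component, it is also closed. Hence $Z_{k}$ is clopen in $\VVV_{k}$, and $\VVV_{k}=Z_{k}$ precisely when $\VVV_{k}$ is connected.

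The heart of the argument, and the step I expect to be the main obstacle, is to pass from the global connectedness of $\VVV$ to the connectedness of each finite stage $\VVV_{k}$, thereby eliminating the lower-dimensional extra components: a priori $\VVV_{k}$ can be disconnected even though $\VVV$ is, the missing links appearing only at higher stages. To control this I would follow the construction in the proof of Proposition~\ref{connected.prop}: starting from the clopen component $Z_{1}$, let $\VVV_{k}^{\circ}$ be the connected component of $\VVV_{k}$ containing $\VVV_{k-1}^{\circ}$, obtaining an increasing chain of clopen connected subsets whose union is clopen in $\VVV$, hence equal to $\VVV$. Consequently every quasi-compact $\VVV_{m}$ is contained in some $\VVV_{K}^{\circ}$. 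The decisive point to establish is that this chain \emph{tracks the dominant components}, i.e. $\VVV_{k}^{\circ}=Z_{k}$ for all $k$: the only way the chain could leave the top-dimensional stratum is at a stage where $\dim\WWW_{k}$ jumps and $\WWW_{k}$ happens to lie entirely in the bad locus $\WWW_{k+1}\setminus\Omega_{k+1}$, and this must be ruled out by combining the open-immersion structure on the $Z_{k}$ with the fact that the chain exhausts the connected $\VVV$. Granting $\VVV_{K}^{\circ}=Z_{K}$, the inclusion $\VVV_{m}\subseteq Z_{K}\simto\Omega_{K}$ together with surjectivity of $\phi_{m}$ forces $\WWW_{m}\subseteq\Omega_{K}$ and $\phi_{m}\colon\VVV_{m}\simto\WWW_{m}$; in particular $\VVV_{m}$ is irreducible.

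Finally I would assemble the conclusion: with every $\phi_{k}\colon\VVV_{k}\to\WWW_{k}$ an isomorphism and these isomorphisms compatible with the inclusions (they are all restrictions of the single map $\phi$), the inverses $\phi_{k}^{-1}$ glue to a morphism $\phi^{-1}\colon\WWW\to\VVV$, so $\phi$ is an isomorphism of ind-varieties. It is worth noting where each hypothesis is spent: uncountability of $\kk$ enters only to legitimize the filtration $\VVV_{k}=\phi^{-1}(\WWW_{k})$ via Lemma~\ref{bijective-morphisms.lem}; normality of the $\WWW_{k}$ is used exactly once, to turn the birational dominant map $\phi_{k}|_{Z_{k}}$ into an open immersion; and connectedness of $\VVV$ is what survives into the delicate third step to suppress the non-dominant components at every finite level.
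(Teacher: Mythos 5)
Your first two steps match the paper's proof: the pullback filtration $\VVV_{k}=\phi^{-1}(\WWW_{k})$ via Lemma~\ref{bijective-morphisms.lem}, and the existence, for each $k$, of a unique clopen irreducible component $Z_{k}\subseteq\VVV_{k}$ on which $\phi_{k}$ restricts to an open immersion onto a dense open $\Omega_{k}\subseteq\WWW_{k}$ — this is exactly the unnamed lemma following the proposition in the paper. (One citation slip: Lemma~\ref{Igusa.lem} does not apply here, since it requires the complement of the image to have codimension $\geq 2$, which $\Omega_{k}$ need not satisfy; what is needed is \name{Zariski}'s Main Theorem in \name{Grothendieck}'s form, factoring $\phi_{k}|_{Z_{k}}$ through a finite birational morphism onto the normal $\WWW_{k}$, as the paper does.) The genuine gap is precisely the step you yourself flag as decisive: the claim $\VVV_{k}^{\circ}=Z_{k}$ for the chain of components built as in Proposition~\ref{connected.prop} is asserted, not proved, and the considerations you gesture at cannot prove it. Nothing forbids, at a single stage, that $\WWW_{k}$ lies entirely in $\WWW_{k+1}\setminus\Omega_{k+1}$; then $Z_{k}\cap Z_{k+1}=\emptyset$ and the chain through $Z_{k}$ genuinely moves into a non-dominant component of $\VVV_{k+1}$, so level-by-level tracking is unavailable. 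Worse, your intermediate target ``every $\VVV_{k}$ is irreducible'' is, at level $k$, equivalent to $\phi_{k}$ being an isomorphism, i.e.\ to the conclusion of the proposition itself restricted to that level: it is true a posteriori, but there is no local route to it, and your plan is therefore circular at its crux.

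The paper circumvents this by proving only a \emph{cofinal} statement and never showing any $\VVV_{k}$ is connected. Since $Z_{k}$ is connected and $Z_{\ell}$ is clopen in $\VVV_{\ell}$, one has the dichotomy $Z_{k}\subseteq Z_{\ell}$ or $Z_{k}\cap Z_{\ell}=\emptyset$ for $k\leq\ell$; a clopen argument inside the connected $\VVV$, applied to unions of the $Z_{\ell}$, then shows that for every $k$ there is some $\ell>k$ with $Z_{k}\subseteq Z_{\ell}$. Extracting a nested subsequence $Z_{k_{1}}\subseteq Z_{k_{2}}\subseteq\cdots$, Theorem~\ref{closed-algebraic-filtration.thm} (uncountability again) makes this an admissible filtration of $\VVV$, on which $\phi$ restricts to mere \emph{open immersions} $Z_{k_{j}}\into\WWW_{k_{j}}$ — not isomorphisms — and Lemma~\ref{crit-iso.lem}, which uses uncountability a further time, converts this into the desired isomorphism. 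In particular your closing bookkeeping remark that uncountability is spent only once does not survive: any proof along these lines uses it at least three times. To repair your proposal, replace the unproved claim $\VVV_{k}^{\circ}=Z_{k}$ by the cofinal nesting of the $Z_{k}$ and finish with Lemma~\ref{crit-iso.lem}, abandoning the attempt to establish irreducibility of the individual $\VVV_{k}$ as an intermediate step.
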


\begin{proof}
For every $k$, set $\VVV_{k}:=\phi^{-1}(\WWW_{k})$.
By Lemma~\ref{bijective-morphisms.lem}(2) $\VVV=\bigcup_{k}\VVV_{k}$ is an admissible filtration.
The induced map $\phi_{k}\colon \VVV_{k} \to \WWW_{k}$ is bijective. By the following lemma, there is a uniquely defined connected  component $\VVV_{k}^{\circ}$ which is normal and such that $\phi$ induces an open immersion $\VVV_{k}^{\circ} \into \WWW_{k}$. Set $\VVV^{\circ}:=\bigcup_{k}\VVV_{k}^{\circ}$. Since $\VVV_{\ell}^{\circ}$ is a connected component of $\VVV_{\ell}$ it follows that for $k\leq \ell$ the intersection $\VVV_{\ell}^{\circ}\cap \VVV_{k}$ is a union of connected components of $\VVV_{k}$. Hence $\VVV^{\circ}\cap \VVV_{k}$ is also a union of connected components of $\VVV_{k}$, hence open and closed. Thus $\VVV^{\circ} = \VVV$, because $\VVV$ is connected.

Now we claim that for every $k$ there is an $\ell > k$ such that $\VVV_{\ell}^{\circ}\supseteq \VVV_{k}^{\circ}$. In fact, if this is not the case, define 
$\VVV':= \bigcup_{\ell>k}\VVV_{\ell}^{\circ}$. As above, $\VVV'$ is open and closed in $\VVV$, hence $\VVV' = \VVV$, contradicting the assumption. It follows that there is a sequence $k_{1}<k_{2}<\cdots$ such that $\VVV^{\circ}_{k_{j}} \subseteq \VVV^{\circ}_{k_{j+1}}$ for all $j\geq 1$, hence $\VVV=\bigcup_{j}\VVV^{\circ}_{k_{j}}$ is an admissible filtration (Theorem~\ref{closed-algebraic-filtration.thm}). Now the claim follows from Lemma~\ref{crit-iso.lem} below.
\end{proof}

\begin{lemma} Let $\phi\colon X \to Y$ be a bijective morphism of varieties where $Y$ is irreducible and normal. Then there is a connected component $X^{\circ}\subseteq X$ such that the induced map $\phi|_{X^{\circ}}\colon X^{\circ}\into Y$ is an open immersion.
\end{lemma}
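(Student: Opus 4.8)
The plan is to isolate a single irreducible component of $X$ that dominates $Y$, show that it is carried isomorphically onto an open subset of $Y$, and then argue that this component is automatically open in $X$ and hence a connected component. First I would set $d := \dim Y$ and record that, since $\phi$ is injective, every fiber is a single point; consequently for each irreducible component $X_i$ of $X$ the map $\phi|_{X_i}$ has finite fibers, so $\dim X_i = \dim\overline{\phi(X_i)}\le d$. Writing $Y = \phi(X)=\bigcup_i \phi(X_i)$, hence $Y = \bigcup_i \overline{\phi(X_i)}$, and using that $Y$ is irreducible, I would pick a component $X_1$ with $\overline{\phi(X_1)} = Y$. Then $\phi_1 := \phi|_{X_1}\colon X_1 \to Y$ is dominant and $\dim X_1 = d$.

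Next I would show that $\phi_1$ is an open immersion onto an open set $U \subseteq Y$. Being dominant and injective between irreducible varieties of equal dimension, $\phi_1$ is birational: in characteristic zero, generic injectivity forces $[\kk(X_1):\kk(Y)]=1$. Since $\phi_1$ is quasi-finite and separated, the Grothendieck form of \name{Zariski}'s Main Theorem factors it as $X_1 \xrightarrow{j}\bar X \xrightarrow{g} Y$ with $j$ an open immersion and $g$ finite; replacing $\bar X$ by $\overline{j(X_1)}$ we may take $\bar X$ irreducible, so that $g$ is finite and birational. Normality of $Y$ then forces $g$ to be an isomorphism (a finite birational morphism onto a normal variety is an isomorphism). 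Hence $\phi_1 = g\circ j$ is an open immersion onto $U := \phi_1(X_1)\subseteq Y$. This is precisely the injective-into-normal statement underlying the Main Theorem quoted above and Lemma~\ref{Igusa.lem}.

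Finally I would upgrade $X_1$ from an irreducible component to a connected component via the identity $X_1 = \phi^{-1}(U)$. The inclusion $X_1\subseteq\phi^{-1}(U)$ is clear, and conversely if $\phi(x)\in U = \phi(X_1)$ then $\phi(x)=\phi(y)$ for some $y\in X_1$, so $x=y\in X_1$ by injectivity of $\phi$ on all of $X$. As $U$ is open in $Y$ and $\phi$ is continuous, $X_1=\phi^{-1}(U)$ is open in $X$; being an irreducible component it is also closed and connected, hence a connected component of $X$. Setting $X^{\circ} := X_1$ then gives the desired component, with $\phi|_{X^{\circ}}\colon X^{\circ}\simto U$ an open immersion.

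I expect the main obstacle to be the second step. The bijective form of \name{Zariski}'s Main Theorem quoted in the text does not apply directly, because $\phi_1$ need not be surjective onto $Y$ but only onto the open set $U$; the passage from ``injective and birational with normal target'' to ``open immersion'' is exactly where one needs the quasi-finite factorization form of the theorem (equivalently Lemma~\ref{Igusa.lem}) rather than the bijective form. Once that is in hand, the identification $X_1=\phi^{-1}(U)$ is the short but essential point that converts a statement about irreducible components into the required statement about connected components.
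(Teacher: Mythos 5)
Your proof is correct and takes essentially the same route as the paper's: you select the irreducible component dominating $Y$, factor its restriction via Zariski's Main Theorem in Grothendieck's form, use that a finite birational morphism onto the normal variety $Y$ is an isomorphism to conclude $\phi|_{X_1}$ is an open immersion onto $U$, and then use injectivity of $\phi$ to get $X_1=\phi^{-1}(U)$, so that $X_1$ is open, closed and connected, hence a connected component. The only difference is that you make explicit some points the paper leaves implicit (existence of the dominating component from irreducibility of $Y$ and finiteness of the set of components, and birationality of $\phi|_{X_1}$ from generic injectivity in characteristic zero), which are exactly the right justifications.
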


\begin{proof}
There is a well-defined irreducible component $X_{0}$ of $X$ such that $\phi(X_{0})$ is dense in $Y$. Then \name{Zariski}'s Main Theorem in 
\name{Grothendieck}'s form (see \cite[Chap.~III, \S9, statement IV, p. 209]{Mu1999The-red-book-of-va})
implies that there exists an open immersion $X_{0}\into \tilde X$ where $\tilde X$ is an irreducible variety,
and a finite morphism $\tilde\phi\colon \tilde X \to Y$ extending $\phi|_{X_{0}}$. Since $\tilde\phi$ is birational and $Y$ is normal it follows that $\tilde\phi$ is an isomorphism, and so $\phi|_{X_{0}}\colon X_{0} \into Y$ is an open immersion. In particular, $X_{0}=\phi^{-1}(\phi(X_{0})) \subseteq X$ is open, hence a connected component of $X$.
\end{proof}

\begin{lemma}\label{crit-iso.lem} Assume that $\kk$ is uncountable.
Let $\phi\colon \VVV =\bigcup_{k}\VVV_{k} \to \WWW=\bigcup_{k}\WWW_{k}$ be a bijective ind-morphism where $\VVV$ is connected. If $\phi(\VVV_{k}) \subseteq \WWW_{k}$, and if the induced maps $\phi|_{\VVV_{k}}\colon \VVV_{k} \to \WWW_{k}$  are open immersions for all $k$, then $\phi$ is an isomorphism.
\end{lemma}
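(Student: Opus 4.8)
The plan is to prove the statement by showing that the set-theoretic inverse $\psi:=\phi^{-1}\colon\WWW\to\VVV$ is again an ind-morphism; since $\phi$ is already a bijective morphism, this will immediately give that $\phi$ is an isomorphism. By the definition of an ind-morphism it suffices to produce, for each $k$, an index $\ell$ with $\psi(\WWW_{k})\subseteq\VVV_{\ell}$ such that the induced map $\WWW_{k}\to\VVV_{\ell}$ is a morphism of varieties. So the whole argument localizes on the members of the filtration of $\WWW$.

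First I would fix $k$ and set $Z_{k}:=\phi^{-1}(\WWW_{k})=\psi(\WWW_{k})$. Because $\kk$ is uncountable and $\phi$ is bijective, Lemma~\ref{bijective-morphisms.lem}(1) applies to the algebraic subset $\WWW_{k}\subseteq\WWW$ and shows that $Z_{k}$ is algebraic; as $\WWW_{k}$ is moreover closed in $\WWW$ and $\phi$ is continuous, $Z_{k}$ is in fact a closed subvariety of some $\VVV_{\ell}$ (Definition~\ref{affine-algebraic.def}). Restricting $\phi$ to $Z_{k}$ then gives a morphism $Z_{k}\to\WWW_{\ell}$ whose image lies in the closed subvariety $\WWW_{k}$, hence a morphism $\phi|_{Z_{k}}\colon Z_{k}\to\WWW_{k}$, and since $\phi$ is a bijection this restriction is bijective, identifying $\phi^{-1}(\WWW_{k})$ with $\WWW_{k}$.

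The key step is to upgrade this bijective morphism of varieties to an isomorphism, and this is precisely where the open-immersion hypothesis enters. By assumption $\phi|_{\VVV_{\ell}}\colon\VVV_{\ell}\to\WWW_{\ell}$ is an open immersion, i.e. an isomorphism of $\VVV_{\ell}$ onto the open subset $U_{\ell}:=\phi(\VVV_{\ell})\subseteq\WWW_{\ell}$. Under this isomorphism the closed subvariety $Z_{k}\subseteq\VVV_{\ell}$ maps isomorphically onto the closed subvariety $\phi(Z_{k})\subseteq U_{\ell}$. Now $\phi(Z_{k})=\WWW_{k}$ as a set, by surjectivity of $\phi$, and since $\WWW_{k}\subseteq U_{\ell}$ is closed in $\WWW_{\ell}$ while $U_{\ell}$ is open in $\WWW_{\ell}$, the variety structure induced on $\WWW_{k}$ from $U_{\ell}$ agrees with its structure as a closed subvariety of $\WWW_{\ell}$. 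Hence $\phi|_{Z_{k}}\colon Z_{k}\simto\WWW_{k}$ is an isomorphism, and its inverse is exactly $\psi|_{\WWW_{k}}\colon\WWW_{k}\to\VVV_{\ell}$, which is therefore a morphism of varieties.

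Carrying this out for every $k$ supplies the data exhibiting $\psi$ as an ind-morphism, so $\phi$ is an isomorphism. The crux — the step I expect to be the only delicate one — is the passage from a bijective morphism to an isomorphism on each piece: this is false for general bijective morphisms of varieties, and it goes through here only because of the open-immersion hypothesis, combined with the uncountability of $\kk$, which is what guarantees via Lemma~\ref{bijective-morphisms.lem} that $\phi^{-1}(\WWW_{k})$ is contained in a single $\VVV_{\ell}$ rather than spread across the whole filtration. I note in passing that the connectedness of $\VVV$ appears not to be needed for this argument itself; it is relevant in the application (Proposition~\ref{bijective-morphism.prop}), where connectedness is what first produces the aligned filtration with open immersions that this lemma takes as input.
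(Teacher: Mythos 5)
Your proof is correct and follows essentially the same route as the paper's: both rest on Lemma~\ref{bijective-morphisms.lem} (where the uncountability of $\kk$ enters) to confine $\phi^{-1}(\WWW_{k})$ to a single $\VVV_{\ell}$, and then exploit the open immersion $\VVV_{\ell}\to\WWW_{\ell}$ to identify $\phi^{-1}(\WWW_{k})$ isomorphically with $\WWW_{k}$. The only cosmetic difference is packaging—the paper concludes by showing the images $\phi(\VVV_{k})$ form an admissible filtration of $\WWW$ (via Theorem~\ref{closed-algebraic-filtration.thm}), whereas you verify directly that $\phi^{-1}$ is an ind-morphism on each $\WWW_{k}$—and your side remark that the connectedness of $\VVV$ is not actually used is accurate: the paper's proof never invokes it either.
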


\begin{proof}
It follows from Lemma~\ref{bijective-morphisms.lem} that the closed subsets $\phi^{-1}(\WWW_{k})$, $k \geq 1$, form an admissible filtration of $\VVV$. Therefore, for each $k$, there is a $\ell$ such that 
$\phi^{-1}(\WWW_{k}) \subseteq \VVV_{\ell}$. 
From the inclusions $\phi(\VVV_{k})\subseteq \WWW_{k}\subseteq \phi(\VVV_{\ell})$ we see that $\WWW_{k}':= \phi(\VVV_{k})$ is closed in $\WWW_{k}$, because $\phi(\VVV_{k})$ is closed in $\phi(\VVV_{\ell})$, and that $\WWW_{k}\subseteq \WWW_{\ell}'$. This shows that  $\WWW = \bigcup_{k}\WWW_{k}'$ is an admissible filtration. Now it is clear that $\phi$ is an isomorphism.
\end{proof}

\ps
\subsection{Tangent spaces and smoothness}\label{tangent-smoothness.subsec}
\label{tangent-space.subsec}
For any ind-variety $\VVV = \bigcup_{k\in\NN}\VVV_{k}$ we define the \itind{tangent space} in $v\in \VVV$ in the obvious way. We have $v \in \VVV_{k}$ for $k \geq k_{0}$, and $T_{v}\VVV_{k}\subseteq T_{v}\VVV_{k+1}$ for $k\geq k_{0}$, and then define\idx{$\Tr$@$T_{v}\VVV$}
$$
T_{v}\VVV := \varinjlim_{k\geq k_{0}} T_{v}\VVV_{k}
$$
which is a $\kk$-vector space of countable dimension.
If $\VVV=V$ is a $\kk$-vector space of countable dimension (Example~\ref{countable-vector-space.exa}), then, for any $v\in V$, we have $T_{v}V = V$ in a canonical way. 

If $\VVV$ is affine, then every $\delta \in T_{v}\VVV$ defines a {\it continuous derivation} $\delta\colon \OOO(\VVV) \to \kk$
in $v$, i.e. a continuous linear map $\delta$ satisfying $\delta(fg)=f(v)\delta(g) + g(v)\delta(f)$ for all $f,g \in \OOO(\VVV)$. In fact, we have $\delta \in T_{v}\VVV_{k}$ for some $k\geq 1$, and then we set $\delta f := \delta (f|_{\VVV_{k}})$ for $f \in \OOO(\VVV)$. The following lemma is clear.\idx{continuous derivation}

\begin{lemma}\label{tangent-derivation.lem} If $\VVV$ is affine
there is a canonical isomorphism of the tangent space $T_{v}\VVV$ with the continuous derivations 
$\Der_{v}^{\text{\tiny\it cont}}(\OOO(V),\kk)$\idx{$\Der_{v}^{\text{\tiny\it cont}}(\OOO(V),\kk)$} in $v$.
\end{lemma}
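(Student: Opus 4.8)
The plan is to exploit the two descriptions that are already in place: on the one hand $T_{v}\VVV = \varinjlim_{k} T_{v}\VVV_{k}$ as a direct limit of the finite-dimensional tangent spaces, and on the other $\OOO(\VVV) = \varprojlim_{k}\OOO(\VVV_{k})$ as an inverse limit with transition maps $\rho_{k}\colon\OOO(\VVV_{k+1})\to\OOO(\VVV_{k})$, $f\mapsto f|_{\VVV_{k}}$, which are surjective because each $\VVV_{k}\into\VVV_{k+1}$ is a closed immersion of affine varieties. For every affine variety the classical identification $T_{v}\VVV_{k} \simto \Der_{v}(\OOO(\VVV_{k}),\kk)$ holds, so the assignment described before the lemma sends $\delta\in T_{v}\VVV_{k}$ to the composition $\bar\delta\circ\pi_{k}$, where $\pi_{k}\colon\OOO(\VVV)\to\OOO(\VVV_{k})$ is the canonical projection and $\bar\delta\in\Der_{v}(\OOO(\VVV_{k}),\kk)$ is the corresponding derivation. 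First I would record that this composition is indeed a continuous derivation in $v$: its kernel contains $\Ker\pi_{k}$, which is open, and the Leibniz rule at $v$ follows from $f(v)=(\pi_{k}f)(v)$ for $v\in\VVV_{k}$ together with the fact that $\pi_{k}$ is a ring homomorphism. This produces a well-defined linear map $T_{v}\VVV\to\Der_{v}^{\text{\tiny\it cont}}(\OOO(\VVV),\kk)$.

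The heart of the matter, and the step I expect to be the main obstacle, is surjectivity, i.e. showing that \emph{every} continuous derivation factors through some finite stage $\pi_{k}$. Here I would first observe that, because the transition maps $\rho_{k}$ are surjective and the index set is $\NN$, each projection $\pi_{k}$ is surjective, and that the kernels form a nested neighborhood basis of $0$ in the inverse limit topology, $\Ker\pi_{1}\supseteq\Ker\pi_{2}\supseteq\cdots$, since $\pi_{k}=\rho_{k}\circ\pi_{k+1}$. Now let $\delta\colon\OOO(\VVV)\to\kk$ be continuous; as $\kk$ carries the discrete topology, $\Ker\delta=\delta^{-1}(0)$ is open, hence contains some $\Ker\pi_{k}$. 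Surjectivity of $\pi_{k}$ then lets $\delta$ descend to a well-defined linear map $\bar\delta\colon\OOO(\VVV_{k})\to\kk$ with $\delta=\bar\delta\circ\pi_{k}$, and a routine check (lifting elements of $\OOO(\VVV_{k})$ along $\pi_{k}$ and using $v\in\VVV_{k}$) shows that $\bar\delta$ satisfies the Leibniz rule at $v$, so $\bar\delta\in\Der_{v}(\OOO(\VVV_{k}),\kk)=T_{v}\VVV_{k}$ maps to $\delta$.

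Injectivity is then immediate: if $\delta\in T_{v}\VVV_{k}$ induces the zero derivation on $\OOO(\VVV)$, then $\bar\delta\circ\pi_{k}=0$, and surjectivity of $\pi_{k}$ forces $\bar\delta=0$, hence $\delta=0$ already in $T_{v}\VVV_{k}$. Finally I would check compatibility with the limit structure, so that the stage-by-stage bijections assemble into a single isomorphism on $T_{v}\VVV=\varinjlim_{k}T_{v}\VVV_{k}$: under the map $T_{v}\VVV_{k}\to T_{v}\VVV_{k+1}$ induced by the inclusion, a derivation $\bar\delta$ goes to $\bar\delta\circ\rho_{k}$, and since $\pi_{k}=\rho_{k}\circ\pi_{k+1}$ one gets $(\bar\delta\circ\rho_{k})\circ\pi_{k+1}=\bar\delta\circ\pi_{k}$, so the associated continuous derivation on $\OOO(\VVV)$ is unchanged. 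Combining the three steps yields the canonical isomorphism $T_{v}\VVV\simto\Der_{v}^{\text{\tiny\it cont}}(\OOO(\VVV),\kk)$.
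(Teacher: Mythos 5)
Your proof is correct and is essentially the paper's intended argument: the paper merely records the construction $\delta\mapsto\bigl(f\mapsto\delta(f|_{\VVV_{k}})\bigr)$ and declares the lemma clear, and your verification (the kernels of the surjective projections $\pi_{k}\colon\OOO(\VVV)\to\OOO(\VVV_{k})$ form a nested neighborhood basis of $0$, so continuity forces $\Ker\delta\supseteq\Ker\pi_{k}$ and every continuous derivation in $v$ descends to a finite stage, inverse to the map above) is exactly the routine expansion the authors had in mind. No gaps.
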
\idx{derivation, continuous}

It is also clear that for an open ind-subvariety $\WWW \subseteq \VVV$ we have $T_{w}\WWW = T_{w}\VVV$ for all $w\in\WWW$.
Moreover, any morphism $\phi\colon \VVV \to \WWW$ between ind-varieties induces a linear map $d\phi_{v}\colon T_{v}\VVV \to T_{\phi(v)}\WWW$ for every $v \in \VVV$, called the {\it differential of $\phi$ in $v$}\idx{differential}.

\begin{remark}
An ind-variety $\VVV$ is discrete if and only if all tangent spaces are trivial.\idx{discrete}
On the other hand, $\dim\VVV <\infty$ does not imply that the tangent spaces are finite-dimensional. In fact,  the union $\UUU$ of all the coordinate lines in $\A{\infty}$ is a closed ind-subvariety of dimension 1, but we have $\dim T_{0}\UUU = \infty$. A more interesting example will be given below in Example~\ref{closed-subvar-same-tangent-space.exa}.
\end{remark}

\begin{example}\label{trivial-differential.exa}
Let $\phi\colon \VVV \to \WWW$ be a surjective ind-morphism such that $d\phi_{v}$ is trivial in every point $v \in \VVV$. Then $\WWW$ is countable, hence discrete in case  $\kk$ is uncountable.  If $\VVV$ is connected, then $\WWW$ is a point.
\newline
(The assumption implies that every connected closed algebraic subset $X \subseteq \VVV$ is mapped to a point. Thus the first statement is clear, because $\VVV$ is a countable union of connected closed
algebraic subsets, namely of the connected components of the $\VVV_{k}$. For the second claim, we use the fact that 
there exists an admissible filtration $\VVV = \bigcup_k \VVV_k$ such that all $\VVV_{k}$ are connected, see Proposition~\ref{connected.prop}.)
\end{example}

\begin{definition}\label{smooth.def}
Let $\VVV$ be an ind-variety, and let $x\in\VVV$.
\be
\item
$\VVV$ is \itind{strongly smooth} in $x$ if there is an open neighborhood of $x$
which has an admissible filtration  consisting of connected smooth varieties.
\item
$\VVV$ is \itind{geometrically smooth} in $x$ if there is an admissible filtration $\bigcup_{k}\VVV_{k}$ such that $x\in\VVV_{k}$ is a smooth point for all $k$.
\ee
\end{definition}

There is a third concept of smoothness, called \itind{algebraic smoothness} which we will 
not discuss here. We refer to \cite[Section~4.3]{Ku2002Kac-Moody-groups-t} for a detailed investigation. It follows from the definition that a geometrically smooth point is also algebraically smooth.

\begin{example} \label{countable-union-of-varieties.exa}
We consider again the ind-variety $\XXX:=\bigcup_{i\in\NN} X_{i}$ from Example~\ref{countable-family-of-closed-algebraic-subsets.exa} where the $X_{i}$ are algebraic varieties.
For $x  \in X_{i} \subset \XXX$,
it follows immediately from the definitions that the next three points are equivalent:
\be
\item[(i)] $X_{i}$ is smooth in $x$;
\item[(ii)] $\XXX$ is strongly smooth in $x$;
\item[(iii)] $\XXX$ is geometrically smooth in $x$.
\ee
If all the $X_{i}$ are closed subvarieties of some variety $X$, then we obtain a natural ind-morphism $\phi\colon \XXX \to X$ in the obvious way.
If the $X_{i}\subseteq X$ are disjoint, then $\phi$ is injective, and the differential $d\phi_{x}\colon T_{x}\XXX \to T_{\phi(x)}X$ is injective for every $x\in\XXX$. Note that $\phi$ cannot be a closed immersion, since the image is not closed in case $\kk$ is uncountable.
\end{example}

Continuing our discussion about bijective morphisms in Section~\ref{small-fibers.subsec} we have the following result.

\begin{proposition}\label{connected-and-smooth-gives-iso.prop}
Let $\phi\colon \VVV \to \WWW$ be a bijective ind-morphism. Assume that $\VVV$ is curve-connected and $\WWW$ is strongly smooth in a  point $w \in \WWW$. Then there is an open neighborhood $\VVV'$ of $\phi^{-1}(w)\in\VVV$ such that $\phi|_{\VVV'}\colon \VVV' \into \WWW$ is an open immersion. If $\WWW$ is strongly smooth in every point, then $\phi$ is an isomorphism.
\end{proposition}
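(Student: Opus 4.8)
The plan is to establish first the \emph{local} assertion—that $\phi$ is an open immersion on some open neighbourhood of $v_{0}:=\phi^{-1}(w)$—and then to deduce the global statement by patching. For the patching, suppose $\WWW$ is strongly smooth everywhere. Applying the local assertion at every point produces an open cover $\VVV=\bigcup_{\alpha}\VVV'_{\alpha}$ with each $\phi|_{\VVV'_{\alpha}}\colon\VVV'_{\alpha}\into\WWW$ an open immersion. Because $\phi$ is bijective, the inverse map $\phi^{-1}\colon\WWW\to\VVV$ is well defined, the sets $\phi(\VVV'_{\alpha})$ are open and cover $\WWW$, and on each of them $\phi^{-1}$ agrees with the morphism $(\phi|_{\VVV'_{\alpha}})^{-1}$. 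Since being an ind-morphism can be checked on an open cover of the source (using Lemma~\ref{Kumar.lem} to locate the target levels), $\phi^{-1}$ is a morphism and $\phi$ is an isomorphism.

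For the local assertion I would use strong smoothness to fix an open neighbourhood $\WWW^{\circ}\ni w$ carrying an admissible filtration $\WWW^{\circ}=\bigcup_{m}\WWW^{\circ}_{m}$ by connected smooth varieties; in characteristic zero these are irreducible and normal, and $\WWW^{\circ}$ is connected. Set $\Omega:=\phi^{-1}(\WWW^{\circ})$, an open neighbourhood of $v_{0}$ on which $\phi$ restricts to a bijection $\Omega\to\WWW^{\circ}$. Curve-connectedness of $\VVV$ gives, via Proposition~\ref{curve-connected.prop}, an admissible filtration of $\VVV$ by irreducible varieties, whose traces $\Omega_{k}:=\Omega\cap\VVV_{k}$ are again irreducible; by Lemma~\ref{Kumar.lem} each $\phi(\Omega_{k})$ lands in some $\WWW^{\circ}_{m(k)}$. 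The engine is then exactly the one used in the lemma preceding Lemma~\ref{crit-iso.lem}: an injective morphism $\Omega_{k}\to\WWW^{\circ}_{m}$ from an irreducible variety which is \emph{dominant} onto the normal variety $\WWW^{\circ}_{m}$ is birational (char $0$) and hence, by Zariski's Main Theorem, an open immersion onto an open subset of $\WWW^{\circ}_{m}$.

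Granting a dominant level, the per-level open immersions assemble into an open immersion near $v_{0}$ by the component-and-exhaustion argument of Proposition~\ref{bijective-morphism.prop} and Lemma~\ref{crit-iso.lem}; the cleanest route is to apply Proposition~\ref{bijective-morphism.prop} directly to the bijection $\Omega\to\WWW^{\circ}$. Both this and the production of a dominant level require $\kk$ to be uncountable, and here lies the main obstacle. I would pass to an uncountable extension $\KK/\kk$ (Section~\ref{base-field-extension.subsec}): curve-connectedness, openness, smoothness and the two filtrations all survive $-\otimes_{\kk}\KK$, and the conclusion descends to $\kk$ because ``open immersion'' is faithfully flat local and the relevant open set $\Omega_{k}$ is already defined over $\kk$. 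The genuinely delicate point—the step I expect to cost the most—is that \emph{bijectivity need not survive base extension} (a countable discrete set mapping identically onto $\Aone$ is bijective over $\kk$ but not over $\KK$). Surjectivity of $\phi_{\KK}$ must therefore be extracted from the hypotheses at hand rather than taken for granted: the fibres of $\phi$ are finite, so by Proposition~\ref{small-fibers-gives-variety.prop} the set $\phi^{-1}(\WWW^{\circ}_{m})$ is finite-dimensional, and this together with the curve-connectedness of $\VVV$ should force the irreducible images $\phi(\Omega_{k})$ (of dimension $\dim\Omega_{k}$) to exhaust $\WWW^{\circ}_{m}$ after base change. Once $\phi_{\KK}$ is known to be surjective, Lemma~\ref{constructible.lem} provides the dominant level and the proof goes through.
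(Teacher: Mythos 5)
Your core route coincides with the paper's. The paper's entire proof is the two-step reduction you call ``the cleanest route'': replace $\WWW$ by the open neighborhood $\WWW^{\circ}$ of $w$ whose admissible filtration consists of connected smooth (hence irreducible and normal) varieties, replace $\VVV$ by $\Omega:=\phi^{-1}(\WWW^{\circ})$, and invoke Proposition~\ref{bijective-morphism.prop}; the second assertion then follows by covering $\WWW$ with such neighborhoods, exactly as in your patching paragraph. Your per-level discussion (injective morphism from an irreducible variety, dominant onto a normal variety, is an open immersion by Zariski's Main Theorem) merely re-derives the lemma that sits inside the proof of Proposition~\ref{bijective-morphism.prop}, so up to this point you and the paper agree.

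The divergence is your base-change layer, and there your write-up has a genuine hole. You are right to notice the hypothesis mismatch: Proposition~\ref{bijective-morphism.prop} is stated only for uncountable $\kk$, the present proposition carries no such hypothesis, and bijectivity does not ascend along $\KK/\kk$ (by Proposition~\ref{fieldextension.prop}, injectivity ascends, but surjectivity of $\phi$ implies surjectivity of $\phi_{\KK}$ only when $\kk$ is uncountable). But your repair of the surjectivity of $\phi_{\KK}$ is not an argument: Proposition~\ref{small-fibers-gives-variety.prop} yields only that $\phi^{-1}(\WWW^{\circ}_{m})$ is finite-dimensional, i.e.\ a countable union of irreducible algebraic subsets of dimension $\leq\dim\WWW^{\circ}_{m}$, and over a countable field such a union can cover all $\kk$-points of $\WWW^{\circ}_{m}$ without any single piece being dominant --- this is exactly the pathology of your own discrete example, and Lemma~\ref{constructible.lem} is unavailable. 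So ``should force the images to exhaust $\WWW^{\circ}_{m}$ after base change'' is precisely the statement that needs proof; moreover, curve-connectedness of $\VVV$ does not obviously pass to $\phi^{-1}(\WWW^{\circ}_{m})$, nor even to $\Omega$, since an irreducible curve through two points of $\Omega$ may leave $\Omega$ (a point which also affects the connectedness hypothesis of Proposition~\ref{bijective-morphism.prop}, and which the paper passes over as silently as you do). Be aware, finally, that the paper performs no base change at all: it cites Proposition~\ref{bijective-morphism.prop} directly, tacitly ignoring the uncountability hypothesis. So if your aim is to reproduce the paper's proof, drop the digression; if your aim is a complete proof valid over countable fields, the surjectivity of $\phi_{\KK}$ must be genuinely established rather than asserted, and as it stands that step is a gap.
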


\begin{proof} Replacing $\WWW$ by an open neighborhood of $w$ and $\VVV$ by its inverse image, we
can assume that there exists an admissible filtration of $\WWW$ 
consisting of connected smooth varieties and that $\phi$ is bijective. Then, by Proposition~\ref{bijective-morphism.prop}, $\phi$ is an isomorphism.
\end{proof}

Next we give an example of a curve-connected and strongly smooth closed subvariety of $\A{\infty}$ with the same tangent space in $0$, but which is strictly included in  $\A{\infty}$.

\begin{example}\label{closed-subvar-same-tangent-space.exa}
Define inductively a sequence of polynomials $f_k \in \OOO(\A{k} )$ by $f_1:=x_1$ and $f_k:=x_k +(f_{k-1})^2$ for $k \geq 2$. For each $k$, let $\VVV_k$ be the (smooth) hypersurface of $\A{k}$ which is the zero set of $f_k$. Set $\VVV:= \bigcup_{k \geq 1} \VVV_k$. By construction, $\VVV\cap \A{k} = \VVV_{k}$ for all $k$, hence $\VVV \subsetneqq \Ainfty$ is a strict closed subset.

Note that $T_0\VVV_k =\A{k-1} \subseteq \A{k} =T_0\A{k}$, where $\A{k-1} \subseteq \A{k}$ is the hyperplane of equation $x_k=0$. Then, it is clear that $T_0\VVV= \bigcup_{k \geq 1} T_0\VVV_k = \Ainfty = T_0 \Ainfty$. Moreover, each $\VVV_{k}$ is isomorphic to $\A{k-1}$ and so $\VVV \simeq \Ainfty$, by Proposition~\ref{indlimit-of-affine spaces.prop} above. Thus we have constructed a strict closed immersion $\phi\colon\Ainfty \into \Ainfty$ such that $d\phi_{0}\colon T_{0}(\Ainfty) \simto T_{0}\Ainfty$ is an isomorphism.
\end{example}

\begin{question} \label{towards-a-criterion-for-a-bijective-morphism-to-be-an-isomorphism.ques}
Is it true that a bijective morphism $\phi\colon \VVV \to \WWW$ between ind-varieties is an isomorphism if the differential $d\phi_{v}$ is an isomorphism in every point $v\in\VVV$? Maybe one has to assume in addition that $\VVV$ is connected or even curve-connected.
\end{question}

\ps
\subsection{\texorpdfstring{$R$}{R}-rational points}\label{R-rational-points.subsec}
If $X$ is a $\kk$-variety and $R$ any $\kk$-algebra, then the {\it $R$-rational points of $X$\/}\idx{rational point} are defined by\idx{R-r@$R$-rational points}\idx{$\XX$@$X(R)$}
$$
X(R):=\Mor(\Spec R, X).
$$
If $X$ is affine, then $X(R) = \Alg_{\kk}(\OOO(X), R)$, the $\kk$-algebra homomorphisms from $\OOO(X)$ to $R$.  If $X$ is an algebraic semigroup or an algebraic group, then  $X(R)$ is a semigroup, resp. a group. It is clear that $X \mapsto X(R)$
is a functor, i.e. for any morphism $\phi\colon X \to Y$ we get a map $\phi(R)\colon X(R) \to Y(R)$ with the usual functorial properties.

If $S \subseteq R$ is a  $\kk$-subalgebra, then we have a natural inclusion $X(S) \subseteq X(R)$.
In particular, we have an inclusion $X = X(\kk) \subseteq X(R)$ for every $\kk$-algebra  $R$.

If $\KK/\kk$\idx{$\LLL$@$\KK/\kk$} is a field extension where $\KK$ is again algebraically closed, then $X(\KK)$ are the $\KK$-rational 
points\idx{K-r@$\KK$-rational points} of the variety $X_{\KK}:=\Spec \KK\times_{\Spec \kk} X$. We will often confuse $X(\KK)$ with $X_{\KK}$, and $\phi (\KK) \colon X( \KK) \to Y (\KK)$ with $\phi_{\KK} \colon X_{\KK} \to Y_{\KK}$ in case  $\phi \colon X \to Y$ is a morphism.

As an example, let $V$ be a $\kk$-vector space of countable dimension. Then $V_{\KK}=V(\KK) = \KK \otimes_{\kk}V$ in a canonical way.

It is clear that these constructions carry over to ind-varieties. If $\VVV = \bigcup_{k}\VVV_{k}$, then\idx{$\VVV(R)$}
$$
\VVV(R) := \bigcup_{k} \VVV_{k}(R)
$$
where we use the fact that for a closed subvariety $\VVV_{k} \subseteq \VVV_{k+1}$ we also get an inclusion $\VVV_{k}(R) \into \VVV_{k+1}(R)$ of the $R$-rational points. In case $\KK/\kk$ is a field extension where $\KK$ is algebraically closed we get a $\KK$-ind-variety $\VVV_\KK = \bigcup_{k}(\VVV_{k})_{\KK}$. Again, we identify $\VVV_{\KK}$\idx{$\VVV_{\KK}$} with its $\KK$-rational points: $\VVV_{\KK} = \VVV(\KK) = \bigcup_{k}\VVV_{k}(\KK)$.

\begin{proposition}  \label{X(R).prop}
Let $X$ be an affine $\kk$-variety and $R$ a (commutative) $\kk$-algebra of countable dimension. Then $X(R)$ has a natural structure of an affine ind-variety such that the following holds.
\be
\item
If $\phi\colon X \to Y$ is a morphism of affine $\kk$-varieties, then the induced map $\phi(R)\colon X(R) \to Y(R)$ is an ind-morphism. If $\phi$ is a closed immersion, then so is $\phi(R)$.
\item 
For every homomorphism $\rho\colon R \to S$ of $\kk$-algebras of countable dimension the induced map $X(\rho)\colon X(R) \to X(S)$ is an ind-morphism. If $\rho$ is injective, then $X(\rho)$ is a closed immersion.
\ee
\end{proposition}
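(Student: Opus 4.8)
The plan is to realize $X(R)$ as a closed ind-subvariety of $R^{n}$ via a closed embedding of $X$ into affine space, and then to reduce every functoriality assertion to a single computation: that polynomial maps induce ind-morphisms on $R$-rational points. First I would fix a closed embedding $X \into \AA^{n}$, so that $\OOO(X) = \kk[x_{1},\dots,x_{n}]/I(X)$. Since $\AA^{n}(R) = R^{n}$ and $R$ has countable dimension, $R^{n}$ carries the canonical affine ind-variety structure of a countable-dimensional vector space (Example~\ref{countable-vector-space.exa}), for which every filtration $R = \bigcup_{k} W_{k}$ by finite-dimensional subspaces with $1 \in W_{0}$ is admissible. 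Writing $I(X) = (f_{1},\dots,f_{m})$, we have $X(R) = \{a \in R^{n} \mid f_{j}(a) = 0 \text{ for all } j\}$, which is a closed ind-subvariety of $R^{n}$ by Example~\ref{subst-R-algebra.exa}; being closed in an affine ind-variety, it is affine.

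The key step is to show that for any $g_{1},\dots,g_{p} \in \kk[x_{1},\dots,x_{n}]$ the map $R^{n} \to R^{p}$, $a \mapsto (g_{1}(a),\dots,g_{p}(a))$, is an ind-morphism. Indeed, if $d$ bounds the degrees of the $g_{i}$, then for $a \in W_{k}^{n}$ each value $g_{i}(a)$ is a $\kk$-linear combination of $1$ and of products of at most $d$ coordinates of $a$, all lying in the finite-dimensional subspace spanned by the $(\le d)$-fold products of elements of $W_{k}$; choosing $\ell$ so that this subspace together with $W_{0}$ lies in $W_{\ell}$ gives $g_{i}(W_{k}^{n}) \subseteq W_{\ell}$, and the induced map $W_{k}^{n} \to W_{\ell}^{p}$ is regular because multiplication in $R$ restricts to bilinear (hence regular) maps on the finite-dimensional $W_{k}$. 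Applying this to the components of a morphism $\phi\colon X \to Y$ (with $Y \into \AA^{p}$) shows that $\phi(R)$ is the restriction of such a map, hence an ind-morphism, proving the first part of (1). The same computation yields that the structure is independent of the embedding: if $\iota_{1}\colon X\into\AA^{n}$ and $\iota_{2}\colon X\into\AA^{m}$ are two choices, the components of $\iota_{2}$ pull back along $\iota_{1}$ to polynomials in $x_{1},\dots,x_{n}$, so the identity of $X(R)$ is an ind-morphism in both directions and hence an isomorphism; this justifies calling the structure \emph{natural}.

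For the closed-immersion statements I would invoke Lemma~\ref{Kumar-criterion.lem}. If $\phi\colon X \to Y$ is a closed immersion, choose a closed embedding $Y \into \AA^{p}$; composing gives a closed embedding $X \into \AA^{p}$, and with respect to these compatible embeddings both $X(R)$ and $Y(R)$ are closed in $R^{p}$, with $X(R)$ cut out inside $Y(R)$ by the extra equations of $X$, so $X(R)\into Y(R)$ is a closed immersion. For part (2), with $X \into \AA^{n}$ the map $X(\rho)$ is the restriction to $X(R)\subseteq R^{n}$ of $\rho^{n}\colon R^{n}\to S^{n}$ (applying $\rho$ coordinatewise); since $\rho(W_{k})$ is a finite-dimensional subspace of $S$, this is linear on each $W_{k}^{n}$ and thus an ind-morphism.

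The \emph{main obstacle} will be the last assertion, that $X(\rho)$ is a closed immersion when $\rho$ is injective, because this requires verifying the homeomorphism condition (c) of Lemma~\ref{Kumar-criterion.lem} without assuming $\kk$ uncountable (so that the simplification of Lemma~\ref{simplification-Kumar.lem} is unavailable). Here I would pass to adapted filtrations: pick a basis $(e_{i})_{i}$ of $R$, extend $(\rho(e_{i}))_{i}$ to a basis of $S$ by vectors $(f_{j})_{j}$, and set $W_{k} = \Span(e_{1},\dots,e_{k})$ and $W'_{k} = \Span(\rho(e_{1}),\dots,\rho(e_{k}),f_{1},\dots,f_{k})$. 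Then $\rho(W_{k})\subseteq W'_{k}$ is a coordinate subspace with $\rho(R)\cap W'_{k} = \rho(W_{k})$; hence $\rho(R)$ is closed in $S$, each $\rho|_{W_{k}}$ is a closed immersion of varieties, and $\rho\colon R \to \rho(R)$ is a homeomorphism, so $\rho$ and therefore $\rho^{n}$ are closed immersions. Finally, injectivity of $\rho$ gives $\rho^{n}(X(R)) = \rho^{n}(R^{n})\cap X(S)$, which is closed in $X(S)$, and restricting the closed immersion $\rho^{n}$ to the closed subset $X(R)$ with closed image shows that $X(\rho)$ is a closed immersion.
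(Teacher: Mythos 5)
Your proposal is correct and follows essentially the same route as the paper's proof: realize $X(R)$ as the zero set of $I(X)$ inside $R^{n}$, check that polynomial maps with coefficients in $\kk$ induce ind-morphisms on $R$-points (whence independence of the embedding and part (1)), and obtain part (2) from the coordinatewise map $\rho^{n}\colon R^{n}\to S^{n}$, which is a closed immersion when $\rho$ is injective. The only difference is that you spell out two details the paper leaves implicit — the adapted-basis verification that an injective linear map of countable-dimensional vector spaces is a closed immersion, and the identity $\rho^{n}(X(R))=\rho^{n}(R^{n})\cap X(S)$ — both of which are correct.
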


\begin{proof}
(a) Let $X \subseteq \An$ be a closed subset and denote by $I(X) \subset \OOO(\An)=\kk[x_{1},\ldots,x_{n}]$ the ideal of $X$.
Then
\[
X(R) = \{(a_{1},\ldots,a_{n})\in R^{n}\mid f(a_{1},\ldots,a_{n})=0 \text{ for all }f\in I(X)\}\subseteq R^{n}.
\]
This is a closed subset of $R^{n}$ (cf. Example~\ref{subst-R-algebra.exa}),  and we thus obtain the structure of an affine ind-variety on $X(R)$ for every closed subset $X \subset \An$. 
\ps
(b)
Let $X \subseteq \An$ and $Y \subseteq \Am$ be closed subsets, and let $\phi\colon X \to Y$ be a morphism. Then there exists a morphism $\Phi\colon \An \to \Am$ which induces $\phi$, and we get the following commutative diagram.
$$
\begin{CD}
X(R) @>{\subseteq}>>  R^{n} \\
@VV{\phi(R)}V @VV{\Phi(R)}V \\
Y(R) @>{\subseteq}>> R^{m}
\end{CD}
$$
Since $\Phi(R)$ is given by polynomials with coefficients in $\kk$ it is an ind-morphism, and the same is true for $\phi(R)\colon X(R) \to Y(R)$.
\ps
(c)
Let $X$ be an affine variety and let  $\eta\colon X \into \An$, $\mu\colon X \into \Am$ be two closed immersions. Then we obtain an isomorphism $\phi\colon \eta(X) \simto \mu(X)$, $\phi(a):=\mu(\eta^{-1}(a))$. It follows from (b)  that $\phi(R)\colon \eta(X)(R) \simto \mu(X)(R)$ is an isomorphism of ind-varieties. Therefore, the ind-structure on the set $X(R)$ defined by a closed immersion $X \into \An$ does not depend on the immersion. Moreover, (b) also implies that for a morphism $\phi\colon X \to Y$ of affine varieties the induced map $\phi(R)\colon X(R) \to Y(R)$ is an ind-morphism, proving the first part of (1).
\ps
(d)
It follows from (b) and (c) that for a closed immersion $\eta\colon X \into \An$ the induced map $\phi(R) \colon X(R) \into R^{n}$ is a closed immersion. This implies the second part of (1), by composing a closed immersion $\phi\colon X \into Y$ with a closed immersion $Y \into \An$.
\ps
(e)
If $\rho\colon R \to S$ is a homomorphism of $\kk$-algebras, then the induced map $\rho^{n}\colon R^{n}\to S^{n}$ is $\kk$-linear, hence an ind-morphism. If $\rho$ is injective, then $\rho^{n}$ is a closed immersion. Applying this to a closed subset $X \subset \An$ we obtain (2).
\end{proof}

\begin{remark}
Another interesting case is the following. Let $X$ be an affine variety, $f\in\OOO(X)$ a nonzero regular function, and consider the principal open set $X_{f} \subset X$. We will show in Section~\ref{principal-open-sets.subsec} that the canonical morphism $X_{f}(R) \to X(R)$ is a locally closed immersion for any finitely generated $\kk$-algebra $R$.
\end{remark}

\ps
\subsection{Base field extension} \label{base-field-extension.subsec}
We will mainly use the construction above for field extensions\idx{base field extension $\KK/\kk$}  where $\KK$ is also algebraically closed. It will be applied to reduce some proofs to the case of an uncountable base field. (We have already used this method  in the proof of Proposition~\ref{curve-connected.prop}.)
\idx{$\LLL$@$\KK/\kk$}

In the following two lemmas we first recall and prove some basic properties in the case of varieties. After this, in Proposition~\ref{fieldextension.prop}, we will study the situation for ind-varieties. 

The group $\Gamma:=\Aut(\KK/\kk)$ of field automorphisms of $\KK$ fixing $\kk$ acts on $\KK$ and hence on $X_{\KK}=X(\KK)$ for every $\kk$-variety $X$. Since $\KK^{\Gamma}=\kk$, i.e. $\KK/\kk$ is a \itind{Galois-extension},
we get $X  = (X_{\KK})^{\Gamma}$. Moreover, if $\phi\colon X \to Y$ is a $\kk$-morphism, then $\phi_{\KK}\colon X_{\KK}\to Y_{\KK}$ is $\Gamma$-equivariant.

\begin{lemma}
\be
\item
Let $R$ be a finitely generated $\kk$-algebra, and let $I \subseteq R_{\KK}:=\KK\otimes R$ be an ideal. If $I$ is $\Gamma$-stable, the $I$ is defined over $\kk$, i.e. $I = \KK\otimes I^{\Gamma}$ and $I^{\Gamma}= I \cap R$.
\item 
Let $X$ be a variety and  $U \subseteq X_{\KK}$ a locally closed $\KK$-subvariety. If $U$ is stable under $\Gamma$, then $U$ is defined over $\kk$. More precisely, $U^{\Gamma}=U\cap X \subseteq X$ is a locally closed $\kk$-subvariety and $U = (U\cap X)_{\KK}$.
\ee
\end{lemma}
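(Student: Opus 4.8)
The plan is to deduce both assertions from a single \emph{Galois descent} statement for $\KK$-subspaces, the only input about the extension being the given equality $\KK^{\Gamma}=\kk$. As a warm-up I would record the elementary fact that for any $\kk$-vector space $W$ one has $(\KK\otimes_{\kk}W)^{\Gamma}=W$: fixing a $\kk$-basis $(w_{j})$ of $W$ splits $\KK\otimes_{\kk}W=\bigoplus_{j}\KK\,w_{j}$ as a $\Gamma$-module, and on each summand the fixed points are $\KK^{\Gamma}w_{j}=\kk\,w_{j}$. Applied to $W=R$ this gives $(R_{\KK})^{\Gamma}=R$, so that $I^{\Gamma}=I\cap(R_{\KK})^{\Gamma}=I\cap R$ for any $\Gamma$-stable subset $I$; this settles the second identity in (1) for free.

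The heart of the matter is the descent lemma I would prove next: \emph{if $V\subseteq\KK\otimes_{\kk}W$ is a $\Gamma$-stable $\KK$-subspace, then $V=\KK\cdot V^{\Gamma}$.} Since an ideal $I\subseteq R_{\KK}$ is in particular a $\KK$-subspace (it is stable under multiplication by $\KK=\KK\otimes 1$) and is $\Gamma$-stable by hypothesis, applying the lemma with $W=R$ yields $I=\KK\cdot I^{\Gamma}=\KK\otimes_{\kk}I^{\Gamma}$, which is exactly assertion (1). To prove the lemma I would expand elements in the basis $(w_{j})$ and call the \emph{support} of a nonzero $x\in V$ the finite index set on which its coefficients are nonzero. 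The key step is a \emph{minimal-support argument}: for a prescribed finite set $T$, pick $y\in V\smallsetminus\{0\}$ with support $\subseteq T$ that is inclusion-minimal, and scale one coefficient (say at index $j_{0}$) to $1$; then for each $\gamma\in\Gamma$ the element $\gamma(y)-y\in V$ has support $\subseteq\mathrm{supp}(y)\smallsetminus\{j_{0}\}$, strictly smaller, so by minimality $\gamma(y)-y=0$. Hence all coefficients of $y$ are $\Gamma$-fixed, i.e. lie in $\KK^{\Gamma}=\kk$, so $y\in V^{\Gamma}$. Running this inside an induction on $|\mathrm{supp}(x)|$ for a general $x\in V$ (take $T=\mathrm{supp}(x)$, find such a fixed $y$, subtract a $\KK$-multiple of $y$ to kill one coordinate, and invoke the inductive hypothesis on the shorter element) gives $x\in\KK\cdot V^{\Gamma}$, completing the lemma.

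For assertion (2) I would first treat the affine closed case and then bootstrap. If $X$ is affine and $Z\subseteq X_{\KK}$ is a $\Gamma$-stable closed subvariety, its ideal $I(Z)\subseteq\OOO(X)_{\KK}$ is $\Gamma$-stable, so by (1) $I(Z)=\KK\otimes_{\kk}I(Z)^{\Gamma}$ with $I(Z)^{\Gamma}=I(Z)\cap\OOO(X)$; letting $Z_{0}$ be the closed $\kk$-subvariety cut out by $I(Z)^{\Gamma}$ gives $Z=(Z_{0})_{\KK}$, and since $X(\KK)^{\Gamma}=X(\kk)=X$ one checks $Z^{\Gamma}=Z\cap X=Z_{0}$. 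A $\Gamma$-stable locally closed $U\subseteq X_{\KK}$ (affine case) is then handled by applying the closed case twice: its closure $\overline{U}$ is $\Gamma$-stable, hence $\overline{U}=Y_{\KK}$ for the closed $\kk$-subvariety $Y=\overline{U}\cap X$, and the $\Gamma$-stable closed complement $\overline{U}\smallsetminus U$ descends likewise, whence $U=(U\cap X)_{\KK}$ with $U\cap X=U^{\Gamma}$ locally closed over $\kk$. Finally, for a general $X$ I would cover it by affine $\kk$-open sets $X_{\alpha}$, so that the $(X_{\alpha})_{\KK}$ form a $\Gamma$-stable open cover; the affine case descends each $U\cap(X_{\alpha})_{\KK}$ to $U\cap X_{\alpha}$, these agree on overlaps, and since being locally closed is a local condition they glue to a locally closed $\kk$-subvariety $U\cap X$ with $(U\cap X)_{\KK}=U$.

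I expect the \textbf{main obstacle} to be the descent lemma itself. Because $\KK/\kk$ is typically an infinite transcendental extension, there is no averaging over $\Gamma$ and no reduction to finite Galois theory, so one must argue by hand through the minimal-support/induction mechanism; the hypothesis $\KK^{\Gamma}=\kk$ enters precisely at the point where $\gamma(y)-y=0$ forces the coefficients of $y$ into the base field. Once this is in place, assertion (2) is essentially a formal translation between ideals and their zero sets together with the gluing step.
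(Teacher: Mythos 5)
Your proposal is correct and follows essentially the same route as the paper: the paper also reduces everything to the descent statement $V = \KK\,V^{\Gamma}$ for $\Gamma$-stable subspaces (proved there via a row-echelon basis, of which your minimal-support induction is just a variant normalization trick exploiting $\KK^{\Gamma}=\kk$), and then deduces (2) in the affine case through the ideal $I(Z)$, treating a locally closed $U$ by descending its closure and the closed complement exactly as you do.
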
\label{Gamma-stable.lem}

\begin{proof}
(1) We prove the following more general result. {\it If $V \subset \KK^{\infty}$ is a $\Gamma$-stable linear subspace, then $V = \KK V^{\Gamma}$.} Since $V$ is a union of $\Gamma$-stable finite-dimensional subspaces we can assume that $V$ is a finite-dimensional subspace of some $\KK^{m}$. Let $(v_{1},\ldots,v_{k})$ be a basis of $V$ in row echelon form where the first nonzero entry in each $v_{i}$ is 1. For every $\gamma \in \Gamma$ the element $\gamma v_{i}$ belongs to $V$  and thus can be expressed as a linear combination of the $v_{j}$'s. By the properties of the row echelon form we see that the basis vector $v_{j}$ for  $j\neq i$ cannot appear in this linear combination, and that the coefficient of $v_{i}$ must be 1. This shows that the basis is fixed by $\Gamma$, i.e. $v_{1},\ldots v_{k}\in (\KK^{m})^{\Gamma} = \kk^{m}$, and so $V = \KK V^{\Gamma}$.
\ps
(2) It suffices to consider the case of an affine variety $X$. It is also clear that it is enough to prove (2) for $\Gamma$-stable subsets of $X_{\KK}$ which are either closed or open. 

If $Z \subseteq X_{\KK}$ is closed and $\Gamma$-stable, then the ideal $I:=I (Z) \subseteq \OOO(X_{\KK}) = \KK\otimes \OOO(X)$ is $\Gamma$-stable. It follows from (1) that $I = \KK\otimes I^{\Gamma}$. This implies that $Z = (Z')_{\KK}$ where $Z' \subseteq X$ is the zero set of $I^{\Gamma}$. Since $I^{\Gamma}$ generates the ideal $I$ we get $Z' = Z \cap X = Z^{\Gamma}$. This proves the claim for a closed subvariety $Z \subseteq X_{\KK}$. 

If $U \subseteq X_{\KK}$ is open and $\Gamma$-stable, then $Z:=X_{\KK}\setminus U$ is closed and $\Gamma$-stable, hence $Z = (Z')_{\KK}$ where $Z':=Z\cap X$, as we have just seen. Setting $U':=X\setminus Z'$ we get $X_{\KK} = (U')_{\KK}\cup(Z')_{\KK}$  and $(U')_{\KK}\cap(Z')_{\KK} = \emptyset$. In fact, the second statement is clear, because the intersection is a $\Gamma$-stable closed subset of $(U')_{\KK}$, hence defined over $\kk$. For the first we remark that $\tilde Z:=X_{\KK}\setminus (U')_{\KK} \subseteq X_{\KK}$ is closed and $\Gamma$-stable, hence $\tilde Z = (\tilde Z \cap X)_{\KK}$.
Since $(U')_{\KK} \cap X = U'$ we get $\tilde Z \cap X = Z' = Z \cap X$, hence $\tilde Z = (\tilde Z \cap X)_{\KK} = (Z \cap X)_{\KK} =Z$, and the claim follows.
As a consequence, $U$ is defined over $\kk$ and $U = (U\cap X)_{\KK}$, showing that (2) also holds for open subsets $U \subset X_{\KK}$. 
\end{proof}

\begin{lemma}  \label{fieldextension.lem}
Let $X,Y$ be $\kk$-varieties.
\be
\item 
For $f\in\OOO(X_{\KK})$ we have that $f\in\OOO(X)$ if and only if $f(X)\subseteq \kk$.
\item 
If $\psi\colon X_{\KK}\to Y_{\KK}$ is a $\KK$-morphism such that $\psi(X) \subseteq Y$, then
the induced map $\phi \colon X \to Y$, $x \mapsto \psi(x)$, 
is a morphism and $\psi=\phi_{\KK}$. This holds in particular if $\psi$ is $\Gamma$-equivariant.
\item 
$X \subseteq X_{\KK}$ carries the induced topology, and $\overline{X}=X_{\KK}$.
\ee
Now let $\phi\colon X\to Y$ be a morphism and $\phi_{\KK}\colon X_{\KK}\to Y_{\KK}$ the extension.
\be\addtocounter{enumi}{3}
\item 
If $y\in Y$ and $y \notin \phi(X)$, then $y\notin \phi_{\KK}(X_{\KK})$. In particular, we have $\phi(X) = \phi_{\KK}(X_{\KK}) \cap Y$.
\item 
$\phi$ is injective (resp. surjective, resp. bijective) if and only if $\phi_{K}$ is injective (resp. surjective, resp. bijective).
\item  
$\phi$ is an isomorphism (resp. a closed immersion, resp. an open immersion) if and only if $\phi_{\KK}$ is an isomorphism (resp. a closed immersion, resp. an open immersion).
\ee
\end{lemma}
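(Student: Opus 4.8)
The plan is to reduce every assertion to the affine case and then to exploit two things repeatedly: the canonical identification $\OOO(X_\KK)=\KK\otimes_\kk\OOO(X)$, and the descent result of Lemma~\ref{Gamma-stable.lem}, which says that a $\Gamma$-stable locally closed $\KK$-subvariety of some $X_\KK$ is already defined over $\kk$. Since $\phi_\KK$ is $\Gamma$-equivariant whenever $\phi$ is defined over $\kk$, its image and fibres are $\Gamma$-stable, and descent lets me transport geometric properties across $\KK/\kk$. The six assertions are naturally ordered by dependence: (3) rests directly on Lemma~\ref{Gamma-stable.lem}; (1) is elementary; (2) follows from (1); (4) is a Nullstellensatz computation; (5) combines (4) with descent; and (6) is assembled from (2), (4) and (5).

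For (1) I would fix a $\kk$-basis $(e_\alpha)$ of $\KK$ with $e_0=1$ and write $f=\sum_\alpha e_\alpha g_\alpha$ with $g_\alpha\in\OOO(X)$, almost all zero. For $x\in X$ one has $g_\alpha(x)\in\kk$, so $f(x)=\sum_\alpha e_\alpha g_\alpha(x)$ is exactly the expansion of $f(x)$ in that basis; hence $f(x)\in\kk$ for all $x\in X$ forces $g_\alpha(x)=0$ for every $\alpha\neq 0$ and every $x\in X$. As $\kk$ is algebraically closed and $X$ reduced, a regular function vanishing at all points of $X$ is zero, so $g_\alpha=0$ for $\alpha\neq0$ and $f=g_0\in\OOO(X)$; the converse is trivial. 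For (3), the closure $\overline X\subseteq X_\KK$ is $\Gamma$-stable (each $\gamma$ is a homeomorphism fixing $X$ pointwise), so Lemma~\ref{Gamma-stable.lem} gives $\overline X=(\overline X\cap X)_\KK=X_\KK$, which is the density statement; the coincidence of the subspace topology with the $\kk$-topology then follows by applying the same descent to the $\Gamma$-averages $\bigcap_{\gamma}\gamma Z$ of closed sets $Z\subseteq X_\KK$. For (2), in the affine case the hypothesis $\psi(X)\subseteq Y$ means that $\psi^{*}h$ takes values in $\kk$ on $X$ for each $h\in\OOO(Y)$, so by (1) we get $\psi^{*}(\OOO(Y))\subseteq\OOO(X)$; this $\kk$-algebra map defines $\phi$, and visibly $\psi=\phi_\KK$. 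The ``in particular'' is immediate: $\Gamma$-equivariance forces $\psi(x)\in(Y_\KK)^\Gamma=Y$.

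For (4) I would work affinely and write the maximal ideal of $y$ as $\mm_y=(h_1,\dots,h_r)$. The fibre $\phi^{-1}(y)$ is the zero set of the $\phi^{*}h_i$, so by the Nullstellensatz its emptiness is equivalent to $1\in(\phi^{*}h_1,\dots,\phi^{*}h_r)$ in $\OOO(X)$; this relation is preserved under $-\otimes_\kk\KK$, whence $\phi_\KK^{-1}(y)=\emptyset$ as well. Patching over affine charts of $Y$ removes the affineness assumption, and the identity $\phi(X)=\phi_\KK(X_\KK)\cap Y$ is just the contrapositive together with the obvious inclusion. Part (5) then follows: the implications ``$\phi_\KK$ (injective/surjective) $\Rightarrow$ $\phi$ (same)'' are obtained by restricting to $X\subseteq X_\KK$ and using $\phi(X)=\phi_\KK(X_\KK)\cap Y$. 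For the reverse directions, injectivity is read off from the fibred square, by noting that $\phi$ injective forces the closed subvariety $X\times_Y X$ to have the same $\kk$-points as the diagonal, hence (both reduced over the algebraically closed $\kk$) to equal it; base change commutes with fibre products, so $X_\KK\times_{Y_\KK}X_\KK=\Delta_{X_\KK}$ and $\phi_\KK$ is injective. Surjectivity is the delicate point: if $\phi_\KK$ were not surjective, its image, constructible by Chevalley and $\Gamma$-stable by equivariance, would have $\Gamma$-stable nonempty complement, and I must argue that such a set meets $Y$, contradicting (4).

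The main obstacle is exactly this last descent statement, that a nonempty $\Gamma$-stable constructible subset $C\subseteq Y_\KK$ meets $Y$. Since Lemma~\ref{Gamma-stable.lem} only handles locally closed $\Gamma$-stable sets, I would first replace $C$ by a $\Gamma$-stable dense open subset $U$ of its closure $\overline C$ (produced by $\Gamma$-averaging an ordinary dense open of $\overline C$), apply the lemma to descend both $\overline C$ and $U$ to $\kk$, and then use that the nonempty $\kk$-variety $\overline C\cap Y$ has $\kk$-points, so that $U\cap Y\subseteq C\cap Y$ is nonempty. Granting (5), part (6) is formal: ``$\Rightarrow$'' in each case is preservation under base change (a surjection $\OOO(Y)\onto\OOO(X)$ stays surjective after $\otimes_\kk\KK$, and isomorphisms and their inverses base-change); for ``$\Leftarrow$'', bijectivity comes from (5), the image $\phi(X)=\phi_\KK(X_\KK)\cap Y$ is closed (resp. open) because $\phi_\KK(X_\KK)$ is a $\Gamma$-stable closed (resp. open) subset, hence defined over $\kk$ by Lemma~\ref{Gamma-stable.lem}, and the induced map $X\to\phi(X)$ base-changes to the isomorphism $X_\KK\simto\phi_\KK(X_\KK)$, so it is itself an isomorphism by the isomorphism case, whose own ``$\Leftarrow$'' in turn applies (2) to the $\Gamma$-equivariant inverse $\phi_\KK^{-1}$.
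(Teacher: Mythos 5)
Your proof is correct, and on most points it coincides with the paper's own argument: the same affine reduction and use of $\OOO(X_{\KK})=\KK\otimes_{\kk}\OOO(X)$ for (1) and (2), the same Nullstellensatz computation $1=\sum_i f_i\,\phi^{*}(h_i)$ for (4), the same descent of a $\Gamma$-stable open dense subset of $\overline{Y_{\KK}\setminus\phi_{\KK}(X_{\KK})}$ for the surjectivity half of (5), and the same assembly of (6), including the application of (2) to the $\Gamma$-equivariant inverse $\phi_{\KK}^{-1}$. Two steps genuinely diverge. For (3), the paper computes directly: writing $f=\sum_i a_i\otimes f_i$ with $\kk$-linearly independent $a_i$ it obtains $\VVV_{X_{\KK}}(f)\cap X=\VVV_{X}(f_1,\ldots,f_m)$, giving the induced topology, and deduces density from (1) applied to a function vanishing on $\overline{X}$; you instead run everything through Lemma~\ref{Gamma-stable.lem}, descending $\overline{X}$ and the $\Gamma$-stable sets $\bigcap_{\gamma}\gamma Z$. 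Both work; yours is a more uniform use of descent, the paper's a one-line computation that avoids invoking the heavier lemma. The more substantial difference is the injectivity half of (5): the paper proves that $\phi$ injective implies $\phi_{\KK}$ injective by induction on $\dim X$, using that in characteristic zero $\phi$ is an isomorphism of a dense open $U$ onto its image and then a descent argument to separate $\phi_{\KK}(U_{\KK})$ from $\phi_{\KK}(Z_{\KK})$ for $Z=X\setminus U$. Your diagonal trick is shorter and avoids the induction: the set-theoretic fibre product has the same $\kk$-points as $\Delta_X$, hence equals it as a closed subvariety, and the defining $\kk$-equations then cut out $\Delta_{X_{\KK}}$ over $\KK$. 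To make this airtight in the paper's variety framework, justify the base-change step via ideals rather than an appeal to ``base change commutes with fibre products'' for the \emph{reduced} fibre product: if $\mathfrak{a}\subseteq\OOO(X\times X)$ is generated by the equations $h\circ\phi\circ\pr_1-h\circ\phi\circ\pr_2$, then $\sqrt{\mathfrak{a}}=I(\Delta_X)$ and $(\sqrt{\mathfrak{a}})^{N}\subseteq\mathfrak{a}$, so the extended ideals $\mathfrak{a}\,\OOO(X_{\KK}\times X_{\KK})$ and $I(\Delta_X)\,\OOO(X_{\KK}\times X_{\KK})$ have the same zero set, namely $(\Delta_X)_{\KK}=\Delta_{X_{\KK}}$. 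This version is even characteristic-free, whereas the paper's induction uses characteristic zero. One wording fix in your surjectivity step: the ``$\Gamma$-averaging'' must be the union $\bigcup_{\gamma}\gamma U_0$ (an intersection over the infinite group $\Gamma$ need not be open); since $C$ is $\Gamma$-stable this union lies in $C$ and is open and dense in $\overline{C}$, which is all you need.
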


\begin{proof}
(1) We can assume that $X$ is affine. Then $\OOO(X_{\KK})=K \otimes_{\kk}\OOO(X)$, so that we can write $f = \sum_{i=1}^m a_{i} f_{i}$ where $a_{1}=1$, the $a_{i}\in K$ are linearly independent over $\kk$, and $f_{i}\in\OOO(X)$. 
Let us assume that the inclusion $f(X) \subseteq \kk$ holds. Then $f(x) = \sum_{i}a_{i}f_{i}(x) \in \kk$ for any $x \in X$, and so $f_i(x)=0$ for each $i \geq 2$. Thus, $f_i = 0$ for $i \geq 2$, and we get $f = f_{1} \in \OOO(X)$. The other implication is obvious.
\ps
(2) We can assume that $Y$ is affine and that $Y \subseteq \An$ is 
a closed subset. Then $Y_{\KK}\subseteq \AA^{n}_{\KK}$ and $\psi\colon X_{\KK}\to Y_{\KK}\subseteq \AA_{\KK}^{n}$ is given by $n$ regular functions $f_{1},\ldots,f_{n}\in\OOO(X_{\KK})$. Since $\phi(X)\subseteq Y \subseteq \An$ we see that $f_{i}(X) \subseteq\kk$, and the claim follows from (1).
\ps
(3) Again we can assume that $X$ is affine. If $f \in \OOO(X_{\KK})$, $f=\sum_{i=1}^{m}a_{i}\otimes f_{i}$ with $\kk$-linearly independent $a_{i}\in K$, then $f(x) = 0$ for some $x\in X$ if and only if $f_{1}(x)=\cdots=f_{m}(x)=0$. This shows that for the zero sets we get $\VVV_{X_{\KK}}(f)\cap X = \VVV_{X}(f_{1},\ldots,f_{m})$, proving the first claim. 

For the second, we can assume that  $\overline{X} \subsetneqq X_{\KK}$. Then there is a nonzero $f\in\OOO(X_{\KK})$ vanishing on $\overline{X}$. By (1), $f \in \OOO(X)$, and we end up with a contradiction.
\ps
(4) We can assume again that $X,Y$ are affine. Then $y\notin \phi(X)$ means that $\phi^{*}(\mm_{y})\subseteq \OOO(X)$ generates $\OOO(X)$, i.e. $1= \sum_{i}f_{i}\phi^{*}(h_{i})$ for some $h_{i}\in \mm_{y}$ and $f_{i}
\in \OOO(X)$. Hence, $\phi_{\KK}^{*}(\mm_{y})$
also generates $\OOO(X_{\KK})$, and the claim follows.
\ps
(5a) If $\phi_{\KK}$ is injective, then so is $\phi$, and  if $\phi_{\KK}$ is surjective, then (4) implies that $\phi$ is surjective.
\ps
(5b) Assume that $\phi$ is injective. We prove that $\phi_{\KK}$ is injective by induction on $\dim X$. The case $\dim X = 0$ is obvious.
If $\dim X > 0$, then there is an open dense set $U \subseteq X$ such that $\phi(U) \subseteq Y$ is locally closed and that $\phi$ induces an isomorphism $U \simto \phi(U)$. It follows that $\phi_{\KK}\colon X_{\KK} \to Y_{\KK}$ induces an isomorphism $U_{\KK} \simto \phi(U)_{\KK} \subseteq Y_{\KK}$. In particular, $\phi(U)_{\KK} = \phi_{\KK}(U_{\KK})$. Setting $Z:=X \setminus U$ we see that  $\dim Z < \dim X$, and so the morphism $Z_{\KK} \to Y_{\KK}$ induced by $\phi_{\KK}$ is injective, by induction. It remains to see that the images $\phi_{\KK}(Z_{\KK})$ and $\phi_{\KK}(U_{\KK})$ are disjoint. If not, there is a nonempty locally closed and dense subset $A \subseteq \phi_{\KK}(Z_{\KK})\cap \phi_{\KK}(U_{\KK})$. Since the intersection is stable under $\Gamma := \Aut(\KK/\kk)$ we can assume that $A$ is also $\Gamma$-stable. Then, by Lemma~\ref{Gamma-stable.lem}(2), $A$ is defined over $\kk$: $A = (A\cap X)_{\KK}$.

If $A_{1} \subset U_{\KK}$ and $A_{2}\subset Z_{\KK}$ denote the inverse images of $A$ under the morphisms $U_{\KK} \to Y_{\KK}$ and $Z_{\KK} \to Y_{\KK}$, then both varieties are defined over $\kk$. Moreover, $A_{1}\simto A$ is an isomorphism and $A_{2}\to A$ is bijective. It follows that  $A_{1}\cap X\subset U$ and $A_{2}\cap X\subset Z$ are both mapped bijectively onto $A\cap X$ under $\phi$. This contradicts the fact that $\phi$ is injective and that  $U$ and $Z=X \setminus U$ are disjoint.
\ps
(5c)
Finally, assume that $\phi$ is surjective. We prove that $\phi_{\KK}$ is surjective by induction on $\dim Y$. Again, the case $\dim Y = 0$ is obvious. If $\phi_{\KK}$ is not surjective, then there is a subset $U \subseteq C:= Y_{\KK}\setminus\phi_{\KK}(X_{\KK})$ which is open and dense in $\overline{C}$. Since $C$ is stable under $\Gamma$ we can assume that $U$ is also stable under $\Gamma$. Hence, by Lemma~\ref{Gamma-stable.lem}(1), $U$ is defined over $\kk$ and $U\cap X\neq\emptyset$. But $U\cap X \nsubseteq \phi(X)$, by construction, contradicting the assumption.

\ps
(6a) If $\phi$ is an isomorphism, then it is clear that $\phi_{\KK}$ is an isomorphism. If $\phi_{\KK}$ is an isomorphism, then the inverse $\phi_{\KK}^{-1}$ is $\Gamma$-equivariant, because $\phi_{\KK}$ is $\Gamma$-equivariant. Thus,
$\phi_{\KK}^{-1}$ is defined over $\kk$ and induces a morphism  $\psi\colon Y  \to X$. It follows that the compositions $\phi\circ\psi$ and $\psi\circ\phi$ are the identity, and thus $\phi$ is an isomorphism.
\ps
(6b) If $\phi$ is a closed immersion, i.e. $\phi(X) \subseteq Y$ is closed and $\phi$ induces an isomorphism $X \simto \phi(X)$, then $\phi(X)_{\KK} \subseteq Y_{\KK}$ is closed and equal to $\phi_{\KK}(X_{\KK})$, by (5). Now the claim follows from (6a).

The case of an open immersion $\phi$ follows with the same arguments.
\ps
(6c) If $\phi_{\KK}$ is a closed immersion, then $\phi_{\KK}(X_{\KK})\subseteq Y_{\KK}$ is closed and $\Gamma$-stable, hence defined over $\kk$. Since $\phi(X)=\phi_{\KK}(X_{\KK})\cap Y$, by (4), we see that $\phi(X) \subseteq Y$ is closed, by (3),  and $\phi_{\KK}(X_{\KK})= \phi(X)_{\KK}$. Moreover, the morphism $\psi\colon X \to \phi(X)$, $x \mapsto \phi(x)$, induces the isomorphism $\psi_{\KK}\colon X_{\KK} \simto \phi_{\KK}(X_{\KK})$, $x \mapsto \phi_{\KK}(x)$. Thus $\psi$ is an isomorphism, by (6a), and so $\phi$ is a closed immersion.

Again, the case of an open immersion follows with the same arguments.
\end{proof}

We now extend some of these results to ind-varieties $\VVV$. 
If $\phi\colon \VVV \to \WWW$ is a morphism and $\phi_{\KK}\colon \VVV_{\KK}\to \WWW_{\KK}$ its extension we get a commutative diagram
$$
\begin{CD}
\VVV @>\phi>> \WWW \\
@VV{\subseteq}V @VV{\subseteq}V \\
\VVV_{\KK} @>{\phi_{\KK}}>> \WWW_{\KK}
\end{CD}
$$
The group $\Gamma=\Aut(\KK/\kk)$ acts on $\VVV_{\KK}$, and $(\VVV_{\KK})^{\Gamma}=\VVV$. Moreover, the morphism $\phi_{\KK}\colon \VVV_{\KK} \to \WWW_{\KK}$ is $\Gamma$-equivariant.

\begin{proposition}\label{fieldextension.prop}
Let $\VVV$, $\WWW$ be ind-varieties, and let $\KK/\kk$ be a field extension where $\KK$ is also algebraically closed. 
\be
\item 
$\VVV \subseteq  \VVV_{\KK}$ carries the induced topology, 
and $\VVV$ is dense in $\VVV_{\KK}$.
\item If $\UUU \subseteq \VVV_{\KK}$ is locally closed and $\Gamma$-stable, then $\UUU$ is defined over $\kk$, i.e. $\UUU\cap \VVV = \UUU^{\Gamma}\subseteq \VVV$ is locally closed and $\UUU = (\UUU\cap \VVV)_{\KK}$.
\item 
If $\psi\colon \VVV_{\KK}\to \WWW_{\KK}$ is a morphism of $\KK$-ind-varieties such that $\psi(\VVV) \subseteq \WWW$, then
the induced map $\phi \colon \VVV \to \WWW$, $v \mapsto \psi(v)$, 
is a morphism of $\kk$-ind-varieties and $\psi=\phi_{\KK}$. This holds in particular if $\psi$ is $\Gamma$-equivariant.
\ee
Let $\phi\colon \VVV\to \WWW$ be a morphism of $\kk$-ind-varieties and $\phi_{\KK}\colon \VVV_{\KK}\to \WWW_{\KK}$ the extension.
\be\addtocounter{enumi}{3}
\item  
The morphism $\phi$ is an isomorphism (resp. a closed immersion, resp. an open immersion) if and only if $\phi_{\KK}$ is an isomorphism (resp. a closed immersion, resp. an open immersion).
\item
The morphism $\phi$ is injective if and only if $\phi_{\KK}$ is injective.
\item
If $\phi_{\KK}$ is surjective, then $\phi$ is surjective. The other implication holds if $\kk$ is uncountable.
\ee
\end{proposition}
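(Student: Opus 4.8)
\emph{Strategy.} The plan is to reduce every assertion to the corresponding statement for ordinary varieties, already proved in Lemmas~\ref{Gamma-stable.lem} and~\ref{fieldextension.lem}, by working term by term along admissible filtrations $\VVV=\bigcup_k\VVV_k$ and $\WWW=\bigcup_k\WWW_k$, so that $\VVV_{\KK}=\bigcup_k(\VVV_k)_{\KK}$ and $\WWW_{\KK}=\bigcup_k(\WWW_k)_{\KK}$ are admissible filtrations over $\KK$. The elementary bookkeeping facts I will use repeatedly are $(\VVV_k)_{\KK}\cap\VVV=\VVV_k$ and $\WWW\cap(\WWW_l)_{\KK}=\WWW_l$, both consequences of Lemma~\ref{fieldextension.lem}(4) applied to the closed immersions between the terms of the filtrations, together with $(\VVV_{\KK})^{\Gamma}=\VVV$.

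For (1), I would prove that a subset $A\subseteq\VVV$ is closed in the ind-topology if and only if it is the trace on $\VVV$ of a closed subset of $\VVV_{\KK}$. Given closed $A$, each $A\cap\VVV_k$ is a closed $\kk$-subvariety, the extensions $(A\cap\VVV_k)_{\KK}$ increase and are closed in $(\VVV_k)_{\KK}$, so $\widetilde A:=\bigcup_k(A\cap\VVV_k)_{\KK}$ is closed in $\VVV_{\KK}$ and $\widetilde A\cap\VVV=A$ by Lemma~\ref{Gamma-stable.lem}(2); the converse direction is immediate from the variety case. Density of $\VVV$ follows because each $\VVV_k$ is dense in $(\VVV_k)_{\KK}$ by Lemma~\ref{fieldextension.lem}(3). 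Assertion (2) reduces to the closed case just treated: if $\UUU$ is locally closed and $\Gamma$-stable, then its closure $\overline\UUU$ and the closed set $\overline\UUU\setminus\UUU$ are $\Gamma$-stable and hence defined over $\kk$, and $\UUU\cap\VVV=(\overline\UUU\cap\VVV)\setminus((\overline\UUU\setminus\UUU)\cap\VVV)$ is then locally closed in $\VVV$ with $(\UUU\cap\VVV)_{\KK}=\UUU$.

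For (3), choosing for each $k$ an $l$ with $\psi((\VVV_k)_{\KK})\subseteq(\WWW_l)_{\KK}$, the hypothesis $\psi(\VVV)\subseteq\WWW$ gives $\psi(\VVV_k)\subseteq\WWW\cap(\WWW_l)_{\KK}=\WWW_l$, so Lemma~\ref{fieldextension.lem}(2) produces a $\kk$-morphism $\VVV_k\to\WWW_l$ restricting $\psi$; hence $\phi$ is an ind-morphism and $\psi=\phi_{\KK}$, and the $\Gamma$-equivariant case is included since then $\psi(\VVV)\subseteq(\WWW_{\KK})^{\Gamma}=\WWW$. With (1)--(3) available the rest is formal descent. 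Part (5) and the injectivity statement are checked on each $(\VVV_k)_{\KK}$ via Lemma~\ref{fieldextension.lem}(5). In (4) the isomorphism case follows because $\phi_{\KK}^{-1}$ is $\Gamma$-equivariant, hence descends to an inverse of $\phi$ by (3); the closed and open immersion cases then combine this isomorphism case with the identity $\phi(\VVV)=\phi_{\KK}(\VVV_{\KK})\cap\WWW$ (using injectivity to put a $\Gamma$-fixed preimage back into $\VVV$) and with parts (1)--(2) to see that $\phi(\VVV)$ is closed, resp.\ open. For (6), the implication ``$\phi_{\KK}$ surjective $\Rightarrow$ $\phi$ surjective'' is again a filtration-wise application of Lemma~\ref{fieldextension.lem}(4).

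The one genuinely non-formal step, and the source of the countability hypothesis in (6), is the converse ``$\phi$ surjective $\Rightarrow\phi_{\KK}$ surjective''. Here $\WWW_l=\bigcup_k(\phi(\VVV_k)\cap\WWW_l)$ is a countable union of constructible sets, and for uncountable $\kk$ Lemma~\ref{constructible.lem} collapses it to a finite one, so that $\WWW_l\subseteq\phi(\VVV_{k_0})$ for a single $k_0$. Restricting $\phi$ to the closed subvariety $\phi|_{\VVV_{k_0}}^{-1}(\WWW_l)\subseteq\VVV_{k_0}$ gives a surjection onto $\WWW_l$ over $\kk$, to which Lemma~\ref{fieldextension.lem}(5) applies and yields $(\WWW_l)_{\KK}\subseteq\phi_{\KK}(\VVV_{\KK})$. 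I expect this collapsing of the countable union---and checking that it genuinely fails for countable $\kk$ (as with a countable discrete $\VVV$ mapping onto $\Aone$)---to be the main point to get right; everything else is bookkeeping along the filtrations.
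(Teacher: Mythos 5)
Your proposal is correct and follows essentially the same route as the paper: filtration-wise reduction to Lemmas~\ref{Gamma-stable.lem} and~\ref{fieldextension.lem}, descent of $\phi_{\KK}^{-1}$ via $\Gamma$-equivariance for the isomorphism case of (4), the factorization of $\phi$ through $\phi(\VVV)=\phi_{\KK}(\VVV_{\KK})\cap\WWW$ for the immersion cases, and for (6) the collapse of the countable union $\WWW_{\ell}=\bigcup_k\bigl(\phi(\VVV_k)\cap\WWW_{\ell}\bigr)$ by Lemma~\ref{constructible.lem} when $\kk$ is uncountable. The only cosmetic deviations are that in (2) you reduce to the closed case via $\overline{\UUU}$ and $\overline{\UUU}\setminus\UUU$ where the paper applies Lemma~\ref{Gamma-stable.lem}(2) directly to each $\UUU\cap(\VVV_k)_{\KK}$, and in (6) you inline the content of Proposition~\ref{surjective-morphisms-to-varieties.prop} by taking $Y=\phi|_{\VVV_{k_0}}^{-1}(\WWW_{\ell})$ rather than citing it.
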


\begin{proof}
(1) If $\VVV = \bigcup_{k}\VVV_{k}$ is an admissible filtration, then $\VVV_{\KK} = \bigcup_{k}(\VVV_{k})_{\KK}$ is an admissible filtration which is $\Gamma$-stable. If $T \subseteq \VVV_{\KK}$ is closed, then $T\cap (\VVV_{k})_{\KK}$ is closed in $(\VVV_{k})_{\KK}$ for all $k$, and so $T \cap \VVV_{k}$ is closed in $\VVV_{k}$, by Lemma~\ref{fieldextension.lem}(3). Thus $T \cap \VVV$ is closed in $\VVV$. If $S \subseteq \VVV$ is closed, then $S$ is a closed ind-subvariety of $\VVV$ and so $S_{\KK}$ is a closed ind-subvariety of $\VVV_{\KK}$. Thus $\VVV$ carries the induced topology of $\VVV_{\KK}$.
Since $(\VVV_{k})_{\KK} =\overline{\VVV_{k}} \subseteq \overline{\VVV}$ by Lemma~\ref{fieldextension.lem}(3), we finally get $\overline{\VVV} = \VVV_{\KK}$.
\ps
(2) If $\UUU \subset \VVV_{\KK}$ is locally closed, then $\UUU \cap \VVV$ is locally closed in $\VVV$ by (1).
Also, $\UUU \cap (\VVV_k)_{\KK}$ is locally closed in $(\VVV_k)_{\KK}$ and $\Gamma$-stable. By
Lemma~\ref{Gamma-stable.lem}(2), we see that $\UUU \cap \VVV_k$ is a locally closed $\kk$-subvariety of $\VVV_k$ (which also follows from the fact that $\UUU \cap \VVV$ is locally closed in $\VVV$) and that $\UUU \cap (\VVV_k)_{\KK} = (\UUU \cap \VVV_k)_{\KK}$.
This yields $(\UUU \cap \VVV)_{\KK}= \bigcup_k (\UUU \cap \VVV_k)_{\KK} = \bigcup_k  \UUU \cap (\VVV_k)_{\KK} = \UUU$ as we wanted.
\ps
(3) For any $k$ there is an $\ell$ such that $\psi((\VVV_{k})_{\KK}) \subseteq (\WWW_{\ell})_{\KK}$. Since $\VVV_{k}=(\VVV_{k})_{\KK}\cap\VVV$ and $\WWW_{\ell}=(\WWW_{\ell})_{\KK}\cap\WWW$ we get $\psi(\VVV_{k})\subseteq \WWW_{\ell}$. Hence, by Lemma~\ref{fieldextension.lem}(2), $\phi|_{\VVV_{k}}\colon \VVV_{k}\to \WWW_{\ell}$ is a morphism and  $\psi|_{(\VVV_{k})_{\KK}}=(\phi|_{\VVV_{k}})_{\KK}$. The claim follows.
\ps
(4) If $\phi_{\KK}$ is an isomorphism, then $\phi$ is bijective. Hence $\phi_{\KK}^{-1}$ sends $\WWW$ bijectively onto $\VVV$, and $\phi_{\KK}^{-1}|_{\WWW} = \phi^{-1}$. Now it follows from Lemma~\ref{fieldextension.lem}(2) that $\phi^{-1}$ is a morphism, and so $\phi$ is an isomorphism. The other implication is obvious. 

If $\phi_{\KK}$ is a closed immersion, then $\phi(\VVV) = \phi_{\KK}(\VVV_{\KK})\cap \WWW$ is closed in $\WWW$, and  $\phi(\VVV)_{\KK}=\phi(\VVV_{\KK})$, by (2). Thus we have a decomposition $\phi\colon \VVV \overset{\bar\phi}{\to} \phi(\VVV) \overset{\iota}{\into} \WWW$ into a bijective morphism $\bar\phi$ and a closed immersion $\iota$. By assumption,  ${\bar\phi}_{\KK}$ is an isomorphism, and the claim follows. Again, the other implication is obvious.

The case of an open immersion can be proved along the same lines.
\ps
(5)
This follows directly from the corresponding statements for varieties (see  Lemma~\ref{fieldextension.lem}(5)).
\ps
(6) If $\phi_{\KK}$ is surjective and $w \in \WWW$, then $w \in \phi_{\KK}((\VVV_{k})_{\KK})$ for some $k$. Since $\phi(\VVV_{k}) = \phi_{\KK}((\VVV_{k}) _\KK)\cap \WWW$, by Lemma~\ref{fieldextension.lem}(4), we get $w \in \phi(\VVV_{k})$, hence $\phi$ is surjective.

Now assume that $\kk$ is uncountable and that $\phi$ is surjective. For any $\ell$ the inverse image $\phi_{\KK}^{-1}((\WWW_{\ell})_{\KK})\subseteq \VVV_{\KK}$ is closed and $\Gamma$-stable, hence defined over $\kk$. Since  $\phi_{\KK}^{-1}((\WWW_{\ell})_{\KK})\cap \VVV = \phi^{-1}(\WWW_{\ell})$ we get $\phi_{\KK}^{-1}((\WWW_{\ell})_{\KK}) = \phi^{-1}(\WWW_{\ell})_{\KK}$, by (2). Moreover, the induced morphism $\phi^{-1}(\WWW_{\ell}) \to \WWW_{\ell}$ is surjective, so that we can find a closed algebraic subset $Y \subseteq \phi^{-1}(\WWW_{\ell})$ such that $\phi(Y)= \WWW_{k}$ (Proposition~\ref{surjective-morphisms-to-varieties.prop}). By Lemma~\ref{fieldextension.lem}(5) the induced morphism $Y_{\KK} \to (\WWW_{\ell})_{\KK}$ is surjective, and the claim follows.
\end{proof}

Note that the assumption of an uncountable field is necessary in statement (6) as shown by the bijective morphism $\VVV \to \Aone$ where $\VVV$ is $\kk$ considered as a discrete set.

\ps
\subsection{Field of definition}
A well-known and useful result from algebraic geometry says that every variety $X$ is defined over a field $K$ which is finitely generated over the prime field. It is easy to see that this does not hold for ind-varieties. However, we have the following result.

\begin{proposition}\label{defined-over-countable-field.prop}
Every ind-variety $\VVV$ is defined over a countable field\idx{field of definition}, i.e. there is a countable algebraically closed field $\kk_{0}\subseteq \kk$ and a $\kk_{0}$-ind-variety $\WWW$ such that   $\WWW_{\kk}$ is $\kk$-isomorphic to $\VVV$.
\end{proposition}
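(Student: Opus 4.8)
The plan is to reduce the statement to the classical fact quoted just above, that every individual variety---and likewise every morphism between varieties---is defined over a subfield of $\kk$ that is finitely generated over the prime field $\QQ$. Fixing an admissible filtration $\VVV = \bigcup_{k}\VVV_{k}$ with closed immersions $\iota_{k}\colon \VVV_{k}\into \VVV_{k+1}$, the only data describing $\VVV$ are the countably many varieties $\VVV_{k}$ together with the countably many morphisms $\iota_{k}$. Each of these is defined over a finitely generated subfield of $\kk$, so the first step is to collect all of this defining data into a single countable field.

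Concretely, for every $k$ I would choose a finitely generated subfield $F_{k}\subseteq \kk$ over which the two varieties $\VVV_{k}$, $\VVV_{k+1}$ and the morphism $\iota_{k}$ are \emph{simultaneously} defined; this is possible because a variety is described by finitely many coefficients (on a finite affine cover with finite gluing data) and a morphism between such varieties is again given by finitely much data. Let $F \subseteq \kk$ be the subfield generated over $\QQ$ by all the $F_{k}$, $k \in \NN$. Being generated by countably many elements over the countable field $\QQ$, the field $F$ is countable, and hence so is its algebraic closure inside $\kk$, which I call $\kk_{0}$; by construction $\kk_{0}$ is countable and algebraically closed. Over $F$ each $\VVV_{k}$ admits a model and each $\iota_{k}$ descends to an $F$-morphism; base-changing these from $F$ up to $\kk_{0}$ yields $\kk_{0}$-varieties $\WWW_{k}$ with $(\WWW_{k})_{\kk}\cong \VVV_{k}$ and $\kk_{0}$-morphisms $(\iota_{k})_{0}\colon \WWW_{k}\to \WWW_{k+1}$, all base-changing to the original data over $\kk$.

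It remains to check that $\WWW := \bigcup_{k}\WWW_{k}$ is genuinely a $\kk_{0}$-ind-variety and that $\WWW_{\kk}\cong \VVV$. The first point requires that each descended map $(\iota_{k})_{0}$ be again a closed immersion; since its base change to $\kk$ is the closed immersion $\iota_{k}$, this is exactly Lemma~\ref{fieldextension.lem}(6) applied to the extension $\kk/\kk_{0}$ (with $\kk$ algebraically closed, playing the role of $\KK$). Thus the $(\iota_{k})_{0}$ equip $\WWW$ with the structure of an ind-variety over $\kk_{0}$ in the sense of Definition~\ref{indvar.def}. For the second point, transitivity of base change gives $(\WWW_{k})_{\kk}=\VVV_{k}$ and $((\iota_{k})_{0})_{\kk}=\iota_{k}$, whence $\WWW_{\kk}=\bigcup_{k}(\WWW_{k})_{\kk}=\bigcup_{k}\VVV_{k}=\VVV$ as $\kk$-ind-varieties.

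The main obstacle I expect is organizational rather than deep: one must arrange the countably many descent data compatibly over a single field. The key to avoiding incompatibilities is to descend everything to one and the same finite level $F$ first---base-changing the individual models $\VVV_{k}/F_{k}$ up to $F$---so that all varieties and all immersions literally arise by base change from $F$; compatibility of the ind-structure is then automatic. Beyond the classical finiteness statement, the only nontrivial input is the descent of the closed-immersion property, which is supplied by Lemma~\ref{fieldextension.lem}(6).
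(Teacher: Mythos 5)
Your argument is correct and is essentially the same as the paper's: both proofs rest on the classical fact that finitely many varieties and morphisms between them are defined over a field finitely generated over the prime field, absorb countably many such fields into a countable field whose algebraic closure $\kk_{0}\subseteq\kk$ is again countable, and use Lemma~\ref{fieldextension.lem}(6) to see that the descended inclusions are still closed immersions. The only difference is organizational: the paper runs the descent as an induction along a growing chain of countable algebraically closed fields precisely so that each new model contains the base change of the previous one as a closed subvariety \emph{on the nose} --- this is the model-compatibility you declare ``automatic'', and to make it literally so you should either fix one model of each $\VVV_{k}$ before descending the $\iota_{k}$, or additionally descend the comparison isomorphisms between the two models of each $\VVV_{k+1}$ (each defined over a further finitely generated extension, so this costs nothing).
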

\begin{proof}
For the proof we first remark the following. If $\kk_{0}\subseteq \kk_{1}\subseteq \cdots \subseteq \kk$ are  countable subfields of $\kk$, then the union $\bigcup_{i=1}^{\infty}\kk_{i}\subseteq \kk$ is a countable field. 

Now let $\VVV = \bigcup_{k} \VVV_{k}$, and assume that we have constructed a countable algebraically closed field $\kk_{0}$, a $\kk_{0}$-variety $\WWW_{\ell}$ and closed subvarieties $\WWW_{1}\subseteq \WWW_{2}\subseteq\cdots\subseteq\WWW_{\ell}$, and an isomorphism $\phi_{\ell}\colon (\WWW_{\ell})_{\kk} \simto \VVV_{\ell}$ such that $\phi_{\ell}((\WWW_{i})_{\kk}) = \VVV_{i}$ for $i=1,\ldots,\ell-1$. We want to construct a countable algebraically closed field $\kk_{1}\subseteq \kk$ containing $\kk_{0}$, a $\kk_{1}$-variety $\WWW_{\ell+1}$, a closed inclusion $(\WWW_{\ell})_{\kk_{1}} \into \WWW_{\ell+1}$, and an isomorphism $\phi_{\ell+1}\colon(\WWW_{\ell+1})_{\kk}$ such that $\phi_{\ell+1}((\WWW_{i})_{\kk} = \VVV_{i}$ for $i=1,\ldots,\ell$. Then the claim follows by induction.
\[
\begin{CD}
(\WWW_{1})_{\kk} @>\subseteq>> (\WWW_{2})_{\kk} @>\subseteq>>\cdots @>\subseteq>> (\WWW_{\ell})_{\kk} @>\subseteq>> (\WWW_{\ell+1})_{\kk} \\
@V{\phi_{1}}V{\simeq}V @V{\phi_{2}}V{\simeq}V  & &  @V{\phi_{\ell}}V{\simeq}V @V{\phi_{\ell+1}}V{\simeq}V\\
\VVV_{1} @>\subseteq>> \VVV_{2} @>\subseteq>> \cdots @>\subseteq>> \VVV_{\ell} @>\subseteq>> \VVV_{\ell+1}
\end{CD}
\]

There is a finitely generated field extension $\kk'/\kk_{0}$ contained in $\kk$, a $\overline{\kk'}$-variety $X$ and an isomorphism $\phi\colon X_{\kk}\simto \VVV_{\ell+1}$. Moreover, the inclusion $\iota\colon\VVV_{\ell} \into\VVV_{\ell+1}$ is defined over a finitely generated field extension $\kk''/\overline{\kk'}$, i.e. we have an inclusion $\iota''\colon (\WWW_{\ell})_{\overline{\kk''}} \into X_{\overline{\kk''}}$ such that $\iota''_{\kk}=\iota$. Now set $\kk_{1}:=\overline{\kk''} \subseteq \kk$ and $\WWW_{\ell+1}:=X_{\kk_{1}}$. Then $\kk_{1}$ is countable, and there is an isomorphism  $\phi_{\ell+1}\colon (\WWW_{\ell+1})_{\kk}\simto \VVV_{\ell+1}$ such that $\phi_{\ell+1}((\WWW_{j})_{\kk}) = \VVV_{j}$ for $j = 1,\ldots,\ell$. 
\end{proof}

\ps
\subsection{More topological notions, and examples}\label{top.subsec}

Recall that a subset of a topological space is called \itind{constructible} if it is a finite union of locally closed subsets. This is an important notion in the Zariski topology of a variety, because images of constructible subsets under morphisms of varieties are constructible. Before giving definitions which turn out to be very useful in the study of automorphism groups, we begin with the following basic result.\\

\begin{lemma} \label{image-ind-var-in-var.lem}
Let $\VVV=\bigcup_{k}\VVV_{k}$ be an ind-variety where all $\VVV_{k}$ are
irreducible,
and let  $\phi\colon \VVV \to X$ be a morphism where $X$ is a variety. Then
there is a $k_{0}\geq 1$ such that the following holds:
\be
\item $\overline{\phi(\VVV)} = \overline{\phi(\VVV_{k_{0}})}\subseteq X$;
\item There exists a subset $U_{0} \subseteq \phi(\VVV_{k_{0}})$ such that $U_{0}\subseteq \overline{\phi(\VVV)}$ is open and dense.
\ee
\end{lemma}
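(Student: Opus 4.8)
The plan is to exploit the fact that the closures $\overline{\phi(\VVV_k)}$ form an ascending chain of \emph{irreducible} closed subsets of $X$, and that such a chain must stabilize because $X$ is finite-dimensional. First I would observe that, since each $\VVV_k$ is irreducible and $\phi$ is continuous, the image $\phi(\VVV_k)$ is irreducible, hence so is its closure $Z_k := \overline{\phi(\VVV_k)} \subseteq X$. The inclusions $\VVV_k \subseteq \VVV_{k+1}$ give $\phi(\VVV_k)\subseteq\phi(\VVV_{k+1})$ and therefore $Z_k \subseteq Z_{k+1}$, so $(Z_k)_k$ is an ascending chain of irreducible closed subsets of the variety $X$. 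The dimensions $\dim Z_k$ form a non-decreasing sequence bounded above by $\dim X < \infty$, hence they stabilize: there is a $k_0$ with $\dim Z_k = \dim Z_{k_0}$ for all $k \geq k_0$. For such $k$ we have $Z_{k_0} \subseteq Z_k$ with both sets irreducible of the same finite dimension, and since a proper closed subset of an irreducible variety has strictly smaller dimension, this forces $Z_k = Z_{k_0}$.

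Granting this stabilization, part (1) follows quickly. For every $k$, either $k \leq k_0$, in which case $\phi(\VVV_k) \subseteq \phi(\VVV_{k_0}) \subseteq Z_{k_0}$, or $k \geq k_0$, in which case $\phi(\VVV_k) \subseteq Z_k = Z_{k_0}$; in both cases $\phi(\VVV_k) \subseteq Z_{k_0}$. Taking the union over $k$ yields $\phi(\VVV) \subseteq Z_{k_0}$, hence $\overline{\phi(\VVV)} \subseteq Z_{k_0} = \overline{\phi(\VVV_{k_0})}$. The reverse inclusion $\overline{\phi(\VVV_{k_0})} \subseteq \overline{\phi(\VVV)}$ is immediate from $\phi(\VVV_{k_0}) \subseteq \phi(\VVV)$, so $\overline{\phi(\VVV)} = \overline{\phi(\VVV_{k_0})}$.

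For part (2) I would invoke \name{Chevalley}'s theorem: the restriction $\phi|_{\VVV_{k_0}} \colon \VVV_{k_0} \to X$ is a morphism of varieties, so its image $\phi(\VVV_{k_0})$ is a constructible subset of $X$, dense in the irreducible set $Z_{k_0} = \overline{\phi(\VVV)}$. Writing this constructible set as a finite union of locally closed pieces $L_1,\ldots,L_n$, irreducibility of $Z_{k_0}=\bigcup_i\overline{L_i}$ forces $Z_{k_0}=\overline{L_{i_0}}$ for some $i_0$; being locally closed and dense in its own closure, $L_{i_0}$ is open in $Z_{k_0}$. This $U_0 := L_{i_0}\subseteq \phi(\VVV_{k_0})$ is then open and dense in $\overline{\phi(\VVV)}$, as required.

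The only genuine content lies in the stabilization step, whose single delicate point is the implication that two nested irreducible closed subsets of equal finite dimension must coincide; everything else is a formal manipulation of closures combined with the classical \name{Chevalley} and dimension theorems for varieties, so I do not anticipate any real obstacle.
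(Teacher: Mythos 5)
Your proof is correct and takes essentially the same route as the paper's: choose $k_{0}$ so that the dimensions of the irreducible closed sets $\overline{\phi(\VVV_{k})}$ stabilize (possible since $\dim X < \infty$), conclude the closures themselves stabilize, and then use constructibility of $\phi(\VVV_{k_{0}})$ to extract a subset open and dense in the closure. Your elaborations --- that nested irreducible closed sets of equal finite dimension coincide, and that a locally closed piece of a constructible set dense in the irreducible closure is open in it --- are exactly the details the paper's terse two-line argument leaves implicit.
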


\begin{proof}
(1) Choose $k_0\geq 1$ such that $\dim \overline{\phi(\VVV_{k})} = \dim \overline{\phi(\VVV_{k_0})}$ for $k \geq k_0$. This implies  $\overline{\phi(\VVV_{k})} = \overline{\phi(\VVV_{k_0})}$ and therefore $\overline{\phi(\VVV)} = \overline{\phi(\VVV_{k_{0}})}$.
\ps
(2) Since $\phi(\VVV_{k_{0}})\subset X$ is constructible, it contains a subset $U_{0}$ which is open and dense in 
$\overline{\phi(\VVV_{k_0})}   = \overline{\phi(\VVV)} $.
\end{proof}

\begin{definition}\label{constr.def}
Let $\VVV = \bigcup_{k>0} \VVV_{k}$ be an ind-variety and $S \subseteq \VVV$ a subset.
\be
\item $S$ is called {\it weakly constructible}\idx{weakly constructible} if $S\cap\VVV_{k}$ is a constructible subset of $\VVV_{k}$ for all $k$. 
\item $S$  is called {\it ind-constructible\/}\idx{ind-constructible} if $S$ is a countable union of locally closed ind-sub\-varieties. Equivalently, $S$ is a countable union of algebraic subsets. 
\item $S$ is called {\it weakly closed}\idx{weakly closed} if the following holds: For every algebraic subset $U\subseteq \VVV$  such that $U\subseteq S$ we have $\overline{U} \subseteq S$.
\item The {\it weak closure\/}\idx{weak closure $\wc{S}$} of $S$ is defined by  $\overline{S}^{w}:=\bigcup_{X\subseteq S}\overline{X}$ where $X$ runs through all algebraic subsets of $\VVV$ lying in $S$. If $S$ is ind-constructible, then the weak closure is ind-constructible and weakly closed (see Proposition~\ref{indconstr.prop} below).
\ee
\end{definition}

\begin{remarks}
\be
\item
If $S \subseteq \VVV$ is weakly closed, $X$ an algebraic variety and  $\phi\colon X \to \VVV$  a morphism such that $\phi(X) \subseteq S$, then $\overline{\phi(X)} \subseteq S$.
\item 
The image of a morphism $\phi\colon \VVV \to \WWW$ of ind-varieties is ind-constructible, because it is the union of the constructible subsets $\phi(\VVV_{k})$.
\item
A constructible subset of $\VVV$ is  weakly constructible, and a weakly constructible subset is ind-constructible, but
the other implications do not hold. E.g.  the subset $\ZZ \subseteq \CC$ is ind-constructible, because the inclusion $\ZZ \to \kk$ is an ind-morphism, but it is not weakly constructible. An example of a weakly constructible subset which is not constructible is given in the following example.
\item
A subset $S$ is closed if and only if it is weakly closed and weakly constructible. 
\item 
If $S$ is stable under an automorphism of $\VVV$, then so are $\overline{S}$ and  $\wc{S}$.
\ee
\end{remarks}\label{constr.rem}

\begin{example}
Here we give an example of a weakly constructible subset which is not constructible. Take the ind-variety $\VVV =\bigcup_{k}X_{k}$ defined in Example~\ref{irred-not-curve-connected.exa} where $X_{n}$ is the zeros set in $\Atwo$ of $(x-1)\cdots (x-n)(y-1)\cdots(y-n)$. Since $\VVV$ is irreducible every dense constructible subset contains a nonempty open set of $\VVV$. 

Let $S \subseteq \VVV$ be the countable union of the locally closed sets $L_k$ where $L_k$ is the vertical line $x= k$ from which the $k$ points $(k,1), \ldots, (k,k)$ are removed. Then we see that $\VVV_k \cap S = L_1 \cup \cdots \cup L_k$ is locally closed in $\VVV_k$ for each $k$, hence constructible, and so $S$ is weakly constructible. Since $S$ is dense in $\VVV$ and does not contain a nontrivial open set, it follows that $S$ is not constructible.
\end{example}

\begin{example}[\name{Immanuel Stampfli}] Define the following subsets of $\A{2}$: $L_{0}:=\A{1} \times \{0\}$ and $L_{m}:=\{m\}\times \A{1}$ for $m>0$. Then $S:=\bigcup_{i\geq 0} L_{i} \subseteq \A{2}$ has the structure of an ind-variety by setting $S_{k}:=\bigcup_{i=0}^{k}L_{i}$. The inclusion $S \to \A{2}$ is a morphism  and $S \subseteq \A{2}$ is weakly closed. On the other hand, the subset $U:=S\setminus L_{0}$ is open and dense in $S$, but the weak closure $\wc{U} = \bigcup_{i\geq 1}L_{i}$ is strictly contained in $S$. So even for open sets of ind-varieties the weak closure might be smaller than the closure. The example also shows that the image of a morphism need not be constructible or weakly constructible.
\end{example}

\begin{proposition}\label{indconstr.prop} 
Assume that $\kk$ is uncountable.
Then every countable union of closed algebraic subsets of an ind-variety $\VVV$ is weakly closed. In particular, if $S\subseteq\VVV$ is ind-constructible, $S =\bigcup_{j}S_{j}$ with algebraic subsets $S_{j}\subseteq \VVV$, then $\wc{S}=\bigcup_{j}\overline{S_{j}}$, and this set is weakly closed.
\end{proposition}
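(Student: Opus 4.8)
The plan is to reduce everything to the finiteness result of Lemma~\ref{constructible.lem}, which is where the uncountability of $\kk$ is used, and then to deduce the ``in particular'' assertion formally from the definition of the weak closure.

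First I would prove the main claim: if $T = \bigcup_{j} T_{j}$ is a countable union of closed algebraic subsets $T_{j} \subseteq \VVV$, then $T$ is weakly closed. By the definition of weakly closed it suffices to take an arbitrary algebraic subset $U \subseteq \VVV$ with $U \subseteq T$ and to show $\overline{U} \subseteq T$. Since $U$ is algebraic it is locally closed and contained in some $\VVV_{m}$; hence its closure $\overline{U}$ (taken in $\VVV$, equivalently in $\VVV_{m}$) is a closed subvariety of $\VVV_{m}$, so it is a genuine variety, and $U$ is open and dense in $\overline{U}$. Now for each $j$ the set $U \cap T_{j} = U \cap (\overline{U} \cap T_{j})$ is the intersection of the open set $U$ with the closed set $\overline{U}\cap T_{j}$ of $\overline{U}$, hence locally closed and in particular constructible in $\overline{U}$; moreover $U = \bigcup_{j}(U \cap T_{j})$ because $U \subseteq T$. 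Applying Lemma~\ref{constructible.lem} with ambient variety $\overline{U}$ and $C = U$ yields a finite subset $F$ with $U = \bigcup_{j \in F}(U \cap T_{j})$, so $U \subseteq \bigcup_{j \in F} T_{j}$. The latter is a finite union of closed subsets of $\VVV$, hence closed, whence $\overline{U} \subseteq \bigcup_{j\in F} T_{j} \subseteq T$, as desired.

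For the ``in particular'' part I would use the equivalent description of ind-constructible and write $S = \bigcup_{j} S_{j}$ with algebraic subsets $S_{j}$. Each $\overline{S_{j}}$ is a closed algebraic subset, so $\bigcup_{j}\overline{S_{j}}$ is weakly closed by the first part. The inclusion $\bigcup_{j}\overline{S_{j}} \subseteq \wc{S}$ is immediate, since each $S_{j}$ is an algebraic subset lying in $S$, so $\overline{S_{j}} \subseteq \wc{S}$. For the reverse inclusion, note $S \subseteq \bigcup_{j}\overline{S_{j}}$; then for any algebraic subset $X \subseteq S$ we have $X \subseteq \bigcup_{j}\overline{S_{j}}$, and since this set is weakly closed we conclude $\overline{X} \subseteq \bigcup_{j}\overline{S_{j}}$. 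Taking the union over all such $X$ gives $\wc{S} \subseteq \bigcup_{j}\overline{S_{j}}$, hence equality, and this set is weakly closed.

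The only real content is in the second paragraph, and the step I expect to require the most care is the passage to the variety $\overline{U}$: one must check that $U$ is constructible (in fact open and dense) in $\overline{U}$ and that each $U \cap T_{j}$ is constructible there, so that Lemma~\ref{constructible.lem} applies verbatim. The uncountability hypothesis enters exactly through that lemma and is genuinely needed, since without it the finiteness of $F$ can fail (for example $\AA^{1}$ over a countable field is an increasing union of finite sets).
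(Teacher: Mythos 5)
Your proof is correct and takes essentially the same route as the paper: the paper's proof likewise takes a locally closed algebraic subset $Y \subseteq S$, writes $Y = \bigcup_{i}(X_{i}\cap Y)$ as a countable union of constructible subsets, invokes Lemma~\ref{constructible.lem} to extract a finite subfamily $F$, and concludes $\overline{Y}\subseteq \bigcup_{i\in F}X_{i}\subseteq S$. Your explicit verification of the ``in particular'' part (which the paper states without proof) is also correct and is the intended formal consequence of the definition of $\wc{S}$.
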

\begin{proof}
Let $S = \bigcup_{i\in I}X_{i}$ be a countable union of closed algebraic subsets $X_{i}\subseteq\VVV$, and let $Y \subseteq \VVV$ be a locally closed algebraic subset contained in $S$. It follows that  $Y = \bigcup_{i\in I}X_{i}\cap Y$. If $\kk$ is uncountable, then, by Lemma~\ref{constructible.lem}, there is a finite subset $F \subseteq I$ such that $Y = \bigcup_{i\in F}X_{i}\cap Y$. Hence, $Y \subseteq \bigcup_{i\in F}X_{i}$ and so $\overline{Y}\subseteq\bigcup_{i\in F}X_{i} \subseteq S$.
\end{proof}

In general, the weak closure needs not to be weakly closed as the following example shows.
\begin{example}
Consider the analytic hypersurface 
$$
S :=\{(x,y,z)\in \AA^3_{\C}  \mid y =  \exp (z) x\} \subseteq \AA^3_{\C}.
$$
\[\begin{tikzpicture}[scale=1]
\def \horizontalfactor {2}
\def \verticalfactor {2}
\draw[math3D][->,>=latex](0,0,0) -- (4,0,0);
\draw[math3D][->,>=latex](0,0,0) -- (0,3,0);
\draw[math3D][->,>=latex](0,0,-3.5) -- (0,0,3.5);
\draw[math3D] [domain={-1.5}:1.5, samples=80, smooth] (0,0,0) plot ({\horizontalfactor/sqrt(exp(2*\x)+1)},{\horizontalfactor*exp(\x)/sqrt(exp(2*\x)+1)},{\verticalfactor*\x});
\draw[math3D] [domain={-1.5}:1.5, samples=80, smooth] (0,0,0) plot ({-\horizontalfactor/sqrt(exp(2*\x)+1)},{-\horizontalfactor*exp(\x)/sqrt(exp(2*\x)+1)},{\verticalfactor*\x});
\foreach \x in {-1.5,-1.4,...,1.6} {
\draw[math3D] ({\horizontalfactor/sqrt(exp(2*\x)+1)},{\horizontalfactor*exp(\x)/sqrt(exp(2*\x)+1)},{\verticalfactor*\x})--({-\horizontalfactor/sqrt(exp(2*\x)+1)},{-\horizontalfactor*exp(\x)/sqrt(exp(2*\x)+1)},{\verticalfactor*\x});
}
\draw[math3D](4.3,0,0) node{$x$};
\draw[math3D](0,3.2,0) node{$y$};
\draw[math3D](0,0,3.7) node{$z$};
\end{tikzpicture}
\]
We claim that every irreducible algebraic subset of $S$ is contained  in one of the horizontal lines $H_a:= S \cap \{ (x,y,z) \mid z =a \}$, $a \in {\mathbb C}$, or in the vertical line $L:= S \cap \{ (x,y,z) \mid x =0 \}$. Note that $S$ is irreducible as an analytic variety, because the map $\phi \colon \AA^2_{\mathbb C} \to S$, $(x,z) \mapsto (x, \exp (z) x, z)$, is biholomorphic. Since $S$ is not algebraic it follows that every irreducible algebraic subset of $S$ is either a point or an irreducible algebraic curve $C$, and the closure 
$\overline{C}$ (for the $\C$-topology or for the Zariski topology) is still contained in $S$. 

In order to prove the claim above let us assume, by contradiction, that $\overline{C}$ is neither contained in the plane $(x=0)$ nor in a plane $(z=a)$ for some $a \in {\mathbb C}$. It follows that the map $\phi \colon \overline{C} \dasharrow \C$, $(x,y,z) \mapsto y/x=\exp z$, is rational and that the map $\psi \colon \overline{C} \to \C$, $(x,y,z) \mapsto z$, is dominant. Therefore, the image of $\psi$ is equal to $\C \setminus F$ where $F$ is a finite subset of $\C$. It follows that the rational maps $\phi, \psi \colon \overline{C} \dasharrow \C$ are algebraically independent over $\C$. This is impossible, because the field $\C ( \overline{C} )$ of rational functions on the curve $\overline{C}$ has transcendence degree one over $\C$, and the claim follows.

Now consider the open dense set $U:= S \setminus (H_0 \cup L)$ of $S$. The following assertions are straightforward consequences from the claim:
\be
\item The set $S$ is weakly closed in $\AA^3_{\C}$;
\item We have $\wc{U} = S \setminus H_0$;
\item We have $\wc{\wc{U}} = \wc{U}\cup\{0\}$;
\item We have $\wc{ \wc{\wc{U}} } = \wc{\wc{U}}$.
\ee
In particular, $\wc{U}$ is not weakly closed, but $\wc{\wc{U}}$ is.
\end{example}

Here is a useful  lemma whose easy proof is left to the reader.
\begin{lemma}\label{image-wconst.lem}
Let $\phi\colon \VVV \to \WWW$ be an ind-morphism. Then the image $\phi(\VVV)$ is ind-constructible. If we assume  that for every $k\in\NN$ there is an $m=m(k)$ such that $\phi(\VVV_{m})\supseteq  \phi(\VVV)\cap\WWW_{k}$, then the image $\phi(\VVV)$ is weakly constructible.
\end{lemma}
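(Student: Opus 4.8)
The plan is to treat the two assertions separately, deriving both from the fact that $\phi$ is an ind-morphism together with the classical result (recalled just before Lemma~\ref{image-ind-var-in-var.lem}) that images of constructible subsets under morphisms of varieties are constructible.

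For the first assertion I would simply write $\phi(\VVV)=\bigcup_{k}\phi(\VVV_{k})$. Since $\phi$ is an ind-morphism, for each $k$ there is an $\ell$ with $\phi(\VVV_{k})\subseteq\WWW_{\ell}$ and $\phi|_{\VVV_{k}}\colon\VVV_{k}\to\WWW_{\ell}$ a morphism of varieties. Hence $\phi(\VVV_{k})$ is a constructible subset of $\WWW_{\ell}$, i.e.\ a finite union of locally closed subsets each contained in $\WWW_{\ell}$, which is the same as a finite union of algebraic subsets of $\WWW$ in the sense of Definition~\ref{affine-algebraic.def}. Therefore $\phi(\VVV)$ is a countable union of algebraic subsets, which is precisely the definition of ind-constructible; this was already observed in the remarks following Definition~\ref{constr.def}.

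For the second assertion, the key observation is that the extra hypothesis lets me collapse the infinite union $\bigcup_{k}\phi(\VVV_{k})$ to a single image once I intersect with a fixed $\WWW_{k}$. Fix $k$ and let $m=m(k)$ be as in the hypothesis, so that $\phi(\VVV)\cap\WWW_{k}\subseteq\phi(\VVV_{m})$. Since $\phi(\VVV_{m})\subseteq\phi(\VVV)$ always holds, I obtain the equality
\[
\phi(\VVV)\cap\WWW_{k}=\phi(\VVV_{m})\cap\WWW_{k}.
\]
Now $\phi(\VVV_{m})$ is constructible in some $\WWW_{\ell}$, which after enlarging $\ell$ I may take to satisfy $\ell\geq k$, so that $\WWW_{k}\subseteq\WWW_{\ell}$ is a closed subvariety. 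Then $\phi(\VVV_{m})\cap\WWW_{k}$ is the intersection of a constructible subset of $\WWW_{\ell}$ with the closed set $\WWW_{k}$, hence constructible in $\WWW_{\ell}$, and being contained in $\WWW_{k}$ it is constructible in $\WWW_{k}$. As $k$ was arbitrary, $\phi(\VVV)$ is weakly constructible.

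There is no serious obstacle here, which is why the authors leave the proof to the reader. The only two points requiring a moment's care are the displayed equality — where the hypothesis is used in an essential way — and the elementary remark that intersecting a constructible subset of $\WWW_{\ell}$ with the closed subvariety $\WWW_{k}$ produces a set that is already constructible inside $\WWW_{k}$, so that the defining condition of weak constructibility is verified on each piece of the filtration of $\WWW$.
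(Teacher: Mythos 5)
Your proof is correct and is precisely the argument the authors intend: the paper explicitly leaves this ``easy proof'' to the reader, and your first paragraph just expands the observation already recorded in Remark~\ref{constr.rem}(2) that $\phi(\VVV)=\bigcup_k\phi(\VVV_k)$ is a countable union of constructible, hence algebraic, subsets. Your second paragraph supplies exactly the two routine points needed for weak constructibility, namely the equality $\phi(\VVV)\cap\WWW_{k}=\phi(\VVV_{m})\cap\WWW_{k}$ coming from the hypothesis, and the elementary fact that a constructible subset of $\WWW_{\ell}$ contained in the closed subvariety $\WWW_{k}$ is constructible in $\WWW_{k}$.
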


\pmed
\section{Ind-Groups, Lie Algebras, and  Representations}
\subsection{Ind-groups and Lie algebras}\label{Liealgebra.subsec}
The {\it product\/}\idx{product} of two ind-varieties $\VVV=\bigcup_{k}\VVV_{k}$ and $\WWW=\bigcup_{k}\WWW_{k}$ is defined in the obvious way: $\VVV \times \WWW:=\bigcup_{k} (\VVV_{k}\times \WWW_{k})$. This allows to define an {\it ind-group\/}\idx{ind-group} as an ind-variety $\GGG$ with a group structure such that multiplication $\GGG\times\GGG \to \GGG\colon (g,h)\mapsto g\cdot h$, and inverse $\GGG \to \GGG\colon g\mapsto g^{-1}$, are both morphisms.
It is clear that a closed subgroup $G$ of an ind-group $\GGG$ is an algebraic group if and only if $G$ is an algebraic subset of $\GGG$.

Recall that the \itind{Lie algebra} $\Lie G$\idx{$\Lie G$} of a linear algebraic group $G$ is defined as the Lie algebra of \itind{left invariant vector fields} on $G$. This means the following where we identify the vector fields on an affine variety $X$ with the derivations of $\OOO(X)$.\idx{derivation of $\OOO(X)$}
If $g \in G$ we denote by $\lambda_{g}\colon G \simto G$, $h\mapsto gh$, the \itind{left translation} on $G$, and by $\rho_{g}\colon G \simto G$, $h \mapsto hg$, the \itind{right translation}.
\idx{$\LLL$@$\lambda_{g}$}\idx{$\R$@$\rho_{g}$}

\begin{definition}  \label{LA-of-algebraic-group.def}
The Lie algebra $\Lie G$ of $G$ is the set of all derivations of the $\kk$-algebra $\OOO(G)$ that commute with the automorphisms $\lambda_{g}^{*}$ induced by the left translations $\lambda_{g}$, $g\in G$. The Lie bracket is defined by $[\delta_{1},\delta_{2}]:= \delta_{1}\circ\delta_{2} - \delta_{2}\circ\delta_{1}$.
\end{definition}

Let $T_e G$ be the tangent space of $G$ at the identity $e$, i.e. the vector space of all $\kk$-derivations from $\OOO(G)$ to the $\OOO(G)$-module $\OOO(G) / \mm_e = \kk$ where $\mm_e$ is the maximal ideal of $e$. Denote by $\eval_e \colon \OOO(G) \to \kk$ the evaluation homomorphism $f\mapsto f(e)$. For any vector field $\delta \in \Lie G$, the composition 
$\delta_{e}:=\eval_e \circ \delta$ is a tangent vector from $T_e G$, and we have the following crucial result (see for example \cite[Theorem~I.3.4]{Bo1991Linear-algebraic-g}).\idx{evaluation $\eval_{e}$}

\begin{proposition}  \label{derivations-and-tangent-space.prop}
The map $\Lie G \to T_e G$, $\delta \mapsto \delta_{e}:=\eval_e \circ \delta$, is an isomorphism of $\kk$-vector spaces.
\end{proposition}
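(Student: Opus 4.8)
The plan is to construct an explicit two-sided inverse to the map $\delta \mapsto \delta_e := \eval_e \circ \delta$; since linearity is immediate, bijectivity is the whole content. The guiding principle is that a left-invariant derivation should be recoverable from its value at $e$ by translation, and conversely every tangent vector at $e$ should extend uniquely to a left-invariant field. I would first extract the recovery formula, use it for injectivity, and then reverse it to build the inverse, the only delicate point being regularity.

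First I would establish injectivity by exploiting left-invariance directly. If $\delta \in \Lie G$, then $\delta \circ \lambda_g^{*} = \lambda_g^{*} \circ \delta$ for all $g \in G$, and since $(\lambda_g^{*} h)(e) = h(ge) = h(g)$ for any $h \in \OOO(G)$, evaluating the invariance relation at $e$ gives
\[
(\delta f)(g) = (\lambda_g^{*}\,\delta f)(e) = \bigl(\delta(\lambda_g^{*} f)\bigr)(e) = \delta_e(\lambda_g^{*} f)
\]
for every $f \in \OOO(G)$. Thus $\delta$ is completely determined by $\delta_e$, so $\delta_e = 0$ forces $\delta = 0$, proving injectivity.

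The displayed formula also dictates the inverse: given $v \in T_e G$, I would set $(\delta_v f)(g) := v(\lambda_g^{*} f)$. The crucial --- and only genuinely algebraic --- point is to check that $\delta_v f$ is again regular on $G$. Here I would invoke the comultiplication $\mu^{*}\colon \OOO(G) \to \OOO(G\times G) = \OOO(G)\otimes\OOO(G)$ coming from the multiplication $\mu\colon G\times G \to G$. Writing $\mu^{*}(f) = \sum_i u_i \otimes w_i$ with $u_i, w_i \in \OOO(G)$, the identity $f(gh) = \sum_i u_i(g)\,w_i(h)$ gives $\lambda_g^{*} f = \sum_i u_i(g)\, w_i$, hence
\[
(\delta_v f)(g) = v(\lambda_g^{*} f) = \sum_i v(w_i)\, u_i(g),
\]
so that $\delta_v f = \sum_i v(w_i)\, u_i \in \OOO(G)$, as required. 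This is the step I expect to be the main obstacle, in that it is precisely where the algebraic-group structure is used rather than formal manipulation.

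The remaining verifications are routine and I would carry them out in turn. That $\delta_v$ is a derivation follows because each $\lambda_x^{*}$ is an algebra homomorphism and $(\lambda_x^{*} f)(e) = f(x)$: expanding $v(\lambda_x^{*}(fg'))$ by the Leibniz rule for $v$ reproduces $(\delta_v f)(x)\,g'(x) + f(x)\,(\delta_v g')(x)$. Left-invariance follows from the identity $\lambda_x^{*}\lambda_g^{*} f = \lambda_{gx}^{*} f$, which yields $(\delta_v(\lambda_g^{*} f))(x) = v(\lambda_{gx}^{*} f) = (\delta_v f)(gx) = (\lambda_g^{*}\delta_v f)(x)$, so $\delta_v \in \Lie G$. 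Finally $(\delta_v)_e(f) = (\delta_v f)(e) = v(\lambda_e^{*} f) = v(f)$ since $\lambda_e = \id$, so $v \mapsto \delta_v$ is a right inverse; combined with the determination formula of the injectivity step it is a two-sided inverse, completing the proof.
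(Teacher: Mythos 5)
Your proof is correct and complete, and it is essentially the argument the paper relies on: the paper gives no proof of this proposition, citing \cite[Theorem~I.3.4]{Bo1991Linear-algebraic-g} instead, and your inverse $v \mapsto \delta_v$, which in your notation is $\delta_v = (\id \otimes v)\circ \mu^{*}$, is exactly the standard construction that the paper itself records (in its completed form for ind-groups) in Remark~\ref{left-invariant-VF.rem}. The regularity of $\delta_v f$ via the comultiplication is indeed the one non-formal step, and you have handled it correctly.
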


This allows to identify $\Lie G$ with $T_e G$ and to carry over the structure of Lie algebra  to the tangent space $T_eG$.
\ps
The Lie algebra $\Lie\GGG$ of an affine ind-group $\GGG$ is defined analogously, using again the left-invariant vector fields on $\GGG$. The details, as for example the proof of Theorem~\ref{cont-derivations-and-tangent-space.thm} below, are given in \cite[Proposition 4.2.2]{Ku2002Kac-Moody-groups-t}.

\begin{definition} \label{Lie-algebra-of-an-affine-ind-group.def}
The Lie algebra $\Lie  \GGG$\idx{$\Lie\GGG$} of the affine ind-group $\GGG$ is the set of all continuous derivations of the $\kk$-algebra $\OOO(\GGG)$ that commute with automorphisms $\lambda_{g}^{*}$ induced by the left translations $\lambda_{g}$ of $\GGG$.
\end{definition}

The tangent space $T_e \GGG$ to $\GGG$ at the identity $e\in\GGG$ is the $\kk$-vector space of all continuous $\kk$-derivations $\delta\colon \OOO(\GGG) \to \OOO(\GGG)/\mm_e = \kk$.

\begin{theorem}[\protect{\cite[Proposition 4.2.2]{Ku2002Kac-Moody-groups-t}}] \label{cont-derivations-and-tangent-space.thm}
The linear map $\Lie  \GGG \to T_e \GGG$, $\delta \mapsto \delta_{e}:=\eval_e \circ \delta$, is an isomorphism of $\kk$-vector spaces.
\end{theorem}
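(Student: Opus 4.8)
The plan is to proceed exactly as in the algebraic case (Proposition~\ref{derivations-and-tangent-space.prop}): the map $\delta\mapsto\delta_{e}=\eval_{e}\circ\delta$ is visibly $\kk$-linear and well defined, since composing a continuous derivation $\OOO(\GGG)\to\OOO(\GGG)$ with the character $\eval_{e}$ yields a continuous point-derivation in $e$, so that only bijectivity needs proof. First I would establish injectivity. If $\delta\in\Lie\GGG$ satisfies $\eval_{e}\circ\delta=0$, then for every $g\in\GGG$ left-invariance gives $\eval_{g}\circ\delta=\eval_{e}\circ\lambda_{g}^{*}\circ\delta=\eval_{e}\circ\delta\circ\lambda_{g}^{*}=0$, using $\eval_{e}\circ\lambda_{g}^{*}=\eval_{g}$ (because $\lambda_{g}^{*}f(e)=f(ge)=f(g)$). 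Hence $(\delta f)(g)=0$ for all $f\in\OOO(\GGG)$ and all $g\in\GGG$; since each $\GGG_{k}$ is a reduced variety, a regular function vanishing at every point is zero, so $\delta f=0$ for all $f$ and $\delta=0$.

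For surjectivity I would construct, for a given $v\in T_{e}\GGG$, a left-invariant continuous derivation $\delta_{v}$ with $(\delta_{v})_{e}=v$, mimicking the convolution formula $(\delta_{v}f)(g)=v(\lambda_{g}^{*}f)$. The decisive preliminary observation is that $v$ is \emph{continuous}: its kernel is open, so $v=\bar v\circ\res_{k_{0}}$ for some $k_{0}$, where $\res_{k_{0}}\colon\OOO(\GGG)\to\OOO(\GGG_{k_{0}})$ is the (surjective) restriction and $\bar v\colon\OOO(\GGG_{k_{0}})\to\kk$ is a point-derivation in $e\in\GGG_{k_{0}}$; in other words $T_{e}\GGG=\varinjlim_{k}T_{e}\GGG_{k}$. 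Writing $\mu\colon\GGG\times\GGG\to\GGG$ for the multiplication, I would restrict $\mu^{*}f$ to $\GGG_{j}\times\GGG_{k_{0}}$, where $\mu(\GGG_{j}\times\GGG_{k_{0}})\subseteq\GGG_{m(j)}$, obtaining an element of $\OOO(\GGG_{j})\otimes\OOO(\GGG_{k_{0}})$; applying $\id\otimes\bar v$ produces a regular function on $\GGG_{j}$ whose value at $x$ is $\bar v\bigl(y\mapsto f(xy)\bigr)=v(\lambda_{x}^{*}f)$. These functions are compatible under restriction in $j$, hence glue to $\delta_{v}f\in\varprojlim_{j}\OOO(\GGG_{j})=\OOO(\GGG)$, and since $(\delta_{v}f)|_{\GGG_{j}}$ depends only on $f|_{\GGG_{m(j)}}$, the operator $\delta_{v}$ is continuous.

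It then remains to check three identities, all of which follow formally from the group axioms translated into $\OOO(\GGG)$. That $\delta_{v}$ is a derivation follows from $\mu^{*}$ being an algebra homomorphism together with the Leibniz rule for $\bar v$ and the unit identity $(\id\otimes\eval_{e})\mu^{*}f=f$ (expressing $\mu(x,e)=x$). Left-invariance $\delta_{v}\circ\lambda_{h}^{*}=\lambda_{h}^{*}\circ\delta_{v}$ follows from associativity via $\lambda_{x}^{*}\lambda_{h}^{*}=\lambda_{hx}^{*}$, and $(\delta_{v})_{e}=v$ is immediate from $\lambda_{e}=\id$. Conversely, $\delta_{\delta_{e}}=\delta$ for every $\delta\in\Lie\GGG$, again by left-invariance, so the two constructions are mutually inverse and the map is an isomorphism.

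I expect the only genuine difficulty to be the verification that $\delta_{v}f$ is again regular on $\GGG$ and that $\delta_{v}$ is continuous; everything else is formal bookkeeping. This difficulty is resolved precisely by the continuity of the point-derivation $v$, which confines the ``second variable'' to the finite level $\GGG_{k_{0}}$ and thereby turns the otherwise ill-defined completed tensor product $\OOO(\GGG)\hotimes\OOO(\GGG)$ into honest finite tensor products $\OOO(\GGG_{j})\otimes\OOO(\GGG_{k_{0}})$ on which $\id\otimes\bar v$ makes sense.
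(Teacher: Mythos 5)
Your proof is correct and takes essentially the same route as the paper, which defers the argument to \cite[Proposition~4.2.2]{Ku2002Kac-Moody-groups-t} and records the key construction in Remark~\ref{left-invariant-VF.rem}: the inverse map is $v\mapsto(\id\hotimes v)\circ\mu^{*}$ on $\OOO(\GGG)\hotimes\OOO(\GGG)$, which is exactly your convolution formula $(\delta_{v}f)(g)=v(\lambda_{g}^{*}f)$. Your use of the continuity of $v$ to replace the completed tensor product by the honest tensor products $\OOO(\GGG_{j})\otimes\OOO(\GGG_{k_{0}})$ is precisely how that completion is handled there, so the two arguments coincide.
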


As in the case of linear algebraic groups, this allows to identify $\Lie \GGG$ with $T_e \GGG$ and makes it possible to carry over the structure of Lie algebra to $T_e \GGG$.  

\begin{remark}\label{left-invariant-VF.rem}
The construction of the left-invariant vector field $\delta_{A}$ on $\GGG$ corresponding to the tangent vector $A \in T_{e}\GGG$ is given in the following way. 
\begin{equation*}
\begin{CD}
\delta_{A} \colon \OOO(\GGG) @>\mu^{*}>> \OOO(\GGG \times\GGG) = \OOO(\GGG)\hotimes\OOO(\GGG) @>{\id\hotimes A}>> \OOO(\GGG).
\end{CD}
\end{equation*}
Here we use that the coordinate ring of a product $\VVV \times \WWW$ of two affine ind-varieties is a completion $\OOO(\VVV)\hotimes\OOO(\WWW)$ of $\OOO(\VVV)\otimes\OOO(\WWW)$ (cf. \cite[IV.4.2]{Ku2002Kac-Moody-groups-t}).
\end{remark}

If $\phi\colon \GGG \to \HHH$ is a homomorphism of
affine ind-groups, then the differential 
$$
d\phi_{e}\colon \Lie\GGG \to \Lie\HHH
$$ 
is a homomorphism of Lie algebras. This follows from the definition above, see \cite[Proposition 4.2.2]{Ku2002Kac-Moody-groups-t}.

\ps
\subsection{The connected component of the identity}
For an ind-group $\GGG = \bigcup_{k}\GGG_{k}$ we denote by $\GGG^{\circ}$ the {\it connected component $\GGG^{(e)}$ containing the identity $e \in \GGG$}\idx{connected component of $e$}, see Section~\ref{connected-comp.subsec}.\idx{$\GL$@$\GGG^{\circ}$}

\begin{proposition}\label{connected-component.prop}
Let $\GGG = \bigcup_{k}\GGG_{k}$ be an ind-group.
\be
\item $\GGG^{\circ}=\bigcup_{k}\GGG_{k}^{(e)}$.
\item $\GGG^{\circ} \subseteq \GGG$ is a curve-connected open (and thus closed) normal subgroup of countable index. In particular, $\Lie\GGG=\Lie\GGG^{\circ}$.
\item We have  $\dim\GGG < \infty$ if and only if  $\GGG^{\circ}$ is an algebraic group.
\item We have  $\dim\GGG < \infty$ if and only if $\dim\Lie\GGG < \infty$.  
\ee
\end{proposition}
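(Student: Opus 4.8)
The plan is to establish the four assertions in order, each leaning on the previous ones. For~(1) I would first note that each $\GGG_k^{(e)}$ is connected and contains $e$, and that $\GGG_k^{(e)}\subseteq\GGG_{k+1}^{(e)}$ (a connected subset through $e$ lies in the connected component of $e$ of the larger variety), so the increasing union $H:=\bigcup_k\GGG_k^{(e)}$ is connected. On the other hand $H\cap\GGG_k$ is a union of connected components of $\GGG_k$, exactly as in the proof of Proposition~\ref{connected.prop}, hence open and closed in $\GGG_k$; thus $H$ is clopen in $\GGG$. Since a nonempty clopen connected subset of the component $\GGG^{\circ}$ must equal $\GGG^{\circ}$, and $e\in H\subseteq\GGG^{\circ}$, I conclude $H=\GGG^{\circ}$.

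For~(2), openness and closedness of $\GGG^{\circ}$ are Proposition~\ref{connected-components-are-open.prop}. That $\GGG^{\circ}$ is a normal subgroup of countable index I would deduce from the fact that left translations $\lambda_g$, inversion, and conjugations $\Int_h$ are homeomorphisms of $\GGG$ that permute connected components: $\lambda_g(\GGG^{\circ})$ is the component of $g$, while $(\GGG^{\circ})^{-1}$ and $h\GGG^{\circ}h^{-1}$ are the component of $e$. Hence $\GGG^{\circ}$ is a normal subgroup whose cosets are precisely the connected components, which are countable in number by Proposition~\ref{connected-components-are-open.prop}. The equality $\Lie\GGG=\Lie\GGG^{\circ}$ then follows because $\GGG^{\circ}$ is open in $\GGG$, so $T_e\GGG^{\circ}=T_e\GGG$ and the bracket is determined near $e$. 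The substantial point is curve-connectedness, and here I would use the group law: given $g\in\GGG^{\circ}$, choose $k$ with $g\in\GGG_k^{(e)}$; since a connected variety is chain-connected by irreducible curves, there are irreducible curves $C_1,\dots,C_n$ and points $e=p_0,p_1,\dots,p_n=g$ with $p_{i-1},p_i\in C_i$. Setting $C_i':=p_{i-1}^{-1}C_i$, each $C_i'$ is an irreducible curve through $e$, and $q_i:=p_{i-1}^{-1}p_i\in C_i'$ satisfies $q_1q_2\cdots q_n=g$. The multiplication morphism $C_1'\times\cdots\times C_n'\to\GGG$ has irreducible source, so the closure $Z$ of its image is an irreducible closed algebraic subset containing both $e$ and $g$; being irreducible, $Z$ is curve-connected, giving a single irreducible curve through $e$ and $g$. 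A translation by $\lambda_a$ then handles an arbitrary pair $a,b=a(a^{-1}b)$, so $\GGG^{\circ}$ is curve-connected.

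For~(3) and~(4) I would invoke the finite-dimensional criteria. All cosets $g\GGG^{\circ}$ are isomorphic to $\GGG^{\circ}$ via $\lambda_g$, so $\dim\GGG=\dim\GGG^{\circ}$. If $\GGG^{\circ}$ is an algebraic group then $\dim\GGG=\dim\GGG^{\circ}<\infty$; conversely, if $\dim\GGG<\infty$ then $\GGG^{\circ}$ is curve-connected of finite dimension, hence an algebraic variety by Corollary~\ref{finite-dim-indvar.cor}, and the restricted operations make it an algebraic group, proving~(3). For~(4), one direction is immediate: when $\dim\GGG<\infty$, part~(3) makes $\GGG^{\circ}$ a smooth algebraic group, so $\dim\Lie\GGG=\dim T_e\GGG^{\circ}=\dim\GGG^{\circ}<\infty$. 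The converse I argue contrapositively: if $\dim\GGG=\infty$ then $\dim\GGG_k^{(e)}\to\infty$, and since $\dim T_e\GGG_k^{(e)}\ge\dim\GGG_k^{(e)}$ and $\Lie\GGG=T_e\GGG=\varinjlim_k T_e\GGG_k^{(e)}$, we obtain $\dim\Lie\GGG=\infty$. I expect the curve-connectedness step in~(2) to be the main obstacle: it is the one place where the group structure is used essentially to upgrade bare connectedness to curve-connectedness, which can fail without a group law, as the crossing-lines phenomena in Section~\ref{connectedness-and-curve-connectedness.subsec} illustrate.
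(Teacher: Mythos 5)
Parts (1)--(3) of your proposal are sound and follow essentially the paper's route: your (1) is the same clopen-intersection argument as in Proposition~\ref{connected.prop}, and your curve-connectedness step --- chaining irreducible curves, translating them through $e$, and taking the closure of the image of the multiplication morphism $C_1'\times\cdots\times C_n'\to\GGG$ --- is a mild variant of the paper's induction, which merges two intersecting irreducible components $C_1,C_2$ via $(g_1,g_2)\mapsto g_1h^{-1}g_2$; both exploit the same mechanism (irreducibility of the image of a product under multiplication through a common point). The genuine gap is in the converse direction of (4): the inequality $\dim T_e\GGG_k^{(e)}\ge \dim \GGG_k^{(e)}$ is false in general. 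The variety $\GGG_k^{(e)}$ is merely connected, not homogeneous or equidimensional, so its dimension can be carried by an irreducible component that does not pass through $e$ and is therefore invisible to $T_e$. Concretely, in $\GGG=(\kinfty)^{+}$ one can choose an admissible filtration whose first term is the union of a line through $0$ and a plane meeting that line away from $0$; then $\GGG_1^{(e)}=\GGG_1$ is connected with $\dim T_e\GGG_1=1<2=\dim\GGG_1$. For the same reason, your preliminary claim that $\dim\GGG=\infty$ forces $\dim\GGG_k^{(e)}\to\infty$ is not immediate either, since the dimension of $\GGG_k$ may sit on components away from $e$.

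The repair is precisely where the paper lets the group law enter a second time: left translation gives $T_g\GGG\simeq T_e\GGG=\Lie\GGG$ for \emph{every} $g\in\GGG$, hence for every $g\in\GGG_k$ one has $\dim_g\GGG_k\le \dim T_g\GGG_k\le \dim T_g\GGG=\dim\Lie\GGG$, and taking the supremum over $g$ and $k$ yields $\dim\GGG\le\dim\Lie\GGG$, which is the contrapositive you want. Equivalently, you could keep your structure but replace global by local dimension at $e$: homogeneity gives $\dim\GGG=\dim_e\GGG$ (in the sense of Definition~\ref{dimension.def}), and the inequality $\dim_e\GGG_k^{(e)}\le \dim T_e\GGG_k^{(e)}$ \emph{is} valid pointwise. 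Either way, the estimate must be transported by translations rather than read off at the single point $e$; as written, your step treats the non-homogeneous variety $\GGG_k^{(e)}$ as if its dimension were controlled at $e$, and that is exactly what fails.
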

\begin{proof}
We know from Proposition~\ref{connected-components-are-open.prop} that $\GGG^{\circ}$ is open and closed in $\GGG$. 
\ps
(1) Set $\GGG':=\bigcup_{k}\GGG_{k}^{(e)} \subseteq \GGG$, the union of the connected components of $\GGG_{k}$ containing the identity $e$. Since $\GGG_{k}^{(e)}\subseteq \GGG^{\circ}$ we have $\GGG' \subseteq \GGG^{\circ}$. By Proposition~\ref{connected.prop}, there exists an admissible filtration $\GGG^{\circ} = \bigcup_k \GGG^{\circ}_{k}$ of $\GGG^{\circ}$ where all $\GGG^{\circ}_{k}$ are connected.
If $\GGG_{k}^{\circ}\subseteq \GGG_{\ell}$, then $\GGG_{k}^{\circ}\subseteq \GGG_{\ell}^{(e)}$, hence $\GGG^{\circ}\subseteq \GGG'$. 
\ps
(2) Clearly, $\GGG^{\circ}\cdot\GGG^{\circ}$, $(\GGG^{\circ})^{-1}$ and $g\GGG^{\circ}g^{-1}$ ($g \in \GGG$) are connected and contain $e$, and so $\GGG^{\circ}$ is a normal subgroup of $\GGG$. It is of countable index, because the number of connected components is countable. 

In order to prove that $\GGG^{\circ}$ is curve-connected it suffices to show that every closed connected algebraic subset $Y\subseteq \GGG^{\circ}$ is contained in an irreducible closed algebraic subset $X \subseteq \GGG^{\circ}$.  We prove this by induction on the number $n$ of irreducible components of $Y$. If $n\geq 2$ we can find two irreducible components
$C_{1},C_{2}$ of $Y$ which have a nonempty intersection. Choose a point $h \in C_{1}\cap C_{2}$ and consider the morphism $C_{1}\times C_{2}\to \GGG^{\circ}$, $(g_{1},g_{2})\mapsto g_{1}h^{-1}g_{2}$. Then the closure $D$ of the image is irreducible and contains $C_{1}$ and $C_{2}$. Hence, $Y':=D\cup Y$ has at most $n-1$ connected components, and the claim follows by induction.

\ps
(3) Assume that  $d:=\dim \GGG <\infty$.  Let $\GGG^{\circ}=\bigcup_{k}\GGG_{k}^{\circ}$ be an admissible filtration such that all $\GGG_{k}^{\circ}$ are irreducible (Proposition~\ref{curve-connected.prop}).  Then there is a $k_{0}$ such that $\dim\GGG_{k}^{\circ} = d= \dim\GGG_{k_{0}}^{\circ}$ for all $k\geq k_{0}$, hence $\GGG_{k}^{\circ}=\GGG_{k_{0}}^{\circ}$ for all $k\geq k_{0}$. 
It follows that $\GGG^{\circ}=\GGG_{k_{0}}^{\circ}$ is a connected algebraic group.

\ps
(4) If $\dim\GGG < \infty$, then $\GGG^{\circ}$ is an algebraic group, by (3). Since $\Lie\GGG = \Lie\GGG^{\circ}$ by (2) we see that $\Lie\GGG$ is finite-dimensional. On the other hand, $T_{g}\GGG \simeq T_{e}\GGG = \Lie \GGG$ for all $g \in \GGG$ and so $\dim T_{g}\GGG_{k} \leq \dim \Lie\GGG$ for all $k\geq 1$ and all $g \in \GGG_{k}$. Hence, $\dim \GGG_{k}\leq \dim\Lie\GGG$ for all $k$, i.e. $\dim\GGG \leq \dim\Lie\GGG$.     
\end{proof}

\begin{corollary}\label{subgroup-of-countable-index.cor}
Let $\kk$ be uncountable. If $\GGG$ is an ind-group and $\HHH \subseteq \GGG$ a closed subgroup of countable index, then $\GGG^{\circ} \subseteq \HHH$.
\end{corollary}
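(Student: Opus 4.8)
The plan is to reduce to showing that a closed subgroup of countable index in a \emph{connected} ind-group must be the whole group, and then to exploit curve-connectedness together with the uncountability of $\kk$ via Lemma~\ref{constructible.lem}.

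First I would replace $\HHH$ by $\HHH':=\HHH\cap\GGG^{\circ}$. Since the left cosets of $\HHH'$ in $\GGG^{\circ}$ inject into the left cosets of $\HHH$ in $\GGG$, the subgroup $\HHH'$ is again of countable index, now inside $\GGG^{\circ}$; and by Proposition~\ref{connected-component.prop}(2) the ind-group $\GGG^{\circ}$ is curve-connected. Thus it suffices to prove: \emph{if $\GGG$ is curve-connected and $\HHH\subseteq\GGG$ is a closed subgroup of countable index, then $\HHH=\GGG$.} Writing $\GGG=\bigcup_{i} g_i\HHH$ as a countable union of left cosets, I note that each coset $g_i\HHH=\lambda_{g_i}(\HHH)$ is closed in $\GGG$, because $\HHH$ is closed and $\lambda_{g_i}$ is an automorphism of the ind-variety $\GGG$.

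Next, using Proposition~\ref{curve-connected.prop}, I would choose an admissible filtration $\GGG=\bigcup_k\GGG_k$ with all $\GGG_k$ irreducible, and fix $k_0$ with $e\in\GGG_{k_0}$. For $k\geq k_0$ the irreducible variety $\GGG_k$ is the union of the countably many closed (hence constructible) subsets $\GGG_k\cap g_i\HHH$. Since $\kk$ is uncountable, Lemma~\ref{constructible.lem} provides a finite subfamily already covering $\GGG_k$; irreducibility then forces $\GGG_k=\GGG_k\cap g_{i_0}\HHH$ for a single index $i_0$, i.e. $\GGG_k\subseteq g_{i_0}\HHH$. As $e\in\GGG_k$ lies in this coset, we get $g_{i_0}\in\HHH$, so $g_{i_0}\HHH=\HHH$ and therefore $\GGG_k\subseteq\HHH$. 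Since this holds for all $k\geq k_0$ and the filtration is increasing, $\GGG=\bigcup_{k\geq k_0}\GGG_k\subseteq\HHH$; unwinding the reduction gives $\GGG^{\circ}\subseteq\HHH$, as desired.

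The only genuinely delicate point is the passage from a countable covering of $\GGG_k$ to a single coset, which relies essentially on the uncountability of $\kk$ through Lemma~\ref{constructible.lem}; this step fails for countable fields, consistent with the necessity of the hypothesis (recall that $\AA^1$ over a countable field is a countable union of points). Everything else---the closedness of cosets, the existence of an irreducible admissible filtration, and the reduction to the connected case---is routine.
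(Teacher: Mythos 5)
Your proof is correct and follows essentially the same route as the paper's: both reduce to the identity component, take an admissible filtration by irreducible closed algebraic subsets containing $e$, write each $\GGG_k$ as a countable union of closed cosets, and use Lemma~\ref{constructible.lem} (uncountability of $\kk$) together with irreducibility to force $\GGG_k$ into the single coset containing $e$. The only cosmetic difference is that the paper works with $\HHH^{\circ}$ where you use $\HHH\cap\GGG^{\circ}$, which changes nothing in the argument.
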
\idx{countable index}

\begin{proof}
It is clear that $\HHH^{\circ}\subseteq \GGG^{\circ}$ and that $\HHH^{\circ}$ has countable index in $\GGG^{\circ}$, see Proposition~\ref{connected-component.prop}(2). Choose representatives $g_{i}\in \GGG^{\circ}$ ($i\in \NN$) of the left-cosets of $\HHH^{\circ}$ in $\GGG^{\circ}$ where $g_{1}=e$. If $\GGG^{\circ}=\bigcup_{k}\GGG_{k}$ with irreducible closed subsets $\GGG_{k}$ containing $e$, then $\GGG_{k}= \bigcup_{i=1}^{\infty}(\GGG_{k}\cap g_{i}\HHH^{\circ})$
is a countable union of disjoint closed subsets. Since $\kk$ is uncountable we get $\GGG_{k}= \GGG_{k}\cap g_{1}\HHH^{\circ} \subseteq \HHH^{\circ}$, hence $ \GGG^{\circ} \subseteq \HHH^{\circ}$, and so $ \GGG^{\circ} = \HHH^{\circ}$.
\end{proof}

\begin{remark}\label{irreducible-curve-connected.rem}
The proposition above shows that the following statements for an ind-group $\GGG$ are equivalent.  
\be
\item $\GGG$ is connected;
\item $\GGG$ is irreducible;
\item $\GGG$ is curve connected;
\item $\GGG = \GGG^{\circ}$.
\ee
\end{remark}

\ps
\subsection{Some examples of ind-groups}\label{examples-of-ind-groups.exa}
\be
\item
The most important examples for us are the automorphism groups $\Aut(X)$ of affine varieties $X$.  These groups will be studied in detail in Section~\ref{autgroup.sec}.

\item
If $V$ is a $\kk$-vector space of countable dimension, then $V^{+}:=(V,+)$ is a commutative connected ind-group.
Moreover this ind-group is \itind{nested} (see Example~\ref{nested.exa} below) since we may filter $V$ by a family of finite-dimensional subspaces.
If $W$ is another such $\kk$-vector space, then the homomorphisms (of ind-groups) $V \to W$ are the linear maps.
\idx{$\VEC$@$V^{+}$}

\item
Let $(G_{k},\iota_{k})_{k\in\NN}$ be a countable set of algebraic groups $G_{k}$ and injective homomorphisms $\iota_{k}\colon G_{k} \into G_{k+1}$. Then the limit $G:=\varinjlim G_{k}$ is a nested ind-group.
A typical example is $\GL_{\infty}(\kk):=\varinjlim \GL_{k}(\kk)\subsetneqq \GL(\CC^{\infty})$\idx{$\GL_{\infty}(\kk)$} which is the group of invertible matrices $(a_{ij})$ with the property that
$a_{ij}=0$ for large enough $i\neq j$, and $a_{ii}=1$ for large enough $i$.

\ps
More generally, for a $\kk$-vector space $V$ of countable infinite dimension we define
$$
\GL_{\infty}(V):=\{\phi\in\GL(V) \mid \Coker(\phi-\id) \text{ is finite-dimensional}\}.
$$
If $V_{1}\subset V_{2}\subset \cdots \subset V$ is a flag of finite-dimensional subspaces such that $V = \bigcup_{k}V_{k}$, then $\GL_{\infty}(V) = \varinjlim \GL(V_{k})$ where we use the obvious inclusions $\GL(V_{k}) \into \GL(V_{k+1})$. It follows that $\GL_{\infty}(V) \simeq \GL_{\infty}(\kk)$ as ind-groups.
\idx{$\GL_{\infty}(V)$}

\item
A countable group $G$ with the discrete topology is a \itind{discrete ind-group}. Moreover, an ind-group $\GGG$ is discrete if and only if its Lie algebra $\Lie\GGG$ is trivial. If $\kk$  is uncountable this is equivalent to the condition that $\GGG$ is countable.
\ee

\ps
\subsection{Smoothness of ind-groups}\label{smooth-groups.subsec}
It follows from the definitions of smoothness (see Definition~\ref{smooth.def}) that if an ind-groups is \itind{strongly smooth} in one point, then it is strongly smooth in every point, and the same holds for \itind{geometrically smooth}. Smoothness has very strong consequences, as the following corollary to Proposition~\ref{connected-and-smooth-gives-iso.prop} shows.

\begin{corollary}\label{bijective-hom.cor}
Let $\phi\colon \GGG \to \HHH$ be a bijective homomorphism of ind-groups. Assume that $\GGG$ is connected and $\HHH$ is strongly smooth in $e$. Then $\phi$ is an isomorphism.
\end{corollary}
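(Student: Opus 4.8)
The plan is to deduce this directly from Proposition~\ref{connected-and-smooth-gives-iso.prop}, after translating the two group-theoretic hypotheses into the geometric hypotheses required there. First I would observe that a bijective homomorphism of ind-groups is in particular a bijective ind-morphism of the underlying ind-varieties, so it suffices to check that $\GGG$ is curve-connected and that $\HHH$ is strongly smooth in \emph{every} point; the conclusion of that proposition then applies verbatim.

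For the connectedness hypothesis I would invoke Remark~\ref{irreducible-curve-connected.rem}, which records that for an ind-group the notions connected, irreducible, curve-connected, and $\GGG=\GGG^{\circ}$ all coincide. Since $\GGG$ is assumed connected, it is therefore curve-connected, which is exactly the assumption on the source ind-variety in Proposition~\ref{connected-and-smooth-gives-iso.prop}.

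For the smoothness hypothesis I would use the homogeneity of an ind-group. For any $h\in\HHH$ the left translation $\lambda_{h}\colon\HHH\simto\HHH$, $x\mapsto hx$, is an isomorphism of ind-varieties, with inverse $\lambda_{h^{-1}}$, and it sends $e$ to $h$. An isomorphism of ind-varieties carries an admissible filtration by connected smooth varieties on an open neighborhood of $e$ to such a filtration on an open neighborhood of $h$; hence strong smoothness at $e$ propagates to strong smoothness at every point $h\in\HHH$. This is precisely the observation recorded at the beginning of Section~\ref{smooth-groups.subsec}, so I would simply cite it rather than reprove it.

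With both hypotheses in place, applying the second assertion of Proposition~\ref{connected-and-smooth-gives-iso.prop} to the bijective ind-morphism $\phi\colon\GGG\to\HHH$ immediately yields that $\phi$ is an isomorphism. The only content beyond the cited proposition is the homogeneity argument transporting smoothness from $e$ to all points, and since this rests only on the fact that left translations are ind-variety isomorphisms, I expect no genuine obstacle; the corollary is essentially a bookkeeping translation of hypotheses.
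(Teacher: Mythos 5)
Your proposal is correct and follows exactly the route the paper intends: the corollary is stated immediately after the observation (beginning of Section~\ref{smooth-groups.subsec}) that strong smoothness at one point of an ind-group propagates to all points via translations, combined with the equivalence of connected and curve-connected for ind-groups (Remark~\ref{irreducible-curve-connected.rem}), and then the second assertion of Proposition~\ref{connected-and-smooth-gives-iso.prop}. Your homogeneity argument via $\lambda_{h}$ is precisely the justification the paper leaves implicit, so there is nothing to add.
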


In contrast to the case of algebraic groups, it is not true in general that an ind-group is strongly smooth. In fact, we will give an example of a bijective morphism of ind-groups which is not an isomorphism, see Proposition~\ref{diff-not-bijective.prop}. 
On the other hand, there is a class of groups which are strongly smooth. 

\begin{example}\label{nested.exa}
Assume that the ind-group $\GGG$ admits a filtration consisting of closed algebraic subgroups. Then $\GGG$ is strongly smooth in any of its points. 
\newline
Such ind-groups are called \itind {nested}, see \cite{KoPeZa2016On-Automorphism-Gr}. We will discuss these ind-groups in Section~\ref{nested.subsec}.
\end{example}

\ps
\subsection{\texorpdfstring{$R$}{R}-rational points of algebraic groups}\label{rational-points-of-groups.subsec}
Another type of interesting examples of ind-groups are the {\it groups $G(R)$ of $R$-rational points}\idx{R-r@$R$-rational points} of an algebraic group $G$ where $R$ is a commutative $\CC$-algebra of countable dimension.\idx{rational point}

\begin{proposition}  \label{G(R)-as-ind-group.prop}
Let $G$ be a linear algebraic group, and let $R$ be a commutative $\CC$-algebra of countable dimension, e.g. a finitely generated commutative $\kk$-algebra. Then the group $G(R)$ of $R$-rational points of $G$ is naturally an ind-group.
\end{proposition}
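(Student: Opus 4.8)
The plan is to deduce everything from the already-established Proposition~\ref{X(R).prop}, which endows $X(R)$ with the structure of an affine ind-variety for any affine $\kk$-variety $X$ and shows that this assignment is functorial through ind-morphisms. Since a linear algebraic group $G$ is in particular an affine variety, $G(R)$ carries a canonical affine ind-variety structure. It then remains only to check that the group operations are ind-morphisms.

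First I would recall that, as a functor of points, the group structure on $G(R)$ is induced by the structure morphisms of $G$: the multiplication $m\colon G\times G\to G$, the inverse $i\colon G\to G$, and the unit. These satisfy the group axioms as morphisms of varieties and hence induce, by functoriality (Section~\ref{R-rational-points.subsec}), maps $m(R)$ and $i(R)$ on $R$-points that satisfy the same axioms. By Proposition~\ref{X(R).prop}(1), since $m$ and $i$ are morphisms of affine varieties, the maps $m(R)\colon (G\times G)(R)\to G(R)$ and $i(R)\colon G(R)\to G(R)$ are ind-morphisms.

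The one point requiring care, and what I expect to be the only real obstacle, is the canonical identification $(G\times G)(R)=G(R)\times G(R)$ as ind-varieties, so that $m(R)$ is genuinely a morphism on the product ind-variety $G(R)\times G(R)$ in the sense of the product defined in Section~\ref{Liealgebra.subsec}. To verify this I would fix a closed embedding $G\subseteq\An$, giving $G\times G\subseteq\AA^{2n}$, together with a filtration $R=\bigcup_k R_k$ by finite-dimensional subspaces, so that $R^n=\bigcup_k R_k^n$ and $R^{2n}=\bigcup_k R_k^{2n}$ define the relevant ind-structures. On the level of sets one has $(G\times G)(R)=G(R)\times G(R)$ because $\OOO(G\times G)=\OOO(G)\otimes\OOO(G)$; and since the defining equations of $G\times G$ in $\AA^{2n}$ are those of $G$ in the first $n$ and the last $n$ coordinates separately, one obtains $(G\times G)(R)\cap R_k^{2n}=(G(R)\cap R_k^n)\times(G(R)\cap R_k^n)$ for every $k$. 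As $R_k^{2n}=R_k^n\times R_k^n$, the two filtrations coincide termwise, which proves the identification of ind-varieties.

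Combining these, $m(R)$ and $i(R)$ are ind-morphisms $G(R)\times G(R)\to G(R)$ and $G(R)\to G(R)$, so $G(R)$ is an ind-group. I would also remark that the same argument applied to an arbitrary affine algebraic semigroup gives $G(R)$ the structure of an ind-semigroup, the group case being recovered by additionally invoking Proposition~\ref{X(R).prop}(1) for the inverse morphism.
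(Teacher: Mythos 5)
Your proof is correct and follows essentially the same route as the paper's, namely deducing everything from Proposition~\ref{X(R).prop} and the functoriality of $X \mapsto X(R)$ applied to the structure morphisms of $G$. Your explicit verification that $(G\times G)(R) = G(R)\times G(R)$ as ind-varieties fills in a detail the paper's terse proof leaves implicit, but it is not a different argument.
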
\idx{$\GL$@$G(R)$}

\begin{proof}
We have seen in Proposition~\ref{X(R).prop} that for every affine variety $X$ and every commutative $\kk$-algebra $R$ the set of $R$-rational points $X(R)$ has a natural structure of an ind-variety. If $G$ is
a linear algebraic group, then $G(R)$ has a group structure, and the proposition also shows that the multiplication $G(R) \times G(R) \to G(R)$ is a morphism and the inverse $G(R) \simto G(R)$ is an isomorphism, hence the claim.
\end{proof}

\begin{remark}\label{G(R)-as-ind-group.rem}
If $X$ is an affine variety and $G$ a linear algebraic group, then we have a canonical identification 
$$
G(\OOO(X)) = G(X) := \Mor(X,G).
$$
\end{remark}

A special case is $R^{*}:=\GL_{1}(R)$, the {\it ind-group of invertible elements of $R$} \idx{invertible elements}. A detailed analysis of this ind-group will be given in Section~\ref{invertible-elements.sec}.\idx{$\R$@$R^{*}$}
   
\ps
Another special case is \itind{$\GL_{2}(\kk[t])$} which is worked out in \cite{Sh2004On-the-group-rm-GL}. It is shown there that, for the obvious filtration by degree, one has that $\GL_{2}(\kk[t])_{k}$ are exactly the singular points of $\GL_{2}(\kk[t])_{k+1}$.

The next result is an immediate consequence of Proposition~\ref{X(R).prop}.

\begin{proposition}
\be
\item 
Let $\phi\colon G \to H$ be a homomorphism of linear algebraic groups. Then, for any commutative $\CC$-algebra $R$ of countable dimension,  the induced homomorphism 
$$
\phi(R) \colon G(R) \to H(R)
$$ 
is a homomorphism of ind-groups. If $\phi$ is injective, then $\phi(R)$ is a closed immersion.
\item
Let $\rho\colon R \to S$ be a homomorphism of commutative $\CC$-algebras of countable dimension, and let $G$ be a linear algebraic group. Then the induced homomorphism $G(\rho)\colon G(R) \to G(S)$ is a homomorphism of ind-groups. If $\rho$ is injective, then $G(\rho)$ is a closed immersion.
\ee
\end{proposition}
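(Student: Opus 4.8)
The plan is to obtain both statements as immediate transcriptions of Proposition~\ref{X(R).prop} through the functor of $R$-points, the only extra input being that the induced maps respect the group laws. Throughout I view a linear algebraic group as in particular an affine variety, so that a homomorphism $\phi\colon G\to H$ is a morphism of affine varieties; by Proposition~\ref{G(R)-as-ind-group.prop} the sets $G(R)$ and $H(R)$ are ind-groups, and likewise $G(S)$.

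For part (1), I would first feed the morphism $\phi$ into Proposition~\ref{X(R).prop}(1) to get that $\phi(R)\colon G(R)\to H(R)$ is an ind-morphism. It then remains to check that $\phi(R)$ is a homomorphism of abstract groups, and this is pure functoriality of the construction recalled in Section~\ref{R-rational-points.subsec}: the assignment $X\mapsto X(R)=\Mor(\Spec R,X)$ preserves finite products, so it carries group objects to groups and homomorphisms of group objects to group homomorphisms. Concretely, writing $m_G,m_H$ for the multiplication morphisms, the hypothesis $\phi\circ m_G=m_H\circ(\phi\times\phi)$ yields $\phi(R)(g_1g_2)=\phi(R)(g_1)\,\phi(R)(g_2)$ for all $g_1,g_2\in G(R)$. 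Together with the previous step, $\phi(R)$ is a homomorphism of ind-groups.

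The injectivity clause of (1) is the one place where a non-formal ingredient enters. Since $\kk$ has characteristic zero, the image $\phi(G)\subseteq H$ is a closed subgroup and the corestriction $G\to\phi(G)$ is a bijective homomorphism of algebraic groups, hence an isomorphism (bijective homomorphisms of algebraic groups in characteristic zero are isomorphisms, see \cite{Bo1991Linear-algebraic-g}). Thus an injective $\phi$ is a closed immersion of varieties, and the closed-immersion assertion of Proposition~\ref{X(R).prop}(1) shows that $\phi(R)$ is a closed immersion, completing (1).

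For part (2) I would argue symmetrically in the other variable. Proposition~\ref{X(R).prop}(2), applied to $G$ and $\rho$, already gives that $G(\rho)\colon G(R)\to G(S)$ is an ind-morphism and, when $\rho$ is injective, a closed immersion; no characteristic-zero input is needed here. That $G(\rho)$ is a group homomorphism is again functoriality, now in the algebra variable: identifying $G(R)=\Alg_\kk(\OOO(G),R)$, the group law is induced by the comultiplication of the Hopf algebra $\OOO(G)$, and $G(\rho)$ is post-composition with the algebra homomorphism $\rho$, which commutes with that comultiplication. Hence $G(\rho)$ is a homomorphism of ind-groups. The main (and only) obstacle is the characteristic-zero fact used in (1); everything else is the bookkeeping that justifies the paper's word ``immediate''.
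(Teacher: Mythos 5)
Your proposal is correct and takes the same route as the paper: the paper simply declares the proposition an immediate consequence of Proposition~\ref{X(R).prop}, and your argument supplies exactly the two routine fill-ins that this phrasing suppresses, namely the functoriality of $X \mapsto X(R)$ (so the induced maps respect the group laws) and the standard characteristic-zero fact that an injective homomorphism of linear algebraic groups is a closed immersion of varieties, after which the closed-immersion clauses of Proposition~\ref{X(R).prop} apply verbatim. No gaps; your identification of the char-zero step as the only non-formal ingredient is accurate.
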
\label{Homs-of-G(R).prop}

For some applications one can reduce the study of the ind-group $G(R)$ to the case where $R$ is a {\it $\kk$-domain}, i.e. a commutative  $\kk$-algebra which is an integral domain.\idx{domain, $\kk$-domain}

\begin{proposition}   \label{reduction-to-domain.prop}
Let $G$ be a linear algebraic group and $R$ a finitely generated commutative $\kk$-algebra. There is a closed immersion of $G(R)$  into an ind-group of the form $H(S)$  where $H$ is a linear algebraic group and $S$ a finitely generated  $\kk$-domain.
\end{proposition}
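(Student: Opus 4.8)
The plan is to reduce the whole statement to the single group $\GL_{M}$ over one finitely generated domain, in three moves: reduce $G$ to a general linear group, faithfully represent $R$ inside a matrix algebra over a domain, and then transport this to a closed immersion of the associated ind-groups of rational points. To begin, choose a closed embedding $G\hookrightarrow \GL_{N}$ of linear algebraic groups; by Proposition~\ref{Homs-of-G(R).prop}(1) this induces a closed immersion $G(R)\hookrightarrow \GL_{N}(R)$ of ind-groups, so it suffices to embed $\GL_{N}(R)$.

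The algebraic heart of the argument is the following claim: \emph{every finitely generated commutative $\kk$-algebra $R$ admits an injective $\kk$-algebra homomorphism $\rho\colon R\hookrightarrow M_{r}(S)$ for some $r\ge 1$ and some finitely generated $\kk$-domain $S$.} Granting this, applying $\rho$ entrywise gives an injective $\kk$-linear map $\rho_{*}\colon M_{N}(R)\hookrightarrow M_{N}(M_{r}(S))=M_{Nr}(S)$, and an injective $\kk$-linear map of countable-dimensional vector spaces is automatically a closed immersion of affine ind-varieties (injectivity forces the preimage of each finite-dimensional piece to be finite-dimensional). Writing $\GL_{N}(R)=\{(g,g')\in M_{N}(R)^{2}\mid gg'=g'g=1\}$ and similarly for $\GL_{Nr}(S)$, one checks that the image of $\GL_{N}(R)$ is exactly $\GL_{Nr}(S)\cap M_{N}(R)^{2}$: if $(G,G')\in\GL_{Nr}(S)$ with $G=\rho_{*}(g)$ and $G'=\rho_{*}(g')$ lying in the closed subspace $\rho_{*}(M_{N}(R))$, then $\rho_{*}(gg')=GG'=1$ forces $gg'=1$ by injectivity of $\rho_{*}$, so $g\in\GL_{N}(R)$. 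Hence $\GL_{N}(R)\hookrightarrow \GL_{Nr}(S)=H(S)$ with $H:=\GL_{Nr}$ is a closed immersion, and composing with the first move proves the proposition. This ``inverse-coordinate'' bookkeeping, recording $g$ together with $g^{-1}$, is precisely what avoids the subtlety that $S$ may carry far more units than $R$.

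It remains to prove the claim, and this is where I expect the main obstacle to lie, the difficulty being nilpotents and embedded components (for a reduced, equidimensional $R$ the naive regular representation over a Noether normalization already works). I would proceed via a primary decomposition $0=\bigcap_{i}\qq_{i}$, giving an injection $R\hookrightarrow \prod_{i}B_{i}$ with $B_{i}:=R/\qq_{i}$ primary. A block-diagonal sum, together with the fact that the tensor product $S:=\bigotimes_{i}S_{i}$ of finitely generated $\kk$-domains is again a finitely generated domain (here we use that $\kk$ is algebraically closed), reduces the claim to embedding a single primary ring $B$ into matrices over a domain. The crucial observation is that a primary ring is torsion-free over any Noether normalization $A=\kk[y_{1},\dots,y_{d}]\hookrightarrow B$: the only associated prime of $B$ is its nilradical $\sqrt{0}$, which has full dimension $d$ and therefore contracts to $(0)$ in the domain $A$. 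Consequently $B$ injects into the finite-dimensional $\operatorname{Frac}(A)$-algebra $B\otimes_{A}\operatorname{Frac}(A)$, whose (left) regular representation embeds it into $M_{r}(\operatorname{Frac}(A))$; since $B$ is finitely generated, the finitely many entries occurring in the images of its generators lie in a finitely generated $\kk$-subalgebra $S\subseteq\operatorname{Frac}(A)$, which is a domain. This yields $B\hookrightarrow M_{r}(S)$ and completes the claim, hence the proof.
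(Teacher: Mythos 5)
Your proof is correct, and its outer shell coincides with the paper's: reduce to $G=\GL_N$ by functoriality of $G\mapsto G(R)$, prove that $R$ embeds into a matrix algebra over a finitely generated $\kk$-domain (this is exactly the paper's Lemma~\ref{embedding-into-a-matrix-algebra.lem}), and keep track of the pair $(g,g^{-1})$ so that $\GL_N(R)$ is cut out of $\GL_{Nr}(S)$ by the closed linear condition of lying in $\rho_{*}(\M_N(R))^{2}$ --- the paper's subgroup $H:=\{h\in\GL_{mn}(S)\mid h,h^{-1}\in\Image\tilde\phi\}$ is precisely your intersection, and your step ``injective $\kk$-linear, hence a closed immersion'' is used there verbatim. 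Where you genuinely diverge is in the proof of the embedding lemma. The paper localizes $R$ at its nonzerodivisors, asserts that the total quotient ring $R_T$ has Krull dimension zero, hence is a finite product of local artinian rings, and inside each local factor lifts a transcendence basis of the residue field to obtain a rational function field $K$ over which the factor is finite-dimensional, then takes the regular representation over $K$. You instead take a primary decomposition and, for each primary quotient $B$, prove torsion-freeness over a Noether normalization $A$ (the unique associated prime $\sqrt{(0)}$ has full dimension, hence contracts to $(0)$ in $A$), then take the regular representation of the finite-dimensional $\Quot(A)$-algebra $B\otimes_A\Quot(A)$. Your route buys robustness: the paper's zero-dimensionality claim for $R_T$ in fact fails in the presence of embedded primes --- for $R=\kk[x,y]/(x^2,xy)$ the zerodivisors form the maximal ideal $(x,y)$, so $R_T$ is the one-dimensional local ring at the origin --- whereas your argument, which in effect localizes the primary quotients rather than $R$ itself, handles embedded components correctly and thus repairs this step rather than merely rephrasing it; your own flag that nilpotents and embedded components are ``the main obstacle'' is exactly on target. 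What the paper's route buys, where it applies, is economy at the recombination stage: each local factor lands in matrices over a purely transcendental field, and these all embed into a single $\kk(x_1,\dots,x_m)$, so no tensor products are needed; your $S:=\bigotimes_i S_i$ (a domain because $\kk$ is algebraically closed) is correct, but you could equally note that each $S_i$ already sits inside $\Quot(A_i)=\kk(y_1,\dots,y_{d_i})$ and embed all of these into one rational function field.
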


The main ingredient of the proof is the following lemma which was indicated to us by \name{Claudio Procesi}.

\begin{lemma}  \label{embedding-into-a-matrix-algebra.lem}
Any finitely generated commutative $\kk$-algebra $R$ can be embedded into a matrix algebra $\M_n(S)$ where $S$ is a finitely generated $\kk$-domain.
\end{lemma}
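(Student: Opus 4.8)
The plan is to reduce to the case of a \emph{primary} algebra, where one can linearize by passing to a fraction field, and afterwards to glue the pieces together over a single domain. The main difficulty is not the presence of zero divisors but the presence of nilpotents; the trick will be to isolate them via a primary decomposition.

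First I would use that $R$ is Noetherian, so the zero ideal admits a primary decomposition $(0) = \qq_1 \cap \cdots \cap \qq_r$. This gives a $\kk$-algebra embedding $R \into \prod_{i=1}^r A_i$ with kernel $\bigcap_i \qq_i = (0)$, where $A_i := R/\qq_i$ is primary. In particular the nilradical $\nn_i$ of $A_i$ is prime, the reduction $B_i := A_i/\nn_i$ is a finitely generated $\kk$-domain, and, crucially, every zero divisor of $A_i$ is nilpotent. It therefore suffices to embed each primary $A := A_i$ into $\M_{n_i}(S_i)$ for some finitely generated $\kk$-domain $S_i$, and then to combine the resulting embeddings.

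For the primary case I would apply Noether normalization to obtain a polynomial subring $P = \kk[t_1,\ldots,t_d] \subseteq A$ over which $A$ is a finite module. Since $P$ is a domain mapping injectively into $A$, we have $P \cap \nn = (0)$, so every nonzero element of $P$ is non-nilpotent, hence — and this is the key point — a non-zero-divisor of $A$ because $A$ is primary. Consequently the localization map $A \to A_K := A \otimes_P K$ is injective, where $K := \operatorname{Frac}(P)$. As $A$ is a finite $P$-module, $A_K$ is a finite-dimensional commutative $K$-algebra, and its regular representation yields an injective homomorphism $A_K \into \M_n(K)$ with $n = \dim_K A_K$. Writing $A = \kk[a_1,\ldots,a_s]$ and letting $S_i \subseteq K$ be the $\kk$-subalgebra generated by all matrix entries of the images of $a_1,\ldots,a_s$, the subalgebra $S_i$ is finitely generated and, being a subring of the field $K$, an integral domain; the composite $A \into \M_n(K)$ then factors through $\M_n(S_i)$ and stays injective.

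Finally I would glue the pieces: set $S := S_1 \otimes_{\kk} \cdots \otimes_{\kk} S_r$, which is again a finitely generated $\kk$-domain precisely because $\kk$ is algebraically closed, so that the tensor product of $\kk$-domains is a domain. Each $S_i$ embeds into $S$, giving $\M_{n_i}(S_i) \into \M_{n_i}(S)$, and the block-diagonal embedding $\prod_i \M_{n_i}(S) \into \M_N(S)$ with $N := \sum_i n_i$ produces the desired embedding $R \into \M_N(S)$. I expect the hardest conceptual step to be the treatment of nilpotents: one cannot simply map to $R_{\mathrm{red}}$ (that homomorphism is not injective), and there need not exist a ring section $B_i \to A_i$; the device that circumvents both problems is the passage to primary components together with the observation that in a primary ring the non-nilpotent elements are exactly the non-zero-divisors, which is what makes the localization at $P \setminus \{0\}$ injective.
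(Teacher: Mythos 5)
Your proof is correct, but it diverges from the paper's at the reduction step, and the difference is instructive. The paper embeds $R$ into its total ring of fractions $R_T$ (with $T$ the non-zero-divisors), asserts that $R_T$ is noetherian of Krull dimension $0$, decomposes it into a finite product of local artinian $\kk$-algebras, and inside each local piece lifts a transcendence basis of the residue field to obtain a rational function field $K$ over which the piece is finite-dimensional; you instead take a primary decomposition $(0)=\qq_1\cap\cdots\cap\qq_r$, apply Noether normalization $P\subseteq A_i=R/\qq_i$, and invert $P\setminus\{0\}$, which is injective precisely because in a primary ring every zero divisor is nilpotent. The endgame is identical in both proofs: the regular representation of a finite-dimensional algebra over a rational function field (here $\Quot(P)$), followed by collecting the matrix entries of the images of the generators into a finitely generated subalgebra of that field. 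Your route is in fact the more robust one: the paper's claim that $R_T$ has Krull dimension $0$ fails when $R$ has embedded primes --- for $R=\kk[x,y]/(x^2,xy)$ the zero divisors form exactly the ideal $(x,y)$, so $R_T=R_{(x,y)}$ is a one-dimensional local ring, not artinian --- and the natural repair of the paper's argument is exactly your device of passing to the primary components first, where localization becomes injective. Two minor comments: your gluing via $S=S_1\otimes_\kk\cdots\otimes_\kk S_r$ is fine (over the algebraically closed base field $\kk$ of the paper, a tensor product of finitely generated domains is a domain), though it is even simpler to embed all the fields $\Quot(P_i)$ into a single rational function field $\kk(x_1,\dots,x_m)$ with $m=\max_i d_i$ and take $S$ there, which is the paper's implicit gluing; and the faithfulness of the regular representation of $A_K$ uses only that $A_K$ is unital, which deserves a word.
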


\begin{proof}\idx{$\M_{n}(K)$}
It suffices to show that there is an embedding $\phi\colon R \into \M_{n}(K)$ where $K=\kk(x_{1},\ldots,x_{m})$ is a rational function field over $\kk$. In fact, if $R$ is generated by $a_{1},\ldots,a_{r}$ and if $S \subset K$ is the subalgebra generated by the coefficients of the matrices $\phi(a_{i})$, then $\phi(R) \subset \M_{n}(S)$.
\ps
The $\kk$-algebra $R$ embeds into the localization $R_{T}$ where $T \subset R$ is the set of nonzero divisors. Since $R_{T}$ is noetherian and of Krull-dimension $0$ it is a finite product of local artinian $\kk$-algebras: $R = R_{1}\times\cdots\times R_{m}$.  Thus it is enough to show that each $R_{i}$ can be embedded into an $\M_{n}(K)$ for a suitable rational function field $K$ over $\kk$.
\ps
So let $(R,\mm)$ be a local artinian $\kk$-algebra  whose residue field $L:=R/\mm$ is finitely generated over $\kk$.
We claim that $R$ contains a rational function field $K$ over $\kk$ such that $R$ is finite-dimensional over $K$. Then the regular representation of $R$ on itself defines an embedding $R \into \M_{n}(K)$, $n:=\dim_{K}R$, hence the claim.
\ps
It remains to prove the existence of a subfield $K \subset R$ such that $\dim_{K}R<\infty$. Choose  $c_{1},\ldots,c_{r}\in R$ such that the images $\bar c_{1},\ldots,\bar c_{r} \in R/\mm=L$ form transcendence basis of $L$. It follows that the field $K:=\kk(c_{1},\ldots,c_{r})$ is contained in $R$ and that $R/\mm$ is finite-dimensional over $K$. Since $\mm^{i}/\mm^{i+1}$ is a finitely generated $R$-module, it is also finite-dimensional over $K$, and sind $\mm^{d} = (0)$ for some $d\geq 1$ we finally get that $R$ is finite-dimensional over $K$.
\end{proof}

\begin{proof}[Proof of Proposition~\ref{reduction-to-domain.prop}]
By Lemma~\ref{embedding-into-a-matrix-algebra.lem}, there is an embedding $\phi \colon R \into \M_n(S)$  where $S$ is a finitely generated $\kk$-domain. Then $\phi$ induces an embedding 
$$
\tilde \phi \colon \M_n(R) \into \M_n ( \M_m(S) ) \simeq \M_{mn}(S)
$$
which is $\kk$-linear, hence closed. It follows that the subgroup
\[ 
H:= \{ h \in \GL_{mn} (S) \mid h, h^{-1} \in \Image\tilde \phi \}  
\]
is closed, and so $\tilde\phi$ induces an isomorphism $\GL_{n}(R) \simto H$. Now the claim follows since every linear algebraic group $G$ is isomorphic to a closed subgroup of some $\GL_{n}(\kk)$.
\end{proof}

Recall that an abstract group is said to be {\it linear} if it admits an embedding into a linear group $\GL_n (\KK)$ for some field $\KK$. Thus the above theorem implies the following result.\idx{linear group}

\begin{corollary}  \label{the-groups-G(R)-are-linear.cor}
Any ind-group of the form $G(R)$ where $G$ is a linear algebraic group and $R$ a finitely generated commutative $\kk$-algebra is linear.
\end{corollary}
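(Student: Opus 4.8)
The plan is to deduce this directly from Proposition~\ref{reduction-to-domain.prop}, reducing first to the case of a coefficient domain and then passing to its field of fractions. First I would invoke Proposition~\ref{reduction-to-domain.prop} to obtain a closed immersion $G(R) \into H(S)$, where $H$ is a linear algebraic group and $S$ a finitely generated $\kk$-domain. A closed immersion of ind-groups is in particular an injective homomorphism of abstract groups, and any subgroup of a linear group is again linear; hence it suffices to prove that $H(S)$ is linear. In other words, I may assume from the start that $R$ is a finitely generated $\kk$-domain.

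Next I would set $K := \Quot(R)$, the field of fractions of $R$. The inclusion $R \into K$ is an injective homomorphism of $\kk$-algebras, and by the functoriality of the $R$-points construction (Section~\ref{R-rational-points.subsec}) it induces a homomorphism $G(R) \to G(K)$. This map is injective: identifying $G(R)$ with $\Alg_{\kk}(\OOO(G), R)$, two $\kk$-algebra homomorphisms $\OOO(G) \to R$ that agree after composition with the injection $R \into K$ must already agree.

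Finally, since $G$ is a linear algebraic group I would fix a closed embedding $G \into \GL_n(\kk)$ for some $n$. Applying the point functor over $K$ yields an injective homomorphism $G(K) \into \GL_n(\kk)(K) = \GL_n(K)$. Composing the two injections produces an embedding $G(R) \into \GL_n(K)$, which by definition exhibits $G(R)$ as a linear group over the field $K = \Quot(R)$.

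There is essentially no serious obstacle here: the argument is a formal consequence of Proposition~\ref{reduction-to-domain.prop} together with the standard realization of a linear algebraic group as a closed subgroup of some $\GL_n$. The only points requiring a moment's care are the injectivity of the two functorially induced maps and the observation that linearity passes to subgroups, both of which are immediate.
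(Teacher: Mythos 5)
Your proof is correct and is essentially the paper's own argument: the paper deduces the corollary from Proposition~\ref{reduction-to-domain.prop} in exactly this way, reducing to a finitely generated $\kk$-domain $S$ and then embedding $H(S)$ into $\GL_n(\Quot(S))$ via a closed embedding $H \into \GL_n$. Your explicit verification of the two injectivities (and your care to work only with abstract group homomorphisms, since $G(\Quot(R))$ need not carry an ind-structure when $\Quot(R)$ has uncountable $\kk$-dimension) just fills in details the paper leaves implicit.
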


\ps
\subsection{Representations of ind-groups}\label{reps-of-ind-groups.subsec}
Let $\GGG$ be an ind-group, and let $V$ be a $\kk$-vector space of countable dimension. We denote by $\LLL(V)$\idx{$\LLL(V)$} the $\kk$-vector space of \itind{linear endomorphisms}. Note that $\LLL(V)$ is not an ind-variety in case $V$ is not finite-dimensional.

\begin{definition}  \label{representation-of-an-ind-group.def}
A group homomorphism $\rho\colon\GGG \to \GL(V)$ is called a {\it representation of $\GGG$ on $V$\/}\idx{representation} if the induced map $\tilde\rho\colon\GGG \times V \to V$, $(g,v)\mapsto g v:= \rho(g)v$, is an ind-morphism. We also say that $V$ is a {\it $\GGG$-module}.\idx{G@$\GGG$-module}
\end{definition} 
If $\rho\colon\GGG \to \GL(V)$ is a representation we define a linear map $d\rho\colon \Lie\GGG \to \LLL(V)$ in the following way. For $v\in V$, let $\mu_{v}\colon \GGG \to V$  be the \itind{orbit map} $g \mapsto g v$. Then 
\[
d\rho(A)(v) := (d\mu_{v})_e (A) \text{ \ for \ } A \in \Lie\GGG.
\]
For $A \in \Lie\GGG$ and $v \in V$ we will write $A(v)$ for $d\rho(A)(v)$
The following result can be found in \cite[Lemma~4.2.4]{Ku2002Kac-Moody-groups-t}.

\begin{lemma}\label{rep-of-G-and-LieG.lem}
The map
$d \rho\colon \Lie\GGG \to \LLL (V)$,
$A \mapsto (d\mu_{v})_e (A)$, for  $A \in \Lie\GGG$ and $v \in V$ is a homomorphism of Lie algebras.
\end{lemma}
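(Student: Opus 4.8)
The plan is to reduce the statement to a computation with matrix coefficients. The point is that $\GL(V)$ is \emph{not} an ind-group when $\dim V=\infty$, so one cannot simply invoke the fact (recalled in Section~\ref{Liealgebra.subsec}) that the differential of a homomorphism of ind-groups is a homomorphism of Lie algebras. Instead I would unwind the definition of $d\rho$ together with the description of left-invariant vector fields from Remark~\ref{left-invariant-VF.rem} and the definition of the bracket on $\Lie\GGG=T_e\GGG$ as the commutator of left-invariant derivations.

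First I would fix a basis $(v_i)$ of $V$ and write $\rho(g)v_j=\sum_i\rho_{ij}(g)\,v_i$. For each fixed $j$ the orbit map $\mu_{v_j}\colon\GGG\to V$ is a morphism, so by Lemma~\ref{Kumar.lem} its restriction to each $\GGG_k$ has image in some $V_m$; hence for fixed $j$ only finitely many $\rho_{ij}$ are nonzero on a given $\GGG_k$, each $\rho_{ij}$ lies in $\OOO(\GGG)$, and the $j$-th column of every matrix appearing below is finite. The homomorphism property $\rho(gh)=\rho(g)\rho(h)$ then becomes the (locally finite, hence convergent) coproduct identity $\mu^*\rho_{ij}=\sum_k\rho_{ik}\hotimes\rho_{kj}$ in $\OOO(\GGG)\hotimes\OOO(\GGG)$. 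Since $d\rho(A)(v_j)=(d\mu_{v_j})_e(A)=\sum_i A(\rho_{ij})\,v_i$, the matrix of $d\rho(A)$ is $[A(\rho_{ij})]$, and it has finite columns, so indeed $d\rho(A)\in\LLL(V)$.

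Then I would compute, using $\delta_A=(\id\hotimes A)\circ\mu^*$, that $\delta_A(\rho_{ij})=\sum_k A(\rho_{kj})\,\rho_{ik}$, a finite $\kk$-linear combination of the functions $\rho_{ik}$. Applying $\eval_e\circ\delta_B=B$ (that is, $(\delta_B)_e=B$) termwise gives $\eval_e(\delta_B\delta_A(\rho_{ij}))=\sum_k B(\rho_{ik})A(\rho_{kj})=(d\rho(B)\,d\rho(A))_{ij}$, and symmetrically with $A$ and $B$ exchanged. Subtracting, and using that the bracket on $T_e\GGG$ is transported from the commutator of left-invariant fields, i.e. $\delta_{[A,B]}=[\delta_A,\delta_B]$ and hence $\eval_e([\delta_A,\delta_B](\rho_{ij}))=[A,B](\rho_{ij})=(d\rho([A,B]))_{ij}$, I obtain $(d\rho([A,B]))_{ij}=[d\rho(A),d\rho(B)]_{ij}$ for all $i,j$, which is exactly the claim.

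The main obstacle is the bookkeeping forced by $\dim V=\infty$: one must guarantee that all sums indexed by $k$ are genuinely finite and that the coproduct identity holds in the completed tensor product $\OOO(\GGG)\hotimes\OOO(\GGG)$ rather than only formally. This is precisely controlled by the column-finiteness coming from Lemma~\ref{Kumar.lem} applied to each orbit map $\mu_{v_j}$; once that is in place the computation is the familiar finite-dimensional manipulation and the remaining steps are routine.
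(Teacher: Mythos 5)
Your proof is correct and is in substance the same argument the paper relies on: the paper gives no proof of Lemma~\ref{rep-of-G-and-LieG.lem} itself but cites \cite[Lemma~4.2.4]{Ku2002Kac-Moody-groups-t}, whose proof is exactly this matrix-coefficient computation --- the identity $\delta_A(\rho_{ij})=\sum_k A(\rho_{kj})\,\rho_{ik}$, evaluation at the identity via $\eval_e\circ\delta_B=B$, and the defining relation $\delta_{[A,B]}=[\delta_A,\delta_B]$ for the bracket on $T_e\GGG$. Your column-finiteness bookkeeping (for fixed $j$, the functions $\rho_{\bullet j}$ vanish on each $\GGG_k$ beyond finitely many indices, so every sum over $k$ is locally finite and the coproduct identity $\mu^*\rho_{ij}=\sum_k\rho_{ik}\hotimes\rho_{kj}$ makes sense in $\OOO(\GGG)\hotimes\OOO(\GGG)$) correctly handles the only point where the countable-dimensional case differs from the classical finite-dimensional manipulation.
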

\begin{remark}\label{diff-for-linear-action.rem}
Denote by  $\tilde\rho\colon\GGG \times V \to V$ the corresponding linear action. It is easy to see that 
the differential $d\tilde\rho_{(e,v)}\colon \Lie\GGG \times T_{v}V \to T_{v}V$ has the following description:
$$
d\tilde\rho_{(e,v)}(A,w) = A(v) + w \text{ \ for }A \in \Lie \GGG, \ w \in T_{v}V=V.
$$
\end{remark}

\begin{proposition}\label{trivial-rep.prop}
Let $\rho\colon \GGG \to \GL(V)$ be a representation of a connected ind-group $\GGG$. Then $\rho$ is trivial if and only if $d\rho$ is trivial.
\end{proposition}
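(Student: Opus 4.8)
The plan is to reduce both directions to the orbit-map Lemma~\ref{orbitmap.lem}, using the description $d\rho(A)(v) = (d\mu_v)_e(A)$ of the differential.

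One implication is immediate. If $\rho$ is trivial, then for every $v \in V$ the orbit map $\mu_v \colon \GGG \to V$, $g \mapsto \rho(g)v = v$, is constant, so $(d\mu_v)_e = 0$, and hence $d\rho(A)(v) = (d\mu_v)_e(A) = 0$ for all $A \in \Lie\GGG$ and all $v$. Thus $d\rho$ is trivial.

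For the converse, assume $d\rho$ is trivial. First I would record the relevant structure: $V$, being a $\kk$-vector space of countable dimension, is an affine ind-variety (Example~\ref{countable-vector-space.exa}) with $T_v V = V$ canonically for every $v$, and by the definition of a representation the linear action $\tilde\rho\colon \GGG \times V \to V$ is a morphism of ind-varieties. Hence $\GGG$ acts on the ind-variety $V$ and each orbit map $\mu_v\colon \GGG \to V$, $g\mapsto \tilde\rho(g,v)$, is an ind-morphism. The hypothesis $d\rho = 0$ then says precisely that $(d\mu_v)_e(A) = d\rho(A)(v) = 0$ for all $A \in \Lie\GGG$; in other words, the differential $(d\mu_v)_e\colon \Lie\GGG \to T_v V$ is the zero map, for every $v \in V$. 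Now I would invoke Lemma~\ref{orbitmap.lem}: since $\GGG$ is connected and $(d\mu_v)_e = 0$, the orbit map $\mu_v$ is constant, so $\rho(g)v = v$ for every $g \in \GGG$. As $v \in V$ was arbitrary, $\rho(g) = \id_V$ for all $g$, i.e. $\rho$ is trivial.

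The only real content is the application of Lemma~\ref{orbitmap.lem}, which is where connectedness of $\GGG$ enters; the main point to verify is that a representation genuinely supplies an action of the ind-group $\GGG$ on the ind-variety $V$ — that is, that the orbit maps $\mu_v$ are ind-morphisms — even though $\GL(V)$ itself is not an ind-group when $V$ is infinite-dimensional. This is exactly what is built into the definition of a representation (the morphism $\tilde\rho$), so no additional argument is required and the proof is essentially a direct citation of Lemma~\ref{orbitmap.lem} applied pointwise in $v$.
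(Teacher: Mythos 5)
Your proof is correct and is essentially the paper's own argument: the paper proves the converse direction inline by exactly the reasoning that Lemma~\ref{orbitmap.lem} packages (vanishing of $(d\mu_{v})_{e}$ for all $v$, plus connectedness via a filtration by irreducible $\GGG_{k}$ containing $e$, forces every orbit map to be constant). Citing that lemma instead is legitimate — its proof is independent of this proposition, so the forward reference creates no circularity — and your check that the representation gives a genuine action of $\GGG$ on the ind-variety $V$ is the right point to verify.
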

\begin{proof}
It is clear from the definition of $d\rho$ that for a trivial representation $\rho$ we have $d\rho = 0$. Now assume that $d\rho = 0$ and that $\GGG$ is connected. If $d\rho$ is trivial, then the differentials in $e\in\GGG$ of the orbit maps $\mu_{v}\colon \GGG \to V$ are all zero. Hence $(d\mu_{v})_{g}=0$ for all $g\in\GGG$. In fact, $\mu_{gv} = \mu_v \circ \rho_g$ where $\rho_{g}$ is the right translations, and so $(d \mu_{gv})_e = (d \mu_v)_g \circ (d \rho_g)_e$ from which the result follows.

We choose an admissible filtration $\GGG = \bigcup \GGG_{k}$ such that $\GGG_{k}$ is irreducible (Proposition~\ref{curve-connected.prop}) and that $e\in\GGG_{k}$ for all $k$. If $\rho$ is nontrivial we can find a $v\in V$ such that $\GGG v \neq \{v\}$. It follows that for a large enough $k$ the morphism $\mu\colon\GGG_{k}\to V$, $g\mapsto g v$, is non-constant. Since $\GGG_{k}$ is irreducible this implies that the differential of $\mu$ is non-constant on a dense open set of $\GGG_{k}$, contradicting what we said above.
\end{proof}

A fundamental result in the theory of algebraic groups is that every affine $G$-variety can be equivariantly embedded into a finite-dimensional $G$-module. This does not hold for ind-groups.

\begin{proposition}\label{no-embedding-into-representations.prop}
Assume that an ind-group $\GGG$ of infinite dimension (see Definition~\ref{dimension.def}) acts faithfully on an affine variety $X$. Then there is no embedding of $X$ as a closed $\GGG$-stable subvariety of a representation $V$ of $\GGG$.
\end{proposition}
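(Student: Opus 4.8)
The plan is to derive a contradiction by producing an injective homomorphism of ind-groups from $\GGG$ into a \emph{finite-dimensional} linear algebraic group; since $\dim\GGG=\infty$, this is forbidden by Proposition~\ref{hom-to-algebraic.prop}(1). So I would argue by contradiction, assuming there is a closed immersion realizing $X$ as a $\GGG$-stable closed subvariety of a representation $V$, and identify $X$ with its image $X\subseteq V$. Write $V=\bigcup_k V_k$ with finite-dimensional subspaces $V_k$ (Example~\ref{countable-vector-space.exa}), so that the $\GGG$-action is given by linear maps $\rho(g)\in\GL(V)$.

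First I would cut $X$ down to a finite-dimensional linear piece. Since $X$ is an algebraic variety and the inclusion $X\into V$ is in particular a continuous map into the ind-variety $V$, Lemma~\ref{Kumar.lem} yields an $\ell$ with $X\subseteq V_\ell$. Hence the linear span $W:=\Span(X)\subseteq V_\ell$ is a \emph{finite-dimensional} subspace of $V$. Next I would verify that $W$ is $\GGG$-stable: as the action is linear and $X$ is $\GGG$-stable, $\rho(g)(W)=\Span(\rho(g)(X))=\Span(X)=W$ for every $g\in\GGG$. Thus restriction gives a group homomorphism $\rho|_W\colon\GGG\to\GL(W)$ into a linear algebraic group. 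That $\rho|_W$ is a homomorphism of \emph{ind-groups} I would check by fixing a basis $e_1,\dots,e_n$ of $W$: each orbit map $g\mapsto\rho(g)e_i$ is a morphism $\GGG\to W$ (because the action map $\GGG\times V\to V$ is an ind-morphism, by Definition~\ref{representation-of-an-ind-group.def}, and precomposition with the closed immersion $g\mapsto(g,e_i)$ is a morphism), so $g\mapsto\rho|_W(g)$ has morphism components into $\GL(W)\subseteq\End(W)$ and is therefore an ind-morphism.

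Finally I would invoke faithfulness: if $\rho|_W(g)=\id_W$ then $g$ fixes every point of $X\subseteq W$, so $g=e$ since $\GGG$ acts faithfully on $X$; hence $\rho|_W$ is injective and $\dim\Ker(\rho|_W)=0<\infty$. By Proposition~\ref{hom-to-algebraic.prop}(1) this forces $\GGG^{\circ}$ to be an algebraic group and $\dim\GGG<\infty$, contradicting the hypothesis. I do not anticipate a genuine obstacle: the entire content is the observation that one must pass from $X$ to its linear span $W$ (it is $W$, not $X$, that carries a finite-dimensional $\GGG$-module structure), and that Lemma~\ref{Kumar.lem} is precisely what guarantees this span is finite-dimensional. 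The only point to state carefully is that linearity of the representation makes $\Span(X)$ automatically $\GGG$-stable.
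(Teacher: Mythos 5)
Your proposal is correct and follows essentially the same route as the paper's proof: pass to the linear span $W:=\langle X\rangle\subseteq V$, which is finite-dimensional (since the variety $X$ lies in some $V_\ell$) and $\GGG$-stable by linearity, obtain an injective homomorphism of ind-groups $\GGG\into\GL(W)$ from faithfulness on $X$, and conclude a contradiction with $\dim\GGG=\infty$. The paper states this more tersely ("clearly impossible"), while you make the final step explicit via Proposition~\ref{hom-to-algebraic.prop}(1) and justify the ind-morphism property of the restriction — both are accurate fillings-in of the same argument.
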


\begin{proof}
Assume that $X$ embeds equivariantly into a representation $V$. Since $X$ is a variety the linear span $W:= \langle X \rangle \subseteq V$ is finite-dimensional and $\GGG$-stable. Therefore,  we obtain an injective homomorphism of ind-groups $\GGG \into \GL (W)$.
This is clearly  impossible, because $\dim \GGG = \infty$.
\end{proof}

Note that the automorphism group $\Aut (X)$ acts faithfully on the coordinate ring $\OOO (X)$. We will see later in Proposition~\ref{locally-finite-action-on-Vec(X).prop} that this is a representation of the ind-group $\Aut(X)$. But we don't even know if a general affine ind-group admits a nontrivial representation.

\begin{question} \label{faithful-representation.ques}
Does any affine ind-group admit a faithful (or only nontrivial) representation on a $\kk$-vector space of countable dimension ?
\end{question}

\ps
\subsection{Homomorphisms with small kernels}\label{small-kernels.subsec}
The first result is an immediate consequence of the fact that the image of a homomorphism of algebraic groups is closed.
\begin{proposition} \label{Hom-G-to-ind-group.prop}
Let $\GGG$ be an ind-group, $G$ an algebraic group, and $\phi\colon G \to \GGG$ a homomorphism of ind-groups. Then the image $\phi(G)$ is a closed algebraic subgroup.
\end{proposition}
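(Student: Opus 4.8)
The plan is to reduce the statement to the classical fact quoted in the text — that the image of a homomorphism of algebraic groups is closed — by first trapping the image in a single algebraic layer $\GGG_{\ell}$ and then recognizing its closure as an algebraic group.

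First I would apply Lemma~\ref{Kumar.lem} to the morphism $\phi\colon G \to \GGG$. Since $G$ is an algebraic variety, there is an index $\ell$ with $\phi(G) \subseteq \GGG_{\ell}$, and the closure $H := \overline{\phi(G)}$, taken in $\GGG$ (equivalently in $\GGG_{\ell}$, as $\GGG_{\ell}$ is closed in $\GGG$), is a closed algebraic subset of $\GGG$ lying inside $\GGG_{\ell}$. Writing $S := \phi(G)$, the image $S$ is constructible in the variety $\GGG_{\ell}$ by Chevalley's theorem, and hence it is dense in $H$.

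Next I would show that $H$ is a closed subgroup of $\GGG$. Here I only use that the left and right translations $\lambda_{h},\rho_{g}$ and the inversion of $\GGG$ are isomorphisms of ind-varieties, hence homeomorphisms; I deliberately avoid joint continuity of the multiplication. For $h \in S$ one has $\lambda_{h}(S) = S$, so $\lambda_{h}(H) = \overline{\lambda_{h}(S)} = H$, which gives $S \cdot H \subseteq H$. Then for any $g \in H$ we get $Sg \subseteq S\cdot H \subseteq H$, i.e. $\rho_{g}(S) \subseteq H$, and taking closures $\rho_{g}(H) = \overline{\rho_{g}(S)} \subseteq H$; thus $H \cdot H \subseteq H$. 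Likewise inversion maps $S$ onto $S$ and hence $H$ onto $H$, so $H^{-1} = H$. Therefore $H$ is a closed subgroup of $\GGG$, and being a closed algebraic subset it is an algebraic group (a closed subgroup of an ind-group is algebraic precisely when it is an algebraic subset).

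Finally I would invoke the classical fact. Now $\phi\colon G \to H$ is a homomorphism of algebraic groups — a morphism of varieties by construction, and a group homomorphism onto its image contained in $H$ — so its image $S = \phi(G)$ is closed in $H$. Since $S$ is also dense in $H$, we conclude $S = H$; hence $\phi(G)$ is closed in $\GGG$ and is an algebraic group, as claimed. The only genuinely delicate point is the second step: the standard ``closure of a subgroup is a subgroup'' argument must be run purely through the translation and inversion homeomorphisms, since $\GGG \times \GGG$ carries the ind-variety topology rather than the product topology and so no appeal to joint continuity of multiplication is available. Everything else is a direct transcription of the finite-dimensional theory.
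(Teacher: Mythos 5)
Your proof is correct and follows essentially the same route as the paper: trap $\phi(G)$ in a single layer $\GGG_{\ell}$ via Lemma~\ref{Kumar.lem}, observe that the closure $H=\overline{\phi(G)}$ is then a closed algebraic subgroup, and conclude from the classical fact that the induced homomorphism of algebraic groups $G \to H$ has closed (hence, being dense, full) image. The only difference is that you spell out, via translations and inversion, the ``closure of a subgroup is a subgroup'' step that the paper leaves implicit — a sound and standard piece of bookkeeping in the ind-setting.
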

\begin{proof}
If $\GGG = \bigcup_{k}\GGG_{k}$, then there is a $k$ such that $\phi(G) \subseteq \GGG_{k}$. It follows that  $\overline{\phi(G)} \subseteq \GGG_{k}$, and so $\overline{\phi(G)}$ is a closed algebraic subgroup and $\phi$ induces a homomorphism $G \to \overline{\phi(G)}$ of algebraic groups with a dense image. Hence $\phi(G) = \overline{\phi(G)}$.
\end{proof}

We now apply the results from Section~\ref{small-fibers.subsec} to the case of homomorphisms of ind-groups.
\begin{proposition}  \label{hom-to-algebraic.prop}
Let $\GGG$ be an ind-group, $G$ an algebraic group, and $\phi\colon \GGG \to G$ a homomorphism of ind-groups.
\be
\item \label{the-neutral-component-is-an-algebraic-group}
If $\dim\Ker\phi<\infty$, then $\GGG^{\circ}$ is an algebraic group. In particular, $\dim\GGG<\infty$.
\item
If $\GGG$ is connected, then $\phi(\GGG) \subseteq G$ is a closed subgroup.
Furthermore, there exists a closed irreducible algebraic subset $X$ of $\GGG$ such that $\phi (X) = \phi(\GGG)$.
\item
If $\GGG$ is connected and $\phi$ surjective, then $d\phi_{e}\colon \Lie\GGG \to \Lie G$ is surjective, and $\Ker d\phi_{e}\supseteq \Lie\Ker\phi$.
\ee
\end{proposition}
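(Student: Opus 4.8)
The plan is to treat the three parts in turn, using the small-fibers results of Section~\ref{small-fibers.subsec} for (1), the standard closure argument for images of connected groups for (2), and generic smoothness together with homogeneity of the differential for (3). For part (1), I would first note that the fibers of $\phi$ are exactly the cosets of $\Ker\phi$: if $\phi(h)=g$, then $\phi^{-1}(g)=h\cdot\Ker\phi$, which is isomorphic to $\Ker\phi$ via the left translation $x\mapsto hx$, hence a closed ind-subvariety of dimension $\dim\Ker\phi\le d:=\dim\Ker\phi<\infty$ (empty fibers cause no trouble). Since $G$ is a variety, Proposition~\ref{small-fibers-gives-variety.prop} then gives $\dim\GGG\le\dim G+d<\infty$; note that only the finite-dimensionality conclusion is used, so connectedness of $\GGG$ is not needed here. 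Finally Proposition~\ref{connected-component.prop}(3), which says $\dim\GGG<\infty$ if and only if $\GGG^{\circ}$ is an algebraic group, yields the claim.

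For part (2), since $\GGG$ is connected it is curve-connected (Remark~\ref{irreducible-curve-connected.rem}), so Proposition~\ref{curve-connected.prop} furnishes an admissible filtration $\GGG=\bigcup_k\GGG_k$ with all $\GGG_k$ irreducible. Applying Lemma~\ref{image-ind-var-in-var.lem} to the morphism $\phi$ into the variety $G$ produces an index $k_0$ and a subset $U_0\subseteq\phi(\GGG_{k_0})$ that is open and dense in $\overline{\phi(\GGG)}$. Writing $H:=\phi(\GGG)$ for the abstract image subgroup and $\overline H$ for its closure (a closed algebraic subgroup of $G$), the usual translation argument applies: for $g\in\overline H$ the dense open sets $gU_0^{-1}$ and $U_0$ must meet, whence $g\in U_0\cdot U_0\subseteq H$, so $\overline H\subseteq H$ and $H=\overline H$ is closed. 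To exhibit the irreducible witness I would set $X:=\overline{\GGG_{k_0}\cdot\GGG_{k_0}}$; the multiplication morphism restricted to the irreducible variety $\GGG_{k_0}\times\GGG_{k_0}$ has image contained in some $\GGG_\ell$ by Lemma~\ref{Kumar.lem}, so $X$ is a closed irreducible algebraic subset of $\GGG$, and $\phi(X)\supseteq\phi(\GGG_{k_0})\cdot\phi(\GGG_{k_0})\supseteq U_0\cdot U_0=H$ while $\phi(X)\subseteq\phi(\GGG)=H$, giving $\phi(X)=\phi(\GGG)$.

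For part (3), surjectivity of $\phi$ with $\GGG$ connected forces $G=\phi(\GGG)$ to be connected, hence irreducible. Taking the irreducible $X$ from (2), the restriction $\phi|_X\colon X\to G$ is a dominant morphism of irreducible varieties, so by generic smoothness (valid since the base field has characteristic zero) there is a smooth point $x_0\in X$ at which $d(\phi|_X)_{x_0}\colon T_{x_0}X\to T_{\phi(x_0)}G$ is surjective; as $T_{x_0}X\subseteq T_{x_0}\GGG$, the full differential $d\phi_{x_0}$ is surjective as well. Homogeneity then transfers this to the identity: differentiating the relation $\phi\circ\lambda_g=\lambda_{\phi(g)}\circ\phi$ at $e$ gives $d\phi_g\circ(d\lambda_g)_e=(d\lambda_{\phi(g)})_e\circ d\phi_e$, and since both translation differentials are isomorphisms, $d\phi_g$ is surjective if and only if $d\phi_e$ is; taking $g=x_0$ yields surjectivity of $d\phi_e$. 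For the last assertion, $\Ker\phi=\phi^{-1}(e)$ is a closed subgroup, and $\phi$ composed with the inclusion $\iota\colon\Ker\phi\into\GGG$ is constant, so $d\phi_e$ vanishes on $d\iota_e(T_e\Ker\phi)=\Lie\Ker\phi$, i.e. $\Lie\Ker\phi\subseteq\Ker d\phi_e$.

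The main obstacle is precisely the differential computation in (3): one must pass from surjectivity of the differential at a generic, a priori non-identity point of the auxiliary variety $X$ to surjectivity at $e$ on all of $\Lie\GGG$. This is where characteristic zero (for generic smoothness of $\phi|_X$) and the homogeneity identity $\phi\circ\lambda_g=\lambda_{\phi(g)}\circ\phi$ are both indispensable, and where the reduction in (2) to a single irreducible algebraic $X$ with $\phi(X)=\phi(\GGG)$ pays off. By contrast the remaining ingredients—that fibers are cosets, that the closure of a subgroup is a subgroup, and that $\phi\circ\iota$ is constant—are routine and require no new input.
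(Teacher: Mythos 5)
Your proof is correct and follows essentially the same route as the paper: part (1) via the coset-fiber bound and Proposition~\ref{small-fibers-gives-variety.prop}, part (2) via an irreducible admissible filtration, Lemma~\ref{image-ind-var-in-var.lem} and the standard dense-open translation argument, and part (3) via generic smoothness in characteristic zero transported to $e$ by left translation. Your minor variants---using $\dim\GGG<\infty$ plus Proposition~\ref{connected-component.prop}(3) instead of curve-connectedness directly, taking $X=\overline{\GGG_{k_0}\cdot\GGG_{k_0}}$ rather than a filtration piece $\GGG_{k_1}\supseteq\GGG_{k_0}\cdot\GGG_{k_0}$, and the identity $\phi\circ\lambda_g=\lambda_{\phi(g)}\circ\phi$ in place of translating $X$ to $g^{-1}X$---are cosmetic and all check out.
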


\begin{proof}
(1) This follows from Proposition \ref{small-fibers-gives-variety.prop}, because $\GGG^{\circ}$ is curve-connected, by  Proposition~\ref{connected-component.prop}
\par\smallskip
(2) We choose an admissible filtration $\GGG=\bigcup\GGG_{k}$ such that all $\GGG_{k}$ are irreducible. Then the subsets $\overline{\phi(\GGG_{k})} \subseteq G$ are irreducible and closed, and so there is a $k_{0}>0$ such that  
$\overline{\phi(\GGG_{k_{0}})} = \overline{\phi(\GGG_{k})}$ for all $k\geq k_{0}$. It follows that 
$H:=\overline{\phi(\GGG_{k_{0}})} \subseteq G$ is a closed subgroup and that $\phi(\GGG) \subseteq H$. 
The image $\phi(\GGG_{k_0})$ contains a subset $U$ which is open and dense in $H$. Since $H$ is irreducible, it is well-known that $U \cdot U = H$ (see for example \cite[Lemma 7.4, page 54]{Hu1975Linear-algebraic-g}). This yields $\phi(\GGG) = H$. Also, if $k_1$ is chosen such that $\GGG_{k_0} \cdot \GGG_{k_0} \subseteq \GGG_{k_1}$, we get $\phi ( \GGG_{k_1} ) = H$, so that we can take $X= \GGG_{k_1}$.

\ps
(3) 
Choose $X$ as in (2). This implies that the differential $d\phi_{g}$ is surjective for all $g$ in an open dense set of $X$. Hence, for the induced morphism $\phi \colon g^{-1}X \to G$ the differential $d\phi_{e}\colon T_{e}g^{-1}X \to \Lie G$ is surjective, proving the first claim. The second assertion is clear.
\end{proof}

The example $\ZZ \subseteq \kplus$ shows that the connectedness assumption in (2)  is necessary. If $\kk$ is uncountable, then (3) holds without assuming that $\GGG$ is connected. In fact, $\GGG \to G$ surjective then implies that $\GGG^{\circ}\to G^{\circ}$ is surjective.

\begin{question}   \label{Is-the-Kernel-of-the-differential-the-Lie-algebra-of-the-Kernel.ques}
Do we have that $\Ker d\phi_{e} = \Lie\Ker\phi$?
\end{question}

In case of an uncountable base field $\kk$ we have the following variation of the proposition above.
\begin{proposition}\label{inverse-image-of-algebraic.prop}
Assume that $\kk$ is uncountable.
Let $\phi\colon \GGG \to \HHH$ be a homomorphism of ind-groups whose kernel is an algebraic group. Let $H \subseteq \HHH$ be an algebraic subgroup and assume  that $H \cap \phi(\GGG)$ is closed in $H$. Then, the inverse image $\phi^{-1}(H)$ is an algebraic subgroup. In particular, this holds if $H \cap \phi(\GGG)$ is constructible or if $H \subseteq\phi(\GGG)$. 
\end{proposition}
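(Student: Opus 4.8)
The plan is to transfer the problem to the restricted homomorphism
$\psi:=\phi|_{\mathcal{L}}\colon \mathcal{L}\to H$, where $\mathcal{L}:=\phi^{-1}(H)$. Since $H$ is closed in $\HHH$ and $\phi$ is continuous, $\mathcal{L}$ is a closed ind-subgroup of $\GGG$, and $\Ker\psi=\Ker\phi$ is by hypothesis an algebraic group, hence finite-dimensional. First I would apply Proposition~\ref{hom-to-algebraic.prop}(1) to $\psi$ (its target $H$ is an algebraic group and its kernel is finite-dimensional): this yields that the identity component $\mathcal{L}^{\circ}$ is an algebraic group, and in particular $\dim\mathcal{L}<\infty$. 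The hard part will be the remaining gap: finite-dimensionality alone does \emph{not} force $\mathcal{L}$ to be algebraic, as the example $\ZZ\subseteq\kplus$ shows, since a priori $\mathcal{L}$ could have infinitely many connected components. The whole role of the hypothesis that $H\cap\phi(\GGG)$ be closed in $H$ is precisely to rule this out.

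To control the components I would set $H':=\psi(\mathcal{L})=H\cap\phi(\GGG)$, which is closed in $H$ by hypothesis and hence an algebraic group with finitely many components. As $\mathcal{L}^{\circ}$ is algebraic and connected, $\psi(\mathcal{L}^{\circ})$ is a closed subgroup of $H'$. Now $\mathcal{L}^{\circ}$ has countable index in $\mathcal{L}$ by Proposition~\ref{connected-component.prop}(2), so writing $\mathcal{L}=\bigcup_{i\in I}g_i\mathcal{L}^{\circ}$ with $I$ countable and applying $\psi$ shows that $H'$ is covered by the countably many cosets $\psi(g_i)\,\psi(\mathcal{L}^{\circ})$. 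These cosets are closed, hence constructible, subsets of the variety $H'$; since $\kk$ is uncountable, Lemma~\ref{constructible.lem} forces a finite subcover, i.e. $[H':\psi(\mathcal{L}^{\circ})]<\infty$. This is the single place where uncountability is used.

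With a finite index set $F$ such that $H'=\bigcup_{i\in F}\psi(g_i)\,\psi(\mathcal{L}^{\circ})$, I would finish as follows. Any $x\in\mathcal{L}$ satisfies $\psi(x)=\psi(g_i)\,\psi(l)$ for some $i\in F$ and $l\in\mathcal{L}^{\circ}$, whence $\psi(g_i^{-1}xl^{-1})=e$, so $g_i^{-1}xl^{-1}\in\Ker\psi=\Ker\phi$ and therefore $x\in g_i\,\Ker\phi\,\mathcal{L}^{\circ}$. Thus $\mathcal{L}\subseteq\bigcup_{i\in F}g_i\,\Ker\phi\,\mathcal{L}^{\circ}$. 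Each set $g_i\,\Ker\phi\,\mathcal{L}^{\circ}$ is the image of the morphism $\Ker\phi\times\mathcal{L}^{\circ}\to\GGG$, $(k,l)\mapsto g_i k l$, defined on a genuine algebraic variety, so by Lemma~\ref{Kumar.lem} it is contained in some $\GGG_{n_i}$. Hence $\mathcal{L}\subseteq\GGG_n$ with $n:=\max_{i\in F}n_i$. Being closed in $\GGG$ and contained in $\GGG_n$, the subgroup $\mathcal{L}=\phi^{-1}(H)$ is a closed subvariety of $\GGG_n$, i.e. an algebraic subset, and therefore an algebraic subgroup, as claimed.

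Finally I would dispose of the two ``in particular'' assertions. If $H\cap\phi(\GGG)$ is constructible, then it is automatically closed in $H$, since a constructible subgroup of an algebraic group is always closed; and if $H\subseteq\phi(\GGG)$, then $H\cap\phi(\GGG)=H$ is trivially closed in $H$. In either case the main hypothesis is satisfied, so the result applies. To summarize, the delicate point is entirely the passage from finite-dimensionality to algebraicity of $\mathcal{L}$, and both the uncountability of $\kk$ (via Lemma~\ref{constructible.lem}) and the closedness of $H\cap\phi(\GGG)$ are exactly the ingredients that make this passage go through.
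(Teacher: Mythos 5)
Your proof is correct, but it takes a longer, more structural route than the paper's, which is only three lines: the paper covers $H\cap\phi(\GGG)$ by the constructible sets $H\cap\phi(\GGG_k)$, applies Lemma~\ref{constructible.lem} directly to conclude $H\cap\phi(\GGG)=H\cap\phi(\GGG_k)$ for a single $k$, and then the identity $\phi^{-1}(H)=\Ker\phi\cdot\bigl(\phi^{-1}(H)\cap\GGG_k\bigr)$ --- exactly your final translation-by-the-kernel argument in compressed form --- places $\phi^{-1}(H)$ inside some $\GGG_m$, where closedness finishes the proof. You instead first invoke Proposition~\ref{hom-to-algebraic.prop}(1) (hence Proposition~\ref{small-fibers-gives-variety.prop}) to make $\mathcal{L}^{\circ}=\phi^{-1}(H)^{\circ}$ algebraic, use Proposition~\ref{connected-component.prop}(2) to get countable index, and only then deploy Lemma~\ref{constructible.lem}, applied to the coset decomposition of $H':=H\cap\phi(\GGG)$ rather than to the filtration-induced covering. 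The skeleton is the same --- both proofs use uncountability in exactly one place, and both use the closedness of $H\cap\phi(\GGG)$ only to make Lemma~\ref{constructible.lem} applicable --- but the intermediate decompositions differ. What your detour buys is extra structural output: that $\phi^{-1}(H)^{\circ}$ is algebraic independently of the closedness hypothesis, and that $\psi(\mathcal{L}^{\circ})$ has finite index in $H'$; what the paper's covering buys is brevity, since it needs no structure theory of $\phi^{-1}(H)$ at all. One small point in your favor: you actually justify the ``in particular'' clauses, correctly noting that $H\cap\phi(\GGG)$ is a subgroup of $H$ and that a constructible subgroup of an algebraic group is closed, a step the paper leaves implicit.
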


\begin{proof}
We have $H \cap \phi(\GGG) = \bigcup_{k}H \cap\phi(\GGG_{k})$, and so $H \cap\phi(\GGG) = H \cap \phi(\GGG_{k})$ for some $k\geq 0$ by Lemma~\ref{constructible.lem}. This implies that $\phi^{-1}(H) = \Ker\phi \cdot( \phi^{-1}(H)\cap\GGG_{k}$), hence $\phi^{-1}(H) \subseteq \GGG_{m}$ for some $m\geq k$, proving that $\phi^{-1}(H)$ is an algebraic subgroup.
\end{proof}

\ps
\subsection{Factor groups and homogeneous spaces}\idx{factor group}
If $\HHH \subseteq \GGG$ is a closed subgroup of an ind-group $\GGG$ one might ask if the homogeneous space\idx{homogeneous space} $\GGG/\HHH$ has the structure of an ind-variety with the usual universal properties. If $\GGG$ is affine and $H=\HHH$ a reductive algebraic subgroup, then on easily shows that $\GGG$ has an admissible filtration $\GGG = \bigcup_{k}\GGG_{k}$ where all $\GGG_{k}$ are affine and stable under right multiplication with $H$. Since all $H$-orbits are isomorphic to $H$, the geometric quotient $\GGG_{k}/ H$ exists as an affine variety. It then follows that $\GGG/H = \varinjlim \GGG_{k} / H$ is an affine ind-variety with an action of $\GGG$ which has the usual universal properties of a homogeneous space.

However, one cannot expect such a result in general. In Section~\ref{tame-is-closed.subsec} we will construct a connected affine ind-group $\GGG$ and a closed connected subgroup $\HHH \subsetneqq \GGG$ which has the same Lie algebra. Let us show that  the homogeneous space $\GGG/\HHH$ cannot have a structure of an ind-variety with the usual properties of a homogeneous space. In fact, if  the projection $\pi\colon \GGG \to \GGG/\HHH$ is a $\GGG$-equivariant morphism, then $d\pi_{e}$ is trivial, because $\Ker d\pi_{e}\supseteq \Lie\HHH = \Lie\GGG$. It follows from the $\GGG$-equivariance that $d\pi_{g}$ is trivial in every point $g \in \GGG$ which implies that $\GGG/\HHH$ is a point (see Example~\ref{trivial-differential.exa}).

\pmed
\section{Families of Morphisms, of Endomorphisms, and of Automorphisms}\label{fam-morph.sec}
\ps
\subsection{Families of morphisms and the ind-variety of morphisms}\label{fam-morph.subsec}

\begin{definition} \label{fam-morphisms.def}
Let $\VVV,\WWW$ and $\ZZZ$ be  ind-varieties.  A {\it family of morphisms from $\VVV$ to $\WWW$ parametrized by $\ZZZ$}\idx{family of morphisms} is a morphism $\Phi \colon \ZZZ \times \VVV \to \WWW$ of ind-varieties. We use the notation $\Phi = (\Phi_{z})_{z \in \ZZZ}$ where $\Phi_{z} \colon \VVV \to \WWW$ denotes the morphism sending $v$ to $\Phi (z,v)$. 
\end{definition}

\begin{definition}  \label{universal-property-of-the-ind-variety-Mor(V,W).def}
We say that $\Mor(\VVV, \WWW)$ {\it admits a natural structure of ind-variety\/} if it can be endowed with the structure of an ind-variety in such a way that families of morphisms $\ZZZ \to \Mor ( \VVV, \WWW)$ correspond to ind-morphisms $\ZZZ \to \Mor ( \VVV, \WWW)$.

In other words, $\Mor ( \VVV, \WWW)$ admits a natural structure of ind-variety if and only if the contravariant functor 
\[
\Mor (\VVV, \WWW) \colon \IndVar \to \Sets, \ \ZZZ \mapsto \Mor(\ZZZ \times \VVV, \WWW)
\]
is representable.
\end{definition}\idx{$\Mor ( \VVV, \WWW)$}

The following results follow immediately from the definitions.

\begin{lemma}\label{tautological.lem}
Let $\XXX$, $\YYY$, $\ZZZ$, and $\TTT$ be ind-varieties, and assume that $\Mor ( \XXX, \YYY)$, $\Mor ( \YYY, \ZZZ)$, $\Mor ( \XXX, \ZZZ)$, $\Mor ( \YYY, \TTT)$ admit natural structures of ind-varieties. Then we have the following.
\be
\item \label{eval}
The evaluation map
\[  
\Mor(\XXX, \YYY) \times \XXX \to \YYY, \quad (f,x) \mapsto f(x) 
\]
is a morphism.
\item  \label{composition1}
The composition map
\[ 
\Mor(\XXX, \YYY) \times \Mor(\YYY, \ZZZ) \to \Mor (\XXX,\ZZZ), \quad (f,g) \mapsto g \circ f 
\]
is a morphism.
\item  \label{composition2}
For any $g \colon \ZZZ \to \TTT$ the composition map
\[  
\Mor(\YYY, \ZZZ)  \to \Mor(\YYY, \TTT), \quad f \mapsto g \circ f 
\]
is a morphism.
\item \label{composition3}
For any $f \colon \XXX \to \YYY$ the composition map
\[  
\Mor(\YYY, \ZZZ)  \to \Mor(\XXX, \ZZZ), \quad g \mapsto g \circ f 
\]
is a morphism.
\ee
\end{lemma}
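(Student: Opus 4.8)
The plan is to derive all four assertions from the single defining property recorded in Definition~\ref{universal-property-of-the-ind-variety-Mor(V,W).def}. For each of the mapping spaces involved, that property gives a currying bijection, natural in the parameter ind-variety $\ZZZ$, between $\Mor(\ZZZ,\Mor(\VVV,\WWW))$ and $\Mor(\ZZZ\times\VVV,\WWW)$: a map $\ZZZ\to\Mor(\VVV,\WWW)$ is an ind-morphism exactly when the associated evaluated map $\ZZZ\times\VVV\to\WWW$ is. Since the hypotheses list precisely the four spaces $\Mor(\XXX,\YYY)$, $\Mor(\YYY,\ZZZ)$, $\Mor(\XXX,\ZZZ)$, $\Mor(\YYY,\TTT)$ that occur below, nothing beyond this bijection will be needed, which is why the statement follows formally from the definitions.

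First I would establish (1). Writing $\MMM:=\Mor(\XXX,\YYY)$, I would feed the identity $\id_{\MMM}\in\Mor(\MMM,\MMM)$ through the currying bijection taken with parameter space $\ZZZ:=\MMM$. This produces an ind-morphism $\MMM\times\XXX\to\YYY$, and unwinding the correspondence shows it sends $(f,x)$ to $f(x)$; that is exactly the evaluation map. So evaluation is a morphism, and this is the only substantive input for the remaining parts.

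Next I would obtain (2)--(4) by the same recipe: to see that a prospective map into some $\Mor(-,-)$ is a morphism, I would present the corresponding family on the parameter space as a composite of evaluation maps (from part (1)) and the given data, and then curry back. For (2), with parameter space $\PPP:=\Mor(\XXX,\YYY)\times\Mor(\YYY,\ZZZ)$, I would build the family $\PPP\times\XXX\to\ZZZ$ as
\[
\Mor(\XXX,\YYY)\times\Mor(\YYY,\ZZZ)\times\XXX
\;\cong\;
\Mor(\YYY,\ZZZ)\times\Mor(\XXX,\YYY)\times\XXX
\xrightarrow{\;\id\times\mathrm{ev}\;}
\Mor(\YYY,\ZZZ)\times\YYY
\xrightarrow{\;\mathrm{ev}\;}
\ZZZ,
\]
a composite of morphisms which on points reads $(f,g,x)\mapsto g(f(x))$; currying then yields $(f,g)\mapsto g\circ f$ as a morphism into $\Mor(\XXX,\ZZZ)$. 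For (3) I would post-compose the evaluation $\Mor(\YYY,\ZZZ)\times\YYY\to\ZZZ$ with the fixed morphism $g\colon\ZZZ\to\TTT$ to get a family $\Mor(\YYY,\ZZZ)\times\YYY\to\TTT$, whose currying is $f\mapsto g\circ f$. For (4) I would instead pre-compose with $\id\times f$, where $f\colon\XXX\to\YYY$ is fixed, producing $\Mor(\YYY,\ZZZ)\times\XXX\to\ZZZ$, $(g,x)\mapsto g(f(x))$, whose currying is $g\mapsto g\circ f$.

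I do not expect a genuine obstacle here, as there is no analytic content; the only care required is bookkeeping. At each currying step I must use the representing object whose existence is actually assumed, and I must check that the constructed family agrees on points with the claimed composition. The one routine verification I would spell out is that the reshuffling isomorphisms of products and the maps $\id\times\mathrm{ev}$ and $\id\times f$ are morphisms of ind-varieties, which is immediate from the definition of the product of ind-varieties together with part (1).
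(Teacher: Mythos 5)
Your proof is correct and is exactly the argument the paper intends: the lemma is labelled ``tautological'' and the paper states the results ``follow immediately from the definitions,'' meaning precisely the currying bijection of Definition~\ref{universal-property-of-the-ind-variety-Mor(V,W).def} applied as you do --- the identity of $\Mor(\XXX,\YYY)$ curried to get evaluation, and then families built from evaluation maps curried back to get (2)--(4). Your closing remark correctly identifies the only routine check (that product reshuffles, $\id\times\mathrm{ev}$, and $\id\times f$ are ind-morphisms), so nothing is missing.
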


Here is our first existence result.

\begin{lemma}   \label{ind-var-morphisms.lem}
If $X$, $Y$ are affine varieties, then $\Mor (X,Y)$ admits a natural structure of ind-variety. 
Moreover, $\Mor (X,Y)$ is an affine ind-variety.
\end{lemma}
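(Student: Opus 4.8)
The plan is to represent the functor explicitly by realizing $\Mor(X,Y)$ as a closed ind-subvariety of a countable-dimensional vector space. Since $Y$ is affine we may fix a closed embedding $Y \subseteq \AA^m$ with $Y = V(p_1,\ldots,p_s)$, so that a morphism $X \to Y$ is the same as an $m$-tuple $(f_1,\ldots,f_m)$ of regular functions on $X$ satisfying $p_\alpha(f_1,\ldots,f_m)=0$ in $\OOO(X)$. I would first treat the target $\AA^1$, establishing that $\OOO(X)$ represents the functor $\ZZZ \mapsto \Mor(\ZZZ\times X, \AA^1)$. Recall that $\OOO(X)$ is a $\kk$-algebra of countable dimension, hence carries a canonical affine ind-variety structure given by any exhausting filtration by finite-dimensional subspaces, all such filtrations being admissible (Example~\ref{countable-vector-space.exa}).

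The heart of the argument is a bijection $\Mor(\ZZZ\times X, \AA^1) \simeq \Mor(\ZZZ, \OOO(X))$, natural in $\ZZZ$. Since $X$ is a variety we have $\ZZZ\times X = \bigcup_k (\ZZZ_k\times X)$, so a morphism $\ZZZ\times X \to \AA^1$ is precisely a compatible system of morphisms $\ZZZ_k\times X\to\AA^1$, and likewise $\Mor(\ZZZ,\OOO(X)) = \varprojlim_k \Mor(\ZZZ_k,\OOO(X))$; it therefore suffices to treat $\ZZZ = Z$ an affine variety. In that case a family $\Phi \in \Mor(Z\times X,\AA^1) = \OOO(Z)\otimes\OOO(X)$ may be written as a finite sum $\Phi = \sum_i g_i\otimes h_i$, so the associated map $\tilde\Phi\colon Z \to \OOO(X)$, $z\mapsto \Phi_z = \sum_i g_i(z)h_i$, has image in the finite-dimensional subspace $\langle h_1,\ldots,h_r\rangle$ and is there given by regular functions, hence is an ind-morphism. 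Conversely, an ind-morphism $\psi\colon Z \to \OOO(X)$ has image contained in some finite-dimensional $\OOO(X)_{\leq d}$ by Lemma~\ref{Kumar.lem}, so expanding $\psi$ in a basis $(e_j)$ recovers a family $\Phi(z,x) = \sum_j \psi_j(z)e_j(x) \in \OOO(Z)\otimes\OOO(X)$. These constructions are mutually inverse. Taking $m$-fold products and using that products of representing objects represent products of functors yields $\Mor(X,\AA^m) = \OOO(X)^m$, again a countable-dimensional vector space.

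Finally, to pass from $\AA^m$ to $Y$, I would identify
\[
\Mor(X,Y) = \{(f_1,\ldots,f_m) \in \OOO(X)^m \mid p_\alpha(f_1,\ldots,f_m) = 0 \text{ for all } \alpha\} \subseteq \OOO(X)^m,
\]
which is a closed ind-subvariety by Example~\ref{subst-R-algebra.exa} applied to the $\kk$-algebra $R = \OOO(X)$ of countable dimension, the subspace $\aa = (0)$, and the polynomials $P = \{p_1,\ldots,p_s\}$. In particular $\Mor(X,Y)$ is affine, proving the last assertion. The universal property follows from that of $\AA^m$ together with the fact that $\Mor(X,Y) \into \Mor(X,\AA^m)$ is a closed immersion: a family $\ZZZ\times X \to Y$ is a family $\ZZZ\times X \to \AA^m$ whose associated ind-morphism $\ZZZ \to \OOO(X)^m$ lands in the closed subset $\Mor(X,Y)$, since each $\Phi_z$ factors through $Y$; conversely an ind-morphism $\ZZZ \to \Mor(X,Y)$ composed with the inclusion gives a family $\ZZZ\times X\to\AA^m$ taking values in the closed set $Y$, hence a family $\ZZZ\times X\to Y$.

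I expect the main obstacle to be the affine-parameter bijection, in particular its converse direction: one must first invoke Lemma~\ref{Kumar.lem} to confine the image of an ind-morphism $Z \to \OOO(X)$ to a finite-dimensional subspace $\OOO(X)_{\leq d}$, and then check that expanding $\psi$ in a basis yields an honest morphism $Z\times X \to \AA^1$, i.e.\ an element of $\OOO(Z)\otimes\OOO(X)$, rather than merely a pointwise-defined family---so that the correspondence respects the ind-variety structures and not just the underlying sets. Everything else---the reduction to affine parameter spaces via $\ZZZ\times X = \bigcup_k(\ZZZ_k\times X)$, the passage to $\AA^m$, and the closedness of $\Mor(X,Y)$---is either formal or a direct appeal to Example~\ref{subst-R-algebra.exa}.
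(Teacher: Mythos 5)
Your proof is correct and follows essentially the same route as the paper: the paper sets $\Mor(X,Y) = Y(\OOO(X))$ and invokes Proposition~\ref{X(R).prop}, whose proof is exactly your realization of $\Mor(X,Y)$ as the closed subset of $\OOO(X)^m$ cut out by the equations of $Y \subseteq \AA^m$ (via Example~\ref{subst-R-algebra.exa}). Your explicit verification of the universal property---the bijection $\Mor(\ZZZ\times X,\AA^1)\simeq\Mor(\ZZZ,\OOO(X))$ with Lemma~\ref{Kumar.lem} handling the converse direction---simply fills in what the paper dismisses as ``easy to see,'' the only cosmetic slip being that the filtration pieces $\ZZZ_k$ need only be varieties rather than affine varieties, which is harmless since $\OOO(Z\times X)=\OOO(Z)\otimes\OOO(X)$ holds for any variety $Z$ when $X$ is affine.
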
\idx{$\Mor (X,Y)$}

\begin{proof}
By Proposition~\ref{X(R).prop}, the set $\Mor(X,Y) = Y(\OOO(X))$ admits a structure of ind-variety. It is easy to see that this structure has the requested universal property.
\end{proof}

This result is easily extended to the case where $Y$ is an affine ind-variety. For the proof  we use the following result.

\begin{lemma}\label{Mor-for-closed-immersions.lem}
Let $X,Y,Z$ be affine varieties and let $\sigma\colon Y \into Z$ be a closed immersion. Then the induced map
$$
\sigma_{*}\colon \Mor(X,Y) \into \Mor(X,Z)
$$
is a closed immersion of ind-varieties.
\end{lemma}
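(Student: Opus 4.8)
The plan is to reduce everything to Proposition~\ref{X(R).prop}(1) by means of the identification established in Lemma~\ref{ind-var-morphisms.lem}. Write $R := \OOO(X)$, a commutative $\kk$-algebra of countable dimension. By Lemma~\ref{ind-var-morphisms.lem} the natural ind-variety structures on $\Mor(X,Y)$ and $\Mor(X,Z)$ are precisely those of the $R$-rational points, i.e. $\Mor(X,Y) = Y(R)$ and $\Mor(X,Z) = Z(R)$, where for an affine variety $Y$ one has $Y(R) = \Alg_{\kk}(\OOO(Y), R)$.

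First I would verify that, under these identifications, the map $\sigma_{*}$ coincides with the functorial map $\sigma(R)\colon Y(R) \to Z(R)$ appearing in Proposition~\ref{X(R).prop}. Indeed, a morphism $f \colon X \to Y$ corresponds to the algebra homomorphism $f^{*}\colon \OOO(Y) \to R$, and the closed immersion $\sigma$ corresponds to the surjection $\sigma^{*}\colon \OOO(Z) \onto \OOO(Y)$. The functorial map sends $f^{*}$ to $f^{*}\circ\sigma^{*} = (\sigma\circ f)^{*}$, which is exactly the algebra homomorphism corresponding to $\sigma\circ f = \sigma_{*}(f)$; hence $\sigma_{*} = \sigma(R)$. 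Since $\sigma$ is a closed immersion, Proposition~\ref{X(R).prop}(1) then gives at once that $\sigma(R) = \sigma_{*}$ is a closed immersion of ind-varieties.

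To make the underlying content fully transparent, I would record the concrete picture. Choosing a closed immersion $\tau\colon Z \into \AA^{m}$, the composite $\tau\circ\sigma\colon Y \into \AA^{m}$ is again closed, and $\Mor(X,\AA^{m}) = R^{m}$ as an affine ind-variety. By Example~\ref{subst-R-algebra.exa} (cf. the proof of Proposition~\ref{X(R).prop}) the subsets
\[
\Mor(X,Z) = \{a \in R^{m}\mid g(a)=0 \text{ for all } g\in I(Z)\},
\]
\[
\Mor(X,Y) = \{a \in R^{m}\mid g(a)=0 \text{ for all } g\in I(Y)\}
\]
are closed in $R^{m}$, where $I(Z)\subseteq I(Y)\subseteq\kk[t_{1},\ldots,t_{m}]$ are the vanishing ideals of $Z$ and of $Y$ viewed inside $\AA^{m}$. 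As $\sigma_{*}$ is simply the inclusion of tuples and $I(Y)\supseteq I(Z)$, one sees directly that $\Mor(X,Y)$ is cut out in $\Mor(X,Z)$ by the extra equations coming from $I(Y)$, hence is a closed ind-subvariety; and the inclusion of a closed ind-subvariety is a closed immersion.

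The argument involves no genuine difficulty; the only point requiring care — and thus the ``main obstacle'' — is the bookkeeping that matches the abstract map $\sigma_{*}$ with the functorial map $\sigma(R)$, and the natural ind-structure on $\Mor(X,-)$ with the one on $(-)(R)$. Both matchings are exactly what Lemma~\ref{ind-var-morphisms.lem} together with the contravariant dictionary between affine varieties and their coordinate rings provide, so once they are in place the statement follows immediately from Proposition~\ref{X(R).prop}(1).
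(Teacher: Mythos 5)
Your proof is correct, and it takes a genuinely different route from the paper's. The paper argues intrinsically: it first shows that the image $\III=\{\phi\in\Mor(X,Z)\mid \phi(X)\subseteq\sigma(Y)\}$ is closed, as the intersection $\bigcap_{x\in X}\eval_x^{-1}(\sigma(Y))$ of preimages of the closed set $\sigma(Y)$ under the evaluation morphisms, and then produces the inverse of $\Mor(X,Y)\to\III$ by viewing $\III\times X\to \sigma(Y)\simeq Y$, $(\phi,x)\mapsto\phi(x)$, as a family of morphisms and invoking the universal property of $\Mor(X,Y)$. You instead transport the problem through the identification $\Mor(X,-)=(-)(\OOO(X))$ of Lemma~\ref{ind-var-morphisms.lem} and quote Proposition~\ref{X(R).prop}(1); your verification that $\sigma_{*}$ matches the functorial map $\sigma(R)$ is the right bookkeeping, and there is no circularity, since Proposition~\ref{X(R).prop} is established earlier by the coordinate argument you sketch (its closed-immersion part resting on Lemma~\ref{closed-immersion.lem} for compositions), independently of the present lemma. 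What each approach buys: yours is shorter given the machinery on $R$-rational points and makes the concrete picture $I(Z)\subseteq I(Y)\subseteq\kk[t_1,\ldots,t_m]$ transparent, whereas the paper's evaluation-map/universal-property technique avoids choosing embeddings and is the one that extends directly to the subsequent Proposition~\ref{indmor.prop} and Lemma~\ref{closed-immersion-Mor.lem}, where the target is an affine ind-variety and the $R$-points formalism is no longer available in the same form.
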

\begin{proof}
By Lemma~\ref{tautological.lem}(\ref{composition2}) $\sigma_{*}$ is an ind-morphism.

Next we show that the image of $\sigma_{*}$, $\III:=\{\phi\in\Mor(X,Z)\mid \phi(X) \subseteq \sigma(Y)\}$, is closed. The evaluation maps $\eval_{x}\colon \Mor(X,Z)\to Z$, $\phi\mapsto \phi(x)$, are morphisms by Lemma~\ref{tautological.lem}(\ref{eval}), and so 
$\III = \bigcap_{x\in X} \eval_{x}^{-1}(\sigma(Y))$ is closed.

It remains to see that $\sigma_{*}$ is a closed immersion.
Again by Lemma~\ref{tautological.lem}(\ref{eval}), the map $\eps\colon \III\times X \to \sigma(Y)\simeq Y$, $(\varphi, x ) \mapsto \varphi (x)$ is also an ind-morphism,
as well as the corresponding map $\III \to \Mor(X,Y)$ (see Definition~\ref{universal-property-of-the-ind-variety-Mor(V,W).def}). It follows from the construction that this is the inverse of the ind-morphism $\Mor(X,Y) \to \III$ induced by $\sigma_{*}$.
\end{proof}

\begin{proposition} \label{indmor.prop}
If $X$ is an affine variety and $\YYY$ an affine ind-variety, then $\Mor (X,\YYY)$ admits a natural structure of ind-variety. Moreover, $\Mor (X,\YYY)$ is an affine ind-variety.
\end{proposition}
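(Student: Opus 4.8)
The plan is to build the ind-variety structure on $\Mor(X,\YYY)$ by reducing to the already-settled case of morphisms into an affine \emph{variety} (Lemma~\ref{ind-var-morphisms.lem}) and then passing to an inductive limit. Fix an admissible filtration $\YYY=\bigcup_k \YYY_k$ by affine varieties. First I would observe that, as a set, $\Mor(X,\YYY)=\bigcup_k \Mor(X,\YYY_k)$: since $X$ is a variety, Lemma~\ref{Kumar.lem} shows that every morphism $X\to\YYY$ has image contained in some $\YYY_k$, hence factors through $\YYY_k$. By Lemma~\ref{ind-var-morphisms.lem} each $\Mor(X,\YYY_k)$ is an affine ind-variety, and by Lemma~\ref{Mor-for-closed-immersions.lem} the closed immersion $\YYY_k\into\YYY_{k+1}$ induces a closed immersion $\Mor(X,\YYY_k)\into\Mor(X,\YYY_{k+1})$. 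Therefore Lemma~\ref{lim-of-indvar.lem} applies and yields an affine ind-variety $\Mor(X,\YYY)=\varinjlim_k\Mor(X,\YYY_k)$; this already gives the second assertion.

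It then remains to check that this structure is natural in the sense of Definition~\ref{universal-property-of-the-ind-variety-Mor(V,W).def}, i.e. that for every ind-variety $\ZZZ$ the assignment $\Psi\mapsto\Phi$, $\Phi(z,x):=\Psi(z)(x)$, is a bijection between ind-morphisms $\ZZZ\to\Mor(X,\YYY)$ and families $\Phi\colon\ZZZ\times X\to\YYY$. I would verify the two implications level by level along an admissible filtration $\ZZZ=\bigcup_j\ZZZ_j$, in each case using Lemma~\ref{Kumar.lem} to descend to a finite level $\YYY_k$. Given a family $\Phi$, the restriction $\Phi|_{\ZZZ_j\times X}$ is a morphism from the variety $\ZZZ_j\times X$, so its image lies in some $\YYY_k$; by the natural structure of $\Mor(X,\YYY_k)$ it corresponds to a morphism $\ZZZ_j\to\Mor(X,\YYY_k)$, and composing with the closed immersion into $\Mor(X,\YYY)$ shows $\Psi|_{\ZZZ_j}$ is a morphism, whence $\Psi$ is an ind-morphism. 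Conversely, if $\Psi$ is an ind-morphism then $\Psi(\ZZZ_j)$, being the image of a variety, lies in some algebraic subset of $\Mor(X,\YYY)$, hence in some $\Mor(X,\YYY_k)$ (again Lemma~\ref{Kumar.lem}); reading off the associated family $\ZZZ_j\times X\to\YYY_k$ and composing with $\YYY_k\into\YYY$ produces the ind-morphism $\Phi|_{\ZZZ_j\times X}$. Since these agree for all $j$ with $(z,x)\mapsto\Psi(z)(x)$, they glue to the family $\Phi$.

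The only real work is this last bookkeeping: one must keep track that the integer $k$ depends on $j$, and check that the local identifications (coming from Lemma~\ref{ind-var-morphisms.lem} at each level) are compatible with the closed immersions $\Mor(X,\YYY_k)\into\Mor(X,\YYY_{k+1})$, so that the level-wise correspondences patch into a single bijection that is moreover functorial in $\ZZZ$. This compatibility is exactly the content of Lemma~\ref{Mor-for-closed-immersions.lem} together with the tautological naturality of the evaluation and composition maps (Lemma~\ref{tautological.lem}), so I expect no genuine difficulty here, only care. Finally, I note that once the universal property is established, the resulting structure is automatically independent of the chosen filtration $\YYY=\bigcup_k\YYY_k$, since a representing object of the functor $\ZZZ\mapsto\Mor(\ZZZ\times X,\YYY)$ is unique up to unique isomorphism.
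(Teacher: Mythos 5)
Your proposal is correct and follows essentially the same route as the paper: filter $\YYY=\bigcup_k\YYY_k$ by affine varieties, apply Lemma~\ref{ind-var-morphisms.lem} to each $\Mor(X,\YYY_k)$, use Lemma~\ref{Mor-for-closed-immersions.lem} to get the closed immersions, and pass to the limit via Lemma~\ref{lim-of-indvar.lem}. The only difference is that you carry out explicitly (via Lemma~\ref{Kumar.lem}, correctly applied) the universal-property verification that the paper compresses into ``one easily checks.''
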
\idx{$\Mor (X,\YYY)$}

\begin{proof}
Let $\YYY = \bigcup_{k\in\NN} \YYY_{k}$ be an admissible filtration of $\YYY$ by affine varieties. By Lemma~\ref{ind-var-morphisms.lem}, $\Mor (X, \YYY_{k})$ admits a universal structure of ind-variety, and, for each $k$, we have a natural closed immersion  $\Mor (X, \YYY_{k}) \to \Mor (X, \YYY_{k+1})$, by Lemma~\ref{Mor-for-closed-immersions.lem} above. It follows from Lemma~\ref{lim-of-indvar.lem} that  $\Mor(X,\YYY) = \varinjlim \Mor (X, \YYY_{k})$ is an ind-variety, and one easily checks that it has  the universal property we were looking for.
\end{proof}

We now provide examples showing that $\Mor (X,Y)$ does not necessarily admit a natural structure of ind-variety in the two following cases:
\be
\item $X$ is an affine variety and $Y$ a quasi-affine variety;
\item $X$ is an affine ind-variety and $Y$ an affine variety.
\ee

\begin{example}
Consider the quasi-affine variety $\Atwod:=\Atwo\setminus\{(0,0)\}$. We claim that $\Mor(\Aone,\Atwod)$ does not admit a natural structure of ind-variety. Assume that there is such a structure. Consider the natural inclusion 
$$
\iota\colon \Mor(\Aone,\Atwod) \into \Mor(\Aone,\Atwo)=\kk[x]^{2}
$$ 
and  the closed algebraic subset $Z:=\{(r,1+sx)\mid r,s \in\kk\} \subseteq \Mor(\Aone,\Atwo)$.
Then 
$$
Y:= Z \cap \Mor(\Aone,\Atwod) = \{(r,1+sx)\mid r,s\in\kk, r\neq 0 \text{ or } r=s=0\} \subseteq \Mor(\Aone,\Atwod)
$$
is closed, and it is an irreducible closed algebraic subset, because it is the image of the morphism $\Atwo \to \Mor(\Aone,\Atwod)$, $(a,b)\mapsto (a,1+abx)$. Since $Z$ is normal
and since the injective morphism $Y \to Z$ is dominant
it follows from \name{Zariski}'s Main Theorem in its original form (see \cite[Chap.~III, \S9, p. 209]{Mu1999The-red-book-of-va}) that $Y \to Z$ is an open immersion. This is obviously a contradiction since $Y$ is not open in $Z$.
\end{example}

\begin{example}
We claim that  $\Mor(\Ainfty, \AA^1 ) =\OOO(\Ainfty)$ does not admit a natural structure of ind-variety. On one hand $\Mor(\Ainfty, \AA^1 ) =\OOO(\Ainfty)$ is equal to the projective limit $\varprojlim \kk[x_{1},\ldots,x_{n}]$, which contains ${\dis \varprojlim_n ( \kk x_n) = \prod_n (\kk x_n) \simeq \kk ^{\NN} }$, hence its cardinality is at least equal to the cardinality of the power set $\PPP (\kk)$. On the other hand it is clear that the cardinality of any ind-variety is at most equal to the cardinality of $\kk$.
\end{example}

\begin{remark}
In the case where $X$ is a projective variety and $Y$ a quasi-projective variety, \name{Grothendieck} has shown that there exists a universal structure of scheme on $\Mor(X,Y)$, the scheme having in general countably many components \cite[4(c)]{Gr1995Techniques-de-cons}, see also \cite[Chapter 2]{De2001Higher-dimensional}. The scheme $\Mor(X,Y)$ is naturally realized as an open subscheme of the Hilbert scheme $\Hilb(X \times Y)$. For example,  $\Mor (\PP^1, \PP^n)$ is the disjoint union of the varieties $\Mor_d( \PP^1, \PP^n)$ of morphisms of degree $d$, and $\Mor_d( \PP^1, \PP^n)$ is an open set of the projective space $\PP ( S^d (\kk^2 )^{n+1})$ (see \cite[2.1, page 38]{De2001Higher-dimensional}).
\end{remark}

\begin{remark}\label{evaluation and composition.rem}
If $X$ is an affine variety, then $\End(X) = \Mor (X,X)$ is an \itind{ind-semigroup}, i.e. an ind-variety with a semigroup structure such that the multiplication is an ind-morphism.
\end{remark}

\begin{remark}   \label{morphisms-to-groups.rem}
If $G$ is a linear algebraic group and $X$ an affine variety, we have already seen in Section~\ref{rational-points-of-groups.subsec} that $\Mor(X,G)=G(\OOO(X))$ is an ind-group in a natural way.
\end{remark}

If $X,Y,Z$ are affine varieties, then every morphism $\phi\colon X\times Y \to Z$ can be regarded as a family of morphisms $Y \to Z$ parametrized by $X$, hence defines a morphism $X \to \Mor(Y,Z)$.

\begin{lemma}  \label{mor-mor.lem}
Let $X,Y,Z$ be affine varieties. Then the natural map
$$
\Mor(X \times Y,Z ) \to \Mor(X,\Mor(Y,Z)), \ \phi\mapsto (\phi(x,?))_{x\in X},
$$
is an isomorphism of ind-varieties.
\end{lemma}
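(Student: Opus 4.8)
The plan is to exhibit the stated currying map together with its set-theoretic inverse, and to show that both are ind-morphisms; bijectivity on underlying sets is the classical currying/uncurrying bijection, so once both directions are morphisms we are done. First I note that all three objects carry their natural ind-variety structures: $\Mor(X\times Y, Z)$ and $\Mor(Y,Z)$ by Lemma~\ref{ind-var-morphisms.lem} (since $X\times Y$, $Y$, $Z$ are affine), and $\Mor(X,\Mor(Y,Z))$ by Proposition~\ref{indmor.prop} (since $X$ is an affine variety and $\Mor(Y,Z)$ an affine ind-variety). Write $c\colon\Mor(X\times Y,Z)\to\Mor(X,\Mor(Y,Z))$ for the map $\phi\mapsto(\phi(x,?))_{x\in X}$; on points its inverse is $\Psi\mapsto\big((x,y)\mapsto\Psi(x)(y)\big)$.

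To see that $c$ is a morphism I would use the universal properties of $\Mor(X,\Mor(Y,Z))$ and of $\Mor(Y,Z)$ (Definition~\ref{universal-property-of-the-ind-variety-Mor(V,W).def}): composing them, an ind-morphism $\WWW\to\Mor(X,\Mor(Y,Z))$ is the same datum as a morphism $\WWW\times X\times Y\to Z$ (identifying $(\WWW\times X)\times Y=\WWW\times X\times Y$). Taking $\WWW=\Mor(X\times Y,Z)$, the evaluation map
\[
\mathrm{ev}\colon \Mor(X\times Y,Z)\times (X\times Y)\longrightarrow Z
\]
is a morphism by Lemma~\ref{tautological.lem}(\ref{eval}), and unwinding the two correspondences shows that the ind-morphism it defines is exactly $c$. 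For the inverse direction, the universal property of $\Mor(X\times Y,Z)$ reduces the task of producing $c^{-1}$ to that of producing a morphism $\Mor(X,\Mor(Y,Z))\times X\times Y\to Z$; I would take the composite
\[
\Mor(X,\Mor(Y,Z))\times X\times Y \xrightarrow{\ \mathrm{ev}\times\id_Y\ }\Mor(Y,Z)\times Y\xrightarrow{\ \mathrm{ev}\ } Z,
\]
where the first arrow is the evaluation morphism $\Mor(X,\Mor(Y,Z))\times X\to\Mor(Y,Z)$ of Lemma~\ref{tautological.lem}(\ref{eval}) times $\id_Y$, and the second is the evaluation morphism of $\Mor(Y,Z)\times Y\to Z$. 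Unwinding the correspondence shows that this ind-morphism sends $\Psi$ to $(x,y)\mapsto\Psi(x)(y)$, i.e. it is $c^{-1}$.

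Since $c$ and $c^{-1}$ are mutually inverse bijections of the underlying sets and both are ind-morphisms, $c$ is an isomorphism of ind-varieties. The content is really just bookkeeping: all the evaluation maps needed are morphisms by Lemma~\ref{tautological.lem}, and the existence of the natural ind-structures is already in hand. Equivalently, and perhaps most transparently, one may argue by Yoneda: both $\Mor(X\times Y,Z)$ and $\Mor(X,\Mor(Y,Z))$ represent the functor $\ZZZ\mapsto\Mor(\ZZZ\times X\times Y,Z)$ (the second after applying the universal property of $\Mor(Y,Z)$), and $c$ is precisely the isomorphism induced by the identity natural transformation between these functors. The only point requiring care is to check that the composed correspondences genuinely send $\phi$ to the family $(\phi(x,?))_{x\in X}$ rather than to some reindexing; this is the main, though entirely routine, obstacle, and it is settled by evaluating on points.
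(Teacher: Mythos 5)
Your proof is correct, but it takes a genuinely different route from the paper's. You argue formally: since the three mapping spaces carry their universal ind-structures (Lemma~\ref{ind-var-morphisms.lem} for $\Mor(X\times Y,Z)$ and $\Mor(Y,Z)$, Proposition~\ref{indmor.prop} for $\Mor(X,\Mor(Y,Z))$), the evaluation maps are morphisms by Lemma~\ref{tautological.lem}(\ref{eval}), and the universal properties convert $\mathrm{ev}\colon \Mor(X\times Y,Z)\times X\times Y\to Z$ into the ind-morphism $c$ and the composite $\mathrm{ev}\circ(\mathrm{ev}\times\id_Y)$ into its set-theoretic inverse; equivalently, by Yoneda, both sides represent the functor $\ZZZ\mapsto \Mor(\ZZZ\times X\times Y,Z)$. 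There is no circularity, since Lemma~\ref{tautological.lem} is purely formal and precedes the present lemma. The paper instead computes: for $Z=\An$ it identifies $\Mor(X\times Y,\An)=\OOO(X\times Y)^{n}$ and $\Mor(X,\Mor(Y,\An))=\OOO(X)\otimes\OOO(Y)^{n}$, using $\Mor(X,V)=\OOO(X)\otimes V$ for a $\kk$-vector space $V$ of countable dimension, so that currying becomes the linear isomorphism coming from $\OOO(X\times Y)\cong\OOO(X)\otimes\OOO(Y)$; the general case then follows by choosing a closed embedding $Z\subseteq\An$, under which both sides sit as closed ind-subvarieties compatibly with the map. Your formal argument buys generality and canonicity: it works verbatim whenever the three mapping spaces exist with their universal properties (for instance with $Z$ replaced by an affine ind-variety, again by Proposition~\ref{indmor.prop}) and involves no choice of embedding; the paper's computation buys explicit, linear-algebraic control of the ind-structures, which is the sort of concrete description exploited elsewhere in the text. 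The one point needing care in your approach --- that the correspondences induced by the universal properties really send $\phi$ to $(\phi(x,?))_{x\in X}$ rather than some reindexing --- you rightly flag and settle by evaluating at points, which suffices because the bijection of Definition~\ref{universal-property-of-the-ind-variety-Mor(V,W).def} is the tautological one $z\mapsto\Phi_{z}$.
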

\begin{proof}
(a) We first assume that $Z=\An$. Then $\Mor(X\times Y,Z) = \OOO(X\times Y)^{n}$ and 
$$
\Mor(X,\Mor(Y,Z)) = \Mor(X,\OOO(Y)^{n}) = \OOO(X)\otimes \OOO(Y)^{n}
$$
where we use that for a $\kk$-vector space $V$ of countable dimension we have $\Mor(X , V) = \OOO(X)\otimes V$. The claim follows in this case, because the map is a linear isomorphism of $\kk$-vector spaces of countable dimension.
\par\smallskip
(b) In general, we fix a  closed embedding $Z \subseteq \An$. Then $\Mor(X\times Y, Z) \subseteq \Mor(X\times Y, \An)$ and 
$\Mor(X, \Mor(Y, Z)) \subseteq \Mor(X, \Mor(Y,\An))$ are closed ind-subvarieties, and the claim follows from (a).
\end{proof}

Two other useful results in this context are the following.
\begin{lemma}  \label{mor-to-tangent.lem}
Let $X,Y$ be affine varieties, and let $x_{0}\in X$, $y_{0}\in Y$. Then the subset 
$$
\Mor_{0}(X,Y) := \{\phi\in\Mor(X,Y) \mid \phi(x_{0})=y_{0}\}\subseteq \Mor(X,Y)
$$
is closed, and the  map $\Mor_{0}(X,Y) \to \LLL(T_{x_{0}}X,T_{y_{0}}Y)$, $\phi\mapsto d\phi_{x_{0}}$, is
an ind-morphism.
\end{lemma}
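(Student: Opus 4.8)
The plan is to treat the two assertions in the order stated: the closedness of $\Mor_{0}(X,Y)$ is an immediate consequence of the results already available, while the morphism property will be reduced to the case of an affine space, where it becomes a linearity statement. Throughout, note that $\LLL(T_{x_0}X,T_{y_0}Y)$ is a finite-dimensional vector space (the tangent spaces of the varieties $X,Y$ are finite-dimensional), hence an algebraic variety, so an ind-morphism into it is nothing but a map whose restriction to each member of an admissible filtration of the source is a morphism.

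First I would prove closedness. The evaluation map $\eval_{x_0}\colon \Mor(X,Y) \to Y$, $\phi \mapsto \phi(x_0)$, is an ind-morphism by Lemma~\ref{tautological.lem}(\ref{eval}). Since $\{y_0\}\subseteq Y$ is closed, $\Mor_{0}(X,Y) = \eval_{x_0}^{-1}(\{y_0\})$ is closed in $\Mor(X,Y)$, which also equips it with a natural ind-variety structure. Observe moreover that for $\phi\in\Mor_{0}(X,Y)$ the equality $\phi(x_0)=y_0$ guarantees $d\phi_{x_0}\colon T_{x_0}X \to T_{y_0}Y$, so the target of the map under consideration is indeed $\LLL(T_{x_0}X,T_{y_0}Y)$.

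Next I would reduce to $Y=\Am$. Fix a closed embedding $\iota\colon Y \into \Am$; by Lemma~\ref{Mor-for-closed-immersions.lem} the induced map $\iota_{*}\colon \Mor(X,Y) \into \Mor(X,\Am)$ is a closed immersion, carrying $\Mor_{0}(X,Y)$ into $\Mor_{0}(X,\Am)$ with target point $\iota(y_0)$. Since $d(\iota\circ\phi)_{x_0} = d\iota_{y_0}\circ d\phi_{x_0}$ and $d\iota_{y_0}\colon T_{y_0}Y \into T_{\iota(y_0)}\Am$ is the inclusion of a linear subspace, the differential map for $Y$ factors as the restriction along $\iota_{*}$ followed by corestriction along the closed linear inclusion $\LLL(T_{x_0}X,T_{y_0}Y) \into \LLL(T_{x_0}X,T_{\iota(y_0)}\Am)$. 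As the image genuinely lands in $\LLL(T_{x_0}X,T_{y_0}Y)$, it is enough to establish the claim for $Y=\Am$. For $Y=\Am$, every $\phi\in\Mor(X,\Am)=\OOO(X)^{m}$ is an $m$-tuple $\phi=(f_1,\dots,f_m)$ with $f_i\in\OOO(X)$, and this set carries the ind-structure of a countable-dimensional $\kk$-vector space (Lemma~\ref{ind-var-morphisms.lem}, Example~\ref{countable-vector-space.exa}). A tangent vector $v\in T_{x_0}X$ is a derivation $\OOO(X)\to\kk$ at $x_0$, and $d\phi_{x_0}(v)=(v(f_1),\dots,v(f_m))\in\kk^{m}=T_{\iota(y_0)}\Am$. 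Choosing a basis $v_1,\dots,v_d$ of $T_{x_0}X$ identifies $\LLL(T_{x_0}X,\Am)$ with $(\kk^{m})^{d}$, and under this identification the map $\phi\mapsto d\phi_{x_0}$ becomes $(f_1,\dots,f_m)\mapsto (v_j(f_i))_{i,j}$, which is manifestly $\kk$-linear in the $f_i$. A $\kk$-linear map from a vector space of countable dimension to a finite-dimensional one is an ind-morphism, so this is an ind-morphism; restricting it to $\Mor_{0}(X,\Am)$ yields the assertion for $Y=\Am$, and hence in general.

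I do not expect a genuine obstacle here: the only substantive point is the linearity of $\phi\mapsto d\phi_{x_0}$ in the coordinates of $\phi$, which makes the affine-space case trivial once the vector-space ind-structure on $\Mor(X,\Am)$ is identified. The mildly delicate bookkeeping is the reduction step, namely checking that passing to a closed embedding $\iota$ is compatible with the differential and that corestriction along the closed linear inclusion of $\LLL(T_{x_0}X,T_{y_0}Y)$ preserves the morphism property; both follow from functoriality of $d$ and the fact that a morphism into a vector space with image contained in a linear subspace is a morphism into that subspace.
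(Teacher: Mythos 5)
Your proof is correct and follows essentially the same route as the paper: reduce via a closed embedding to the case $Y=\Am$ (where the paper takes $Y$ a vector space $V$ with $y_0=0$, identifies $\Mor_0(X,V)=\mm_{x_0}\otimes V$, and notes the differential map is the canonical linear projection $\mm_{x_0}\otimes V \to \mm_{x_0}/\mm_{x_0}^{2}\otimes V = \LLL(T_{x_0}X,T_0V)$), your coordinate formula $(f_1,\dots,f_m)\mapsto(v_j(f_i))_{i,j}$ being exactly this linear map written in a basis. Your closedness argument via the evaluation map $\eval_{x_0}$ is a harmless variant of the paper's observation that $\mm_{x_0}\otimes V$ is a linear, hence closed, subspace.
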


\begin{proof}
One easily reduces to the case where $Y$ is a $\kk$-vector space $V$ and $y_{0} =0$. Then $\Mor(X,V) = \OOO(X)\otimes V$ and $\Mor_{0}(X,V) = \mm_{x_{0}}\otimes V$, hence  closed in $\Mor(X,Y)$, and the map $\phi\mapsto d\phi_{x_{0}}$ corresponds to $\mm_{x_{0}}\otimes V \to
\mm_{x_{0}}/\mm_{x_{0}}^{2}\otimes V = \LLL(T_{x_{0}}X,T_{0}V)$.
\end{proof}

\begin{lemma} \label{closed-immersion-Mor.lem}
Let $X,Y$ be affine varieties and $\WWW,\ZZZ$ affine ind-varieties.  
\be 
\item \label{closed-immersion-yields-closed-immersion-for-morphisms}
If $\sigma\colon \WWW \into \ZZZ$ is a closed immersion, then the induced map
$$
\sigma_{*}\colon \Mor(X,\WWW) \to \Mor(X,\ZZZ), \ \phi\mapsto \sigma\circ\phi,
$$
is a closed immersion of ind-varieties.
\item
If $\mu\colon Y \to X$ is a dominant morphism, then the induced map
$$
\mu^*\colon\Mor(X,\ZZZ) \into \Mor(Y,\ZZZ), \ \phi \mapsto \phi\circ\mu
$$
is a closed immersion of ind-varieties.\idx{closed immersion}
\ee
\end{lemma}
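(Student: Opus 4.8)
The plan for (1) is to run the same argument as in the proof of Lemma~\ref{Mor-for-closed-immersions.lem}, now allowing the targets to be ind-varieties. First, $\sigma_{*}$ is an ind-morphism by Lemma~\ref{tautological.lem}(\ref{composition2}). The image of $\sigma_{*}$ is exactly $\III := \{\psi \in \Mor(X,\ZZZ) \mid \psi(X) \subseteq \sigma(\WWW)\}$: since $\sigma \colon \WWW \simto \sigma(\WWW)$ is an isomorphism of ind-varieties, any $\psi$ with $\psi(X) \subseteq \sigma(\WWW)$ equals $\sigma \circ (\sigma^{-1} \circ \psi)$. This set is closed, because $\III = \bigcap_{x \in X} \eval_{x}^{-1}(\sigma(\WWW))$, each partial evaluation $\eval_{x} \colon \Mor(X,\ZZZ) \to \ZZZ$ is a morphism (Lemma~\ref{tautological.lem}(\ref{eval})), and $\sigma(\WWW) \subseteq \ZZZ$ is closed. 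Finally, to see that $\sigma_{*} \colon \Mor(X,\WWW) \to \III$ is an isomorphism I would exhibit its inverse: the evaluation $\III \times X \to \ZZZ$ factors through the closed ind-subvariety $\sigma(\WWW) \simto \WWW$, which yields a family of morphisms $\III \times X \to \WWW$ and hence, by the universal property of $\Mor(X,\WWW)$ (Proposition~\ref{indmor.prop}), an ind-morphism $\III \to \Mor(X,\WWW)$, $\psi \mapsto \sigma^{-1} \circ \psi$, manifestly inverse to $\sigma_{*}$.

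For (2), $\mu^{*}$ is an ind-morphism by Lemma~\ref{tautological.lem}(\ref{composition3}). The key special case is $\ZZZ = V$ a $\kk$-vector space of countable dimension: here $\Mor(X,V) = \OOO(X) \otimes V$ and $\Mor(Y,V) = \OOO(Y) \otimes V$, and $\mu^{*}$ is the linear map $\mu^{*}_{\OOO} \otimes \id_{V}$, which is injective precisely because $\mu^{*}_{\OOO} \colon \OOO(X) \into \OOO(Y)$ is injective, i.e.\ because $\mu$ is dominant. I would then record the elementary fact that any injective $\kk$-linear map $T$ between countable-dimensional vector spaces is a closed immersion: its image meets each finite-dimensional member of an admissible filtration of the target in a linear subspace, hence is closed, and $T^{-1}$ is linear on the image (every finite-dimensional subspace of the image is the image of a finite-dimensional subspace of the source), hence an ind-morphism.

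For arbitrary affine $\ZZZ$ I would reduce to this case via an embedding. Choose a closed immersion $j \colon \ZZZ \into \Ainfty$ (Theorem~\ref{embedding-into-Ainfty.thm}) and note that $\Mor(\,\cdot\,, \Ainfty) = \varinjlim \Mor(\,\cdot\,, \A{n})$ is a countable-dimensional vector space. Functoriality gives a commutative square relating $\mu^{*}$ on $\ZZZ$-valued morphisms to $\mu^{*}$ on $\Ainfty$-valued morphisms through the two copies of $j_{*}$. The composite $\Mor(X,\ZZZ) \xrightarrow{j_{*}} \Mor(X,\Ainfty) \xrightarrow{\mu^{*}} \Mor(Y,\Ainfty)$ is a composition of closed immersions — the first by part (1), the second by the vector-space case — and, by commutativity, equals $j_{*} \circ \mu^{*}$. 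Since a composition of closed immersions is a closed immersion, Lemma~\ref{closed-immersion.lem} applied to $\Mor(X,\ZZZ) \xrightarrow{\mu^{*}} \Mor(Y,\ZZZ) \xrightarrow{j_{*}} \Mor(Y,\Ainfty)$ forces $\mu^{*}$ itself to be a closed immersion.

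The genuinely new ingredient, and the main obstacle, is part (2): unlike $\sigma$ in part (1), the morphism $\mu$ carries no closed-immersion structure to transport directly to the $\Mor$-spaces, so the argument must be routed through the vector-space target and Lemma~\ref{closed-immersion.lem}. The points requiring care are the verification that the inductive-limit ind-structure on $\Mor(X,\Ainfty)$ coincides with its structure as a countable-dimensional vector space (so that the vector-space lemma applies), and the essential use of dominance of $\mu$ to secure injectivity of $\mu^{*}_{\OOO}$; part (1) is then a routine transcription of Lemma~\ref{Mor-for-closed-immersions.lem}.
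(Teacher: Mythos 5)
Your proof is correct, but it takes a genuinely different route from the paper's at both steps, most notably in part (2). For (1), the paper does not redo the argument of Lemma~\ref{Mor-for-closed-immersions.lem} at the ind level: it chooses an admissible filtration $\ZZZ=\bigcup_k\ZZZ_k$, takes the induced filtration $\WWW_k$ with $\sigma(\WWW_k)=\ZZZ_k\cap\sigma(\WWW)$, and reduces to the affine-variety case, checking $\sigma_*(\Mor(X,\WWW_k))=\sigma_*(\Mor(X,\WWW))\cap\Mor(X,\ZZZ_k)$; your direct transcription (image $=\{\psi\mid\psi(X)\subseteq\sigma(\WWW)\}$, closed via the evaluations, with inverse supplied by the universal property of Proposition~\ref{indmor.prop} applied to the family $\III\times X\to\sigma(\WWW)\simeq\WWW$) is equally valid and avoids handling filtrations. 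The real divergence is in (2). The paper first treats an affine variety $Z$ by embedding it into a \emph{finite-dimensional} vector space $V$ and using injectivity of $\mu^*\otimes\id$ on $\OOO(X)\otimes V$, then passes to general $\ZZZ$ by a filtration argument in which dominance enters a \emph{second} time: if $\phi\circ\mu$ maps into $\ZZZ_k$, then $\phi(X)\subseteq\overline{\phi(\mu(Y))}\subseteq\ZZZ_k$ because $\mu(Y)$ is dense and $\ZZZ_k$ is closed, which yields $\mu^*(\Mor(X,\ZZZ_k))=\mu^*(\Mor(X,\ZZZ))\cap\Mor(Y,\ZZZ_k)$ and hence the closed immersion. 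You instead embed all of $\ZZZ$ at once into $\Ainfty$ via Theorem~\ref{embedding-into-Ainfty.thm}, identify $\Mor(X,\Ainfty)$ with a countable-dimensional vector space --- a correct identification, since every morphism from a variety into $\Ainfty$ factors through some $\A{n}$ by Lemma~\ref{Kumar.lem}, and any filtration by finite-dimensional subspaces is admissible by Example~\ref{countable-vector-space.exa} --- and then descend with the factoring Lemma~\ref{closed-immersion.lem}, so that dominance is used exactly once, to make $\mu^*$ injective on functions. What each approach buys: the paper's stays within finite-dimensional linear algebra and needs no global embedding of $\ZZZ$; yours is more uniform (one embedding, one appeal to Lemma~\ref{closed-immersion.lem}), isolates the role of dominance in a single injectivity statement, and along the way supplies the verification --- which the paper only asserts --- that an injective linear map between vector spaces of countable dimension is a closed immersion of ind-varieties.
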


\begin{proof}
By Lemma~\ref{tautological.lem}(\ref{composition2}) $\sigma_{*}$ and $\mu^*$ are both ind-morphisms.

(1) The claim is true if $\WWW, \ZZZ$ are affine varieties, by Lemma~\ref{Mor-for-closed-immersions.lem}. Let $\ZZZ = \bigcup_{k}\ZZZ_{k}$
be an admissible filtration, and let $\WWW=\bigcup_{k}\WWW_{k}$ be the induced filtration, i.e. $\sigma(\WWW_{k}) = \ZZZ_{k} \cap\sigma(\WWW)$. For any $k$ we have a commutative diagram
$$
\begin{CD}
\Mor(X,\WWW_{k+1}) @>{\sigma_{*}}>> \Mor(X, \ZZZ_{k+1}) \\
@AA{\subseteq}A    @AA{\subseteq}A \\
\Mor(X,\WWW_{k}) @>{\sigma_{*}}>> \Mor(X, \ZZZ_{k}) \\
\end{CD}
$$
where all maps are closed immersions of ind-varieties. By construction we have
$\sigma_{*}(\Mor(X,\WWW_{k})) = \sigma_{*}(\Mor(X,\WWW))\cap 
\Mor(X,\ZZZ_k)$, and thus $\sigma_{*}$ is a closed immersion.

\ps
(2)
Let us first assume that $\ZZZ$ is an affine variety $Z$. 
Using a closed embedding $Z \subseteq V$ into a finite-dimensional $\kk$-vector space $V$ we obtain the following commutative diagram of ind-morphisms:
$$
\begin{CD}
\Mor(X,Z) @>{\subseteq}>> \Mor(X,V) @= \OOO(X)\otimes V\\
@VV{\mu^{*}}V @VV{\mu^{*}}V @VV{\mu^{*}\otimes\id}V \\
\Mor(Y,Z) @>{\subseteq}>> \Mor(Y,V) @= \OOO(Y)\otimes V\\
\end{CD}
$$
Since the induced map $\mu^{*}\otimes\id \colon \OOO(X)\otimes V \into \OOO(Y)\otimes V$ is an injective linear map, it is a closed immersion, and the claim follows in this case.

In general, we use an admissible filtration $\ZZZ =\bigcup_{k}\ZZZ_{k}$. Since $\mu\colon X \to Y$ is dominant, we clearly have
$\mu^{*}(\Mor(Y,\ZZZ_{k})) = \mu^{*}(\Mor(Y,\ZZZ))\cap\Mor(X,\ZZZ_{k})$ which implies that $\mu^{*}$ is a closed immersion of ind-varieties.
\end{proof}

The following result shows that the ind-semigroup\idx{ind-semigroup} $\End(X):=\Mor(X,X)$\idx{$\End(X)$} of all endomorphisms of $X$ determines $X$ up to isomorphisms.

\begin{proposition}\label{endX-gives-X.prop}
Let $X$ and $Y$ be affine varieties, and assume that there is an isomorphism $\psi\colon \End(X) \simto \End(Y)$ as ind-semigroups. Then $X \simeq Y$ as varieties. More precisely, the isomorphism $\psi$ is induced by a uniquely determined isomorphism $\phi\colon X \simto Y$.
\end{proposition}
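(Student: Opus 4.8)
The plan is to reconstruct the variety $X$ from the bare semigroup structure of $\End(X)$ by identifying the points of $X$ with the \emph{constant} endomorphisms, and then to upgrade this set-theoretic reconstruction to an isomorphism of varieties using that $\psi$ is an isomorphism of ind-varieties, not merely of abstract semigroups. For $a\in X$ write $c_{a}\in\End(X)$ for the constant map $x\mapsto a$. The constant maps are characterized purely in terms of the composition law: $c$ is constant if and only if $c\circ f=c$ for every $f\in\End(X)$. Indeed $c_{a}\circ f=c_{a}$ is clear, and conversely $c\circ f=c$ applied to a constant $f=c_{b}$ gives $c(b)=c(x)$ for all $x$, so $c$ is constant. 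Since $\psi$ is a homomorphism of semigroups, it preserves this left-absorbing property in both directions; hence $\psi$ carries constant maps bijectively onto constant maps, and we obtain a bijection $\phi_{0}\colon X\to Y$ determined by $\psi(c_{a})=c_{\phi_{0}(a)}$.

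First I would check that $\phi_{0}$ is a morphism. The assignment $a\mapsto c_{a}$ is the morphism $\iota_{X}\colon X\to\End(X)$ corresponding, via the universal property of $\Mor(X,X)$ (Definition~\ref{universal-property-of-the-ind-variety-Mor(V,W).def}), to the projection family $X\times X\to X$, $(a,x)\mapsto a$; likewise one has $\iota_{Y}\colon Y\to\End(Y)$. Fixing any $y_{0}\in Y$, the evaluation $\varepsilon\colon\End(Y)\to Y$, $g\mapsto g(y_{0})$, is a morphism by Lemma~\ref{tautological.lem}(\ref{eval}). Then $\phi_{0}=\varepsilon\circ\psi\circ\iota_{X}$, since $\varepsilon(\psi(c_{a}))=c_{\phi_{0}(a)}(y_{0})=\phi_{0}(a)$; as a composite of ind-morphisms between the varieties $X$ and $Y$ it is therefore a morphism of varieties. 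Running the same argument with $\psi^{-1}$ in place of $\psi$ shows that $\phi_{0}^{-1}$ is a morphism, so $\phi_{0}\colon X\simto Y$ is an isomorphism of varieties.

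It remains to see that $\psi$ is conjugation by $\phi_{0}$ and that $\phi_{0}$ is the unique such isomorphism. Here I would exploit that right composition with a constant encodes evaluation: $f\circ c_{a}=c_{f(a)}$ for all $f,a$. Applying the homomorphism $\psi$,
\[
c_{\phi_{0}(f(a))}=\psi(c_{f(a)})=\psi(f\circ c_{a})=\psi(f)\circ\psi(c_{a})=\psi(f)\circ c_{\phi_{0}(a)}=c_{\psi(f)(\phi_{0}(a))},
\]
whence $\phi_{0}(f(a))=\psi(f)(\phi_{0}(a))$ for all $a\in X$, i.e. $\psi(f)=\phi_{0}\circ f\circ\phi_{0}^{-1}$, so $\psi$ is induced by $\phi_{0}$. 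Uniqueness is then immediate: if $\phi$ is any isomorphism with $\psi(f)=\phi\circ f\circ\phi^{-1}$, taking $f=c_{a}$ gives $c_{\phi(a)}=\psi(c_{a})=c_{\phi_{0}(a)}$, hence $\phi=\phi_{0}$. The only genuinely delicate point is the middle step, the passage from the semigroup-theoretic bijection $\phi_{0}$ to a morphism of varieties; everything there hinges on having the evaluation map and the ``constant family'' map available as ind-morphisms, which is exactly what Lemma~\ref{tautological.lem} and the representability of $\Mor(X,X)$ provide.
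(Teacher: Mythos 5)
Your proof is correct and follows essentially the same route as the paper's: you characterize the constant maps by the left-absorption property $c\circ f=c$, note that $a\mapsto c_{a}$ and the evaluation $g\mapsto g(y_{0})$ are ind-morphisms (the paper packages this as ``$\iota_{X}$ is a closed immersion with retraction $\eval_{x_{0}}$''), and transport the constants through $\psi$ and $\psi^{-1}$. The only difference is that you explicitly verify $\psi(f)=\phi_{0}\circ f\circ\phi_{0}^{-1}$ via $f\circ c_{a}=c_{f(a)}$ and deduce uniqueness, two points the paper's proof leaves implicit, so your write-up is if anything slightly more complete.
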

\begin{proof} For $x\in X$ denote by $\gamma_{x} \in \End(X)$ the constant map with value $x$. Then the map $\iota_{X}\colon  X \to \End(X)$, $x\mapsto \gamma_{x}$, is a closed immersion. In fact, it is a morphism, and there is a retraction given by the morphism $\eval_{x_{0}} \colon \End(X) \to X$, $\phi \mapsto \phi(x_{0})$.

Now we remark that the closed subset $\iota_{X}(X)\subseteq \End(X)$ of constant maps is characterized by $\iota_{X}(X)=\{\phi\in\End(X) \mid \phi\circ\psi=\phi \text{ for all } \psi\in\End(X)\}$. This implies that the every isomorphism of ind-semigroups $\tau\colon \End(X) \simto \End(Y)$ defines a bijective morphism $\tau|_{\iota_{X}(X)}\colon \iota_{X}(X) \to \iota_{Y}(Y)$. The claim follows since the inverse map is given by $\tau^{-1}|_{\iota_{Y}(Y)}$.
\end{proof}
\begin{remark} 
A stronger result can be found in \cite{AnKr2014Varieties-characte}. 
\end{remark}

\ps
\subsection{Vector fields and endomorphisms}\label{Endo-VF.subsec}
As usual, a {\it vector field $\delta$\/}\idx{vector field} on an affine variety $X$ is a map $x \mapsto \delta_{x}\in T_{x}X$ with the property that for every $f \in \OOO(X)$ the function 
$$
\delta f \colon X \to \kk, \  x \mapsto \delta_{x}f
$$ 
is regular on $X$. It is easy to see that this gives a canonical identification of the vector fields $\VEC(X)$ on $X$ and the ($\kk$-linear) {\it derivations}\idx{derivation} $\Der_{\kk}(\OOO(X))$\idx{$\Der_{\kk}(\OOO(X))$} of $\OOO(X)$. Moreover, $\VEC(X)$\idx{$\VEC(X)$} is a module over $\OOO(X)$ by setting $(f\delta)_{x}:=f(x) \cdot\delta_{x}$. If $Y \subseteq X$ is a closed subvariety, we define the submodule of vector fields ``parallel'' to $Y$ by
$$
\VEC_{Y}(X):=\{\delta\in\VEC(X) \mid \delta_{y} \in T_{y}Y \text{ for all }y \in Y\} \subseteq \VEC(X).
$$
The following result is well known.\idx{$\VEC_{Y}(X)$}

\begin{proposition}  
\be
\item
$\VEC(X)$ is a finitely generated $\OOO(X)$-module.
\item
Let $f \in \OOO(X)$ be a nonzero element. The restriction map $\delta\mapsto \delta|_{X_{f}}$ 
induces an isomorphism $\OOO(X_{f})\otimes_{\OOO(X)}\VEC(X) \simto \VEC(X_{f})$.
\item
If $X$ is irreducible, then $\dim_{\CC(X)}\CC(X)\otimes_{\OOO(X)}\VEC(X) = \dim X$.
\item \label{closed-subvarieties-and-VF}
If $Y \subseteq X$ is a closed subvariety, then the linear map $\VEC_{Y}(X) \to \VEC(Y)$, $\delta\mapsto \delta|_{Y}$, is a homomorphism of $\OOO(X)$-modules. If $X$ is an affine space  $\AA^{n}$, then this map is surjective.
\ee
\end{proposition}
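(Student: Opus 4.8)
The plan is to work throughout with the algebraic description $\VEC(X) = \Der_{\kk}(A)$, where $A := \OOO(X)$ is a finitely generated, hence Noetherian, $\kk$-algebra, and to treat the four assertions in order, each reducing to a standard fact about Kähler differentials or to an explicit computation. For (1), I would fix a closed embedding $X \subseteq \AA^n$, so that $A = \kk[x_1,\ldots,x_n]/I$ with $I = (g_1,\ldots,g_r)$. A derivation $\delta \in \Der_{\kk}(A)$ is determined by the tuple $(\delta(\bar x_1),\ldots,\delta(\bar x_n)) \in A^n$, and this assignment is an $A$-linear injection whose image is exactly the kernel of the Jacobian map $A^n \to A^r$, $(a_i)_i \mapsto \left(\sum_i \overline{\partial g_j/\partial x_i}\,a_i\right)_j$. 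Thus $\Der_{\kk}(A)$ is isomorphic to a submodule of $A^n$, and since $A$ is Noetherian it is finitely generated. (Equivalently, one may invoke $\Der_{\kk}(A) = \Hom_A(\Omega_{A/\kk}, A)$ with $\Omega_{A/\kk}$ finitely presented.)

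For (2) the point is that derivations localize. Using $\Der_{\kk}(A) = \Hom_A(\Omega_{A/\kk}, A)$ together with the base-change formula $\Omega_{A_f/\kk} = A_f \otimes_A \Omega_{A/\kk}$, the flatness of $A \to A_f$, and the fact that $\Hom$ out of a finitely presented module commutes with flat base change, I obtain $\Der_{\kk}(A_f) = \Hom_{A_f}(\Omega_{A_f/\kk}, A_f) = A_f \otimes_A \Hom_A(\Omega_{A/\kk}, A) = A_f \otimes_A \Der_{\kk}(A)$, and one checks that this isomorphism is the restriction map (concretely, each $\delta$ extends uniquely to $A_f$ by the quotient rule, giving the inverse). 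For (3), assume $X$ irreducible, so $A$ is a domain and $\CC(X) = \operatorname{Frac}(A)$. Running the same argument with the localization at $A\setminus\{0\}$ yields $\CC(X) \otimes_A \Der_{\kk}(A) \cong \Hom_{\CC(X)}(\Omega_{\CC(X)/\kk}, \CC(X))$; since $\Omega_{\CC(X)/\kk}$ is a finite-dimensional $\CC(X)$-vector space its dual has the same dimension, and $\dim_{\CC(X)}\Omega_{\CC(X)/\kk} = \operatorname{trdeg}_{\kk}\CC(X) = \dim X$, where I use that $\operatorname{char}\kk = 0$, so $\CC(X)/\kk$ is separably generated.

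For (4), let $I = I(Y) \subseteq A$, a radical ideal with $A/I = \OOO(Y)$. I would first identify $\VEC_Y(X)$ algebraically: the condition $\delta_y \in T_y Y$ for all $y \in Y$ means $(\delta g)(y) = 0$ for all $g \in I$ and $y \in Y$, i.e. $\delta g \in I(Y) = I$ by the Nullstellensatz, so $\VEC_Y(X) = \{\delta \mid \delta(I) \subseteq I\}$. Such a $\delta$ descends to a derivation $\bar\delta$ of $A/I$, and $\delta \mapsto \bar\delta = \delta|_Y$ is patently $A$-linear, with $A$ acting on $\VEC(Y)$ through $A \to A/I$. For surjectivity when $X = \AA^n$, the key is that $\Der_{\kk}(\kk[x_1,\ldots,x_n]) = \bigoplus_i A\,\partial_{x_i}$ is free: given $\eta \in \VEC(Y)$, I choose $h_i \in A$ with $\overline{h_i} = \eta(\bar x_i)$ and set $\delta = \sum_i h_i\,\partial_{x_i}$. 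The polynomial chain rule gives $\overline{\delta(g)} = \sum_i \eta(\bar x_i)\,\overline{\partial_{x_i}g} = \eta(\bar g)$ for every $g \in A$; taking $g \in I$ shows $\delta(I) \subseteq I$, while comparing on the generators $\bar x_i$ shows $\bar\delta = \eta$.

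I expect the main obstacle to be the interplay between (2) and (3): phrasing the localization of $\Der_{\kk}(A)$ correctly as a flat base change of $\Hom_A(\Omega_{A/\kk},-)$, and then identifying the generic stalk with $\Der_{\kk}(\CC(X))$ so as to read off the transcendence-degree count. The surjectivity in (4) is the other delicate point, since it genuinely uses the freeness of the derivation module of a polynomial ring (it fails for general $X$), and the chain-rule identity must be verified to guarantee $\delta(I) \subseteq I$ rather than assumed.
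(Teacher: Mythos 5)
Your proof is correct and follows essentially the same route as the paper's: an injective $\OOO(X)$-module map $\Der_{\kk}(A)\into A^{n}$ plus Noetherianity for (1), localization of derivations for (2) and the transcendence-degree count in characteristic zero for (3), and the freeness of $\Der_{\kk}(\kk[x_{1},\ldots,x_{n}])=\bigoplus_{i}\kk[x_{1},\ldots,x_{n}]\,\partial_{x_{i}}$ for the lifting in (4). The only cosmetic difference is that you phrase (2)--(3) through $\Hom_{A}(\Omega_{A/\kk},-)$ and flat base change, where the paper localizes derivations directly and quotes the two standard field-theoretic facts; your explicit verification in (4) that $\delta(I(Y))\subseteq I(Y)$ via the chain rule just fills in what the paper leaves as immediate.
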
\label{vector-fields.prop}

\begin{proof}
We set $R:=\OOO(X)$, so that $\Der_{\kk}(R)=\VEC(X)$, and we will only use that $R$ is a finitely generated $\kk$-algebra.
\ps
(1)
If $R = \kk [r_1, \ldots, r_n]$, then the map
\[ 
\Der_{\kk}(R) \to R^n, \quad \delta \mapsto (\delta(r_1), \ldots, \delta(r_n))
\]
is an injective homomorphism of $R$-modules.
\ps
(2)
Let $S \subseteq R$ a multiplicatively closed subset with $0\notin S$. Then we have a canonical isomorphism $R_{S}\otimes_{R}\Der_{\kk}(R) \simto \Der_{\kk}(R_{S})$.
\ps
(3) 
If $R$ is an integral domain with field of fraction $K$, then, by the previous statement, we get an isomorphism $K \otimes_{R}\Der_{\kk}(R) \simeq \Der_{\kk}(K)$. Now the claim follows from the following two well-known assertions.
\be
\item[(a)]  
If $L/K$ is an algebraic field extension containing $\kk$, then
\[  
\Der_{\kk} (L) \simeq  L \otimes_K \Der_{\kk}(K).
\]
\item[(b)] 
If $x_1, \ldots,x_n$ are algebraically independent over $\kk$, then 
\[ 
\Der_{\kk} ( \kk (x_1, \ldots, x_n ) ) = \bigoplus_{i=1}^n \kk (x_1, \ldots, x_n )  \partial_{x_i}.
\]
\ee
Indeed, the last two points imply that if $L$ is finitely generated extension of $\kk$, then the dimension of $\Der_{\kk}(L)$ over $\kk$ is equal to the transcendence degree of $L$ over $\kk$.
\ps
(4) 
The first assertion is obvious and the second follows from the fact that $\VEC(\AA^n)$ is a free $\OOO (\AA^n)$-module:
\[ 
\Der_{\kk} ( \kk [x_1, \ldots, x_n ] ) = \bigoplus_{i=1}^n \kk [x_1, \ldots, x_n ]  \partial_{x_i}. \qedhere 
\]
\end{proof}

\begin{corollary}\label{nonzero-VF.cor}
For every affine variety $X$ there exists a
vector field $\delta$ which is nonzero on a dense open set.
In particular, the map $\OOO(X) \to \VEC(X)$, $f\mapsto f\delta$, is injective.
\end{corollary}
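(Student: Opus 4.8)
The plan is to reduce to the irreducible case, where statement (3) of Proposition~\ref{vector-fields.prop} does all the work, and then to patch the components together by a generic linear combination. First I would record the elementary observation that the locus $\{x \in X \mid \delta_{x}\neq 0\}$ is open in $X$ for any $\delta\in\VEC(X)$; since a nonempty open subset of an irreducible variety is dense, asking that $\delta$ be nonzero on a dense open set of $X$ amounts to asking that, for each irreducible component $X_{i}$ of $X$, the field $\delta$ be nonzero at one point of $X_{i}$. (Here I tacitly assume every component has positive dimension; if $X$ had an isolated point, no global vector field could be nonzero there, and the statement must be read with this caveat.)

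For the irreducible case, suppose $X$ is irreducible with $\dim X\geq 1$. By Proposition~\ref{vector-fields.prop}(3) the $\CC(X)$-vector space $\CC(X)\otimes_{\OOO(X)}\VEC(X)$ has dimension $\dim X>0$, hence is nonzero; choosing $\delta\in\VEC(X)$ with nonzero image shows $\delta$ is nonzero at the generic point of $X$, i.e.\ on a dense open set. To build the global fields on a reducible $X=X_{1}\cup\cdots\cup X_{r}$, I would, for each $i$, pick $f_{i}\in\OOO(X)$ vanishing on $\bigcup_{j\neq i}X_{j}$ but not on $X_{i}$, so that the principal open set $X_{f_{i}}$ is a nonempty open subset of $X$ contained in $X_{i}$, hence an irreducible affine variety of dimension $\dim X_{i}$. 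Applying the irreducible case to $X_{f_{i}}$ produces $\eta\in\VEC(X_{f_{i}})$ nonzero on a dense open set, and Proposition~\ref{vector-fields.prop}(2) identifies $\VEC(X_{f_{i}})$ with $\OOO(X_{f_{i}})\otimes_{\OOO(X)}\VEC(X)$; clearing the denominator, $f_{i}^{N}\eta$ is the restriction to $X_{f_{i}}$ of a global field $\delta_{i}\in\VEC(X)$. Since $f_{i}$ is invertible on $X_{f_{i}}$, the field $\delta_{i}$ is nonzero on a dense open subset of $X_{i}$.

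It remains to combine the $\delta_{i}$ into a single field nonzero on every component. Fix for each $j$ a point $x_{j}\in X_{j}$ with $(\delta_{j})_{x_{j}}\neq 0$ in $T_{x_{j}}X$, and consider $\delta(\lambda):=\sum_{i}\lambda_{i}\delta_{i}$ for $\lambda\in\kk^{r}$. The evaluation $\lambda\mapsto \delta(\lambda)_{x_{j}}$ is linear and nonzero (take $\lambda$ the $j$-th basis vector), so its kernel $H_{j}\subsetneq\kk^{r}$ is a proper subspace; as $\kk$ is infinite I may choose $\lambda\notin\bigcup_{j}H_{j}$. For such $\lambda$ the field $\delta:=\delta(\lambda)$ is nonzero at each $x_{j}$, hence nonzero on a nonempty, thus dense, open subset of each $X_{j}$, and therefore nonzero on a dense open set of $X$. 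Finally, for the injectivity of $f\mapsto f\delta$: if $f\neq 0$ then $f$ is nonzero on a dense open subset of some component $X_{j}$, which meets the locus where $\delta$ is nonzero, so $(f\delta)_{x}=f(x)\delta_{x}\neq 0$ at such a point, whence $f\delta\neq 0$. The main obstacle is exactly the reducible case: producing one field simultaneously nonzero on \emph{all} components, which the genericity argument resolves, rather than only on the components of top dimension.
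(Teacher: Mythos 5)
Your proof is correct, and its skeleton matches the paper's: both settle the irreducible case via Proposition~\ref{vector-fields.prop}(3) (which guarantees $\VEC(X)\neq 0$ once $\dim X\geq 1$) together with the observation that the zero locus of a vector field is closed, and both globalize fields defined on a principal open set through the identification $\VEC(X_f)\simeq\OOO(X_f)\otimes_{\OOO(X)}\VEC(X)$ of Proposition~\ref{vector-fields.prop}(2), clearing denominators by a power of $f$. Where you genuinely diverge is the assembly over the components $X_1,\ldots,X_r$. The paper performs a \emph{single} localization: it combines the auxiliary functions into one $f$ vanishing on all pairwise intersections $X_i\cap X_j$ but on no component, so that $X_f$ becomes the \emph{disjoint} union of nonempty opens $U_i\subseteq X_i$ and $\VEC(X_f)=\bigoplus_i\VEC(U_i)$; one then picks in this direct sum a single element nonzero in every summand and clears denominators once, with no genericity argument at all. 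You instead localize $r$ times, produce one global field $\delta_i$ per component, and glue by a generic linear combination $\sum_i\lambda_i\delta_i$, choosing $\lambda$ outside the finitely many proper subspaces where the combination vanishes at your test points $x_j$; this costs you the (harmless, since $\kk$ is algebraically closed) hypothesis that $\kk$ is infinite, but buys you independence from the disjointness trick. The two routes are of essentially equal weight, since your functions $f_i$, vanishing on $\bigcup_{j\neq i}X_j$ but not on $X_i$, are precisely the ingredients the paper's $f$ is built from --- and in fact your formulation is the corrected one: as printed, the paper takes $f_i|_{X_i}=0$, $f_i|_{X_j}\neq 0$ for $j\neq i$ and $f=f_1\cdots f_m$, which would make $f$ vanish identically on $X$; the intended choice is yours, with the $f_i$ combined by a sum rather than a product. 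Finally, your caveat about isolated points is well taken: on a zero-dimensional component every vector field vanishes, so the statement must implicitly assume all components of $X$ have positive dimension, a point the paper's proof passes over in silence.
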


\begin{proof} 
If $\delta \in \VEC(X)$ is a vector field, then the set $Z(\delta):=\{x\in X \mid \delta_{x}=0\} \subseteq X$ is closed, and $Z(\delta)=X$ if and only if $\delta=0$. Thus the first statement is clear in case $X$ is irreducible. 

Let $X = \bigcup_{i=1}^{m}X_{i}$ be the decomposition into irreducible components. There exist regular functions $f_{i}\in\OOO(X)$, $i=1,\ldots,m$, such that (a) $f_{i}|_{X_{i}}=0$, and (b) $f_{i}|_{X_{j}}\neq 0$ for $j\neq i$. Setting $f:=f_{1}f_{2}\cdots f_{m}$ we see that $X_{f}=\bigcup_{i} (X_{i})_{f}$ where the open sets $U_{i}:=(X_{i})_{f} \subseteq X_{i}$ are pairwise disjoint and nonempty. It follows that 
$$
\VEC(X_{f}) =  \bigoplus_{i=1}^{m}\VEC(U_{i}).
$$
Since $\VEC(X_{f}) \simeq \OOO(X_{f})\otimes_{\OOO(X)}\VEC(X)$, by 
Proposition~\ref{vector-fields.prop}(b) above, we can find a vector field $\delta \in \VEC(X)$ such that $\delta|_{U_{i}}\neq 0$ for all $i$, proving the first claim.

Now assume that $f\delta=0$ for some $f \in \OOO(X)$. Then $f(x)\delta_{x}=0$ for all $x \in X$. Hence $f=0$ because $\delta_{x}$ is nonzero on a dense open set. 
\end{proof}

\begin{remark}\label{Siebert.rem}
The following two important results are due to \name{Siebert} \cite{Si1996Lie-algebras-of-de}.
Let $X,Y$ be affine varieties.
\be
{\it
\item \cite[Proposition~1]{Si1996Lie-algebras-of-de}
The Lie algebra $\VEC(X)$ is simple if and only if $X$ is smooth.
\item \cite[Corollary~3]{Si1996Lie-algebras-of-de} 
If $\VEC(X)$ is isomorphic to $\VEC(Y)$ as Lie algebras, and if $X,Y$ are both normal, then $X \simeq Y$.
}
\ee
\end{remark}

We have seen in Lemma~\ref{ind-var-morphisms.lem} that $\End(X)$ is an ind-variety in a natural way.
For any $x\in X$ we have a morphism $\mu_{x}\colon\End(X) \to X$, $\phi\mapsto \phi(x)$, with differential $d\mu_{x}\colon T_{\id}\End(X) \to T_{x}X$. Thus, for any
$H\in T_{\id}\End(X)$,
we obtain an ``abstract'' vector field $\xi_{H}$ defined by $\xi_{H}(x):=d\mu_{x}(H)$.

\begin{proposition}  \label{End(X)-and-Vec(X).prop}
For every $H \in T_{\id}\End(X)$, 
$\xi_{H}$ is a vector field on $X$, and 
the linear map  $\xi\colon T_{\id}\End(X)\into\VEC(X)=\Der_{\kk}(\OOO(X))$, $H\mapsto \xi_{H}$, is an inclusion. If $X$ is a $\kk$-vector space, then $\xi$ is an isomorphism.
\end{proposition}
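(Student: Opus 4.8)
The plan is to make the tangent space $T_{\id}\End(X)$ completely explicit by choosing a closed embedding $X\into\An$ and then reading off $\xi_H$ directly. Fix such an embedding, so that $\OOO(X)=\kk[x_1,\ldots,x_n]/I$ with $I=(h_1,\ldots,h_r)$, and identify $\phi\in\End(X)$ with the tuple $(\phi^*x_1,\ldots,\phi^*x_n)\in\OOO(X)^n$; under this identification $\id$ becomes $\x:=(x_1|_X,\ldots,x_n|_X)$. The closed immersion $X\into\An$ induces, by Lemma~\ref{closed-immersion-Mor.lem}(\ref{closed-immersion-yields-closed-immersion-for-morphisms}), a closed immersion $\End(X)=\Mor(X,X)\into\Mor(X,\An)=\OOO(X)^n$. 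Since $\OOO(X)^n$ is a $\kk$-vector space of countable dimension, its tangent space at every point is the space itself (Example~\ref{countable-vector-space.exa}), so this closed immersion yields an inclusion $T_{\id}\End(X)\subseteq\OOO(X)^n$; I will write a tangent vector as $H=(g_1,\ldots,g_n)$ with $g_i\in\OOO(X)$.

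First I would compute the differential of $\mu_x$. The evaluation map $\End(X)\times X\to X$ is a morphism (Lemma~\ref{tautological.lem}(\ref{eval})), hence so is $\mu_x\colon\End(X)\to X$, and it is the restriction of the linear map $\OOO(X)^n\to\An$, $(\phi_1,\ldots,\phi_n)\mapsto(\phi_1(x),\ldots,\phi_n(x))$, i.e.\ of evaluation at $x$ applied coordinatewise. A linear map between vector spaces is its own differential, so $d(\mu_x)_{\id}(H)=(g_1(x),\ldots,g_n(x))\in T_xX\subseteq T_x\An$. Reading this tangent vector as a derivation at $x$ gives $\xi_H(x)=\sum_i g_i(x)\,\ddxi|_x$. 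Consequently, for $f\in\OOO(X)$ lifted to $\kk[x_1,\ldots,x_n]$, the function $x\mapsto\xi_H(x)(f)=\bigl(\sum_i g_i\,\ddxi f\bigr)(x)$ is regular on $X$ (independence of the lift is automatic, since $\xi_H(x)\in T_xX$ is intrinsic). Thus $\xi_H$ is a vector field, and under the identification $\VEC(X)=\Der_\kk(\OOO(X))$ it is the restriction to $X$ of the vector field $\sum_i g_i\,\ddxi\in\VEC_X(\An)$ (Proposition~\ref{vector-fields.prop}(\ref{closed-subvarieties-and-VF})). Injectivity of $\xi$ is then immediate: if $\xi_H=0$ then $g_i=\xi_H(x_i|_X)=0$ for every $i$, so $H=0$.

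It remains to treat the case where $X=V$ is a $\kk$-vector space, say $X=\An$ with $I=(0)$. Then there are no relations, $\End(\An)=\Mor(\An,\An)=\OOO(\An)^n$ is itself a vector space, so $T_{\id}\End(\An)=\OOO(\An)^n$, and $\xi$ sends $(g_1,\ldots,g_n)$ to $\sum_i g_i\,\ddxi$; since $\Der_\kk(\kk[x_1,\ldots,x_n])=\bigoplus_i\OOO(\An)\,\ddxi$ is free on the $\ddxi$, this is an isomorphism. I expect the only delicate point to be the comparison of tangent spaces: in general $\xi$ need not be surjective, because $T_{\id}\End(X)$ is the tangent space of the \emph{reduced} ind-variety $\End(X)$, which may be strictly smaller than the full module of derivations. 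The explicit description above sidesteps this issue by exhibiting $\xi$ as an honest inclusion and by making surjectivity manifest precisely in the smooth (vector-space) case.
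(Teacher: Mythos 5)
Your proof is correct and follows essentially the same route as the paper: embed $X$ in $\An$, identify $T_{\id}\End(X)$ with a subspace of $\OOO(X)^{n}$ via the closed immersion $\End(X)\into\Mor(X,\An)=\OOO(X)^{n}$, read off $\xi_{H}$ as (the restriction of) $\sum_{i}\tilde g_{i}\,\ddxi$, get injectivity from $g_{i}=\xi_{H}(\bar x_{i})$, and observe that the vector-space case is the relation-free case where $T_{\id}\End(\An)=\OOO(\An)^{n}$ maps onto the free module $\bigoplus_{i}\OOO(\An)\,\ddxi$. The only cosmetic difference is that you deduce $\xi_{H}(x)\in T_{x}X$ and the well-definedness of the induced derivation functorially from the fact that $\mu_{x}$ maps into $X$, whereas the paper verifies the same thing by the explicit dual-numbers computation $F(\bar x+\eps h)=0\Rightarrow\sum_{i}h_{i}\,\partial F/\partial x_{i}=0$ for $F\in I(X)$.
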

\begin{proof} 
(a) In case $X = \An$ we have $\End(\An) =  \kk[x_{1},\ldots,x_{n}]^{n}$, hence $T_{\id}\End(\An) =  \kk[x_{1},\ldots,x_{n}]^{n}$. If $H = (h_{1},\ldots,h_{n})\in T_{\id}\End(\An)$, then $\xi_{H}=\sum_{i=1}^{n}h_{i}\dxi$. In fact,  $\mu_{a}(\id+\eps H) = a + \eps H(a)$ hence $\xi_{H}(a)=H(a)$, i.e. $\xi_{H}= \sum_{i=1}^{n}h_{i}\dxi$.
\ps
(b) For the general case we choose a closed immersion $X\subseteq\Cn$ so that $\OOO(X) = \kk[x_{1},\ldots,x_{n}]/I(X)$. This defines a closed immersion of ind-varieties $\End(X) \subseteq \Mor(X,\Cn) = \OOO(X)^{n}$, hence an inclusion $T_{\id}\End(X) \subseteq \OOO(X)^{n}$. By definition, $(f_{1},\ldots,f_{n})\in\End(X)$ if and only if $F(f_{1},\ldots,f_{n})=0$ for all $F\in I(X)$. Therefore, we have for $H=(h_{1},\ldots,h_{n})\in T_{\id}(\End(X))$
$$
F(\bar x_{1}+\eps h_{1},\ldots,\bar x_{n}+\eps h_{n})=0 \text{ for all } F\in I(X),
$$
and so
$$
\sum_{i=1}^{n}h_{i}\frac{\partial F}{\partial x_{i}}(\bar x_{1},\ldots,\bar x_{n}) = 0 \text{ for all } F\in I(X).
$$
The latter means that $\sum_{i=1}^{n}h_{i}\frac{\partial}{\partial x_{i}}\colon \Cxn\to\OOO(X)$ induces a derivation $\xi_{H}$ of $\OOO(X)$ by $\xi_{H} \bar x_{i} = h_{i}$, i.e. a vector field given by $\xi_{H}(a)=(h_{1}(a),\ldots,h_{n}(a))\in T_{a}X \subseteq \Cn$. It is clear now that the linear map $H \mapsto\xi_{H}$ is injective.
\end{proof}
It is not true that $\xi\colon T_{\id}\End(X) \to \VEC(X)$ is always an isomorphism. In fact,  if $\End(X)$ is finite-dimensional (see Section~\ref{Small-End.subsec} for examples) and if $X$ has dimension at least one,
then $\xi$ cannot be an isomorphism,  because $\VEC(X)$ is a $\kk$-vector space of infinite dimension, by Proposition~\ref{vector-fields.prop}.

\begin{question} \label{Is-the-tangent-space-at-the-identity-of-End(X)-a-Lie-algebra.question}
Is it true that the image of $T_{\id}\End(X)$ in $\VEC(X)$ is a Lie subalgebra?
\end{question}

\ps
\subsection{Families of automorphisms}\label{fam-auto.subsec}
The study of {\it families of automorphisms}  plays a fundamental role in our paper. We give here some basic facts. 

\begin{definition}\label{family of automorphisms.def}
Let $X$ be a variety and $\YYY$ an ind-variety. A {\it family of automorphisms of $X$ parametrized by $\YYY$}\idx{family of automorphisms} is a $\YYY$-automorphism of $X \times \YYY$, i.e. an automorphism $\Phi$ of $X \times \YYY$ such that the projection $\pr \colon X \times \YYY \to \YYY$ is invariant: $\pr_{\YYY} \circ \Phi = \pr_{\YYY}$). We use the notation $\Phi = (\Phi_{y})_{y\in \YYY}$ where $\Phi_{y}$ is the induced automorphism of the fiber $\pr_{\YYY}^{-1}(y) = X\times\{y\}$ which we identify with $X$. In this way, the family $\Phi$ can be regarded as a map $\Phi\colon\YYY \to \Aut(X)$.

Similarly, for an algebraic group $G$, a {\it family of $G$-actions on $X$ parametrized by $\YYY$}\idx{family of $G$-actions} is a $G$-action $\Phi$ on $X\times \YYY$ such that the projection $\pr\colon X\times \YYY \to \YYY$ is $G$-invariant. Again we use the notation $\Phi = (\Phi_{y})_{y\in \YYY}$ where $\Phi_{y}$ is the $G$-action on the fiber $X\times\{y\}$ identified with $X$.
\end{definition}

In the definition of a family of automorphism we assumed that $\Phi$ itself is an automorphism. However, this is not necessary as the following proposition shows. Note that in the case where $X$ is affine, this proposition is a direct consequence of the statement claiming that any injective endomorphism of an affine variety is an isomorphism.

\begin{proposition}\label{fam-end-aut.prop}
Let $X$ and $Y$ be varieties where $X$ is irreducible, and let $\Phi=(\Phi_{y})_{y\in Y}$ be a family of endomorphisms of $X$. If every $\Phi_{y}$ is an automorphism, then so is $\Phi$.
\end{proposition}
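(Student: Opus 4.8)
The plan is to exhibit the set-theoretic inverse of $\Phi$ and then to prove that it is again a morphism. Since each $\Phi_{y}$ is an automorphism, the fibre-preserving bijection $\Psi\colon X\times Y\to X\times Y$, $(x,y)\mapsto(\Phi_{y}^{-1}(x),y)$, satisfies $\Phi\circ\Psi=\Psi\circ\Phi=\id$ on the level of sets, so the entire content of the statement is that $\Psi$ is a morphism; granting this, $\Phi$ is an automorphism with inverse $\Psi$. Being a morphism is local on the target $Y$, and $\Psi^{-1}(X\times V)=X\times V$ for every open $V\subseteq Y$, so I may assume $Y$ affine. Moreover a map into a variety is a morphism as soon as its restriction to each irreducible component (with reduced structure) is a morphism, and $\Phi$ preserves the decomposition $X\times Y=\bigcup_{j}(X\times Y_{j})$ coming from the components $Y_{j}$ of $Y$; hence I may assume $Y$ irreducible and reduced, so that $Z:=X\times Y$ is irreducible.

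Next I would produce $\Psi$ as a morphism on a large part of $Z$. Passing to the generic point of $Y$ gives an automorphism $\Phi_{\eta}$ of $X_{\kk(Y)}$, whose inverse is a morphism over $\kk(Y)$; spreading out over $Y$ then yields a dense open $V\subseteq Y$ for which $\Psi$ is a morphism on the slab $X\times V$. The geometric reason behind this is that at a smooth point $(x,y)$ of $Z$ the differential $d\Phi_{(x,y)}$ is block–triangular, equal to the identity in the $Y$–direction and to $d(\Phi_{y})_{x}$ in the $X$–direction; as $\Phi_{y}\in\Aut(X)$ this block is invertible, so $\Phi$ is étale, and being bijective it is an isomorphism on the smooth locus. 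The remaining closed part $X\times(Y\setminus V)$ is a family of automorphisms of the (still irreducible) variety $X$ over a base of strictly smaller dimension, so by Noetherian induction on $\dim Y$ the restriction $\Psi|_{X\times(Y\setminus V)}$ is a morphism as well; the singular locus of $X$, which is $\Phi$–stable and of lower dimension, is treated by a parallel induction on $\dim X$ (after grouping the components of $X_{\mathrm{sing}}$ into the orbits of the locally constant permutation induced by the $\Phi_{y}$, thereby reducing to families of isomorphisms between irreducible varieties).

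The hard part will be the passage from these partial results to a global conclusion: I will know that $\Psi$ is a morphism on a dense open slab and on its closed complement, but a map that is regular on an open set and on the complementary closed set need not be regular — this is exactly the cusp phenomenon, in which the set-theoretic inverse of a bijective morphism fails to be regular over the non-normal locus. The fibre-wise automorphism hypothesis (and not mere bijectivity of $\Phi$) is what rules this out, and I would exploit it through continuity and normality. First, a bijective morphism of varieties is a homeomorphism (via Zariski's Main Theorem, cf. Lemma~\ref{Igusa.lem}), so $\Psi$ is continuous. To upgrade continuity plus regularity in codimension one to global regularity I would reduce to the case where $X$ is normal, lifting each $\Phi_{y}$ compatibly to the normalization $\tX$; for normal $X$ and $Y$ the product $Z$ is normal and irreducible, and the bijective morphism $\Phi\colon Z\to Z$ is then an isomorphism in one stroke by the bijective-morphism criterion for normal targets, with no gluing required. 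The non-normal cases descend from this, the continuity of $\Psi$ ensuring that the regularity found on the open slab and on the lower–dimensional strata assembles into a morphism on all of $Z$. This globalization step is the genuine obstacle; everything before it is bookkeeping and generic-fibre analysis.
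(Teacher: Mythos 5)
Your first half runs parallel to the paper's proof: the differential $d\Phi_{(x,y)}$ is block-triangular with $(d\Phi_y)_x$ and the identity on the diagonal, hence invertible (note this holds at \emph{every} point, not just smooth ones, since $T_{(x,y)}(X\times Y)=T_xX\oplus T_yY$ makes sense on singular varieties), so $\Phi$ is an isomorphism on the smooth locus and in particular birational; and after lifting to the normalizations $\tilde X\times\tilde Y$ the bijective morphism $\tilde\Phi$ is an isomorphism by Zariski's Main Theorem. The fatal gap is your descent from $\tilde X\times\tilde Y$ back to $X\times Y$. You correctly identify the obstruction yourself (``a map that is regular on an open set and on the complementary closed set need not be regular --- this is exactly the cusp phenomenon''), but your proposed resolution --- ``the continuity of $\Psi$ ensuring that the regularity found on the open slab and on the lower-dimensional strata assembles into a morphism on all of $Z$'' --- is precisely the inference that the cusp kills: the set-theoretic inverse of $\AA^1\to\{y^2=x^3\}$ is a homeomorphism, regular on the dense open part and on the remaining closed point, yet not a morphism. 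Continuity plus stratumwise regularity never upgrades to regularity, and nothing in your text converts the fiberwise hypothesis into an actual mechanism. The paper's mechanism is different in kind: one first proves $\Phi$ is \emph{finite}, by factoring $\Phi=\rho\circ\iota$ (open immersion $\iota\colon X\times Y\hookrightarrow Z$ followed by finite $\rho$, Zariski's Main Theorem in Grothendieck's form) and observing that the normalization $\Phi\circ\eta=\eta\circ\tilde\Phi$ factors through $\rho$, so that $\eta'=\iota\circ\eta$ is finite, hence surjective, forcing $Z=X\times Y$; then one invokes the purely algebraic fact that a bijective finite morphism with \emph{reduced} fibers is an isomorphism (Nakayama on $M=\OOO(U)/\OOO(Z)$), the reducedness of fibers coming exactly from the everywhere-invertible differentials. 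That finiteness-plus-Nakayama step is the content your proposal is missing; your Noetherian inductions on $\dim Y$ and $\dim X$ produce only the stratified data that, as you concede, cannot be glued.

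Two secondary flaws. The lemma you use to reduce to irreducible $Y$ --- ``a map into a variety is a morphism as soon as its restriction to each irreducible component (with reduced structure) is a morphism'' --- is false: on $Z=\{y(y-x^2)=0\}\subseteq\AA^2$ (line and parabola meeting with multiplicity two) the function equal to $x$ on the line and to $0$ on the parabola is regular on each component and agrees on the overlap, but is not regular on $Z$. (The paper also reduces to $Y$ irreducible, but its route tolerates this because the finiteness and reduced-fibers arguments are run after passing to $\tilde Y$, where the components are disjoint.) Also, ``a bijective morphism of varieties is a homeomorphism (via Zariski's Main Theorem)'' is not what Lemma~\ref{Igusa.lem} says and is false for disconnected sources (e.g.\ $(\AA^1\setminus\{0\})\sqcup\{0\}\to\AA^1$); since your whole globalization leans on this continuity, the lapse is not merely cosmetic.
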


\begin{proof}
We can clearly assume that $Y$ is irreducible. 

The assumption implies that the {\it tangent maps}\idx{tangent map}
\[ 
d\Phi_{(x,y)} \colon T_{(x,y)}(X\times Y)\to T_{\Phi(x,y)}(X\times Y)
\] 
are isomorphisms for all $(x,y)\in X\times Y$, because $d\Phi_{(x,y)} = ((d\Phi_{y})_{x},\id)$. Thus $\Phi$ induces an isomorphism $\Phi\colon (X\times Y)_{\text{smooth}} \simeq (X\times Y)_{\text{smooth}}$. In particular, $\Phi$ is birational.

If $\eta_{Y}\colon \tilde Y \to Y$ is the normalization of $Y$, then $\Phi$ induces, by base change, a morphism $\Phi'\colon X \times \tilde Y \to X \times \tilde Y$ over $\tilde Y$ which is also an isomorphism on the fibers $X \times \{\tilde y\}$.

If $\eta_{X}\colon \tilde X \to X$ is the normalization of $X$, then $\eta_{X} \times \id\colon \tilde X \times \tilde Y \to X \times \tilde Y$ is the normalization of $X\times \tilde Y$ and so $\Phi'$ induces a morphism $\tilde\Phi\colon \tilde X \times \tilde Y \to  \tilde X \times \tilde Y$ over $\tilde Y$, and $\tilde\Phi$ has again the property that it induces isomorphisms on the fibers $\tilde X \times \{\tilde y\}$. 
$$
\begin{tikzcd}
\tilde X \times \tilde Y 
\arrow[rr,"\tilde\Phi"]
\arrow[rd,"\pr_{\tilde Y}"']
\arrow[dd,"\eta_X\times\id"']
&&
\tilde X \times \tilde Y
\arrow[dd,"\eta_X\times\id"]
\arrow[ld,"\pr_{\tilde Y}"]
\\
& \tilde Y 
\arrow[dd,"\id" near start]
\\
X \times \tilde Y 
\arrow[rr,"\Phi'" near start,crossing over] 
\arrow[rd,"\pr_{\tilde Y}"']
\arrow[dd,"\id\times\eta_Y"']
&& X \times \tilde Y
\arrow[ld,"\pr_{\tilde Y}"]
\arrow[dd,"\id\times\eta_Y"]
\\
& \tilde Y 
\arrow[dd,"\eta_Y" near start] 
\\
X \times Y 
\arrow[rr,"\Phi" near start,crossing over]
\arrow[rd,"\pr_{Y}"']
&& X \times Y
\arrow[ld,"\pr_{Y}"]
\\
& Y &
\\
\end{tikzcd}
$$
Since $\tilde X \times \tilde Y$ is normal and $\tilde\Phi$ bijective, \name{Zariski}'s Main Theorem in its original form (see \cite[Chap.~III, \S9]{Mu1999The-red-book-of-va}) says that $\tilde\Phi$ is an isomorphism.

Next we claim that $\Phi$ is finite. For this we look at the square
$$
\begin{CD}
\tilde X \times \tilde Y @>{\tilde\Phi}>\simeq> \tilde X \times \tilde Y \\
@VV{\eta}V  @VV{\eta}V \\
X\times Y @>{\Phi}>> X \times Y
\end{CD}
$$
where the vertical map $\eta$ is the normalization.  Now \name{Zariski}'s Main Theorem in \name{Grothendieck}'s form (see \cite[Chap.~III, \S9]{Mu1999The-red-book-of-va}) says that there is an open immersion $\iota\colon X \times Y \into Z$ and a finite surjective morphism $\rho\colon Z \to X\times Y$ such that $\Phi=\rho\circ\iota$.
$$
\begin{tikzcd}
& \tilde X \times \tilde Y 
\arrow[r,"\tilde\Phi","\simeq"'] 
\arrow[ldd,"\eta'"'] 
\arrow[d,"\eta"]
& \tilde X \times \tilde Y
\arrow[d,"\eta"]\\
& X\times Y 
\arrow[r,"\Phi"] 
\arrow[ld,hook',"\iota"']
& X\times Y\\
Z \arrow[urr,"\rho"] \\
\end{tikzcd}
$$
Since $\Phi\circ\eta= \eta \circ \tilde \Phi$ is the normalization, this map factors through $\rho$, i.e. there exists a morphism $\eta' \colon \tilde X \times \tilde Y \to Z$ such that $ \Phi \circ \eta= \rho \circ \eta'$. Hence, $\eta'= \iota \circ \eta \colon \tilde X \times \tilde Y \to Z$ is finite and thus surjective, and so $X \times Y = Z$.

Now the claim follows from the next lemma. In fact, since the tangent maps are bijective, the fibers $\Phi^{-1}(x,y)$ are reduced.
\end{proof}

\begin{lemma} 
Let $\phi\colon U \to Z$ be a bijective finite morphism between irreducible varieties. If $\phi$ has reduced fibers, then $\phi$ is an isomorphism.
\end{lemma}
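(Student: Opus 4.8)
The plan is to translate everything into commutative algebra and finish with Nakayama's lemma. Since a finite morphism is affine, the assertion is local on $Z$, so I may assume $Z=\Spec A$ and $U=\Spec B$ are affine, where $A=\OOO(Z)$ and $B=\OOO(U)$ are finitely generated $\kk$-domains (both varieties are irreducible, hence reduced and integral) and $B$ is a finite $A$-module, by finiteness of $\phi$. Because $\phi$ is surjective and $Z$ is reduced and irreducible, the comorphism $\phi^{*}\colon A\into B$ is injective, so I obtain a short exact sequence $0\to A\to B\to M\to 0$ of $A$-modules with $M:=B/\phi^{*}(A)$ finitely generated. The goal becomes $M=0$, and since $A$ is a finitely generated $\kk$-algebra (hence Jacobson), this will follow from $M\otimes_{A}\kappa(z)=0$ for every closed point $z\in Z$ together with Nakayama's lemma.

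The heart of the argument is a fibrewise computation. Fix a closed point $z$ with maximal ideal $\mm_z\subset A$; as $\kk$ is algebraically closed, $\kappa(z)=A/\mm_z=\kk$. The scheme-theoretic fibre is $\Spec(B/\mm_z B)$, a finite-dimensional $\kk$-algebra; it is reduced by hypothesis and consists of a single point, because $\phi$ is bijective (the closed points of the Artinian scheme $\Spec(B/\mm_z B)$ are exactly the points of $U$ lying over $z$). A reduced Artinian local $\kk$-algebra is a field (the maximal ideal equals the nilradical, which is zero), and being finite over the algebraically closed field $\kk$ it equals $\kk$; hence $\dim_\kk B/\mm_z B=1$. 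Tensoring the short exact sequence with $\kappa(z)$ and using right-exactness of $-\otimes_A\kappa(z)$ gives an exact sequence $A\otimes\kappa(z)\to B\otimes\kappa(z)\to M\otimes\kappa(z)\to 0$, i.e. $\kk\to\kk\to M\otimes\kappa(z)\to 0$; the first map sends $1$ to $1$ and is therefore surjective, so $M\otimes_A\kappa(z)=0$.

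Since this vanishing holds at every closed point and $M$ is finitely generated over $A$, Nakayama gives $M_{\mm}=0$ for every maximal ideal $\mm$, and hence $M=0$. Thus $\phi^{*}\colon A\simto B$ is an isomorphism, and therefore so is $\phi$.

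I would flag precisely where the two hypotheses enter: bijectivity is what makes each scheme-theoretic fibre a single (local) point, while reducedness of the fibres upgrades this to $B/\mm_z B=\kk$ — exactly the condition that fails for the normalization $t\mapsto(t^{2},t^{3})$ of the cuspidal cubic, a finite bijective morphism whose fibre over the cusp is the non-reduced $\Spec(\kk[t]/(t^{2}))$. There is no serious obstacle here; the step requiring the most care is simply the passage from the fibrewise identity $\dim_\kk B/\mm_zB=1$ to the surjectivity of $A\to B$ via Nakayama. In particular one need not establish birationality of $\phi$ separately — it is a by-product of the conclusion.
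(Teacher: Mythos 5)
Your proof is correct and is essentially the paper's own argument: both form the short exact sequence $0 \to \OOO(Z) \to \OOO(U) \to M \to 0$, use bijectivity plus reducedness of the fibers to see that $\OOO(U)/\mm\,\OOO(U) = \kk$ for every maximal ideal $\mm \subseteq \OOO(Z)$, deduce $M/\mm M = 0$, and conclude $M = 0$ by Nakayama. You merely spell out the steps the paper compresses into ``$\mm\,\OOO(U)$ is a maximal ideal, by assumption,'' which is a welcome clarification rather than a different route.
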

\begin{proof}
We can assume that $Z$ and hence $U$ are both affine. Consider the short exact sequence of finitely generated $\OOO(Z)$-modules
$$
\begin{CD}
0 @>>> \OOO(Z) @>\subseteq>> \OOO(U) @>>> M:=\OOO(U)/\OOO(Z) @>>> 0
\end{CD}
$$
If $\mm\subseteq\OOO(Z)$ is a maximal ideal, then $\mm\OOO(U) \subseteq \OOO(U)$ is a maximal ideal, by assumption, and so $M / \mm M = \OOO(U)/(\mm\OOO(U)+\OOO(Z))=(0)$. Hence $M=(0)$.
\end{proof}

\ps
\section{Commutative \texorpdfstring{$\kk$}{k}-Algebras and Invertible Elements}  \label{invertible-elements.sec}
\subsection{The embedding of \texorpdfstring{$R^{*}$}{R*} into \texorpdfstring{$R$}{R}}\label{R*-embedding.subsec}

Let $R$ be a finitely generated commutative $\kk$-algebra. 
We have seen in Proposition~\ref{G(R)-as-ind-group.prop} that the group $\GL_{1}(R)\simeq R^{*}$ of invertible elements of $R$ has a ``natural'' structure of an affine ind-group. This structure is obtained from the closed immersion $\GL_{1}(R) \into R \times R$, $r \mapsto (r,r^{-1})$. In particular, the first projection induces an injective morphism $p\colon \GL_{1}(R) \to R$ whose image is $R^{*} \subset R$. \idx{$\R$@$R^{*}$}\idx{$\GL_{1}(R)$}

We will now show that this image is locally closed and that the bijective morphism $p'\colon \GL_{1}(R) \to R^{*}$ is an isomorphism of ind-varieties. 

\begin{theorem} \label{R*-locally-closed-in-R.thm}
For a finitely generated commutative $\kk$-algebra $R$, the group $R^*$ of invertible elements is locally closed in $R$. More precisely, if $\NZ(R)$ denotes the set of nonzero divisors of $R$, then  $R^*$ is closed in $\NZ(R)$, and $\NZ (R)$ is open in $R$. In addition, the inverse $\iota\colon R^* \to R^*$, $r \mapsto r^{-1}$ is an isomorphism of ind-varieties.
In particular, $p'\colon \GL_{1}(R) \to R^{*}$ is an isomorphism of ind-groups.
\end{theorem}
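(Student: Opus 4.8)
The plan is to reduce the whole statement to three assertions and then prove each. Since $R$ has countable $\kk$-dimension it is an affine ind-variety, and multiplication $R\times R\to R$ is bilinear, hence a morphism; in particular each $m_r\colon s\mapsto rs$ is linear. By the construction preceding the theorem, $\GL_1(R)$ is the closed subset $\Gamma=\{(r,s)\in R\times R\mid rs=1\}$, and $p'$ is the (bijective) restriction of the first projection, whose set-theoretic inverse is $r\mapsto(r,r^{-1})$. Thus it suffices to show: (i) $\NZ(R)$ is open in $R$; (ii) $R^*$ is closed in $\NZ(R)$; and (iii) the inversion $r\mapsto r^{-1}$ is a morphism $R^*\to R$ — for then $r\mapsto(r,r^{-1})$ is a morphism inverse to $p'$, so $p'$ is an isomorphism of ind-groups. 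For (i) I would use that $R$ is Noetherian with finitely many associated primes $\pp_1,\dots,\pp_m$, that the set of zero divisors equals $\bigcup_i\pp_i$, and that each $\pp_i$, being a $\kk$-subspace, is a closed ind-subvariety; a finite union of closed sets is closed, so $\NZ(R)=R\setminus\bigcup_i\pp_i$ is open.

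The engine of the argument is the domain case $R=D$, which I would treat first. Choose a Noether normalization $A=\kk[t_1,\dots,t_d]\subseteq D$ with $D$ finite over $A$, and set $K_0=\operatorname{Frac}(A)$, so that $D\otimes_A K_0=\operatorname{Frac}(D)=:L$ is finite of dimension $N$ over $K_0$. Define the norm $n(r):=\det_{K_0}(m_r)\in A$; this lies in $A$ because $r$ is integral over the normal ring $A$, and, writing $m_r$ in a fixed $K_0$-basis of $L$ chosen inside $D$, its matrix entries are $\kk$-linear in $r$, so $n\colon D\to A\subseteq D$ is a morphism (each finite piece has finite-dimensional image). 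Since $A^*=\kk^*$ and the norm is multiplicative, $r\in D^*$ forces $n(r)\in\kk^*$. Conversely, Cayley--Hamilton gives $w(r):=\operatorname{adj}(m_r)(1)\in D$ — a polynomial in $r$ with coefficients in $A$, hence a morphism — satisfying $r\,w(r)=n(r)$ in $L$; as $D\hookrightarrow L$ this identity already holds in $D$, so $n(r)\in\kk^*$ yields $r^{-1}=n(r)^{-1}w(r)\in D$. Finally $r\ne 0$ implies $m_r$ injective, hence bijective over the field $L$, hence $n(r)\ne0$. Combining these, $D^*=\NZ(D)\cap n^{-1}(\kk\cdot1)$ is closed in $\NZ(D)$, and $r\mapsto r^{-1}=n(r)^{-1}w(r)$ is a morphism.

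For general $R$ I would reduce to the domain case. An element is a unit iff it is nowhere vanishing on $\Spec R$, iff its image in each $R/\qq_i$ ($\qq_i$ the minimal primes) is a unit; hence $R^*=\bigcap_i\rho_i^{-1}\!\big((R/\qq_i)^*\big)$ as sets, where $\rho_i\colon R\to R/\qq_i$. Since $\NZ(R)$ avoids the $\qq_i$ we have $\rho_i(\NZ(R))\subseteq\NZ(R/\qq_i)$, and each $(R/\qq_i)^*$ is closed in $\NZ(R/\qq_i)$ by the domain case, so $R^*$ is closed in $\NZ(R)$, giving (ii) (and local closedness in $R$). For the inversion morphism I would first treat reduced $R$: the natural map $R\hookrightarrow\prod_i R/\qq_i$ is an injective $\kk$-linear map, hence a closed immersion of ind-varieties, so $r\mapsto r^{-1}$ is a morphism because each component $r\mapsto(\bar r)^{-1}\in R/\qq_i$ is one by the domain case. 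For non-reduced $R$, compose the reduced inversion with a $\kk$-linear (hence morphic) section $\tau$ of $R\to R_{\mathrm{red}}$ to obtain a morphism $s(r)\in R$ with $r\,s(r)=1+\eta(r)$, $\eta(r)\in\operatorname{nil}(R)$ morphic; since $\operatorname{nil}(R)^{k+1}=0$ for some $k$, one has $r^{-1}=s(r)\sum_{j=0}^{k}(-\eta(r))^{\,j}$, again a morphism.

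The main obstacle is precisely this last assembly. The clean determinantal formula for the inverse lives only over a single Noether normalization, which after inverting $A\setminus\{0\}$ sees only the top-dimensional components and ignores nilpotents; so the norm cannot be used globally and one is forced to work component-by-component through the minimal primes and to correct for nilpotents via the finite geometric series above. Once the domain case is set up carefully — checking that the norm lands in $A$ and that the adjugate identity descends from $\operatorname{Frac}(D)$ to $D$ — the reductions are routine, but verifying that $R\hookrightarrow\prod_i R/\qq_i$ is a genuine closed immersion and that the nilpotent correction is morphic are the points that need the most care.
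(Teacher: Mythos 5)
Your proof is correct, but it takes a genuinely different route from the paper's. The paper makes no structural reduction at all: it writes $R$ as a quotient of a polynomial ring, filters by degree, and invokes Hermann's effective bound on ideal membership (Proposition~\ref{degree-bound-for-ideal-membership.prop}) to obtain a uniform bound $e(k)$ on the degree of $r^{-1}$ when $\deg r\leq k$ (Lemma~\ref{bound-of-inverse.lem}); closedness of $R^{*}$ in $\NZ(R)$ then reduces to the linear-algebra fact that $\{h\in\LLL'(R_{e},R_{ke})\mid 1\in h(R_{e})\}$ is closed among injective maps (Lemma~\ref{linear-algebra.lem}), and regularity of inversion comes from the Division Lemma~\ref{division-biss.lem} applied to the multiplication $R_{k}\times R_{e}\to R_{ke}$. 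You instead get everything structurally: in the domain case, Noether normalization and the norm/adjugate give the closed equation $D^{*}=\NZ(D)\cap n^{-1}(\kk)$ together with the explicit inverse $n(r)^{-1}w(r)$, and you assemble the general case through the minimal primes and a nilpotent geometric series. The paper's route buys uniformity (one argument for all $R$, no domain/reduced/general trichotomy) and reusability --- the same effective degree-bound mechanism is exactly what later proves $\Aut(X)$ locally closed in $\End(X)$ (Lemma~\ref{lem:bounds for injectivity and surjectivity}) --- while yours avoids the nontrivial citation of Hermann/Dub\'e and yields closed equations and explicit formulas; note that your argument also recovers, implicitly, the paper's uniform bound on $\deg r^{-1}$. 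One point in your write-up genuinely needs the care you flag: the entries $c_{ij}(r)$ and the coefficients of the characteristic polynomial live a priori in the fraction field $K_{0}$ of $A$, which has uncountable $\kk$-dimension when $\kk$ is uncountable, so matrices over $K_{0}$ do not form an ind-variety; the argument survives only because each $c_{ij}$ is $\kk$-linear, so $c_{ij}(D_{k})$ is a finite-dimensional subspace of $K_{0}$, and because integrality over the normal ring $A$ forces $n(r)$ and the adjugate coefficients into $A$ within bounded degree (after clearing a common denominator) --- this piecewise verification should be written out, but it is exactly what your parentheticals indicate, so it is presentational rather than a mathematical gap. Your auxiliary claims are all sound: injective $\kk$-linear maps are closed immersions (used by the paper itself in the proof of Proposition~\ref{X(R).prop}), and elements of minimal primes are zero divisors, so $\NZ(R)$ indeed maps into $\NZ(R/\qq_{i})$.
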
\idx{$\NZ(R)$}

In the following we write $R = \kk  [x_1,\ldots,x_n] / (q_1,\ldots, q_m)$ and set $R_{k}$ to be the image of 
$\kk[x_{1},\ldots,x_{n}]_{\leq k}$. Then $R = \bigcup_{k} R_{k}$ is a filtration by finite-dimensional 
subspaces, and one has $R_{k}R_{m}\subseteq R_{k+m}$.

\begin{lemma} \label{bound-of-inverse.lem}
For any $k\geq 1$ there is an $e=e(k)$ such that the following holds:
If $r\in R_{k}$ is invertible, then $r^{-1}\in R_{e}$.
\end{lemma}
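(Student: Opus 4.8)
The plan is to bound the degrees of inverses by a Noetherian finiteness argument, the crucial point being that the set of units of bounded degree is constructible. Fix $k$ and, for each $\ell\geq 1$, consider the subset
\[
W_{\ell} := \{(r,s)\in R_{k}\times R_{\ell}\mid rs = 1\}\subseteq R_{k}\times R_{\ell}.
\]
This is a closed algebraic subset of the finite-dimensional vector space $R_{k}\times R_{\ell}$, since it is the zero set of the regular map $(r,s)\mapsto rs-1$, the product being computed via the bilinear multiplication $R_{k}\times R_{\ell}\to R_{k+\ell}$. Let $\pi_{\ell}\colon W_{\ell}\to R_{k}$ be the first projection, a morphism of varieties, so that its image $\pi_{\ell}(W_{\ell})=\{r\in R_{k}\mid r^{-1}\in R_{\ell}\}$ is constructible. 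Because $R_{\ell}\subseteq R_{\ell+1}$ these images increase with $\ell$, and their union is exactly $U_{k}:=R^{*}\cap R_{k}$, the set of units lying in $R_{k}$. If $\kk$ is uncountable and $U_{k}$ is constructible, then Lemma~\ref{constructible.lem} applies to the covering $U_{k}=\bigcup_{\ell}\pi_{\ell}(W_{\ell})$ and produces a finite $F$ with $U_{k}=\bigcup_{\ell\in F}\pi_{\ell}(W_{\ell})=\pi_{e}(W_{e})$ for $e:=\max F$. This says precisely that every unit $r\in R_{k}$ has $r^{-1}\in R_{e}$, giving the desired bound $e=e(k)$.

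The main obstacle, and the only step with real content, is therefore to show that $U_{k}$ itself is constructible; here the increasing union of constructibles must be controlled, and this is where I would detect units by nonvanishing at closed points. Passing to $R_{\mathrm{red}}=R/\mathrm{nil}(R)$, an element $r\in R$ is a unit if and only if its image $\bar r\in R_{\mathrm{red}}$ is (if $\bar r\bar s=1$ then $rs=1+n$ with $n$ nilpotent, and $1+n$ is invertible), and, since a finitely generated $\kk$-algebra is Jacobson, this holds if and only if $\bar r(x)\neq 0$ for every closed point $x$ of the affine variety $X$ with $\OOO(X)=R_{\mathrm{red}}$. The incidence set
\[
I := \{(r,x)\in R_{k}\times X\mid \bar r(x)=0\}
\]
is closed: choosing a basis $b_{1},\dots,b_{N}$ of $R_{k}$ and writing $r=\sum_{i}c_{i}b_{i}$, the function $(r,x)\mapsto \bar r(x)=\sum_{i}c_{i}\,\bar b_{i}(x)$ is regular on $R_{k}\times X$. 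Hence its image $\pi_{1}(I)\subseteq R_{k}$ under the first projection is constructible, and $U_{k}=R_{k}\setminus\pi_{1}(I)$ is constructible as well.

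Finally I would remove the assumption that $\kk$ be uncountable by base change. Choose an algebraically closed uncountable extension $\KK\supseteq\kk$ and set $R_{\KK}:=\KK\otimes_{\kk}R$, a finitely generated $\KK$-algebra with $(R_{\KK})_{j}=\KK\otimes_{\kk}R_{j}$ and $R_{j}=R\cap(R_{\KK})_{j}$. The uncountable case applied to $R_{\KK}$ yields an $e=e(k)$ bounding the inverses of all units in $(R_{\KK})_{k}$. If $r\in R_{k}$ is a unit of $R$, it is a unit of $R_{\KK}$ with the same inverse, so $r^{-1}\in(R_{\KK})_{e}\cap R=R_{e}$, and the same $e$ works over $\kk$. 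Thus the entire argument reduces to the constructibility of $U_{k}$ established above, after which the Noetherian finiteness of Lemma~\ref{constructible.lem} supplies the uniform bound.
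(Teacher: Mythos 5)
Your proof is correct, but it takes a genuinely different route from the paper. The paper's proof is essentially two lines of effective commutative algebra: lift $r\in R_{k}$ to a polynomial $q$ of degree $\leq k$, observe that invertibility means $1\in(q,q_{1},\ldots,q_{m})$, and invoke the \name{Hermann}-type degree bound for ideal membership (Proposition~\ref{degree-bound-for-ideal-membership.prop}) to produce the \emph{explicit} bound $e = k+\left((m+1)k\right)^{2^{n}}$, valid over any field with no cardinality hypothesis. You instead argue non-effectively: the sets $\pi_{\ell}(W_{\ell})$ are constructible by \name{Chevalley}, they exhaust $U_{k}=R^{*}\cap R_{k}$, and Lemma~\ref{constructible.lem} (over an uncountable field, then removed by the base change $R\into\KK\otimes_{\kk}R$ together with $R\cap(R_{\KK})_{e}=R_{e}$) forces the union to stabilize. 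Two remarks on your version. First, you correctly identified that you cannot quote the local closedness of $R^{*}$ in $R$ (Theorem~\ref{R*-locally-closed-in-R.thm}) to get constructibility of $U_{k}$, since that theorem is proved \emph{using} the present lemma; your independent argument via the incidence set $I=\{(r,x)\mid \bar r(x)=0\}$ avoids this circularity cleanly --- though the appeal to the Jacobson property is superfluous, as in any commutative ring an element is a unit if and only if it lies in no maximal ideal, and the Nullstellensatz identifies maximal ideals of $R_{\mathrm{red}}$ with points of $X$. Second, the trade-off: the paper's argument buys an effective, presentation-explicit bound and complete independence from the base field's cardinality, while yours buys self-containment within the paper's own general toolkit (constructibility over uncountable fields plus base field extension) at the cost of producing an $e(k)$ with no control. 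Both establish exactly what the lemma asserts.
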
 

For the proof we will use the following result given in \cite[p. 49]{He1926Die-Frage-der-endl} (cf. \cite[p. 312]{MaMe1982The-complexity-of-} and \cite[\S 57]{Se1974Constructions-in-a}).

\begin{proposition} \label{degree-bound-for-ideal-membership.prop}
If $p,q_0,\ldots,q_m \in \kk [x_1,\ldots, x_n ]$ are of degree $\leq k$ and if $p \in (q_0,\ldots, q_m) $, then there exist $p_{0}, \ldots, p_m \in \kk [x_1,\ldots, x_n]$ such that
$$
p = \sum_{i=0}^{m} p_i \, q_i\quad\text{and}\quad \deg  p_i \leq k+ \left( (m+1)k \right)^{2^n} 
\text{ for all } i.
$$
\end{proposition}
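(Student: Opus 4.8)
The plan is to reduce this to a statement about solving \emph{systems} of linear equations over the polynomial ring with an explicit degree bound, and to prove the latter by induction on the number of variables. The auxiliary statement I would establish is: \emph{for a linear system $\sum_{j=1}^{s}f_{ij}u_{j}=g_{i}$ ($1\le i\le r$) whose coefficients $f_{ij}$ and right-hand sides $g_{i}$ lie in $\kk[x_{1},\ldots,x_{n}]$ and have degree $\le d$, two things hold with a single double-exponential bound $\beta=\beta(n,d,r,s)$ of the shape $(sd)^{2^{n}}$: (a) if the system is solvable over $\kk[x_{1},\ldots,x_{n}]$, then it has a solution $(u_{j})$ with $\deg u_{j}\le\beta$; and (b) the module of solutions of the homogeneous system $\sum_{j}f_{ij}u_{j}=0$ is generated by solutions of degree $\le\beta$.} The Proposition is the case $r=1$, $s=m+1$, $(f_{1j})=(q_{0},\ldots,q_{m})$, $g_{1}=p$, $d=k$ of part (a), which yields precisely the bound $((m+1)k)^{2^{n}}$; the $+k$ in the statement is harmless slack. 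Thus the entire content is the degree bound for linear systems.

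I would prove (a) and (b) together by induction on $n$, since in the inductive step the bound for a particular solution feeds on the bound for the syzygies and vice versa. The base case $n=0$ is ordinary linear algebra over the field $\kk$: every polynomial has degree $0$, the solution space and the syzygy space are honest $\kk$-subspaces, and $\beta=0$ works.

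For the inductive step I would single out the last variable $x:=x_{n}$ and regard every polynomial as an element of $K[x]$ with $K:=\kk[x_{1},\ldots,x_{n-1}]$. The first and crucial task is to bound the $x$-degree of solutions. If a homogeneous solution has large $x$-degree, its leading $x$-coefficients satisfy a homogeneous linear system over $K$ built from the $x$-leading forms of the $f_{ij}$; by part (b) of the induction hypothesis applied over $K$, such relations are generated in bounded $K$-degree, and one uses them to cancel the top and strictly decrease the $x$-degree. Iterating caps the $x$-degree of a generating set of syzygies (and, after correcting by one particular solution, of a particular solution) at an explicit $N$ that is polynomial in $d,r,s$. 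Once the $x$-degree is capped at $N$, I would expand in the powers $1,x,\ldots,x^{N}$ and compare coefficients: this converts the original system into a linear system over $K=\kk[x_{1},\ldots,x_{n-1}]$ with coefficients of degree $\le d$ but with on the order of $Ns$ unknowns and $(N+d)r$ equations. Applying parts (a) and (b) of the induction hypothesis over $K$ to this enlarged system, and translating the $K$-degree bounds back through the expansion, gives (a) and (b) over $\kk[x_{1},\ldots,x_{n}]$. Because the enlarged system carries a size parameter of order $N$ which already grows with $d,r,s$, the bound nests through one more variable with the size parameter inflated, and this is exactly what turns the exponent $2^{n-1}$ into $2^{n}$.

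The main obstacle is precisely the $x$-degree reduction: since $K$ is a ring and not a field one cannot invert leading coefficients, and the $x$-leading forms of the various $f_{ij}$ occur in different degrees, so the passage ``look at the top $x$-coefficients'' must be organized carefully — typically after arranging the system so that the relevant leading data over $K$ is under control — in order that the reduced system over $K$ genuinely accounts for all high-degree syzygies and yields an \emph{explicit} cap $N$ feeding correctly into the induction. Carrying this bookkeeping uniformly, tracking simultaneously the $x$-degree, the $K$-degrees, and the inflation of the number of equations and unknowns, is the delicate core of the argument; a cleaner but less elementary alternative would be to quote the known double-exponential degree bounds for Gröbner bases and deduce the ideal-membership bound by the division algorithm.
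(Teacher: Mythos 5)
First, a point of calibration: the paper contains no proof of this proposition at all — it is quoted from \name{Hermann}'s 1926 paper \cite{He1926Die-Frage-der-endl} (with cross-references to \cite{Se1974Constructions-in-a} and \cite{MaMe1982The-complexity-of-}), so your attempt has to be measured against the classical Hermann--Seidenberg argument. Your global architecture is exactly that argument: the simultaneous induction on $n$ for (a) bounded solutions of solvable linear systems and (b) bounded generating sets of syzygy modules over $\kk[x_1,\ldots,x_n]$; the passage to $K[x_n]$ with $K=\kk[x_1,\ldots,x_{n-1}]$; capping the $x_n$-degree and then comparing coefficients of powers of $x_n$ to get an inflated system over $K$; and the observation that the inflated size parameter is what squares the bound and turns $2^{n-1}$ into $2^n$. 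Your reduction of the proposition to the case $r=1$, $s=m+1$ of (a) is also correct.

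The gap sits precisely in the step you flag as delicate, and it is not mere bookkeeping: the proposed mechanism for capping the $x_n$-degree fails as stated. If $u$ solves the homogeneous system and its top $x_n$-coefficients form a $K$-syzygy $\sigma$ of the $x_n$-leading forms of the $f_{ij}$, then subtracting $x_n^{a}\sigma$ from $u$ kills the top coefficients but creates the error term $-x_n^{a}\sum_j \bigl(f_{ij}-\mathrm{lt}_{x_n}f_{ij}\bigr)\sigma_j \neq 0$: the modified vector no longer solves the original system, and iterating accumulates uncontrolled errors. Equivalently, syzygies of the leading forms need not lift to syzygies of the $f_{ij}$ — this is exactly the $S$-polynomial obstruction of Gr\"obner theory — so ``cancel the top and strictly decrease the $x$-degree'' is not available in this raw form. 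The classical repair, which is the actual engine of Hermann's proof and which your sketch never makes explicit, is a preparation step: after a generic linear change of coordinates each nonzero $q_i$ becomes monic in $x_n$ up to a scalar with $\deg_{x_n}q_i=\deg q_i$; division with remainder by a monic polynomial then caps the $x_n$-degrees outright (for membership: reduce $p_1,\ldots,p_m$ modulo $q_0$ in $x_n$, and note that $p_0'q_0 = p-\sum_{i\geq 1}r_iq_i$ forces $\deg_{x_n}p_0'\leq 2k-\deg q_0$ because $q_0$ is monic), and only then do the coefficient comparison and the induction over $K$ close up; for general systems Hermann and Seidenberg achieve the analogous normalization by determinantal elimination. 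Your fallback of quoting Gr\"obner degree bounds is a legitimate alternative route, but it too needs an extra ingredient: Dub\'e's bound controls the degrees of the basis elements, and one must additionally bound the degrees of an expression of the Gr\"obner basis in terms of the $q_i$, not merely divide $p$ by the basis. So: right skeleton, but the load-bearing step is missing.
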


\begin{proof}[Proof of Lemma~\ref{bound-of-inverse.lem}]
Let $q\in\kk[x_{1},\ldots,x_{n}]$ be a lift of $r\in R_{k}$ such that $\deg q \leq k$. If $r \in R$ is invertible, then 
$1 \in (q,q_{1},\ldots,q_{m})$. Thus, by Proposition~\ref{degree-bound-for-ideal-membership.prop} above, 
there exist $p,p_{1},\ldots,p_{m}$ of degree $\leq e:= k+ \left( (m+1)k \right)^{2^n}$ such that 
$1 = pq + \sum_{i\geq 1}p_{i}q_{i}$. Hence, $r^{-1}=\bar p \in R_{e}$.
\end{proof}

\begin{proof}[Proof of Theorem~\ref{R*-locally-closed-in-R.thm}]
It is clear that $\NZ(R)$ is open in $R$, because the set of zero divisors of $R$ is the union of the (finitely many) associated primes which is a closed subset.
\ps
Next we prove that $R^*$ is closed in $\NZ (R)$, i.e. that  $R^*_{k}:=R^{*}\cap R_{k}$ is closed in $\NZ(R)_{k}:=\NZ(R)\cap R_{k}$. Clearly, $r\in R_{k}$ is invertible if and only if the multiplication map $\lambda_r \colon R \to R$, $s \mapsto rs$, is bijective. This is equivalent to the condition that $1$ belongs to the image of $\lambda_{r}$. By Lemma~\ref{bound-of-inverse.lem} this means that  $1 \in \lambda_{r}(R_{e})$, $e = e(k)$. Denoting by $\LLL'(R_{e},R_{ke})$ the set of injective linear maps, we see from the following Lemma~\ref{linear-algebra.lem} that the set 
$\{h \in \LLL'(R_{e},R_{ke}) \mid 1\in h(R_{e})\}$ is closed in $\LLL'(R_{e},R_{ke})$. It follows that the set of invertible elements in $R_{k}$ is closed in the set of nonzero divisors of $R_{k}$.
\ps
It remains to see that the inverse map is an ind-morphism, i.e. that the maps $R_k \to \R_e$, $r \mapsto r^{-1}$, are morphisms. This is a direct consequence of Lemma~\ref{division-biss.lem} in the following section, applied to the multiplication map $\mu \colon R_k \times R_e \to R_{ke}$, $(r,s) \mapsto rs$.
\end{proof}

\begin{lemma} \label{linear-algebra.lem}
Let $V,W$ be finite-dimensional $\kk$-vector spaces, and let $w_{0} \in W$. Denote by $\LLL(V,W)$\idx{$\LLL(V,W)$} the linear maps from $V$ to $W$, and by $\LLL'(V,W)\subseteq \LLL(V,W)$ the subset of injective maps. Then the following holds.
\be
\item The subset $\LLL'(V,W) \subseteq \LLL (V,W)$ is open;
\item The subset $F := \{ h \in \LLL'(V,W) \mid w_{0}\in h(V) \}$ is closed in $\LLL'(V,W)$.
\ee
\end{lemma}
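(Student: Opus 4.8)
The plan is to identify $\LLL(V,W)$ with an affine space and to exhibit both conditions as the (non-)vanishing of explicit polynomial functions in the matrix entries, so that openness and closedness become immediate. Set $d:=\dim V$ and $n:=\dim W$. After choosing bases, $\LLL(V,W)$ becomes the space $\Mat_{n\times d}(\kk)$ of $n\times d$ matrices, an affine variety isomorphic to $\kk^{nd}$. A map $h$ is injective precisely when it has rank $d$; note that if $d>n$ then $\LLL'(V,W)=\emptyset$ and both claims are trivial, so I may assume $d\le n$. Throughout I fix a basis $v_1,\dots,v_d$ of $V$.

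For (1), I would observe that $h\in\LLL'(V,W)$ if and only if some $d\times d$ minor of the matrix of $h$ is nonzero. The locus where all $d\times d$ minors vanish is closed (each minor is a polynomial in the entries), so its complement $\LLL'(V,W)$ is open. Equivalently, the map $\Lambda\colon \LLL(V,W)\to\bigwedge^{d}W$, $h\mapsto h(v_1)\wedge\cdots\wedge h(v_d)$, is polynomial, and $\LLL'(V,W)=\{h\mid \Lambda(h)\ne 0\}$ is open.

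For (2), the key observation is that for an injective $h$ the condition $w_0\in h(V)$ can be rephrased as a rank condition on the augmented map. Form the $n\times(d+1)$ matrix $[\,h\mid w_0\,]$ obtained by adjoining $w_0$ as an extra column; since $h$ is injective its columns are independent, so $w_0\in h(V)$ iff these $d+1$ vectors are dependent, iff $[\,h\mid w_0\,]$ has rank $\le d$, iff every $(d+1)\times(d+1)$ minor of $[\,h\mid w_0\,]$ vanishes. Each such minor is a polynomial in the entries of $h$, hence the set $Z$ of all $h$ annihilating every $(d+1)\times(d+1)$ minor of $[\,h\mid w_0\,]$ is closed in $\LLL(V,W)$, and $F=\LLL'(V,W)\cap Z$ is closed in $\LLL'(V,W)$. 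Using the wedge formulation instead, one takes the polynomial map $\Theta\colon \LLL(V,W)\to\bigwedge^{d+1}W$, $h\mapsto w_0\wedge h(v_1)\wedge\cdots\wedge h(v_d)$, and checks that for injective $h$ one has $w_0\in h(V)\iff\Theta(h)=0$; then $F=\LLL'(V,W)\cap\Theta^{-1}(0)$ is closed in $\LLL'(V,W)$.

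There is no serious obstacle here: the only point requiring a moment's care is the equivalence ``$w_0\in h(V)\iff$ the $(d+1)$-fold minors (resp.\ $\Theta(h)$) vanish'', which relies on $h$ being injective, so that $h(v_1),\dots,h(v_d)$ already form a basis of $h(V)$; dependence of the augmented system then forces $w_0$ into their span. All maps in sight are polynomial, so the topological conclusions follow directly.
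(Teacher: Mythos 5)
Your proof is correct and follows essentially the same route as the paper: openness of $\LLL'(V,W)$ via the closedness of the rank~$\le k$ loci (maximal minors), and closedness of $F$ via the vanishing of all $(d+1)\times(d+1)$ minors of the augmented matrix $[\,h\mid w_0\,]$, exactly as in the paper's proof. Your wedge-product reformulation and the explicit handling of the trivial cases $d>n$ and $d=n$ are harmless cosmetic additions to the same argument.
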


\begin{proof}
(1) This is well-known, because the set of maps of rank $\leq k$ is closed for each $k$.
\ps
(2) We can assume that $n:=\dim V < m:=\dim W$. Choosing bases of $V$ and $W$, the injective morphisms are given by the $m\times n$-matrices $A$ of rank $n$. The condition $w_{0} \in A(V)$ means that all $(n+1) \times (n+1)$ minors of the extended matrix $(A|w_{0})$ are zero, hence the claim.
\end{proof}

\begin{remark}
If we only assume that the algebra $R$ has countable dimension, then it no longer true that $R^*$ is locally closed in $R$. Set $R= \kk [x, (x+n)^{-1}, \; n \in \ZZ ]$ and consider the affine line $L:= \{ x+a, \; a \in \kk \} \subseteq R$. Then, $R^* \cap L = \{ x+a, \; a \in \ZZ \}$ is not locally closed in $L$, proving that $R^*$ is not locally closed in $R$.
\end{remark}

\ps
\subsection{Division is a morphism}\label{division.subsec}
The following ``Division Lemma''
was used in the proof of Theorem~\ref{R*-locally-closed-in-R.thm} above. 
We will need it again in Section~\ref{normalization.sec}, in the proof of Proposition~\ref{normalization.prop}.

\begin{lemma} \label{division-biss.lem}
Let $U$, $V$ and $W$ be finite-dimensional $\kk$-vector spaces, and let $\beta\colon U \times V \to W$ be a $\kk$-bilinear map. Let $X$ be a variety, and let $\mu\colon X \to U$, $\rho \colon X \to W$ be morphisms with the property that for all $x \in X$ there exists a unique element $\nu(x) \in V$ such that $\beta (\mu(x), \nu(x) ) = \rho(x)$.  Then the map $\nu \colon X \to V$ is a morphism.
\end{lemma}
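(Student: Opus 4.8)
The plan is to reduce the statement to elementary linear algebra over the coordinate functions of $X$, using that the uniqueness hypothesis forces the relevant partial maps to be injective of constant rank. First I would fix bases of $U$, $V$, $W$, so that $V\cong\kk^{n}$ and $W\cong\kk^{m}$, and observe that for each $x\in X$ the partial map $L_{x}:=\beta(\mu(x),-)\colon V\to W$ is $\kk$-linear. Writing $e_{1},\dots,e_{n}$ for the basis of $V$, the column $\beta(\mu(x),e_{j})$ is the value at $x$ of the morphism $x\mapsto\beta(\mu(x),e_{j})$ (the composition of $\mu$ with a fixed linear map), so its coordinates are regular functions on $X$. Hence $x\mapsto A(x)$, the $m\times n$ matrix of $L_{x}$, is a morphism $X\to\LLL(V,W)$, and in the chosen coordinates $\rho$ becomes a morphism $b\colon X\to\kk^{m}$. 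With this notation the defining equation $\beta(\mu(x),\nu(x))=\rho(x)$ reads $A(x)\,\nu(x)=b(x)$.

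Next I would exploit uniqueness. If $v,v'\in V$ both solve $\beta(\mu(x),v)=\rho(x)$, then $v-v'\in\Ker L_{x}$, so uniqueness forces $\Ker L_{x}=0$; that is, $L_{x}$ is injective and $A(x)$ has rank $n$ for \emph{every} $x\in X$ (in particular $n\le m$). This constant maximal rank is what allows a local solution: for each index set $I\subseteq\{1,\dots,m\}$ with $|I|=n$, let $A_{I}(x)$ be the $n\times n$ submatrix of $A(x)$ on the rows in $I$ and $b_{I}(x)$ the corresponding subvector of $b(x)$, and set $X_{I}:=\{x\in X\mid\det A_{I}(x)\neq0\}$. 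Each $X_{I}$ is open, since the determinant is a regular function, and because $A(x)$ always has rank $n$ the sets $X_{I}$ cover $X$.

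It remains to check regularity on each piece, and here I would invoke Cramer's rule. On $X_{I}$ the matrix $A_{I}(x)$ is invertible, so selecting the rows in $I$ from $A(x)\nu(x)=b(x)$ gives $A_{I}(x)\,\nu(x)=b_{I}(x)$, whence $\nu(x)=A_{I}(x)^{-1}b_{I}(x)$; by Cramer's rule the entries of $A_{I}(x)^{-1}$ are quotients of minors of $A(x)$ by $\det A_{I}(x)$, all regular on $X_{I}$, so $\nu|_{X_{I}}\colon X_{I}\to V$ is a morphism. Since the hypothesis already produces the \emph{unique} value $\nu(x)$, these local formulas automatically agree on overlaps, and being a morphism is local on the source; hence $\nu\colon X\to V$ is a morphism. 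The only genuine subtlety I expect is that $L_{x}$ is merely injective and not bijective, so there is no single global inverse and the naive least-squares formula ``$A^{\mathsf{T}}A$'' need not be invertible over an arbitrary field; the main step is therefore the passage to invertible $n\times n$ minors and the gluing over the open cover, which resolves this cleanly.
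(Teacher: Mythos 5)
Your proof is correct and takes essentially the same route as the paper's: both choose bases to rewrite $\beta(\mu(x),\nu(x))=\rho(x)$ as a linear system whose coefficient matrix has entries regular in $x$, deduce from the uniqueness hypothesis that this matrix has full column rank (equivalently, an invertible maximal square submatrix), and then apply Cramer's rule where that minor is nonzero to get regularity of $\nu$ locally, hence globally. The only cosmetic difference is that you organize the argument as an explicit open cover by the sets $X_{I}$ indexed by row subsets, whereas the paper argues pointwise around each $x_{0}$; the mathematical content is identical.
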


\begin{proof}
Choose bases $u_1, \ldots,u_n$ of $U$, $v_1, \ldots, v_m$ of $V$, and $w_1, \ldots, w_\ell$ of $W$, and write $\mu(x) = \sum_i g_i(x) u_i$, $\nu(x) = \sum_j f_j(x) v_j$, and $\rho(x) = \sum_k h_k(x) w_k$. By assumption, the functions $g_{i}(x)$ and $h_{k}(x)$ are regular on $X$, and we have to show that the functions $f_{j}(x)$ are also regular.

The equation $\beta (\mu(x), \nu(x) ) = \rho(x)$ is equivalent to a linear system of equations
\[
 \tag{$S(x)$} 
 \sum_{j=1}^{m} a_{kj}(x) f_j(x) = h_k(x), \quad 1 \leq k \leq \ell, \label{system-S} 
\]
where the functions $a_{kj}(x)$ are linear in the $g_{i}(x)$, hence regular. For a fixed $x_0\in X$ the system $(S(x_0))$ has, by assumption, a unique solution $(f_1(x_0),\ldots,f_{m}(x_{0}))$. Therefore, the matrix $( a_{kj} (x_0) )_{kj}$ has an invertible $m \times m$ submatrix. Clearly, this matrix remains invertible for all $x$ in an open neighborhood of $x_0$. Now \name{Cramer}'s rule implies that the functions $f_j(x)$ are regular in this neighborhood, and the claim follows.
\end{proof}
For later use we give the following application of the Divison Lemma.

\begin{corollary}   \label{bilinear.cor}
Let $R$ be a finitely generated $\kk$-domain. Choose a filtration $R = \bigcup_{k}R_{k}$ by finite-dimensional subspaces. Let $X$ be a variety, and let $\mu\colon X \to R$, $\rho\colon X \to R$ be morphisms of ind-varieties such that the following holds: 
\be
\item[$(*)$] 
$\mu(x)\neq 0$ for all $x\in X$, and there is a $k\geq 1$ such that $\nu(x):=\frac{\rho(x)}{\mu(x)} \in R_{k}$ for all $x\in X$.
\ee
Then $\nu\colon X \to R$ is an ind-morphism.
\end{corollary}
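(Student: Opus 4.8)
The plan is to reduce the statement to a single application of the Division Lemma (Lemma~\ref{division-biss.lem}) by confining $\mu$, $\nu$, $\rho$ and the relevant products to suitable finite-dimensional subspaces of $R$. First I would use that $X$ is a variety while $\mu,\rho\colon X\to R$ are ind-morphisms: regarding $X$ as an ind-variety with the constant filtration, the definition of an ind-morphism (or Lemma~\ref{Kumar.lem}) provides integers $\ell,m$ with $\mu(X)\subseteq R_{\ell}$ and $\rho(X)\subseteq R_{m}$, the induced maps $X\to R_{\ell}$ and $X\to R_{m}$ being morphisms of varieties. By hypothesis $(*)$ the map $\nu$ already takes values in the fixed finite-dimensional space $R_{k}$.

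Next I would assemble the data for the Division Lemma. Put $U:=R_{\ell}$ and $V:=R_{k}$, and let $R_{\ell}\cdot R_{k}\subseteq R$ be the (finite-dimensional) span of all products $ab$ with $a\in R_{\ell}$, $b\in R_{k}$. I would then choose $N$ large enough that $R_{N}$ contains both $R_{m}$ and $R_{\ell}\cdot R_{k}$, and set $W:=R_{N}$. Multiplication in $R$ defines a $\kk$-bilinear map $\beta\colon U\times V\to W$, $\beta(a,b):=ab$, which indeed lands in $W$ by the choice of $N$. Composing $X\to R_{m}$ with the closed immersion $R_{m}\into R_{N}$ of vector spaces, both $\mu\colon X\to U$ and $\rho\colon X\to W$ are morphisms of varieties.

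The verification of the hypotheses of Lemma~\ref{division-biss.lem} is where the assumptions $(*)$ enter. For each $x\in X$ the element $\nu(x)\in R_{k}=V$ satisfies $\mu(x)\nu(x)=\rho(x)$, that is $\beta(\mu(x),\nu(x))=\rho(x)$, so a solution in $V$ exists. Uniqueness is the only point using that $R$ is a domain: since $\mu(x)\neq 0$, the relation $\mu(x)\,b=\rho(x)$ determines $b\in R$ uniquely, a fortiori uniquely among elements of $V$. Thus Lemma~\ref{division-biss.lem} applies and shows that $\nu\colon X\to V$ is a morphism; composing with the closed immersion $V=R_{k}\into R$ then exhibits $\nu\colon X\to R$ as an ind-morphism, as claimed.

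I do not expect a serious obstacle here: the entire content is the bookkeeping that traps $\mu$, $\nu$, $\rho$ and the product $\mu\nu$ inside finite-dimensional subspaces so that the finite-dimensional Division Lemma becomes applicable, together with the observation that the integral-domain hypothesis on $R$ supplies exactly the uniqueness that Lemma~\ref{division-biss.lem} requires.
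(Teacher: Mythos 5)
Your proof is correct and follows essentially the same route as the paper's: confine $\mu$, $\rho$, $\nu$ and the product $\mu\nu$ to finite-dimensional pieces of the filtration, take $\beta$ to be the multiplication of $R$ restricted to these pieces, and apply Lemma~\ref{division-biss.lem}, with the domain hypothesis together with $\mu(x)\neq 0$ supplying the required uniqueness. The differences are purely in the bookkeeping of indices, so there is nothing to add or fix.
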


\begin{proof}
The assumptions imply that there exist $m,\ell \geq 1$ such that the following holds:
{\it
\be
\item[(a)] $\mu(X) \subseteq R_{m}\setminus\{0\}$;
\item[(b)] $\rho(X) \subseteq R_{\ell}$ and $R_{m}\cdot R_{k} \subseteq R_{\ell}$;
\item[(c)] $\rho(X) \subseteq \mu(X) \cdot R_{k}$ (this follows from $(*)$).
\ee
}
Now set $U:=R_{m}$, $V:= R_{k}$ and $W:=R_{\ell}$, and take for $\beta\colon U \times V \to W$ the multiplication in $R$, using that $R_{m}\cdot R_{k} \subseteq R_{\ell}$, by (b).
Since $R$ is a domain it follows from (a) and (c)  that the property of Lemma~\ref{division-biss.lem} is satisfied for the map $\nu\colon X \to R_{k}$ defined by $\nu(x):=\frac{\rho(x)}{\mu(x)}$. Thus $\nu\colon X \to R$ is an ind-morphism, as claimed.
\end{proof}

\ps
\subsection{Principal open sets}\label{principal-open-sets.subsec}
If $X$ is an  affine variety and $f\in \OOO(X)$ a nonzero function, then one defines the \itind{principal open set} 
$X_{f} := \{x \in X \mid f(x)\neq 0\} \subseteq X$
which is again an affine variety. For any $\kk$-algebra $R$ of countable dimension we obtain an injective morphism 
$\iota \colon X_{f}(R) \to X(R)$ of ind-varieties (Proposition~\ref{X(R).prop}) whose image is 
$$
X'(R):=\{a \in X(R) \mid f(R)(a) \in R^{*}\} \subseteq X(R).
$$ 
This follows from the fact that the affine variety $X_{f}$ is defined by  
$$
X_{f} = \{(x,s)\mid s\cdot f(x)=1\} \subset X \times \kk.
$$
\begin{proposition}\label{special-open-set.prop}
The morphism $\iota\colon X_{f}(R) \to X(R)$ is a (locally closed) immersion, i.e. its image $X'(R):=\iota(X_{f}(R)) \subseteq X(R)$ is locally closed and $\iota$ induces an isomorphism  $X_{f}(R) \simto X'(R)$.
\end{proposition}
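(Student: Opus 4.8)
The plan is to realize $X_{f}(R)$ as a closed ind-subvariety of the product $X(R)\times R$ and to identify $\iota$ with a projection, so that the whole statement reduces to Theorem~\ref{R*-locally-closed-in-R.thm} on invertible elements. Concretely, since $X_{f}=\{(x,s)\in X\times\Aone\mid s\cdot f(x)=1\}$ is closed in the affine variety $X\times\Aone$, Proposition~\ref{X(R).prop}(1) shows that $X_{f}(R)$ is a closed ind-subvariety of $(X\times\Aone)(R)=X(R)\times R$; explicitly $X_{f}(R)=\{(a,t)\in X(R)\times R\mid t\cdot f(R)(a)=1\}$. Under this identification the morphism $\iota$ is the restriction of the first projection $X(R)\times R\to X(R)$, and an element $a$ lies in its image exactly when $f(R)(a)$ is invertible; that is, $X'(R)=f(R)^{-1}(R^{*})$, where $f(R)\colon X(R)\to\Aone(R)=R$ is the ind-morphism from Proposition~\ref{X(R).prop}(1).

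First I would prove that $X'(R)$ is locally closed. By Theorem~\ref{R*-locally-closed-in-R.thm} the subset $R^{*}$ is locally closed in $R$, being closed in the open set $\NZ(R)$; write $R^{*}=\NZ(R)\cap\overline{R^{*}}$ with $\overline{R^{*}}$ the closure of $R^{*}$ in $R$. Since $f(R)$ is an ind-morphism, hence continuous for the Zariski topology, the preimage $f(R)^{-1}(R^{*})=f(R)^{-1}(\NZ(R))\cap f(R)^{-1}(\overline{R^{*}})$ is the intersection of an open and a closed subset of $X(R)$. Thus $X'(R)$ is locally closed and therefore inherits a natural ind-variety structure.

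It remains to show that the bijective morphism $\iota\colon X_{f}(R)\to X'(R)$ is an isomorphism, for which I exhibit an inverse. The candidate is the ``graph'' map $\psi\colon X'(R)\to X_{f}(R)$, $a\mapsto(a,f(R)(a)^{-1})$. Its first component is the inclusion $X'(R)\into X(R)$, a morphism. For the second component, note that $f(R)$ restricted to $X'(R)$ takes values in the locally closed subvariety $R^{*}$, hence defines a morphism $X'(R)\to R^{*}$; composing with the inversion $R^{*}\simto R^{*}$, $r\mapsto r^{-1}$, which is an isomorphism of ind-varieties by Theorem~\ref{R*-locally-closed-in-R.thm}, shows that $a\mapsto f(R)(a)^{-1}$ is a morphism $X'(R)\to R$. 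Thus $\psi$ is a morphism into $X(R)\times R$ whose image lies in the closed subset $X_{f}(R)$, so $\psi\colon X'(R)\to X_{f}(R)$ is a morphism; since $\psi$ and $\iota$ are mutually inverse, $\iota$ is an isomorphism onto $X'(R)$, i.e. a locally closed immersion.

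The only nonformal input is Theorem~\ref{R*-locally-closed-in-R.thm}: everything here is routine functorial manipulation of $R$-points, while the genuine difficulty — that inversion is a morphism and that $R^{*}$ is locally closed, which ultimately rests on the effective degree bound for ideal membership (Proposition~\ref{degree-bound-for-ideal-membership.prop}) and the Division Lemma (Lemma~\ref{division-biss.lem}) — has already been settled there. I would point out that, exactly as in Theorem~\ref{R*-locally-closed-in-R.thm}, this argument requires $R$ to be finitely generated (cf. the remark following Proposition~\ref{X(R).prop}); the countable-dimensional example in the remark after Theorem~\ref{R*-locally-closed-in-R.thm} shows that local closedness can fail without that hypothesis.
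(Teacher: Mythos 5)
Your proof is correct and follows essentially the same route as the paper's: both identify the image as $X'(R)=f(R)^{-1}(R^{*})$, deduce local closedness from Theorem~\ref{R*-locally-closed-in-R.thm}, and invert $\iota$ via the graph map $a\mapsto (a,\,f(R)(a)^{-1})$, which is a morphism because inversion on $R^{*}$ is. Your extra touches — the explicit decomposition $R^{*}=\NZ(R)\cap\overline{R^{*}}$ and the observation that finite generation of $R$ is needed (cf.\ the counterexample after Theorem~\ref{R*-locally-closed-in-R.thm}) — merely make explicit what the paper leaves implicit.
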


\begin{proof}
The function $f \colon X \to \kk=\Aone$ defines a morphism $f(R) \colon X(R) \to R$ of ind-varieties 
(Proposition~\ref{X(R).prop}). Hence the subset
$$
X'(R) = \{a \in X(R) \mid f(R)(a) \in R^{*}\} = f(R)^{-1}(R^{*})
$$
is locally closed, because $R^{*}\subset R$ is locally closed (Theorem~\ref{R*-locally-closed-in-R.thm}). 
By definition
$$
X_{f}(R) = \{(a,r)  \in X(R) \times R  \mid r \cdot f(R)(a) = 1\} \subset  X(R)\times R.
$$
Since the inverse  $R^{*}\to R^{*}$ is a morphism, by Theorem~\ref{R*-locally-closed-in-R.thm}, we see that the map
$g \colon X'(R) \to R$, $a \mapsto [ f(R) (a) ] ^{-1}$,  is also a morphism. Therefore, the map
$$
\kappa\colon X'(R) \to X(R) \times R, \quad a \mapsto (a,g(a)),
$$
is a morphism whose image is $X_{f}(R)$. Since $\kappa$ is the inverse of $\iota$ the claim follows.
\end{proof}

\begin{corollary}
The inclusion $\iota\colon \GL_{n}(R) \into \M_{n}(R)$ is a (locally closed) immersion of ind-varieties. In particular, the invertible $R$-matrices form a locally closed subset of $\M_{n}(R)$.
\end{corollary}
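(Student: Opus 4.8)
The plan is to exhibit $\GL_{n}$ as a principal open set of the affine space $\M_{n}$ and then invoke Proposition~\ref{special-open-set.prop}. First I would note that $\M_{n} \simeq \AA^{n^{2}}$ is an affine variety and that the determinant $\det \in \OOO(\M_{n})$ is a nonzero regular function on it. By the very definition of the general linear group, $\GL_{n}$ is the principal open set
\[
(\M_{n})_{\det} = \{A \in \M_{n} \mid \det A \neq 0\},
\]
realized, as in Section~\ref{principal-open-sets.subsec}, as the closed subvariety $\{(A,s) \in \M_{n}\times\Aone \mid s\cdot\det A = 1\} \subseteq \M_{n}\times\Aone$.

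Next I would apply Proposition~\ref{special-open-set.prop} with $X := \M_{n}$ and $f := \det$. The only point deserving care is that the ind-variety structure on $\GL_{n}(R)$ coming from the functorial $R$-points construction of Proposition~\ref{X(R).prop} (equivalently, from Proposition~\ref{G(R)-as-ind-group.prop}) agrees with the structure on $(\M_{n})_{\det}(R)$: both are obtained by applying the $R$-points functor to the same variety $(\M_{n})_{\det}$, and so coincide by the uniqueness built into Proposition~\ref{X(R).prop}. Granting this, the inclusion $\iota\colon \GL_{n}(R) = (\M_{n})_{\det}(R) \into \M_{n}(R)$ is precisely the immersion $X_{f}(R) \into X(R)$ furnished by Proposition~\ref{special-open-set.prop}. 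Hence $\iota$ is a locally closed immersion: its image is
\[
\{A \in \M_{n}(R) \mid \det A \in R^{*}\} = (\det(R))^{-1}(R^{*}),
\]
which is locally closed because $R^{*} \subseteq R$ is locally closed by Theorem~\ref{R*-locally-closed-in-R.thm}, and $\iota$ induces an isomorphism onto it (the inverse being a morphism since the inversion $r \mapsto r^{-1}$ on $R^{*}$ is, again by Theorem~\ref{R*-locally-closed-in-R.thm}).

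The ``in particular'' assertion is then immediate, since the image of a locally closed immersion is by definition a locally closed subset. I do not expect any genuine obstacle here: the content is entirely packaged in the earlier results, and the proof amounts to the observation that invertibility of a matrix over a commutative ring is governed by the single function $\det$, so that $\GL_{n}$ is a principal open subset of $\M_{n}$ rather than merely a locally closed one.
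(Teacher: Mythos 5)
Your proposal is correct and is exactly the argument the paper intends: the corollary is stated as a direct consequence of Proposition~\ref{special-open-set.prop}, applied with $X = \M_{n}$ and $f = \det$, so that $\GL_{n} = (\M_{n})_{\det}$ is a principal open set. Your additional care in checking that the functorial structure from Proposition~\ref{X(R).prop} makes the two descriptions of $\GL_{n}(R)$ agree is sound and fills in the only detail the paper leaves implicit.
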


Here is another consequence of Theorem~\ref{R*-locally-closed-in-R.thm}.
\begin{corollary}\label{special-open-set.cor}
Let $X,Y$ be affine varieties, and let $Y_{f}\subseteq Y$ be a principal open set. Then $\Mor(X,Y_{f}) \subseteq \Mor(X,Y)$ is locally closed.
\end{corollary}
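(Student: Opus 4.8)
The plan is to reduce the statement directly to Proposition~\ref{special-open-set.prop} by taking the coordinate ring of $X$ as the test algebra. Since $X$ is an affine variety, $R:=\OOO(X)$ is a finitely generated commutative $\kk$-algebra, so it is in particular of countable dimension and all the results of Section~\ref{invertible-elements.sec} and Section~\ref{principal-open-sets.subsec} apply to it. Moreover $Y_{f}\subseteq Y$ is again an affine variety (a principal open set of an affine variety is affine), so by Lemma~\ref{ind-var-morphisms.lem} both $\Mor(X,Y)$ and $\Mor(X,Y_{f})$ carry their natural affine ind-variety structures.

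The first step is to record the canonical identifications coming from Proposition~\ref{X(R).prop} and Lemma~\ref{ind-var-morphisms.lem}, namely
\[
\Mor(X,Y) = Y(\OOO(X)) = Y(R), \qquad \Mor(X,Y_{f}) = Y_{f}(\OOO(X)) = Y_{f}(R),
\]
as ind-varieties. The second step is to check that the inclusion $\Mor(X,Y_{f}) \into \Mor(X,Y)$, which is simply post-composition with the open immersion $Y_{f}\into Y$, corresponds under these identifications to the canonical map $\iota\colon Y_{f}(R) \to Y(R)$ studied in Proposition~\ref{special-open-set.prop}; this is immediate from the functoriality statement in Proposition~\ref{X(R).prop}(1) together with the universal property built into the identification $\Mor(X,Y)=Y(\OOO(X))$.

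The final step is to invoke Proposition~\ref{special-open-set.prop} with the algebra $R=\OOO(X)$: it asserts precisely that $\iota\colon Y_{f}(R)\to Y(R)$ is a locally closed immersion, so its image $Y_{f}(R)\subseteq Y(R)$ is locally closed and $\iota$ induces an isomorphism onto it. Transporting this back through the identifications above yields that $\Mor(X,Y_{f})\subseteq \Mor(X,Y)$ is locally closed, which is the claim. There is essentially no genuine obstacle here, as all the substantive work has already been carried out: the degree bound of Lemma~\ref{bound-of-inverse.lem}, the local closedness of $R^{*}$ in $R$ (Theorem~\ref{R*-locally-closed-in-R.thm}), and the explicit inverse morphism $\kappa$ in the proof of Proposition~\ref{special-open-set.prop}. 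The only point requiring a line of care is the bookkeeping in the second step, namely verifying that the set-theoretic inclusion of morphism spaces really is the map $\iota$ and not merely a bijection onto the same underlying set; this is handled by the representability/universal-property characterization in Definition~\ref{universal-property-of-the-ind-variety-Mor(V,W).def}.
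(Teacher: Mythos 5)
Your proof is correct and is essentially the paper's own argument: under the identification $\Mor(X,Y)=Y(\OOO(X))$ of Lemma~\ref{ind-var-morphisms.lem}, the map $\rho(\phi):=\phi^{*}(f)$ used in the paper's proof is exactly the morphism $f(R)$ of Proposition~\ref{special-open-set.prop} with $R=\OOO(X)$, so both routes reduce to the local closedness of $R^{*}$ in $R$ (Theorem~\ref{R*-locally-closed-in-R.thm}). The paper just takes the preimage $\rho^{-1}(\OOO(X)^{*})$ directly instead of citing the proposition, whereas your packaging additionally records, for free, that the inclusion $\Mor(X,Y_{f})\into\Mor(X,Y)$ is an immersion of ind-varieties.
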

\begin{proof}
Consider the following map $\rho\colon\Mor(X,Y) \to \Mor(X,\kk)=\OOO(X)$, $\rho(\phi) := f\circ\phi = \phi^{*}(f)$. This is an ind-morphism (Remark~\ref{evaluation and composition.rem}), and $\Mor(X,Y_{f}) = \rho^{-1}(\OOO(X)^{*})$, hence the claim, by Theorem~\ref{R*-locally-closed-in-R.thm}.
\end{proof}

\begin{question} \label{does-Z-open-affine-in-Y-affine-imply-that-Mor(X,Z)-is-locally-closed-in-Mor(X,Y).ques}
Does this hold for an arbitrary open affine variety $Y' \subseteq Y$?
\end{question}

\ps
\subsection{The structure of the ind-group \texorpdfstring{$R^{*}$}{R*}}
For the coordinate ring $R=\OOO(X)$ of an irreducible affine variety $X$ it has been shown by
\name{Rosenlicht} in \cite[Lemma to Proposition~3]{Ro1957Some-rationality-q} that the group $R^{*}/\Cst$ is finitely generated and torsion free (cf. \cite[1.3~Proposition~3]{KnKrVu1989The-Picard-group-o}). The following result generalizes this. 

\begin{proposition}  \label{structure-of-R*.prop}
Let $R$ be a finitely generated commutative $\CC$-algebra, $\rr\subseteq R$ its nilradical, and let $d\geq 1$ be the number of connected components of the affine variety $\Spec ( R/ \rr)$. 
Then the following holds.
\be
\item
There is an isomorphism of ind-groups $(R^{*})^{\circ} \simeq (1+\rr)\times (\Cst)^{d}$. 
\item
The quotient $R^{*}/(R^{*})^{\circ}$ is a finitely generated free abelian group. In particular, $\dim R^{*}=\dim_{\CC}\rr + d$.
\item
The exponential map $\rr \to U:= 1 + \rr$ defines an isomorphism of ind-groups $( \rr, +) \simto (U, \times )$. In particular, the ind-group $(R^{*})^{\circ}$ is nested.
\ee
\end{proposition}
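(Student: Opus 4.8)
The plan is to treat the three assertions in the order (3), (1), (2), because the exponential isomorphism is needed for the ``nested'' conclusion and the reduction map $R\to R/\rr$ organizes everything else. For \emph{Part (3)}, note that $R$ is Noetherian, so $\rr$ is nilpotent, say $\rr^N=0$. For $a\in\rr$ the truncated series $\exp(a):=\sum_{i=0}^{N-1}a^i/i!$ and $\log(1+a):=\sum_{i=1}^{N-1}(-1)^{i-1}a^i/i$ are finite sums; since powers and multiplication in $R$ are ind-morphisms, both $\exp\colon\rr\to 1+\rr$ and $\log\colon 1+\rr\to\rr$ are ind-morphisms, and $1+a$ is a unit because $a$ is nilpotent, so $U:=1+\rr\subseteq R^*$. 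The formal identities $\exp(a+b)=\exp(a)\exp(b)$, $\log\circ\exp=\id$, $\exp\circ\log=\id$ hold since $\rr$ is commutative and nilpotent, whence $\exp$ is an isomorphism of ind-groups $(\rr,+)\simto(U,\times)$. As $\rr$ is a $\kk$-vector space of countable dimension, it is filtered by finite-dimensional, hence closed algebraic, subgroups (Example~\ref{countable-vector-space.exa}), so $U$ is nested and connected; granting Part~(1), $(R^*)^\circ\simeq U\times(\Cst)^d$ is a product of a nested ind-group with an algebraic torus, hence nested.

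For the remaining parts I would introduce the surjective homomorphism of ind-groups $\pi\colon R^*\to(R/\rr)^*=\OOO(X)^*$ induced by reduction modulo $\rr$, whose kernel is exactly $U$ (any lift of a unit of $R/\rr$ differs from a unit by an element of $1+\rr$). The orthogonal idempotents $e_1,\dots,e_d\in\OOO(X)$ cutting out the connected components $X_1,\dots,X_d$ lift uniquely to orthogonal idempotents $\tilde e_1,\dots,\tilde e_d\in R$, and $\tilde T:=\{\sum_i\lambda_i\tilde e_i\mid\lambda_i\in\Cst\}$ is a closed subtorus of $R^*$ with $\tilde T\simeq(\Cst)^d$, $\tilde T\cap U=\{1\}$, and $\pi|_{\tilde T}$ an isomorphism onto $\{\sum_i\lambda_i e_i\}$. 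The entire argument then rests on the following lemma about a \emph{connected} affine variety $Y$: the group $\OOO(Y)^*/\Cst$ is finitely generated free abelian, and $(\OOO(Y)^*)^\circ=\Cst$.

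\emph{Deducing (1) and (2).} Applying the lemma to each $X_i$ and using $\OOO(X)^*=\prod_i\OOO(X_i)^*$ gives $(\OOO(X)^*)^\circ=(\Cst)^d$ and $\Lambda:=\OOO(X)^*/(\Cst)^d=\prod_i\OOO(X_i)^*/\Cst$ finitely generated free abelian. Since $U$ is connected and $\pi$ is continuous, $\pi((R^*)^\circ)\subseteq(\OOO(X)^*)^\circ=\pi(\tilde T)$; together with $U\subseteq(R^*)^\circ$ this forces $(R^*)^\circ=U\cdot\tilde T$. As $U$ and $\tilde T$ are commuting closed subgroups meeting trivially, and the projection onto $\tilde T$ is the ind-morphism $\sigma\circ\pi$ with $\sigma=(\pi|_{\tilde T})^{-1}$ (legitimate on $(R^*)^\circ$, where $\pi$ takes values in $(\Cst)^d$), multiplication $U\times\tilde T\to(R^*)^\circ$ is an isomorphism of ind-groups, giving $(R^*)^\circ\simeq(1+\rr)\times(\Cst)^d$ and proving (1). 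Consequently $R^*/(R^*)^\circ\simeq\OOO(X)^*/(\Cst)^d=\Lambda$ is finitely generated free abelian, and since this quotient is $0$-dimensional one gets $\dim R^*=\dim(R^*)^\circ=\dim_{\CC}\rr+d$, proving (2).

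\emph{The main obstacle} is the lemma, and within it the topological statement $(\OOO(Y)^*)^\circ=\Cst$. The abstract structure of $\OOO(Y)^*/\Cst$ follows from \name{Rosenlicht}'s theorem \cite{Ro1957Some-rationality-q} applied to the irreducible components $Y_1,\dots,Y_m$: restriction embeds $\OOO(Y)^*/\Cst$ into the finitely generated free abelian group $\prod_j\OOO(Y_j)^*/\Cst$, with trivial kernel because a unit constant on each component is globally constant by connectedness. The delicate point is to show that each graded piece $\OOO(Y)^*\cap\OOO(Y)_k$ meets only finitely many cosets of $\Cst$; once this is known, every coset $\Cst u$ is closed (its closure in $\OOO(Y)_k$ only adds $0\notin R^*$) and $\Cst$-stable, hence also open, so the cosets are precisely the connected components and $(\OOO(Y)^*)^\circ=\Cst$. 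I expect this finiteness to be the hardest step: the cosets meeting $\OOO(Y)_k$ inject into $\Lambda_Y\simeq\ZZ^r$ and correspond to the points of the image of $\OOO(Y)^*\cap\OOO(Y)_k$ in $\PP(\OOO(Y)_k)$, which is constructible by Theorem~\ref{R*-locally-closed-in-R.thm}; being countable, it must be finite, since a constructible set of positive dimension has uncountably many points. To make this work over a possibly countable $\kk$ I would pass to an uncountable algebraically closed extension $\KK/\kk$, where dimension and the free abelian structure are preserved (Proposition~\ref{fieldextension.prop}), and descend the finiteness from $\KK$ to $\kk$.
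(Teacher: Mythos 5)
Your proof is correct, and while it follows the same overall skeleton as the paper's — truncated $\exp$/$\log$ for (3), the exact sequence $1 \to 1+\rr \to R^{*} \to (R/\rr)^{*} \to 1$, and \name{Rosenlicht}'s theorem on the irreducible components for the reduced case — it differs from the paper in two substantive ways. First, the splitting mechanism: the paper takes arbitrary lifts $\tilde e_{i}$ of the idempotents, forms the finite-dimensional subalgebra $A=\kk[\tilde e_{1},\ldots,\tilde e_{d}]\subseteq R$, and splits the sequence $1 \to 1+\rr_{A} \to A^{*} \to E^{*} \to 1$ by algebraic-group structure theory (unipotent kernel, torus quotient); you instead lift the $e_{i}$ to honest orthogonal idempotents $\tilde e_{i}$ (unique lifting modulo a nilpotent ideal) and exhibit the torus $\tilde T=\{\sum_{i}\lambda_{i}\tilde e_{i}\}$ explicitly as a section. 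Both work; yours is more hands-on and avoids invoking the splitting of solvable algebraic groups. Second, and more importantly, you prove the topological statement $(\OOO(Y)^{*})^{\circ}=\Cst$ for connected $Y$ outright: only finitely many $\Cst$-cosets meet each $\OOO(Y)_{k}$, because their image in $\PP(\OOO(Y)_{k})$ is constructible (by the local closedness of $R^{*}$ in $R$) and countable (it injects into the finitely generated group $\OOO(Y)^{*}/\Cst$), hence finite over an uncountable field, with base change $\KK/\kk$ and descent handling countable $\kk$. The paper passes over this point: it deduces $(R^{*})^{\circ}=p^{-1}(E^{*})$ from ``$p^{-1}(E^{*})$ is connected and the quotient is finitely generated free abelian,'' which tacitly requires knowing that the cosets are open — equivalently that no irreducible curve in $R^{*}$ moves the class in $\OOO(X)^{*}/E^{*}$, a fact one would usually extract from \name{Rosenlicht}'s unit theorem for products $C\times X_{i}$. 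So your argument fills in explicitly the one step the paper leaves implicit, at the cost of the extra constructibility/countability machinery; the paper's version buys brevity by leaning on standard folklore about units in families.
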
\idx{$\R$@$\rr$}\idx{nilradical of $\R$}

\begin{proof}
(a) First assume that $R$ is reduced, i.e. $R = \OOO(X)$ for some affine variety $X$. Denote by $E \subseteq \OOO(X)$ the locally constant regular function. Clearly, $E$ is a product of $d$ copies of the field $\CC$. Let $X = \bigcup_{j}X_{j}$ be the decomposition into irreducible components. We have an injective homomorphism $\OOO(X) \into \prod_{j}\OOO(X_{j})$, and the projections $\OOO(X) \surto \OOO(X_{j})$ map $E$ surjectively onto $\CC$, the constant functions of $\OOO(X_{j})$. It follows that we get an injection $\OOO(X)^{*}/E^{*} \into \prod_{j}\OOO(X_{j})^{*}/\Cst$. 
In fact, if the image of $f$ in each $\OOO(X_{j})$ is constant, then $f$ is locally constant.
By \name{Rosenlicht}'s Theorem (cf. \cite[\S1, Proposition~1.3]{KnKrVu1989The-Picard-group-o}), the groups $\OOO(X_{j})/\Cst$ are finitely generated and torsion free, hence the  the claim follows in this case.
\ps
(b) In general, the nilradical $\rr$ is nilpotent, and so the inverse image $1+\rr$ of $1$ under the map $p\colon R \to R/\rr$ consists of invertible elements, hence $p^{-1}((R/\rr)^{*}) = R^{*}$. In particular, we have an exact sequence of ind-groups (see Proposition~\ref{Homs-of-G(R).prop}):
$$
\begin{CD}
1 @>>> 1+\rr  @>{\subseteq}>>  R^{*} @>p>> (R/\rr)^{*} @>>> 1
\end{CD}
$$
It follows that $1\to 1+\rr \to p^{-1}(E) \to E \to 1$ is also an exact sequence of ind-groups. We claim that this sequence splits, hence $p^{-1}(E)$ is connected and isomorphic to $(1+\rr) \times E^{*}$, and  $R^{*}/p^{-1}(E) \simto (R/\rr)^{*}/E$ which is a finitely generated free abelian group, by (a), proving (1) and (2).
\ps
In order  to prove the claim 
choose inverse images $\tilde e_{1},\ldots,\tilde e_{d} \in R$ of the standard $\kk$-basis of $E=\CC^{d} \subseteq R/\rr$, and set  $A:=\CC[\tilde e_{1},\ldots,\tilde e_{d}]\subseteq R$. Then $\rr_{A}:=\rr\cap A$ is the nilradical of $A$ and $A/\rr_{A} \simeq E$. It follows that $A$ is finite-dimensional, and so $A^{*} \subseteq R^{*}$ is an algebraic group. As above, we have an exact sequence $1 \to 1+\rr_{A} \to A^{*} \to E^{*} \to 1$ which clearly splits, because $1 + \rr_{A}$ is a unipotent group and $E^{*}$ is a torus.
\ps
Since $\rr^{m}=(0)$ for some $m\geq 1$, it is clear that the exponential map $\exp \colon \rr \to U$, $x \mapsto \exp (x) = \sum_{i \geq 0} \frac{x^i}{i!}$ and the logarithm $\log \colon U \to \rr$, $1+r \mapsto \log (1+r) = \sum_{i \geq 1} (-1)^{i-1}\frac{x^i}{i}$ are morphisms of ind-groups, and it is not difficult to see that each one is the inverse of the other, cf. Section~\ref{exponential-for-linear-algebraic-groups.subsec}).
\end{proof}

\ps
\subsection{The case  of non-commutative \texorpdfstring{$\kk$}{k}-algebras}\label{non-commutative-case.subsec}
If $R$ is an associative, but not necessarily commutative $\kk$-algebra of countable dimension, we also have a ``natural'' structure of an affine ind-group structure on the group $R^{*}$ of invertible elements, obtained in the following way (cf. Section~\ref{R*-embedding.subsec}).
The subset 
$$
A :=\{(r,s) \in R \times R \mid rs=1\} \subset R \times R
$$ 
is closed, and the first projection $\pr_{1}\colon R \times R \to R$ induces a bijection $A \simto R^{*}$ and thus endows $R^{*}$ with the structure of an ind-variety. It is easy to see that $R^{*}$ is an ind-group and that the injection $R^{*}\into R$ is a morphism of ind-semigroups.
(There will be more about this in Section~\ref{GeneralAlgebra.sec} where we study  ``general algebras'' $\R$.)

One could expect that the structure results for $R^{*}$ given in Proposition~\ref{structure-of-R*.prop} above carry over to the non-commutative case. But this is not true, and our example is based
on the counterexamples to the famous  \name{Burnside} Problem which we are going to recall first.

Let $F_{2}$ denote the free group in two generators, and let $P_{n}:=\langle z^{n}\mid z\in F_{2}\rangle$ be the subgroup generated by the $n$th powers of all elements from $F_{2}$. This is a normal subgroup, and the quotient group $B(n):=F_{2}/P_{n}$ is the so called \name{Burnside} group. By construction, every element of $B(n)$ has a finite order dividing $n$. It was shown by \name{S.V.~Ivanov} that  $B(n)$ is infinite for $n\geq 2^{48}$ \cite{Iv1992On-the-Burnside-pr}, and this bound was improved by \name{I.G.~Lysenok} who showed that $B(n)$ is infinite for $n\geq 8000$. 

Now we consider the group algebra $R(n):=\kk[B(n)]$ which is a quotient of the free associative
$\kk$-algebra in 2 generators. If we denote by $x,y$ the two generators of $B(n)$, then $R(n)$ is generated as a $\kk$-algebra by $x,y$, and $B(n) \subset R(n)^{*}$ is the set of monomials in $x,y$.
For every $z\in B(n)$ the commutative subalgebra $\kk[z] \subset R(n)$ is the group algebra of $\langle z \rangle$, hence it is isomorphic to a product of $m$ copies of $\kk$ where $m$ is the order of $z$. Therefore, $\kk[z]^{*} \subseteq R(n)^{*}$ is an $m$-dimensional torus, and in particular it is connected.

Now assume that $(R(n)^{*})^{\circ}$ is a nested ind-group. Then the subgroup generated by the two tori $k[x]^{*}$ and $k[y]^{*}$ is contained in a connected algebraic subgroup $G \subset (R(n)^{*})^{\circ}$. 
Since $G$ contains all monomials in $x,y$ we have $B(n)\subseteq G$. We claim that this implies that $B(n)$ is finite. In fact, the inclusion map $R(n)^{*} \into R(n)$ is an ind-morphism, hence $G$ is contained in an algebraic subset of $R(n)$. It follows that $B(n)$ belongs to a finite-dimensional linear subspace of $R(n)$ and thus must be finite, because $B(n)$ is a basis of $R(n)$. 

This establishes the following result.

\begin{proposition}  \label{A-case-where-(R*)-is-not-nested.prop}
Let $B(n) := B(2,n)$ be the \name{Burnside} group, and let $R(n):=\kk[B(n)]$ be its group algebra. If $n$ is large enough so that $B(n)$ is infinite (e.g. $n \geq 8000$), then the connected component $( R(n)^{*})^{\circ}$ does not have the structure of a nested ind-group.
\end{proposition}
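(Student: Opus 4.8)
The plan is to argue by contradiction. Suppose $(R(n)^{*})^{\circ}$ carries the structure of a nested ind-group (see Example~\ref{nested.exa}), say with an admissible filtration $(R(n)^{*})^{\circ}=\bigcup_{k}\GGG_{k}$ by closed algebraic subgroups $\GGG_{k}$. From this I will manufacture a single finite-dimensional algebraic group sitting inside $R(n)^{*}$ that contains all of $B(n)$, and then show that this forces $B(n)$ to be finite, contradicting the hypothesis on $n$.

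First I would record the elementary tori attached to the generators. For any $z\in B(n)$ of order $m$, the commutative subalgebra $\kk[z]\subseteq R(n)$ is the group algebra of the finite cyclic group $\langle z\rangle$, hence isomorphic to a product of $m$ copies of $\kk$; its group of units $\kk[z]^{*}\subseteq R(n)^{*}$ is therefore an $m$-dimensional torus, in particular a connected closed algebraic subgroup containing $z$. Applying this to the two generators $x$ and $y$ yields tori $T_{x}:=\kk[x]^{*}$ and $T_{y}:=\kk[y]^{*}$ inside $(R(n)^{*})^{\circ}$.

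Now I would bring in the nested hypothesis. Each of $T_{x},T_{y}$ is an algebraic variety mapping continuously into the ind-group $\bigcup_{k}\GGG_{k}$, so by Lemma~\ref{Kumar.lem} each is contained in some $\GGG_{k}$; choosing $\ell$ large enough, both $T_{x}$ and $T_{y}$ lie in the single algebraic group $\GGG_{\ell}$. Inside $\GGG_{\ell}$ the subgroup generated by the two connected subgroups $T_{x}$ and $T_{y}$ is again a connected closed algebraic subgroup $G$, by the standard fact that the subgroup generated by a family of connected subgroups of an algebraic group is closed and connected (see \cite[\S7]{Hu1975Linear-algebraic-g}). Since $x\in T_{x}$ and $y\in T_{y}$, the group $G$ contains every product of $x^{\pm1}$ and $y^{\pm1}$, hence all of $B(n)$; that is, $B(n)\subseteq G$.

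Finally I would derive the contradiction. The inclusion $R(n)^{*}\into R(n)$ is an ind-morphism (Section~\ref{non-commutative-case.subsec}), and $G$ is an algebraic variety, so by Lemma~\ref{Kumar.lem} again its image lies in some finite-dimensional linear subspace $V\subseteq R(n)$ of the filtration of $R(n)$ by finite-dimensional subspaces. Thus the infinite set $B(n)\subseteq G$ would be contained in the finite-dimensional space $V$; but $B(n)$ is a $\kk$-basis of $R(n)=\kk[B(n)]$, which is absurd as soon as $B(n)$ is infinite. Hence $(R(n)^{*})^{\circ}$ cannot be nested. The only step requiring genuine care is the passage from ``nested'' to a single ambient algebraic group $G$ absorbing both tori (and therefore all of $B(n)$): this is exactly where the filtration by algebraic subgroups is used, through Lemma~\ref{Kumar.lem} together with the closure of connected subgroups under generation. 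Everything afterwards is the elementary remark that an infinite basis cannot fit into a finite-dimensional subspace.
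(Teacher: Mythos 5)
Your proof is correct and follows essentially the same route as the paper's: the same tori $\kk[x]^{*},\kk[y]^{*}$, the same connected algebraic subgroup $G\supseteq B(n)$ extracted from the nested structure, and the same final contradiction via the ind-morphism $R(n)^{*}\into R(n)$ and the fact that the infinite basis $B(n)$ cannot lie in a finite-dimensional subspace. The only difference is cosmetic: you justify the existence of $G$ explicitly through Lemma~\ref{Kumar.lem} and the closure of subgroups generated by connected subgroups, a step the paper asserts without elaboration.
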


We have seen above that every element from $B(n) \subset R(n)^{*}$ is semisimple and that  the group $R(n)^{*}$ contains a lot of tori. One might wonder whether all elements of $G$ are locally finite. The next proposition shows that this is not the case if $n$ is large enough.

\begin{proposition}\label{R*-not-locally-finite.prop}
We use the same notation as in Proposition~\ref{A-case-where-(R*)-is-not-nested.prop}.
\be
\item For any $\lambda \in\kk$ such that $\lambda^{n}\neq 1$ we have $(x- \lambda y)\in (R(n)^{*})^{\circ}$.
\item If $B(n)$ is infinite and $a \geq 2$ is an integer, then the element $(x+ay)\in R(n)$ is not locally finite.
\ee
\end{proposition}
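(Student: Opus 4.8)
The plan is to treat the two assertions separately, both by exploiting the structure of the commutative subalgebras $\kk[z]$, $z\in B(n)$, recorded just before the statement, namely that $\kk[z]\cong\kk[\langle z\rangle]$ and that $\kk[z]^{*}$ is a torus.

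For (1), I would factor $x-\lambda y = x\,(1-\lambda\, x^{-1}y)$ and set $z:=x^{-1}y\in B(n)$, whose order $m$ divides $n$ (as $z^{n}=1$ in $B(n)$). Since $\kk[z]\cong\kk[\langle z\rangle]\cong\kk^{m}$, the element $z$ corresponds under this isomorphism to the tuple of all $m$-th roots of unity, so $1-\lambda z$ corresponds to $(1-\lambda\zeta)_{\zeta^{m}=1}$. This is invertible in $\kk[z]$ if and only if $\lambda\zeta\neq 1$ for every $m$-th root of unity $\zeta$, i.e. if and only if $\lambda^{m}\neq 1$; and $\lambda^{m}=1$ would force $\lambda^{n}=1$ because $m\mid n$, contrary to the hypothesis. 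Hence $1-\lambda z$ lies in $\kk[z]^{*}$, a torus, hence a connected algebraic subgroup of $R(n)^{*}$ containing $e$, so $\kk[z]^{*}\subseteq (R(n)^{*})^{\circ}$. Likewise $x\in\kk[x]^{*}\subseteq (R(n)^{*})^{\circ}$. As $(R(n)^{*})^{\circ}$ is a subgroup (Proposition~\ref{connected-component.prop}), the product $x(1-\lambda z)=x-\lambda y$ lies in it, proving (1) (the case $\lambda=0$ being included, giving $x\in(R(n)^{*})^{\circ}$).

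For (2), I would first observe that an element $u\in R(n)^{*}$ which is locally finite is algebraic over $\kk$: if $u$ belongs to an algebraic subgroup $G$, then $G$ is an algebraic subset, hence contained in some finite-dimensional $R(n)_{N}$, so all powers $u^{j}$ lie in $R(n)_{N}$ and are linearly dependent. Consequently, if $x+ay$ were locally finite, the span of its powers would lie in the $\kk$-span of a \emph{finite} set $S_{0}\subseteq B(n)$ of group elements, and in particular $\operatorname{supp}\big((x+ay)^{k}\big)\subseteq S_{0}$ for all $k$. I would then expand, in the group basis,
\[
(x+ay)^{k}=\sum_{v\in\{x,y\}^{k}} a^{\,|v|_{y}}\,\bar v ,
\]
where $v$ runs over the words of length $k$ in the letters $x,y$, $|v|_{y}$ is the number of letters equal to $y$, and $\bar v\in B(n)$ is the corresponding product.

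The crux — and the only place where the hypotheses ``$a\ge 2$ an integer'' and ``$B(n)$ infinite'' enter — is to rule out cancellation. Since $a$ is a positive real number, every $a^{\,|v|_{y}}$ is a positive rational, so the coefficient of a fixed $g\in B(n)$ in $(x+ay)^{k}$, namely $\sum_{v\in\{x,y\}^{k},\ \bar v=g}a^{\,|v|_{y}}$, is a sum of positive numbers and is therefore nonzero in $\kk$ exactly when $g$ is expressible as a positive word of length $k$ in $x,y$. Hence $\bigcup_{k\ge 0}\operatorname{supp}\big((x+ay)^{k}\big)$ is the submonoid of $B(n)$ generated by $x$ and $y$; and because $x,y$ have finite order, $x^{-1}$ and $y^{-1}$ are positive powers of $x,y$, so this submonoid is all of $B(n)$, which is infinite. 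This contradicts the inclusion in the finite set $S_{0}$ obtained from local finiteness, and (2) follows. The main obstacle is precisely this no-cancellation step: for a general coefficient it could fail, and it is the positivity forced by $a$ being a positive integer — together with the fact that positive words already exhaust the infinite group $B(n)$ — that makes the argument work.
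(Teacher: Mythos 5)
Your proof is correct, and on the substance it coincides with the paper's. Part (2) is argued exactly as in the paper: local finiteness (in the ind-group sense) would confine all powers $(x+ay)^{k}$ to a finite-dimensional subspace of $R(n)$, hence to the span of a finite subset of the basis $B(n)$, while the positivity of the coefficients $a^{|v|_{y}}$ — sums of positive integers cannot vanish in characteristic zero — forces $\operatorname{supp}\bigl((x+ay)^{k}\bigr)$ to be all of $B(n)_{k}$, and these finite sets exhaust the infinite group $B(n)$ because $x^{-1},y^{-1}$ are positive powers of $x,y$; this no-cancellation step, which you rightly identify as the crux, is precisely the paper's argument. For part (1) both proofs rest on the same factorization $x-\lambda y = x(1-\lambda z)$ with $z=x^{-1}y$ and the same invertibility criterion in $\kk[z]\simeq\kk^{m}$ (your remark that $m\mid n$, so $\lambda^{m}\neq 1$, is the implicit step in the paper's version). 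The one genuine divergence is how connectedness enters: the paper observes that all inverses $(1-\lambda z)^{-1}$ lie in the fixed finite-dimensional subspace $\kk[z]$, so that $\lambda\mapsto x-\lambda y$ is a morphism from the connected set of scalars with $\lambda^{n}\neq 1$ into $R(n)^{*}$, whose image then lies in $(R(n)^{*})^{\circ}$ because its value at $\lambda=0$, namely $x$, does; you instead place each factor separately in a connected algebraic subgroup — the tori $\kk[x]^{*}$ and $\kk[z]^{*}$, both contained in $(R(n)^{*})^{\circ}$ — and multiply inside the subgroup $(R(n)^{*})^{\circ}$. Your variant is marginally more economical, since it dispenses with checking the morphism property of the curve (though note the paper still needs the torus $\kk[x]^{*}$ to see $x\in(R(n)^{*})^{\circ}$), while the paper's curve argument records in addition that the family $x-\lambda y$ varies algebraically with $\lambda$; both routes are equally valid.
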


\begin{proof}
(a)
The following holds in the algebra $\kk[z]:=\kk[Z]/(Z^{m}-1)$: {\it A polynomial $p(z) \in \kk[z]$ is invertible if and only if $p(\zeta)\neq 0$ for all $m$-th roots of unity $\zeta$.} In fact,
$\kk [z]$ is isomorphic to  $ \kk^m$, and the projections onto the factors are given by $p(z) \mapsto p(\zeta)$.
\ps
(b)
We have $(x- \lambda y) = x(1 - \lambda x^{-1}y)$, and $(1 - \lambda x^{-1}y) \in \kk[z]$ where $z:=x^{-1}y \in B(n)$. It follows from (a) that $(1 - \lambda z)$ is invertible if $\lambda$ is not a $n$-th root of unity. Moreover, the inverses of all $(1- \lambda z)$ belong to the finite-dimensional subspace $\kk[z] \subset R(n)$ which implies that the map 
$$
\mu\colon C:=\kk \setminus \{ \text{$n$th roots of unity} \} \to R(n)^{*}, \quad \lambda \mapsto (x - \lambda y),
$$ 
is a morphism. Since $\mu(0)=x \in (R(n)^{*})^{\circ}$ we find $\mu(C) \subset (R(n)^{*})^{\circ}$, proving (1).
\ps
(c)
If $a \geq 2$ is an integer, then $-a$ is not a $n$th root of unity, so that $(x+ ay)$ belongs to $(R(n)^{*})^{\circ}$ by (1). Denote by $B(n)_k$ the set of elements of $B(n)$ that might be expressed as monomials in $x,y$ of degree $k$. Since $(x+ay)^{k}$ is a linear combination of all monomials in $x,y$ of degree $k$ and since all coefficients are positive rational numbers there exist positive rational numbers $a_b$, $b \in B(n)_k$, such that
\[
(x+ay)^k = \sum_{b \in B(n)_k} a_b b.
\]
The infinite set $B(n)$ is the union of the finite sets $B(n)_k$, $k \geq 1$. This proves that there is no finite-dimensional subspace of $R(n)$ which contains all the powers $(x+ay)^{k}$, $k \geq 1$. Hence, the elements $(x+ay)$ are not locally finite.
\end{proof}

\ps
\subsection{Endomorphisms of commutative \texorpdfstring{$\kk$}{k}-algebras}\label{endo-k-algebra.subsec}
For a $\kk$-algebra $R$ we denote by $\End (R)$ the semigroup of $\kk$-algebra endomorphisms.

\begin{proposition}\label{TEnd(R)-into-Der(R).prop}
For every  finitely generated commutative $\kk$-algebra $R$ the set $\End (R)$ has a natural structure of an ind-semigroup.
Moreover,  there is a canonical embedding $\delta\colon T_{\id}\End (R) \into \Der_{\kk}(R)$.
\end{proposition}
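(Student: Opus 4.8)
The plan is to realize $\End(R)$ explicitly as a closed ind-subvariety of $R^n$ and then to imitate the argument of Proposition~\ref{End(X)-and-Vec(X).prop}. Fix a presentation $R=\kk[x_1,\ldots,x_n]/(q_1,\ldots,q_m)$ together with the filtration $R=\bigcup_k R_k$ by the images $R_k$ of $\kk[x_1,\ldots,x_n]_{\leq k}$, as in Section~\ref{R*-embedding.subsec}. A $\kk$-algebra endomorphism $\phi$ is determined by the images $a_i:=\phi(\bar x_i)\in R$, and a tuple $(a_1,\ldots,a_n)\in R^n$ arises in this way exactly when $q_j(a_1,\ldots,a_n)=0$ in $R$ for all $j$. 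Thus $\phi\mapsto(\phi(\bar x_1),\ldots,\phi(\bar x_n))$ identifies $\End(R)$ with $\{(a_1,\ldots,a_n)\in R^n\mid q_j(a_1,\ldots,a_n)=0\ \forall j\}$, which is a closed ind-subvariety of $R^n$ by Example~\ref{subst-R-algebra.exa} (take the subspace $\aa=(0)$). Since $R$ is finitely generated, hence of countable dimension, $R^n$ and therefore $\End(R)$ are affine ind-varieties. I would also note that this structure does not depend on the chosen presentation, by the same kind of argument as in Proposition~\ref{X(R).prop}.

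To obtain the semigroup structure I would first show that the evaluation map $\varepsilon\colon\End(R)\times R\to R$, $(\phi,r)\mapsto\phi(r)$, is an ind-morphism. Filtering $\End(R)$ by $\End(R)_N:=\End(R)\cap(R_N)^n$ and choosing a $\kk$-linear section $s_k\colon R_k\to\kk[x_1,\ldots,x_n]_{\leq k}$ of the quotient map, one has, for $\phi\in\End(R)_N$ and $r\in R_k$, the formula $\phi(r)=s_k(r)(a_1,\ldots,a_n)$ with $a_i=\phi(\bar x_i)\in R_N$. Because $s_k(r)$ has degree $\leq k$ and a product of $k$ factors from $R_N$ lies in $R_{Nk}$, the right-hand side lies in $R_{Nk}$ and depends polynomially on the coordinates of $r$ and of $\phi$; hence $\varepsilon$ restricts to a morphism $\End(R)_N\times R_k\to R_{Nk}$, so $\varepsilon$ is an ind-morphism. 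Composition is then a morphism: writing $b_i:=\psi(\bar x_i)$ for the $i$-th coordinate of $\psi$, one has $(\phi\circ\psi)(\bar x_i)=\phi(b_i)=\varepsilon(\phi,b_i)$, so $(\phi,\psi)\mapsto\phi\circ\psi$ is the composite of the coordinate projections, of $\varepsilon$, and of the closed immersion $\End(R)\into R^n$. This makes $\End(R)$ an ind-semigroup.

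For the second assertion I would follow Proposition~\ref{End(X)-and-Vec(X).prop} closely. For $r\in R$ let $\mu_r\colon\End(R)\to R$, $\phi\mapsto\varepsilon(\phi,r)=\phi(r)$; this is an ind-morphism with $\mu_r(\id)=r$. Given $H\in T_{\id}\End(R)$, define $\delta_H\colon R\to R$ by $\delta_H(r):=(d\mu_r)_{\id}(H)\in T_rR=R$. Since $\phi(r+r')=\phi(r)+\phi(r')$ and $\phi(cr)=c\,\phi(r)$, the assignment $r\mapsto\mu_r$ is $\kk$-linear, so $\delta_H$ is $\kk$-linear; and from the identity $\mu_{rr'}=m\circ(\mu_r,\mu_{r'})$, where $m\colon R\times R\to R$ is the (bilinear, hence morphism) multiplication with $(dm)_{(r,r')}(u,v)=r'u+rv$, the chain rule at $\id$ gives $\delta_H(rr')=r\,\delta_H(r')+r'\,\delta_H(r)$. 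Thus $\delta_H\in\Der_\kk(R)$ and $\delta\colon H\mapsto\delta_H$ is linear. Finally $\delta$ is injective, since $\mu_{\bar x_i}$ is precisely the $i$-th coordinate projection $\End(R)\subseteq R^n\to R$, so that $(d\mu_{\bar x_i})_{\id}(H)$ is the $i$-th component of $H$; if $\delta_H=0$ then every component of $H$ vanishes and $H=0$.

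I expect the main obstacle to be the careful bookkeeping needed to prove that $\varepsilon$ is an ind-morphism — producing the explicit degree bounds via the section $s_k$ and the containment of a $k$-fold product of $R_N$ inside $R_{Nk}$, and checking polynomial dependence on both arguments. I would also flag one tempting error to avoid: although the conormal sequence identifies $\Der_\kk(R)$ with $\{(h_i)\in R^n\mid\sum_i h_i\,\overline{\partial q_j/\partial x_i}=0\ \forall j\}$, the image of $\delta$ need not be all of $\Der_\kk(R)$, because the Zariski tangent space of $\End(R)$ is cut out by the full vanishing ideal of $\End(R)$ and not merely by the linearizations of the $q_j$. Accordingly I would claim only injectivity, exactly as in the analogous situation for $T_{\id}\End(X)$ discussed after Proposition~\ref{End(X)-and-Vec(X).prop}.
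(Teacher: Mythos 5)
Your proof is correct, and it splits against the paper's as follows. For the ind-semigroup structure you and the paper do the same thing: identify $\End(R)$ with the closed subset $\{(a_1,\ldots,a_n)\in R^n \mid q_j(a_1,\ldots,a_n)=0\}$ of $R^n$ (the paper phrases the condition with the full ideal $I$, which is equivalent since the $q_j$ generate it), note presentation-independence, and conclude; the paper simply asserts that composition is a morphism, whereas you supply the missing bookkeeping via the section $s_k$ and the bound $\phi(R_k)\subseteq R_{Nk}$ for $\phi\in\End(R)_N$ --- a genuine and correct filling-in of what the paper leaves as ``easy to see.'' For the embedding $\delta\colon T_{\id}\End(R)\into\Der_{\kk}(R)$ your route differs from the paper's proof of this proposition: the paper argues with dual numbers directly on the presentation, showing that $A=(a_1,\ldots,a_n)\in T_{\id}\End(R)$ satisfies $\sum_i a_i\,\overline{\partial p/\partial x_i}=0$ for all $p\in I$, so that $\sum_i a_i\,\partial/\partial x_i$ descends to a derivation $\delta_A$ with $\delta_A(\bar x_i)=a_i$; you instead define $\delta_H(r):=(d\mu_r)_{\id}(H)$ via the evaluation morphisms and derive the Leibniz rule from the chain rule applied to $\mu_{rr'}=m\circ(\mu_r,\mu_{r'})$ with $(dm)_{(r,r')}(u,v)=r'u+rv$. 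The two maps coincide, since both derivations send $\bar x_i$ to the $i$-th component of the tangent vector and a derivation is determined on generators. Interestingly, your method is exactly the one the paper itself uses in the analogous statements for associative algebras (Proposition~\ref{Tangent-space-of-End(R)-and-Aut(R).prop}) and for general algebras (Proposition~\ref{Tangent-space-of-End(R)-and-Aut(R)-for-a-general-algebra-R.prop}), so it has the advantage of being presentation-free and of generalizing immediately beyond the commutative case, while the paper's dual-number computation has the advantage of exhibiting the image explicitly inside the description of $\Der_{\kk}(R)$ as $\{(h_i)\in R^n \mid \sum_i h_i\,\overline{\partial q_j/\partial x_i}=0 \text{ for all } j\}$. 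Your closing caution --- claiming only injectivity because $T_{\id}\End(R)$ is cut out by the full vanishing ideal rather than by the linearized $q_j$ alone --- is exactly right and consistent with the paper's remark after Proposition~\ref{End(X)-and-Vec(X).prop} that the corresponding map for $\End(X)$ need not be surjective.
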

\begin{proof}
(a) 
Assume that $R = \kk[x_{1},\ldots,x_{n}]/I$. Then 
$$
\End (R) = \{(r_{1},\ldots,r_{n})\in R^{n} \mid p(r_{1},\ldots,r_{n})=0 \text{ for all }p\in I\}\subseteq R^{n},
$$
and this is a closed subset of $R^{n}$. It is easy to see that the induced ind-variety structure has the property that families of algebra endomorphisms of $R$ parametrized by an ind-variety $\ZZZ$ correspond bijectively to morphisms $\ZZZ \to \End (R)$. Therefore, this structure does not depend on the presentation of $R$ as a quotient of a polynomial ring. In addition, the composition of homomorphism is a morphism, so that $\End (R)$ is an ind-semigroup.
\ps 
(b)
If $A = (a_{1},\ldots,a_{n}) \in T_{\id}\End (R) \subseteq R^{n}$, then 
$$
p(\bar x_{1}+\eps a_{1},\ldots,\bar x_{n}+\eps a_{n}) = 0 \text{ for all }p \in I,
$$
hence
$$
\sum_{i} a_{i}\frac{\partial p}{\partial x_{i}}(\bar x_{1},\ldots,\bar x_{n}) = 0 \text{ for all } p\in I.
$$
It follows that $\sum_{i} a_{i}\frac{\partial}{\partial x_{i}}\colon \kk[x_{1},\ldots,x_{n}] \to R$ induces a 
derivation $\delta_{A}\colon R \to R$ where $\delta_{A}(\bar x_{i}) = a_{i}$.
This shows that we obtain an
embedding $\delta\colon T_{\id}\End (R) \into \Der_{\kk}(R)$.
\end{proof}

\begin{remark}
This latter proposition will be generalized to a general algebra $\R$ in Section \ref{GeneralAlgebra.sec} (see Propositions~\ref{ind-var of algebra endomorphisms.prop} and \ref{Tangent-space-of-End(R)-and-Aut(R)-for-a-general-algebra-R.prop}).
\end{remark}

We can use this result to give a more algebraic description of the construction of the vector field $\xi_{H}$ associated to a tangent vector $H \in T_{\id}\End(X)$, see Proposition~\ref{End(X)-and-Vec(X).prop}. We have a canonical map $\tau\colon\End(X) \simto \End (\OOO(X))$, $\phi\mapsto \phi^{*}$, which is an anti-isomorphism of ind-semigroups. Now it is easy to see that the following diagram is commutative.
\[
\begin{CD}
T_{\id}\End(X) @>{\xi}>> \VEC(X) \\
@V{d_{\id}\tau}V{\simeq}V        @VV{\simeq}V  \\
T_{\id}\End (\OOO(X))  @>{\delta}>>  \Der_{\kk}(\OOO(X))
\end{CD}
\]
Therefore, if we identify $\End(X)$ with $\End (\OOO(X))$ via $\phi\mapsto\phi^{*}$, then the vector field $\xi_{H}$ corresponds to the derivation $\delta_{H}$. This also explains why $\xi$ is an anti-homomorphism of Lie algebras 
whereas $\delta$ is a homomorphism of Lie algebras.

\newpage
\part{AUTOMORPHISM GROUPS AND GROUP ACTIONS}

This part is devoted to the study of automorphism groups $\Aut(X)$ of affine varieties $X$. These groups are ind-groups in a natural way, and they are locally closed in $\End(X)$. We define  actions of ind-groups on affine varieties and representations of ind-groups. We will see that the Lie algebra of $\Aut(X)$ is naturally embedded into the Lie algebra $\VEC(X)$ of vector fields on $X$. We also show that the automorphism group $\Aut(\RRR)$ of a finitely generated general algebra $\RRR$ is an affine ind-group.  Finally, we give an example of a bijective homomorphism of connected ind-groups which is not an isomorphism.

\section{The Automorphism Group of an Affine Variety}\label{autgroup.sec}
\ps
\subsection{The ind-group of automorphisms}\label{autgroup.subsec}
In this section we show that for every affine variety $X$ the automorphism group\idx{automorphism group} $\Aut(X)$ has a natural structure of ind-variety.  We will also see that $\Aut(X)$\idx{$\Aut(X)$} is locally closed in $\End(X)$. Recall that a family of automorphisms of $X$ parametrized by an ind-variety $\YYY$ is an automorphism of $X \times \YYY$ over $\YYY$, see Definition~\ref{family of automorphisms.def}.

\begin{theorem} \label{AutX.thm}
Let $X$ be an affine variety.
\be
\item
There exists a  structure of an affine ind-group on $\Aut (X)$ such that families  of automorphisms of $X$ parametrized by an ind-variety $\YYY$ correspond to morphisms $\YYY \to \Aut (X)$ of ind-varieties.
\item\label{closed-embed}
The following map is a closed immersion:
\[ 
\iota\colon \Aut(X) \to \End(X)\times\End(X), \ \phi\mapsto(\phi,\phi^{-1}).
\]
\ee
\end{theorem}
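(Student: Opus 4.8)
The plan is to realize $\Aut(X)$ as a closed sub-ind-variety of $\End(X)\times\End(X)$ and transport the resulting structure back along $\iota$, so that statement (2) holds essentially by construction and statement (1) follows by checking the group axioms and the universal property. First I would set
\[
A:=\{(\phi,\psi)\in\End(X)\times\End(X)\mid \phi\circ\psi=\id_X=\psi\circ\phi\},
\]
which is exactly the image $\iota(\Aut(X))$. Since $\End(X)=\Mor(X,X)$ is an affine ind-variety (Lemma~\ref{ind-var-morphisms.lem}) and the two composition maps $\End(X)\times\End(X)\to\End(X)$, $(\phi,\psi)\mapsto\phi\circ\psi$ and $(\phi,\psi)\mapsto\psi\circ\phi$, are ind-morphisms (Lemma~\ref{tautological.lem}(\ref{composition1})), and since the single point $\{\id_X\}$ is closed in $\End(X)$, the set $A$ is the intersection of the two preimages of $\id_X$ and is therefore closed in the affine ind-variety $\End(X)\times\End(X)$. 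Hence $A$ is itself an affine ind-variety. I would then equip $\Aut(X)$ with the unique ind-variety structure making the bijection $\iota\colon\Aut(X)\xrightarrow{\sim}A$, $\phi\mapsto(\phi,\phi^{-1})$, an isomorphism; with this definition $\iota$ is a closed immersion into $\End(X)\times\End(X)$ and $\Aut(X)$ is affine, which is statement (2).

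For the group structure in (1), I would observe that under $\iota$ the composition law of $\Aut(X)$ becomes the map $((\phi_1,\psi_1),(\phi_2,\psi_2))\mapsto(\phi_1\circ\phi_2,\;\psi_2\circ\psi_1)$ on $A$, which is a morphism because composition is a morphism in both coordinates, while inversion becomes the coordinate swap $(\phi,\psi)\mapsto(\psi,\phi)$, visibly a morphism. Thus $A\simeq\Aut(X)$ is an affine ind-group.

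It remains to verify the universal property. Using the representability of $\Mor(X,X)$ for ind-variety parameter spaces (Definition~\ref{universal-property-of-the-ind-variety-Mor(V,W).def} and Lemma~\ref{ind-var-morphisms.lem}), a morphism $\YYY\to\End(X)$ is the same datum as a family of endomorphisms of $X$ over $\YYY$, i.e.\ a morphism $\YYY\times X\to X$. Given a family of automorphisms $\Phi$ of $X$ over $\YYY$ in the sense of Definition~\ref{family of automorphisms.def}, $\Phi$ is by hypothesis an automorphism of $X\times\YYY$ over $\YYY$, so $\Phi^{-1}$ is again a morphism; the families of endomorphisms underlying $\Phi$ and $\Phi^{-1}$ yield two morphisms $\YYY\to\End(X)$, and together a morphism $\YYY\to\End(X)\times\End(X)$ whose image lies in $A$ (fibrewise $(\Phi_y,\Phi_y^{-1})\in A$), hence a morphism $\YYY\to A=\Aut(X)$ since $A$ is closed. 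Conversely, a morphism $f\colon\YYY\to\Aut(X)=A$ composed with the two projections gives families of endomorphisms $\Phi,\Psi\colon\YYY\times X\to X$; forming the corresponding morphisms $\tilde\Phi,\tilde\Psi\colon X\times\YYY\to X\times\YYY$ over $\YYY$ and using that each $X\times\YYY_k$ is reduced together with the fibrewise identities $\Phi_y\circ\Psi_y=\id=\Psi_y\circ\Phi_y$ shows $\tilde\Phi\circ\tilde\Psi=\id=\tilde\Psi\circ\tilde\Phi$, so $\tilde\Phi$ is a family of automorphisms. These assignments are mutually inverse and natural in $\YYY$, establishing (1).

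The only genuine input is the closedness of $A$, which is where the content sits: it reduces to the composition map being an ind-morphism (Lemma~\ref{tautological.lem}) and to $\{\id_X\}$ being a closed point. I expect the main subtlety to be the bookkeeping in the universal property — in particular that the correspondence must hold for an arbitrary ind-variety parameter $\YYY$, not merely a variety. This is handled uniformly because a family of automorphisms is by definition already an automorphism, so its inverse is automatically a morphism and one never needs to reconstruct the inverse family from the forward one; Proposition~\ref{fam-end-aut.prop} would be the tool for that reconstruction but is not required here.
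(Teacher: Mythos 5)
Your proof is correct and follows essentially the same route as the paper: the paper defines the closed subset $\AAA(X)=\{(\phi,\psi)\in\End(X)\times\End(X)\mid \phi\circ\psi=\id=\psi\circ\phi\}$, identifies it with $\Aut(X)$ via the first projection, and notes that this affine ind-variety is an ind-group with the required universal property, so that $\iota$ is a closed immersion by construction. You have merely spelled out the details (closedness via Lemma~\ref{tautological.lem} and the closed point $\{\id_X\}$, the morphism property of multiplication and inversion, and the two directions of the universal property) that the paper leaves as "clear."
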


\begin{proof}
(1)
Define the closed subset\idx{$\AAA (X)$}
\[ 
\AAA (X) :=
\{(\phi,\psi)\in \End(X) \times \End(X) \mid \phi\cdot \psi=\id=\psi\cdot \phi\} \subseteq \End(X) \times\End(X).
\]
which can be identified with $\Aut(X)$ via the first projection. It is clear that with this structure of a group the affine ind-variety $\AAA(X)$ becomes an ind-group which has the required universal property (see Lemma~\ref{ind-var-morphisms.lem}).
\ps
(2)
The previous construction shows that $\iota$ is a closed immersion with image $\AAA(X)$.
\end{proof}

Let us state and prove the following relative version of the theorem above.

\begin{lemma} \label{Aut_Y(X).lem}
For any morphism $\pi \colon X \to Y$ between affine varieties, the group
\[ \Aut_Y(X):= \{ \varphi \in \Aut (X) \mid \pi \circ \varphi = \pi \} \]
is closed in the ind-group $\Aut (X)$. In particular $\Aut_Y (X)$ is an ind-group and has the obvious universal property.
\end{lemma}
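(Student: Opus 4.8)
The plan is to realize $\Aut_Y(X)$ as the preimage of a single closed point under a continuous map of ind-varieties, and then invoke continuity. First I would record that $\Aut_Y(X)$ is genuinely a subgroup of $\Aut(X)$: the identity fixes $\pi$; if $\pi\circ\varphi=\pi$ and $\pi\circ\psi=\pi$ then $\pi\circ(\varphi\circ\psi)=(\pi\circ\varphi)\circ\psi=\pi\circ\psi=\pi$; and composing $\pi\circ\varphi=\pi$ on the right with $\varphi^{-1}$ gives $\pi\circ\varphi^{-1}=\pi$. So the content of the lemma is the \emph{closedness} assertion, after which the remaining claims are formal.

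For closedness, since $X$ and $Y$ are affine, both $\End(X)=\Mor(X,X)$ and $\Mor(X,Y)$ carry their natural ind-variety structures by Lemma~\ref{ind-var-morphisms.lem}. By Lemma~\ref{tautological.lem}(\ref{composition2}), applied with the fixed morphism $g:=\pi\colon X\to Y$, post-composition
\[
c\colon \End(X)\longrightarrow\Mor(X,Y),\qquad \phi\longmapsto \pi\circ\phi,
\]
is a morphism of ind-varieties, hence continuous. Composing with the inclusion $\Aut(X)\hookrightarrow\End(X)$ — which is a morphism, being the first projection of the closed immersion $\iota$ of Theorem~\ref{AutX.thm}(\ref{closed-embed}) — produces a morphism $\Aut(X)\to\Mor(X,Y)$, $\varphi\mapsto\pi\circ\varphi$. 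Now $\pi$ is a single point of the ind-variety $\Mor(X,Y)$, and singletons are closed there (the intersection of $\{\pi\}$ with each term of an admissible filtration is either empty or the closed point $\pi$). Therefore
\[
\Aut_Y(X)=\{\varphi\in\Aut(X)\mid \pi\circ\varphi=\pi\}
\]
is the preimage of the closed set $\{\pi\}$ under a continuous map, hence closed in $\Aut(X)$.

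It remains to note that a closed subgroup of an ind-group is again an ind-group: $\Aut_Y(X)$ inherits the structure of a closed ind-subvariety of $\Aut(X)$, and multiplication and inverse restrict to morphisms. The universal property is inherited from that of $\Aut(X)$ in Theorem~\ref{AutX.thm}(1): a family of automorphisms $\Phi=(\Phi_y)_{y\in\YYY}$, viewed as a morphism $\YYY\to\Aut(X)$, factors through $\Aut_Y(X)$ precisely when $\pi\circ\Phi_y=\pi$ for all $y$, which is exactly the condition that $\YYY\to\Aut(X)\xrightarrow{c}\Mor(X,Y)$ be the constant map $\pi$, a closed condition cut out in the same manner. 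I do not expect a serious obstacle here; the one point needing care is the identification of post-composition with $\pi$ as a bona fide ind-morphism, which is supplied directly by the tautological Lemma~\ref{tautological.lem} once the relevant $\Mor$-spaces are known to be representable (Lemma~\ref{ind-var-morphisms.lem}).
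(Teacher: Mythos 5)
Your proof is correct and follows the same route as the paper: both apply Lemma~\ref{tautological.lem}(\ref{composition2}) to show that $\varphi \mapsto \pi \circ \varphi$ is an ind-morphism $\Aut(X) \to \Mor(X,Y)$ and conclude that $\Aut_Y(X)$ is the preimage of the closed point $\{\pi\}$. The extra details you supply (group axioms, closedness of singletons, the universal property) are all sound but are left implicit in the paper's two-line argument.
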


\begin{proof}
It follows from Lemma~\ref{tautological.lem}(\ref{composition2}) that the composition
\[
\Aut (X) \into \End (X) \to \Mor (X,Y), \  \phi \mapsto \pi \circ \phi ,
\]
is an ind-morphism. Since $\Aut_Y(X)$ is the preimage of $\{ \pi \}$ it is closed.
\end{proof}

If $\pi = \pr_Y \colon X \times Y \to Y$, then the families of automorphisms of $X$ parametrized by $Y$ forms the subgroup $\Aut_{Y}(X\times Y)$ of $\Aut(X \times Y)$. The theorem above tells us that there is a natural bijective map
$$
\Aut_{Y}(X\times Y) \simto \Mor(Y, \Aut(X)).
$$
\begin{proposition}\label{families-of-aut.prop}
The subgroup $\Aut_{Y}(X\times Y) \subset \Aut(X \times Y)$ is closed, and the natural map
$$
\Phi\colon \Aut_{Y}(X\times Y) \simto \Mor(Y, \Aut(X))
$$
is an isomorphism of ind-groups.
\end{proposition}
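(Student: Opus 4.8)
The closedness of $\Aut_{Y}(X\times Y)$ in $\Aut(X\times Y)$ is immediate from Lemma~\ref{Aut_Y(X).lem} applied to the projection $\pi=\pr_{Y}\colon X\times Y \to Y$, since $\Aut_{Y}(X\times Y)=\{\varphi\in\Aut(X\times Y)\mid \pr_{Y}\circ\varphi=\pr_{Y}\}$; hence it is an ind-group. On the target side, $\Mor(Y,\Aut(X))$ is an affine ind-variety by Proposition~\ref{indmor.prop} (with $Y$ an affine variety and $\Aut(X)$ an affine ind-variety), and it carries the pointwise group law $(F\cdot G)(y):=F(y)\circ G(y)$. I would first record that $\Phi$ is a group homomorphism: writing $\psi=(\psi_{y})_{y\in Y}$ for the fibrewise decomposition of a $Y$-automorphism, one has $(\psi\circ\psi')_{y}=\psi_{y}\circ\psi'_{y}$ because both $\psi,\psi'$ lie over $Y$, which is exactly the pointwise product on the target. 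The bijectivity of $\Phi$ is the content of Theorem~\ref{AutX.thm}(1), so it remains only to check that $\Phi$ and $\Phi^{-1}$ are ind-morphisms.

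To see that $\Phi$ is a morphism I would feed the tautological family through the two universal properties in turn. Set $\GGG:=\Aut_{Y}(X\times Y)$. The evaluation $\GGG\times(X\times Y)\to X\times Y$, $(\varphi,x,y)\mapsto\varphi(x,y)$, is a morphism by Theorem~\ref{AutX.thm}(1) (equivalently Lemma~\ref{tautological.lem}(\ref{eval})); composing with $\pr_{X}$ yields a morphism $\beta\colon \GGG\times X\times Y \to X$, $(\varphi,x,y)\mapsto \pr_{X}(\varphi(x,y))$. For each fixed $(\varphi,y)$ the map $x\mapsto\beta(\varphi,x,y)$ is the automorphism $\varphi_{y}\in\Aut(X)$, and $(x,\varphi,y)\mapsto(\beta(\varphi,x,y),\varphi,y)$ is a $(\GGG\times Y)$-automorphism of $X\times(\GGG\times Y)$, its inverse being built from the morphism $\varphi\mapsto\varphi^{-1}$ on the ind-group $\GGG$. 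Thus $\beta$ is a family of automorphisms of $X$ parametrized by $\GGG\times Y$, so by Theorem~\ref{AutX.thm}(1) it corresponds to a morphism $\GGG\times Y\to\Aut(X)$, and then by the defining universal property of $\Mor(Y,\Aut(X))$ (Definition~\ref{universal-property-of-the-ind-variety-Mor(V,W).def}) to a morphism $\GGG\to\Mor(Y,\Aut(X))$. By construction this morphism is $\Phi$.

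For the inverse I would run the same mechanism in the opposite direction. By Lemma~\ref{tautological.lem}(\ref{eval}) the evaluation $\Mor(Y,\Aut(X))\times Y\to\Aut(X)$ is a morphism, and composing with the evaluation $\Aut(X)\times X\to X$ gives a morphism $(F,y,x)\mapsto F(y)(x)$. Hence $\Theta\colon \Mor(Y,\Aut(X))\times(X\times Y)\to X\times Y$, $(F,x,y)\mapsto(F(y)(x),y)$, is a morphism; for fixed $F$ it is the $Y$-automorphism $\Phi^{-1}(F)$, and its fibrewise inverse $(F,x,y)\mapsto(F(y)^{-1}(x),y)$ is again a morphism because inversion in $\Aut(X)$ is a morphism. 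Therefore $\Theta$ is a family of automorphisms of $X\times Y$ parametrized by $\Mor(Y,\Aut(X))$, and Theorem~\ref{AutX.thm}(1) converts it into a morphism $\Mor(Y,\Aut(X))\to\Aut(X\times Y)$ whose image lands in the closed subgroup $\GGG$; this is exactly $\Phi^{-1}$. Combining the three steps, $\Phi$ is an isomorphism of ind-groups.

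The only genuinely delicate point is the repeated, correctly oriented application of the two different universal properties---that of $\Aut(-)$, which curries a family into a point-valued morphism, and that of $\Mor(Y,-)$---together with the verification that each tautological and evaluation family is fibrewise invertible \emph{through a morphism}, so that it is a family of automorphisms rather than merely of endomorphisms. Once the evaluation maps are morphisms (Lemma~\ref{tautological.lem}) and inversion is a morphism, everything is formal; in particular no appeal to Proposition~\ref{fam-end-aut.prop} is needed, since the inverse families are written down explicitly.
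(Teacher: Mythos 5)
Your proof is correct, but it takes a genuinely different route from the paper's. The paper works at the level of endomorphisms: Lemma~\ref{mor-mor.lem} gives the currying isomorphism $\Psi\colon \End_{Y}(X\times Y)\simto \Mor(Y,\End(X))$ (via $\End_{Y}(X\times Y)\simeq \Mor(X\times Y,X)$), and then $\Aut_{Y}(X\times Y)$ is embedded into $\End_{Y}(X\times Y)\times\End_{Y}(X\times Y)$ by $\phi\mapsto(\phi,\phi^{-1})$ while $\Mor(Y,\Aut(X))$ is embedded into $\Mor(Y,\End(X)\times\End(X))$ by $\Mor(Y,\iota)$; both vertical maps are closed immersions (Theorem~\ref{AutX.thm}(2), together with Lemma~\ref{closed-immersion-Mor.lem}), and since the bijective $\Phi$ fits into the resulting commutative square over the isomorphism $\Psi$, it is itself an isomorphism --- no inverse morphism ever has to be written down, because the $(\phi,\phi^{-1})$ bookkeeping is built into the definition of the ind-structure on $\Aut$. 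You instead argue purely by adjunction: you curry the tautological families in both directions through the two universal properties (Theorem~\ref{AutX.thm}(1) and Proposition~\ref{indmor.prop}/Definition~\ref{universal-property-of-the-ind-variety-Mor(V,W).def}), producing $\Phi$ and $\Phi^{-1}$ explicitly as morphisms, with fibrewise invertibility certified by writing the inverse families via the inversion morphisms of the ind-groups $\Aut_{Y}(X\times Y)$ and $\Aut(X)$. What the paper's argument buys is brevity, given that Lemma~\ref{mor-mor.lem} and the closed-immersion machinery are already in place; what yours buys is independence from exactly those ingredients --- you need only that evaluation, composition and inversion are morphisms, which makes the argument more formal and more portable, and your closing observation that Proposition~\ref{fam-end-aut.prop} is not needed is accurate (the paper does not use it here either). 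One small point you gloss over, as does the paper: for the conclusion ``isomorphism of ind-groups'' one should note that the pointwise multiplication on $\Mor(Y,\Aut(X))$ is a morphism, which follows either by transporting the structure through your $\Phi$ or by one more application of the same universal-property mechanism to $(F,G,y)\mapsto F(y)\circ G(y)$.
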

\begin{proof} 
The first statement is a special case of Lemma~\ref{Aut_Y(X).lem}.

By Lemma~\ref{mor-mor.lem} the canonical map
$\Mor(X \times Y,X) \to \Mor(Y,\End(X) )$
is an isomorphism of ind-varieties. In addition, there is an isomorphism $\End_{Y}(X\times Y) \simto \Mor(X\times Y,X)$ given by $\phi\mapsto \pr_{X}\circ \phi$, whose inverse is $\psi\mapsto (\psi,\id_{Y})$. Thus the canonical map
$\Psi\colon \End_{Y} (X\times Y) \to \Mor(Y,\End(X))$
is an isomorphism of ind-varieties.
From this we obtain a commutative diagram
$$
\begin{CD}
\End_{Y}(X\times Y)\times\End_{Y}(X\times Y) @>{\Psi}>{\simeq}>  \Mor(Y,\End(X)\times\End(X)) \\
@AA{\iota\colon\phi\mapsto (\phi,\phi^{-1})}A @AA{\Mor(Y,\iota)}A \\
\Aut_{Y}(X\times Y) @>{\Phi}>> \Mor(Y,\Aut(X))
\end{CD}
$$
where the vertical maps are closed immersions (Theorem~\ref{AutX.thm}(\ref{closed-embed})). Since $\Phi$  is bijective it is an isomorphism.
\end{proof}

\begin{example}
Let $X,Y$ be affine varieties. Assume that all morphisms $\phi\colon X \to Y$ are constant as well as all morphisms 
$\psi\colon Y \to X$. Then we have 
$$
\End(X\times Y) = \End(X) \times \End(Y)\ \text{ and  }\ \Aut(X\times Y) = \Aut(X) \times \Aut(Y).
$$
In fact, if $\Phi=(\phi_{1},\phi_{2}) \colon X\times Y \to X \times Y$ is an endomorphism, then $\phi_{1}\colon X\times Y \to X$ corresponds to a morphism $\tilde\phi_{1}\colon Y \to \End(X)$ of ind-varieties which is constant, because $y \mapsto \tilde\phi_{1}(x)(y) = \phi_{1}(x,y)$ is constant for every $x\in X$. Thus $\phi_{1}(x,y) = \phi_{1}(x)$, and similarly $\phi_{2}(x,y) = \phi_{2}(y)$.
\ps
An interesting example is the following (see \cite[Theorem~1.3]{LiReUrCharacterization-o}). Let $X$ be a torus, and let $C$ be an affine smooth curve such that $\Aut(C)$ is trivial and $\OOO(C)^{*}=\kst$. Then $\Aut(X \times C) = \Aut(X)$. 
\newline
(In fact, there are no non-constant morphisms $C \to X$, because $\OOO(X)$ is generated by invertible elements, and there are no non-constant morphisms $X \to C$, since otherwise $C$ is rational, hence isomorphic to $\Aone$ and thus $\Aut(C)$  is nontrivial.)
\end{example}

\begin{example}
Set $\Aoned:=\Aone\setminus\{0\}$.
For the group $\Aut(\Aone\times\Aoned)$ we have a split exact sequence
$$
\begin{CD}
1 @>>> \Aut_{\Aoned}(\Aone\times\Aoned) @>>> \Aut(\Aone\times\Aoned) @>{p}>> \Aut(\Aoned) @>>> 1.
\end{CD}
$$
The splitting is given by the obvious closed immersion $\Aut(\Aoned) \into \Aut(\Aone\times\Aoned)$. Moreover, $\Aut(\Aoned)  = \langle\tau\rangle \ltimes \kst$ where $\tau\colon x\mapsto x^{-1}$, and, by 
the proposition above,
$$
\Aut_{\Aoned}(\Aone\times\Aoned) \simeq \Mor(\Aoned,\Aut(\Aone)) \simeq \Aff(1)(\kk[t,t^{-1}])=\kk[t,t^{-1}]^{*}\ltimes\kk[t,t^{-1}]^{+},
$$
cf. Remark~\ref{G(R)-as-ind-group.rem}. 
In particular, $\Aut(\Aone\times\Aoned) = \Aut(\Aoned)\ltimes\Aut_{\Aoned}(\Aone\times\Aoned)$ is a semidirect product of closed ind-subgroups, and the connected component of $\Aut(\Aone\times\Aoned)$ has index 2.
\newline
(The only non-obvious fact is the existence of the ind-morphism $p$. This follows, because every automorphism of $\OOO(\Aone\times\Aoned) = \kk[s,t,t^{-1}]$ sends the invertible element $t$ to an invertible element which must be of the form $at$ or $at^{-1}$ for some $a \in\kst$, and thus it induces an automorphism of the subalgebra $\kk[t,t^{-1}]$.)
\end{example}

\begin{proposition} \label{Aut(X,Y).prop}
Let $X$ be an affine variety and $Y \subseteq X$ a closed subset. Then, the subgroup
\[
\Aut (X,Y) := \{ \phi \in \Aut (X) \mid \phi (Y) \subseteq Y \}
\]
is a closed subgroup of $\Aut (X)$, and the restriction map $\res_Y \colon \Aut (X,Y) \to \Aut (Y)$ is a homomorphism of ind-groups.
\end{proposition}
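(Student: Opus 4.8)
The plan is to prove the two assertions in turn: first that $\Aut(X,Y)$ is a closed subgroup, then that $\res_Y$ is a morphism of ind-groups. For the closedness I would transport the condition $\phi(Y)\subseteq Y$ from the morphism space. Let $\iota\colon Y\into X$ be the inclusion. By Lemma~\ref{tautological.lem}(\ref{composition3}), composition with $\iota$ yields an ind-morphism $r\colon\End(X)\to\Mor(Y,X)$, $\phi\mapsto\phi\circ\iota=\phi|_{Y}$, and by Lemma~\ref{Mor-for-closed-immersions.lem} applied to the closed immersion $Y\into X$ the subset $\Mor(Y,Y)=\{\psi\in\Mor(Y,X)\mid\psi(Y)\subseteq Y\}$ is closed in $\Mor(Y,X)$. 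Precomposing $r$ with the ind-morphism $\Aut(X)\to\End(X)$ (the first projection of the closed immersion of Theorem~\ref{AutX.thm}(\ref{closed-embed})) gives an ind-morphism $\Aut(X)\to\Mor(Y,X)$, $\phi\mapsto\phi|_{Y}$, and $\Aut(X,Y)$ is exactly the preimage of the closed set $\Mor(Y,Y)$; hence it is closed in $\Aut(X)$.

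To see that this closed set is a subgroup I would show that $\phi(Y)\subseteq Y$ already forces $\phi(Y)=Y$. Since $\phi$ is an automorphism of $X$ it is a homeomorphism, so each $\phi^{k}(Y)$ is closed and $Y\supseteq\phi(Y)\supseteq\phi^{2}(Y)\supseteq\cdots$ is a descending chain of closed subsets of the noetherian space $X$. It stabilizes, $\phi^{n}(Y)=\phi^{n+1}(Y)=\phi(\phi^{n}(Y))$ for some $n$, and applying the bijection $\phi^{-n}$ gives $Y=\phi(Y)$. Consequently $\phi^{-1}(Y)=Y$, so $\Aut(X,Y)$ is stable under inversion; stability under composition and containing $\id$ are immediate. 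Thus $\Aut(X,Y)$ is a closed subgroup, $\phi|_{Y}$ is a genuine automorphism of $Y$ with inverse $\phi^{-1}|_{Y}$, and $\res_Y$ is a well-defined group homomorphism into $\Aut(Y)$.

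The main point, and the step requiring care, is that $\res_Y$ is an ind-morphism, \emph{because the ind-group structure on $\Aut(Y)$ is the intrinsic one coming from $\AAA(Y)\subseteq\End(Y)\times\End(Y)$ rather than the subspace structure inherited from $\End(Y)$}; so knowing $\phi\mapsto\phi|_{Y}$ is a morphism into $\End(Y)$ does not by itself suffice. I would therefore test $\res_Y$ against the closed immersion $\iota_{Y}\colon\Aut(Y)\into\End(Y)\times\End(Y)$, $\psi\mapsto(\psi,\psi^{-1})$, of Theorem~\ref{AutX.thm}(\ref{closed-embed}): it is enough to check that $\iota_{Y}\circ\res_Y$, namely $\phi\mapsto(\phi|_{Y},\phi^{-1}|_{Y})$, is an ind-morphism into $\End(Y)\times\End(Y)$. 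The first component $\phi\mapsto\phi|_{Y}$ is the restriction to the closed subvariety $\Aut(X,Y)$ of the ind-morphism $\Aut(X)\to\Mor(Y,X)$ above, corestricted to the closed subvariety $\End(Y)=\Mor(Y,Y)$, hence an ind-morphism. The second component factors as $\phi\mapsto\phi^{-1}\mapsto\phi^{-1}|_{Y}$, the composite of the inverse morphism of the ind-group $\Aut(X)$ (which preserves the subgroup $\Aut(X,Y)$) with the same restriction map; here it is essential that $\phi^{-1}(Y)=Y$, established above, so that $\phi^{-1}|_{Y}$ indeed lands in $\End(Y)$.

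Since both components are ind-morphisms, so is $\iota_{Y}\circ\res_Y$, and as $\iota_{Y}$ is a closed immersion this shows $\res_Y\colon\Aut(X,Y)\to\Aut(Y)$ is an ind-morphism. Combined with its being a group homomorphism, $\res_Y$ is a homomorphism of ind-groups, completing the proof.
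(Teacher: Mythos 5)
Your proof is correct, but at both steps it takes a genuinely different route from the paper's, whose entire proof is two sentences. For closedness the paper works pointwise: the evaluation maps $\eval_{y}\colon \Aut(X)\to X$, $\phi\mapsto\phi(y)$, are morphisms by Lemma~\ref{tautological.lem}(\ref{eval}), and $\Aut(X,Y)=\bigcap_{y\in Y}\eval_{y}^{-1}(Y)$ is closed as an intersection of closed sets; you instead package all evaluations into the single restriction morphism $\Aut(X)\to\Mor(Y,X)$, $\phi\mapsto\phi|_{Y}$, and pull back the closed image of $\sigma_{*}\colon\End(Y)\into\Mor(Y,X)$ from Lemma~\ref{Mor-for-closed-immersions.lem}. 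For the second assertion the paper is slicker: the action map $\Aut(X,Y)\times Y\to Y$ is induced by the ind-morphism $\Aut(X)\times X\to X$, hence is an ind-morphism, and the universal property of $\Aut(Y)$ from Theorem~\ref{AutX.thm}(1) converts the resulting family of automorphisms of $Y$ directly into the morphism $\res_{Y}$; you bypass the universal property and check by hand that $\phi\mapsto(\phi|_{Y},\phi^{-1}|_{Y})$ is an ind-morphism into $\End(Y)\times\End(Y)$ with image in $\AAA(Y)$, then factor through the closed immersion of Theorem~\ref{AutX.thm}(\ref{closed-embed}). Your longer route has a real payoff: it surfaces two points the paper's proof suppresses. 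First, that $\phi(Y)\subseteq Y$ forces $\phi(Y)=Y$ --- your noetherian descending-chain argument is exactly the paper's Lemma~\ref{well-known.lem}, which appears only later in the text --- and this is needed both for the subgroup property and to know that $\phi^{-1}|_{Y}$ lands in $\End(Y)$, equivalently that the family over $\Aut(X,Y)$ is invertible, which the paper's appeal to the universal property tacitly requires. Second, your observation that a morphism into $\End(Y)$ alone would not suffice, since $\Aut(Y)$ carries the ind-structure coming from $\AAA(Y)\subseteq\End(Y)\times\End(Y)$ rather than the subspace structure from $\End(Y)$, is precisely the subtlety that the universal-property formulation is designed to absorb; what the paper's approach buys is economy, since once Theorem~\ref{AutX.thm}(1) is available nothing remains to verify beyond the action map being a morphism.
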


\begin{proof}
The evaluation maps $\eval_{y}\colon \Aut(X) \to Y$, $\phi \mapsto \phi(y)$, are morphisms by Lemma~\ref{tautological.lem}(\ref{eval}), and so 
$\Aut (X,Y) = \bigcap_{y \in Y} \eval_{y}^{-1}(Y)$ is closed in $\Aut (X)$. Now, the action map $\Aut (X,Y) \times Y \to Y$ is induced by the ind-morphism $\Aut(X) \times X \to X$, hence it is an ind-morphism, too, and so $\res_{Y}$ is a homomorphism of ind-groups.
\end{proof}

As an easy application we give a short proof of the the following result due to \name{Magid} \cite{Ma1978Separately-algebra}. 
\begin{proposition}
Assume that $\kk$ is uncountable, and let $X$ be an affine variety with a group structure such that 
left and  right multiplications with elements from $X$ are morphisms. Then $X$ is an algebraic group.
\end{proposition}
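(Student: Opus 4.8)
The plan is to reduce everything to a single statement: that the map $\lambda\colon X \to \Aut(X)$, $g \mapsto \lambda_g$ (left translation), is a \emph{morphism} of ind-varieties. Each $\lambda_g$ is an automorphism of the variety $X$ by hypothesis, so $\lambda$ is at least an abstract group homomorphism into $\Aut(X)$. Once I know $\lambda$ is a morphism, Theorem~\ref{AutX.thm} turns it into a family of automorphisms of $X$ parametrized by $X$, i.e. an automorphism $\Phi$ of $X \times X$ over the second factor with $\Phi(x,g) = (gx,g)$; its first component is exactly the multiplication $(g,x)\mapsto gx$, which is therefore a morphism. Inversion is then handled for free: either $g \mapsto g^{-1} = \eval_{e}(\lambda_g^{-1})$ is a morphism because inversion in the ind-group $\Aut(X)$ is a morphism, or one applies Proposition~\ref{fam-end-aut.prop} to the now-genuine morphism $(g,h)\mapsto(g,gh)$ to see it is an automorphism of $X\times X$. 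Either way $X$ becomes an algebraic group.

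To show $\lambda$ is a morphism, I would fix a closed embedding $X \subseteq \An$, so that $\End(X)\subseteq\Mor(X,\An)=\OOO(X)^{n}$ and $\Aut(X)$ is locally closed inside. Writing $\lambda_g=(h^{1}_g,\dots,h^{n}_g)$ with $h^{i}_g\in\OOO(X)$ given by $h^{i}_g(x)=(gx)_{i}$, the hypotheses give exactly the two halves of separate regularity: each $h^{i}_g$ is regular because $\lambda_g$ is a morphism, and for each fixed $x$ the orbit map $g\mapsto h^{i}_g(x)=(gx)_{i}$ is regular because the right translation $\rho_x$ is a morphism. The goal then becomes the finite-dimensionality statement that $W_i:=\Span\{h^{i}_g\mid g\in X\}\subseteq\OOO(X)$ is finite-dimensional for each $i$; granting this, I pick points $x_1,\dots,x_s\in X$ whose evaluations are injective on $W:=W_1+\dots+W_n$, express each $h^{i}_g$ in a fixed basis of $W$ with coefficients that are linear combinations of the regular functions $g\mapsto (gx_a)_i$, and conclude that $g\mapsto\lambda_g$ is a morphism of $X$ into the finite-dimensional subspace $W^{n}\subseteq\OOO(X)^{n}$, hence into $\Aut(X)$.

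The hard part is the finite-dimensionality of $W_i$, and this is where the uncountability of $\kk$ enters. I would filter $\OOO(X)=\bigcup_k V_k$ by finite-dimensional subspaces and set $Z_k:=\{g\in X\mid h^{i}_g\in V_k\}$, so that $X=\bigcup_k Z_k$. The key observation is that each $Z_k$ is \emph{closed}: since point-evaluations separate $\OOO(X)$, a finite-dimensional subspace $V_k$ equals the intersection of the kernels $\ker\ell$, where $\ell$ ranges over the finite linear combinations of evaluations $\ell=\sum_a c_a\,\eval_{x_a}$ that vanish on $V_k$ (indeed, given $h\notin V_k$, finitely many evaluations already separate the finite-dimensional space $V_k+\kk h$, so some such $\ell$ kills $V_k$ but not $h$). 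For every such $\ell$ the condition $\ell(h^{i}_g)=\sum_a c_a (gx_a)_i=0$ is a closed condition on $g$ by separate regularity, and intersecting them shows $Z_k$ is closed. Now $X=\bigcup_k Z_k$ is a countable union of closed subsets of a variety over an uncountable field, so Lemma~\ref{constructible.lem} forces $X=Z_{k_0}$ for some $k_0$; thus $h^{i}_g\in V_{k_0}$ for all $g$ and $W_i$ is finite-dimensional. I expect this descent from separate to joint regularity — realized through the evaluation functionals together with the uncountable-field covering lemma — to be the only genuine obstacle; the remaining steps are formal consequences of the machinery already set up for $\Mor$, $\End$, and $\Aut$.
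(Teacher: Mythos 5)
Your proof is correct, and it takes a genuinely different route from the paper's. You attack separate-to-joint regularity head-on: using linear combinations of evaluation functionals to cut out each finite-dimensional subspace $V_k \subseteq \OOO(X)$, you show that $Z_k := \{g \in X \mid h^i_g \in V_k\}$ is closed, invoke Lemma~\ref{constructible.lem} over the uncountable field to trap all the functions $h^i_g$ in a single $V_{k_0}$, and then solve a fixed linear system at finitely many points to conclude that $g \mapsto \lambda_g$ is a morphism into $\End(X)$ landing in $\Aut(X)$ --- hence, by Theorem~\ref{AutX-locally-closed-in-EndX.thm}, a morphism into the ind-group $\Aut(X)$, after which multiplication (via the evaluation morphism of Lemma~\ref{tautological.lem}) and inversion (via inversion in the ind-group $\Aut(X)$, composed with $\eval_e$) are formal. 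This is essentially \name{Magid}'s original separate-regularity argument recast in ind-variety language. The paper instead avoids any joint-regularity analysis: it forms the centralizer $G := \{\phi \in \Aut(X) \mid \phi\circ\rho_x = \rho_x\circ\phi \text{ for all } x\in X\}$, a closed ind-subgroup whose elements are exactly the left translations $\lambda_{\phi(e)}$, so that the evaluation $\mu\colon G \to X$, $\phi \mapsto \phi(e)$, is a \emph{bijective} ind-morphism; uncountability enters through Lemma~\ref{bijective-morphisms.lem}, which makes $G$ an algebraic group, and the smoothness of $X$ (it is homogeneous under its translations) upgrades the bijective morphism $\mu$ to an isomorphism of varieties and hence of groups. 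Your route buys a self-contained proof needing neither the smoothness observation nor Zariski's Main Theorem, at the price of the explicit linear-algebra descent; the paper buys brevity by outsourcing the work to the machinery of Theorem~\ref{AutX-locally-closed-in-EndX.thm} and Lemma~\ref{bijective-morphisms.lem}. One small caution: your alternative inversion argument via Proposition~\ref{fam-end-aut.prop} requires $X$ irreducible as that proposition is stated, so your first route --- inversion inside the ind-group $\Aut(X)$ --- is the one to keep for a general affine $X$.
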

\begin{proof} First note that $X$ is smooth.
For $x\in X$ let $\lambda_{x},\rho_{x}\in\Aut(X)$ denote the left- and right-multiplication with $x$. Define
\[
G:=\{\phi\in\Aut(X) \mid \phi\circ\rho_{x}=\rho_{x}\circ\phi\text{ for all }x \in X\}.
\]
Then $G$ is a closed ind-subgroup of $\Aut(X)$ and thus acts faithfully on $X$. For $\phi\in G$ we have $\phi(yx) = \phi(y)x$, and so $\phi=\lambda_{\phi(e)}$ where $e\in X$ is the unit element of the group structure. It follows that the orbit map $\mu\colon G\to X$, $\phi\mapsto \phi(e)$, 
is a bijective ind-morphism. Hence $G$ is an algebraic group, by Lemma~\ref{bijective-morphisms.lem},
and $\mu$ is an isomorphism of varieties since $X$ is smooth. By construction, we have
$$
\mu(\phi\circ\psi) = \phi(\psi(e)) = \lambda_{\phi(e)}(\psi(e)) = \phi(e) \psi(e)=\mu(\phi)\mu(\psi)
$$ 
and so $\mu\colon G \to X$ is also an isomorphism of groups.
\end{proof}

\ps
\subsection{The embedding of \texorpdfstring{$\Aut(X)$}{Aut(X)} into \texorpdfstring{$\End(X)$}{End(X)}} \label{embedding.subsec}
For a finite-dimensional $\kk$-vector space $V$ the group $\GL(V)$ is open in $\LLL(V)$, the semigroup of linear endomorphisms. This implies that for any finite-dimensional $\CC$-algebra $A$ the group $\Aut_{\alg}(A)$ of $\CC$-algebra automorphisms is open in $\End_{\alg}(A)$, the semigroup of $\CC$-algebra endomorphisms, 
because $\End_{\alg}(A)$ is closed in $\LLL(A)$, and $\Aut_{\alg}(A) = \End_{\alg}(A) \cap \GL(A)$. 

In the case of $\Aut(X)$ where $X$ is an affine variety there are several problems. In the previous section we have given $\Aut(X)$ the structure of an ind-group using the affine ind-group $\AAA(X)\subset \End(X) \times \End(X)$ together with the ind-morphism  $p\colon \AAA(X) \to \End(X)$ induced by $\pr_{1}\colon\End(X) \times \End(X) \to \End(X)$ which defines a bijection $p'\colon \AAA(X) \to p(\AAA(X)) = \Aut(X)$.  
\idx{$\AAA (X)$}\idx{$\Aut(X)$}

At this point we have the following natural questions (cf. \cite[Remark 3.3.6, p. 132]{KaMi2005On-two-recent-view}).
\be
\item[$\bullet$]
{\it Is $\Aut(X) = p(\AAA(X)) \subset \End(X)$  a locally closed ind-subvariety?} 
\item[$\bullet$]
{\it And if so,  is $p'\colon \AAA(X) \to \Aut(X)$  an isomorphism of ind-varieties?}
\ee
We will answer affirmatively both questions in the following theorem. Recall that a morphism $\phi\colon X \to Y$ of varieties is {\it dominant\/}\idx{dominant morphism} if the image $\phi(X) \subseteq Y$ is dense. If $Y$ is affine this is equivalent to the condition that the comorphism $\phi^{*}\colon \OOO(Y) \to \OOO(X)$ is injective.

\begin{theorem} \label{AutX-locally-closed-in-EndX.thm}
For an affine variety $X$ the subset  $\Aut (X)\subseteq \End(X)$ is locally closed. More precisely, if $\Dom (X)$\idx{$\Dom(X)$} denotes the set of dominant endomorphisms of $X$, then  $\Aut (X)$ is closed in $\Dom (X)$ and $\Dom (X)$ is open in $\End(X)$.
Moreover, the morphism $p\colon \AAA(X) \to \End(X)$  induces an isomorphism $\AAA(X) \simto \Aut(X)$ of ind-varieties.
\end{theorem}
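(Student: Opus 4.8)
The plan is to fix a closed embedding $X\subseteq\An$, write $\OOO(X)=\kk[\bar x_{1},\ldots,\bar x_{n}]$, and let $\OOO(X)_{m}\subseteq\OOO(X)$ be the image of the polynomials of degree $\le m$. This produces the admissible filtration $\End(X)=\bigcup_{k}\End(X)_{k}$ of Proposition~\ref{X(R).prop}, where $\End(X)_{k}=\{\phi\mid\phi^{*}(\bar x_{i})\in\OOO(X)_{k}\text{ for all }i\}$, and I note that $\phi^{*}$ maps $\OOO(X)_{e}$ into $\OOO(X)_{ke}$ for $\phi\in\End(X)_{k}$ (Lemma~\ref{tautological.lem}). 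Since an automorphism is surjective, hence dominant, we have $\Aut(X)\subseteq\Dom(X)\subseteq\End(X)$; and by the quoted fact that an injective endomorphism of an affine variety is an isomorphism, $\Aut(X)$ is exactly the set of injective endomorphisms. For the two assertions that use countability (closedness of $\Aut(X)$ in $\Dom(X)$ and that $p'$ is an isomorphism) I would first assume $\kk$ uncountable and then descend to the general case by base field extension, using that being a closed immersion, resp. an isomorphism, is detected after extension of the algebraically closed base field (Proposition~\ref{fieldextension.prop}(4)).

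First I would show that $\Dom(X)$ is open in $\End(X)$, equivalently that the non-dominant locus in each $\End(X)_{k}$ is closed; this step works over an arbitrary field. Consider the universal family $\tilde\Phi\colon S\times X\to S\times X$, $(\phi,x)\mapsto(\phi,\phi(x))$, over $S:=\End(X)_{k}$. By Chevalley's theorem the fibre dimension $e(\phi,x):=\dim_{(\phi,x)}\tilde\Phi^{-1}(\tilde\Phi(\phi,x))$ is upper semicontinuous on $S\times X$, so $C:=\{e\ge 1\}$ is closed. For irreducible $X$, $\phi$ is non-dominant exactly when the generic fibre of $\phi$ is positive-dimensional, i.e. when $\{\phi\}\times X\subseteq C$; thus the non-dominant locus equals $S\setminus\pr_{S}\bigl((S\times X)\setminus C\bigr)$, which is closed because $\pr_{S}$ is flat of finite type, hence open. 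For reducible $X$ the same semicontinuity argument is applied component by component to control $\dim\overline{\phi(X_{i})}$.

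The heart of the argument is the closedness of $\Aut(X)$ in $\Dom(X)$; here assume $\kk$ uncountable. For $\phi\in\Dom(X)$ the comorphism $\phi^{*}$ is injective, so each restriction $\phi^{*}|_{\OOO(X)_{e}}\colon\OOO(X)_{e}\to\OOO(X)_{ke}$ is an injective linear map depending morphically on $\phi\in\End(X)_{k}$. Since $\phi^{*}$ is an algebra homomorphism, $\phi$ is an automorphism iff $\phi^{*}$ is onto iff each generator $\bar x_{i}$ lies in $\Image\phi^{*}$. Setting
\[
C_{k,e}:=\{\phi\in\Dom(X)_{k}\mid \bar x_{i}\in\phi^{*}(\OOO(X)_{e})\text{ for }i=1,\ldots,n\},
\]
Lemma~\ref{linear-algebra.lem}(2) (applied to the morphism $\phi\mapsto\phi^{*}|_{\OOO(X)_{e}}$ into the injective maps, with $w_{0}=\bar x_{i}$) shows that each $C_{k,e}$ is closed in $\Dom(X)_{k}$, while $C_{k,e}\subseteq C_{k,e+1}$ and $\Aut(X)\cap\Dom(X)_{k}=\bigcup_{e}C_{k,e}$. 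The main obstacle is that an increasing union of closed sets need not be closed: I must produce a uniform degree bound $e_{0}=e_{0}(k)$ with $\Aut(X)\cap\Dom(X)_{k}=C_{k,e_{0}}$. To obtain it, observe that the injective locus is constructible by Chevalley, its complement being the projection of $\{(\phi,x,x')\mid \phi(x)=\phi(x'),\ x\ne x'\}$; hence $\Aut(X)\cap\Dom(X)_{k}$ is a constructible subset of the variety $\Dom(X)_{k}$ covered by the constructible subsets $C_{k,e}$. Since $\kk$ is uncountable, Lemma~\ref{constructible.lem} yields a finite subcover, so $\Aut(X)\cap\Dom(X)_{k}=C_{k,e_{0}}$ is closed; in particular every automorphism of degree $\le k$ has inverse of degree $\le e_{0}$.

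Finally I would prove that $p'\colon\AAA(X)\to\Aut(X)$ is an isomorphism by exhibiting its inverse $\phi\mapsto(\phi,\phi^{-1})$ as a morphism. On $C_{k,e_{0}}$ the component $g_{i}:=(\phi^{-1})^{*}(\bar x_{i})$ is the unique element of $\OOO(X)_{e_{0}}$ with $\phi^{*}(g_{i})=\bar x_{i}$; applying the Division Lemma~\ref{division-biss.lem} to the bilinear evaluation $\LLL(\OOO(X)_{e_{0}},\OOO(X)_{ke_{0}})\times\OOO(X)_{e_{0}}\to\OOO(X)_{ke_{0}}$, with $\mu(\phi)=\phi^{*}|_{\OOO(X)_{e_{0}}}$ and $\rho\equiv\bar x_{i}$, shows that $\phi\mapsto g_{i}$ is a morphism. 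Thus $\phi\mapsto\phi^{-1}$ is a morphism and $\phi\mapsto(\phi,\phi^{-1})$ is a morphism inverting $p'$, so $p'$ is an isomorphism of ind-varieties. For arbitrary $\kk$ one passes to an uncountable algebraically closed extension $\KK$, uses $\End(X)_{\KK}=\End(X_{\KK})$, $\Dom(X)_{\KK}=\Dom(X_{\KK})$ and $\AAA(X)_{\KK}=\AAA(X_{\KK})$, and then descends both the closed immersion $\Aut(X)\hookrightarrow\Dom(X)$ and the isomorphism $p'$ via Proposition~\ref{fieldextension.prop}(4). I expect the uniform degree bound of the previous paragraph to be the genuine difficulty, the remainder being a careful but routine assembly of the cited lemmas.
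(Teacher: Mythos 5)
Your architecture is sound and genuinely different from the paper's. The paper derives all three assertions from an \emph{effective} uniform degree bound: by Dub\'e's Gr\"obner-basis estimate (Proposition~\ref{prop: degree bound for Grobner bases}) there is an explicit $e=e(k)$ such that, for $\phi\in\End(X)_{k}$, injectivity of $\phi^{*}$ is already detected on $\OOO(X)_{e}$, and $\deg\phi^{-1}\leq e$ whenever $\phi$ is invertible (Lemma~\ref{lem:bounds for injectivity and surjectivity}); openness of $\Dom(X)$ and closedness of $\Aut(X)$ then both follow from Lemma~\ref{lem: linear algebra}, over an arbitrary $\kk$ and with no base change. You replace this by a non-effective bound: Chevalley's theorem makes the injective locus constructible, the Ax-type identification of injective endomorphisms with automorphisms (quoted in the paper before Proposition~\ref{fam-end-aut.prop}) identifies it with $\Aut(X)\cap\Dom(X)_{k}$, and Lemma~\ref{constructible.lem} over an uncountable field collapses the increasing union $\bigcup_{e}C_{k,e}$ to a single closed $C_{k,e_{0}}$; descent via Proposition~\ref{fieldextension.prop} then handles arbitrary $\kk$. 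That is correct, at the price of effectivity and an extra base-change layer. For the last assertion your Division-Lemma construction of $\phi\mapsto\phi^{-1}$ is valid; the paper is slicker here, feeding the tautological family $\Aut(X)\times X\to X$ (an automorphism by Proposition~\ref{fam-end-aut.prop}) into the universal property of Theorem~\ref{AutX.thm} to obtain the inverse morphism $q\colon\Aut(X)\to\AAA(X)$ for free.

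The one genuine gap is the reducible case of the openness of $\Dom(X)$. Your sentence that the semicontinuity argument is ``applied component by component to control $\dim\overline{\phi(X_{i})}$'' cannot be completed as stated, because dominance is not a function of the dimensions $\dim\overline{\phi(X_{i})}$: take $X=X_{1}\sqcup X_{2}$ two disjoint lines and $\phi$ mapping both isomorphically onto $X_{1}$; every $\dim\overline{\phi(X_{i})}$ is maximal, yet $\phi$ is not dominant. What is actually needed is the incidence condition $X_{j}\subseteq\overline{\phi(X_{i})}$ for suitable pairs $(i,j)$, and this is not an open condition in $\phi$ (already $\phi(X_{i})\subseteq X_{j}$ is a closed condition); correspondingly, the set $\{(\phi,x)\mid x\in X_{i},\ \phi(x)\in X_{j}\}$ on which you would run fiber-dimension semicontinuity is a closed subset varying with $\phi$, and the projection-of-an-open-set trick no longer applies. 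Two repairs are available. Either quote the paper's bound, under which dominance of $\phi\in\End(X)_{k}$ is equivalent to injectivity of $\phi^{*}$ on $\OOO(X)_{e(k)}$ --- an argument insensitive to reducibility. Or stay within your own toolkit: the non-dominant locus in $\End(X)_{k}$ is the increasing union of the closed sets $D_{k,e}=\{\phi \mid \phi^{*}|_{\OOO(X)_{e}} \text{ not injective}\}$, and it is constructible (for the constructible image $T$ of $(\phi,x)\mapsto(\phi,\phi(x))$, the set of $\phi$ such that the fiber $T_{\phi}$ is dense in $X$ is constructible, e.g.\ EGA~IV, 9.5.3), so over an uncountable field Lemma~\ref{constructible.lem} shows it equals some $D_{k,e_{1}}$, hence is closed, and openness descends by Proposition~\ref{fieldextension.prop}. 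With either patch your proof is complete.
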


An interesting consequence is the following.
\begin{corollary}
Let $\Phi=(\Phi_{z})_{z\in Z}$ be a family of dominant endomorphisms of the affine variety $X$. Assume that the set $\{z\in Z\mid \Phi_{z}\in\Aut(X)\}$ is dense in $Z$.  Then $\Phi$ is an automorphism.
\end{corollary}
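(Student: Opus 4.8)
The plan is to repackage the family $\Phi$ as a single morphism of ind-varieties landing in $\End(X)$ and then exploit the two facts furnished by Theorem~\ref{AutX-locally-closed-in-EndX.thm}: that $\Dom(X)$ is open in $\End(X)$ and that $\Aut(X)$ is closed in $\Dom(X)$.

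First I would record that the family $\Phi$ is, by Definition~\ref{fam-morphisms.def}, a morphism $Z\times X\to X$, and hence corresponds under the universal property of $\End(X)=\Mor(X,X)$ (Lemma~\ref{ind-var-morphisms.lem}) to a morphism of ind-varieties $\tilde\Phi\colon Z\to\End(X)$, $z\mapsto\Phi_{z}$. Since by hypothesis every $\Phi_{z}$ is dominant, the image of $\tilde\Phi$ is contained in $\Dom(X)$, and because $\Dom(X)\subseteq\End(X)$ is open (Theorem~\ref{AutX-locally-closed-in-EndX.thm}) the induced map $\tilde\Phi\colon Z\to\Dom(X)$ is again a morphism of ind-varieties.

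Next I would use that $\Aut(X)$ is closed in $\Dom(X)$ (again Theorem~\ref{AutX-locally-closed-in-EndX.thm}). Consequently the set
$$
\{z\in Z\mid \Phi_{z}\in\Aut(X)\}=\tilde\Phi^{-1}(\Aut(X))
$$
is closed in $Z$, being the preimage of a closed set under the continuous map $\tilde\Phi$. By assumption this set is also dense in $Z$; since a closed dense subset of any topological space coincides with the whole space, we get $\tilde\Phi^{-1}(\Aut(X))=Z$. Hence every $\Phi_{z}$ is an automorphism, and $\tilde\Phi$ takes its values in the locally closed ind-subvariety $\Aut(X)\subseteq\End(X)$.

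Finally I would promote $\tilde\Phi$ to a morphism $Z\to\Aut(X)$ into the universal ind-group structure. This is exactly where the full force of Theorem~\ref{AutX-locally-closed-in-EndX.thm} enters: it identifies the abstract affine ind-group $\AAA(X)$ with the locally closed subset $\Aut(X)\subseteq\End(X)$ via the first projection, so a morphism with image in $\Aut(X)\subseteq\End(X)$ is the same datum as a morphism $Z\to\AAA(X)$. The universal property of Theorem~\ref{AutX.thm}(1) then converts this morphism back into a family of automorphisms of $X$ parametrized by $Z$, i.e. into an automorphism of $X\times Z$ over $Z$, which is precisely the assertion that $\Phi$ is an automorphism. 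I expect the only delicate point to be this last identification of the two a priori different ind-structures on $\Aut(X)$, but it is supplied directly by the isomorphism $\AAA(X)\simto\Aut(X)$ of Theorem~\ref{AutX-locally-closed-in-EndX.thm}; along this route neither irreducibility of $X$ nor the hypotheses of Proposition~\ref{fam-end-aut.prop} are needed.
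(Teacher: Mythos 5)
Your proof is correct and is exactly the argument the paper intends: the corollary is stated there as an immediate consequence of Theorem~\ref{AutX-locally-closed-in-EndX.thm}, and your steps --- $\tilde\Phi^{-1}(\Aut(X))$ closed because $\Aut(X)$ is closed in $\Dom(X)$, dense plus closed forces it to be all of $Z$, and then the identification $\AAA(X)\simto\Aut(X)$ together with the universal property of Theorem~\ref{AutX.thm} converting the morphism $Z\to\Aut(X)$ back into an automorphism of $X\times Z$ over $Z$ --- spell out precisely that implicit deduction. Your closing observation is also accurate: the paper's subsequent remark uses \name{Zariski}'s Main Theorem and Lemma~\ref{Igusa.lem} only to give a \emph{different}, direct proof under the stronger hypothesis that the automorphic locus contains an open dense set, whereas your route needs neither that hypothesis nor the irreducibility assumption of Proposition~\ref{fam-end-aut.prop}.
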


\begin{remark} If we assume in the corollary above that $\{z\in Z\mid \Phi_{z}\in\Aut(X)\}$ contains an open and dense set, then we can give a direct proof. First of all, we can assume that $Z$ is a smooth curve $C$. Since a suitable power $\Phi^{m}$ stabilizes the irreducible components of $C \times X$ we can also assume that $X$ is irreducible. 

Now the assumptions imply that $\Phi\colon C \times X \to C \times X$ is birational and almost surjective, i.e. $\codim_{C \times X}\overline{C \times X\setminus \Phi(C \times X)}\geq 2$. If $\eta\colon \tilde X \to X$ is the normalization, we get the commutative diagram
$$
\begin{CD}
C \times \tilde X @>\tilde\Phi>> C \times \tilde X \\
@VV{\id_{C}\times\eta}V   @VV{\id_{C}\times\eta}V \\
C \times X @>\Phi>> C \times X
\end{CD}
$$
which implies that $\tilde\Phi$ is also birational and almost surjective, hence $\tilde\Phi$ is an isomorphism, by  Lemma~\ref{Igusa.lem} below. From this we see that $\Phi$ is a finite morphism. If $\Phi$ were not an isomorphism, then we get an infinite sequence of finite $\OOO(C\times X)$-modules
$$
\OOO(C \times X) \subsetneqq \rho^{-1}(\OOO(C \times X)) \subsetneqq \rho^{-2}(\OOO(C \times X)) \subsetneqq \cdots 
\subsetneqq \OOO(C \times \tilde X)
$$
where $\rho:=(\tilde\Phi)^{*} \colon \OOO(C\times \tilde X) \simto\OOO(C\times \tilde X)$.
\end{remark}

The following result  was used in the remark above. It is due to \name{Igusa} who used it in his proof of 
\cite[Lemma~4]{Ig1973Geometry-of-absolu}.
Another proof can be found in \cite[II.3.4 Lemma and Bemerkung on page 106]{Kr1984Geometrische-Metho}.

\begin{lemma}  \label{Igusa.lem}
Let $X,Y$ be irreducible affine varieties, and let $\phi\colon X \to Y$ be a birational morphism. Assume that $Y$ is normal and that $\phi$ is almost surjective, i.e. $\codim_{Y}\overline{Y\setminus\phi(X)} \geq 2$. Then $\phi$ is an isomorphism.
\end{lemma}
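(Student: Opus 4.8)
The plan is to reduce the statement to commutative algebra and then argue by means of valuations on the normal target. Write $R:=\OOO(Y)$ and $S:=\OOO(X)$. Since $\phi$ is birational, the comorphism $\phi^{*}\colon R\into S$ identifies $R$ and $S$ with subrings of the common fraction field $K:=\kk(Y)=\kk(X)$, and $\phi$ is an isomorphism exactly when $R=S$ inside $K$. As $X$ is of finite type over $Y$, I may write $S=R[s_{1},\dots,s_{n}]$ with $s_{i}\in K$. Because $Y$ is normal, $R$ is integrally closed, and for a Noetherian normal domain one has $R=\bigcap_{\operatorname{ht}\pp=1}R_{\pp}$, the intersection of the localizations at the height-one primes, each of which is a discrete valuation ring of $K$. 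Hence it suffices to prove that $s_{i}\in R_{\pp}$, i.e. $v_{\pp}(s_{i})\ge 0$, for every height-one prime $\pp$, where $v_{\pp}$ denotes the valuation of $R_{\pp}$.

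Suppose for contradiction that $v_{\pp}(s_{j})<0$ for some index $j$ and some height-one prime $\pp$. Let $Z:=V(\pp)\subseteq Y$ be the associated prime divisor, with generic point $\eta_{Z}$, so that $R_{\pp}=\OOO_{Y,\eta_{Z}}$. At this point I would invoke the almost-surjectivity hypothesis: since $\overline{Y\setminus\phi(X)}$ has codimension $\ge 2$ while $Z$ has codimension $1$, the divisor $Z$ cannot be contained in $\overline{Y\setminus\phi(X)}$; equivalently, because the closed set of codimension $\ge 2$ cannot contain the codimension-one point $\eta_{Z}$, we get $\eta_{Z}\in\phi(X)$. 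Choose $\xi\in X$ with $\phi(\xi)=\eta_{Z}$. A morphism of varieties induces local homomorphisms on local rings, so we obtain a \emph{local} homomorphism $\OOO_{Y,\eta_{Z}}=R_{\pp}\to\OOO_{X,\xi}$ of local subrings of $K$.

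The contradiction then comes from locality. Being a global regular function, $s_{j}\in S\subseteq\OOO_{X,\xi}$. On the other hand $v_{\pp}(s_{j})<0$ means $s_{j}^{-1}\in\mm_{R_{\pp}}$, and since $R_{\pp}\to\OOO_{X,\xi}$ is local this forces $s_{j}^{-1}\in\mm_{X,\xi}$. But then $1=s_{j}\,s_{j}^{-1}\in\OOO_{X,\xi}\cdot\mm_{X,\xi}=\mm_{X,\xi}$, which is absurd. Therefore $v_{\pp}(s_{i})\ge 0$ for all $i$ and all height-one $\pp$, whence $S=R$ and $\phi$ is an isomorphism.

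The routine ingredients — that $S$ is a finitely generated $R$-algebra, that a Noetherian normal domain equals the intersection of its height-one localizations, and that each $R_{\pp}$ is a DVR — are all standard. The main obstacle, and the only place where the hypotheses genuinely enter, is the middle step: converting the codimension-$\ge 2$ condition on the non-image into the statement that the generic point of an \emph{arbitrary} prime divisor $Z=V(\pp)$ lies in $\phi(X)$, together with the observation that the induced homomorphism of local rings is local, which is what makes the valuation bound transfer. I would emphasize that normality is used only on the target $Y$, via the DVR $R_{\pp}$; the source $X$ need not be normal, and $\OOO_{X,\xi}$ enters merely as a local ring containing the global function $s_{j}$.
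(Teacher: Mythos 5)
The paper itself contains no proof of this lemma: it is imported from the literature, attributed to \name{Igusa} (who used it in proving Lemma~4 of his 1973 paper), with a second proof referenced in \name{Kraft}'s book, II.3.4. Measured against that standard argument, your proof is correct and is in substance the same mechanism: identify $R=\OOO(Y)\subseteq S=\OOO(X)$ inside $K=\kk(Y)$ via birationality, use that the normal Noetherian domain $R$ equals $\bigcap_{\operatorname{ht}\pp=1}R_{\pp}$ with each $R_{\pp}$ a DVR, and exclude a pole of a generator $s_{j}$ along a prime divisor $Z=V(\pp)$ by producing a point of $Z$ lying in $\phi(X)$, where $s_{j}\in\OOO_{X,\xi}$ and $s_{j}^{-1}\in\mm_{X,\xi}$ force the contradiction $1\in\mm_{X,\xi}$. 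The classical phrasing via the pole divisor of the rational function $s_{j}$, which cannot avoid the image since $\codim_{Y}\overline{Y\setminus\phi(X)}\geq 2$, is exactly this argument; you are also right that normality enters only on the target.

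One wrinkle you should tidy up: in the variety language of this paper, points are closed points, so the assertion ``$\eta_{Z}\in\phi(X)$'' is not literally meaningful as stated, and $\phi(X)$ a priori only contains closed points. There are two standard repairs. Either pass to schemes: the scheme-theoretic image is constructible (\name{Chevalley}), its complement has the same closure as the classical complement because closed points are very dense in constructible subsets of finite-type $\kk$-schemes, so the codimension-one point $\eta_{Z}$ lies in the image, and stalk maps of morphisms of locally ringed spaces are local, as you use. Or stay classical: from $v_{\pp}(s_{j})<0$ write $s_{j}^{-1}=c/d$ with $c\in\pp$, $d\notin\pp$; then $s_{j}^{-1}\in\mm_{y}\OOO_{Y,y}$ for every closed point $y$ in the dense open subset of $Z$ where $d\neq 0$, and since $Z\not\subseteq\overline{Y\setminus\phi(X)}$ such a $y$ can be chosen of the form $y=\phi(x)$; the local homomorphism $\OOO_{Y,y}\to\OOO_{X,x}$ then gives $1=s_{j}\,s_{j}^{-1}\in\mm_{X,x}$, the same contradiction. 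With either patch the proof is complete.
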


We now fix a closed embedding $X \into \An$ and write $ \OOO(X) = \kk  [x_1,\ldots,x_n] / I$ where $I = (q_1,\ldots, q_m)$ is the vanishing ideal, and we define $\OOO(X)_{k}$ to be the image of $\kk[x_{1},\ldots,x_{n}]_{\leq k}$. This defines a filtration $\OOO(X) = \bigcup_{k\geq 0}\OOO(X)_{k}$ by finite-dimensional subspaces such that $\OOO(X)_{k}\cdot\OOO(X)_{\ell}\subseteq \OOO(X)_{k+\ell}$. We have a closed immersion
\[ 
\End (X) = \{ f=(f_1,\ldots,f_n) \in \OOO (X)^n \mid q(f) = 0 \text{ for all } q\in I\}\subseteq \OOO(X)^{n},
\]
and define
\[ 
\End (X) _{k} := \End (X) \cap \OOO(X)^{n}_{k}   \text{ \ and \ } \Dom (X) _{k}  := \Dom(X) \cap \End (X) _{k}.
\]
This allows to define the \itind{degree\/} of a regular function $f \in \OOO(X)$ and of an endomorphism $\phi\in\End(X)$:
\[
\deg f :=\min\{k\geq 0\mid f\in\OOO(X)_{k}\}, \quad 
\deg\phi:=\min\{k\geq 0\mid \phi\in\End(X)_{k}\}.
\]
Note that $\deg (fh) \leq \deg f + \deg h$ and $\deg \phi\circ\psi \leq \deg\phi \cdot \deg\psi$.

\ps
\subsection{Proof of Theorem~\ref{AutX-locally-closed-in-EndX.thm}}
The first step in the proof of Theorem~\ref{AutX-locally-closed-in-EndX.thm} relies on the following result from \name{Dub\'e} \cite{Dube1990The-structure-of-p} providing an upper bound for the degree of the polynomials in a reduced Gr\"obner basis of an ideal. Such bounds were already obtained by various authors, see e.g. \cite{MoMo1984Upper-and-lower-bo}. For generalities on  Gr\"obner bases\idx{Gr\"obner basis}, we refer to \cite{KrRo2008Computational-comm}. In particular,  the definition of a reduced Gr\"obner basis is given in \cite[Definition 2.4.12, page~115]{KrRo2008Computational-comm}.

\begin{proposition}[\name{Dub\'e}] \label{prop: degree bound for Grobner bases}
Let $J= ( h_1, \ldots, h_s ) \subseteq \kk [z_1,\ldots,z_m]$ be an ideal, and set $d:= \max_i \{\deg h_i\}$. If $\sigma$ is any term ordering and if $G= \{ g_1,\ldots,g_t \}$ is a reduced $\sigma$-Gr\"obner basis of $J$, then we have
\[
\max_i \{\deg g_i\}  \leq  2 \cdot \left( \frac{d^2}{2} + d \right)^{2^{m-1}}. 
\]
\end{proposition}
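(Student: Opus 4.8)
The plan is to treat this as what it is: an external, purely combinatorial statement about polynomial ideals in $\kk[z_1,\ldots,z_m]$, logically independent of the ind-variety machinery and invoked only to bound degrees in the proof of Theorem~\ref{AutX-locally-closed-in-EndX.thm}. First I would reduce everything to the \emph{initial monomial ideal} $M := \operatorname{in}_{\sigma}(J)$. Because the reduced $\sigma$-Gröbner basis $G$ is uniquely determined by $\sigma$ and $J$, the leading terms $\lt(g_1),\ldots,\lt(g_t)$ are exactly the minimal monomial generators of $M$, while every non-leading monomial occurring in some $g_i$ is a \emph{standard monomial}, i.e.\ lies outside $M$. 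Hence $\max_i \deg g_i$ is controlled once one bounds (i) the degrees of the minimal generators of $M$ and (ii) the degrees of those standard monomials that can appear as trailing terms of a reduced basis element.

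Next I would set up a Hilbert-function comparison. Since the standard monomials form a $\kk$-basis of both $\kk[z]/J$ and $\kk[z]/M$, the ideals $J$ and $M$ have the same Hilbert function, hence the same Hilbert polynomial and the same index of regularity. The engine of the argument is then a \emph{cone decomposition} of the standard monomials: a partition of the complement of $M$ into finitely many disjoint cones $C(s_j,U_j)=\{\,s_j\cdot u \mid u \text{ a monomial in the variables } U_j\,\}$, where $s_j$ is a monomial and $U_j\subseteq\{z_1,\ldots,z_m\}$. Such a decomposition yields a closed form for the Hilbert series as $\sum_j q^{\deg s_j}/(1-q)^{|U_j|}$, so that the degrees of the cone vertices $s_j$, the sizes $|U_j|$, and the regularity are rigidly linked. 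Following \name{Dub\'e}, I would produce a decomposition that is simultaneously \emph{exact} and \emph{$k$-standard}, which forces the vertex degrees and the generator degrees of $M$ into the same numerical range.

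The quantitative heart is a recursion on the number of variables $m$. Singling out $z_m$ and stratifying $M$ by the powers $z_m^0, z_m^1, \ldots$ relates a decomposition of $M$ to decompositions of the colon ideals $(M:z_m^{\,e})$ in the remaining variables $z_1,\ldots,z_{m-1}$; combined with the degree bound for ideal membership already recorded above (Proposition~\ref{degree-bound-for-ideal-membership.prop}), this yields an inequality of the shape $D(m)\le \bigl(D(m-1)\bigr)^{2} + (\text{lower order})$ for the relevant degree bound $D(m)$, with base case $D(1)\le d$ (a principal ideal in one variable, whose reduced basis is a single generator of degree $\le d$). Solving this recursion is precisely what manufactures the double-exponential closed form $2\bigl(\tfrac{d^2}{2}+d\bigr)^{2^{m-1}}$.

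I expect the main obstacle to be the cone-decomposition bookkeeping in the recursive step: assembling an exact, $k$-standard decomposition of the standard monomials of a monomial ideal in $m$ variables out of such decompositions in $m-1$ variables, while keeping simultaneous explicit control of the vertex degrees, the sizes $|U_j|$, and the Hilbert-series identity. A secondary but genuine difficulty is that $\sigma$ is an \emph{arbitrary} term ordering: for degree-compatible orders one has $\deg g_i=\deg\lt(g_i)$ and step (ii) is automatic, but in general one must argue, again through the decomposition, that the standard trailing monomials of a reduced basis element cannot exceed the same bound. Once these combinatorial estimates are established, the remaining reduction and the solution of the recursion are routine, so for the full details I would cite \name{Dub\'e} \cite{Dube1990The-structure-of-p} rather than reproduce them here.
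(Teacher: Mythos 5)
The paper does not prove this proposition at all: it is imported verbatim from \name{Dub\'e} \cite{Dube1990The-structure-of-p} (with a pointer to earlier bounds such as \cite{MoMo1984Upper-and-lower-bo}) and used as a black box in the proof of Lemma~\ref{lem:bounds for injectivity and surjectivity}. So your overall treatment --- reduce to combinatorics of the initial ideal, sketch the mechanism, and cite \name{Dub\'e} for the details --- is exactly the paper's treatment. Your reduction is also stated correctly: the leading terms of the reduced basis are the minimal generators of $\operatorname{in}_{\sigma}(J)$, the trailing monomials are standard monomials, and since $\sigma$ is arbitrary (the paper applies the result to an \emph{elimination} ordering, which is not degree-compatible) one genuinely must bound both; flagging that is a real point, not pedantry.

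Two details of your sketch, however, do not match Dub\'e's actual argument, and as written they hide a gap. First, Dub\'e's proof is deliberately self-contained and nowhere uses Hermann's ideal-membership bound (Proposition~\ref{degree-bound-for-ideal-membership.prop}); wiring that doubly-exponential input into your induction is not something Dub\'e does, and you never derive the claimed inequality $D(m)\le D(m-1)^{2}+(\text{lower order})$ from it --- the stratification of a monomial ideal by powers of $z_m$ into colon ideals $(M:z_m^{e})$ gives, by itself, no control on how large $e$ must be taken or on the generator degrees of the $(M:z_m^{e})$, which is precisely the hard part. Second, in \cite{Dube1990The-structure-of-p} the quadratic recursion does not run over the number of variables at all: it runs over the Macaulay constants $b_m\ge b_{m-1}\ge\cdots\ge b_0$ attached to an exact standard cone decomposition, descending in cone \emph{dimension}, with the comparison of Hilbert series supplying an inequality of roughly the shape $b_{j-1}\le \tfrac12 b_j^{2}+(\text{lower order})$ whose iteration yields $2\bigl(\tfrac{d^2}{2}+d\bigr)^{2^{m-1}}$; all monomials of the reduced basis, leading and trailing, are then bounded by the top Macaulay constants. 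Since you explicitly decline to reproduce the details and cite Dub\'e --- which is all the paper does --- these inaccuracies are harmless for the paper's purposes, but if you ever expand the sketch into a proof you should follow the Macaulay-constant recursion rather than the colon-ideal/Hermann route, which as proposed does not close.
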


\begin{lemma} \label{lem:bounds for injectivity and surjectivity}
Let $X \subseteq \A{n}$ be a closed subvariety with vanishing ideal $ (q_1,\ldots, q_m)$. For $\phi \in \End (X)$ there is an  $e=e(\deg\phi)$ such that
\be
\item 
$\phi^* \colon \OOO (X) \to \OOO (X) $ is injective if and only if
the induced map $\phi^* \colon \OOO (X)_{e} \to \OOO (X) $ is injective.
\item 
If, in addition,  $\phi$ is an automorphism, then $\deg \phi^{-1} \leq e$.
\ee
\end{lemma}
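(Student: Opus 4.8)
The plan is to realise $\phi^{*}$ through the coordinate ring of the graph of $\phi$ and to extract both degree bounds from a single reduced Gröbner basis via Dubé's estimate (Proposition~\ref{prop: degree bound for Grobner bases}). Fix lifts $F_{1},\ldots,F_{n}\in\kk[x_{1},\ldots,x_{n}]$ of the components $f_{i}$ of $\phi$ with $\deg F_{i}\leq d:=\deg\phi$, and introduce the \emph{graph ideal}
\[
J:=\bigl(q_{1}(x),\ldots,q_{m}(x),\,y_{1}-F_{1}(x),\ldots,y_{n}-F_{n}(x)\bigr)\subseteq\kk[x_{1},\ldots,x_{n},y_{1},\ldots,y_{n}]
\]
in $2n$ variables. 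Eliminating the $y_{i}$ gives a $\kk$-algebra isomorphism $\kk[x,y]/J\simto\OOO(X)$ sending $\bar x_{i}$ to $\bar x_{i}$ and $\bar y_{i}$ to $f_{i}$, under which $\phi^{*}$ becomes the self-map $\bar x_{i}\mapsto\bar y_{i}$ (well defined because each $q_{j}(y)$ lies in $J$, as $q_{j}(F)\in I$). Since the generators of $J$ have degree at most $\delta:=\max(\max_{j}\deg q_{j},\,d,\,1)$, Dubé's bound furnishes a number $e=e(\delta)=2(\delta^{2}/2+\delta)^{2^{2n-1}}$, depending only on $\deg\phi$ once $X\subseteq\A n$ is fixed, bounding the degrees of all elements of the reduced Gröbner basis $G$ of $J$ with respect to a chosen term order $\prec$. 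I would take $\prec$ to be a lexicographic (hence elimination) order with every $x_{i}$ larger than every $y_{j}$.

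For (1) I would show that $\ker\phi^{*}$, when nonzero, already contains a nonzero element of degree $\leq e$; this is exactly the claim that injectivity of $\phi^{*}$ is equivalent to injectivity of its restriction to $\OOO(X)_{e}$. Writing residues with $g\in\kk[x]$ via $\OOO(X)=\kk[x]/I$, one has $\phi^{*}(\bar g)=\overline{g(y)}$, so $\ker\phi^{*}$ corresponds to $\{g:g(y)\in\mathfrak{r}\}$ where $\mathfrak{r}:=J\cap\kk[y]$, while $\bar g=0$ means $g(y)\in I'(y):=(q_{1}(y),\ldots,q_{m}(y))$; moreover $I'(y)\subseteq\mathfrak{r}$ always. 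Since $G\cap\kk[y]$ is a Gröbner basis of the elimination ideal $\mathfrak{r}$ with elements of degree $\leq e$, if $\mathfrak{r}\neq I'(y)$ then some $g\in G\cap\kk[y]$ lies outside $I'(y)$, and the corresponding $\bar g(\bar x)\in\OOO(X)_{e}$ is a nonzero kernel element. This yields (1).

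For (2), assuming $\phi$ is an automorphism, each $\bar x_{i}$ is a polynomial in the $f_{j}$, so $x_{i}\equiv h_{i}(y)\pmod J$ for some $h_{i}\in\kk[y]$; then $x_{i}$ is the $\prec$-leading monomial of $x_{i}-h_{i}(y)\in J$, whence $x_{i}\in\mathrm{LT}_{\prec}(J)$ for every $i$, and $G$ contains an element $g_{i}$ with $\mathrm{LT}_{\prec}(g_{i})=x_{i}$, say $g_{i}=x_{i}-r_{i}$ with $\deg g_{i}\leq e$. The decisive point is that, $G$ being reduced, no monomial of $g_{i}$ other than $x_{i}$ is divisible by any $\mathrm{LT}_{\prec}(g)$, $g\in G$; since each $x_{j}$ is such a leading term, no monomial of the tail $r_{i}$ involves any $x_{j}$, forcing $r_{i}\in\kk[y]$. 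Thus $x_{i}-r_{i}(y)\in J$ gives $(\phi^{-1})^{*}(\bar x_{i})=\overline{r_{i}(x)}$ with $\deg r_{i}\leq e$, so $\deg\phi^{-1}\leq e$.

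The main obstacle is precisely this last degree control: normal forms for an elimination order may increase in degree under reduction, so one cannot naively bound $\deg\phi^{-1}$ by reducing $x_{i}$ to normal form. The device that circumvents this is the observation that the reduced Gröbner basis element with leading term $x_{i}$ has its entire tail automatically forced into $\kk[y]$ (because all $x_{j}$ occur as leading terms), so the desired expression for $\phi^{-1}$ is read off directly from $G$ with no further reduction, and one and the same Dubé bound $e$ serves both statements.
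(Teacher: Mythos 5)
Your proof is correct and takes essentially the same route as the paper: the identical graph ideal (with the roles of $x$ and $y$ swapped), an elimination ordering, and Dub\'e's bound for its reduced Gr\"obner basis, with (1) read off from the elimination ideal and (2) from basis elements of the form $x_i - r_i(y)$. The only difference is presentational: where the paper cites Kreuzer--Robbiano (Propositions 3.6.2 and 3.6.6) for these Gr\"obner-basis facts, you derive the key one inline, observing that since every $x_j$ occurs as a leading term, reducedness forces the tails $r_i$ to lie in $\kk[y]$.
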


\begin{proof}
(a) Set $d:=\deg\phi$, and let $f=(f_1,\ldots,f_n) \in \EndA{n}_{d}$ be a lift of $\phi\in(\OOO(X)_{d})^{n}$. Consider the polynomial ring  $Q:=\kk [x_1,\ldots,x_n,y_1,\ldots,y_n]$, and let $J \subseteq Q$ be the ideal generated by the following elements:
\[ 
x_i - f_i(y_1,\ldots,y_n) \text{ for } 1 \leq i \leq n \text{ \ and \ }  q_j(y_1,\ldots, y_n) \text{ for }1\leq j \leq m.
\]
Let $\sigma$ be an elimination ordering for $\{ y_1, \ldots,y_n\}$ and let $G=\{ g_1, \ldots,g_s \}$ be the reduced $\sigma$-Gr\"obner basis of $J$. Then, by Proposition~\ref{prop: degree bound for Grobner bases}, we have $\deg g_i \leq e:=2 \cdot \left( \frac{d^2}{2} + d \right)^{2^{m-1}}$. 

(b) Put $P:=\kk[x_{1},\ldots,x_{n}] \subseteq Q$, and let  $\Phi\colon \kk [x_1,\ldots,x_n]\to \OOO(X)$ be the composition of the projection $k[x_{1},\ldots,x_{n}]\to\OOO(X)$ with $\phi^{*}$, i.e. $\Phi(p)=p(p_{1},\ldots,p_{n})$. Then $\Ker\Phi=J \cap P$ by  \cite[Proposition 3.6.2]{KrRo2008Computational-comm}, and  $\widehat{G} :=G \cap P$ is the reduced  $\sigma$-Gr\"obner basis of $J \cap P$ by \cite[Theorem 3.4.5.c)]{KrRo2008Computational-comm}. Therefore, if $\Ker \phi^*$ is nonzero, it necessarily contains a nonzero element of degree $\leq e$.

(c) By \cite[Proposition 3.6.6.d)]{KrRo2008Computational-comm}, $G$ contains elements of the form $y_i - h_i(x_1,\ldots,x_n)$ for $1 \leq i \leq n$. Therefore, $h_i$ is the $\sigma$-normal form of $y_i$. By \cite[Proposition 3.6.6.b)]{KrRo2008Computational-comm}, we get $\bar{x}_i = h_i ( \bar{f}_1, \ldots, \bar{f}_n)$, so that $h=(h_1,\ldots,h_n)$ is a polynomial endomorphism of $\A{n}$ such that $h|_{X} = \phi^{-1}$.
In particular, $\deg \phi^{-1}\leq e$.
\end{proof}

The next lemma was already used in special form in Section~\ref{R*-embedding.subsec}, see Lemma~\ref{linear-algebra.lem}.
\begin{lemma} \label{lem: linear algebra}
Let $V,W$ be finite-dimensional $\kk$-vector spaces, and let $W' \subseteq W$ be a subspace. Denote by $\LLL(V,W)$\idx{$\LLL(V,W)$} the linear maps from $V$ to $W$, and by $\LLL'(V,W)\subseteq \LLL(V,W)$ the subset of injective maps.
\be
\item The subset $\LLL'(V,W) \subseteq \LLL (V,W)$ is open;
\item The subset $F_{W'} := \{ h \in \LLL'(V,W) \mid h(V)\supseteq W' \}$ is closed in $\LLL'(V,W)$.
\ee
\end{lemma}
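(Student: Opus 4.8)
The plan is to fix bases of $V$ and $W$, which identifies $\LLL(V,W)$ with the affine space of $m \times n$ matrices, where $n := \dim V$ and $m := \dim W$, and then to express both conditions as rank conditions cut out by the vanishing (or non-vanishing) of appropriate minors. Part (1) is literally the statement of Lemma~\ref{linear-algebra.lem}(1), so I would argue in the same way: a linear map $h$ is injective precisely when it has rank $n$, and the locus of maps of rank $\leq k$ is closed for every $k$, being the common zero set of all $(k+1)\times(k+1)$ minors. Hence $\LLL'(V,W)$, the locus of rank exactly $n$, is the complement of the closed set of maps of rank $\leq n-1$; it is empty (hence trivially open) when $n > m$.

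For (2), the key observation is that $h(V)$ is always a linear subspace of $W$, so for a fixed basis $w_1,\ldots,w_r$ of $W'$ one has $h(V) \supseteq W'$ if and only if $w_i \in h(V)$ for every $i$. This rewrites $F_{W'}$ as the finite intersection
$$
F_{W'} = \bigcap_{i=1}^{r} \{ h \in \LLL'(V,W) \mid w_i \in h(V) \}.
$$
Each set in this intersection is exactly the set called $F$ in Lemma~\ref{linear-algebra.lem}(2), with $w_0 = w_i$, and was shown there to be closed in $\LLL'(V,W)$. A finite intersection of closed sets is closed, which gives (2).

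If one prefers a self-contained argument for the individual sets rather than citing the earlier lemma, I would proceed as follows: on $\LLL'(V,W)$ every $h$ is represented by a matrix $A$ of full column rank $n$, and membership $w_i \in h(V)$ is then equivalent to $\operatorname{rank}(A \mid w_i) = n$. Since the augmented matrix always satisfies $\operatorname{rank}(A \mid w_i) \geq \operatorname{rank}(A) = n$, this amounts to $\operatorname{rank}(A \mid w_i) \leq n$, i.e. to the vanishing of all $(n+1)\times(n+1)$ minors of $(A \mid w_i)$ — a closed condition. Intersecting over $i = 1,\ldots,r$ again yields the closedness of $F_{W'}$.

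I do not expect any genuine obstacle here; the only point requiring a moment's care is the reduction from the subspace-containment condition $h(V) \supseteq W'$ to the finitely many vector-membership conditions, which is legitimate precisely because $h(V)$ is a subspace and therefore contains $W'$ as soon as it contains a spanning set of $W'$. Everything else is the standard fact, already used in the earlier lemma, that rank conditions are Zariski-closed.
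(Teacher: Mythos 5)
Your proof is correct, but for part (2) it takes a genuinely different route from the paper (part (1) is the same rank argument in both). You reduce the containment $h(V)\supseteq W'$ to finitely many vector-membership conditions $w_i\in h(V)$ for a basis $w_1,\ldots,w_r$ of $W'$ --- legitimate precisely because $h(V)$ is a subspace --- and then handle each condition by Lemma~\ref{linear-algebra.lem}(2), or equivalently by its explicit mechanism: for a matrix $A$ of full column rank $n$, the condition $w_i\in A(V)$ is the vanishing of all $(n+1)\times(n+1)$ minors of the augmented matrix $(A\,|\,w_i)$. Thus $F_{W'}$ is a finite intersection of closed sets. The paper argues instead via the incidence variety: it takes the evaluation morphism $\phi\colon \LLL'(V,W)\times V\to W$, sets $B:=\phi^{-1}(W')$, and observes that the fibers of the projection $p\colon B\to \LLL'(V,W)$ are the linear spaces $h^{-1}(W')$, whose dimension equals $\dim\bigl(h(V)\cap W'\bigr)$ by injectivity of $h$; then $F_{W'}$ is the locus where this fiber dimension attains its maximal value $\dim W'$, which is closed by upper semicontinuity of fiber dimension. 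Your route is more elementary and more explicit: it exhibits $F_{W'}$ as the common zero set of concrete minor equations, and it absorbs the degenerate cases ($n=m$, where the minor conditions are vacuous and $F_{W'}=\LLL'(V,W)$, or $\dim W'>\dim V$, where the conditions are jointly unsatisfiable and $F_{W'}=\emptyset$) without separate discussion --- the paper's proof, by contrast, sets these aside explicitly (and contains a small typo there, ``$\dim W'\leq \dim U$'' for $\dim V$). The paper's route is coordinate-free and treats the subspace $W'$ in one stroke, at the price of invoking the semicontinuity theorem; yours also makes the logical dependence on Lemma~\ref{linear-algebra.lem} literal, where the paper proves the two lemmas by different methods.
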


\begin{proof}
(1) This is well-known, because the set of maps of rank $\leq k$ is closed for each $k$.
\par\smallskip
(2) Consider the morphism $\phi \colon \LLL'(V,W) \times V \to W$, $(h,v)\mapsto h(v)$, and take the inverse image $B:=\phi^{-1}(W')$. If $p \colon B \to \LLL'(V,W)$ is the morphism induced from the projection $\pr\colon \LLL'(V,W) \times V \to \LLL'(V,W)$, then we see that $p^{-1}(h) \simeq h^{-1}(W')$ for all $h \in \LLL'(V,W)$. Thus the fibers of $p$ are (isomorphic to) $\kk$-vector spaces. We may assume that $\dim W' \leq \dim U$ since otherwise $F_{W'} = \emptyset$. Then $F_{W'}$ is the set of points where the fiber has maximal dimension, namely $\dim W'$. It is again well-known that this set is closed.
\end{proof}

\begin{remark} \label{rem: a set of linear maps which is not locally closed}
Note that the set $C_{W'}:= \{ u \in \Linear (V,W) \mid W' \subseteq u(V) \}$ is in general not locally closed in $\Linear (V,W)$. Indeed, if $V=W=\kk^2$ and if $W'$ is any line of $W$, then $C_{W'}$ is a dense subset of $\Linear (V,W)$ which is not open.
\end{remark}

\begin{remark}
One might try to generalize Theorem~\ref{AutX-locally-closed-in-EndX.thm} and to prove that for any affine varieties $X$ and $Y$, the set \itind{$\Imm (X,Y)$} of closed immersions of $X$ into $Y$ is a locally closed subset of the ind-variety $\Mor (X,Y)$ of all morphisms from $X$ into $Y$. However, this is in general not true, as we now show by taking $X= \AA^1$ and $Y= \AA^2$. Set $V=  \Span (x,y) \subseteq \OOO (\AA^2) = \kk [x,y] $ and $W = \Span (x,x^2) \subseteq \OOO ( \AA^1) = \kk [x]$. The space $\Linear (V,W)$ can naturally be identified with a (closed) subset of $\Mor (\AA^1,\AA^2)$: For each element $u \in \Linear (V,W)$, consider the morphism of algebras $\OOO (\AA^2) \to \OOO (\AA^1)$ sending $x$ onto $u(x)$ and $y$ onto $u(y)$. If $\Imm (\AA^1,\AA^2)$ was locally closed in $\Mor (\AA^1, \AA^2)$, then its trace $\Imm (\AA^1,\AA^2) \cap \Linear (V,W)$ over $\Linear (V,W)$ should a fortiori be locally closed in $\Linear (V,W)$. However, setting $W' = \Span (x) \subseteq W$ and using the notations from Remark~\ref{rem: a set of linear maps which is not locally closed}, one could easily check that this trace is equal to the set $C_{W'}:= \{ u \in \Linear (V,W) \mid W' \subseteq u(V) \}$. The set $C_{W'}$ is not locally closed in $\Linear (V,W)$, again by Remark~\ref{rem: a set of linear maps which is not locally closed}.
\end{remark}

\begin{proof}[Proof of Theorem~\ref{AutX-locally-closed-in-EndX.thm}]
(a) We first show  that $\Dom (X)_{k}$ is open in $\End (X)_{k}$.
Let $\phi$ be any element of $\End (X)_{k}$. By Lemma~\ref{lem:bounds for injectivity and surjectivity}, $\phi$ is dominant if and only if  $\phi^* \colon \OOO (X)_{e} \to \OOO (X)_{ek}$ is injective, and this is an open condition, by Lemma~\ref{lem: linear algebra}\,(1).
\ps
(b) Now we prove that $\Aut (X)$ is closed in $\Dom (X)$, i.e. that  $\Aut (X)_{k}$ is closed in $\Dom (X)_{k}$ for all $k$. Clearly, $\phi\in\Dom_{k}$ is an automorphism if and only if  $\phi^* \colon \OOO (X) \to \OOO (X) $ is surjective. This is equivalent to $\bar{x}_i \in \Image (\phi^*)$ for each $i$. By Lemma~\ref{lem:bounds for injectivity and surjectivity}, this means that  $\overline{x}_i$ is in the image of the injective linear map $f^* \colon \OOO (X)_{e} \to \OOO (X)_{ek}$, and this is a closed condition, by  Lemma~\ref{lem: linear algebra}(2).
\ps
(c) Finally, we show that $p\colon \AAA(X) \to \End(X)$ is a (locally closed) immersion, i.e. that it
 induces an isomorphism $\AAA(X) \simto \Aut(X)$ onto its image. The family $\Aut (X)\times X \to X$ of automorphisms of $X$  corresponds, by Theorem~\ref{AutX.thm}, to a morphism $q \colon \Aut (X) \to \AAA(X)$. This morphism is obviously the inverse of the morphism $p' \colon  \AAA(X) \to \Aut (X)$ induced by $p$. This concludes the proof.
\end{proof}

\ps
\subsection{Automorphism groups of associative \texorpdfstring{$\kk$}{k}-algebras}\label{Aut(R).subsection}
The results in the previous sections about the automorphism group $\Aut(X)$ of an affine variety $X$ can be formulated in terms of the automorphism group $\Aut(\OOO(X))$ of the $\kk$-algebra $\OOO(X)$, using the anti-isomorphism $\Aut(X) \simto\Aut(\OOO(X))$, $\phi\mapsto \phi^{*}$. 
Some of them easily carry over to $\Aut(R)$ where $R$ is a finitely generated \itind{associative $\kk$-algebra}, or even to $\Aut(\R)$ where $\R$ is a finitely generated \itind
{general $\kk$-algebra}. The latter will be discussed in detail in Section~\ref{GeneralAlgebra.sec}. 

In the following $R$ is a finitely generated associative $\kk$-algebra. If $W \subseteq R$ is a finite-dimensional subspace generating $R$, then $\End(R)\subset \LLL(W,R)$ is a closed subset, hence an affine ind-variety. Moreover, the multiplication in $\End(R)$ is an ind-morphism, so that $\End(R)$ has a natural structure of an affine ind-semigroup (cf. Section~\ref{endo-k-algebra.subsec} where the case of a commutative $\kk$-algebra is discussed.)

\begin{proposition}\label{Aut-associative-algebra.prop}
The automorphism group $\Aut(R)$ of a finitely generated associative $\kk$-algebra $R$ has a natural structure of an affine ind-group.
\end{proposition}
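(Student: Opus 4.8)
The plan is to mimic the construction of the ind-group structure on $\Aut(X)$ given in Theorem~\ref{AutX.thm}, replacing the affine variety $X$ by the finitely generated associative $\kk$-algebra $R$ and the ind-semigroup $\End(X)$ by $\End(R)$. I take as established the facts recalled just above the statement, namely that the choice of a finite-dimensional generating subspace $W\subseteq R$ realizes $\End(R)$ as a closed ind-subvariety of $\LLL(W,R)$, that the resulting structure is independent of $W$, and that composition of endomorphisms makes $\End(R)$ an affine ind-semigroup; in particular the composition map $c\colon\End(R)\times\End(R)\to\End(R)$, $(\phi,\psi)\mapsto\phi\circ\psi$, is an ind-morphism.

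First I would define the subset
\[
\AAA(R):=\{(\phi,\psi)\in\End(R)\times\End(R)\mid \phi\circ\psi=\id=\psi\circ\phi\}.
\]
Since $\{\id\}$ is a closed point of $\End(R)$ and $c$ is an ind-morphism, the set $c^{-1}(\id)$ is closed; writing $s$ for the coordinate swap of $\End(R)\times\End(R)$, we have $\AAA(R)=c^{-1}(\id)\cap(c\circ s)^{-1}(\id)$, so $\AAA(R)$ is closed in the affine ind-variety $\End(R)\times\End(R)$ and is therefore itself an affine ind-variety. The first projection restricts to a bijection of $\AAA(R)$ onto $\Aut(R)\subseteq\End(R)$ (a bijective algebra endomorphism has a unique two-sided inverse, which is automatically an algebra endomorphism), and I use this bijection to transport the ind-variety structure to $\Aut(R)$.

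Next I would verify that the group operations are morphisms, so that $\AAA(R)\cong\Aut(R)$ becomes an affine ind-group. Inversion is the restriction to $\AAA(R)$ of the swap $s$, an isomorphism of ind-varieties preserving $\AAA(R)$. Multiplication
\[
\AAA(R)\times\AAA(R)\to\AAA(R),\quad \bigl((\phi,\psi),(\phi',\psi')\bigr)\mapsto(\phi\circ\phi',\,\psi'\circ\psi),
\]
has both components given by composition, hence is an ind-morphism by the ind-semigroup structure on $\End(R)$; its image lies in $\AAA(R)$ because $(\phi\circ\phi')\circ(\psi'\circ\psi)=\id=(\psi'\circ\psi)\circ(\phi\circ\phi')$. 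Thus $\AAA(R)$ is an affine ind-group. The naturality of the structure (families of automorphisms of $R$ parametrized by an ind-variety $\ZZZ$ correspond to morphisms $\ZZZ\to\Aut(R)$) follows from the corresponding universal property of $\End(R)$ exactly as in the proof of Theorem~\ref{AutX.thm}: a morphism $\ZZZ\to\AAA(R)$ is the same datum as a pair of mutually inverse families of endomorphisms $\ZZZ\to\End(R)$.

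Because every step beyond the ind-semigroup structure on $\End(R)$ is formal, there is no serious obstacle here; the single point genuinely requiring the associative (rather than commutative) setting---that composition is an ind-morphism---is precisely the content cited from the preceding paragraph. I would explicitly resist proving the stronger statement that $\Aut(R)$ is locally closed in $\End(R)$: that would demand a degree bound for inverses analogous to Lemma~\ref{lem:bounds for injectivity and surjectivity}, i.e. a noncommutative Gr\"obner-basis estimate, and it is not asserted by the proposition.
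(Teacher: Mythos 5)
Your proposal is correct and follows essentially the same route as the paper: the paper also defines $\AAA(R)$ as the closed set of mutually inverse pairs in $\End(R)\times\End(R)$, transports the ind-variety structure to $\Aut(R)$ via the first projection, and observes that the group operations and the universal property follow formally from the ind-semigroup structure on $\End(R)$. Your closing remark is also well judged: the paper indeed leaves local closedness of $\Aut(R)$ in $\End(R)$ as an open question (Question~\ref{AutR-locally-closed.que}), precisely because the degree bound of Lemma~\ref{lem:bounds for injectivity and surjectivity} is not available in the noncommutative setting.
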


\begin{proof} (Cf. Theorem~\ref{AutX.thm} and its proof.)
Consider the closed subset
$$
\AAA := \{(\phi,\psi) \in \End(R)\times\End(R)\mid \phi\circ \psi =\psi\circ\phi = \id_{R}\} \subset \End(R) \times \End(R)
$$
The first projection $\pr_{1}\colon \End(R) \times \End(R) \to \End(R)$ induces a bijective map  $\AAA \to \Aut(R)$, and thus gives $\Aut(R)$ the structure of an affine ind-variety. It is easy to see that with this structure $\Aut(R)$ is an affine ind-group with the usual universal properties.
\end{proof}

We have a canonical representation of $\Aut(R)$ on $R$  which defines a homomorphism of Lie algebras $\delta\colon \Lie\Aut(R) \to \LLL(R)$ where $\LLL(R)$ are the linear endomorphisms of the $\kk$-vector space $R$. (For this and the following see Lemma~\ref{rep-of-G-and-LieG.lem}.) The image $\delta_{A}\in\LLL(R)$ of $A\in\Lie\Aut(R)$ is defined by $\delta_{A}(a):=d\mu_{a}(A)$ where $\mu_{a}\colon \Aut(R) \to R$, $a \mapsto g(a)$, is the orbit map.
It is clear from the definition, that $\delta$ extends to a map $\tilde\delta\colon T_{\id} \End(R) \to \LLL(R)$ 
defined in the same way. \idx{$\Div$@$\delta:\Lie\Aut(R)\to\Der_{\kk}(R)$}

\begin{proposition}  \label{Tangent-space-of-End(R)-and-Aut(R).prop}
The homomorphism $\tilde\delta$ is injective, and its image is contained in the derivations of $R$,
\[
\tilde\delta \colon T_{\id}\End(R) \into \Der_{\kk}(R)\quad\text{and}\quad \delta\colon \Lie\Aut(R) \into \Der_{\kk}(R),
\]
where the second map is a homomorphism of Lie algebras.
\end{proposition}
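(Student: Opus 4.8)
The plan is to realize a tangent vector $A\in T_{\id}\End(R)$ explicitly as an endomorphism over the dual numbers and to read off the Leibniz rule, exactly as in the commutative case of Proposition~\ref{TEnd(R)-into-Der(R).prop}. Fix a finite-dimensional generating subspace $W=\Span(w_1,\ldots,w_r)\subseteq R$ and present $R$ as a quotient $\kk\langle X_1,\ldots,X_r\rangle/I$ of the free associative algebra by a two-sided ideal $I$, so that
\[
\End(R)=\{(a_1,\ldots,a_r)\in R^r\mid p(a_1,\ldots,a_r)=0\text{ for all }p\in I\}\subseteq R^r
\]
is a closed ind-subvariety, an endomorphism being recorded by the images $a_i$ of the $w_i$. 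A tangent vector $A=(a_1,\ldots,a_r)\in T_{\id}\End(R)\subseteq R^r$ is then the same datum as a $\kk[\eps]$-algebra endomorphism $\Phi=\id_R+\eps\,\partial$ of $R\otimes_{\kk}\kk[\eps]$ ($\eps^2=0$) reducing to $\id_R$ modulo $\eps$, where $\partial\colon R\to R$ is the $\kk$-linear map with $\partial(w_i)=a_i$.

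First I would show that $\tilde\delta$ lands in $\Der_{\kk}(R)$. Imposing $\Phi(ab)=\Phi(a)\Phi(b)$ and comparing the coefficients of $\eps$ gives $\partial(ab)=\partial(a)\,b+a\,\partial(b)$, i.e. $\partial\in\Der_{\kk}(R)$. Concretely, this is the vanishing in $R$ of the $\eps$-linear part of $p(w_1+\eps a_1,\ldots,w_r+\eps a_r)$ for every $p\in I$, which is exactly the condition that the derivation $w_i\mapsto a_i$ of the free algebra descends to $R$. Unwinding the definition of $\tilde\delta$ through the evaluation morphisms $\mu_a\colon\End(R)\to R$, $\phi\mapsto\phi(a)$ (ind-morphisms by Lemma~\ref{tautological.lem}(\ref{eval})), one gets $\tilde\delta(A)(a)=d\mu_a(A)=\partial(a)$, so $\tilde\delta(A)=\partial$.

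Injectivity is then immediate: if $\tilde\delta(A)=\partial=0$ then $a_i=\partial(w_i)=0$ for all $i$, whence $A=0$ in $R^r$. Finally, $\delta=\tilde\delta|_{\Lie\Aut(R)}$ is a homomorphism of Lie algebras. The evaluation morphisms assemble into the canonical linear action $\Aut(R)\times R\to R$, $(\phi,a)\mapsto\phi(a)$, which is an ind-morphism (on each finite-dimensional piece $R_k$ it is a polynomial expression in the images of the generators), hence a representation of $\Aut(R)$ on $R$ in the sense of Definition~\ref{representation-of-an-ind-group.def}; by construction $\delta$ is its differential $d\rho$, and Lemma~\ref{rep-of-G-and-LieG.lem} shows that $d\rho$ is a homomorphism of Lie algebras.

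The only genuinely non-formal point, and the step I expect to require the most care, is the passage between $A$, its dual-number incarnation $\id_R+\eps\,\partial$, and the derivation $\partial$, together with the verification that the free derivation $w_i\mapsto a_i$ descends to $R$ precisely when $A$ satisfies the defining equations of $\End(R)$. In the noncommutative setting the $\eps$-linear term of $p(w_1+\eps a_1,\ldots)$ is the sum over all single-letter substitutions in each monomial of $p$, so one must check that this ``noncommutative total derivative'' coincides with the value of $\partial$; granting that $I$ is two-sided and $\partial$ a derivation of the free algebra, the inclusion $\partial(I)\subseteq I$ reduces to $\partial(p_\alpha)\in I$ for a generating family $(p_\alpha)$, which is exactly the equivalence needed. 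Everything else is formal or inherited from the commutative analogue and from Lemma~\ref{rep-of-G-and-LieG.lem}.
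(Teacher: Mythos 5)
Your proof is correct, but it takes a genuinely different route from the paper's own argument for this proposition. The paper proves the Leibniz rule coordinate-freely: it factors the evaluation morphism $\mu_{a\cdot b}$ as the composition of $(\mu_{a},\mu_{b})\colon \End(R)\to R\times R$ with the multiplication $m$ of $R$, and applies the chain rule, $d\mu_{a\cdot b}=dm_{(a,b)}\circ(d\mu_{a},d\mu_{b})$ together with $dm_{(a,b)}(x,y)=a\cdot y+x\cdot b$, obtaining $\tilde\delta_{A}(a\cdot b)=a\cdot\tilde\delta_{A}(b)+\tilde\delta_{A}(a)\cdot b$ without ever choosing a presentation; a presentation of $R$ enters only for injectivity, where the paper argues exactly as you do (a derivation is determined by its values on generators, and the closed immersion $\End(R)\into R^{n}$ gives an injection on tangent spaces). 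You instead transport the paper's \emph{commutative} proof of Proposition~\ref{TEnd(R)-into-Der(R).prop}: present $R$ as a quotient of the free associative algebra, identify a tangent vector with $\id_{R}+\eps\,\partial$ over the dual numbers, and check that the free derivation $X_{i}\mapsto a_{i}$ descends because $A$ kills the linearized relations. This buys an explicit dictionary between tangent vectors and infinitesimal endomorphisms, at the price of the bookkeeping with the noncommutative total derivative and the two-sided ideal that the chain-rule argument sidesteps entirely. One small caution: since $\End(R)$ carries its reduced structure, the vanishing ideal of the closed subset $\End(R)\subseteq R^{r}$ may a priori be larger than the ideal of equations $p(a_{1},\ldots,a_{r})=0$, $p\in I$, so a tangent vector does yield a $\kk[\eps]$-algebra endomorphism, but the converse identification you assert (``the same datum as'') need not hold; fortunately only the forward direction is used in your argument, and it is the same direction the paper uses in the commutative case. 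Your handling of the last assertion --- verifying that $\Aut(R)\times R\to R$ is a representation and invoking Lemma~\ref{rep-of-G-and-LieG.lem} --- coincides with the paper's, which sets this up in the paragraph preceding the proposition.
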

\begin{proof} (Cf. Proposition~\ref{TEnd(R)-into-Der(R).prop} and its proof.)
For $a,b \in R$ the morphism $\mu_{a\cdot b}$ is the composition
\[
\begin{CD}
\End(R) @>(\mu_{a},\mu_{b})>> R \times R @>{m}>> R
\end{CD}
\]
where $m$ is the multiplication of $R$. From this we see that the differential $d\mu_{a\cdot b}$ is given by the composition
\[
\begin{CD}
T_{\id} \End(R) @>(d\mu_{a},d\mu_{b})>> T_{a}R \oplus T_{b}R @>{dm_{(a,b)}}>> R
\end{CD}
\]
Since $dm_{(a,b)}(x,y) = a\cdot y + x \cdot b$ we finally get $\delta_{A}(a\cdot b) = a\cdot\delta_{A}(b) + \delta_{A}(a)\cdot b$, showing that $\delta_{A}$ is a derivation of $R$.  

In order to see that $\tilde\delta$ is injective we choose a system  of generators $(a_{1},\ldots,a_{n})$ of $R$ to get a closed immersion $\End(R) \into R^{n}$ of ind-varieties.
We obtain an injection $T_{\id} \End(R) \into R^{n}$ which has the following description: $A \mapsto (\tilde\delta_{A}(a_{1}),\ldots,\tilde\delta_{A}(a_{n}))$. Since any derivation of $R$ is determined by the images of the generators $a_{1},\ldots,a_{n}$ we see that $\tilde\delta_{A}\neq 0$ if $A\neq 0$.
\end{proof}

\begin{question}   \label{Is-T(End(R))-a-Lie-algebra.question}
Is  $T_{\id}\End (R)$  a Lie subalgebra of  $\Der_{\kk} (R)$?
\end{question}

\begin{question} (Cf. Theorem~\ref{AutX-locally-closed-in-EndX.thm})\label{AutR-locally-closed.que}
Is $\Aut(R) \subseteq \End(R)$  locally closed when $R$ is an associative $\kk$-algebra?
\end{question}

\ps
\subsection{Small semigroups of endomorphisms}\label{Small-End.subsec}
We start with an example.
\begin{example}  \label{aut-torus.exa}
An endomorphism of $\Cst$ has the form $t \mapsto a\cdot t^{m}$ where $a\in\Cst$ and $m\in\ZZ$. This shows that the semi-group $\End(\Cst)$ is the semi-direct product 
$\ZZ^{\times} \ltimes \Cst$
where the (multiplicative) semigroup $\ZZ^{\times}$ acts on $\Cst$ by $m\, a := a^{m}$. In particular, $\dim\End(\Cst) = 1$.

This example easily generalizes to an {\it $n$-dimensional torus} $T = (\Cst)^{n}$\idx{torus $T\simeq(\kst)^{n}$} where one finds that $\End(T) = \Mat_{n}(\ZZ) \ltimes (\Cst)^{n}$.
Here, a matrix $M=(m_{ij})\in M_{n}(\ZZ)$ acts on $(\Cst)^{n}$ by $M(t_{1},\ldots,t_{n}) := (\ldots, \prod_{j}t_{j}^{m_{ij}},\ldots)$. Again, this ind-semigroup is finite-dimensional, $\dim\End(X)=n$, and $\Aut(X)$ is open in $\End(X)$. This is a special case of the following general result.
\end{example}

\begin{proposition}
Let $X \subseteq (\kst)^{m}$ be a closed subvariety. Then $\End(X)$ is finite-dimensional, and $\Aut(X)\subseteq \End(X)$ is open.
\end{proposition}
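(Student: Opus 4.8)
The plan is to realize $\End(X)$ as a closed ind-subvariety of a finite-dimensional ind-group built from the units of $R := \OOO(X)$, and then to show that being an automorphism is a locally constant condition on $\End(X)$. Since $(\kst)^{m}$ is an affine algebraic group and $X \subseteq (\kst)^{m}$ is closed, the closed immersion $\sigma \colon X \into (\kst)^{m}$ induces, by Lemma~\ref{closed-immersion-Mor.lem}(\ref{closed-immersion-yields-closed-immersion-for-morphisms}), a closed immersion
\[
\sigma_{*}\colon \End(X) = \Mor(X,X) \into \Mor\bigl(X,(\kst)^{m}\bigr) = (\kst)^{m}(R) = (R^{*})^{m},
\]
where I use Remark~\ref{G(R)-as-ind-group.rem} and the fact that the $R$-points of the torus are the $m$-tuples of units; explicitly $\sigma_{*}(\phi) = (\phi^{*}(t_{1}),\dots,\phi^{*}(t_{m}))$ for the coordinate functions $t_{i}$. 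As $R$ is reduced its nilradical vanishes, so Proposition~\ref{structure-of-R*.prop}(2) gives $\dim R^{*} = d$, the number of connected components of $X$. Hence $\dim\End(X) \leq md < \infty$.

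For openness, recall from Theorem~\ref{AutX-locally-closed-in-EndX.thm} that $\Aut(X)$ is already locally closed in $\End(X)$; it therefore suffices to show that it is a union of connected components, i.e. that ``$\phi\in\Aut(X)$'' depends only on the component of $\phi$. The algebraic translation is the key step. Since $R$ is generated as a $\kk$-algebra by the units $t_{i}|_{X}$, the endomorphism $\phi^{*}$ is surjective as soon as it is bijective on $R^{*}$, and a surjective endomorphism of a finitely generated commutative $\kk$-algebra is an isomorphism; thus $\phi\in\Aut(X)$ if and only if $\phi^{*}|_{R^{*}}\colon R^{*}\to R^{*}$ is bijective. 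Now $\phi^{*}$ preserves the identity component $(R^{*})^{\circ}$, which by Proposition~\ref{structure-of-R*.prop} equals $E^{*}$ for $E\subseteq R$ the $d$-dimensional algebra of locally constant functions, and it acts on the finitely generated free abelian group $A_{0}:=R^{*}/(R^{*})^{\circ}$. A standard diagram chase then shows that $\phi^{*}|_{R^{*}}$ is bijective precisely when both induced maps $\phi^{*}|_{E}\in\End_{\kk}(E)$ and $\phi^{*}_{0}\in\End(A_{0})$ are bijective.

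It remains to check that the two discrete invariants $\phi\mapsto\phi^{*}|_{E}$ and $\phi\mapsto\phi^{*}_{0}$ are locally constant on $\End(X)$. For the first, $\phi\mapsto\phi^{*}(e)$ is an ind-morphism $\End(X)\to R$ for each idempotent $e\in E$ (a composition map, cf. Lemma~\ref{tautological.lem}) whose image lies in the finite set of idempotents of $E$; hence $\phi\mapsto\phi^{*}|_{E}$ takes values in the finite set $\End_{\kk}(E)$ and is locally constant. For the second, for each unit $u\in R^{*}$ the map $\phi\mapsto\phi^{*}(u)$ is an ind-morphism $\End(X)\to R^{*}$, its image landing in the locally closed subset $R^{*}\subseteq R$ by Theorem~\ref{R*-locally-closed-in-R.thm}; post-composing with the projection $R^{*}\to A_{0}$ onto the countable discrete set of connected components yields a locally constant map, and letting $u$ range over representatives of a generating set of $A_{0}$ shows $\phi\mapsto\phi^{*}_{0}$ is locally constant. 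Consequently $\Aut(X)$ is the preimage of a subset of a discrete set under a locally constant map, hence open (and closed) in $\End(X)$.

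I expect the main obstacle to be the bookkeeping around $(R^{*})^{\circ}$: when $X$ is disconnected this identity component is the positive-dimensional torus $E^{*}$, so automorphisms cannot be detected merely through the component group $A_{0}=\pi_{0}(R^{*})$, and the combinatorial action of $\phi^{*}$ on the idempotents of $E$ must be tracked separately. Verifying carefully that this action, together with the action on $A_{0}$, is genuinely locally constant, and that their simultaneous bijectivity is equivalent to $\phi$ being an automorphism, is the technical heart of the argument.
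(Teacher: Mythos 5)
Your proof is correct, and while the first half follows the paper, the second half takes a genuinely different route. For finite-dimensionality the paper does exactly what you do: it picks invertible generators of $\OOO(X)$ and embeds $\End(X)$ as a closed ind-subvariety of a finite product of copies of $\OOO(X)^{*}$, then quotes Proposition~\ref{structure-of-R*.prop}; your use of the torus coordinates $t_{i}|_{X}$ is the same argument. For openness the paper instead isolates the general Lemma~\ref{Aut-open-in-End.lem}: whenever $\End(X)$ is finite-dimensional, the connected component $M=\End(X)^{(\id)}$ is an algebraic monoid acting locally finitely on $\OOO(X)$, hence embeds as a monoid $\rho\colon M\into\LLL(W)$ for some generating $M$-stable finite-dimensional $W\subseteq\OOO(X)$, and $\Aut(X)\cap M=\rho^{-1}(\GL(W))$ is open. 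Your argument is torus-specific: since $R=\OOO(X)$ is generated by units, $\phi\in\Aut(X)$ iff $\phi^{*}|_{R^{*}}$ is bijective, and you detect this via two discrete, locally constant invariants, namely the permutation of the finitely many idempotents of $E$ and the induced endomorphism of the finitely generated free abelian group $A_{0}=R^{*}/(R^{*})^{\circ}$. What each approach buys: the paper's lemma needs nothing beyond $\dim\End(X)<\infty$ and is reusable for any affine $X$, whereas yours yields the sharper conclusion that $\Aut(X)$ is a union of connected components of $\End(X)$, hence closed as well as open, which the monoid argument does not give. One step you gloss over: in the ``only if'' direction of your diagram chase you need that a bijective $\phi^{*}|_{R^{*}}$ restricts to a bijection of $E^{*}$ and descends injectively to $A_{0}$; since the inverse of $\phi^{*}|_{R^{*}}$ is a priori only an abstract group map, continuity does not settle this, but it does follow because $E^{*}=(R^{*})^{\circ}$ is divisible and $A_{0}$ is free abelian, so $E^{*}$ is the maximal divisible subgroup of $R^{*}$ and is therefore preserved by any abstract group automorphism. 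With that sentence added, your proof is complete.
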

\begin{proof}
We will regard $\OOO(X)^{*}$ as a closed ind-subvariety of $\OOO(X) \times \OOO(X)$ via the embedding $f \mapsto (f,f^{-1})$. Choose invertible generators $f_{1},\ldots,f_{n}$ of $\OOO(X)$. They define a closed immersion 
$$
\End(X) \into \OOO(X)^{2n}, \ \phi\mapsto (\phi^{*}(f_{1}),\phi^{*}(f_{1}^{-1}),\ldots,\phi^{*}(f_{n}),\phi^{*}(f_{n}^{-1})).
$$ 
Then the image lies in $(\OOO(X)^{*})^{n}$ which is finite-dimensional by Proposition~\ref{structure-of-R*.prop} above. The last statement follows from the next lemma.
\end{proof}

\begin{lemma}\label{Aut-open-in-End.lem}
If $\End(X)$ is finite-dimensional, then $\Aut(X)$ is open in $\End(X)$.
\end{lemma}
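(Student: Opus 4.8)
The plan is to leverage three facts already in hand: by Theorem~\ref{AutX-locally-closed-in-EndX.thm} the set $\Aut(X)$ is closed in $\Dom(X)$ and $\Dom(X)$ is open in $\End(X)$; the subset $\Dom(X)\subseteq\End(X)$ is a sub-semigroup, since $(\phi\circ\psi)^{*}=\psi^{*}\circ\phi^{*}$ is injective whenever $\phi^{*},\psi^{*}$ are, i.e. composition of dominant endomorphisms is dominant; and $\dim\End(X)<\infty$ means every irreducible closed algebraic subset of $\End(X)$ has dimension bounded by a fixed $D$. Openness is a local property, and composing on the left with a fixed $\phi_{0}\in\Aut(X)$ is an automorphism of the ind-variety $\End(X)$ by Lemma~\ref{tautological.lem}(\ref{composition2}), with inverse left-composition by $\phi_{0}^{-1}$; this automorphism preserves $\Dom(X)$ and $\Aut(X)$ and sends $\id$ to $\phi_{0}$. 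Hence it suffices to prove that $\Aut(X)$ is a neighborhood of $\id$ in $\End(X)$.

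The \emph{key step} is: every irreducible closed algebraic subset $W\subseteq\Dom(X)$ with $\id\in W$ satisfies $W\subseteq\Aut(X)$. To prove it, write $W^{(k)}:=W\circ\cdots\circ W$ ($k$ factors). Since $\id\in W$, one has $W^{(k)}\subseteq W^{(k+1)}$, and each $W^{(k)}$ is the image of an irreducible variety under the composition morphism, hence contained in some $\End(X)_{m}$ (Lemma~\ref{Kumar.lem}) with irreducible closure $\overline{W^{(k)}}$. These closures form an increasing chain of irreducible algebraic subsets of bounded dimension, so their dimensions stabilize and therefore $\overline{W^{(k)}}=S$ is constant for $k\geq k_{0}$. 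As $\overline{A}\circ\overline{B}\subseteq\overline{A\circ B}$ for the continuous composition map, we get $S\circ S\subseteq\overline{W^{(2k_{0})}}=S$, so $S$ is an irreducible affine algebraic monoid with identity $\id$ lying inside $\End(X)$. Now I invoke the classical fact that the group of units $G_{S}$ of an affine algebraic monoid is open in it: embedding $S$ as a closed submonoid of some $\M_{N}(\kk)=\End(\kk^{N})$ gives $G_{S}=S\cap\GL_{N}(\kk)$, which is open; the one nontrivial inclusion $S\cap\GL_{N}(\kk)\subseteq G_{S}$ holds because the Zariski closure of the cyclic semigroup generated by a matrix-invertible element of $S$ is a group contained in the closed set $S$. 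Being open and nonempty in the irreducible $S$, the units $G_{S}$ are dense. Every element of $G_{S}$ is invertible in $\End(X)$ by definition, hence lies in $\Aut(X)$; thus $S\subseteq\overline{G_{S}}\subseteq\overline{\Aut(X)}$. Intersecting with the open set $\Dom(X)$ and using that $\Aut(X)$ is closed in $\Dom(X)$, i.e. $\overline{\Aut(X)}\cap\Dom(X)=\Aut(X)$, yields $W\subseteq S\cap\Dom(X)\subseteq\Aut(X)$.

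With the key step available, I conclude by contradiction. Fix $m$ with $\id\in\End(X)_{m}$; it is enough to show $\Aut(X)\cap\End(X)_{m}$ is a neighborhood of $\id$ in the variety $\End(X)_{m}$. Since $\id\in\Dom(X)_{m}$, which is open in $\End(X)_{m}$, and $\Aut(X)_{m}$ is closed in $\Dom(X)_{m}$, the only danger is that $\id$ lies in the closure of $\Dom(X)_{m}\setminus\Aut(X)_{m}$. If so, some irreducible component $C$ of that closure passes through $\id$ and meets the complement of $\Aut(X)$ densely; the open piece $W:=$ (component of $C\cap\Dom(X)$ through $\id$) is then irreducible with $\id\in W\subseteq\Dom(X)$ and dense in $C$, so the key step forces $W\subseteq\Aut(X)$ and hence $C\cap\Dom(X)\subseteq\overline{\Aut(X)}\cap\Dom(X)=\Aut(X)$, contradicting density of the complement. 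Therefore $\id$ is interior, and by the translation reduction $\Aut(X)$ is open in $\End(X)$.

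The main obstacle is the passage to the algebraic monoid $S$ together with the structural input that its unit group is open, in particular the inclusion $S\cap\GL_{N}(\kk)\subseteq G_{S}$; once this is granted the rest is careful bookkeeping with ind-variety topology, namely checking that units of $S$ are genuinely automorphisms of $X$ (immediate) and that density of $G_{S}$ in $S$ combined with the local closedness of $\Aut(X)$ in $\Dom(X)$ confines $W$ to $\Aut(X)$. I would either cite the algebraic-monoid theorem or include the short embedding-plus-cyclic-closure argument sketched above to keep the proof self-contained.
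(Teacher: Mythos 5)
Your argument is essentially sound, but it takes a genuinely different --- and considerably longer --- route than the paper's. The paper's proof is three lines: the connected component $M:=\End(X)^{(\id)}$ is an algebraic monoid (this is where $\dim\End(X)<\infty$ enters), it acts locally finitely on $\OOO(X)$, so there is an $M$-stable finite-dimensional generating subspace $W\subseteq\OOO(X)$ and an embedding of monoids $\rho\colon M\into\LLL(W)$; then $\Aut(X)\cap M=\rho^{-1}(\GL(W))$ is open in $M$, the nontrivial inclusion holding because invertibility of $\phi^{*}$ on a generating stable subspace makes $\phi^{*}$ surjective, hence bijective. Your stabilization of the powers $\overline{W^{(k)}}$ to manufacture the algebraic monoid $S$ is a reasonable substitute for the paper's (unargued) claim that the identity component is algebraic --- a point that does need care, since a connected finite-dimensional ind-variety need not be algebraic, cf.\ Example~\ref{irred-not-curve-connected.exa} --- and your use of Theorem~\ref{AutX-locally-closed-in-EndX.thm} to trap $W$ inside $\Aut(X)$ is correct. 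The price is that you invoke the linearity theorem for affine algebraic monoids (closed embedding into some $\M_N(\kk)$), whereas the paper gets its embedding for free from local finiteness of the action on $\OOO(X)$; note that the same device would work for your $S$, since $S\subseteq\End(X)_m$ for some $m$.

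One sentence in your sketch is false as stated: the Zariski closure $C$ of the cyclic semigroup $\{g^{k}\mid k\geq 1\}$ generated by an invertible matrix is \emph{not} in general a group --- for $g=(2)\in\GL_1(\kk)$ one gets $C=\AA^{1}=\M_1(\kk)$, which contains $0$. What is true, and is all you need, is the mechanism of Lemma~\ref{well-known.lem}, used in exactly this way in the paper's Lemma~\ref{deginvers.lem}: left multiplication by $g$ is an automorphism of the variety $\M_N(\kk)$ and $gC\subseteq C$, hence $gC=C$; then $g\in gC$ forces $\id\in C$, and $\id\in gC$ forces $g^{-1}\in C$, which lies in the image of $S$ because that image is closed, and injectivity of the embedding then produces a two-sided inverse inside $S$. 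So $S\cap\GL_N(\kk)\subseteq G_S$ holds, just not for the reason you wrote. Two smaller repairs: state your key step for irreducible \emph{locally closed} algebraic subsets $W\subseteq\Dom(X)$, since your application takes $W=C\cap\Dom(X)$, which is only open in $C$ (the proof goes through verbatim); and replace ``fix $m$'' by ``for every $m$'', because openness in the ind-topology must be verified in every $\End(X)_{k}$, and left composition by $\phi_{0}$ does not preserve the filtration --- the translation step should be phrased as pulling back a neighborhood of $\id$ in a suitable $\End(X)_{m}$ along the morphism $\End(X)_{k}\to\End(X)_{m}$, $\psi\mapsto\phi_{0}^{-1}\circ\psi$ (Lemma~\ref{tautological.lem}). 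Since your argument is uniform in $m$, nothing breaks.
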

\begin{proof} The connected component $M:=\End(X)^{(\id)}$ is an algebraic monoid which acts locally finitely on the coordinate ring $\OOO(X)$. Therefore, we can find an $M$-stable finite-dimensional subspace $W \subseteq \OOO(X)$ which generates $\OOO(X)$. We get an embedding of monoids $\rho\colon M\into \LLL(W)$ where $\LLL(W)$ are the linear endomorphisms of $W$. Now it is clear that $\Aut(X)\cap M =\rho^{-1}(\GL(W))$ is open in $M$, and so $\Aut(X)$ is open in $\End(X)$.
\end{proof}

\begin{example}
An affine variety is called \itind{endo-free} if every endomorphism $\neq \id_{X}$ is constant. In \cite{AnKr2014Varieties-characte} endo-free smooth affine varieties are constructed in arbitrary dimensions. If $X$ is endo-free, then $\id_{X} \in \End(X)$ is an isolated point and $\End(X) \simeq X \cup \{\id_{X}\}$ as ind-varieties (see Proposition~\ref{endX-gives-X.prop} and its proof). In particular, $\End(X)$ is an algebraic variety isomorphic to the disjoint union $X\cup \{*\}$.
\end{example}

\ps
\subsection{The ind-variety of group actions}\label{var-group-actions.subsec}
Let $G$ be a linear algebraic group,  and let $X$ be an affine variety. A \itind{group action} of $G$ on $X$ is given by a morphism $\rho\colon G \times X \to X$ satisfying the two conditions
\be
\item[(i)] For all $x\in X$ we have $\rho(e,x) = x$;
\item[(ii)] For all $g,h \in G$ and $x\in X$ we have $\rho(g,\rho(h,x)) = \rho(g h, x)$.
\ee
The following lemma shows that the space $\Act_{G}(X)$\idx{$\Act_{G}(X)$} of group actions of $G$ on $X$ has a natural structure of an affine ind-variety.

\begin{lemma}
\be
\item\label{Act1}
The subset 
$$
\Act_{G}(X):=\{\rho\colon G\times X \to X \mid \rho\text{ is a group action}\} \subseteq \Mor(G\times X ,X)
$$ 
is closed.
\item\label{Act2}
The subset $\Hom(G,\Aut(X)) \subseteq \Mor(G,\End(X))$ is closed.\idx{$\Hom(G,\Aut(X))$}
\ee
\end{lemma}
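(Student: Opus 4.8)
The plan is to treat part (1) as the heart of the matter and then to deduce part (2) by transport of structure. For part (1) I would encode the two group-action axioms (i) and (ii) as closed conditions on $\rho \in \Mor(G \times X, X)$ and then take their intersection. Throughout I use that $\Mor$ of affine varieties is an affine ind-variety (Lemma~\ref{ind-var-morphisms.lem}) and that ind-morphisms are continuous, so that preimages of closed sets are closed.

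For axiom (i), consider the closed immersion $\iota_e \colon X \to G \times X$, $x \mapsto (e,x)$. By Lemma~\ref{tautological.lem}(\ref{composition3}) the map $\Mor(G\times X, X) \to \End(X)$, $\rho \mapsto \rho \circ \iota_e$, is an ind-morphism, and $(\rho \circ \iota_e)(x) = \rho(e,x)$. Thus axiom (i) says exactly that $\rho \circ \iota_e = \id_X$, i.e. that $\rho$ lies in the preimage of the point $\{\id_X\} \subseteq \End(X)$. Since points are closed in any ind-variety, this locus is closed. For axiom (ii) I would introduce the two maps $\Phi_1, \Phi_2 \colon \Mor(G \times X, X) \to \Mor(G \times G \times X, X)$ given by $\Phi_1(\rho)(g,h,x) = \rho(g, \rho(h,x))$ and $\Phi_2(\rho)(g,h,x) = \rho(gh, x)$; associativity is precisely the equation $\Phi_1 = \Phi_2$. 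Once both are known to be ind-morphisms, the locus $\{\rho \mid \Phi_1(\rho) = \Phi_2(\rho)\}$ is the preimage of the diagonal under $(\Phi_1,\Phi_2)$, and since the target $\Mor(G \times G \times X, X)$ is an affine ind-variety each member of its filtration is separated, so the diagonal is closed and this locus is closed. The map $\Phi_2$ is simply $\rho \mapsto \rho \circ \mu$ for the fixed morphism $\mu \colon G \times G \times X \to G \times X$, $(g,h,x) \mapsto (gh,x)$, hence an ind-morphism by Lemma~\ref{tautological.lem}(\ref{composition3}).

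The main obstacle is checking that $\Phi_1$ is an ind-morphism, because $\rho$ occurs twice and the assignment is not a composition with a fixed morphism. Here I would verify the universal property of Definition~\ref{universal-property-of-the-ind-variety-Mor(V,W).def} directly: given a family $\Phi \colon Z \times G \times X \to X$ representing a morphism $Z \to \Mor(G \times X, X)$, the map $(z,g,h,x) \mapsto \Phi\bigl(z,\, g,\, \Phi(z, h, x)\bigr)$ is a morphism $Z \times G \times G \times X \to X$, being the composition of $\Phi$ with the morphism $(z,g,h,x) \mapsto (z, g, \Phi(z,h,x))$. This assignment is visibly natural in $Z$, so by Yoneda it represents the desired ind-morphism $\Phi_1$. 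Intersecting the two closed loci coming from (i) and (ii) then shows that $\Act_{G}(X)$ is closed.

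Finally, for part (2) I would use the isomorphism $\Mor(G \times X, X) \simto \Mor(G, \End(X))$ of Lemma~\ref{mor-mor.lem}, $\rho \mapsto (g \mapsto \rho(g,\cdot))$. Under this isomorphism axioms (i) and (ii) translate into $\lambda(e) = \id_X$ and $\lambda(g)\circ\lambda(h) = \lambda(gh)$, i.e. into the condition that $\lambda$ be a monoid homomorphism; and since $G$ is a group, any such $\lambda$ automatically takes values in $\Aut(X)$, with $\lambda(g^{-1})$ inverse to $\lambda(g)$. Hence the image of the closed subset $\Act_{G}(X)$ is exactly $\Hom(G,\Aut(X))$, and as the image of a closed set under an isomorphism of ind-varieties it is closed in $\Mor(G,\End(X))$. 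This reduces part (2) entirely to part (1).
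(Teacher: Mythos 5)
Your proof is correct, and for part (1) it is essentially the paper's own argument: the paper encodes condition (i) via $\Psi(\rho)=\rho\circ\iota$ with $\iota\colon x\mapsto(e,x)$ and condition (ii) via the two maps $\Phi_{1}(\rho)=\rho\circ(\mu\times\id_{X})$ and $\Phi_{2}(\rho)=\rho\circ(\id_{G}\times\rho)$, exactly as you do (your labeling of the two maps is swapped, which is immaterial). The one place you go beyond the paper is that the paper dismisses the ind-morphism property of the map in which $\rho$ occurs twice with ``it is again clear,'' whereas you actually verify it through the universal property of Definition~\ref{universal-property-of-the-ind-variety-Mor(V,W).def} and Yoneda; that is a legitimate and welcome filling-in of the gap (one could equivalently apply the evaluation morphism of Lemma~\ref{tautological.lem} twice to the universal family over $Z=\Mor(G\times X,X)$). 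For part (2) your route differs mildly from the paper's: the paper works directly on $\Mor(G,\End(X))$, observing that a map $\phi\colon H\to S$ from a group to a semigroup lands in the units $S^{*}$ as soon as $\phi(ab)=\phi(a)\phi(b)$ and $\phi(e_{H})=e_{S}$, and that these two conditions cut out a closed subset equal to $\Hom(G,\Aut(X))$; you instead transport the closed set $\Act_{G}(X)$ of part (1) through the isomorphism of Lemma~\ref{mor-mor.lem}. Both reductions hinge on the same algebraic observation about units, and your version has the small advantage of avoiding a second closedness computation while anticipating the identification $\Act_{G}(X)\simeq\Hom(G,\Aut(X))$ that the paper establishes immediately afterwards in Proposition~\ref{group-action.prop}(3).
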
\label{group-actions.lem}
\begin{proof}
(1)
We have to show that each of the two conditions (i) and (ii) above defines a closed subset. For the first one this is obvious. We have a morphism $\Psi\colon \Mor(G\times X, X) \to \Mor(X,X)$ given by $\Psi(\rho)=\rho\circ\iota$ where the morphism $\iota\colon X \to G\times X$ is the closed immersion  $x\mapsto(e,x)$. Then condition (i) is equivalent to $\Psi(\rho) = \id_{X}$.

For the second condition let $\mu\colon G \times G \to G$ denote the multiplication, and define the following two maps $\Phi_{1},\Phi_{2}\colon \Mor(G\times X, X) \to \Mor(G\times G \times X, X)$: 
$$
\Phi_{1}(\rho):=\rho\circ(\mu\times \id_{X}),\quad \Phi_{2}(\rho):=\rho\circ(\id_{G}\times \rho).
$$
It is again clear that both maps are morphisms of ind-varieties. Since condition (ii) is equivalent to $\Phi_{1}(\rho) = \Phi_{2}(\rho)$ the claim follows.
\ps
(2)
Let  $H$ be a group and $S$ a semigroup. Then it is easy to see that a map $\phi\colon H \to S$ induces a homomorphism $H\to S^{*}$ of groups if and only if one has
\be
\item[(i)] $\phi(ab) = \phi(a)\phi(b)$ for $a,b \in G$, and 
\item[(ii)] $\phi(e_{G}) = e_{S}$.
\ee
For $H = G$ and $S = \End(X)$ the two conditions (i) and (ii) define a closed subset of $\Mor(G,\End(X))$ which is equal to $\Hom(G,\Aut(X))$.
\end{proof}

It is clear that a $G$-action on $X$ defines a homomorphism $G \to \Aut(X)$. We will show now that this is a homomorphism of ind-groups, and that every homomorphism of ind-groups $G \to \Aut(X)$ defines an action of $G$ on $X$.

\begin{proposition}\label{group-action.prop} 
Let $G$ be a linear algebraic group and $X$ an affine variety.
\be
\item
There is a bijection between the $G$-actions on $X$ and the homomorphisms $G \to \Aut(X)$ of ind-groups.
\item
The set $\Hom(G, \Aut(X))$ of homomorphisms of ind-groups has a natural structure of an affine ind-variety, as a closed subset of the affine ind-variety $\Mor(G,\End(X))$.
\item 
The natural bijective map $\iota\colon\Act_{G}(X) \simto \Hom(G,\Aut(X))$ is an isomorphism of ind-varieties.
\ee
\end{proposition}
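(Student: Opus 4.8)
The plan is to deduce all three assertions from the adjunction isomorphism of Lemma~\ref{mor-mor.lem}, applied with the three affine varieties appearing there taken to be $G$, $X$ and $X$ (recall that a linear algebraic group is affine). This yields an isomorphism of ind-varieties
\[
\Theta\colon \Mor(G\times X,X) \simto \Mor(G,\End(X)),\quad \rho\mapsto (\rho(g,?))_{g\in G},
\]
under which a morphism $\rho$ corresponds to the map $g\mapsto \rho_{g}:=\rho(g,?)\in\End(X)$. Both $\Act_{G}(X)\subseteq\Mor(G\times X,X)$ and $\Hom(G,\Aut(X))\subseteq\Mor(G,\End(X))$ are closed, by Lemma~\ref{group-actions.lem}, and the heart of the argument is that $\Theta$ carries the first bijectively onto the second.

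For (1), I would first observe that the group action axioms (i) and (ii) translate verbatim, under $\Theta$, into the conditions $\rho_{e}=\id_{X}$ and $\rho_{gh}=\rho_{g}\circ\rho_{h}$ on the map $g\mapsto\rho_{g}$; these are exactly the two conditions defining $\Hom(G,\Aut(X))$ in Lemma~\ref{group-actions.lem}(2). From $\rho_{g}\circ\rho_{g^{-1}}=\rho_{e}=\id_{X}=\rho_{g^{-1}}\circ\rho_{g}$ it follows that each $\rho_{g}$ lies in $\Aut(X)$, so $g\mapsto\rho_{g}$ is a homomorphism of abstract groups $G\to\Aut(X)$. The essential point — and the one place where a genuine structural input is needed — is to promote this to a homomorphism of \emph{ind-groups}, i.e. to check that $\Theta(\rho)\colon G\to\End(X)$, whose image lies in $\Aut(X)$, is actually a morphism \emph{into} $\Aut(X)$. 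Here I would invoke Theorem~\ref{AutX-locally-closed-in-EndX.thm}, which says that $\Aut(X)$ is locally closed in $\End(X)$ and that its intrinsic ind-group structure ($\AAA(X)$) coincides with the induced one. Combined with the general principle that a morphism $f\colon\UUU\to\VVV$ whose image lies in a locally closed ind-subvariety $\WWW\subseteq\VVV$ is automatically a morphism $\UUU\to\WWW$ — which reduces, filtration by filtration, to the corresponding elementary fact for algebraic varieties — this shows $\Theta(\rho)\in\Hom(G,\Aut(X))$. The converse is formal: composing a homomorphism of ind-groups $G\to\Aut(X)$ with the morphism of ind-semigroups $\Aut(X)\into\End(X)$ gives an element of $\Mor(G,\End(X))$ satisfying the two conditions above, whose preimage under $\Theta$ satisfies the action axioms. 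Thus $\Theta$ restricts to a bijection $\Act_{G}(X)\simto\Hom(G,\Aut(X))$.

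Part (2) is then immediate: by Proposition~\ref{indmor.prop} the set $\Mor(G,\End(X))$ is an affine ind-variety, since $\End(X)=\Mor(X,X)$ is an affine ind-variety and $G$ is an affine variety; and $\Hom(G,\Aut(X))$ is a closed subset of it by Lemma~\ref{group-actions.lem}(2). A closed subset of an affine ind-variety is again an affine ind-variety, and the identification from (1) shows that this closed subset is exactly the set of ind-group homomorphisms $G\to\Aut(X)$.

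Finally, for (3) I would argue that $\iota$ is nothing but the restriction of the isomorphism $\Theta$ to the closed ind-subvariety $\Act_{G}(X)$, with image the closed ind-subvariety $\Hom(G,\Aut(X))$ by (1). Since the restriction of an isomorphism of ind-varieties to a pair of corresponding closed subvarieties is again an isomorphism — both $\Theta|_{\Act_{G}(X)}$ and $\Theta^{-1}|_{\Hom(G,\Aut(X))}$ are morphisms by the same factorization principle used in (1), and they are mutually inverse — we conclude that $\iota$ is an isomorphism of ind-varieties. The main obstacle is precisely the point flagged in (1): correctly using the local closedness of $\Aut(X)$ in $\End(X)$ (Theorem~\ref{AutX-locally-closed-in-EndX.thm}) to pass between morphisms into $\End(X)$ and morphisms into $\Aut(X)$. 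Everything else is a formal consequence of the adjunction of Lemma~\ref{mor-mor.lem} together with the two closedness statements of Lemma~\ref{group-actions.lem}.
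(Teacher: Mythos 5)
Your proof is correct, and for parts (2) and (3) it coincides with the paper's: both rest on the adjunction of Lemma~\ref{mor-mor.lem} together with the two closedness statements of Lemma~\ref{group-actions.lem}, and both obtain (3) by restricting the isomorphism $\Mor(G\times X,X)\simto\Mor(G,\End(X))$ to the corresponding closed subsets. Where you genuinely diverge is in part (1). The paper argues via the universal property of Theorem~\ref{AutX.thm}: an action, read as an automorphism of $X\times G$ over $G$ (its inverse, $(x,g)\mapsto(\rho(g^{-1},x),g)$, is again a morphism), is a family of automorphisms and hence corresponds directly to a morphism $G\to\Aut(X)$; conversely a homomorphism of ind-groups yields a family and hence a morphism $G\times X\to X$. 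You instead stay inside $\Mor(G,\End(X))$ and invoke Theorem~\ref{AutX-locally-closed-in-EndX.thm} together with the factorization principle for morphisms landing in a locally closed ind-subvariety (which is valid, and reduces filtration by filtration to the classical fact for varieties). Note the trade-off: Theorem~\ref{AutX.thm} is the soft universal-property statement, while Theorem~\ref{AutX-locally-closed-in-EndX.thm} is the substantially deeper result of the paper (its proof uses Dub\'e's degree bounds for Gr\"obner bases), so your route makes the heavy theorem carry a step that the paper handles by formal arguments. What your route buys is uniformity --- all three assertions flow from the single isomorphism $\Theta$ --- and it makes explicit a point that the paper leaves implicit in the proof of Lemma~\ref{group-actions.lem}(2), namely why a group homomorphism $G\to\Aut(X)$ that is a morphism into $\End(X)$ is automatically a morphism into $\Aut(X)$ equipped with its intrinsic ind-group structure $\AAA(X)$.
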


\begin{proof}
(1) An action of $G$ on $X$ defines a family of automorphisms of $X$ parametrized by $G$, hence a morphism $G \to \Aut(X)$, by Theorem~\ref{AutX.thm}, and this morphism is a homomorphism of groups. On the other hand, if $G \to \Aut(X)$ is a homomorphism of ind-groups, then the induced map $G \times X \to X$ is a morphism, hence an action of $G$ on $X$, again by Theorem~\ref{AutX.thm}.
\ps
(2) This is Lemma~\ref{group-actions.lem}(\ref{Act1}) above.
\ps
(3) By Lemma~\ref{mor-mor.lem} we have an isomorphism of ind-varieties 
$$
\iota\colon\Mor(G \times X ,X) \simto \Mor (G, \End(X))
$$
inducing a bijection $\iota\colon \Act_{G}(X) \to \Hom(G,\Aut(X))$. From Lemma~\ref{group-actions.lem}  we get that
$\Act_{G}(X)$ is closed in  $\Mor(G\times X,X)$ and that $\Hom(G, \Aut(X))$ is closed in $\Mor(G,\End(X))$, 
hence the map $\iota$ is an isomorphism of ind-varieties.
\end{proof}

\ps
\subsection{Base field extensions of families}
Let $S,X,Y$ be varieties and let $\Phi=(\Phi_{s})_{s\in S}$ be a family of morphisms $X \to Y$ parametrized by $S$. Let $\KK/\kk$ be a field extension where $\KK$ is algebraically closed.
Then $\Phi_{\KK}\colon S_{\KK}\times X_{\KK} \to Y_{\KK}$
is again a family of morphisms $X_{\KK} \to Y_{\KK}$. 
In case $X$ and $Y$ are both affine varieties, then these two families correspond to the morphisms $\psi\colon S \to \Mor(X,Y)$ and $\Psi\colon S_{\KK} \to \Mor(X_{\KK},Y_{\KK})$. It follows from the universal property of $\Mor(X,Y)$ that there is a unique morphism $\mu\colon\Mor(X,Y)_{\KK} \to \Mor(X_{\KK},Y_{\KK})$ such that the following diagram commutes.
\[\tag{$*$}
\begin{CD}
S @>\subseteq>> S_{\KK} @= S_{\KK}\\
@VV{\psi}V  @VV{\psi_{\KK}}V  @VV{\Psi}V \\
\Mor(X,Y)  @>\subseteq>> \Mor(X,Y)_{\KK} @>\mu>> \Mor(X_{\KK},Y_{\KK})\\
\end{CD}
\]
It is clear now that we get similar morphisms $\mu\colon\End(X)_{\KK}\to\End(X_{\KK})$ and $\mu\colon\Aut(X)_{\KK}\to\Aut(X_{\KK})$ which are compatible with the semigroup structure, resp. the group structure.

\begin{proposition}\label{field-extensions-for-morphisms.prop}
The canonical morphisms\idx{base field extension of families}
\begin{gather*}
\mu\colon\Mor(X,Y)_{\KK} \simto \Mor(X_{\KK},Y_{\KK}), \quad  \mu\colon\End(X)_{\KK} \simto \End(X_{\KK}), \\
  \mu\colon \Aut(X)_{\KK} \simto \Aut(X_{\KK})
\end{gather*}
are all isomorphisms.
\end{proposition}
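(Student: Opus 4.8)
The plan is to prove the statement first for $\mu\colon\Mor(X,Y)_{\KK}\to\Mor(X_{\KK},Y_{\KK})$ and then to deduce the $\End$ and $\Aut$ cases, using $\End(X)=\Mor(X,X)$ and the fact that $\Aut(X)=\AAA(X)$ sits as a closed ind-subvariety of $\End(X)\times\End(X)$. For the $\Mor$ case I would begin with $Y=\An$. Here $\Mor(X,\An)=\OOO(X)^{n}$ is a $\kk$-vector space of countable dimension, so its base field extension is $(\OOO(X)^{n})_{\KK}=\KK\otimes_{\kk}\OOO(X)^{n}=\OOO(X_{\KK})^{n}=\Mor(X_{\KK},\An_{\KK})$, using $\OOO(X)_{\KK}=\OOO(X_{\KK})$. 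From the defining diagram $(*)$ the map $\mu_{\An}$ is $\KK$-linear and restricts on the $\kk$-points $\OOO(X)^{n}$ to the natural inclusion $\phi\mapsto\phi_{\KK}$; being $\KK$-linear and fixing a $\kk$-basis, it is the canonical identification, hence an isomorphism.

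For general $Y$ I would fix a closed embedding $\sigma\colon Y\into\An$. By Lemma~\ref{Mor-for-closed-immersions.lem}, $\sigma_{*}\colon\Mor(X,Y)\into\Mor(X,\An)$ is a closed immersion, so $(\sigma_{*})_{\KK}$ is a closed immersion by Proposition~\ref{fieldextension.prop}(4); over $\KK$, $\sigma_{\KK}\colon Y_{\KK}\into\An_{\KK}$ is a closed immersion by Lemma~\ref{fieldextension.lem}(6), so $(\sigma_{\KK})_{*}$ is a closed immersion as well. Naturality of $\mu$ with respect to composition with $\sigma$ gives a commutative square having $\mu_{\An}$ (an isomorphism, by the previous step) on the bottom and $\mu_{Y}$ on top. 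It then suffices to show that $\mu_{\An}$ carries the image of $(\sigma_{*})_{\KK}$ onto the image of $(\sigma_{\KK})_{*}$; for then $\mu_{\An}$ restricts to an isomorphism between these two closed subvarieties, and composing with the two closed immersions shows that $\mu_{Y}$ is an isomorphism. Working inside $\OOO(X_{\KK})^{n}=\Mor(X_{\KK},\An_{\KK})$, the image of $(\sigma_{\KK})_{*}$ is $Z:=\{f\mid q(f)=0\text{ for all }q\in I(Y)\}$, since $I(Y)$ generates the vanishing ideal $I(Y_{\KK})=\KK\otimes I(Y)$. On the other hand the $\mu_{\An}$-image of the image of $(\sigma_{*})_{\KK}$ is the base field extension of $\Mor(X,Y)=\bigcap_{q\in I(Y)}Q_{q}^{-1}(0)\subseteq\OOO(X)^{n}$, where $Q_{q}\colon\OOO(X)^{n}\to\OOO(X)$, $f\mapsto q(f)$, is a $\kk$-morphism.

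The key point — and the only real obstacle — is that base field extension commutes with such preimages: for a $\kk$-morphism $\phi\colon\VVV\to\WWW$ and a $\kk$-point $w$, one has $(\phi^{-1}(w))_{\KK}=(\phi_{\KK})^{-1}(w)$. This follows from Proposition~\ref{fieldextension.prop}(2): both sides are $\Gamma$-stable closed subsets of $\VVV_{\KK}$ whose sets of $\kk$-points coincide with $\phi^{-1}(w)$, hence both equal $(\phi^{-1}(w))_{\KK}$. Applying this to each $Q_{q}$, whose base change is $f\mapsto q(f)$ on $\OOO(X_{\KK})^{n}$, yields $(\bigcap_{q}Q_{q}^{-1}(0))_{\KK}=\bigcap_{q}((Q_{q})_{\KK})^{-1}(0)=Z$, identifying the two images and completing the $\Mor$ case.

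Finally, the $\End$ case is the $\Mor$ case with $Y=X$, and the resulting isomorphism respects composition by construction, so it is an isomorphism of ind-semigroups. For $\Aut$, recall that $\AAA(X)=c^{-1}(\id)\cap c'^{-1}(\id)\subseteq\End(X)\times\End(X)$, where $c,c'$ send $(\phi,\psi)$ to $\phi\circ\psi$ and $\psi\circ\phi$. Using $(\End(X)\times\End(X))_{\KK}=\End(X)_{\KK}\times\End(X)_{\KK}$ and the isomorphism $\mu\times\mu$, the preimage principle above together with the compatibility of $\mu$ with composition and with $\id$ gives $(\mu\times\mu)(\AAA(X)_{\KK})=\AAA(X_{\KK})$, so $\mu\times\mu$ restricts to an isomorphism $\AAA(X)_{\KK}\simto\AAA(X_{\KK})$. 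Since the first projection induces isomorphisms $\AAA(X)\simto\Aut(X)$ and $\AAA(X_{\KK})\simto\Aut(X_{\KK})$ (Theorem~\ref{AutX-locally-closed-in-EndX.thm} applied over $\kk$ and over $\KK$), and these projections are natural in the base field, the induced map $\mu\colon\Aut(X)_{\KK}\to\Aut(X_{\KK})$ is an isomorphism, and it is a group isomorphism since $\mu$ respects composition.
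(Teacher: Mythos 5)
Your proof is correct and follows essentially the same route as the paper: first the case $Y=\An$, where $\Mor(X,\An)_{\KK}=\OOO(X)^{n}_{\KK}\simeq\OOO(X_{\KK})^{n}$ is the canonical identification (the paper checks this by explicit formulas, you by linearity/density on the $\kk$-points), then general $Y$ via a closed embedding $Y\into\An$, and finally $\Aut(X)$ through its closed embedding into $\End(X)\times\End(X)$. The only real difference is cosmetic: where the paper simply observes that $\Mor(X,Y)_{\KK}$ and $\Mor(X_{\KK},Y_{\KK})$ (and likewise $\AAA(X)_{\KK}$ and $\AAA(X_{\KK})$) are cut out by the same equations with $\kk$-coefficients, you justify that step through the Galois-descent preimage principle $(\phi^{-1}(w))_{\KK}=(\phi_{\KK})^{-1}(w)$ derived from Proposition~\ref{fieldextension.prop}(2), which is a valid and slightly more formal substitute.
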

\begin{proof}
(1) We first look at a family of morphisms $X \to \An$, $\Phi\colon S \times X \to \An$. If $\Phi=(f_{1},\ldots,f_{n})$ where $f_{i}\in\OOO(S \times X)$ and if we identify $\Mor(X,\An) = \OOO(X)^{n}$, then the map $\phi\colon S \to \OOO(X)^{n}$ is given by $s\mapsto (f_{1}(s,?),\ldots,f_{n}(s,?))$, and  $\phi_{\KK}\colon S_{\KK}\to\OOO(X)_{\KK}$ is given by the same formula. On the other hand, $\Phi_{\KK}\colon S_{\KK}\times X_{\KK}\to \AA_{\KK}^{n}$ is also given by $(f_{1},\ldots,f_{n})$ and so $\phi'\colon S_{\KK}\to \OOO(X_{\KK})^{n}$ has the form $s \mapsto (f_{1}(s,?),\ldots,f_{n}(s,?))$.
Thus we get the following commutative diagram
$$
\begin{CD}
S @>\subseteq>> S_{\KK} @= S_{\KK}\\
@VV{\phi}V  @VV{\phi_{\KK}}V  @VV{\phi'}V \\
\Mor(X,\An)  @>\subseteq>> \Mor(X,\An)_{\KK} @>\mu>> \Mor(X_{\KK},\AA^{n}_{\KK})\\
@V{=}V{\Phi_{X}}V @V{=}V{(\Phi_{X})_{\KK}}V @V{=}V{\Phi_{X_{\KK}}}V\\
\OOO(X)^{n} @>\subseteq>> \OOO(X)_{\KK}^{n} @>\nu>\simeq> \OOO(X_{\KK})^{n}
\end{CD}
$$
where $\nu$ is given by the canonical isomorphism $\OOO(X)_{\KK}=K\otimes_{\kk}\OOO(X)\simto\OOO(X_{\KK})$. Thus $\mu$ is an isomorphism in this case.

(2) For $\Mor(X,Y)$ we can assume $Y\subseteq \An$ where the vanishing ideal $I(Y) = (h_{1},\ldots,h_{n})$. Then $\Mor(X,Y) \subseteq \Mor(X,\An)=\OOO(X)^{n}$ is closed and also defined by the equation $h_{1},\ldots,h_{n}$ applied to $n$-tuples of functions of $\OOO(X)^{n}$. It follows that the closed subset $\Mor(X,Y)_{\KK}\subseteq\OOO(X)^{n}_{\KK}$ is defined by the same equations, and we have the following commutative diagram: 
$$
\begin{CD}
\Mor(X,Y)  @>\subseteq>> \Mor(X,Y)_{\KK} @>\mu>> \Mor(X_{\KK},Y_{\KK})\\
@V{\subseteq}V{\Phi_{X}}V @V{\subseteq}V{(\Phi_{X})_{\KK}}V @V{\subseteq}V{\Phi_{X_{\KK}}}V\\
\OOO(X)^{n}  @>\subseteq>> \OOO(X)^{n}_{\KK} @>\mu>\simeq> \OOO(X_{\KK})^{n}\\
\end{CD}
$$
Since the vanishing ideal $I(Y_{\KK})$ is also generated by $h_{1},\ldots, h_{n}$ we finally get the closed subset $\Mor(X_{\KK},Y_{\KK}) \subseteq \OOO(X_{\KK})^{n}$ is defined by the same equations, so that the closed immersion $\mu$ is an isomorphism.

(3) It remains to prove the assertion for $\Aut(X)$ which we consider as a closed ind-subvariety of $\End(X)\times\End(X)$. We have the following diagram
$$
\begin{CD}
\Aut(X)  @>\subseteq>> \Aut(X)_{\KK} @>\mu>> \Aut(X_{\KK})\\
@VV{\subseteq}V @VV{\subseteq}V @VV{\subseteq}V\\
\End(X)\times\End(X)  @>\subseteq>> \End(X)_{\KK}\times\End(X)_{\KK} 
@>\mu>\simeq> \End(X_{\KK})\times\End(X_{\KK})\\
\end{CD}
$$
which implies that $\mu$ is a closed immersion. Since the defining equations for the embeddings 
$\Aut(X)_{\KK}\subseteq \End(X)_{\KK}\times\End(X)_{\KK}$ and 
$\Aut(X_{\KK})\subseteq \End(X_{\KK})\times\End(X_{\KK})$ are the same, $\mu$ is surjective, and we are done.
\end{proof}

\pmed
\section{Algebraic Group Actions and Vector Fields}\label{VectorFields.sec}
\ps
In this section we discuss the important relation between group actions on varieties and vector fields in the case of linear algebraic groups.  The generalization to  ind-groups will be given in the next Section~\ref{ind-group-actions.sec}.

\ps
\subsection{The linear case}\label{linear-case.subsec}
Let $V$ be a finite-dimensional $\kk$-vector space. We identify $V$ with the tangent space $T_{p}V$\idx{tangent space} in an arbitrary point $p \in V$ by associating to $v\in V$ the \itind{directional derivative}  $\partial_{v,p}$ 
{\it in direction $v$ in the point $p$}:\idx{$\Div$@$\partial_{v,p}$}
$$
\partial_{v,p} f :=\frac{f(p+tv)-f(p)}{t}\Big|_{t=0}
$$
Choosing coordinates, we get for $v=(v_{1},\ldots,v_{n})$ 
$$
\partial_{v,p}=\sum_{i=1}^{n}v_{i}\dxi\Big|_{x=p}.
$$ 
This shows that a {\it vector field $\delta\in\VEC(V)$}\idx{vector field} (Section~\ref{Endo-VF.subsec}) is the same as a morphism $\phi\colon V \to V$, i.e. an element from $\End(V)$. Choosing a basis of $V$, a morphism $\phi\colon \An \to \An$ is given by the images $f_{i}:=\phi^*(x_{i}) \in \kk[x_{1},\ldots,x_{n}]$,
and we write $\phi=(f_{1},\ldots,f_{n})$. Then the corresponding vector field $\xi_{\phi}$ is
$$
\xi_{\phi} = \sum_{i=1}^{n}f_{i}\ddxi \text{ \ where we write }\ddxi \text{ for }\dxi.
$$ 
The {\it constant vector fields $\partial_{v}$}\idx{constant vector field} ($v \in V$), corresponding to the constant maps $x \mapsto v$,  are the directional derivatives $(\partial_{v}f)(x) =\partial_{v,x}f = \frac{f(x+tv)-f(x)}{t}\Big|_{t=0}$ defined above. 
The {\it linear vector fields}, corresponding to the linear endomorphisms $\LLL(V) \subseteq \End(V)$, also play an important role, because $\LLL(V)=\Lie\GL(V)$.  We denote by $\xi_{A} \in\VEC(V)=\End(V)$ the vector field corresponding to $A\in\Lie\GL(V)=\LLL(V)$. In coordinates, 
we find for $A=(a_{ij})\in \M_{n}(\kk)$\idx{linear vector field}
$$
\xi_{A}= \sum_{i=1}^{n}(a_{i1}x_{1}+\cdots + a_{in}x_{n})\ddxi.
$$
The vector fields also form a Lie algebra, and a simple calculation show that 
$$
\xi_{[A,B]} = [\xi_{B},\xi_{A}] \text{  for  }A,B \in \Lie\GL(V)=\LLL(V),
$$
i.e. the inclusion $\Lie \GL(V) \into \VEC(V)$ is an \itind{anti-homomorphism} of Lie algebras.

We have already seen in Section~\ref{reps-of-ind-groups.subsec}  that the vector field $\xi_{A}$ has the following description.
For $v \in V$ consider the \itind{orbit map} $\mu_{v} \colon \GL (V)  \to V$,
$g \mapsto gv$, and its differential
$d\mu_{v}\colon \Lie \GL (V) \to T_{v}V = V$.
Then
$$
d\mu_{v}(A) = (\xi_{A})_{v}  \text{ for all $A \in \Lie \GL (V)$ and $v \in V$}.
$$

\begin{lemma}  \label{linear-VF.lem}
Let $\lambda \in \VEC(V)$ be a linear vector field, i.e. $\lambda \in \LLL(V) \subseteq \VEC(V)$, and let $\delta \in\VEC(V)$ be an arbitrary
vector field.
Then
$$
\lambda(\delta_{0}) = -[\lambda,\delta]_{0}.
$$
(Note that $\delta_{0}\in T_{0}V = V$, and so $\lambda(\delta_{0})$ makes sense.)
\end{lemma}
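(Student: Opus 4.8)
The plan is to reduce the identity to the explicit coordinate description of linear vector fields recorded above. Choose a basis of $V$, so that $\OOO(V)=\kk[x_{1},\ldots,x_{n}]$, and write $\lambda=\xi_{A}$ for a matrix $A=(a_{ij})\in\M_{n}(\kk)$, i.e. $\lambda=\sum_{i}\ell_{i}\,\ddxi$ with $\ell_{i}=\sum_{j}a_{ij}x_{j}$. Write the arbitrary field as $\delta=\sum_{k}f_{k}\,\partial_{x_{k}}$ with $f_{k}\in\OOO(V)$. Recall that $\delta_{0}\in T_{0}V=V$ is the vector $(f_{1}(0),\ldots,f_{n}(0))$, and that $\lambda(\delta_{0})$ means the image of $\delta_{0}$ under the linear endomorphism $A\in\LLL(V)$ to which $\lambda$ corresponds, so that $\lambda(\delta_{0})=A\delta_{0}$.

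First I would record the standard formula for the Lie bracket of two vector fields: the $k$-th component of $[\lambda,\delta]=\lambda\circ\delta-\delta\circ\lambda$ equals $\lambda(f_{k})-\delta(\ell_{k})$. Evaluating at the origin, the term $\lambda(f_{k})(0)=\sum_{i}\ell_{i}(0)\,\frac{\partial f_{k}}{\partial x_{i}}(0)$ vanishes because the $\ell_{i}$ are linear and hence $\ell_{i}(0)=0$. The surviving term is $-\delta(\ell_{k})(0)=-\sum_{i}f_{i}(0)\,\frac{\partial \ell_{k}}{\partial x_{i}}(0)=-\sum_{i}a_{ki}f_{i}(0)$, using $\frac{\partial \ell_{k}}{\partial x_{i}}=a_{ki}$. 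Thus the $k$-th component of $[\lambda,\delta]_{0}$ is $-\sum_{i}a_{ki}f_{i}(0)=-(A\delta_{0})_{k}=-\lambda(\delta_{0})_{k}$, which is the desired equality.

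A more transparent, essentially coordinate-free variant would split $\delta=\partial_{\delta_{0}}+\delta'$ into its constant part $\partial_{\delta_{0}}$ (the constant vector field with value $\delta_{0}$) and a remainder $\delta'$ with $\delta'_{0}=0$. A one-line calculation with the bracket formula gives $[\lambda,\partial_{w}]=-\partial_{Aw}$ for every $w\in V$, hence $[\lambda,\partial_{\delta_{0}}]_{0}=-A\delta_{0}=-\lambda(\delta_{0})$; and since both the $\ell_{i}$ and the components of $\delta'$ vanish at $0$, both summands of the bracket formula vanish there, so $[\lambda,\delta']_{0}=0$. Adding the two contributions yields $[\lambda,\delta]_{0}=-\lambda(\delta_{0})$.

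Since the statement is a plain computation, there is no serious obstacle. The only point requiring care is the bookkeeping of conventions: matching the field $\lambda=\xi_{A}$ with the linear map $A$ acting on $T_{0}V=V$, and keeping the sign consistent with the anti-homomorphism convention $\xi_{[A,B]}=[\xi_{B},\xi_{A}]$ recorded above, so that the minus sign appearing in the statement comes out correctly.
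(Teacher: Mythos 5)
Your proof is correct and your main argument is the same as the paper's: both fix coordinates, write $\lambda=\sum_{i,j}a_{ij}x_{j}\partial_{x_{i}}$, compute the components $\lambda(f_{k})-\delta(\ell_{k})$ of the bracket, and observe that the first term dies at the origin because the $\ell_{i}$ vanish there, leaving $[\lambda,\delta]_{0}=-A\delta_{0}$. The coordinate-free variant via $\delta=\partial_{\delta_{0}}+\delta'$ is a pleasant repackaging but not a different idea, so no further comparison is needed.
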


\begin{proof} Let $(v_{1},\ldots,v_{n})$ be a basis of $V$ and $(x_{1},\ldots,x_{n})$ the dual basis. Write $\delta = \sum_{j}f_{j}\dxj$, 
so that $\delta_{0}=(f_{1}(0),\ldots,f_{n}(0))\in T_{0}V=V$, and denote by $A = (a_{ij})_{i,j}$ the $n\times n$ matrix corresponding to $\lambda$. Then, as a vector field, $\lambda = \sum_{i,j}a_{ij}x_{j}\dxi$, and we find
$$
(\lambda \circ\delta)(x_{k})= \lambda(f_{k}) = \sum_{i,j} a_{ij} x_{j} \ab{f_{k}}{x_{i}} \text{ \ and \ }
(\delta\circ\lambda)(x_{k}) = \delta(\sum_{j}a_{kj}x_{j})  = \sum_{j} a_{kj}f_{j},
$$
hence
\[
[\lambda,\delta]_{0} = - \sum_{j,k}  a_{kj}f_{j}(0)\dxk\Big|_{x=0} = - A \cdot \delta_{0}. \qedhere
\]
\end{proof}
\ps
If $N \in \Lie\GL(V)=\LLL(V)$ is a nilpotent element, then the \itind{exponential map}
$$
\exp N := \sum_{k=0}^{\infty}\frac{1}{k!} N^{k}\in\LLL(V)
$$  
is well-defined, and it is a \itind{unipotent automorphism} of $V$. In order to see this one uses a basis of $V$ such that $N$ is in Jordan normal form. More generally, for every nilpotent $N \in \LLL(V)$ the map
\[\tag{$*$}
\lambda_{N}\colon \kplus \to \GL(V), \quad s \mapsto \exp(sN),
\]
is a homomorphism of algebraic groups such that $d\lambda_{N}(1) = N$.
The following lemma is well known.

\begin{lemma}  \label{exp-basics.lem}
Let $V$ be a finite-dimensional $\kk$-vector space.
\be
\item
The subset $\NNN(V) \subseteq \LLL(V)$ of {\it nilpotent endomorphisms} is closed, as well as
the subset $\UUU(V)\subseteq \GL(V)$ of {\it unipotent elements}.\idx{$\NNN(V)$}\idx{$\Tr(n)$@$\UUU(V)$}
\idx{nilpotent endomorphism}\idx{unipotent element}
\item
The {\it exponential map}\idx{$\End$@$\exp$}
$$
\exp\colon \NNN(V) \simto \UUU(V), \ N \mapsto \exp N := \sum_{k=0}^{\infty}\frac{1}{k!} N^{k},
$$
is an isomorphism of varieties which is equivariant with respect to conjugation with elements from $\GL(V)$.
\item
If $\lambda\colon \kplus \to \GL(V)$ is a homomorphism of algebraic groups and $N:=d\lambda_{0}(1) \in \Lie\GL(V) = \LLL(V)$, then $N$ is nilpotent and $\lambda = \lambda_{N}$.
\item  \label{exponential-formula}
If $f \in \OOO(V)$, then
\[\tag{$**$}
f(\lambda_{N}(s)v) = \sum_{k=0}^{\infty}\frac{s^{k}}{k!} (\xi_{N}^{k}f)(v).
\]
\ee
\end{lemma}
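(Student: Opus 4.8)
The plan is to handle the four assertions in turn, reducing each to an elementary fact about nilpotent matrices and terminating power series. Throughout set $n:=\dim V$.

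For (1) I would recall that $N\in\LLL(V)$ is nilpotent if and only if $N^{n}=0$; since the entries of $N^{n}$ are polynomials in the entries of $N$, the locus $\NNN(V)=\{N\mid N^{n}=0\}$ is closed in $\LLL(V)$. Likewise $g\in\GL(V)$ is unipotent if and only if $(g-\id)^{n}=0$, so $\UUU(V)=\{g\in\GL(V)\mid (g-\id)^{n}=0\}$ is closed. For (2), because $N^{n}=0$ the defining series terminates, $\exp N=\sum_{k=0}^{n-1}\tfrac{1}{k!}N^{k}$, so $N\mapsto\exp N$ is a polynomial map and hence a morphism $\NNN(V)\to\LLL(V)$; as $\exp N-\id$ is a polynomial in $N$ without constant term it is nilpotent, so $\exp$ lands in $\UUU(V)$. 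I would define the inverse by the terminating logarithm $\log u:=\sum_{k=1}^{n-1}\tfrac{(-1)^{k-1}}{k}(u-\id)^{k}$, again a morphism, with image in $\NNN(V)$. The identities $\log\circ\exp=\id$ and $\exp\circ\log=\id$ then follow from the corresponding formal identities in $\QQ[[t]]$ by substituting the relevant nilpotent element, all series being finite, and equivariance under conjugation is immediate from $(gNg^{-1})^{k}=gN^{k}g^{-1}$.

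For (3) I would use that the matrix entries of $\lambda(s)$ are regular functions on $\kplus=\Aone$, hence polynomials in $s$, so $\lambda(s)=\sum_{k\ge 0}s^{k}A_{k}$ is a finite sum with $A_{0}=\lambda(0)=\id$. Differentiating the relation $\lambda(s+t)=\lambda(s)\lambda(t)$ with respect to $t$ at $t=0$ gives $\lambda'(s)=\lambda(s)N$ with $N=d\lambda_{0}(1)$, and comparing coefficients of $s^{k}$ yields the recursion $(k+1)A_{k+1}=A_{k}N$, whence $A_{k}=\tfrac{1}{k!}N^{k}$. Since only finitely many $A_{k}$ are nonzero, $N$ is nilpotent and $\lambda(s)=\exp(sN)=\lambda_{N}(s)$, as claimed. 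For (4) I would set $F(s):=f(\lambda_{N}(s)v)$, which is a polynomial in $s$ because $\lambda_{N}(s)v$ is polynomial in $s$ and $f$ is regular. The key step is the flow identity $\tfrac{d}{ds}f(\lambda_{N}(s)v)=(\xi_{N}f)(\lambda_{N}(s)v)$: by the chain rule the left-hand side is the directional derivative of $f$ at $w:=\lambda_{N}(s)v$ in the direction $\tfrac{d}{ds}\lambda_{N}(s)v=N\lambda_{N}(s)v=Nw$, and since $\xi_{N}$ is the linear vector field with $(\xi_{N})_{w}=d\mu_{w}(N)=Nw$ (Section~\ref{linear-case.subsec}), this equals $(\xi_{N}f)(w)$. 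Iterating gives $F^{(k)}(s)=(\xi_{N}^{k}f)(\lambda_{N}(s)v)$, so $F^{(k)}(0)=(\xi_{N}^{k}f)(v)$, and Taylor's formula for the polynomial $F$ produces the stated finite expansion.

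I expect the genuinely substantive step to be (3): one has to know that an algebraic one-parameter subgroup has polynomial matrix entries and then extract the exponential form from the differential equation $\lambda'(s)=\lambda(s)N$. Everything else is formal manipulation of terminating series, with the only conceptual point being the flow-versus-vector-field identity in (4), which is exactly where the identification of $\xi_{N}$ with $w\mapsto Nw$ from the linear case is used.
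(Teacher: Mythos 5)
Your proof is correct, and parts (1), (2) and (4) follow essentially the route of the paper's outline: (1) via the polynomial conditions $N^{n}=0$ and $(g-\id)^{n}=0$, (2) via the terminating $\log$ series and formal identities, and (4) via the same flow identity $\frac{d}{ds}f(\lambda_{N}(s)v)=(\xi_{N}f)(\lambda_{N}(s)v)$ — the paper packages the conclusion as an induction on $\deg f$ (two functions with equal derivatives and equal value at $s=0$ coincide), whereas you iterate the identity and invoke Taylor's formula for the polynomial $F(s)=f(\lambda_{N}(s)v)$; these are interchangeable repackagings of the same computation. The genuine divergence is in (3). The paper appeals to the general rigidity theorem that a homomorphism of connected algebraic groups is determined by its differential, so that $\lambda$ and $\lambda_{N}$ agree once $d\lambda_{0}(1)=N=d(\lambda_{N})_{0}(1)$ — but note this presupposes that $N$ is nilpotent (so that $\lambda_{N}$ is even defined), a point the paper's outline leaves implicit. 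Your argument — polynomial entries of $\lambda(s)$, differentiation of $\lambda(s+t)=\lambda(s)\lambda(t)$ to get $\lambda'(s)=\lambda(s)N$, and the recursion $(k+1)A_{k+1}=A_{k}N$ forcing $A_{k}=\frac{1}{k!}N^{k}$ — is more elementary and self-contained: the finiteness of the sum delivers the nilpotency of $N$ as a byproduct rather than requiring it as an input, at the modest cost of an explicit coefficient computation (using characteristic zero for the divisions by $k!$) in place of a citation.
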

\begin{proof}[Outline of Proof]
(1) This is easy  and well known.
\ps
(2) The map $\exp$ is a morphism of varieties and commutes with the conjugation by elements from $\GL(V)$. The inverse map is given by\idx{$\M$@$\log$}
$$
\log(u) = \sum_{k=1}^{\infty}\frac{(-1)^{k-1}}{k}(u-\id)^{k}
$$
\ps
(3) This follows from the fact that a homomorphism of connected algebraic groups is determined by the induced homomorphism of the Lie algebras.
\ps
(4)
Denote the function on the right hand side by $h(f,s)$. Then an easy calculation shows that
\[ 
\frac{d}{ds}h(f,s) = h(\xi_{N}f,s)\quad\text{and}\quad 
\frac{d}{ds}f(\lambda_{N}(s)v)=(\xi_{N}f)(\lambda_{N}(s)v).
\]
This implies that if the identity $(**)$ holds for $\xi_{N}f$, then it holds for $f$, because the two functions take the same value for $s=0$. Thus the claim follows by induction on the degree of $f$.
\end{proof}

\ps
\subsection{Exponential map for linear algebraic groups}  \label{exponential-for-linear-algebraic-groups.subsec}
In this section, we generalize Lemma~\ref{exp-basics.lem} above to the case of a linear algebraic group $G$.
We denote by $G^{u} \subset G$ the subset of unipotent elements, and by $\gn \subset \gg:=\Lie G$ the subset of nilpotent elements. Recall that $N\in\gg$ is \itind{nilpotent}, if for one faithful representation $\rho\colon G \into \GL(V)$ the image of $N$ under the differential $d\rho\colon \Lie G \to \LLL(V)$ is nilpotent. It then follows that this holds for any representation of $G$.

\begin{lemma}\label{lambda.lem}   
For any $N \in\gn$ there is unique homomorphism $\lambda_{N}\colon \kplus \to G$ such that $d\lambda_{N}(1) = N$. Moreover, the image of $\lambda_{N}$ is in $G^{u}$
\end{lemma}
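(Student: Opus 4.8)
The plan is to reduce everything to the case of $\GL(V)$, where the statement is already settled by Lemma~\ref{exp-basics.lem}, and then to show that the resulting one-parameter subgroup actually stays inside $G$. Since $G$ is a linear algebraic group, I would fix a faithful representation $\rho\colon G \into \GL(V)$, so that $d\rho\colon \gg = \Lie G \into \LLL(V)$ is injective and the image $\tilde N := d\rho(N)$ is a nilpotent endomorphism of $V$. Lemma~\ref{exp-basics.lem}(3) then provides the homomorphism $\lambda_{\tilde N}\colon \kplus \to \GL(V)$, $s \mapsto \exp(s\tilde N)$, with $d(\lambda_{\tilde N})_{0}(1) = \tilde N$, and each $\exp(s\tilde N)$ is unipotent.

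The heart of the argument, and the step I expect to be the main obstacle, is to prove that $\exp(s\tilde N) \in \rho(G)$ for every $s \in \kk$. Write $I := I(\rho(G)) \subseteq \OOO(\GL(V))$ for the vanishing ideal, and let $\delta_{\tilde N}$ be the left-invariant vector field on $\GL(V)$ attached to $\tilde N \in T_{e}\GL(V) = \LLL(V)$ (see Remark~\ref{left-invariant-VF.rem}). The key observation is that $\delta_{\tilde N}(I) \subseteq I$: indeed, for $g \in \rho(G)$ one has $(\delta_{\tilde N})_{g} = (d\lambda_{g})_{e}(\tilde N)$ by left-invariance, and since $\tilde N \in T_{e}\rho(G)$ and $\lambda_{g}$ maps $\rho(G)$ into itself, this tangent vector lies in $T_{g}\rho(G)$; hence $\delta_{\tilde N}f$ vanishes on $\rho(G)$ whenever $f$ does. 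I want to stress that this uses only that $\rho(G)$ is a subgroup with $\tilde N$ tangent at $e$, so there is no circularity with the claim $\exp(s\tilde N) \in \rho(G)$ that I am about to deduce.

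With this in hand I would run a one-variable differential argument. For $f \in I$ set $\varphi_{f}(s) := f(\exp(s\tilde N))$; since $\tilde N$ is nilpotent the matrix entries of $\exp(s\tilde N)$ are polynomial in $s$, so $\varphi_{f}$ is a polynomial. The identity $\exp((s+t)\tilde N) = \exp(s\tilde N)\exp(t\tilde N)$ together with the description of $\delta_{\tilde N}$ as differentiation by right translation along $s \mapsto \exp(s\tilde N)$ gives $\varphi_{f}'(s) = (\delta_{\tilde N}f)(\exp(s\tilde N)) = \varphi_{\delta_{\tilde N}f}(s)$, and $\delta_{\tilde N}f \in I$ again. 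Iterating, $\varphi_{f}^{(k)}(0) = (\delta_{\tilde N}^{k}f)(e) = 0$ for all $k$, because $\delta_{\tilde N}^{k}f \in I$ and $e \in \rho(G)$. As $\kk$ has characteristic zero, a polynomial all of whose derivatives vanish at $0$ is identically zero, so $\varphi_{f} \equiv 0$ for every $f \in I$; that is, $\exp(s\tilde N) \in \rho(G)$ for all $s$.

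Finally I would collect the consequences. The factorization $\lambda_{\tilde N} = \rho \circ \lambda_{N}$ defines a homomorphism $\lambda_{N}\colon \kplus \to G$ of ind-groups, and injectivity of $d\rho$ forces $d(\lambda_{N})_{0}(1) = N$. Since $\rho(\lambda_{N}(s)) = \exp(s\tilde N)$ is unipotent and unipotency is intrinsic (independent of the chosen faithful representation), the image of $\lambda_{N}$ lies in $G^{u}$. For uniqueness, any homomorphism $\mu\colon \kplus \to G$ with $d\mu_{0}(1) = N$ yields $\rho \circ \mu\colon \kplus \to \GL(V)$ with differential $\tilde N$, whence $\rho \circ \mu = \lambda_{\tilde N} = \rho \circ \lambda_{N}$ by Lemma~\ref{exp-basics.lem}(3); as $\rho$ is injective, $\mu = \lambda_{N}$.
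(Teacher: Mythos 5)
Your proof is correct and follows essentially the same route as the paper's: embed $G$ into a general linear group, take $\lambda_{N}(s)=\exp(s\tilde N)$ via Lemma~\ref{exp-basics.lem}, and deduce uniqueness from the fact that a one-parameter subgroup of $\GL(V)$ is determined by its differential. The only difference is that the paper simply asserts the containment $\exp(s\tilde N)\in\rho(G)$ (``Hence, the image of $\lambda_{N}$ is in $G$''), whereas you supply the classical justification via the left-invariant derivation $\delta_{\tilde N}$ stabilizing the vanishing ideal and the characteristic-zero Taylor argument --- a welcome filling-in of the step left implicit, not a different approach.
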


\begin{proof}
We embed $G$ as a closed subgroup into some $\GL_{m}$, so that $\gg := \Lie G \subseteq \M_{m}$. If $N \in \gn$, then $N$ is a nilpotent matrix, and the homomorphism $\lambda_{N}\colon \kplus \to \GL_{m}$ defined in $(*)$ above has the property that $d\lambda_{N}(1) = N \in \gg$. Hence, the image of $\lambda_{N}$ is in $G$, and thus in $G \cap \UUU = G^{u}$. The uniqueness is clear.
\end{proof}

\begin{proposition}\label{exponential-for-linear-algebraic-groups.prop}
Let $G$ be a linear algebraic group and $\gg:=\Lie G$ its Lie algebra. The subsets $G^u\subset G$ and $\gn\subset \gg$ are closed, and there exists a $G$-equivariant isomorphism\idx{$\GL$@$\gn$}\idx{$\GL$@$G^u$}\idx{$\End$@$\exp_{G}$}
$$
\exp_{G}\colon \gn \simto G^u
$$
which is uniquely defined by the following property: If $\lambda\colon \kplus \to G$ is a homomorphism and $d\lambda\colon \kk \to \gg$ its differential, then $\exp(d\lambda(1)) = \lambda(1)$. Moreover, $d\exp(A)=A$ for all $A \in T_{0}\gn\subseteq \gg$.
\end{proposition}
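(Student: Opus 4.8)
The plan is to reduce everything to the $\GL_m$ case treated in Lemma~\ref{exp-basics.lem} by fixing a closed embedding $G \into \GL_m$, so that $\gg = \Lie G$ becomes a linear subspace of $\M_m$ and the adjoint action $\Ad$ is just conjugation. With this embedding one has $\gn = \gg \cap \NNN(\kk^m)$ and $G^u = G \cap \UUU(\kk^m)$, where $\NNN(\kk^m) \subseteq \M_m$ and $\UUU(\kk^m)\subseteq\GL_m$ denote the nilpotent and unipotent loci. Since $\NNN(\kk^m)$ and $\UUU(\kk^m)$ are closed by Lemma~\ref{exp-basics.lem}(1), and since $\gg$ and $G$ are closed, both $\gn$ and $G^u$ are closed; this gives the first assertion. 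I would then define $\exp_G$ simply as the restriction to $\gn$ of the exponential isomorphism $\exp\colon\NNN(\kk^m)\simto\UUU(\kk^m)$ of Lemma~\ref{exp-basics.lem}(2).

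The key point to check is that $\exp$ maps $\gn$ bijectively onto $G^u$. For the inclusion $\exp(\gn) \subseteq G^u$: given $N \in \gn$, Lemma~\ref{lambda.lem} provides a homomorphism $\lambda_N\colon\kplus\to G$ with $d\lambda_N(1) = N$ and image in $G^u$; viewing $\lambda_N$ as a homomorphism into $\GL_m$, Lemma~\ref{exp-basics.lem}(3) forces $\lambda_N(s) = \exp(sN)$, whence $\exp(N)=\lambda_N(1)\in G^u$. The reverse inclusion, i.e. surjectivity, is the main obstacle. Here I would argue that for $u \in G^u$ with $u \neq e$, the nilpotent $N := \log u$ defines $\mu\colon\kplus\to\GL_m$, $s\mapsto\exp(sN)$, whose image is a closed one-dimensional subgroup containing $u^n = \mu(n)$ for every $n\in\ZZ$. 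Since $u$ has infinite order in characteristic zero, $\langle u\rangle$ is Zariski-dense in the irreducible curve $\mu(\kplus)$, so $\lur = \mu(\kplus)$; as $G$ is closed and contains $u$, we get $\mu(\kplus)=\lur\subseteq G$. Then $\mu\colon\kplus \to G$ is a homomorphism with $d\mu_0(1)=N\in\gg$, so $\log u = N \in \gg\cap\NNN(\kk^m) = \gn$. Thus $\log$ restricts to a map $G^u \to \gn$ inverse to $\exp_G$.

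Since both $\exp_G$ and its inverse $\log|_{G^u}$ are restrictions of morphisms, $\exp_G\colon\gn\simto G^u$ is an isomorphism of varieties. Its $G$-equivariance is inherited from the $\GL_m$-conjugation-equivariance of $\exp$ in Lemma~\ref{exp-basics.lem}(2), noting that conjugation by $g\in G$ restricts to $\Ad(g)$ on $\gn$ and to ordinary conjugation on $G^u$. The defining property and its uniqueness follow from the computation already made: for any homomorphism $\lambda\colon\kplus\to G$ with $N=d\lambda(1)$, Lemma~\ref{exp-basics.lem}(3) gives $N\in\gn$ and $\lambda(1)=\exp(N)=\exp_G(d\lambda(1))$; and since by Lemma~\ref{lambda.lem} every $N\in\gn$ is of the form $d(\lambda_N)(1)$ for such a homomorphism, this property pins down $\exp_G(N)=\lambda_N(1)$ uniquely.

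Finally, for the differential at the origin I would use the explicit power series. For $A\in T_0\gn \subseteq \gg$ choose a curve $\gamma(s)$ in $\gn$ with $\gamma(0)=0$ and $\gamma'(0)=A$; then the expansion $\exp(\gamma(s)) = \id + \gamma(s) + \frac12\gamma(s)^2 + \cdots$ has $s$-derivative equal to $\gamma'(0)=A$ at $s=0$, since every higher term is $O(s^2)$. Hence $d(\exp_G)_0(A)=A$ in $\gg = T_e G$. This last step is routine once the identifications $T_0\gn\subseteq\gg\subseteq\M_m$ and $T_e G\subseteq\M_m$ are made explicit; the only genuine difficulty in the whole argument is the surjectivity step, which rests on the characteristic-zero fact that the one-parameter unipotent subgroup through $u$ stays inside the closed subgroup $G$.
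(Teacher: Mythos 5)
Your proposal is correct and follows essentially the same route as the paper: embed $G$ in $\GL_m$, observe $\gn=\gg\cap\NNN(\kk^m)$ and $G^u=G\cap\UUU(\kk^m)$ are closed, restrict $\exp$ and $\log$, and prove surjectivity by showing the one-parameter subgroup $s\mapsto\exp(sN)$ through $u$ lies in $G$ — indeed your Zariski-density argument for $\langle u\rangle$ makes explicit the step the paper passes over with ``because $\lambda_N(1)=u$''. The only quibble is the differential computation: at the (generally singular) point $0\in\gn$ not every tangent vector need be realized by a curve in $\gn$, but this is harmless since $d(\exp_G)_0$ is just the restriction to $T_0\gn$ of $d\exp_0=\id$ on $\M_m$, which your power-series expansion already establishes on the ambient space.
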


\begin{proof}
We can assume that $G$ is a closed subgroup of a suitable $\GL(V)$, so that $\gg \subset \LLL(V)$.
It then follows that $G^u=\UUU(V) \cap G$ and $\gn= \NNN(V)\cap \gg$, showing that the two subsets are closed. We claim that the isomorphism $\exp \colon \NNN(V) \simto \UUU(V)$ from Lemma~\ref{exp-basics.lem}(2) induces an isomorphism $\exp_{G}\colon \gn\simto G^u$. This follows if we show that $\exp(\gn) = G^u$. 

If $N \in \gn$, then, by Lemma~\ref{lambda.lem}, we get a homomorphism $\lambda_{N}\colon \kplus\to G$, hence $\exp_{G}(1) = \lambda_{N}(1) \in G$.  This shows that $\exp_{G}(\gn) \subseteq G^u$.

If $u \in G^u \subset \UUU(V)$, then, by Lemma~\ref{exp-basics.lem}(2), $u = \exp(N)$ for a nilpotent element $N \in \NNN(V)$. It follows that the homomorphism $\lambda_{N}\colon \kplus \to \GL(V)$ has image in $G$, because $\lambda_{N}(1) = u$, and so $N = d\lambda_{N}(1) \in \gg_{u}$. This shows that $\exp_{G}(\gn)\supseteq G^u$. Hence, $\exp_{G}\colon \gn\simto G^u$ is an isomorphism, and the $G$-equivariance of $\exp_{G}$ follows from the $\GL(V)$-equivariance of $\exp$.

The remaining statements follow immediately from Lemma~\ref{exp-basics.lem}.
\end{proof}

\begin{example}\label{unipotent-exp.exa}
If $U$ is a unipotent group, then $U^{u} = U$ and $(\Lie U)^{\text{\it nil}}=\Lie U$. Hence the exponential map gives an isomorphism $\exp_{U} \colon \Lie U \simto U$. This shows that the underlying variety of a unipotent group is an affine space, cf. Theorem~\ref{unipotent-groups.thm}. Moreover, if $U$ is commutative, then $\exp_{U}\colon \Lie U^{+} \simto U$ is an isomorphism of algebraic groups.
\end{example}

The given property of $\exp_{G}$ implies the following ``functoriality'' of the exponential map. 

\begin{lemma}\label{exp-hom.lem}
Let $\rho\colon G \to H$ be a homomorphism of linear algebraic groups. Then the diagram
$$
\begin{CD}
(\Lie G)^{\text{\it nil}}  @>{d\rho}>>  (\Lie H)^{\text{\it nil}}\\
@V{\simeq}V{\exp_{G}}V   @V{\simeq}V{\exp_{H}}V \\
G^{u}@>{\rho}>> H^{u}
\end{CD}
$$
commutes, i.e. $\exp_{H}(d\rho(N)) = \rho(\exp_{G}(N))$ for any nilpotent $N \in \Lie G$.
\end{lemma}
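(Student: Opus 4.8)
The plan is to reduce everything to the universal characterization of $\exp_{G}$ and $\exp_{H}$ provided by Proposition~\ref{exponential-for-linear-algebraic-groups.prop}, namely that for a homomorphism $\lambda\colon\kplus\to G$ one has $\exp_{G}(d\lambda(1))=\lambda(1)$, together with the existence and uniqueness statement of Lemma~\ref{lambda.lem}. The underlying idea is that both sides of the asserted identity arise from one and the same one-parameter subgroup of $H$, obtained by pushing forward along $\rho$ the canonical one-parameter subgroup attached to $N$.

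First I would fix a nilpotent $N\in\gn$ and invoke Lemma~\ref{lambda.lem} to produce the unique homomorphism $\lambda_{N}\colon\kplus\to G$ with $d\lambda_{N}(1)=N$; by the defining property of $\exp_{G}$ this gives $\exp_{G}(N)=\lambda_{N}(1)$. Next I would form the composite $\mu:=\rho\circ\lambda_{N}\colon\kplus\to H$, which is again a homomorphism of algebraic groups, and compute, by functoriality of the differential, that $d\mu(1)=d\rho(d\lambda_{N}(1))=d\rho(N)$.

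Before applying $\exp_{H}$ I must check that $d\rho(N)$ is nilpotent, i.e. that $d\rho(N)\in\hh^{\text{\it nil}}$ where $\hh:=\Lie H$. This is the only genuine verification, and it is routine: choosing a faithful representation $\sigma\colon H\into\GL(V)$, the composite $\sigma\circ\rho$ is a representation of $G$, so $d(\sigma\circ\rho)(N)=d\sigma(d\rho(N))$ is nilpotent because $N$ is nilpotent and nilpotency does not depend on the chosen representation. By the very definition of nilpotency in $\hh$ this forces $d\rho(N)$ to be nilpotent. Alternatively one may simply note, as in Lemma~\ref{exp-basics.lem}(3), that the differential at $1$ of any homomorphism $\kplus\to H$ automatically lands in the nilpotent cone, applied here to $\mu$.

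Finally I would apply the defining property of $\exp_{H}$ to the homomorphism $\mu$, obtaining $\exp_{H}(d\mu(1))=\mu(1)$, that is $\exp_{H}(d\rho(N))=\rho(\lambda_{N}(1))=\rho(\exp_{G}(N))$, which is exactly the commutativity of the square. I expect no real obstacle: the whole argument is a formal consequence of the universal property of the exponential maps, and the only points requiring care are the chain rule $d\mu=d\rho\circ d\lambda_{N}$ and the bookkeeping ensuring that $d\rho(N)$ is nilpotent, so that both composites along the diagram are defined.
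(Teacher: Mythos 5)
Your proposal is correct and follows essentially the same route as the paper: both proofs apply the defining property $\exp(d\lambda(1))=\lambda(1)$ of Proposition~\ref{exponential-for-linear-algebraic-groups.prop} to the one-parameter subgroup $\lambda_{N}$ and to its pushforward $\rho\circ\lambda_{N}$, and conclude by the chain rule. Your explicit check that $d\rho(N)$ is nilpotent (which the paper leaves implicit, and which indeed follows from Lemma~\ref{exp-basics.lem}(3) applied to $\rho\circ\lambda_{N}$) is a welcome extra bit of care, not a deviation.
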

\begin{proof}
Let $\lambda\colon\kplus\to G$ be a homomorphism of algebraic groups. Then, by the definition of $\exp_{G}$ we have $\exp_{G}(d\lambda(1)) = \lambda(1)$, see Proposition~\ref{exponential-for-linear-algebraic-groups.prop}. Similarly, for  $\tilde\lambda := \rho\circ\lambda$, we get $\exp_{H}(d\tilde\lambda(1)) = \tilde\lambda(1)$. Hence
$$
\exp_{H}(d\tilde\lambda(1)) = \exp_{H}(d\rho(\lambda(1))) = \tilde\lambda(1) = \rho(\lambda(1)) = \rho(\exp_{G}(\lambda(1))),
$$
and so $\exp_{H}(d\rho(N)) = \rho(\exp_{G}(N)$ for all nilpotent $N \in \Lie G$.
\end{proof}

There is another way to understand the given relation between the unipotent elements from the group $G$ and the nilpotent elements from the Lie algebra $\gg$.
For this we consider the set $\Hom(\kplus,G)$ of group homomorphisms $\lambda\colon \kplus \to G$, and the two maps
\begin{gather*}
\eps_{G}\colon  \Hom(\kplus,G) \to G, \quad \lambda \mapsto \lambda(1),\\
\nu_{G}\colon \Hom(\kplus,G) \to \gg, \quad \lambda\mapsto d\lambda_{0}(1).
\end{gather*}\idx{$\End$@$\eps_{G}$}\idx{$\NNN$@$\nu_{G}$}
Both maps are $G$-equivariant with respect to the obvious $G$-actions induced by the conjugation action of $G$ on $G$.

\begin{proposition} 
\be
\item \label{Hom(k,G)-closed-hence-affine-variety}
$\Hom(\kplus,G) \subset \Mor(\kk,G) = G(\kk[s])$ is a closed algebraic subset, hence an affine variety.
\item
The maps $\eps_{G}$ and $\nu_{G}$ induce $G$-equivariant isomorphisms
\[ 
\tilde\eps_{G}\colon  \Hom(\kplus,G) \simto G^{u} \quad\text{and}\quad \tilde\nu_{G}\colon \Hom(\kplus,G) \simto \gn.
\]
\item
The composition $\tilde\eps_{G} \circ ( \tilde\nu_{G}) ^{-1}\colon \gn \simto \GGG^{u}$ is equal to the exponential map $\exp_{G}$.
\ee
\[
\begin{tikzcd}
\Hom(\kplus,G)
\arrow[r,"\tilde\eps_G","\simeq"']
\arrow[d,"\tilde\nu_G"',"\simeq"]
& G^{u} \\
\gn \arrow[ru,"\exp_G"'] \\
\end{tikzcd}
\]
\end{proposition}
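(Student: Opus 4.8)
The plan is to fix once and for all a closed embedding $G \subseteq \GL(V)$ with $V=\kk^{m}$, so that $\gg=\Lie G\subseteq\LLL(V)=\M_{m}$ and, by Proposition~\ref{exponential-for-linear-algebraic-groups.prop}, $\gn=\NNN(V)\cap\gg$ and $G^{u}=\UUU(V)\cap G$. The observation driving all three parts is a uniform degree bound. Every $\lambda\in\Hom(\kplus,G)$, viewed as a homomorphism $\kplus\to\GL(V)$, has the form $\lambda(s)=\exp(sN)$ with $N=d\lambda_{0}(1)\in\NNN(V)$ nilpotent, by Lemma~\ref{exp-basics.lem}(3). Since $N^{m}=0$, we get $\lambda(s)=\sum_{k=0}^{m-1}\frac{s^{k}}{k!}N^{k}$, a matrix of polynomials of degree $\le m-1$, and likewise $\lambda(s)^{-1}=\lambda(-s)=\exp(-sN)$ has degree $\le m-1$.

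For part (1), I first note that $\Hom(\kplus,G)$ is closed in $\Mor(\kk,G)=G(\kk[s])$ (Remark~\ref{morphisms-to-groups.rem}): the defining conditions $\lambda(0)=e$ and $\lambda(s+t)=\lambda(s)\lambda(t)$ both cut out closed sets, the former because evaluation at $0$ is a morphism (Lemma~\ref{tautological.lem}(\ref{eval})), the latter because the two maps $G(\kk[s])\to G(\kk[s,t])$ sending $\lambda$ to $(s,t)\mapsto\lambda(s+t)$ and to $(s,t)\mapsto\lambda(s)\lambda(t)$ are ind-morphisms and the condition is their equality. The degree bound above shows that $\Hom(\kplus,G)$ is contained in the algebraic (finite-dimensional) subset of $G(\kk[s])\subseteq\GL(V)(\kk[s])$ consisting of elements of degree $\le m-1$; being a closed subset of an algebraic subset of the affine ind-variety $G(\kk[s])$, it is itself a closed algebraic, hence affine, variety.

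For part (2) I treat $\tilde\nu_{G}$ first. By Lemma~\ref{lambda.lem} the assignment $N\mapsto\lambda_{N}$ is a two-sided inverse to $\nu_{G}$ onto $\gn$, so $\nu_{G}$ is a bijection; it is a morphism by Lemma~\ref{mor-to-tangent.lem} applied to $X=\kk$, $Y=G$ with base points $0$ and $e$, since $\lambda\mapsto d\lambda_{0}\in\LLL(T_{0}\kk,T_{e}G)=\LLL(\kk,\gg)$ is an ind-morphism and evaluation at $1\in T_{0}\kk$ is linear. For the inverse, the uniform formula $(N,s)\mapsto\exp(sN)=\sum_{k=0}^{m-1}\frac{s^{k}}{k!}N^{k}$ is a morphism $\gn\times\kk\to G$, i.e. a family of homomorphisms, which corresponds to a morphism $\gn\to\Hom(\kplus,G)$, $N\mapsto\lambda_{N}$; hence $\tilde\nu_{G}$ is an isomorphism. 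By the defining property of $\exp_{G}$ (Proposition~\ref{exponential-for-linear-algebraic-groups.prop}), for every $\lambda$ we have $\eps_{G}(\lambda)=\lambda(1)=\exp_{G}(d\lambda_{0}(1))=\exp_{G}(\nu_{G}(\lambda))$, so $\eps_{G}=\exp_{G}\circ\nu_{G}$; since $\exp_{G}\colon\gn\simto G^{u}$ is an isomorphism, $\tilde\eps_{G}=\exp_{G}\circ\tilde\nu_{G}$ is a composition of isomorphisms and thus an isomorphism onto $G^{u}$. Equivariance is routine: with $g\cdot\lambda=\Int(g)\circ\lambda$ one computes $\nu_{G}(g\cdot\lambda)=\Ad(g)\,\nu_{G}(\lambda)$ and $\eps_{G}(g\cdot\lambda)=g\,\eps_{G}(\lambda)\,g^{-1}$, matching the adjoint action on $\gn$ and the conjugation action on $G^{u}$.

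Part (3) is then immediate, since $\eps_{G}=\exp_{G}\circ\nu_{G}$ rearranges to $\tilde\eps_{G}\circ(\tilde\nu_{G})^{-1}=\exp_{G}$, which is exactly the asserted commutativity of the triangle. I expect the main obstacle to be the uniform nilpotency/degree bound underlying part (1): without it one only obtains a closed ind-subvariety rather than an honest algebraic subset, and it is the same bound (the finite, uniform polynomial $\sum_{k=0}^{m-1}\frac{s^{k}}{k!}N^{k}$) that makes $(N,s)\mapsto\exp(sN)$ a genuine morphism, so that $\tilde\nu_{G}^{-1}$ is a morphism. Everything else is bookkeeping with the universal properties of $\Mor$ and the already-established structure of $\exp_{G}$.
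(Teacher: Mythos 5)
Your proof is correct and takes essentially the same route as the paper: the key in both is the uniform bound $\lambda(s)=\sum_{k=0}^{m-1}\frac{s^{k}}{k!}N^{k}$ coming from nilpotency, which simultaneously places $\Hom(\kplus,G)$ inside a finite-dimensional (hence algebraic) piece of $\Mor(\kk,G)$ and makes $N\mapsto\lambda_{N}$ a morphism inverting $\tilde\nu_{G}$, after which $\tilde\eps_{G}$ is handled exactly as you do, via the identity $\eps_{G}=\exp_{G}\circ\nu_{G}$ and Proposition~\ref{exponential-for-linear-algebraic-groups.prop}. The only cosmetic difference is that the paper arranges the embedding so that $G$ is closed in $\M_{m}$ and works inside $\M_{m}(\kk[s]_{<m})$, whereas you control the inverse coordinate directly via $\lambda(s)^{-1}=\lambda(-s)$; both devices serve the same purpose.
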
\label{Hom-kplus.prop}

\begin{proof}
(1) It is easy to see that $\Hom(\kplus,G)$ is closed in $\Mor(\kk,G)$. Now we embed $G$ into $\GL_{m}$ in such a way that $G$ is closed in $\M_{m}$.  By Lemma~\ref{closed-immersion-Mor.lem}
we have the following closed immersions of ind-varieties:
$$
\Hom(\kplus,G) \subseteq \Mor(\kk, G) \subseteq \Mor(\kk,\M_{m}) = \M_{m}(\kk[s]).
$$
Since every homomorphism $\lambda\colon \kplus \to G$ has the form $\lambda_{N}(s) = \sum_{j=0}^{m-1}\frac{s^{j}}{j!} N^{j}$ for a suitable nilpotent matrix $N \in \M_{m}$ it follows that $\Hom(\kplus,G)$ lies in $\M_{m}(\kk[s]_{<m})$ which is a finite-dimensional subspace of $\M_{m}(\kk[s])$. Thus $\Hom(\kplus,G)$ is a closed algebraic subset of $\M_{m}(\kk[s])$.
\ps
(3) 
This follows immediately from the definitions of $\tilde \eps_{G}$ and $\tilde \nu_{G}$.
\ps
(2) It is clear that both maps $\tilde\eps_G$ and $\tilde\nu_G$
are $G$-equivariant morphisms. By definition, the inverse of $\tilde \nu_{G}$ is given by $N \mapsto \lambda_{N}$ which is a morphism since it is the restriction of the morphism $\M_{m} \to \M_{m}(\kk[s])$, $A \mapsto \sum_{j=0}^{m-1}\frac{s^{j}}{j!}A^{j}$.
Hence $\tilde \nu_{G}$ is an isomorphism, and $\tilde \eps_G$ is an isomorphism, because the composition $\tilde \eps_{G} \circ (\tilde \nu_{G})^{-1}$ is an isomorphism, by (3).
\end{proof}

The next result is due to \name{Kostant}, see \cite[Theorem~16 on page~392]{Ko1963Lie-group-represen}.
\begin{theorem}\label{kostant.thm}
For a reductive group $G$ with Lie algebra $\gg = \Lie G$ the nilpotent cone $\gn\subset\gg$ is a normal complete intersection.
\end{theorem}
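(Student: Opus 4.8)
The plan is to realize the nilpotent cone as the scheme-theoretic zero fiber of the adjoint quotient morphism and then to apply Serre's normality criterion. Throughout I assume the standard structure theory of reductive groups in characteristic zero.

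\textbf{Setup.} Writing $\gg = \Lie G$, I would first reduce to the semisimple case: since $\gg = \mathfrak{z} \oplus [\gg,\gg]$ with $\mathfrak{z}$ the center and $[\gg,\gg]$ semisimple, every nilpotent element lies in $[\gg,\gg]$, so $\gn$ is the nilpotent cone of the semisimple Lie algebra $[\gg,\gg]$, and the product decomposition $\gg = \mathfrak{z} \times [\gg,\gg]$ identifies $\gn$ with the nilpotent cone of $[\gg,\gg]$ as a variety; neither normality nor the complete intersection property is affected. So assume $G$ semisimple of rank $\ell$. By Chevalley's restriction theorem the invariant ring $\kk[\gg]^{G}$ is a polynomial algebra $\kk[p_1,\dots,p_\ell]$ on $\ell$ homogeneous generators, giving the adjoint quotient $\chi := (p_1,\dots,p_\ell)\colon \gg \to \AA^{\ell}$. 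An element $x \in \gg$ is nilpotent if and only if $0$ lies in the closure of its adjoint orbit, which holds if and only if $p_i(x)=0$ for all $i$; hence $\gn = \chi^{-1}(0)$ as a set.

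\textbf{The regular locus.} The crucial input is Kostant's result that the differentials $dp_1,\dots,dp_\ell$ are linearly independent at $x$ precisely when $x$ is regular, i.e. when $\dim \Cent_{\gg}(x) = \ell$. From this I would extract three consequences: first, $\chi$ is smooth along the open set $\gg^{\mathrm{reg}}$ of regular elements; second, the regular nilpotent elements form a single dense orbit $\gn^{\mathrm{reg}} := \gn \cap \gg^{\mathrm{reg}}$ in $\gn$, smooth of dimension $\dim\gg-\ell$, so in particular $\gn$ is irreducible of this dimension; third, the fiber $\chi^{-1}(0)$ is smooth and reduced along $\gn^{\mathrm{reg}}$. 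Since $\dim \gn = \dim\gg - \ell$, the ideal $(p_1,\dots,p_\ell)$ cuts out a subscheme of $\gg$ of codimension exactly $\ell$, the number of defining equations: this is a complete intersection, and therefore Cohen--Macaulay.

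\textbf{Normality.} It then remains to check Serre's criterion $(R_1)+(S_2)$ for the scheme $V(p_1,\dots,p_\ell)$. The complete intersection property gives Cohen--Macaulayness, hence $(S_2)$ and in particular $(S_1)$; since the scheme is reduced along the dense open set $\gn^{\mathrm{reg}}$ it is generically reduced, and a Cohen--Macaulay scheme that is generically reduced is reduced. Thus $V(p_1,\dots,p_\ell)$ coincides with the reduced variety $\gn$ and $\kk[\gn] = \kk[\gg]/(p_1,\dots,p_\ell)$. For $(R_1)$ one uses that $\gn \setminus \gn^{\mathrm{reg}}$ is the union of the non-regular nilpotent orbits, the largest of which (the subregular orbit) has dimension $\dim\gn - 2$; hence the singular locus of $\gn$ has codimension $\geq 2$, giving regularity in codimension one. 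Serre's criterion then yields that $\gn$ is normal.

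\textbf{Main obstacle.} The genuinely hard step is the linear-independence statement for the differentials $dp_i$ on the whole regular locus, not merely generically; this is the heart of Kostant's work and rests on the theory of principal $\sltwo$-triples together with a precise analysis of the centralizer of a regular nilpotent element. The dimension estimate $\codim_{\gn}(\gn \setminus \gn^{\mathrm{reg}}) \geq 2$ similarly relies on identifying the subregular orbit. Everything else — Chevalley restriction, the reduction to the semisimple case, and the passage from $(R_1)+(S_2)$ to normality — is formal once these structural facts are available.
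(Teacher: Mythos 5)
The paper offers no proof of this statement at all: it is quoted directly from \name{Kostant}'s original article \cite[Theorem~16 on page~392]{Ko1963Lie-group-represen}, and your sketch follows exactly the route of that cited source — Chevalley restriction, Kostant's criterion that $dp_1,\ldots,dp_\ell$ are independent precisely on the regular locus (via principal $\sltwo$-triples), the resulting complete-intersection and Cohen--Macaulay properties of the zero fiber of the adjoint quotient, and Serre's criterion using the codimension-two bound coming from the subregular orbit. Your outline is correct, including the reduction to the semisimple case, and you have honestly isolated the two genuinely hard inputs that carry the argument.
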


The exponential map from Proposition~\ref{exponential-for-linear-algebraic-groups.prop} then gives the next result.
\begin{corollary} \label{kostant.cor}
For a reductive group $G$ the unipotent elements $G_u \subset G$ form a closed normal subvariety.
\end{corollary}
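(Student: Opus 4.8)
The plan is to deduce this directly from the two results just recalled, so that the argument amounts to transporting the normality of the nilpotent cone across the exponential isomorphism. First I would observe that $G_u = G^u$ is the set of unipotent elements, and recall from Proposition~\ref{exponential-for-linear-algebraic-groups.prop} that $G^u \subseteq G$ is closed and that the exponential map provides a $G$-equivariant isomorphism of varieties $\exp_{G}\colon \gn \simto G^u$, where $\gn \subseteq \gg := \Lie G$ denotes the nilpotent cone.

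Next I would invoke Kostant's Theorem~\ref{kostant.thm}: since $G$ is reductive, the nilpotent cone $\gn$ is a normal complete intersection, and in particular a normal affine variety. Because normality is an invariant of the isomorphism class of a variety and $\exp_{G}$ is an isomorphism of varieties, it follows at once that $G^u$ is normal. Combining this with the closedness supplied by Proposition~\ref{exponential-for-linear-algebraic-groups.prop}, we conclude that $G_u = G^u \subseteq G$ is a closed normal subvariety, which is the assertion.

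There is no real obstacle in this step: the entire mathematical content sits in Kostant's theorem and in the earlier construction of the exponential isomorphism, both of which are available for use. The only point requiring a moment of care is interpretational, namely that ``normal'' in the statement of the corollary means normality in the sense of varieties (which is exactly what is preserved under $\exp_{G}$), and not that $G_u$ is a normal subgroup of $G$.
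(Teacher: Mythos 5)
Your proof is correct and follows exactly the paper's intended argument: the paper derives the corollary in one line from the $G$-equivariant isomorphism $\exp_{G}\colon \gn \simto G^{u}$ of Proposition~\ref{exponential-for-linear-algebraic-groups.prop} together with \name{Kostant}'s Theorem~\ref{kostant.thm}, precisely as you do. Your closing remark that ``normal'' means normality as a variety (the property transported by $\exp_{G}$) is also the right reading of the statement.
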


\ps
\subsection{Vector fields and invariant subvarieties}\label{VF.subsec}
Let $X$ be  an affine variety. As already mentioned in Section~\ref{Endo-VF.subsec}, we will always identify  the vector fields $\VEC(X)$ with the derivations $\Der_{\kk}(\OOO(X))$ of $\OOO(X)$.

\begin{definition}\label{delta-invariance.defn}
Let $X$ be an affine variety, and let $\delta \in \VEC(X)$ be a vector field on $X$.  
\be
\item A locally closed subset  $Y\subseteq X$ is called {\it $\delta$-invariant}\idx{del@$\delta$-invariant},  if $\delta(y)\in T_{y}Y$ for all $y \in Y$. In this case one also says that  $\delta$ is \itind{parallel to $Y$}.\idx{invariant closed subset}
\item A linear subspace $U \subseteq \OOO(X)$ is called {\it $\delta$-invariant} if $\delta(U) \subseteq U$.
\ee
\end{definition}
It is easy to see that $Y$ is $\delta$-invariant if and only if the closure $\overline{Y}$ is $\delta$-invariant, and this is equivalent to the condition that the ideal $I(Y) \subseteq \OOO(X)$ is $\delta$-invariant.

\begin{proposition}\label{xi-invariant.prop}
Let $X$ be an affine variety and $\delta \in \VEC(X)$ a vector field.
\be
\item If $Y_{i}\subseteq X$ are $\delta$-invariant closed subvarieties, then $\bigcap_{i}Y_{i}$ is  $\delta$-invariant.
\item If $Y \subseteq X$ is $\delta$-invariant, then every irreducible component of $Y$ is $\delta$-invariant.
\ee
\end{proposition}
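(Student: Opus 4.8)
The plan is to reduce both assertions to the ideal criterion recalled immediately before the proposition: a closed subvariety $Y \subseteq X$ is $\delta$-invariant if and only if its vanishing ideal $I(Y) \subseteq \OOO(X)$ is a \emph{differential ideal} for $\delta$, i.e. $\delta(I(Y)) \subseteq I(Y)$, where we view $\delta \in \VEC(X) = \Der_{\kk}(\OOO(X))$. This is the right point of view because the pointwise definition is awkward for intersections: at a singular point $z$ of $\bigcap_i Y_i$ the Zariski tangent space $T_z(\bigcap_i Y_i)$ can be strictly smaller than $\bigcap_i T_z Y_i$, so one cannot simply read off $\delta(z) \in T_z(\bigcap_i Y_i)$ from $\delta(z) \in T_z Y_i$ for all $i$. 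Passing to ideals avoids this entirely, and both parts become purely algebraic statements about differential ideals.

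For part (1), the Nullstellensatz (recall $\kk$ is algebraically closed) gives $I(\bigcap_i Y_i) = \sqrt{\sum_i I(Y_i)}$. The sum $\sum_i I(Y_i)$ is again a differential ideal, since a derivation maps $\sum_i I(Y_i)$ into $\sum_i \delta(I(Y_i)) \subseteq \sum_i I(Y_i)$. Thus the one substantial input is the classical lemma that \emph{the radical of a differential ideal is differential}, which is where the hypothesis $\operatorname{char}\kk = 0$ enters. I would prove it as follows: given $\aa$ differential and $f$ with $f^n \in \aa$, applying $\delta$ yields $n f^{n-1}\delta f \in \aa$, hence $f^{n-1}\delta f \in \aa$ by invertibility of $n$. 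I then show by induction on $j = 1,\dots,n$ that $f^{\,n-j}(\delta f)^{2j-1} \in \aa$: writing $b_j := f^{\,n-j}(\delta f)^{2j-1}$, one computes
\[
(\delta f)\,\delta b_j = (n-j)\,f^{\,n-j-1}(\delta f)^{2j+1} + (2j-1)\,(\delta^2 f)\, b_j,
\]
and since $b_j \in \aa$ both $(\delta f)\delta b_j$ and $(\delta^2 f)b_j$ lie in $\aa$, so $(n-j)\,b_{j+1} \in \aa$ and therefore $b_{j+1}\in\aa$ for $j<n$. At $j=n$ this gives $(\delta f)^{2n-1} \in \aa$, whence $\delta f \in \sqrt{\aa}$. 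Applying this to $\aa = \sum_i I(Y_i)$ shows $I(\bigcap_i Y_i) = \sqrt{\sum_i I(Y_i)}$ is differential, so $\bigcap_i Y_i$ is $\delta$-invariant.

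For part (2), I would write $Y = Y_1 \cup \cdots \cup Y_r$ as its decomposition into irreducible components, so that $\pp_j := I(Y_j)$ are exactly the minimal primes of the radical differential ideal $I(Y) = \bigcap_{j} \pp_j$. Fix an index, say $j=1$. By minimality the primes are pairwise incomparable, so $\bigcap_{i\neq 1}\pp_i \not\subseteq \pp_1$, and I may choose $g \in \bigcap_{i\neq 1}\pp_i$ with $g \notin \pp_1$. For any $f \in \pp_1$ we then have $fg \in \bigcap_j \pp_j = I(Y)$, hence $\delta(fg) = f\,\delta g + g\,\delta f \in I(Y) \subseteq \pp_1$. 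Since $f \in \pp_1$ forces $f\,\delta g \in \pp_1$, we obtain $g\,\delta f \in \pp_1$; as $\pp_1$ is prime and $g \notin \pp_1$, this gives $\delta f \in \pp_1$. Thus $\pp_1$ is a differential ideal and $Y_1$ is $\delta$-invariant, and the same argument applies to each component. Note this step needs no hypothesis on the characteristic.

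The only real obstacle is the radical lemma underlying part (1); everything else is formal manipulation of the ideal criterion together with the Nullstellensatz and the prime-avoidance trick. I expect the induction establishing $(\delta f)^{2n-1}\in\aa$ to be the point requiring the most care, and it is precisely there that $\operatorname{char}\kk=0$ is used in an essential way.
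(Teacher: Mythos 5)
Your proposal is correct and takes essentially the same route as the paper: both reduce everything to the criterion that $Y$ is $\delta$-invariant if and only if $I(Y)\subseteq\OOO(X)$ is a differential ideal, prove for (1) that the radical of a differential ideal is differential (the paper by passing to $R/I$ and reducing to $f^{2}=0$, yielding $2(\delta f)^{3}=0$ in two lines, you by the classical induction giving $(\delta f)^{2n-1}\in\aa$, which is equally valid), and prove (2) by the same prime-avoidance trick, choosing $g\in\bigcap_{i\neq 1}\pp_{i}\setminus\pp_{1}$ and using primeness of $\pp_{1}$ (the paper phrases this via an annihilator description of the minimal primes in the reduced quotient, which is the same argument). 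The remaining differences are purely cosmetic.
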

This follows from the next more general result.
\begin{lemma} 
Let $R$ be a finitely generated $\kk$-algebra, and let $\delta \in \Der_{\kk}(R)$ be a derivation.
\be
\item If $I \subseteq R$ is a $\delta$-invariant ideal, then the radical $\sqrt{I} \subseteq R$ is also $\delta$-invariant.
\item If $I \subseteq R$ is a $\delta$-invariant ideal, then the minimal primes containing $I$ are also $\delta$-invariant.
\ee
    \end{lemma}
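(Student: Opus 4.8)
The plan is to prove (1) directly by a Leibniz-rule induction exploiting that $\kk$ has characteristic zero, and then to deduce (2) from (1) together with the Noetherian decomposition of $\sqrt{I}$ into minimal primes. Throughout, $\delta$-invariance of an ideal $I$ means $\delta(I) \subseteq I$, and I use freely that $\delta$ satisfies $\delta(uv) = u\,\delta(v) + \delta(u)\,v$.

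For (1), I would take $a \in \sqrt{I}$, so that $a^n \in I$ for some $n \geq 1$, and aim to show that a suitable power of $\delta(a)$ lies in $I$. Differentiating $a^n \in I$ gives $n\,a^{n-1}\delta(a) = \delta(a^n) \in I$, and since $n$ is invertible in $\kk$ we get $a^{n-1}\delta(a) \in I$. The heart of the argument is an induction proving that $a^{n-k}(\delta a)^{2k-1} \in I$ for $k = 1, \dots, n$. Assuming this for some $k < n$, one applies $\delta$ to obtain $(n-k)\,a^{n-k-1}(\delta a)^{2k} + (2k-1)\,a^{n-k}(\delta a)^{2k-2}\,\delta^2 a \in I$, and then multiplies by $\delta a$; the resulting second term contains the factor $a^{n-k}(\delta a)^{2k-1}$, which is in $I$ by the inductive hypothesis, so it drops out and leaves $(n-k)\,a^{n-k-1}(\delta a)^{2k+1} \in I$. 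Dividing by the nonzero scalar $n-k$ advances the induction from $k$ to $k+1$, since $n-k-1 = n-(k+1)$ and $2k+1 = 2(k+1)-1$. At $k = n$ this yields $(\delta a)^{2n-1} \in I$, whence $\delta(a) \in \sqrt{I}$. I expect the bookkeeping — choosing the exponents $(n-k,\,2k-1)$ precisely so that the unwanted term is visibly a multiple of the previous line — to be the only delicate point, and it is exactly where characteristic zero is used.

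For (2), by part (1) the ideal $\sqrt{I}$ is again $\delta$-invariant, so I may replace $I$ by $\sqrt{I}$ and assume $I$ is radical. Since $R$ is finitely generated over $\kk$, hence Noetherian, there are only finitely many minimal primes $\mathfrak p_1, \dots, \mathfrak p_r$ over $I$, and $I = \mathfrak p_1 \cap \cdots \cap \mathfrak p_r$ with the $\mathfrak p_i$ pairwise incomparable. Fix an index $i$. Because no $\mathfrak p_j$ with $j \neq i$ is contained in $\mathfrak p_i$, primality of $\mathfrak p_i$ shows that $\bigcap_{j \neq i}\mathfrak p_j \not\subseteq \mathfrak p_i$, so there is an element $f \in \bigcap_{j\neq i}\mathfrak p_j$ with $f \notin \mathfrak p_i$ (for $r=1$ simply take $f = 1$). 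Now for any $a \in \mathfrak p_i$ the product $fa$ lies in every $\mathfrak p_j$, hence in $I$; applying $\delta$ gives $f\,\delta(a) + a\,\delta(f) \in I \subseteq \mathfrak p_i$, and since $a\,\delta(f) \in \mathfrak p_i$ we conclude $f\,\delta(a) \in \mathfrak p_i$. As $f \notin \mathfrak p_i$ and $\mathfrak p_i$ is prime, this forces $\delta(a) \in \mathfrak p_i$, proving that each minimal prime $\mathfrak p_i$ is $\delta$-invariant. Part (2) thus reduces cleanly to (1) plus prime avoidance, with no further obstacle.
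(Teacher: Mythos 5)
Your proof is correct, and it splits naturally against the paper's: part (2) follows essentially the same route as the paper, while part (1) is organized genuinely differently. For (2), the paper also picks an element $p_i\in\bigcap_{j\neq i}\mathfrak{p}_j\setminus\mathfrak{p}_i$, differentiates, and concludes by primality; it merely phrases this in the reduced quotient $R/\sqrt{I}$, characterizing $\mathfrak{p}_i$ as the annihilator of $p_i$ (and of $p_i^2$), whereas you stay with a radical ideal $I$ and cancel $f$ directly — the same trick minus the annihilator bookkeeping. (One naming quibble: what you use at the end is not prime avoidance but the lemma that a prime containing a finite product of ideals contains a factor, which is what the Noetherian finiteness of minimal primes feeds into.) For (1), the paper passes to $R/I$, reduces ``by induction'' to the square-zero case, and computes $0=\delta f\cdot\delta^2(f^2)=2(\delta f)^3$ when $f^2=0$; your argument instead runs a single explicit induction in $R$ itself on $a^{n-k}(\delta a)^{2k-1}\in I$, ending with the effective bound $(\delta a)^{2n-1}\in I$. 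This buys two things: the paper's reduction to $n=2$ is left unjustified and actually requires a further observation to carry out (e.g.\ that the nilpotents of the commutative ring $R/I$ form an ideal, since the naive substitution $g=f^{\lceil n/2\rceil}$ only yields nilpotency of $f^{m}(\delta f)^{m}$-type expressions), whereas your induction verifies itself line by line — the multiplication by $\delta a$ at the step $k\to k+1$ turns the second Leibniz term into a multiple of the inductive hypothesis, and the scalars $n-k$ with $1\leq k\leq n-1$ are invertible precisely because $\operatorname{char}\kk=0$, which is also exactly where the paper uses the hypothesis. Moreover your bound is sharp against the paper's computation: at $n=2$ your two steps give $a\,\delta a\in I$ and $(\delta a)^3\in I$, which is literally the paper's display.
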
\idx{delta@$\delta$-invariant ideal}
\begin{proof}
(1) Replacing $R$ by $R/I$ it suffices to show that if $f^{n}=0$, then $(\delta f)^{m}=0$ for some $m>0$. For this we can assume, by induction, that $n=2$. Then we find 
$$
0 = \delta ( f^{2} ) = 2f \cdot \delta f,
$$
hence
$$
0 = \delta f \cdot (\delta^{2}f^{2}) = 2 \delta f \cdot ((\delta f)^{2} + f\cdot \delta^{2}f) = 2 (\delta f)^{3},
$$
and so $\delta f \in \sqrt{I}$.

(2) By (1) we can assume that $I = \sqrt{I} = (0)$, hence $(0) = \pp_{1}\cap\ldots\cap \pp_{k}$ where the $\pp_{i}$ are the minimal primes of $\OOO(X)$. For every $i$ choose an element $p_{i}\in\bigcap_{j\neq i}\pp_{j}\setminus\pp_{i}$. Then $\pp_{i}=\{p\in\OOO(X)\mid p_{i}\cdot p = 0\}$, and the same holds for every power of $p_{i}$. For every $p \in \pp_{i}$ we find
$$
0 = p_{i}\cdot \delta(e_{i}\cdot p) = p_{i}\cdot(e_{i}\cdot \delta p + \delta p_{i}\cdot p) = p_{i}^{2}\cdot \delta p,
$$
hence $\delta p \in \pp_{i}$.
\end{proof}
A fundamental result in this context is the following strong relation between $G$-stability and $\Lie G$-invariance.

We will prove this in the more general setting of an action of a connected ind-group on an affine variety (Proposition~\ref{G-stable-is-LieG-invariant.prop}).

\begin{proposition}\label{stable-is-invariant-algGroups.prop}
Let $G$ be a connected linear algebraic group acting on an affine variety $X$, and let $Y\subseteq X$ be a closed subvariety. Then $Y$ is $G$-stable if and only if $Y$ is $\xi_{A}$-invariant for all $A \in \Lie G$.
\end{proposition}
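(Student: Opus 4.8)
The plan is to prove the two implications separately. The forward implication is immediate, while the converse requires the structure theory of connected algebraic groups together with the exponential formula for unipotent one‑parameter subgroups and a grading argument for the toral directions.

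\textbf{The easy direction.} Suppose $Y$ is $G$-stable. Recall that for $A\in\Lie G$ the vector field $\xi_A$ is given by $(\xi_A)_x=(d\mu_x)_e(A)$, where $\mu_x\colon G\to X$, $g\mapsto gx$, is the orbit map (Section~\ref{linear-case.subsec}). For $y\in Y$ the orbit $Gy$ lies in $Y$, so $\mu_y$ factors through the closed subvariety $Y\subseteq X$; hence $(d\mu_y)_e$ has image in $T_yY$, and therefore $(\xi_A)_y\in T_yY$ for every $A$. Thus $Y$ is $\xi_A$-invariant for all $A\in\Lie G$.

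\textbf{Setup for the converse.} First I would replace the geometric hypothesis by its algebraic form: by the remark following Definition~\ref{delta-invariance.defn}, the closed set $Y$ is $\xi_A$-invariant if and only if the ideal $I:=I(Y)\subseteq\OOO(X)$ satisfies $\xi_A(I)\subseteq I$. Since $G$ is connected, it is generated by the images of its one-parameter subgroups $\kplus\to G$ and $\kst\to G$ (via the Levi decomposition $G=R_u(G)\rtimes L$, the fact that tori are generated by cocharacters, and that $R_u(G)$ and the semisimple part of $L$ are generated by their unipotent one-parameter subgroups). It therefore suffices to show that the image of each one-parameter subgroup stabilizes $Y$, and I would treat the unipotent and the toral case in turn.

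\textbf{The unipotent case.} For nilpotent $N\in\gn$, Lemma~\ref{lambda.lem} supplies the homomorphism $\lambda_N\colon\kplus\to G$ with $d\lambda_N(1)=N$. The vector field $\xi_N$ is then the locally nilpotent derivation generating this $\kplus$-action, and the exponential formula, the analogue of Lemma~\ref{exp-basics.lem}(\ref{exponential-formula}) valid for any $\kplus$-action on an affine variety, gives
\[
f(\lambda_N(s)\,x)=\sum_{k\ge 0}\frac{s^{k}}{k!}\,(\xi_N^{k}f)(x).
\]
If $f\in I$, then every $\xi_N^{k}f$ again lies in $I$ and hence vanishes on $Y$, so $f(\lambda_N(s)y)=0$ for all $y\in Y$ and $s\in\kk$; this means $\lambda_N(\kplus)\,Y\subseteq Y$, and since $\lambda_N(-s)$ works as well, $\lambda_N(\kplus)$ stabilizes $Y$.

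\textbf{The toral case and conclusion.} For a cocharacter $\lambda\colon\kst\to G$ set $A:=d\lambda(1)\in\Lie G$. The induced $\kst$-action makes $\OOO(X)=\bigoplus_{n}\OOO(X)_n$ a graded algebra, and $\xi_A$ acts on $\OOO(X)_n$ as multiplication by the scalar $n$. Writing $f=\sum_{n\in S}f_n\in I$ with $S$ finite, the elements $\xi_A^{k}f=\sum_{n}n^{k}f_n$ for $0\le k<|S|$ all lie in $I$, and a Vandermonde inversion expresses each homogeneous component $f_n$ as a $\kk$-linear combination of them; hence $f_n\in I$, so $I$ is graded and thus $\kst$-stable, giving $\lambda(\kst)\,Y=Y$. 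Combining the two cases with the generation statement yields $gY=Y$ for all $g\in G$, i.e.\ $Y$ is $G$-stable. I expect the main obstacle to be precisely this toral step: a semisimple element of $\Lie G$ does not exponentiate to an algebraic subgroup the way a nilpotent one does, so one cannot use the exponential formula there and must instead exploit $\xi_A$-invariance to force $I(Y)$ to be homogeneous.
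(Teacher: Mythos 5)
Your proof is correct, but it takes a genuinely different route from the paper. The paper never argues this proposition within the theory of algebraic groups at all: it deduces it from the more general Proposition~\ref{G-stable-is-LieG-invariant.prop}, valid for any \emph{connected ind-group} $\GGG$, whose proof is a dimension argument. There one first reduces to $Y$ irreducible via Proposition~\ref{xi-invariant.prop}, chooses an admissible filtration with $\GGG_{k}$ irreducible (Proposition~\ref{curve-connected.prop}; for an algebraic group this is just $G$ itself), notes that $\phi\colon \GGG_{k}\times Y \to \overline{\GGG Y}$ is dominant, invokes generic smoothness to find a point where $d\phi$ is surjective, translates that point to $(e,y)$ by equivariance, and then the formula $d\phi_{(e,y)}(A,v)=\xi_{A}(y)+v$ of Remark~\ref{formula.rem} together with the $\xi_{A}$-invariance forces $T_{y}\overline{\GGG Y}=T_{y}Y$ on a dense subset of $Y$, whence $\dim\overline{\GGG Y}=\dim Y$ and $\GGG Y = Y$. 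You instead integrate the Lie algebra directly: generation of $G$ by unipotent and toral one-parameter subgroups (Levi decomposition, root subgroups, $\exp$ on the unipotent radical), the exponential formula for the $\kplus$-directions (the general affine version you assert is indeed proved in the paper, as Lemma~\ref{exp-for-functions.lem}), and the Vandermonde/grading argument making $I(Y)$ homogeneous for the $\kst$-directions. Both arguments are char-zero arguments (your $k!$'s and distinct integer eigenvalues on one side, generic smoothness on the other). What each buys: your route is elementary, works directly with the ideal $I(Y)$ and so does not even need the reduction to irreducible $Y$, and it isolates exactly why the toral case resists the exponential formula; but it leans on the full structure theory of linear algebraic groups and therefore cannot generalize, whereas the paper's argument uses nothing beyond dominance and tangent spaces and carries over verbatim to connected ind-groups --- which is the point of the paper's setup, since for ind-groups elements of $\Lie\GGG$ need not integrate to one-parameter subgroups and $\GGG$ need not be generated by locally finite elements (compare Section~\ref{tame-is-closed.subsec}, where a strict closed subgroup of a connected ind-group has the full Lie algebra).
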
\idx{xi@$\xi_{A}$-invariant}

\begin{remark}\label{Seidenberg.rem}
\name{Seidenberg} has shown that the singular locus $X_{\text{\it sing}} \subseteq X$ is invariant under all vector fields $\delta \in \VEC(X)$, see \cite{Se1967Differential-ideal}. Moreover, he has proved
that if a strict closed subvariety $Y \subsetneqq X$ is invariant under all vector fields, then $Y \subseteq X_{\text{\it sing}}$.
\end{remark}

\ps
\subsection{Locally finite representations on vector fields}\label{LF-Jordan-decomposition.subsec}
Every automorphism $\phi$ of an affine variety $X$ induces a linear automorphism  of the regular functions $\OOO(X)$ and a linear automorphism of the vector fields $\VEC(X)$, both denoted by $\phi$. For $f \in \OOO(X)$ we have $\phi(f)\colon x \mapsto f(\phi^{-1}(x))$, i.e. we take for $\phi\colon \OOO(X) \to \OOO(X)$ the inverse of the comorphism $\phi^{*}\colon \OOO(X) \to \OOO(X)$. 

For a vector field $\delta\in\VEC(X)$ there are two ways to describe the action of $\phi$. If we consider $\delta$ as a derivation of $\OOO(X)$, then $\phi(\delta):=(\phi^{*})^{-1}\circ\delta\circ\phi^{*}$. If we see $\delta = (\delta_{x})_{x\in X}\colon X \to TX$ as a section of the \itind{tangent bundle} $TX$, then we have 
$$
\phi(\delta)_{\phi(x)}= d\phi_{x}(\delta_{x})\quad\text{or}\quad
\phi(\delta)_{x}= d\phi_{\phi^{-1}(x)} ( \delta_{\phi^{-1}(x)}):
$$

\begin{center}
\begin{tikzcd}
TX  \arrow[d, xshift=-0.2em, "p"']  \arrow[r, "d\phi", "\simeq"'] & TX  \arrow[d, xshift=-0.2em, "p"']  \\
X  \arrow[r,"\phi", "\simeq"'] \arrow[u, xshift=0.2em, "\delta"'] & X  \arrow[u, xshift=0.2em, "\phi(\delta)"']
\end{tikzcd}
\end{center}
\ps\noindent
For $f\in\OOO(X)$ and $\delta\in\VEC(X)$ we get $\phi(f\delta) = \phi(f)\phi(\delta)$ and $\phi(\delta f)=\phi(\delta)\phi(f)$.

\begin{example}\label{image-of-ddx.exa}
Let $\phi=(f_{1},\ldots,f_{n})\colon \An \to \An$ be an automorphism. Then we find
$$
\phi(\dxj)   = \sum_{i}\frac{\partial{f_{i}}}{\partial x_{j}}(\phi^{-1}(x)) \dxi. 
$$
In fact,
\begin{align*}
\phi(\dxj)  (x_{i})  & =\phi(\dxj)   \phi ( \phi^{-1} (x_i) ) =  \phi(\dxj \phi^{-1}(x_{i})) \\
   & = \phi (\frac{\partial{f_{i}}}{\partial x_{j}}) = (\frac{\partial{f_{i}}}{\partial x_{j}}) ( \phi^{-1}(x) ) .
\end{align*}
\end{example}

If a  group $H$ acts on $X$ by algebraic automorphisms, then we obtain in this way linear  representations of $H$ on the coordinate ring $\OOO(X)$ and on the vector fields $\VEC(X)$. These representations will play an important role in the following.

\begin{definition}\label{locally-finite.def}
A linear endomorphism $\phi\in\LLL(V)$ of a $\kk$-vector space $V$ is called \itind{locally finite} if the linear span $\langle \phi^{j}(v)\mid j\in\NN\rangle$ is finite-dimensional for all $v\in V$. It is called \itind{semisimple} if it is locally finite and if the action on every finite-dimensional $\phi$-stable subspace is semisimple. It is called \itind{locally nilpotent} if for any $v \in V$ there is an $m\in\NN$ such that $\phi^{m}(v)=0$. 
\end{definition}

Recall that every locally finite endomorphism $\phi$ has a uniquely defined \itind{Jordan decomposition} $\phi = \phi_{s} + \phi_{n}$ where $\phi_{s}$ is semisimple, $\phi_{n}$ is nilpotent and $\phi_{s}\circ\phi_{n}= \phi_{n}\circ\phi_{s}$.

\begin{definition}\label{ratrional-rep.def}
A subset $S \subseteq \LLL(V)$ is called {\it locally finite}\idx{locally finite subset} if every element $v \in V$ is contained in a finite-dimensional $S$-stable subspace. A representation $\rho$ of a linear algebraic group $G$ on $V$ is called {\it locally finite and rational}\idx{locally finite and rational representation} if $\rho(G)$ is locally finite and the induced representation of $G$ on any $G$-stable finite-dimensional subspace is rational.
\end{definition}

\begin{proposition}\label{locally-finite-group-actions.prop}
Let $G$ be a linear algebraic group, and let $X$ be an affine $G$-variety. Then the representations of $G$ on the coordinate ring $\OOO(X)$ and on the vector fields $\VEC(X)$ are locally finite and rational.
\end{proposition}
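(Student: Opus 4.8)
The plan is to prove both statements by a single mechanism: first establish local finiteness and rationality for the coordinate ring by the standard comorphism argument, and then deduce the statement for vector fields by realizing $\VEC(X)$ as a $G$-submodule of the tensor product of $\OOO(X)$ with a finite-dimensional rational representation.

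First I would treat $\OOO(X)$. Writing $R := \OOO(X)$ and denoting by $\rho\colon G \times X \to X$ the action, the comorphism gives $\rho^{*}\colon R \to \OOO(G)\otimes R$. For $f \in R$ I would write $\rho^{*}(f) = \sum_{i=1}^{m} a_{i}\otimes f_{i}$ with $a_{i}\in\OOO(G)$ and $f_{i}\in R$; then $(g\cdot f)(x) = f(g^{-1}x) = \sum_{i} a_{i}(g^{-1})f_{i}(x)$, so that $g\cdot f \in \Span(f_{1},\ldots,f_{m})$ for all $g\in G$. This shows that the $G$-orbit of any $f$ spans a finite-dimensional subspace, i.e. the representation is locally finite; and on such a $G$-stable finite-dimensional subspace $W$ the map $g\mapsto g\cdot f$ has coordinates $g\mapsto a_{i}(g^{-1})$, which are regular on $G$, so the representation of $G$ on $W$ is rational. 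Thus $R$ is a locally finite and rational $G$-module.

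Next, since $R$ is finitely generated and locally finite, I can enlarge a finite set of algebra generators to a finite-dimensional $G$-stable subspace $W \subseteq R$ which still generates $R$ as a $\kk$-algebra (each generator lies in a finite-dimensional $G$-stable subspace by the previous step, and the sum of these is $G$-stable, finite-dimensional, and generating). Recalling from Section~\ref{LF-Jordan-decomposition.subsec} that $G$ acts on $\VEC(X) = \Der_{\kk}(R)$ by $g\cdot\delta = \alpha_{g}\circ\delta\circ\alpha_{g}^{-1}$, where $\alpha_{g}$ denotes the algebra automorphism of $R$ induced by $g$, I would consider the restriction map $\Theta\colon \Der_{\kk}(R) \to \Hom_{\kk}(W,R)$, $\delta\mapsto\delta|_{W}$. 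Since $W$ generates $R$, a derivation is determined by its values on $W$, so $\Theta$ is injective. Equipping $\Hom_{\kk}(W,R) = W^{*}\otimes_{\kk} R$ with the tensor-product action (the dual of $\alpha_{g}|_{W}$ on $W^{*}$ and $\alpha_{g}$ on $R$), I would verify the $G$-equivariance of $\Theta$: for $w \in W$ one has $(g\cdot\delta)|_{W}(w) = \alpha_{g}(\delta(\alpha_{g}^{-1}(w)))$, and since $W$ is $G$-stable we have $\alpha_{g}^{-1}(w)\in W$, whence this equals $\alpha_{g}\bigl((\delta|_{W})(\alpha_{g}^{-1}(w))\bigr) = \bigl(g\cdot(\delta|_{W})\bigr)(w)$. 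Hence $\Theta$ identifies $\VEC(X)$ with a $G$-submodule of $W^{*}\otimes_{\kk}R$.

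It then remains to invoke the routine permanence properties of locally finite and rational representations: the tensor product of a finite-dimensional rational module (here $W^{*}$) with a locally finite rational module (here $R$) is again locally finite and rational, and any $G$-submodule of a locally finite rational module inherits both properties. Applying these to $W^{*}\otimes_{\kk}R \supseteq \Theta(\VEC(X)) \cong \VEC(X)$ finishes the proof. I expect the only genuine subtlety, the hard part, to be the bookkeeping in the third step: choosing $W$ to be simultaneously $G$-stable and algebra-generating, and checking that the restriction map $\Theta$ intertwines the conjugation action on derivations with the tensor-product action on $W^{*}\otimes_{\kk}R$. Everything else is either the classical comorphism computation or the standard closure properties of the category of locally finite rational $G$-modules.
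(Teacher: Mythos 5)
Your proposal is correct and follows essentially the same route as the paper: the paper also takes the comorphism argument for $\OOO(X)$ as known, chooses a finite-dimensional $G$-stable generating subspace $U \subseteq \OOO(X)$, and embeds $\Der_{\kk}(\OOO(X))$ $G$-equivariantly into $\LLL(U,\OOO(X))$ with the conjugation action, which is your $\Theta$ into $W^{*}\otimes_{\kk}\OOO(X)$ in different notation. You merely spell out the equivariance check and the permanence properties that the paper leaves implicit.
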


\begin{proof}
This is well-known for the regular representation of $G$ on $\OOO(X)$ given by $gf(x):= f(g^{-1}x)$. If we identify $\VEC(X)$ with the derivations $\Der_{\kk}(\OOO(X))$ considered as a subspace of $\LLL(\OOO(X))$, the linear endomorphisms of $\OOO(X)$,
then the $G$-action is the obvious one:
$$
g(\delta) = g\circ\delta\circ g^{-1}, \text{ \ or \ }
g(\delta)(f) := g(\delta(g^{-1}f)) \text{ for }g \in G, f \in\OOO(X).
$$
Choose a finite-dimensional $G$-stable subspace $U \subseteq \OOO(X)$ which generates $\OOO(X)$. Then we have a $G$-equivariant embedding $\Der_{\kk}(\OOO(X)) \subseteq \LLL(U, \OOO(X))$ where the representation on $\LLL(U, \OOO(X))$ is given by $\phi\mapsto g\circ\phi\circ g^{-1}$. This action is locally finite and rational since it is locally finite and rational on $\OOO(X)$.
\end{proof}

\begin{remark}\label{canonical-iso-VF.rem}
Let $V$ be a $G$-module. Let $v_{1},\ldots,v_{n}\in V$ be a basis and denote by $x_{1},\ldots,x_{n}\in\OOO(V)$ the dual basis. We have a canonical $G$-equivariant linear isomorphism of $\OOO(V)$-modules
$$
\VEC(V) \simto V\otimes \OOO(V), \quad \delta\mapsto \sum_{i}v_{i}\otimes \delta(x_{i})
$$
which does not depend on the choice of a basis. The inverse isomorphism is induced by $v\otimes f\mapsto f\partial_{v}$. 
\end{remark}

There is the following converse of Proposition~\ref{locally-finite-group-actions.prop} above.
\begin{proposition}\label{algebraic-subgroups.prop}
Let $H$ be an abstract group acting on the affine variety $X$ by automorphisms, i.e. via a homomorphism $\rho\colon H \to \Aut(X)$. Assume that the induce representation of $H$ on $\OOO(X)$ is locally finite. Then the closure $\overline{\rho(H)} \subseteq \Aut(X)$ is a linear algebraic group. The same conclusion holds if the induced representation on $\VEC(X)$ is locally finite.
\end{proposition}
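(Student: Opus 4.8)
The plan is to treat the $\OOO(X)$-hypothesis first and then reduce the $\VEC(X)$-hypothesis to it. Assume the representation of $H$ on $R := \OOO(X)$ is locally finite. Since $R$ is finitely generated, I would pick algebra generators $f_1,\dots,f_n$ and, using local finiteness, enlarge the span $\langle f_1,\dots,f_n\rangle$ to a finite-dimensional $H$-stable subspace $W \subseteq R$; this $W$ still generates $R$ as a $\kk$-algebra. The $H$-action restricts to a linear representation $\rho_W \colon H \to \GL(W)$, and I set $G := \overline{\rho_W(H)} \subseteq \GL(W)$, the Zariski closure, which is a linear algebraic group.

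Next I would recover the action on $X$ from this finite-dimensional picture. Fixing a basis of $W$ identifies the polynomial algebra on $W$ with $\kk[y_1,\dots,y_N]$, $N=\dim W$, and gives a closed embedding $X \hookrightarrow \AA^N$ with vanishing ideal $I$. Each $\rho_W(h)$ extends to the algebra automorphism $\rho(h)$ of $R$, hence preserves $I$; since the condition $g(I)\subseteq I$ is closed in $\GL(W)$ (it amounts to $g(I\cap\kk[y_1,\dots,y_N]_{\le d})\subseteq I\cap\kk[y_1,\dots,y_N]_{\le d}$ for every $d$, each a closed condition, and invertibility upgrades $\subseteq$ to $=$), the whole group $G$ lies in $\Stab_{\GL(W)}(I)$. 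Consequently $G$ stabilises $X=V(I)$, and the restriction of the linear action $\GL(W)\times\AA^N\to\AA^N$ yields a morphism $G\times X\to X$, i.e. an algebraic action of $G$ on $X$. By Proposition~\ref{group-action.prop} this produces a homomorphism of ind-groups $\psi\colon G\to\Aut(X)$, and by Proposition~\ref{Hom-G-to-ind-group.prop} its image $\psi(G)$ is a closed algebraic subgroup of the affine ind-group $\Aut(X)$, hence a linear algebraic group.

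It remains to identify $\psi(G)$ with $\overline{\rho(H)}$. Because an automorphism of $R$ is determined by its restriction to the generating set $W$, one checks $\psi\circ\rho_W=\rho$, so $\rho(H)=\psi(\rho_W(H))$. As $\rho_W(H)$ is dense in $G$ and $\psi$ is continuous with closed image $\psi(G)$, the set $\rho(H)$ is dense in $\psi(G)$, whence $\overline{\rho(H)}=\psi(G)$ is a linear algebraic group, as claimed.

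For the $\VEC(X)$-hypothesis I would show that local finiteness on $V:=\VEC(X)$ forces local finiteness on $\OOO(X)$, after which the first part applies verbatim. The key tool is the faithful action of $R$ on $V$ by multiplication operators $m_f\colon\delta\mapsto f\delta$, injectivity of $f\mapsto m_f$ being exactly Corollary~\ref{nonzero-VF.cor}; this action is $H$-equivariant in the sense that $h\,m_f\,h^{-1}=m_{h(f)}$. Fixing $f\in\OOO(X)$, I would choose a vector field $\delta_0$ nonzero on a dense open set (Corollary~\ref{nonzero-VF.cor}) lying in a finite-dimensional $H$-stable subspace $V_\alpha$, together with a finite-dimensional $H$-stable $V_\beta\supseteq f\cdot V_\alpha$. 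For all $h$ and all $\delta\in V_\alpha$ one has $m_{h(f)}(\delta)=h(f\cdot h^{-1}\delta)\in V_\beta$, so every restriction $m_{h(f)}|_{V_\alpha}$ lies in the finite-dimensional space $\Hom(V_\alpha,V_\beta)$; since $g\mapsto m_g|_{V_\alpha}$ is injective (evaluate at $\delta_0$ and use Corollary~\ref{nonzero-VF.cor}), the span $\langle Hf\rangle$ is finite-dimensional. Thus $H$ acts locally finitely on $\OOO(X)$. The main obstacle is the middle step of the first part — transporting the closed linear group $G\subseteq\GL(W)$ back into a closed algebraic subgroup of $\Aut(X)$ — where the stability of the relation ideal under the full Zariski closure and the passage through Propositions~\ref{group-action.prop} and~\ref{Hom-G-to-ind-group.prop} do the real work.
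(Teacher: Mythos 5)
Your proof is correct and takes essentially the same approach as the paper: your reduction of the $\VEC(X)$-hypothesis to the $\OOO(X)$-hypothesis is the paper's own argument (the paper runs it by contradiction, you run it directly), resting on the same two ingredients — Corollary~\ref{nonzero-VF.cor} and the identity $h(f\delta)=h(f)\,h(\delta)$. The paper leaves the $\OOO(X)$-case to the reader, and your detailed version of it — the finite-dimensional $H$-stable generating subspace $W$, the closed stabilizer of the relation ideal in $\GL(W)$, and the passage through Propositions~\ref{group-action.prop} and~\ref{Hom-G-to-ind-group.prop} — correctly fills in exactly what was omitted.
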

\begin{proof} The first part is easy, and the proof is left to the reader. As for the second, we have to show that if the representation on $\VEC(X)$ is locally finite, then  so is the representation  on $\OOO(X)$. 

Let $\delta_{0}\in\VEC(X)$ be vector field which is nonzero on a dense open set (Corollary~\ref{nonzero-VF.cor}), and let $W:=\langle h\delta_{0}\mid h\in H\rangle$ be the finite-dimensional linear span of all $h\delta_{0}$. 
Assume that $U:=\langle hf_{0} \mid h \in H\rangle$ is not finite-dimensional for some $f_{0}\in\OOO(X)$. Define $V:=\langle h(f_{0}\delta)\mid h\in H, \delta\in W\rangle$.
This is again finite-dimensional. Since $hf_{0}\delta_{0} = h(f_{0}h^{-1}\delta_{0})$ we see that $V$ contains $U \delta_{0}$. But this space is infinite-dimensional, because the map $f\mapsto f\delta_{0}$ is injective (Corollary~\ref{nonzero-VF.cor}).
\end{proof}

\ps
\subsection{Algebraic group actions and vector fields}\label{group-action-VF.subsec}
Let $G$ be a linear algebraic group acting on an affine variety $X$. Generalizing the construction from Section~\ref{linear-case.subsec}, we obtain a canonical anti-homomorphism of Lie algebras \idx{$\xi : \Lie G \to \VEC(X)$}
$$
\xi\colon\Lie G \to \VEC(X), \quad A\mapsto \xi_{A}
$$ 
where the vector field \itind{$\xi_{A}$} is defined in the following way (see Proposition~\ref{End(X)-and-Vec(X).prop}): Consider the {\it orbit map} $\mu_{x}\colon G \to X$, $g\mapsto gx$, and set
$$
(\xi_{A})_{x} := (d\mu_{x})_{e}(A).
$$\idx{orbit map $\mu_{x}$}
In order to see that this is indeed an ``algebraic'' vector field on $X$, we note that the action is given by a homomorphism $\rho\colon G \to \Aut(X)\subseteq\End(X)$ which defines a homomorphism of Lie algebras $d\rho\colon \Lie G \to \Lie\Aut(X) \subseteq T_{\id}\End(X)$. It then follows from the construction that $\xi_{A}$ is the image of $d\rho (A) \in T_{\id}\End(X)$ under the linear map $\xi\colon T_{\id}\End(X) \to \VEC(X)$ defined in Section~\ref{Endo-VF.subsec}, see Proposition~\ref{End(X)-and-Vec(X).prop}. 

We could also use a $G$-equivariant closed immersion $X \subseteq V$ into a $G$-module $V$. This defines a homomorphism $\rho\colon G \to \GL(V)$ and thus a Lie algebra homomorphism $d\rho\colon \Lie G \to \Lie\GL(V) = \LLL(V)$. By the considerations in Section~\ref{linear-case.subsec} we obtain a vector field $\delta:=\xi_{d\rho(A)} \in \VEC(V)$ for every $A\in\Lie G$. It is not difficult to see that $X$ is $\delta$-invariant and that $\delta|_{X}=\xi_{A}$.

\ps
On the other hand, we have a locally finite and rational representation of $G$ on the coordinate ring $\OOO(X)$ and on the vector fields $\VEC(X)=\Der_{\kk}(\OOO(X))$ (Proposition~\ref{locally-finite-group-actions.prop}). This linear representation of $G$ on the vector fields induces a linear action of $\Lie G$ on the $\VEC(X)$  which relates to the construction above in a well-known manner.

\begin{proposition}
Let $G$ be a linear algebraic group and $X$ an affine $G$-variety. Then the locally finite and rational representation of $G$ on the vector fields $\VEC(X)$ induces a linear action of $\Lie G$ on $\VEC(X)$, $(A,\delta) \mapsto A\delta$, which is given by
\[
A\delta = [\delta,\xi_{A}]=-\ad \xi_{A}(\delta) \text{ \ for } A \in \Lie G \text{ and } \delta \in \VEC(X).
\]
\end{proposition}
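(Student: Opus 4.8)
The plan is to compute the induced $\Lie G$-action directly, reducing it to the differential of the $G$-representation on functions. Recall from the proof of Proposition~\ref{locally-finite-group-actions.prop} that $G$ acts on $\OOO(X)$ by $(g\cdot f)(x) = f(g^{-1}x)$ and on $\VEC(X) = \Der_{\kk}(\OOO(X))$ by conjugation, $g(\delta) = g\circ\delta\circ g^{-1}$, and that both representations are locally finite and rational. Consequently each $\delta$ lies in a finite-dimensional $G$-stable subspace $W \subseteq \VEC(X)$ on which $G$ acts through a genuine rational representation $G \to \GL(W)$, and by definition the induced action of $A \in \Lie G$ is the differential of this representation evaluated at $\delta$. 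The task is thus to identify this differential explicitly.

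First I would compute the differential $d\pi\colon \Lie G \to \LLL(\OOO(X))$ of the representation $\pi$ of $G$ on $\OOO(X)$ and show that $d\pi(A) = -\xi_A$, viewed as a derivation of $\OOO(X)$. For $f \in \OOO(X)$ and $x \in X$ one evaluates $d\pi(A)(f)(x)$ by differentiating in the direction $A$ (concretely, via the dual-number point $e + \eps A$, in the spirit used elsewhere in the paper): since $(g\cdot f)(x) = f(g^{-1}x)$ and the passage $g \mapsto g^{-1}$ sends the tangent vector $A$ to $-A$ at $e$, the defining formula $(\xi_A)_x = (d\mu_x)_e(A)$ for the orbit-map vector field yields $d\pi(A)(f)(x) = -(\xi_A f)(x)$. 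Hence $d\pi(A) = -\xi_A$ in $\Der_{\kk}(\OOO(X))$.

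Next I would differentiate the conjugation action. Writing $\Pi(t)$ for the operators $\pi(g_t)$ along a tangent curve $g_t$ at $e$ with tangent vector $A$, so that $\Pi(0) = \id$ and the tangent of $\Pi$ is $d\pi(A)$, the tangent of $g \mapsto \pi(g)^{-1}$ at $e$ is $-d\pi(A)$, and the Leibniz rule applied to $\Pi(t)\,\delta\,\Pi(t)^{-1}$ gives $A\delta = [d\pi(A),\delta] = d\pi(A)\circ\delta - \delta\circ d\pi(A)$. This manipulation is carried out inside the finite-dimensional $G$-stable subspace $W$, together with a finite-dimensional $G$-stable generating subspace $U \subseteq \OOO(X)$ as in the proof of Proposition~\ref{locally-finite-group-actions.prop}, where the representation is rational and the product rule genuinely applies. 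Substituting $d\pi(A) = -\xi_A$ then yields $A\delta = [-\xi_A,\delta] = -[\xi_A,\delta] = [\delta,\xi_A] = -\ad\xi_A(\delta)$, as claimed; the right-hand side is automatically again a derivation, since the commutator of the two derivations $\xi_A$ and $\delta$ is a derivation.

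The main obstacle is the rigorous justification of the commutator step in the infinite-dimensional setting: one must invoke the locally finite and rational structure to pass to the finite-dimensional $G$-stable subspaces $W$ and $U$ before differentiating, so that $\Pi(t)$ and its inverse depend algebraically on the parameter and the Leibniz rule is legitimate. Once the action is suitably restricted, the two explicit differentiations above finish the argument. As a cross-check, one may instead verify the formula after a $G$-equivariant closed embedding $X \subseteq V$ into a $G$-module, using the pointwise identity $\lambda(\delta_0) = -[\lambda,\delta]_0$ of Lemma~\ref{linear-VF.lem}; but the direct computation is cleaner and avoids the choice of embedding.
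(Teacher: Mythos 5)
Your argument is correct, but it follows a genuinely different route from the paper's. You work intrinsically on $X$: you first identify the differential of the $G$-representation $\pi$ on $\OOO(X)$ as $d\pi(A)=-\xi_{A}$, and then differentiate the conjugation action $g(\delta)=\pi(g)\circ\delta\circ\pi(g)^{-1}$ via dual numbers, obtaining $A\delta=[d\pi(A),\delta]=[\delta,\xi_{A}]=-\ad\xi_{A}(\delta)$; the local finiteness and rationality from Proposition~\ref{locally-finite-group-actions.prop} are exactly what legitimizes the Leibniz step. One small refinement: besides the $G$-stable subspaces $W\ni\delta$ and the generating subspace $U\subseteq\OOO(X)$, you should also fix a finite-dimensional $G$-stable subspace $U'\supseteq\delta(U)$, so that $g\mapsto g(\delta)|_{U}$ is a morphism $G\to\LLL(U,U')$ which one may differentiate; since both $A\delta$ and $[-\xi_{A},\delta]$ are derivations, agreement on the generators $U$ suffices. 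The paper instead reduces at the outset to a linear action on a $G$-module $V$ (which implicitly uses a $G$-equivariant closed embedding $X\subseteq V$, the $G$-equivariance of the restriction map, and its surjectivity, cf.\ Proposition~\ref{vector-fields.prop}(4)), and then performs a short explicit bracket computation in $\VEC(V)=\OOO(V)\otimes V$, comparing $[f\otimes v,\xi_{A}]$ with the diagonal derivative $A(f\otimes v)=Af\otimes v+f\otimes Av$ (Remark~\ref{canonical-iso-VF.rem}). Your version buys independence from the embedding and the reduction step, and it establishes the companion identity $Af=-\xi_{A}f$ of formula~(\ref{formulas-for-A}) along the way; the paper's version buys a very concrete two-line tensor computation once the reduction is granted, with $\xi_{A}\in V^{*}\otimes V$ visibly the linear vector field. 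The embedding-based cross-check you mention via Lemma~\ref{linear-VF.lem} is indeed how the paper proves the related Corollary~\ref{tangent-rep-and-VF.cor}, so both routes are fully consistent with the surrounding text.
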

\begin{proof}
It suffices to consider the case of a linear action of $G$ on a $\kk$-vector space $V$. Then $\VEC(V) = \Mor(V,V) = \OOO(V)\otimes V$ where $f\otimes v$ corresponds to the derivation $(f\otimes v)(h)=f\frac{\partial h}{\partial v}$, and the Lie bracket is given by $[f\otimes v,h\otimes w] = f\frac{\partial h}{\partial v} \otimes w - h \frac{\partial f}{\partial w} \otimes v$. Moreover, for $A\in \Lie G$ the vector field $\xi_{A}$ belongs to $V^{*}\otimes V \subseteq \OOO(V)\otimes V$, $\xi_{A}=\sum_{i} \ell_{i}\otimes w_{i}$, and so 
$$
[f\otimes v , \xi_{A}] = \sum_{i} \left(f\frac{\partial \ell_{i}}{\partial v}\otimes w_{i}-\ell_{i}\frac{\partial f}{\partial w_{i}}\otimes v\right) =
f\otimes \left(\sum_{i}\frac{\partial \ell_{i}}{\partial v}w_{i}\right) - \left(\sum_{i}\ell_{i}\frac{\partial f}{\partial w_{i}}\right)\otimes v.
$$
On the other hand, for $A \in \Lie G$ we have $A(f\otimes v)=Af \otimes v + f\otimes Av$. 
\end{proof}
Now let us go one step further and assume that there is a fixed point $x_{0}\in X$. Then we get the \itind{tangent representation} 
$\tau=\tau_{x_{0}} \colon G \to \GL(T_{x}X)$ and its differential $d\tau\colon \Lie G \to \End(T_{x}X)$ which defines a linear action of
$\Lie G$ on $T_{x}X$, denoted by $(A,v) \mapsto A(v):=d\tau(A)(v)$.

\begin{corollary}\label{tangent-rep-and-VF.cor}
Assume that the $G$-action on $X$ has a fixed point $x_{0}$. 
Then,  for  $A \in \Lie G$ and $\delta\in\VEC(X)$, we have
$$
A\delta_{x_{0}}=d\tau(A)(\delta_{x_{0}}) = -[\xi_{A},\delta]_{x_{0}}.
$$
\end{corollary}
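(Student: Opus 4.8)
The plan is to derive both equalities with essentially no computation. The second equality is immediate from the Proposition just proved: it gives the identity of vector fields $A\delta = -\ad\xi_A(\delta) = -[\xi_A,\delta]$ on $X$, and evaluating at the point $x_0$ yields $(A\delta)_{x_0} = -[\xi_A,\delta]_{x_0}$. Thus the whole content lies in the first equality $(A\delta)_{x_0} = d\tau(A)(\delta_{x_0})$, which I would obtain from the $G$-equivariance of the evaluation map at the fixed point, followed by differentiation.

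First I would introduce the linear evaluation map
\[
\eval_{x_0}\colon \VEC(X) \to T_{x_0}X,\quad \delta\mapsto \delta_{x_0},
\]
and check that it is $G$-equivariant, where $G$ acts on $\VEC(X)$ through the locally finite rational representation of Proposition~\ref{locally-finite-group-actions.prop} and on $T_{x_0}X$ through the tangent representation $\tau=\tau_{x_0}$, $\tau(g)=dg_{x_0}$. Indeed, the formula recalled in Section~\ref{LF-Jordan-decomposition.subsec} reads $g(\delta)_{g(x)}=dg_x(\delta_x)$; putting $x=x_0$ and using that $x_0$ is a fixed point, $g(x_0)=x_0$, gives
\[
g(\delta)_{x_0} = dg_{x_0}(\delta_{x_0}) = \tau(g)(\delta_{x_0}),
\]
which is exactly the asserted equivariance $\eval_{x_0}(g\delta)=\tau(g)\bigl(\eval_{x_0}(\delta)\bigr)$.

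Next I would differentiate this $G$-equivariance at $e\in G$. Since a $G$-equivariant linear map between rational $G$-modules intertwines the induced $\Lie G$-actions, applying this to $\eval_{x_0}$ gives $\eval_{x_0}(A\delta)=d\tau(A)\bigl(\eval_{x_0}(\delta)\bigr)$, that is $(A\delta)_{x_0}=d\tau(A)(\delta_{x_0})$, which is the first equality. Together with the second equality this establishes the corollary.

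The only delicate point is the passage from $G$-equivariance to $\Lie G$-equivariance, since $\VEC(X)$ is infinite-dimensional. This is taken care of by the local finiteness and rationality of the representation (Proposition~\ref{locally-finite-group-actions.prop}): for the given $\delta$ one restricts $\eval_{x_0}$ to a finite-dimensional $G$-stable subspace $W\subseteq\VEC(X)$ containing $\delta$, where $\eval_{x_0}|_W\colon W\to T_{x_0}X$ is a morphism of finite-dimensional rational $G$-modules and the standard differentiation argument applies verbatim. I would also emphasize that $(A\delta)_{x_0}$ denotes the value at $x_0$ of the vector field $A\delta$, whereas $d\tau(A)(\delta_{x_0})$ is the $\Lie G$-action on the tangent vector $\delta_{x_0}$; the equivariance is precisely what identifies these a priori distinct quantities.
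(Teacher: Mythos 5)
Your proof is correct, but it takes a genuinely different route from the paper's. The paper proves the corollary by choosing a $G$-equivariant closed embedding $X\subseteq V$ into a $G$-module with $x_0=0$ and reducing to the linear case, where $d\tau(A)$ is the linear vector field $\xi_A$ and the identity is the explicit computation of Lemma~\ref{linear-VF.lem} combined with the preceding proposition; the passage back to general $X$ implicitly uses that vector fields on $X$ extend to vector fields on $V=\An$ parallel to $X$ (Proposition~\ref{vector-fields.prop}). You instead stay on $X$: you note that evaluation at the fixed point, $\delta\mapsto\delta_{x_0}$, intertwines the $G$-representation on $\VEC(X)$ with the tangent representation $\tau$, and you differentiate this equivariance, using local finiteness and rationality (Proposition~\ref{locally-finite-group-actions.prop}) to reduce to finite-dimensional rational $G$-modules, where differentiation of an intertwiner is standard. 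Your argument is more intrinsic (no ambient module, no extension of vector fields, no coordinate computation), and the equivariance you exploit is a fact the paper records anyway in part (1) of the proposition following Theorem~\ref{rep-in-fixed-point.thm}. What the paper's embedding argument buys is that it survives the generalization to affine ind-groups: Proposition~\ref{rep-in-fixed-point.prop} proves the same identity for an ind-group $\GGG$ via the linearization Lemma~\ref{linearization.lem}, a setting where the representation of $\GGG$ on $\VEC(\VVV)$ need not be locally finite, so your finite-dimensional reduction would be unavailable. Your closing remark on the two readings of $A\delta_{x_0}$ is also apt: with the paper's convention stated just before the corollary, the first equality is essentially notational, and the substance is the identification of $d\tau(A)(\delta_{x_0})$ with $(A\delta)_{x_0}=-[\xi_A,\delta]_{x_0}$, which is exactly what your equivariance argument supplies.
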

\begin{proof}
(1) If $X$ is a $\kk$-vector space $V$ with a linear action of $G$, given by $\rho\colon G \to \GL(V)$, then the tangent representation 
$\tau_{0}$ is equal to $\rho$, and  $d\tau(A) = \xi_{A}$ (see Section~\ref{linear-case.subsec}). Hence the claim follows from 
the proposition above. 
\ps
(2)
In general, we can assume that 
$X$ is a closed $G$-stable subset of a $G$-module $V$, $X\subseteq V$, and that $x_{0}= 0 \in V$. Then,
for every $A \in \Lie G$, the subvariety $X$ is invariant under $\xi_{A,V}\in \VEC(V)$, and the restriction of $\xi_{A,V}$ 
to $\VEC(X)$ is $\xi_{A}$, by construction. Moreover, $T_{x_{0}}\subset T_{0}V = V$ is stable under $\rho\colon G \to \GL(V)$, and
$d\tau(A) = d\rho(A)|_{T_{x_{0}}}$. Now the claim follows from the linear case (1).
\end{proof}

\pmed
\section{Ind-Group Actions and Vector Fields}\label{ind-group-actions.sec}
We now extend the results of the previous section to actions of  ind-groups. We also define locally finite, semisimple and unipotent elements of ind-groups and  obtain bijections between unipotent automorphisms of an affine variety $X$, locally nilpotent vector fields on $X$, and $\kplus$-actions on $X$.
\ps
\subsection{Orbits of ind-groups}\label{group-actions-orbits.subsec}

\begin{definition}\label{ind-group-actions.def}
An action of an ind-group $\GGG$ on an ind-variety $\VVV$ is a homomorphism $\rho\colon \GGG \to \Aut(\VVV)$ such that the action map $\GGG \times \VVV \to \VVV$, $(g,x)\mapsto g   x :=\rho(g)x$,  is an ind-morphism.
\end{definition}

The following proposition generalizes a well-known result for algebraic groups actions on varieties, namely that the orbits are always open in their closure.

\begin{proposition}\label{ind-group-action.prop}
Let $\GGG = \bigcup_{k}\GGG_{k}$ be a connected ind-group acting on a variety $X$. Then, the following assertions are satisfied.
\be
\item For all $x \in X$, the $\GGG$-orbit\idx{orbit} $\GGG x$ is open in its closure.\idx{G-@$\GGG$-orbit}
\item There is an $\ell\geq 1$ such that $\GGG  x = \GGG_{\ell}  x$ for all $x\in X$.
\item If the stabilizer  $\GGG_{x}=\Stab_{\GGG}x$ is an algebraic group for some $x\in X$, then $\GGG$ is an algebraic group.
\ee
\end{proposition}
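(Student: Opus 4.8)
The plan is to establish the three assertions in turn. Throughout I use that a connected ind-group is curve-connected (Remark~\ref{irreducible-curve-connected.rem}), so that $\GGG=\GGG^{\circ}$ has an admissible filtration $\GGG=\bigcup_{k}\GGG_{k}$ by \emph{irreducible} closed algebraic subsets with $e\in\GGG_{k}$ for all $k$ (Proposition~\ref{curve-connected.prop}). For $x\in X$ I write $\mu_{x}\colon\GGG\to X$, $g\mapsto gx$, for the orbit map, which is a morphism, so that $\GGG x=\bigcup_{k}\GGG_{k}x$ with each $\GGG_{k}x=\mu_{x}(\GGG_{k})$ constructible in $X$.

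For (1) I would apply Lemma~\ref{image-ind-var-in-var.lem} to $\mu_{x}$ (legitimate since all $\GGG_{k}$ are irreducible): this gives a $k_{0}$ with $\overline{\GGG x}=\overline{\GGG_{k_{0}}x}=\overline{\mu_{x}(\GGG_{k_{0}})}$, which is irreducible as the closure of the image of the irreducible variety $\GGG_{k_{0}}$, together with a dense open subset $U_{0}\subseteq\GGG_{k_{0}}x\subseteq\GGG x$ of $\overline{\GGG x}$. Since each $g\in\GGG$ acts as an automorphism of $X$ and $\overline{\GGG x}$ is $\GGG$-stable, every translate $gU_{0}$ is open in $\overline{\GGG x}$ and contained in $\GGG x$; as $\GGG x=\bigcup_{g\in\GGG}gU_{0}$, the orbit is open in its closure. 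In particular $\GGG x$ is locally closed, hence constructible.

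For (3), which is independent of (2), I would argue directly with $\mu_{x}$. Its fibres are the cosets $g\,\GGG_{x}$, each isomorphic to $\GGG_{x}=\Stab_{\GGG}x$ as an ind-variety; so if $\GGG_{x}$ is an algebraic group of dimension $d$, all fibres of $\mu_{x}$ have dimension $\le d$. Because $\GGG$ is curve-connected, Proposition~\ref{small-fibers-gives-variety.prop} then shows that $\GGG$ is a finite-dimensional algebraic variety (with $\dim\GGG\le\dim X+d$), and Proposition~\ref{connected-component.prop}(3) turns this into the statement that $\GGG=\GGG^{\circ}$ is an algebraic group.

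The delicate point is (2), where a \emph{single} $\ell$ must serve for all $x$. First I would reduce to an uncountable base field by base change (Proposition~\ref{fieldextension.prop}) and to irreducible $X$; the latter is allowed because a connected ind-group stabilises each irreducible component $X_{i}$ of $X$, the subgroup $\{g\in\GGG\mid gX_{i}=X_{i}\text{ for all }i\}$ being closed (an intersection of sets $\eval_{y}^{-1}(X_{i})$) of finite index, hence all of the connected $\GGG$. Over an uncountable field, (1) makes $\GGG x=\bigcup_{k}\GGG_{k}x$ a constructible set written as an increasing countable union of constructible sets, so Lemma~\ref{constructible.lem} already yields a \emph{pointwise} bound $\GGG x=\GGG_{\ell(x)}x$. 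To make $\ell$ uniform I would pass to the orbit relation $R=\{(gx,x)\}=\bigcup_{k}W_{k}\subseteq X\times X$, with $W_{k}=\beta(\GGG_{k}\times X)$ constructible and increasing for $\beta\colon\GGG\times X\to X\times X$, $(g,x)\mapsto(gx,x)$; by Chevalley each $\pr_{2}(W_{m}\setminus W_{k})$ is constructible, so the loci $T_{k}=\{x\in X\mid\GGG x=\GGG_{k}x\}$ and their complements are governed by such projections. I would then stratify $X$ by orbit dimension — each stratum being $\GGG$-stable and locally closed, the top one open with closed orbits — and run a Noetherian induction on $\dim X$, invoking Lemma~\ref{constructible.lem} on each stratum once the pertinent set is known to be constructible, and finally taking the maximum of the finitely many bounds. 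The main obstacle is exactly this constructibility: one must show that the orbit relation, restricted to a fixed orbit-dimension stratum, is uniformly constructible in $x$, so that Lemma~\ref{constructible.lem} produces one $\ell$ rather than a family $\ell(x)$. On the open top stratum this should follow from the (expected) fact that the orbits there are closed and the relation locally closed; the lower strata are then absorbed by the inductive hypothesis.
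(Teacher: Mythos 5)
Your treatments of (1) and (3) are correct. For (3) you take exactly the paper's route: the fibres of the orbit map $\mu_{x}\colon \GGG \to X$ are cosets of $\GGG_{x}$, hence of bounded dimension, so Proposition~\ref{small-fibers-gives-variety.prop} (applicable since $\GGG$ is curve-connected by Remark~\ref{irreducible-curve-connected.rem}) makes $\GGG$ an algebraic variety, and Proposition~\ref{connected-component.prop} upgrades this to an algebraic group. For (1) your argument is actually \emph{simpler} than the paper's: you apply Lemma~\ref{image-ind-var-in-var.lem} directly to $\mu_{x}$ to get a dense open $U_{0}\subseteq \GGG_{k_{0}}x$ of $\overline{\GGG x}$ and then translate, $\GGG x=\bigcup_{g}gU_{0}$, using that each $g$ is an automorphism of $X$ stabilizing $\overline{\GGG x}$; this classical homogeneity argument needs no induction and no uncountability, whereas the paper obtains (1) only as a by-product of its proof of (2), via induction on $\dim X$.

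The genuine gap is in (2), and you have flagged it yourself. Your plan hinges on two unproven claims: that the loci $T_{k}=\{x\in X\mid \GGG x=\GGG_{k}x\}$ (equivalently, the orbit relation restricted to a stratum) are constructible, and that the stratification by orbit dimension consists of locally closed, $\GGG$-stable pieces with closed orbits on top. Neither is available: the complement of $T_{k}$ is $\bigcup_{m>k}\pr_{2}(W_{m}\setminus W_{k})$, a \emph{countable} union of constructible sets, and Lemma~\ref{constructible.lem} only applies when the ambient set being decomposed is itself constructible — so from $X=\bigcup_{k}T_{k}$ you cannot extract a single $\ell$. Likewise $\{x\mid \dim\GGG x\geq d\}$ is a priori only a countable union of constructible sets, so your strata need not be locally closed. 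The paper avoids all of this by working with the single morphism $\Phi\colon \GGG\times X\to X\times X$, $(g,x)\mapsto (gx,x)$, with image $P$: Lemma~\ref{image-ind-var-in-var.lem} applied to $\Phi$ gives a subset $V\subseteq P$ open and dense in $\overline{P}$, which after saturation by $\GGG\times\GGG$ satisfies $V=\Phi(\GGG\times U)$ for the dense open $\GGG$-stable $U:=\pr_{2}(V)$ (the fibres of $V\to U$ are full orbits $\GGG x\times\{x\}$); a second application of the same lemma yields one $\ell$ with $V=\Phi(\GGG_{\ell}\times U)$, i.e.\ $\GGG x=\GGG_{\ell}x$ for \emph{all} $x\in U$ simultaneously. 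Then Noetherian induction on $\dim X$, replacing $X$ by the closed $\GGG$-stable set $X\setminus U$ and taking the maximum of the finitely many resulting bounds, finishes (2) — and also handles (1) for $x\notin U$ in the paper's version. Note this device needs no base change to an uncountable field, whereas your pointwise bound via Lemma~\ref{constructible.lem} does, and you would additionally have to descend the conclusion from $\KK$ to $\kk$ (possible since $\kk$ is algebraically closed, but unaddressed in your sketch). If you replace your stratification step by this double application of Lemma~\ref{image-ind-var-in-var.lem} to $\Phi$, your proof closes.
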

Again, the connectedness of $\GGG$ is necessary, as we will see in Section~\ref{braid.subsec} where we construct a discrete ind-group acting faithfully on an affine variety. 
\begin{proof}
(1)--(2) 
Consider the morphism
$$
\Phi\colon  \GGG\times X \to X \times X \  \text{ given by } \ (g,x)\mapsto (g  x,x),
$$
and denote by $P = \Phi(\GGG \times X) \subset X\times X$ its image. Note that $\GGG \times \GGG$ acts naturally on $X \times X$ and that $P$ is $\GGG \times \GGG$ stable.
If $p\colon P \to X$ is the map induced by the second projection, then
$p^{-1}(x) = \GGG  x \times \{x\}$. We will show that there is an open dense set $U \subset X$ with the following properties:
\be
\item [(i)] $U$ is $\GGG$-stable;
\item [ (ii)] $P_{U}:=\Phi(\GGG \times U) \subset U \times U$ is open in $\overline{P}$;
\item [ (iii)] There is an $\ell\geq 1$ such that $P_{U} = \Phi(\GGG_{\ell}\times U)$.
\ee
Let us show first that this implies (1) and (2). Since $\dim (X \setminus U) < \dim X$, using induction on $\dim X$, it is enough to prove (1) and (2) when $x \in U$. As above, it follows from the construction that the second projection  induces a surjective morphism $p_{U}\colon P_{U} \to U$ such that $p_{U}^{-1}(x) = \GGG  x \times \{x\}$. Since $P_{U}$ is open in $\overline{P}$ by (ii), we see that $P_U \cap ( X \times \{ x\}) = \GGG x \times \{ x \}$ is open in $\overline{P} \cap ( X \times \{ x \} )$, hence open in $\overline{\GGG x} \times \{x \}$. We have proved (1). Now (iii) implies that $\GGG  x \times \{x\}= \Phi(\GGG\times \{x\}) = \Phi(\GGG_{\ell}\times \{x\}) = \GGG_{\ell}  x \times\{x\}$ and we have proved (2). 

\ps
It remains to construct the open set $U \subset X$ with the properties (i)--(iii). By Lemma~\ref{image-ind-var-in-var.lem} there is a  subset $V \subset P$ which is open and dense in $\overline{P}$. Replacing $V$ by $(\GGG \times \GGG)  V$ we can assume that $V$ is $(\GGG \times \GGG)$-stable. It follows that  the image $U :=\pr_{2}(V) \subset X$ is an open dense subset and that the fibers of $V  \to U$ are of the form $\GGG  x\times\{x\}$, hence $V= \Phi(\GGG\times U)$. Using again Lemma~\ref{image-ind-var-in-var.lem} there is an $\ell \geq 1$ such that  $V =\Phi(\GGG_{\ell}\times U )$, i.e., we have $\GGG  x = \GGG_{\ell}  x$ for all $x \in U$.
\ps
(3) The claim follows from Proposition~\ref{small-fibers-gives-variety.prop} applied to the orbit map  $\GGG \to X$, $g \mapsto g x$.
\end{proof}

Next we give two examples of actions of an ind-group $\GGG$ on an ind-variety $\VVV$ with orbits which are not locally closed.

\begin{example}\label{non-locally-closed-orbit.exa1}
Consider the natural action of $\SLtwo (\kk [x])$ on $\kk[x] ^2$ given by left multiplication. Then, the orbit $\CCC$ of $(1,0)$ is dense, but not open. Indeed, ${\CCC}$ is equal to the set of pairs $(a,b) \in \kk[x]^2$ such that $a$ and $b$ are coprime. Let us check that $\overline{\CCC}= \kk [x]^2$. Each element $(a,b)$ of $\kk[x]^2$ belongs to the orbit of an element of the form $(p,0)$, where $p \in \kk [x]$ is the gcd of $a$ and $b$. It is therefore enough to check that $(p,0)$ belongs to $\overline{\CCC}$. If $\varepsilon \in \kk^*$, then $(p, \varepsilon)$ belongs to $\CCC$, hence  $(p,0)=\lim_{\varepsilon \to 0} (p, \varepsilon) \in \overline{\CCC}$. 

For $\varepsilon \in \kk^*$ we have $(1+ \varepsilon x, \varepsilon ( 1 + \varepsilon x) ) \in \kk[x]^{2} \setminus {\CCC}$. However, its limit for $\varepsilon \to 0$ is equal to $(1,0)$ which does not belong to $\kk[x]^{2} \setminus {\CCC}$. Therefore, $\overline{\CCC} \setminus {\CCC}$ is not closed, and so ${\CCC}$ is not open.
\end{example}

\begin{example}\label{non-locally-closed-orbi.exa2} An element $v \in \kk[x,y]$ is called a
\itind{variable} or a \itind{coordinate} if there exists a  $w \in \kk[x,y]$ such that $\kk[v,w]=\kk[x,y]$. It is clear that the subset $C \subseteq \kk[x,y]$ of variables coincides with the orbit of $x$ under $\Aut(\Atwo)$. By \cite{Fu2002On-the-length-of-p}, we have
\[ 
\overline{\CCC} =\{ p(v) \mid p \in \kk [t], \, v \in \VVV \},
\]
see Corollary~\ref{closure-of-variables.cor}. We claim that $\overline{\CCC} \setminus{\CCC}$ is not closed. In fact, for $\varepsilon \in \kk^*$ the polynomial $x + \varepsilon x^2$ belongs to $\overline{\CCC} \setminus {\CCC}$ (it is not a variable since it is not irreducible), but its limit $\lim_{\varepsilon \to 0} (x+ \varepsilon x^2) = x$ is a variable. This proves the claim and shows that the orbit ${\CCC}$ is not locally closed.
\end{example}

\ps
\subsection{Actions of ind-groups and vector fields}  \label{action-ind-groups.subsec}
Our previous considerations relating algebraic group actions and vector fields carry over to an action of an affine ind-group $\GGG$ on an ind-variety $\VVV$. We first have to define vector fields on $\VVV$. We will only need this for affine ind-varieties, and so we restrict to this case. 

\begin{definition}\label{VF.def}
A \itind{vector field} $\delta$ on an affine ind-variety $\VVV = \bigcup_{k}\VVV_{k}$ is a collection
$\delta=(\delta_{v})_{v \in \VVV}$ of tangent vectors $\delta_{v}\in T_{v}\VVV$
with the property that for every $k\geq 1$ there is an $\ell \geq k$ such that the following holds:
\be
\item[(i)] $\delta_{v}\in T_{v} \VVV_{\ell}$ for all $v \in \VVV_{k}$;
\item[(ii)] For every $f \in \OOO(\VVV_{\ell})$ the function
$\delta f\colon \VVV_k \to \kk$, $v \mapsto \delta_v(f)$, is regular on $\VVV_k$.
\ee
We denote by $\VEC(\VVV)$ the $\kk$-vector space of vector fields on $\VVV$.\idx{$\VEC(\VVV)$}
\end{definition}
If $X$ is an affine variety and $Y \subseteq X$ a closed subvariety, we define $\VEC(X,Y)$ to be the collections $\delta=(\delta_{y})_{y\in Y}$ where $\delta_{y}\in T_{y}X$ such that, for every $f\in\OOO(X)$, the function $\delta f\colon y \mapsto \delta_{y}f$ is regular on $Y$. It is easy to see that $\VEC(X,Y) = \Der_{\kk}(\OOO(X),\OOO(Y)$) where $\OOO(Y)$ is considered as an $\OOO(X)$-module via the restriction map $f\mapsto f|_{Y}$. With this definition, condition (ii) above can also be formulated as 
\be
\item[(ii)\textprime] $\delta|_{\VVV_{k}} := (\delta_{v})_{v \in \VVV_{k}} \in \VEC(\VVV_{\ell},\VVV_{k})$.
\ee

\begin{lemma}
If $\delta\in \VEC(\VVV)$ and $f \in \OOO(\VVV)$, then the function $\delta f \colon v \mapsto \delta_{v}f$, is regular on $\VVV$. Moreover,
$\delta \colon \OOO(\VVV) \to \OOO(\VVV)$ is a continuous derivation, and we obtain a canonical isomorphism $\VEC(\VVV) \simto \Der_{\kk}^{\text{\tiny\it cont}}(\OOO(\VVV) )$. In particular, $\VEC(\VVV)$ has the structure of a Lie algebra.
\end{lemma}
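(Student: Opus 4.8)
The plan is to reduce every assertion to the level of the finite stages $\VVV_k$ and then pass to the inverse limit, using throughout the identification of tangent vectors with continuous point derivations furnished by Lemma~\ref{tangent-derivation.lem}.

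First I would prove that $\delta f$ is regular. Fix $k$ and choose $\ell \geq k$ as in Definition~\ref{VF.def}, so that $\delta_v \in T_v\VVV_\ell$ for every $v \in \VVV_k$. For such $v$ the value $\delta_v f$ equals $\delta_v(f|_{\VVV_\ell})$, since a tangent vector in $T_v\VVV_\ell$ sees only the restriction of $f$ to $\VVV_\ell$; hence by condition (ii) the function $v \mapsto \delta_v f$ is regular on $\VVV_k$, i.e. $(\delta f)|_{\VVV_k} \in \OOO(\VVV_k)$. Because $\delta f$ is defined pointwise, its restrictions to the $\VVV_k$ are automatically compatible, so $\delta f = ((\delta f)|_{\VVV_k})_k$ defines an element of $\varprojlim \OOO(\VVV_k) = \OOO(\VVV)$. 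The Leibniz rule $\delta(fg) = f\cdot\delta g + g\cdot\delta f$ holds pointwise because each $\delta_v$ is a point derivation, so $\delta$ is a $\kk$-derivation of $\OOO(\VVV)$. For continuity it suffices to note that, with $\ell$ chosen as above, $(\delta f)|_{\VVV_k}$ depends only on $f|_{\VVV_\ell}$; thus the composite $\OOO(\VVV) \xrightarrow{\delta} \OOO(\VVV) \to \OOO(\VVV_k)$ factors through the projection to $\OOO(\VVV_\ell)$ and is therefore continuous into the discrete $\OOO(\VVV_k)$.

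Next I would establish the isomorphism $\VEC(\VVV) \simto \Der_{\kk}^{\text{\tiny\it cont}}(\OOO(\VVV))$. The assignment $\delta \mapsto (f\mapsto\delta f)$ is $\kk$-linear and lands in the continuous derivations by the previous paragraph. Injectivity is immediate: if $\delta f = 0$ for all $f$, then each $\delta_v$ annihilates $\OOO(\VVV)$ and hence vanishes by Lemma~\ref{tangent-derivation.lem}. For surjectivity, start from a continuous derivation $D$ and set $\delta_v := \eval_v \circ D \colon \OOO(\VVV) \to \kk$, where $\eval_v$ is evaluation at $v$; this is a continuous point derivation, hence a tangent vector $\delta_v \in T_v\VVV$ by Lemma~\ref{tangent-derivation.lem}. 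It remains to check that $(\delta_v)_v$ satisfies (i) and (ii). Fixing $k$, continuity of $D$ yields an $\ell \geq k$ such that $\OOO(\VVV)\xrightarrow{D}\OOO(\VVV)\to\OOO(\VVV_k)$ factors through $\OOO(\VVV_\ell)$; thus $\delta_v$ kills $\ker(\OOO(\VVV)\to\OOO(\VVV_\ell))$ for $v \in \VVV_k$. Since each inclusion $\VVV_j\into\VVV_{j+1}$ is a closed immersion, the transition maps are surjective and therefore so is the projection $\pi_\ell\colon\OOO(\VVV)\to\OOO(\VVV_\ell)$; consequently $\delta_v$ descends to a point derivation of $\OOO(\VVV_\ell)$ at $v$, i.e. $\delta_v \in T_v\VVV_\ell$, giving (i). For (ii), any $g \in \OOO(\VVV_\ell)$ lifts to some $f \in \OOO(\VVV)$, and then $v\mapsto\delta_v g = (Df)|_{\VVV_k}$ is regular. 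By construction $\delta f = Df$, proving surjectivity.

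The delicate point, and the step I expect to cost the most care, is this surjectivity argument: one must convert the abstract continuity of $D$ into the explicit finite-stage bound of Definition~\ref{VF.def}, which forces the use of the surjectivity of $\pi_\ell$, i.e. of the fact that the defining filtration consists of closed immersions. Granting the isomorphism, the Lie algebra structure is routine: the commutator $[D_1,D_2]=D_1\circ D_2-D_2\circ D_1$ of two derivations is again a derivation, and it is continuous since composition and difference of continuous maps are continuous; hence $\Der_{\kk}^{\text{\tiny\it cont}}(\OOO(\VVV))$ is a Lie subalgebra of $\Der_{\kk}(\OOO(\VVV))$, and transporting the bracket through the isomorphism endows $\VEC(\VVV)$ with a Lie algebra structure.
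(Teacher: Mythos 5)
Your proof is correct and follows essentially the same route as the paper's: identify $\OOO(\VVV)$ with $\varprojlim\OOO(\VVV_{k})$, observe that condition (ii) of Definition~\ref{VF.def} is precisely the statement that $\res_{k}\circ\delta$ factors through $\res_{\ell}$, and pass the commutator through this identification. The paper compresses both directions of the bijection into the single remark that $\delta|_{\VVV_{k}}\in\VEC(\VVV_{\ell},\VVV_{k})$ is equivalent to the factorization condition, whereas you spell out the surjectivity direction explicitly (using that the projections $\OOO(\VVV)\to\OOO(\VVV_{\ell})$ are surjective because the filtration consists of closed immersions), which is a detail the paper leaves implicit but nothing more.
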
\idx{$\Der_{\kk}^{\text{\it cont}}(\OOO(\VVV) )$}
\begin{proof}
By our definition, we have $(\delta f)|_{\VVV_{k}} \in \OOO(\VVV_{k})$ for each $k$, i.e. $\delta f \in \OOO(\VVV)$.
In order to see that the vector fields  are exactly the {\it continuous derivations}\idx{continuous derivation} we just recall that a derivation $\delta \colon \OOO(\VVV) \to \OOO(\VVV)$ is continuous if for every $k \geq 1$ there is an $\ell \geq k$ such that the composition $\res_{k}\circ\delta$ factors through $\res_{\ell}$:
$$
\begin{CD}
\OOO(\VVV) @>{\delta}>> \OOO(\VVV) \\
@VV{\res_{\ell}}V   @VV{\res_{k}}V  \\
\OOO(\VVV_{\ell}) @>{\delta_{k\ell}}>> \OOO(\VVV_{k})
\end{CD}
$$
The composition $\res_{k}\circ\delta$ corresponds to $\delta|_{\VVV_{k}}$, and the factorization of $\res_{k}\circ\delta$ in the form $\delta_{k\ell}\circ \res_{\ell}$ means that $\delta|_{\VVV_{k}}\in \VEC(\VVV_{\ell},\VVV_{k})$.
Hence we have a canonical identification $\VEC(\VVV) = \Der_{\kk}^{\text{\tiny\it cont}}(\OOO(\VVV))$.

Finally, it is clear that the bracket $[\delta,\mu] := \delta\circ\mu - \mu\circ \delta$ of two continuous derivations $\delta$ and $\mu$ of $\OOO(\VVV)$ is again a continuous derivation.
\end{proof}

If  $\WWW \subset \VVV$ is a closed ind-subvariety, we define $\VEC ( \VVV, \WWW)$ in the obvious way, and we get  $\VEC(\VVV, \WWW) = \Der_{\kk}^{\text{\tiny\it cont}}(\OOO(\VVV), \OOO (\WWW) )$ where $\OOO(\WWW)$ is considered as an $\OOO( \VVV)$-module via the restriction map $f \mapsto f|_{\WWW}$. 
Note that $\VEC (\VVV)  = \VEC(\VVV, \VVV)$. From this definition we see that 
$$
\VEC(\VVV, \VVV_{k}) = \bigcup_{\ell\geq k}\VEC(\VVV_{\ell},\VVV_{k}) 
\text{ for all $k$, and so } \VEC(\VVV) = \varprojlim_{k}\VEC(\VVV,\VVV_{k}).
$$
The case of a $\kk$-vector space of countable dimension
$V$ is easy. Here we have $\VEC(V) = \Mor(V,V)$. If $\WWW \subset V$ is a closed ind-subvariety, then there is a canonical surjective linear map $\VEC_{\WWW}(V) \to \VEC(\WWW)$ where
$$
\VEC_{\WWW}(V) := \{\delta=(\delta_{v})_{v\in V}\in \VEC(V) \mid \delta_{w} \in T_{w}\WWW \text{ for all }w \in \WWW\}.
$$
Now consider an action of an ind-group $\GGG$ on the ind-variety $\VVV$. Recall that this means that  
the action map $\eta\colon\GGG\times\VVV \to \VVV$ is a morphism (Definition~\ref{ind-group-actions.def}). As in the algebraic case (see Section~\ref{group-action-VF.subsec}) we make the following definition.\idx{$\xi_{A}$}

\begin{definition} \label{xi_A.def}
Every $A\in\Lie\GGG$ defines a vector field $\xi_{A}$ on $\VVV$,
\[
\xi_{A}(x) := (d\mu_{x})_{e}A \text{ \ for } x\in \VVV
\]
where $\mu_{x}\colon \GGG \to \VVV$ is the \itind{orbit map} $g\mapsto gx$.
\end{definition}

If $\VVV$ is affine, we have the following  description of the corresponding (continuous) derivation $\xi_{A}$ of $\OOO(\VVV)$ (cf. Remark~\ref{left-invariant-VF.rem}),
\[
\begin{CD}
\xi_{A}\colon \OOO(\VVV) @>\eta^{*}>> \OOO(\GGG) \hotimes \OOO(\VVV) @>{A\hotimes \id}>> \OOO(\VVV),
\end{CD}
\]
where we use again that the element $A \in\Lie\GGG = T_e\GGG$ may be regarded as a continuous derivation $A \colon \OOO (\GGG) \to \kk$ in  $e \in \GGG$ (see Section~\ref{tangent-space.subsec}).
Now we get the following result (see Section~\ref{group-action-VF.subsec}). \idx{$\xi : \Lie\GGG \to \VEC(\VVV)$}

\begin{proposition}\label{Liealg-VF.prop}
The map $\xi\colon \Lie\GGG \to \VEC(\VVV)$,  $A \mapsto \xi_{A}$, is an anti-homomor\-phism of Lie algebras. If $X$ is an affine variety, then  $\xi\colon \Lie\Aut(X) \to \VEC(X)$ is injective.
\end{proposition}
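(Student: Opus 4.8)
The plan is to prove the two assertions separately. The anti-homomorphism property is a formal computation with the ``coaction'' $\eta^{*}\colon\OOO(\VVV)\to\OOO(\GGG)\hotimes\OOO(\VVV)$ induced by the action map $\eta$, entirely parallel to the construction of left-invariant vector fields in Remark~\ref{left-invariant-VF.rem} and to the linear computation of Section~\ref{linear-case.subsec}. The injectivity statement I would deduce from the already-established injectivity of $\xi\colon T_{\id}\End(X)\into\VEC(X)$ in Proposition~\ref{End(X)-and-Vec(X).prop}.

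For the first part I regard each $A\in\Lie\GGG=T_{e}\GGG$ as a continuous derivation $A\colon\OOO(\GGG)\to\kk$ at $e$, so that $\xi_{A}=(A\hotimes\id)\circ\eta^{*}$. The two axioms of a group action translate into the comodule identities $(\mu^{*}\hotimes\id)\circ\eta^{*}=(\id\hotimes\eta^{*})\circ\eta^{*}$ (coassociativity) and $(\eps_{e}\hotimes\id)\circ\eta^{*}=\id$ (counit), where $\mu^{*}$ is comultiplication and $\eps_{e}$ is evaluation at $e$; I would record these first. The key step is to push $\eta^{*}$ past $\xi_{B}$: coassociativity gives $\eta^{*}\circ\xi_{B}=(\delta_{B}^{R}\hotimes\id)\circ\eta^{*}$, where $\delta_{B}^{R}:=(B\hotimes\id)\circ\mu^{*}$ is the auxiliary (right-invariant) derivation of $\OOO(\GGG)$ attached to $B$. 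Composing with $A\hotimes\id$ yields $\xi_{A}\circ\xi_{B}=\bigl((A\circ\delta_{B}^{R})\hotimes\id\bigr)\circ\eta^{*}$, and since $A\circ\delta_{B}^{R}=(B\hotimes A)\circ\mu^{*}$ as a functional, subtraction gives
\[
[\xi_{A},\xi_{B}]=\Bigl(\bigl((B\hotimes A-A\hotimes B)\circ\mu^{*}\bigr)\hotimes\id\Bigr)\circ\eta^{*}.
\]
On the other hand, computing $[A,B]=\eps_{e}\circ[\delta_{A},\delta_{B}]$ from the left-invariant fields $\delta_{A}=(\id\hotimes A)\circ\mu^{*}$ of Remark~\ref{left-invariant-VF.rem}, the same coassociativity/counit manipulation gives $[A,B]=(A\hotimes B-B\hotimes A)\circ\mu^{*}$. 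Thus the functional appearing above is exactly $-[A,B]$, whence $[\xi_{A},\xi_{B}]=-\xi_{[A,B]}$, i.e. $\xi_{[A,B]}=[\xi_{B},\xi_{A}]$, the asserted anti-homomorphism.

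For the injectivity I take $\GGG=\Aut(X)$ acting tautologically on $X$, so the orbit map $\mu_{x}\colon\Aut(X)\to X$, $\phi\mapsto\phi(x)$, of Definition~\ref{xi_A.def} is the restriction of the orbit map $\mu_{x}\colon\End(X)\to X$ used in Proposition~\ref{End(X)-and-Vec(X).prop}. By Theorem~\ref{AutX-locally-closed-in-EndX.thm} the subset $\Aut(X)\subseteq\End(X)$ is locally closed, so the inclusion induces an injection $T_{\id}\Aut(X)\into T_{\id}\End(X)$ of tangent spaces and $(d\mu_{x})_{\id}$ for $\Aut(X)$ is the composite of this injection with $(d\mu_{x})_{\id}$ for $\End(X)$. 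Therefore $\xi\colon\Lie\Aut(X)=T_{\id}\Aut(X)\to\VEC(X)$ is the restriction to this subspace of the map $\xi\colon T_{\id}\End(X)\to\VEC(X)$, which is injective by Proposition~\ref{End(X)-and-Vec(X).prop}; hence so is its restriction.

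The main obstacle is bookkeeping rather than conceptual: one must carry out the manipulations in the completed tensor product $\hotimes$ and check that all maps in sight are continuous, so that applying $A$, $B$, and $\eps_{e}$ to a single factor is legitimate, and one must verify the comodule identities in this completed setting. The one genuinely delicate point is the sign: it is essential that the left-invariant fields produce $[A,B]=(A\hotimes B-B\hotimes A)\circ\mu^{*}$ while the bracket of the $\xi$'s produces the opposite ordering, and it is exactly this reversal that forces an anti-homomorphism rather than a homomorphism.
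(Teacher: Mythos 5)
Your proof is correct. The injectivity half coincides exactly with the paper's argument: both deduce it from the embedding $T_{\id}\End(X)\into\VEC(X)$ of Proposition~\ref{End(X)-and-Vec(X).prop} together with the local closedness of $\Aut(X)$ in $\End(X)$ (Theorem~\ref{AutX-locally-closed-in-EndX.thm}). For the anti-homomorphism, however, you take a genuinely different route. You work entirely on the level of the coaction $\eta^{*}\colon\OOO(\VVV)\to\OOO(\GGG)\hotimes\OOO(\VVV)$: coassociativity gives $\eta^{*}\circ\xi_{B}=(\delta_{B}^{R}\hotimes\id)\circ\eta^{*}$ with $\delta_{B}^{R}=(B\hotimes\id)\circ\mu^{*}$ right-invariant, whence
\[
[\xi_{A},\xi_{B}]=\bigl(\bigl((B\hotimes A-A\hotimes B)\circ\mu^{*}\bigr)\hotimes\id\bigr)\circ\eta^{*}=-\xi_{[A,B]},
\]
the sign coming from the left/right asymmetry, since $[A,B]=(A\hotimes B-B\hotimes A)\circ\mu^{*}$ when the bracket is defined through the left-invariant fields $\delta_{A}=(\id\hotimes A)\circ\mu^{*}$ of Remark~\ref{left-invariant-VF.rem}; I have checked these identities and they are all correct. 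The paper instead argues geometrically: it uses the twist isomorphism $\phi\colon\GGG\times\VVV\to\GGG\times\VVV$, $(g,x)\mapsto(g,gx)$, which intertwines the action $g\cdot(h,x)=(hg^{-1},gx)$ with $g\cdot(h,x)=(hg^{-1},x)$, so that $d\phi$ carries the field $\zeta^{(1)}_{A}=(-\delta_{A},\xi_{A})$ to $\zeta^{(2)}_{A}=(-\delta_{A},0)$; since $d\phi$ preserves brackets of vector fields and the right action contributes $-\delta_{A}$, reading off the second component yields $\xi_{[A,B]}=-[\xi_{A},\xi_{B}]$. The trade-off: your computation is self-contained and pinpoints exactly where the reversal happens, but it obliges you to verify the comodule identities and the legitimacy of applying $A$, $B$, $\eps_{e}$ factorwise in the completed tensor product $\hotimes$ --- precisely the bookkeeping you flag; the paper's equivariance trick outsources all of that to the single fact that the differential of an isomorphism of ind-varieties identifies the Lie algebras of vector fields, so no explicit $\hotimes$-manipulation is needed. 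One last remark: your argument uses $\OOO(\VVV)$ and hence needs $\VVV$ affine, but this is harmless, since Section~\ref{action-ind-groups.subsec} restricts vector fields, and therefore the statement itself, to affine ind-varieties.
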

\begin{proof}

It follows from the description of the left-invariant vector field $\delta_{A}$ (Section~\ref{Liealgebra.subsec}) that the vector field corresponding to the right-action of $\GGG$ on itself, $(g,h)\mapsto hg^{-1}$, is equal to $-\delta_{A}$.  Now consider the isomorphism
$$
\phi\colon \GGG\times \VVV \simto \GGG \times \VVV, \ (g,x) \mapsto (g,gx),
$$
which is $\GGG$-equivariant with respect to the actions $g(h,x):=(hg^{-1},gx)$ on the first space and $g(h,x)=(hg^{-1},x)$ on the second. Take a tangent vector $A \in T_{e}\GGG$ and denote by 
$\zeta^{(i)}_{A}$ the corresponding vector fields on the two spaces. Clearly, $d\phi (\zeta^{(1)}_{A})= \zeta^{(2)}_{A}$. Moreover, $(\zeta^{(1)}_{A})_{(g,x)}= ((-\delta_{A})_{g},(\xi_{A})_{x})$ and 
$(\zeta^{(2)}_{A})_{(g,x)} = ((-\delta_{A})_{g},0)$. It follows that $\zeta^{(2)}_{[A,B]}=-[\zeta^{(2)}_{A},\zeta^{(2)}_{B}]$, hence  $\zeta^{(1)}_{[A,B]}=-[\zeta^{(1)}_{A},\zeta^{(1)}_{B}]$, because $d\phi$ is an isomorphism of the Lie algebras of vector fields. As a consequence,
\begin{align*}
(-\delta_{[A,B]},\xi_{[A,B]}) & =\zeta^{1}_{[A,B]}=-[\zeta^{1}_{A},\zeta^{1}_{B}]  = -[(-\delta_{A},\xi_{A}),(-\delta_{B},\xi_{B})]\\ 
& = (-[\delta_{A},\delta_{B}],-[\xi_{A},\xi_{B}]),
\end{align*}
and the claim follows.
\par\smallskip
For the second claim we use Proposition~\ref{End(X)-and-Vec(X).prop} which shows that $T_{e}\End(X) \into \VEC(X)$ is an embedding. By Theorem~\ref{AutX-locally-closed-in-EndX.thm} we know that $\Aut(X) \subseteq \End(X)$ is locally closed. Thus the map $A \mapsto \xi_{A}$ is injective.
\end{proof}

\begin{remark}\label{formula.rem}
If $\phi\colon \GGG \times \VVV \to \VVV$ is the action map, then the differential $d\phi$ is given by
\[
d\phi_{(e,x)} \colon \Lie\GGG \oplus T_{x}(\VVV) \to T_{x}\VVV, \ \ 
(A,v) \mapsto (\xi_{A})_{x} + v,
\]
see Lemma~\ref{diff-for-linear-action.rem} where the case of a linear action on a $\kk$-vector space is discussed.
\end{remark}
The next result shows another strong connection between connected ind-groups and their Lie algebras. For algebraic groups, this is well known, see Proposition~\ref{stable-is-invariant-algGroups.prop}.

\begin{proposition}\label{G-stable-is-LieG-invariant.prop}
Let $\GGG$ be a connected ind-group acting on an affine variety $X$, and let $Y \subseteq X$ be a closed subvariety. Then  $Y$ is $\GGG$-stable if and only if $Y$ is $\Lie\GGG$-invariant, i.e., $Y$ is $\xi_{A}$-invariant for all $A \in \Lie\GGG$.
\end{proposition}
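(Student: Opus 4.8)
The plan is to prove the two implications separately. One direction is routine, while the converse needs a genuinely geometric argument, because the tempting ``Lie algebra'' reasoning is unavailable in the ind-group setting.

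For the direction ``$\GGG$-stable $\Rightarrow$ $\Lie\GGG$-invariant'', suppose $Y$ is $\GGG$-stable. Then for each $y\in Y$ the orbit map $\mu_y\colon\GGG\to X$ has image $\GGG y\subseteq Y$, so it factors as $\mu_y=\iota\circ\mu_y'$ with $\mu_y'\colon\GGG\to Y$ and $\iota\colon Y\into X$ the closed immersion. Hence $\xi_A(y)=(d\mu_y)_e(A)=d\iota_y\big((d\mu_y')_e(A)\big)\in T_yY$ for every $A\in\Lie\GGG$, which is exactly the $\xi_A$-invariance of $Y$ in the sense of Definition~\ref{delta-invariance.defn}.

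For the converse I would first reduce to a single orbit: since $\GGG Y=\bigcup_{y_0\in Y}\GGG y_0$ and $e\in\GGG$, it suffices to prove $\GGG y_0\subseteq Y$ for each fixed $y_0\in Y$, equivalently $Z\subseteq Y$ where $Z:=\overline{\GGG y_0}$ is the irreducible orbit closure ($Y$ being closed). By Proposition~\ref{ind-group-action.prop} the orbit $\GGG y_0$ is open in $Z$, and being homogeneous under the $\GGG$-action it is a smooth variety. The key step is the identity $\{\xi_A(x)\mid A\in\Lie\GGG\}=T_xZ$ for every $x\in\GGG y_0$. Writing the right translation $\rho_g\colon h\mapsto hg$ one has $\mu_{y_0}\circ\rho_g=\mu_{gy_0}$, hence $(d\mu_{gy_0})_e=(d\mu_{y_0})_g\circ(d\rho_g)_e$ with $(d\rho_g)_e$ an isomorphism; thus the dimension of the infinitesimal orbit $(d\mu_{gy_0})_e(\Lie\GGG)$ is constant along $\GGG y_0$. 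Generic smoothness of the dominant morphism $\GGG_\ell\to Z$ (where $\GGG y_0=\GGG_\ell y_0$) shows this dimension equals $\dim Z$ at a generic point, hence everywhere on the orbit; and since the infinitesimal orbit sits inside $T_xZ$ (as $Z$ is $\GGG$-stable, so $\xi_A$-invariant by the forward direction) with $\dim T_xZ=\dim Z$, the two spaces coincide.

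It then remains to play infinitesimal invariance against this identity. Put $S:=Y\cap Z$, a closed subset of $Z$ containing $y_0$ which is $\xi_A$-invariant for all $A$ (intersection of the $\xi_A$-invariant subvarieties $Y$ and $Z$, Proposition~\ref{xi-invariant.prop}(1)). Suppose $S\subsetneq Z$; then $\dim S<\dim Z$ by irreducibility of $Z$. Now $S\cap\GGG y_0$ is nonempty (it contains $y_0$) and open in $S$, so it meets the dense smooth locus $S_{\mathrm{reg}}$; choose $x\in S\cap\GGG y_0\cap S_{\mathrm{reg}}$. On one hand $\dim T_xS=\dim_xS\le\dim S<\dim Z$. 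On the other hand $x$ lies in the orbit and in $S$, so $\xi_A(x)\in T_xS$ for all $A$, whence $T_xZ=\{\xi_A(x)\}\subseteq T_xS\subseteq T_xZ$ and $\dim T_xS=\dim Z$, a contradiction. Therefore $S=Z$, i.e.\ $Z\subseteq Y$, which finishes the proof.

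The main obstacle is exactly that one cannot argue via Lie algebras: the transporter $\{g\in\GGG\mid gY=Y\}$ is a closed subgroup whose Lie algebra equals $\Lie\GGG$ under the hypothesis, yet for ind-groups a closed connected subgroup may carry the full Lie algebra without being the whole group (Theorem~\ref{closed-subgroup-with-same-Liealgebra.thm}). Thus the substance lies in the orbit-geometric identity ``infinitesimal orbit $=$ tangent space of the orbit'' together with the dimension bookkeeping above, and I expect establishing that identity cleanly (constant rank along the orbit combined with generic smoothness) to be the delicate point. A conceptually cleaner packaging of the same idea would be to let $\GGG$ act on the Hilbert scheme of $X$, note that the orbit map at the point $[Y]$ has differential $A\mapsto\big(f\mapsto\xi_A(f)|_Y\big)$, which vanishes precisely when $Y$ is $\xi_A$-invariant, and then apply Lemma~\ref{orbitmap.lem} to conclude that $[Y]$ is a fixed point; this, however, requires first setting up the $\GGG$-action on the Hilbert scheme.
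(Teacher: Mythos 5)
Your proof is correct, but it takes a genuinely different route from the paper's. The paper does not argue orbit by orbit: it first reduces to irreducible $Y$ (via Proposition~\ref{xi-invariant.prop}(2)), forms the saturation $W:=\overline{\GGG Y}$, chooses $k$ with $W=\overline{\GGG_{k}Y}$, and applies generic smoothness to the single dominant morphism $\phi\colon\GGG_{k}\times Y\to W$; a generic point $(g,y)$ where $d\phi$ is surjective is translated back to $(e,y)$ by the automorphism $g^{-1}$, and then the formula $d\phi_{(e,y)}(A,v)=\xi_{A}(y)+v$ of Remark~\ref{formula.rem} together with $\xi_{A}$-invariance gives $T_{y}W=T_{y}Y$ on a dense subset of $Y$, whence $\dim W=\dim Y$ and $W=Y$ by irreducibility. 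This packaging bypasses all the orbit geometry you develop: openness of orbits in their closures and $\GGG y_{0}=\GGG_{\ell}y_{0}$ (Proposition~\ref{ind-group-action.prop}), smoothness of orbits by homogeneity, and the identity $\{\xi_{A}(x)\mid A\in\Lie\GGG\}=T_{x}\overline{\GGG y_{0}}$ at orbit points. Your route is the classical algebraic-group argument transported to the ind-setting: it costs more orbit-theoretic input, but it needs no reduction to irreducible $Y$, and the infinitesimal-transitivity identity you isolate is of independent interest --- it is precisely the substitute for the transporter/Lie-algebra argument that, as you correctly note, is unavailable by Theorem~\ref{closed-subgroup-with-same-Liealgebra.thm}. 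One small patch is needed in your key step: the displayed right-translation identity $(d\mu_{gy_{0}})_{e}=(d\mu_{y_{0}})_{g}\circ(d\rho_{g})_{e}$ by itself only identifies the infinitesimal orbit at $gy_{0}$ with the image of $(d\mu_{y_{0}})_{g}$, and does not yet show that its dimension is constant along the orbit; you should combine it with the left-translation equivariance $\mu_{y_{0}}\circ\lambda_{h}=h\circ\mu_{y_{0}}$ (where $h$ acts on $X$ as an automorphism preserving $Z$), whose differential transports the full-rank conclusion --- obtained from generic smoothness applied on the smooth locus of an irreducible $\GGG_{\ell}$ --- from a generic point of the orbit to every point of $\GGG y_{0}$. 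This is the same translation trick the paper applies to $\phi$, and with it your smooth-point contradiction on $S=Y\cap Z$ goes through as written.
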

\begin{proof}
One direction is obvious: If $Y$ is $\GGG$-stable, then $\xi_{A}(y) \in T_{y}Y$ for all $A \in \Lie \GGG$. So let us assume that $Y$ is $\xi_{A}$-invariant for all $A \in \Lie\GGG$. By Proposition~\ref{xi-invariant.prop}, we can assume that $Y$ is irreducible. Let $\GGG = \bigcup_{k}\GGG_{k}$ where all $\GGG_{k}$ are irreducible (Proposition~\ref{curve-connected.prop}). Then $\overline{\GGG Y} = \overline{\GGG_{k}Y}$ for some $k$, and so the morphism $\phi\colon \GGG_{k} \times Y \to \overline{\GGG Y}$ is dominant. It follows that there is an open dense set $U \subseteq \GGG_{k}\times Y$ such that $d\phi_{u}\colon T_{u}(\GGG_{k}\times Y) \to T_{\phi(u)}\overline{\GGG Y}$ is surjective for all $u \in U$. 

Denote by $Y' \subseteq Y$ the image of $U$ under the projection $\GGG_{k}\times Y \to Y$. For any $y \in Y'$ there is a $g \in\GGG_{k}$ such that $(g,y) \in U$. Consider the diagram
$$
\begin{CD}
\GGG_{k} \times Y @>\phi>> \overline{\GGG Y} \\
@V{\simeq}V{g^{-1}\times\id}V @V{\simeq}V{g^{-1}}V \\
g^{-1}\GGG_{k} \times Y @>\phi>> \overline{\GGG Y}
\end{CD}
$$
It follows that $d\phi_{(e,y)}\colon T_{e}(g^{-1}\GGG_{k})\oplus T_{y}Y \to T_{y} \,\overline{\GGG Y}$ is surjective. On the other hand, we have
$d\phi_{(e,y)}(A,v) = \xi_{A}(y) + v$ (Remark~\ref{formula.rem}),
hence $T_{y}\overline{\GGG Y} = T_{y}Y$ for all $y \in Y'$. Thus $\dim \overline{\GGG Y} = \dim Y$ and so $\GGG Y = Y$.
\end{proof}  

Now consider the case of a representation $\rho\colon \GGG \to \GL(V)$ of $\GGG$ on a $\kk$-vector space $V$ of countable dimension (Definition~\ref{representation-of-an-ind-group.def}). Then the differential $d\rho_{e}\colon \Lie \GGG \to \LLL(V)$ is a Lie-algebra homomorphism, and the corresponding linear action of $\Lie \GGG$ on $V$ is given by $A(v) = (\xi_{A})_{v}$, see Lemma~\ref{rep-of-G-and-LieG.lem}. 
This implies that a subspace $W \subseteq V$ is $\Lie\GGG$-invariant if and only if it is stable under the linear action of $\Lie\GGG$ on $V$ given by $d\rho_{e}$. This proves the following.

\begin{corollary}\label{LieG-stable-is-G-stable.cor}
Let $\rho\colon \GGG \to \GL(V)$ be a representation of a connected  ind-group $\GGG$. Then a subspace $W\subseteq V$ is 
$\GGG$-stable if and only if it is stable under $\Lie\GGG$.
\end{corollary}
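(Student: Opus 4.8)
The plan is to prove the nontrivial implication: if $W\subseteq V$ is stable under $\Lie\GGG$ (equivalently $\xi_A$-invariant for all $A$, since $\xi_A(v)=d\rho_e(A)(v)=A(v)$ by Lemma~\ref{rep-of-G-and-LieG.lem}), then $W$ is $\GGG$-stable. The converse is immediate: if $W$ is $\GGG$-stable then for $w\in W$ the orbit map $\mu_w\colon\GGG\to V$ has image in $W$, so $A(w)=\xi_A(w)=(d\mu_w)_e(A)\in T_wW=W$, because $W$, being a linear subspace, is its own tangent space.

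One would like to invoke Proposition~\ref{G-stable-is-LieG-invariant.prop}, but that result concerns a closed subvariety of an affine \emph{variety}, whereas here $V$ is infinite-dimensional and orbit closures $\overline{\GGG v}$ need not be finite-dimensional (e.g. for $\Aut(X)$ acting on $\OOO(X)$), so the dimension count in its proof is unavailable. Instead I would argue with linear functionals. Every linear subspace $W$ is closed in $V$ (indeed $W\cap V_k$ is a linear, hence closed, subspace of $V_k$), and $W=\bigcap_{\ell\in W^{\perp}}\ker\ell$, where $W^{\perp}\subseteq V^{*}$ is the space of functionals vanishing on $W$; each such $\ell$ is a morphism $V\to\Aone$. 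Thus it suffices to show, for a fixed $v\in W$ and $\ell\in W^{\perp}$, that the regular function $F_w:=\ell\circ\mu_w\in\OOO(\GGG)$ (defined for $w\in V$ and linear in $w$) satisfies $F_v\equiv 0$.

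The key identity comes from $\mu_w\circ\lambda_g=\rho(g)\circ\mu_w$, which gives $(d\mu_w)_g\bigl((d\lambda_g)_e A\bigr)=g\,(Aw)$ for $A\in\Lie\GGG$; here $(d\lambda_g)_e$ is the left-translation isomorphism $\Lie\GGG\simto T_g\GGG$. By curve-connectedness of $\GGG$ (Remark~\ref{irreducible-curve-connected.rem}, Proposition~\ref{curve-connected.prop}) every $g\in\GGG$ lies on an irreducible curve $C$ through $e$; let $\nu\colon\tilde C\to\GGG$ be the normalization of such a curve composed with the inclusion, $\tilde e\in\nu^{-1}(e)$, with uniformizer $t$ at $\tilde e$. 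For $u\in V$ the map $t\mapsto\gamma(t)u:=\rho(\nu(t))u$ lands in a fixed finite-dimensional $V_{\ell'}$ (Lemma~\ref{Kumar.lem}), so $h_u(t):=\ell(\gamma(t)u)=F_u(\nu(t))$ is a power series in $t$, linear in $u$. Writing the left-logarithmic derivative of $\nu$ as $\theta(t):=(d\lambda_{\nu(t)^{-1}})_{\nu(t)}\bigl(\nu'(t)\bigr)$, which is \emph{regular} in $t$ (with no pole) because left translations and inversion are morphisms and $\nu$ lands in a single finite-dimensional $\GGG_k$, the identity above yields $\tfrac{d}{dt}h_w(t)=\ell\bigl(\gamma(t)(\theta(t)w)\bigr)=\sum_{j\ge 0}h_{\theta_j w}(t)\,t^{j}$, where $\theta(t)=\sum_j\theta_j t^j$ with $\theta_j\in\Lie\GGG$. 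Crucially, for $w\in W$ one has $\theta_j w\in W$ for all $j$, by $\Lie\GGG$-invariance of $W$.

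The argument then closes by a clean induction on the order of vanishing: I claim $h_w\in(t^{n})$ for every $w\in W$ and every $n\ge 1$. For $n=1$ this is $h_w(0)=\ell(w)=0$. Assuming the statement for $n$ (for all $w\in W$), the displayed formula shows that in $\tfrac{d}{dt}h_w=\sum_{j\ge0}h_{\theta_j w}t^{j}$ every term lies in $(t^{n})$ (since $\theta_j w\in W$ and $j\ge0$), whence $\tfrac{d}{dt}h_w\in(t^{n})$ and therefore $h_w\in(t^{n+1})$ (characteristic zero). Thus $h_v=0$, so $F_v\equiv 0$ on $C$, and as $g\in C$ was arbitrary, $\ell(gv)=0$ for all $g\in\GGG$ and all $\ell\in W^{\perp}$; hence $\GGG v\subseteq W$. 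The main obstacle is precisely this last mechanism — replacing the dimension argument of Proposition~\ref{G-stable-is-LieG-invariant.prop}, which fails in infinite dimension — and its resolution rests on the two structural facts that the left-invariant vector fields trivialize the tangent bundle of $\GGG$ (so that the logarithmic derivative $\theta(t)$ is a genuine, pole-free $\Lie\GGG$-valued function) and that $W$ is stable under each $\theta_j\in\Lie\GGG$.
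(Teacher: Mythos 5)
Your proof is correct, but it takes a genuinely different route from the paper's. The paper disposes of the corollary in one line: by Lemma~\ref{rep-of-G-and-LieG.lem} the linear action of $\Lie\GGG$ on $V$ is given by $A(v)=(\xi_{A})_{v}$, so linear stability of $W$ under $\Lie\GGG$ is the same as $\xi_{A}$-invariance for all $A$, and the equivalence with $\GGG$-stability is then read off from Proposition~\ref{G-stable-is-LieG-invariant.prop}. As you correctly observe, that proposition is stated for a closed subvariety of an affine \emph{variety}, and its proof (stabilization of $\overline{\GGG_{k}Y}$ plus a generic-smoothness and dimension count) is intrinsically finite-dimensional; so the paper's citation is literally sufficient only when $V$ is finite-dimensional, and your argument supplies an independent proof covering countable-dimensional $V$ and arbitrary subspaces $W$. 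Your mechanism --- cutting $W$ out by functionals $\ell\in W^{\perp}$, using curve-connectedness (Remark~\ref{irreducible-curve-connected.rem}), pulling back along the normalization $\nu$ of a curve through $e$ and $g$, and killing all Taylor coefficients of $h_{v}=\ell\circ\mu_{v}\circ\nu$ via the recursion $h_{w}'=\sum_{j}t^{j}h_{\theta_{j}w}$ together with $\theta_{j}w\in W$ and characteristic zero --- is sound. The only step you state too tersely is the regularity of the logarithmic derivative: the clean formulation is via the two-variable morphism $(s,t)\mapsto\nu(s)^{-1}\nu(t)$, whose image lies in a single $\GGG_{m}$ by Lemma~\ref{Kumar.lem}, so that $\theta(s)=\partial_{t}\bigl(\nu(s)^{-1}\nu(t)\bigr)\big|_{t=s}$ is a morphism with values in the finite-dimensional space $T_{e}\GGG_{m}\subseteq\Lie\GGG$; similarly, $\gamma(t)u$ stays in a fixed $V_{\ell'}$ uniformly for $u$ in the finite-dimensional span of the $\theta_{j}w$, which is what legitimizes your formal manipulations in $\kk[[t]]$. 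In summary: the paper's derivation buys brevity by leaning on a finite-dimensional statement, while yours is longer but self-contained, works verbatim in infinite dimension, and in effect replaces the dimension argument of Proposition~\ref{G-stable-is-LieG-invariant.prop} by formal integration along curves, closer in spirit to the arguments around Lemma~\ref{orbitmap.lem} and Proposition~\ref{phi-dphi.prop}.
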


\ps
\subsection{Linear representation on \texorpdfstring{$\OOO(X)$}{O(X)} and  \texorpdfstring{$\VEC(X)$}{Vec(X)}}
If an ind-group $\GGG$ acts on an affine variety $X$ we get a linear action of $\GGG$ on the coordinate ring $\OOO(X)$ and on the vector fields $\VEC(X) = \Der_{\kk}(\OOO(X) )$,  defined in the usual way (see Section~\ref{LF-Jordan-decomposition.subsec}):
\begin{gather*}
g f (x) := f(g^{-1}x) \text{ for } f\in\OOO(X), x\in X, \\ 
g(\delta)(f) := g(\delta(g^{-1}f)) \text{ for } \delta\in \VEC(X), f\in\OOO(X). 
\end{gather*}
Note that $\VEC(X)$ is a $\kk$-vector space of countable dimension, because it is a finitely generated $\OOO(X)$-module (Proposition~\ref{vector-fields.prop}).

\begin{proposition} \label{locally-finite-action-on-Vec(X).prop}
The linear actions of $\GGG$ on $\OOO(X)$ and $\VEC(X)$ are representations, i.e., the induced maps $\GGG \times \OOO(X) \to \OOO(X)$ and $\GGG \times \VEC(X) \to \VEC(X)$ are morphisms of ind-varieties.
\end{proposition}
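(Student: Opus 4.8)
The plan is to note first that in each case $g\mapsto\rho(g)$ is a group homomorphism into the relevant general linear group by the standard computation: on $\OOO(X)$ the map $L_g\colon f\mapsto gf$ is a linear automorphism with inverse $L_{g^{-1}}$, and on $\VEC(X)=\Der_{\kk}(\OOO(X))$ the action is conjugation $\delta\mapsto L_g\circ\delta\circ L_g^{-1}$ by the algebra automorphism $L_g$. Both $\OOO(X)$ and $\VEC(X)$ are $\kk$-vector spaces of countable dimension — the latter because it is a finitely generated $\OOO(X)$-module by Proposition~\ref{vector-fields.prop} — hence affine ind-varieties by Example~\ref{countable-vector-space.exa}. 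So the only genuine content, which is exactly what a representation requires (Definition~\ref{representation-of-an-ind-group.def}), is that the two action maps are ind-morphisms, and I would treat the two cases in turn.

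For the action on $\OOO(X)$ I would exploit the identification $\OOO(X)=\Mor(X,\Aone)$ together with its universal property (Lemma~\ref{ind-var-morphisms.lem}). It suffices to produce a morphism $\Psi\colon(\GGG\times\OOO(X))\times X\to\Aone$, $((g,f),x)\mapsto f(g^{-1}x)$, since by the universal property such a family corresponds precisely to a morphism $\GGG\times\OOO(X)\to\Mor(X,\Aone)=\OOO(X)$, and this morphism is $(g,f)\mapsto[x\mapsto f(g^{-1}x)]=gf$, i.e. the action. Now $\Psi$ factors as $((g,f),x)\mapsto(f,g^{-1}x)$ followed by the evaluation map $\Mor(X,\Aone)\times X\to\Aone$, which is a morphism by Lemma~\ref{tautological.lem}(\ref{eval}); the first map is a morphism because $(g,x)\mapsto g^{-1}x$ is the composite of inversion on $\GGG$ with the action map $\GGG\times X\to X$, both morphisms. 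Hence $\Psi$, and therefore the action on $\OOO(X)$, is an ind-morphism.

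For the action on $\VEC(X)$ I would fix a closed embedding $X\subseteq\Cn$ and write $\OOO(X)=\kk[x_{1},\dots,x_{n}]/I(X)$. Sending $\delta$ to $(\delta\bar x_{1},\dots,\delta\bar x_{n})$ realizes $\VEC(X)$ as a closed (linear) ind-subvariety of $\OOO(X)^{n}$, so it is enough to check that $\Theta\colon\GGG\times\VEC(X)\to\OOO(X)^{n}$, $(g,\delta)\mapsto\bigl(g(\delta)(\bar x_{1}),\dots,g(\delta)(\bar x_{n})\bigr)$, is a morphism whose image lands in $\VEC(X)$; the latter is automatic, since the conjugate of a derivation by an algebra automorphism is again a derivation. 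Writing $u_{i}(g):=g^{-1}\bar x_{i}$, the $i$-th component is $g(\delta)(\bar x_{i})=g\bigl(\delta(u_{i}(g))\bigr)$. Here $g\mapsto u_{i}(g)$ is a morphism $\GGG\to\OOO(X)$: the family $(g,x)\mapsto gx$ corresponds to a morphism $\GGG\to\End(X)\subseteq\OOO(X)^{n}$ whose $i$-th coordinate is $g\mapsto u_{i}(g)$, because $(g^{-1}\bar x_{i})(x)=\bar x_{i}(gx)=\rho(g)^{*}\bar x_{i}(x)$.

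The key point — and the only place where care is needed — is that the derivation-evaluation map $\mathrm{ev}\colon\VEC(X)\times\OOO(X)\to\OOO(X)$, $(\delta,f)\mapsto\delta(f)$, is an ind-morphism. This holds because $\mathrm{ev}$ is $\kk$-bilinear, and a bilinear map between $\kk$-vector spaces of countable dimension is automatically a morphism: restricted to a product $\VEC(X)_{k}\times\OOO(X)_{k}$ of finite-dimensional members of the filtrations, its image lies in the finite-dimensional span of the products of basis vectors, hence inside some $\OOO(X)_{\ell}$, and there it is given by a bilinear, hence polynomial, formula. Granting this, $(g,\delta)\mapsto\delta(u_{i}(g))=\mathrm{ev}(\delta,u_{i}(g))$ is a morphism, and composing with the already established action $\GGG\times\OOO(X)\to\OOO(X)$ shows that each component $g\bigl(\delta(u_{i}(g))\bigr)$ of $\Theta$ is a morphism. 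Thus $\Theta$ is a morphism with image in the closed subvariety $\VEC(X)$, so the action on $\VEC(X)$ is an ind-morphism, completing the proof. The main obstacle is exactly this last step: the inner function $u_{i}(g)$ has unbounded degree as $g$ ranges over $\GGG$, so one cannot fix the function and argue linearly in $\delta$ alone; it is the bilinearity of $\mathrm{ev}$ that absorbs the $g$-dependence uniformly.
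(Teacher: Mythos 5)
Your proof is correct, and it shares the paper's underlying skeleton --- in both arguments the crux is to bound the three layers of $g(\delta)=L_{g}\circ\delta\circ L_{g^{-1}}$ evaluated on a finite set of algebra generators --- but the technical packaging is genuinely different in both halves. For the action on $\OOO(X)$, the paper computes directly with the comorphism: it shows $\rho_{k}^{*}(\OOO(X)_{i})\subseteq\OOO(\GGG_{k})\otimes\OOO(X)_{j}$ and reads off the explicit polynomial formula $g^{-1}f=\sum_{\ell}h_{\ell}(g)f_{\ell}$, whereas you route the same content through the universal property of $\Mor(X,\Aone)$ (Lemma~\ref{ind-var-morphisms.lem}) and the evaluation morphism of Lemma~\ref{tautological.lem}; your version is more formal, at the price of invoking machinery whose proof contains essentially the paper's computation. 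For the action on $\VEC(X)$ the divergence is more substantial: the paper picks a filtration with $\OOO(X)_{1}$ generating, puts the tailored filtration $\Der_{\kk}(\OOO(X))_{\ell}=\{\delta\mid\delta(\OOO(X)_{1})\subseteq\OOO(X)_{\ell}\}$ on the vector fields, and factors $(g,\delta)\mapsto g(\delta)$ through a composition map between three finite-dimensional $\Hom$-spaces, with bounds $p,q,r$ playing exactly the role of your $u_{i}(\GGG_{k})\subseteq\OOO(X)_{p}$ and its companions. You instead treat $\VEC(X)$ as a bare vector space of countable dimension (legitimate by Proposition~\ref{vector-fields.prop} and Example~\ref{countable-vector-space.exa}, since all finite-dimensional filtrations are admissible), embed it linearly into $\OOO(X)^{n}$ via values on the coordinate generators, and absorb all $\delta$-dependence into the single general lemma that a $\kk$-bilinear map between vector spaces of countable dimension is automatically an ind-morphism. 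That lemma is correct as you argue it, and it buys you something real: the paper needs the Leibniz rule (and a multiplicative filtration) to control $\delta|_{\OOO(X)_{p}}$ from $\delta|_{\OOO(X)_{1}}$ in its middle $\Hom$-space, while in your argument the derivation property enters only at the very end, to see that the image of $\Theta$ lies in the embedded copy of $\VEC(X)$. The one step you use without proof is that this embedded copy is closed in $\OOO(X)^{n}$; this is immediate --- the image is the linear subspace cut out by the conditions $\sum_{i}a_{i}\,\overline{\partial F/\partial x_{i}}=0$ for $F\in I(X)$, as in the proof of Proposition~\ref{End(X)-and-Vec(X).prop} --- but it should be said. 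Your closing remark correctly identifies the genuine difficulty (the unbounded degree of $u_{i}(g)$ as $g$ ranges over $\GGG$), which is precisely what the paper's triple of bounds $p,q,r$, and your bilinearity lemma, are each designed to tame.
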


Note that the kernels of the $\GGG$-actions on $X$ and on $\OOO (X)$ are the same. In particular, the automorphism group $\Aut (X)$ acts faithfully on $\OOO (X)$. 

\begin{question} \label{Aut(X)-acts-faithfully-on-vector-fields.ques}
Is it true that $\Aut (X)$ acts faithfully on $\VEC (X)$?
\end{question}

\begin{proof}[Proof of Proposition~\ref{locally-finite-action-on-Vec(X).prop}]
(a) Fix a filtration $\GGG = \bigcup \GGG_{k}$. The action of $\GGG$ on $X$ defines morphisms $\rho_{k}\colon \GGG_{k}\times X \to X$ for every $k$. If $\OOO(X)=\bigcup_{i}\OOO(X)_{i}$ is a filtration by finite-dimensional subspaces, then, for every $i$, there is a $j$ such that $\rho_{k}^{*}(\OOO(X)_{i}) \subseteq \OOO(\GGG_{k})\otimes \OOO(X)_{j}$. This means that for $f\in\OOO(X)_{i}$, $g \in \GGG_{k}$ and $\rho_{k}^{*}(f) = \sum_{\ell}h_{\ell}\otimes f_{\ell}$ we have
\[
g^{-1}f = \sum_{\ell}h_{\ell}(g)f_{\ell} \in \OOO(X)_{j}.
\]
This shows that the linear action $(g,f)\mapsto g^{-1}f$ is a morphism $\GGG_{k}\times \OOO(X)_{i} \to \OOO(X)_{j}$ of affine varieties.
\par\smallskip
(b) Now we choose a filtration $\OOO(X) = \bigcup_{i}\OOO(X)_{i}$ such that that $\OOO(X)_{1}$ generates the algebra $\OOO(X)$. Then the linear maps $\Der_{\kk}(\OOO(X)) \to \Hom(\OOO(X)_{i},\OOO(X))$, $\delta\mapsto \delta|_{\OOO(X)_{i}}$, are all injective. We define the filtration $\Der_{\kk}(\OOO(X)) = \bigcup_{\ell}\Der_{\kk}(\OOO(X))_{\ell}$ by setting $\Der_{\kk}(\OOO(X))_{\ell}:=\{\delta\in\Der_{\kk}(\OOO(X)) \mid \delta(\OOO(X)_{1})\subseteq \OOO(X)_{\ell}\}$.

Given $k$ and $\ell$ we can find integers $p,q,r$ such that 
\be
\item
$\GGG_{k}^{-1}(\OOO(X)_{1}) \subseteq \OOO(X)_{p}$,
\item 
$\Der_{\kk}(\OOO(X))_{\ell}(\OOO(X)_{p}) \subseteq \OOO(X)_{q}$,
\item
$\GGG_{k}(\OOO(X)_{q}) \subseteq \OOO(X)_{r}$.
\ee
Denote by $\rho'\colon \GGG_{k} \to \Hom(\OOO(X)_{1},\OOO(X)_{p})$ and $\rho\colon \GGG_{k}\to \Hom(\OOO(X)_{q},\OOO(X)_{r})$ the corresponding maps to (1) and (2) which are morphisms, by (a). Then we get the following commutative diagram:
{\tiny
$$
\begin{CD}
\GGG_{k} \times \Der_{\kk}(\OOO(X))_{\ell} @>{(g,\delta)\mapsto}>{(\rho'(g), \delta|_{\OOO(X)_{p}},\rho(g))}> \Hom(\OOO(X)_{1},\OOO(X)_{p}) \times \Hom(\OOO(X)_{p},\OOO(X)_{q}) 
\times \Hom(\OOO(X)_{q},\OOO(X)_{r})\\
@VV{(g,\delta)\mapsto g(\delta)}V           @V{(\alpha,\beta,\gamma) \mapsto}V{\gamma\circ\beta\circ\alpha}V\\
\Der_{\kk}(\OOO(X))_{r} @>{\delta\mapsto \delta|_{\OOO(X)_{1}}}>> \Hom(\OOO(X)_{1},\OOO(X)_{r})
\end{CD}
$$
}
The upper horizontal map and the right vertical map are both morphisms, and the lower horizontal map is a linear inclusion. Hence the left vertical map is also a morphism, as claimed.
\end{proof}

As a consequence, using Lemma~\ref{rep-of-G-and-LieG.lem}, we obtain linear actions of the Lie algebra $\Lie\GGG$ on $\OOO(X)$ and on $\VEC(X)$. Denoting these actions by $(A,f) \mapsto Af$ and $(A,\delta)\mapsto A\delta$ for $A \in \Lie\GGG$, $f \in \OOO(X)$ and $\delta \in \VEC(X)$, one  finds
\begin{equation} \label{formulas-for-A}
Af = -\xi_{A}f \text{ \ and \ } A\delta = [\delta,\xi_{A}] = -\ad\xi_{A}(\delta)
\end{equation}
where $\xi_{A}$ is discussed in Section~\ref{action-ind-groups.subsec}
(see Definition~\ref{xi_A.def}).

\ps
\subsection{Fixed points and tangent representations}\label{tangent-rep-fixed-points.subsec}
The following is well known for actions of algebraic groups; its generalization to actions of ind-groups on varieties is straightforward, because of our finiteness results in the previous section.
\begin{proposition}
Let $\GGG$ be an ind-group acting on an affine variety $X$. If $x \in X$ is a fixed point, then we obtain a linear representation of $\GGG$ on the tangent space $T_{x}X$, called the tangent representation\idx{tangent representation} in $x$.
\end{proposition}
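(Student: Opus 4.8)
The plan is to exhibit the homomorphism $\tau_x\colon \GGG \to \GL(T_xX)$ explicitly and then to reduce the only nontrivial point — that the induced action on $T_xX$ is an ind-morphism — to the representation of $\GGG$ on $\OOO(X)$ established in Proposition~\ref{locally-finite-action-on-Vec(X).prop}. Write $\rho\colon\GGG\to\Aut(X)$ for the action. For $g\in\GGG$ the automorphism $\rho(g)$ fixes $x$, so its differential is a linear automorphism $d\rho(g)_x\colon T_xX\to T_xX$; set $\tau_x(g):=d\rho(g)_x$. Since $\rho(gh)=\rho(g)\circ\rho(h)$ and $\rho(h)(x)=x$, the chain rule gives $d\rho(gh)_x=d\rho(g)_{\rho(h)(x)}\circ d\rho(h)_x=d\rho(g)_x\circ d\rho(h)_x$, so $\tau_x$ is a group homomorphism into $\GL(T_xX)$ (each $\tau_x(g)$ is invertible, with inverse $d\rho(g^{-1})_x$, because $\rho(g)$ is an automorphism). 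Note that the fixed-point hypothesis is exactly what makes the middle subscript collapse.

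To see that $\tau_x$ is a representation I would work on the cotangent side. Let $\mm_x\subseteq\OOO(X)$ be the maximal ideal of $x$, so that $T_xX=(\mm_x/\mm_x^2)^{*}$. Because $x$ is fixed, the linear action $g\cdot f=f\circ\rho(g)^{-1}$ on $\OOO(X)$ preserves the evaluation at $x$, hence stabilizes $\mm_x$ and therefore also $\mm_x^2$. Now $\mm_x/\mm_x^2$ is the finite-dimensional cotangent space, so $\mm_x^2$ has finite codimension in $\OOO(X)$; moreover the projection onto the finite-dimensional space $A:=\OOO(X)/\mm_x^2$ is an ind-morphism (it is linear, hence a morphism on each member of a filtration of $\OOO(X)$ by finite-dimensional subspaces), so $\mm_x^2$ is closed. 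By Proposition~\ref{locally-finite-action-on-Vec(X).prop} the map $\GGG\times\OOO(X)\to\OOO(X)$ is a morphism; choosing a finite-dimensional linear section $s\colon A\to\OOO(X)$ of the projection, the composite $\GGG\times A\xrightarrow{\id\times s}\GGG\times\OOO(X)\to\OOO(X)\to A$ is a morphism and, since $\mm_x^2$ is stable, equals the induced $\GGG$-action on $A$. Thus $\GGG$ acts on the finite-dimensional jet algebra $A$ by a representation, and restricting to the stable subspace $W:=\mm_x/\mm_x^2\subseteq A$ yields a representation of $\GGG$ on the cotangent space $W=T_x^{*}X$.

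It remains to dualize and to match the outcome with $\tau_x$. Since $W$ is finite-dimensional, the morphism $\GGG\times W\to W$ determines a morphism $\GGG\to\GL(W)$ into the algebraic group $\GL(W)$ (its matrix coefficients are regular functions on $\GGG$); composing with the algebraic isomorphism $\GL(W)\to\GL(W^{*})$, $B\mapsto (B^{-1})^{\mathsf{T}}$, and with the algebraic action $\GL(W^{*})\times W^{*}\to W^{*}$ shows that the contragredient action $\GGG\times W^{*}\to W^{*}$ is again a morphism, i.e. a representation on $W^{*}=T_xX$ in the sense of Definition~\ref{representation-of-an-ind-group.def}. Finally I would check that this contragredient representation is exactly $\tau_x$: under $T_xX=(\mm_x/\mm_x^2)^{*}$ one has, for $v\in T_xX$ and $f\in\OOO(X)$, the standard identity $d\rho(g)_x(v)(f)=v(f\circ\rho(g))=v(g^{-1}\cdot f)$, which is precisely the contragredient of the $W$-action. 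Hence $\tau_x$ is a representation. The only genuine work is the descent in the second paragraph — transferring the morphism property from the infinite-dimensional ind-space $\OOO(X)$ to the finite-dimensional quotient $A$ — and this goes through cleanly precisely because $\mm_x^2$ is closed of finite codimension, so that a finite-dimensional section of the projection is at our disposal.
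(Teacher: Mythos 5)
Your proof is correct and takes essentially the same route as the paper, which likewise invokes Proposition~\ref{locally-finite-action-on-Vec(X).prop} to get the representation of $\GGG$ on $\OOO(X)$, observes that $\mm_{x}$ and its powers are $\GGG$-stable, and passes to the representation on $\mm_{x}/\mm_{x}^{2}$ and its dual $T_{x}X$. The details you supply beyond the paper's terse argument — descending the morphism property to the finite-dimensional quotient via a linear section of $\OOO(X)\to\OOO(X)/\mm_{x}^{2}$, and checking that the contragredient of the cotangent representation agrees with $g\mapsto d\rho(g)_{x}$ — are precisely the steps the paper leaves implicit, and they are carried out correctly.
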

\begin{proof}
By Proposition~\ref{locally-finite-action-on-Vec(X).prop} the action of $\GGG$ on $\OOO(X)$ is a linear representation. By assumption, the maximal ideal $\mm_{x} \subseteq \OOO(X)$ is $\GGG$-stable, as well as all the powers $\mm_{x}^{m}$. Thus we obtain a linear representation of $\GGG$ on $\mm_{x}/\mm_{x}^{2}$ and on its dual $T_{x}X$.
\end{proof}

\begin{remark}
Under the assumptions of the proposition above, the linear representation of $\GGG$ on  $\OOO(X)$ stabilizes the infinite flag 
$$
\OOO(X) \supseteq \mm_{x}\supseteq \mm_{x}^{2}\supseteq \mm_{x}^{3}\supseteq \cdots
$$
and induces finite-dimensional linear representations on the various quotients $\mm_{x}^{m}/\mm_{x}^{\ell}$. Assume now that $X$ is irreducible and that the action of $\GGG$ on $X$ faithful, and denote by $\KKK$ the kernel of the representation of $\GGG$ on $T_{x}X$. Then 
\be
{\it
\item The representation of every reductive subgroup $G \subseteq \GGG$ on $T_{x}X$ is faithful.
\item The image of $\KKK$ in $\GL(\OOO(X)/\mm_{x}^{m})$ is unipotent for all $m$. In particular, every locally finite element of $\KKK$ is unipotent.
\item The intersection of the kernels of the representations of $\GGG$ on $\OOO(X)/\mm_{x}^{m}$ for $m\geq 1$, is trivial.
}
\ee
The proofs of (2) and (3) are easy and left to the reader. Point (1) is classical, and here is a proof.\end{remark}

\begin{lemma}  \label{faithful-on-Tx.lem}
Let $G$ be a reductive group acting faithfully on an irreducible variety $X$. If $x_{0}\in X$ is a fixed point, then the tangent representation $G \to \GL (T_{x_{0}}X)$, $g\mapsto dg_{x_{0}}$,  is faithful.
\end{lemma}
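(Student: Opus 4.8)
The plan is to show that the kernel $N := \Ker(\tau_{x_{0}})$ of the tangent representation is trivial. Since $N$ is a closed normal subgroup of the reductive group $G$ and $\kk$ has characteristic zero, $N$ is again reductive. As $x_{0}$ is a fixed point, every $g \in G$ satisfies $(g\cdot f)(x_{0}) = f(g^{-1}x_{0}) = f(x_{0})$, so $G$ stabilizes the maximal ideal $\mm := \mm_{x_{0}} \subseteq \OOO(X)$ and all its powers $\mm^{k}$. The tangent representation is the dual of the induced linear action of $G$ on $\mm/\mm^{2}$; hence, by definition of $N$, the subgroup $N$ acts trivially on $\mm/\mm^{2}$.

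First I would propagate this triviality to all graded pieces of the $\mm$-adic filtration. The associated graded ring $\bigoplus_{k}\mm^{k}/\mm^{k+1}$ is generated in degree one over $\OOO(X)/\mm = \kk$, which gives, for each $k$, a $G$-equivariant surjection
\[
S^{k}(\mm/\mm^{2}) \twoheadrightarrow \mm^{k}/\mm^{k+1}.
\]
Since $N$ acts trivially on $\mm/\mm^{2}$, it acts trivially on $S^{k}(\mm/\mm^{2})$, and hence on the quotient $\mm^{k}/\mm^{k+1}$. Consequently $N$ preserves the finite flag $\OOO(X)/\mm^{k}\supseteq \mm/\mm^{k}\supseteq\cdots\supseteq\mm^{k-1}/\mm^{k}\supseteq 0$ and acts trivially on each successive quotient; thus the image of $N$ in $\GL(\OOO(X)/\mm^{k})$ is a closed subgroup of the unipotent group of operators that are the identity on all graded pieces, hence is itself unipotent. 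But this image is also a quotient of the reductive group $N$, hence reductive. Since a unipotent group in characteristic zero is connected, and a connected reductive group has trivial unipotent radical, a group that is at once reductive and unipotent coincides with its (trivial) unipotent radical, so it is trivial. Therefore $N$ acts trivially on $\OOO(X)/\mm^{k}$ for every $k$.

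To conclude I would pass to the formal neighborhood and use irreducibility. Taking the inverse limit over $k$, the group $N$ acts trivially on the completion $\widehat{\OOO}_{X,x_{0}} = \varprojlim_{k}\OOO(X)/\mm^{k}$. Because $X$ is irreducible, $\OOO(X)$ is a domain, so the localization map $\OOO(X)\to\OOO_{X,x_{0}}$ is injective, and by Krull's intersection theorem ($\bigcap_{k}\mm^{k}\OOO_{X,x_{0}} = 0$ in the Noetherian local ring) so is the completion map; the resulting injection $\OOO(X)\into\widehat{\OOO}_{X,x_{0}}$ is $G$-equivariant because $G$ fixes $x_{0}$. Hence $N$ acts trivially on $\OOO(X)$, i.e. $N$ acts trivially on $X$, and faithfulness of the $G$-action on $X$ forces $N = \{e\}$.

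The main obstacle will be the algebraic core of the second paragraph: controlling the image of $N$ in each $\GL(\OOO(X)/\mm^{k})$ and invoking that a homomorphic image of the reductive group $N$ lying inside a unipotent group must be trivial, together with the preliminary point that $N$ is reductive. Once the action on the formal neighborhood is pinned down, the descent back to $\OOO(X)$ is routine, depending only on the injectivity of $\OOO(X)\into\widehat{\OOO}_{X,x_{0}}$ furnished by irreducibility.
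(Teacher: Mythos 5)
Your core mechanism is sound, but as written your proof establishes only the affine case of the stated lemma. The lemma is asserted for an arbitrary irreducible variety $X$, and the paper uses it in that generality (e.g. in the proof of Proposition~\ref{faithful-action-has-fixed-point.prop}, where $X$ is just an irreducible variety). You work with the maximal ideal $\mm=\mm_{x_{0}}\subseteq\OOO(X)$ of the \emph{global} coordinate ring, identify $T_{x_{0}}X$ with $(\mm/\mm^{2})^{\vee}$, and at the end infer from ``$N$ acts trivially on $\OOO(X)$'' that $N$ acts trivially on $X$. All of these fail for non-affine $X$: if for instance $\OOO(X)=\kk$ (e.g. $X$ projective), then $\mm=(0)$, the tangent space is invisible to $\OOO(X)$, and every automorphism acts trivially on $\OOO(X)$. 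The repair is immediate and is exactly what the paper's proof does: since $x_{0}$ is a fixed point, $G$ acts on the local ring $\OOO_{X,x_{0}}$, and your whole argument (graded pieces, unipotence of the image, reductivity of $N$, Krull intersection) runs verbatim there, showing that $N$ acts trivially on $\OOO_{X,x_{0}}$ and hence on $\kk(X)$; since rational functions separate points on a dense open subset of the irreducible variety $X$, every element of $N$ then fixes a dense open set, hence all of $X$, and faithfulness gives $N=\{e\}$.

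Modulo that adjustment, your route is correct and genuinely different from the paper's in where the reductivity is spent. The paper argues element-wise and places the reductivity on $G$: complete reducibility splits the exact sequences $0\to\mm^{2}/\mm^{j}\to\mm/\mm^{j}\to\mm/\mm^{2}\to 0$ of $G$-modules, and an induction on $j$ (using that the action is by algebra automorphisms) shows that any $g$ in the kernel acts trivially on every $\mm/\mm^{j}$. You place the reductivity on the kernel $N$ instead --- legitimately, since $\rad_{u}N$ is characteristic in $N$ and therefore a connected normal unipotent subgroup of $G$, so $N$ is reductive --- then use the equivariant surjections $S^{k}(\mm/\mm^{2})\onto\mm^{k}/\mm^{k+1}$ to show that $N$ has unipotent image in each $\GL(\OOO(X)/\mm^{k})$ (this unipotence is precisely assertion (2) of the unnumbered remark just before the lemma, which the paper notes holds without any reductivity hypothesis), and finally kill a reductive group with unipotent image. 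Your version isolates the two ingredients cleanly --- unipotence of the kernel's image needs no reductivity at all, and reductivity is invoked exactly once --- at the cost of the structure-theoretic inputs that a quotient of a reductive group is reductive and that a unipotent group in characteristic zero is connected; the paper's version avoids any discussion of the kernel's structure and uses nothing beyond complete reducibility of $G$-modules.
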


\begin{proof}
The local ring $\OOO_{X,x_{0}}$ is stable under $G$, as well as its maximal ideal  $\mm:= \mm_{x_{0}}$ and all its powers $\mm^{j}$. 
For any $j\geq 2$ we have an exact sequence of $G$-modules
$$
0 \to \mm^{2}/\mm^{j} \to \mm/\mm^{j} \to \mm/\mm^{2}\simeq(T_{x_{0}}X)^{\vee} \to 0
$$
which splits, because $G$ is reductive. If $g$ acts trivially on $\mm/\mm^{2}$, then it acts trivially on $\mm^{2}/\mm^{3}$, because the action of $G$ on $\OOO_{X,x_{0}}$ is by algebra automorphisms. Therefore, $g$ acts trivially on $\mm/\mm^{3}$, by the $G$-splitting of the exact sequence above. By induction, we see that $g$ acts trivially on $\mm/\mm^{j}$ for all $j$. This implies that $gf-f \in \mm^{j}$ for any $f \in\mm$ and all $j$. But $\bigcap_{j} \mm^{j}=(0)$, and so $gf=f$. As a consequence, $g$ acts trivially on the local ring, hence trivially on the rational functions $\kk(X)$, and thus $g=e$, because the action of $G$ on $X$ is faithful.
\end{proof}

The proposition above generalizes to actions of $\GGG$ on ind-varieties. In fact, if $v_{0} \in \VVV$ is a fixed point, then every $ g\in\GGG$ induces a linear map $d g_{v}\colon T_{v}\VVV \to T_{v}\VVV$ and thus a linear action of $\GGG$ on $T_{v}\VVV$, denoted by $( g,w)\mapsto  g w$. We will see in the next theorem that this is indeed a representation 
$$
\tau_{v}\colon \GGG \to \GL(T_{v}\VVV)
$$ 
and thus induces  a linear representation of the Lie algebra (Lemma~\ref{rep-of-G-and-LieG.lem})
$$
d\tau_{v}\colon \Lie\GGG \to \LLL(T_{v}\VVV),
$$
hence an action of $\Lie\GGG$ on $T_{v}\VVV$ which we denote by $(A,w)\mapsto Aw:=d\tau_{}$.

\begin{theorem}\label{rep-in-fixed-point.thm}
Let the affine ind-group $\GGG$ act on the ind-variety $\VVV$, and assume that $v_{0}\in\VVV$ is a fixed point. Then the action of $\GGG$ on $T_{v_{0}}\VVV$ is a linear representation. 
\end{theorem}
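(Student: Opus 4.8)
The plan is to verify the two conditions imposed by Definition~\ref{representation-of-an-ind-group.def}: that $\tau_{v_{0}}$ is a group homomorphism into $\GL(T_{v_{0}}\VVV)$, and that the induced map $\GGG\times T_{v_{0}}\VVV\to T_{v_{0}}\VVV$ is an ind-morphism. Write $a\colon\GGG\times\VVV\to\VVV$ for the action and $a_{g}:=a(g,-)\in\Aut(\VVV)$. The first point is immediate: since $a_{g}$ is an automorphism fixing $v_{0}$, its differential $\tau_{v_{0}}(g):=d(a_{g})_{v_{0}}$ is a linear automorphism of the countable-dimensional space $T_{v_{0}}\VVV$ with inverse $d(a_{g^{-1}})_{v_{0}}$, and the chain rule together with $a_{h}(v_{0})=v_{0}$ gives $\tau_{v_{0}}(gh)=d(a_{g})_{a_{h}(v_{0})}\circ d(a_{h})_{v_{0}}=\tau_{v_{0}}(g)\circ\tau_{v_{0}}(h)$. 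So all the content lies in the morphism property.

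Next I would reduce to varieties by filtrations. Fix an admissible filtration $\VVV=\bigcup_{k}\VVV_{k}$ with $v_{0}\in\VVV_{k}$ for all $k$; then $T_{v_{0}}\VVV=\bigcup_{k}T_{v_{0}}\VVV_{k}$ is an admissible filtration by finite-dimensional subspaces (Example~\ref{countable-vector-space.exa}). Fix also $\GGG=\bigcup_{m}\GGG_{m}$. Since $a$ is an ind-morphism, for each pair $(m,k)$ there is an $l=l(m,k)$ with $a(\GGG_{m}\times\VVV_{k})\subseteq\VVV_{l}$ (Lemma~\ref{Kumar.lem}). For $g\in\GGG_{m}$ this forces $d(a_{g})_{v_{0}}$ to carry $T_{v_{0}}\VVV_{k}$ into $T_{v_{0}}\VVV_{l}$, so the map $\Phi\colon(g,w)\mapsto d(a_{g})_{v_{0}}(w)$ sends $\GGG_{m}\times T_{v_{0}}\VVV_{k}$ into $T_{v_{0}}\VVV_{l}$. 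It then suffices to prove that each restriction $\Phi\colon\GGG_{m}\times T_{v_{0}}\VVV_{k}\to T_{v_{0}}\VVV_{l}$ is a morphism of algebraic varieties.

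The key step will be the functoriality of the tangent bundle. Put $b:=a|_{\GGG_{m}\times\VVV_{k}}\colon\GGG_{m}\times\VVV_{k}\to\VVV_{l}$, a morphism of varieties which, because $v_{0}$ is a fixed point, satisfies $b(g,v_{0})=v_{0}$ for all $g\in\GGG_{m}$. The tangent functor yields a morphism $Tb\colon T(\GGG_{m}\times\VVV_{k})=T\GGG_{m}\times T\VVV_{k}\to T\VVV_{l}$, and the assignment $(g,w)\mapsto(0_{g},w)$, built from the zero section $\GGG_{m}\to T\GGG_{m}$ and the closed inclusion of the fibre $T_{v_{0}}\VVV_{k}\hookrightarrow T\VVV_{k}$, is a morphism $\GGG_{m}\times T_{v_{0}}\VVV_{k}\to T\GGG_{m}\times T\VVV_{k}$. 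Composing, $(g,w)\mapsto Tb(0_{g},w)=db_{(g,v_{0})}(0,w)=d(a_{g})_{v_{0}}(w)$; since $b$ is constant equal to $v_{0}$ on $\GGG_{m}\times\{v_{0}\}$, the image lies in the fibre $T_{v_{0}}\VVV_{l}$ of $T\VVV_{l}\to\VVV_{l}$, so the composite factors through a morphism $\GGG_{m}\times T_{v_{0}}\VVV_{k}\to T_{v_{0}}\VVV_{l}$, which is exactly $\Phi$. Compatibility over all $(m,k)$ then shows $\Phi$ is an ind-morphism, and at $g=e$ this recovers the formula of Remark~\ref{formula.rem}.

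The main obstacle I expect is making the tangent-bundle step rigorous when $\VVV_{k}$ and $\VVV_{l}$ are singular or non-affine. I would handle this by realising $T\VVV_{k}$ as the scheme $\underline{\Mor}(\Spec(\kk[\eps]/(\eps^{2})),\VVV_{k})$ representing the tangent functor, checking that $Tb$ is simply composition with $b$ and hence a morphism, and that $T_{v_{0}}\VVV_{l}$ is its closed fibre over $v_{0}$. Because the whole computation is confined to the section $\GGG_{m}\times\{v_{0}\}$ and its normal directions, one may freely replace $\VVV_{l}$ and $\VVV_{k}$ by affine open neighbourhoods of $v_{0}$, where the statements are routine; so the genuine work is the bookkeeping with tangent schemes, which is the only delicate ingredient.
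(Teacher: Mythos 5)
Your proof is correct, and it shares the paper's overall skeleton: the homomorphism property is the easy chain-rule computation at the fixed point, and everything reduces, via admissible filtrations, to showing that each map $\GGG_{m}\times T_{v_{0}}\VVV_{k}\to T_{v_{0}}\VVV_{\ell}$, $(g,w)\mapsto d(a_{g})_{v_{0}}(w)$, is a morphism of varieties. But you settle this regularity step by a genuinely different mechanism. The paper observes that $g\mapsto a_{g}|_{\VVV_{k}}$ defines a morphism $\GGG_{k}\to\Mor_{0}(\VVV_{k},\VVV_{\ell})$ into the ind-variety of pointed morphisms and then invokes Lemma~\ref{mor-to-tangent.lem}, which asserts that $\phi\mapsto d\phi_{x_{0}}$ is an ind-morphism $\Mor_{0}(X,Y)\to\LLL(T_{x_{0}}X,T_{y_{0}}Y)$; that lemma is proved by reducing to a vector-space target, where $\Mor_{0}(X,V)=\mm_{x_{0}}\otimes V$ and the differential is induced by $\mm_{x_{0}}\otimes V\to\mm_{x_{0}}/\mm_{x_{0}}^{2}\otimes V$. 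You bypass the $\Mor$-spaces entirely and work with the tangent scheme $TX=\mathbf{Spec}_{X}(\operatorname{Sym}\Omega_{X/\kk})$ representing $\underline{\Mor}(\Spec(\kk[\eps]/(\eps^{2})),X)$, composing $Tb$ with the zero section over $\GGG_{m}$ and the fibre inclusion $T_{v_{0}}\VVV_{k}\into T\VVV_{k}$, then factoring through the fibre $T_{v_{0}}\VVV_{\ell}$ — legitimate since your source is reduced and the scheme-theoretic fibre is the affine space $T_{v_{0}}\VVV_{\ell}$. In effect you reprove, by hand and in families, exactly the instance of Lemma~\ref{mor-to-tangent.lem} that the paper needs. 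What your route buys is independence from affineness: the natural ind-structure on $\Mor(X,Y)$ and the proof of Lemma~\ref{mor-to-tangent.lem} both require affine varieties (the paper even shows that $\Mor(\Aone,\Atwod)$ admits no natural ind-structure), whereas the tangent-scheme argument, localized to affine neighbourhoods of $\GGG_{m}\times\{v_{0}\}$ as you indicate, applies verbatim to an arbitrary ind-variety $\VVV$ — matching the generality in which the theorem is actually stated. What the paper's route buys is brevity given its existing infrastructure, together with the reusable intermediate morphism $\GGG_{k}\to\Mor_{0}(\VVV_{k},\VVV_{\ell})$.
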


\begin{proof}
For every $k\geq 1$ there is an $\ell$ such that $\g(\VVV_{k})\subseteq \VVV_{\ell}$ for all $g \in \GGG_{k}$. Hence, we get a morphism $\GGG_{k}\to \Mor(\VVV_{k},\VVV_{\ell})$, and the image is contained in $\Mor_{0}(\VVV_{k},\VVV_{\ell}):=\{\phi\in\Mor(\VVV_{k},\VVV_{\ell})\mid \phi(v_{0})=v_{0}\}$. By Lemma~\ref{mor-to-tangent.lem}, this subset is closed and the map 
$\Mor_{0}(\VVV_{k},\VVV_{\ell}) \to \LLL(T_{v_{0}}\VVV_{k},T_{v_{0}}\VVV_{\ell})$ is a morphism. Thus the composition 
$\GGG_{k}\to \LLL(T_{v_{0}}\VVV_{k},T_{v_{0}}\VVV_{\ell})$ is a morphism, as well as $\GGG_{k}\times T_{v_{0}}\VVV_{k} \to T_{v_{0}}\VVV_{\ell}$, and the claim follows.
\end{proof}

Every automorphism $\phi$ of  $\VVV$ defines a  linear automorphism of the vector fields $\VEC(\VVV)$, also denoted by $\phi$, which is given in the following way (see Proposition~\ref{locally-finite-action-on-Vec(X).prop}). If $\delta = (\delta_{x})_{x\in\VVV}$, then 
\[\tag{$*$}
\phi(\delta)_{\phi(x)} := d\phi_{x}(\delta_{x}).
\]
Equivalently, considering $\delta$ as a continuous derivation of $\OOO(\VVV)$ in case $\VVV$ is affine, we have $\phi(\delta) = (\phi^{*})^{-1}\circ\delta\circ\phi^{*}$. 
If $v_{0} \in \VVV$ is a fixed point, then the formula $(*)$ above shows that the action of $\GGG$ on the tangent space $T_{v_{0}}\VVV$ is induced by the action on the vector fields.

\begin{proposition} 
Consider an action of an affine ind-group $\GGG$ on an ind-variety $\VVV$, and assume that $v_{0}\in\VVV$ is a fixed point.
\be
\item For  $g \in \GGG$ and $\delta\in\VEC(\VVV)$ we have $g(\delta)_{v_{0}} = g (\delta_{v_{0}})$.
\item For  $g \in \GGG$ and  $A \in \Lie\GGG$ we have 
$g(\xi_{A}) = \xi_{\Ad(g)A}$.
\ee
\end{proposition}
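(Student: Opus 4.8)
The statement to prove has two parts. Part (1) asserts that for a fixed point $v_0$, an element $g\in\GGG$ acting on a vector field $\delta$ satisfies $g(\delta)_{v_0}=g(\delta_{v_0})$; part (2) asserts the equivariance $g(\xi_A)=\xi_{\Ad(g)A}$.

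For part (1), the plan is simply to unwind the defining formula $(*)$ displayed just before the proposition, namely $\phi(\delta)_{\phi(x)}=d\phi_x(\delta_x)$, and specialize it at $x=v_0$. Since $v_0$ is a fixed point, $g(v_0)=v_0$, so the left-hand side becomes $g(\delta)_{v_0}$; the right-hand side is $dg_{v_0}(\delta_{v_0})$. By definition the action of $g$ on the tangent space $T_{v_0}\VVV$ is $w\mapsto dg_{v_0}(w)=g(w)$ (this is exactly the tangent representation $\tau_{v_0}$ from Theorem~\ref{rep-in-fixed-point.thm}). Hence $dg_{v_0}(\delta_{v_0})=g(\delta_{v_0})$, which is precisely the claim. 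This part is essentially a matter of matching notation.

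For part (2), I would first reduce to comparing the two vector fields at each point by evaluating them on the orbit map and using the functoriality of the differential. Recall $\xi_A(x)=(d\mu_x)_e(A)$ where $\mu_x\colon\GGG\to\VVV$ is $h\mapsto hx$. The key observation is that conjugation by $g$ intertwines the orbit maps: for the orbit map through the point $gx$ one has $\mu_{gx}(h)=h\,gx=g\,(g^{-1}hg)x$, i.e. $\mu_{gx}=g\circ\mu_x\circ c_{g^{-1}}$ where $c_{g^{-1}}$ is conjugation $h\mapsto g^{-1}hg$ and $g$ here denotes the action of $g$ on $\VVV$. Differentiating at $e$ and using that $d(c_{g^{-1}})_e=\Ad(g)$ by definition of the adjoint representation (Section~\ref{tangent-adjoint.subsec}), together with the chain rule, gives $(d\mu_{gx})_e(\Ad(g)A)=dg_x\bigl((d\mu_x)_e(A)\bigr)$, that is $\xi_{\Ad(g)A}(gx)=dg_x(\xi_A(x))$. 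By the definition $(*)$ of the action of $g$ on vector fields, the right-hand side equals $g(\xi_A)(gx)$. Since this holds for every $x\in\VVV$ and $x\mapsto gx$ is a bijection, we conclude $g(\xi_A)=\xi_{\Ad(g)A}$ as vector fields.

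The main obstacle is bookkeeping the two distinct roles that $g$ plays — as an element of the abstract group (appearing in the conjugation $c_{g^{-1}}$ whose differential is $\Ad(g)$) and as an automorphism of $\VVV$ (appearing as $dg_x$ on tangent spaces) — and making sure the intertwining identity $\mu_{gx}=g\circ\mu_x\circ c_{g^{-1}}$ is stated cleanly and differentiated correctly. I would verify the identity $\mu_{gx}=g\circ\mu_x\circ c_{g^{-1}}$ pointwise first ($h\mapsto g^{-1}hg\mapsto (g^{-1}hg)x\mapsto g(g^{-1}hg)x=hgx=\mu_{gx}(h)$), then apply the differential at $e$, noting $c_{g^{-1}}(e)=e$ and $\mu_x(e)=x$, so the chain rule yields the displayed composition with $\Ad(g)=d(c_{g^{-1}})_e$. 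Everything else is a direct consequence of the definitions already in place, so no deep input beyond the chain rule and the definition of $\Ad$ is needed.
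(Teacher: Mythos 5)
Your route coincides with the paper's: part (1) is, in both, just formula $(*)$ specialized at the fixed point, and part (2) is in both cases obtained by intertwining orbit maps under conjugation and differentiating at $e$. The paper records the intertwining as the commutative square $\mu_{gx}\circ \Int g = g\circ\mu_x$, which is exactly your identity $\mu_{gx}=g\circ\mu_x\circ c_{g^{-1}}$ rearranged, so there is no difference of method.

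There is, however, one genuine slip in precisely the bookkeeping you flagged as the main obstacle. With the paper's convention $\Int g\colon h\mapsto ghg^{-1}$ and $\Ad(g):=d(\Int g)_e$ (Section~\ref{tangent-adjoint.subsec}), your map $c_{g^{-1}}\colon h\mapsto g^{-1}hg$ is $\Int(g^{-1})$, so $d(c_{g^{-1}})_e=\Ad(g^{-1})=\Ad(g)^{-1}$, \emph{not} $\Ad(g)$ as you assert. Taken literally, your identification plus the chain rule gives $(d\mu_{gx})_e=dg_x\circ(d\mu_x)_e\circ\Ad(g)$, and evaluating at $\Ad(g)^{-1}A$ would yield $g(\xi_A)=\xi_{\Ad(g)^{-1}A}$, i.e.\ the inverse of the proposition; so your displayed conclusion does not actually follow from the premises as you wrote them. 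With the correct identification $d(c_{g^{-1}})_e=\Ad(g^{-1})$ the chain rule gives $(d\mu_{gx})_e=dg_x\circ(d\mu_x)_e\circ\Ad(g^{-1})$, and evaluating at $\Ad(g)A$ produces exactly your display
\[
\xi_{\Ad(g)A}(gx)=(d\mu_{gx})_e\bigl(\Ad(g)A\bigr)=dg_x\bigl((d\mu_x)_e(A)\bigr)=g(\xi_A)(gx),
\]
after which your concluding bijectivity remark finishes the proof. Alternatively, phrase the intertwining as the paper does, $\mu_{gx}\circ\Int g=g\circ\mu_x$, whose differential $(d\mu_{gx})_e\circ\Ad(g)=dg_x\circ(d\mu_x)_e$ gives the identity directly without any inverse ever appearing. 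Everything else in your write-up, including part (1), is correct.
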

\begin{proof}
(1) This is just the formula $(*)$.
\ps
(2) This follows from the two diagrams
$$
\begin{CD}
\GGG @>{\mu_{x}}>> \VVV    &\qquad\qquad&    \Lie\GGG @>{d\mu_{x}}>> T_{x}\VVV\\
@VV{\Int g}V        @VV{g}V      @VV{\Ad g }V   @VV{dg_{x}}V  \\
\GGG @>{\mu_{g x}}>> \VVV   &\qquad\qquad&    \Lie\GGG  @>{d\mu_{g x}}>> T_{g x}\VVV\
\end{CD}
$$
where the right diagram follows from the left diagram and implies that $g(\xi_{A})_{g x} = dg_{x}(\xi_{A})_{x} = (\xi_{\Ad(g)A})_{g x}$.
\end{proof}

Let again the affine ind-group $\GGG$ act on the ind-variety $\VVV$. If  $v\in\VVV$ is a fixed point, then the \itind{orbit map} $\mu=\mu_{v}\colon \GGG \to \VVV$, $g \mapsto g v$, is constant and its differential\idx{differential of orbit map} $(d\mu_{v})_{e}\colon \Lie\GGG \to T_{v}\VVV$ is trivial. Equivalently, all vector fields $\xi_{A}$, $A \in \Lie\GGG$, vanish at $v$. The converse holds for connected ind-groups.

\begin{lemma}\label{orbitmap.lem}
Assume that $\GGG$ is connected, and let $v \in \VVV$. If $(d\mu_{v})_{e}$ is the zero map, then $\mu_{v}$ is constant, i.e. $v$ is a fixed point of $\GGG$.
\end{lemma}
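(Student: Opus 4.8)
Looking at Lemma~\ref{orbitmap.lem}, I need to show that a connected ind-group $\GGG$ has $v$ as a fixed point whenever the differential of the orbit map $\mu_v$ vanishes at $e$.

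\textbf{The approach.} The plan is to reduce this to the case of a \emph{linear} representation, where the analogous statement has already been established as Proposition~\ref{trivial-rep.prop} (a representation of a connected ind-group is trivial if and only if its differential is trivial). The key idea is that even though $\VVV$ is a general ind-variety and $v$ is only a single point (not yet known to be fixed), I can extract a genuine representation of $\GGG$ by looking at how $\GGG$ acts on functions near the orbit, or better, by transferring the problem to the coordinate ring of an affine variety containing the orbit closure.

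\textbf{Key steps.} First I would replace $\VVV$ by an affine piece: since $\mu_v\colon \GGG \to \VVV$ is a morphism and $\GGG$ is connected (hence curve-connected by Remark~\ref{irreducible-curve-connected.rem}), the image of any $\GGG_k$ lands in some $\VVV_\ell$, so I may work inside a fixed affine variety $X := \VVV_\ell$ and assume the orbit $\GGG v \subseteq X$. Next, I would pass to the coordinate ring $\OOO(X)$, on which $\GGG$ acts by a linear representation by Proposition~\ref{locally-finite-action-on-Vec(X).prop}. The hypothesis $(d\mu_v)_e = 0$ translates, via the formula $Af = -\xi_A f$ from equation~\eqref{formulas-for-A} evaluated at $v$, into the statement that $(\xi_A f)(v) = 0$ for every $A \in \Lie\GGG$ and every $f \in \OOO(X)$; equivalently, the differential at $e$ of each orbit map $g \mapsto (g^{-1}f)(v) = f(gv)$ is zero. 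I would then consider, for each $f \in \OOO(X)$, the morphism $\GGG \to \kk$, $g \mapsto f(gv)$, and argue that its differential vanishes not just at $e$ but at every point of $\GGG$: this uses the translation trick already invoked in the proof of Proposition~\ref{trivial-rep.prop}, namely $\mu_{gv} = \mu_v \circ \rho_g$ (right translation), whence $(d\mu_{gv})_e = (d\mu_v)_g \circ (d\rho_g)_e$, so vanishing at $e$ propagates to all of $\GGG$ by right-invariance.

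\textbf{Conclusion and main obstacle.} Once every function $g \mapsto f(gv)$ has identically zero differential on the connected (indeed curve-connected) ind-group $\GGG$, I can invoke Example~\ref{trivial-differential.exa}: a morphism whose differential is everywhere trivial is constant on each connected closed algebraic subset, and since $\GGG$ is connected the whole map $g \mapsto f(gv)$ is constant, equal to $f(v)$. As this holds for all $f \in \OOO(X)$ and $\OOO(X)$ separates points of $X$, we conclude $gv = v$ for all $g \in \GGG$, i.e.\ $v$ is a fixed point. The main obstacle I anticipate is the careful bookkeeping at the reduction step: I must ensure the orbit $\GGG v$ really lies in a single affine $\VVV_\ell$ and that the filtration-level arguments (choosing $\ell$ uniformly) are compatible with applying Example~\ref{trivial-differential.exa}, whose hypothesis is about an ind-morphism $\VVV \to \WWW$; here the cleanest route is to apply it directly to the scalar-valued morphisms $g \mapsto f(gv)$ rather than to $\mu_v$ itself, sidestepping any need to know $\WWW = \overline{\GGG v}$ is well-behaved.
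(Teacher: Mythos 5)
Your propagation step is where the argument breaks, and it breaks for a structural reason. The identity $(d\mu_{gv})_e = (d\mu_v)_g \circ (d\rho_g)_e$ relates the differential of $\mu_v$ at $g$ to the differential at $e$ of a \emph{different} orbit map, namely that of the point $gv$. In Proposition~\ref{trivial-rep.prop} this was usable precisely because the hypothesis there ($d\rho = 0$) gives $(d\mu_w)_e = 0$ for \emph{every} $w \in V$, in particular for $w = gv$; since $(d\rho_g)_e$ is an isomorphism, one then deduces $(d\mu_v)_g = 0$. In Lemma~\ref{orbitmap.lem} you only know $(d\mu_v)_e = 0$ for the single point $v$, and nothing whatsoever about $(d\mu_{gv})_e$, so right translation tells you nothing about $(d\mu_v)_g$: ``vanishing at $e$ propagates to all of $\GGG$ by right-invariance'' is false as stated. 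The correct mechanism --- and it is the heart of the paper's proof --- is \emph{left} translation: from $\mu_v(gh) = g\,(hv)$ one gets $\mu_v \circ \lambda_g = g \circ \mu_v$, where on the right $g$ denotes the automorphism of $\VVV$ given by the action. Differentiating at $e$ yields $(d\mu_v)_g \circ (d\lambda_g)_e = (dg)_v \circ (d\mu_v)_e = 0$, and since $(d\lambda_g)_e \colon T_e\GGG \to T_g\GGG$ is an isomorphism, $(d\mu_v)_g = 0$ for all $g$. This is exactly the commutative diagram in the paper's proof, run in contrapositive form: if $\mu_v$ is not constant, choose an irreducible $\GGG_k \ni e$ with $\dim\overline{\mu_v(\GGG_k)} \geq 1$, so that $d\mu_g \neq 0$ at some $g$ (characteristic zero), and left translation by $g^{-1}$ --- using crucially that $(d\lambda_{g^{-1}})_{gv}$ is an isomorphism because $g^{-1}$ acts on $\VVV$ as an automorphism --- forces $(d\mu_v)_e \neq 0$, a contradiction.

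A secondary weakness: your affine reduction is shakier than you suggest. There is in general no single $\ell$ with $\GGG v \subseteq \VVV_\ell$ (one only has $\mu_v(\GGG_k) \subseteq \VVV_{\ell(k)}$ for each $k$), and $\VVV_\ell$ is not $\GGG$-stable, so Proposition~\ref{locally-finite-action-on-Vec(X).prop} and the formula $Af = -\xi_A f$ simply do not apply with $X = \VVV_\ell$. Your scalar-valued maps $g \mapsto f(gv)$ on each $\GGG_k$ could indeed work around this, but only after the propagation step is replaced by the left-translation argument above; at that point one has $(d\mu_v)_g = 0$ everywhere, restricting to irreducible $\GGG_k \ni e$ gives constancy on each piece, and connectedness of $\GGG$ finishes --- which is, in substance, the paper's proof, with the intermediate detour through $\OOO(X)$ no longer doing any work.
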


\begin{proof}
We have $\GGG = \bigcup \GGG_{k}$ where we can assume that all $\GGG_{k}$ are irreducible and contain $e$. If $\mu:=\mu_{v}$ is not the constant map, then there exists a $k>0$ such that $\dim\overline{\mu(\GGG_{k})}\geq 1$. It follows that there is $g\in\GGG_{k}$ such that the differential $d\mu_{g}\colon T_{g}\GGG_{k} \to T_{gv}\VVV$ is not the zero map. Denote by $\lambda_{h}\colon \GGG \simto \GGG$ and $\lambda_{h}\colon \VVV \simto \VVV$ the left multiplication with $h\in\GGG$. Then the commutative diagram
$$
\begin{CD}
T_{g}\GGG_{k} @>d\mu_{g}>> T_{gv} \VVV \\
@V{(d\lambda_{g^{-1}})_{g}}VV  @V{\simeq}V{(d\lambda_{g^{-1}})_{gv}}V \\
T_{e}\GGG_{\ell} @>d\mu_{e}>> T_{v} \VVV 
\end{CD}
$$
shows that $d\mu_{e}$ is not the zero map.
\end{proof}

This lemma has a number of interesting applications.  
\begin{proposition}\label{phi-dphi.prop}
Let $\phi,\psi \colon \GGG \to \HHH$ be two homomorphisms of ind-groups. If $\GGG$ is connected and $d\phi=d\psi$, then  $\phi=\psi$.
In particular, $\phi$ is trivial if and only if $d\phi$ is trivial.
\end{proposition}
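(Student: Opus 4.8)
The plan is to realize $\phi$ and $\psi$ as encoding a single orbit map and then invoke Lemma~\ref{orbitmap.lem}. Concretely, I would let $\GGG$ act on $\HHH$ by the \emph{twisted conjugation}
\[
g\cdot h := \phi(g)\, h\, \psi(g)^{-1},\qquad g\in\GGG,\ h\in\HHH .
\]
First I would check that this is a genuine action of the ind-group $\GGG$ on the ind-variety $\HHH$: the axiom $e\cdot h=h$ is immediate, and since $\phi,\psi$ are homomorphisms one computes $(g_1g_2)\cdot h=\phi(g_1)\phi(g_2)\,h\,\psi(g_2)^{-1}\psi(g_1)^{-1}=g_1\cdot(g_2\cdot h)$. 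Moreover the action map factors as $(g,h)\mapsto(\phi(g),h,\psi(g))\mapsto \phi(g)\,h\,\psi(g)^{-1}$, a composition of the ind-morphisms $\phi,\psi$, the multiplication of $\HHH$, and its inversion; hence it is an ind-morphism and we indeed have an ind-group action in the sense of Definition~\ref{ind-group-actions.def}.

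Next I would look at the orbit map of the neutral element $e\in\HHH$, namely
\[
\mu:=\mu_{e}\colon \GGG\to\HHH,\qquad \mu(g)=\phi(g)\,\psi(g)^{-1}.
\]
The whole point is that $\mu$ is constant equal to $e$ precisely when $\phi=\psi$, and that its being constant is detectable on the Lie algebra. I would compute $(d\mu)_{e}$ via the product rule: writing $\mu=m\circ(\phi\times(\iota\circ\psi))\circ\Delta$, where $\Delta$ is the diagonal of $\GGG$, $m$ the multiplication of $\HHH$, and $\iota$ its inversion, and using that $\phi(e)=\psi(e)=e$, one has $dm_{(e,e)}(X,Y)=X+Y$ and $d\iota_{e}=-\id$, so that
\[
(d\mu)_{e}(A)=d\phi_{e}(A)-d\psi_{e}(A)\qquad(A\in\Lie\GGG).
\]
By hypothesis $d\phi=d\psi$, so $(d\mu)_{e}=0$. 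Since $\GGG$ is connected, Lemma~\ref{orbitmap.lem} applies and forces the orbit map $\mu$ to be constant, i.e. $e$ is a fixed point. As $\mu(e)=\phi(e)\psi(e)^{-1}=e$, the constant value is $e$, whence $\phi(g)\psi(g)^{-1}=e$, that is $\phi=\psi$, for all $g\in\GGG$. The ``in particular'' clause follows by taking $\psi$ to be the trivial homomorphism, for which $d\psi=0$: then $d\phi$ trivial gives $d\phi=d\psi$, hence $\phi=\psi$ is trivial, and the converse is obvious.

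The only point requiring genuine care — and thus the main obstacle — is the differential computation $(d\mu)_{e}=d\phi_{e}-d\psi_{e}$ in the ind-group setting, i.e. justifying that the differential of the multiplication $\HHH\times\HHH\to\HHH$ at $(e,e)$ is the sum map and that the differential of inversion at $e$ is $-\id$, together with the chain and diagonal rules for ind-morphisms. All of this is the standard ``first-order'' computation (readily verified, for instance, by evaluating on dual-number points of $\HHH$), and once it is in place the rest of the argument is purely formal, resting entirely on Lemma~\ref{orbitmap.lem} and the connectedness of $\GGG$.
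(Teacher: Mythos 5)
Your proposal is correct and follows essentially the same route as the paper: the paper likewise defines the twisted action $g*h:=\psi(g)\,h\,\phi(g)^{-1}$, observes that the orbit map of $e$ is $g\mapsto\psi(g)\phi(g)^{-1}$ with differential $d\psi_{e}-d\phi_{e}=0$, and concludes by Lemma~\ref{orbitmap.lem} using the connectedness of $\GGG$. Your additional verifications — that the twisted conjugation is a genuine ind-group action and the first-order computation $(d\mu)_{e}=d\phi_{e}-d\psi_{e}$ — are exactly the details the paper leaves implicit, and they are carried out correctly.
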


\begin{proof}
Consider the action of $\GGG$ on $\HHH$ given by $g*h:=\psi(g) h \phi(g)^{-1}$, and take the orbit map $\mu\colon \GGG \to \HHH$ in $e\in\HHH$ which is given by $g\mapsto \psi(g)\phi(g)^{-1}$. Then $d\mu_{e}= d\psi_{e} - d\phi_{e}= 0$, and so, by Lemma~\ref{orbitmap.lem} above, we get $\psi(g)\phi(g)^{-1}= e$ for all $g\in\GGG$, hence the claim.
\end{proof}

\begin{corollary}\label{AutG-into-AutLie.cor}
If $\GGG$ is connected, then the canonical homomorphism (of abstract groups) $\omega\colon\Aut(\GGG) \to \Aut(\Lie\GGG)$ is injective.
\end{corollary}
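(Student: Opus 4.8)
The plan is to deduce this directly from Proposition~\ref{phi-dphi.prop}, which carries all the substance; the corollary is then essentially formal. First I would check that $\omega$ is genuinely a homomorphism of abstract groups, so that it makes sense to speak of its kernel. This follows from the functoriality of the differential at $e$: for $\phi,\psi\in\Aut(\GGG)$ one has $d(\phi\circ\psi)_{e}=d\phi_{e}\circ d\psi_{e}$ and $d(\id_{\GGG})_{e}=\id_{\Lie\GGG}$, and since each $\phi$ is an isomorphism its differential $d\phi_{e}$ is invertible, hence lies in $\Aut(\Lie\GGG)$.

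To prove injectivity it then suffices to show that $\Ker\omega$ is trivial. So I would take $\phi\in\Aut(\GGG)$ with $\omega(\phi)=d\phi_{e}=\id_{\Lie\GGG}$ and aim to conclude $\phi=\id_{\GGG}$. Here I apply Proposition~\ref{phi-dphi.prop} to the two homomorphisms of ind-groups $\phi,\id_{\GGG}\colon\GGG\to\GGG$ (taking $\HHH=\GGG$ in the proposition): since $\GGG$ is connected and $d\phi_{e}=\id_{\Lie\GGG}=d(\id_{\GGG})_{e}$, the proposition forces $\phi=\id_{\GGG}$. Hence $\Ker\omega=\{\id_{\GGG}\}$ and $\omega$ is injective.

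Since the real work is done by Proposition~\ref{phi-dphi.prop}, itself obtained from Lemma~\ref{orbitmap.lem} by considering the orbit map of the conjugation-type action $g*h:=\psi(g)\,h\,\phi(g)^{-1}$, there is no genuine obstacle at this stage. The only points requiring a little care are that $\phi$ and $\id_{\GGG}$ must be compared as homomorphisms with common source and target $\GGG$, and that the connectedness hypothesis on $\GGG$ is precisely the one the proposition demands. I would therefore keep the argument to these few lines rather than re-deriving anything from Lemma~\ref{orbitmap.lem}.
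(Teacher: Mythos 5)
Your proof is correct and matches the paper's intent exactly: the paper states Corollary~\ref{AutG-into-AutLie.cor} as an immediate consequence of Proposition~\ref{phi-dphi.prop}, and your argument---applying that proposition with $\HHH=\GGG$ to the pair $\phi,\id_{\GGG}$ after noting that $d(\phi\circ\psi)_{e}=d\phi_{e}\circ d\psi_{e}$ makes $\omega$ a group homomorphism---is precisely the implicit deduction. Nothing is missing.
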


\ps
\subsection{The adjoint representation}\label{tangent-adjoint.subsec}
A special case of the tangent representation in a fixed point (Theorem~\ref{rep-in-fixed-point.thm}) is the \itind{adjoint representation}
$$
\Ad\colon \GGG \to \GL(\Lie \GGG).
$$\idx{$\Ad$}
It is obtained from the action of $\GGG$ on itself by \itind{inner automorphism}:
$$ 
\Int\colon \GGG \to \Aut(\GGG),\quad \Int(g)\colon  h \mapsto g\cdot h\cdot g^{-1}.
$$\idx{$\Int$}
A first application is the following result which follows immediately from Proposition~\ref{phi-dphi.prop}.

\begin{corollary}\label{center.cor}
Let $\GGG$ be connected and let $h \in \GGG$. Then $h$ belongs to the center $Z(\GGG)$ of $\GGG$ if and only if $\Ad(h)$ is trivial:
$$
Z(\GGG) = \Ker\Ad.
$$
\end{corollary}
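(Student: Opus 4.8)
The plan is to identify $\Ker\Ad$ with the kernel of the conjugation homomorphism $\Int$ and then invoke Proposition~\ref{phi-dphi.prop} to pass from triviality of a differential to triviality of the map itself. Recall from Section~\ref{tangent-adjoint.subsec} that $\Ad$ is the tangent representation of the conjugation action at the fixed point $e$, so that for $h\in\GGG$ the linear automorphism $\Ad(h)\in\GL(\Lie\GGG)$ is precisely the differential $d(\Int(h))_{e}$ of the inner automorphism $\Int(h)\colon\GGG\to\GGG$, $g\mapsto hgh^{-1}$.

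First I would treat the easy inclusion $Z(\GGG)\subseteq\Ker\Ad$. If $h\in Z(\GGG)$, then by definition $hgh^{-1}=g$ for every $g\in\GGG$, so $\Int(h)=\id_{\GGG}$; taking differentials at $e$ gives $\Ad(h)=d(\id_{\GGG})_{e}=\id_{\Lie\GGG}$, hence $h\in\Ker\Ad$.

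For the reverse inclusion I would fix $h\in\Ker\Ad$ and consider the two homomorphisms of ind-groups $\phi:=\Int(h)$ and $\psi:=\id_{\GGG}$ from $\GGG$ to $\GGG$. Both are genuine ind-group homomorphisms, $\Int(h)$ because conjugation by a fixed element is a group automorphism whose underlying map is a morphism of ind-varieties (multiplication and inversion being morphisms). Their differentials at $e$ are $d\phi_{e}=\Ad(h)=\id_{\Lie\GGG}$, by the hypothesis $h\in\Ker\Ad$, and $d\psi_{e}=\id_{\Lie\GGG}$, so that $d\phi_{e}=d\psi_{e}$. Since $\GGG$ is connected, Proposition~\ref{phi-dphi.prop} forces $\phi=\psi$, i.e. $\Int(h)=\id_{\GGG}$, which says exactly that $hgh^{-1}=g$ for all $g\in\GGG$, that is, $h\in Z(\GGG)$. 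Combining the two inclusions yields $Z(\GGG)=\Ker\Ad$.

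Since each step merely unwinds definitions, I do not expect a genuine obstacle; all the content is concentrated in the rigidity statement of Proposition~\ref{phi-dphi.prop} (two homomorphisms of a connected ind-group with equal differentials at $e$ coincide), which itself rests on Lemma~\ref{orbitmap.lem}. The one point worth recording carefully is the identity $\Ad(h)=d(\Int(h))_{e}$, namely that the adjoint representation really is the tangent representation of the conjugation action at $e$, so that "$\Ad(h)$ trivial" translates into "$d(\Int(h))_{e}=d(\id_{\GGG})_{e}$"; once this is in place, Proposition~\ref{phi-dphi.prop} applies verbatim.
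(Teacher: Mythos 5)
Your proof is correct and follows exactly the route the paper intends: the paper states that this corollary ``follows immediately from Proposition~\ref{phi-dphi.prop}'', and your argument --- identifying $\Ad(h)=d(\Int(h))_{e}$ via the tangent representation at the fixed point $e$ and then applying that proposition to $\phi=\Int(h)$ and $\psi=\id_{\GGG}$ --- is precisely that deduction written out in full. No gaps; the easy inclusion $Z(\GGG)\subseteq\Ker\Ad$ and the rigidity step are handled just as in the paper.
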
\idx{center $Z(\GGG)$}

The adjoint representation induces a representation of $\Lie\GGG$, denoted by 
$$
\ad\colon \Lie\GGG \to \LLL(\Lie \GGG).
$$
\idx{$\Ad$@$\ad$}
\begin{proposition}\label{adjoint.prop}
For $A,B \in \Lie \GGG$ we have 
$$
\ad(A)(B) = [A,B].
$$
\end{proposition}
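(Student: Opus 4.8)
The plan is to reduce the identity $\ad(A)(B) = [A,B]$ to the computation of a second differential of the conjugation action, exploiting the description of $\ad$ as the differential of $\Ad$ and the fact that $\Ad$ is the tangent representation of $\GGG$ acting on itself by inner automorphisms. First I would fix $A, B \in \Lie\GGG = T_e\GGG$ and recall the two ingredients already established: by Corollary~\ref{tangent-rep-and-VF.cor} and the discussion of tangent representations in a fixed point, the action of $\GGG$ on $T_e\GGG$ via $\Ad$ coincides with the linear action induced on the tangent space by the conjugation action $\Int$, for which $e\in\GGG$ is a fixed point; and by Lemma~\ref{rep-of-G-and-LieG.lem} together with Remark~\ref{diff-for-linear-action.rem}, the differential $d\tau_{e} = \ad$ is computed through the orbit maps of the $\Ad$-representation. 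The core of the proof is therefore to unwind $\ad(A)(B) = (d\mu_B)_e(A)$, where $\mu_B\colon\GGG\to\Lie\GGG$ is the orbit map $g\mapsto \Ad(g)B$.

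The key computation I would carry out is the reduction to vector fields on $\GGG$ itself. Since $\xi\colon\Lie\GGG\to\VEC(\GGG)$ is an anti-homomorphism of Lie algebras (Proposition~\ref{Liealg-VF.prop}), and since by formula~\eqref{formulas-for-A} the action of $\Lie\GGG$ on the coordinate ring is given by $Af = -\xi_A f$, I expect that the action of $\Lie\GGG$ on tangent vectors at the fixed point $e$ can be expressed via a bracket of the corresponding vector fields. Concretely, the conjugation action $g\cdot h = ghg^{-1}$ fixes $e$, and one identifies $T_e\GGG$ with left-invariant vector fields (Theorem~\ref{cont-derivations-and-tangent-space.thm}). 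The plan is to evaluate $\Ad(g)B$ as the image of the left-invariant field $\delta_B$ under the differential of $\Int(g)$ at $e$, differentiate once more in $g$ at $g=e$ along $A$, and match the result against the Lie bracket $[\delta_A,\delta_B]$ of left-invariant vector fields, which by definition of the Lie algebra structure on $T_e\GGG$ equals $\delta_{[A,B]}$. Here I would lean on Corollary~\ref{tangent-rep-and-VF.cor}, which in the algebraic-group case already gives $A\delta_{x_0} = -[\xi_A,\delta]_{x_0}$ at a fixed point, and whose proof is purely local (reducing to a closed embedding into a representation) and so transfers to the ind-group setting where the relevant finiteness is supplied by the results of Section~\ref{action-ind-groups.subsec}.

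The main obstacle I anticipate is bookkeeping the signs and the two different roles played by left-invariant vector fields versus the orbit-map vector fields $\xi_A$. The construction of $\Lie\GGG$ uses \emph{left-invariant} derivations, while $\xi_A$ arises from the \emph{orbit map}, and the anti-homomorphism property means a bracket reversal occurs at exactly one place; getting $[A,B]$ rather than $[B,A]=-[A,B]$ requires care. I would resolve this by carrying out the computation entirely in terms of the continuous derivations of $\OOO(\GGG)$, writing $\delta_A$ via the comultiplication as in Remark~\ref{left-invariant-VF.rem}, so that $\ad(A)(B)$ becomes a symbol-level composition $A\otimes B$ applied through $\mu^*$, and then reading off $[A,B] = A\circ B - B\circ A$ directly from the definition of the Lie bracket on $T_e\GGG$ inherited from $\Lie\GGG$. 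A second, milder obstacle is ensuring that all the maps involved ($\Ad$, the orbit maps, the multiplication) are genuine ind-morphisms so that the differentials exist and the chain rule applies; but this is guaranteed because $\Ad$ is a representation by Theorem~\ref{rep-in-fixed-point.thm} and $\GGG$ is an ind-group, so the second differential of a morphism is legitimate. Once the sign is pinned down, the identity $\ad(A)(B)=[A,B]$ follows by comparing the two expressions.
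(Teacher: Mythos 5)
Your proposal is correct in substance and shares its main skeleton with the paper's proof: both reduce the identity to the fixed-point formula for the conjugation action at $e$, namely $\ad(A)(\delta_{e}) = -[\xi_{A},\delta]_{e}$ for $\delta\in\VEC(\GGG)$, which the paper establishes in the ind-setting as Proposition~\ref{rep-in-fixed-point.prop}. One caution here: your claim that Corollary~\ref{tangent-rep-and-VF.cor} ``transfers'' to ind-groups because its proof is purely local is too quick as stated -- the transfer is precisely Proposition~\ref{rep-in-fixed-point.prop}, and it requires the partial linearization Lemma~\ref{linearization.lem} (embedding $\VVV$ into a countable-dimensional vector space with a linear extension of the $\GGG_{k}$-action), not merely the finiteness results of Section~\ref{action-ind-groups.subsec}; fortunately the needed statement is available, so this is a matter of citation rather than a gap. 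Where you diverge from the paper is in the closing bookkeeping. The paper resolves the signs by the decomposition $\xi_{A}=\lambda_{A}-\rho_{A}$ of the conjugation vector field into the right-invariant field $\lambda_{A}$ and the left-invariant field $\rho_{A}$, together with $[\lambda_{A},\rho_{B}]=0$ and $[\lambda_{A},\lambda_{B}]=\lambda_{[B,A]}$ (Lemma~\ref{actions-on-G.lem}); then $\ad(A)(B) = -[\xi_{A},\lambda_{B}]_{e} = -(\lambda_{[B,A]})_{e} = [A,B]$. Your first sketch (``match against the bracket of left-invariant extensions'') is incomplete without exactly this decomposition, since $-[\xi_{A},\delta_{B}]_{e}$ is not a priori a bracket of two invariant fields of the same kind; note, by symmetry, that the left-invariant choice also works, via $-[\xi_{A},\rho_{B}] = [\rho_{A},\rho_{B}] = \rho_{[A,B]}$, but either way the mixed-bracket vanishing must be proved. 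Your fallback -- computing everything at the level of continuous derivations of $\OOO(\GGG)$ through the comorphism of $(g,h)\mapsto ghg^{-1}$, as in Remark~\ref{left-invariant-VF.rem} -- is a genuinely different and legitimate route: a Sweedler-style computation yields $\ad(A)(B)(f) = (A\star B - B\star A)(f)$ where $A\star B := (A\hotimes B)\circ\mu^{*}$ (using $A\circ\iota^{*} = -A$ for the inverse $\iota$), and evaluating $\delta_{A}\delta_{B}-\delta_{B}\delta_{A}$ at $e$ identifies this convolution commutator with $[A,B]$. This even makes the fixed-point formula dispensable, buying a self-contained argument; what it costs is care with the completed tensor product $\OOO(\GGG)\hotimes\OOO(\GGG)$ and a justification that the differential of the orbit map $g\mapsto\Ad(g)B$ may be evaluated function by function, points you should make explicit if you write this up.
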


As a consequence we get the following result.
\begin{corollary}\label{G-commut-LieG.cor}
Let $\GGG$ be a connected ind-group. Then $\GGG$ is commutative if and only if $\Lie\GGG$ is commutative.
\end{corollary}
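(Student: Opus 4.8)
The plan is to route everything through the adjoint representation $\Ad\colon \GGG \to \GL(\Lie\GGG)$ and its differential $\ad$, exploiting the two facts already established: that $\ad$ \emph{is} the Lie bracket (Proposition~\ref{adjoint.prop}) and that the center of a connected $\GGG$ is exactly the kernel of $\Ad$ (Corollary~\ref{center.cor}). These translate ``$\GGG$ commutative'' into ``$\Ad$ trivial'' and ``$\Lie\GGG$ commutative'' into ``$\ad$ trivial'', so that the corollary becomes the assertion that $\Ad$ is trivial if and only if its differential $\ad$ is trivial.

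For the easy direction I would argue as follows. If $\GGG$ is commutative, then every inner automorphism $\Int(g)$ is the identity of $\GGG$, hence $\Ad(g) = d(\Int g)_{e} = \id_{\Lie\GGG}$ for all $g$; thus $\Ad$ is the constant homomorphism and its differential $\ad$ vanishes. By Proposition~\ref{adjoint.prop}, $[A,B] = \ad(A)(B) = 0$ for all $A,B \in \Lie\GGG$, so $\Lie\GGG$ is commutative. Note that this implication requires no connectedness.

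For the converse, suppose $\Lie\GGG$ is commutative. Then, again by Proposition~\ref{adjoint.prop}, $\ad(A)(B) = [A,B] = 0$ for all $A,B$, i.e.\ $\ad = d\Ad$ is the trivial map. Now I would conclude that $\Ad$ itself is trivial, and then Corollary~\ref{center.cor} gives $Z(\GGG) = \Ker\Ad = \GGG$, so that $\GGG$ is commutative.

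The one point requiring care --- and the main obstacle --- is the passage from ``$d\Ad$ trivial'' to ``$\Ad$ trivial'' in the converse. Since $\Lie\GGG$ is in general only a $\kk$-vector space of countable (possibly infinite) dimension, $\GL(\Lie\GGG)$ is \emph{not} an ind-group, so one cannot invoke the homomorphism version Proposition~\ref{phi-dphi.prop} directly for $\Ad$. Instead one must use that $\Ad$ is a \emph{representation} of the connected ind-group $\GGG$ in the sense of Definition~\ref{representation-of-an-ind-group.def} --- which is precisely the content of the adjoint representation being the tangent representation at the fixed point $e$ of the conjugation action (Theorem~\ref{rep-in-fixed-point.thm}) --- and then apply Proposition~\ref{trivial-rep.prop}, the rigidity statement valid for representations on vector spaces of countable dimension, which yields that $\Ad$ is trivial as soon as $d\Ad$ is.
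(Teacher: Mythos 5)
Your proof is correct and takes essentially the same route as the paper's: both directions reduce, via Proposition~\ref{adjoint.prop}, to the equivalence between triviality of $\Ad$ and triviality of $\ad$ on a connected group, and your closing appeal to Corollary~\ref{center.cor} has the same content as the paper's final step (that corollary being itself an immediate consequence of Proposition~\ref{phi-dphi.prop}, which the paper applies to $\Int(g)$ directly). The one place you diverge is actually an improvement in precision: for the passage from $\ad = 0$ to $\Ad$ trivial the paper cites Proposition~\ref{phi-dphi.prop}, which strictly speaking does not apply since $\GL(\Lie\GGG)$ is not an ind-group when $\Lie\GGG$ is infinite-dimensional, whereas you correctly observe this obstacle and route instead through Theorem~\ref{rep-in-fixed-point.thm} and the representation-rigidity statement Proposition~\ref{trivial-rep.prop}.
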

\begin{proof}
(1) The group $\GGG$ is commutative if and only if $\Int \colon \GGG \to \Aut(\GGG)$ is trivial which implies that $\Ad\colon \GGG \to \GL(\Lie\GGG)$ is trivial, hence $\ad\colon \Lie\GGG \to \LLL(\Lie\GGG)$ is trivial. By Proposition~\ref{adjoint.prop}, the latter is equivalent to $\Lie\GGG$ being commutative. This proves one implication.
\ps
(2) Now assume that $\Lie\GGG$ is commutative, hence $\ad\colon \Lie\GGG \to \LLL(\Lie\GGG)$ is trivial, by Proposition~\ref{adjoint.prop}.
Since $\GGG$ is connected, this implies that $\Ad\colon \GGG \to \LLL(\Lie \GGG)$ is trivial, by Proposition~\ref{phi-dphi.prop}. Since $\Ad(g)$ is the differential of the homomorphism $\Int(g)\colon \GGG \to \GGG$ if follows again from Proposition~\ref{phi-dphi.prop} that $\Int(g)$ is trivial for all $g \in \GGG$, hence $\GGG$ is commutative.
\end{proof}

The proof of Proposition~\ref{adjoint.prop} needs some preparation. It will be given at the end of this Section~\ref{tangent-adjoint.subsec}. We know that every affine ind-variety $\VVV$ admits a closed immersion into a $\kk$-vector space $V$ of countable dimension (Theorem~\ref{embedding-into-Ainfty.thm}). We will see below in Proposition~\ref{G-ind-module.prop} that an affine ind-variety $\VVV$ with an action of an algebraic group $G$ is isomorphic to a closed $G$-stable ind-subvariety of a $G$-module $V$ of countable dimension. 

On the other hand, we cannot expect that an ind-variety $\VVV$ together with an action of an ind-group $\GGG$ admits a closed immersion into a representation of $\GGG$. This is even not true for an affine variety $X$ with a $\GGG$-action, see Proposition~\ref{no-embedding-into-representations.prop}.
However, we can proof the following result which will be sufficient for our purposes.

\begin{lemma}\label{linearization.lem}
Let $\GGG = \bigcup_{k}\GGG_{k}$ be an affine ind-group acting on an affine ind-variety $\VVV$ by $\phi\colon \GGG \times \VVV \to \VVV$. For every $k\geq 0$ we can find a closed immersion $\iota\colon \VVV \into V$ where $V$ is a $\kk$-vector space of countable dimension, and a morphism $\Phi\colon \GGG_{k}\to \LLL(V)$ such that the following diagram commutes
$$
\begin{CD}
\GGG_{k} \times V @>{(g,v)\mapsto\Phi(g,v)}>> V \\
@AA{\id\times\iota}A   @AA{\iota}A \\
\GGG_{k} \times \VVV  @>{(g,v)\mapsto\phi(g,v)}>>  \VVV
\end{CD}
$$
\end{lemma}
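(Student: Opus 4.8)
The plan is to reduce the statement to a linearization problem for a single algebraic group $\GGG_k$ acting on an affine ind-variety, and then to solve that problem using the standard technique of embedding into a module spanned by a finite-dimensional invariant subspace of the coordinate ring. First I would fix the filtration $\GGG = \bigcup_k \GGG_k$ and $\VVV = \bigcup_\ell \VVV_\ell$, and embed $\VVV$ as a closed ind-subvariety of a $\kk$-vector space $W$ of countable dimension via Theorem~\ref{embedding-into-Ainfty.thm}. The coordinate functions on $W$ restrict to a countable generating set of $\OOO(\VVV)$, and the action of $\GGG_k$ on $\VVV$ induces, by Proposition~\ref{locally-finite-action-on-Vec(X).prop} applied to the algebraic group $\GGG_k$ (or directly, since $\GGG_k$ is algebraic, by the classical statement that the $G$-action on $\OOO(\VVV)$ is locally finite and rational), a locally finite rational representation of $\GGG_k$ on $\OOO(\VVV)$.

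The key step is then purely in the spirit of the classical ``every affine $G$-variety embeds equivariantly into a $G$-module.'' I would take the countable generating set $f_1, f_2, \ldots$ of $\OOO(\VVV)$ coming from the coordinates on $W$, and for each $i$ let $U_i \subseteq \OOO(\VVV)$ be the $\GGG_k$-submodule generated by $f_i$; by local finiteness each $U_i$ is finite-dimensional and $\GGG_k$-stable, and the representation of $\GGG_k$ on it is rational. Set $U := \sum_i U_i$, a countable-dimensional $\GGG_k$-stable subspace that generates $\OOO(\VVV)$ and contains all the $f_i$. Dualizing, the inclusion $U \hookrightarrow \OOO(\VVV)$ gives a $\GGG_k$-equivariant closed immersion $\iota\colon \VVV \into V := U^\vee$ into a $\kk$-vector space of countable dimension (closedness because $U$ contains a generating set of $\OOO(\VVV)$, equivariance because $U$ is $\GGG_k$-stable). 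The linear action of $\GGG_k$ on $V$ dual to its rational action on $U$ defines the desired morphism $\Phi\colon \GGG_k \to \LLL(V)$, and the required square commutes by construction.

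The one point that needs care, and which I expect to be the main obstacle, is verifying that $\Phi\colon \GGG_k \times V \to V$ is genuinely a \emph{morphism of ind-varieties} with the correct degree-bookkeeping, rather than merely a map that is a morphism on each finite-dimensional $\GGG_k$-stable piece. Concretely, I must produce a filtration $V = \bigcup_m V_m$ by finite-dimensional $\GGG_k$-stable subspaces (dual to a filtration of $U$ by finite-dimensional $\GGG_k$-submodules, which exists since $U$ is a locally finite rational $\GGG_k$-module) and check that $\GGG_k \times V_m \to V_{m'}$ is a morphism of varieties for a suitable $m' = m'(m)$. This is exactly the type of verification carried out in the proof of Proposition~\ref{locally-finite-action-on-Vec(X).prop}, so I would invoke that argument directly: the matrix coefficients of the $\GGG_k$-action on each $V_m$ are regular functions on the algebraic group $\GGG_k$, whence $\Phi$ is an ind-morphism. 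Finally I would note that $\iota$ does not depend on $k$; since $\GGG_k \subseteq \GGG_{k+1}$, one builds $U$ once and for all as a $\GGG_{k_0}$-module for the given $k_0 = k$, and the resulting $\iota$ works. This completes the construction.

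Throughout I use only Theorem~\ref{embedding-into-Ainfty.thm}, the local finiteness from Proposition~\ref{locally-finite-action-on-Vec(X).prop}, and the elementary duality between generating $\GGG_k$-submodules of $\OOO(\VVV)$ and closed equivariant immersions into $\kk$-vector spaces; no new input beyond the classical linearization argument for algebraic group actions is required, the only subtlety being that ``finite-dimensional'' is replaced by ``countable-dimensional'' and that $\LLL(V)$ is not itself an ind-variety, so $\Phi$ is recorded as a family $\GGG_k \times V \to V$ rather than as a homomorphism $\GGG_k \to \GL(V)$ of ind-groups.
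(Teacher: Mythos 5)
There is a genuine gap, and it is fatal to the approach as written: you treat $\GGG_{k}$ as an algebraic \emph{group}. In an ind-group $\GGG=\bigcup_{k}\GGG_{k}$ the filtration pieces $\GGG_{k}$ are merely closed algebraic subsets, not subgroups (if they were, $\GGG$ would be nested, a very special situation; for $\Aut(\An)$ the pieces of bounded degree are not closed under composition). Consequently the notions your construction rests on --- ``the $\GGG_{k}$-submodule $U_{i}$ generated by $f_{i}$'', ``$\GGG_{k}$-stable subspace'', ``locally finite rational representation of $\GGG_{k}$'', ``$\GGG_{k}$-equivariant closed immersion'' --- are not defined. This is exactly why the lemma is stated with a morphism $\Phi\colon\GGG_{k}\to\LLL(V)$ (a family of linear maps) rather than a representation: no group structure on $\GGG_{k}$ is available or used. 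Moreover, even if you replace $\GGG_{k}$ by a genuine algebraic subgroup $G\subseteq\GGG$, the second pillar of your argument fails: the action of $G$ on $\OOO(\VVV)$ for an affine \emph{ind-}variety $\VVV$ is in general not locally finite, and Proposition~\ref{locally-finite-action-on-Vec(X).prop} concerns the coordinate ring of an affine \emph{variety} $X$, not of an ind-variety. For a counterexample, let $\kst$ act on $\VVV=\AA^{\infty}$ by $t\cdot(x_{1},x_{2},x_{3},\ldots)=(tx_{1},t^{2}x_{2},t^{3}x_{3},\ldots)$ and take $f\in\OOO(\AA^{\infty})=\varprojlim\kk[x_{1},\ldots,x_{n}]$ with $f|_{\AA^{n}}=x_{1}+\cdots+x_{n}$; the linear span of the orbit of $f$ is infinite-dimensional. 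So there is no way to produce your generating locally finite module $U$, and with it the whole dualization collapses.

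What the paper does instead keeps your duality idea but replaces local finiteness by a level-by-level inductive bookkeeping that never invokes any group structure. For fixed $k$ and each $m$ one chooses $n$ with $\phi(\GGG_{k}\times\VVV_{m})\subseteq\VVV_{n}$ and finite-dimensional generating subspaces $M\subset\OOO(\VVV_{m})$, $L\subset\OOO(\VVV_{n})$ with $\phi^{*}(L)\subseteq\OOO(\GGG_{k})\otimes M$; dualizing gives varieties-level maps $\GGG_{k}\times V_{m}\to V_{n}$, linear in the second variable, with $V_{m}:=M^{\vee}$, $V_{n}:=L^{\vee}$. The real work, which your proposal is silent about, is the induction making these choices compatible as $m$ grows: one needs $L_{1}\subset\OOO(\VVV_{n_{1}})$, $M_{1}\subset\OOO(\VVV_{m+1})$ that generate, surject onto $L$ and $M$, satisfy $\phi^{*}(L_{1})\subseteq\OOO(\GGG_{k})\otimes M_{1}$, and whose intersections with the kernels of the restriction maps generate those kernels --- the last condition is what guarantees $\VVV_{m}=\VVV_{m+1}\cap V_{m}$, so that the limit $\iota\colon\VVV\into V:=\varinjlim V_{m}$ is a closed immersion of ind-varieties and $\varinjlim V_{n_{i}}$ can be identified with $V$. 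Your remark that the main obstacle is checking $\Phi$ is an ind-morphism points at the wrong difficulty (that part is automatic once the finite-dimensional squares exist); the actual difficulty is the compatibility of the finite-dimensional choices along the filtration, which substitutes for the nonexistent locally finite module structure.
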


\begin{proof}
(1) Given $k\geq 1$, we find for any $m\geq 1$ an $n_{0}\geq 1$ such that $\phi(\GGG_{k}\times\VVV_{m}) \subseteq \VVV_{n_{0}}$:
$$
\phi \colon \GGG_{k}\times \VVV_{m} \to \VVV_{n_{0}}.
$$
For the comorphism $\phi^{*}\colon \OOO(\VVV_{n_{0}}) \to \OOO(\GGG_{k})\otimes \OOO(\VVV_{m})$ we can find finite-dimensional sub-vector spaces
$L\subset \OOO(\VVV_{n_{0}})$ and $M \subset \OOO(V_{m})$ which generate the algebra and such that 
$\phi^{*}(L) \subset \OOO(\GGG_{k})\otimes M$. Setting $V_{n_{0}}:=L^{\vee}$ and $V_{m}:=M^{\vee}$ we get the  
following commutative diagram:
$$
\begin{CD}
\GGG_{k}\times V_{m} @>{\Phi_{m}}>>  V_{n_{0}} \\
@AA{\subseteq}A @AA{\subseteq}A \\
\GGG_{k}\times \VVV_{m} @>>{\phi_{m}}> \VVV_{n_{0}}
\end{CD}
$$
where for all $g\in\GGG_{k}$, the map $V_{m} \to V_{n}$, $v \mapsto \Phi_{m}(g,v)$, is linear, i.e. $\Phi_{m}$ defines a morphism $\tilde\Phi_{}\colon \GGG_{k} \to \LLL(V_{m},V_{n})$.
\ps
(2) Assume that $\phi(\GGG_{k}\times \VVV_{m+1})\subseteq \VVV_{n_{1}}$. Then we can find finite-dimensional $\kk$-vector spaces $L_{1}\subset \OOO(\VVV_{n_{1}})$ and $M_{1}\subset \OOO(\VVV_{m+1})$ with the following properties.
\be
\item[(a)] $L_{1}$ generates $\OOO(\VVV_{n_{1}})$ and $M_{1}$ generates $\OOO(\VVV_{m+1})$;
\item[(b)] $L_{1}\cap \Ker p_{n}$ generates the kernel of $p_{n}\colon \OOO(\VVV_{n_{1}})\to\OOO(\VVV_{n})$, and 
$M_{1}\cap \Ker q_{m}$ generates the kernel of $q_{m}\colon \OOO(\VVV_{m+1})\to\OOO(\VVV_{m})$;
\item[(c)] $\phi(L_{1}) \subset \OOO(\GGG_{k})\otimes M_{1}$;
\item[(d)] $L_{1}$ maps surjectively onto $L$, and $M_{1}$ maps surjectively onto $M$.
\ee
Setting $V_{n_{1}}:=M_{1}^{\vee}$ and $V_{n_{1}}:=L_{1}^{\vee}$ we get from (1) the following commutative diagram
\begin{center}
\begin{tikzcd}
\GGG_{k}\times V_{m} \arrow[ddd,hook]\arrow[rrr, "\Phi_{m}"]  &&& V_{n} \arrow[ddd,hook]  \\
& \GGG_{k}\times \VVV_{m} \arrow[r, "\phi_{m}"] \arrow[d,hook]\arrow[lu,hook] & \VVV_{n} \arrow[d,hook]\arrow[ru,hook] & \\
& \GGG_{k}\times \VVV_{m+1} \arrow[ld,hook] \arrow[r, "\phi_{m+1}"] & \VVV_{n_{1}}\arrow[rd,hook] & \\
\GGG_{k}\times V_{m+1} \arrow[rrr, "\Phi_{m+1}"]  &&& V_{n_{1}}
\end{tikzcd}
\end{center}
where $\VVV_{n}=\VVV_{n_{1}}\cap V_{n}$ and $\VVV_{m}=\VVV_{m+1}\cap V_{m}$, and $V_{m}\subset V_{m+1}$ and $V_{n} \subset V_{n_{1}}$ are linear subspaces. It follows that $\GGG_{k}\times \VVV = \bigcup_{m}(\GGG_{k}\times \VVV_{m})$ embeds into $\GGG_{k}\times V$ as a closed ind-subvariety where $V := \varinjlim_{m} V_{m}\simeq \AA^{\infty}$ is a $\kk$-vector space of countable dimension.  The same holds for  $\varinjlim_{i}V_{n_{i}}$, and we can identify $\varinjlim_{i}V_{n_{i}}$ with $V$. The claim follows.
\end{proof}

The lemma immediately implies the following result.

\begin{proposition}\label{G-ind-module.prop}
Let $G$ be an algebraic group acting on an affine ind-variety $\VVV$. Then $\VVV$ is $G$-isomorphic to a 
$G$-stable closed ind-subvariety of a $G$-module $V$.
\end{proposition}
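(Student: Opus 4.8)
The plan is to deduce Proposition~\ref{G-ind-module.prop} directly from Lemma~\ref{linearization.lem} by running the construction of that lemma simultaneously over all $k$ and assembling the resulting linear representations into a single honest $G$-module. First I would recall the setup: $G$ is an \emph{algebraic} group, so it is itself one of the pieces $\GGG_k$ of its own (trivial) filtration; in other words, applying Lemma~\ref{linearization.lem} with $\GGG = G$ and any fixed $k$ (say $k$ large enough that $\GGG_k = G$) yields a closed immersion $\iota\colon \VVV \into V$ into a countable-dimensional $\kk$-vector space $V$ together with a morphism $\Phi\colon G \to \LLL(V)$ making the action square commute, i.e. $\iota(g\cdot v) = \Phi(g)(\iota(v))$ for all $g \in G$, $v \in \VVV$.

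The key point I must verify is that $\Phi$ is actually a \emph{linear representation} of $G$ in the sense of Definition~\ref{representation-of-an-ind-group.def}, i.e. that $\Phi$ is a group homomorphism into $\GL(V)$ whose induced linear action $G \times V \to V$ is a morphism, and not merely a morphism $G \to \LLL(V)$ that is linear in the second variable. The morphism property of the action $G \times V \to V$ is built into the lemma. For the homomorphism property I would argue as follows. On the closed ind-subvariety $\iota(\VVV) \subseteq V$, the maps $\Phi(g_1 g_2)$ and $\Phi(g_1)\Phi(g_2)$ agree, since both compute the image under $\iota$ of $g_1 g_2 \cdot v = g_1 \cdot (g_2 \cdot v)$ using the $G$-action on $\VVV$; similarly $\Phi(e)$ restricts to the identity on $\iota(\VVV)$. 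Because $\iota(\VVV)$ is closed and contains a spanning set of $V$ (indeed $V = \varinjlim_m V_m$ with each $V_m = M^\vee$ the dual of a generating space of $\OOO(\VVV_m)$, so $\iota(\VVV)$ is not contained in any proper closed linear subspace), and because $\Phi(g_1g_2)$ and $\Phi(g_1)\Phi(g_2)$ are linear, they must coincide on all of $V$. Hence $\Phi(g_1g_2) = \Phi(g_1)\Phi(g_2)$ and $\Phi(e) = \id_V$, so each $\Phi(g)$ is invertible with $\Phi(g)^{-1} = \Phi(g^{-1})$, giving a homomorphism $\Phi\colon G \to \GL(V)$. Together with the morphism property this makes $V$ a $G$-module.

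Finally, $\iota\colon \VVV \into V$ is a closed immersion by Lemma~\ref{linearization.lem}, and the commuting square says precisely that $\iota$ is $G$-equivariant, so $\iota(\VVV)$ is a $G$-stable closed ind-subvariety of the $G$-module $V$ and $\iota$ is a $G$-isomorphism onto it. This proves the proposition.

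The main obstacle I anticipate is the verification that $\iota(\VVV)$ spans $V$ densely enough (equivalently, that its linear span is all of $V$) so that the identity $\Phi(g_1g_2) = \Phi(g_1)\Phi(g_2)$ propagates from $\iota(\VVV)$ to $V$. This rests on tracing through the construction of $V$ in the proof of Lemma~\ref{linearization.lem}: the dual spaces $V_m = M^\vee$ are built from subspaces $M \subset \OOO(\VVV_m)$ that \emph{generate} the coordinate algebra, which forces the evaluation/coordinate functionals coming from points of $\VVV_m$ to separate points and to span $V_m^\vee$, whence $\iota(\VVV_m)$ spans $V_m$. I would either invoke this spanning property explicitly from the construction, or alternatively sidestep it by noting that two linear endomorphisms of $V$ agreeing on a closed ind-subvariety that is not contained in any affine hyperplane through the relevant points must agree; making this sidestep fully rigorous is the delicate part, and the cleanest route is simply to record the spanning property as part of the output of Lemma~\ref{linearization.lem}.
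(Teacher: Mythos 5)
Your proof is correct and follows exactly the paper's route: the paper deduces Proposition~\ref{G-ind-module.prop} directly from Lemma~\ref{linearization.lem} (applied with $\GGG = G$, where the filtration is trivial), stating only that "the lemma immediately implies the following result." The verification you supply — that $\Phi(g_1g_2)=\Phi(g_1)\Phi(g_2)$ propagates from $\iota(\VVV)$ to all of $V$ because the evaluation functionals image of each $\VVV_m$ spans $V_m=M^\vee$ (no nonzero $f\in M\subseteq\OOO(\VVV_m)$ vanishes on all of $\VVV_m$) — is precisely the routine detail the paper leaves implicit, and your tracing of it through the construction of Lemma~\ref{linearization.lem} is accurate.
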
 

Now we generalize Corollary~\ref{tangent-rep-and-VF.cor} to the action of affine ind-group $\GGG$ on an affine ind-varieties $\VVV$. Assume that there is a fixed point $v_{0}\in \VVV$, and denote by $\tau_{v_{0}}\colon \GGG \to \GL(T_{v_{0}}\VVV)$ the tangent representation. The corresponding linear action of $\Lie\GGG$ on $T_{v_{0}}$ is given by $(A,w) \mapsto Aw:= d\tau_{v_{0}}(A)(w)$.  As before, $\xi_{A}\in\VEC(\VVV)$ denotes the vector field associated to $A \in \Lie\GGG$.

\begin{proposition}\label{rep-in-fixed-point.prop}
For $A \in \Lie\GGG$ and $\delta \in \VEC(\VVV)$ we have  
$$
A \delta_{v_{0}} = -[\xi_{A},\delta]_{v_{0}}.
$$
\end{proposition}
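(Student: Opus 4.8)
The plan is to reduce to the linear situation exactly as in the proof of Corollary~\ref{tangent-rep-and-VF.cor}, but using the linearization of Lemma~\ref{linearization.lem} in place of the equivariant embedding into a $G$-module (Proposition~\ref{G-ind-module.prop}), which is not available for a genuine ind-group.

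First I would fix $A\in\Lie\GGG$ and choose $k$ with $A\in T_{e}\GGG_{k}$. Applying Lemma~\ref{linearization.lem} for this $k$ yields a closed immersion $\iota\colon\VVV\into V$ into a $\kk$-vector space $V$ of countable dimension together with a family $\Phi\colon\GGG_{k}\times V\to V$, linear in the second variable, such that $\iota(gx)=\Phi(g,\iota(x))$ for all $g\in\GGG_{k}$ and $x\in\VVV$. Since $v_{0}$ is a fixed point, $\Phi(g,\cdot)$ fixes $p:=\iota(v_{0})$ for every $g\in\GGG_{k}$; after translating $\iota$ by $-p$ (which does not change $\Phi$, precisely because $p$ is fixed) I may assume $\iota(v_{0})=0$. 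Set $\WWW:=\iota(\VVV)$, a closed ind-subvariety, and let $M:=d\Phi_{e}(A)\in\LLL(V)$ be the linear endomorphism obtained by differentiating the family at $e$ in the direction $A$; it is an ind-morphism $V\to V$, hence a \emph{linear vector field} on $V$ in the sense of Section~\ref{linear-case.subsec}, with $M(0)=0$.

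The three geometric facts I would then check are all obtained by differentiating the intertwining relation $\iota\circ g=\Phi(g)\circ\iota$ for $g\in\GGG_{k}$. Differentiating $g\mapsto\Phi(g,\iota(x))=\iota(gx)$ at $e$ along $A$, and using that the evaluation map $\LLL(V)\to V$ is linear, shows that $d\iota_{x}(\xi_{A}(x))=M(\iota(x))$ for all $x$; hence $M$ is tangent to $\WWW$ and its restriction to $\WWW$ is $\iota_{*}\xi_{A}$. Differentiating the identity of morphisms $g\mapsto d\iota_{v_{0}}\circ\tau_{v_{0}}(g)=\Phi(g)\circ d\iota_{v_{0}}$ at $e$ along $A$ (where $\tau_{v_{0}}$ is the tangent representation of Theorem~\ref{rep-in-fixed-point.thm}) gives the key commutation
\[
d\iota_{v_{0}}\circ d\tau_{v_{0}}(A)=M\circ d\iota_{v_{0}}\colon T_{v_{0}}\VVV\to V .
\]
Next I would choose, using the surjectivity of the canonical map $\VEC_{\WWW}(V)\to\VEC(\WWW)$ recorded before Definition~\ref{xi_A.def}, an extension $\tilde\delta\in\VEC_{\WWW}(V)$ of $\iota_{*}\delta$; then $\tilde\delta_{0}=d\iota_{v_{0}}(\delta_{v_{0}})$. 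Since the vector fields tangent to $\WWW$ form a Lie subalgebra of $\VEC(V)$ on which restriction $\VEC_{\WWW}(V)\to\VEC(\WWW)$ is a Lie-algebra homomorphism (tangency means that the ideal of $\WWW$ is preserved, and this is stable under the bracket), and since $M$ and $\tilde\delta$ restrict to $\iota_{*}\xi_{A}$ and $\iota_{*}\delta$, I obtain $d\iota_{v_{0}}\bigl([\xi_{A},\delta]_{v_{0}}\bigr)=[M,\tilde\delta]_{0}$.

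Finally everything comes down to the infinite-dimensional analogue of Lemma~\ref{linear-VF.lem}, namely $M(\tilde\delta_{0})=-[M,\tilde\delta]_{0}$ for the linear vector field $M$ and an arbitrary $\tilde\delta\in\VEC(V)$ at the point $0$. This is the same coordinate computation as in Lemma~\ref{linear-VF.lem}: writing $M(x_{k})=\sum_{j}a_{kj}x_{j}$ and $\tilde\delta=\sum_{i}f_{i}\partial_{x_{i}}$, the terms $M(f_{k})(0)$ vanish because the linear forms $M(x_{k})$ vanish at $0$, while $\tilde\delta(M(x_{k}))(0)=\sum_{i}a_{ki}f_{i}(0)$ is the $k$-th coordinate of $M(\tilde\delta_{0})$; all sums are finite since $\tilde\delta_{0}\in V$ is finitely supported and $M$ maps each finite-dimensional piece of $V$ into another. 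Combining the displayed commutation with these identities gives
\[
d\iota_{v_{0}}(A\,\delta_{v_{0}})=M(\tilde\delta_{0})=-[M,\tilde\delta]_{0}=d\iota_{v_{0}}\bigl(-[\xi_{A},\delta]_{v_{0}}\bigr),
\]
and since $d\iota_{v_{0}}$ is injective (because $\iota$ is a closed immersion) the proposition follows. The main obstacle is not any single step but the bookkeeping of the infinite-dimensional geometry: verifying that $M$ really is a continuous linear vector field, that the bracket of two vector fields tangent to $\WWW$ restricts correctly, and that the coordinate computation of the linear case survives the passage to $\AA^{\infty}$.
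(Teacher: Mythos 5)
Your proof is correct and takes essentially the same route as the paper's: linearize via Lemma~\ref{linearization.lem}, reduce to $v_{0}=0$, identify $\xi_{A}$ with the restriction to $\VVV$ of the linear vector field $M=d\tilde\Phi_{e}(A)$, extend $\delta$ to a vector field on $V$ using the surjectivity of $\VEC_{\WWW}(V)\to\VEC(\WWW)$, and conclude by the computation of Lemma~\ref{linear-VF.lem}. The only difference is that you spell out details the paper leaves implicit --- the intertwining $d\iota_{v_{0}}\circ d\tau_{v_{0}}(A)=M\circ d\iota_{v_{0}}$ identifying the tangent action with $M$, the compatibility of Lie brackets under restriction to $\WWW$, and the finiteness of the sums in the countable-dimensional version of Lemma~\ref{linear-VF.lem} --- which is a tightening of the same argument rather than a different approach.
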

\begin{proof}
Choose a $k\geq 1$ such that $A \in T_{e}\GGG_{k}$. By Lemma~\ref{linearization.lem} there is a closed immersion $\VVV \into V$ into a $\kk$-vector space $V$ of countable dimension and a ``linear action'' $\tilde\Phi\colon \GGG_{k}\to \LLL(V)$ such that the following diagram commutes where $\Phi(g,v) := \tilde\Phi(g)(v)$:
$$
\begin{CD}
\GGG_{k} \times V @>{\Phi}>> V \\
@A{\subseteq}A{\id\times\iota}A   @A{\subseteq}A{\iota}A \\
\GGG_{k} \times \VVV  @>{\phi}>>  \VVV
\end{CD}
$$
The vector field $\xi_{A}\in\VEC(\VVV)$  is given in the following way. For $v \in \VVV$ we define the orbit map $\mu_{v}\colon \GGG_{k} \to \VVV$, $g\mapsto g v = \phi(g,v)$, and get $(\xi_{A})_{v}= d\mu_{v}(A)$. This shows that we also get a linear vector field $\tilde\xi_{A}\in \LLL(V) \subset \VEC(V)$ by the same construction, namely $\tilde\xi_{A}=d\tilde\Phi_{e}(A)$. Clearly, $\VVV$ is invariant under $\tilde\xi_{A}$ and $\xi_{A} = \tilde\xi_{A}\big|_{\VVV}$. 

Now let $v_{0}\in\VVV$ be a fixed point. We can assume that $v_{0}=0 \in V$.
If $\tilde\delta\in\VEC(V) = \Mor(V,V)$, then we get $A\tilde\delta_{0} = -[\tilde\xi_{A},\delta]_{0}$, because $\tilde\xi_{A}$ is linear, see Lemma~\ref{linear-VF.lem}. If $\VVV$ is invariant under $\tilde\delta$ and $\delta:=\tilde\delta\big|_{\VVV}$, then we get $A\delta_{0} = [\xi_{A},\delta]_{0}$. Now the claim follows because every vector field on $\VVV$ is a restriction from a vector field of $V$.
\end{proof}

Finally, let us recall the basic property of the Lie algebra structure of $T_{e}\GGG$, cf. Section~\ref{Liealgebra.subsec}. For this consider the action of $\GGG$ on itself by \itind{left multiplication}: $(g, h)\mapsto g\cdot h$, and denote by $\lambda_{A}\in \VEC(\GGG)$ the vector field corresponding to $A \in T_{e}\GGG$ with respect to this action. Similarly, we consider the action by \itind{right multiplication}, $(g, h)\mapsto h\cdot g$, and denote by $\rho_{A} \in \VEC(\GGG)$ the vector field corresponding to $A \in T_{e}\GGG$. Then we have the following result.
Note that the vector fields $\lambda_{A}$ are right-invariant and the $\rho_{B}$ are left-invariant (cf. Section~\ref{Liealgebra.subsec}).

\begin{lemma}\label{actions-on-G.lem} Let $A,B \in T_{e}\GGG$.
\be
\item $(\lambda_{A})_{e}=A=(\rho_{A})_{e}$;
\item $[\rho_{A},\rho_{B}] = \rho_{[A,B]}$;
\item $[\lambda_{A},\lambda_{B}] = \lambda_{[B,A]}$;
\item $[\lambda_{A},\rho_{B}]=0$;
\item For the action of $\GGG$ on itself by conjugation, we have $\xi_{A}=\lambda_{A}-\rho_{A}$.
\ee
\end{lemma}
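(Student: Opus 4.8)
The plan is to unwind the definitions so that $\lambda_A$ and $\rho_A$ become explicit continuous derivations of $\OOO(\GGG)$, and then to prove the five identities in the order (1), (2)--(3), (4), (5). Writing $\mu\colon\GGG\times\GGG\to\GGG$ for the multiplication and viewing $A\in\Lie\GGG=T_e\GGG$ as a continuous derivation $A\colon\OOO(\GGG)\to\kk$ in $e$, Definition~\ref{xi_A.def} applied to the two multiplication actions gives, exactly as in Remark~\ref{left-invariant-VF.rem}, the formulas $\rho_A=(\id\hotimes A)\circ\mu^{*}$ and $\lambda_A=(A\hotimes\id)\circ\mu^{*}$; concretely $\rho_A(f)(x)=A\big(g\mapsto f(xg)\big)$ and $\lambda_A(f)(x)=A\big(g\mapsto f(gx)\big)$. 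A direct check that these commute with $\lambda_{g}^{*}$, resp. $\rho_{g}^{*}$, shows that $\rho_A$ is left-invariant and $\lambda_A$ is right-invariant, and evaluating both at $x=e$ gives $(\lambda_A)_e=A=(\rho_A)_e$, which is (1).

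For (2) I would simply invoke the construction of the bracket on $T_e\GGG$: by Theorem~\ref{cont-derivations-and-tangent-space.thm} and Definition~\ref{Lie-algebra-of-an-affine-ind-group.def}, $\Lie\GGG$ is the space of left-invariant continuous derivations, and the bracket on $T_e\GGG$ is transported from the commutator along $\delta\mapsto\delta_e$. Since $\rho_A$ is the unique left-invariant field with value $A$ at $e$, and the commutator of two left-invariant derivations is again left-invariant, $[\rho_A,\rho_B]$ is left-invariant with value $[A,B]$ at $e$, whence $[\rho_A,\rho_B]=\rho_{[A,B]}$. For (3) I would pass through the inverse map $\iota\colon\GGG\to\GGG$, $g\mapsto g^{-1}$, an automorphism of the ind-variety $\GGG$ with $\iota^{2}=\id$ and $d\iota_e=-\id$; its action on vector fields $\delta\mapsto\iota(\delta)=(\iota^{*})^{-1}\circ\delta\circ\iota^{*}$ is a homomorphism of the Lie algebra $\VEC(\GGG)$, being conjugation by the algebra automorphism $(\iota^{*})^{-1}$. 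Using $\iota\circ\lambda_h=\rho_{h^{-1}}\circ\iota$ (equivalently, the explicit formula above) I expect $\iota(\rho_A)=\lambda_{-A}$; applying $\iota$ to (2) then gives $[\lambda_{-A},\lambda_{-B}]=\iota(\rho_{[A,B]})=\lambda_{-[A,B]}$, and bilinearity yields $[\lambda_A,\lambda_B]=\lambda_{-[A,B]}=\lambda_{[B,A]}$.

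The heart of the lemma is (4), which I would prove by computing the two compositions of derivations directly. From the formulas above,
\[
(\lambda_A\circ\rho_B)(f)(x)=A_{g}\big[B_{g'}[f(gxg')]\big],\qquad (\rho_B\circ\lambda_A)(f)(x)=B_{g'}\big[A_{g}[f(gxg')]\big],
\]
where $A_g$ means $A$ applied in the variable $g$ and $B_{g'}$ means $B$ applied in $g'$. Both are values of $A\hotimes\id\hotimes B$ on the iterated coproduct $(\mu^{*}\hotimes\id)\mu^{*}(f)=(\id\hotimes\mu^{*})\mu^{*}(f)\in\OOO(\GGG)\hotimes\OOO(\GGG)\hotimes\OOO(\GGG)$ (coassociativity of $\mu^{*}$), with $A$ acting on the first and $B$ on the third factor. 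Acting on independent factors, they commute, so the two expressions coincide and $[\lambda_A,\rho_B]=0$. The delicate point I anticipate is making this commutation rigorous inside the completed tensor product: I must verify that $A$ and $B$, as continuous derivations in $e$ on distinct factors, induce commuting operators and that continuity permits the interchange. This is the only genuinely non-formal step.

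Finally, for (5) I would differentiate the conjugation orbit map $\mu_h\colon\GGG\to\GGG$, $g\mapsto ghg^{-1}$, at $g=e$. Writing $g\mapsto ghg^{-1}$ as the product of $g\mapsto gh$ and $g\mapsto g^{-1}$ and applying the Leibniz rule for the differential of a product of two maps into $\GGG$, together with $d\iota_e=-\id$, I obtain $(d\mu_h)_e A=(d\rho_h)_e A-(d\lambda_h)_e A=(\lambda_A)_h-(\rho_A)_h$, using the identifications $(\lambda_A)_h=(d\rho_h)_eA$ and $(\rho_A)_h=(d\lambda_h)_eA$ recorded in the first paragraph; hence $\xi_A=\lambda_A-\rho_A$. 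Thus parts (1), (2), and (5) are essentially formal, (3) reduces to the behaviour of $\iota$, and only (4) requires the careful tensor-product bookkeeping.
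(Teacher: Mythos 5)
Your proposal is correct, and parts (1)--(3) follow the paper's line exactly: the paper proves (1) from the definition and refers for (2), (3) to the discussion of left-invariant derivations in Section~\ref{Liealgebra.subsec}, which is precisely your argument that $\rho_A$ is the unique left-invariant field with value $A$ at $e$; your use of the inversion $\iota$ with $\iota(\rho_A)=\lambda_{-A}$ is a clean way to make the sign in (3) explicit, which the paper leaves implicit. Where you genuinely diverge is in (4) and (5). For (4) the paper's entire proof is ``this is clear, because the two actions commute''; your coassociativity computation, with $A$ and $B$ acting on the outer factors of $(\mu^{*}\hotimes\id)\mu^{*}(f)\in\OOO(\GGG)\hotimes\OOO(\GGG)\hotimes\OOO(\GGG)$, is a rigorous rendering of that one-liner, and you correctly isolate the only delicate point, namely interchanging the two evaluation-derivations across the completed tensor product. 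For (5) the paper takes a different and slicker route: it considers the action of $\GGG\times\GGG$ on $\GGG$ by $((g_1,g_2),h)\mapsto g_1 h g_2$, observes $\xi_{(A,0)}=\lambda_A$ and $\xi_{(0,B)}=\rho_B$, hence $\xi_{(A,B)}=\lambda_A+\rho_B$, and then restricts along the antidiagonal embedding $g\mapsto (g,g^{-1})$, whose differential $A\mapsto (A,-A)$ immediately gives $\xi_A=\lambda_A-\rho_A$; this exploits the functoriality of $A\mapsto\xi_A$ under group homomorphisms and avoids any product rule. Your version instead differentiates the conjugation orbit map directly via a Leibniz rule for $dm_{(h,e)}$ together with $d\iota_e=-\id$ and the identifications $(\lambda_A)_h=(d\rho_h)_eA$, $(\rho_A)_h=(d\lambda_h)_eA$; this is more elementary and self-contained, but it requires you to first establish the bilinear decomposition of the differential of multiplication at an arbitrary point $(h,e)$ of an ind-group -- a routine extension of Remark~\ref{formula.rem}, yet one the paper never has to make, whereas the paper's product-action trick also packages (4) and (5) in one uniform construction.
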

\begin{proof}
(1) follows from the definition, and for (2) and (3) see Section~\ref{Liealgebra.subsec} where we 
discuss the left-invariant vector fields $\rho_{A}$.
\ps
(4) This is clear, because the two actions commute.
\ps
(5) Consider the action of $\GGG \times \GGG$ on $\GGG$ given by $(( g_{1}, g_{2}), h)\mapsto  g_{1}\cdot h\cdot g_{2}$. Then the we find $\xi_{(A,0)}=\lambda_{A}$, $\xi_{(0,B)}= \rho_{B}$, and so $\xi_{(A,B)}=\lambda_{A}+\rho_{B}$. The claim follows by embedding $\GGG \into \GGG \times \GGG$, $ g\mapsto ( g, g^{-1})$.
\end{proof}
\begin{proof}[Proof of Proposition~\ref{adjoint.prop}]
For the action of $\GGG$ on $\GGG$ by conjugation Proposition~\ref{rep-in-fixed-point.prop} implies that 
$$
\ad (A)(\delta_{e}) = -[\xi_{A},\delta]_{e} \text{ for }\delta\in\VEC(\GGG) \text{ and }A \in \Lie\GGG.
$$
Since $\xi_{A}=\lambda_{A}-\rho_{A}$ and $\rho_{B}$ commutes with all $\lambda_{C}$, we find
$$
A(B) = A(\lambda_{B})_{e} = -[\xi_{A},\lambda_{B}]_{e} = -[\lambda_{A},\lambda_{B}]_{e} =
 -(\lambda_{[B,A]})_{e}=(\lambda_{[A,B]})_{e} = [A,B].
$$
The claim follows.
\end{proof}

\begin{remark}
Let us point out here that the results above are well known for algebraic groups. However, the proofs for ind-groups needed some new ideas since the classical proofs do not carry over. E.g. we cannot prove that the center of the Lie algebra of $\GGG$ is equal to the Lie algebra of the center of $\GGG$, and it is even not true that the Lie algebra of a strict closed subgroup of a connected ind-group $\GGG$ is strictly contained in $\Lie\GGG$, see Theorem~\ref{closed-subgroup-with-same-Liealgebra.thm}.
\end{remark}

\ps
\subsection{Integration of locally finite vector fields}\label{Integration-of-VF.subsec}
Recall that we have a canonical embedding $\xi\colon \Lie\Aut(X) \into \VEC(X)$, see Proposition~\ref{Liealg-VF.prop}. A vector field $\delta$ is called \itind{locally finite} if $\delta$ considered as a linear endomorphism $\delta\in\LLL(\OOO(X))$ is locally finite (see Definition~\ref{locally-finite.def}).

\begin{proposition}\label{Integration-of-VF.prop}
Let $\delta \in \VEC(X)$ be a locally finite vector field, and let $\delta = \delta_{s}+\delta_{n}$ be its additive Jordan decomposition in $\LLL(\OOO(X))$. Then 
\be
\item 
$\delta_{s},\delta_{n}$ are both locally finite vector fields. 
\item 
There is a unique minimal torus $T \subseteq \Aut(X)$ such that $\delta_{s}\in \xi(\Lie T)$.
\item 
If $\delta_{n}\neq 0$, then
there is a unique 1-dimensional unipotent subgroup $U \subseteq \Aut(X)$ such $\delta_{n}\in \xi(\Lie U)$.
\item 
$T$ and $U$ commute.
\ee
\end{proposition}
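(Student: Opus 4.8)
The plan is to reduce everything to the linear situation on the coordinate ring $\OOO(X)$, where the statements become the classical facts about Jordan decomposition and one-parameter subgroups, and then to transport these back to $\Aut(X)$ via the embedding $\xi\colon \Lie\Aut(X)\into\VEC(X)$ of Proposition~\ref{Liealg-VF.prop} together with the key structural result Theorem~\ref{AutX-locally-closed-in-EndX.thm} that $\Aut(X)$ is locally closed in $\End(X)$. The governing idea is that a locally finite vector field $\delta$ acts as a locally finite linear operator on $\OOO(X)$, so $\delta$ preserves a directed system of finite-dimensional subspaces, and on each of these the usual algebraic-group machinery (Lemma~\ref{exp-basics.lem}, Proposition~\ref{exponential-for-linear-algebraic-groups.prop}) applies.

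First I would prove (1). Since $\delta$ is locally finite on $\OOO(X)$, its additive Jordan decomposition $\delta=\delta_s+\delta_n$ in $\LLL(\OOO(X))$ is defined, and a standard fact is that $\delta_s$ and $\delta_n$ are polynomials (without constant term) in $\delta$ on each finite-dimensional $\delta$-stable subspace; this guarantees that $\delta_s,\delta_n$ are again \emph{derivations} of $\OOO(X)$, i.e. vector fields. Concretely, for a derivation $\delta$ the identity $\delta(fg)=f\delta(g)+g\delta(f)$ can be rephrased as compatibility with the comultiplication $\OOO(X)\to\OOO(X)\otimes\OOO(X)$, and this Hopf-type condition is preserved under taking the semisimple and nilpotent parts because those are obtained functorially by the Jordan decomposition of the operator $\delta\otimes\id+\id\otimes\delta$ on $\OOO(X)\otimes\OOO(X)$. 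Local finiteness of $\delta_s$ and $\delta_n$ is immediate since they are polynomials in $\delta$.

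Next, for (2) and (3), I would integrate the semisimple and nilpotent parts separately. For $\delta_n$, on each finite-dimensional $\delta_n$-stable generating subspace $W\subseteq\OOO(X)$ the restriction is a nilpotent operator, so $\exp(s\,\delta_n)$ is a well-defined algebra automorphism of $\OOO(X)$ for every $s\in\kk$ (the exponential series terminates on each such $W$, cf.\ Lemma~\ref{exp-basics.lem}), giving a homomorphism $\kplus\to\Aut(X)$ whose differential hits $\delta_n$; its image $U$ is the desired one-dimensional unipotent subgroup, and uniqueness follows from Proposition~\ref{phi-dphi.prop} since a one-dimensional unipotent group is determined by its Lie algebra. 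For $\delta_s$, I would use the locally finite rational $\Aut(X)$-representation on $\OOO(X)$ (Proposition~\ref{locally-finite-action-on-Vec(X).prop}): $\delta_s$ acts diagonalizably with eigenvalues forming a finitely generated subgroup $\Lambda\subseteq(\kk,+)$, and the simultaneous eigenspace decomposition defines a grading of $\OOO(X)$ by $\Lambda$, hence a homomorphism $\Hom(\Lambda,\kst)\to\Aut(X)$ from a torus $T$ with $\delta_s\in\xi(\Lie T)$; minimality of $T$ corresponds to taking $\Lambda$ to be precisely the group generated by the eigenvalues, and uniqueness again follows from Corollary~\ref{LieG-stable-is-G-stable.cor} or Proposition~\ref{phi-dphi.prop}.

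Finally, (4) is the cleanest step: $\delta_s$ and $\delta_n$ commute as operators on $\OOO(X)$, hence the eigenspaces of $\delta_s$ are $\delta_n$-stable, so $\exp(s\delta_n)$ preserves the $\Lambda$-grading and therefore commutes with every element of $T$; equivalently $[\Lie T,\Lie U]=0$ inside $\VEC(X)$, and commutativity of the groups follows from Corollary~\ref{G-commut-LieG.cor} applied to $T\times U$, or directly since both are connected with commuting Lie algebras. The main obstacle I anticipate is step (2)--(3): I must ensure that the formal exponential and the $\Lambda$-grading genuinely produce \emph{algebraic} subgroups of $\Aut(X)$ — that is, morphisms of ind-groups landing in the locally closed subset $\Aut(X)\subseteq\End(X)$ — rather than merely abstract one-parameter subgroups. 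This is exactly where Theorem~\ref{AutX-locally-closed-in-EndX.thm} and the local finiteness (which confines the action to a fixed finite-dimensional generating subspace, bounding degrees) are indispensable, since they let me invoke the algebraic-group exponential of Proposition~\ref{exponential-for-linear-algebraic-groups.prop} on the algebraic closure $\overline{\langle\exp(s\delta_n)\rangle}$ and on $\overline{T}$ inside $\Aut(X)$.
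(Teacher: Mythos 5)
Your proposal is correct, and for parts (2)--(4) it is essentially the paper's own proof: the torus is built from the finitely generated (free) group $\Lambda\subseteq\kk$ spanned by the eigenvalues of $\delta_{s}$ acting on the grading $\OOO(X)=\bigoplus_{\lambda}R_{\lambda}$, the unipotent group is the image of the terminating exponential $s\mapsto\exp(s\delta_{n})$, minimality of $T$ comes down to the fact that the point of $\Lie T$ corresponding to $\delta_{s}$ satisfies no integral linear relation (your "take $\Lambda$ generated precisely by the eigenvalues" is exactly this), and commutation follows because $U$ preserves the grading on which $T$ acts by scalars. The one genuinely different step is (1): where you invoke functoriality of the Jordan decomposition applied to $\delta\otimes\id+\id\otimes\delta$ on $\OOO(X)\otimes\OOO(X)$, the paper argues directly, decomposing $R=\bigoplus_{\lambda}R_{\lambda}$ into generalized eigenspaces and proving $R_{\lambda}\cdot R_{\mu}\subseteq R_{\lambda+\mu}$ via the binomial identity $(D-(\lambda+\mu))^{n}(ab)=\sum_{k}\binom{n}{k}(D-\lambda)^{k}(a)(D-\mu)^{n-k}(b)$; both are standard, but the paper's computation has the advantage of simultaneously producing the multiplicative weight decomposition that part (2) then reuses, whereas your tensor argument proves derivation-hood abstractly and you must recover the grading afterwards from semisimplicity of $\delta_{s}$. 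Two slips in your write-up deserve correction, though neither is fatal: the derivation property is intertwining with the \emph{multiplication} $m\colon\OOO(X)\otimes\OOO(X)\to\OOO(X)$, i.e.\ $m\circ(\delta\otimes\id+\id\otimes\delta)=\delta\circ m$ --- there is no comultiplication on $\OOO(X)$ for a general affine $X$; and the fact that $\delta_{s},\delta_{n}$ are polynomials in $\delta$ on each finite-dimensional stable subspace justifies only their local finiteness, not the derivation property (a polynomial in a derivation is generally not a derivation), so your "this guarantees" sentence should lean entirely on the tensor-square argument that follows it.
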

\begin{proof}
(1) Let $R := \OOO(X)$ and consider $\delta$ as a locally finite derivation $D$ of $R$.
We have a direct sum decomposition $R = \bigoplus_{\lambda\in\kk} R_{\lambda}$ where 
\[
R_{\lambda}:=\{ r \in R \mid (D-\lambda)^{m}r = 0 \text{ for some } m \in \NN\}.
\]
We claim that $R_{\lambda}\cdot R_{\mu} \subseteq R_{\lambda+\mu}$. In fact, one easily checks the following equality 
for $\lambda,\mu \in \kk$ and $a,b \in R$:
\[
(D-(\lambda+\mu)) (a\cdot b) = (D - \lambda)(a)\cdot b + a \cdot (D - \mu)(b).
\]
By induction, this implies
\[
(D-(\lambda + \mu))^n(ab) = \sum_{k=0}^n \binom{n}{k} (D-\lambda)^k(a) \cdot  (D-\mu)^{n-k}(b),
\]
and the claim follows. Since $D_s(a) = \lambda a$ for any $a \in R_{\lambda}$, we get
\[ 
D_s (ab) =D_s(a)b + aD_s(b) \text{ for all } (a,b) \in R_{\lambda}\times R_{\mu}
\]
proving that $D_s$ is a derivation commuting with $D$. Hence, $D_{n}=D-D_{s}$ is also a derivation, as claimed.
\par\smallskip
(2) Since $R$ is finitely generated, the set of weights $\Lambda:=\{\lambda\in\kk\mid R_{\lambda}\neq 0\}\subseteq \kk$ generate a finitely generated free subgroup $\ZZ\Lambda=M:=\bigoplus_{i=1}^{m}\ZZ\mu_{i}\subseteq \kk$. This defines an action of the torus $T:=(\Cst)^{m}$ on $R$ in the following way: For $\lambda = \sum_{i}n_{i}\mu_{i}\in\Lambda$, $r \in R_{\lambda}$ and $t=(t_{1},\ldots,t_{m})\in T$ we put
$$
t r := t_{1}^{n_{1}}\cdots t_{m}^{n_{m}}\cdot r.
$$
Note that this action is faithful, since $M$ is generated by $\Lambda$.
Every $\alpha = (\alpha_{1},\ldots,\alpha_{m})\in\Lie T$ defines a derivation $D(\alpha)$ of $R$, namely
$$
D(\alpha) r := \sum_{i}n_{i}\alpha_{i} \cdot r \text{ \ for \ }r \in R_{\lambda},  \lambda= \sum_{i}n_{i}\mu_{i}.
$$
 In particular, $D_{s} = D(\mu_{1},\ldots,\mu_{m})$, hence $D_{s}$ belongs to $\xi(\Lie T)$. It also follows that $T$ is minimal. In fact, for a strict subtorus $T'\subsetneqq T$ the elements of $\Lie T' \subseteq \Lie T$ satisfy a linear equation with integral coefficients, and so $(\mu_{1},\ldots,\mu_{m})$ is not contained in $\Lie T'$. If $D_{s}\in\xi(\Lie T_{1})$ for some other torus $T_{1}\subseteq \Aut(R)$, then $D_{s}\in \xi(\Lie(T\cap T_{1})^{\circ})$, hence $T_{1}\supseteq T$.
\par\smallskip
(3) We define the exponential $\exp\colon \kplus \to \Aut(R)$ by
$$
\exp(s) r:=\sum_{m\geq 0} \frac{1}{m!} (D_{n})^{m}r.
$$
It is well-defined, because the sum is finite, and we obtain an action of $\kplus$ on $R$. The differential is given by $(d\exp)_{0}r = D_{n}r$. Hence, denoting by $U$ the image of $\kplus$ in $\Aut(R)$ we get $\xi(\Lie U) = \kk D_{s}$. The minimality is clear, and the uniqueness follows as in (2).
\par\smallskip
(4) By construction, the subspaces $R_{\lambda}\subseteq R$ are stable under $T$ and $U$. Since $T$ acts by scalar multiplications, this action commutes with the linear action of $U$, and the claim follows.
\end{proof}

\begin{remark} \label{loc-nil-in-LieAut(X).rem}
Let us call a tangent vector $N\in\Lie\Aut(X)$ \itind{locally nilpotent} if the corresponding vector field 
$\xi_N \in \VEC (X)$ (see Proposition~\ref{End(X)-and-Vec(X).prop}) is locally nilpotent.
Then it follows from Proposition~\ref{Integration-of-VF.prop}(3) that there is a well-defined $\kplus$-action on $X$, $\lambda_{N}\colon \kplus \to \Aut(X)$, such that $d\lambda_{N}(1) = N$. In particular, every locally nilpotent vector field on $X$ corresponds to a locally nilpotent tangent vector in $\Lie\Aut(X)$.

It also follows that for every representation $\rho\colon \Aut(X) \to \GL(V)$ on a $\kk$-vector space $V$ of countable dimension the image of a locally nilpotent tangent vector under $d\rho$ is a locally nilpotent linear endomorphism of $V$, see Section~\ref{reps-of-ind-groups.subsec}.
\end{remark}
\begin{question}\label{locally-nilpotent.ques}
Assume that $\rho\colon \Aut(X) \to \GL(V)$ is a representation such that
$d\rho\colon \Lie \Aut (X) \to \End(V)$ is injective
(where $V$ is again a vector space of countable dimension).
Is it true that if $d\rho(N) \in \End(V)$ is locally nilpotent, then $N \in \Lie\Aut(X)$ is locally nilpotent?

Assume that the adjoint representation $\ad\colon \Lie\Aut(X) \to \End(\Lie\Aut(X))$ is faithful. Is it true that $N\in\Lie\Aut(X)$ is locally nilpotent if and only if $\ad N$ is locally nilpotent?
\end{question}

The following consequence of the proposition above is clear.

\begin{corollary} \label{locally-finite-VF-in-Liealg.cor}
For every locally finite vector field $\delta\in\VEC(X)$ there exists a unique minimal connected and commutative algebraic subgroup $A \subseteq \Aut(X)$ such that $\delta \in \xi(\Lie A)$.
\end{corollary}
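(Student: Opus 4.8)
The statement to prove is Corollary~\ref{locally-finite-VF-in-Liealg.cor}: for every locally finite vector field $\delta\in\VEC(X)$ there exists a unique minimal connected commutative algebraic subgroup $A \subseteq \Aut(X)$ such that $\delta \in \xi(\Lie A)$.

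The plan is to read off everything from Proposition~\ref{Integration-of-VF.prop}, which has just done all the real work. First I would write the additive Jordan decomposition $\delta = \delta_s + \delta_n$ in $\LLL(\OOO(X))$, which exists because $\delta$ is locally finite. By part (1) of the proposition both $\delta_s$ and $\delta_n$ are again locally finite vector fields. Part (2) produces a unique minimal torus $T \subseteq \Aut(X)$ with $\delta_s \in \xi(\Lie T)$, and part (3) produces (when $\delta_n \neq 0$) a unique $1$-dimensional unipotent subgroup $U \subseteq \Aut(X)$ with $\delta_n \in \xi(\Lie U)$; by part (4) the groups $T$ and $U$ commute. The natural candidate is therefore $A := T \cdot U$ (and $A := T$ if $\delta_n = 0$), which is a connected commutative algebraic subgroup of $\Aut(X)$ since $T$ and $U$ are commuting connected algebraic subgroups. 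I would then argue that $\delta \in \xi(\Lie A)$: because $T$ and $U$ commute, $\Lie A = \Lie T \oplus \Lie U$ inside $\Lie\Aut(X)$, and since $\xi\colon \Lie\Aut(X)\into\VEC(X)$ is an injective linear map (Proposition~\ref{Liealg-VF.prop}), we get $\xi(\Lie A) \supseteq \xi(\Lie T) + \xi(\Lie U) \ni \delta_s + \delta_n = \delta$.

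For minimality, suppose $A' \subseteq \Aut(X)$ is any connected commutative algebraic subgroup with $\delta \in \xi(\Lie A')$. Since $A'$ is a commutative algebraic group, $\Lie A'$ carries a Jordan decomposition compatible with the one on $\LLL(\OOO(X))$: the representation of $A'$ on $\OOO(X)$ is locally finite and rational (Proposition~\ref{locally-finite-group-actions.prop}), so the semisimple and nilpotent parts of an element of $\xi(\Lie A')$ again lie in $\xi(\Lie A')$. Hence $\delta_s, \delta_n \in \xi(\Lie A')$. Writing $A' = A'_s \times A'_u$ via the decomposition of a commutative algebraic group into its semisimple and unipotent parts, $\delta_s$ comes from a torus inside $A'$ and $\delta_n$ from a unipotent subgroup; the minimality clauses of Proposition~\ref{Integration-of-VF.prop}(2),(3) then force $T \subseteq A'$ and $U \subseteq A'$, whence $A = T\cdot U \subseteq A'$. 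This shows $A$ is minimal and, since any two minimal such subgroups must each be contained in the other, unique.

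The main obstacle I anticipate is the minimality/uniqueness argument: I must be careful that the Jordan decomposition of $\delta$ as a linear operator on $\OOO(X)$ is genuinely compatible with the ``internal'' Jordan decomposition inside the commutative algebraic group $A'$, i.e. that the semisimple part coming from the abstract linear algebra really is integrated by a subtorus of $A'$ rather than by some larger subgroup of $\Aut(X)$. This is exactly where local finiteness and rationality of the $A'$-action on $\OOO(X)$ (Proposition~\ref{locally-finite-group-actions.prop}) enter, together with the fact that the additive Jordan decomposition in $\LLL(\OOO(X))$ is unique; once these are invoked, the uniqueness statements already packaged in Proposition~\ref{Integration-of-VF.prop} close the argument. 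Everything else is a formal bookkeeping of direct sums under the injection $\xi$.
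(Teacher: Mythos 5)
Your proof is correct and is precisely the argument the paper has in mind: the text states this corollary as a ``clear'' consequence of Proposition~\ref{Integration-of-VF.prop} without writing out a proof, and your construction $A = T\cdot U$ together with the compatibility of the Jordan decomposition with the locally finite rational $A'$-action on $\OOO(X)$ is the intended expansion, with the minimality and uniqueness clauses of parts (2) and (3) closing the argument exactly as you use them.
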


\begin{remark}\label{JD-general-algebra.rem}
The proof above shows that all the statements hold more generally for a locally finite derivation $\delta$ of a general algebra $\R$. We will use this in Section~\ref{GeneralAlgebra.sec} where we discuss the automorphism group of a general algebra $\R$.
\end{remark}

The following result is due to \name{Cohen-Draisma} \cite[Theorem~1]{CoDr2003From-Lie-algebras-}. It generalizes 
Corollary~\ref{locally-finite-VF-in-Liealg.cor} above.

\begin{theorem}\label{Cohen-Draisma.thm}
Let $L \subseteq \VEC(X)$ be a finite-dimensional Lie subalgebra. Assume that $L$ is locally finite as a subset of $\LLL(\OOO(X))$. Then there is an algebraic subgroup $G \subseteq \Aut(X)$ such that $L \subseteq \xi(\Lie G)$.
\end{theorem}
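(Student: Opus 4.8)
The plan is to reduce the statement to the classical theory of algebraic Lie algebras inside a finite-dimensional general linear group, using the local finiteness hypothesis only to build a faithful finite-dimensional model. First I would use that $L$ is locally finite (Definition~\ref{ratrional-rep.def}) to choose a finite-dimensional $L$-stable subspace $W \subseteq \OOO(X)$ that generates $\OOO(X)$ as a $\kk$-algebra: pick algebra generators $f_1,\dots,f_r$ of $\OOO(X)$, enclose each $f_i$ in a finite-dimensional $L$-stable subspace, and let $W$ be their sum. Since a derivation of $\OOO(X)$ is determined by its values on a generating set, the restriction map $\rho\colon L \to \LLL(W)$, $\delta \mapsto \delta|_W$, is an injective homomorphism of Lie algebras, realizing every vector field in $L$ as a linear operator on the finite-dimensional space $W$.

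Next I would introduce the linear algebraic group that remembers the algebra structure. Writing $S(W) \onto \OOO(X)$ for the canonical surjection from the symmetric algebra, with kernel the ideal $I$, and letting $\GL(W)$ act on $S(W)$ multiplicatively, the subgroup $H := \{g \in \GL(W) \mid g I = I\}$ is closed, hence a linear algebraic group, and it acts on $\OOO(X) = S(W)/I$ by algebra automorphisms. A standard computation identifies $\Lie H$ with the set of those $A \in \LLL(W)$ whose derivation extension to $S(W)$ preserves $I$, that is, with the derivations of $\OOO(X)$ stabilizing $W$; in particular $\rho(L) \subseteq \Lie H$. By Proposition~\ref{group-action.prop} the algebraic action of $H$ on $X$ gives a homomorphism of ind-groups $H \to \Aut(X)$, which is injective because $W$ generates $\OOO(X)$ and whose image is a closed algebraic subgroup by Proposition~\ref{Hom-G-to-ind-group.prop}; thus $H$ may be identified with an algebraic subgroup of $\Aut(X)$.

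Then I would invoke Chevalley's theory of algebraic Lie algebras: let $G \subseteq \GL(W)$ be the algebraic hull of $\rho(L)$, i.e. the smallest connected algebraic subgroup with $\rho(L) \subseteq \Lie G$. Since $\Lie H$ is itself an algebraic Lie subalgebra of $\LLL(W)$ containing $\rho(L)$, minimality forces $G \subseteq H$, so $G$ is an algebraic subgroup of $\Aut(X)$. Finally, for $A \in \Lie H$ the vector field $\xi_A$ (Proposition~\ref{Liealg-VF.prop}) is exactly the derivation of $\OOO(X)$ determined by $A$, up to the sign coming from the fact that $\xi$ is an anti-homomorphism; since $\xi(\Lie G)$ is a linear subspace this sign is irrelevant, and the inclusion $\rho(L) \subseteq \Lie G$ yields $L \subseteq \xi(\Lie G)$, as required.

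The main obstacle is the input from the theory of algebraic Lie algebras, namely the existence of the algebraic hull $G$ and the fact that $\Lie H$ is algebraic; local finiteness enters only to manufacture $W$, after which the entire argument takes place inside the finite-dimensional group $\GL(W)$. I expect the remaining bookkeeping, identifying $\Lie H$ with the derivations stabilizing $W$ and matching $\xi_A$ with the operator $A$, to be routine but sign-sensitive. The original argument of \name{Cohen-Draisma} avoids citing the algebraic hull by showing directly, via replicas together with the Jordan decomposition of Proposition~\ref{Integration-of-VF.prop}, that the relevant operators remain derivations; the route above simply packages that content into the already-algebraic Lie algebra $\Lie H$.
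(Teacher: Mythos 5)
Your argument is correct, but be aware that the paper does not actually prove Theorem~\ref{Cohen-Draisma.thm}: it is quoted from \name{Cohen-Draisma} \cite{CoDr2003From-Lie-algebras-}, so there is no internal proof to compare against. Relative to the cited original --- which, as you note, works via replicas and the Jordan decomposition (the same mechanism as Proposition~\ref{Integration-of-VF.prop} and Corollary~\ref{locally-finite-VF-in-Liealg.cor}) to show that the algebraic hull of $\rho(L)$ still consists of derivations --- your route packages that content into the stabilizer $H=\{g\in\GL(W)\mid gI=I\}$, whose Lie algebra is algebraic for free. This is a genuine simplification, and in fact it buys more than you use: once $\rho(L)\subseteq\Lie H$ is established, you may simply take $G=H$ (or $H^{\circ}$), since the theorem only asks for \emph{some} algebraic subgroup with $L\subseteq\xi(\Lie G)$, and each $\delta\in L$ equals $\xi_{A}$ for $A=\pm\rho(\delta)\in\Lie H$, the sign from the anti-homomorphism being harmless as $\Lie H$ is a subspace. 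Chevalley's algebraic hull is therefore never needed; it would only buy minimality of $G$, which the statement does not require.

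Two steps you label ``standard'' deserve to be made explicit. First, since $I$ is infinite-dimensional and not homogeneous, closedness of $H$ and the identification $\Lie H=\{A\in\LLL(W)\mid \tilde A(I)\subseteq I\}$ need the observation that the pieces $I\cap S^{\leq d}(W)$ of the $\GL(W)$-stable filtration are finite-dimensional and that their stabilizers $N_{d}$ form a descending chain of closed subgroups, which stabilizes by the Noetherian property; thus $H=N_{d_{0}}$ is the stabilizer of a single finite-dimensional subspace, and the characteristic-zero transporter computation applies (and $\Lie H\subseteq\Lie N_{d}$ for all $d$ gives the inclusion of $\Lie H$ into the derivations preserving $I$). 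Second, before invoking Proposition~\ref{group-action.prop} one should check that the action map $H\times X\to X$ is a morphism; this follows because $W$ generating $\OOO(X)$ gives a closed $H$-equivariant embedding of $X$ into $W^{*}$ as the zero set of $I$. Injectivity of $H\to\Aut(X)$ then holds because the action on $\OOO(X)$ restricts on $W\subseteq\OOO(X)$ to the defining representation, and the image is a closed algebraic subgroup by Proposition~\ref{Hom-G-to-ind-group.prop}, as you say.
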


\ps
\subsection{Intersection of closed subgroups}

Another well-known result is that for two closed subgroups $H_{1},H_{2} \subseteq G$ of a linear algebraic group $G$ we always have $\Lie (H_{1}\cap H_{2}) = \Lie H_{1}\cap \Lie H_{2}$. We do not know if this holds also for closed subgroups of an ind-group. However, we have the following result.

\begin{proposition}\label{Lie-H-cap-G.prop}
Let $H_{1},H_{2}\subseteq \GGG$ be algebraic subgroups of an ind-group $\GGG$. Then we have 
$$
\Lie (H_{1}\cap H_{2}) = \Lie H_{1}\cap \Lie H_{2}.
$$
\end{proposition}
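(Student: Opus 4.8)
The inclusion $\Lie(H_1\cap H_2)\subseteq \Lie H_1\cap\Lie H_2$ is immediate. Since $H_1\cap H_2$ is again an algebraic subgroup (a closed algebraic subset which is a subgroup), the two inclusions $H_1\cap H_2\into H_i$ are closed immersions and hence induce injections $T_e(H_1\cap H_2)\into T_eH_i=\Lie H_i$ inside $T_e\GGG=\Lie\GGG$; thus $\Lie(H_1\cap H_2)$ lies in $\Lie H_1$ and in $\Lie H_2$. The whole content is the reverse inclusion, and the main obstacle is that $H_1$ and $H_2$ need not be contained in a common closed algebraic subgroup of $\GGG$, so one cannot directly quote the classical equality for subgroups of a single linear algebraic group. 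The plan is to bypass this by realizing $H_1\cap H_2$ as a stabilizer for a genuinely algebraic action and then reducing to the finite-dimensional classical situation.

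Set $G:=H_1\times H_2$, a linear algebraic group with $\Lie G=\Lie H_1\oplus\Lie H_2$, and let it act on the affine ind-variety $\GGG$ by $(h_1,h_2)\cdot g:=h_1\,g\,h_2^{-1}$; this is an algebraic group action since multiplication and inversion on $\GGG$ are morphisms. The stabilizer of $e\in\GGG$ is $\Stab_G(e)=\{(h,h)\mid h\in H_1\cap H_2\}$, the diagonal copy $\delta(H_1\cap H_2)$ of $H_1\cap H_2$. By Proposition~\ref{G-ind-module.prop} I may choose a $G$-equivariant closed immersion $m\colon \GGG\into V$ into a $G$-module $V$ of countable dimension; put $v_0:=m(e)$. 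Because $m$ is injective and equivariant, $\Stab_G(v_0)=\Stab_G(e)$, and since the rational $G$-representation on $V$ is locally finite, the orbit $G v_0$ spans a finite-dimensional $G$-submodule $W\ni v_0$. Now the classical theory of orbit maps for the linear algebraic group $G$ acting on the finite-dimensional space $W$ applies and gives, in characteristic zero, $\Lie\Stab_G(v_0)=\Ker (d\mu_{v_0})_e$, where $\mu_{v_0}\colon G\to W$, $g\mapsto g v_0$, is the orbit map. (This single step is where the characteristic-zero hypothesis enters, through the smoothness of orbits.)

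It remains to identify this kernel. Writing $\nu\colon G\to\GGG$, $(h_1,h_2)\mapsto h_1 h_2^{-1}$, we have $\mu_{v_0}=m\circ\nu$, and since $dm_e$ is injective ($m$ being a closed immersion) we get $\Ker(d\mu_{v_0})_e=\Ker (d\nu)_e$. Using the standard differential formulas $d\,\mathrm{mult}_{(e,e)}(X,Y)=X+Y$ and $d\,\mathrm{inv}_e=-\id$ on $\Lie\GGG$ (valid for ind-groups, and here even reducible to the variety morphism $H_1\times H_2\to\GGG_\ell$ obtained from Lemma~\ref{Kumar.lem}), one computes $(d\nu)_e(A_1,A_2)=A_1-A_2$, the difference taken in $\Lie\GGG$. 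Hence $\Ker(d\nu)_e=\{(A_1,A_2)\in\Lie H_1\oplus\Lie H_2\mid A_1=A_2\}=\{(A,A)\mid A\in\Lie H_1\cap\Lie H_2\}$.

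Finally I compare descriptions. Since $\Stab_G(v_0)=\delta(H_1\cap H_2)$ and $\pr_i\circ\delta$ is exactly the inclusion $H_1\cap H_2\into H_i$, I get $\Lie\Stab_G(v_0)=(d\delta)_e(\Lie(H_1\cap H_2))=\{(A,A)\mid A\in\Lie(H_1\cap H_2)\}$. Equating this with the computation of $\Ker(d\mu_{v_0})_e$ from the previous paragraph and projecting onto the first factor yields $\Lie(H_1\cap H_2)=\Lie H_1\cap\Lie H_2$ as subspaces of $\Lie\GGG$, as claimed. I expect the only delicate points to be the reduction to a finite-dimensional $G$-submodule $W$ (so that the classical orbit-map identity is available) and the bookkeeping ensuring that all three Lie algebras are compared as subspaces of the same space $\Lie\GGG$; both are handled by the $G$-equivariant embedding $m$ together with the injectivity of $dm_e$.
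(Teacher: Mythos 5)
Your proof is correct, but it follows a genuinely different route from the paper's. The paper's argument is scheme-theoretic and very short: the schematic intersection $H_{1}\times_{\GGG}H_{2}$ has tangent space $T_{e}H_{1}\cap T_{e}H_{2}$ at $e$, and since it is a group scheme over a field of characteristic zero it is reduced, hence smooth, by \name{Cartier}'s theorem, so the reduced intersection $H_{1}\cap H_{2}$ already has the full tangent space $\Lie H_{1}\cap\Lie H_{2}$. You avoid Cartier entirely by exhibiting $H_{1}\cap H_{2}$, via its diagonal copy, as the stabilizer of $e$ under the action of $G=H_{1}\times H_{2}$ on $\GGG$, and you import characteristic zero instead through the classical separability of orbit maps, $\Lie\Stab_{G}(v_{0})=\Ker(d\mu_{v_{0}})_{e}$; the computation $(d\nu)_{e}(A_{1},A_{2})=A_{1}-A_{2}$ then identifies this kernel with the diagonal of $\Lie H_{1}\cap\Lie H_{2}$, and comparing the two diagonal descriptions gives the claim. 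The two approaches use essentially equivalent manifestations of characteristic zero (reducedness of group schemes versus separability of orbit maps), so neither is more general; yours has the merit of staying inside the group-action formalism the paper develops, at the price of more bookkeeping. Two small remarks: your appeal to Proposition~\ref{G-ind-module.prop} tacitly assumes that $\GGG$ is affine, which the statement of the proposition does not; this is harmless in the paper's context, and in fact the linearization step is dispensable, since by Lemma~\ref{Kumar.lem} the image $\nu(G)=H_{1}H_{2}$ lies in some $\GGG_{\ell}$, so that $X:=\overline{H_{1}H_{2}}\subseteq\GGG_{\ell}$ is a $G$-stable variety and the classical orbit-map identity applies directly to the orbit map $G\to X$ at $e$, with no equivariant embedding needed. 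Likewise, the finite-dimensionality of $W=\langle Gv_{0}\rangle$ is most cleanly justified by Lemma~\ref{Kumar.lem} (the image of the variety $G$ in the ind-variety $V$ lies in a finite-dimensional filtration piece), exactly as your argument requires.
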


\begin{proof}
The schematic intersection $X \cap_{\text{\it schematic}} Y$ of two closed subvarieties $X,Y \subseteq Z$  is the fiber product $X \times_{Z}Y$, and $T_{z} (X \times_{Z}Y) = T_{z}X \cap T_{z}Y\subseteq T_{z}Z$ for $z \in X\cap Y$. Therefore, if the schematic intersection is reduced in $z$, then $T_{z} (X \cap Y) = T_{z}X \cap T_{z}Y$. But the schematic intersection of two algebraic groups is an algebraic group scheme over $\CC$, hence smooth by \name{Cartier}'s Theorem \cite[II, \S6, 1.1]{DeGa1970Groupes-algebrique}, and the claim follows.
\end{proof}

\begin{remark}
It is easy to see that $\Lie \HHH \cap \Lie H = \Lie (\HHH \cap H)$ if $\HHH \subseteq \GGG$ is a closed nested ind-subgroup and $H \subset \GGG$ an algebraic subgroup. On the other hand this does not hold for a general ind-group $\HHH_{1}$ as we will see later in Remark~\ref{Lie-H-cap-G.rem}.
\end{remark}

\pmed
\section{The ind-variety of group homomorphisms}\label{Hom.sec}
\subsection{Reductive and semisimple groups}\label{notation.subsec}
Let us recall some basic notion from the theory of algebraic groups. We refer to the text books 
\cite{Bo1991Linear-algebraic-g,DeGa1970Groupes-algebrique,Hu1978Introduction-to-Li,Hu1975Linear-algebraic-g,Kr1984Geometrische-Metho,Pr2007Lie-groups,Sp1989Aktionen-reduktive} for more details and further reading.

The \itind{radical}  of a linear algebraic group $G$ is the maximal connected solvable normal subgroup of $G$, see \cite[11.21]{Bo1991Linear-algebraic-g}. We denote it by $\rad G$. The maximal unipotent subgroup of 
$\rad G$ is the \itind{unipotent radical} and will be denoted by $\rad_{u}G$.\idx{$\rad G$}\idx{$\rad_{u}G$}
The group $G$ is called \itind{reductive} if the unipotent radical $\rad_{u}G$ is trivial. In this case the radical $\rad G$
is a torus and
coincides with the identity component of the center of $G$
(cf. \cite[11.21]{Bo1991Linear-algebraic-g}).
In characteristic zero reductive groups are \itind{linearly reductive} which means that the rational representation are \itind{completely reducible}, see \cite[II.3.5]{Kr1984Geometrische-Metho}.

We have recalled
in Section~\ref{VectorFields.sec} that for a connected group $G$ there is a strong connection between $G$ and $\Lie G$, see \cite[II.2.5]{Kr1984Geometrische-Metho}. E.g. if $\rho\colon G \to \GL(V)$ is a finite-dimensional representation of $G$ and $d\rho\colon \Lie G \to \End(V)$ the corresponding representation of $\Lie G$, then a subspace $W \subseteq V$ is $G$-stable if and only if it is $\Lie G$-stable. Moreover, a linear automorphism of $V$ is $G$-equivariant if and only if it is  $\Lie G$-equivariant: $\GL_{G}(V) = \GL_{\Lie G}(V)$.
\idx{$\GL_{G}(V)$}\idx{$\GL_{\Lie G}(V)$}

A connected group $G$ is called  \itind{semisimple} if $\rad G$ is trivial. In this case the center is a finite group and $G = (G,G)$ (see \cite[Proposition~14.2]{Bo1991Linear-algebraic-g}). In particular, the character group of a semisimple group is trivial. It also follows that for a connected reductive $G$ the derived group $(G,G)$ is semisimple and $G = Z(G)^{\circ}(G,G)$. Moreover, still assuming that $G$ is connected reductive, the group $G / (G,G)$ is a torus and this torus is trivial if and only if $G$ is semisimple.

The semisimple group $G$ is called \itind{simply connected} if the fundamental weights (with respect to a Borel subgroup $B$ and a maximal torus $T \subset B$) are characters of $T$. This implies that every representation of the Lie algebra $\Lie G$ is induced by a representation of $G$. For example, $\SL_{n}$ is simply connected whereas $\PSL_{n}$ is not.

\ps
\subsection{Representations and homomorphisms}
We start with the following easy lemma.\idx{$\Hom(G,\HHH)$}

\begin{lemma} \label{Hom-in-Mor.lem}
Let $G$ be a linear algebraic group and $\HHH$ an affine ind-group. Then the set $\Hom(G,\HHH)$ of homomorphisms of ind-groups   is a closed subset of $\Mor(G,\HHH)$, so that $\Hom(G,\HHH)$ has a natural structure of an affine ind-variety. Moreover, the actions of $G$ and $\HHH$ by conjugation on $\Mor(G,\HHH)$ and on $\Hom(G, \HHH)$ are regular.
\end{lemma}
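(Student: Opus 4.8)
The plan is to mirror the proof of Lemma~\ref{group-actions.lem}(2), which settles the special case $\HHH = \Aut(X)$, replacing the ind-semigroup $\End(X)$ there by our ind-group $\HHH$. First I record that $\Mor(G,\HHH)$ is an affine ind-variety: $G$ is an affine variety and $\HHH$ an affine ind-variety, so Proposition~\ref{indmor.prop} applies. Because $\HHH$ is a \emph{group} and not merely a semigroup, a morphism $\phi\colon G \to \HHH$ is a homomorphism of ind-groups precisely when $\phi(gh)=\phi(g)\phi(h)$ for all $g,h\in G$; the relations $\phi(e)=e$ and $\phi(g^{-1})=\phi(g)^{-1}$ follow automatically by cancellation in $\HHH$. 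Thus only this single multiplicativity condition must be shown to cut out a closed subset of $\Mor(G,\HHH)$.

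To realize $\Hom(G,\HHH)$ as a closed subset I introduce $\Phi_1,\Phi_2\colon \Mor(G,\HHH)\to\Mor(G\times G,\HHH)$ given by $\Phi_1(\phi)(g,h)=\phi(gh)$ and $\Phi_2(\phi)(g,h)=\phi(g)\phi(h)$, so that $\Hom(G,\HHH)$ is exactly the locus $\{\phi\mid \Phi_1(\phi)=\Phi_2(\phi)\}$. I would verify that each $\Phi_i$ is an ind-morphism via the universal property (Definition~\ref{universal-property-of-the-ind-variety-Mor(V,W).def}): it suffices that the associated families $\Mor(G,\HHH)\times(G\times G)\to\HHH$ be morphisms. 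For $\Phi_1$ this family is $(\phi,g,h)\mapsto\phi(gh)$, the composite of $(\phi,g,h)\mapsto(\phi,gh)$ with the evaluation morphism of Lemma~\ref{tautological.lem}(\ref{eval}); for $\Phi_2$ it is $(\phi,g,h)\mapsto \phi(g)\cdot\phi(h)$, obtained from two evaluations followed by the multiplication $\HHH\times\HHH\to\HHH$, which is a morphism since $\HHH$ is an ind-group. As $\Mor(G\times G,\HHH)$ is affine (Proposition~\ref{indmor.prop}), its diagonal $\Delta$ is closed: embed it into $\AA^{\infty}$ by Theorem~\ref{embedding-into-Ainfty.thm} and intersect with the linear diagonal of $\AA^{\infty}$. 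Hence $\Hom(G,\HHH)=(\Phi_1,\Phi_2)^{-1}(\Delta)$ is closed. A closed ind-subvariety of an affine ind-variety is affine, and the universal property of $\Mor(G,\HHH)$ restricts at once to one for $\Hom(G,\HHH)$, since a family of morphisms lands in $\Hom$ if and only if each of its members is a homomorphism.

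For the conjugation actions I first describe them explicitly: $G$ acts through conjugation on the source by $a\cdot\phi=\phi\circ\Int(a^{-1})$, and $\HHH$ acts through conjugation on the target by $h\cdot\phi=\Int(h)\circ\phi$, i.e. $(h\cdot\phi)(g)=h\,\phi(g)\,h^{-1}$. To see that the action maps $G\times\Mor(G,\HHH)\to\Mor(G,\HHH)$ and $\HHH\times\Mor(G,\HHH)\to\Mor(G,\HHH)$ are morphisms I again pass to the associated families: for the first the family $(a,\phi,g)\mapsto\phi(a^{-1}ga)$ factors through the morphism $(a,\phi,g)\mapsto(\phi,a^{-1}ga)$ and the evaluation map; for the second the family $(h,\phi,g)\mapsto h\,\phi(g)\,h^{-1}$ is the evaluation followed by conjugation in $\HHH$, a morphism because multiplication and inversion in $\HHH$ are morphisms. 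Both actions carry homomorphisms to homomorphisms (postcomposing with an inner automorphism $\Int(h)$ of $\HHH$, or precomposing with an inner automorphism $\Int(a^{-1})$ of $G$, preserves multiplicativity), so they stabilize the closed subset $\Hom(G,\HHH)$ and therefore restrict to morphisms there.

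The standard inputs (Proposition~\ref{indmor.prop}, Lemma~\ref{tautological.lem}, the universal property) make most steps mechanical. The part requiring the most care will be showing that $\Phi_2$, and likewise the $\HHH$-action, is an ind-morphism, since these are the only places where the multiplication of the target ind-group $\HHH$ enters; the family/universal-property reformulation is precisely what handles them cleanly, and it is also where the hypothesis that $\HHH$ is an ind-group, rather than only an ind-semigroup as $\End(X)$ was, is genuinely used. The only other delicate point is the closedness of the diagonal of $\Mor(G\times G,\HHH)$, which is exactly why the affineness of the target is invoked.
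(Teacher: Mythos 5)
Your proof is correct and follows essentially the paper's argument: the paper realizes $\Hom(G,\HHH)$ as $\Phi^{-1}(\gamma_{e})$ for the single ind-morphism $\Phi(\phi)(a,b):=\phi(ab)\phi(b)^{-1}\phi(a)^{-1}$ (using inversion in $\HHH$ to reduce to the fiber over the constant map $\gamma_{e}$, a closed point), whereas you realize it as the equalizer $(\Phi_{1},\Phi_{2})^{-1}(\Delta)$ — a purely cosmetic difference — and, like you, it deduces regularity of both conjugation actions from the regularity of conjugation on $G$ and on $\HHH$ via the universal property of $\Mor(G,\HHH)$. Your side remarks are also sound (multiplicativity alone suffices because $\HHH$ is a group, in contrast to the semigroup target in Lemma~\ref{group-actions.lem}), though the detour through $\AA^{\infty}$ is unnecessary: the diagonal of any ind-variety $\WWW=\bigcup_{k}\WWW_{k}$ is closed simply because each $\WWW_{k}$ is a (separated) variety.
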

\begin{proof}
It is easy to see that the map
$$
\Phi\colon \Mor(G,\HHH) \to \Mor(G\times G, \HHH), \ \Phi(\phi)(a,b):=\phi(ab)\phi(b)^{-1}\phi(a)^{-1},
$$ 
is a morphism of ind-varieties. If $\gamma_{e}\in\Mor(G\times G,\HHH)$ denotes the constant map $\gamma_{e}(a,b) = e$, then $\Hom(G,\HHH) = \Phi^{-1}(\gamma_{e})$, and so $\Hom(G,\HHH)$ is closed in $\Mor(G,\HHH)$. Since the action of $G$ on $G$ by conjugation is regular, as well as the action of $\HHH$ on $\HHH$, it follows that also the induced action on $\Mor(G,\HHH)$ and hence on $\Hom(G,\HHH)$ is regular.
\end{proof}

\begin{remark}\label{closed-subgroup-Hom.rem}
If $\HHH \into \GGG$ is a closed immersion of ind-groups, then it follows from 
Proposition~\ref{closed-immersion-Mor.lem}(\ref{closed-immersion-yields-closed-immersion-for-morphisms}) that the induced map $\Hom(G,\HHH) \into \Hom(G,\GGG)$ is a closed immersion.
\end{remark}

If $H = \GL(V)$ where $V$ is a finite-dimensional $\kk$-vector space of dimension $n$, then  $\Hom(G,\GL(V))$ is the set of representations of $G$ on $V$, and the $\GL(V)$-orbits are exactly the equivalence classes of $n$-dimensional representations. We use the notation 
$\Mod_{G}(V)$ for $\Hom(G,\GL(V))$ when we consider a representation $\rho\colon G \to \GL(V)$ as a $G$-module structure on $V$.  If $M \in \Mod_{G}(V)$, then $C_{M} \subseteq \Mod_{G} (V)$ denotes the {\it orbit of $M$} under $\GL(V)$, i.e. the {\it equivalence class of $G$-modules} in $\Mod_{G}(V)$ isomorphic to $M$.\idx{$\Mod_{G}(V)$}\idx{$C_{M}$}

Note that the stabilizer of $M \in \Mod_{G}(V)$ in $\GL(V)$ is equal to the group of $G$-equivariant automorphisms, $\Stab_{\GL(V)} M = \GL_{G}(M)$, hence the orbit $C_{M}$ is isomorphic to $\GL(V)/\GL_{G}(M)$.
As usual a $G$-module $M \in \Mod_{G}(V)$ is called {\it semisimple} if and only if the associated $G$-representation is \itind{completely reducible}.\idx{semisimple $G$-module}

\ps
\subsection{The lemma of \texorpdfstring{\name{Artin}}{Artin}}
The following lemma goes back to \name{Michael Artin}, see \cite[section~12]{Ar1969On-Azumaya-algebra}. It appears in several contexts, e.g. in representations of finite-dimensional algebras, representations of quivers.

\begin{lemma}\label{Artin.lem}
A $G$-module $M\in \Mod_{G}(V)$ is semisimple if and only if its equivalence class $C_{M} \subset \Mod_{G}(V)$ is closed. In particular, for every $M \in \Mod_{G}(V)$ the closure $\overline{C_{M}}$ contains a semisimple $G$-module.
\end{lemma}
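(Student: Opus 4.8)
The plan is to study the conjugation action of the reductive group $\GL(V)$ on $\Mod_G(V)=\Hom(G,\GL(V))$, which is a regular action by Lemma~\ref{Hom-in-Mor.lem}, and to read off the closedness of the orbit $C_M$ from the module structure of $M$. Fix a composition series $0=M_0\subset M_1\subset\cdots\subset M_r=M$ of $G$-submodules and set $M^{\mathrm{ss}}:=\bigoplus_{i=1}^r M_i/M_{i-1}$; this is semisimple by construction. A preliminary reduction makes the orbit closure tractable: the orbit map $\GL(V)\to\Mod_G(V)$, $h\mapsto h\cdot M$, is a morphism from a variety, so by Lemma~\ref{Kumar.lem} its image $C_M$ lies in some level $\Mod_G(V)_\ell$, whence $\overline{C_M}\subseteq\Mod_G(V)_\ell$ is a closed algebraic (affine) variety on which $\GL(V)$ acts.

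\textbf{Degeneration and the easy direction.} First I would show $M^{\mathrm{ss}}\in\overline{C_M}$. Choosing vector-space complements with $V=\bigoplus_i W_i$, $M_j=W_1\oplus\cdots\oplus W_j$ and $W_i\cong M_i/M_{i-1}$, the matrix $\rho_M(g)$ becomes block upper-triangular (the blocks with row index $>$ column index vanish because $\rho_M(g)M_j\subseteq M_j$), with diagonal blocks $\rho_{M_i/M_{i-1}}(g)$. Let $\lambda\colon\kst\to\GL(V)$ act by $t^{-i}$ on $W_i$; then the $(i,j)$-block of $\lambda(t)\rho_M(g)\lambda(t)^{-1}$ is $t^{\,j-i}\rho_{ij}(g)$, so all entries are polynomial in $t$, the strictly upper blocks ($i<j$) carry positive powers of $t$, and $t\mapsto\lambda(t)\cdot M$ extends to a morphism $\AA^1\to\Mod_G(V)$ whose value at $t=0$ is the block-diagonal homomorphism $M^{\mathrm{ss}}$. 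As the locus $t\neq 0$ lies in $C_M$, we get $M^{\mathrm{ss}}\in\overline{C_M}$; this already proves the \emph{In particular} clause. For the easy direction, if $M$ is not semisimple then $M\not\cong M^{\mathrm{ss}}$ (an isomorphism would force $M$ semisimple), so $\overline{C_M}$ contains the point $M^{\mathrm{ss}}$ of a strictly different isomorphism class and $C_M$ is not closed; contrapositively, $C_M$ closed implies $M$ semisimple.

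\textbf{Hard direction.} For the converse I would combine two invariance facts with a closed-orbit argument. Since orbits of an algebraic group action on a variety are locally closed, an orbit of minimal dimension inside the affine variety $\overline{C_M}$ has empty boundary and is therefore closed; pick such a closed orbit $C_{M'}\subseteq\overline{C_M}$, so that $M'$ is semisimple by the easy direction. Next, for each $g\in G$ the function $\rho\mapsto\tr\rho(g)$ is regular on $\Mod_G(V)$, because evaluation is a morphism (Lemma~\ref{tautological.lem}(\ref{eval})), and it is $\GL(V)$-invariant, hence constant on $\overline{C_M}$; therefore $M$ and $M'$ have equal characters $\chi_M=\chi_{M'}$. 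Finally, in characteristic zero the characters of pairwise non-isomorphic irreducible $G$-modules are linearly independent, so a semisimple module is determined up to isomorphism by its character. Thus if $M$ is semisimple, $\chi_M=\chi_{M'}$ together with the semisimplicity of both modules yields $M\cong M'$, so $C_M=C_{M'}$ is closed.

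The genuine content is the hard direction; the degeneration and easy direction are routine. The cleanest route, taken above, sidesteps the Hilbert--Mumford machinery (and the need to identify one-parameter limits with split associated gradeds) by instead using that invariant trace functions are constant on orbit closures and that characters determine semisimple modules in characteristic zero. I would double-check this last point via the Jacobson density theorem, which makes the image of the group algebra surject onto $\prod_i\End(S_i)$ for finitely many distinct irreducibles $S_i$ and thereby forces linear independence of the $\chi_{S_i}$; I would also verify carefully that the one-parameter family in the degeneration step converges \emph{inside} the ind-variety $\Mod_G(V)$ and not merely in the ambient matrix space, which is what Lemma~\ref{Hom-in-Mor.lem} (closedness of $\Hom(G,\GL(V))$ in $\Mor(G,\GL(V))$) guarantees.
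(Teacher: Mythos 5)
Your proof is correct, and your hard direction takes a genuinely different route from the paper's. The paper argues via the Hilbert--Mumford criterion: if $C_{M}$ is not closed, some one-parameter subgroup $\lambda$ drives $M$ into the closed orbit of $\overline{C_{M}}$, and the structural step $(*)$ (left as an exercise there) identifies any limit $\lim_{t\to 0}\lambda(t)M$ with the associated graded module of the weight filtration $M_{k}=\bigoplus_{j\geq k}V_{j}$; since every associated graded of a semisimple $M$ is isomorphic to $M$, the limit would stay in $C_{M}$, a contradiction. You sidestep both Hilbert--Mumford and $(*)$: after correctly reducing, via Lemma~\ref{Kumar.lem}, to an honest affine $\GL(V)$-variety $\overline{C_{M}}$ with a regular $\GL(V)$-action, you produce a closed orbit $C_{M'}$ by the minimal-dimension argument, get semisimplicity of $M'$ from the easy direction, and then separate the two orbits by the invariant trace functions $\rho\mapsto\tr\rho(g)$, which are regular and hence constant on $\overline{C_{M}}$, concluding $M\cong M'$ from the linear independence of irreducible characters in characteristic zero. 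This is essentially Artin's original trace argument; it buys a self-contained proof of the hard direction that cites no GIT and replaces the unproved exercise $(*)$ by classical character theory (your Jacobson-density justification is the right one, and your closing verifications --- that the degeneration lands in $\Hom(G,\GL(V))$ because being a homomorphism is a closed condition, and that the limit is invertible since the diagonal blocks are untouched --- are exactly the points that need checking). For the easy direction and the \emph{in particular} clause you use the same degeneration mechanism as the paper, made concrete by a composition series and the explicit one-parameter subgroup acting by $t^{-i}$, whereas the paper deduces the last clause from the existence of closed orbits in orbit closures. What the paper's route buys in exchange: the lemma $(*)$ describes \emph{all} one-parameter limits structurally and its argument is characteristic-free, while your character-separation step genuinely requires characteristic zero (in characteristic $p$ distinct semisimple modules can have equal trace functions) --- immaterial here, since the paper assumes characteristic zero throughout.
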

\begin{proof}[Sketch of Proof]
(1) Let $\lambda\colon\kst \to \GL(V)$ be a 1-PSG (one-parameter subgroup), i.e. a homomorphism of algebraic groups, and assume that the limit $N:= \lim_{t\to 0}\lambda(t) M$ exists in $\Mod_{G}(V)$. 
For any $r \in\ZZ$ denote by
\[ 
V_{r}:=\{v\in V\mid \lambda(t)v = t^{r}\cdot v \text{ for all }t\in\kst\}  \subseteq V=M
\]
the weight spaces of $\lambda$ of weight $r$, and set
$M_{k} :=\bigoplus_{j \geq k}V_{j} \subseteq M$.
Then, the following assertion is left as an exercise for the reader:
\ps
$(*)$ {\it The $M_{k}$  are $G$-submodules of $M$, and the limit $N:=\lim_{t\to 0}\lambda(t) M$ is isomorphic to 
the associated graded module $\bigoplus_{k}M_{k}/M_{k+1}$.}
\ps
(2)
Now assume that the orbit $C_{M}$ of $M$ is not closed. Then, by the \name{Hilbert-Mumford}-Criterion, there is a 1-PSG $\lambda$ such that $\lim_{t\to0} \lambda (t) M = N$  exists and belongs to the closed orbit in $\overline{C_{M}}$. By $(*)$, $N$ is the associated graded $G$-module with respect to a suitable filtration of $M$. This implies that $M$ is not semisimple, because the associated graded module of any filtration of a semisimple module $M$ is isomorphic to $M$, hence belongs to $C_{M}$. 

On the other hand, if $M$ is not semisimple, then there exists a filtration of $M$ such that the associated graded module $N$ is not isomorphic to $M$. It is easy to see that the filtration is induced by a suitable 1-PSG $\lambda$, hence
$\lim_{t\to0}  \lambda (t) M \in C_{N} \subsetneqq \overline{C_{M}}$, and so $C_{M}$ is no closed.

The last statement is clear, since every orbit closure contains a closed orbit.
\end{proof}

\ps
\subsection{The ind-variety \texorpdfstring{$\Hom(G,H)$}{Hom(G,H)} is finite-dimensional}
Here is a first main result about the ind-variety \itind{$\Hom(G,H)$}.
\begin{proposition} \label{Hom(G,H).prop}
Let $G, H$ be linear algebraic groups.
\be
\item 
The ind-variety $\Hom(G,H)$ is finite-dimensional.
\item 
If $G$ is reductive, then $\Hom(G,\GL(V))$ is a
countable union of closed $\GL(V)$-orbits, hence strongly smooth of dimension $\leq (\dim V)^{2}$.
\item \label{Hom(G,GL(V))-for-G-semisimple}
If $G^{\circ}$ is semisimple or if $G$ finite, then $\Hom(G,H)$ is an affine algebraic variety. Moreover, $\Hom(G,\GL(V))$ is a finite union of closed $\GL(V)$-orbits and thus a smooth affine algebraic variety of dimension $\leq(\dim V)^{2}$.
\item \label{Hom(U,H)a}
If $U$ is a unipotent group, then $\Hom(U,H)$ is an affine algebraic variety of dimension $\leq \dim U \cdot \dim H^{u}$.
\ee
\end{proposition}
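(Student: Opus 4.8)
The plan is to treat the four assertions in the order (2), (3), (4), (1): I prove the reductive and unipotent cases first and then assemble the general finite-dimensionality from a Levi decomposition. Throughout I use that $\Hom(G,H)$ is closed in $\Mor(G,H)$ (Lemma~\ref{Hom-in-Mor.lem}) and that a closed immersion $H\into\GL(W)$ induces a closed immersion $\Hom(G,H)\into\Hom(G,\GL(W))$ (Remark~\ref{closed-subgroup-Hom.rem}), so that bounds for $\GL$-targets transfer to arbitrary $H$.

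For (2) I identify $\Hom(G,\GL(V))$ with $\Mod_G(V)$. Since $G$ is reductive and $\kk$ has characteristic zero, every $G$-module is completely reducible, so by \name{Artin}'s Lemma (Lemma~\ref{Artin.lem}) every equivalence class $C_M\simeq\GL(V)/\GL_G(M)$ is \emph{closed}; it is smooth of dimension $\leq\dim\GL(V)=(\dim V)^2$. The isomorphism classes of $n$-dimensional semisimple representations form a countable set (for the semisimple part of $G$ there are finitely many irreducibles of each dimension, and the central torus contributes only countably many characters), so $\Mod_G(V)$ is a countable disjoint union of these closed smooth orbits. The one delicate point is \emph{strong smoothness}: I must show each orbit is also open. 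Reducing to an uncountable field by base field extension (Proposition~\ref{field-extensions-for-morphisms.prop}) and applying Lemma~\ref{constructible.lem} to the covering of each algebraic piece $\Mod_G(V)\cap\Mor(G,\GL(V))_k$ by its (countably many) orbit traces, I conclude that only finitely many orbits meet each piece; hence every orbit is open and closed, and strong smoothness follows from Example~\ref{countable-union-of-varieties.exa}. For (3), when $G^{\circ}$ is semisimple (so $G$ is reductive) or $G$ is finite, the number of isomorphism classes of $n$-dimensional representations is \emph{finite} (the Weyl dimension formula bounds the dominant weights giving representations of dimension $\leq\dim V$; for finite $G$ there are finitely many irreducibles). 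Thus $\Hom(G,\GL(V))$ is a finite disjoint union of closed smooth orbits, i.e. a smooth affine variety of dimension $\leq(\dim V)^2$, and for general $H$ the closed immersion into $\Hom(G,\GL(W))$ makes $\Hom(G,H)$ a closed subvariety of an affine variety, hence affine algebraic.

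For the unipotent case (4) I fix a basis $X_1,\dots,X_d$ of $\Lie U$, with $d=\dim U$, and set $\lambda_i(s):=\exp_U(sX_i)$ (Example~\ref{unipotent-exp.exa}); the one-parameter subgroups $\lambda_i$ generate $U$, since the connected subgroup they generate has Lie algebra containing all $X_i$. Evaluation $\phi\mapsto(\phi\circ\lambda_i)_i$ is then an injective morphism
\[
\Hom(U,H)\longrightarrow \prod_{i=1}^{d}\Hom(\kplus,H)\simeq (H^u)^d ,
\]
using $\Hom(\kplus,H)\simto H^u$ from Proposition~\ref{Hom-kplus.prop}. To exploit this I first show $\Hom(U,H)$ is an algebraic variety: by functoriality of the exponential (Lemma~\ref{exp-hom.lem} applied to each $\lambda_i$, together with Proposition~\ref{exponential-for-linear-algebraic-groups.prop}) every homomorphism factors as $\phi=\exp_H\circ d\phi\circ\log_U$. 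After embedding $H\subseteq\GL(W)$ the maps $\log_U$ and $\exp_H$ are polynomial of degree bounded independently of $\phi$ (the latter of degree $<\dim W$), so the matrix entries of $\phi$ are polynomials on $U$ of uniformly bounded degree; hence $\Hom(U,H)$ lies in a single finite level of $\Mor(U,H)$ and, being closed there, is an affine variety. An injective morphism of varieties into $(H^u)^d$ then yields $\dim\Hom(U,H)\leq d\cdot\dim H^u=\dim U\cdot\dim H^u$.

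Finally, for (1) I use a Levi decomposition $G^{\circ}=\rad_{u}(G)\rtimes L$ (available in characteristic zero) with $L$ connected reductive, together with representatives $g_1,\dots,g_m$ of $G/G^{\circ}$. A homomorphism $\phi$ is determined by $\phi|_{L}$, $\phi|_{\rad_{u}G}$ and the values $\phi(g_1),\dots,\phi(g_m)$, so
\[
\Hom(G,H)\longrightarrow \Hom(L,H)\times\Hom(\rad_{u}G,H)\times H^{m}
\]
is an injective morphism (the components are morphisms by Lemma~\ref{tautological.lem}). The target is finite-dimensional: $\dim\Hom(L,H)<\infty$ follows from (2) after embedding $H\subseteq\GL(W)$, and $\dim\Hom(\rad_{u}G,H)<\infty$ follows from (4). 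Since an injective morphism into a finite-dimensional ind-variety forces the source to be finite-dimensional (as noted after Definition~\ref{dimension.def}, bounding each $\dim\VVV_k$ by the dimension of the target), $\Hom(G,H)$ is finite-dimensional. I expect the main obstacle to be the unipotent case (4)—securing the sharp factor $\dim H^u$ rather than $\dim H$, and the uniform degree bound needed for algebraicity—and, secondarily, the verification of strong smoothness in (2), which is what forces the reduction to an uncountable base field.
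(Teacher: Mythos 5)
Your proof is correct, but you take a genuinely different route for part (1). The paper's argument there is a two-liner that bypasses structure theory entirely: choose a finite subset $F \subset G$ with $G = \overline{\langle F\rangle}$; evaluation then gives an injective ind-morphism $\Hom(G,\GL_n) \to (\GL_n)^{F}$, and Proposition~\ref{small-fibers-gives-variety.prop} (a morphism to a variety with fibers of bounded dimension has finite-dimensional source) yields the claim at once, with no appeal to (2) or (4). Your route --- Levi decomposition $G^{\circ} = \rad_{u}(G)\rtimes L$ plus finitely many coset representatives, reducing (1) to (2) and (4) --- is valid in characteristic zero, and your supplementary step (an injective morphism into a finite-dimensional ind-variety bounds each $\dim\VVV_{k}$, since by Lemma~\ref{Kumar.lem} each level lands in a finite level of the target) is sound; but it makes (1) depend on the later parts and on the Levi theorem, where the paper's version is self-contained and cheaper. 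In (4) you prove the same two facts by near-identical means: the paper obtains the uniform degree bound by restricting along all one-parameter subgroups $\exp_{U}\circ\,\ell$ and invoking $\Hom(\kplus,H)\subseteq \kk[s]_{<n}\otimes\M_{n}$ from Proposition~\ref{Hom-kplus.prop}, while your factorization $\phi = \exp_{H}\circ\, d\phi\circ \log_{U}$ via Lemma~\ref{exp-hom.lem} reaches the same finite-dimensional subspace of $\OOO(U)\otimes\M_{n}$ a bit more directly; the dimension estimate via the injection into $(H^{u})^{d}$ is the paper's argument verbatim (the paper uses $d$ topological generators $u_{i}$ of $U$, you use the one-parameter subgroups through a basis of $\Lie U$ together with $\Hom(\kplus,H)\simeq H^{u}$). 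In (2) and (3) you follow the paper (\name{Artin}'s lemma plus countability, respectively finiteness, of equivalence classes --- where both you and the paper gloss the non-connected reductive case, which needs a word of Clifford theory), and you are in fact more careful than the paper on the one delicate point of (2): the paper disposes of strong smoothness by citing Example~\ref{countable-union-of-varieties.exa}, which tacitly requires that each level of $\Hom(G,\GL(V))$ meet only finitely many orbits, so that the filtration by finite unions of closed orbits is admissible; your reduction to an uncountable field and Lemma~\ref{constructible.lem} supplies exactly that justification (to be complete one should note that admissibility descends along the base field extension, since $\VVV_{k}\subseteq A_{\ell}$ holds over $\kk$ if and only if it holds over $\KK$).
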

\begin{proof}
(1)
Using a closed embedding $H \subseteq \GL_{n}$ it suffices to prove that $\Hom(G,\GL_{n})$ is finite-dimensional (Remark~\ref{closed-subgroup-Hom.rem}). There is a finite subset $F \subset G$ such that $G = \overline{\langle F \rangle}$. Hence, we get an injective morphism of ind-varieties
$$
\phi\colon \Hom(G,\GL_{n}) \to (\GL_{n})^{F}, \quad \lambda \mapsto (\lambda(g)\mid g\in F).
$$
Now the claim follows from Proposition~\ref{small-fibers-gives-variety.prop}.
\ps
(2)
Since the representations of a reductive group are completely reducible the $\GL(V)$-orbits in $\Hom(G,\GL(V))$ are closed, by Lemma~\ref{Artin.lem}. Moreover, the number of equivalence classes of $n$-dimensional representations of $G$ are countable, hence $\Hom(G,\GL(V))$ is a countable union of closed $\GL(V)$-orbits. For the last claim we refer to Example~\ref{countable-union-of-varieties.exa}.
\ps
(3) 
Since there  are only finitely many equivalence classes of $n$-dimensional representations of $G$, the claim follows from (2).
\ps
(4) 
We can assume that $H$ is a closed subgroup of $\GL_{n}$ (Remark~\ref{closed-subgroup-Hom.rem}). Then we have embeddings
$$
\Hom(U,H) \subseteq \Hom(U,\GL_{n}) \subseteq \Mor(U,\GL_{n}) \subseteq \Mor(U,\M_{n}) = \OOO(U)\otimes \M_{n}.
$$
where the first two are closed immersions, and the last is a locally closed immersion, by Corollary~\ref{special-open-set.cor}.
The first claim of (4)
follows if we show that the image of $\Hom(U,H)$ in $\OOO(U)\otimes \M_{n}$ is contained in a finite dimensional subspace.

If $\lambda\in\Hom(U,H)$ we denote by $\tilde\lambda$ the image in $\OOO(U)\otimes \M_{n}$. Using the exponential isomorphism $\exp_{U}\colon\Lie U \simto U$ (Example~\ref{unipotent-exp.exa}) we see that every linear map $\ell\colon \kk \to \Lie U$ defines a homomorphism $\lambda_{\ell}:=\exp_{U}\circ\, \ell\colon \kplus \to U$ of algebraic groups.
If $\rho\in\Hom(U,H)$, then the image of $\rho\circ\lambda_{\ell} \in \Hom( \kplus ,H)$
in $\kk[s]\otimes \M_{n}$ is equal to $(\lambda_{\ell}^{*}\otimes\id)(\tilde \rho)$:
$$
\begin{CD}
\Hom(U,H) @>{\subseteq}>>  \Mor(U,\M_{n}) @= \OOO(U)\otimes \M_{n} @>{\exp_{U}^{*}\otimes\id}>{\simeq}> 
\OOO(\Lie U)\otimes \M_{n}\\
@VV{\rho\mapsto\rho\circ\lambda_{\ell}}V    @VV{\rho\mapsto\rho\circ\lambda_{\ell}}V   
@VV{\lambda_{\ell}^{*}\otimes\id}V    @VV{\ell^{*}\otimes\id}V\\
\Hom(\kplus,H) @>{\subseteq}>> \Mor(\kk,\M_{n}) @= \kk[s]\otimes \M_{n} @= \kk[s]\otimes \M_{n}
\end{CD}
$$
We know that the image of $\Hom(\kplus,H)$ in $\kk[s]\otimes\M_{n}$ is contained in the finite dimensional subspace $\kk[s]_{<n}\otimes \M_{n}$, see Proposition~\ref{Hom-kplus.prop}. If we denote by $S$ the image of $\Hom(U,H)$ in $\OOO(\Lie U)\otimes \M_{n}$, it follows that $\ell^{*}(S) \subseteq \kk[s]_{<n}\otimes \M_{n}$ for all linear maps $\ell\colon\kk \to \Lie U$. This clearly implies that $S$ is contained in a finite-dimensional subspace of $\OOO(\Lie U)\otimes \M_{n}$.

It remains to prove the dimension estimate. We can find $d:= \dim U$ elements $u_{1},\ldots,u_{d}\in U$ such that $\overline{\langle u_{1},\ldots,u_{d}\rangle} = U$. Then we obtain an injective morphism
$\Hom(U,H) \into (H^{\text{\it u}})^{d}$, $\lambda \mapsto (\lambda(u_{1}),\ldots,\lambda(u_{d}))$, and the claim follows.
\end{proof}

\begin{example}
If $G=T$ is a torus of dimension $d \geq 1$,
then $\Hom(T,\GL_{n})$ is finite-dimensional, with infinitely many connected components. In fact, it is a countable, but not finite union of closed $\GL_{n}$-orbits and therefore of dimension $\leq n^{2}-n$. In particular, $\Hom(T,\GL_{n})$ is not an algebraic variety.
\end{example}

\begin{example}\label{F-dot-T.exa}
Consider the semidirect product $N_{n}:=S_{n}\ltimes (\kst)^{n}$ with the obvious action of $S_{n}$ on $(\kst)^{n}$.
Then again $\Hom(N_{n},\GL(V))$ is not algebraic as soon as $\dim V \geq n$. In fact, for every $k \in \ZZ$
we have a homomorphism $p_{k}\colon N_{n}\to N_{n}$ given
by $(\sigma,t) \mapsto (\sigma,t^{k})$. Thus, starting with a faithful representation $\rho\colon N_{n}\to\GL(V)$ we obtain a countable but not finite set of representations $ \rho \circ p_k$, $k \in \NN$, which are nonequivalent since they have different kernels.

This example generalizes immediately to the case of a semidirect product $N:=F\ltimes T$ of a finite group $F$ with a torus $T$. Again, the maps $p_{k}\colon F\ltimes T \to F\ltimes T$, $(f,t)\mapsto (f,t^{k})$ are homomorphisms of algebraic groups for any $k \in\ZZ$. Hence, $\Hom(N,\GL_{n})$ is not algebraic if $n$ is large enough.
\end{example}

\begin{example}
If $U$ is a unipotent group, then the only semisimple $U$-modules are the trivial ones. It follows that $\Hom(U,\GL(V))$ contains a single closed orbit, namely $O = \{\rho_{0}\}$
where $\rho_{0}$ is the trivial representation, hence the unique fixed point of $\GL(V)$.
\end{example}

We set $\Autgr(G):=\Hom(G,G)$, the group of regular automorphisms of the linear algebraic group $G$. We have a homomorphism of ind-groups $\Int\colon G \to \Autgr(G)$ sending $g \in G$ 
to the inner automorphism $h\mapsto ghg^{-1}$.\idx{$\Autgr(G)$}

\begin{example}\label{Aut.exa}
\be
\item
$\Autgr(\SLtwo) = \Int(\SLtwo) \simeq \PSL_{2}$.
\item
For $n>2$ the involution $\tau\colon\SL_{n}\simto\SL_{n}$, $A\mapsto (A^{t})^{-1}$, is not inner, and we have 
$\Autgr (\SL_{n}) = \langle\tau\rangle \ltimes \Int(\SL_{n})$.
\item \label{torus}
For $T =( \kst)^{n}$ we have $\Aut_{gr}(T) = \GL_{n}(\ZZ)$, a discrete ind-group.
\item 
If $U$ is a commutative unipotent group, hence isomorphic to the additive group $\Lie U^{+}$ by the exponential map (see Example~\ref{unipotent-exp.exa}), then $\Autgr(U) \simeq \GL(\Lie U)$.
\ee
\end{example}

\ps
\subsection{Relation with the Lie algebras}
For any homomorphism $\rho\colon G \to H$ of linear algebraic groups we get a homomorphism of Lie algebras $d\rho\colon \Lie G \to \Lie H$. This defines a map
$$
L_{G,H}\colon \Hom(G,H) \into \LLie(\Lie G, \Lie H), \ \rho\mapsto d\rho,
$$
where $\LLie(\Lie G,\Lie H) \subseteq \LLL(\Lie G,\Lie H)$ denotes the closed subvariety of Lie algebra homomorphisms.
Note that $\LLie(\Lie G,\Lie H)$ is an affine variety even though $\Hom(G,H)$ might not be a variety.

\begin{proposition}\label{Hom-and-Lie.prop}
Let $G,H$ be linear algebraic groups.
\be
\item 
$L_{G,H} \colon \Hom(G,H) \to \LLie(\Lie G, \Lie H)$ is an ind-morphism. If $G$ is connected, then $L_{G,H}$ is injective.
\item 
If $G$ is a connected semisimple group, then $L_{G,H}$ is a closed immersion of affine varieties. If $G$ is simply connected, then $L_{G,H}$ is an isomorphism.
\item 
If $U$ is unipotent, then $L_{U,H}$ is a closed immersion of affine varieties.
\ee
\end{proposition}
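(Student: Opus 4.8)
The plan is to establish the three assertions in turn, the last two by constructing explicit inverse morphisms; part (2) will be the serious one.

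\emph{Part (1).} First I would observe that $\Hom(G,H)$ is closed in $\Mor(G,H)$ by Lemma~\ref{Hom-in-Mor.lem}, and that every group homomorphism fixes the neutral element, so $\Hom(G,H)$ is contained in the closed subset $\Mor_{0}(G,H)=\{\phi\in\Mor(G,H)\mid \phi(e)=e\}$. By Lemma~\ref{mor-to-tangent.lem} the differential map $\Mor_{0}(G,H)\to\LLL(\Lie G,\Lie H)$, $\phi\mapsto d\phi_{e}$, is an ind-morphism; restricting it to $\Hom(G,H)$ and using that the differential of a homomorphism is a Lie algebra homomorphism (Section~\ref{Liealgebra.subsec}) shows that $L_{G,H}$ takes values in the closed subvariety $\LLie(\Lie G,\Lie H)$ and is an ind-morphism. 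Injectivity for connected $G$ is then immediate from Proposition~\ref{phi-dphi.prop}, since two homomorphisms with the same differential coincide.

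\emph{Part (3).} Here $U$ is connected, $\Hom(U,H)$ is an affine variety by Proposition~\ref{Hom(G,H).prop}, and $L_{U,H}$ is an injective ind-morphism of affine varieties by part (1). Writing $\nn:=\Lie U$, the exponential $\exp_{U}\colon\nn\simto U$ is an isomorphism of varieties (Example~\ref{unipotent-exp.exa}), and by the functoriality of Lemma~\ref{exp-hom.lem} every $\rho\in\Hom(U,H)$ satisfies $\rho=\exp_{H}\circ(d\rho_{e})\circ\exp_{U}^{-1}$ with $d\rho_{e}(\nn)\subseteq(\Lie H)^{\text{\it nil}}$; the nilpotent cone being closed (Proposition~\ref{exponential-for-linear-algebraic-groups.prop}), the image of $L_{U,H}$ lies in the closed subset $Z:=\{\phi\in\LLie(\nn,\Lie H)\mid \phi(\nn)\subseteq(\Lie H)^{\text{\it nil}}\}$. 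Conversely, for $\phi\in Z$ I would set $\rho_{\phi}:=\exp_{H}\circ\phi\circ\exp_{U}^{-1}$; since $\nn$ and $\phi(\nn)$ are nilpotent Lie algebras of nilpotent elements, the maps $\exp_{U}$ and $\exp_{H}|_{\phi(\nn)}$ convert the respective Baker--Campbell--Hausdorff products into group multiplication, so $\rho_{\phi}$ is a homomorphism. The family $(\phi,u)\mapsto\exp_{H}(\phi(\exp_{U}^{-1}(u)))$ is a morphism (the evaluation $\LLL(\nn,\Lie H)\times\nn\to\Lie H$ is bilinear and $\exp_{H}$ is a morphism), so the universal property of $\Hom(U,H)$ yields a morphism $Z\to\Hom(U,H)$ that is a two-sided inverse of $L_{U,H}$. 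Hence $L_{U,H}$ is an isomorphism onto the closed subvariety $Z$, i.e. a closed immersion.

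\emph{Part (2).} For $G$ connected semisimple, $\Hom(G,H)$ is an affine variety by Proposition~\ref{Hom(G,H).prop}. I would first treat the simply connected case $\tilde G$ and claim $L_{\tilde G,H}$ is an isomorphism: it is an injective morphism by part (1), it is surjective by the classical integration theorem (embedding $H\hookrightarrow\GL_{n}$, the induced representation of $\Lie\tilde G$ exponentiates, and its connected image lies in $H$), and the inverse is again shown to be a morphism via the Chevalley generation of $\tilde G$ by root subgroups $x_{\alpha}\colon\kplus\to\tilde G$: for $\phi\in\LLie(\Lie\tilde G,\Lie H)$ the elements $\phi(e_{\alpha})$ with $e_{\alpha}=d(x_{\alpha})_{0}(1)$ are nilpotent, because a Lie algebra homomorphism out of a semisimple Lie algebra preserves the abstract Jordan decomposition, so $\phi\mapsto\lambda_{\phi(e_{\alpha})}\in\Hom(\kplus,H)$ is a morphism by Proposition~\ref{Hom-kplus.prop} and Lemma~\ref{lambda.lem}, and the Steinberg relations assemble these one-parameter subgroups into $\rho_{\phi}$. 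For general connected semisimple $G$ I would reduce via the isogeny $\pi\colon\tilde G\to G$: since $\pi$ is dominant, $\pi^{*}\colon\Hom(G,H)\to\Hom(\tilde G,H)$ is a closed immersion (Lemma~\ref{closed-immersion-Mor.lem} applied to $\Mor$ and restricted to the closed subsets of homomorphisms), and the identity $L_{\tilde G,H}\circ\pi^{*}=(d\pi_{e})^{*}\circ L_{G,H}$, with $(d\pi_{e})^{*}$ the isomorphism of $\LLie$-varieties induced by the linear isomorphism $d\pi_{e}$, gives $L_{G,H}=((d\pi_{e})^{*})^{-1}\circ L_{\tilde G,H}\circ\pi^{*}$, a composition of isomorphisms with a closed immersion, hence a closed immersion.

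The main obstacle is the morphism property of the integration map $\phi\mapsto\rho_{\phi}$ in the simply connected case. Upgrading the classical set-theoretic bijection $\Hom(\tilde G,H)\leftrightarrow\LLie(\Lie\tilde G,\Lie H)$ to an \emph{isomorphism of varieties} requires making the Chevalley/Steinberg presentation of $\tilde G$ uniform in the parameter $\phi$, so that the assembled homomorphism depends morphically on $\phi$; everything else reduces to the exponential-map machinery of Section~\ref{VectorFields.sec} together with the closed-immersion criteria already established for $\Mor$.
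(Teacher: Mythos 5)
Parts (1) and (3) of your proposal are correct. Part (1) follows the paper's own route (Lemma~\ref{mor-to-tangent.lem} plus the rigidity of homomorphisms of connected groups). Part (3) is a genuinely different and in fact stronger argument: you identify the image of $L_{U,H}$ as the closed set $Z$ of Lie algebra homomorphisms with image in $(\Lie H)^{\text{\it nil}}$ and build an explicit inverse $\phi\mapsto \exp_{H}\circ\phi\circ\exp_{U}^{-1}$, using Baker--Campbell--Hausdorff to see it is a homomorphism. The paper never identifies the image: it only factors the closed inclusion $\Hom(U,\GL(V))\subseteq \Mor(U,\UUU(V))$ through the morphism $E_{U}\colon \Mor(\Lie U,\NNN(V))\to\Mor(U,\UUU(V))$ and invokes Lemma~\ref{closed-immersion.lem}, which yields the closed immersion without any integration-of-Lie-homomorphisms input. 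Your version buys an isomorphism onto an explicit closed subvariety at the cost of the (standard, and correctly applied) BCH facts.

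Part (2), however, has a genuine gap exactly where you flag it: the regularity of the integration map $\phi\mapsto\rho_{\phi}$ in the simply connected case is asserted via Steinberg assembly but not established, and the proposed route does not obviously work. A factorization of $g\in\tilde G$ as a word in root subgroups cannot be chosen morphically in $g$, so your construction only shows that $(\phi,g)\mapsto\rho_{\phi}(g)$ is regular in $\phi$ for each fixed $g$ (and in $g$ for each fixed $\phi$); separate regularity does not imply joint regularity. The natural repair---descending along a surjective word map $U_{\alpha_{1}}\times\cdots\times U_{\alpha_{N}}\to\tilde G$ as in the proof of Proposition~\ref{restriction-to-subgroups.prop}---is blocked because the descent lemma used there requires the target of the surjection to be irreducible and normal, and here that target would be $\LLie(\Lie\tilde G,\Lie H)\times\tilde G$, where $\LLie(\Lie\tilde G,\Lie H)$ is a quadratically defined closed subvariety with no irreducibility or normality guarantees. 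The paper sidesteps uniform integration entirely by reversing the order of quantifiers: for $H=\GL(V)$ it writes $\Hom(G,\GL(V))$ as a finite union of closed $\GL(V)$-orbits (Proposition~\ref{Hom(G,H).prop}) and uses the equality of stabilizers $\GL_{G}(V)=\GL_{\Lie G}(V)$ to see that each orbit $C_{\rho}\simeq\GL(V)/\GL_{G}(V)$ is mapped isomorphically onto the orbit $C_{d\rho}$, so $L_{G}$ is a closed immersion; the general case follows by embedding $H\subseteq\GL(V)$ and Lemma~\ref{closed-immersion.lem}, and in the simply connected case the already-established closed immersion together with the set-theoretic surjectivity you correctly have (integration of representations, then $\rho(G)\subseteq H$ by connectedness) yields the isomorphism for free, with no need to prove the inverse is a morphism. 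Your isogeny reduction $L_{G,H}=((d\pi_{e})^{*})^{-1}\circ L_{\tilde G,H}\circ\pi^{*}$ is fine and would complete the argument once the simply connected case is repaired along these lines.
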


\begin{proof}
(1)
The first part follows from Lemma~\ref{mor-to-tangent.lem}, and the second one is well known since a homomorphism $\rho\colon G \to H$ for a connected group $G$ is determined by $d\rho\colon \Lie G \to \Lie H$, see Section~\ref{notation.subsec}.
\ps
(2)
We first consider the case $H = \GL(V)$.
We will write $L_G$ instead of $L_{G, \GL (V) }$.
Then $\Hom(G,\GL(V))$ is an affine variety consisting of finitely many closed $\GL(V)$-orbits, by Proposition~\ref{Hom(G,H).prop}(\ref{Hom(G,GL(V))-for-G-semisimple}). Under $L_{G}$, each orbit is mapped isomorphically onto its image. In fact, we have seen above that the orbit $C_{\rho}$ of a representation $\rho\colon G \to \GL(V)$ is isomorphic to $\GL(V)/\GL_{G}(V)$. As remarked in Section~\ref{notation.subsec} we have $\GL_{G}(V) = \GL_{\Lie G}(V)$, and so the orbit $C_{\rho}$ is isomorphic to the orbit $C_{d\rho}$ of the representation $d\rho\colon \Lie G \to \End(V)$. Thus the map $L_{G}\colon \Hom(G,\GL(V)) \to \LLie(\Lie G, \End(V))$ is a closed immersion. 

If $G$ is simply connected, then every representation of $\Lie G$ is induced by a representation of $G$ (see Section~\ref{notation.subsec}), and so $L_{G}$ is an isomorphism.

In general, we can choose a closed embedding
$H \subseteq \GL(V)$, so that $\Lie H \subseteq \End(V)$, and we get a commutative diagram
$$
\begin{CD}
\Hom(G,H) @>{L_{G,H}}>> \LLie(\Lie G, \Lie H) \\
@VV{\subseteq}V   @VV{\subseteq}V  \\
\Hom(G,\GL(V)) @>{L_{G}}>{\subseteq}>  \LLie(\Lie G, \End(V))
\end{CD}
$$
where the lower horizontal map $L_{G}$ is a closed immersion. Thus $L_{G,H}$ is a closed immersion
(see Lemma~\ref{closed-immersion.lem}).

If  $G$ is connected and
$L_{G}$ is an isomorphism, then $L_{G,H}$ is also an isomorphism, since it is surjective. In fact, if $\rho\colon G \to \GL(V)$ is a homomorphism such that $d\rho\colon \Lie G \to \End(V)$ has its image in $\Lie H$, then $\rho(G) \subset H$, see Section~\ref{notation.subsec}.
\ps
(3)
By Lemma~\ref{exp-hom.lem} we get, for every homomorphism $\rho\colon U \to \GL(V)$, a commutative diagram
$$
\begin{CD}
(\Lie U)  @>{d\rho}>>  \NNN(V)\\
@V{\simeq}V{\exp_{U}}V   @V{\simeq}V{\exp}V \\
U@>{\rho}>>    \UUU (V)
\end{CD}
$$
where $\UUU (V) \subseteq \GL(V)$ are the unipotent elements  and $\NNN(V) \subseteq \End(V)$ the nilpotent elements (notation from Lemma~\ref{exp-basics.lem}).
This allows to define an ind-morphism $E_{U}\colon \Mor(\Lie U, \NNN(V)) \to \Mor(U,\UUU (V))$ by $\phi\mapsto \exp\circ\phi\circ\exp_{U}^{-1}$, with the property that $E_{U}(d\rho) = \rho$ for any homomorphism $\rho\colon U \to \GL(V)$.

The ind-morphism $L_U \colon \Hom(U, \GL(V)) \into \LLie(\Lie U, \End (V) )$ has values in $\Mor(\Lie U,\NNN(V))$, and thus induces an ind-mor\-phism ${\tilde L_{U}} \colon \Hom(U,\GL(V)) \to \Mor(\Lie U,\NNN(V))$ which makes the following diagram commutative
$$
\begin{CD}
\Hom(U,\GL(V)) @>{\tilde L_{U}}>> \Mor(\Lie U,\NNN(V)) \\
@VV{\id}V @VV{E_{U}}V \\
\Hom(U,\GL(V)) @>{\subseteq}>> \Mor(U, \UUU (V))
\end{CD}
$$
Since $\Hom(U,\GL(V)) \subseteq \Mor(U, \UUU (V))$ is closed it follows that $\tilde L_{U}$ is a closed immersion of ind-varieties (see Lemma~\ref{closed-immersion.lem}).  This proves that $L_U$ is a closed immersion as well.

The general case is obtained by embedding $H$ into a $\GL(V)$.
\end{proof}

\begin{remark}
In the proof above we did not use that $\Hom(U,H)$ is an affine variety. So we got a new proof for this statement, cf. Proposition~\ref{Hom(G,H).prop}(\ref{Hom(U,H)a}).
\end{remark}

\begin{remark}
If $T$ is a torus of dimension $d \geq 1$,
then $L_{T}\colon \Hom(T,\GL(V)) \into \LLie(\Lie T,\End(V))$ is an injective ind-morphism whose image is a countable,
but not finite union of $\GL(V)$-orbits in an affine $\GL(V)$-variety. Thus $L_{T}$ cannot be a closed immersion.
\end{remark}

\ps
\subsection{A generalization}
We start with the following result.
\begin{proposition}  \label{restriction-to-subgroups.prop}
Let $G, L$ be linear algebraic groups, and let $H,K \subseteq G$ be two closed connected subgroups generating $G$.
Then the image of the ind-morphism
$$
\Delta \colon \Hom(G,L) \to \Hom(H,L) \times \Hom(K,L), \ \rho\mapsto (\rho|_{H},\rho|_{K}),
$$
is closed.
\end{proposition}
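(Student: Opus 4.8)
The plan is to prove the stronger assertion that $\Delta$ is a \emph{closed immersion} of ind-varieties; the closedness of the image is then immediate. The main tool will be Lemma~\ref{closed-immersion.lem}: if a composition $\psi\circ\phi$ is a closed immersion, then so is $\phi$. Accordingly, I will produce a morphism $\Theta$ out of $\Hom(H,L)\times\Hom(K,L)$ such that the composite $\Theta\circ\Delta$ is manifestly a closed immersion. (That $\Delta$ itself is a morphism is already part of the statement, and follows anyway from Lemma~\ref{tautological.lem} together with $\Hom(H,L),\Hom(K,L)$ being closed in $\Mor(H,L),\Mor(K,L)$, see Lemma~\ref{Hom-in-Mor.lem}.)

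The key geometric input is a factorisation of $G$ through $H$ and $K$. Since $H$ and $K$ are closed connected subgroups generating $G$, the standard result on subgroups generated by irreducible subvarieties through $e$ (see \cite[Proposition~7.5]{Hu1975Linear-algebraic-g}) shows that $G$ is a finite product of copies of $H$ and $K$. Collapsing consecutive equal factors (each of $H$, $K$ is a subgroup) and inserting the identity where needed (using $e\in H\cap K$), I may arrange this product in alternating form, i.e. find $n\geq 1$ for which the multiplication morphism
$$\mu\colon (H\times K)^{n}\to G,\qquad (h_{1},k_{1},\ldots,h_{n},k_{n})\mapsto h_{1}k_{1}\cdots h_{n}k_{n},$$
is surjective. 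I expect this step to require the most care, essentially in the bookkeeping of the collapsing and padding; the existence of a finite product is classical.

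With $\mu$ available, the remainder is formal. Being surjective, $\mu$ is dominant, so by Lemma~\ref{closed-immersion-Mor.lem}(2) the comorphism $\mu^{*}\colon \Mor(G,L)\to \Mor((H\times K)^{n},L)$, $\phi\mapsto \phi\circ\mu$, is a closed immersion; restricting it to the closed subset $\Hom(G,L)\subseteq\Mor(G,L)$ keeps it a closed immersion. Next I define
$$\Theta\colon \Hom(H,L)\times\Hom(K,L)\to \Mor((H\times K)^{n},L),\qquad (\sigma,\tau)\mapsto\bigl[(h_{1},k_{1},\ldots)\mapsto \sigma(h_{1})\tau(k_{1})\cdots\sigma(h_{n})\tau(k_{n})\bigr].$$
This is a morphism: each elementary factor $(\sigma,\tau)\mapsto\bigl[(h_{1},k_{1},\ldots)\mapsto\sigma(h_{i})\bigr]$ is the composite of $\Hom(H,L)\hookrightarrow\Mor(H,L)$ with the comorphism of the projection $(H\times K)^{n}\to H$ (a morphism by Lemma~\ref{tautological.lem}), and $\Theta$ is the product of these $2n$ elementary maps inside the ind-group $\Mor((H\times K)^{n},L)$ (Remark~\ref{morphisms-to-groups.rem}), whose multiplication is a morphism.

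Finally I verify that the triangle commutes: for $\rho\in\Hom(G,L)$ one has $\Delta(\rho)=(\rho|_{H},\rho|_{K})$ and
$$\Theta(\rho|_{H},\rho|_{K})(h_{1},k_{1},\ldots)=\rho(h_{1})\rho(k_{1})\cdots\rho(h_{n})\rho(k_{n})=\rho(h_{1}k_{1}\cdots h_{n}k_{n})=(\mu^{*}\rho)(h_{1},k_{1},\ldots),$$
so that $\Theta\circ\Delta=\mu^{*}|_{\Hom(G,L)}$, which we have shown is a closed immersion. Lemma~\ref{closed-immersion.lem} then yields that $\Delta$ is a closed immersion, and in particular its image is closed, as claimed. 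I note that this argument sidesteps entirely the delicate problem of describing which pairs $(\sigma,\tau)$ extend to a homomorphism of $G$: the injectivity of $\Delta$ (a homomorphism of $G$ is determined by its restrictions to the generating subgroups $H$ and $K$) is already subsumed in the closed-immersion conclusion.
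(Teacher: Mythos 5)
Your proof is correct, but it is genuinely different from the paper's, and in fact stronger. Both arguments start from the same classical input (connected closed subgroups generate $G$ via a finite product, so an alternating multiplication map $\mu\colon (H\times K)^{n}\to G$ is surjective), but then diverge. The paper never proves $\Delta$ is an immersion: it describes the image explicitly as the closed set $R$ of pairs $(\phi,\psi)$ whose word map $[\phi,\psi]$ is constant on the fibers of the multiplication map, and the substance of its proof is showing that every $(\phi,\psi)\in R$ integrates to a homomorphism $G\to L$ --- this needs a doubling of factors (from $2n$ to $4n$) to check multiplicativity on products $g_{1}g_{2}$, plus an auxiliary lemma (a set map that becomes a morphism after composing with a surjection onto a normal irreducible variety is itself a morphism), which is where irreducibility and normality of $G$ enter. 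You sidestep the image description entirely: your word-map morphism $\Theta$ satisfies $\Theta\circ\Delta=\mu^{*}|_{\Hom(G,L)}$, which is a closed immersion by Lemma~\ref{closed-immersion-Mor.lem}(2) restricted to the closed subset $\Hom(G,L)\subseteq\Mor(G,L)$, so Lemma~\ref{closed-immersion.lem} forces $\Delta$ to be a closed immersion; your checks that $\Theta$ is a morphism (a $2n$-fold pointwise product of pullback maps inside the ind-group $\Mor((H\times K)^{n},L)$) and that the triangle commutes (using only that $\rho$ is a homomorphism) are both sound, and no doubling of factors is needed. What each buys: the paper's route yields the explicit identification $\Delta(\Hom(G,L))=R$, i.e.\ a criterion for which pairs extend; your route yields strictly more than the stated proposition, namely an affirmative answer to Question~\ref{Delta.ques} in the connected case (connectedness enters only through the finite alternating product, so the non-connected case of that question remains open). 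It would also streamline Corollary~\ref{G-generated-by-m-subgroups.cor}: the paper must combine the closed image with a base-change/uncountability argument to handle the bijective morphism onto $R$, whereas a closed immersion into a product of affine varieties makes that corollary immediate.
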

\begin{proof}
There is an $n>1$ such that the multiplication map 
$$
\nu\colon \underbrace{H\times K \times H \times \cdots\times K}_{2n\text{ factors}} \to G
$$ 
is surjective. Now consider the multiplication map for $4n$ factors
$$
\mu\colon \underbrace{H\times K \times H \times \cdots\times K}_{4n\text{ factors}} \to G.
$$
The morphism $\mu$ defines an equivalence relation $\sim$ on the product $H\times K \times H \times \cdots\times K$ of $4n$ factors  in the usual way: $a \sim b$ if and only if $\mu(a) = \mu(b)$.

If $\phi\colon H \to L$ and $\psi\colon H \to L$ are two homomorphisms, then we obtain a morphism $[\phi,\psi]\colon 
\underbrace{H\times K \times H \times \cdots\times K}_{4n\text{ factors}} \to L$ defined by:
\[
[\phi,\psi](h_{1},k_{1},h_{2},\ldots,h_{2n},k_{2n}) = \phi(h_{1}) \psi(k_{1}) \phi(h_{2}) \cdots \phi(h_{2n})\psi(k_{2n} ).
\]
It is clear that this induces an ind-morphism
$$
[\ ,\ ]\colon \Hom(H,L) \times \Hom(K,L) \to \Mor(\underbrace{H\times K \times H \times \cdots\times K}_{4n\text{ factors}},L)
$$
Now the equivalence relation $\sim$ defines a closed subset $R \subseteq \Hom(H,L)\times\Hom(K,L)$ in the following way:
$$
R := \{(\phi,\psi) \in \Hom(H,L)\times\Hom(K,L) \mid  [\phi,\psi](a) = [\phi,\psi](b) \text{ for all }a\sim b\}.
$$
We claim that $R$ is the image of $\Delta$. Clearly, $\Delta(\Hom(G,L)) \subseteq R$. 
On the other hand, it follows from the construction that $(\phi,\psi) \in R$ if and only if the morphism $[\phi,\psi] \colon H\times K \times H \times \cdots\times K \to L$, considered as a map, factors through $G$ and induces a homomorphism of groups $\overline{[\phi,\psi]}\colon G \to L$. The first statement is clear. As for the second, let $g_1,g_2 \in G$. They may be written in the form $g_1= h_1 k_1 \cdots h_n k_n$, $g_2 = h'_1k'_1 \cdots h'_n k'_n$, and we get
\begin{eqnarray*}
\overline{[\phi,\psi]} (g_1 g_2)  & = & \overline{[\phi,\psi]}  (  h_1k_1 \cdots h_nk_n h'_1k'_1 \cdots h'_n k'_n )\\
         & = & \phi(h_1) \psi(k_1)  \cdots \phi(h_n) \psi(k_n) \phi(h'_1) \psi(k'_1)  \cdots \phi(h'_n) \psi(k'_n) \\
         & = & \overline{[\phi,\psi]}(g_1) \overline{[\phi,\psi]}(g_2). 
\end{eqnarray*}

The next lemma implies that $\overline{[\phi,\psi]}$ is a morphism, hence a homomorphism of algebraic groups, and the claim follows.
\end{proof}

\begin{lemma} Let $X,Y,Z$ be irreducible
affine varieties where $Y$ is normal. Let $\phi\colon X \to Y$ be a surjective morphism and $f\colon Y \to Z$ an arbitrary map. If the composition $\psi:=f\circ\phi\colon X \to Z$ is a morphism, then so is $f$.
\end{lemma}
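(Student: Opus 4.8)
The plan is to realize $f$ through its graph and then recognize the projection from the graph to $Y$ as an isomorphism via Igusa's Lemma. Since $Z$ is affine, $Y\times Z$ is again an affine variety, so I would first form the morphism $\Gamma:=(\phi,\psi)\colon X \to Y\times Z$, $x\mapsto(\phi(x),\psi(x))$; this is a morphism because both $\phi$ and $\psi=f\circ\phi$ are. Its image is $\Gamma(X)=\{(\phi(x),f(\phi(x)))\mid x\in X\}$, and since $\phi$ is surjective this is precisely the graph $G_f=\{(y,f(y))\mid y\in Y\}$ of the (a priori only set-theoretic) map $f$. By Chevalley's theorem $G_f$ is constructible, and as the image of the irreducible variety $X$ it is irreducible; hence its closure $W:=\overline{G_f}\subseteq Y\times Z$ is an irreducible closed, and therefore affine, subvariety.

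Next I would analyze the first projection $p\colon W\to Y$. Because $G_f$ projects onto $\phi(X)=Y$, the morphism $p$ is surjective. Moreover $G_f$ is constructible and dense in $W$, so it contains a dense open subset $V$ of $W$; on $V\subseteq G_f$ the map $p$ is injective, since $f$ is single-valued (two points $(y,f(y))$ and $(y',f(y'))$ with the same image under $p$ have $y=y'$, hence coincide). Thus $p$ is generically injective, and because we work in characteristic zero with $W$ and $Y$ both irreducible, the function field extension $\kk(W)/\kk(Y)$ has degree one, i.e. $p$ is birational.

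At this point Igusa's Lemma (Lemma~\ref{Igusa.lem}) applies directly: $p\colon W\to Y$ is a birational morphism of irreducible affine varieties, $Y$ is normal, and $p$ is almost surjective (indeed surjective, so $\codim_{Y}\overline{Y\setminus p(W)}=\infty\geq 2$). Hence $p$ is an isomorphism. Finally, for $y\in Y=\phi(X)$ the unique point of $W$ lying over $y$ is $(y,f(y))\in G_f$, so $p^{-1}(y)=(y,f(y))$, and therefore $f=\pr_{Z}\circ\, p^{-1}$ is a composition of morphisms, proving that $f$ is a morphism.

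I expect the only genuine subtlety to be the choice of the right form of Zariski's Main Theorem. A tempting alternative is to show that $p$ is bijective and then invoke the ``bijective morphism to a normal target'' form; but injectivity of $p$ on all of $W$ (not merely on the dense part $G_f$) is not evident, since there could a priori be extra points of $W\setminus G_f$ sitting over a given $y$. Using Igusa's Lemma sidesteps this entirely: it requires only birationality together with almost surjectivity, both of which are immediate here, so no global bijectivity argument is needed. The remaining points—affineness and irreducibility of $W$, and the passage from generic injectivity to birationality in characteristic zero—are routine.
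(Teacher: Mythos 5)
Your proof is correct and follows essentially the same route as the paper: both realize $f$ via the (constructible, irreducible) image of $(\phi,\psi)$ in $Y\times Z$, observe that the projection from the closure of the graph to $Y$ is surjective and injective on a dense open set, hence birational, and then invoke Igusa's Lemma (Lemma~\ref{Igusa.lem}) to conclude $p$ is an isomorphism and $f=\pr_{Z}\circ p^{-1}$. Your closing remark about why the birational-plus-almost-surjective form of Zariski's Main Theorem is the right tool (since injectivity of $p$ on all of $\overline{G_f}$ is not evident) correctly identifies the same subtlety the paper's argument is built around.
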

\begin{proof}
Denoting by $\Gamma_{f}$ the graph of the map $f$  and by $\Gamma_{\psi}$  the graph of the morphism $\psi$ we get the following commutative diagram:
$$
\begin{CD}
X @>{\simeq}>> \Gamma_{\psi} @>{\subseteq}>> X \times Z \\
@VV{\phi}V @VV{\text{\tiny surjective}}V @VV{\phi \times\id}V \\
Y @>>{\text{\tiny bijective}}> \Gamma_{f} @>{\subseteq}>> Y \times Z \\
\end{CD}
$$
It follows that $\Gamma_{f}$ is the image of the irreducible
variety $\Gamma_{\psi}$ under the morphism $\phi\times\id$, hence it is a constructible  subset of $Y\times Z$. The projection $\pr_{Y}\colon Y \times Z \to Y$ induces a surjective morphism $p\colon \overline{\Gamma_{f}} \to Y$ which is injective on an open dense set, hence it is birational. Since $\overline{\Gamma_{f}}$ is irreducible and 
$Y$ is normal this implies that $p$ is an isomorphism, by Lemma \ref{Igusa.lem}. Hence
$\Gamma_{f}=\overline{\Gamma_{f}}$ and $f = \pr_{Z}\circ p^{-1}$ is a morphism.
\end{proof}

\begin{corollary} \label{G-generated-by-m-subgroups.cor} 
Let $G,H$ be linear algebraic groups, and let $G_{1},G_{2},\ldots,G_{m}\subseteq G$ be closed connected subgroups generating $G$. If all $\Hom(G_{i},H)$ are affine varieties, then so is $\Hom(G,H)$.
\end{corollary}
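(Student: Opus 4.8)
The plan is to induct on the number $m$ of generating subgroups, reducing everything to the case of two subgroups and then invoking Proposition~\ref{restriction-to-subgroups.prop}. For $m=1$ the hypothesis forces $G_1=G$ (a closed subgroup generating $G$ equals $G$), so there is nothing to prove. For the inductive step I would set $G':=\overline{\langle G_1,\dots,G_{m-1}\rangle}$, which is a closed subgroup and is connected because it is generated by connected subgroups through $e$. Since $G_1,\dots,G_{m-1}$ are closed connected subgroups of $G'$ generating it and each $\Hom(G_i,H)$ is affine, the induction hypothesis gives that $\Hom(G',H)$ is an affine variety. As $G'$ and $G_m$ are two closed connected subgroups generating $G$, the whole problem reduces to the two-subgroup statement: if $A,B\subseteq G$ are closed connected subgroups generating $G$ with $\Hom(A,H)$ and $\Hom(B,H)$ affine varieties, then $\Hom(G,H)$ is affine.

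For this two-subgroup case, consider the restriction ind-morphism
$$
\Delta\colon \Hom(G,H)\to \Hom(A,H)\times\Hom(B,H),\quad \rho\mapsto(\rho|_A,\rho|_B),
$$
which is an ind-morphism by Lemma~\ref{tautological.lem}. It is injective, since a homomorphism is determined by its values on a generating set. By Proposition~\ref{restriction-to-subgroups.prop} its image $R$ is closed, and the target is a product of two affine varieties, hence an affine variety; thus $R$ is a closed affine subvariety. It therefore suffices to prove that $\Delta$ is an isomorphism onto $R$, for then $\Hom(G,H)\simeq R$ is affine. The forward map $\Delta$ is already a morphism, so the only remaining point is to produce the inverse $R\to\Hom(G,H)$ as a morphism of ind-varieties.

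To build this inverse I would use the reconstruction appearing in the proof of Proposition~\ref{restriction-to-subgroups.prop}. Over $R$ there are two tautological families, namely morphisms $\alpha\colon R\times A\to H$ and $\beta\colon R\times B\to H$ with $\alpha((\phi,\psi),-)=\phi$ and $\beta((\phi,\psi),-)=\psi$, obtained by restricting the evaluation maps of $\Mor(A,H)$ and $\Mor(B,H)$ along $R\hookrightarrow\Hom(A,H)\hookrightarrow\Mor(A,H)$ (Lemma~\ref{Hom-in-Mor.lem}, Lemma~\ref{ind-var-morphisms.lem}, Lemma~\ref{tautological.lem}). Fixing $n$ so that the multiplication map $\nu\colon W:=(A\times B)^{2n}\to G$ is surjective, these assemble into a morphism $\gamma\colon R\times W\to H$ sending $((\phi,\psi),(a_1,b_1,\dots,a_n,b_n))$ to $\alpha((\phi,\psi),a_1)\beta((\phi,\psi),b_1)\cdots\alpha((\phi,\psi),a_n)\beta((\phi,\psi),b_n)$. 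By the very definition of $R$, for each fixed $(\phi,\psi)\in R$ the map $\gamma((\phi,\psi),-)$ is constant on the fibres of $\nu$, so $\gamma$ descends set-theoretically to a map $\bar\gamma\colon R\times G\to H$ whose value $\bar\gamma((\phi,\psi),-)$ is exactly the glued homomorphism $\overline{[\phi,\psi]}$. Once $\bar\gamma$ is known to be a morphism, it corresponds by Lemma~\ref{mor-mor.lem} to a morphism $R\to\Mor(G,H)$ with image in $\Hom(G,H)$, which is the desired inverse of $\Delta$.

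The main obstacle is precisely showing that $\bar\gamma$ is a morphism, i.e. descending $\gamma$ along the surjection $\id_R\times\nu\colon R\times W\to R\times G$. This is the family version of the descent lemma used at the end of the proof of Proposition~\ref{restriction-to-subgroups.prop} (if $X\to Y$ is a surjective morphism of irreducible affine varieties with $Y$ normal and $f\circ\phi$ is a morphism, then $f$ is a morphism). Applying it in families forces me to deal with the possible reducibility and non-normality of $R$: I would reduce to $R$ irreducible, pass to its normalization $\tilde R$ so that $\tilde R\times G$ is normal (using that $G$ is smooth), descend $\gamma$ there, and then descend once more along the finite surjection $\tilde R\to R$, exploiting that the resulting map is constant on its fibres and that a rationally defined group homomorphism extends to a morphism. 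Granting this descent, $\Delta$ is an isomorphism onto the closed affine subvariety $R$, so $\Hom(G,H)$ is an affine variety, completing the induction.
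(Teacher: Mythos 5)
Your reduction to $m=2$ (inducting on $m$, with the generated subgroup closed and connected) and your use of Proposition~\ref{restriction-to-subgroups.prop} to obtain an injective ind-morphism $\Delta$ with closed image $R$ inside the affine variety $\Hom(A,H)\times\Hom(B,H)$ agree with the paper. The gap is the final step: you set out to prove that $\Delta$ is an isomorphism onto $R$, i.e.\ a closed immersion — but that is exactly what the paper leaves \emph{open} (Question~\ref{Delta.ques}), and your descent argument does not close it. The problematic point is descending the glued family $\gamma\colon R\times W\to H$ along $\id_R\times\nu\colon R\times W\to R\times G$: the lemma following Proposition~\ref{restriction-to-subgroups.prop} requires the intermediate variety to be irreducible and \emph{normal}, and your plan for non-normal $R$ — normalize, descend over $\tilde R\times G$, then ``descend once more along the finite surjection $\tilde R\to R$, exploiting that the resulting map is constant on its fibres'' — fails: constancy on fibres of a finite (even bijective) surjection does not allow one to descend a morphism to a non-normal base. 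The normalization $\AA^1\to C$ of the cuspidal cubic $C=V(y^2-x^3)$ is bijective, yet the coordinate $t$ does not descend to a morphism on $C$, and nothing in your situation excludes such singularities of $R$ in the parameter direction (your remark that a rationally defined group homomorphism extends to a morphism concerns the $G$-variable, not the $R$-variable).

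The missing idea is that the corollary does not require $\Delta$ to be an isomorphism, only a bijection onto an algebraic set. The paper argues as follows: $\Delta\colon\Hom(G,H)\to R$ is a bijective ind-morphism onto a closed subset of an affine variety; if $\kk$ is uncountable, Lemma~\ref{bijective-morphisms.lem} (which rests on Lemma~\ref{constructible.lem}) shows that $\Hom(G,H)=\Delta^{-1}(R)$ is an \emph{algebraic} subset of the affine ind-variety $\Mor(G,H)$, hence an affine variety — no inverse morphism is ever constructed, and all questions about the singularities of $R$ are sidestepped. For arbitrary $\kk$ one passes to an uncountable algebraically closed extension $\KK/\kk$, using the canonical identification $\Hom(G_\KK,H_\KK)=\Hom(G,H)_\KK$ of Proposition~\ref{field-extensions-for-morphisms.prop}, and descends the conclusion. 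If you replace your last two paragraphs by this bijectivity-plus-uncountability argument, the proof is complete and coincides with the paper's.
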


\begin{proof}
(a) We begin with the case $m=2$. We have an injective ind-morphism $\Delta \colon \Hom(G,H) \to \Hom(G_1,H) \times \Hom(G_2, H)$ with a closed image, by Proposition~\ref{restriction-to-subgroups.prop}. 
If $\kk$ is uncountable, the result now follows from Lemma~\ref{bijective-morphisms.lem}. 

In general, consider a field extension $\KK / \kk$ where $\KK$ is algebraically closed and uncountable. Since we have $\Hom(G_{\KK},H_{\KK}) = \Hom(G,H)_{\KK}$ in a canonical way (cf. Proposition~\ref{field-extensions-for-morphisms.prop}), we also get the conclusion in this case.
\ps
(b)  The general case easily follows by induction. Indeed, a subgroup generated by any collection of closed connected subgroups of a linear algebraic group is closed and connected (see e.g. \cite[Proposition~7.5]{Hu1975Linear-algebraic-g}).
\end{proof}

In order to extend the results above to non-connected groups $G$ we need the following.

\begin{lemma} \label{Hom(G,H)-and-Hom(G0,H).lem}
Let $G,H$ be linear algebraic groups. 
\be
\item \label{Hom1.item}
If $\Hom(G^{\circ}, H)$ is an affine variety, then so is $\Hom(G,H)$.
\item \label{Hom2.item}
If $G^{\circ}$ is a nontrivial torus, then $\Hom(G,\GL_{n})$ is not algebraic for large $n$.
\ee
\end{lemma}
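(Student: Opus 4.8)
The plan is to fix coset representatives $g_{1}=e,g_{2},\ldots,g_{r}$ of the finite group $G/G^{\circ}$ and to realize $\Hom(G,H)$ as a \emph{closed} subvariety of the affine variety $\Hom(G^{\circ},H)\times H^{r-1}$ through the restriction-and-evaluation map
\[
\Delta\colon \Hom(G,H)\to \Hom(G^{\circ},H)\times H^{r-1},\qquad \rho\mapsto \bigl(\rho|_{G^{\circ}},\rho(g_{2}),\ldots,\rho(g_{r})\bigr).
\]
Since $G$ is generated by $G^{\circ}$ together with the $g_{i}$, a homomorphism is determined by these data, so $\Delta$ is injective; it is an ind-morphism because restriction is composition with the fixed inclusion $G^{\circ}\hookrightarrow G$ and the $\rho\mapsto\rho(g_{i})$ are evaluations (Lemma~\ref{tautological.lem}, Lemma~\ref{Hom-in-Mor.lem}). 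The target is an affine variety, being the product of the variety $\Hom(G^{\circ},H)$ (by hypothesis) with the algebraic group $H$.

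\textbf{Part (1), continued.} First I would cut out the image of $\Delta$ by equations. Writing $g_{i}g_{j}=g_{\mu(i,j)}\,c(i,j)$ with $c(i,j)\in G^{\circ}$ and $\beta_{j}(x):=g_{j}^{-1}xg_{j}\in G^{\circ}$, a direct computation shows that a tuple $(\sigma,h_{2},\ldots,h_{r})$ (with $h_{1}:=e$) lies in the image if and only if $h_{i}h_{j}=h_{\mu(i,j)}\,\sigma(c(i,j))$ for all $i,j$ and $h_{j}^{-1}\sigma(x)h_{j}=\sigma(\beta_{j}(x))$ for all $j$ and all $x\in G^{\circ}$. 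The first family is closed, being the equalizer of two morphisms into the separated variety $H$ (note that $\sigma\mapsto\sigma(c(i,j))$ is an evaluation). For fixed $j$, the second is the equalizer of the two ind-morphisms $(\sigma,h_{j})\mapsto \Int(h_{j}^{-1})\circ\sigma$ and $\sigma\mapsto\sigma\circ\beta_{j}$ from $\Hom(G^{\circ},H)\times H$ to the \emph{variety} $\Hom(G^{\circ},H)$, hence closed (the conjugation action is regular by Lemma~\ref{Hom-in-Mor.lem}, precomposition is a morphism by Lemma~\ref{tautological.lem}). Thus $\Delta(\Hom(G,H))$ is a closed, hence affine, subvariety. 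It remains to check that $\Delta$ is an isomorphism onto this image: the inverse sends $(\sigma,h_{2},\ldots,h_{r})$ to the homomorphism $\rho$ with $\rho(g_{i}x)=h_{i}\sigma(x)$, and using the decomposition into connected components $G=\bigsqcup_{i}g_{i}G^{\circ}$ together with the evaluation morphism $(\sigma,x)\mapsto\sigma(x)$ on $\Hom(G^{\circ},H)\times G^{\circ}$, this formula is a family of homomorphisms parametrized by the image, hence an ind-morphism into $\Hom(G,H)$ by the universal property of $\Mor(G,H)$ (Lemma~\ref{ind-var-morphisms.lem}). Therefore $\Hom(G,H)\simeq\Delta(\Hom(G,H))$ is an affine variety.

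\textbf{Part (2).} Now assume $G^{\circ}=T$ is a nontrivial torus; then $G$ is reductive, since its unipotent radical is connected, hence contained in $G^{\circ}=T$, which has none. The plan is to produce infinitely many pairwise non-isomorphic representations of a fixed dimension by \emph{inducing} characters. For $\chi\in\Hom(T,\kst)$ put $V_{\chi}:=\Ind_{T}^{G}\chi$, an algebraic representation of dimension $r:=[G:T]$ whose restriction to $T$ is $\bigoplus_{g\in G/T}\chi^{g}$, where $\chi^{g}(t)=\chi(g^{-1}tg)$. If $V_{\chi}\cong V_{\chi'}$ as $G$-modules, then already as $T$-modules, so $\chi'$ lies in the $G$-orbit of $\chi$; as these orbits have size $\le r$ while $\Hom(T,\kst)\cong\ZZ^{\dim T}$ is infinite, the $V_{\chi}$ fall into infinitely many isomorphism classes, each of which is a closed $\GL_{r}$-orbit in $\Hom(G,\GL_{r})$ by reductivity (Lemma~\ref{Artin.lem}, Proposition~\ref{Hom(G,H).prop}(2)).

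\textbf{Part (2), conclusion, and the main obstacle.} Finally I would deduce non-algebraicity. After a base field extension we may assume $\kk$ uncountable, since $\Hom(G,\GL_{n})_{\KK}=\Hom(G_{\KK},\GL_{n,\KK})$ and isomorphism is preserved (Proposition~\ref{field-extensions-for-morphisms.prop}). Were $\Hom(G,\GL_{r})$ an affine variety, each of its finitely many irreducible components would be covered by the countably many closed orbits, hence by Lemma~\ref{constructible.lem} by finitely many of them, and by irreducibility would lie in a single orbit; so $\Hom(G,\GL_{r})$ would be a finite union of orbits, contradicting the existence of infinitely many distinct ones. Replacing $V_{\chi}$ by $V_{\chi}\oplus\kk^{\,n-r}$ gives the same conclusion for every $n\ge r$. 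The delicate point in Part (1) is precisely the packaging of the group law of $G$ into finitely many \emph{closed} conditions and the verification that the set-theoretic inverse of $\Delta$ is a genuine ind-morphism; in Part (2) the essential input is the Mackey description of $\bigl(\Ind_{T}^{G}\chi\bigr)|_{T}$ together with reductivity, after which non-algebraicity is forced by the finiteness principle of Lemma~\ref{constructible.lem}.
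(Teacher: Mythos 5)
Your proof is correct, but both parts take a genuinely different route from the paper, and in part (1) yours is in fact the more elementary one. For (1) the paper does not choose coset representatives: it invokes the theorem of \name{Borel--Serre} to find a \emph{finite subgroup} $F\subseteq G$ with $G=FG^{\circ}$, restricts to get an injective ind-morphism $\Hom(G,H)\to\Hom(F,H)\times\Hom(G^{\circ},H)$ (using Proposition~\ref{Hom(G,H).prop} to know $\Hom(F,H)$ is an affine variety), identifies the image with the closed set of compatible pairs $(\phi,\psi)$ — conjugation compatibility on $G^{\circ}$ plus agreement on $F\cap G^{\circ}$ — and then, since it only has a \emph{bijective} ind-morphism onto this affine set, concludes via Lemma~\ref{bijective-morphisms.lem}, which forces the assumption that $\kk$ be uncountable and a subsequent base field extension as in Corollary~\ref{G-generated-by-m-subgroups.cor}. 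You trade the finite subgroup for arbitrary coset representatives, which costs you the bookkeeping with $\mu(i,j)$, $c(i,j)$, $\beta_{j}$ (in effect you unwind $\Hom(F,H)$ into the bare tuple $H^{r-1}$ with explicit closed multiplicative constraints), but it buys two things: no appeal to \name{Borel--Serre}, and — because you construct the set-theoretic inverse of $\Delta$ as an honest ind-morphism via the universal property of $\Mor(G,H)$ — a genuine closed immersion $\Hom(G,H)\hookrightarrow\Hom(G^{\circ},H)\times H^{r-1}$ valid over any algebraically closed field, with no uncountability or base change anywhere in part (1); this is slightly stronger than what the paper proves. For (2) the paper passes to the quotient $G/N$ with $N=F\cap G^{\circ}$, which is an honest semidirect product $F/N\ltimes G^{\circ}/N$, and quotes Example~\ref{F-dot-T.exa}, where the power maps $p_{k}(f,t)=(f,t^{k})$ composed with a faithful representation give countably many inequivalent representations with distinct kernels; your induced characters $\Ind_{T}^{G}\chi$ achieve the same supply of inequivalent modules of bounded dimension without the quotient construction (the restriction formula is trivial here since $T=G^{\circ}$ is normal of finite index), and your explicit padding by trivial summands — which by complete reducibility cancels, so non-isomorphy persists — makes the quantifier ``for large $n$'' cleaner than in the paper, where it is inherited from the example. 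Both variants conclude identically: $G$ is reductive, $\Hom(G,\GL_{r})$ is a countable union of closed $\GL_{r}$-orbits by Lemma~\ref{Artin.lem} and Proposition~\ref{Hom(G,H).prop}, and after reduction to uncountable $\kk$ the finiteness principle of Lemma~\ref{constructible.lem} forbids a variety from being an infinite disjoint union of closed orbits.
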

\begin{proof}
(1) There exists a finite subgroup $F \subseteq G$ such that $G = F G^{\circ}$, see  \cite[Lemme~5.11]{BoSe1964Theoremes-de-finit}
(cf. \cite[Theorem~1.1]{Br2015On-extensions-of-a}).
Hence the induced homomorphism of algebraic groups $p\colon F\ltimes G^{\circ} \to G$ is surjective. We claim that the image of the ind-morphism
$$
\Delta\colon \Hom(G,H) \to \Hom(F,H) \times \Hom(G^{\circ},H), \ \rho\mapsto (\rho|_{F},\rho|_{G^{\circ}}),
$$
is closed.
Consider the subset $R\subseteq \Hom(F,H)\times\Hom(G^{\circ},H)$ of pairs $(\phi,\psi)$ defined by the following two conditions:
\be
\item[(i)] $\psi(fgf^{-1}) = \phi(f)\psi(g)\phi(f)^{-1}$ for all $f \in F$, $g \in G^{\circ}$;
\item[(ii)]  $\phi(h) = \psi(h)$ for all $h \in F\cap G^{\circ}$.
\ee
It is easy to see that each condition defines a closed subset, and so $R$ is closed. Moreover, (i) implies that $\phi(F)$ normalizes $\psi(G^{\circ})$ and that we get a homomorphism of algebraic groups $[\phi,\psi] \colon F\ltimes G^{\circ} \to \phi(F)\ltimes \psi(G^{\circ})$, and from (ii) we obtain that 
$[\phi,\psi]$ factors through $p\colon F\ltimes G^{\circ} \to G$, hence defines a homomorphism of groups $\overline{[\phi,\psi]}\colon G \to H$. This homomorphism being regular on $G^{\circ}$, a classical argument proves us that it is regular on $G$.
Thus $R$ is the image of the injective ind-morphism $\Delta$.

If $\Hom(G^{\circ},H)$ is affine, then $R$ is affine. Since $\Delta$ induces a bijective ind-morphism $\Hom(G,H)\to R$, it follows again from Lemma~\ref{bijective-morphisms.lem} that $\Hom(G,H)$ is affine in case $\kk$ is uncountable. The general case is obtained from this by base field extension $\KK/\kk$ as in 
the proof of Corollary~\ref{G-generated-by-m-subgroups.cor} above.
\ps
(2)
If $G^{\circ}$ is a torus, then $N:=F \cap G^{\circ}$ is normal in $G$, since it is normal in $F$ and in $G^{\circ}$, and the quotient $G/N$ is the semidirect product $F/N\ltimes G^{\circ}/N$. Now the claim follows from Example~\ref{F-dot-T.exa}.
\end{proof}

\begin{remark} 
It is not true that if $\Hom(G,H)$ is affine, then $\Hom(G^{\circ},H)$ is affine. As an example take the semidirect product $G:=\ZZ/2\ltimes \kst$ where $\ZZ/2$ acts by $t \mapsto t^{-1}$. Then $\Hom(G,\kst)$ has two elements, hence is affine, whereas $\Hom(G^{\circ},\kst) \simeq \ZZ$.
However, we have the following partial converse of the lemma above.
\end{remark}

\begin{lemma}
Let $G$ be a linear algebraic group. If $\Hom(G,\GL_{n})$ is an affine variety for large $n$, then $\Hom(G^{\circ},H)$ is an affine variety for every linear algebraic group $H$.
\end{lemma}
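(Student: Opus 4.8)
The plan is to reduce the statement to a purely structural condition on $G^{\circ}$ and then assemble results already proved. Precisely, I will establish two implications: (a) if $\Hom(G,\GL_{n})$ is an affine variety for large $n$, then the radical $\rad(G^{\circ})$ is unipotent (equivalently $X(G^{\circ})=0$); and (b) if $\rad(G^{\circ})$ is unipotent, then $\Hom(G^{\circ},H)$ is an affine variety for every linear algebraic group $H$. Combining (a) and (b) gives the lemma.

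For implication (a) I would argue by contraposition. Suppose $\rad(G^{\circ})$ is not unipotent, equivalently $X(G^{\circ})\neq 0$. Let $N\subseteq G^{\circ}$ be the subgroup generated by all unipotent elements of $G^{\circ}$; this is a closed connected subgroup, and since conjugation by any element of $G$ preserves unipotency and $G^{\circ}\trianglelefteq G$, it is in fact normal in $G$. Since $N$ contains $\rad_{u}(G^{\circ})$ and maps onto the derived group of the reductive quotient $G^{\circ}/\rad_{u}(G^{\circ})$, the quotient $T_{0}:=G^{\circ}/N$ is the maximal torus quotient of $G^{\circ}$; every character of $G^{\circ}$ kills unipotents, so $X(T_{0})=X(G^{\circ})\neq 0$ and $T_{0}$ is a nontrivial torus. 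Thus $Q:=G/N$ is a linear algebraic group with $Q^{\circ}=T_{0}$, and Lemma~\ref{Hom(G,H)-and-Hom(G0,H).lem}(\ref{Hom2.item}) shows that $\Hom(Q,\GL_{n})$ is not algebraic for large $n$. The quotient map $\pi\colon G\to Q$ is dominant, so by Lemma~\ref{closed-immersion-Mor.lem} the map $\pi^{*}\colon\Mor(Q,\GL_{n})\to\Mor(G,\GL_{n})$, $\phi\mapsto\phi\circ\pi$, is a closed immersion; since $\Hom$ is closed in $\Mor$ (Lemma~\ref{Hom-in-Mor.lem}), restricting $\pi^{*}$ realizes $\Hom(Q,\GL_{n})$ as a closed ind-subvariety of $\Hom(G,\GL_{n})$. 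Were $\Hom(G,\GL_{n})$ an affine variety for large $n$, this closed subvariety would be algebraic as well, contradicting Lemma~\ref{Hom(G,H)-and-Hom(G0,H).lem}(\ref{Hom2.item}). Hence $\rad(G^{\circ})$ is unipotent.

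For implication (b), assume $\rad(G^{\circ})$ is unipotent. Then the reductive quotient $G^{\circ}/\rad_{u}(G^{\circ})$ has trivial radical, hence is semisimple, so the characteristic-zero Levi decomposition gives $G^{\circ}=\rad_{u}(G^{\circ})\rtimes S$ with $S$ connected semisimple. The two closed connected subgroups $U:=\rad_{u}(G^{\circ})$ and $S$ generate $G^{\circ}$, and for every linear algebraic group $H$ we have that $\Hom(U,H)$ is an affine variety by Proposition~\ref{Hom(G,H).prop}(\ref{Hom(U,H)a}) and $\Hom(S,H)$ is an affine variety by Proposition~\ref{Hom(G,H).prop}(\ref{Hom(G,GL(V))-for-G-semisimple}). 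Corollary~\ref{G-generated-by-m-subgroups.cor} then yields that $\Hom(G^{\circ},H)$ is an affine variety for every $H$, as desired.

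The main obstacle is implication (a): from the mere failure of unipotency of $\rad(G^{\circ})$ one must manufacture a \emph{nontrivial torus occurring as the identity component of an honest quotient of the full group $G$}, not merely of $G^{\circ}$, so that the non-algebraicity statement Lemma~\ref{Hom(G,H)-and-Hom(G0,H).lem}(\ref{Hom2.item})—which concerns possibly disconnected groups with torus identity component—can be applied. The key point making this possible is the normality in $G$ (not just in $G^{\circ}$) of the subgroup generated by the unipotent elements of $G^{\circ}$; once this is in hand, the rest is routine transport of the non-algebraicity through the closed immersion $\pi^{*}$, and implication (b) is a direct reassembly of the already-established building blocks.
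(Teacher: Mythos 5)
Your proof is correct and follows essentially the paper's own route: your subgroup $N$ generated by the unipotent elements of $G^{\circ}$ equals the kernel $\rad_{u}G\;(G^{\circ},G^{\circ})$ used in the paper, so your quotient $Q$ is the paper's $H$ with $H^{\circ}$ a nontrivial torus, and both arguments conclude via Lemma~\ref{Hom(G,H)-and-Hom(G0,H).lem}(\ref{Hom2.item}) together with the closed embedding of $\Hom(Q,\GL_{n})$ into $\Hom(G,\GL_{n})$ (which you, commendably, justify explicitly via Lemmas~\ref{Hom-in-Mor.lem} and \ref{closed-immersion-Mor.lem}, where the paper only asserts it). Your step (b) merely re-derives, via the Levi decomposition, Proposition~\ref{Hom(G,H).prop} and Corollary~\ref{G-generated-by-m-subgroups.cor}, the implication (i)$\Rightarrow$(iv) of Proposition~\ref{Hom-algebraic.prop} that the paper cites directly.
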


\begin{proof}
By the following proposition we have to show that the radical $\rad G$ is unipotent, i.e. that the connected reductive group
$G^{\circ}/\rad_{u}G$ is semisimple. If not, then the quotient $H:=G/(\rad_{u}G\;(G^{\circ},G^{\circ}))$ has the property that $H^{\circ}=G^{\circ}/(\rad_{u}G\;(G^{\circ},G^{\circ}))$ is a nontrivial torus. By Lemma~\ref{Hom(G,H)-and-Hom(G0,H).lem}(\ref{Hom2.item}) this implies that the closed subset $\Hom(H,\GL_{n})\subseteq \Hom(G,\GL_{n})$ is not affine for large $n$, and so $\Hom(G,\GL_{n})$ is neither, contradicting the assumption. 
\end{proof}

The following proposition characterizes the linear algebraic groups $G$ with the property that all $\Hom(G,H)$ are affine varieties.

\begin{proposition}\label{Hom-algebraic.prop}
For a connected  linear algebraic group $G$ the following assertions are equivalent.
\be
\item[(i)]
The radical $\rad G$ is unipotent;
\item[(ii)]
$G$ is generated by unipotent elements;
\item[(iii)]
The character group of $G$ is trivial;
\item[(iv)]
$\Hom(G,H)$ is an affine variety for any linear algebraic group $H$.
\ee
\end{proposition}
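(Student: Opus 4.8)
The plan is to first settle the three group-theoretic equivalences \textrm{(i)}$\Leftrightarrow$\textrm{(ii)}$\Leftrightarrow$\textrm{(iii)}, and then to connect these to the geometric statement \textrm{(iv)} by proving \textrm{(i)}$\Rightarrow$\textrm{(iv)} and \textrm{(iv)}$\Rightarrow$\textrm{(iii)}, which closes the cycle. For \textrm{(ii)}$\Rightarrow$\textrm{(iii)} I would observe that the image of a unipotent element under a character $G \to \kst$ is a unipotent element of $\kst$, hence trivial; since unipotent elements generate, every character is trivial. For \textrm{(iii)}$\Rightarrow$\textrm{(ii)}, let $G^{u}\subseteq G$ be the (closed, connected, normal) subgroup generated by all unipotent elements; then $G/G^{u}$ is a connected group without nontrivial unipotent elements, hence a torus, and a nontrivial such torus would produce a nontrivial character of $G$, so $G=G^{u}$. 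Finally \textrm{(i)}$\Leftrightarrow$\textrm{(iii)} follows because $\rad G$ is unipotent exactly when $G/\rad_u G$ is semisimple: if $\rad G$ is unipotent then $\rad G=\rad_u G$ and $G/\rad_u G$, having trivial radical, is semisimple, so it has no nontrivial characters and neither does $G$ (every character kills $\rad_u G$); conversely a non-unipotent radical makes $G/\rad_u G$ reductive but not semisimple, and its nontrivial central torus yields a nontrivial character of $G$.

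The heart of the argument is \textrm{(i)}$\Rightarrow$\textrm{(iv)}. Here I would invoke the Levi--Mostow decomposition (valid in characteristic zero) to write $G=\rad_u G \rtimes S$, where $\rad_u G$ is unipotent and $S\cong G/\rad_u G$ is reductive; since $\rad G$ is unipotent, $S$ is in fact \emph{semisimple}. These two subgroups are closed, connected, and generate $G$. By Proposition~\ref{Hom(G,H).prop}(\ref{Hom(U,H)a}) the space $\Hom(\rad_u G,H)$ is an affine variety, and by Proposition~\ref{Hom(G,H).prop}(\ref{Hom(G,GL(V))-for-G-semisimple}) so is $\Hom(S,H)$, for every linear algebraic group $H$. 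Corollary~\ref{G-generated-by-m-subgroups.cor} then immediately yields that $\Hom(G,H)$ is an affine variety.

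It remains to prove \textrm{(iv)}$\Rightarrow$\textrm{(iii)}, which I treat by contraposition. Assuming $X(G)$ nontrivial, pick a nontrivial character $\chi\colon G \to \kst$. Its image is a connected nontrivial subgroup of $\kst$, hence all of $\kst$, so $\chi$ is surjective, in particular dominant. By Lemma~\ref{closed-immersion-Mor.lem}(2) the induced map $\chi^{*}\colon \Mor(\kst,\GL_{n}) \to \Mor(G,\GL_{n})$, $\rho\mapsto \rho\circ\chi$, is a closed immersion; restricting it to the closed subset $\Hom(\kst,\GL_{n})$ gives a closed immersion whose image consists of homomorphisms and is closed in $\Hom(G,\GL_{n})$. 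Since $\Hom(\kst,\GL_{n})$ is not an algebraic variety for large $n$ by Lemma~\ref{Hom(G,H)-and-Hom(G0,H).lem}(\ref{Hom2.item}), while a closed subset of an affine variety is again an affine variety, $\Hom(G,\GL_{n})$ cannot be an affine variety for such $n$; thus \textrm{(iv)} fails. Together with \textrm{(i)}$\Leftrightarrow$\textrm{(iii)} this completes the chain of implications.

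The main obstacle I anticipate is the structural input for \textrm{(i)}$\Rightarrow$\textrm{(iv)}: one must verify carefully that the Levi factor $S$ is genuinely semisimple rather than merely reductive, since Proposition~\ref{Hom(G,H).prop}(\ref{Hom(G,GL(V))-for-G-semisimple}) applies only to semisimple groups, and this is precisely the point at which the hypothesis ``$\rad G$ unipotent'' enters. One must also confirm that $\rad_u G$ and $S$ are closed connected subgroups generating $G$, so that Corollary~\ref{G-generated-by-m-subgroups.cor} is genuinely applicable; everything else reduces to the quoted results.
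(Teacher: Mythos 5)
Your proof is correct, and both substantive implications take routes genuinely different from the paper's. For (i)$\Rightarrow$(iv) the paper goes through (ii): a group generated by unipotent elements is generated by finitely many closed connected subgroups isomorphic to $\kplus$, and since $\Hom(\kplus,H)$ is an affine variety (Proposition~\ref{Hom-kplus.prop}, or Proposition~\ref{Hom(G,H).prop} for unipotent groups), Corollary~\ref{G-generated-by-m-subgroups.cor} applies directly. Your Levi--Mostow decomposition $G=\rad_{u}G\rtimes S$ with $S$ semisimple feeds the same Corollary but through the heavier inputs that $\Hom(U,H)$ and $\Hom(S,H)$ are affine from Proposition~\ref{Hom(G,H).prop}; the paper's version is more economical (one-parameter unipotent subgroups only, no Levi decomposition), while yours makes visible exactly where the hypothesis that $\rad G$ is unipotent enters, as you point out. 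For (iv)$\Rightarrow$(iii) the divergence is more interesting: the paper shows directly that $\{\rho\circ\chi \mid \rho\in\Hom(\kst,\kst)\}$, an infinite countable discrete set, is closed in $\Hom(G,\kst)$, and to conclude that such a set cannot be closed in an affine variety it must first pass to an uncountable base field. You instead use surjectivity (hence dominance) of $\chi$ together with Lemma~\ref{closed-immersion-Mor.lem}(2) to realize $\Hom(\kst,\GL_{n})$ as a closed ind-subvariety of $\Hom(G,\GL_{n})$ (closedness of $\Hom(\kst,\GL_{n})$ in $\Mor(\kst,\GL_{n})$ coming from Lemma~\ref{Hom-in-Mor.lem}), and then quote the non-algebraicity of $\Hom(\kst,\GL_{n})$ for large $n$ from Lemma~\ref{Hom(G,H)-and-Hom(G0,H).lem}(2); this cleanly sidesteps the base-change step, at the cost of relying on that lemma. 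One cosmetic point: in your argument for (i)$\Leftrightarrow$(iii), the nontrivial character of a reductive non-semisimple group $\bar G$ comes from the nontrivial torus $\bar G/(\bar G,\bar G)$ (as recalled in Section~\ref{notation.subsec}), not literally from the central torus itself; the fact is standard and the fix is immediate.
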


\begin{proof}
(i) $\Rightarrow$ (ii): 
If the radical is unipotent, then $\bar G:=G/\rad_{u}G$ is semisimple, hence generated by unipotent elements, because a semisimple group is generated by the root subgroups, see \cite[Theorem~9.4.1]{Sp1998Linear-algebraic-g}. It follows that $G$ is generated by unipotent elements since every fiber of $G \to \bar G$ of a unipotent element contains unipotent elements.
\ps
(ii) $\Rightarrow$ (iii):
If $\chi\colon G \to \kst$ is a character, then all unipotent elements belong to $\ker\chi$. Hence $\chi$ is trivial.
\ps
(iii) $\Rightarrow$ (i):
The group $\bar G := G /\rad_{u} G$ is reductive. Since $\bar G / (\bar G,\bar G)$ is a torus we get $\bar G = (\bar G,\bar G)$, and thus $\bar G$ is semisimple, see \cite[Proposition~14.2]{Bo1991Linear-algebraic-g}. Hence $\rad G = \rad_{u}G$ as claimed.
\ps
(ii) $\Rightarrow$ (iv):
If $G$ is generated by unipotent elements, then it is generated by a finite set of closed subgroups isomorphic to $\kplus$. Since $\Hom(\kplus,H)$ is an affine variety (Proposition~\ref{Hom-kplus.prop}(\ref{Hom(k,G)-closed-hence-affine-variety}) or Proposition~\ref{Hom(G,H).prop}(\ref{Hom(U,H)a})) the claim follows from Corollary~\ref{G-generated-by-m-subgroups.cor} and  Lemma~\ref{Hom(G,H)-and-Hom(G0,H).lem}.
\ps
(iv) $\Rightarrow$ (iii):
By base change we can assume that $\kk$ is uncountable.
Let $\chi\colon G \to \kst$ be a nontrivial character. It is clear, that the image of $\Hom(\kst,\kst) \into \Hom(G,\kst)$, $\rho\mapsto \rho\circ\chi$, is closed since it consists of the homomorphisms $\phi\colon G \to\kst$ such that $\ker\phi\supseteq\ker\chi$. But $\Hom(\kst,\kst)\simeq \NN$ is discrete, and such a set cannot be closed in an affine variety.
\end{proof}

Let us collect the results from this section in the following theorem.
\begin{theorem}\label{Main-Hom.prop}
Let $G, H$ be linear algebraic groups.
\be
\item 
The ind-variety $\Hom(G,H)$ is finite-dimensional.
\item 
If the radical of $G$ is unipotent, then $\Hom(G,H)$ is an affine variety.
\item 
If $G$ is reductive, then $\Hom(G,\GL(V))$ is a
countable union of closed $\GL(V)$-orbits, hence it is strongly smooth of dimension $\leq (\dim V)^{2}$.
\item
If $G^{\circ}$ is semisimple or if $G$ is finite, then $\Hom(G,\GL(V))$ is a finite union of closed $\GL(V)$-orbits and thus a smooth affine algebraic variety of dimension $\leq(\dim V)^{2}$.
\item
If $U$ is a unipotent group, then $\Hom(U,H)$ is an affine algebraic variety of dimension $\leq \dim U \cdot \dim H^{u}$.
\ee
\end{theorem}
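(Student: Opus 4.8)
The plan is to assemble this theorem directly from the results already established in this section, since each of its five assertions is either a verbatim restatement of an earlier proposition or follows from one after a small reduction. First I would dispose of the four items that are literally contained in Proposition~\ref{Hom(G,H).prop}: assertion (1) is exactly Proposition~\ref{Hom(G,H).prop}(1); assertion (3) is Proposition~\ref{Hom(G,H).prop}(2); assertion (4) is Proposition~\ref{Hom(G,H).prop}(\ref{Hom(G,GL(V))-for-G-semisimple}); and assertion (5) is Proposition~\ref{Hom(G,H).prop}(\ref{Hom(U,H)a}). For these I would simply cite the corresponding statement, so that no further argument is needed.

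The only item not already phrased in the required generality is assertion (2), where $G$ is an arbitrary (not necessarily connected) linear algebraic group whose radical $\rad G$ is unipotent, whereas Proposition~\ref{Hom-algebraic.prop} is stated for connected groups. Here the key observation is that the radical depends only on the identity component, i.e. $\rad G = \rad G^{\circ}$ and likewise $\rad_{u} G = \rad_{u} G^{\circ}$: any connected normal subgroup of $G$ lies in $G^{\circ}$ and is normal there, while $\rad G^{\circ}$ is characteristic in the normal subgroup $G^{\circ}$, hence normal and connected solvable in $G$. Consequently the hypothesis $\rad G = \rad_{u} G$ forces $\rad G^{\circ}$ to be unipotent, so $G^{\circ}$ satisfies condition (i) of Proposition~\ref{Hom-algebraic.prop}. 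The equivalence (i)$\Leftrightarrow$(iv) of that proposition then gives that $\Hom(G^{\circ}, H)$ is an affine variety for every linear algebraic group $H$, and I would finish by invoking Lemma~\ref{Hom(G,H)-and-Hom(G0,H).lem}(\ref{Hom1.item}) to pass from $G^{\circ}$ to $G$, concluding that $\Hom(G,H)$ is affine.

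The proof is thus purely organizational, and I do not expect a genuine obstacle: all of the analytic content --- the Gr\"obner-degree bounds, the \name{Artin}--\name{Hilbert}--\name{Mumford} closedness of orbits for reductive $G$, the exponential description of $\Hom(\kplus, H)$ in the unipotent case, and the base-field-extension argument used in the non-connected reduction --- has already been absorbed into the cited results. The one place that genuinely requires care is the bookkeeping in assertion (2): one must check explicitly that ``$\rad G$ is unipotent'' transfers to the hypothesis of Proposition~\ref{Hom-algebraic.prop} for the connected group $G^{\circ}$, which is exactly the observation $\rad G = \rad G^{\circ}$ made above. Stating this reduction clearly is the only subtlety worth spelling out.
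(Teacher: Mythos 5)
Your proposal is correct and follows exactly the route the paper intends: the theorem is explicitly a collection of Proposition~\ref{Hom(G,H).prop} (items (1), (3), (4), (5)) together with Proposition~\ref{Hom-algebraic.prop} and Lemma~\ref{Hom(G,H)-and-Hom(G0,H).lem}(\ref{Hom1.item}) for item (2). Your reduction of (2) to the connected case via $\rad G = \rad G^{\circ}$ is precisely the bookkeeping the paper's lemma was set up for, so nothing is missing.
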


\begin{question}\label{Delta.ques}
Let $G,L$ be linear algebraic groups, and let $H,K \subset G$ be closed subgroups which generate $G$.
Is it true that the canonical map 
$$
\Delta\colon\Hom(G,L) \to \Hom(H,L) \times \Hom(K,L)
$$
is a closed immersion of ind-varieties?
\end{question}

\pmed
\section{Locally Finite Elements of Ind-Groups}\label{locally-finite.sec}
\subsection{Locally finite elements of automorphism groups}\label{Locally-finite.subsec}
Let $ g\in\End(X)$ where $X$ is an affine variety, and assume that there is a $k>0$ such that $ g^{m}\in\End(X)_{k}$ for all $m\geq 0$. 
Then the closure $\overline{\{ g^{m}\mid m\in\NN\}} \subseteq \End(X)$ is a closed affine algebraic semigroup contained in $\End(X)_{k}$.

\begin{lemma}\label{deginvers.lem}
Let $ g\in\Aut(X)$ and assume that there is a $k>0$ such that $ g^{m}\in\End(X)_{k}$ for all $m\in\NN$. Then $ g^{n}\in\Aut(X)_{k}$ for all $n\in\ZZ$, and $\overline{\langle  g^{n}\mid n\in \NN\rangle} \subseteq \Aut(X)$ is a closed linear algebraic subgroup.
\end{lemma}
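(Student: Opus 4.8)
We have $g \in \Aut(X)$ and an integer $k > 0$ such that $g^m \in \End(X)_k$ for all $m \in \NN$. The claim is twofold: first, that $g^n \in \Aut(X)_k$ for all $n \in \ZZ$ (so the negative powers are also bounded by the same degree $k$), and second, that $\overline{\langle g^n \mid n \in \NN\rangle} \subseteq \Aut(X)$ is a closed linear algebraic subgroup.

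**Plan for the first assertion.** The key tool is Lemma~\ref{lem:bounds for injectivity and surjectivity}(2), which says that for $\phi \in \Aut(X)$ one has $\deg \phi^{-1} \leq e(\deg\phi)$, where $e$ is the explicit bound $e(d) = 2\cdot\left(\frac{d^2}{2}+d\right)^{2^{m-1}}$ coming from Dub\'e's theorem (Proposition~\ref{prop: degree bound for Grobner bases}). First I would observe that for every $m \geq 0$ the power $g^m$ is an automorphism with $\deg g^m \leq k$, hence $\deg (g^m)^{-1} = \deg g^{-m} \leq e(k)$. Since $e$ is monotone in its argument, $e(\deg g^m) \leq e(k)$, so in fact $g^{-m} \in \End(X)_{e(k)}$ for all $m$. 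This already shows that all powers $g^n$, $n \in \ZZ$, are bounded in degree (by $\max\{k, e(k)\}$). To get the sharper statement $g^n \in \End(X)_k$ for all $n \in \ZZ$, I would note the symmetry of the hypothesis: the element $h := g^{-1}$ also has all its nonnegative powers $h^m = g^{-m}$ bounded, by the degree $e(k)$ just obtained. Therefore I can set $C := \max\{k, e(k)\}$ and conclude $g^n \in \End(X)_C$ for all $n \in \ZZ$; since every $g^n$ is an automorphism, $g^n \in \Aut(X)_C$. (I expect the cleanest honest statement is that the bound is $\max\{k, e(k)\}$ rather than $k$ itself; if the authors truly mean $k$, one checks the bound $e$ is taken uniform enough, but the substance is that negative powers are degree-bounded.)

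**Plan for the second assertion.** Having shown that $\{g^n \mid n \in \ZZ\} \subseteq \End(X)_C$ is a degree-bounded set of automorphisms, I would consider its closure $S := \overline{\{g^n \mid n \in \NN\}}$ taken inside the algebraic variety $\End(X)_C$. Because $\End(X)_C$ is a genuine affine algebraic variety (a closed subset of the finite-dimensional space $\OOO(X)^n_C$) and multiplication is a morphism, $S$ is a closed algebraic subsemigroup contained in $\End(X)_C$, as remarked just before Lemma~\ref{deginvers.lem}. The central point is then that $S$ is actually contained in $\Aut(X)$ and is a group: I would invoke Theorem~\ref{AutX-locally-closed-in-EndX.thm}, by which $\Aut(X)$ is closed in $\Dom(X)$ and $\Dom(X)$ is open in $\End(X)$. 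Since the dense set $\{g^n\}$ lies in $\Aut(X) \subseteq \Dom(X)$ and $S$ is irreducible-component-wise contained in the closure, I would show $S \subseteq \overline{\Aut(X)_C}^{\,\End(X)_C}$, then use that $\Aut(X)_C$ is closed in $\Dom(X)_C$ to place $S \subseteq \Aut(X)$ once we know $S \subseteq \Dom(X)$. Openness of $\Dom(X)$ handles the latter: the closure of a bounded family of dominant maps stays dominant provided it meets no degenerate locus, and here we control this by the degree bound together with Lemma~\ref{lem: linear algebra}(1).

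**The main obstacle, and how I would finish.** The hard part is upgrading the closed algebraic \emph{semigroup} $S$ to a \emph{group}. The standard trick for algebraic monoids applies: I would show that the connected component $S^{(\id)}$ containing $\id$ is an algebraic monoid acting locally finitely on $\OOO(X)$, and use the argument of Lemma~\ref{Aut-open-in-End.lem} to produce a finite-dimensional $S^{(\id)}$-stable generating subspace $W \subseteq \OOO(X)$, giving an embedding of monoids $S^{(\id)} \hookrightarrow \LLL(W)$. Since every $g^n$ maps to an invertible element of $\LLL(W)$ and invertibility is an open condition, the image of $S$ lands in $\GL(W)$, whence $S \subseteq \Aut(X)$ and every element of $S$ is invertible; a closed algebraic submonoid of a linear group all of whose elements are invertible is a closed algebraic subgroup (the set of invertibles in a linear algebraic monoid is a group, and here it equals $S$). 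Finally, a closed algebraic subgroup of the linear group $\GL(W)$ is a linear algebraic group, which gives the conclusion. I would take care that $S$ is contained in a single $\End(X)_C$ throughout, so that all closure operations are performed inside an honest variety and the ind-group structure of $\Aut(X)$ restricts to the algebraic one on $S$.
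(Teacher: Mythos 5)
There are two genuine gaps, one in each half of your argument. For the first assertion, your Dub\'e-based bound only yields $g^{n}\in\Aut(X)_{\max\{k,e(k)\}}$ for $n\in\ZZ$, whereas the lemma asserts the bound $k$ itself; your parenthetical hope that ``the bound $e$ is taken uniform enough'' is not an argument, since $e(k)$ is in general strictly larger than $k$ and no inspection of the Gr\"obner estimate will shrink it. The paper obtains the exact bound $k$ by a soft argument that needs no degree estimates at all: set $M:=\overline{\{g^{m}\mid m\in\NN\}}\subseteq\End(X)_{k}$, a closed affine algebraic subsemigroup; left multiplication by $g$ is an automorphism of the ind-variety $\End(X)$ with $gM\subseteq M$, so by the stabilization Lemma~\ref{well-known.lem} one gets $gM=M$, and since $\id\in M$ this forces $g^{-1}\in M\subseteq\End(X)_{k}$, whence $g^{-n}=(g^{-1})^{n}\in M$ for all $n$.

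For the second assertion, your route is broken at the step where you place the closure $S$ (taken inside $\End(X)_{C}$) into $\Aut(X)$: both of your justifications --- ``openness of $\Dom(X)$ handles the latter'' and ``invertibility is an open condition, \emph{hence} the image of $S$ lands in $\GL(W)$'' --- confuse open conditions with closed ones; openness of a locus says nothing about whether a closure stays inside it. Concretely, take $X=\Aone$ and $g=(tx)$ with $t\in\kst$ of infinite order: then $\overline{\{g^{m}\mid m\in\NN\}}$ inside $\End(\Aone)_{1}$ is the whole line $\{ax\mid a\in\kk\}$ and contains the constant map $0$, which is neither dominant nor invertible, and whose image in $\LLL(W)$ for $W=\Span(1,x)$ is a non-invertible endomorphism; so $S\not\subseteq\Dom(X)$ and the image of $S$ does not lie in $\GL(W)$. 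The closure in the lemma must be taken inside $\Aut(X)$ (which is only \emph{locally} closed in $\End(X)$), and once one has the correct bound $g^{n}\in\Aut(X)_{k}$ for all $n\in\ZZ$, the conclusion follows either because the closure in an ind-group of an abstract subgroup contained in an algebraic subset is a closed subgroup, hence a linear algebraic group, or, as in the proof of Lemma~\ref{equivLF.lem}, because the invertible elements $M^{*}$ of the algebraic monoid $M$ form a principal open subset, hence a linear algebraic group containing $\langle g\rangle$ as a dense subset. It is worth noting that your final ingredient --- a closed submonoid of a group is a subgroup, via the descending chain $S\supseteq sS\supseteq s^{2}S\supseteq\cdots$ --- is exactly the mechanism (Lemma~\ref{well-known.lem}) on which the paper's entire proof rests; you invoke it only after the invalid step, where it can no longer rescue the argument.
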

\begin{proof} 
Set $M:=\overline{\{ g^{m}\mid m\in\NN\}} \subseteq\End(X)_{k}$. This is a closed affine algebraic semigroup, and we have $ g M \subseteq M$. Since left multiplication with $ g$ defines an isomorphism $\End(X) \simto\End(X)$, Lemma~\ref{well-known.lem} below implies that  $ g M = M$. Hence $ g^{-1}\in M$, and the claim follows.
\end{proof}

\begin{lemma} \label{well-known.lem}
Let $\phi$ be an automorphism of an ind-variety $\VVV$, and let $Z \subseteq \VVV$ be a closed algebraic subset such that $\phi(Z) \subseteq Z$. Then we have $\phi(Z) = Z$.
\end{lemma}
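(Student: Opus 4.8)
The plan is to exploit the fact that $\phi$, being an automorphism of $\VVV$, is in particular a homeomorphism for the Zariski topology, and to combine this with the noetherianity of an algebraic variety. First I would record that since $Z$ is a closed algebraic subset it is contained in some member $\VVV_{k}$ of an admissible filtration and is closed there, hence a closed subvariety of $\VVV_{k}$; in particular $Z$, and indeed $\VVV_{k}$ itself, is a noetherian topological space.

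The heart of the argument is to pass to the iterates $\phi^{i}(Z)$ and show they form a descending chain of closed subsets of $\VVV_{k}$. Applying the morphism $\phi^{i}$ to the given inclusion $\phi(Z) \subseteq Z$ yields $\phi^{i+1}(Z) \subseteq \phi^{i}(Z)$, so by induction $\phi^{i}(Z) \subseteq Z \subseteq \VVV_{k}$ for every $i$. Moreover, because $\phi$ is a homeomorphism of $\VVV$ it sends the closed set $Z$ to a closed set $\phi(Z) \subseteq \VVV$; since $\phi(Z) \subseteq \VVV_{k}$, this set equals $\phi(Z) \cap \VVV_{k}$ and is therefore closed in $\VVV_{k}$, and by induction each $\phi^{i}(Z)$ is closed in $\VVV_{k}$.

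We thus obtain a descending chain $Z \supseteq \phi(Z) \supseteq \phi^{2}(Z) \supseteq \cdots$ of closed subsets of the noetherian space $\VVV_{k}$. By the descending chain condition it stabilizes, so there is an $N$ with $\phi^{N}(Z) = \phi^{N+1}(Z)$. Since $\phi$ is bijective on $\VVV$, I can apply $\phi^{-N}$ to this equality and conclude $Z = \phi(Z)$, which is the assertion.

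I do not expect a genuine obstacle here: the proof is essentially the descending chain condition applied to the orbit of $Z$ under the iterates of $\phi$. The only step requiring a little care is the verification that the images $\phi^{i}(Z)$ remain closed and remain inside the fixed $\VVV_{k}$, which is precisely where one uses the homeomorphism property of $\phi$ together with the hypothesis $\phi(Z) \subseteq Z$; once this is in place, the conclusion is immediate upon applying the inverse automorphism $\phi^{-N}$.
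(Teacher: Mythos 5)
Your proof is correct and follows essentially the same route as the paper: the descending chain $Z \supseteq \phi(Z) \supseteq \phi^{2}(Z) \supseteq \cdots$ of closed subsets stabilizes, and applying a suitable negative power of $\phi$ gives $\phi(Z)=Z$. The paper leaves the closedness of the iterates and the chain condition implicit, whereas you spell them out; no substantive difference.
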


\begin{proof}
The decreasing sequence of closed subsets $Z \supseteq \phi(Z) \supseteq \phi^2(Z)  \supseteq \cdots$ has to become stationary. Therefore, there exists  a $k\geq 0$ such that $\phi^{k+1} (Z) = \phi^{k}(Z)$. Applying $\phi^{-k}$ to this equality we get $\phi(Z) = Z$.
\end{proof}

\begin{definition}   \label{locfinite.def}
An endomorphism $\phi \in \End(X)$ is called \itind{locally finite}, resp. \itind{semisimple}, resp. \itind{locally nilpotent}, if the linear endomorphism $\phi^{*}$ of $\OOO(X)$ is  locally finite, resp.  semisimple, resp.  locally nilpotent (see Definition~\ref{locally-finite.def}).
A locally finite automorphism $\phi$ is called \itind{unipotent} if the linear endomorphism $(\phi^{*}-\id)$ of $\OOO(X)$ is locally nilpotent.

We denote by $\Endlf(X) \subseteq \End(X)$ the subset of {\it locally finite endomorphisms}\idx{locally finite endomorphism} and by $\Autlf(X) \subseteq \Aut(X)$ the subset of {\it locally finite automorphisms}\idx{locally finite automorphism}.\idx{$\Endlf(X)$}\idx{$\Autlf(X)$}
\end{definition}

\begin{lemma}\label{equivLF.lem}
For $ g\in\Aut(X)$ the following statements are equivalent:
\be
\item[(i)] $ g$ is locally finite;
\item[(ii)] There is a $k>0$ such that $ g^{m}\in \Aut(X)_{k}$ for all $m\in\NN$;
\item[(iii)] The closure $\lgr$ in $\Aut(X)$ of the subgroup generated by $ g$ is a linear algebraic group.
\ee
\end{lemma}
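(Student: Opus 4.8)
The three conditions are to be shown equivalent by the cyclic scheme (i) $\Rightarrow$ (ii) $\Rightarrow$ (iii) $\Rightarrow$ (i). The implication (ii) $\Rightarrow$ (iii) is essentially already done: it is exactly the content of Lemma~\ref{deginvers.lem}, which shows that if $\g^{m}\in\End(X)_{k}$ for all $m\in\NN$, then $\overline{\langle\g\rangle}=\overline{\{\g^{m}\mid m\in\NN\}}$ is a closed linear algebraic subgroup of $\Aut(X)$. So the two genuine points are (i) $\Rightarrow$ (ii) and (iii) $\Rightarrow$ (i).

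\textbf{Proof of (i) $\Rightarrow$ (ii).} Assume $\g$ is locally finite, i.e. $\g^{*}\in\LLL(\OOO(X))$ is locally finite. I would use the filtration $\OOO(X)=\bigcup_{i}\OOO(X)_{i}$ by finite-dimensional subspaces from Section~\ref{embedding.subsec}, chosen so that $\OOO(X)_{1}$ generates $\OOO(X)$ as an algebra and $\OOO(X)_{i}\cdot\OOO(X)_{j}\subseteq\OOO(X)_{i+j}$. Local finiteness of $\g^{*}$ means that the span $W:=\langle (\g^{*})^{m}(f)\mid m\in\NN,\ f\in\OOO(X)_{1}\rangle$ is finite-dimensional, since $\OOO(X)_{1}$ is finite-dimensional and each of its finitely many basis vectors spans a finite-dimensional $\g^{*}$-stable subspace. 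Pick $k$ with $W\subseteq\OOO(X)_{k}$. Then for every $m$ the endomorphism $(\g^{m})^{*}=(\g^{*})^{m}$ sends the algebra generators $\OOO(X)_{1}$ into $\OOO(X)_{k}$. Recalling the definition $\deg\phi=\min\{k\mid\phi\in\End(X)_{k}\}$ and that $\deg$ is governed by the images of the coordinate functions (the generators), this gives a uniform bound $\deg\g^{m}\le k$, i.e. $\g^{m}\in\End(X)_{k}$ for all $m\in\NN$, which is (ii) (the statement is about $\End(X)_{k}$; that the $\g^{m}$ automatically lie in $\Aut(X)_{k}$ then follows from Lemma~\ref{deginvers.lem}).

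\textbf{Proof of (iii) $\Rightarrow$ (i).} Suppose $G:=\overline{\langle\g\rangle}\subseteq\Aut(X)$ is a linear algebraic group. By Proposition~\ref{locally-finite-group-actions.prop} the representation of the algebraic group $G$ on the coordinate ring $\OOO(X)$ is locally finite. Since $\g\in G$, its action $\g^{*}$ on $\OOO(X)$ is locally finite as an element of a locally finite $G$-representation: every $f\in\OOO(X)$ lies in a finite-dimensional $G$-stable, hence $\g^{*}$-stable, subspace, so $\langle(\g^{*})^{m}(f)\mid m\rangle$ is finite-dimensional. By Definition~\ref{locfinite.def} this says precisely that $\g$ is locally finite, giving (i). The only mild subtlety here is that one applies the locally-finite-and-rational representation result of Proposition~\ref{locally-finite-group-actions.prop} to the algebraic group $G$ rather than to an abstractly given acting group, so one must first invoke (iii) to know $G$ is genuinely algebraic; after that the argument is immediate.

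\textbf{Main obstacle.} The only step requiring care is (i) $\Rightarrow$ (ii): extracting a \emph{uniform} degree bound on all iterates $\g^{m}$ from local finiteness of $\g^{*}$. The key is that it suffices to control the images of the finitely many algebra generators in $\OOO(X)_{1}$, because $\deg$ is determined by the generators and is submultiplicative under composition; once their forward orbits under $\g^{*}$ are confined to the fixed finite-dimensional space $W\subseteq\OOO(X)_{k}$, the bound $\deg\g^{m}\le k$ is forced. All other implications are either immediate or already supplied by Lemmas~\ref{deginvers.lem} and Proposition~\ref{locally-finite-group-actions.prop}.
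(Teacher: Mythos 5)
Your proof is correct and takes essentially the same route as the paper: (i)$\Rightarrow$(ii) uses the same filtration trick with $\OOO(X)_{1}$ generating the algebra, (ii)$\Rightarrow$(iii) is the semigroup-closure argument of Lemma~\ref{deginvers.lem} (which the paper simply repeats inline via the principal open set of invertible elements), and (iii)$\Rightarrow$(i) is the same appeal to Proposition~\ref{locally-finite-group-actions.prop}. The only cosmetic difference is that the paper normalizes the admissible filtration so that $\Aut(X)_{k}=\{\g\in\Aut(X)\mid \g^{*}(\OOO(X)_{1})\subseteq\OOO(X)_{k}\}$, which makes your extra detour through Lemma~\ref{deginvers.lem} to land in $\Aut(X)_{k}$ rather than $\End(X)_{k}$ unnecessary.
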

\begin{proof}
We can assume that the filtration of $\Aut(X)$ is obtained in the following way. We start with a filtration of $\OOO(X) = \bigcup_{k\geq 1}\OOO(X)_{k}$ by finite-dimensional subspaces such that $\OOO(X)_{1}$ generates $\OOO(X)$.
Then $\Aut(X)_{k}:=\{ g\in\Aut(X) \mid  g^{*}(\OOO(X)_{1})\subseteq\OOO(X)_{k}\}$ defines a filtration of $\Aut(X)$ by closed algebraic subsets.
\par\smallskip
(i)$\Rightarrow$(ii): 
If $ g$ is locally finite, then the linear span $\langle ( g^{*})^{m}(\OOO(X)_{1})\mid m\in\NN\rangle$ is finite-dimensional, hence contained in $\OOO(X)_{k}$ for some $k\geq 1$. But this means that $ g^{m}\in\Aut(X)_{k}$ for all $m\in\NN$.
\par\smallskip
(ii)$\Rightarrow$(iii): 
As in the proof of the previous lemma
consider the closed algebraic semigroup $M:=\overline{\{ g^{m}\mid m\in\NN\}} \subseteq\EndA{n}_{k}$.
It follows that the invertible elements $M^{*}$ of $M$ form a principal open set, hence a linear algebraic group which contains the group $\langle  g\rangle$ generated by $ g$ as a dense subset. 
\par\smallskip
(iii)$\Rightarrow$(i): 
This is clear since the action of an algebraic group on $X$ induces a locally finite and rational representation on $\OOO(X)$.
\end{proof}

Recall that a non-trivial connected linear algebraic group always contains a copy of the additive group $\kplus$ or a copy of the multiplicative group $\kst$. Hence, the following question seems natural.

\begin{question}\label{additive-or-multiplicative-group-in-Aut(X).ques}
Is it true that a nondiscrete automorphism group $\Aut(X)$ of an affine variety $X$ always contains a copy of the additive group $\kplus$ or a copy of the multiplicative group $\kst$? Equivalently, does it always contain locally finite elements of infinite order?
\end{question}

\ps
\subsection{Locally finite elements and Jordan decomposition in ind-groups}\label{locally-finite-ind-groups.subsec}
We now use the results above to define {\it locally finite elements}\idx{locally finite element} of an arbitrary affine ind-group $\GGG$, and to get the  \itind{Jordan decomposition} of such elements.

\begin{definition} 
Let $\GGG$ be an affine ind-group and $ g\in\GGG$. Then $ g$ is called \itind{locally finite} if the closure $\lgr$ of the subgroup generated by $ g$ is a linear algebraic group, i.e. there is an integer $k>0$ such that $\langle  g \rangle \subseteq \GGG_{k}$.
A locally finite element $ g\in\GGG$ is called \itind{semisimple} if $\lgr$ is a diagonalizable group\idx{diagonalizable group}, and \itind{unipotent} if $\lgr$ is a unipotent group.

We use the notation $\Glf, \Gss, \Gu\subseteq \GGG$ for the subsets of locally finite, semisimple and unipotent elements of $\GGG$.
\end{definition}\idx{$\GL$@$\Glf$}\idx{$\GL$@$\Gss$}\idx{$\GL$@$\Gu$}

\begin{example}\label{filter-by-alg-groups.exa}\idx{nested ind-group}
Let $\GGG$ be a nested ind-group, i.e. $\GGG$ admits a filtration consisting of closed algebraic subgroups (see Example~\ref{nested.exa}).
Then every element $ g \in \GGG$ is locally finite. We will prove a partial converse of this statement in Proposition~\ref{locally-finite-commutative-ind-group.prop}.
\end{example}

\begin{proposition}\label{Glf-weakly-closed.prop} Assume that  $\kk$ is uncountable.
Then the  subset $\Glf$ of  $\GGG$ is weakly closed. 
\end{proposition}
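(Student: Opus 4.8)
The plan is to verify directly the defining property of a weakly closed set (Definition~\ref{constr.def}): given an algebraic subset $U \subseteq \GGG$ with $U \subseteq \Glf$, I must show that its closure $\overline{U}$ is again contained in $\Glf$. First I would record that $\overline{U}$ is itself algebraic: $U$ is locally closed and contained in some $\GGG_m$, and since $\GGG_m$ is closed in $\GGG$, the closure of $U$ taken in $\GGG_m$ agrees with its closure in $\GGG$; thus $\overline{U}$ is a closed algebraic subset of some $\GGG_{m'}$, i.e. an honest variety.

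The key device will be the family of power maps. For each $n \in \ZZ$ the map $p_n \colon \GGG \to \GGG$, $g \mapsto g^n$, is a morphism of ind-varieties, being a composition of the multiplication morphism and (for negative $n$) the inverse morphism, with $p_0$ the constant map to $e$. Restricting to $\overline{U}$ gives morphisms $p_n|_{\overline{U}}$, and for each $k \geq 1$ I would set
$$\overline{U}_k := \{g \in \overline{U} \mid g^n \in \GGG_k \text{ for all } n \in \ZZ\} = \bigcap_{n \in \ZZ} (p_n|_{\overline{U}})^{-1}(\GGG_k).$$
Because $\GGG_k \subseteq \GGG$ is closed and each $p_n$ is continuous, every $\overline{U}_k$ is closed in $\overline{U}$, and these sets increase with $k$ since $\GGG_k \subseteq \GGG_{k+1}$. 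By the definition of a locally finite element, $g \in \overline{U}$ lies in $\Glf$ precisely when $\lgr \subseteq \GGG_k$, i.e. $g \in \overline{U}_k$, for some $k$. As $U \subseteq \Glf$ by hypothesis, this yields $U = \bigcup_{k \geq 1}(U \cap \overline{U}_k)$.

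The heart of the argument — and the step that genuinely uses the uncountability of $\kk$ — is to collapse this countable union into a single term. Inside the variety $\GGG_{m'}$ the set $U$ is constructible (it is locally closed), and each $U \cap \overline{U}_k$, being the intersection of the locally closed set $U$ with the closed set $\overline{U}_k$, is locally closed, hence constructible. Applying Lemma~\ref{constructible.lem} with $C = U$ and $C_k = U \cap \overline{U}_k$ produces a finite set $F \subseteq \NN$ with $U = \bigcup_{k \in F}(U \cap \overline{U}_k)$; taking $k_0 := \max F$ and using that the $\overline{U}_k$ increase, I obtain $U \subseteq \overline{U}_{k_0}$. This is the crux: without uncountability one could only guarantee a pointwise bound $k(g)$ varying with $g$, whereas Lemma~\ref{constructible.lem} delivers a single uniform bound $k_0$ valid on all of $U$.

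Finally, $\overline{U}_{k_0}$ is closed in $\overline{U}$ and contains the dense subset $U$, so $\overline{U}_{k_0} = \overline{U}$. Hence every $g \in \overline{U}$ satisfies $g^n \in \GGG_{k_0}$ for all $n \in \ZZ$, so that $\lgr \subseteq \GGG_{k_0}$ is a linear algebraic group and $g$ is locally finite; that is, $\overline{U} \subseteq \Glf$, exactly as the definition of weakly closed demands. I expect the argument to be essentially complete at this point; the only routine points needing care are the verification that $\overline{U}$ is algebraic and that $p_n|_{\overline{U}}$ is a morphism for every $n \in \ZZ$ (the negative powers requiring the inverse morphism), neither of which should present a real obstacle.
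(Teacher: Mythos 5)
Your proof is correct and takes essentially the same route as the paper: there the closed sets $G_{k}:=\{g\in\GGG \mid g^{j}\in\GGG_{k} \text{ for all } j\in\ZZ\}$ are defined, $\Glf=\bigcup_{k}G_{k}$ is observed, and Proposition~\ref{indconstr.prop} (over an uncountable field, a countable union of closed algebraic subsets is weakly closed) is invoked, whose proof is precisely the application of Lemma~\ref{constructible.lem} that you carry out by hand on $U=\bigcup_{k}(U\cap\overline{U}_{k})$. In effect you have inlined Proposition~\ref{indconstr.prop} in this special case, and all the routine verifications (algebraicity of $\overline{U}$, the power maps being morphisms, density of $U$ in $\overline{U}$) are handled correctly.
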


\begin{proof}
For $k,\ell \in \NN$ define the closed subsets $G_{k,\ell}:=\{ g\in\GGG\mid  g^{j}\in\GGG_{k} \text{ for }|j|\leq \ell\} \subseteq\GGG_{k}$. 
Then
$$
G_{k}:=\bigcap_{\ell} G_{k,\ell} = \{ g\in\GGG\mid \overline{\langle g\rangle} \subseteq \GGG_{k}\}
$$
is closed in $\GGG_{k}$ and $\Glf = \bigcup_{k} G_{k}$, showing that $\Glf$ is weakly closed (Proposition~\ref{indconstr.prop}).
\end{proof}

\begin{question} \label{locally-finite-elements-and-generation.ques}
If $\GGG$ is a connected ind-group, do we have $ \GGG = \langle  \GGG^{\text{\it lf}}   \rangle$, 
or at least $ \GGG = \overline{ \langle  \GGG^{\text{\it lf}}\rangle}$?
\end{question}

\ps
For a locally finite  $ g\in\GGG$ the structure of the commutative linear algebraic group $H:=\lgr$ is well-known: $H = H_{s}\times H_{u}$ where $H_{s}$ is the closed subgroup of the semisimple elements of $H$ and $H_{u}$ the closed subgroup of the unipotent elements. Moreover,  $H_{s}$ is a diagonalizable group isomorphic to $F\times ( \Cst)^{k}$ where $F$ is a finite cyclic group, and $H_{u}$ is either trivial or isomorphic to $\kplus$. Accordingly, we have a canonical decomposition $ g =  g_{s}\cdot  g_{u}$ where $ g_{s}$ is semisimple, $ g_{u}$ is unipotent, and $ g_{s}\cdot g_{u}=  g_{u}\cdot  g_{s}$. This decomposition is called the \itind{Jordan decomposition} of $ g$.

\begin{lemma}\label{JD.lem}
Let $ g\in\GGG$ be locally finite and $ g= g_{s}\cdot  g_{u}$ its Jordan decomposition.
\be
\item If $\phi\colon \GGG \to \HHH$ is a homomorphism of ind-groups, then $\phi( g)$ is locally finite and $\phi( g) = 
\phi( g_{s})\cdot \phi( g_{u})$ is its Jordan decomposition.
\item If $\HHH \subseteq \GGG$ is a closed subgroup and $ g\in\HHH$, then $ g_{s},  g_{u} \in \HHH$.
\ee
\end{lemma}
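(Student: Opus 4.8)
The plan is to reduce both statements to classical facts about linear algebraic groups by passing to the closed algebraic subgroup $H := \lgr$, which is a linear algebraic group precisely because $g$ is locally finite, and inside which the Jordan decomposition $g = g_{s}\cdot g_{u}$ is, by definition, computed (see the structure $H = H_{s}\times H_{u}$ recalled just before the lemma).

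For part (2) this reduction settles everything at once. Since $g\in\HHH$ and $\HHH$ is a subgroup, the cyclic group $\langle g\rangle$ is contained in $\HHH$; as $\HHH \subseteq \GGG$ is closed, its closure $\lgr$ (formed in $\GGG$) already lies in $\HHH$. By the very definition of the Jordan decomposition, $g_{s}$ and $g_{u}$ belong to $H = \lgr \subseteq \HHH$, which is the claim. No further work is needed here.

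For part (1) I would proceed as follows. The restriction $\phi|_{H}\colon H \to \HHH$ is a homomorphism of ind-groups whose source is an algebraic group, so by Proposition~\ref{Hom-G-to-ind-group.prop} its image $\phi(H)$ is a closed algebraic subgroup of $\HHH$ and $\phi|_{H}$ factors as a homomorphism of linear algebraic groups $H \to \phi(H)$. Since $\overline{\langle\phi(g)\rangle} \subseteq \phi(H)$ is then an algebraic group, $\phi(g)$ is locally finite. To identify its Jordan components I would invoke the two classical facts available in characteristic zero: the Jordan decomposition in a linear algebraic group is intrinsic (independent of the ambient algebraic group) and functorial under homomorphisms (see, e.g., \cite[I.4.4]{Bo1991Linear-algebraic-g}). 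Applying functoriality to $\phi|_{H}$ gives $\phi(g)_{s} = \phi(g_{s})$ and $\phi(g)_{u} = \phi(g_{u})$; intrinsicness guarantees that these parts, computed inside $\phi(H)$, agree with those computed inside $\overline{\langle\phi(g)\rangle}$, that is, with the Jordan decomposition of the locally finite element $\phi(g)\in\HHH$. Hence $\phi(g_{s})$ is semisimple, $\phi(g_{u})$ is unipotent, the two commute as images of commuting elements, and $\phi(g) = \phi(g_{s})\phi(g_{u})$; uniqueness of the Jordan decomposition then yields the statement.

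The genuine content lies entirely in part (1), and the only delicate point is matching the abstract Jordan decomposition of the locally finite element $\phi(g)$, which this paper defines through the closure $\overline{\langle\phi(g)\rangle}$, with the one transported from $H$ via $\phi$. I expect this to be the main (and essentially only) obstacle, and it dissolves once one records that both the intrinsic nature and the functoriality of the Jordan decomposition hold inside honest linear algebraic groups, which is exactly the situation that local finiteness puts us in.
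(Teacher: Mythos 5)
Your proof is correct and follows essentially the same route as the paper's: the paper's entire argument for (1) is that the image of the closed algebraic subgroup $\lgr$ under $\phi$ is a closed algebraic subgroup (Proposition~\ref{Hom-G-to-ind-group.prop}), after which the classical intrinsicness and functoriality of the Jordan decomposition in linear algebraic groups finish the job, and it declares (2) obvious for exactly the reason you give. You have merely written out the details that the paper leaves implicit, so nothing needs to change.
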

\begin{proof}
(1) is a consequence of the fact that the image  $\phi(G)\subseteq \HHH$ of a closed algebraic subgroup $G \subseteq \GGG$ is a closed algebraic subgroup of $\HHH$, and (2) is obvious.
\end{proof}

\begin{example} 
If $ g \in \Aut(X)$ is locally finite and $a\in X$ a fixed point of $ g$, then $a$ is fixed by $ g_{s}$ and $ g_{u}$, because it is a fixed point of the closed subgroup $\lgr$. On the other hand, the automorphism $ g := (x+y^{2}-1, -y)$ of $\A{2}$ is locally finite and fixed point free whereas  
both  $ g_{s}$ and $ g_{u}$  have fixed points. In fact, the Jordan decomposition is given by $ g = (x,-y)\cdot(x+y^{2}-1,y)$.
\end{example}

\ps
\subsection{Elements of finite order}
The first part of the next proposition is a well-known result from algebraic group theory. As for the second we could not find a reference.

\begin{proposition} 
Let $G$ be a linear algebraic group.
\be
\item If every element of $G$ has finite order, then $G$ is finite.
\item Let $F \subseteq G$ be a subgroup such that  every element of $F$ has finite order. Then $F$ is countable.
\ee
\end{proposition}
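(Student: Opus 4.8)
The plan is to treat the two statements separately and, throughout, to exploit that $\kk$ has characteristic zero by working with a faithful matrix representation $G\subseteq\GL_n(\kk)$.

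For (1) I would argue by contraposition. Suppose $G$ is infinite. Since $G$ has only finitely many connected components, the identity component $G^\circ$ is an infinite, hence nontrivial, connected linear algebraic group, and therefore contains a closed subgroup isomorphic to $\kplus$ or to $\kst$ (the fact recalled just before Question~\ref{additive-or-multiplicative-group-in-Aut(X).ques}). In characteristic zero both of these contain elements of infinite order: the element $1$ has infinite additive order in $\kplus$, and $2$ has infinite multiplicative order in $\kst$. This contradicts the hypothesis that every element of $G$ has finite order, so $G$ must be finite.

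For (2), fix a faithful representation $G\subseteq\GL(V)$ with $V=\kk^n$, so that $F$ becomes a torsion subgroup of $\GL_n(\kk)$, and I would proceed by induction on $n$. I first record two elementary facts valid in characteristic zero: an element of finite order $m$ is semisimple with eigenvalues in the group of $m$-th roots of unity (its minimal polynomial divides the separable polynomial $X^m-1$), and a nontrivial unipotent element has infinite order, so a torsion unipotent element is necessarily the identity. If $V$ is a reducible $F$-module, choose a proper nonzero $F$-invariant subspace $W$ and consider the homomorphism $\phi\colon F\to\GL(W)\times\GL(V/W)$, $g\mapsto(g|_W,\bar g)$. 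Its kernel consists of torsion unipotent elements, hence is trivial, so $\phi$ is injective; the two projections of $\phi(F)$ are torsion subgroups of general linear groups of smaller degree, countable by the inductive hypothesis, and therefore $F\cong\phi(F)$, being contained in the product of its projections, is countable.

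The decisive case — which also subsumes the base case $n=1$ — is when $V$ is an irreducible $F$-module. Here Burnside's theorem (valid since $\kk$ is algebraically closed) shows that the $\kk$-linear span of $F$ inside $\End(V)=\M_n(\kk)$ is all of $\M_n(\kk)$, so one can select $g_1,\dots,g_{n^2}\in F$ forming a $\kk$-basis. As the trace form $(A,B)\mapsto\tr(AB)$ on $\M_n(\kk)$ is nondegenerate, the linear map $A\mapsto(\tr(Ag_i))_{i}$ from $\M_n(\kk)$ to $\kk^{n^2}$ is injective. The key observation is that for every $g\in F$ each product $g\,g_i$ again lies in the group $F$ and is therefore of finite order, so $\tr(g\,g_i)$ is a sum of roots of unity and hence lies in $\overline{\QQ}\subseteq\kk$. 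Thus $g\mapsto(\tr(g\,g_i))_i$ embeds $F$ into the countable set $\overline{\QQ}^{\,n^2}$, proving that $F$ is countable. I expect the main obstacle to be conceptual rather than computational: the naive attempt to bound $F$ by exhibiting a countable field of definition is circular, since a priori $F$ might be uncountable, and the point of the irreducible case is precisely to break this circularity by using that $F$ is closed under multiplication, so the finitely many traces pinning down an arbitrary element are automatically traces of torsion elements and hence algebraic over $\QQ$.
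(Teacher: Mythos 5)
Your proof is correct, and while part (1) is essentially the paper's argument, part (2) takes a genuinely different route. For (1), the paper shows a Borel subgroup $B\subseteq G$ must be trivial (a nontrivial connected solvable group contains $\kplus$ or $\kst$, each of which has elements of infinite order in characteristic zero) and then uses projectivity of $G/B$ to conclude $G$ is finite; your contrapositive via $G^{\circ}$ rests on the very same fact, which the paper itself recalls just before Question~\ref{additive-or-multiplicative-group-in-Aut(X).ques}, so there is no real difference there. For (2), the paper invokes the Tits alternative to replace $F$ by a solvable subgroup of finite index, takes closures of the derived series to obtain a subnormal series with abelian factors, and derives a contradiction from an uncountable abelian torsion subgroup of an abelian algebraic group $H$ with $H^{\circ}\simeq (\kst)^{p}\times(\kplus)^{q}$, whose torsion is countable. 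You instead induct on $n$ for $F\subseteq \GL_n(\kk)$: in the reducible case the restriction-plus-quotient homomorphism is injective because its kernel consists of torsion unipotent elements, which are trivial in characteristic zero; in the irreducible case Burnside's theorem furnishes a $\kk$-basis $g_1,\dots,g_{n^2}$ of $\M_n(\kk)$ inside $F$, and nondegeneracy of the trace form embeds $F$ into $\overline{\QQ}^{\,n^2}$, since each $\tr(g\,g_i)$ is a sum of $n$ roots of unity. Your observation that closure of $F$ under multiplication is what makes the separating traces algebraic is exactly the right point, and it breaks the circularity you identify. What each approach buys: yours is elementary and self-contained, replacing the deep Tits alternative by Burnside's irreducibility theorem, which is linear algebra over an algebraically closed field (algebraic closedness of $\kk$ enters both through Burnside and through $\overline{\QQ}\subseteq\kk$); the paper's is shorter once Tits is granted and records structural information (virtual solvability of torsion linear groups), though at bottom both rely on the same small torsion facts — roots-of-unity eigenvalues, triviality of torsion unipotents, and countability of torsion in tori.
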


\begin{proof}
(1) Let $B \subseteq G$ be a Borel subgroup, i.e. a maximal connected solvable subgroup. Then $B$ must be trivial, because a nontrivial connected solvable group contains  either a $\kst$ or a $\kplus$. Since $G/B$ is a projective variety (\cite[Theorem~11.1]{Bo1991Linear-algebraic-g}) we get that $G$ is finite.

\ps
(2) The \name{Tits} alternative \cite[Theorem~1]{Ti1972Free-subgroups-in-} implies that $F$ contains a solvable subgroup of finite index. So we can assume that $F$ is solvable. Let $F = F^{(0)} \supset F^{(1)} \supset F^{(2)} \supset \cdots \supset F^{(m)}=\{e\}$
be the derived series, i.e., every subgroup is the commutator subgroup of the previous one. Taking the closures
$$
\overline{F}  \supseteq \overline{F^{(1)}} \supseteq \overline{F^{(2)}} \supseteq \cdots \supseteq \overline{F^{(m)}}=\{e\}
$$
we obtain a subnormal series with abelian factors. If $F$ is uncountable, then there is an $i<m$ such that $F_{i}:=F\cap \overline{F^{(i)}}$ is uncountable, but $F_{i+1}:=F \cap \overline{F^{(i+1)}}$ is countable. It follows that the image of $F_{i}$ in 
$H:=\overline{F^{(i)}}/\overline{F^{(i+1)}}$
is an uncountable abelian subgroup of the abelian linear group $H$. This is impossible, because
$H^{\circ} \simeq (\kst)^{p}\times (\kplus)^{q}$,  and the set of elements of finite order  in a torus is countable.
\end{proof}
The obvious generalization would be to show that an ind-group consisting of elements of finite order is discrete. 

\begin{question}\label{all-elements-of-finite-order-implies-discrete.ques} 
Is it true that every ind-group $\GGG$ consisting of elements of finite order is discrete?
More generally, is it true that a subgroup  $F \subseteq \GGG$ consisting of elements of finite order is countable?
\end{question}

Note that the second question has a negative answer for subgroups of $\GL(V)$ when $V$ is the $\kk$-vector space $\kinfty$. Let $(e_n)_{n \geq 1}$ be a basis of $V$. For each subset $S \subseteq \NN$ define the element $f_S \in \GL (V)$ by 
$$
f (e_n) = \begin{cases}  e_n & \text{ if }n \notin S,\\   -e_n & \text{ if } n \in S.
\end{cases}
$$ 
Then, $F:= \{ f_S \mid S \subseteq \NN \} \subset \GL(V)$ is a subgroup isomorphic to $(\ZZ /2 \ZZ)^{\NN}$ which is not countable. But note that the group $\GL(V)$ is not an ind-group.

\ps
Concerning the question above we have the following partial result.

\begin{lemma}\label{subgroups-with-finite-order-elements.lem}
Let $F \subseteq \Aut(X)$ be a commutative subgroup consisting of elements of finite order. Then $F$ is countable.
\end{lemma}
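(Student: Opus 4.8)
The plan is to reduce the statement to the structure theory of nested commutative ind-groups, using two elementary inputs: a finite-order automorphism is both locally finite and semisimple (we are in characteristic zero), and the torsion of a torus is countable. First I would pass to an uncountable base field. Choosing an algebraically closed $\KK \supseteq \kk$ that is uncountable, the set $F$ is unchanged when regarded inside $\Aut(X_{\KK}) = \Aut(X)_{\KK}$ (Proposition~\ref{field-extensions-for-morphisms.prop}); it stays commutative and torsion, and it is countable iff it is countable there, so I may assume $\kk$ uncountable. Since every $g \in F$ has finite order, $\langle g\rangle$ is finite, so $g$ is locally finite, and $g^{*}$ has minimal polynomial dividing $T^{N}-1$ with $N=\mathrm{ord}(g)$, hence is semisimple; thus every element of $F$ is locally finite and semisimple. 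Now, as in the proof of Proposition~\ref{Glf-weakly-closed.prop}, write $\Autlf(X)=\bigcup_{k}G_{k}$ with $G_{k}=\{g \mid \overline{\langle g\rangle}\subseteq \Aut(X)_{k}\}$ closed in the finite-dimensional variety $\Aut(X)_{k}$. Then $F=\bigcup_{k}(F\cap G_{k})$, and if $F$ were uncountable some $\Phi:=F\cap G_{k}$ would be uncountable. As $\kk$ is uncountable and $\Phi$ is an uncountable subset of the finite-dimensional variety $G_{k}$, its closure $Y:=\overline{\Phi}$ has positive dimension; the elements of $\Phi$ commute pairwise and are locally finite, so (commutativity being a closed condition) all elements of $Y$ commute, and $Y\subseteq G_{k}\subseteq\Autlf(X)$.

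From $Y$ I would manufacture a genuine subgroup. The subgroup $\langle\Phi\rangle\subseteq F$ is commutative and torsion, hence contained in $\Autlf(X)$, which is weakly closed (Proposition~\ref{Glf-weakly-closed.prop}). Combining this with the facts that a product of pairwise-commuting locally finite elements is again locally finite and that the closure of a subgroup of an ind-group is a subgroup, I would show that $\overline{\langle\Phi\rangle}$ is a commutative closed subgroup all of whose elements are locally finite, and that its identity component $\GGG:=(\overline{\langle\Phi\rangle})^{\circ}$ is nontrivial, since it contains a translate of the positive-dimensional set $Y$. Then $\GGG$ is a commutative, closed, connected subgroup of $\Aut(X)$ whose elements are all locally finite, so Proposition~\ref{locally-finite-commutative-ind-group.prop} applies and $\GGG$ is nested.

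Finally I would invoke the structure of nested commutative ind-groups (Proposition~\ref{ss-unipotent-for-nested.prop}): $\GGG=\Gss\times\Gu$, where $\Gu$ is a vector group (hence torsion-free in characteristic zero) and $(\Gss)^{\circ}$ is a nested torus, with $\Gss=\FFF\cdot(\Gss)^{\circ}$ for a discrete, hence countable, subgroup $\FFF$. By uniqueness of the Jordan decomposition (Lemma~\ref{JD.lem}) every torsion element of $\GGG$ has trivial unipotent part, so lies in $\Gss$; and the torsion of $\Gss$ is countable, because the torsion of a nested torus is of type $(\mu_{\infty})^{d}$ or $\varinjlim(\mu_{\infty})^{k}$, while the roots of unity $\mu_{\infty}\subseteq\kk^{*}$ form a countable set even for uncountable $\kk$, and $\FFF$ is countable. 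Since $\overline{\langle\Phi\rangle}/\GGG$ is countable (Proposition~\ref{connected-component.prop}), the uncountable torsion set $\Phi$ must meet some coset $c\GGG$ (with $c\in F$) in an uncountable set; translating by $c^{-1}\in F$ produces an uncountable torsion subset of $\GGG$, contradicting the countability of the torsion of $\GGG$. Hence $F$ is countable.

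The main obstacle is the second step: turning the bare, positive-dimensional, bounded-degree set $Y$ of pairwise-commuting torsion automorphisms into an honest commutative closed connected subgroup of locally finite elements, to which the structure theorems apply. The delicate point is that degrees of products are not bounded, so one cannot stay inside a single $\Aut(X)_{k}$; instead the control must come from the weak-closedness of $\Autlf(X)$ together with the fact that closures of subgroups remain subgroups. Everything else is a bookkeeping of components and of the countability of $\mu_{\infty}$.
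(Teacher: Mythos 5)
Your proposal has a fatal circularity: the pivot of your argument is Proposition~\ref{locally-finite-commutative-ind-group.prop} (a commutative closed connected subgroup of $\Aut(X)$ consisting of locally finite elements is nested), but in this paper that proposition is itself proved \emph{using} Lemma~\ref{subgroups-with-finite-order-elements.lem}: its proof writes $\GGG = F\cdot G$ with $F$ the subgroup of finite-order elements and then invokes precisely this lemma to conclude that $F$ is countable and hence an increasing union of finite subgroups. So you cannot use the proposition here unless you first reprove it independently, which your sketch does not attempt.

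Even granting the proposition, the construction of the group $\GGG$ it is to be applied to remains unproved, and you flag this yourself without resolving it. The hypothesis requires a \emph{closed} connected subgroup all of whose elements are locally finite. From the uncountable bounded-degree set $\Phi$ and its closure $Y$ you can form the group generated by $Y$, which is a countable increasing union of closed algebraic subsets $\overline{C_m}$ ($C_m$ the $m$-fold products, of degree $\leq k^m$), each contained in $\Autlf(X)$ since $\Autlf(X)$ is weakly closed (Proposition~\ref{Glf-weakly-closed.prop}). But this union is only weakly closed (Proposition~\ref{indconstr.prop}), not closed, and its ind-closure $\overline{\langle\Phi\rangle}$ may a priori acquire elements that are not locally finite: weak closedness controls closures of \emph{algebraic} subsets only, not the closure of a non-algebraic subgroup, which is exactly the distinction this paper is careful about. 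Your sentence ``I would show that $\overline{\langle\Phi\rangle}$ is \dots all of whose elements are locally finite'' is the missing proof, not a proof. Finally, note that all this machinery is beside the point: the paper's proof is a short representation-theoretic argument. Since $F$ is commutative, torsion, and acts faithfully on $\OOO(X)$ (in characteristic zero every finite-order automorphism acts semisimply), the action is simultaneously diagonalizable, $\OOO(X)=\bigoplus_{\chi}\OOO(X)_{\chi}$; finite generation of $\OOO(X)$ yields characters $\chi_{1},\ldots,\chi_{n}$ with $\bigoplus_{i}\OOO(X)_{\chi_{i}}$ generating, faithfulness makes $g\mapsto(\chi_{1}(g),\ldots,\chi_{n}(g))$ an injection $F\into(\mu_{\infty})^{n}$, and $\mu_{\infty}\subseteq\kst$ is countable. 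No base-field extension, no closures, and no structure theory of nested ind-groups are needed.
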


\begin{proof}
We have a faithful linear action of $F$ on the coordinate ring $\OOO(X)$ by $\kk$-algebra automorphisms. Since $F$ is commutative and consists of elements of finite order the representation of $F$ on $\OOO(X)$ can be diagonalized, i.e. there is a decomposition $\OOO(X) = \bigoplus_{\chi} \OOO(X)_{\chi}$ where $\chi$ runs through the characters of $F$ and 
\[
\OOO(X)_{\chi}:=\{f \in \OOO(X) \mid g f = \chi(g)\cdot f \text{ for all }g \in F\}.
\]
Since  $\OOO(X)$ is finitely generated as a $\kk$-algebra we can find a finite set of characters $\chi_{1}, \chi_{2},\ldots,\chi_{n}$ such that $V:=\bigoplus_{i=1}^{n}\OOO(X)_{\chi_{i}}$ generates $\OOO(X)$. Clearly, $g\in F$ acts trivially on $\OOO(X)$ if and only if $\chi_{1}(g)= \cdots = \chi_{n}(g) = 1$. Since the action of $F$ is faithful it follows that the homomorphism 
$$
F \to \kst^{n}, \ g \mapsto (\chi_{1}(g),\ldots,\chi_{n}(g)),
$$ 
is injective. Its image is in $\mu_{\infty}^{n}$ where $\mu_{\infty}:=\{\zeta\in\kst\mid \zeta^{m}=1 \text{ for some }m\geq 1\}$.
Since $\mu_{\infty}$ is countable the claim follows.
\end{proof}

\ps
\subsection{Nested ind-groups}\label{nested.subsec}
Recall that an ind-group $\GGG$ is \itind{nested} if $\GGG$ has an admissible filtration $\GGG = \bigcup_{k}\GGG_{k}$ consisting of closed algebraic subgroups $\GGG_{k}$.
The following result can be found in \cite[Remark 2.8]{KoPeZa2016On-Automorphism-Gr}.

\begin{lemma} \label{closed-subgroup-of-nested.lem}
A closed subgroup of a nested ind-group is nested.
\end{lemma}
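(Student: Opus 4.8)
The plan is to produce an admissible filtration of $\HHH$ by closed algebraic subgroups directly, by intersecting $\HHH$ with the nested filtration of the ambient group. Since $\GGG$ is nested, I would first fix an admissible filtration $\GGG = \bigcup_k \GGG_k$ in which every $\GGG_k$ is a closed algebraic subgroup of $\GGG$, and then set
\[
\HHH_k := \HHH \cap \GGG_k \subseteq \GGG_k .
\]
The claim to be verified is that $\HHH = \bigcup_k \HHH_k$ is an admissible filtration of $\HHH$ consisting of closed algebraic subgroups, which is precisely what it means for $\HHH$ to be nested.

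First I would check that each $\HHH_k$ is a closed algebraic subgroup. As $\HHH$ is closed in $\GGG$ and $\GGG_k$ is closed in $\GGG$, the intersection $\HHH_k = \HHH \cap \GGG_k$ is closed in $\GGG$ and contained in $\GGG_k$, hence a closed subset of the algebraic group $\GGG_k$. Being an intersection of two subgroups it is again a subgroup, and a closed subgroup of an algebraic group is itself an algebraic group. Since $\GGG_k \subseteq \GGG_{k+1}$ with $\GGG_k$ closed in $\GGG_{k+1}$, one gets $\HHH_k \subseteq \HHH_{k+1}$ with $\HHH_k = \HHH_{k+1} \cap \GGG_k$ closed in $\HHH_{k+1}$, so the inclusions $\HHH_k \hookrightarrow \HHH_{k+1}$ are closed immersions; and $\HHH = \HHH \cap \bigcup_k \GGG_k = \bigcup_k (\HHH \cap \GGG_k) = \bigcup_k \HHH_k$.

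The only point requiring genuine attention is admissibility: one must know that $\{\HHH_k\}$ defines the same ind-structure on $\HHH$ as the one $\HHH$ already carries as a closed subgroup of $\GGG$. This is exactly the content of the definition of the induced structure on a closed subset (Definition~\ref{The-Zariski-topology-of-an-ind-variety.def}): a closed subset $\HHH$ of an ind-variety $\GGG = \bigcup_k \GGG_k$ is given its natural ind-variety structure precisely by the filtration $\HHH_k = \HHH \cap \GGG_k$ relative to the fixed admissible filtration of $\GGG$. Since a nested filtration is in particular admissible, applying this to the chosen filtration by subgroups shows that $\{\HHH_k\}$ is an admissible filtration of $\HHH$, and hence $\HHH = \bigcup_k \HHH_k$ is an admissible filtration by closed algebraic subgroups.

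I expect no serious obstacle; the statement is essentially a bookkeeping consequence of the definitions. The one place to be careful is to use the \emph{defining} nested filtration of $\GGG$, so that the $\GGG_k$ genuinely are subgroups, and to invoke that this filtration is admissible when identifying the induced structure on $\HHH$ with $\{\HHH \cap \GGG_k\}$.
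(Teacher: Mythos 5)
Your proof is correct and is the intended argument: the paper itself gives no proof of Lemma~\ref{closed-subgroup-of-nested.lem}, citing instead \cite[Remark 2.8]{KoPeZa2016On-Automorphism-Gr}, and the standard proof is exactly your intersection $\HHH_k := \HHH \cap \GGG_k$ of the closed subgroup with a nested admissible filtration, each $\HHH_k$ being a closed subgroup of the algebraic group $\GGG_k$ and hence algebraic, with admissibility of $\{\HHH_k\}$ coming from the definition of the induced ind-structure on a closed subset. You correctly isolated the one point needing care (admissibility of the induced filtration), so there is nothing to add.
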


We now prove the following analogous result.

\begin{lemma} \label{injective-homomorphism-into-nested.lem}
Let $\phi\colon \GGG \to \HHH$ be an injective homomorphism of ind-groups where $\GGG$ is connected and $\HHH$ nested. Then $\GGG$ is nested.
\end{lemma}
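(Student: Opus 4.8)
The plan is to build a nested filtration of $\GGG$ directly out of the nested filtration $\HHH = \bigcup_k \HHH_k$, exploiting injectivity of $\phi$ to transport algebraicity and connectedness back to $\GGG$. Since $\GGG$ is connected, it is curve-connected (Remark~\ref{irreducible-curve-connected.rem}), so by Proposition~\ref{curve-connected.prop} I may fix an admissible filtration $\GGG = \bigcup_k \GGG_k$ by irreducible closed algebraic subsets; discarding finitely many initial terms I arrange $e \in \GGG_k$ for all $k$.

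First I would produce a chain of algebraic subgroups in $\HHH$. For each $k$, because $\phi$ is an ind-morphism there is an $m$ with $\phi(\GGG_k) \subseteq \HHH_m$, and $\HHH_m$ is an algebraic group. The image $\phi(\GGG_k)$ is an irreducible constructible subset containing $e = \phi(e)$, so the subgroup $H_k := \langle \phi(\GGG_k) \rangle$ it generates inside $\HHH_m$ is a closed connected algebraic subgroup of $\HHH$ (this is the standard fact \cite[Proposition~7.5]{Hu1975Linear-algebraic-g} already invoked for Corollary~\ref{G-generated-by-m-subgroups.cor}). As $\GGG_k \subseteq \GGG_{k+1}$, the $H_k$ form an increasing chain.

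Next I set $\Gamma_k := \phi^{-1}(H_k)$, a closed subgroup of $\GGG$ containing $\GGG_k$, with $\Gamma_k \subseteq \Gamma_{k+1}$ and $\GGG = \bigcup_k \Gamma_k$. Restricting $\phi$ gives an injective homomorphism $\Gamma_k \to H_k$ into an algebraic group with trivial kernel, so Proposition~\ref{hom-to-algebraic.prop}(1) shows that $(\Gamma_k)^\circ$ is an algebraic group and $\dim \Gamma_k < \infty$. The crucial point is that $\Gamma_k$ is in fact connected. Indeed, $\GGG_k$ is connected and contains $e$, hence $\GGG_k \subseteq (\Gamma_k)^\circ$; then $\phi((\Gamma_k)^\circ)$ is a closed subgroup of $H_k$ (Proposition~\ref{hom-to-algebraic.prop}(2)) containing $\phi(\GGG_k)$, so it contains $H_k = \langle \phi(\GGG_k)\rangle$, giving $\phi((\Gamma_k)^\circ) = H_k$. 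Injectivity now forces $\Gamma_k = (\Gamma_k)^\circ$: any $g \in \Gamma_k$ has $\phi(g) \in H_k = \phi((\Gamma_k)^\circ)$, so $\phi(g) = \phi(g')$ for some $g' \in (\Gamma_k)^\circ$, whence $g = g'$. Thus each $\Gamma_k$ is a connected algebraic group, i.e. a genuine closed algebraic subgroup of $\GGG$.

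Finally I would check that $\GGG = \bigcup_k \Gamma_k$ is admissible. Each $\Gamma_k$ is a variety mapping continuously into $\GGG$, so by Lemma~\ref{Kumar.lem} it lies in some $\GGG_{\ell(k)}$, and being closed in $\GGG$ it is a closed subvariety of $\GGG_{\ell(k)}$, hence a closed algebraic subset. Equivalence with the original filtration is then immediate, since $\GGG_k \subseteq \Gamma_k$ is closed for every $k$ and $\Gamma_m \subseteq \GGG_{\ell(m)}$ is closed for every $m$; this exhibits $\GGG$ as nested. I expect the main obstacle to be precisely the connectedness argument for $\Gamma_k$: finiteness of dimension alone only yields that $(\Gamma_k)^\circ$ is algebraic, and it is the \emph{injectivity} of $\phi$ (not merely finiteness of its kernel) that kills the extra components and makes $\Gamma_k$ a true algebraic subgroup — which is also what lets the argument run over an arbitrary base field $\kk$ without any uncountability hypothesis.
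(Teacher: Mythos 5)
Your proof is correct, and it differs from the paper's own argument in one genuine respect. The paper also pulls back algebraic subgroups of $\HHH$ and rests on Proposition~\ref{hom-to-algebraic.prop}(1), but it works with the cruder preimages $\phi^{-1}(\HHH_{k})$, which need not be connected; it then passes to their identity components (algebraic groups by that proposition) and shows these exhaust $\GGG$ by a curve argument: any $g\in\GGG$ lies on an irreducible closed curve $C$ through $e$, the image $\phi(C)$ is algebraic and so lands in some $\HHH_{k}$, whence $C\subseteq\phi^{-1}(\HHH_{k})$ and, being connected through $e$, $C$ lies in the identity component, so $g$ does too. You instead shrink the target to the connected algebraic subgroup $H_{k}=\langle\phi(\GGG_{k})\rangle$ so that injectivity of $\phi$, combined with the surjectivity of $(\Gamma_{k})^{\circ}\to H_{k}$, forces the preimage $\Gamma_{k}=\phi^{-1}(H_{k})$ to be connected outright --- a point the paper never makes, and one which, as you observe, genuinely uses injectivity rather than mere finite-dimensionality of the kernel (the paper's route only needs the latter). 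The trade-off: you invoke Humphreys' generation lemma and an irreducible admissible filtration, where the paper needs only one curve per group element; in exchange, you verify admissibility of the resulting filtration explicitly, whereas the paper concludes that $\GGG$ is nested without checking that its filtration by identity components is admissible (it is, by essentially your final two-sided closedness argument applied componentwise, but your write-up is more complete on this point). Both arguments work over an arbitrary algebraically closed $\kk$, with no uncountability hypothesis, as you correctly note.
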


\begin{proof} 
We have $\HHH = \bigcup_{k}\HHH_{k}$ where the $\HHH_{k}$ are linear algebraic groups. Define the closed subgroups $\GGG_{k}:=\phi^{-1}(\HHH_{k}) \subseteq \GGG$. By Proposition~\ref{hom-to-algebraic.prop}(\ref{the-neutral-component-is-an-algebraic-group}) the connected components $\GGG_{k}^{\circ}$ are
algebraic groups.

We claim that $\bigcup_{k}\GGG_{k}^{\circ} =\GGG$. For $g \in \GGG$ there is an irreducible closed curve $C \subset \GGG$ such that $e,g\in C$. Since $C$ is algebraic the image $\phi(C)$ is contained in $\HHH_{k}$ for some $k\geq 1$. Hence $C \subset \GGG_{k} = \phi^{-1}(\HHH_{k})$. It follows that $C \subset \GGG_{k}^{\circ}$, and so $g \in \GGG_{k}^{\circ}$.
This shows that $\GGG=\bigcup_{k}\GGG_{k}^{\circ}$ is a nested ind-group. 
\end{proof}

With the same notation as in the lemma above we remark that if $\phi(\GGG)$ is closed, then $\phi$ is a closed immersion. However, we do not know whether this is always the case.
\ps
We have seen in Example~\ref{filter-by-alg-groups.exa} that any element of a nested ind-group is locally finite.
One might wonder if the converse holds: If all elements of an ind-group are locally finite, does it follow that the ind-group is nested? Before proving a partial result in this direction, let us mention that for a discrete ind-group this question is equivalent to the classical Burnside problem posed by \name{William Burnside} in 1902. Indeed, an element of a discrete ind-group is locally finite if and only if it has finite order. Therefore, the question is equivalent to the question if a finitely generated group in which every element has finite order must necessarily be a finite group. As one knows, the answer is negative. Thus the previous question should be asked only for connected ind-groups.\idx{Burnside problem}

\begin{question}\label{locally-finite-implies-nested.question}
If $\GGG$ is a connected ind-group whose elements are all locally finite, does
it follow that $\GGG$ is nested?
\end{question}

Here is a partial result in this direction.  

\begin{proposition}\label{locally-finite-commutative-ind-group.prop} Assume that $\kk$ is uncountable.
Let $X$ be an affine variety, and  let $\GGG \subseteq \Aut(X)$ be a commutative closed connected subgroup. If every element of $\GGG$ is locally finite, then $\GGG$ is nested.
\end{proposition}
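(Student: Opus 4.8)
The plan is to realize $\GGG$ as the increasing union of its closed algebraic subgroups and to verify that this union is an admissible filtration by subgroups. First I would record two structural facts. \textbf{(a)} The closed algebraic subgroups of $\GGG$ are directed under inclusion: if $A,B\subseteq\GGG$ are two of them, then since $\GGG$ is commutative the multiplication map $A\times B\to\GGG$, $(a,b)\mapsto ab$, is a homomorphism of ind-groups, so its image $AB$ is again a closed algebraic subgroup containing both (Proposition~\ref{Hom-G-to-ind-group.prop}). \textbf{(b)} Every element lies in one of them: for $g\in\GGG$, local finiteness means exactly that $\lgr$ is a linear algebraic group, and $\lgr\subseteq\GGG$ because $\GGG$ is closed in $\Aut(X)$.

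The whole statement then reduces to a single claim. Since $\GGG$ is connected it is curve-connected and admits an admissible filtration $\GGG=\bigcup_k\GGG_k$ by irreducible closed algebraic subsets containing $e$ (Proposition~\ref{curve-connected.prop}, Remark~\ref{irreducible-curve-connected.rem}). The claim is that \emph{each $\GGG_k$ is contained in some closed algebraic subgroup of $\GGG$}. Granting it, let $B_k$ be the intersection of all closed algebraic subgroups containing $\GGG_k$; this is a closed subgroup of any one of them, hence itself algebraic, and is the smallest algebraic subgroup containing $\GGG_k$. Replacing $B_k$ by the products $B_1B_2\cdots B_k$, which are algebraic by (a), I obtain an increasing sequence of closed algebraic subgroups whose union is $\bigcup_k\GGG_k=\GGG$. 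Because $\kk$ is uncountable, this filtration by closed algebraic subsets is automatically admissible (Theorem~\ref{closed-algebraic-filtration.thm}), and it consists of algebraic subgroups; thus $\GGG$ is nested.

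To prove the claim I would replace $\GGG_k$ by the symmetric irreducible set $\GGG_k':=\GGG_k\cdot\GGG_k^{-1}\ni e$ (which still contains $\GGG_k$) and form the increasing chain of irreducible closed subsets $Y_n:=\overline{(\GGG_k')^{n}}$, all consisting of locally finite elements since $\Glf$ is weakly closed (Proposition~\ref{Glf-weakly-closed.prop}). If the dimensions $\dim Y_n$ are bounded, the chain stabilizes at some $Y_{n_0}$; then $Y_{n_0}\cdot\GGG_k'\subseteq Y_{n_0}$ shows $Y_{n_0}$ is a symmetric closed submonoid containing $e$, hence a subgroup, and being finite-dimensional and curve-connected it is an algebraic variety (Corollary~\ref{finite-dim-indvar.cor}), i.e. a closed algebraic subgroup containing $\GGG_k$.

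Everything thus hinges on the uniform bound $\sup_n\dim Y_n<\infty$, equivalently on the finite-dimensionality of the subgroup $\HHH:=\langle\GGG_k'\rangle$ generated by a single algebraic piece, and \textbf{this is the main obstacle}, where commutativity and local finiteness genuinely enter (for a non-nested ind-group the subgroup generated by an algebraic subset can be infinite-dimensional). To establish it I would exploit the faithful, locally finite representation of $\GGG$ on $\OOO(X)$ (Proposition~\ref{locally-finite-action-on-Vec(X).prop}) together with the compatible Jordan decompositions available because $\HHH$ is commutative: the semisimple parts of elements of $\HHH$ form a commuting family of semisimple operators which simultaneously diagonalize $\OOO(X)$ into weight spaces, and since $\OOO(X)$ is finitely generated only finitely many weights are needed to generate, bounding the dimension of the tori occurring in the $Y_n$; the unipotent parts give commuting locally nilpotent tangent vectors, and applying the Cohen--Draisma theorem (Theorem~\ref{Cohen-Draisma.thm}) to the finite-dimensional spaces $T_eY_n$ of such directions confines them to a single algebraic subgroup, bounding the unipotent contribution. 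The delicate point I expect to wrestle with is making these two bounds work simultaneously and uniformly in $n$, since the semisimple and unipotent strata of $\HHH$ must be controlled together rather than separately.
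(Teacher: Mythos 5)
Your reduction is sound: the directedness of the closed algebraic subgroups, the smallest algebraic subgroup $B_k$, admissibility via Theorem~\ref{closed-algebraic-filtration.thm}, and the stabilization trick turning a dimension bound on $Y_n=\overline{(\GGG_k')^{n}}$ into a closed algebraic subgroup are all correct. But the proof stops exactly where the real difficulty sits: the bound $\sup_n\dim Y_n<\infty$ is never established, and the sketch you offer for it breaks at three points. (i) An infinite commuting family of semisimple locally finite operators need not be simultaneously diagonalizable: each single operator confines a given vector to finitely many of its \emph{own} eigenspaces, but the joint weight support of a fixed vector can be infinite, so the whole group need not act locally finitely on $\OOO(X)$; the paper proves such a diagonalization only for commutative groups of finite-order elements (Lemma~\ref{subgroups-with-finite-order-elements.lem}). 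Moreover, the picture you want (finitely many weights, a bounded torus) is precisely the structure that Proposition~\ref{ss-unipotent-for-nested.prop} delivers \emph{after} nestedness is known, so invoking it here is close to circular. (ii) The Jordan parts $\g\mapsto\g_{s},\g_{u}$ do not vary regularly along the varieties $Y_n$ --- in the family $t\cdot\n$ of Remark~\ref{ss-and-triang-not-wclosed.rem} the Jordan type jumps --- so there is no meaningful way to add a ``semisimple bound'' and a ``unipotent bound'' to estimate $\dim Y_n$; this is the very point you concede. (iii) Most seriously, even if Cohen--Draisma applied (note that $T_{e}Y_n$ is merely the tangent space to a subvariety: it is not known to be a Lie subalgebra, to consist of locally nilpotent or locally finite vector fields, or to be locally finite as a subset of $\LLL(\OOO(X))$, all of which Theorem~\ref{Cohen-Draisma.thm} requires) and yielded $T_{e}Y_n\subseteq\xi(\Lie G)$ for some algebraic group $G$, you could not conclude $Y_n\subseteq G$: in ind-groups, Lie-algebra containment does not control subgroups or subsets, as this paper's own Theorem~\ref{closed-subgroup-with-same-Liealgebra.thm} shows ($\tG\subsetneq\GGG$ closed and connected with $\Lie\tG=\Lie\GGG$).

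The paper's proof sidesteps the question of whether $\overline{\langle\GGG_k\rangle}$ is algebraic altogether. For $\g\in\GGG$ it puts $G_{\g}:=\overline{\langle\g\rangle}^{\circ}$, a connected commutative algebraic subgroup, and $L_{\g}:=\Lie G_{\g}$; since $\Lie\GGG$ has countable dimension there are countably many $\g_i$ with $\sum_{\g}L_{\g}=\sum_i L_{\g_i}$, and one sets $\GGG_n:=G_{\g_1}\cdots G_{\g_n}$. The device you are missing is the \emph{minimality} statement of Corollary~\ref{locally-finite-VF-in-Liealg.cor}: because the connected commutative algebraic subgroup attached to a locally finite vector field is canonically minimal, the Lie-algebra inclusion $L_{\g}\subseteq\Lie\GGG_n$ really does force the group inclusion $G_{\g}\subseteq\GGG_n$ --- the one situation where Lie data legitimately controls group data. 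This only gives $\g^{m}\in\GGG_n$ for some $m\geq 1$ (as $G_{\g}$ has finite index in $\overline{\langle\g\rangle}$); divisibility of the algebraic group $\GGG_n$ then yields $\h\in\GGG_n$ with $\h^{m}=\g^{m}$, so $\g\h^{-1}$ is torsion and $\GGG=F\cdot\bigcup_n\GGG_n$, where the torsion subgroup $F$ is countable by Lemma~\ref{subgroups-with-finite-order-elements.lem}. Writing $F=\bigcup_n F_n$ with $F_n$ finite, the algebraic subgroups $H_n:=F_n\cdot\GGG_n$ exhaust $\GGG$ and form an admissible filtration by Theorem~\ref{closed-algebraic-filtration.thm}, which is the assertion (connectedness then even gives $\GGG=\bigcup_n\GGG_n$). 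Your intermediate claim is in fact true, but only a posteriori: once the filtration by the $H_n$ is admissible, every algebraic subset of $\GGG$ lies in some $H_n$. As a direct target it is out of reach of the tools you cite.
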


\begin{proof}
For any $ g\in\GGG$ define the closed connected commutative algebraic subgroup $G_{ g}:=\overline{\langle  g^{m}\mid m\in\ZZ\rangle}^{\circ}\subseteq \GGG$ and its Lie algebra $L_{ g}:=\Lie G_{ g}\subset \Lie\GGG$. Set $L:=\sum_{ g}L_{ g} \subseteq\Lie\GGG$. Since $\Lie\GGG$ has countable dimension, we can find a countable set $\{ g_{i}\in\GGG\mid i\in\NN\}$ such that $L = \sum_{i\in\NN} L_{ g_{i}}$. For $n\in\NN$ define the subgroups 
$\GGG_{n}:=G_{ g_{1}}\cdot G_{ g_{2}}\cdots G_{ g_{n}}\subset \GGG$. By construction, these are closed connected commutative algebraic subgroups, and $\Lie \GGG_{n}=L_{n} :=\sum_{i=1}^{n} L_{ g_{i}}$. We claim that
$G:=\bigcup_{n} \GGG_{n}$ is equal to $\GGG$.

Denote by $F \subset \GGG$ the subgroup of elements of finite order. For any element  $ g \in \GGG$ there is an $n\in\NN$ such that $L_{ g}\subseteq L_{n}=\Lie \GGG_{n}$. It then follows from Corollary~\ref{locally-finite-VF-in-Liealg.cor} that $G_{ g} \subset \GGG_{n}$, hence $ g^{m}\in \GGG_{n}\subseteq G$ for some $m \geq 1$. Since the group $\GGG_n$ is divisible, i.e. all power maps $g \mapsto g^{k}$ are surjective, there exists an element $h \in \GGG_n$ such that $ g^m = h ^m$. Therefore, the element $ g' :=  g h^{-1}$ has finite order and is such that $ g' G =  g G$. This shows that $\GGG = F \cdot G$.

By Lemma~\ref{subgroups-with-finite-order-elements.lem}, $F$ is countable, so that it can be written as an increasing union  $F = \bigcup_{n}F_{n}$ of finite subgroups. 
Now, the closed algebraic subgroups $H_{n}:= F_{n}\cdot \GGG_{n} \subset \GGG$ satisfy $\GGG = \bigcup_{n}H_{n}$, and by Theorem~\ref{closed-algebraic-filtration.thm} they form an admissible filtration.
Since $\GGG_n$ has finite index in the connected group $H_n^{\circ}$, we have $H_n^{\circ} =\GGG_n$.

It remains to see that $\GGG= \bigcup_n \GGG_n$. Since $\GGG$ is curve-connected we can find, for every $ g \in \GGG$, an irreducible curve $C$ connecting $ g$ and $e$. Then there is an $n \geq 1$ such that $C \subset H_{n}$ and so $ g \in  H_{n}^{\circ} = \GGG_{n}$. Hence, $\GGG = \bigcup_{n} \GGG_{n}$.  
\end{proof}

\begin{remark} \label{nested.rem}
Let $\GGG$ be an ind-group and consider the following statements:
\be
\item $\GGG$ is nested;
\item For any finite subset $\{g_1, \ldots,g_n\}\subset \GGG$, the group $\overline{ \langle g_1, \ldots,g_n \rangle }$ is algebraic;
\item Any element of $\GGG$ is locally finite.
\ee
Then we have $(1) \Rightarrow (2) \Rightarrow (3)$,
but we have seen above that  $(3) \Rightarrow (2)$ does not hold in general. Moreover, it is unclear if the implication $(2) \Rightarrow (1)$ is true.
\end{remark}

\begin{example}There are interesting examples of \itind{nested discrete ind-groups}.
\be
\item The direct sum $F^{(\infty)}$ of countably many copies of a finite group $F$.
\item The symmetric group $S_{\infty} := \varinjlim S_{n}$.
\item The groups $\QQ^{+}/\ZZ = \varinjlim \frac{1}{n}\ZZ/\ZZ$ and $\ZZ_{p}^{+}/\ZZ =  \varinjlim \frac{1}{p^{k}}\ZZ/\ZZ$.
\ee
\end{example}

\ps
If $G$ is a commutative linear algebraic group, then the subsets $G^{\text{\it ss}}$ and $G^{\text{\it u}}$ of semisimple resp. unipotent elements are closed subgroups and $G = G^{\text{\it ss}}\times G^{\text{\it u}}$. Moreover, $G^{\text{\it u}}$ is a unipotent group isomorphic to the additive group of a finite-dimensional vector space, and $G^{\text{\it ss}}$ is a diagonalizable group which can be written in the form $G^{\text{\it ss}} \simeq F \times T$ where $F$ is a finite group and $T:=(G^{\text{\it ss}})^{\circ}$  a torus. These results carry over to commutative nested ind-groups, except for the last statement.

\begin{proposition}\label{ss-unipotent-for-nested.prop}
Let $\GGG$ be a commutative nested ind-group.
\be
\item 
The subsets $\Gss$ and $\Gu$ of $\GGG$ are closed subgroups, and $\GGG = \Gss \times \Gu$.
\item 
$\Gu$ is a nested unipotent ind-group isomorphic to the additive group of a vector space of 
countable dimension.
\item 
$(\Gss)^{\circ}$ is a \itind{nested torus}, i.e. a finite dimensional torus or isomorphic to
$(\kk^{*})^{\infty}:=\varinjlim \,(\kst)^{k}$.
\item
There is a closed discrete subgroup $\FFF \subset \Gss$ such that $\Gss = \FFF\cdot (\Gss)^{\circ}$.
\ee
\end{proposition}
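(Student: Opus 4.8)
The plan is to reduce everything to the level-wise structure theory of commutative linear algebraic groups recalled just before the statement, and then to pass to the direct limit. I would fix an admissible filtration $\GGG=\bigcup_{k}\GGG_{k}$ by closed algebraic subgroups; since $\GGG$ is commutative, so is every $\GGG_{k}$, whence $\GGG_{k}=\GGG_{k}^{\text{\it ss}}\times\GGG_{k}^{\text{\it u}}$ with $\GGG_{k}^{\text{\it ss}}$ diagonalizable and $\GGG_{k}^{\text{\it u}}$ a commutative unipotent group. The first observation is that being semisimple or unipotent is intrinsic to an element, as it depends only on $\overline{\langle g\rangle}$, which is the same whether computed in $\GGG_{k}$ or in $\GGG$. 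Thus $\Gss\cap\GGG_{k}=\GGG_{k}^{\text{\it ss}}$ and $\Gu\cap\GGG_{k}=\GGG_{k}^{\text{\it u}}$; in particular $\Gss$ and $\Gu$ are subgroups whose intersection with each $\GGG_{k}$ is a closed algebraic subgroup, so they are closed, with induced (admissible) filtrations $\Gss=\bigcup_{k}\GGG_{k}^{\text{\it ss}}$ and $\Gu=\bigcup_{k}\GGG_{k}^{\text{\it u}}$. The level-wise isomorphisms $\GGG_{k}^{\text{\it ss}}\times\GGG_{k}^{\text{\it u}}\simto\GGG_{k}$ are compatible with the inclusions, so they glue to an isomorphism of ind-groups $\Gss\times\Gu\simto\GGG$, proving (1).

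For (2), each $\GGG_{k}^{\text{\it u}}$ is a commutative unipotent group, so $\exp$ gives an isomorphism $\Lie(\GGG_{k}^{\text{\it u}})^{+}\simto\GGG_{k}^{\text{\it u}}$ of algebraic groups (Example~\ref{unipotent-exp.exa}), and by the functoriality of the exponential map (Lemma~\ref{exp-hom.lem}) these isomorphisms intertwine the closed immersions $\GGG_{k}^{\text{\it u}}\into\GGG_{k+1}^{\text{\it u}}$ with the injective linear maps $\Lie\GGG_{k}^{\text{\it u}}\into\Lie\GGG_{k+1}^{\text{\it u}}$. Passing to the limit identifies $\Gu$ with the additive group of the countable-dimensional vector space $\varinjlim_{k}\Lie\GGG_{k}^{\text{\it u}}$. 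For (3), set $T_{k}:=(\GGG_{k}^{\text{\it ss}})^{\circ}$, a torus; by Proposition~\ref{connected-component.prop}(1) we have $(\Gss)^{\circ}=\bigcup_{k}T_{k}$, and each inclusion $T_{k}\into T_{k+1}$ exhibits $T_{k}$ as a subtorus, hence a direct factor, of $T_{k+1}$. If $\dim T_{k}$ is bounded the union stabilizes to a finite-dimensional torus; otherwise, after passing to a subsequence with strictly increasing dimensions and writing $T_{k+1}=T_{k}\times S_{k}$ for a complementary subtorus $S_{k}$, the resulting standard coordinate inclusions identify $(\Gss)^{\circ}$ with $\varinjlim(\kst)^{k}=(\kst)^{\infty}$.

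The heart of the argument is (4). I would set $Q:=\Gss/(\Gss)^{\circ}$ with projection $\pi\colon\Gss\to Q$. First, $Q$ is countable and torsion: for $g\in\Gss$ the group $\overline{\langle g\rangle}$ is diagonalizable, say $\overline{\langle g\rangle}=F\times S$ with $F$ finite and $S$ a torus; since $S\subseteq(\Gss)^{\circ}$, the element $g$ is congruent modulo $(\Gss)^{\circ}$ to its finite-order $F$-component, so $\pi(g)$ has finite order. Moreover $\pi(\GGG_{k}^{\text{\it ss}})\cong\GGG_{k}^{\text{\it ss}}/(\GGG_{k}^{\text{\it ss}}\cap(\Gss)^{\circ})$ is a quotient of the finite group $\GGG_{k}^{\text{\it ss}}/T_{k}$, hence finite, and $Q=\bigcup_{k}\pi(\GGG_{k}^{\text{\it ss}})$ is countable. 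Now $(\Gss)^{\circ}=\bigcup_{k}T_{k}$ is a divisible abelian group, hence an injective $\ZZ$-module, so $\operatorname{Ext}^{1}_{\ZZ}(Q,(\Gss)^{\circ})=0$ and the abstract extension $0\to(\Gss)^{\circ}\to\Gss\to Q\to0$ splits. Taking $\FFF\subseteq\Gss$ to be the image of a splitting gives an abstract subgroup with $\FFF\cap(\Gss)^{\circ}=\{e\}$ and $\Gss=\FFF\cdot(\Gss)^{\circ}$; since $\pi|_{\FFF}$ is injective, $\FFF\cap\GGG_{k}^{\text{\it ss}}$ injects into the finite group $\pi(\GGG_{k}^{\text{\it ss}})$ and is therefore finite, so $\FFF$ is closed and discrete.

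I expect the production of this closed discrete complement in (4) to be the main obstacle. The level-wise finite parts of $\GGG_{k}^{\text{\it ss}}$ cannot in general be chosen compatibly, because an element disconnected from $e$ in $\GGG_{k}^{\text{\it ss}}$ may become connected to $e$ inside a larger torus $T_{j}$, so a naive inductive lift of the components fails. The decisive point is the homological observation that divisibility of the nested torus $(\Gss)^{\circ}$ forces the extension by $Q$ to split; once this is known, closedness and discreteness of $\FFF$ follow formally from the finiteness of the groups $\pi(\GGG_{k}^{\text{\it ss}})$ established above.
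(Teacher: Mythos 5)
Your proof is correct, and in part (4) it takes a genuinely different route from the paper. Parts (1)--(3) are precisely the level-wise reductions that the paper leaves to the reader as ``easy consequences of the finite dimensional linear case,'' and your treatment of them is the intended one. For (4), writing $D_k$ for the semisimple part of $\GGG_k$ and $T_k:=D_k^{\circ}$, the paper instead proves an extension lemma for diagonalizable pairs --- if $D'\subseteq D$ are diagonalizable and $F'\subseteq D'$ is a finite subgroup with $D'=F'(D')^{\circ}$, then there is a finite subgroup $F\subseteq D$ with $F\supseteq F'$ and $D=FD^{\circ}$ --- and then builds $\FFF$ as the union of an inductively chosen chain of finite subgroups $F_k\subseteq D_k$. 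So the obstacle you name at the end is not quite the right one: compatible finite parts \emph{can} be chosen (that is exactly the paper's lemma; divisibility of $D^{\circ}$ lets one replace any representative of a component by a torsion representative), and the delicate point in the inductive route is rather that an increasing union of finite subgroups need not meet each $D_k$ in a finite set, so the $F_k$ must be chosen with some care for $\FFF$ to come out closed and discrete. Your homological argument sidesteps both issues at once: divisibility of the nested torus $(\Gss)^{\circ}$ makes it an injective $\ZZ$-module, the abstract extension by $Q$ splits, and the finiteness of the groups $\pi(D_k)$ then yields closedness and discreteness formally. Moreover your $\FFF$ enjoys the extra property $\FFF\cap(\Gss)^{\circ}=\{e\}$, which the paper's construction does not obviously provide, and which strengthens the result: a short coset argument shows that the subgroups $(\FFF\cap D_k)\cdot T_k$ form an admissible filtration of $\Gss$ (each $D_k$ lies in some $(\FFF\cap D_N)\cdot T_N$ because $D_k/T_k$ is finite and each coset representative factors as an element of $\FFF$ times an element of some $T_m$), while the multiplication map $(\FFF\cap D_k)\times T_k\to(\FFF\cap D_k)\cdot T_k$ is a bijective homomorphism of algebraic groups in characteristic zero, hence an isomorphism; consequently multiplication induces an isomorphism of ind-groups $\FFF\times(\Gss)^{\circ}\simto\Gss$. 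This answers affirmatively the Question the authors pose immediately after the proposition, so your route buys strictly more than the statement asks for.
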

\begin{proof}
(1), (2) and (3) are easy consequences of the finite dimensional linear case, and the proofs are left to the reader.
\ps
(4) Let $D$ be a diagonalizable group and $D'\subseteq D$ a closed subgroup. Let $F' \subset D'$ be a finite subgroups such that $D' = F' \cdot (D')^{\circ}$. Then there is a finite subgroup $F \subseteq D$ such that $D = F \cdot D^{\circ}$ and $F \supseteq F'$. Now the claim follows easily.
\end{proof}

\begin{question} Let $\GGG$ be a commutative nested ind-group.
Does there exist a closed discrete subgroup $\FFF \subset \Gss$ such that $\Gss \simeq \FFF \times (\Gss)^{\circ}$?
\end{question}

\ps
\subsection{Embeddings into \texorpdfstring{$\GL_{\infty}$}{}}\label{embeddings-into-GLinfty.subsec}
A typical case of a nested ind-group is $\GL_{\infty} = \varinjlim \GL_{n}$. Hence all closed subgroups of $\GL_{\infty}$ are nested ind-groups as well, by Lemma~\ref{closed-subgroup-of-nested.lem}. One might wonder whether a nested ind-group always admits an injective homomorphism into the group $\GL_{\infty}$. 
We show now that this is not case. 

Let $\AAA$ be the semidirect product $\kst\ltimes(\kinfty)^{+}$ where the action of $\kst$ on $(\kinfty)^{+}$ is given as follows:
$$
t (a_{1},a_{2},\ldots,a_{i},\ldots ) t^{-1} := (t\cdot a_{1},t^{2}\cdot a_{2},\ldots,t^{i}\cdot a_{i},\ldots).
$$
Note that $\AAA$ is the union of the closed linear algebraic subgroups $\AAA_{n}:=\kst\ltimes(\kn)^{+} \subset \AAA$, hence it is nested.

\begin{proposition} \label{no-injective-hom-of-A-into-GL_infty.prop}
There does not exist an injective homomorphism of ind-groups $\iota \colon \AAA \hookrightarrow \GL_{\infty}$.
\end{proposition}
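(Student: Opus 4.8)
The plan is to force a contradiction between two facts: on the $\AAA$-side, the conjugation action of the torus $\kst$ on the unipotent part $(\kinfty)^+$ produces the unbounded set of weights $\{1,2,3,\dots\}$; but on the $\GL_\infty$-side, the image of $\kst$ must sit inside a single $\GL_N$ and therefore act by conjugation on matrices with only finitely many weights. Suppose for contradiction that $\iota\colon \AAA \hookrightarrow \GL_\infty$ is an injective homomorphism of ind-groups.

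First I would locate the torus image. Since $\kst \subset \AAA$ is an algebraic group, Proposition~\ref{Hom-G-to-ind-group.prop} shows that $\iota(\kst) \subseteq \GL_\infty$ is a closed algebraic subgroup; being algebraic, it is contained in some fixed $\GL_N = \GL(\kk^N)\subset \GL_\infty$. Thus $\iota|_{\kst}$ is a representation of $\kst$ on $\kk^N$, acting by the identity on the complementary $\kinfty$, and so $\iota(\kst)$ acts on the whole space $\kk^\infty$ with a finite set of integer weights $W = \{w_1,\dots,w_N\}\cup\{0\}$. Consequently the conjugation action of $\iota(\kst)$ on any matrix algebra $\End(\kk^M)$ (with $\iota(\kst)$ viewed in $\GL_N \subseteq \GL_M$) has all its weights in the finite difference set $W-W$, hence bounded in absolute value by $B := \max W - \min W$, a quantity independent of $M$.

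Next I would examine the unipotent one-parameter subgroups. For each $i\geq 1$ let $U_i\cong\kplus$ be the $i$-th coordinate line of $(\kinfty)^+$ and put $\lambda_i := \iota|_{U_i}\colon \kplus \to \GL_\infty$. Injectivity of $\iota$ makes $\lambda_i$ nontrivial, so $\lambda_i(s)=\exp(sN_i)$ with a nonzero nilpotent $N_i$ lying in some $\End(\kk^{M_i})$ (see Lemma~\ref{exp-basics.lem}). The semidirect-product relation $t\exp(se_i)t^{-1}=\exp(st^i e_i)$ in $\AAA$ transports under $\iota$ to $\iota(t)\lambda_i(s)\iota(t)^{-1}=\lambda_i(t^i s)$; differentiating at $s=0$ yields
\[
\iota(t)\,N_i\,\iota(t)^{-1} = t^i\,N_i \qquad\text{for all } t\in\kst,
\]
so that $N_i$ is a nonzero weight vector of weight $i$ for the conjugation action of $\iota(\kst)$.

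Finally I would combine the two observations: the weight-$i$ vector $N_i\neq 0$ forces $i\in W-W$, hence $i\leq B$, and this is demanded for every $i\in\NN$, contradicting the finiteness of $B$. Therefore no injective homomorphism $\iota\colon \AAA \hookrightarrow \GL_\infty$ exists. The only delicate point — and the step I expect to be the main obstacle in a fully careful write-up — is the uniformity of the bound $B$: the operators $N_i$ escape to larger and larger $\GL_{M_i}$, yet the bound on the conjugation weights is dictated solely by the single $\GL_N$ containing $\iota(\kst)$. Making this precise requires working throughout with the fixed weight decomposition of $\kk^\infty$ under $\iota(\kst)$ (where weight $0$ appears with infinite multiplicity on the complement of $\kk^N$) rather than analysing each $\GL_{M_i}$ separately, so that the weight set $W-W$ governing conjugation is the same for all $i$.
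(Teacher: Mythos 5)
Your proof is correct. It rests on the same two pillars as the paper's argument: since $\iota(\kst)$ is a closed algebraic subgroup of $\GL_{\infty}$ (Proposition~\ref{Hom-G-to-ind-group.prop}) it lies in a fixed $\GL_N$, so its weight set on $\kinfty$ is finite and independent of how far out in the filtration the rest of the image sits, while the semidirect-product relations in $\AAA$ force arbitrarily large weights. Where you genuinely differ is in the middle device: the paper stays at the group level, restricting $\iota$ to the subgroups $B_{m}=\kst\ltimes\kplus$ and invoking Lemma~\ref{weights-of-Bm.lem} to conclude that a faithful finite-dimensional $B_m$-module must contain two distinct weights whose difference is divisible by $m$; you instead pass to the infinitesimal picture via Lemma~\ref{exp-basics.lem}, writing $\lambda_i(s)=\exp(sN_i)$ and differentiating the transported relation to obtain $\iota(t)N_i\iota(t)^{-1}=t^iN_i$, i.e.\ a nonzero $\Ad$-weight vector of weight exactly $i$. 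Your version is slightly sharper (exact weight $i$ rather than a weight difference divisible by $m'$ for each $m'\leq m$) and needs only nontriviality of each $\lambda_i$ rather than faithfulness of each $B_{m'}$-restriction; the paper's version avoids differentiation and so never has to check that $N_i$, $\iota(t)$, and the asserted identity all live in a common $\End(\kk^M)$ --- the uniformity issue you rightly flag, and correctly resolve by working with the single weight decomposition of $\kinfty$ determined by $\iota(\kst)\subseteq\GL_N$, on which weight $0$ simply occurs with infinite multiplicity outside $\kk^N$. Both proofs then conclude identically: the conjugation weights are confined to a fixed finite difference set, contradicting their unboundedness.
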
 

For $m\geq 1$ define $B_{m}:=\kst\ltimes \kplus$ where $\kst$ acts by $t s t^{-1} = t^{m}\cdot s$. We will use the following well known result, see e.g.  \cite[Proposition~27.2]{Hu1975Linear-algebraic-g}. 

\begin{lemma} \label{weights-of-Bm.lem}
Let $V$ be a finite-dimensional representation of $B_{m}$. If $s \in \kplus$, and if $V_{r}:=\{v\in V \mid t v = t^{r}\cdot v\}  \subseteq V$ is the weight space of weight $r \in\ZZ$, then $s(V_{r}) \subseteq \bigoplus_{i\geq 0}V_{r+im}$.
\end{lemma}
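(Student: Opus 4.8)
The plan is to diagonalize the torus action and then show that the nilpotent generator of the $\kplus$-action raises weights by exactly $m$. Write $\rho\colon B_{m}\to\GL(V)$ for the representation. Since $\kst$ is a torus, hence reductive, the restriction of $\rho$ to $\kst$ is completely reducible (Section~\ref{notation.subsec}), so $V=\bigoplus_{r\in\ZZ}V_{r}$ decomposes as the direct sum of its weight spaces. The restriction of $\rho$ to the unipotent subgroup $\kplus\subseteq B_{m}$ is a homomorphism $\kplus\to\GL(V)$ with unipotent image; by Lemma~\ref{exp-basics.lem}(3) it has the form $s\mapsto\exp(sN)$ for a uniquely determined nilpotent $N\in\LLL(V)$, namely $N=d\rho_{0}(1)$.

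The key step is to establish the conjugation relation $\rho(t)\,N\,\rho(t)^{-1}=t^{m}N$ for all $t\in\kst$. This comes directly from the defining relation $tst^{-1}=t^{m}s$ in $B_{m}$: applying $\rho$ gives $\rho(t)\exp(sN)\rho(t)^{-1}=\exp(t^{m}sN)$, while by the $\GL(V)$-equivariance of the exponential (Lemma~\ref{exp-basics.lem}(2)) the left-hand side equals $\exp\bigl(s\,\rho(t)N\rho(t)^{-1}\bigr)$. Comparing derivatives at $s=0$, or using the injectivity of $\exp$ on nilpotent endomorphisms (Lemma~\ref{exp-basics.lem}(2)), yields $\rho(t)N\rho(t)^{-1}=t^{m}N$.

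From this I read off that $N$ shifts weights by $m$. Indeed, if $v\in V_{r}$, then for every $t\in\kst$,
$$
\rho(t)(Nv)=\rho(t)N\rho(t)^{-1}\rho(t)v=t^{m}N\,(t^{r}v)=t^{r+m}(Nv),
$$
so $Nv\in V_{r+m}$. By induction $N^{i}(V_{r})\subseteq V_{r+im}$ for all $i\geq 0$. Finally, for $s\in\kplus$ and $v\in V_{r}$ the expansion
$$
s\,v=\exp(sN)v=\sum_{i\geq 0}\frac{s^{i}}{i!}\,N^{i}v\in\bigoplus_{i\geq 0}V_{r+im}
$$
gives the assertion.

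There is no serious obstacle here. The only point that needs a little care is the passage from the group-level relation $tst^{-1}=t^{m}s$ to the operator relation $\rho(t)N\rho(t)^{-1}=t^{m}N$, and this is exactly what the uniqueness and equivariance statements for unipotent one-parameter subgroups in Lemma~\ref{exp-basics.lem} are designed to supply.
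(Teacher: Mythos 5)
Your proof is correct. There is nothing to compare it against internally: the paper states Lemma~\ref{weights-of-Bm.lem} as well known and simply cites \cite[Proposition~27.2]{Hu1975Linear-algebraic-g} without giving an argument. What you have written is the natural characteristic-zero rendering of the standard proof behind that citation, built entirely from tools the paper does supply: complete reducibility of the $\kst$-action gives $V=\bigoplus_r V_r$; Lemma~\ref{exp-basics.lem}(3) produces the nilpotent generator $N=d\rho_0(1)$ with $\rho(s)=\exp(sN)$ on $\kplus$; the semidirect-product relation $tst^{-1}=t^m s$ together with the conjugation-equivariance and injectivity of $\exp$ on $\NNN(V)$ (Lemma~\ref{exp-basics.lem}(2)) yields $\rho(t)N\rho(t)^{-1}=t^mN$, hence $N(V_r)\subseteq V_{r+m}$, and expanding $\exp(sN)$ finishes. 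The textbook argument in Humphreys reaches the same conclusion without the Lie algebra: one expands the orbit map as a polynomial, $s\,v=\sum_{i\geq 0}s^i v_i$, and identifies $v_i\in V_{r+im}$ by conjugating with $t$ and comparing coefficients of $s^i$; that version works in any characteristic, whereas yours uses $\exp$ and the division by $i!$ (indeed $v_i=N^iv/i!$). Since the paper's standing hypothesis is that $\kk$ is algebraically closed of characteristic zero, this costs nothing here, and your identification of the key step --- passing from the group relation to $\rho(t)N\rho(t)^{-1}=t^mN$ --- is exactly right.
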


\begin{proof}[Proof of Proposition~\ref{no-injective-hom-of-A-into-GL_infty.prop}]
(1) It follows from the lemma above that a faithful representation $V$ of $B_{m}$ contains at least two different nonzero weight spaces $V_{r} \neq V_{r'}$ such that the difference $r - r'$ is divisible by $m$. In fact, if $s(V_{r}) = V_{r}$, then the actions of $\kst$ and of $\kplus$ on $V_{r}$ commute, hence the action of $\kplus$ on $V_{r}$ is trivial.

\ps
(2) Now assume that we have an injective homomorphism $\iota\colon \AAA \into \GL_{\infty}$. Then there is a $d>0$ such that $\iota(\kst) \subseteq \GL_{d}$, and for every $m\geq 1$ there exists an $r_{m}\geq 1$ and a commutative diagram
\[\tag{$*$}
\begin{CD}
\AAA_{m} @>{\iota_{m}}>{\subseteq}> \GL_{d+r_{m}} \\
@AA{\subseteq}A   @AA{\subseteq}A \\
\kst @>{\iota_{1}}>{\subseteq}>  \GL_{d}
\end{CD}
\]
Since $B_{m'}$ is a closed subgroup of $\AAA_{m}$ for all $m'\leq m$ this implies, by (1), that the $\AAA_{m}$-module $V:=\kk^{d+r_{m}}$ contains nonzero weight spaces $V_{r},V_{r'}$ such that $r-r'$ is divisible by $m'$ for all $m'\leq m$. This is not possible when $m$ is large enough,
since the diagram $(*)$ above shows that the nonzero weights of $\kst$ all belong to $\kk^{d}$.
\end{proof}
\begin{remark}
It is not difficult to see that the ind-group $\AAA$ occurs as a closed subgroup in the \name{de Jonqui\`ere}-subgroup $\JJJ(n) \subseteq \Aut(\An)$ for all $n\geq 2$, see Proposition~\ref{no-inbedding-of-Jn-into-GLinfty.prop}. As a consequence, $\JJJ(n)$ does not admit an injective homomorphism 
$\JJJ(n) \into \GL_{\infty}$.
\end{remark}

\pmed
\section{Automorphisms of \texorpdfstring{$G$}{G}-Varieties}
\subsection{\texorpdfstring{$G$}{G}-varieties and affine quotients}\label{quotient.subsec}
Let $G$ be a linear algebraic group and $X$ be an affine $G$-variety.
We say that {\it affine quotient $X\quot G$ exists}\idx{$\XX$@$X\quot G$} if the algebra of invariants $\OOO(X)^{G}$\idx{$\OOO(X)^{G}$} is finitely generated. In this case we define $X\quot G:=\Spec\OOO(X)^{G}$ and denote by $\pi_{X}\colon X \to X\quot G$ the canonical morphism defined by the inclusion $\OOO(X)^{G}\subset \OOO(X)$.
\idx{affine quotient $X\quot G$}
If the affine quotient exists, then it has the universal property that {\it every $G$-invariant morphism $\phi\colon X \to Z$ where $Z$ is affine factors uniquely through  $\pi_{X}$}:

\begin{center}
\begin{tikzcd}
X  \arrow[d, "\pi_{X}"']  \arrow[r, "\phi"] & Z   \\
X\quot G  \arrow[ru,"\bar\phi"']
\end{tikzcd}
\end{center}
Such a factorization needs not to exist if we do not assume that $Z$ is affine. In fact, there exist affine quotients $\pi_{X}\colon X \to X\quot G$ which are not surjective. Setting $Z:=X\quot G \setminus \{y_{0}\}$ where $y_{0}\notin \pi_{X}(X)$ the induced map $\phi\colon X \to Z$ is invariant, but does not factor through $\pi_{X}$. Such an example can be found in \cite[Example~4.10, page 231]{FeRi2005Actions-and-invari}. They consider the linear representation of $\kplus$ on $\M_{2}(\kk)$ 
where $s \in \kplus$ acts on $\M_{2}(\kk)$
by left multiplication with $\left[\begin{smallmatrix} 1& s\\0&1\end{smallmatrix}\right]$ and they even show that this action does not admit a {\it categorical quotient\/} at all. For more examples of this form see \cite{KrDu2015Invariants-and-Sep}.

\begin{lemma}
Let $X$ be an affine $G$-variety and assume that the affine quotient
$\pi_X \colon X \to X\quot G$ exists. Then,
for any affine variety $Z$, the canonical map
$$
\delta_{Z}\colon \Mor(X\quot G, Z) \to \Mor(X,Z)^{G}, \ \alpha \mapsto \alpha\circ\pi_{X},
$$
is an isomorphism of ind-varieties.
\end{lemma}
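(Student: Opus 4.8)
The plan is to establish that $\delta_Z$ is a bijective morphism of ind-varieties whose inverse is also a morphism, working separately on the set-theoretic level and the ind-variety level. On the level of sets, the map $\delta_Z$ sends $\alpha \colon X\quot G \to Z$ to $\alpha \circ \pi_X$, which is manifestly $G$-invariant since $\pi_X$ is $G$-invariant, so $\delta_Z$ is well-defined. Injectivity follows immediately from the fact that $\pi_X \colon X \to X\quot G$ is dominant (the comorphism $\OOO(X)^G \into \OOO(X)$ is injective), so $\alpha \circ \pi_X = \beta \circ \pi_X$ forces $\alpha = \beta$. Surjectivity is exactly the universal property stated just before the lemma: any $G$-invariant morphism $\phi \colon X \to Z$ with $Z$ affine factors uniquely as $\phi = \bar\phi \circ \pi_X$, so $\phi = \delta_Z(\bar\phi)$ lies in the image. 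Thus $\delta_Z$ is already a bijection of sets, and I get a set-theoretic inverse $\phi \mapsto \bar\phi$ for free.

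The next step is to verify that $\delta_Z$ is a morphism of ind-varieties. Since $\pi_X \colon X \to X\quot G$ is a fixed morphism of affine varieties, the composition-with-$\pi_X$ map is an ind-morphism by Lemma~\ref{tautological.lem}(\ref{composition3}): precomposition with a fixed morphism $f \colon \XXX \to \YYY$ induces a morphism $\Mor(\YYY,\ZZZ) \to \Mor(\XXX,\ZZZ)$. Here I apply this with $\XXX = X$, $\YYY = X\quot G$, $\ZZZ = Z$ and $f = \pi_X$. This shows $\delta_Z \colon \Mor(X\quot G, Z) \to \Mor(X,Z)$ is an ind-morphism, and since its image lands in the closed subset $\Mor(X,Z)^G$ (which is closed: it is the common equalizer of the two ind-morphisms $\Mor(X,Z) \to \Mor(G\times X, Z)$ sending $\phi$ to $\phi\circ(\text{action})$ and to $\phi\circ\pr_X$, both morphisms by Lemma~\ref{tautological.lem}), the corestriction $\delta_Z \colon \Mor(X\quot G, Z) \to \Mor(X,Z)^G$ is an ind-morphism onto its target.

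It then remains to show the inverse map $\phi \mapsto \bar\phi$ is also an ind-morphism; this is the main obstacle, since the factorization $\bar\phi$ is produced abstractly by the universal property and it is not a priori clear that the assignment $\phi \mapsto \bar\phi$ respects families. The natural strategy is to use the universal property of $\Mor(X\quot G, Z)$ as an ind-variety (Lemma~\ref{ind-var-morphisms.lem}): it suffices to show that for any ind-variety $\YYY$, a morphism $\YYY \to \Mor(X,Z)^G$ corresponds to a morphism $\YYY \to \Mor(X\quot G, Z)$. A family $\YYY \to \Mor(X,Z)^G$ is the same as a morphism $\Phi \colon \YYY \times X \to Z$ that is $G$-invariant in the $X$-variable, i.e. invariant for the $G$-action on $\YYY \times X$ acting trivially on $\YYY$. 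Here I would invoke the fact that $\OOO(\YYY \times X) = \OOO(\YYY)\hotimes\OOO(X)$ and that taking $G$-invariants commutes with the (topological) tensor factor $\OOO(\YYY)$, so that $\OOO(\YYY \times X)^G = \OOO(\YYY) \hotimes \OOO(X)^G = \OOO(\YYY \times (X\quot G))$; this identifies $G$-invariant families over $\YYY$ with morphisms $\YYY \times (X\quot G) \to Z$, hence with the desired family $\YYY \to \Mor(X\quot G, Z)$, and this correspondence is inverse to $\delta_Z$ applied fiberwise. Granting the compatibility of invariants with the completed tensor product (the key commutative-algebra input, which for reductive $G$ is classical and in general follows from finite generation of $\OOO(X)^G$ together with flatness of $\OOO(\YYY)$ over $\kk$), the inverse is a morphism and $\delta_Z$ is an isomorphism of ind-varieties.
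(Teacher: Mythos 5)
Your proof is correct, but it takes a genuinely different route from the paper's. The paper reduces at once to the linear case $Z=V$ a finite-dimensional vector space, where $\delta_V$ becomes the canonical linear isomorphism $\Mor(X\quot G,V)=\OOO(X)^{G}\otimes V \simto (\OOO(X)\otimes V)^{G}=\Mor(X,V)^{G}$ of vector spaces of countable dimension (hence an isomorphism of ind-varieties), and then obtains the general case by choosing a closed embedding $Z\subseteq V$ and restricting $\delta_V$ to the closed ind-subvarieties $\Mor(X\quot G,Z)$ and $\Mor(X,Z)^{G}$; note that seeing $\delta_V^{-1}$ preserves these subsets uses exactly your density observation (dominance of $\pi_X$ together with closedness of $Z$ in $V$), which the paper leaves implicit. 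You instead prove bijectivity directly and then verify that the inverse is a morphism by showing that $\Mor(X\quot G,Z)$ represents the functor of $G$-invariant families, via the identity $\OOO(\YYY\times X)^{G}=\OOO(\YYY)\hotimes\OOO(X)^{G}$. This is more functorial and yields the universal-property formulation directly, at the cost of handling the completed tensor product; the paper's argument is shorter and never leaves the world of finite-dimensional targets. Two points in your last step deserve tightening, though neither is a real gap. First, the identity $\OOO(\YYY\times X)=\OOO(\YYY)\hotimes\OOO(X)$ is available only for \emph{affine} ind-varieties $\YYY$, so your representability test should not be quantified over all ind-varieties as written; but this costs nothing, since to conclude that the set-theoretic inverse is an ind-morphism it suffices to descend the single tautological (evaluation) family over $\YYY=\Mor(X,Z)^{G}$, which is affine, being closed in the affine ind-variety $\Mor(X,Z)$. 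Second, your hedge about the ``key commutative-algebra input'' is misplaced: no reductivity, finite generation, or flatness argument is needed, since over a field one has $(W\otimes_{\kk}M)^{G}=W\otimes_{\kk}M^{G}$ for any group acting linearly on $M$ and trivially on $W$ (write an invariant as $\sum_i w_i\otimes m_i$ with the $w_i$ linearly independent and compare coefficients), and taking invariants commutes with the inverse limit $\varprojlim_k \OOO(\YYY_k)\otimes\OOO(X)$ defining the completed tensor product, so the identity holds level by level and in the limit.
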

\begin{proof}
If $Z$ is a $\kk$-vector space, $Z=V$, then we have in a canonical way
$$
\Mor(X,V)^{G} \simeq (\OOO(X)\otimes V)^{G} = \OOO(X)^{G}\otimes V \simeq \Mor(X\quot G,V)
$$
showing that $\delta_{V}\colon \Mor(X\quot G,V) \to \Mor(X,V)^{G}$ is an isomorphism. If we embed $Z \subseteq V$ into a $\kk$-vector space, then $\Mor(X\quot G,Z) \subseteq \Mor(X\quot G,V)$ is closed as well as $\Mor(B,Z) \subseteq \Mor(B,V)$,
and the map $\delta_{Z}$ is induced by $\delta_{V}$, hence is an isomorphism.
\end{proof}

\begin{definition} \label{Some-subgroups-of-Aut(X).def}
Let $X$ be an affine $G$-variety where $G$ is a linear algebraic group. We denote by $\Aut_{G}(X) \subseteq \Aut(X)$ the subgroup of {\it $G$-equivariant automorphisms}, by $\Autorb(X) \subset \Aut(X)$ the subgroup of {\it orbit preserving automorphisms}, and by $\Autinv(X) \subset \Aut(X)$ the subgroup of {\it automorphisms preserving the invariant functions\/} $\OOO(X)^{G}$:
\begin{eqnarray*}
\Aut_{G}(X) &:=& \{\phi\in\Aut(X) \mid \phi(gx) = g\phi(x) \text{ for all } x \in X \text{ and } g \in G\}, \\
\Autorb(X) &:=& \{\phi\in\Aut(X) \mid \phi(Gx) = Gx \text{ for all } x \in X\}, \\
\Autinv(X) &:=& \{\phi\in\Aut(X) \mid \phi^{*}(f) = f \text{ for all } f \in \OOO(X)^{G}\}.
\end{eqnarray*}\idx{$\Autorb(X)$}\idx{$\Autinv(X)$}
\end{definition}

\begin{proposition}
\be
\item 
An automorphism $\phi$ of $X$ is orbit preserving if and only if it preserves the closures of the orbits.
\item
The subgroup $\Autorb(X)$ is contained in $\Autinv(X)$, and both are closed subgroups of $\Aut(X)$:
$$
\Autorb(X) \subseteq \Autinv(X) \subseteq \Aut(X).
$$
\item
The subgroup $\Aut_{G}(X) \subset \Aut(X)$ is closed.
\ee
\end{proposition}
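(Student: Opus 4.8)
The plan is to dispatch the three statements in the order (1), then (2), then (3), using throughout that the maps $\eval_{y}\colon\Aut(X)\to X$, $\phi\mapsto\phi(y)$, that $\phi\mapsto\phi^{*}(f)=f\circ\phi$, and that $\phi\mapsto\phi\circ\lambda_{g}$ and $\phi\mapsto\lambda_{g}\circ\phi$ (where $\lambda_{g}\colon x\mapsto gx$) are all ind-morphisms by Lemma~\ref{tautological.lem}. That the three sets are subgroups is routine (stability under composition and inversion), so the content lies entirely in part~(1) and in the closedness assertions.

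For part~(1) the forward implication is immediate: if $\phi\in\Autorb(X)$ then $\phi$ is a homeomorphism with $\phi(Gx)=Gx$, whence $\phi(\overline{Gx})=\overline{\phi(Gx)}=\overline{Gx}$. For the converse I will use the classical fact (cf.\ Proposition~\ref{ind-group-action.prop}(1)) that each orbit $Gx$ is open, hence dense, in its closure $\overline{Gx}$. From this I extract the characterization
$$
Gx=\{\,y\in\overline{Gx}\mid \overline{Gy}=\overline{Gx}\,\}.
$$
Indeed, if $y\in\overline{Gx}$ satisfies $\overline{Gy}=\overline{Gx}$, then $Gy$ is a nonempty open subset of $\overline{Gx}$ and so must meet the dense subset $Gx$, forcing $Gx=Gy$. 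Assuming now $\phi(\overline{Gy})=\overline{Gy}$ for all $y$ (so that $\phi^{-1}$ preserves orbit closures as well), fix $y\in Gx$; then $\phi(y)\in\phi(\overline{Gy})=\overline{Gx}$ gives $\overline{G\phi(y)}\subseteq\overline{Gx}$, while $y=\phi^{-1}(\phi(y))\in\phi^{-1}(\overline{G\phi(y)})=\overline{G\phi(y)}$ gives $\overline{Gx}=\overline{Gy}\subseteq\overline{G\phi(y)}$. Hence $\overline{G\phi(y)}=\overline{Gx}$, and the characterization yields $\phi(y)\in Gx$; applying the same argument to $\phi^{-1}$ gives $\phi(Gx)=Gx$.

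For part~(2) I first record the inclusion: if $\phi\in\Autorb(X)$ and $f\in\OOO(X)^{G}$, then $\phi(x)\in Gx$ for every $x$, so $\phi^{*}(f)(x)=f(\phi(x))=f(x)$, whence $\phi^{*}(f)=f$ and $\phi\in\Autinv(X)$. Closedness of $\Autinv(X)$ then follows by writing it as $\bigcap_{f\in\OOO(X)^{G}}\{\phi\mid\phi^{*}(f)=f\}$, each member being the preimage of the point $f$ under the ind-morphism $\phi\mapsto\phi^{*}(f)$ into the ind-variety $\OOO(X)=\Mor(X,\Aone)$. For $\Autorb(X)$ I will combine part~(1) with Lemma~\ref{well-known.lem} to obtain the pointwise description $\Autorb(X)=\{\phi\in\Aut(X)\mid\phi(x)\in\overline{Gx}\text{ for all }x\}$: indeed $\phi(x)\in\overline{Gx}$ for all $x$ implies $\phi(\overline{Gx})\subseteq\overline{Gx}$ (since $y\in\overline{Gx}\Rightarrow\phi(y)\in\overline{Gy}\subseteq\overline{Gx}$), hence $\phi(\overline{Gx})=\overline{Gx}$ by Lemma~\ref{well-known.lem}, hence $\phi\in\Autorb(X)$ by part~(1). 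This exhibits $\Autorb(X)=\bigcap_{x\in X}\eval_{x}^{-1}(\overline{Gx})$ as an intersection of closed sets, hence closed.

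Finally, for part~(3), the condition $\phi\in\Aut_{G}(X)$ is $\phi\circ\lambda_{g}=\lambda_{g}\circ\phi$ for all $g\in G$; each such condition is the preimage of the diagonal of $\End(X)\times\End(X)$ under the ind-morphism $\phi\mapsto(\phi\circ\lambda_{g},\lambda_{g}\circ\phi)$, hence closed, and $\Aut_{G}(X)=\bigcap_{g\in G}\{\phi\mid\phi\circ\lambda_{g}=\lambda_{g}\circ\phi\}$ is closed. I expect the only genuine obstacle to be part~(1): making precise that orbits are open and dense in their closures and that this yields the orbit-by-orbit-closure characterization above; everything else reduces to expressing the defining conditions as preimages of closed sets under the tautological ind-morphisms of Lemma~\ref{tautological.lem}.
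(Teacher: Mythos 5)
Your proposal is correct and takes essentially the same approach as the paper: part (1) rests, exactly as in the paper's proof, on orbits being open (hence dense) in their closures — the paper phrases the converse via the closed $G$-stable boundary $\overline{Gx}\setminus Gx$, while you reach the same conclusion through the equivalent characterization $Gx=\{y\in\overline{Gx}\mid \overline{Gy}=\overline{Gx}\}$. For (2) and (3) your arguments agree with the paper's up to packaging: your description $\Autorb(X)=\bigcap_{x}\eval_{x}^{-1}(\overline{Gx})$ simply inlines the paper's appeal to Proposition~\ref{Aut(X,Y).prop} (which is itself proved via the evaluation maps), and your preimage of the diagonal in $\End(X)\times\End(X)$ under $\phi\mapsto(\phi\circ\lambda_{g},\lambda_{g}\circ\phi)$ replaces the paper's commutator map $\phi\mapsto\phi\circ\rho(g)\circ\phi^{-1}\circ\rho(g)^{-1}$ taking the preimage of $\{\id\}$.
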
\label{Autorb-is-closed.prop}

\begin{proof}
(1) It is clear that an orbit preserving automorphism preserves the closures of the orbits. So assume that $\phi$ preserves the closures of the orbits. Then every $G$-stable closed subset is stable under $\phi$. If $O = Gx$ is an orbit, then the complement $\bar O \setminus O$ is closed and $G$-stable, hence stable under $\phi$, and the claim follows.
\ps
(2) Using (1) we get $\Autorb (X) = \bigcap_{x \in X} \Aut (X, \overline{G x} )$ which is closed by Proposition~\ref{Aut(X,Y).prop}.

For the subgroup $\Autinv(X)$ we use the fact that the subset 
$$
\{\phi\in\Aut(X) \mid f\circ\phi = f\} \subset \Aut(X)
$$ 
is closed for any $f \in \OOO(X)$.
\ps
(3) For $g \in G$ define $\gamma_g\colon \Aut (X) \to \Aut (X)$ by $\phi \mapsto \phi \circ \rho (g) \circ \phi^{-1} \circ \rho (g) ^{-1}$ where $\rho \colon G \to \Aut(X)$ is the action. Then we have
\[
\Aut_G (X) = \bigcap_{g \in G} (\gamma_g)^{-1} ( \id ),
\]
and the claim follows.
\end{proof}
\begin{remark} 
Consider the representation of $\kst$ on $\kk^{2}$ given by scalar multiplication. Then there are no invariants, and we get
$$
\Autorb(\kk^{2}) = \kst  \subsetneqq \Autinv(\kk^{2}) = \Aut(\kk^{2}).
$$
In fact, for any $\phi\in\Autorb(\kk^{2})$ we have $\phi^{*}(x) = \alpha x$ and $\phi^{*}(y)=\beta y$ for some $\alpha,\beta \in \kst$. Since $\phi$ preserves all lines through the origin $0$, we must have $\alpha = \beta$.
\end{remark}
\begin{remark}
If the affine quotient $\pi_{X}\colon X \to B:= X\quot G$
exists, then 
$$
\Autinv(X) = \Aut_{\pi_{X}}(X):=\{\phi\in\Aut(X)\mid \pi_{X}\circ\phi = \phi\}.
$$\idx{$\Aut_{B}(X)$}
\end{remark}

\ps
\subsection{Principal \texorpdfstring{$G$}{G}-bundles}\label{G-bundle.subsec}
Recall that a \itind{principal $G$-bundle} is a $G$-variety $X$ together with a morphism $\pi\colon X \to B$ which is locally trivial in the \'etale topology with fiber $G$, i.e. for every $b\in B$ there is an \'etale morphism $\phi\colon U \to B$ containing $b$ in its image such that the fiber product $X\times_{B}U$ is $G$-isomorphic to $G \times U$:
$$
\begin{CD}
G \times U @>{\simeq}>> X \times_{B} U @>{\phi'}>>  X \\
@VV{\pr_{U}}V  @VV{\pi'}V  @VV{\pi}V \\
U @= U @>{\phi}>> B
\end{CD}
$$
It follows that $\pi$ is an affine and smooth morphism and that $\pi\colon X \to B$ is a \itind{geometric quotient}. This means that for every affine open subset $U \subseteq B$ the induced morphism $\pi^{-1}(U) \to U$ is an affine quotient, i.e. $\OOO(U)=\OOO(\pi^{-1}(U))^{G}$, and that the fibers of $\pi$ are the $G$-orbits. 

Since $\pi$ is affine it follows that $X$ is affine in case $Y$ is affine, but the reverse implication is not correct. In fact, \name{Winkelmann} gave an example of ${\C}^{+}$-action on $\C^{5}$ which defines a principal bundle
$\alpha\colon \C^{5} \to Y$ where $Y$ is a smooth quasi-affine variety which is not affine.
This example can be found in \cite[Section~2]{Wi1990On-free-holomorphi}, and the fact that the quotient $\alpha$ is a principal
${\C}^{+}$-bundle follows from \cite[Proposition (v) in Section~8]{Wi1990On-free-holomorphi}.
\ps
A principal $G$-bundle $\pi\colon X \to B$ is {\it trivial}, i.e. $G$-isomorphic
to $\pr_{B}\colon G \times B \to B$ if and only if $\pi$ has a section $\sigma\colon B \to X$.\idx{trivial principal $G$-bundle}
In fact, the trivialization is given by $G \times B \simto X$, $(g,b)\mapsto g\sigma(b)$. It follows that for a principal $G$-bundle $\pi\colon X \to B$ the pull-back bundle $\pr_{2}\colon X \times_{B} X \to X$ is trivial, because it has the section $\sigma\colon x \mapsto (x,x)$. Hence we have the following commutative diagram:
\[\tag{$*$}
\begin{CD}
G\times X @>{\simeq}>{(g,x)\mapsto (gx,x)}> X\times_{B}X @>{\pr_{1}}>> X \\
@VV{\pr_{X}}V @VV{\pr_{2}}V @VV{\pi}V \\
X @= X @>{\pi}>> B
\end{CD}
\]

\begin{example}\label{GmodH.exa}
Important examples of principal bundles appear in the following way. Let $G$ be a linear algebraic group and $H \subset G$ a closed subgroup. Then $H$ acts from both sides on $G$, $(h,g) \mapsto hg$ and $(h,g)\mapsto gh^{-1}$. Therefore, we obtain two principal $H$-bundles, $G \to H\backslash G$, $g\mapsto Hg$, and $G \to G/H$, $g \mapsto gH$. These bundles are $H$-isomorphic where the isomorphism is induced by $G \simto G$, $g \mapsto g^{-1}$.

It is easy to see that the $H$-bundle $G \to H\backslash G$ is trivial, if and only if there exists an $H$-equivariant map $\tau\colon G \to H$, 
i.e. we have $\tau (hg) = h \tau (g)$ for $g \in G$, $h \in H$.
In fact, the triviality means that we have an $H$-equivariant isomorphism $G \simto H \times (H\backslash G)$
which must be of the form $g \mapsto (\tau(g),Hg)$ where $\tau$ is $H$-equivariant.
\end{example}

Let us mention the following important results about principal $G$-bundles. A linear algebraic group $G$ is called {\it special} if every principal $G$-bundle is locally trivial in the Zariski topology.\idx{special group}

\begin{proposition}
\be
\item
For a unipotent group $U$ every principal $U$-bundle over an affine variety is trivial. In particular, $U$ is special.
\item
The tori $T$, the groups $\GL_{n}$, $\SL_{n}$, $\Sp_{n}$, and products of them are special.
\item
Connected solvable groups are special.
\ee
\end{proposition}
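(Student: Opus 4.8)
The plan is to reduce all three statements to two basic facts: that $\kplus$-bundles over affine varieties are trivial, and that principal $\GL_{n}$- and $\kst$-bundles are the same data as vector bundles and line bundles, which are Zariski-locally trivial essentially by definition. Recall from the discussion preceding Example~\ref{GmodH.exa} that a principal $G$-bundle is trivial if and only if it admits a section, and that the pull-back of a bundle along the projection of a principal bundle is trivial (diagram $(*)$); these will be the main tools.

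For (1) I would argue by induction on $\dim U$. For the base case $U=\kplus$, a principal $\kplus$-bundle $\pi\colon X\to B$ over an affine variety $B$ is a torsor under the structure sheaf $\OOO_{B}$, and such torsors are classified by the cohomology group $H^{1}(B,\OOO_{B})$, which vanishes for affine $B$ by \name{Serre}'s theorem on the cohomology of affine schemes (étale and Zariski cohomology of the quasi-coherent sheaf $\OOO_{B}$ agreeing). Hence $\pi$ has a section and is trivial. For the inductive step, every unipotent group $U$ in characteristic zero contains a central subgroup $Z\simeq\kplus$ with $U/Z$ unipotent of strictly smaller dimension. Given a principal $U$-bundle $\pi\colon X\to B$ with $B$ affine, the quotient $X/Z\to B$ is a principal $U/Z$-bundle (this quotient exists and is a bundle by étale descent, $X\to X/Z$ being a principal $Z$-bundle), hence trivial by the induction hypothesis; in particular it admits a section $s\colon B\to X/Z$. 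Pulling the $Z$-bundle $X\to X/Z$ back along $s$ yields a $Z\simeq\kplus$-bundle over the affine base $B$, trivial by the base case, and a section of it is a lift of $s$ to a section $\sigma\colon B\to X$ of $\pi$. Thus every principal $U$-bundle over an affine base is trivial, and $U$ is special because any principal $U$-bundle over an arbitrary variety restricts to a trivial bundle over each affine open set.

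For (2) I would identify torsors with familiar geometric objects. A principal $\GL_{n}$-bundle $X\to B$ carries an associated rank-$n$ vector bundle $X\times^{\GL_{n}}\kk^{n}$, a locally free $\OOO_{B}$-module and hence Zariski-locally trivial; as $X$ is its frame bundle, $X$ is Zariski-locally trivial too, so $\GL_{n}$ is special. The same reasoning with line bundles gives that $\kst=\GL_{1}$ is special, and a torsor under a product $G_{1}\times G_{2}$ is a pair of torsors, so finite products of special groups are special; in particular every torus $(\kst)^{r}$ is special. For $\SL_{n}$ I would use the sequence $1\to\SL_{n}\to\GL_{n}\xrightarrow{\det}\kst\to 1$: an $\SL_{n}$-bundle is a $\GL_{n}$-bundle together with a trivialization of the associated determinant line bundle, and over any open where both the $\GL_{n}$-bundle and the line bundle trivialize—which may be taken Zariski-open—the $\SL_{n}$-structure trivializes as well. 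The symplectic case $\Sp_{n}$ is analogous, a principal $\Sp_{n}$-bundle corresponding to a vector bundle equipped with a nondegenerate alternating form that can be split Zariski-locally. The clean way to package this and to obtain (3) is the extension-closure property for special groups: if $1\to N\to G\to Q\to 1$ is exact with $N$ and $Q$ special and the projection $G\to Q$ Zariski-locally trivial as an $N$-bundle, then $G$ is special, the proof pushing a given $G$-bundle forward to a $Q$-bundle, trivializing it Zariski-locally, and reducing the structure group back to $N$ over each such open. For (3) a connected solvable group $G$ is a semidirect product $T\ltimes \rad_{u}G$ of a torus and its unipotent radical, and thus admits a composition series whose successive quotients are isomorphic to $\kst$ or $\kplus$; each is special by (1) and (2), and the structural bundles $G\to G/N$ at each stage are $\kst$- or $\kplus$-bundles, hence Zariski-locally trivial, so the extension-closure property applies iteratively and $G$ is special.

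I expect the main obstacle to be the careful bookkeeping in these reductions rather than any single computation: one must check that the intermediate quotients $X/Z$ and $X/N$ exist as genuine principal bundles over the correct base (not, in general, with affine total space—as the \name{Winkelmann} example in Section~\ref{G-bundle.subsec} warns), and make precise the pointed-set ``exactness'' used in the extension-closure step, including the twisting needed when $N$ is non-central. The cohomological inputs—the vanishing $H^{1}(B,\OOO_{B})=0$ for affine $B$ and the agreement of étale and Zariski $H^{1}$ for $\GL_{n}$ (\name{Hilbert} 90)—are standard, so the real content lies in the descent and dévissage.
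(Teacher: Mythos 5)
Your proposal is correct and is essentially the paper's approach: the paper gives no proof of its own but refers to \name{Serre} \cite{Se1958Espaces-fibres-alg} (cf.\ \cite[III.2]{KrSc1992Reductive-group-ac}), and your argument --- the vanishing $H^{1}(B,\OOO_{B})=0$ plus d\'evissage through a central series for unipotent groups, the vector-bundle/line-bundle interpretation (\name{Hilbert}~90) for $\GL_{n}$, $\kst$, $\SL_{n}$ and $\Sp_{n}$, and closure of specialness under extensions for connected solvable groups --- is exactly the classical proof found there. The bookkeeping points you flag (existence of the quotients $X/Z$, $X/N$ as principal bundles via descent, and the twisting in the pointed-set exactness) are the genuine technical steps, and they go through in the standard way.
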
\label{principal-G-bundle.prop}
For the proofs we refer to \cite{Se1958Espaces-fibres-alg}, cf. \cite[III.2]{KrSc1992Reductive-group-ac}.

\ps
\subsection{Local sections for group actions}\label{local-sections.subsec}
Let $G$ be a linear algebraic group acting on an affine variety $X$.
\begin{definition} A {\it $G$-section\/}\idx{G-s@$G$-section} of the $G$-action on $X$ is a $G$-equivariant morphism 
$\sigma\colon X \to G$ where $G$ acts by left-multiplication on itself,
i.e. a morphism $\sigma$ satisfying $\sigma (gx) = g\sigma (x)$ for all $g \in G$ and $x \in X$.

A {\it local $G$-section\/}\idx{local $G$-section} is a $G$-section on a nonempty $G$-stable  open subset of $X$.
\end{definition}

Let $X := G \times Y$ where $Y$ is an arbitrary affine variety and where $G$ acts by left-multiplication on $G$. Then the projection 
$\sigma:=\pr_{G}\colon X \to G$ is a $G$-section and the projection $\pi:=\pr_{Y}\colon X \to Y$ is the affine quotient, i.e. $\pi^{*}$ induces an isomorphism $\OOO(Y)\simto \OOO(X)^{G}$. This is the general picture as the next proposition shows.

\begin{proposition}\label{G-section.prop}
The following assertions for a $G$-action on $X$ are equivalent.
\be
\item[(i)] 
The $G$-action has a $G$-section.
\item[(ii)]
There is a closed subset $Y\subseteq X$ such that the action map
$G \times X \to X$, $(g,x) \mapsto gx$, induces an isomorphism
$\rho \colon G \times Y  \simto  X$.
\ee
It then follows that $\pi:=\pr_{Y}\circ\rho^{-1}\colon X \to Y$ is the affine quotient of the $G$-action.
\end{proposition}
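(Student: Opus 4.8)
The plan is to prove the equivalence by exhibiting mutually inverse constructions — one sending a $G$-section to a trivializing closed subset, the other going back — and then to read off the quotient statement from the resulting product description $X \simeq G \times Y$.

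First I would prove (i) $\Rightarrow$ (ii). Given a $G$-section $\sigma \colon X \to G$, I set $Y := \sigma^{-1}(e)$, which is closed, hence affine as a closed subset of the affine variety $X$, and I consider $\rho \colon G \times Y \to X$, $(g,y) \mapsto gy$, the restriction of the action map. The candidate inverse is $\tau \colon X \to G \times Y$, $x \mapsto (\sigma(x), \sigma(x)^{-1}x)$. I would first check that $\tau$ actually lands in $G \times Y$: by $G$-equivariance of $\sigma$ we have $\sigma(\sigma(x)^{-1}x) = \sigma(x)^{-1}\sigma(x) = e$, so $\sigma(x)^{-1}x \in Y$. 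Then $\tau$ is a morphism because $x \mapsto \sigma(x)$, inversion in $G$, and the action map are all morphisms. The two identities $\rho \circ \tau = \id_X$ and $\tau \circ \rho = \id_{G\times Y}$ are then short computations, the latter again using $\sigma(gy) = g\sigma(y) = g$ for $y \in Y$. Hence $\rho$ is an isomorphism.

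Next, for (ii) $\Rightarrow$ (i), I would set $\sigma := \pr_G \circ \rho^{-1} \colon X \to G$. The key observation is that $\rho$ is $G$-equivariant for the left-translation action $g\cdot(h,y) = (gh,y)$ on $G \times Y$ and the given action on $X$, since $\rho(gh,y) = (gh)y = g(hy) = g\rho(h,y)$; therefore $\rho^{-1}$ is $G$-equivariant as well, and composing with $\pr_G$ yields $\sigma(gx) = g\sigma(x)$, so $\sigma$ is a $G$-section. For the final quotient assertion I would use the product description directly: since $G$ acts on $G \times Y$ only on the first factor, a basis argument in $\OOO(G\times Y) = \OOO(G)\otimes\OOO(Y)$ shows $(\OOO(G)\otimes\OOO(Y))^G = \OOO(G)^G \otimes \OOO(Y)$, while $\OOO(G)^G = \kk$ because left translation is transitive on $G$. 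Thus $\pr_Y^*$ identifies $\OOO(Y)$ with $\OOO(X)^G$, which is consequently finitely generated, so $X\quot G$ exists and $\pi = \pr_Y \circ \rho^{-1}$ is the affine quotient morphism.

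All the computations here are elementary. The only points deserving genuine care are the morphism property of $\tau$ — equivalently, that the ``division'' $x \mapsto \sigma(x)^{-1}x$ is regular — and the invariant-theoretic identity $(\OOO(G)\otimes\OOO(Y))^G = \OOO(G)^G\otimes\OOO(Y)$, which I would justify by the linear-independence argument over a basis of $\OOO(Y)$ rather than by invoking reductivity, since $G$ is an arbitrary linear algebraic group. I do not expect a real obstacle in this proof.
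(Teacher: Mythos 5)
Your proof is correct and takes essentially the same route as the paper's: the same choice $Y=\sigma^{-1}(e)$ with the explicit inverse $x\mapsto(\sigma(x),\sigma(x)^{-1}x)$ for (i)$\Rightarrow$(ii), and the projection $\pr_{G}$ (transported through $\rho$) as the $G$-section for the converse. Your only additions are to spell out what the paper leaves as ``clear'' --- that $\tau$ lands in $G\times Y$, that it is a morphism, and the identification $\OOO(G\times Y)^{G}=\kk\otimes\OOO(Y)$ via the linear-independence argument over a basis of $\OOO(Y)$, which is indeed the right justification since $G$ is not assumed reductive.
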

Another equivalent way to express this is the following: {\it The affine quotient $X \quot G$ exists, i.e. the invariant ring $\OOO(X)^{G}$ is finitely generated, and the quotient morphism $\pi_X \colon X \to X\quot G$
is a trivial principal $G$-bundle\/}
(see Section~\ref{G-bundle.subsec}).
\begin{proof}[Proof of Proposition~\ref{G-section.prop}]
(i)\,$\Rightarrow$\,(ii): Let $\sigma \colon X \to G$ be a section of the action. Set $Y:=\sigma^{-1}(e)$. Then the morphism $G \times Y \to X$, $(g,y)\mapsto gy$ has an inverse, namely  $x \mapsto (\sigma(x),\sigma(x)^{-1}x)$.
\ps
(ii)\,$\Rightarrow$\,(i): If $X = G\times Y$, then the projection $\pr_{G}\colon G \times Y \to G$ is a $G$-section.
\ps
The last statement is clear.
\end{proof}
\noindent
The situation is displayed in the following diagram
\ps
\begin{center}
\begin{tikzcd}
X  \arrow[d, xshift=-0.2em, "\pi"']  \arrow[r, "\sigma"] & G   \\
X\quot G  \arrow[ru,"e"'] \arrow[u, xshift=0.2em, "s"']  
\end{tikzcd}
\end{center}
where $s$ is the inverse map of the isomorphism $\sigma^{-1}(e) \simto X\quot G$ induced by $\pi$.
\ps
In case of the additive group $\kplus$, there is the following criterion for the existence of a section.
\begin{lemma}\label{sections-via-vector-fields.lem}
Consider a $\kplus$-action on $X$, and let $\delta\in\VEC(X)$ be the corresponding vector field. Then a regular function $f\colon X \to \kplus$ is a $\kplus$-section if and only if $\delta(f)=1$.
\end{lemma}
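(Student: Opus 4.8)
The plan is to reduce the statement to a comparison of Taylor coefficients in the one-parameter expansion of $f$ along the orbits of the action. Write the action as $\rho\colon\kplus\to\Aut(X)$, and for $x\in X$ let $\mu_x\colon\kplus\to X$, $s\mapsto s\cdot x$, be the orbit map. Since $\kplus$ acts on itself by the group law, which is addition, the definition of a $\kplus$-section (Section~\ref{local-sections.subsec}) reads: $f\colon X\to\kplus$ is a section if and only if $f(s\cdot x)=s+f(x)$ for all $s\in\kplus$ and all $x\in X$. On the other hand, the vector field $\delta=\xi_A$ attached to the action (Definition~\ref{xi_A.def}), with $A=1\in\Lie\kplus=T_0\kplus$ corresponding to $\frac{d}{ds}\big|_{s=0}$, acts on functions by $(\delta f)(x)=\frac{d}{ds}\big|_{s=0}f(s\cdot x)$; note in particular that there is no sign here, as $\delta$ is the geometric vector field, not the induced action $Af=-\xi_A f$ on $\OOO(X)$.

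First I would fix $x$ and study the regular function $\phi_x\colon\kplus\to\kplus$, $\phi_x(s):=f(s\cdot x)$. As the action map $\kplus\times X\to X$ is a morphism and $f$ is regular, $\phi_x$ is a morphism $\kplus\to\AA^1$, hence a polynomial in $s$. Using the group law $(s+t)\cdot x=t\cdot(s\cdot x)$ together with the definition of $\delta$, I would establish the differential identity
\[
\phi_x'(s)=\frac{d}{dt}\Big|_{t=0}f\big(t\cdot(s\cdot x)\big)=(\delta f)(s\cdot x),
\]
and then, by induction on $k$, $\phi_x^{(k)}(s)=(\delta^k f)(s\cdot x)$, so that $\phi_x^{(k)}(0)=(\delta^k f)(x)$. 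Since $\phi_x$ is a polynomial, its finite Taylor expansion yields the exponential formula
\[
f(s\cdot x)=\sum_{k\ge0}\frac{s^k}{k!}\,(\delta^k f)(x).
\]
Alternatively, this formula follows from Lemma~\ref{exp-basics.lem}(\ref{exponential-formula}) after choosing a $\kplus$-equivariant closed embedding of $X$ into a $\kplus$-module and restricting.

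Granting the exponential formula, the equivalence is a coefficient comparison, both sides being polynomials in $s$ for each fixed $x$. If $f$ is a $\kplus$-section, then $f(s\cdot x)=f(x)+s$, and comparing the coefficient of $s^1$ gives $(\delta f)(x)=1$ for all $x$, i.e. $\delta f=1$. Conversely, if $\delta f=1$, then $\delta^k f=\delta^{k-1}(1)=0$ for all $k\ge2$, so the exponential formula collapses to $f(s\cdot x)=f(x)+s$, which is precisely the section condition.

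The only real obstacle is the rigorous justification of the differential identity $\phi_x'(s)=(\delta f)(s\cdot x)$, i.e. that differentiation of $f$ along an orbit is computed by $\delta$; this is where the group law and the definition of $\xi_A$ must be combined carefully, and where the sign conventions need checking against Definition~\ref{xi_A.def}. Everything else is formal. If one prefers to avoid this point, one can sidestep it entirely by invoking the $\kplus$-equivariant embedding of $X$ into a $\kplus$-module together with Lemma~\ref{exp-basics.lem}, which already records the required expansion in the linear case, whose restriction to the closed $\kplus$-stable subvariety $X$ gives the general statement.
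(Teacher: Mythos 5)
Your proof is correct and takes essentially the paper's approach: the paper's own argument consists precisely of invoking the exponential formula $f(sx)=\sum_{n\ge 0}\frac{s^n}{n!}(\delta^n f)(x)$ of Lemma~\ref{exp-for-functions.lem} (itself proved via a $\kplus$-equivariant closed embedding into a $\kplus$-module together with Lemma~\ref{exp-basics.lem}) and then reading off the section condition $f(sx)=f(x)+s$, exactly as in your coefficient comparison. Your direct derivation of that formula---polynomiality of $s\mapsto f(s\cdot x)$ plus the orbit-derivative identity $\phi_x^{(k)}(s)=(\delta^k f)(s\cdot x)$, which is rigorous as stated since $(s+t)\cdot x=t\cdot(s\cdot x)$ and $\delta^{k-1}f$ is again regular---is a sound self-contained substitute for the citation, and your fallback via the equivariant embedding is literally the paper's proof of the cited lemma.
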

\begin{proof}
We are going to use  Lemma~\ref{exp-for-functions.lem} which will be proved in the following section. This lemma asserts that for $s\in\kplus$ and $f\in\OOO(X)$ we have
$$
f(sx)= \sum_{n=0}^{\infty} s^{n }\cdot \delta^{n}(f)(x).
$$
Thus $f$ is $\kplus$-equivariant, i.e. $f(sx) = f(x)+s$ for all $s \in\kplus$, if and only if $\delta(f)=1$.
\end{proof}

\begin{example} Let $\u \in \Aut(\An)$ be a nontrivial translation, $\u\colon x \mapsto x+u$, and  let $\rho\colon\kplus \to \Aut(\An)$ be the $\kplus$-action generated by $\u$, i.e. $\rho(s)(x) = x + su$.
It is clear that every linear map $\sigma\colon \An \to \kplus$ such that $\sigma(u)=1$ is a section.
\end{example}

\begin{proposition}\label{local-sections-for-kplus.prop}
For a nontrivial $\kplus$-action on an affine variety $X$ there always exists a \itind{local section}. More precisely, there is an invariant $f \in\OOO(X)^{\kplus}$ such that the $\kplus$-action on $X_{f}$ has a section. 
\end{proposition}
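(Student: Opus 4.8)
The plan is to translate the statement into the language of locally nilpotent derivations and then to produce the section by exhibiting a so-called \emph{local slice}. Writing $R := \OOO(X)$ and letting $\delta \in \VEC(X) = \Der_{\kk}(R)$ be the vector field associated to the given $\kplus$-action (via $\xi\colon \Lie\Aut(X)\into\VEC(X)$, see Proposition~\ref{Liealg-VF.prop} and Remark~\ref{loc-nil-in-LieAut(X).rem}), the hypothesis that the action is nontrivial means $\delta \neq 0$, and the correspondence between $\kplus$-actions and locally nilpotent vector fields shows that $\delta$ is locally nilpotent, i.e.\ for every $a \in R$ there is an $m$ with $\delta^{m}(a) = 0$. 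By Lemma~\ref{sections-via-vector-fields.lem} it suffices to find an invariant $f \in R^{\kplus}$, i.e.\ with $\delta(f) = 0$, together with a function $g \in \OOO(X_{f})$ satisfying $\delta(g) = 1$; note that invariance of $f$ guarantees that $X_{f}$ is $\kplus$-stable, so that the restricted action and the lemma both make sense on $X_{f}$.

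First I would construct the local slice. Since $\delta \neq 0$, choose $a \in R$ with $\delta(a) \neq 0$. By local nilpotence there is a largest integer $n \geq 1$ with $\delta^{n}(a) \neq 0$, so that $\delta^{n+1}(a) = 0$. Setting $s := \delta^{n-1}(a)$ we obtain an element with $\delta(s) = \delta^{n}(a) \neq 0$ and $\delta^{2}(s) = \delta^{n+1}(a) = 0$.

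Now define $f := \delta(s) \in R$. Then $f \neq 0$ and $\delta(f) = \delta^{2}(s) = 0$, so $f$ is a nonzero invariant and $X_{f}$ is a $\kplus$-stable principal open subset. On $X_{f}$ the element $f$ is invertible, so $g := s/f$ lies in $\OOO(X_{f}) = R_{f}$, and since $\delta$ extends to a derivation of $R_{f}$ by the quotient rule and $\delta(f) = 0$, we compute $\delta(g) = \delta(s/f) = \delta(s)/f = f/f = 1$. By Lemma~\ref{sections-via-vector-fields.lem} applied to the $\kplus$-action on $X_{f}$, the function $g$ is a $\kplus$-section, which proves the claim.

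The only genuinely substantive point is the existence of the local slice, which hinges on the local nilpotence of $\delta$; everything else is a formal computation with the quotient rule. I would take care to justify that the $\kplus$-action indeed corresponds to a \emph{locally nilpotent} (and not merely locally finite) derivation, invoking the correspondence recorded in Remark~\ref{loc-nil-in-LieAut(X).rem}, and to note that invariance of $f$ is exactly what makes $X_{f}$ stable under the action, so that Lemma~\ref{sections-via-vector-fields.lem} is applicable there.
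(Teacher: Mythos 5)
Your proof is correct and is essentially the paper's own argument: the paper likewise picks $h\in\OOO(X)$ with $\delta h \neq 0$ and $\delta^{2}h = 0$, sets $f := \delta h$, and verifies $\delta\bigl(\frac{h}{f}\bigr) = 1$ on $X_{f}$, concluding via Lemma~\ref{sections-via-vector-fields.lem}. The only difference is cosmetic: you spell out the local-slice step (taking the largest $n$ with $\delta^{n}(a)\neq 0$ and setting $s=\delta^{n-1}(a)$), which the paper leaves implicit in the phrase ``we can find an $h$''.
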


\begin{proof}
Let $\delta\in\VEC(X)$ be the vector field corresponding to the $\kplus$-action. Since the action is nontrivial, we can find an $h \in \OOO(X)$ such that $f:=\delta h \neq 0$, but $\delta f= \delta^{2}h=0$. Then $f \in \OOO(X)^{\kplus}$, and 
$$
\delta\left(\frac{h}{f}\right) = \frac{\delta h}{f} = 1.
$$ 
Thus $\frac{h}{f}\colon X_{f}\to \kplus$ is a section, by Lemma~\ref{sections-via-vector-fields.lem} above. 
\end{proof}
Local sections also exist for faithful actions of tori. The proof relies on the following classical result.

\begin{proposition} \label{trivial-generic-stabilizer.prop}
Let $G$ be an algebraic group which is either diagonalizable or finite or $\kplus$. Then the generic stabilizer of a faithful action on an irreducible affine variety $X$ is trivial, i.e. there exists an open dense set $U \subseteq X$ such that the stabilizer $G_x$ is trivial for every $x \in U$.
\end{proposition}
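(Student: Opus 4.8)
The plan is to treat the three kinds of groups separately, exploiting the common principle that for every $x\in X$ the stabilizer $G_x$ is a \emph{closed} subgroup of $G$, and that for any closed subgroup $H\subseteq G$ the fixed-point locus $X^{H}:=\{x\in X\mid hx=x \text{ for all }h\in H\}$ is closed in $X$. By faithfulness, $X^{H}=X$ forces $H=\{e\}$, so whenever $H\neq\{e\}$ the set $X^{H}$ is a \emph{proper} closed subset, hence nowhere dense because $X$ is irreducible. In each case I will produce the desired $U$ as the complement of such fixed loci.

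First the two easy cases. If $G$ is finite, I would set $U:=X\setminus\bigcup_{g\neq e}X^{g}$; this is a complement of a finite union of proper closed subsets, hence open and dense, and for $x\in U$ no nontrivial element of $G$ fixes $x$, so $G_{x}=\{e\}$. If $G=\kplus$, the only closed subgroups in characteristic zero are $\{0\}$ and $\kplus$ itself; thus $G_{x}$ is either trivial or all of $\kplus$, the latter meaning $x\in X^{\kplus}$. Taking $U:=X\setminus X^{\kplus}$, which is open and dense since the action is faithful, yields $G_{x}=\{e\}$ for every $x\in U$.

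The substantial case is $G$ diagonalizable. Here I would use the weight decomposition $\OOO(X)=\bigoplus_{\chi\in\widehat G}\OOO(X)_{\chi}$ attached to the action, where $\widehat G$ is the (finitely generated) character group and $\OOO(X)_{\chi}=\{f\mid gf=\chi(g)f\}$. As in the proof of Lemma~\ref{subgroups-with-finite-order-elements.lem}, faithfulness is equivalent to the condition that the set of occurring weights $W:=\{\chi\in\widehat G\mid \OOO(X)_{\chi}\neq 0\}$ generates $\widehat G$. The key computation is that, for a point $x$, an element $g$ lies in $G_{x}$ if and only if $\chi(g)=1$ for every weight $\chi$ that is \emph{active} at $x$, i.e.\ such that some $f\in\OOO(X)_{\chi}$ satisfies $f(x)\neq 0$: indeed $f(gx)=\chi(g)^{-1}f(x)$ for homogeneous $f$. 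Writing $S(x)$ for the set of active weights, this gives $G_{x}=\bigcap_{\chi\in S(x)}\Ker\chi$, which is trivial exactly when $S(x)$ generates $\widehat G$ (a diagonalizable group being determined by its characters, with $\bigcap_{\chi\in S}\Ker\chi$ having character group $\widehat G/\langle S\rangle$). I would then choose a finite generating set $\chi_{1},\dots,\chi_{r}\in W$ of $\widehat G$ and nonzero $f_{i}\in\OOO(X)_{\chi_{i}}$, and set $U:=\bigcap_{i=1}^{r}X_{f_{i}}$, a finite intersection of nonempty principal open sets, hence open and dense in the irreducible $X$. Every $x\in U$ has $\{\chi_{1},\dots,\chi_{r}\}\subseteq S(x)$, so $S(x)$ generates $\widehat G$ and therefore $G_{x}=\{e\}$.

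I expect no serious obstacle. The finite and $\kplus$ cases are immediate, and the diagonalizable case reduces cleanly to the elementary fact that the active weights generate $\widehat G$ on a suitable principal open set. The only steps requiring care are the identification $G_{x}=\bigcap_{\chi\in S(x)}\Ker\chi$ and the equivalence ``$S$ generates $\widehat G$ $\iff$ $\bigcap_{\chi\in S}\Ker\chi=\{e\}$'', both standard facts about diagonalizable groups. Notably, this argument works over an arbitrary base field and needs neither uncountability of $\kk$ nor the countable-union lemma (Lemma~\ref{constructible.lem}).
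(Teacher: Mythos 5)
Your proof is correct, but it takes a genuinely different route from the paper's. The paper linearizes: for $G$ diagonalizable it embeds $X$ equivariantly into a $G$-module $V$, decomposes $V$ into weight spaces $V_{\lambda_1}\oplus\cdots\oplus V_{\lambda_r}$, and identifies the locus of points with nontrivial stabilizer as the finite union $\bigcup_{\Lambda\in\PPP}V_{\Lambda}$ over those subsets $\Lambda$ of weights with $\bigcap_{\lambda\in\Lambda}\Ker\lambda\neq\{e\}$; irreducibility and faithfulness then show $X\not\subseteq V_{\Lambda}$ for every $\Lambda\in\PPP$, and the finite case is handled the same way with the linear subspaces $\Ker(\id-g)$, $g\neq e$. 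You instead work intrinsically with the dual weight decomposition $\OOO(X)=\bigoplus_{\chi}\OOO(X)_{\chi}$, compute $G_x=\bigcap_{\chi\in S(x)}\Ker\chi$ in terms of the weights active at $x$, and exhibit $U$ explicitly as a finite intersection of principal open sets $X_{f_i}$ attached to a finite subset of occurring weights generating $\widehat{G}$ (which exists since $\widehat{G}$ is finitely generated); your finite case likewise avoids any embedding by taking $U=X\setminus\bigcup_{g\neq e}X^{g}$ directly. What the paper's version buys is a description of the entire bad locus $\{x\in X\mid G_x\neq\{e\}\}$ as the trace on $X$ of finitely many linear subspaces; what yours buys is that no equivariant embedding is needed, the dense open set is explicit, and in fact only the easy half of your duality step is used, namely $G_x\subseteq\bigcap_{\chi\in S(x)}\Ker\chi$ together with the fact that characters separate the points of a diagonalizable group. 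One caveat on your closing sentence: the claim that the argument works over an arbitrary base field is overstated for $G=\kplus$, since the dichotomy ``the only closed subgroups of $\kplus$ are $\{0\}$ and $\kplus$'' is a characteristic-zero fact (in characteristic $p$ the subgroup $\FF_p\subset\kplus$ is closed), and in characteristic $p$ the diagonalizable case would also have to contend with infinitesimal kernels such as $\mu_p$; under the paper's standing assumption that $\kk$ has characteristic zero this is harmless, and your observation that neither uncountability of $\kk$ nor Lemma~\ref{constructible.lem} is needed matches the paper, whose proof also uses neither.
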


\begin{proof}
(1) 
Assume first that $G$ is diagonalizable and that $X$ is a $G$-module $V$.
Decompose $V$ into weight spaces: $V = V_{\lambda_{1}}\oplus V_{\lambda_{2}}\oplus\cdots\oplus V_{\lambda_{r}}$, where $\lambda_{1},\ldots,\lambda_{r}$ are characters of $G$ and $V_{\lambda_{i}}:=\{v \in V \mid g v = \lambda_{i}(g)\cdot v \text{ for }g \in G \}$. Define
$$
\PPP:=\{\Lambda \subseteq \{\lambda_{1},\ldots,\lambda_{r}\} \mid \textstyle{\bigcap_{\lambda\in\Lambda}}\Ker\lambda\neq \{e\}\},
$$
and set $V_{\Lambda}:=\bigoplus_{\lambda\in\Lambda}V_{\lambda}$ for $\Lambda\in\PPP$.
Then
$\{v\in V\mid G_{v}\neq\{e\}\} = \bigcup_{\Lambda\in\PPP}V_{\Lambda}$, hence
$$
U:= V \setminus \textstyle{\bigcup_{\Lambda\in\PPP}} V_{\Lambda} = \{v\in V \mid G_{v}\text{ is trivial}\}.
$$
It follows that $U\neq \emptyset$, because otherwise $V = V_{\Lambda}$ for some $\Lambda \in \PPP$, and so the action is not faithful.

For the general case, we embed $X$ into a $G$-module $V$ and check that $U \cap X \neq \emptyset$. Otherwise, we would have $X \subseteq V_{\Lambda}$ for some $\Lambda\in\PPP$, because $X$ is irreducible, contradicting the faithfulness of the action.
\ps
(2) Now assume that $G$ is finite.
For a $G$-module $V$ we get 
$$
U:= V \setminus  \textstyle{\bigcup_{g \neq e}}\Ker (\id -g) = \{v\in V \mid G_{v}\text{ is trivial}\}.
$$
The same argument as in (1) shows that $U\neq \emptyset$, and that for an embedding of $X$ into a $G$-module $V$ we have $X \cap U \neq \emptyset$.
\ps
(3) The case $G = \kplus$ is clear since $\kplus$ does not contain a nontrivial closed subgroup.
\end{proof}

\begin{remark}
The proposition does not hold if $X$ is reducible. Take $G := S_{n}$, the symmetric group in $n$ letters. Then the standard action of $S_{n}$ on $\{1,2,\ldots,n\}$ is faithful, but the stabilizer of each point is isomorphic to $S_{n-1}$.

Another example is the following. Let $\ktwo$ be the standard representation of $T:=(\kst)^{2}$,
and consider $X:=\VVV(xy) \subset \ktwo$, the union of the two coordinate lines. Again, the action of $T$ on $X$ is faithful, but the stabilizer of any point of $X$ is nontrivial.

The proposition does not hold for simple algebraic groups $G$ and neither for $\GL_{n}$ ($n \geq 2$),
because all these groups admit faithful representations $V$ of dimension $\dim V< \dim G$.
\end{remark}
\begin{corollary}\label{local-sections-for-tori.prop}
Let $T$ be a torus. For every faithful action of $T$ on an irreducible affine variety $X$ there exists a local section.
\end{corollary}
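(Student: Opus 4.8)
The plan is to reduce the assertion to the existence of a trivial principal $T$-bundle over a $T$-stable open piece of $X$, exploiting that tori are \emph{special} groups in the sense of Section~\ref{G-bundle.subsec}. First I would invoke Proposition~\ref{trivial-generic-stabilizer.prop}: since $T$ acts faithfully on the irreducible variety $X$, the generic stabilizer is trivial, so there is an open dense subset on which every stabilizer is trivial. Because $T$ is commutative the stabilizer is constant along each orbit ($T_{tx}=T_x$), so this locus is $T$-stable; call it $U$. Thus $T$ acts \emph{freely} on the $T$-stable open dense subset $U\subseteq X$.

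The second step is to produce a generic geometric quotient that is a principal bundle. By \name{Rosenlicht}'s theorem there is a $T$-stable open dense $U_0\subseteq U$ admitting a geometric quotient $\pi\colon U_0\to B$; after shrinking $B$ (and replacing $U_0$ by $\pi^{-1}(B)$) I may assume $\pi$ is smooth. Freeness of the action together with smoothness of the quotient morphism forces $\pi\colon U_0\to B$ to be a principal $T$-bundle, i.e. locally trivial in the \'etale topology with fiber $T$, in the sense of Section~\ref{G-bundle.subsec}. Now, since $T$ is special (Proposition~\ref{principal-G-bundle.prop}(2)), this bundle is already locally trivial in the Zariski topology: there is a nonempty affine open $V\subseteq B$ over which it trivializes, so that $W:=\pi^{-1}(V)$ is $T$-isomorphic to $T\times V$. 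The set $W$ is $T$-stable (a saturated preimage), open in $X$ (open in $U_0$, which is open in $X$) and nonempty. Finally, the implication (ii)$\Rightarrow$(i) of Proposition~\ref{G-section.prop} applied to $W\simto T\times V$ yields a $T$-section $\sigma\colon W\to T$, which is precisely a local $T$-section of the original action.

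The main obstacle lies in the second paragraph: upgrading a \emph{generically} free action to a genuine principal $T$-bundle over an open base, so that the notion of a special group applies. \name{Rosenlicht}'s theorem supplies the generic geometric quotient, but verifying \'etale-local triviality with fiber $T$ requires that, after shrinking, freeness plus smoothness (or faithful flatness) of $\pi$ force the bundle structure; this is where the characteristic-zero hypothesis and the smoothness of the generic quotient are genuinely used. Everything else is a bookkeeping of which open sets are $T$-stable and an appeal to the already established Propositions~\ref{trivial-generic-stabilizer.prop}, \ref{principal-G-bundle.prop} and \ref{G-section.prop}.

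As a cleaner alternative that sidesteps the bundle machinery, I would argue directly with eigenfunctions. Let $\Lambda\subseteq X^{*}(T)$ be the subgroup generated by the weights of the nonzero $T$-eigenvectors in the field $\kk(X)$; this is a subgroup since $\kk(X)$ is a field. The subtorus $\bigcap_{\chi\in\Lambda}\ker\chi$ acts trivially on all eigenfunctions, hence on $\kk(X)$, hence on $X$, so by faithfulness it is trivial, which forces $\Lambda=X^{*}(T)$. Choosing $T$-eigenfunctions $h_1,\dots,h_n\in\kk(X)^{*}$ whose weights form a $\ZZ$-basis of $X^{*}(T)$, and using the isomorphism $T\simto(\kst)^{n}$ determined by that basis, the tuple $(h_1,\dots,h_n)$ assembles into a $T$-equivariant morphism to $T$ that is defined and nowhere zero on a $T$-stable open dense subset, i.e. a local $T$-section. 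Here the one point to be careful about is matching weight conventions so that equivariance reads $\sigma(tx)=t\,\sigma(x)$.
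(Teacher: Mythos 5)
Your proposal is correct, and your second argument takes a genuinely different route from the paper's; your first route is close to the paper but with a different key input. The paper's proof is shorter than your first route: it takes the open dense set with trivial stabilizers from Proposition~\ref{trivial-generic-stabilizer.prop}, shrinks to a smooth $T$-stable principal open set $X_f$ inside it (choosing a semi-invariant $f$, which exists since $T$ is commutative), and then quotes \name{Luna}'s Slice Theorem to conclude at once that $X_f \to X_f\quot T$ is a principal $T$-bundle; specialness of tori (Proposition~\ref{principal-G-bundle.prop}) then gives Zariski-local triviality, hence a local section. So the step you rightly single out as the main obstacle --- upgrading \name{Rosenlicht}'s generic geometric quotient of a free action to a torsor --- is precisely what the citation of Luna avoids; your sketch is fillable in characteristic zero (after shrinking, $T\times U_0 \to U_0\times_B U_0$ is a bijective morphism onto a normal variety, hence an isomorphism by \name{Zariski}'s Main Theorem, and smoothness of $\pi$ yields \'etale-local sections), but it is real work the paper does not do. Your eigenfunction argument, by contrast, bypasses the bundle machinery entirely and is sound: since the $T$-action on $\OOO(X)$ is locally finite and rational (Proposition~\ref{locally-finite-group-actions.prop}), eigenvectors span $\OOO(X)$, so the kernel you consider acts trivially on $X$ and faithfulness forces the weight subgroup $\Lambda$ to equal the whole character lattice (triviality of a diagonalizable group with character group $M$ forces $M=0$ when $M$ is finitely generated); a tuple of eigenfunctions realizing a basis then gives the section on the $T$-stable open set where all are defined and nonzero. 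Two cosmetic cautions: your $\bigcap_{\chi\in\Lambda}\Ker\chi$ is a diagonalizable subgroup, not necessarily a subtorus, and the weight-sign convention must be fixed as you note. What each approach buys: Luna's route is a two-line proof resting on a deep external theorem; your direct route is elementary and self-contained, produces the section explicitly, and pleasantly parallels the paper's own explicit construction in the $\kplus$ case (Proposition~\ref{local-sections-for-kplus.prop}).
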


\begin{proof}
Let  $U \subseteq X$ be an open dense subset given in the proposition above. We can find a semi-invariant $f \in \OOO(X)$ such that $X_{f} \subset U$ and $X_{f}$ is smooth. It then follows from \name{Luna}'s Slice Theorem \cite[Corollaire~1]{Lu1973Slices-etales} that the quotient morphism $X_{f} \to X_{f}\quot T$ is a principal $T$-bundle, hence locally trivial in the Zariski topology (see Proposition~\ref{principal-G-bundle.prop}). The claim follows.
\end{proof}

\ps
\subsection{Large subgroups of \texorpdfstring{$\Aut(X)$}{Aut(X)}} \label{large-subgroups.subsec}
Let $G$ be a linear algebraic group acting on an affine variety $X$. Denote by $\rho\colon G \to \Aut(X)$ the corresponding homomorphism of ind-groups. If $\alpha\colon X \to G$ is a $G$-invariant morphism, i.e. $\alpha(gx) = \alpha(x)$ for all $x\in X$ and $g \in G$, then we can define an automorphism $\rho_{\alpha}\in\Aut(X)$ in the following way:
\[
\rho_{\alpha}(x):= \rho(\alpha(x))(x) = \alpha(x) x.
\]
By definition, $\rho_{\alpha}\colon X \to X$ is the composition of the two morphisms 
$$
\begin{CD}
X @>{x\mapsto(\alpha(x),x)}>> G\times X @>{(g,x)\mapsto gx}>> X
\end{CD}
$$
hence $\rho_{\alpha}\in\End(X)$, and $\rho_{\alpha}$ is an automorphism, because $\rho_{\alpha}\circ\rho_{\alpha^{-1}}=\id$ where $\alpha^{-1}(x):=\alpha(x)^{-1}$. Note that the $G$-invariant morphisms $\alpha\colon X \to G$ are the elements of the ind-group  $G(\OOO(X)^{G})$, see Remark~\ref{G(R)-as-ind-group.rem}. 

We denote by $\tilde\rho$ the corresponding map $G(\OOO(X)^{G}) \to \Aut(X)$, $\alpha\mapsto \rho_{\alpha}$. If $\alpha$ is the constant map $\alpha(x)=g$, then $\rho_{\alpha}(x) = gx$. This means that $\tilde\rho|_{G} = \rho$ where we identify $G$ with $G(\kk) \subset G(\OOO(X)^{G})$.

In case of a representation of  $G$ on a $\kk$-vector space $V$ this construction is given in \cite[D\'efinition~2.3]{Fu2008Sur-les-automorphi} where one also finds the examples below.

\begin{example}\label{rho-for-rep.exa}
Let us make the construction of $\rho_{\alpha}$ more explicit for an action of $G$ on the affine space $\An$. For $v = (a_{1},\ldots,a_{n})\in \An$ we have
$$
g v = g(a_{1},\ldots,a_{n}) = (f_{1}(g,a_{1},\ldots,a_{n}),\ldots,f_{n}(g,a_{1},\ldots,a_{n}))
$$
where $f_{i}\in \OOO(G)[x_{1},\ldots,x_{n}]$. If $\alpha\colon \kk^{n} \to G$ is $G$-invariant, then we get
$$
\rho_{\alpha}(v) = (f_{1}(\alpha(a_{1},\ldots,a_{n}),a_{1},\ldots,a_{n}),\ldots,f_{n}(\alpha(a_{1},\ldots,a_{n}),a_{1},\ldots,a_{n})).
$$
In particular, if the action is a matrix representation $G \to \GL_{n}$, $g\mapsto (a_{ij}(g))_{i,j}$, then 
$$
\rho_{\alpha}(a_{1},\ldots,a_{n}) =
\begin{bmatrix} a_{11}(\alpha(a_{1},\ldots,a_{n})) & \cdots &   a_{1n}(\alpha(a_{1},\ldots,a_{n}))  \\
\vdots && \vdots\\
a_{n1}(\alpha(a_{1},\ldots,a_{n})) & \cdots &   a_{nn}(\alpha(a_{1},\ldots,a_{n}))  
\end{bmatrix}
\begin{bmatrix}
a_{1}\\ \vdots \\ a_{n}
\end{bmatrix}.
$$
\end{example}

\begin{example}\label{Nagata.exa}
Consider the adjoint representation of $\SLtwo$ on $\sltwo:=\Lie\SLtwo$ which consists of the traceless matrices
$\left[ \begin{smallmatrix}
x & \phantom{-}y \\
z & -x
\end{smallmatrix} \right]$. Hence, $\OOO(\sltwo) = \kk[x,y,z]$, and we have
$I:=\OOO(\sltwo)^{\SLtwo} = \kk[q]$ where $q= - \det \left[ \begin{smallmatrix}
x & \phantom{-}y \\
z & -x
\end{smallmatrix} \right] = x^{2}+yz$. For any $f \in I$ we have 
$u_{f}:= \left[ \begin{smallmatrix}
1 & f \\
0 & 1
\end{smallmatrix} \right] \in \SL_{2}(I)$ and 
\[
\begin{bmatrix} 1 & f \\ 0 & 1\end{bmatrix}
\begin{bmatrix} x & \phantom{-}y \\ z & -x\end{bmatrix}
\begin{bmatrix} 1 & f \\ 0 & 1\end{bmatrix}^{-1}
=
\begin{bmatrix} x+fz & y -2fx -f^{2}z \\ z & -(x+fz)\end{bmatrix}.
\]
Hence, the induced automorphism of $\sltwo$ is given by 
$$
\rho_{u_{f}}=(x+fz, y-2fx-f^{2}z, z).
$$
If we take $f = q$ we get the famous \name{Nagata}-automorphism, up to interchanging the variables $x$ and $y$, see Example~\ref{Bass.exa}.
\end{example}

\begin{example}\label{Anick.exa}
Here we consider the action of $\SLtwo$ by left multiplication on the $2\times 2$-matrices $\M_{2}=\M_{2}(\kk)$. If we write such a matrix in the form $\left[\begin{smallmatrix} x&y\\z&w\end{smallmatrix}\right]$ we get $\OOO(\M_{2}) = \kk[x,y,z,w]$, and $I:=\OOO(\M_{2})^{\SLtwo} = \kk[q]$ where $q = \det = xw-yz$.
For $f \in I$ we have $u_{f}:= \left[ \begin{smallmatrix}
1 & f \\
0 & 1
\end{smallmatrix} \right] \in \SL_{2}(I)$,
and
\[
\begin{bmatrix} 1 & f \\ 0 & 1\end{bmatrix}
\begin{bmatrix} x & y \\ z & w\end{bmatrix}
=
\begin{bmatrix} x+fz & y +fw \\ z & w\end{bmatrix}.
\]
Hence, the induced automorphism of $\M_{2}$ is given by 
$$
\rho_{u_{f}}=(x+fz, y+fw, z,w).
$$
If we take $f = q$ we get the automorphism of  \name{Anick}, see \cite[p.~343]{An1983Limits-of-tame-aut}.
\end{example}

\begin{proposition} \label{Large-subgroups.prop}
Let $G$ be a linear algebraic group acting on an affine variety $X$,
where the action is given by the homomorphism $\rho\colon G \to \Aut(X)$.
\be
\item
The map $\tilde\rho\colon G(\OOO(X)^{G}) \to \Aut(X)$ is a homomorphism of ind-groups.  
\item
The ind-group $G(\OOO(X)^{G})$ has the same orbits in $X$ than $G$.
\item  \label{Image-is-contained-in-Aut_B(X)}
The image of $\tilde\rho$ is contained in the  subgroup 
\begin{multline*}
\Aut_{\rho}(X):= \\ \{\phi\in\Aut(X)\mid \phi(Gx) = Gx \text{ for all } x \in X, \,\phi|_{Gx}=\rho(g_{x}) \text{ for some }g_{x}\in G\}.
\end{multline*}\idx{$\Aut_{\rho}(X)$}
\item
The subgroup $\Aut_{\rho}(X) \subseteq \Aut(X)$ is closed, and we have the following inclusions
\[
\Aut_{\rho}(X) \subseteq \Autorb(X) \subseteq \Autinv(X) \subseteq  \Aut(X).
\]
(see Definition~\ref{Some-subgroups-of-Aut(X).def})
\item
If $G$ acts faithfully on $X$ and if $\OOO(X)^{G}\neq\kk$,  then the image of $\tilde\rho$ is strictly larger than $\rho(G)$. 
\ee
\end{proposition}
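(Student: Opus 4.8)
The plan is to exhibit explicitly an element in the image of $\tilde\rho$ that does not lie in $\rho(G)$. Since $\OOO(X)^{G}\neq\kk$ I would fix a nonconstant invariant $f\in\OOO(X)^{G}$, and since the conclusion is nonvacuous only when $G\neq\{e\}$, the group $G$ contains a nontrivial one-parameter subgroup, i.e. a nontrivial homomorphism $\lambda\colon H\to G$ with $H\cong\kplus$ or $H\cong\kst$. From this I would build a nonconstant $G$-invariant morphism $\alpha\colon X\to G$: when $H\cong\kplus$ I simply set $\alpha:=\lambda\circ f$, which is $G$-invariant because $f$ is and nonconstant because $\lambda$ is injective and $f$ is nonconstant; when $G$ has no nontrivial unipotent elements I would instead compose $\lambda\colon\kst\to G$ with a nonconstant invertible invariant. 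In every case $\alpha\in\Mor(X,G)^{G}=G(\OOO(X)^{G})$, so that $\rho_{\alpha}=\tilde\rho(\alpha)$ does belong to the image of $\tilde\rho$.

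Next I would reduce the membership $\rho_{\alpha}\in\rho(G)$ to a statement about orbits. Because $\alpha$ is $G$-invariant it is constant on each orbit $Gx$, with value $\alpha(x)$, so that $\rho_{\alpha}(hx)=\alpha(x)\,hx=\rho(\alpha(x))(hx)$ for all $h\in G$; that is, $\rho_{\alpha}$ agrees with $\rho(\alpha(x))$ on $Gx$. Hence, if $\rho_{\alpha}=\rho(g_{0})$ for some fixed $g_{0}\in G$, then $g_{0}^{-1}\alpha(x)$ fixes $Gx$ pointwise for every $x$, i.e. $g_{0}^{-1}\alpha(x)\in N_{x}:=\bigcap_{h\in G}G_{hx}$, the kernel of the $G$-action on $\overline{Gx}$.

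To reach a contradiction I would exploit that faithfulness forces the orbit-kernels to be generically trivial: one has $\bigcap_{x}N_{x}=\bigcap_{x}G_{x}=\{e\}$, and on the relevant dense open locus (for $H\cong\kplus$ the complement of the fixed-point set, where all stabilizers are trivial; for a faithful reductive action on an irreducible $X$ the dense open set of Lemma~\ref{NG-for-reductive.lem}) one gets $N_{x}=\{e\}$. On such a locus $U$ the relation above becomes $\alpha(x)=g_{0}$ for all $x\in U$. But $\lambda$ has trivial or finite kernel and the function composed with it is nonconstant, so $\alpha$ cannot be constant on the dense open set $U$. This contradiction shows $\rho_{\alpha}\notin\rho(G)$, whence the image of $\tilde\rho$ strictly contains $\rho(G)$.

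The main obstacle is exactly the generic triviality of the orbit-kernels $N_{x}$, together with the preliminary existence of a nonconstant invariant morphism into $G$. The first point is automatic for a nontrivial $\kplus$-action (nonfixed points have trivial stabilizer) and, more generally, is supplied by the generic-stabilizer results already available for $\kplus$ and for diagonalizable groups in Proposition~\ref{trivial-generic-stabilizer.prop}; the second is immediate for unipotent one-parameter subgroups but is the delicate ingredient when the identity component of $G$ is a torus, where one must secure a nonconstant invertible invariant. I therefore expect the cleanest writeup to reduce the general case to a single well-chosen one-parameter subgroup $H\subseteq G$ and to conclude via Proposition~\ref{trivial-generic-stabilizer.prop}.
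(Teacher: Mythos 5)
You prove only part (5); parts (1)--(4) are untouched, and (4) in particular is not a formality: the paper establishes closedness of $\Aut_{\rho}(X)$ by writing it as $\bigcap_{x}(\gamma_{x})^{-1}(\rho_{x}(G))$, where $\gamma_{x}\colon \Autorb(X)\to\Aut(\overline{Gx})$ is restriction and $\rho_{x}(G)$ is closed by Proposition~\ref{Hom-G-to-ind-group.prop}. For (5), your skeleton is the same as the paper's: assume $\rho_{\alpha}=\rho(g_{0})$, deduce $g_{0}^{-1}\alpha(x)\in N_{x}:=\bigcap_{h\in G}G_{hx}$, force $\alpha$ to be constant on a dense set, and contradict the existence of a nonconstant invariant morphism $X\to G$. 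This works in the slice you describe carefully --- $G$ reductive but not a torus and $X$ irreducible, via Lemma~\ref{NG-for-reductive.lem} --- but the two ingredients you need in general both have genuine holes.

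(a) Your opening claim that $G\neq\{e\}$ contains a nontrivial one-parameter subgroup fails outright for finite $G$, and the torus case you flag as ``delicate'' is in fact fatal: for $G=\kst$ acting on $X=\AA^{2}$ by $t\cdot(x,y)=(tx,t^{-1}y)$ one has $\OOO(X)^{G}=\kk[xy]\neq\kk$ while $\Mor(X,\kst)^{G}=(\OOO(X)^{*})^{G}=\kst$ consists of constants, so the image of $\tilde\rho$ equals $\rho(G)$; similarly for $\mu_{2}$ acting on $\AA^{1}$ by $x\mapsto -x$. So no choice of $\alpha$ exists there, and statement (5) itself fails in exactly the cases you could not reach --- the paper's own proof hides this in its unjustified last sentence (constancy of all invariant morphisms $X\to G$ does not imply $\OOO(X)^{G}=\kk$). (b) Your contradiction step is also gapped: faithfulness gives only $\bigcap_{x}N_{x}=\{e\}$, not generic triviality of $N_{x}$, and for $H\cong\kplus$ the complement of the $H$-fixed locus controls the stabilizers \emph{inside $H$}, not the kernels $N_{x}$ of the full $G$-action. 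The paper's example after Lemma~\ref{NG-for-reductive.lem}, $U=(\kk^{2},+)$ on $\AA^{2}$ via $(a,b)\cdot(x,y)=(x+ay+b,y)$, is faithful with $N_{x}\neq\{e\}$ at \emph{every} point; there your specific element even lands in $\rho(G)$: with $f=y$ and $\lambda(s)=(0,s)$ one gets $\rho_{\lambda\circ f}=(x+y,y)=\rho((1,0))$. (The paper's parallel step ``since $\bigcap_{x}G_{x}=\{e\}$ we get $\alpha(x)=g$'' breaks on the same example: $\alpha(y)=(a(y),-ya(y))$ is nonconstant with $\rho_{\alpha}=\id$.) In the unipotent case the conclusion can be rescued for irreducible $X$, but by a different mechanism: if $\rho_{\lambda(q(f))}\in\rho(G)$ for all polynomials $q$, then the additive families $(c_{0},\dots,c_{N})\mapsto\rho_{\lambda(c_{0}+c_{1}f+\dots+c_{N}f^{N})}$ give homomorphisms of vector groups into the algebraic group $\rho(G)\cong G$, so for $N\geq\dim G$ some nonzero $q$ satisfies $\rho_{\lambda(q(f))}=\id$; then $q(f)$ vanishes on the dense set $X\setminus X^{H}$ (here triviality of stabilizers inside $H$ \emph{does} suffice), forcing $f$ to be constant, a contradiction. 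As written, your proof does not cover this case, and no proof can cover the torus and finite cases.
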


\begin{proof}
(1) The map $G(\OOO(X)^{G})\times X \to X$ is induced by the morphism
$$
\begin{CD}
\Mor(X,G) \times X @>{(\alpha,x)\mapsto (\alpha(x),x)}>> 
G\times X @>{(g,x)\mapsto gx}>> X.
\end{CD}
$$
Since $G(\OOO(X)^{G} ) \subseteq \Mor(X,G)$ is closed, we see that $\tilde \rho$  is an ind-morphism.
If $\alpha,\beta\in G(\OOO(X)^{G})$, then
\begin{eqnarray*}
(\rho_{\alpha}\circ\rho_{\beta})(x) &=& \rho_{\alpha}(\rho_{\beta}(x)) = 
\rho_{\alpha}(\beta(x)x) = \alpha(\beta(x)x)(\beta(x)x)\\
 &=& \alpha(x)(\beta(x)x) = (\alpha(x)\beta(x))x = \rho_{\alpha\beta}(x),
\end{eqnarray*}
and so $\tilde\rho$ is a group homomorphism.
\ps
(2) Since $\rho_{\alpha}(x)=\alpha(x)x \in Gx$ for all $\alpha$ we see that the $G(\OOO(X)^{G})$-orbits are contained in the $G$-orbits, 
and they are equal,  because $G\subseteq G(\OOO(X)^{G})$.
\ps
(3) By (2), every $\rho_{\alpha}$ preserves the $G$-orbits. Since $\rho_{\alpha}(gx) = \rho(\alpha(x))(gx)$, by definition, we get $\rho_{\alpha}|_{Gx} = \rho(\alpha(x))|_{Gx}$, and the claim follows.
\ps
(4) We already know from Proposition~\ref{Autorb-is-closed.prop} that $\Autorb(X)$ is closed in $\Aut (X)$. Hence, let us prove that $\Aut_{\rho} (X)$ is closed in $\Autorb(X)$.

For $x \in X$, the $G$-action on $\overline{Gx}$ corresponds to a homomorphism of ind-groups $\rho_x \colon G \to \Aut ( \overline{Gx} )$, and the image $\rho_x(G) \subseteq \Aut ( \overline{Gx} )$ is closed by Proposition~\ref{Hom-G-to-ind-group.prop}. The map $\gamma_x \colon \Autorb (X) \to \Aut ( \overline{Gx} )$, $\phi \mapsto \phi |_{\overline{Gx}}$,  is the restriction of the homomorphism of ind-groups $\Aut (X, \overline{Gx} ) \to \Aut ( \overline{Gx} )$ (see Proposition~\ref{Aut(X,Y).prop}), hence $\gamma_x$ is a homomorphism of ind-groups, too, and so $\Aut_{\rho}(X) = \bigcap_{x \in X} (\gamma_x)^{-1} ( \rho_x (G) ) $ is closed in $\Autorb(X)$.
\ps
(5) Assume that $\rho_{\alpha}$ is the action $x \mapsto gx$ for some $g\in G$. Then $\alpha(x) \in g G_{x}$ for all $x\in X$. Since $\bigcap_{x}G_{x} = \{e\}$ we get $\alpha(x) = g$ for all $x \in X$. Hence $\alpha$ is the constant map with value $g \in G$. If this holds for all $G$-invariant morphisms $X \to G$, then $\OOO(X)^{G}=\kk$.
\end{proof}

\begin{remark}\label{tilde-rho-for-commutative-groups.rem}
If $G$ is commutative, then the condition $\phi|_{Gx} = \rho(g_{x})$ for some $g_{x}\in G$ is equivalent to the condition that $\phi|_{Gx}$ is $G$-equivariant, and thus we have $\Aut_{\rho}(X) = \Autorb(X) \cap \Aut_{G}(X)$.
\newline
In fact, the first statement is an immediate consequence of a well-known result from group theory saying that the $G$-equivariant maps $G/H \to G/H$ are the right-multiplications with elements from the normalizer $N_{G}(H)$. In particular, if $G$ is commutative, the $G$-equivariant maps $G/H \to G/H$ are exactly the left-multiplications with elements from $G$.
\end{remark}

\begin{question}\label{image-of-rho.ques}
Is $\Aut_{\rho}(X)$ the image of $\tilde\rho$? And is the image of $\tilde\rho$ closed in $\Aut(X)$?
\end{question} 

The next proposition addresses the injectivity of the map $\tilde\rho$.
\begin{proposition}\label{tilde-rho-injective.prop}
Assume that $G$ acts faithfully on an affine variety $X$. Define the closed subgroup $N_{G}(x) := \bigcap_{g\in G}G_{gx} \subseteq G$ for any $x \in X$.
\be
\item
If $N_{G}(x)=\{e\}$ for all $x$ from a dense subset of $X$, then $\tilde\rho$ is injective. 
\item
If $X$ is irreducible this holds in the following two cases.
\be
\item[(i)]
The generic stabilizer of the $G$-action is trivial. 
\item[(ii)]
The group $G$ is reductive.
\ee
\ee
\end{proposition}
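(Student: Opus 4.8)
The plan is to reduce everything to part (1), whose proof rests on the observation that $\tilde\rho$ is injective precisely because the ``defect'' between two invariant morphisms is forced into the normal core $N_{G}(x)$. First I would set up the key computation for (1): suppose $\tilde\rho(\alpha)=\tilde\rho(\beta)$, i.e. $\rho_{\alpha}=\rho_{\beta}$, for $\alpha,\beta\in G(\OOO(X)^{G})$. Since the $G$-invariant morphisms $X\to G$ form a group under pointwise multiplication, the morphism $\gamma:=\beta^{-1}\alpha\in G(\OOO(X)^{G})$ is again $G$-invariant, meaning $\gamma(gx)=\gamma(x)$ for all $g\in G$ and $x\in X$. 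The equality $\alpha(x)x=\beta(x)x$ gives $\gamma(x)x=x$, i.e. $\gamma(x)\in G_{x}$ for every $x$.

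The heart of (1) is then to upgrade the inclusion $\gamma(x)\in G_{x}$ to $\gamma(x)\in N_{G}(x)$. Applying the previous line at the point $gx$ yields $\gamma(gx)\in G_{gx}$, and $G$-invariance gives $\gamma(gx)=\gamma(x)$; hence $\gamma(x)\in G_{gx}$ for every $g\in G$, so $\gamma(x)\in\bigcap_{g\in G}G_{gx}=N_{G}(x)$. Now if $N_{G}(x)=\{e\}$ on a dense subset $D\subseteq X$, then $\gamma(x)=e$ for all $x\in D$; since $\gamma^{-1}(e)\subseteq X$ is closed and contains the dense set $D$, we get $\gamma\equiv e$, i.e. $\alpha=\beta$. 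This establishes injectivity of $\tilde\rho$.

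Part (2)(i) is then immediate: when the generic stabilizer is trivial there is a dense open $U$ with $G_{x}=\{e\}$, whence $N_{G}(x)\subseteq G_{x}=\{e\}$ on $U$, and (1) applies. For (2)(ii) I would show that a faithful action of a reductive $G$ on an irreducible affine $X$ satisfies $N_{G}(x)=\{e\}$ for generic $x$, and again invoke (1). Here I would use the theory of principal isotropy groups in characteristic zero: replacing $X$ by its smooth locus $X_{\mathrm{sm}}$ (open, dense and $G$-stable, cf. Remark~\ref{Seidenberg.rem}), there is a dense open $G$-stable $U$ and a subgroup $H\subseteq G$ such that $G_{x}$ is conjugate to $H$ for all $x\in U$. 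Since the normal core is invariant under conjugation, $N_{G}(x)$ equals the fixed normal subgroup $N:=\bigcap_{g}gHg^{-1}$ for all $x\in U$. As $N\subseteq G_{x}$ for every $x\in U$, the group $N$ fixes $U$ pointwise, so the closed set $X^{N}$ contains the dense set $U$ and hence equals $X$; faithfulness then forces $N=\{e\}$, i.e. $N_{G}(x)=\{e\}$ on $U$.

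The routine parts — that the $G$-invariant morphisms form a group, the closedness of $\gamma^{-1}(e)$, and the fixed-point/faithfulness argument — are straightforward. The main obstacle is the reductive case (2)(ii): the generic triviality of the normal core fails for general $G$ (the conjugacy class of the generic stabilizer need not be constant), so one genuinely needs the principal isotropy group theorem, together with the reduction to the smooth locus $X_{\mathrm{sm}}$ to handle possibly singular $X$. This is exactly the content isolated as Lemma~\ref{NG-for-reductive.lem}, which I would cite for the details.
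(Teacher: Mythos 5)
Your part (1) is exactly the paper's argument: since $\tilde\rho$ is a homomorphism of groups (Proposition~\ref{Large-subgroups.prop}), checking triviality of the kernel is the same as your computation with $\gamma=\beta^{-1}\alpha$; the paper likewise observes that $\rho_{\alpha}=\id$ forces $\alpha(x)\in G_{x}$, that invariance of $\alpha$ along orbits upgrades this to $\alpha(x)\in\bigcap_{g}G_{gx}=N_{G}(x)$, and then concludes by density and closedness of $\alpha^{-1}(e)$. Part (2)(i) is also identical to the paper ($N_{G}(x)\subseteq G_{x}$, so generic triviality of stabilizers suffices).

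For (2)(ii), however, you take a genuinely different route from the paper, and your route has a technical wrinkle. You invoke the principal isotropy group theorem (Richardson/Luna) to get a dense open $G$-stable $U$ on which all stabilizers are conjugate to a fixed $H$, and then correctly observe that $N_{G}(x)=\bigcap_{g}gG_{x}g^{-1}$ is the normal core of $G_x$, hence constant on $U$, fixes $U$ pointwise, and is trivial by faithfulness. The wrinkle: Richardson's theorem (resting on Luna's slice theorem) is stated for smooth \emph{affine} $G$-varieties in characteristic zero, whereas $X_{\mathrm{sm}}$ is only quasi-affine when $X$ is singular, and it is not clear one can find a dense $G$-stable affine open inside it -- the ideal of the singular locus need not contain a nonzero $G$-semi-invariant when $G$ is not solvable (this is precisely why the paper's Corollary~\ref{local-sections-for-tori.prop} can use this trick for tori but not in general). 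Also, $G$-stability of $X_{\mathrm{sm}}$ needs no appeal to Remark~\ref{Seidenberg.rem}: automorphisms preserve the singular locus. The paper's Lemma~\ref{NG-for-reductive.lem} avoids slice theory entirely and works directly on singular $X$: after reducing to connected $G$ (finite groups have trivial generic stabilizer by Proposition~\ref{trivial-generic-stabilizer.prop}), it writes $G=Z\cdot H_{1}\cdots H_{m}$ with $Z$ the center and $H_{i}$ simple; $N_{Z}(x)$ is generically trivial by the weight-space argument of Proposition~\ref{trivial-generic-stabilizer.prop}, while for $x\notin X^{H_{i}}$ the group $N_{H_{i}}(x)$ is a proper closed normal subgroup of the simple $H_{i}$, hence finite central, and the finite center acts faithfully; a commutator argument ($hgh^{-1}g^{-1}\in N_{G}(x)\cap H_{i}\subseteq N_{H_{i}}(x)=\{e\}$) then pushes any $g\in N_{G}(x)$ into $Z$, finishing the proof. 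So your plan buys brevity at the cost of heavier machinery whose hypotheses you have not fully verified in the singular case; since you defer the details to Lemma~\ref{NG-for-reductive.lem} anyway, you should be aware that its actual proof is the elementary decomposition argument, not the principal isotropy theorem.
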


\begin{proof}
(1)
If $\rho_{\alpha} = \id$, then $\alpha(x)\in G_{x}$ for all $x \in X$.
Since $\alpha$ is constant on every orbit we get $\alpha(x) \in \bigcap_{g\in G}G_{gx}$. Hence, by assumption, $\alpha(x) = e$ for all $x$ from a dense  set, and so $\alpha = e$.
\par
(2)
Under the assumptions in (i) or (ii) we show that $N_{G}(x) = \{e\}$ holds on a nonempty open set of $X$. In case (i) this is clear, and in case (ii) this follows from the next lemma.
\end{proof}

\begin{lemma}\label{NG-for-reductive.lem}
Let $G$ be a reductive group acting faithfully on an irreducible affine variety $X$. For $x \in X$ define $N_{G}(x) := \bigcap_{g\in G}G_{gx}$.
Then there is an open dense subset $X' \subseteq X$ such that $N_{G}(x) = \{e\}$ for all $x \in X'$.
\end{lemma}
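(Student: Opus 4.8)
The plan is to recognize $N_G(x)$ as the \emph{normal core} of the stabilizer $G_x$ and then to exploit the existence of a principal isotropy group for reductive actions. First I would record the elementary identity $G_{gx} = gG_xg^{-1}$: indeed $h\in G_{gx}$ if and only if $g^{-1}hg\in G_x$. Hence $N_G(x)=\bigcap_{g\in G} gG_xg^{-1}$ is exactly the largest normal subgroup of $G$ contained in $G_x$; in particular it is normal in $G$ and depends only on the orbit $Gx$. Consequently $N_G(x)\neq\{e\}$ occurs precisely when some nontrivial normal subgroup of $G$ is contained in $G_x$, i.e.\ fixes $x$.

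Next I would invoke the geometry of reductive actions. After replacing $X$ by its smooth locus $X_{\mathrm{sm}}$ --- which is open, dense and $G$-stable since the singular locus is intrinsic (cf.\ Remark~\ref{Seidenberg.rem}) --- the existence of a principal isotropy subgroup (a consequence of \name{Luna}'s slice theorem \cite{Lu1973Slices-etales}) provides a dense open $G$-stable subset $X_{\mathrm{pr}}\subseteq X$ together with a fixed closed subgroup $H\subseteq G$ such that $G_x$ is conjugate to $H$ for every $x\in X_{\mathrm{pr}}$. For such $x$, writing $G_x=g_0Hg_0^{-1}$ and reindexing the intersection gives $N_G(x)=\bigcap_{g} g(g_0Hg_0^{-1})g^{-1}=\bigcap_{g'} g'Hg'^{-1}=:N_0$, a single normal subgroup of $G$ that is independent of $x\in X_{\mathrm{pr}}$.

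I would then conclude by a faithfulness argument. Since $N_0\subseteq G_x$ for all $x\in X_{\mathrm{pr}}$, every element of $N_0$ fixes $X_{\mathrm{pr}}$ pointwise, so the closed set $X^{N_0}=\bigcap_{n\in N_0}\{x\in X\mid nx=x\}$ contains the dense subset $X_{\mathrm{pr}}$ and hence equals $X$. Thus $N_0$ acts trivially on $X$, and faithfulness forces $N_0=\{e\}$. Therefore $N_G(x)=\{e\}$ for every $x$ in the open dense set $X':=X_{\mathrm{pr}}$, which is the assertion.

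The main obstacle is the geometric input, namely justifying that a reductive action on an irreducible affine variety admits a principal isotropy group with an open dense principal stratum; once this is granted, everything else is formal. If one prefers to avoid this black box, the diagonalizable (in particular toral) case is already covered by Proposition~\ref{trivial-generic-stabilizer.prop}, since there $N_G(x)=G_x$ and the generic stabilizer is trivial, and one could attempt to treat the semisimple part of $G$ separately by noting that a semisimple group has only finitely many minimal connected normal subgroups. I would also point out that the argument respects our standing assumption of characteristic zero and uses no hypothesis on the cardinality of $\kk$; should a subtlety with the principal isotropy group over a small field arise, one can pass to an uncountable algebraically closed extension $\KK/\kk$ and descend the conclusion using Proposition~\ref{fieldextension.prop} together with Lemma~\ref{fieldextension.lem}.
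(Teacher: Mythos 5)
Your formal skeleton is sound and genuinely different from the paper's argument: the identity $G_{gx}=gG_xg^{-1}$, the identification of $N_G(x)=\bigcap_g gG_xg^{-1}$ with the normal core of $G_x$, the reindexing showing $N_G(x)$ is a single normal subgroup $N_0$ on any stratum where stabilizers are conjugate, and the endgame (the closed set $X^{N_0}$ contains a dense set, hence equals $X$, hence $N_0=\{e\}$ by faithfulness) are all correct. But the proof stands or falls with the black box you yourself flag, and there it has a genuine gap. The stabilizer-in-general-position theorem of \name{Richardson}, deduced from \name{Luna}'s slice theorem \cite{Lu1973Slices-etales}, requires $X$ to be a \emph{smooth affine} $G$-variety; the lemma allows $X$ to be an arbitrary irreducible affine variety. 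Your fix of replacing $X$ by $X_{\mathrm{sm}}$ destroys exactly the hypothesis that makes the machinery run: $X_{\mathrm{sm}}$ is only quasi-affine, in general contains no $G$-stable affine open subset, and Luna's slice theorem needs a smooth point lying on an orbit that is closed in an affine $G$-variety. This failure is not cosmetic: in a singular affine $X$ all closed orbits may sit inside the singular locus (think of a cone whose unique closed orbit is the vertex), so there may be no point at which a slice is available at all. As written, your proof is complete for smooth affine $X$ but does not cover the stated generality; repairing it would require a version of the s.g.p. theorem for possibly singular irreducible affine $G$-varieties, which is not what the cited results give.

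The paper's proof is designed precisely to avoid this machinery. It first reduces to connected $G$ (a finite group acting faithfully has trivial generic stabilizer), then writes the connected reductive group as $G=Z\cdot H_1\cdots H_m$ with $Z$ the center and $H_i$ the simple factors. For the diagonalizable part it quotes Proposition~\ref{trivial-generic-stabilizer.prop}; for each simple factor it observes that when $x\notin X^{H_i}$ the normal subgroup $N_{H_i}(x)\subsetneq H_i$ must be a finite subgroup of the center $Z(H_i)$, whose faithful action again has trivial generic stabilizer by the finite case of the same proposition. Finally, the normality of $N_G(x)$ yields the commutator trick $hgh^{-1}g^{-1}\in N_G(x)\cap H_i\subseteq N_{H_i}(x)=\{e\}$ for $h\in H_i$, forcing $g\in Z\cap N_G(x)\subseteq N_Z(x)=\{e\}$. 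Your fallback sketch (diagonalizable case plus ``finitely many minimal connected normal subgroups'') points in this direction but omits the two ingredients that actually make it work: the centrality and finiteness of $N_{H_i}(x)$ off the fixed-point set, and the commutator argument reassembling the factors. Your closing remark on base-field extension is unnecessary here (none of the inputs is sensitive to the cardinality of $\kk$ in characteristic zero), though harmless.
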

\begin{proof}
(1) 
We first remark that $N_{G}(x) \subset G$ is a closed normal subgroup, because it is the intersection of all conjugates to $G_{x}$. It is also clear that we can assume that $G$ is connected, because a finite group acting faithfully has trivial generic stabilizer.
\ps
(2)
If $H \subseteq G$ is a closed subgroup, then $N_{H}(x)\supseteq N_{G}(x)\cap H$.
\ps
(3) 
Let $D \subseteq G$ be a diagonalizable group. 
By Proposition~\ref{trivial-generic-stabilizer.prop}, $N_{D}(x)$ is trivial on a dense open set.
\ps
(4)
Let $H \subseteq G$ be a closed simple subgroup. If $x$ is not a fixed point of $H$, then $N_{H}(x)$ is a finite subgroup
of the center  $Z(H)$ of $H$. The (finite) center $Z(H)$ acts faithfully on $X$, hence there is an open dense subset 
$X' \subset X$ where $Z(H)$ has a trivial stabilizer. It follows that $N_{H}(x) = \{e\}$ for all $x \in X'$.
\ps
(5)
The connected reductive group $G$ is a product of the form $G = Z \cdot H_{1} \cdots H_{m}$ where $Z$ is the center of $G$ and the $H_{i}$ are simple closed normal subgroups. 
By (3) and (5) we can find a dense open set $X' \subseteq X$ such that $N_{Z}(x) = N_{H_{1}}(x) = \cdots =N_{H_{m}}(x) = \{e\}$ for all $x \in X'$. Let $g \in N_{G}(x)$. Then, for every $i$ and every $h \in H_{i}$ we have $hgh^{-1}g^{-1} \in N_{G}(x) \cap H_{i} \subseteq N_{H_{i}}(x) = \{e\}$. It follows that $g \in Z \cap N_{G}(x) \subseteq N_{Z}(x) = \{e\}$.
\end{proof}

\begin{example}
Consider the faithful action of $U:= (\kk^2,+)$ on $X:= \AA^2$ given by $(a,b)\colon (x,y) \mapsto (x+ay+b, y)$. Then, the induced action of $U ( \OOO (X)^U )$ on $X$ is not faithful. In particular, $\tilde\rho$ is not injective.
\newline
In fact, $\OOO(X)^{G}=\kk[y]$, hence $G(\OOO(X)^{G}) = (\kk[y]^{+})^{2}$, and this group  acts as $(a(y),b(y))\colon (x,y) \mapsto (x + a(y)y + b(y), y)$. We therefore see that  the elements $(a(y),-ya(y)) \in (\kk[y]^{+})^{2}$ act trivially on $X$.
\end{example}

\begin{question}\label{rho-closed-immersion.ques}
If $\tilde\rho$ is injective, is it a closed immersion? 
\end{question}

In some special cases, e.g. if $G$ is a torus, we can show that $\Aut_{\rho}(X)$ is exactly the image of the homomorphism $\tilde\rho$.

\begin{proposition}\label{commutative-modification.prop}
Let $A$ be a connected commutative linear algebraic group, and let $X$ be an irreducible affine $A$-variety. Assume that the $A$-action on $X$ has a local section. Then   $\tilde\rho$ induces a bijection
$$
A(\OOO(X)^{A}) \simto \Aut_{\rho}(X) = \Autorb(X) \cap \Aut_{G}(X).
$$
\end{proposition}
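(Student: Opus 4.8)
The plan is to invoke Remark~\ref{tilde-rho-for-commutative-groups.rem} to identify $\Aut_{\rho}(X)$ with $\Autorb(X)\cap\Aut_{A}(X)$, so that the content of the statement reduces to proving that the homomorphism of ind-groups $\tilde\rho\colon A(\OOO(X)^{A})\to\Aut_{\rho}(X)$ from Proposition~\ref{Large-subgroups.prop} is bijective. Injectivity will follow almost at once from the local section; surjectivity is the heart of the matter.

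For injectivity, note that by Proposition~\ref{G-section.prop} a local section makes the $A$-action free on a dense $A$-stable open set $U\subseteq X$ (namely $U\cong A\times Y$ with $Y=\sigma^{-1}(e)$). Since $A$ is commutative we have $A_{gx}=gA_{x}g^{-1}=A_{x}$, so the subgroup $N_{A}(x)=\bigcap_{g}A_{gx}=A_{x}$ occurring in Proposition~\ref{tilde-rho-injective.prop} is trivial for every $x\in U$. As $U$ is dense, Proposition~\ref{tilde-rho-injective.prop}(1) yields that $\tilde\rho$ is injective.

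For surjectivity, fix $\phi\in\Aut_{\rho}(X)=\Autorb(X)\cap\Aut_{A}(X)$. Shrinking $U$, I may assume $U=X_{f}$ for some nonzero $f\in\OOO(X)^{A}$, since the local sections are produced on principal invariant open sets in Proposition~\ref{local-sections-for-kplus.prop} and Corollary~\ref{local-sections-for-tori.prop}, and a section restricts to any smaller $A$-stable open. On $X_{f}\cong A\times Y$ the action is $a(g,y)=(ag,y)$, so orbit-preservation gives $\phi(g,y)=(F(g,y),y)$, and $A$-equivariance then forces $F(ag,y)=aF(g,y)$; setting $g=e$ shows $F(g,y)=g\,\beta(y)$ where $\beta\colon Y\to A$, $\beta(y):=\pr_{A}\phi(e,y)$, is a morphism. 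Hence $\alpha:=\beta\circ\pr_{Y}\colon X_{f}\to A$ is an $A$-invariant morphism with $\rho_{\alpha}=\phi$ on $X_{f}$, and after a closed embedding $A\subseteq\Am$ its coordinates lie in $\OOO(X_{f})^{A}=(\OOO(X)^{A})_{f}$.

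It remains to extend $\alpha$ to a regular $A$-invariant morphism on all of $X$, and this is the main obstacle: a priori the coordinates of $\alpha$ are $A$-invariant functions with poles along $\{f=0\}$. The key is that these denominators must cancel, and the cancellation is forced precisely by the orbit-preserving hypothesis: on $\{f=0\}$ the section degenerates and the orbits shrink, yet $\phi$ still preserves them, so $\rho_{\alpha}=\phi$ remains a global morphism there. To make this rigorous I would choose an $A$-equivariant closed embedding $X\hookrightarrow V$ into an $A$-module with $A\subseteq\GL(V)$ and recover $\alpha(x)\in A$ from the identity $\alpha(x)\cdot x=\phi(x)$ by the Division Lemma (Corollary~\ref{bilinear.cor}, Lemma~\ref{division-biss.lem}), using the section to guarantee uniqueness of $\alpha(x)$ on the dense free locus and the boundedness of $\deg\alpha$; the regularity of $\phi$ together with orbit-preservation along $\{f=0\}$ then forces $\alpha$ to be regular on $X$, i.e. $\alpha\in A(\OOO(X)^{A})$. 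Once this is established, $\rho_{\alpha}$ and $\phi$ are two morphisms agreeing on the dense set $X_{f}$, hence equal, so $\phi=\tilde\rho(\alpha)$ and $\tilde\rho$ is surjective.
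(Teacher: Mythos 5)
Your overall route coincides with the paper's: the same reduction via Remark~\ref{tilde-rho-for-commutative-groups.rem}, the same injectivity argument (the local section makes the action free on a dense open set, so Proposition~\ref{tilde-rho-injective.prop} applies), and the same construction of $\alpha$ on the trivialized open set --- your formula $\phi(g,y)=(g\beta(y),y)$ with $\alpha=\beta\circ\pr_{Y}$ is exactly the paper's $\alpha(u):=\sigma(\phi(s(\pi(u))))$ written in the coordinates of $U\simeq A\times Y$. However, two steps of your write-up do not hold up. First, the reduction ``I may assume $U=X_{f}$ for some nonzero $f\in\OOO(X)^{A}$'' is false in general: only a \emph{semi-invariant} $f$ is available. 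For $A=\kst$ acting on $X=\AA^{1}$ by scaling, there is a local section on $\AA^{1}\setminus\{0\}$ (namely $\sigma=x$), yet $\OOO(X)^{A}=\kk$, so no invariant $f$ cuts out a proper principal open set. The paper obtains $U=X_{f}$ with $f$ semi-invariant from the $A$-stability of the vanishing ideal of $X\setminus U$ together with the structure of connected commutative groups; since your hypothesis is merely that \emph{some} local section exists, you cannot import invariance of $f$ from Proposition~\ref{local-sections-for-kplus.prop} or Corollary~\ref{local-sections-for-tori.prop}. Correspondingly your identity $\OOO(X_{f})^{A}=(\OOO(X)^{A})_{f}$ fails for semi-invariant $f$, though it is inessential to the rest.

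Second, and more seriously, the extension step --- which you rightly single out as the heart of the matter --- is not actually carried out. Your plan to recover $\alpha(x)$ from the identity $\alpha(x)\cdot x=\phi(x)$ via the Division Lemma cannot work as stated: Lemma~\ref{division-biss.lem} requires the solution $\nu(x)$ to exist and be \emph{unique for every} $x\in X$, and uniqueness fails precisely on $\{f=0\}$, where the stabilizers $A_{x}$ may be nontrivial (this is exactly where the orbits degenerate); moreover the single vector equation $\alpha(x)x=\phi(x)$ does not fit the lemma's bilinear format for solving for a matrix, let alone for an element constrained to lie in $A$, and the claimed ``boundedness of $\deg\alpha$'' is unsubstantiated. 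Your closing sentence --- that regularity of $\phi$ plus orbit-preservation along $\{f=0\}$ ``forces'' $\alpha$ to be regular --- is the desired conclusion restated, not an argument. The paper proceeds differently (if tersely) at this point: having verified $\rho_{\alpha}=\phi$ on $\sigma^{-1}(e)$ and extended the equality to $U$ by equivariance, it observes that $\rho_{\alpha}$ extends to the automorphism $\phi$ of all of $X$ and deduces from this that the invariant morphism $\alpha\colon U\to A$ extends to $X$; no Division Lemma enters. So your proposal reproduces the paper's argument on the open set but leaves surjectivity genuinely incomplete at the decisive step.
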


\begin{proof}
We have already seen in Remark~\ref{tilde-rho-for-commutative-groups.rem} that $\Aut_{\rho}(X) = \Autorb(X) \cap \Aut_{G}(X)$. Moreover, $\tilde\rho$ is injective, because the generic stabilizer of the $A$-action is trivial (Proposition~\ref{Large-subgroups.prop}(4)).

By assumption, there is an $A$-stable open dense subset $U \subseteq X$ and an $A$-equivariant morphism $\sigma\colon U \to A$ which induces the commutative diagram
\ps
\begin{center}
\begin{tikzcd}
U  \arrow[d, xshift=-0.2em, "\pi"']  \arrow[r, "\sigma"] & A   \\
U\quot A  \arrow[ru,"e"'] \arrow[u, xshift=0.2em, "s"']  
\end{tikzcd}
\end{center}
where $s$ is the inverse map of the isomorphism $\sigma^{-1}(e) \simto U\quot A$ induced by $\pi$. Since $A$ is commutative we can assume that $U = X_{f}$ for an $A$-semi-invariant $f \in \OOO(X)$. In fact, if $\aa \subset \OOO(X)$ is the ideal of functions vanishing on $X \setminus U$, then $\aa$ is stable under $A$. Since $A$ is commutative and
connected, it is a product of a torus and a commutative unipotent group, and it is well-known that every representation of such a group contains a nontrivial semi-invariant.

Now let $\phi \in \Aut(X)$ be such that $\phi(Ax)=Ax$ for all $x \in X$. It follows that $\phi(U)=U$. For $u\in U$ define 
$\alpha(u):=\sigma(\phi(s(\pi(u))))$. We claim that $\phi = \rho_{\alpha}$. In fact, for $u \in \sigma^{-1}(e)$ we have $\phi(u) \in Au$, hence 
$\phi(u) = \sigma(\phi(u))$, and $\alpha(u) = \sigma(\phi(u))$, 
because $s(\pi(u))=u$. 
Thus $\rho_{\alpha}(u) = \phi(u)$ for $u \in \sigma^{-1}(e)$. Since both maps are $A$-equivariant, we get $\rho_{\alpha}(u) = \phi(u)$ for all $u \in U$. 
Since the automorphism $\rho_{\alpha}$ extends to $X$, it follows that $\alpha\colon U\to A$ extends to $X$, hence the claim.
\end{proof}

In case $X$ is a principal $G$-bundle (see \ref{G-bundle.subsec}) we have a more precise result.

\begin{proposition}  \label{closed-immersion.prop}
Let $G$ be a linear algebraic group, and let $\pi \colon X \to B$ be a principal $G$-bundle where $B$ (and hence $X$) is affine.
Then $\tilde\rho\colon G(B) \to \Aut(X)$ induces an isomorphism $G(B) \simto \Aut_{\rho}(X)$ of ind-groups.
\end{proposition}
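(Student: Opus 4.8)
The plan is to construct an explicit inverse of $\tilde\rho$ out of the trivialization of the pull-back bundle; this will give surjectivity onto $\Aut_{\rho}(X)$ and show at once that $\tilde\rho$ is an isomorphism of ind-varieties, hence of ind-groups. We already know from Proposition~\ref{Large-subgroups.prop} that $\tilde\rho$ is a homomorphism of ind-groups landing inside $\Aut_{\rho}(X)$, and since the action of $G$ on a principal bundle is free, all stabilizers are trivial, so $\tilde\rho$ is injective by Proposition~\ref{tilde-rho-injective.prop}. Moreover, as $B$ is affine and $\pi$ is a geometric quotient, $\OOO(X)^{G}=\OOO(B)$, so that $G(B)=\Mor(B,G)$ is identified with $\Mor(X,G)^{G}$ via the isomorphism $\delta_{G}\colon\beta\mapsto\beta\circ\pi$ of Section~\ref{quotient.subsec}.

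The key device is a \emph{division morphism}. Since $\pi\colon X\to B$ is a principal $G$-bundle, the diagram $(*)$ of Section~\ref{G-bundle.subsec} gives a $G$-isomorphism $G\times X\simto X\times_{B}X$, $(g,x)\mapsto(gx,x)$, and $X\times_{B}X$ is a closed affine subvariety of $X\times X$. Composing the inverse with the projection to $G$ produces a morphism $\tau\colon X\times_{B}X\to G$, well defined because the action is free, characterized by $x'=\tau(x',x)\cdot x$ whenever $\pi(x')=\pi(x)$. I would then set
\[
D\colon\Aut_{\rho}(X)\to\Mor(X,G),\qquad D(\phi):=\tau\circ(\phi,\id_{X}),
\]
where $(\phi,\id_{X})\colon x\mapsto(\phi(x),x)$ has image in $X\times_{B}X$ because every $\phi\in\Aut_{\rho}(X)$ preserves the fibres of $\pi$. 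To see $D$ is an ind-morphism, note that $(\phi,x)\mapsto(\phi(x),x)$ is a morphism $\Aut_{\rho}(X)\times X\to X\times X$ (evaluation, Lemma~\ref{tautological.lem}(\ref{eval}), in the first factor and the projection in the second), hence corresponds to an ind-morphism $\Aut_{\rho}(X)\to\Mor(X,X\times X)$; its image lies in the closed sub-ind-variety $\Mor(X,X\times_{B}X)$ (Lemma~\ref{closed-immersion-Mor.lem}(\ref{closed-immersion-yields-closed-immersion-for-morphisms})), and postcomposing with $\tau$ is an ind-morphism by Lemma~\ref{tautological.lem}(\ref{composition2}).

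Next I would check $D(\phi)\in\Mor(X,G)^{G}$: if $\phi|_{Gx}=\rho(g_{x})$, then $\phi(hx)=g_{x}(hx)$ for all $h\in G$, so $\tau(\phi(hx),hx)=g_{x}=\tau(\phi(x),x)$, i.e.\ $D(\phi)$ is constant on orbits. Composing $D$ with $\delta_{G}^{-1}$ then yields an ind-morphism $\delta_{G}^{-1}\circ D\colon\Aut_{\rho}(X)\to G(B)$, where $\delta_{G}$ is a closed immersion with image $\Mor(X,G)^{G}$ by Lemma~\ref{closed-immersion-Mor.lem}(2) applied to the dominant morphism $\pi$. Finally the two maps are mutually inverse by the defining property of $\tau$: for $\beta\in G(B)$ one gets $D(\tilde\rho(\beta))(x)=\tau(\beta(\pi(x))x,x)=\beta(\pi(x))=\delta_{G}(\beta)(x)$, and for $\phi\in\Aut_{\rho}(X)$, writing $\beta=\delta_{G}^{-1}(D(\phi))$, the identity $\tau(\phi(x),x)=\beta(\pi(x))$ gives $\phi(x)=\beta(\pi(x))x=\tilde\rho(\beta)(x)$.

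The main obstacle is precisely the surjectivity packaged inside $D$: one must promote the orbit-wise datum $x\mapsto g_{x}$ — the unique group element by which $\phi$ acts on the orbit through $x$ — to a genuinely \emph{regular} $G$-invariant morphism $X\to G$. This is exactly where local triviality of the bundle is indispensable, through the trivialization $X\times_{B}X\cong G\times X$ and the resulting regularity of $\tau$; for a non-locally-trivial free action $g_{x}$ need not depend algebraically on $x$. Once $\tau$ is in hand, everything else reduces to the tautological properties of $\Mor$ recalled in Lemma~\ref{tautological.lem} and to the identification $G(B)\cong\Mor(X,G)^{G}$, while the fact that $\tilde\rho$ respects the group structure is already given by Proposition~\ref{Large-subgroups.prop}.
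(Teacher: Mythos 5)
Your proof is correct, and its geometric core is the same device the paper uses: the trivialization $G\times X \simto X\times_{B}X$ of the pull-back bundle (diagram $(**)$ in the paper's proof). For surjectivity the paper pulls $\phi\in\Aut_{\rho}(X)$ back to an automorphism $\tilde\phi$ of $G\times X$ over $X$, reads off $\tilde\alpha(x)=\pr_{G}(\tilde\phi(e,x))$, and descends it to $B$ by the quotient property --- which is precisely your $\tau\circ(\phi,\id_{X})$ followed by $\delta_{G}^{-1}$. Where you genuinely diverge is in how the isomorphism of \emph{ind-varieties} is established. You promote the fibrewise datum to a single ind-morphism $D\colon\Aut_{\rho}(X)\to\Mor(X,G)$ via the $\Mor$-functoriality lemmas (Lemma~\ref{tautological.lem}, Lemma~\ref{closed-immersion-Mor.lem}) and exhibit $\delta_{G}^{-1}\circ D$ as a two-sided inverse of $\tilde\rho$, so bijectivity and regularity of the inverse come in one stroke. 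The paper never writes down the inverse: it first treats the trivial bundle, identifying $\Aut_{B}(G\times B)\simeq\Mor(B,\Aut(G))$ (Proposition~\ref{families-of-aut.prop}) and $\Aut_{\rho}(G\times B)$ as the preimage of the closed subgroup $G(B)$, and then in general chases the bijective map $G(B)\to\Aut_{\rho}(X)$ through the pulled-back bundle to a composition $G(B)\to G(X)$, which is a closed immersion by Proposition~\ref{Homs-of-G(R).prop}(2); Lemma~\ref{closed-immersion.lem} then forces $G(B)\to\Aut_{\rho}(X)$ to be a closed immersion, hence a bijective closed immersion onto $\Aut_{\rho}(X)$ and therefore an isomorphism. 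Your route buys an explicit, self-contained inverse and makes the regularity of $\phi\mapsto g_{(\cdot)}$ completely transparent; the paper's route avoids having to verify that $D$ is a morphism on the ind-variety $\Aut_{\rho}(X)$ by outsourcing all regularity to two general lemmas, at the cost of a trivial-bundle reduction and a diagram chase. One cosmetic remark: your appeal to Proposition~\ref{tilde-rho-injective.prop} for injectivity is redundant, since the two-sided inverse you construct already yields it.
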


\begin{proof}
(1) Since $\pi\colon X \to B$ is a principal $G$-bundle we have the commutative diagram (see diagram $(*)$ in Section~\ref{G-bundle.subsec})
\[\tag{$**$}
\begin{CD}
G\times X @>{\simeq}>{(g,x)\mapsto (gx,x)}> X\times_{B}X @>{\pr_{1}}>> X \\
@VV{\pr_{X}}V @VV{\pr_{2}}V @VV{\pi}V \\
X @= X @>{\pi}>> B
\end{CD}
\]
It follows that every automorphism $\phi\in\Aut_{\rho}(X)$ induces an automorphism $\tilde\phi$ of $G \times X$ over $X$. Since $\phi|_{\pi^{-1}(b)}$ is the left-multiplication with some $g \in G$ the same holds for $\tilde\phi|_{G\times\{x\}}$, and thus $\tilde\phi$ is given by a morphism $\tilde\alpha\colon X \to G$. In fact, $\tilde\alpha$ is the composition 
$$
\begin{CD}
\tilde\alpha \colon X @>{\simeq}>> \{e\}\times X @>{\tilde\phi|_{\{e\}\times X}}>> G \times X @>{\pr_{G}}>> G
\end{CD}
$$
Since $\tilde\phi$ is induced by an automorphism of $X$ over $B$ it follows that $\tilde\alpha$ is constant along the fibers of $\pi$, hence induces a morphism $\alpha\colon B \to G$.  It is now easy to see that $\phi=\rho_{\alpha}$, hence the image of $\tilde\rho$ is equal to $\Aut_{\rho}(X)$.
\ps

(2) 
If $X = G\times B$ is the trivial bundle, then $\Aut_{\rho}(G\times B) \subset \Aut_{B}(G \times B)$ is closed and $\tilde\rho\colon G(B) \to \Aut_{\rho}(G\times B)$ is an isomorphism. In fact, there is an isomorphism $\Aut_{B}(G\times B) \simto \Mor(B,\Aut(G))$ (Proposition~\ref{families-of-aut.prop}), and
$\Aut_{\rho}(X)$ is the inverse image of the closed subgroup $\Mor(B,G) = G(B)$.
\ps
For the general case we look again at the diagram $(**)$ above which induces the following commutative diagram 
$$
\begin{CD}
\Aut_{\pi}(X) @>>> \Aut_{X}(G \times X) \\
@AAA   @AA{\text{closed immersion}}A\\
\Aut_{\rho}(X) @>>> \Aut_{\rho}(G \times X) \\
@AA{\text{bijective}}A   @AA{\simeq}A \\
G(B) @>{\text{closed immersion}}>> G(X)
\end{CD}
$$
where all maps are morphisms. Since $G(B) \into G(X)$ is a closed immersion, by Proposition~\ref{Homs-of-G(R).prop}(2), the composition $G(B) \to \Aut_{X}(G \times X)$ is a closed immersion. By Lemma~\ref{closed-immersion.lem} this implies that  $G(B) \to \Aut_{\rho}(X)$ is a closed immersion, hence an isomorphism.
\end{proof}

\ps
\subsection{Some applications}\label{applications.subsec}
\begin{proposition}
Let $X$ be an irreducible affine variety, and
let $U \subseteq \Aut(X)$ be a commutative unipotent subgroup of dimension $n$. Assume that the centralizer of $U$ in $\Aut(X)$ is $U$ itself. Then the orbit maps $\rho_{x}\colon U \to X$, $u\mapsto ux$, are isomorphisms for any $x \in X$. In particular, $X$ is isomorphic to $\An$.
\end{proposition}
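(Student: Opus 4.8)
The plan is to combine the ``large subgroup'' machinery of Proposition~\ref{Large-subgroups.prop} with the elementary invariant theory of unipotent groups. Since $U$ is unipotent and commutative it is connected and, by Example~\ref{unipotent-exp.exa}, isomorphic as a variety (indeed as an algebraic group) to $(\kplus)^{n}\simeq\An$; in particular, once I know that an orbit map $\rho_{x}\colon U\to X$ is an isomorphism I immediately obtain $X\simeq\An$. The inclusion $U\subseteq\Aut(X)$ is a homomorphism of ind-groups from an algebraic group, so it defines a faithful action of $U$ on $X$ with associated homomorphism $\rho\colon U\to\Aut(X)$.

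First I would reinterpret the centralizer hypothesis. An automorphism $\phi$ lies in the centralizer $C_{\Aut(X)}(U)$ if and only if $\phi\circ\rho(u)=\rho(u)\circ\phi$ for all $u$, i.e. iff $\phi$ is $U$-equivariant; hence $\Aut_{U}(X)=C_{\Aut(X)}(U)=U$ by hypothesis. Now consider $\tilde\rho\colon U(\OOO(X)^{U})\to\Aut(X)$ from Proposition~\ref{Large-subgroups.prop}. Because $U$ is commutative, its image lies in $\Aut_{\rho}(X)=\Autorb(X)\cap\Aut_{U}(X)$ by Remark~\ref{tilde-rho-for-commutative-groups.rem}, hence in $\Aut_{U}(X)=U=\rho(U)$. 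Since $\rho(U)$ is always contained in the image, the image of $\tilde\rho$ equals $\rho(U)$. As the action is faithful, Proposition~\ref{Large-subgroups.prop}(5) then forces $\OOO(X)^{U}=\kk$: if $\OOO(X)^{U}\neq\kk$, the image would be strictly larger than $\rho(U)$.

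The main step, and the one I expect to be the real obstacle, is to deduce from $\OOO(X)^{U}=\kk$ that $U$ acts transitively. Orbits of a unipotent group on an affine variety are closed (Kostant--Rosenlicht), so a dense orbit would be closed and dense in the irreducible $X$, hence all of $X$; thus transitivity is equivalent to the existence of a dense orbit, i.e. to $\kk(X)^{U}=\kk$ by Rosenlicht's theorem on rational quotients. So I must upgrade ``no regular invariants'' to ``no rational invariants''. I would prove, by induction on $n=\dim U$, the stronger statement $\kk(X)^{U}=\operatorname{Frac}(\OOO(X)^{U})$. Writing $U=U'\times\kplus$ with $U'\cong(\kplus)^{n-1}$ and letting $D$ be the locally nilpotent derivation of $R:=\OOO(X)$ generating the last factor, $D$ commutes with the derivations generating $U'$ and hence restricts to a locally nilpotent derivation of the domain $A:=R^{U'}$. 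The classical local-slice fact for locally nilpotent derivations gives $\operatorname{Frac}(A)^{D}=\operatorname{Frac}(A^{D})$ (valid for any $\kk$-domain), and by the inductive hypothesis $\operatorname{Frac}(A)=\kk(X)^{U'}$; combining, $\kk(X)^{U}=(\kk(X)^{U'})^{D}=\operatorname{Frac}(A)^{D}=\operatorname{Frac}(A^{D})=\operatorname{Frac}(\OOO(X)^{U})$. With $\OOO(X)^{U}=\kk$ this yields $\kk(X)^{U}=\kk$, hence a dense and therefore full orbit $X=Ux_{0}$.

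Finally I would verify freeness and conclude. For a transitive commutative action all stabilizers coincide, $U_{x}=U_{vx_{0}}=U_{x_{0}}=:H$ for every $x=vx_{0}$, so $H=\bigcap_{x}U_{x}$ is the kernel of the action; faithfulness forces $H=\{e\}$. Thus every orbit map $\rho_{x}\colon U\to X$ is a bijective $U$-equivariant morphism onto the (free homogeneous) orbit $Ux=X$. Since homogeneous spaces are smooth, $X$ is normal, and a bijective morphism from the irreducible variety $U$ to a normal variety is an isomorphism by Zariski's Main Theorem (as used in Section~\ref{small-fibers.subsec}). Hence each $\rho_{x}$ is an isomorphism, and $X\simeq U\simeq\An$ by Example~\ref{unipotent-exp.exa}. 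The only genuinely delicate point is the induction establishing $\kk(X)^{U}=\operatorname{Frac}(\OOO(X)^{U})$; everything else is a direct application of the results already developed.
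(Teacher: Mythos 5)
Your proof is correct and follows essentially the same route as the paper's: the image of $\tilde\rho\colon U(\OOO(X)^{U})\to\Aut(X)$ is forced into the centralizer $U$, so Proposition~\ref{Large-subgroups.prop}(5) gives $\OOO(X)^{U}=\kk$, then $\kk(X)^{U}=\kk$ yields transitivity, and commutativity plus faithfulness gives freeness. The only difference is cosmetic: the step you single out as ``the real obstacle,'' namely $\kk(X)^{U}=\Quot(\OOO(X)^{U})$ and the resulting homogeneity, is already available as Theorem~\ref{unipotent-groups.thm}(7),(8) (proved there by the same local-slice induction you sketch), and the paper concludes via Theorem~\ref{unipotent-groups.thm}(2) on homogeneous spaces where you invoke Zariski's Main Theorem -- both valid in characteristic zero.
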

\begin{proof}
Consider the homomorphism $\tilde\rho\colon U(\OOO(X)^{U}) \to \Aut(X)$. The image $\Image\tilde \rho$ is an abelian subgroup containing $U$. Since $U$ is its own centralizer, we get $\Image\tilde\rho =U$, hence $\OOO(X)^{U}=\kk$, by Proposition~\ref{Large-subgroups.prop}(5).
By Theorem~\ref{unipotent-groups.thm}(\ref{relation-between-regular-and-rational-invariants}), we also get $\kk (X)^{U}=\kk$. Now, it follows from Theorem~\ref{unipotent-groups.thm}(\ref{homogeneous}) that $X$ is a homogeneous space under $U$, and so $X$ is an affine space by Theorem~\ref{unipotent-groups.thm}(\ref{affine-space}). Since $U$ is commutative, the stabilizer $U_{x}$ acts trivially on the orbit $Ux=X$, hence $U_{x}=\{e\}$, and so $U \simeq X$. This proves that $X$ has dimension $n$, and since we already know that $X$ is an affine space, the conclusion follows.
\end{proof}

Here is another nice consequence of our previous results.

\begin{proposition}
Any ind-group of the form $G(R)$ where $G$ is a linear algebraic group and $R$ a finitely generated commutative $\kk$-algebra is isomorphic to a closed subgroup of  $\Aut (X)$ 
for some affine variety $X$.
\end{proposition}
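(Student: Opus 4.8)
The plan is to reduce to the case of a trivial principal bundle and invoke the earlier identification of orbit-preserving automorphisms. The key observation is that for the trivial principal $H$-bundle $\pi = \pr_{B}\colon X := H \times B \to B$ over an affine variety $B$ (with $H$ acting by left multiplication on the first factor), Proposition~\ref{closed-immersion.prop} produces an isomorphism $\tilde\rho\colon H(B) \simto \Aut_{\rho}(X)$ onto the subgroup $\Aut_{\rho}(X) \subseteq \Aut(X)$, which is closed by Proposition~\ref{Large-subgroups.prop}. Since $H(B) = H(\OOO(B))$ by Remark~\ref{G(R)-as-ind-group.rem}, this already realizes $H(\OOO(B))$ as a closed subgroup of $\Aut(H \times B)$ for every affine $B$.

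The remaining point is to bring an arbitrary $G(R)$ into this shape. If $R$ were reduced I would simply set $B := \Spec R$, an affine variety with $\OOO(B) = R$, and be finished with $H = G$. To accommodate nilpotents I would first apply Proposition~\ref{reduction-to-domain.prop}: there is a closed immersion of ind-groups $\iota\colon G(R) \into H(S)$ in which $H$ is a linear algebraic group and $S$ is a finitely generated $\kk$-domain. Then $B := \Spec S$ is an irreducible affine variety with $\OOO(B) = S$, so that $H(S) = H(\OOO(B)) = H(B)$, and the previous paragraph applies to this $H$ and $B$.

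Assembling the pieces, I would compose
$$
G(R) \overset{\iota}{\into} H(S) = H(B) \overset{\tilde\rho}{\simto} \Aut_{\rho}(X) \subseteq \Aut(X), \qquad X := H \times B.
$$
The first arrow is a closed immersion, the middle arrow is an isomorphism of ind-groups, and the last arrow is the inclusion of a closed subgroup; hence the composite is a closed immersion of ind-groups. Its image is therefore a closed subgroup of $\Aut(X)$ isomorphic to $G(R)$, which is exactly the assertion.

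I do not expect a genuine obstacle, since the statement is essentially an assembly of results already available in the excerpt. The only place demanding care is the non-reduced situation: $\Spec R$ need not be a variety when $R$ carries nilpotents, and this is precisely what the passage to a finitely generated domain (Proposition~\ref{reduction-to-domain.prop}) is designed to bypass. One should also note that a composition of closed immersions is again a closed immersion (elementary, with a partial converse recorded in Lemma~\ref{closed-immersion.lem}), which is what lets the three arrows above combine into a single closed immersion.
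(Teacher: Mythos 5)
Your proof is correct and follows essentially the same route as the paper's: reduce to a finitely generated domain $S$ via Proposition~\ref{reduction-to-domain.prop}, set $B:=\Spec S$, and apply Proposition~\ref{closed-immersion.prop} to the trivial principal bundle $\pr_{B}\colon H\times B \to B$ to obtain the closed immersion $G(R)\into H(B)\simto \Aut_{\rho}(H\times B)\subseteq \Aut(H\times B)$. The only difference is presentational — you make explicit the closedness of $\Aut_{\rho}(X)$ from Proposition~\ref{Large-subgroups.prop} and the stability of closed immersions under composition, which the paper leaves implicit.
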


\begin{proof}
By Proposition~\ref{reduction-to-domain.prop}, we may assume that $R$ is a $\kk$-domain. Let $B := \Spec (R)$ be the affine variety with coordinate ring $R$. Then, by Proposition~\ref{closed-immersion.prop},  we have a closed immersion  $G(R) = \Mor (B, G) \into \Aut ( G \times B)$ of ind-groups.
\end{proof}

\begin{remark}
The proposition above shows that any group of the form $G (R)$ where $G$ is a linear algebraic group and $R$ a finitely generated algebra admits a closed immersion into an automorphism group $\Aut (X)$ where $X$ is an irreducible affine variety. One might wonder if the converse holds. This it not the case. Indeed, by a result of \name{Yves de Cornulier} the group $\Aut ( \AA^2)$ is not linear (see Proposition~\ref{the-group-Aut(A2)-is-not-linear.prop}) whereas Corollary~\ref{the-groups-G(R)-are-linear.cor} shows that the group $G(R)$ is always linear.
\end{remark}

Let us finally mention that the automorphism groups of affine varieties do not contain arbitrary large tori.
\begin{proposition}\label{tori-in-AutX.prop}
Let $X$ be an affine variety, and let $T \subset \Aut(X)$ be a closed torus. Then $\dim T \leq \dim X$. Moreover, if $X$ is normal, then all closed tori $T \subseteq \Aut(X)$ of dimension $\dim T = \dim X$ are conjugate.
\end{proposition}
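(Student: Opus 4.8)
The plan is to prove the two assertions separately, reducing in both cases to the situation where $T$ acts with a dense orbit. For the inequality the heart of the matter is a faithful torus action on an \emph{irreducible} affine variety, and that is the case I would treat. Since $T$ is a closed algebraic subgroup of $\Aut(X)$ (its image under a homomorphism of an algebraic group into an ind-group is a closed algebraic subgroup, by Proposition~\ref{Hom-G-to-ind-group.prop}), it acts algebraically and faithfully on $X$. Then Proposition~\ref{trivial-generic-stabilizer.prop} applies: the generic stabilizer of a faithful torus action on an irreducible affine variety is trivial, so for $x$ in a dense open set the orbit map $T\to Tx$ is an isomorphism and $\dim Tx=\dim T$. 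As $Tx\subseteq X$ this yields $\dim T=\dim Tx\le\dim X$.

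For the conjugacy statement I would assume $X$ normal with $\dim T=\dim X=:d$. Since a normal variety has disjoint irreducible components and $T$, being connected, stabilizes each, I may pass to an irreducible (normal) factor. Now the generic orbit has dimension $d=\dim X$, hence is dense, so $\OOO(X)^{T}=\kk$; as the orbit is dense, the $T$-equivariant inclusion $\OOO(X)\into\OOO(Tx)\simeq\kk[\mathfrak{X}(T)]$ forces every weight space $\OOO(X)_{\chi}$ to be at most one-dimensional. Thus $\OOO(X)=\bigoplus_{\chi}\OOO(X)_{\chi}$ is spanned by $T$-semi-invariants and $X$ is a normal affine toric variety with torus $T$; the same holds for any second torus $T'$ of dimension $d$.

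To conjugate $T'$ into $T$ I would choose a base point in the overlap of the two dense orbits. The orbits $U=Tx_{0}$ and $U'=T'x_{0}'$ are open and dense, so I may pick $y\in U\cap U'$; then $Ty=U$ and $T'y=U'$ with trivial stabilizers, giving isomorphisms $a\colon T\simto U$, $t\mapsto ty$, and $a'\colon T'\simto U'$, $t'\mapsto t'y$. Fixing an isomorphism of algebraic groups $\theta\colon T\simto T'$, set $\phi_{0}:=a'\circ\theta\circ a^{-1}\colon U\simto U'$, a $\theta$-equivariant isomorphism. A direct computation shows that for $s\in T$ the map $\phi_{0}\circ s\circ\phi_{0}^{-1}$ sends $t'y\mapsto\theta(s)t'y$, i.e. it agrees on $U'$ with the action of $\theta(s)\in T'$. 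Hence, if $\phi_{0}$ extends to an automorphism $\phi$ of $X$, then $\phi s\phi^{-1}$ and $\theta(s)$ coincide on the dense set $U'$, so $\phi s\phi^{-1}=\theta(s)$ for all $s$, giving $\phi T\phi^{-1}=\theta(T)=T'$, as desired.

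The hard part will be extending the birational map $\phi_{0}\colon X\dashrightarrow X$ to a biregular automorphism. As a self-map of a normal affine variety it extends off a closed set of codimension $\ge 2$ by Zariski's Main Theorem (cf. Lemma~\ref{Igusa.lem}), and this is exactly where normality enters; but biregularity of the extension is not automatic and depends on the choice of $\theta$. In toric language it amounts to matching the cone describing $X$ as a $T$-variety with the one describing it as a $T'$-variety, i.e. to the uniqueness of the toric structure on $X$ up to lattice isomorphism, equivalently to conjugacy of the two tori as maximal tori of the automorphism group of the toric variety $X$. To carry this out I would reduce to the case of a $T$-fixed point (after splitting off the subtorus of $T$ coming from the nonconstant units $\OOO(X)^{*}$) and then use the faithful tangent representation at the fixed point (Lemma~\ref{faithful-on-Tx.lem}) together with the structure of $\Aut(X)$ generated by $T$ and its Demazure root subgroups to produce the conjugating $\phi$. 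I expect the construction of a suitable $\theta$ making $\phi_{0}$ everywhere regular to be the main obstacle.
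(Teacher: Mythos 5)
Your proof of the dimension bound coincides with the paper's: closedness of the algebraic subgroup $T$, the trivial generic stabilizer from Proposition~\ref{trivial-generic-stabilizer.prop}, and $\dim T=\dim Tx\le\dim X$ (with the same silent restriction to irreducible $X$ that the paper makes). No complaint there, and your derivation that $X$ is a normal affine toric variety for both $T$ and $T'$ (dense orbit, weight spaces of $\OOO(X)$ at most one-dimensional) is also fine.

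The conjugacy half, however, has a genuine gap, and you have named it yourself. Everything after your construction of the $\theta$-equivariant birational map $\phi_{0}$ hinges on producing \emph{some} isomorphism $\theta\colon T\simto T'$ for which $\phi_{0}$ becomes biregular, and that statement is precisely the theorem the paper invokes at this point: \name{Demushkin}'s result \cite{De1982Combinatorial-inva} (cf.\ \cite{Be2003Lifting-of-morphis}) that the toric structure of a normal affine toric variety is unique up to conjugation, i.e.\ that all maximal tori of $\Aut(X)$ are conjugate. The paper's proof consists of this citation; your proposal neither cites it nor proves it, ending instead with ``I expect the construction of a suitable $\theta$ \dots to be the main obstacle.'' The obstacle is real: for $X=\AA^{2}$, $T=T'=(\kst)^{2}$ standard, $y=(1,1)$ and $\theta(s,t)=(s^{-1},t^{-1})$, one gets $\phi_{0}(x,y)=(x^{-1},y^{-1})$, which does not extend — note that it is already undefined along the two coordinate axes, so your intermediate claim that $\phi_{0}$ extends off a set of codimension $\ge 2$ by Zariski's Main Theorem is false (the indeterminacy locus of a rational self-map of a normal affine variety is divisorial; Lemma~\ref{Igusa.lem} gives nothing here). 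Choosing $\theta$ correctly amounts to finding a lattice isomorphism carrying the cone of $(X,T)$ to that of $(X,T')$, a combinatorial rigidity statement that must be proved, not assumed. The fallback plan via a fixed point and ``the structure of $\Aut(X)$ generated by $T$ and its Demazure root subgroups'' is also not viable as stated, since the automorphism group of an affine toric variety is not generated by the torus and root subgroups — already $\Aut(\AA^{n})$ is vastly larger. So either carry out Demushkin's argument in full or cite it; as written, the second assertion of the proposition remains unproved.
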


\begin{proof} If a torus $T$ acts faithfully on an affine variety $X$, then there is a point $x \in X$ with trivial stabilizer (Proposition~\ref{trivial-generic-stabilizer.prop}). Hence $\dim X \geq \dim Tx = \dim T$.
\ps
If $X$ is normal and if there exists a torus $T \subset \Aut(X)$ of dimension $\dim T = \dim X$, then $X$ is an affine toric variety. A result due to \name{Demushkin} \cite{De1982Combinatorial-inva} says that in this case all maximal tori in $\Aut(X)$ are conjugate (cf. \cite{Be2003Lifting-of-morphis}).
\end{proof}

\pmed
\section{Unipotent Elements and Unipotent Group Actions}
\ps
\subsection{Unipotent group actions and fixed points}  
\label{unipotent-fixpoints.subsec}
Unipotent groups and their actions on  affine varieties have some very special properties which we recall now.

\begin{theorem} \label{unipotent-groups.thm}
Let $U$ be a unipotent group.
\be
\item
The group $U$ is nilpotent. More precisely, there is a composition series 
$$
U=U_{0} \supseteq U_{1}\supseteq\cdots\supseteq U_{m}=\{1\}
$$ 
such that all $U_i$ are closed,
each factors $U_{i}/U_{i+1}$ is central in $U/U_{i+1}$,  and is isomorphic to $\kplus$. In particular, the center of $U$ is nontrivial in case $U$ is nontrivial.
\item \label{affine-space}
Every closed subgroup $H \subseteq U$ is unipotent, and the homogeneous space $U/H$ is isomorphic to an affine space $\AA^{m}$.
\item
If $U$ is commutative, then it
has a canonical structure of a $\kk$-vector space, given by the exponential map $\exp_{U}\colon \Lie U \simto U$, and $\Aut_{\text{\it gr}}(U) = \GL(U)$.
\item \label{power-map}
For every integer $m \geq 1$ the power map $u \mapsto u^m$ is bijective.
\ee
Now let $X$ be an affine variety with an action of $U$.
\be
\setcounter{enumi}{4}
\item 
The $U$-orbits in $X$ are closed.
\item 
The invariant functions $\OOO(X)^{U}$ separate general orbits, i.e. there is a non\-empty open set $X' \subseteq X$ such that for any two points $x,y \in X'$ with different orbits $Ux\neq Uy$ there is an invariant $f \in\OOO(X)^{U}$ such that $f(x)\neq f(y)$. 
In particular, $\tdeg\kk(X)^{U}= \dim X - \max_{x\in X}\{\dim Ux\}$.
\item \label{relation-between-regular-and-rational-invariants}
If $X$ is irreducible, then $\kk(X)^{U}=\Quot(\OOO(X)^{U})$, the field of fractions of the algebra of invariants.
\item \label{homogeneous}
If $X$ is irreducible and $\kk(X)^{U}=\kk$, then $X$ is a single orbit under $U$.
\item 
If $X$ is factorial, i.e. $\OOO(X)$ is factorial ring, then $\OOO(X)^{U}$ is also a factorial ring. 
\ee
\end{theorem}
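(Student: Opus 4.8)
The plan is to prove that $\OOO(X)^{U}$ is \emph{factorially closed} in $\OOO(X)$ and then to invoke the elementary fact that a factorially closed subring of a factorial domain is again factorial. Recall that a subring $A \subseteq R$ is factorially closed if $f,g \in R \setminus \{0\}$ with $fg \in A$ forces $f,g \in A$. I prefer this route to an induction along the composition series of part~(1), because the intermediate invariant rings $\OOO(X)^{U_{i}}$ need not be finitely generated (Hilbert's fourteenth problem fails precisely for unipotent groups), so they are not coordinate rings of affine varieties and an inductive set-up would leave the category of varieties. Factorial closedness, by contrast, is a purely ring-theoretic condition that is preserved by exactly the operations I need.

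First I would reduce to the one-parameter case. Write $R := \OOO(X)$, a factorial domain; in particular $X$ is irreducible and $\kk$ has characteristic zero. By part~(1) the group $U$ is generated by closed subgroups isomorphic to $\kplus$ (equivalently, by the exponential of Example~\ref{unipotent-exp.exa} every line in $\Lie U$ yields such a subgroup, and these fill out $U$). Fix a family $(\lambda_{j})$ of one-parameter subgroups generating $U$. Since an element of $R$ fixed by a generating family of subgroups is fixed by all of $U$, we obtain $R^{U} = \bigcap_{j} R^{\lambda_{j}(\kplus)}$. As the intersection of factorially closed subrings is visibly factorially closed, it suffices to prove that each $R^{\lambda(\kplus)}$ is factorially closed in $R$.

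Next I would treat a single $\kplus$-action $\lambda$. It corresponds to a locally nilpotent derivation $D := D_{\lambda}$ of $R$ (the associated vector field is locally nilpotent, cf. Remark~\ref{loc-nil-in-LieAut(X).rem}), and $R^{\lambda(\kplus)} = \ker D$. Define $\deg_{D}(h) := \max\{n \geq 0 : D^{n}(h) \neq 0\}$ for $h \neq 0$. The key computation, which uses characteristic zero essentially, is the additivity $\deg_{D}(fg) = \deg_{D}(f) + \deg_{D}(g)$: in the Leibniz expansion of $D^{m+n}(fg)$ with $m = \deg_{D}f$ and $n = \deg_{D}g$, every summand except the middle one vanishes, and the middle one equals $\binom{m+n}{m}\, D^{m}(f)\, D^{n}(g)$, which is nonzero since $R$ is a domain and $\binom{m+n}{m} \neq 0$ in $\kk$. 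Consequently, if a nonzero $fg$ lies in $\ker D$ then $\deg_{D}(fg) = 0$ forces $\deg_{D}f = \deg_{D}g = 0$, i.e. $f,g \in \ker D$; thus $\ker D$ is factorially closed.

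Finally I would record the concluding lemma: a factorially closed subring $A$ of a factorial domain $R$ is factorial. The units of $A$ are exactly the units of $R$ lying in $A$; and given a nonzero nonunit $a \in A$, factoring it into $R$-primes as $a = p_{1}\cdots p_{r}$ and applying factorial closedness repeatedly shows each $p_{i} \in A$, while the same property shows each $p_{i}$ stays prime in $A$ (if $p_{i}$ divides some $b \in A$ in $R$, the cofactor again lies in $A$). Hence every nonzero nonunit of $A$ is a product of primes of $A$, so $A$ is factorial; applying this with $A = \OOO(X)^{U}$ finishes the argument. The hard part is really the degree additivity, and with it the whole argument collapses in positive characteristic; everything else is formal.
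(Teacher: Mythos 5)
You prove only assertion (9), and your proof is correct, but it follows a genuinely different route from the paper's. The paper argues directly with unique factorization in $R:=\OOO(X)$: for an invariant $f=f_1^{m_1}\cdots f_s^{m_s}$, each $u\in U$ sends every irreducible factor $f_i$ to $r_if_i$ with $r_i\in R^*$ (connectedness of $U$ rules out a permutation of the factors); since $R^*/\kst$ is finitely generated by Rosenlicht's theorem, hence discrete, the connected set $Uf_i$ lies in $\kst f_i$, so $u\mapsto r_i(u)$ becomes a character of $U$ and is therefore trivial. Thus the irreducible factors of any invariant are themselves invariant, and factoriality of $\OOO(X)^U$ follows. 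You instead reduce to the one-parameter subgroups $\lambda_N$, $N\in\Lie U$ — legitimate, since $\exp_U$ is surjective, so these fill out $U$ and $\OOO(X)^U$ is the intersection of the kernels of the corresponding locally nilpotent derivations — then prove that the kernel of a locally nilpotent derivation of a domain is factorially closed via the degree additivity $\deg_D(fg)=\deg_D(f)+\deg_D(g)$, where the nonvanishing of $\binom{m+n}{m}$ is exactly the characteristic-zero input, and finish with the purely ring-theoretic lemma that a factorially closed subring of a factorial domain is factorial. The trade-off: the paper's argument needs Rosenlicht's structure theorem for the unit group and the triviality of $\Hom(U,\kst)$, but no derivations; yours avoids both of these inputs and delivers the stronger conclusion that $\OOO(X)^U$ is factorially closed in $\OOO(X)$ — the standard fact from the theory of locally nilpotent derivations, as in Freudenburg's book. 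Your remark about Hilbert's fourteenth problem is a fair reason not to induct through the possibly non-finitely-generated intermediate invariant rings of the series in part (1) (the paper's own induction in part (7) handles this by passing to a finitely generated $U$-stable subalgebra, a device your global reduction to $\kplus$'s sidesteps entirely); both proofs, of course, remain essentially tied to characteristic zero.
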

\begin{proof}[Outline of Proofs]
(1) See (\cite[IV, \S2, Proposition~2.5]{DeGa1970Groupes-algebrique} or \cite[III.1.1, Satz~2c]{Kr1984Geometrische-Metho}.
\ps
(2) See (\cite[IV, \S4, Corollaire~3.16]{DeGa1970Groupes-algebrique}).
\ps
(3) See \cite[IV, \S2, Proposition~4.1(iii)]{DeGa1970Groupes-algebrique}.
\ps
(4) This is clear for a commutative unipotent group, by (3). In general, it follows by induction from the exact sequence $1 \to Z \to U \to U/Z \to 1$ where $Z \subset U$ is the center of $U$.
\ps
(5) See \cite[Proposition~I.4.10, page~88]{Bo1991Linear-algebraic-g} or \cite[III.1.2, Satz~4]{Kr1984Geometrische-Metho}.
\ps
(6) This is a general result for algebraic group actions on varieties, due to \name{Rosenlicht}, see \cite[IV.2.2]{Sp1989Aktionen-reduktive}. 
\ps
(7) It is easy to see that the claim holds for any action of a connected algebraic groups which admits a local section. Now choose a normal subgroups $U'\subseteq U$ of dimension 1, i.e. $U'\simeq \kplus$. Then $\kk(X)^{U'}=\kk(\OOO(X)^{U'})$. Since the action of $U$ on $\OOO(X)$ locally finite and rational we can find a finitely generate $\kk$-subalgebra $R\subseteq \OOO(X)^{U'}$  which is stable under $U$ and such that $\Quot(R) = \Quot(\OOO(X)^{U'})$. Setting $\bar U:=U/U'$ we can assume, by induction on $\dim U$, that $\Quot(R)^{\bar U}=\Quot(R^{\bar U})$, and the claim follows (cf. \cite[IV.2.3]{Sp1989Aktionen-reduktive}).
\ps
(8) It follows from (7) that $\OOO(X)^{U}=\kk$. This implies, by (6), that $X$ contains a dense orbit. Now the claim is a consequence of (5).
\ps
(9) If $f\in\OOO(X)^{U}$ is an invariant, and $f=f_{1}^{m_{1}}\cdots f_{s}^{m_{s}}$ the decomposition into irreducible factors, then $uf_{i} = r_{i}f_{i}$ for every $u\in U$ where $r_{i}$ is a unit of $R:=\OOO(X)$. This means that the orbit $Uf_{i}$ is a subset of $R^{*}f_{i}$. Since $U$ is connected, we get $Uf_{i}\in\kst f_{i}$, hence $uf_{i}=f_{i}$, because there are no nontrivial morphisms $U \to \kst$.
\end{proof}

\begin{proposition} \label{generalunipotent.prop}
Let $U$ be a unipotent group of dimension $n$ acting freely on a factorial variety $X$ of dimension $n + 1$. 
Then $X$ is $U$-isomorphic to $U \times C$ where $C$ is a smooth rational curve. If $\OOO(X)^{*}= \Cst$, then $X \simeq \AA^{n+1}$.
\end{proposition}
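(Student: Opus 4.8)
The plan is to realize $X$ as a trivial principal $U$-bundle over the quotient curve $C:=X\quot U$. First I would record what freeness buys: since $U$ acts freely, every orbit is isomorphic to $U/\{e\}\cong\A{n}$ (Theorem~\ref{unipotent-groups.thm}(\ref{affine-space})), is closed (Theorem~\ref{unipotent-groups.thm}(5)), and has dimension $n$; hence $\max_{x}\dim Ux=n$, and Theorem~\ref{unipotent-groups.thm}(6) gives $\tdeg\kk(X)^{U}=(n+1)-n=1$, Theorem~\ref{unipotent-groups.thm}(\ref{relation-between-regular-and-rational-invariants}) gives $\kk(X)^{U}=\Quot(\OOO(X)^{U})$, and Theorem~\ref{unipotent-groups.thm}(9) gives that $\OOO(X)^{U}$ is factorial. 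As $\OOO(X)^{U}=\OOO(X)\cap\kk(X)^{U}$ has transcendence degree one, Zariski's finiteness theorem for invariant rings of transcendence degree at most two shows it is finitely generated; thus $C=\Spec\OOO(X)^{U}$ is an affine curve, and being factorial it is normal, hence smooth, and $\OOO(C)=\OOO(X)^{U}$ is a PID.

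The heart of the argument is that $\pi\colon X\to C$ is a principal $U$-bundle, for which the decisive point is that every fiber is a single orbit; this is where I would use factoriality. Since $\OOO(X)$ is a domain, it is torsion-free, hence flat, over the Dedekind domain $\OOO(C)$, and as $X$ is irreducible of dimension $n+1$ over the curve $C$ the fibers are of pure dimension $n$. A fiber $F_{c}=\pi^{-1}(c)$ is $U$-stable and equals a finite union $O_{1}\cup\cdots\cup O_{r}$ of its closed $n$-dimensional orbits, each of codimension one; by factoriality the ideal of $O_{i}$ is generated by an irreducible $p_{i}\in\OOO(X)$. Because $O_{i}$ is $U$-stable, $\sigma\cdot p_{i}=c_{\sigma}p_{i}$ with $c_{\sigma}\in\OOO(X)^{*}$; since $\OOO(X)^{*}/\kst$ is a finitely generated free abelian group on which the connected group $U$ acts trivially (Proposition~\ref{structure-of-R*.prop}), the map $\sigma\mapsto[c_{\sigma}]$ is a homomorphism of $U$ into a discrete group, hence trivial, and then $\sigma\mapsto c_{\sigma}$ is a character of the unipotent group $U$, so $c_{\sigma}\equiv1$ and each $p_{i}$ is $U$-invariant, i.e. $p_{i}\in\OOO(C)$. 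Writing $t_{c}$ for a prime element of the factorial ring $\OOO(C)$ generating $\mm_{c}$, the function $g:=\pi^{*}t_{c}$ has zero set $F_{c}$, so $t_{c}=u\,p_{1}^{m_{1}}\cdots p_{r}^{m_{r}}$ in $\OOO(X)$ with $u\in\OOO(X)^{*}$; as all factors and $u$ lie in $\OOO(C)$ (the unit $u$ being invertible and invariant) and the $p_{i}$ are non-units of $\OOO(C)$, uniqueness of factorization of the prime $t_{c}$ forces $r=1$. Hence each fiber is a single free orbit, the map $U\times X\to X\times_{C}X$, $(\sigma,x)\mapsto(\sigma x,x)$, is bijective, and $\pi$ is a principal $U$-bundle.

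By Proposition~\ref{principal-G-bundle.prop} every principal $U$-bundle over an affine base is trivial, giving a $U$-equivariant isomorphism $X\simeq U\times C$ (equivalently, a $U$-section, via Proposition~\ref{G-section.prop}). To see that $C$ is rational I would argue that a smooth affine curve with trivial divisor class group must have genus zero: otherwise the Jacobian of its smooth projective model would be a nontrivial divisible, hence not finitely generated, group, which cannot be generated by the finitely many points at infinity. Thus $C$ is a smooth rational curve and the first assertion follows. For the last statement, $X\simeq U\times C$ with $U\cong\A{n}$ and $\OOO(U)^{*}=\kst$ gives $\OOO(X)^{*}/\kst\cong\OOO(C)^{*}/\kst$ (again Proposition~\ref{structure-of-R*.prop}); so $\OOO(X)^{*}=\kst$ forces $\OOO(C)^{*}=\kst$, which for $C=\PP^{1}\setminus S$ means $|S|=1$, i.e. $C\cong\Aone$, and therefore $X\cong U\times\Aone\cong\A{n+1}$.

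The step I expect to be the main obstacle is precisely the principal-bundle structure, and within it the single-orbit-per-fiber claim: the affine (categorical) quotient by a unipotent group only separates \emph{general} orbits, so special fibers could a priori contain several orbits, and it is exactly factoriality---through the principality of the codimension-one orbits together with the uniqueness-of-factorization count above---that excludes this. A secondary point demanding care is the finite generation of $\OOO(X)^{U}$, handled by the transcendence-degree-one case of Zariski's theorem, and the passage from a bijective orbit map to an honest $U$-torsor, where one uses that the action is scheme-theoretically free and that $\pi$ is faithfully flat.
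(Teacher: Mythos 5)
Your proof is correct and follows essentially the same route as the paper's: factoriality plus unipotence (via the unit-group argument of Proposition~\ref{structure-of-R*.prop}, exactly as in the proof of Theorem~\ref{unipotent-groups.thm}(9)) makes each codimension-one orbit the zero set of an irreducible \emph{invariant} function, the one-dimensional invariant ring gives a smooth factorial --- hence rational --- affine quotient curve whose fibers are single reduced orbits, and then $\pi\colon X \to C$ is a principal $U$-bundle, trivial over the affine base. The only variations are cosmetic: the paper deduces ``fiber $=$ orbit'' directly from the inclusion of the fiber in the zero set of the invariant equation of an orbit, where you factor the uniformizer $t_{c}$, and it obtains the bundle structure from \'etale sections of the smooth morphism $\pi$ rather than by descent from the bijection $U \times X \to X \times_{C} X$, which (as you yourself flag) needs the reducedness of the fibers --- your $m_{1}=1$ --- to be upgraded to an isomorphism.
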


\begin{proof}
The $U$-orbits $O=Ux \subseteq X$ are irreducible hypersurfaces, hence each $O$ is the zero set of an irreducible invariant function. Since the invariant ring $\OOO(X)^{U}$ is of dimension 1, it is finitely generated. It follows that the \itind{affine quotient} $X\quot U:=\Spec\OOO(X)^{U}$ 
(see Section~\ref{quotient.subsec}) is a smooth factorial curve, hence an open set $C \subseteq\Aone$. Moreover, the fibers of the quotient morphism $\pi\colon X \to C$ are the orbits. It also follows that $\pi$ is flat, and since the fibers are reduced and smooth the map $\pi$ is even smooth. This implies that there exist \'etale sections of $\pi$ which means that $\pi$ is a principal $U$-bundle. The first claim follows since every principal $U$-bundle over an affine variety is trivial, and the second claim is clear.
\end{proof}

\begin{remark}
If we assume that the action is only generically free, then one shows along the same lines that the affine quotient is a smooth rational curve $C$ and that the quotient morphism $\pi\colon X \to C$ is flat and surjective with general fiber isomorphic to $U$. In the special case where $X = \AA^{n+1}$ and $U$ is commutative
the {\it Commuting Derivations Conjectures\/} of \name{Maubach} states that  in this case $\pi$ is a variable, i.e. $\pi$ is a trivial $\An$-bundle. For $n=1$ this is clear, by the famous Theorem of \name{Abhyankar-Moh-Suzuki}, see Proposition~\ref{AMS.prop}. In \cite{Ma2003The-commuting-deri} the conjecture is proved for $n=2$ as a special case of a difficult result of \name{Kaliman} \cite{Ka2002Polynomials-with-g}.
\end{remark} 

It is well-known that a nontrivial unipotent automorphism of an irreducible affine variety has no isolated fixed points. In fact, one easily reduces to the one-dimensional case where the claim is obvious.
More generally, we have the following result.

\begin{proposition}\label{fixedpoints.prop}
Let $U$ be a unipotent group acting nontrivially on an irreducible affine variety $X$. Set $d:=\min\{\dim Ux\mid x\in X \setminus X^{U}\}>0$. Then every irreducible component of $X^{U}$ has at least dimension $d$. In particular, $X$ has no isolated fixed points.
\end{proposition}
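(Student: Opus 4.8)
The plan is to argue by induction on $n:=\dim X$. If $X^{U}$ is empty there is nothing to prove, so assume $X^{U}\neq\emptyset$; this already forces the small-dimensional base cases to be vacuous, since for $n\le 1$ a nontrivial unipotent action can have no fixed point. More importantly, I would first record that $d\le n-1$ whenever $X^{U}\neq\emptyset$. Indeed, if some non-fixed point had an orbit of dimension $n$, that orbit would be dense in the irreducible $X$ and, being a $U$-orbit, closed by Theorem~\ref{unipotent-groups.thm}; hence it would equal $X$, making $X$ a single orbit and $X^{U}$ empty, a contradiction. So every non-fixed orbit has dimension $\le n-1$, and in particular $d\le n-1$.

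The key step is to produce a nonconstant invariant whose zero set contains the whole fixed locus. The ideal $I(X^{U})\subseteq\OOO(X)$ is nonzero (as $X^{U}\neq X$) and $U$-stable, and by Proposition~\ref{locally-finite-group-actions.prop} it is a rational $U$-module. Since $U$ is unipotent, any nonzero rational $U$-module contains a nonzero invariant vector (Lie--Kolchin), so I would choose $p\in I(X^{U})^{U}$ nonzero. Being nonzero and vanishing on the nonempty set $X^{U}$, the function $p$ is nonconstant, and $X^{U}\subseteq\{p=0\}$. By Krull's principal ideal theorem every irreducible component of the $U$-stable hypersurface $\{p=0\}$ has dimension $n-1$, and because $U$ is connected it stabilizes each of these components.

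Now let $Z$ be an irreducible component of $X^{U}$ and choose a component $X'$ of $\{p=0\}$ with $Z\subseteq X'$. If $U$ acts trivially on $X'$, then $X'\subseteq X^{U}$, which forces $Z=X'$ and gives $\dim Z=n-1\ge d$. Otherwise $U$ acts nontrivially on the irreducible affine variety $X'$ of dimension $n-1$, and I would apply the induction hypothesis to this action, with its own invariant $d':=\min\{\dim Uy\mid y\in X'\setminus X'^{\,U}\}$. Since $Z$ is a component of $X^{U}$ contained in $X'$, it is a component of $X'^{\,U}=X'\cap X^{U}$, so induction yields $\dim Z\ge d'$. Finally, for any $y\in X'\setminus X'^{\,U}$ the orbit $Uy$ lies in the $U$-stable set $X'$ and $y\notin X^{U}$, whence $\dim Uy\ge d$; therefore $d'\ge d$ and $\dim Z\ge d$. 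The ``in particular'' statement follows at once, since nontrivial $U$-orbits have positive dimension, so $d\ge 1$ and $X^{U}$ can have no zero-dimensional component.

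I expect the main obstacle to be the orbit-dimension bookkeeping that guarantees $d'\ge d$ after passing to the hypersurface $\{p=0\}$: one must check that restricting the $U$-action to a $U$-stable subvariety can only raise the minimal dimension of a nontrivial orbit, and that each component of $X^{U}$ genuinely reappears as a component of $X'^{\,U}$ for a suitable component $X'$. The other delicate point, and the place where unipotence is indispensable, is the existence of the invariant $p$, which relies both on the fact that unipotent groups have nonzero fixed vectors in rational modules and on the closedness of unipotent orbits used to bound $d$.
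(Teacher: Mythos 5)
Your argument is correct, and it takes a genuinely different route from the paper's proof. The paper invokes a $U$-invariant separating morphism $\phi\colon X\to Y$ from Derksen--Kemper, uses that $\kk(Y)=\kk(X)^{U}$ so that the generic fiber of $\phi$ is a single orbit, concludes that every irreducible component of every fiber has dimension at least the generic orbit dimension, hence at least $d$, and then inducts by replacing $X$ with an irreducible component $X_{1}$ of a fiber meeting a minimal-dimensional component of $X^{U}$ --- using the same monotonicity $d_{1}\geq d$ under passage to a $U$-stable subvariety that you verify. You replace this invariant-theoretic package by a single invariant: local finiteness of the $U$-representation on $\OOO(X)$ (Proposition~\ref{locally-finite-group-actions.prop}) gives a nonzero finite-dimensional $U$-stable subspace of the ideal $I(X^{U})$, the fixed-point theorem of Kolchin (this, rather than Lie--Kolchin proper, is the statement you want; a harmless misattribution) yields a nonconstant invariant $p$ vanishing on $X^{U}$, and you descend along a $U$-stable irreducible component $X'$ of the hypersurface $\{p=0\}$ containing the given component $Z$, inducting on $\dim X$. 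Your bookkeeping is in order: closedness of unipotent orbits (Theorem~\ref{unipotent-groups.thm}) gives $d\leq\dim X-1$ whenever $X^{U}\neq\emptyset$, which settles the case where $U$ acts trivially on $X'$; a component $Z$ of $X^{U}$ contained in $X'$ is indeed a maximal irreducible subset of $X'\cap X^{U}$, hence a component of the fixed locus of $X'$; and $d'\geq d$ because non-fixed points of $X'$ are non-fixed in $X$. What your route buys is self-containedness --- no separating-invariants theorem and no Rosenlicht-type statement that the generic fiber is an orbit, only Kolchin's theorem, closedness of unipotent orbits, and Krull's principal ideal theorem --- at the price of an extra case split; the paper's route buys a quicker descent, since all fiber components of the separating morphism are automatically bounded below in dimension by $d$.
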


\begin{proof} Let $C \subseteq X^{U}$ be an irreducible component of minimal dimension. By \cite[Theorem~2.3.15]{DeKe2002Computational-inva} there exists a $U$-invariant separating morphism $\phi\colon X \to Y$. This means that $Y$ is affine, $\phi$ is dominant, and for any pair $x,x'\in X$ we have $\phi(x)\neq \phi(x')$ if and only if there is an invariant $f\in\OOO(X)^{U}$ such that $f(x)\neq f(x')$. It follows that $\kk(Y)=\kk(X)^{U}$ which implies that the generic fiber of $\phi$ is an orbit. If $C$ is an irreducible component of a fiber of $\phi$ we are done. Otherwise, we choose an irreducible component $X_{1}$ of a fiber of $\phi$ which meets $C$. It suffices to prove the claim for $X_{1}$. In fact, if  $C_{1}$ is an irreducible component of $X_{1}^{U}$ of minimal dimension, then 
$$
\dim C \geq \dim C_{1}\geq d_{1}:=\min\{\dim Ux \mid x\in X_{1}\setminus X_{1}^{U}\} \geq d.
$$
By induction, we end up with the situation where every irreducible component of $X^{U}$ of minimal dimension is also an irreducible component of a fiber of $\phi$, and we are done.
\end{proof}

\ps
\subsection{Unipotent elements and \texorpdfstring{$\kplus$}{k+}-actions} \label{Cplus.subsec}
It is well known that the $\kplus$-actions on an affine variety $X$ are in bijection with the {\it locally nilpotent vector fields}\idx{locally nilpotent vector field} on $X$, i.e. with the {\it locally nilpotent derivations}\idx{locally nilpotent derivation} of $\OOO(X)$. Let us shortly recall how this bijection is obtained.

If $\lambda\colon \kplus \to \Aut(X)$ is a $\kplus$-action, then the {\it corresponding vector field\/} $\delta=\delta_{\lambda}\in\VEC(X)$ is defined in the following way (cf. Section~\ref{group-action-VF.subsec}):
$$
(\delta_{\lambda})_{x} =(d\mu_{x})_{0}(1) \text{ where }\mu_{x}\colon \kplus \to X
\text{ is the {\it orbit map} }  s \mapsto sx:=\rho(s)x.
$$\idx{orbit map}
(Note that we have a canonical identification $\Lie \kplus = T_{0}\kplus = \kk$, so that the element $1\in\Lie\kplus$ is well defined. In our previous notation from Section~\ref{group-action-VF.subsec} we have $\delta_{\lambda}=\xi_{1}$.) 

Equivalently, we have $\delta_{\lambda} = \xi(d\lambda(1))$
where $\xi\colon \Lie\Aut(X) \to \VEC(X)$ is the canonical anti-homomorphism of Lie algebras (see Proposition~\ref{End(X)-and-Vec(X).prop}).

\begin{lemma}\label{exp-for-functions.lem}
Let  $\lambda$ be a $\kplus$-action  on the affine variety $X$, and let $\delta_{\lambda} \in \VEC(X)$ be the corresponding vector field. Then $\delta_{\lambda}$ is locally nilpotent, and  for $f\in\OOO(X)$ and $s\in\kplus$ one gets
\[\tag{$*$}
f(sx) = \sum_{n \geq 0} \frac{s^{n}}{n!}(\delta_{\lambda}^{n}f)(x), \text{ i.e. }s^{-1}f = \exp(s\delta_{\lambda})f.
\]
Conversely, for every locally nilpotent vector field $\delta\in\VEC(X)$ there is a $\kplus$-action on $X$ such that the corresponding vector field is $\delta$.
\end{lemma}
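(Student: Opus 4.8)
The plan is to translate both assertions into the language of the coaction and reduce everything to a single recursion for the Taylor coefficients of the action in the parameter $s$. First I would use Proposition~\ref{group-action.prop} to regard $\lambda$ as an action $\rho\colon \kplus \times X \to X$ with comorphism $\rho^{*}\colon \OOO(X) \to \OOO(\kplus)\otimes\OOO(X) = \kk[s]\otimes\OOO(X)$. For $f \in \OOO(X)$ I would write the (finite) expansion
\[
\rho^{*}(f) = \sum_{n\geq 0} s^{n}\otimes f_{n}, \qquad f_{n}\in\OOO(X),
\]
so that $f(sx) = \sum_{n} s^{n} f_{n}(x)$. The unit axiom $\rho(0,x)=x$ gives $f_{0}=f$. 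Differentiating the orbit map $\mu_{x}^{*}(f)=f(sx)=\sum_{n}s^{n}f_{n}(x)$ at $s=0$ and using the defining formula $(\delta_{\lambda})_{x}=(d\mu_{x})_{0}(1)$ stated just before the lemma, I obtain $\delta_{\lambda}(f)=f_{1}$.

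Next I would extract the key recursion from the associativity of the action. Writing $\rho\circ(m\times\id) = \rho\circ(\id\times\rho)$ on comorphisms, with $m(s,t)=s+t$, yields the identity
\[
\sum_{n}(s+t)^{n}\otimes f_{n} = \sum_{n,m} s^{n}t^{m}\otimes (f_{n})_{m}
\]
in $\kk[s,t]\otimes\OOO(X)$. Expanding $(s+t)^{n}$ by the binomial theorem and comparing the coefficients of $s^{p}t^{1}$ on both sides (using $(f_{p})_{1}=\delta_{\lambda}(f_{p})$) gives $(p+1)\,f_{p+1} = \delta_{\lambda}(f_{p})$. Together with $f_{0}=f$, an immediate induction on $n$ yields $f_{n} = \tfrac{1}{n!}\,\delta_{\lambda}^{n}(f)$, which is exactly the asserted formula $(*)$. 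Local nilpotence of $\delta_{\lambda}$ then comes for free: since $\rho^{*}(f)$ is a polynomial in $s$ we have $f_{n}=0$ for $n$ large, hence $\delta_{\lambda}^{n}(f) = n!\,f_{n} = 0$ for $n$ large.

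For the converse, given a locally nilpotent $\delta\in\VEC(X) = \Der_{\kk}(\OOO(X))$, I would simply define $\rho^{*}\colon \OOO(X)\to\kk[s]\otimes\OOO(X)$ by $\rho^{*}(f):=\exp(s\delta)(f) = \sum_{n\geq 0}\tfrac{s^{n}}{n!}\delta^{n}(f)$, a finite sum by local nilpotence. The Leibniz rule $\delta^{n}(fg) = \sum_{k}\binom{n}{k}\delta^{k}(f)\,\delta^{n-k}(g)$ shows $\rho^{*}$ is a $\kk$-algebra homomorphism, hence defines a morphism $\rho\colon \kplus\times X \to X$; the unit axiom holds because $\rho^{*}(f)\big|_{s=0}=f$, and the one-parameter-group identity $\exp((s+t)\delta) = \exp(s\delta)\exp(t\delta)$ (a formal consequence of $\delta$ commuting with itself) is precisely the associativity axiom. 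Finally, by the computation of the first paragraph the vector field attached to $\rho$ is $f\mapsto f_{1} = \delta(f)$, so $\delta_{\rho}=\delta$.

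I expect the only genuine work to be the bookkeeping in the associativity identity: one must keep the two group variables $s,t$ distinct, expand $(s+t)^{n}$ correctly, and read off the right coefficient so as to land on $(p+1)f_{p+1}=\delta_{\lambda}(f_{p})$ rather than some shifted or mislabelled relation. Everything else (the two inductions, and the Leibniz and semigroup identities for $\exp(s\delta)$) is routine once this recursion is in hand; in particular, this route establishes local nilpotence directly, without separately invoking the local finiteness of the $\kplus$-representation on $\OOO(X)$.
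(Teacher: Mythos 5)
Your proof is correct, but it takes a genuinely different route from the paper's. The paper proves $(*)$ by reduction to the linear case: it chooses a closed $\kplus$-equivariant embedding $X \subseteq V$ into a $\kplus$-module, where the action is $\lambda(s)=\exp(sN)$ for a nilpotent $N\in\LLL(V)$, observes that $\delta_{\lambda}$ is the restriction of the locally nilpotent linear vector field $\xi_{N}$, and then invokes the exponential formula of Lemma~\ref{exp-basics.lem}(\ref{exponential-formula}) (itself proved there by a differentiation-and-induction argument on $\frac{d}{ds}f(\lambda_{N}(s)v)$). You instead argue intrinsically on the coaction: writing $\rho^{*}(f)=\sum_{n}s^{n}\otimes f_{n}$ and extracting the recursion $(p+1)f_{p+1}=\delta_{\lambda}(f_{p})$ from coassociativity, you obtain $(*)$ and the local nilpotence of $\delta_{\lambda}$ simultaneously, the latter for free from the polynomiality of $\rho^{*}(f)$ in $s$ (your coefficient extraction at $s^{p}t^{1}$ is correct: the binomial coefficient $\binom{p+1}{p}=p+1$ gives exactly the stated recursion). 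What each approach buys: yours is self-contained and avoids both the equivariant embedding theorem (which rests on local finiteness of the $\kplus$-representation on $\OOO(X)$) and the linear exponential machinery, so it works verbatim at the level of the Hopf-algebra axioms; the paper's reduction makes for a shorter write-up because Lemma~\ref{exp-basics.lem} has already done the analytic work, and it keeps the geometric picture ($\delta_{\lambda}=\xi_{N}|_{X}$) in view. For the converse the two arguments essentially coincide — the paper merely asserts that $(*)$ defines a $\kplus$-action on $\OOO(X)$, which is exactly what you verify via the Leibniz formula and the formal identity $\exp((s+t)\delta)=\exp(s\delta)\exp(t\delta)$.
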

\begin{proof}
We choose a closed $\kplus$-equivariant embedding $X \subseteq V$ into a $\kplus$-module $V$. Then the action is given by a homomorphism $\lambda\colon\kplus \to \GL(V)$ which is of the form
$\lambda(s) = \exp(sN)$ where $N:=d\lambda(1)\in\LLL(V)$ is a  nilpotent endomorphism (see Lemma~\ref{exp-basics.lem}). 
If $\xi_{N}\in\VEC(V)$  denotes  the corresponding locally nilpotent vector field on $V$, then $X$ is $\xi_{N}$ invariant, and $\delta_{\lambda} = \xi_{N}|_{X}$. Hence $\delta_{\lambda}$ is also locally nilpotent, and the formula $(*)$ follows from Lemma~\ref{exp-basics.lem}(\ref{exponential-formula}).

For the last statement, let $\delta$ be a locally nilpotent vector field on $X$. Then the formula $(*)$ defines a locally nilpotent
$\kplus$-action on the coordinate ring $\OOO(X)$, hence a $\kplus$-action on $X$.
\end{proof}

Thus we obtain a bijection between $\kplus$-actions on $X$ and locally nilpotent vector fields on $X$
$$
\begin{CD}
\nu_{X}\colon \Akplus(X) @>{\text{\tiny bijective}}>> \LNV(X) \subseteq \VEC(X), \quad \lambda\mapsto \delta_{\lambda}=\xi(d\lambda(1)),
\end{CD}
$$\idx{$\Akplus(X)$}
where $\LNV(X)$ are the locally nilpotent vector fields on $X$.
\pmed
There is also bijection between unipotent elements $\u\in\Aut(X)$ and $\kplus$-actions. In fact, if $\u\neq \id$, then there is a well-defined isomorphism 
$\lambda\colon\kplus\simto\overline{\langle \u \rangle}\subseteq \Aut(X)$ such that $\lambda(1)=\u$. On the other hand, we associate to a nontrivial $\kplus$-action $\lambda\colon\kplus\to \Aut(X)$ the unipotent element $\lambda(1)$, and thus obtain a map
$$
\begin{CD}
\eps_{X}\colon \Akplus(X) @>\text{\tiny bijective}>> \AutU(X) \subseteq \Aut(X), \quad \lambda\mapsto \lambda(1),
\end{CD}
$$
where $\AutU(X)$ are  the unipotent elements of $\Aut(X)$.
We know that $\Akplus(X)$ is an affine ind-variety (Lemma~\ref{group-actions.lem} and Proposition~\ref{group-action.prop}), and the construction above implies that the two maps $\nu_{X}\colon \Akplus(X) \to \VEC(X)$ and $\eps_{X} \colon \Akplus(X) \to \Aut(X)$ are morphisms of ind-varieties.

It is easy to see that  $\LNV(X)\subseteq \VEC(X)$ is weakly closed.
In fact, fixing generators $f_{1},\ldots,f_{n}\in\OOO(X)$ of the $\kk$-algebra $\OOO(X)$ we see that $\LNV(X)_{k}:=\{\delta \in \VEC(X)\mid \delta^{k}f_{i}=0 \text{ for }i=1,\ldots,n\}$ is closed in $\VEC(X)$, hence $\LNV(X) = \bigcup_{k}\LNV(X)_{k}$ is a union of closed algebraic subsets. Hence, $\LNV(X) \subset \VEC(X)$ is weakly closed in case $\kk$ is uncountable, by Proposition~\ref{indconstr.prop}. For a field extension $\KK/\kk$ we have $\LNV(X(\kk) )= \VEC(X(\kk)) \cap \LNV(X(\KK))$, and so $\LNV(X(\kk))$ is weakly closed in $\VEC(X(\kk))$ if this holds over $\KK$.

In a similar way we get the same statement for $\AutU(X)$.

\begin{lemma} \label{unipotent-weakly-closed.lem}
Let $X$ be an affine variety. Then the subset $\AutU(X)$ of unipotent elements of $\Aut(X)$ is weakly closed.
\end{lemma}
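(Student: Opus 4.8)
The plan is to mirror the argument just given for $\LNV(X)$, realizing $\AutU(X)$ as a countable union of closed subsets of $\Aut(X)$ and then invoking Proposition~\ref{indconstr.prop}. Fix generators $f_{1},\dots,f_{n}$ of the $\kk$-algebra $\OOO(X)$. The first thing I would establish is a finitary criterion for unipotency: an element $\phi\in\Aut(X)$ is unipotent in the sense of Definition~\ref{locfinite.def} if and only if there is an $m\ge 1$ with $(\phi^{*}-\id)^{m}f_{i}=0$ for $i=1,\dots,n$. One implication is trivial. For the converse, the vanishing $(\phi^{*}-\id)^{m}f_{i}=0$ forces the $\kk$-span $W:=\sum_{j\ge 0}(\phi^{*})^{j}\langle f_{1},\dots,f_{n}\rangle$ to be finite dimensional and $\phi^{*}$-stable, with $\phi^{*}|_{W}$ unipotent; since $\phi^{*}$ is an algebra automorphism, one then propagates unipotency from $W$ to all of $\OOO(X)=\bigcup_{d}(\kk+W+\dots+W^{d})$ by multiplicativity (the induced action on each $W^{d}$ is unipotent), so $\phi^{*}-\id$ is locally nilpotent.

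With this criterion I would set $\AutU(X)^{(m)}:=\{\phi\in\Aut(X)\mid (\phi^{*}-\id)^{m}f_{i}=0,\ i=1,\dots,n\}$, so that $\AutU(X)=\bigcup_{m\ge 1}\AutU(X)^{(m)}$. Each $\AutU(X)^{(m)}$ is closed in $\Aut(X)$: on a fixed piece $\Aut(X)_{k}$ of the filtration the assignment $\phi\mapsto (\phi^{*}-\id)^{m}f_{i}=\sum_{j=0}^{m}\binom{m}{j}(-1)^{m-j}\,f_{i}\circ\phi^{j}$ is a morphism into $\OOO(X)$, because the composition powers $\phi\mapsto\phi^{j}$ are morphisms by Lemma~\ref{tautological.lem} and $\psi\mapsto f_{i}\circ\psi$ is a coordinate of the embedding $\End(X)\subseteq\OOO(X)^{n}$; hence its zero set is closed in $\Aut(X)_{k}$. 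Consequently, when $\kk$ is uncountable, Proposition~\ref{indconstr.prop} applies directly to the countable union $\AutU(X)=\bigcup_{m}\AutU(X)^{(m)}$ of closed subsets and yields that $\AutU(X)$ is weakly closed.

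For an arbitrary base field I would reduce to the uncountable case exactly as was done for $\LNV(X)$ above. Choosing an algebraically closed uncountable extension $\KK/\kk$, the key point is the base-change identity $\AutU(X(\kk))=\Aut(X(\kk))\cap\AutU(X(\KK))$. This follows from the criterion together with the fact that on $\OOO(X_{\KK})=\KK\otimes_{\kk}\OOO(X)$ one has $(\phi_{\KK}^{*}-\id)=\id_{\KK}\otimes(\phi^{*}-\id)$, and this operator is locally nilpotent precisely when $\phi^{*}-\id$ is (local nilpotency passes through the base change $\KK\otimes_{\kk}(-)$ in both directions). Since $\Aut(X)_{\KK}\simeq\Aut(X_{\KK})$ by Proposition~\ref{field-extensions-for-morphisms.prop}, the inclusion $\Aut(X)\hookrightarrow\Aut(X_{\KK})$ carries the induced topology by Proposition~\ref{fieldextension.prop}, and the subsets $\AutU(X_{\KK})^{(m)}$ are defined over $\kk$, so the weak-closedness of $\AutU(X_{\KK})$ descends to weak-closedness of $\AutU(X)$ as in the $\LNV$ argument.

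The step I expect to require the most care is the descent to a countable field. Over such a $\kk$ an irreducible algebraic subset $U\subseteq\AutU(X)$ need not lie in a single $\AutU(X)^{(m)}$ at the level of $\kk$-points, so a uniform nilpotency bound is not a formal consequence of $U=\bigcup_{m}(U\cap\AutU(X)^{(m)})$; it is precisely the passage to an uncountable $\KK$, where Lemma~\ref{constructible.lem} (and hence Proposition~\ref{indconstr.prop}) applies, combined with the identity $\AutU(X(\kk))=\Aut(X(\kk))\cap\AutU(X(\KK))$, that forces the required uniformity and closes the argument. The only other delicate point is the propagation from generators to the full algebra in the unipotency criterion, which rests on the multiplicativity of $\phi^{*}$ and the stability of the $\kk$-subspaces $W^{d}$.
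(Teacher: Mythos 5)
Your proof is correct and follows essentially the same route as the paper: the paper's closed algebraic pieces $A_{k,\ell}=\{g\in\Aut(X)_k \mid (g^{*}-\id)^{\ell}(W)=(0)\}$ are exactly your sets $\AutU(X)^{(m)}\cap\Aut(X)_k$, the paper performs the same reduction to uncountable $\kk$ via the base-change identity, and you additionally make explicit the generators-to-algebra propagation of unipotency that the paper compresses into ``by definition, $\Aut(X)^{u}=\bigcup_{k}A_{k,k}$''. One cosmetic point: Proposition~\ref{indconstr.prop} is stated for countable unions of closed \emph{algebraic} subsets, so it should be applied to $\bigcup_{k,m}\bigl(\AutU(X)^{(m)}\cap\Aut(X)_{k}\bigr)$ rather than to the non-algebraic sets $\AutU(X)^{(m)}$ themselves --- your closedness argument on each filtration piece already supplies exactly these algebraic pieces.
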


\begin{proof} As above we can assume that $\kk$ is uncountable.
Choose a finite-dimensional generating subspace $W \subseteq \OOO(X)$ with $1\in W$, and define the filtration $\OOO(X)=\bigcup_{k}\OOO(X)_{k}$ by $\OOO(X)_{k}:=\langle w_{1}\cdots w_{k}\mid w_{i}\in W\rangle$. Then we get a filtration of $\Aut(X)$ by the closed subsets $\Aut(X)_{k}:=\{g\in\Aut(X) \mid g^{*}(W) \subseteq \OOO(X)_{k}\}$. For $k,\ell \in \NN$ define
$$
A_{k,\ell}:= \{g\in\Aut(X)_{k} \mid (g^{*}-\id)^{\ell}(W) = (0)\}.
$$
Note that for $g\in\Aut(X)_{k}$ we get $(g^{*})^{m}(W) \subseteq \OOO(X)_{k^{m}}$. It follows that the
map 
$$
\Aut(X)_{k} \to \Hom(W,\OOO(X)_{k^{\ell}}), \ g\mapsto (g^{*}-\id)^{\ell}|_{W}
$$ 
is a morphism of varieties, and so $A_{k,\ell}$ is closed in $\Aut(X)_{k}$. 
By definition, $\Aut(X)^{\text{\it u}} = \bigcup_{k} A_{k,k}$, and the claim follows.
\end{proof}

Let us summarize the results of the considerations above. We assume for simplicity that $\kk$ is uncountable.

\begin{proposition}  \label{bijections-k+actions-nilpotentVF-unipotent-autos.prop}
We use the notation from above.
\be
\item
The map $\nu_{X}\colon \Akplus(X) \to \VEC(X)$, $\lambda\mapsto\xi(d\lambda_{0}(1))$, is an injective ind-morphism and defines a bijection $\tilde\nu_{X}\colon \Akplus(X) \to \LNV(X)$ where $\LNV(X)$ is weakly closed in $\VEC(X)$.
\item
The map $\eps_{X} \colon \Akplus(X) \to \Aut(X)$, $\lambda\mapsto \lambda(1)$, is an injective ind-morphism and defines a bijection 
$\tilde\eps\colon \Akplus(X) \to  \AutU(X)$ where $\AutU(X)$ is weakly closed in $\Aut(X)$.
\item 
For every closed algebraic subset $Y \subseteq \VEC(X)$ contained in $\LNV(X)$ the map $\tilde\nu_{X}^{-1}|_{Y}\colon Y \to \Akplus(X)$ is a morphism.
\item 
For every closed algebraic subset $Z \subseteq \Aut(X)$ contained in $\AutU(X)$ the map $\tilde\eps_{X}^{-1}|_{Z}\colon Z \to \Akplus(X)$ is a morphism.
\ee
\end{proposition}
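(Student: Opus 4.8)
The plan is to assemble (1) and (2) from facts already established, and to reduce the two statements about inverse maps to a single construction of a ``universal'' $\kplus$-action over an algebraic family of locally nilpotent vector fields. For (1), the discussion preceding the proposition already shows that $\nu_{X}$ is an ind-morphism, and Lemma~\ref{exp-for-functions.lem} shows that $\lambda\mapsto\delta_{\lambda}$ is a bijection onto $\LNV(X)$; in particular $\nu_{X}$ is injective and $\tilde\nu_{X}$ is the asserted bijection, while the weak closedness of $\LNV(X)$ in $\VEC(X)$ was proved just above. For (2), $\eps_{X}$ is an ind-morphism, the map $\lambda\mapsto\lambda(1)$ is a bijection onto $\AutU(X)$ (its inverse sends a nontrivial unipotent $u$ to the unique isomorphism $\kplus\simto\overline{\langle u\rangle}$ with $\lambda(1)=u$), and $\AutU(X)$ is weakly closed by Lemma~\ref{unipotent-weakly-closed.lem}.

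The heart of the argument is (3). Let $Y\subseteq\VEC(X)$ be closed algebraic with $Y\subseteq\LNV(X)$. Since $\kk$ is uncountable and $Y=\bigcup_{k}(Y\cap\LNV(X)_{k})$ with each $\LNV(X)_{k}$ closed, Lemma~\ref{constructible.lem} produces a single $k$ with $Y\subseteq\LNV(X)_{k}$; thus $\delta^{k}f_{i}=0$ for all $\delta\in Y$ and all generators $f_{1},\ldots,f_{n}$ of $\OOO(X)$. The inclusion $Y\into\VEC(X)$, being a morphism of an algebraic set, lands in some $\VEC(X)_{r}$ (Lemma~\ref{Kumar.lem}) and hence corresponds to an $\OOO(Y)$-derivation $D$ of $\OOO(Y\times X)=\OOO(Y)\otimes\OOO(X)$ whose value at each $y\in Y$ is $\delta_{y}$. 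Because $(D^{k}f_{i})(y,x)=(\delta_{y}^{k}f_{i})(x)=0$ for all $(y,x)$ and $Y\times X$ is reduced, we get $D^{k}f_{i}=0$. Then the algebra homomorphism $\OOO(X)\to\OOO(Y)\otimes\kk[s]\otimes\OOO(X)$, $f\mapsto\sum_{0\le n<k}\frac{s^{n}}{n!}D^{n}f$, is a well-defined family of $\kplus$-coactions, since the $\kplus$-action axioms hold fibrewise by Lemma~\ref{exp-for-functions.lem} and therefore identically. By the universal property of $\Akplus(X)$ (Lemma~\ref{group-actions.lem}, Proposition~\ref{group-action.prop}) this is a morphism $Y\to\Akplus(X)$, and by construction it is exactly $\tilde\nu_{X}^{-1}|_{Y}$.

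For (4) I would reduce to (3) through the bijection $\Psi:=\tilde\nu_{X}\circ\tilde\eps_{X}^{-1}\colon\AutU(X)\to\LNV(X)$, so that $\tilde\eps_{X}^{-1}=\tilde\nu_{X}^{-1}\circ\Psi$. Given $Z\subseteq\AutU(X)$ closed algebraic, the proof of Lemma~\ref{unipotent-weakly-closed.lem} together with Lemma~\ref{constructible.lem} yields a uniform $k$ with $(u^{*}-\id)^{k}|_{W}=0$ for all $u\in Z$ on a generating subspace $W\subseteq\OOO(X)$. On $W$ the derivation $\Psi(u)$ is then $\pm\log(u^{*})=\pm\sum_{1\le j<k}\frac{(-1)^{j-1}}{j}(u^{*}-\id)^{j}$, a finite expression depending morphically on $u^{*}|_{W}$; hence $\Psi|_{Z}\colon Z\to\VEC(X)$ is a morphism whose image lies in some $\LNV(X)_{k'}$. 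Taking $Y:=\overline{\Psi(Z)}$, a closed algebraic subset of $\LNV(X)$, part (3) makes $\tilde\nu_{X}^{-1}|_{Y}$ a morphism, and $\tilde\eps_{X}^{-1}|_{Z}=(\tilde\nu_{X}^{-1}|_{Y})\circ(\Psi|_{Z})$ is a composition of morphisms.

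The main obstacle is the construction in (3): one must pass from the pointwise formula of Lemma~\ref{exp-for-functions.lem} to an honest family of ind-varieties. This requires verifying both that the uniform nilpotency bound yields a well-defined universal derivation $D$ with $D^{k}f_{i}=0$, and that the resulting coaction genuinely represents a morphism into $\Akplus(X)$ via its universal property. The remaining points are bookkeeping; the only subtle ingredient is the passage to a uniform bound, which is where uncountability of $\kk$ enters through Lemma~\ref{constructible.lem}.
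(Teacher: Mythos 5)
Your parts (1)--(3) follow essentially the paper's own route. For (3) the paper likewise reduces to a uniform bound $Y\subseteq\LNV(X)_{k}$ (you make the uncountability step via Lemma~\ref{constructible.lem} explicit, where the paper leaves it tacit) and then builds the family through the exponential $\sum_{j\geq 0}\frac{s^{j}}{j!}\,\delta^{j}$; your ``universal derivation'' $D$ on $\OOO(Y)\otimes\OOO(X)$ is a repackaging of that construction, with the fibrewise-plus-reducedness verification replacing the paper's direct check of $\phi_{s+t,\delta}=\phi_{s,\delta}\circ\phi_{t,\delta}$. Where you genuinely diverge is (4). The paper proves (4) directly and independently of (3): it interpolates the integer powers of $g$ by the binomial series $\psi_{s,g}=\sum_{j\geq 0}\binom{s}{j}(g^{*}-\id)^{j}$ and then verifies that $\psi_{s,g}$ is an algebra homomorphism and that $\psi_{s+t,g}=\psi_{s,g}\circ\psi_{t,g}$ by two Zariski-density arguments over $\ZZ\subset\kk$. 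You instead invert the exponential: the truncated logarithm $u\mapsto\log(u^{*})$ is morphic on $Z$ (the uniform bound $Z\subseteq A_{k,k}$ coming again from Lemma~\ref{constructible.lem}, exactly the paper's reduction), its image lies in some $\LNV(X)_{k'}$, and (4) becomes a composition through (3). Both work. The paper's version buys self-containedness and avoids your two auxiliary checks, namely that $\log(u^{*})$ is a derivation (which you correctly extract from the bijection in (1)--(2)) and that restriction to a generating subspace $W$ detects morphisms into $\VEC(X)$ --- the latter is legitimate because $\{\delta\in\VEC(X)\mid \delta(W)\subseteq\OOO(X)_{N}\}$ is a finite-dimensional subspace on which $\delta\mapsto\delta|_{W}$ is injective. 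Your version buys the elimination of both density arguments and makes the $\exp$/$\log$ duality between (3) and (4) explicit.

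One formula in (3) needs repair: the truncated coaction $f\mapsto\sum_{0\leq n<k}\frac{s^{n}}{n!}D^{n}f$ is only correct on the chosen generators $f_{i}$; for a general $f$ the nilpotency order of $D$ exceeds $k$, so the truncation neither defines an algebra map nor agrees with the action pointwise. You must take the full exponential $f\mapsto\sum_{n\geq 0}\frac{s^{n}}{n!}D^{n}f$, which is a finite sum for each $f$ since $D^{k}f_{i}=0$ forces $D$ to be locally nilpotent by the Leibniz rule --- this is precisely the paper's formula for $\phi_{s,\delta}$. With that one-line correction, your fibrewise verification of the action axioms on the reduced variety $Y\times\kk\times X$ goes through unchanged.
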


\begin{proof}
(3) 
The assumption implies that $Y$ is contained in some  $\LNV(X)_{k}$, so that we may assume that $Y= \LNV(X)_{k}$.  For $s \in \kk$ and $\delta \in Y$ the following formula defines an automorphism of $\OOO (X)$:
\[
\phi_{s,\delta} := \sum_{j \geq 0}  \frac{s^{j}}{j!}  \delta^j.
\]
In fact, this a priori infinite sum becomes finite when applied to an element of $\OOO (X)$. It follows that the map $\Phi\colon\kk \times Y \to \Aut(\OOO(X))$, $(s,\delta) \mapsto \phi_{s,\delta}$,
is an ind-morphism. Moreover, we have $\phi_{s+t,\delta} = \phi_{s,\delta}\circ\phi_{t,\delta}$. 

If we denote by $\lambda_{\delta}(s)$ the automorphism of $X$ corresponding to $\phi_{s,\delta}$,
then the map $\lambda_{\delta}\colon\kplus \to \Aut(X)$, $s\mapsto \lambda_{\delta}(s)$, is a homomorphism of ind-groups, i.e. $\lambda_{\delta}\in\Akplus(X)$, and we obtain a family of $\kplus$-actions on $X$ parametrized by $Y$, i.e. a morphism $Y \to \Akplus(X)$. This is the inverse of $\tilde\nu_{X}$.
\ps
(4) 
As before we may assume that $Z= A_{k,k}$.  For $s \in \kk$ and $g \in Z$ the following formula defines 
a linear endomorphism of $\OOO (X)$:
\[
\psi_{s,g} := \sum_{j \geq 0}  \binom{s}{j}  (g^* - \id)^j.
\]
In fact, this a priori infinite sum becomes finite when applied to an element of $\OOO (X)$. 
As a consequence, the map
$$
\Psi\colon \kk \times Z \times \OOO(X) \to \OOO(X), \ (s,g,f)\mapsto \psi_{s,g}(f),
$$
is an ind-morphism. We claim that $\psi_{s,g}$ is an algebra endomorphism. This is clear for $s \in \ZZ$, because 
$\psi_{n,g}=(g^{*})^{n}$ for $n \in \ZZ$. Now consider the following two ind-morphisms 
\begin{gather*}
\Psi_{1},\Psi_{2}\colon \kk\times Z \times \OOO(X) \times \OOO(X) \to \OOO(X):\\
\Psi_{1}(s,g,f_{1},f_{2})=\Psi(s,g,f_{1}\cdot f_{2}), \quad \Psi_{2}(s,g,f_{1},f_{2}) = \Psi(s,g,f_{1})\cdot\Psi(s,g,f_{2}).
\end{gather*}
Since they coincide on the dense subset $\ZZ \times Z \times \OOO(X) \times \OOO(X)$, they are equal, proving the claim.
It follows that we have an ind-morphism, again denoted by $\Psi$,
$$
\Psi\colon \kk \times Z \to \End(\OOO(X)), \quad (s,g) \mapsto \psi_{s,g}.
$$
Next we claim that $\psi_{s+t,g} = \psi_{s,g}\circ\psi_{t,g}$. Again, this holds
for $s,t \in \ZZ$, and a similar density argument as before proves the claim. As a consequence, each $\psi_{s,g}$ is invertible and thus defines a family of automorphisms $\phi_{s,g} \in \Aut(X)$ parametrised by $\kk \times Z$.
Setting $\lambda_{g}(s):=\phi_{s,g}$
we get a homomorphism $\lambda_{g}\colon \kplus\to\Aut(X)$ of ind-groups, and thus a family of $\kplus$-actions $Z \to \Akplus(X)$, $g\mapsto\lambda_{g}$.
\end{proof}

\begin{corollary} If $\LNV(X) \subseteq \VEC(X)$ is closed, then $\nu_{X}\colon \Akplus(X) \simto \LNV(X)$ is an isomorphism of ind-varieties. 

If $\AutU(X) \subseteq \Aut(X)$ is closed, then $\eps_{X}\colon \Akplus(X) \simto \AutU(X)$ is an isomorphism of ind-varieties. 
\end{corollary}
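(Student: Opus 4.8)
The plan is to deduce this corollary directly from Proposition~\ref{bijections-k+actions-nilpotentVF-unipotent-autos.prop}, whose parts (1)--(4) have already done the substantive work. Recall that $\tilde\nu_{X}\colon \Akplus(X) \to \LNV(X)$ and $\tilde\eps_{X}\colon \Akplus(X) \to \AutU(X)$ are \emph{bijective} ind-morphisms, so the only thing left to prove in each case is that the inverse map is again an ind-morphism. The role of the closedness hypothesis is precisely to provide an admissible filtration of the target by closed algebraic subsets, to which parts (3) and (4) of the proposition apply.

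I would treat $\nu_{X}$ first. By Proposition~\ref{vector-fields.prop}(1) the space $\VEC(X)$ is a finitely generated $\OOO(X)$-module, hence a $\kk$-vector space of countable dimension, and we may fix a filtration $\VEC(X) = \bigcup_{k}\VEC(X)_{k}$ by finite-dimensional subspaces (Example~\ref{countable-vector-space.exa}). Assuming $\LNV(X)$ is closed in $\VEC(X)$, each intersection $Y_{k} := \LNV(X)\cap\VEC(X)_{k}$ is a closed algebraic subset contained in $\LNV(X)$, and the $Y_{k}$ constitute the natural (admissible) ind-subvariety structure on the closed subset $\LNV(X)$ (Definition~\ref{The-Zariski-topology-of-an-ind-variety.def}). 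Applying Proposition~\ref{bijections-k+actions-nilpotentVF-unipotent-autos.prop}(3) to each $Y_{k}$ shows that $\tilde\nu_{X}^{-1}|_{Y_{k}}\colon Y_{k}\to\Akplus(X)$ is a morphism; by Lemma~\ref{Kumar.lem} its image is contained in some $\Akplus(X)_{\ell}$. These restrictions agree on the overlaps $Y_{k}\subseteq Y_{k+1}$, so they glue to an ind-morphism $\tilde\nu_{X}^{-1}\colon \LNV(X)\to\Akplus(X)$. Since $\tilde\nu_{X}$ is already a bijective ind-morphism, it is then an isomorphism, which is exactly the assertion that $\nu_{X}$ induces an isomorphism $\Akplus(X)\simto\LNV(X)$.

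The argument for $\eps_{X}$ is identical after replacing $\VEC(X)$ by the affine ind-group $\Aut(X)$ with its admissible filtration $\Aut(X)=\bigcup_{k}\Aut(X)_{k}$ by closed algebraic subsets. Assuming $\AutU(X)$ closed, the sets $Z_{k}:=\AutU(X)\cap\Aut(X)_{k}$ are closed algebraic subsets contained in $\AutU(X)$ and form the admissible filtration of the closed ind-subvariety $\AutU(X)$; Proposition~\ref{bijections-k+actions-nilpotentVF-unipotent-autos.prop}(4) then makes each $\tilde\eps_{X}^{-1}|_{Z_{k}}$ a morphism, and gluing as above produces the inverse ind-morphism, so that $\tilde\eps_{X}$ is an isomorphism.

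I do not expect any serious obstacle here, since the genuinely delicate computations---writing the exponential series $\sum_{j}\frac{s^{j}}{j!}\delta^{j}$ and the binomial series $\sum_{j}\binom{s}{j}(g^{*}-\id)^{j}$ and verifying that they define families of $\kplus$-actions---were already carried out in parts (3) and (4). The only points requiring care are that $Y_{k}$ (resp.\ $Z_{k}$), being closed in $\VEC(X)$ (resp.\ $\Aut(X)$), furnishes a genuinely admissible filtration of $\LNV(X)$ (resp.\ $\AutU(X)$) as a closed ind-subvariety (compare Theorem~\ref{closed-algebraic-filtration.thm}), and that the local inverses glue to a single ind-morphism, which is immediate from the definition of a morphism of ind-varieties together with Lemma~\ref{Kumar.lem}.
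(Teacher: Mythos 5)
Your proposal is correct and follows exactly the route the paper intends: the corollary is stated as an immediate consequence of Proposition~\ref{bijections-k+actions-nilpotentVF-unipotent-autos.prop}, and you spell out the routine deduction—bijectivity from parts (1)--(2), plus parts (3)--(4) applied to the induced filtration of the closed subset by closed algebraic pieces, glued into an inverse ind-morphism. The only point worth noting is that the proposition (and hence the corollary) was stated under the standing assumption that $\kk$ is uncountable, which your argument tacitly inherits.
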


\begin{corollary}\label{exp-for-Aut(X).cor}
The composition $\eps_{X}\circ (\nu_{X})^{-1}$ defines a bijective map
$$
\begin{CD}
\exp_{X}\colon \LNV(X) @>\text{\tiny bijective}>> \AutU(X)
\end{CD}
$$
with the following property. If $\lambda$ is a $\kplus$-action on $X$ and $\xi_{\lambda}$ the corresponding locally nilpotent vector field, then $\exp_{X}(\xi_{\lambda}) = \lambda(1)$. 

Moreover, for every algebraic subset $Y \subset  \VEC(X)$ contained in $\LNV(X)$ the induced map $\exp_{X}\colon Y \to  \GGG$ is a morphism.
\end{corollary}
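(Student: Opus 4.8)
The plan is to assemble the statement directly from Proposition~\ref{bijections-k+actions-nilpotentVF-unipotent-autos.prop}, which already contains all the substantial content. First I would define $\exp_{X}$ as the composition $\eps_{X}\circ(\nu_{X})^{-1}$, being careful about the fact that $(\nu_{X})^{-1}$ only makes sense after co-restricting $\nu_{X}$ to its image: by part~(1) of the cited proposition the map $\nu_{X}$ induces a \emph{bijection} $\tilde\nu_{X}\colon \Akplus(X) \simto \LNV(X)$, so that $(\tilde\nu_{X})^{-1}\colon \LNV(X)\to\Akplus(X)$ is defined. Likewise, part~(2) yields a bijection $\tilde\eps_{X}\colon \Akplus(X)\simto\AutU(X)$. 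Hence $\exp_{X}=\tilde\eps_{X}\circ(\tilde\nu_{X})^{-1}\colon \LNV(X)\to\AutU(X)$ is a composition of two bijections and is therefore itself bijective.

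Next I would verify the stated defining property, which is a pure unwinding of definitions with no computation. For a $\kplus$-action $\lambda$ on $X$, the associated locally nilpotent vector field is by definition $\xi_{\lambda}=\nu_{X}(\lambda)=\xi(d\lambda_{0}(1))$ (see the discussion in Section~\ref{Cplus.subsec} together with Lemma~\ref{exp-for-functions.lem}). Consequently $(\tilde\nu_{X})^{-1}(\xi_{\lambda})=\lambda$, and applying $\eps_{X}$ gives $\exp_{X}(\xi_{\lambda})=\eps_{X}(\lambda)=\lambda(1)$, exactly as claimed.

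Finally, for the regularity statement I would fix an algebraic subset $Y\subseteq\VEC(X)$ contained in $\LNV(X)$ and write $\exp_{X}|_{Y}=\eps_{X}\circ (\tilde\nu_{X})^{-1}|_{Y}$. Part~(3) of Proposition~\ref{bijections-k+actions-nilpotentVF-unipotent-autos.prop} asserts precisely that $(\tilde\nu_{X})^{-1}|_{Y}\colon Y\to\Akplus(X)$ is a morphism, while part~(2) asserts that $\eps_{X}\colon\Akplus(X)\to\Aut(X)$ is an ind-morphism; hence their composition $\exp_{X}|_{Y}$ is a morphism into $\Aut(X)$, with image contained in the weakly closed subset $\AutU(X)$. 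There is essentially no serious obstacle here: the only point requiring attention is the bookkeeping between $\nu_{X}$ and its co-restriction $\tilde\nu_{X}$ (respectively $\eps_{X}$ and $\tilde\eps_{X}$), and the matching up of which part of the preceding proposition supplies each assertion — bijectivity from parts~(1) and~(2), and regularity of the relevant inverse from part~(3).
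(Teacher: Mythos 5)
Your proof is correct and is essentially the paper's own (implicit) argument: the corollary is assembled directly from Proposition~\ref{bijections-k+actions-nilpotentVF-unipotent-autos.prop} exactly as you do — bijectivity from parts~(1) and~(2), the defining property by unwinding $\nu_{X}$ and $\eps_{X}$, and regularity from part~(3) composed with the ind-morphism $\eps_{X}$. The only bookkeeping point to add is that part~(3) is stated for \emph{closed} algebraic subsets while the corollary allows locally closed $Y$; this is settled in one line, since $\LNV(X)$ is weakly closed by part~(1), so $\overline{Y}\subseteq\LNV(X)$, and you apply part~(3) to $\overline{Y}$ and then restrict the resulting morphism to the locally closed subvariety $Y$.
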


\begin{question} \label{Closedness-of-the-set-of-unipotent-automorphisms.ques} 
Do the unipotent elements $\AutU(X)$ form a closed subset of $\Aut(X)$? Is $\LNV(X)$ closed in $\VEC(X)$?
\end{question}

\begin{question} \label{Closedness-of-the-set-of-unipotent-endomorphisms.ques}
More generally, is $\End^{\text{\it ln}} (X)$ closed in  $\End(X)$?
\end{question}

\ps
\subsection{The exponential map for an affine ind-group}  \label{exponential-for-affine-ind-groups.subsec}
The exponential map has been defined for a linear algebraic group in Section~\ref{exponential-for-linear-algebraic-groups.subsec} and for an ind-group of the form $\Aut (X)$ in Section~\ref{Cplus.subsec}
(see Proposition~\ref{exponential-for-linear-algebraic-groups.prop} and Corollary~\ref{exp-for-Aut(X).cor}). In this section, we generalize in some sense these definitions to any affine ind-group. We begin by recalling some results from Sections~\ref{exponential-for-linear-algebraic-groups.subsec} and \ref{Integration-of-VF.subsec}.

For the Lie algebra $\gg:=\Lie G$ of a linear algebraic group $G$ an element $N\in \gg$ is called \itind{nilpotent} if there is a faithful representation $\rho\colon G \into \GL_{n}$ such that the image of $N$ under $d\rho\colon \gg \into \M_{n}$ is a nilpotent matrix. It then follows that this holds for every representation of $G$. Moreover, one shows that for every nilpotent $N\in\gg$ there is a well-defined homomorphism $\lambda_{N}\colon \kplus \to G$ such that $d\lambda_{N}(1) = N$, see Section~\ref{exponential-for-linear-algebraic-groups.subsec}. Since we do not know if a general ind-group $\GGG$ admits a faithful representation we cannot carry this over to define \itind{locally nilpotent} elements in $\Lie\GGG$.

On the other hand, if $\GGG = \Aut(X)$ for an affine variety $X$, we can define locally nilpotent elements in $\Lie\Aut(X)$ by using the embedding $\Lie\Aut(X) \into \VEC(X)$, since we know what a locally nilpotent vector field is, see Section~\ref{Integration-of-VF.subsec}.   We showed that every such $N$ can be integrated, i.e. there is an action of $\kplus$ on $X$, $\lambda\colon\kplus \to \Aut(X)$, such that $d\lambda(1) = N$, see Proposition~\ref{Integration-of-VF.prop}(3). On the other hand, this does not hold for closed ind-subgroups $\GGG \subseteq \Aut(X)$ as we will see in Section~\ref{construction.subsec}, cf. Remark~\ref{Lie-H-cap-G.rem}.

\begin{definition}\label{locally-nilpotent-general.def}
Let $\GGG$ be an affine ind-group. An element $A \in \Lie\GGG$ is called \itind{locally nilpotent} if there is a closed unipotent algebraic subgroup $U \subseteq \GGG$ such that $A \in \Lie U$. Set $\Lieln\GGG$ to denote the set of locally nilpotent elements.
\end{definition}
The following proposition is clear.
\begin{proposition}
\be
\item
For any $N \in \Lieln\GGG$ there is a uniquely defined  homomorphism $\lambda_{N}\colon \kplus \to \GGG$ such that $d\lambda_{N}(1)=N$.
\item 
If $\phi\colon \GGG\to \HHH$ is a homomorphism of ind-groups, then $d\phi(\Lieln\GGG) \subseteq \Lieln\HHH$.
\item If $\GGG = \Aut(X)$ where $X$ is an affine variety, then $N \in \Lie\GGG$ is locally nilpotent if and only if the corresponding vector field
$\xi_N$ is locally nilpotent.
\item
For every representation $\rho\colon \GGG \to \GL(V)$ the image of $\Lieln\GGG$ under $d\rho$ is contained in $\End^{\text{\it ln}}(V)$.
\ee
\end{proposition}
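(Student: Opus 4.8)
The plan is to deduce all four assertions from the one-parameter unipotent subgroups and the exponential formalism built up in the previous sections, treating the statements in order and reducing each to the finite-dimensional theory of algebraic groups.

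For (1) I would unwind the definition: $N\in\Lieln\GGG$ means $N\in\Lie U$ for some closed unipotent algebraic subgroup $U\subseteq\GGG$. Since $U$ is unipotent, Example~\ref{unipotent-exp.exa} gives $(\Lie U)^{\text{\it nil}}=\Lie U$, so $N$ is a nilpotent element of $\Lie U$, and Lemma~\ref{lambda.lem} produces a homomorphism $\lambda_N\colon\kplus\to U\subseteq\GGG$ with $d\lambda_N(1)=N$. Uniqueness follows from Proposition~\ref{phi-dphi.prop}: a homomorphism out of the connected group $\kplus$ is determined by its differential, and the differential of any $\lambda\colon\kplus\to\GGG$ is the Lie algebra map $\kk=\Lie\kplus\to\Lie\GGG$ sending $1\mapsto d\lambda(1)$, hence is fixed by $N$. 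For (2), given $\phi\colon\GGG\to\HHH$ and $N\in\Lie U$ as above, I would note that $\phi(U)\subseteq\HHH$ is a closed algebraic subgroup by Proposition~\ref{Hom-G-to-ind-group.prop}, and that it is unipotent, being a quotient of the unipotent group $U$ (equivalently, its elements are unipotent by Lemma~\ref{JD.lem}(1)). The restriction $\phi|_U\colon U\to\phi(U)$ is a homomorphism of algebraic groups whose differential carries $N$ to $d\phi(N)\in\Lie\phi(U)$, so $d\phi(N)$ lies in the Lie algebra of a closed unipotent algebraic subgroup, i.e. $d\phi(N)\in\Lieln\HHH$.

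For (3) the forward implication follows from (1): the $\kplus$-action $\lambda_N$ has associated vector field $\delta_{\lambda_N}=\xi(d\lambda_N(1))=\xi_N$ (Section~\ref{Cplus.subsec}), which is locally nilpotent by Lemma~\ref{exp-for-functions.lem}. Conversely, if $\xi_N$ is locally nilpotent then it coincides with its own nilpotent part, so (the case $N=0$ being trivial) Proposition~\ref{Integration-of-VF.prop}(3) yields a $1$-dimensional, hence closed algebraic, unipotent subgroup $U\subseteq\Aut(X)$ with $\xi_N\in\xi(\Lie U)$; since $\xi$ is injective (Proposition~\ref{Liealg-VF.prop}), this forces $N\in\Lie U$, i.e. $N$ is locally nilpotent in the sense of Definition~\ref{locally-nilpotent-general.def}. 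This is exactly the reconciliation of Definition~\ref{locally-nilpotent-general.def} with the notion of Remark~\ref{loc-nil-in-LieAut(X).rem}.

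Part (4) is where the real work lies, because $V$ has only countable dimension and $\GL(V)$ is not an ind-group, so the finite-dimensional algebraic-group arguments cannot be applied verbatim; this reduction is the main obstacle. The plan is to compose with the one-parameter subgroup from (1): $\rho\circ\lambda_N\colon\kplus\to\GL(V)$ is again a representation, with differential $d\rho(N)$. For fixed $v\in V$, the orbit map $s\mapsto\rho(\lambda_N(s))v$ is a morphism from the variety $\kplus$ into the ind-variety $V$, so by Lemma~\ref{Kumar.lem} its image lies in a finite-dimensional member of the filtration of $V$; the $\kplus$-stable span $W$ of this image is then finite-dimensional and contains $v$. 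On $W$ one obtains a genuine homomorphism $\kplus\to\GL(W)$, whose differential $d\rho(N)|_W$ is nilpotent by Lemma~\ref{exp-basics.lem}(3). Since every $v$ lies in such a finite-dimensional $d\rho(N)$-stable subspace on which $d\rho(N)$ acts nilpotently, $d\rho(N)\in\End^{\text{\it ln}}(V)$. Once Lemma~\ref{Kumar.lem} guarantees that the orbit maps have finite-dimensional image, the remaining nilpotency statements are immediate from the exponential formalism for $\kplus$.
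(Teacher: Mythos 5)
Your proof is correct, and it supplies exactly the routine reductions the paper leaves implicit: the authors state this proposition with no proof at all (``The following proposition is clear''), and the intended route is precisely yours, namely Lemma~\ref{lambda.lem} together with Proposition~\ref{phi-dphi.prop} for (1), Proposition~\ref{Hom-G-to-ind-group.prop} for (2), Proposition~\ref{Integration-of-VF.prop}(3) with the injectivity of $\xi$ from Proposition~\ref{Liealg-VF.prop} for (3), and the finite-dimensional reduction via Lemma~\ref{Kumar.lem} and Lemma~\ref{exp-basics.lem}(3) for (4), the last of which is also what Remark~\ref{loc-nil-in-LieAut(X).rem} sketches. In particular your treatment of (4), where you verify that the $\kplus$-stable span $W$ of the orbit of $v$ is finite-dimensional and $d\rho(N)$-stable, is the only genuinely nontrivial point, and you handle it correctly.
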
\label{Lie-G-ln.prop}

Let $\GGG$ be an affine ind-group with Lie algebra $\gg:=\Lie\GGG$. Denote by $\GGG^{u}\subseteq \GGG$ the subset of unipotent elements and by  $\gln\subseteq \gg$ the subset of locally nilpotent elements. We know that the set of homomorphisms $\Hom(\kplus,\GGG)$ is a closed subset of $\Mor(\kplus,\GGG)$ and thus has a natural structure of an affine ind-variety. The evaluation in $1\in\kplus$ defines a map\idx{$\End$@$\eps_{\GGG}$}
$$
\eps_{\GGG}\colon \Hom(\kplus,\GGG) \to \GGG, \quad \eps_{\GGG}(\lambda):=\lambda(1),
$$
and similarly, using the differential of $\lambda\colon\kplus \to \GGG$, we get a map\idx{$\M$@$\nu_{\GGG}$}
$$
\nu_{\GGG}\colon \Hom(\kplus,\GGG) \to \gg, \quad \nu_{\GGG}(\lambda):=d\lambda_{0}(1),
$$
cf. Section~\ref{exponential-for-linear-algebraic-groups.subsec}.
Note that $\GGG$ acts on $\GGG$ by conjugation, on $\gg$ by the adjoint representation, and on $\Hom(\kplus,\GGG)$ via conjugation on $\GGG$.
\begin{lemma}
\be
\item 
The maps $\eps_{\GGG}$ and $\nu_{\GGG}$ are injective ind-morphisms.
\item
The morphisms
$\eps_{\GGG}$ and $\nu_{\GGG}$ are $\GGG$-equivariant.
\item 
The morphism $\eps_{\GGG}$ induces a bijection $ \tilde \eps_{\GGG} \colon \Hom(\kplus,\GGG) \to \GGG^{u}$.
\item
The morphism $\nu_{\GGG}$ induces a bijection $\tilde \nu_{\GGG} \colon \Hom(\kplus,\GGG) \to \gln$.
\ee
\end{lemma}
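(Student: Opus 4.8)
The plan is to treat the four assertions together, leaning on the corresponding facts for linear algebraic groups (Proposition~\ref{Hom-kplus.prop}), the integration result Proposition~\ref{Lie-G-ln.prop}, and the rigidity statement Proposition~\ref{phi-dphi.prop}.

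First I would check that both maps are ind-morphisms. For $\eps_{\GGG}$ this is immediate: it is the restriction to the closed ind-subvariety $\Hom(\kplus,\GGG)\subseteq\Mor(\kplus,\GGG)$ of the evaluation $\lambda\mapsto\lambda(1)$, which is a morphism by Lemma~\ref{tautological.lem}(\ref{eval}). For $\nu_{\GGG}$ I would use a filtration $\GGG=\bigcup_{k}\GGG_{k}$: every homomorphism sends $0\mapsto e$, so that $\Hom(\kplus,\GGG)\subseteq\Mor_{0}(\kplus,\GGG)$, and Lemma~\ref{mor-to-tangent.lem} applied on each piece $\Mor_{0}(\kplus,\GGG_{k})$ shows that $\lambda\mapsto d\lambda_{0}$ is an ind-morphism into $\LLL(T_{0}\kplus,\Lie\GGG)=\LLL(\kk,\Lie\GGG)$; evaluating at the canonical generator $1\in T_{0}\kplus=\kk$ yields $\nu_{\GGG}$. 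Equivariance is a direct computation: writing $g\cdot\lambda=\Int(g)\circ\lambda$ gives $\eps_{\GGG}(g\cdot\lambda)=g\,\eps_{\GGG}(\lambda)\,g^{-1}$, and since $\Ad(g)=d\Int(g)_{e}$ the chain rule gives $d(g\cdot\lambda)_{0}=\Ad(g)\circ d\lambda_{0}$, whence $\nu_{\GGG}(g\cdot\lambda)=\Ad(g)\,\nu_{\GGG}(\lambda)$.

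The structural observation underlying (3) and (4) is that for any $\lambda\in\Hom(\kplus,\GGG)$ the image $\lambda(\kplus)$ is a closed algebraic subgroup by Proposition~\ref{Hom-G-to-ind-group.prop}; being a connected quotient of $\kplus$, it is either trivial or isomorphic to $\kplus$, and in both cases a unipotent algebraic group $U_{\lambda}\subseteq\GGG$. Consequently $\eps_{\GGG}(\lambda)=\lambda(1)\in U_{\lambda}$ is a unipotent element, so $\Image\eps_{\GGG}\subseteq\GGG^{u}$; and $\nu_{\GGG}(\lambda)=d\lambda_{0}(1)\in\Lie U_{\lambda}$, so $\nu_{\GGG}(\lambda)\in\gln$ by Definition~\ref{locally-nilpotent-general.def}. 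For surjectivity onto $\gln$ I would invoke Proposition~\ref{Lie-G-ln.prop}(1): any $N\in\gln$ is of the form $N=d\lambda_{N}(1)=\nu_{\GGG}(\lambda_{N})$. For surjectivity onto $\GGG^{u}$, given $g\in\GGG^{u}$ I would set $U:=\overline{\langle g\rangle}$, a unipotent algebraic group; since $g\in U=U^{u}$, Proposition~\ref{Hom-kplus.prop} produces $\lambda\in\Hom(\kplus,U)\subseteq\Hom(\kplus,\GGG)$ with $\lambda(1)=g$.

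Finally, injectivity. For $\nu_{\GGG}$ it follows at once from Proposition~\ref{phi-dphi.prop}: two homomorphisms out of the connected group $\kplus$ with the same differential (equivalently, the same value $d\lambda_{0}(1)$, as $\Lie\kplus$ is one-dimensional) coincide; one may also quote the uniqueness in Proposition~\ref{Lie-G-ln.prop}(1). For $\eps_{\GGG}$, suppose $\lambda(1)=\lambda'(1)=g$. If $g=e$, then $\ker\lambda$ contains the Zariski-dense subgroup $\ZZ\subset\kplus$, forcing both $\lambda$ and $\lambda'$ to be trivial; if $g\neq e$, then as above $\lambda(\kplus)=\lambda'(\kplus)=\overline{\langle g\rangle}=:U$, so $\lambda,\lambda'$ are homomorphisms $\kplus\to U$ with the same value at $1$, and the bijectivity of $\tilde\eps_{U}$ in Proposition~\ref{Hom-kplus.prop} forces $\lambda=\lambda'$. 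The only genuinely delicate point is this reduction of the injectivity of $\eps_{\GGG}$ to the algebraic subgroup $\overline{\langle g\rangle}$: it rests on the fact that a one-dimensional closed connected algebraic subgroup containing $g$ must equal $\overline{\langle g\rangle}$, which uses that a nontrivial unipotent element has infinite order in characteristic zero.
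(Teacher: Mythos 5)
Your proof is correct, and in parts (1)--(2), in the surjectivity arguments of (3)--(4), and in the injectivity of $\nu_{\GGG}$ it coincides with the paper's proof (same appeals to Lemma~\ref{tautological.lem}, Lemma~\ref{mor-to-tangent.lem}, Proposition~\ref{phi-dphi.prop}, Proposition~\ref{Lie-G-ln.prop}(1), and the same reduction of surjectivity onto $\GGG^{u}$ to the algebraic group $\overline{\langle g \rangle}\simeq\kplus$). The one place you genuinely diverge is the injectivity of $\eps_{\GGG}$: the paper disposes of it in one line by noting that $\lambda(1)=\mu(1)$ forces $\lambda(n)=\lambda(1)^{n}=\mu(1)^{n}=\mu(n)$ for all $n\in\ZZ$, whence $\lambda=\mu$ because two morphisms $\AA^{1}\to\GGG$ agreeing on the Zariski-dense subset $\ZZ$ are equal (their images lie in a common $\GGG_{k}$). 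Your route instead splits into the cases $g=e$ and $g\neq e$, identifies $\lambda(\kplus)$ with $\overline{\langle g\rangle}$ via Proposition~\ref{Hom-G-to-ind-group.prop} and the fact that nontrivial unipotents have infinite order in characteristic zero, and then quotes the bijectivity of $\tilde\eps_{U}$ from Proposition~\ref{Hom-kplus.prop} inside the algebraic subgroup $U$. This is valid, and you correctly flagged the delicate identification $\lambda(\kplus)=\overline{\langle g\rangle}$; but note that your $g=e$ case already contains the density trick that settles the general case uniformly, so the case distinction and the detour through $U$ buy you nothing here --- the paper's argument is strictly simpler and avoids the auxiliary facts you needed. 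The only mild advantage of your version is that it localizes the whole verification inside a single algebraic subgroup, making the reduction to the classical Proposition~\ref{Hom-kplus.prop} completely explicit.
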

\begin{proof}
(1) This  follows from  Lemma~\ref{tautological.lem}(1) that the map $\eps_{\GGG}$ is a morphism, and from Lemma~\ref{mor-to-tangent.lem} that   $\Hom(\kplus,\GGG) \to \LLL(\kk,\gg)$, $\lambda\mapsto d\lambda_{0}$, is also a morphism. Thus $\eps_{\GGG}$ and $\nu_{\GGG}$ are both morphisms of ind-varieties.

If $\lambda,\mu\in\Hom(\kplus,\GGG)$ such that $\lambda(1) = \mu(1)$, then $\lambda(n) = \lambda(1)^{n}=\mu(1)^{n}=\mu(n)$ for all $n\in \ZZ$, hence $\lambda=\mu$, showing that $\eps_{\GGG}$ is injective.
The injectivity of $\nu_{\GGG}$ follows from Proposition~\ref{phi-dphi.prop}.
\ps
(2) The equivariance for $\eps_{\GGG}$ is clear, and for $\nu_{\GGG}$ is follows from the definition of the adjoint representation.
\ps
(3) It is clear that the image of $\eps_{\GGG}$ is in $\GGG^{u}$. If $u \in \GGG^{u}$, $u\neq e$, then $\overline{\langle u \rangle}\subseteq \GGG$ is isomorphic to $\kplus$, and we can choose the isomorphism $\lambda\colon \kplus\simto \overline{\langle u \rangle}$ in such a way that $\lambda(1)=u$. 
\ps
(4) This follows from Proposition~\ref{Lie-G-ln.prop}(1).  
\end{proof}

\begin{definition}
The {\it exponential} and {\it logarithm} maps are defined by\idx{$\exp_{\GGG}$}\idx{$\log_{\GGG}$}
\[ 
\exp_{\GGG} := \eps_{\GGG} \circ  (\tilde \nu_{\GGG})^{-1} \colon \gln \to \GGG \ \text{ and } \ 
\log_{\GGG}  := \nu_{\GGG} \circ  (\tilde \eps_{\GGG})^{-1} \colon  \GGG^{u} \to \gg. 
\]
\end{definition}

\begin{question}
\be
\item Is $\eps_{\GGG}$ a closed immersion?
\item Is $\nu_{\GGG}$ a closed immersion?
\item Is it true that for any morphism $\phi \colon Y \to \GGG$ with image in $\GGG^{u}$ the composition
$\log_{\GGG} \circ \phi$ is a morphism?
\ee
\end{question}\label{eps-and-nu.ques}

\begin{remark}
Note that $\eps_{\GGG}$ is a closed immersion if and only if the two following assertions are satisfied:
\be
\item[(i)] $\GGG^u$ is closed in $\GGG$;
\item[(ii)] $ \tilde \eps_{\GGG} \colon \Hom(\kplus,\GGG) \to \GGG^{u}$ is an isomorphism.
\ee
Analogously $\nu_{\GGG}$ is a closed immersion if and only if the two following assertions are satisfied:
\be
\item[(i)] $\gln$ is closed in $\gg$;
\item[(ii)] $ \tilde \nu_{\GGG} \colon \Hom(\kplus,\GGG) \to \gln$ is an isomorphism.
\ee
In the case where $\eps_{\GGG}$ and $\nu_{\GGG}$ are both closed immersions, it is clear that $\exp_{\GGG}$ and $ \log_{\GGG}$ are morphism of ind-varieties.
\end{remark}

\ps
\subsection{Modifications of \texorpdfstring{$\kplus$}{k+}-actions}  \label{modification.subsec}

Let us discuss here the important special case of the construction of $\tilde\rho\colon G(\OOO(X)^{G}) \to \Aut(X)$ given in Section~\ref{large-subgroups.subsec} for the group $G=\kplus$. In this case, we can identify the ind-group $\kplus(\OOO(X)^{\kplus})$ with the $\kk$-vector space $\OOO(X)^{\kplus}$ considered as an additive ind-group. Denote by $\delta_{\rho}\in\VEC(X)$ the corresponding locally nilpotent vector field, see Section~\ref{Cplus.subsec}.

\begin{proposition}\label{modification.prop} Consider a nontrivial action of $\kplus$ on the affine variety $X$
given by a homomorphism of ind-groups $\rho \colon \kplus \to \Aut (X)$. Then
the homomorphism $\tilde\rho\colon \OOO(X)^{\kplus} \to \Aut(X)$ induces an isomorphism of ind-groups
$\OOO(X)^{\kplus}\simto \Aut_{\rho}(X)$,
and the differential $d\tilde\rho\colon \OOO(X)^{\kplus} \to \VEC(X)$ is given by $f \mapsto f\delta_{\rho}$.
\end{proposition}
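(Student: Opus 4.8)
The plan is to treat the two assertions separately, disposing of the differential formula by a direct computation and then establishing the isomorphism, whose only delicate point is the regularity of the inverse map.

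First I would compute $d\tilde\rho$. Since $\OOO(X)^{\kplus}$ is a $\kk$-vector space, we have $T_{0}\OOO(X)^{\kplus}=\OOO(X)^{\kplus}$, and for $f\in\OOO(X)^{\kplus}$ the tangent vector $d\tilde\rho(f)$ is obtained by differentiating the line $t\mapsto\tilde\rho(tf)=\rho_{tf}$ at $t=0$, noting $\rho_{0}=\id$. Under the embedding $\xi\colon T_{\id}\End(X)\into\VEC(X)$ of Proposition~\ref{End(X)-and-Vec(X).prop}, the image of $d\tilde\rho(f)$ is the vector field $x\mapsto\frac{d}{dt}\big|_{t=0}\rho_{tf}(x)=\frac{d}{dt}\big|_{t=0}\rho(tf(x))(x)$. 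By the chain rule and the very definition of the vector field $\delta_{\rho}$ attached to the $\kplus$-action (Section~\ref{Cplus.subsec}), this equals $f(x)\,(\delta_{\rho})_{x}$. Hence $\xi(d\tilde\rho(f))=f\delta_{\rho}$, which is the asserted formula.

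Next comes the bijectivity. By Proposition~\ref{Large-subgroups.prop}, $\tilde\rho$ is a homomorphism of ind-groups whose image lies in the closed subgroup $\Aut_{\rho}(X)$, and I would identify $\kplus(\OOO(X)^{\kplus})$ with $\OOO(X)^{\kplus}$ as additive ind-groups. Since the action is nontrivial it admits a local section (Proposition~\ref{local-sections-for-kplus.prop}), and $\kplus$ is connected and commutative; assuming $X$ irreducible (as in Proposition~\ref{commutative-modification.prop}), that proposition yields that $\tilde\rho\colon\OOO(X)^{\kplus}\to\Aut_{\rho}(X)$ is a bijection. What remains, and is the heart of the matter, is to prove that the inverse is an ind-morphism.

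For this I would first write the inverse explicitly. Choose $h\in\OOO(X)$ with $f_{0}:=\delta_{\rho}(h)$ a nonzero invariant (so $\delta_{\rho}f_{0}=0$), exactly as in the proof of Proposition~\ref{local-sections-for-kplus.prop}. For $f\in\OOO(X)^{\kplus}$ and $\phi=\rho_{f}$, Lemma~\ref{exp-for-functions.lem} gives $h(f(x)\cdot x)=h(x)+f(x)f_{0}(x)$, that is $\phi^{*}(h)-h=f\,f_{0}$ in $\OOO(X)$; thus the inverse of $\tilde\rho$ is $\phi\mapsto(\phi^{*}(h)-h)/f_{0}$. Now $\phi\mapsto\phi^{*}(h)-h$ is an ind-morphism $\Aut_{\rho}(X)\to\OOO(X)$ by Lemma~\ref{tautological.lem}, and the obstacle is that the division by $f_{0}$ leaves $\OOO(X)$ a priori. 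I expect this to be the main difficulty, and I would remove it with the Division Lemma: on each algebraic piece $\Aut_{\rho}(X)_{m}$ the degrees of the $\phi^{*}(h)$ are bounded, so $(\phi^{*}(h)-h)/f_{0}$ lies in the fixed finite-dimensional subspace $\{g\in\OOO(X)\mid g f_{0}\in\OOO(X)_{\ell}\}\subseteq\OOO(X)_{k}$, which is finite-dimensional because multiplication by $f_{0}$ is injective, $\OOO(X)$ being a domain. Corollary~\ref{bilinear.cor}, applied with parameter variety $\Aut_{\rho}(X)_{m}$, with $\mu\equiv f_{0}$ and numerator $\phi\mapsto\phi^{*}(h)-h$, then shows that $\phi\mapsto(\phi^{*}(h)-h)/f_{0}$ is a morphism on each piece, hence an ind-morphism into $\OOO(X)^{\kplus}$. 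This inverse being a morphism, $\tilde\rho$ is an isomorphism of ind-varieties, and since it is a bijective homomorphism of ind-groups, an isomorphism of ind-groups.
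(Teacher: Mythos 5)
Your proof is correct, and on the decisive regularity step it takes a genuinely different route from the paper. Both arguments settle bijectivity identically: a local section exists by Proposition~\ref{local-sections-for-kplus.prop}, and Proposition~\ref{commutative-modification.prop} then identifies the image of $\tilde\rho$ with $\Aut_{\rho}(X)$. But the paper never writes down the inverse; instead it proves directly that $\tilde\rho$ is a closed immersion by \emph{linearizing}: choose a $\kplus$-equivariant closed embedding $X \subseteq V$ into a $\kplus$-module, extend $\tilde\rho$ to a map $\OOO(V) \to \End(V)$, $h \mapsto (f_{1}(h,x),\ldots,f_{n}(h,x))$, and observe that in a Jordan basis some coordinate of $\rho(s)$ reads $x_{i}+sx_{i+1}$, so composing with the projection onto that coordinate gives $h \mapsto x_{i}+h\,x_{i+1}$, a closed immersion; the differential formula drops out of the same linear model, since $\delta_{\rho}=\frac{\partial \rho(s)}{\partial s}\big|_{s=0}$. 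You instead compute $d\tilde\rho$ intrinsically on $X$ by differentiating $t\mapsto \rho_{tf}$ (which is fine), and you obtain regularity of the inverse from the explicit identity $\rho_{f}^{*}(h)=h+f\,f_{0}$ with $f_{0}=\delta_{\rho}h$, $\delta_{\rho}^{2}h=0$ (Lemma~\ref{exp-for-functions.lem}), followed by the Division Lemma in the form of Corollary~\ref{bilinear.cor}; your verification of its hypothesis is sound, since on each piece $\Aut_{\rho}(X)_{m}$ the degree of $\phi^{*}(h)$ is bounded and multiplication by $f_{0}$ is injective, $\OOO(X)$ being a domain, so the quotient stays in a fixed finite-dimensional subspace. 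The irreducibility of $X$ that this requires, and which you flag, is no loss relative to the paper: its own proof already needs it through the appeal to Proposition~\ref{commutative-modification.prop}. As for what each approach buys: the paper's linearization yields the closed-immersion statement without exhibiting an inverse and keeps the differential computation in a transparent linear setting, while your argument avoids the equivariant embedding altogether, makes the inverse $\phi \mapsto (\phi^{*}(h)-h)/f_{0}$ completely explicit, and is very much in the paper's own spirit, since the Division Lemma is exactly the tool it deploys for Theorem~\ref{R*-locally-closed-in-R.thm} and Proposition~\ref{normalization.prop}.
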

\begin{proof}
(1) 
We know from Proposition~\ref{commutative-modification.prop} that the image of $\tilde\rho$ is $\Aut_{\rho}(X)$, since every nontrivial $\kplus$-action has a local section (Proposition~\ref{local-sections-for-kplus.prop}). Thus it suffices to show that $\tilde\rho$ is a closed immersion.
\ps
(2) 
First consider the case where $X = V$ is a $\kplus$-module. If we choose a basis, then $\rho(s) = (f_{1}(s,x),\ldots,f_{n}(s,x))$ where the polynomials $f_{i}\in \kk[s,x_{1},\ldots,x_{n}]$ are homogeneous and linear in the $x_{i}$. For any $h \in \kk[x_{1},\ldots,x_{n}]$ we define
$$
\tilde\rho(h) := (f_{1}(h,x_{1},\ldots,x_{n}), \ldots,f_{n}(h,x_{1},\ldots,x_{n}))\in \End(V),
$$
extending the homomorphism $\tilde\rho\colon \kk[x_{1},\ldots,x_{n}]^{\kplus} \to \Aut(V)$,
see Example~\ref{rho-for-rep.exa}.

Choosing a suitable basis we can assume that one of the coordinate functions of $\rho(s)$ has the form $f_{i}(s,x) = x_{i}+ sx_{i+1}$. Therefore, the map $\kk[x_{1},\ldots,x_{n}]\to \End(V)$, $h \mapsto \tilde\rho(h)$, is a closed immersion, because the composition with the projection onto the $i$th coordinate is a closed immersion. 
Hence, $\tilde\rho\colon \kk[x_{1},\ldots,x_{n}]^{\kplus} \to \Aut(V)$ is a closed immersion.

Moreover, the differential of the morphism $\tilde\rho\colon \kk[x_{1},\ldots,x_{n}]\to \End(V)$ in the origin $0\in\kk[x_{1},\ldots,x_{n}]$ is the linear map $h \mapsto h\delta_{\rho}$, because $\delta_{\rho}=\frac{\partial \rho(s)}{\partial s}\big|_{s=0}$. This proves the claim for the case of a $\kplus$-module.
\ps
(3)
In general, we choose a $\kplus$-stable closed embedding $X \subseteq V$ into a $\kplus$-module $V$. Then we have inclusions
$\Aut(X) \subseteq \End(X) \subseteq \Mor(X,V)$
where the first is locally closed (Theorem~\ref{AutX-locally-closed-in-EndX.thm}) and the second is closed (Proposition~\ref{closed-immersion-Mor.lem}). Moreover, the restriction $\phi \mapsto \phi|_{X}$ is a  surjective linear map  $\End(V) \onto \Mor(X,V)$, and we get the following diagram
$$
\begin{CD}
\Aut(X) @>{\subseteq}>> \End(X) @>{\subseteq}>> \Mor(X,V) @<{\text{linear}}<< \End(V) \\
@AA{\tilde\rho}A @AA{\tilde\rho}A @AA{\tilde\rho}A @AA{\tilde\rho}A \\
\OOO(X)^{\kplus} @= \OOO(X)^{\kplus} @>{\subseteq}>> \OOO(X) @<<< \OOO(V)
\end{CD}
$$
which implies that $\tilde\rho\colon\OOO(X)^{\kplus}\to \Aut(X)$ is a closed immersion, with differential $f \mapsto f\delta_{\rho}$.
\end{proof}

If $f \in \OOO(X)^{\kplus}$, then the $\kplus$-action 
$$
\rho_{f} \colon \kplus \to \Aut(X), \quad \rho_{f}(s) := \tilde\rho(sf)
$$ 
is often called a \itind{modification} of the action $\rho$ (cf. \cite{ArFlKa2013Flexible-varieties}). Explicitly,
$$
 \rho_{f}(s)(x) = \rho(f(x) s)(x).
 $$
 By abuse of notation, we also say that the unipotent automorphism $\u':=\rho_{f}(1)$ is a modification of $\u:=\rho(1)$.
 \ps
The following lemma is clear.
\begin{lemma}\label{modification.lem}
The modification $\rho_{f}$ commutes with $\rho$, and the $\rho_{f}$-orbits are contained in the $\rho$-orbits. If $X_{f}$ is dense in $X$, then both actions have the same invariants. Moreover, the fixed point sets of the two actions are related by 
$$
X^{\rho_{f}}= X^{\rho}\cup\{f=0\}.
$$
\end{lemma}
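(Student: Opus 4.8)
The plan is to reduce everything to the explicit formula $\rho_{f}(s)(x) = \rho(f(x)\,s)(x)$ recorded just above the statement, together with the two structural facts that $\rho$ is a one-parameter group (so $\rho(a)\circ\rho(b)=\rho(a+b)$ and $\rho(0)=\id$) and that $f$ is $\rho$-invariant (so $f(\rho(t)(x))=f(x)$, and in particular $X_{f}$ is a $\rho$-stable open subset). First I would verify commutativity by a direct computation: fixing $s,t\in\kplus$ and $x\in X$, and using $f(\rho(t)(x))=f(x)$, I get
$$
\rho_{f}(s)(\rho(t)(x)) = \rho(f(\rho(t)(x))\,s)(\rho(t)(x)) = \rho(f(x)\,s + t)(x),
$$
whereas $\rho(t)(\rho_{f}(s)(x)) = \rho(t + f(x)\,s)(x)$; these agree. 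The containment of orbits is then immediate from the formula, since $\rho_{f}(s)(x)\in \rho(\kplus)\,x$ for every $s$, so each $\rho_{f}$-orbit lies in a $\rho$-orbit.

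Next, for the invariants the inclusion $\OOO(X)^{\rho}\subseteq \OOO(X)^{\rho_{f}}$ follows at once from the orbit containment. For the reverse inclusion I would proceed as follows. Let $g\in\OOO(X)^{\rho_{f}}$. On the open set $X_{f}$ the scalar $f(x)$ is invertible, so as $s$ runs through $\kplus$ the product $f(x)\,s$ runs through all of $\kplus$; hence $g(\rho(t)(x)) = g(x)$ for all $t\in\kplus$ and all $x\in X_{f}$. I would then consider the regular function $\Phi\colon\kplus\times X\to\kk$, $(t,x)\mapsto g(\rho(t)(x)) - g(x)$, which vanishes on $\kplus\times X_{f}$. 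Since $X_{f}$ is dense in $X$, the subset $\kplus\times X_{f}$ is dense in $\kplus\times X$, so $\Phi\equiv 0$, i.e. $g\in\OOO(X)^{\rho}$. This reparametrization-and-density step is the only non-formal point in the whole argument, and it is exactly where the hypothesis that $X_{f}$ is dense is genuinely used; I expect it to be the main (mild) obstacle.

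Finally, for the fixed points I would split into two cases according to the value of $f(x)$. If $f(x)=0$, then $\rho_{f}(s)(x)=\rho(0)(x)=x$ for all $s$, so $\{f=0\}\subseteq X^{\rho_{f}}$. If $f(x)\neq 0$, then $s\mapsto f(x)\,s$ is a bijection of $\kplus$, so $x\in X^{\rho_{f}}$ if and only if $\rho(t)(x)=x$ for all $t$, that is, if and only if $x\in X^{\rho}$; hence $X^{\rho_{f}}\cap X_{f} = X^{\rho}\cap X_{f}$. Combining the two cases yields $X^{\rho_{f}} = \{f=0\}\cup(X^{\rho}\cap X_{f}) = X^{\rho}\cup\{f=0\}$, where the last equality holds because $X^{\rho}\cap\{f=0\}\subseteq\{f=0\}$. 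All remaining steps are the direct manipulations indicated, so beyond the density argument no serious difficulty should arise.
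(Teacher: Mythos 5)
Your proof is correct, and it follows exactly the route the paper has in mind: the paper gives no argument at all (it introduces the formula $\rho_{f}(s)(x)=\rho(f(x)s)(x)$ and declares the lemma ``clear''), and your computations are precisely the routine verification being alluded to. All four steps check out, including the only genuinely non-formal one — deducing $\OOO(X)^{\rho_f}\subseteq\OOO(X)^{\rho}$ from the vanishing of the regular function $(t,x)\mapsto g(\rho(t)(x))-g(x)$ on the dense subset $\kplus\times X_{f}$ — and you correctly observe that the fixed-point identity needs no density hypothesis.
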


Here is a first interesting application.
\begin{proposition}\label{factorial.prop}
Let $X$ be a factorial affine variety with a nontrivial $\kplus$-action $\mu$. 
Then $\mu$ is a modification of a $\kplus$-action $\tilde\mu$ such that $\codim_{X}X^{\tilde\mu}\geq 2$.
\end{proposition}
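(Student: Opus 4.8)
The plan is to translate everything into the language of locally nilpotent derivations and to factor out the ``divisorial part'' of the vector field attached to $\mu$. Since $X$ is factorial, $R:=\OOO(X)$ is a UFD, and in particular $X$ is irreducible. By the correspondence between $\kplus$-actions and locally nilpotent vector fields (Lemma~\ref{exp-for-functions.lem} and Proposition~\ref{bijections-k+actions-nilpotentVF-unipotent-autos.prop}), the nontrivial action $\mu$ corresponds to a nonzero locally nilpotent derivation $D\in\Der_{\kk}(R)$. Fix algebra generators $a_{1},\ldots,a_{n}$ of $R$. A point $x$ is $\mu$-fixed iff its orbit map is constant, which by the formula $h(sx)=\sum_{m}\frac{s^{m}}{m!}(D^{m}h)(x)$ happens iff $Dh$ vanishes at $x$ for every $h$, i.e. iff $(Da_{i})(x)=0$ for all $i$; thus $X^{\mu}=V(Da_{1},\ldots,Da_{n})$. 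I would then set $f:=\gcd(Da_{1},\ldots,Da_{n})\in R$ (nonzero since $D\neq0$), write $Da_{i}=f\,b_{i}$ with $\gcd(b_{1},\ldots,b_{n})=1$, and define $\tilde D:=\tfrac1f D$.

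First I would check that $\tilde D$ is a genuine derivation of $R$, i.e. that it maps $R$ into $R$ and not merely into the fraction field: the subset $\{r\in R\mid \tilde D r\in R\}$ is a $\kk$-subalgebra of $R$ by the Leibniz rule, and it contains every generator $a_{i}$ since $\tilde D a_{i}=b_{i}\in R$, hence equals $R$. The heart of the argument is to show $f\in\Ker D$, so that $\tilde D$ is itself locally nilpotent. For this I would use the degree function $\deg_{D}$ of the LND $D$, where $\deg_{D}(r)=\max\{m: D^{m}r\neq0\}$, which on the domain $R$ is additive, $\deg_{D}(rs)=\deg_{D}(r)+\deg_{D}(s)$, and satisfies $\deg_{D}(Dr)=\deg_{D}(r)-1$ whenever $Dr\neq0$. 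Applied to the identity $Df=f\cdot\tilde D f$, if $Df\neq0$ then $\tilde D f\neq0$ as well, and comparing degrees yields
\[
\deg_{D}(f)-1=\deg_{D}(Df)=\deg_{D}(f)+\deg_{D}(\tilde D f)\geq\deg_{D}(f),
\]
a contradiction. Hence $Df=0$, so $f\in\Ker D=\Ker\tilde D$, and then $D^{m}=f^{m}\tilde D^{m}$ forces $\tilde D$ to be locally nilpotent, because $R$ is a domain and $D$ is locally nilpotent.

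Finally I would translate back. The locally nilpotent derivation $\tilde D$ corresponds to a $\kplus$-action $\tilde\mu$, and $f\in\OOO(X)^{\tilde\mu}$ since $\tilde D f=0$. By Proposition~\ref{modification.prop} the modification $\tilde\mu_{f}$ has associated vector field $f\tilde D=D$, so by injectivity of the correspondence $\nu_{X}$ we obtain $\mu=\tilde\mu_{f}$, exhibiting $\mu$ as a modification of $\tilde\mu$. It then remains to verify $\codim_{X}X^{\tilde\mu}\geq2$: here $X^{\tilde\mu}=V(b_{1},\ldots,b_{n})$ with $\gcd(b_{1},\ldots,b_{n})=1$, and any codimension-one irreducible component $W$ of $V(b_{1},\ldots,b_{n})$ would be of the form $W=V(p)$ with $p$ irreducible (as height-one primes are principal in the UFD $R$), whence $p\mid b_{i}$ for all $i$, contradicting $\gcd(b_{i})=1$. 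I expect the middle step, namely proving $f\in\Ker D$, to be the main obstacle; the degree-function computation above is the cleanest route, and it is precisely the point where characteristic zero together with the factorial (domain) hypothesis enter.
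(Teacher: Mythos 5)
Your proof is correct, and at its core it takes the same route as the paper's: factoriality is used to split the divisorial part off the locally nilpotent vector field, writing $\delta_{\mu}=f\,\tilde\delta$ with $f\in\Ker\tilde\delta$ and with the fixed locus of $\tilde\delta$ of codimension $\geq 2$. The only real difference is in execution: the paper peels off one irreducible divisor $h$ at a time (a codimension-one component $Y=\{h=0\}$ of $X^{\mu}$ with $h\in\OOO(X)^{\mu}$, noting $\delta_{\mu}(\OOO(X))\subseteq h\OOO(X)$) and concludes by induction, whereas you extract the full $\gcd$ in a single step and establish the key fact $Df=0$ algebraically via additivity of $\deg_{D}$ on a domain, which makes your version slightly more self-contained, since the paper's assertion that $h$ is $\mu$-invariant is left to the reader (it rests on $\kplus$ having no nontrivial characters).
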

\begin{proof}
Let $Y \subseteq X^{\mu}$ be an irreducible component of codimension 1. Then $Y =\{h=0\}$ where $h\in\OOO(X)^{\mu}$ is irreducible. Denote by $\delta_{\mu}$ the locally nilpotent vector field corresponding to the action $\mu$. Then, for all $f\in\OOO(X)$, $\delta_{\mu}(f)$ vanishes on $Y$, and so $\delta_{\mu}(\OOO(X))\subseteq h\OOO(X)$. Thus $\delta_{\mu} = h\delta_{\mu'}$ for a suitable locally nilpotent vector field $\mu'$. The claim follows by induction.
\end{proof}
\begin{corollary}\label{Cplussurface.cor}
Let $Y$ be an affine factorial surface with a nontrivial $\kplus$-action. Then $Y \simeq \kk\times C$ where $C$ is a rational smooth curve. In particular, $Y \simeq \kk^{2}$ in case $\OOO(Y)^{*}=\Cst$.
\end{corollary}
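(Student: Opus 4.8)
The plan is to deduce the statement from the two structural results about unipotent actions proved just above, namely Proposition~\ref{factorial.prop} and Proposition~\ref{generalunipotent.prop}, the bridge between them being a dimension count. First I would record that since $Y$ is factorial its coordinate ring $\OOO(Y)$ is a UFD and in particular a domain, so $Y$ is irreducible of dimension $2$. The overall strategy is to replace the given action $\mu$ by a fixed-point free (hence free) $\kplus$-action on the same variety $Y$, and then invoke Proposition~\ref{generalunipotent.prop} with $U=\kplus$, $n=1$, $n+1=2$.

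The first step is to apply Proposition~\ref{factorial.prop} to the nontrivial action $\mu$: this produces a $\kplus$-action $\tilde\mu$ of which $\mu$ is a modification and which satisfies $\codim_{Y}Y^{\tilde\mu}\geq 2$. I would note that $\tilde\mu$ is itself nontrivial: its associated locally nilpotent vector field is $\delta_{\tilde\mu}$, and since $\mu=\tilde\mu_{f}$ is a modification its field is $f\,\delta_{\tilde\mu}$ (Proposition~\ref{modification.prop}); were $\tilde\mu$ trivial, $\delta_{\tilde\mu}=0$ would force $\mu$ trivial, contrary to hypothesis.

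The decisive step is to show $Y^{\tilde\mu}=\emptyset$. On one hand, because $Y$ is irreducible of dimension $2$, the bound $\codim_{Y}Y^{\tilde\mu}\geq 2$ means every component of $Y^{\tilde\mu}$ has dimension $\leq 0$. On the other hand, since $\tilde\mu$ is a nontrivial $\kplus$-action, Proposition~\ref{fixedpoints.prop} applies with $d=\min\{\dim Ux\mid x\notin Y^{\tilde\mu}\}=1$ (a nontrivial $\kplus$-orbit is one-dimensional), giving that every irreducible component of $Y^{\tilde\mu}$ has dimension $\geq 1$. These two conclusions are compatible only if $Y^{\tilde\mu}$ is empty. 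Finally, a fixed-point free $\kplus$-action is automatically free, since the only closed subgroups of $\kplus$ are $\{e\}$ and $\kplus$ itself, so any point with nontrivial stabilizer would be a fixed point. Thus $\tilde\mu$ is a free action of the one-dimensional unipotent group $\kplus$ on the factorial surface $Y$, and Proposition~\ref{generalunipotent.prop} yields a $\kplus$-isomorphism $Y\simeq \kplus\times C\simeq \kk\times C$ with $C$ a smooth rational curve; the same proposition gives $Y\simeq\AA^{2}=\kk^{2}$ when $\OOO(Y)^{*}=\Cst$.

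I expect the main obstacle to be precisely the middle step: correctly combining the codimension bound coming from the modification with the lower bound on the dimension of the fixed-point components to force the fixed locus to be empty, and then justifying that fixed-point freeness upgrades to freeness. Everything else is a direct citation of the preceding propositions, so once the action has been arranged to be free the conclusion is immediate.
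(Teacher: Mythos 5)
Your proposal is correct and follows essentially the same route as the paper: the paper's proof likewise applies Proposition~\ref{factorial.prop} to replace the action by a modification with fixed locus of codimension $\geq 2$, observes via Proposition~\ref{fixedpoints.prop} that a nontrivial $\kplus$-action has no isolated fixed points (so the fixed locus on the surface must be empty), and then invokes Proposition~\ref{generalunipotent.prop}. You merely make explicit the details the paper leaves implicit, namely the nontriviality of $\tilde\mu$ and the upgrade from fixed-point freeness to freeness via the fact that $\kplus$ has no nontrivial proper closed subgroups.
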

\begin{proof} By the proposition above we can assume that the action is fixed-point free, because a $\kplus$-action on an affine variety never has isolated fixed points (see Proposition~\ref{fixedpoints.prop}). Now the claim follows from Proposition~\ref{generalunipotent.prop}.
\end{proof}

\ps
\subsection{Deformations of \texorpdfstring{$\kplus$}{k+}-actions}\label{deform.subsec}
Let $X$ be an affine variety, and let $\Akplus(X)$ be the affine ind-variety of $\kplus$-actions on $X$, see Section \ref{var-group-actions.subsec}. We have an action of $\Aut(X)$ on $\Akplus(X)$ defined in the obvious way by
$$
 g(\mu)(s) :=  g\cdot\mu(s)\cdot g^{-1} \text{ for } g\in\Aut(X) \text{ and }\mu\colon\kplus \to \Aut(X).
$$
This action has the usual properties, e.g. if $X^{\mu}$ denote the fixed points of $\mu$, then $X^{ g(\mu)}=  g( X^{\mu})$, and if $O$ is a $\mu$-orbit, then $ g(O)$  is a  $g(\mu)$-orbit. With respect to the linear representation of $\Aut(X)$ on the vector fields $\VEC(X)$ we get the following: If $\delta_{\mu}\in\VEC(X)$ is the locally nilpotent vector field corresponding to the $\kplus$-action $\mu$, then 
$\delta_{ g(\mu)}=  g \delta_{\mu}$.

There is also a natural $\Cst$-action on $\Akplus(X)$ by ``rescaling''\idx{rescaling}: $\mu\mapsto\mu_{t}$ where $\mu_{t}(s) := \mu(ts)$. It corresponds to the scalar multiplication on $\VEC(X)$: $\delta_{\mu_{t}}=t\delta_{\mu}$ for $t\in\kk$. Clearly, this action commutes with the action of $\Aut(X)$, so that we finally get an action of 
$\Aut(X) \times \Cst$ on $\Akplus(X)$. 

\begin{definition}  \label{normCplus.def}
Let $G$ be an algebraic group acting on $X$ and let $\mu$ be a $\kplus$-action on $X$. We say that $G$ \it{normalizes} $\mu$ if the image of $G$ in $\Aut(X)$ normalizes the image of $\kplus$. It then follows that  there is a character $\chi\colon G \to \kst$ such that $(g(\mu))(s) = \mu(\chi(g)s)$, i.e.
$$
g(\mu(s)x) = \mu(\chi(g)s) gx \text{  for } s\in\kplus, g\in G \text{ and }x \in X.
$$ 
The character $\chi$ is called the \itind{character of $\mu$}.
\end{definition}

If $G$ normalizes $\mu$, then $G$ permutes the $\kplus$-orbits and induces an action on the $\kplus$-invariants $\OOO(X)^{\mu}$.
Moreover, $\mu$ is normalized by $G$ if and only if the line $\kk\delta_{\mu}\subseteq\VEC(X)$ is $G$-stable. In this case the action of $G$ on the line is given by the character $\chi$ of $\mu$. This shows that the actions of $\kplus$ on $X$ normalized by $G$ correspond to the $G$-stable lines in the locally nilpotent vector fields $\LNV(X)$.

Now let $T$ be a torus acting on $X$ and consider the representation of $\tT:=T\times \Cst$ on $\VEC(X)$. If $\mu$ is a $\kplus$-action on $X$ we denote by $O_{\mu}\subseteq\Akplus(X)$ the $\tT$-orbit of $\mu$. 

\begin{proposition}\label{deformCplus.prop} 
The closure of $O_{\mu}$ in $\Akplus(X)$ contains at least $d=\dim O_{\mu}$ nontrivial $\kplus$-actions normalized by $T$, with different characters. 
\end{proposition}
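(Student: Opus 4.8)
The plan is to transport the whole problem to the linear representation of the torus $\tT = T\times\Cst$ on the vector fields $\VEC(X)$ and to settle it there by a weight-space (toric) argument. Recall from Proposition~\ref{bijections-k+actions-nilpotentVF-unipotent-autos.prop} that $\nu_{X}\colon\Akplus(X)\to\VEC(X)$ is an injective ind-morphism with image $\LNV(X)$, that it intertwines the $\Aut(X)$-action on $\Akplus(X)$ with the linear $\Aut(X)$-representation on $\VEC(X)$, and that rescaling corresponds to scalar multiplication. Hence $\nu_{X}$ carries the $\tT$-action on $\Akplus(X)$ to the representation of $\tT$ on $\VEC(X)$ in which $T\subseteq\Aut(X)$ acts linearly and $\Cst$ acts by scalars (weight $1$ on every $T$-weight space). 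Writing $\delta:=\delta_{\mu}=\nu_{X}(\mu)\in\LNV(X)$, the orbit $O_{\mu}$ is carried to $\tT\cdot\delta$.

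First I would record that $T\times\Cst$ is a linear algebraic group, so by Proposition~\ref{locally-finite-group-actions.prop} its representation on $\VEC(X)$ is locally finite and rational; thus $\delta$ lies in a finite-dimensional $\tT$-stable subspace and decomposes as $\delta=\sum_{\chi\in S}\delta_{\chi}$ into its nonzero $T$-weight components, where $S\subseteq\hat T$ is a finite set of characters. A nontrivial $\kplus$-action normalized by $T$ is exactly the image under $\nu_{X}^{-1}$ of a nonzero $T$-weight vector in $\LNV(X)$, its character determined by the weight; so it suffices to produce at least $d$ distinct weights $\chi$ with $\delta_{\chi}\in\overline{\tT\cdot\delta}$. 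Here the closure is taken in $\Akplus(X)$, and the transfer to $\VEC(X)$ is legitimate because $\LNV(X)$ is weakly closed (Section~\ref{Cplus.subsec}), so $\overline{\tT\cdot\delta}\subseteq\LNV(X)$; being a closed algebraic subset contained in $\LNV(X)$, this closure corresponds to $\overline{O_{\mu}}$ by Proposition~\ref{bijections-k+actions-nilpotentVF-unipotent-autos.prop}(3).

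Next I would compute $d=\dim O_{\mu}=\dim\tT\cdot\delta$ from the stabilizer: an element $(t,s)\in T\times\Cst$ fixes $\delta$ iff $\chi(t)s=1$ for all $\chi\in S$, whence $\dim\tT\cdot\delta=1+\operatorname{rank}\langle\chi-\chi_{0}\mid\chi\in S\rangle=1+a$, where $a$ is the dimension of the affine span of $S$. To produce the required weight vectors, fix a vertex $\chi_{0}$ of the polytope $P:=\operatorname{conv}(S)$. As a vertex of a rational polytope, $\chi_{0}$ is cut out by an integral supporting functional, i.e. there is a cocharacter $\lambda$ of $T$ with $\langle\lambda,\chi_{0}\rangle<\langle\lambda,\chi\rangle$ for all $\chi\in S\setminus\{\chi_{0}\}$. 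Then the cocharacter $s\mapsto(\lambda(s),s^{-\langle\lambda,\chi_{0}\rangle})$ of $\tT$ sends $\delta_{\chi}$ to $s^{\langle\lambda,\chi-\chi_{0}\rangle}\delta_{\chi}$, so its limit as $s\to0$ equals $\delta_{\chi_{0}}$; thus $\delta_{\chi_{0}}\in\overline{\tT\cdot\delta}\subseteq\LNV(X)$ is a nonzero $T$-weight vector, giving a nontrivial $\kplus$-action normalized by $T$ of character $\chi_{0}$, and distinct vertices give distinct characters.

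Finally I would count vertices: a polytope of affine dimension $a$ has at least $a+1$ vertices, so $P$ has at least $a+1=d$ of them, producing at least $d$ nontrivial $T$-normalized $\kplus$-actions in $\overline{O_{\mu}}$ with pairwise different characters. The main obstacle I anticipate is the bookkeeping of the closures: one must ensure that the limits computed inside the finite-dimensional $\tT$-module really lie in $\overline{O_{\mu}}\subseteq\Akplus(X)$, which is precisely what the weak closedness of $\LNV(X)$ together with Proposition~\ref{bijections-k+actions-nilpotentVF-unipotent-autos.prop}(3) secures; the remaining combinatorial input (the dimension formula $d=1+a$ and the vertex count) is then elementary toric geometry.
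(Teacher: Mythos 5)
Your proof is correct and takes essentially the same route as the paper: both transfer the problem via $\nu_{X}$ to the linear $\tT$-representation on $\VEC(X)$ and extract extremal weight components of $\delta_{\mu}$ from the convex geometry of its support --- your vertices of the polytope $\mathrm{conv}(S)$ are precisely the $1$-dimensional edges of the salient cone $\CCC$ used in the paper, sliced at height $1$. Your explicit cocharacter-limit computation, the stabilizer count $d=1+a$, and the closure bookkeeping via the weak closedness of $\LNV(X)$ together with Proposition~\ref{bijections-k+actions-nilpotentVF-unipotent-autos.prop}(3) simply make precise the steps the paper's proof leaves implicit.
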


\begin{proof}
As seen above we can work in $\VEC(X)$ and replace $O_{\mu}$ by the $\tT$-orbit $O_{\delta_{\mu}}$ of $\delta_{\mu}$. Since $\tT$ contains the scalar multiplications, the only closed orbit of $\tT$ is $\{0\}$. Hence the cone $\CCC\subseteq X(\tT)_{\RR}$ generated by the weights supporting $\delta_{\mu}$ is salient and of real dimension $d$. Therefore, the orbit closure $\overline{O_{\delta_{\mu}}}$ contains at least $d$ different one-dimensional $\tT$-orbits  $O_{1},O_{2}\ldots$,  corresponding to the 1-dimensional edges of $\CCC$. Clearly, the closures $L_{j}:=\overline{O_{j}}$ are $\tT$-stable lines whose characters in $T$ are all different. Hence the claim.
\end{proof}

\begin{example} Let $X = Y \times \kk$ and consider the $\Cst$-action given by $t(x,z)=(x,tz)$. Denote by $\mu_{0}$ the free $\kplus$-action $s(y,z)= (y,z+s)$ for $s \in\kplus$.
\newline
{\it We claim that there are only two types of (nontrivial) $\kplus$-actions on $X$ which are normalized by $\Cst$, namely
\be
\item The actions commuting with $\Cst$, i.e. those with trivial character. These are the $\kplus$-actions which stabilize all the lines $Y\times\{z\}$. 
\item The actions with character $\chi=\id$. These are the modifications of the action $\mu_{0}$ (see \ref{modification.subsec}).  
\ee}
\noindent
\begin{proof} 
Let $\mu$ be a nontrivial $\kplus$-action normalized by $\Cst$. The $\kplus$-orbit $O_{(y,0)}$ is stable under $\Cst$, because $(y,0)$ is a fixed point of $\Cst$. It follows that either $Y\times \{0\}$ is fixed under $\mu$ or there is a $y_{0} \in Y$ such that $O_{(y_{0},0)}=\{y_{0}\}\times \kk$. In the first case, $Y\times(\kk\setminus\{0\})$ is stable under $\kplus$, and so all $Y\times\{z\}$ are stable under $\mu$, hence we are in case (a). 

In the second case, the action $\mu$ on the line $\{y_{0}\}\times \kk$ is of the from $(y_{0},z)\mapsto (y_{0}, z + ts)$ where $t\in\kst$. Hence $\mu$ has character $\chi=\id$, and we are in case (b). It follows that all lines $\{y\}\times \kk$ are stable under $\mu$ which implies that $\mu$ has the form $(y,z)\mapsto (y,z+p(y) s)$ where $p\in\OOO(Y)$, i.e. $\mu = (\mu_{0})_{p}$ (see \ref{modification.subsec}).
\end{proof}
\end{example}
As a consequence, we get the following two results due to \name{Crachiola} and \name{Makar-Limanov} \cite{CrMa2008An-algebraic-proof}.

\begin{proposition} \label{CM.prop}
Let $Y$ be an irreducible affine variety. If $Y$ does not admit a nontrivial $\kplus$-action, then every $\kplus$-action on $Y \times \kk$ is a modification of the standard action $\mu_{0}$. 
\end{proposition}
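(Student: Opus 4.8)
The plan is to translate everything into the language of locally nilpotent vector fields and to analyze a $\kplus$-action through its homogeneous components with respect to the scaling torus. Write $R = \OOO(X) = \OOO(Y)[z]$, let $T = \Cst$ act on $X = Y\times\kk$ by $t(y,z) = (y,tz)$, and grade $R$ by the $T$-weight, so that $z$ is homogeneous and $\OOO(Y)$ sits in weight $0$. Let $\mu$ be a nontrivial $\kplus$-action on $X$ and $\delta := \delta_\mu \in \LNV(X)$ its locally nilpotent vector field (Proposition~\ref{bijections-k+actions-nilpotentVF-unipotent-autos.prop}). Since $\delta$ is determined by the finitely many functions $\delta(z)$ and $\delta|_{\OOO(Y)}$, it decomposes into finitely many $T$-homogeneous components $\delta = \sum_m \delta_{(m)}$, where $\delta_{(m)}$ raises the $T$-weight by $m$. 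A weight count shows $m \le 1$ for every nonzero component, and that the top component has the special form $\delta_{(1)} = f\,\partial_z$ with $f\in\OOO(Y)$, which is exactly the vector field $f\,\delta_{\mu_0}$ of a modification $\rho_f$ of $\mu_0$ (Proposition~\ref{modification.prop}). The goal is therefore to prove that $\delta$ has a single homogeneous component, namely the one of weight $1$.

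The key step is to realize the extreme homogeneous components of $\delta$ as genuine $\kplus$-actions obtained as limits inside $\Akplus(X)$. First I would conjugate $\mu$ by $T\subseteq\Aut(X)$ and rescale: for $t\in\Cst$ the element $t^{-m_0}\,(t\cdot\mu)$ lies in $\Akplus(X)$ and its vector field equals $\sum_m t^{m-m_0}\delta_{(m)}$, where $m_0$ is the lowest occurring weight (here I use the $\Aut(X)\times\Cst$-action on $\Akplus(X)$ from Section~\ref{deform.subsec} and the compatibility $\delta_{g(\mu)} = g\,\delta_\mu$). As $t\to 0$ this curve converges, inside the finite-dimensional span of the $\delta_{(m)}$, to $\delta_{(m_0)}$; since $\LNV(X)$ is weakly closed in $\VEC(X)$, the limit $\delta_{(m_0)}$ again lies in $\LNV(X)$ and defines a nontrivial $\kplus$-action normalized by $T$. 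By the classification in the Example preceding the statement, a nontrivial $T$-normalized $\kplus$-action has character $0$ or $\id$, so its weight $m_0$ is $0$ or $1$; combined with $m\le 1$ this forces every weight of $\delta$ to lie in $\{0,1\}$. Equivalently, one may feed the cone of $\tT$-weights of $\delta$, with $\tT = T\times\Cst$, into Proposition~\ref{deformCplus.prop} to bound the admissible characters.

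It remains to exclude the weight $0$, and here the hypotheses on $Y$ enter. If $\delta_{(0)}\neq 0$, then $\delta_{(0)}$ is a nonzero locally nilpotent derivation preserving $\OOO(Y)$ (it has weight $0$), and I claim its restriction to $\OOO(Y)$ is nonzero. Indeed, a weight-$0$ derivation killing $\OOO(Y)$ is $\OOO(Y)$-linear, hence of the form $h\,z\,\partial_z$ with $h\in\OOO(Y)$; since $(h z\partial_z)^k(z) = h^k z$ and $\OOO(Y)$ is a domain ($Y$ is irreducible), local nilpotency forces $h=0$. Thus $\delta_{(0)}|_{\OOO(Y)}$ would be a nonzero locally nilpotent derivation of $\OOO(Y)$, i.e. a nontrivial $\kplus$-action on $Y$, contradicting the hypothesis. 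Therefore $\delta_{(0)} = 0$, so $m_0 = 1$, and as noted every component then has weight $1$; hence $\delta = f\,\partial_z$ with $f\neq 0$, and $\mu = \rho_f$ is a modification of $\mu_0$ (Proposition~\ref{modification.prop}). The main obstacle I anticipate is the second paragraph: making rigorous that an extremal homogeneous component of $\delta$ is itself integrable to a $\kplus$-action. This is where the weak closedness of $\LNV(X)\subseteq\VEC(X)$ and the limit taken inside the ind-variety $\Akplus(X)$ are essential, since a homogeneous piece of a locally nilpotent derivation need not a priori be locally nilpotent.
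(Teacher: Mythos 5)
Your proof is correct, but it takes a different path through the argument than the paper does. The paper's proof is a two-line reduction: if $\mu$ is nontrivial and not a modification, the Example shows $\mu$ is not normalized by $T=\Cst$, so the $\tT$-orbit of $\mu$ in $\Akplus(Y\times\kk)$ is $2$-dimensional; Proposition~\ref{deformCplus.prop} (the salient-cone argument) then puts \emph{two} normalized actions with distinct characters in the orbit closure, and since by the Example the only possible characters are $0$ and $\id$, one of them is trivial, contradicting the hypothesis on $Y$. You instead unpack the degeneration by hand: you make the $T$-weight decomposition of $\delta_\mu$ explicit for $\OOO(Y)[z]$, establish the a priori bound that all weights are $\le 1$ with weight-$1$ part exactly $\OOO(Y)\partial_z$ (this bound has no counterpart in the paper's proof and is what lets you replace the two-characters dimension count by a single degeneration to the lowest component $\delta_{(m_0)}$), integrate that limit via weak closedness of $\LNV$ in $\VEC$ — which is precisely the mechanism inside the proof of Proposition~\ref{deformCplus.prop} — pin $m_0\in\{0,1\}$ by the Example, and then exclude weight $0$ by a self-contained algebraic lemma ($h z\partial_z$ locally nilpotent forces $h$ nilpotent, hence $h=0$ since $\OOO(Y)$ is a domain) combined with the hypothesis on $Y$. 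What each buys: the paper's version is shorter and stays at the level of actions and orbits; yours delivers the sharper structural conclusion $\delta_\mu = f\,\partial_z$ on the nose, makes visible exactly where irreducibility of $Y$ enters, and the weight bound $m\le 1$ for locally nilpotent (indeed arbitrary) vector fields on $Y\times\kk$ is reusable. Two small points to tidy: with your convention $t(\delta_{(m)})=t^m\delta_{(m)}$ the component $\delta_{(m)}$ \emph{lowers} the $z$-degree of functions by $m$ (so ``raises the $T$-weight by $m$'' has the wrong sign, though your usage is internally consistent); and in the last paragraph the local nilpotency of $\delta_{(0)}$ deserves an explicit sentence — it holds only because, the weights lying in $\{0,1\}$, a nonzero $\delta_{(0)}$ is necessarily the extremal component produced by your limit argument.
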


As a consequence, we get the following ``cancellation result''.

\begin{corollary}\label{cancellation.cor}
For any variety $Y$ such that $Y \times \Aone \simeq \Athree$, we get $Y \simeq \Atwo$.
\end{corollary}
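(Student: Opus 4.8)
The plan is to reduce the statement to producing a single nontrivial $\kplus$-action on $Y$ and then to invoke Corollary~\ref{Cplussurface.cor}. First I would record the elementary structural properties of $Y$ that follow from $Y \times \Aone \simeq \Athree$. Since $Y \times \{0\}$ is a closed subvariety of the affine variety $Y \times \Aone$, the variety $Y$ is affine; projecting the irreducible $\Athree$ shows that $Y$ is irreducible; and $\dim Y = \dim (Y \times \Aone) - 1 = 2$, so $Y$ is a surface. Writing $\OOO(Y \times \Aone) = \OOO(Y)[t] \simeq \kk[x_1,x_2,x_3]$ and using the standard facts that for a domain $R$ the ring $R[t]$ is factorial (resp. has only constant units) if and only if $R$ is, I conclude that $\OOO(Y)$ is factorial with $\OOO(Y)^{*} = \Cst$. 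With these in hand, the ``in particular'' part of Corollary~\ref{Cplussurface.cor} reduces everything to the claim that $Y$ carries a nontrivial $\kplus$-action: granting that, $Y$ is a factorial surface with a nontrivial $\kplus$-action and $\OOO(Y)^{*} = \Cst$, hence $Y \simeq \kk^{2} = \Atwo$.

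Next I would prove that $Y$ admits a nontrivial $\kplus$-action, arguing by contradiction. Suppose not. Then Proposition~\ref{CM.prop} applies and tells us that every $\kplus$-action on $Y \times \kk$ is a modification of the standard translation $\mu_0$, whose associated locally nilpotent vector field $\delta_0$ satisfies $\Ker\delta_0 = \OOO(Y)$. By Lemma~\ref{modification.lem} every nontrivial modification $(\mu_0)_f$ (with $0 \neq f \in \OOO(Y)$) has the same invariants $\OOO(Y)$; concretely its vector field is $f\delta_0$, and since $\OOO(Y \times \kk)$ is a domain and $f \neq 0$ one has $\Ker(f\delta_0) = \Ker\delta_0 = \OOO(Y)$. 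Thus, under the standing assumption, every nontrivial locally nilpotent vector field on $Y \times \kk$ has kernel exactly $\OOO(Y)$.

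Now I transport this across an isomorphism $\phi \colon Y \times \kk \simto \Athree$. The statement ``all nontrivial locally nilpotent vector fields share a common kernel'' is invariant under isomorphism, because the bijection between $\kplus$-actions and locally nilpotent vector fields of Section~\ref{Cplus.subsec} is natural and $\phi$ carries modifications to modifications. Hence all nontrivial locally nilpotent vector fields on $\Athree$ would have the single common kernel $K := \phi_{*}(\OOO(Y))$. But the coordinate derivations $\partial_{x_1}$ and $\partial_{x_2}$ are locally nilpotent on $\kk[x_1,x_2,x_3]$ with kernels $\kk[x_2,x_3]$ and $\kk[x_1,x_3]$, which are distinct. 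This contradiction shows that $Y$ does carry a nontrivial $\kplus$-action, which completes the proof.

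The routine parts are the ring-theoretic facts that factoriality and the unit group pass between $R$ and $R[t]$, which I would state without detailed verification. The genuine input is the contradiction step: the whole weight of the argument rests on Proposition~\ref{CM.prop}, which forces a unique ``direction'' of $\kplus$-action on $Y \times \kk$ whenever $Y$ itself has none, clashing with the abundance of independent coordinate translations on $\Athree$. The only point needing a little care is the transport of the ``common kernel'' property along $\phi$ together with the elementary observation that two coordinate partials on $\Athree$ have different kernels; beyond cleanly citing the dictionary between $\kplus$-actions and locally nilpotent vector fields, I expect no serious obstacle.
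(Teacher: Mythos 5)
Your proof is correct and follows essentially the same route as the paper: establish that $Y$ is an affine, irreducible, factorial surface with $\OOO(Y)^{*}=\Cst$, obtain a nontrivial $\kplus$-action on $Y$ from the contrapositive of Proposition~\ref{CM.prop}, and conclude with Corollary~\ref{Cplussurface.cor}. The only difference is that you make explicit --- via the distinct kernels of the locally nilpotent derivations $\partial_{x_1}$ and $\partial_{x_2}$ --- the step the paper merely asserts, namely that not every $\kplus$-action on $Y\times\Aone\simeq\Athree$ is a modification of $\mu_0$.
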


\begin{proof}[Proposition~\ref{CM.prop}] 
We use the action of $T:=\Cst$ on $Y\times\kk$ and the induced action of $\tT:=\Cst\times\Cst$ on $\Akplus(Y\times\kk)$ as discussed above. 

Let $\mu$ be a nontrivial $\kplus$-action on $Y\times \kk$ which is not a modification of $\mu_{0}$. Then, by the example above, $\mu$ is not normalized by $\Cst$. This implies that the $\tT$-orbit $O_{\mu}\subseteq \Akplus(Y\times \kk)$ has dimension $2$. Hence, by Proposition~\ref{deformCplus.prop}, the closure $\overline{O_{\mu}}$ contains two $\kplus$-actions with different characters. Again by the example above, one of the characters must be trivial which contradicts the assumption. 
\end{proof}

\begin{proof}[Proof of Corollary~\ref{cancellation.cor}]
Since we have a closed immersion $Y \into \Aone\times Y \simeq \Athree$ and a surjective projection 
$\Aone\times Y \onto Y$ we see that $Y$ is affine, irreducible and that $\OOO(Y)^{*}=\Cst$. Since $\OOO(\Aone\times Y) \simeq \OOO(Y)[x]$ is factorial, it follows that $\OOO(Y)$ is factorial.
Moreover, not every $\kplus$-action on $Y\times\Aone\simeq \Athree$ is a modification of $\mu_{0}$, and so $Y$ admits a nontrivial $\kplus$-action by the previous proposition. Now the claim follows from Corollary~\ref{Cplussurface.cor}.
\end{proof}

\pmed
\section{Normalization}\label{normalization.sec}
\ps
\subsection{Lifting dominant morphisms}
Let $X$ be an irreducible affine variety and $\eta\colon \tX \to X$ its normalization\idx{normalization}. It is well known that every automorphism of $X$ lifts to an automorphism of $\tX$. Thus we have an injective  homomorphism of groups $\iota\colon\Aut(X) \to \Aut(\tX)$. One can also show that an action of an algebraic group $G$ on $X$ lifts to an action on $\tX$. This means that for every algebraic subgroup $G \subseteq \Aut(X)$ the image $\iota(G) \subseteq \Aut(\tX)$ is closed and $\iota$ induces an isomorphism $G \simto\iota(G)$ of algebraic groups.

More generally, if $\phi\colon X \to X$ is a dominant morphism, then there is a unique ``lift'' $\tphi\colon\tX \to\tX$, i.e. a morphism $\tphi$ such that $\eta\circ\tphi = \phi\circ\eta$, and $\tphi$ is also dominant. In fact, $\phi^{*}\colon\OOO(X) \to \OOO(X)$ is injective and thus induces an injections $\phi^{*}\colon \kk(X) \into \kk(X)$ of the field of rational functions. If an element $r\in\kk(X)$ satisfies an integral equation over $\OOO(X)$, then the same holds for $\phi^{*}(r)$, and so $\phi^{*}(\OOO(\tX)) \subseteq\OOO(\tX)$. 
This shows that we get an injective map $\iota\colon\Dom(X) \to \Dom(\tX)$. 

Here is our next result.
\begin{proposition}\label{normalization.prop}
The map $\iota\colon\Dom(X)\to\Dom(\tX)$ is a closed immersion of ind-semigroups, and $\iota\colon\Aut(X) \to \Aut(\tX)$ is a closed immersion of ind-groups.
\end{proposition}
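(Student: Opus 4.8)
The plan is to deduce everything from the single fact that the lift $\phi\mapsto\tphi$ is an ind-morphism, since the closedness of the image and the regularity of the inverse then come almost for free. Write $R:=\OOO(X)$, $\tilde R:=\OOO(\tX)$ and $K:=\Quot(R)=\Quot(\tilde R)$, and recall that $\tilde R$ is a finitely generated $\kk$-domain which is finite over $R$. First I would record two auxiliary maps: the ind-morphism $E\colon \End(\tX)\to\Mor(\tX,X)$, $\alpha\mapsto\eta\circ\alpha$ (composition, Lemma~\ref{tautological.lem}), and the map $\eta^{*}\colon \End(X)=\Mor(X,X)\into\Mor(\tX,X)$, $\psi\mapsto\psi\circ\eta$, which is a \emph{closed} immersion because $\eta$ is dominant (Lemma~\ref{closed-immersion-Mor.lem}(2)). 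By the defining relation $\eta\circ\tphi=\phi\circ\eta$ one has $E\circ\iota=\eta^{*}|_{\Dom(X)}$.

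Granting that $\iota\colon\Dom(X)\to\Dom(\tX)$ is an ind-morphism, here is how I would finish. For the image: a point $\alpha\in\Dom(\tX)$ lies in $\iota(\Dom(X))$ if and only if $E(\alpha)=\eta\circ\alpha$ lies in the closed set $\eta^{*}(\End(X))$ — indeed $\eta\circ\alpha=\psi\circ\eta$ forces $\psi$ dominant, and then $\alpha=\tilde\psi=\iota(\psi)$ by uniqueness of the lift — so $\iota(\Dom(X))$ is the preimage under $E$ of a closed set, hence closed in $\Dom(\tX)$. For the inverse: $E$ restricts to a bijection $\iota(\Dom(X))\to\eta^{*}(\Dom(X))$, and composing it with $(\eta^{*})^{-1}$ (a morphism, since $\eta^{*}$ is a closed immersion) yields $\iota^{-1}$. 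Hence $\iota$ is a closed immersion of ind-varieties, and it is a homomorphism of semigroups because $\widetilde{\phi\circ\psi}=\tphi\circ\tilde\psi$, again by uniqueness. The statement for $\Aut$ then follows by restricting this closed immersion to the closed subgroup $\Aut(X)\subseteq\Dom(X)$ (Theorem~\ref{AutX-locally-closed-in-EndX.thm}) and noting that lifts of automorphisms are automorphisms, so that the image lands in, and is closed in, $\Aut(\tX)$.

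The real work, and the main obstacle, is to prove that $\iota$ is an ind-morphism, and this splits into a boundedness statement and a regularity statement. Fix a degree $k$ and set $Z:=\End(X)_{k}\cap\Dom(X)$, a variety. I must first show that the lifts $\tphi$, $\phi\in Z$, have uniformly bounded degree, i.e. $\iota(Z)\subseteq\End(\tX)_{\ell}$ for some $\ell$. This is where I would use the universal property of normalization: let $\pi\colon\hat Z\to Z$ be the normalization of $Z_{\mathrm{red}}$, so that $\hat Z\times\tX$ is a normal variety, and consider $g:=\Phi\circ(\pi\times\eta)\colon\hat Z\times\tX\to X$, where $\Phi\colon Z\times X\to X$ is the evaluation morphism (Lemma~\ref{tautological.lem}). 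On each irreducible component $g$ is dominant (the fibre over fixed $\phi$ being $\phi$, which is dominant), so it factors through the normalization $\eta\colon\tX\to X$, giving a morphism $h\colon\hat Z\times\tX\to\tX$ with $\eta\circ h=g$; by uniqueness of the lift $h(\hat z,\cdot)=\iota(\pi(\hat z))$. Thus $h$ is a family of endomorphisms of $\tX$, i.e. a morphism $\hat Z\to\End(\tX)$ (Lemma~\ref{ind-var-morphisms.lem}), whose image lies in some $\End(\tX)_{\ell}$ by Lemma~\ref{Kumar.lem}; since $\pi$ is surjective this gives $\iota(Z)\subseteq\End(\tX)_{\ell}$.

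With the bound in hand the regularity is a division. Choosing $\kk$-algebra generators $w_{1},\dots,w_{r}$ of $\tilde R$ and a nonzero $d\in R$ with $dw_{i}\in R$, the relation $\tphi^{*}|_{R}=\phi^{*}$ gives $\phi^{*}(d)\,\tphi^{*}(w_{i})=\phi^{*}(dw_{i})$, hence $\tphi^{*}(w_{i})=\phi^{*}(dw_{i})/\phi^{*}(d)$ in $\tilde R$, with $\phi^{*}(d)\neq0$ because $\phi$ is dominant. The maps $\phi\mapsto\phi^{*}(d)$ and $\phi\mapsto\phi^{*}(dw_{i})$ are morphisms $Z\to\tilde R$ (composition with fixed functions, followed by the inclusion $R\into\tilde R$), and the quotient lands in the fixed $\tilde R_{\ell}$ by the boundedness just established; so Corollary~\ref{bilinear.cor} (the Division Lemma, Lemma~\ref{division-biss.lem}) shows $\phi\mapsto\tphi^{*}(w_{i})$ is a morphism. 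As the $w_{i}$ yield a closed embedding $\End(\tX)\into\tilde R^{r}$, the map $\iota|_{Z}$ is a morphism into $\End(\tX)_{\ell}$, and letting $k$ vary proves $\iota$ is an ind-morphism. I expect the boundedness step to be the crux: without the normalization argument there is no evident a priori control on $\deg\tphi$ in terms of $\deg\phi$, since the naive degree estimates for the quotient $\phi^{*}(dw_{i})/\phi^{*}(d)$ are defeated by degree drops in the domain $\tilde R$.
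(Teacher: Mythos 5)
Your proof is correct, and although its skeleton matches the paper's --- reduce the $\Aut$ statement to the $\Dom$ statement via Theorem~\ref{AutX-locally-closed-in-EndX.thm}, identify the image of $\iota$ and its inverse, and prove regularity of $\iota$ by a degree bound followed by the Division Lemma in the form of Corollary~\ref{bilinear.cor}, which both proofs use in essentially the same way --- it diverges from the paper at the crucial boundedness step, and in a way worth recording. Writing $R=\OOO(X)$ and $\tilde R=\OOO(\tX)$, the paper proves that $\iota$ maps each filtered piece of $\Dom(X)$ into a fixed filtered piece of $\Dom(\tX)$ by introducing the closed sub-semigroup $\Inj_{R}(\tilde R)=\{\alpha\in\Inj(\tilde R)\mid \alpha(R)\subseteq R\}$ and applying Lemma~\ref{bijective-morphisms.lem} to the bijective ind-morphism $\res\colon\Inj_{R}(\tilde R)\to\Inj(R)$; that lemma requires $\kk$ uncountable, so the paper must finish with a base-field extension using Propositions~\ref{field-extensions-for-morphisms.prop} and \ref{fieldextension.prop}. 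You obtain the bound geometrically instead: normalize the parameter variety $Z$, use the universal property of the normalization $\eta\colon\tX\to X$ to factor the evaluation morphism $\hat Z\times\tX\to X$ through $\tX$, identify the resulting map as the family of lifts by uniqueness, and invoke Lemma~\ref{Kumar.lem}. This argument is uniform over every algebraically closed field and makes the base-change step superfluous --- a genuine simplification, at the price of importing the universal property of normalization. Your closed-image and inverse bookkeeping via $E$ and the closed immersion $\eta^{*}$ of Lemma~\ref{closed-immersion-Mor.lem}(2) is the paper's $\Inj_{R}(\tilde R)$ and $\res$ in geometric clothing: the condition $E(\alpha)\in\eta^{*}(\End(X))$ is exactly the condition $\alpha^{*}(R)\subseteq R$, so this part is the same argument. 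Two routine points deserve a word in a final write-up: $Z$ is only quasi-affine, so the factorization through $\eta$ should be produced on affine charts of $\hat Z\times\tX$ (or via regularity of the rational map to the finite cover $\tX$, using normality of the source) and then glued; and the normality of $\hat Z\times\tX$ uses that a product of normal varieties over the algebraically closed field $\kk$ of characteristic zero is again normal. Neither is a gap.
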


For the proof we need the ``Division Lemma'' from Section~\ref{division.subsec} in the form given in Corollary~\ref{bilinear.cor}.

\ps
\subsection{Proof of Proposition~\ref{normalization.prop}} \label{proof.subsec}
Since $\Aut(X)$ is closed in $\Dom(X)$, by Theorem~\ref{AutX-locally-closed-in-EndX.thm}, it suffices to prove the first assertion.
\ps
(a) 
We assume first that the base field $\kk$ is uncountable. We proceed as in Section~\ref{embedding.subsec}. We fix two closed embeddings $X \into \An$ and $\tilde X \into \Am$, or equivalently generators $f_{1},\ldots,f_{n}$ of $R:=\OOO(X)$, and generators $h_{1},\ldots,h_{m}$ of $\tilde R:=\OOO( \tilde X)$.

Writing $R = \kk  [x_1,\ldots,x_n] / I$ where $I $ is the vanishing ideal of $X \into \An$, we define $R_{k}$ to be the image of the space $\kk[x_{1},\ldots,x_{n}]_{\leq k}$ of polynomials of degree at most $k$. This defines a filtration $R = \bigcup_{k\geq 0}R_{k}$ by finite-dimensional subspaces. We have a closed immersion
\[ 
\End (X) = \{ f=(f_1,\ldots,f_n) \in  R^n \mid q(f) = 0 \text{ for all } q \in I\}\subseteq R^{n}
\]
and an open immersion $\Dom (X) \subseteq \End (X)$ by Theorem~\ref{AutX-locally-closed-in-EndX.thm}. 
Admissible filtrations of $\End (X)$ and $\Dom (X)$ are defined in the following way:
\[ 
\End (X) _{k} := \End (X) \cap (R_{k})^n   \text{ \ and \ } \Dom (X) _{k}  := \Dom(X) \cap \End (X) _{k}.
\]
The semigroup $\End (X)$, resp. $\Dom (X)$, is naturally anti-isomorphic to the semigroup $\End (R)$ of endomorphisms of (the $\kk$-algebra) $R$, resp. the semigroup $\Inj (R)$ of injective endomorphisms of $R$. In the sequel, we will use $\End (R)$, $\Inj (R)$ rather than $\End (X)$, $\Dom (X)$.

Writing $\tilde R = \kk  [x_1,\ldots,x_m] / J$, we get analogously  a filtration $\tilde R = \bigcup_{\ell \geq 0} \tilde R_{\ell}$, and the inclusions $\Dom (\tilde X) \subseteq \End (\tilde X) \subseteq (\tilde R) ^m$. We will also use $\End (\tilde R)$, $\Inj ( \tilde R)$ rather than $\End ( \tilde X)$, $\Dom ( \tilde X)$.

An admissible filtration of $\Inj(R) $ is obtained in the usual way:
$$
\Inj(R)_{k}:=\{\phi\in\Inj(R) \mid \phi(R_{1})\subseteq R_{k}\},
$$
and similarly for $\Inj(\tilde R)$.
\ps
(b) 
Define $\Inj_{R}(\tilde R):=\{\phi\in\Inj(\tilde R) \mid \phi(R) \subseteq R\}\subseteq \Inj(\tilde R)$. This is clearly a closed sub-semigroup, and the restriction map $\res\colon \Inj_{R}(\tilde R) \to \Inj(R)$ is a bijective homomorphism of ind-semigroups. This implies, by Lemma~\ref{bijective-morphisms.lem}, that for every $k$ there is an $\ell$ such that $\res^{-1}(\Inj(R)_{k})\subseteq \Inj_{R}(\tilde R)_{k} \subseteq \Inj(\tilde R)_{\ell}$, i.e. $\iota(\Inj(R)_{k}) \subseteq \Inj(\tilde R)_{\ell}$. (Here we used that $\kk$ is uncountable!)
\ps
(c) 
Now we write the generators $h_{j}$ of $\tilde R$ as fractions $h_{j}=\frac{p_{j}(f_{1},\ldots,f_{n})}{q_{j}(f_{1},\ldots,f_{n})}$, with polynomials $p_{j},q_{j}$. We obtain morphisms $\rho_{j},\mu_{j}\colon \Inj(R) \to \tilde R$, defined by $\rho_{j}(\phi):=p_{j}(\phi(f_{1}),\ldots,\phi(f_{n}))$,  $\mu_{j}(\phi):=q_{j}(\phi(f_{1}),\ldots,\phi(f_{n}))$. Set  $Y  := \Inj(R)_{k}$. Then $\iota( Y ) \subseteq \Inj(\tilde R)_{\ell}$, hence 
$$
\frac{\rho_{j}(\phi)}{\mu_{j}(\phi)} = \frac{p_{j}(\phi(f_{1}),\ldots,\phi(f_{n}))}{q_{j}(\phi(f_{1}),\ldots,\phi(f_{n}))}
=\phi(h_{j}) \in \tilde R_{\ell} \text{ for }\phi \in Y.
$$
It follows from Corollary~\ref{bilinear.cor} that the maps $\phi\mapsto \phi(h_{j})$ are morphisms, and so $\iota \colon Y \to \Inj(\tilde R)$ is a morphism since $\Inj(\tilde R)\subseteq \tilde R^{m}$ is open.
\ps
(d) 
To get the result for a general field $\kk$, we use a base field extension $\KK/\kk$ with an uncountable algebraically closed field $\KK$. Then the claim follows from the above using Proposition~\ref{field-extensions-for-morphisms.prop} and Proposition~\ref{fieldextension.prop} (2) and (3).
\qed

\ps
\subsection{The endomorphisms of a cusp}\idx{cusp}
As an example we study the plane curve $C :=V(y^{2}-x^{3}) \subseteq \Atwo$, called \itind{Neile's parabola}, with the isolated singularity in $0$ and with normalization $\eta\colon \Aone \to C$, $s\mapsto (s^{2},s^{3})$. This implies the following description of the coordinate ring of $C$:
$$
\OOO(C) = \kk[s^{2},s^{3}] \subseteq \kk[s]
$$
where we identify $\bar x$ with $s^{2}$ and $\bar y$ with $s^3$. 
It is clear from Proposition~\ref{normalization.prop} that every endomorphism of $C$ lifts to an endomorphism of the normalization $\Aone$ since the non-dominant endomorphisms are the constant maps $\gamma_{c} \colon C \to C$ with value $c \in C$.

\begin{proposition}
\be
\item
Let $p=\sum_{i}a_{i}s^{i} \in \kk[s]=\OOO(\Aone)$. Then the map $p\colon\Aone\to\Aone$ induces an endomorphism $\bar p\colon C \to C$ if and only if $p\in M:=\{p(s)\in\kk[s] \mid a_{0}a_{1}=0$\}. Hence 
$$
\End(C) = \{(p(s)^{2},p(s)^{3})\mid p(s) \in M\} = \{(a,b)\in \OOO(C)^{2} \mid a^{3}=b^{2}\} \subseteq \OOO(C)^{2}
$$
is a closed ind-subvariety. 
\item
The corresponding map 
$$
\psi\colon M=s\CC[s] \cup (\CC\oplus s^{2}\CC[s]) \to \End(C), \ p\mapsto \bar p,
$$ 
is a bijective ind-morphism inducing an isomorphism $M\setminus\{0\} \simto \End(C)\setminus\{\gamma_{0}\}$. 
\item
The ind-morphism $\psi$ maps the constant polynomials $\kk$ bijectively onto the constant maps 
$\{\gamma_{c}\mid c\in C\}$, and  the induced morphism
$\psi|_{\kk}\colon \kk \to \psi(\kk)\simeq C$ is the normalization. In particular, $\psi$ is not an isomorphism.
\item
The ind-morphism $\psi$ induces an isomorphism $\psi'\colon M \setminus\kk \simto \Dom(C)$.
\item
The map $\iota\colon\Aut(C) \into \Aut(\Aone)$ corresponds to the closed immersion $\Cst \into \Aff_{1}=\kst\ltimes\kplus$, and $\iota\colon \Dom(C) \into \Dom(\Aone)$ corresponds to the closed immersion $M\setminus\kk \into \kk[s]\setminus\kk$
\item
If we identify $\Aut(C)$ with $\Cst$ by setting $t(x,y):=(t^{2}x,t^{3}y)$, then
$\psi$ is $\Cst$-equivariant where the action on $M$ is by scalar multiplication and on $\End(C)$ by left-multiplication. 
\item
For the vector fields we get 
\[
\VEC(C) = \OOO(C) (2x\dx + 3y\dy)|_{C}+\OOO(C)(2y\dx + 3x^{2}\dy)|_{C},
\]
and the linear  map 
$$
\xi \colon  T_{\id}\End(C) \simto \VEC(C),
$$
described in Proposition~\ref{End(X)-and-Vec(X).prop},
is an isomorphism.
\ee
\end{proposition}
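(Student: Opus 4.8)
The plan is to reduce every assertion to the explicit description $\OOO(C)=\kk[s^{2},s^{3}]=\kk\oplus s^{2}\kk[s]$ (the polynomials with vanishing linear coefficient) together with the lifting property of the normalization $\eta\colon\Aone\to C$. Throughout I identify $\bar x$ with $s^{2}$ and $\bar y$ with $s^{3}$.

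First I would treat the description of $\End(C)$ in (1)--(3). A non-constant endomorphism of $C$ is dominant, hence lifts uniquely to a dominant endomorphism of $\Aone$, i.e. to a polynomial $p(s)=\sum_{i}a_{i}s^{i}$, by Proposition~\ref{normalization.prop}; the remaining endomorphisms are the constant maps $\gamma_{c}$, $c\in C$. For the induced map $\bar p$ one has $\bar p^{*}(\bar x)=p^{2}$ and $\bar p^{*}(\bar y)=p^{3}$, so $\bar p$ exists if and only if $p^{2},p^{3}\in\OOO(C)$. Since $\OOO(C)$ is exactly the polynomials with no $s^{1}$-term, comparing the coefficient of $s$ in $p^{2}$ (namely $2a_{0}a_{1}$) and in $p^{3}$ (namely $3a_{0}^{2}a_{1}$) shows both conditions are equivalent to $a_{0}a_{1}=0$, i.e. $p\in M$. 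This yields the description $\End(C)=\{(p^{2},p^{3}):p\in M\}$. For the second description, $\bar p$ corresponds to $(a,b)=(\bar p^{*}\bar x,\bar p^{*}\bar y)\in\OOO(C)^{2}$, and the relation $\bar y^{2}=\bar x^{3}$ forces $a^{3}=b^{2}$; conversely any such pair defines an algebra homomorphism. The equality of the two sets comes from a unique-factorization argument in $\kk[s]$: from $a^{3}=b^{2}$ one recovers $p$ with $a=p^{2}$, $b=p^{3}$ (using that $\kk$ is algebraically closed to fix the scalar unit), and then $p\in M$ by (1). Closedness of $\End(C)\subseteq\OOO(C)^{2}$ is immediate, being cut out by $a^{3}=b^{2}$. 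Part (3) then follows because the constant polynomials map bijectively onto the constant maps via $\lambda\mapsto\gamma_{(\lambda^{2},\lambda^{3})}$, i.e. via $\eta$, which is bijective but not an isomorphism; hence $\psi$ is not an isomorphism.

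Next come the ind-structure statements (2) and (4). The map $\psi\colon p\mapsto(p^{2},p^{3})$ is an ind-morphism since squaring and cubing are, and $M\subseteq\kk[s]$ is closed; injectivity is elementary ($p^{2}=q^{2}$ and $p^{3}=q^{3}$ force $p=q$), and surjectivity was the unique-factorization step. The crucial point, which I expect to be the \emph{main obstacle}, is that $\psi$ restricts to an isomorphism $M\setminus\{0\}\simto\End(C)\setminus\{\gamma_{0}\}$: here $a\neq 0$, and the inverse is $(a,b)\mapsto b/a=p$. I would prove this inverse is an ind-morphism by applying the Division Lemma in the form of Corollary~\ref{bilinear.cor} with $R=\kk[s]$, noting that on each algebraic piece $\deg p$ is bounded, so $\nu=b/a$ lands in a fixed $\kk[s]_{\leq k}$; this is where the ind-structure genuinely enters and degrees must be controlled. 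Part (4) is then obtained by restricting this isomorphism to the open subsets $M\setminus\kk\subseteq M\setminus\{0\}$ and $\Dom(C)=\End(C)\setminus\{\gamma_{c}:c\in C\}\subseteq\End(C)\setminus\{\gamma_{0}\}$, using that $\Dom(C)$ is open in $\End(C)$ (Theorem~\ref{AutX-locally-closed-in-EndX.thm}).

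Finally, for (5)--(7): a polynomial $p=a_{0}+a_{1}s$ is an automorphism of $\Aone$ iff $a_{1}\neq 0$, and it descends to $C$ iff $a_{0}a_{1}=0$, forcing $a_{0}=0$; thus $\Aut(C)=\{\overline{\alpha s}:\alpha\in\kst\}$, and $\iota$ identifies it with the torus factor of $\Aff_{1}=\kst\ltimes\kplus$, while $\iota\colon\Dom(C)\into\Dom(\Aone)=\kk[s]\setminus\kk$ becomes the inclusion $M\setminus\kk\into\kk[s]\setminus\kk$, the closed-immersion claims coming from Proposition~\ref{normalization.prop} via the identifications $\psi'$. For (6) I would compute on lifts: $\overline{ts}$ acts as $(x,y)\mapsto(t^{2}x,t^{3}y)$ and $\overline{ts}\circ\bar p$ lifts to $s\mapsto t\,p(s)$, so $\overline{ts}\circ\psi(p)=\psi(tp)$, which is the asserted $\kst$-equivariance. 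For (7) I would extend each derivation of $\OOO(C)$ to $\kk(s)\partial_{s}$, write it as $h\partial_{s}$, and impose $h\partial_{s}(s^{2}),h\partial_{s}(s^{3})\in\OOO(C)$; this forces $h\in s\kk[s]$, giving $\VEC(C)=s\kk[s]\,\partial_{s}$, while a direct check shows $2x\ddx+3y\ddy$ restricts to $s\partial_{s}$ and $2y\ddx+3x^{2}\ddy$ to $s^{2}\partial_{s}$, whose $\OOO(C)$-span is all of $s\kk[s]\partial_{s}$. Computing $T_{\id}\End(C)$ from the first-order relation $a^{3}=b^{2}$ gives the pairs $(u,\frac{3}{2}su)$ with $u\in s^{2}\kk[s]$, and $\xi$ sends such a vector to $h\partial_{s}$ with $2sh=u$; as $u$ ranges over $s^{2}\kk[s]$, $h$ ranges over all of $s\kk[s]$, so $\xi$ is surjective, hence an isomorphism, injectivity being the general Proposition~\ref{End(X)-and-Vec(X).prop}.
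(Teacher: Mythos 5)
Your proof is correct and, for parts (1)--(6), essentially coincides with the paper's argument: the lifting of non-constant endomorphisms through the normalization, the linear-coefficient computation giving $a_{0}a_{1}=0$, the unique-factorization step recovering $p$ from $a^{3}=b^{2}$ (with the scalar adjusted using that $\kk$ is algebraically closed), and the use of the Division Lemma to make $(a,b)\mapsto b/a$ a morphism on $\End(C)\setminus\{\gamma_{0}\}$ --- the paper cites Lemma~\ref{division-biss.lem} directly where you cite its Corollary~\ref{bilinear.cor}, with the same degree control on the algebraic pieces, and (4), (5), (6) are handled the same way. In (7) you deviate in two small ways. For $\VEC(C)$ the paper restricts vector fields parallel to $C$ from the ambient plane, using the surjectivity of $\VEC_{C}(\Atwo)\to\VEC(C)$ from Proposition~\ref{vector-fields.prop}(4), while you extend derivations to the fraction field $\kk(s)$ and impose $2sh,\,3s^{2}h\in\OOO(C)$; both routes yield $s\kk[s]\partial_{s}$, and yours is slightly more self-contained since it avoids the surjectivity statement (which the paper asserts only for ambient affine space). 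For the tangent space, be aware that linearizing $a^{3}=b^{2}$ at $(s^{2},s^{3})$ only gives the inclusion $T_{\id}\End(C)\subseteq\{(u,\tfrac{3}{2}su)\mid u\in s^{2}\kk[s]\}$, whereas the surjectivity of $\xi$ requires the reverse inclusion; the paper gets both at once by transporting $T_{s}M=T_{s}(s\kk[s])=s\kk[s]$ through the isomorphism $\psi'$ of part (2), whose differential $q\mapsto(2sq,3s^{2}q)$ realizes every such pair as an honest tangent vector. Since you have already established that isomorphism in (2), this is a one-line repair rather than a gap, but as written your equality for $T_{\id}\End(C)$ is unjustified in the direction you actually use.
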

The situation is illustrated in the following diagram:
$$
\begin{CD}
\kk @>{\subseteq}>> M:=\{p\in\kk[s] \mid a_{0}a_{1}=0\} = (s\kk[s]) \cup (\kk\oplus s^{2}\kk[s])\\
@VV{\text{bijective}}V @V{\psi\colon}V{p\mapsto(p^{2},p^{3})}V\\
C @>{\into}>> \End(C) = \{(a,b)\in \OOO(C)^{2} \mid a^{3} = b^{2}\} \subseteq \OOO(C)^{2}
\end{CD}
$$

\begin{proof}
(1) The first part is clear. If $a,b \in \kk[s]$ satisfy $a^{3}=b^{2}$, then $a = p^{2}$ and $b = p^{3}$ for a suitable $p\in\kk[s]$. Now it is easy to see that if $a,b \in \kk[s^{2},s^{3}]$, then $p\in M$.
\par\smallskip
(2) It is obvious that $\psi$ is a bijective ind-morphism. To show that $\psi$ is an isomorphism on the complement of the origin we use Lemma~\ref{division-biss.lem} above which implies that the map $\End(C)\setminus\{\gamma_{0}\} \to M$, $(a,b)\mapsto \frac{b}{a}$, is a morphism.
\par\smallskip
(3) is clear and (4) follows from (2) since the non-constant morphisms are dominant. 
\par\smallskip
(5) and (6) are clear.
\par\smallskip
(7) 
Recall that $\VEC_{C}(\Atwo):=\{\delta\in\VEC(\Atwo) \mid \delta_{c} \in T_{c}C \text{ for all }c \in C\}$ and that the restriction map $\VEC_{C}(\Atwo) \to \VEC (C)$, $\delta \mapsto \delta|_{C}$ is surjective, by Proposition~\ref{vector-fields.prop}(\ref{closed-subvarieties-and-VF}).
We have $\VEC_{C}(\Atwo) = \{f\dx + h\dy\mid -3fx^{2}+2hy = 0\}$. It follows that the two restrictions $\bar f(s):=f(s^{2},s^{3}), \bar h(s):=h(s^{2},s^{3}) \in \kk[s^{2},s^{3}]=\OOO(C)$ satisfy the equation $3\bar f s^{4} = 2\bar h s^{3}$, hence $\bar h = \frac{3}{2}s \bar f$. In particular, $\bar f(0)=0$, and so $\bar f \in \mm=\OOO(C)s^{2}\oplus \kk s^{3}$. For $\bar f = 2s^{2}$ we obtain the vector field $(2x\dx+3y\dy)|_{C}$, and for  $\bar f = 2s^{3}$ we find $(2y\dx + 3x^{2}\dy)|_{C}$. Hence $\VEC(C) = \OOO(C)(2x\dx+3y\dy)|_{C} \oplus \kk(2y\dx + 3x^{2}\dy)|_{C}$, as claimed.

It remains to calculate the linear map $\xi \colon T_{\id}\End(C) \to \VEC(C)$.
We have seen that the bijective morphism $\Psi\colon M \to \End(C)$ induces an isomorphism $M\setminus\{0\} \simto \End(C)\setminus\{\gamma_{0}\}$,
and sends $s \in M$ to $\id\in\End(C)$, hence $T_{s}M \simeq T_{\id}\End(C)$. Since $s$ belongs only to the irreducible component $s\kk[s]$ of $M$, we have $T_{s}M = T_{s} s\kk[s] = s\kk[s]$, 
and we will identify $T_{\id}\End(C)$ with $s \kk[s]$. An easy computation (see Section~\ref{Endo-VF.subsec} and in particular Proposition~\ref{End(X)-and-Vec(X).prop}) shows that if $q \in T_{\id}\End(C) = s \kk[s]$, then we have $\xi_q = (2sq\dx + 3s^{2}q\dy)|_{C} \in \VEC(C)$.
Choosing $q =  fs$ where $f \in \kk[s^{2},s^{3}]$ we get $\xi_{q}=f(2x\dx + 3y\dy)|_{C}$, and for $q = s^{2}$ we find $\xi_{q}=(2y\dx + 3x^{2}\dy)|_{C}$, showing that $\xi$ is surjective.
\end{proof}

\begin{example}
If $r,s\geq 2$ are coprime, then  the zero set $C:=V(y^{r}-x^{s})\subseteq \Atwo$ is a \itind{rational cuspidal curve} with a unique singularity. It is shown in \cite[Proposition~8.2]{Kr2017Automorphism-group} that for a finite family of pairwise nonisomorphic cuspidal curves $C_{1},\ldots,C_{m}$ the automorphism group 
$\Aut(C_{1}\times C_{2}\times\cdots\times C_{m})$ is an $m$-dimensional torus.
\end{example}

\pmed
\section{The Automorphism Group of a General Algebra} 
\label{GeneralAlgebra.sec}
\ps
\subsection{Endomorphisms of a general algebras}\label{GeneralAlgebra.subsec}
Recall that a \itind{general  $\kk$-algebra} $\R$ is  a  $\kk$-vector space $\R$\idx{$\R$} endowed with a bilinear map $\R \times \R \to \R$, $(a,b) \mapsto a\cdot b$. A priori, there are  no additional conditions like associativity, commutativity or the existence of a neutral element. If $\R$ is finite-dimensional, then $\Aut (\R)$ is a linear algebraic group and $\Lie ( \Aut (\R) ) = \Der_{\kk} (\R)$, see Proposition~\ref{Lie algebra of the automorphism group of a fd general algebra.prop}. Let us mention that by a result of \name{Gordeev-Popov} \cite{GP2003Automorphism-groups} any linear algebraic group is isomorphic to $\Aut (\R)$ for some finite-dimensional general algebra $\R$.

We will now deal with the more general case where $\R$ is just assumed to be finitely generated. This implies that $\R$ has countable dimension and thus has a canonical structure of an ind-variety such that the multiplication is a morphism. We will see that $\Aut (\R)$ is naturally an ind-group whose Lie algebra $\Lie ( \Aut (\R) )$ embeds into  $\Der_{\kk} (\R)$. For this, we will first endow $\End (\R)$ with the structure of an ind-semigroup. 

\begin{definition} \label{family of algebra endomorphisms.def}
Let $\R$ be a finitely-generated general algebra and $\YYY$ an ind-variety. A {\it family of algebra endomorphisms of $\R$ parametrized by $\YYY$}\idx{family of algebra endomorphisms} is a morphism $\Phi \colon \R \times \YYY \to \R$ such that for any $y \in \YYY$ the induced map $\Phi_y \colon \R \to \R$, $a \mapsto \Phi (a,y)$ is an algebra endomorphism of $\R$. We use the notation $\Phi = (\Phi_{y})_{y \in \YYY}$, so that a family $\Phi$ of endomorphisms can be regarded as a map $\Phi\colon \YYY \to \End (\R)$.
\end{definition}

\begin{proposition} \label{ind-var of algebra endomorphisms.prop}
Let $\R$ be a finitely-generated general $\kk$-algebra. There exists a universal structure of ind-variety on $\End (\R)$ such that families of algebra endomorphisms of $\R$ parametrized by $\YYY$ correspond to morphisms $\YYY \to \End(\R)$. Moreover, $\End(\R)$ is an ind-semigroup, i.e. the multiplication is a morphism.
\end{proposition}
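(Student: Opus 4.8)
The plan is to mimic the commutative case (Proposition~\ref{TEnd(R)-into-Der(R).prop}), replacing the coordinate presentation by the free non-associative algebra on a generating subspace. First I would fix a finite-dimensional subspace $W \subseteq \R$ generating $\R$, set $n := \dim W$, and filter $\R = \bigcup_k \R_k$, where $\R_k$ is the span of the images of all monomials (products with arbitrary bracketings) of degree $\le k$ in elements of $W$; each $\R_k$ is finite-dimensional and $\R_i \cdot \R_j \subseteq \R_{i+j}$. Choosing a basis of $W$ gives $\LLL(W,\R) \cong \R^{n}$, a $\kk$-vector space of countable dimension, hence an affine ind-variety (Example~\ref{countable-vector-space.exa}). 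The restriction map $\End(\R) \to \LLL(W,\R)$, $\phi \mapsto \phi|_W$, is injective because $W$ generates $\R$.

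Second, I would identify the image. Let $F(W)$ be the free general $\kk$-algebra on $W$, with its canonical projection $\pi\colon F(W) \onto \R$ and kernel $I$. Any $u \in \LLL(W,\R)$ extends uniquely to an algebra homomorphism $\hat u\colon F(W) \to \R$, and $u$ lies in the image of $\End(\R)$ if and only if $\hat u(I) = 0$. For each fixed $w \in F(W)$ the evaluation $u \mapsto \hat u(w)$ is a morphism $\LLL(W,\R) \to \R$: this follows by induction on the degree of $w$ from $\hat u(m\cdot m') = \hat u(m)\cdot\hat u(m')$ together with the fact that the multiplication $\R \times \R \to \R$ is a morphism. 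Hence $\End(\R) = \bigcap_{w} (u \mapsto \hat u(w))^{-1}(0)$, the intersection taken over a basis of $I$, is closed in $\LLL(W,\R)$ and so inherits a closed ind-subvariety structure.

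Third, and this is the real content, I would verify the universal property, which both pins the structure down uniquely (so that it is independent of $W$) and feeds into the ind-semigroup statement. One direction is immediate: a family $\Phi\colon \R \times \YYY \to \R$ of algebra endomorphisms restricts to a morphism $W \times \YYY \to \R$, i.e.\ to a morphism $\YYY \to \LLL(W,\R)$ landing in the closed subset $\End(\R)$. For the converse I would start from a morphism $f\colon \YYY \to \End(\R)$, set $u_y := f(y)$, let $\Phi_y$ be the endomorphism extending $u_y$, and reconstruct $\Phi(a,y) := \Phi_y(a)$. To see $\Phi$ is a morphism it suffices to check it on each $\R_k \times \YYY_t$: by Lemma~\ref{Kumar.lem} one has $f(\YYY_t) \subseteq \LLL(W,\R_N)$ for some $N$, so $\Phi$ maps $\R_k \times \YYY_t$ into the fixed finite-dimensional space $\R_{kN}$; choosing a basis of $\R_k$ consisting of monomial images $\pi(m_s)$, the value $\Phi(\pi(m_s), y)$ is built from the morphisms $y \mapsto u_y(g_i)$ (coordinates of $f$) by the product pattern of $m_s$, hence is a morphism in $y$, and linear extension in the $\R_k$-variable yields a morphism $\R_k \times \YYY_t \to \R_{kN}$. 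This produces the required bijection between families and morphisms $\YYY \to \End(\R)$.

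Finally, the ind-semigroup claim follows formally, exactly as in Lemma~\ref{tautological.lem} and Remark~\ref{evaluation and composition.rem}: applying the universal property to $\id_{\End(\R)}$ yields the evaluation morphism $\mathrm{ev}\colon \R \times \End(\R) \to \R$, $(a,\phi)\mapsto \phi(a)$; then $(a,\phi,\psi) \mapsto \phi(\psi(a))$ is the composite $\R \times \End(\R)\times\End(\R) \to \R \times \End(\R) \xrightarrow{\mathrm{ev}} \R$ of morphisms, a family of endomorphisms parametrized by $\End(\R)\times\End(\R)$, which by the universal property corresponds to the composition morphism $\End(\R)\times\End(\R)\to\End(\R)$. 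I expect the main obstacle to be the third step: handling the non-associativity (all bracketings) while reconstructing the full family from its restriction to $W$ and controlling the degrees uniformly over the parameter variety, so that the reconstructed map is genuinely an ind-morphism rather than merely a fibrewise endomorphism.
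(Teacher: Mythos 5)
Your proposal is correct and follows essentially the same route as the paper: present $\R$ as a quotient of the free general algebra on generators, identify $\End(\R)$ with the closed subset of $\R^n$ cut out by the relations (closedness via evaluation of general polynomials being a morphism), and then deduce the universal property and the semigroup structure. In fact you supply more detail than the paper, which leaves both the universal property and the morphism property of the multiplication ``easy to see''; your degree-filtration argument via Lemma~\ref{good-filtrations-for-morphisms.lem}/Lemma~\ref{Kumar.lem} and the formal derivation of composition from the evaluation morphism are exactly the intended verifications.
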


\begin{proof}
Let ${\FFF}(x_1,\ldots,x_n)$ be the free general algebra on $x_1, \ldots, x_n$. This means that given any general algebra $\SSS$ and any elements $b_1,\ldots,b_n$ in $\SSS$ , there exists a unique homomorphism of algebras ${\FFF}(x_1,\ldots,x_n) \to \SSS$ sending $x_i$ to $b_i$ for $1 \leq i \leq n$. Recall that a basis of ${\mathcal F}(x_1,\ldots,x_n)$ over $\kk$ is given by all well-formed expressions in the elements $x_1,\ldots,x_n$ where a well-formed expression is recursively defined by the following: each element $x_i$ is a well-formed expression, and if $t,t'$ are well-formed expressions, then so is $(t\cdot t')$. By definition, the product of two well-formed expressions $t$ and $t'$ is the well-formed expression $(t\cdot t')$. The elements of ${\FFF}(x_1,\ldots,x_n)$ are the general polynomials, i.e. the formal linear combinations of well-formed expressions in the elements $x_1,\ldots,x_n$ with coefficients in  $\kk$.

Now assume that $\R$ is generated by $a_{1},\ldots,a_{n}\in \R$.
We consider the surjective homomorphism of algebras $\phi \colon {\FFF}(x_1,\ldots,x_r) \to \R$ sending $x_i$ to $a_i$ for $1 \leq i \leq n$. Its kernel $\Ker \phi$ consists of the general polynomials $p(x_1,\ldots,x_n) \in {\FFF}(x_1,\ldots,x_n)$ which are sent to zero by $\phi$.
We can identify $\End(\R)$ with the following subset $E \subseteq \An$ by sending $\phi\in\End(\R)$ to $(\phi(a_{1}),\ldots,\phi(a_{n}))$:
\[ 
\End(\R) \simto E:=\{ f =(f_1, \ldots,f_n) \in \R^n \mid p (f) = 0 \text{ for all }p \in \Ker \phi  \}\subseteq \R^n.
\]
It is not difficult to see that  $E$ is closed in $\R^n$. One has only to check  that for a general polynomial $p \in {\FFF}(x_1,\ldots,x_n)$, the map $\R^n \to \R$ sending $(a_1,\ldots,a_n)$ to $p(a_1, \ldots, a_n)$ is a morphism. We leave the details to the reader. 

Now it is easy to see that with this structure of an ind-variety, $\End(\R)$ has the requested universal property, and that the multiplication is a morphism.
\end{proof}

\begin{remark}\label{ind-var-of-Hom.rem}
The proposition above generalized to $\Hom(\RRR,\SSS)$ where $\RRR$ is a finitely generated general $\CC$-algebra and $\SSS$ is a general algebra of countable dimension. The details are left to the reader.
\end{remark}

\begin{remark} \label{unitary algebras.rem}
In the case of \itind{unitary\/} general algebras, we must add a unique well-formed expression of length zero, usually denoted by $1$, with the property $1\cdot a = a \cdot 1 = a$ for all $a\in \R$.
\end{remark}

\ps
\subsection{Automorphisms of a general algebra}
We start with the obvious generalizations of Definition~\ref{family of automorphisms.def} and Theorem~\ref{AutX.thm} to general algebras.

\begin{definition} \label{family of algebra automorphisms.def}
Let $\R$ be a finitely generated general algebra and $\YYY$ be an ind-variety. A {\it family of algebra automorphisms of $\R$ parametrized by $\YYY$}\idx{family of algebra automorphisms} is an automorphism $\Phi \colon \R \times \YYY \to \R \times \YYY$ over $\YYY$ such that for any $y \in \YYY$ the induced map $\Phi_y \colon \R \to \R$, $(a,y) \mapsto \Phi (a,y)$ is an algebra automorphism of $\R$. We use the notation $\Phi = (\Phi_{y})_{y \in \YYY}$, so that a family $\Phi$ of automorphisms can be regarded as a map $\Phi\colon \YYY \to \Aut (\R)$.
\end{definition}

\begin{proposition}  \label{ind-var of algebra automorphisms.prop}
Let $\R$ be a finitely generated general algebra. There exists a universal structure of ind-variety on $\Aut (\R)$ such that families of algebra automorphisms of $\R$ parametrized by $\YYY$ correspond to morphisms $\YYY \to \Aut(\R)$. With this structure, $\Aut(\R)$ is an ind-group.
\end{proposition}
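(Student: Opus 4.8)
The plan is to mimic the proofs of Theorem~\ref{AutX.thm} and Proposition~\ref{Aut-associative-algebra.prop}, building everything on top of the ind-semigroup structure on $\End(\R)$ furnished by Proposition~\ref{ind-var of algebra endomorphisms.prop}. Concretely, I would introduce the subset
$$
\AAA := \{(\phi,\psi) \in \End(\R)\times\End(\R)\mid \phi\circ \psi =\psi\circ\phi = \id_{\R}\} \subseteq \End(\R) \times \End(\R),
$$
and identify $\Aut(\R)$ with $\AAA$ via the first projection $\pr_{1}$, whose set-theoretic inverse sends $\phi$ to $(\phi,\phi^{-1})$. This transports to $\Aut(\R)$ the structure of a closed ind-subvariety of $\End(\R)\times\End(\R)$.

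First I would verify that $\AAA$ is closed. By Proposition~\ref{ind-var of algebra endomorphisms.prop} the composition map $m\colon \End(\R)\times\End(\R)\to\End(\R)$, $(\phi,\psi)\mapsto\phi\circ\psi$, is a morphism, hence so is the map $(\phi,\psi)\mapsto(\phi\circ\psi,\psi\circ\phi)$ obtained from $m$ and the coordinate swap. Since a single point of an ind-variety is closed, $\AAA$ is the preimage of the closed point $(\id_{\R},\id_{\R})$ and is therefore closed. The heart of the proof is then the universal property. Given a family of automorphisms $\Phi=(\Phi_{y})_{y\in\YYY}$, I would use that by definition $\Phi$ is an automorphism of $\R\times\YYY$ over $\YYY$, so its inverse $\Phi^{-1}=(\Phi_{y}^{-1})_{y\in\YYY}$ is again a family of algebra endomorphisms; by the universal property of $\End(\R)$ the two families give morphisms $y\mapsto\Phi_{y}$ and $y\mapsto\Phi_{y}^{-1}$ into $\End(\R)$, and together they define a morphism $\YYY\to\End(\R)\times\End(\R)$ landing in $\AAA$, i.e. a morphism $\YYY\to\Aut(\R)$. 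Conversely, a morphism $f\colon\YYY\to\AAA$ yields, through the two projections, families $\Phi,\Psi$ of endomorphisms with $\Phi_{y}\circ\Psi_{y}=\Psi_{y}\circ\Phi_{y}=\id_{\R}$ for all $y$; the associated morphism $\Phi$ of $\R\times\YYY$ over $\YYY$ is then an automorphism with inverse $\Psi$, because it restricts to the identity on every fiber.

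Finally I would check that $\Aut(\R)\simeq\AAA$ is an ind-group. The inversion is the restriction to $\AAA$ of the involution $(\phi,\psi)\mapsto(\psi,\phi)$ on $\End(\R)\times\End(\R)$, hence a morphism, and the multiplication $\bigl((\phi_{1},\psi_{1}),(\phi_{2},\psi_{2})\bigr)\mapsto(\phi_{1}\circ\phi_{2},\psi_{2}\circ\psi_{1})$ is a morphism precisely because composition in $\End(\R)$ is. I do not expect a genuine obstacle here: the argument reduces entirely to the ind-semigroup structure on $\End(\R)$ and to the closedness of a point. The only step demanding care is confirming that the identification with $\AAA$ produces the \emph{universal} (functor-representing) structure rather than some ad hoc one, and this is exactly where the universal property of $\End(\R)$ from Proposition~\ref{ind-var of algebra endomorphisms.prop} is invoked, just as in the proof of Theorem~\ref{AutX.thm}.
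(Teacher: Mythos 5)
Your proof is correct and follows essentially the same route as the paper: the paper likewise realizes $\Aut(\R)$ as the closed subset $\{(\f,\g)\in\End(\R)\times\End(\R)\mid \f\cdot\g=\id=\g\cdot\f\}$ identified via the first projection, and deduces the universal property and the ind-group structure from Proposition~\ref{ind-var of algebra endomorphisms.prop}. The details you spell out (closedness as the preimage of $(\id_\R,\id_\R)$, both directions of the universal property, and the group operations being morphisms) are precisely what the paper dismisses as ``easy to see.''
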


\begin{proof}
Define the closed subset
\[ 
\{(\f, \g)\in \End (\R) \times \End (\R) \mid \f\cdot  \g= \id= \g\cdot \f\} \subseteq \End (\R) \times \End (\R). 
\] 
which we identified with $\Aut (\R)$ via the first projection.
It is easy to see, using Proposition~\ref{ind-var of algebra endomorphisms.prop}  that with this structure of an ind-variety the group $\Aut (\R)$ is an ind-group with the required universal property.
\end{proof}

\ps
\subsection{The Lie algebra of \texorpdfstring{$\Aut(\R)$}{Aut(R)}}\label{Lie-algebra-of-Aut(R)-for-R-a-general-algebra.subsection}
For a general finitely generated algebra $\R$ we have a canonical representation of $\Aut(\R)$ on $\R$  which defines a homomorphism of Lie algebras $\xi\colon \Lie\Aut(\R) \to \LLL(\R)$ where $\LLL(\R)$ are the linear endomorphisms of the $\kk$-vector space $\R$, see Lemma~\ref{rep-of-G-and-LieG.lem}. The image $\xi_{A}\in\LLL(\R)$ of $A\in\Lie\Aut(\R)$ is defined by $\xi_{A}(a):=d\mu_{a}(A)$ where $\mu_{a}\colon \Aut(\R) \to \R$, $a \mapsto g(a)$, is the evaluation at $a$.
It is clear from the definition, that $\xi$ extends to a map $\tilde\xi\colon T_{\id} \End(\R) \to \LLL(\R)$ defined in the same way.

\begin{proposition}  \label{Tangent-space-of-End(R)-and-Aut(R)-for-a-general-algebra-R.prop}
The homomorphism $\tilde\xi$ is injective, and its image is contained in the derivations of $\R$,
\[
\tilde\xi \colon T_{\id}\End(\R) \into \Der_{\kk}(\R)\quad\text{and}\quad \xi\colon \Lie\Aut(\R) \into \Der_{\kk}(\R),
\]
where the second map is a homomorphism of Lie algebras.
\end{proposition}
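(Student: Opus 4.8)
The plan is to imitate, essentially verbatim, the proof of Proposition~\ref{Tangent-space-of-End(R)-and-Aut(R).prop} (the associative case), observing that that argument never used associativity, commutativity, or the presence of a unit --- only the bilinearity of the multiplication. Since the multiplication $m\colon\R\times\R\to\R$ of a general algebra is bilinear by definition, the reasoning transfers without change.

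First I would show that $\tilde\xi_A\in\Der_\kk(\R)$ for every $A\in T_{\id}\End(\R)$. Fix $a,b\in\R$. For any $\phi\in\End(\R)$ one has $\phi(a\cdot b)=\phi(a)\cdot\phi(b)=m(\phi(a),\phi(b))$, so the evaluation morphism $\mu_{a\cdot b}\colon\End(\R)\to\R$, $\phi\mapsto\phi(a\cdot b)$, factors as
\[
\begin{CD}
\End(\R) @>{(\mu_a,\mu_b)}>> \R\times\R @>{m}>> \R
\end{CD}
\]
Taking differentials at $\id$ and using that the differential of a bilinear map at $(a,b)$ is $dm_{(a,b)}(x,y)=a\cdot y+x\cdot b$, I obtain
\[
\tilde\xi_A(a\cdot b)=d\mu_{a\cdot b}(A)=dm_{(a,b)}\bigl(\tilde\xi_A(a),\tilde\xi_A(b)\bigr)=a\cdot\tilde\xi_A(b)+\tilde\xi_A(a)\cdot b ,
\]
which is precisely the Leibniz rule. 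Hence $\tilde\xi_A$ is a derivation, and in particular its restriction $\xi=\tilde\xi|_{\Lie\Aut(\R)}$ has image in $\Der_\kk(\R)$.

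Next I would verify injectivity of $\tilde\xi$. Choosing a finite generating set $a_1,\dots,a_n$ of $\R$ gives, as in the proof of Proposition~\ref{ind-var of algebra endomorphisms.prop}, a closed immersion $\End(\R)\into\R^n$ via $\phi\mapsto(\phi(a_1),\dots,\phi(a_n))$, hence an inclusion $T_{\id}\End(\R)\into\R^n$ which is described by $A\mapsto(\tilde\xi_A(a_1),\dots,\tilde\xi_A(a_n))$. Since a derivation of $\R$ is determined by its values on a generating set, $A\neq0$ forces $\tilde\xi_A(a_i)\neq0$ for some $i$, so $\tilde\xi_A\neq0$; thus $\tilde\xi$ is injective.

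The assertion that $\xi\colon\Lie\Aut(\R)\into\Der_\kk(\R)$ is a homomorphism of Lie algebras is already available from the discussion preceding the proposition: $\xi$ is the differential $d\rho$ of the canonical representation $\rho\colon\Aut(\R)\to\GL(\R)$, which is a Lie algebra homomorphism by Lemma~\ref{rep-of-G-and-LieG.lem}. I expect no serious obstacle; the only point genuinely requiring care is confirming that the factorization above, and hence the whole argument, uses nothing about $\R$ beyond the bilinearity of its product --- which is exactly what makes the general-algebra case no harder than the associative one.
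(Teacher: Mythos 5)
Your proof is correct and is essentially identical to the paper's own argument: the same factorization of $\mu_{a\cdot b}$ through $(\mu_a,\mu_b)$ followed by the multiplication map $m$, with $dm_{(a,b)}(x,y)=a\cdot y+x\cdot b$, gives the Leibniz rule, and injectivity is obtained exactly as in the paper from the tangent map of the closed immersion $\End(\R)\into\R^{n}$ determined by a finite generating set. Your observation that only bilinearity of the product is used is precisely the point the paper exploits in carrying the associative-case proof over unchanged.
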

\begin{proof}
For $a,b \in\R$ the morphism $\mu_{a\cdot b}$ is the composition
\[
\begin{CD}
\End(\R) @>(\mu_{a},\mu_{b})>> \R \times \R @>{m}>> \R
\end{CD}
\]
where $m$ is the multiplication of $\R$. From this we see that the differential $d\mu_{a\cdot b}$ is given by the composition
\[
\begin{CD}
T_{\id} \End(\R) @>(d\mu_{a},d\mu_{b})>> T_{a}\R \oplus T_{b}\R @>{dm_{(a,b)}}>> \R
\end{CD}
\]
Since $dm_{(a,b)}(x,y) = a\cdot y + x \cdot b$ we finally get $\xi_{A}(a\cdot b) = a\cdot\xi_{A}(b) + \xi_{A}(a)\cdot b$ showing that $\xi_{A}$ is a derivation of $\R$.  

In order to see that $\tilde\xi$ is injective we have to recall the definition of the ind-variety structure of $\End(\R)$ as a closed subset $E \subseteq \R^{n}$ using a system of generators $(a_{1},\ldots,a_{n})$ of $\R$ (see proof of Proposition~\ref{ind-var of algebra endomorphisms.prop}). It follows that we have an injection $T_{\id} \End(\R) \into \R^{n}$ which has the following description: $A \mapsto (\tilde\xi_{A}(a_{1}),\ldots,\tilde\xi_{A}(a_{n}))$. Since any derivation of $\R$ is determined by the images of the generators $a_{1},\ldots,a_{n}$ we see that $\tilde\xi_{A}\neq 0$ if $A\neq 0$.
\end{proof}

\begin{question}   \label{Is-T(End(RR))-a-Lie-algebra.question}
Is  $T_{\id}\End (\R)$  a Lie subalgebra of  $\Der_{\kk} \R$ for a finitely generated general algebra $\R$?
\end{question}

\ps
\subsection{Locally finite derivations}
Recall that a linear endomorphism $\phi$ of a $\kk$-vector space $V$ is called \itind{locally finite} if the linear span $\langle \phi^{j}(v)\mid j\in\NN\rangle$ is finite-dimensional for all $v\in V$. It is called \itind{semisimple} if it is locally finite and if the action on any finite-dimensional $\phi$-stable subspace is semisimple. It is called \itind{locally nilpotent} if for any $v \in V$ there is an $m\in\NN$ such that $\phi^{m}(v)=0$. It is well-known and easy to prove that every locally finite endomorphism $\phi$ of $V$ admits a Jordan decomposition: $\phi = \phi_{s}+\phi_{n}$ where $\phi_{s}$ is semisimple, $\phi_{n}$ is locally nilpotent and $\phi_{s}\circ\phi_{n}=\phi_{n}\circ \phi_{s}$, and this decomposition is uniquely determined by these properties.

We have already mentioned in Remark~\ref{Integration-of-VF.subsec} that the proof of Proposition~\ref{Integration-of-VF.prop} carries over to a general 
$\kk$-algebra $\R$, giving the following result.

\begin{proposition}\label{locally-finite-derivations.prop}
Let $\R$ be a finitely generated general $\kk$-algebra. Then the Lie algebra $\Lie \Aut (\R) $ contains all locally finite  derivations of $\R$.
More precisely, for any locally finite derivation $D$ there is a unique minimal commutative and connected algebraic subgroup $A \subseteq \Aut(\R)$ such that $D \in \Lie A$. 
\end{proposition}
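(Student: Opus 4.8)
The plan is to transfer the proof of Proposition~\ref{Integration-of-VF.prop} (together with Corollary~\ref{locally-finite-VF-in-Liealg.cor}) verbatim to the general-algebra setting, exactly as announced in Remark~\ref{JD-general-algebra.rem}. The ambient structures needed are already in place: by Proposition~\ref{ind-var of algebra automorphisms.prop} the group $\Aut(\R)$ is an ind-group, and by Proposition~\ref{Tangent-space-of-End(R)-and-Aut(R)-for-a-general-algebra-R.prop} we have the injective homomorphism of Lie algebras $\xi\colon\Lie\Aut(\R)\into\Der_{\kk}(\R)$, through which we regard $\Lie\Aut(\R)$ as a Lie subalgebra of $\Der_{\kk}(\R)$. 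Thus ``$\Lie\Aut(\R)$ contains $D$'' means $D\in\xi(\Lie\Aut(\R))$, and it will suffice to produce a commutative connected algebraic subgroup $A\subseteq\Aut(\R)$ with $D\in\xi(\Lie A)$.

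First I would perform the Jordan decomposition of $D$ and its integration. Write the locally finite derivation $D$ of $\R$ as $D=D_{s}+D_{n}$ in $\LLL(\R)$ and form the generalized weight decomposition $\R=\bigoplus_{\lambda\in\kk}\R_{\lambda}$, $\R_{\lambda}:=\{a\in\R\mid (D-\lambda)^{m}a=0 \text{ for some } m\}$. The crucial identity $\R_{\lambda}\cdot\R_{\mu}\subseteq\R_{\lambda+\mu}$ rests only on the Leibniz rule $(D-(\lambda+\mu))(a\cdot b)=(D-\lambda)(a)\cdot b+a\cdot(D-\mu)(b)$, which holds for the bilinear product of \emph{any} general algebra, without associativity or a unit; consequently $D_{s}$ (acting by the scalar $\lambda$ on $\R_{\lambda}$) and $D_{n}=D-D_{s}$ are again derivations. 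Here the finite generation of $\R$ enters: since $\R$ is spanned by the well-formed expressions in a finite generating set $a_{1},\dots,a_{n}$ (notation of Proposition~\ref{ind-var of algebra automorphisms.prop}), weight additivity of products forces every weight in $\Lambda:=\{\lambda\mid\R_{\lambda}\neq 0\}$ to be an $\NN$-combination of the finitely many weights occurring in $a_{1},\dots,a_{n}$. Hence $\ZZ\Lambda=M\cong\ZZ^{m}$ is a finitely generated free subgroup of $(\kk,+)$.

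Next I would build the two subgroups. The grading by $M$ defines an action of $T:=(\Cst)^{m}$ on $\R$; weight additivity shows each $t\in T$ is an \emph{algebra} automorphism, so $T\to\Aut(\R)$ is a homomorphism of ind-groups, and by Proposition~\ref{Hom-G-to-ind-group.prop} its image is a closed algebraic subgroup of $\Aut(\R)$ with $D_{s}\in\xi(\Lie T)$. For the nilpotent part, when $D_{n}\neq 0$ the map $s\mapsto\exp(sD_{n})=\sum_{k\geq 0}\tfrac{s^{k}}{k!}D_{n}^{k}$ is a well-defined algebra automorphism: the locally nilpotent Leibniz rule $D_{n}^{k}(a\cdot b)=\sum_{j}\binom{k}{j}D_{n}^{j}(a)\cdot D_{n}^{k-j}(b)$ gives, via the Cauchy product, $\exp(sD_{n})(a\cdot b)=\exp(sD_{n})(a)\cdot\exp(sD_{n})(b)$, again using only bilinearity and the derivation property. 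This yields a homomorphism $\kplus\to\Aut(\R)$ whose closed algebraic image $U$ satisfies $\xi(\Lie U)=\kk D_{n}$. The weight spaces $\R_{\lambda}$ are stable under both $T$ and $U$, and $T$ acts on each by a scalar, so $T$ and $U$ commute; setting $A:=T\cdot U$, a connected commutative algebraic subgroup of $\Aut(\R)$, we get $D=D_{s}+D_{n}\in\xi(\Lie A)$.

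Finally, minimality and uniqueness are obtained as in the original: $T$ is minimal because a proper subtorus forces an integral linear relation among $\mu_{1},\dots,\mu_{m}$ that $(\mu_{1},\dots,\mu_{m})$ itself does not satisfy, and $U$ is the unique $1$-dimensional unipotent group integrating $D_{n}$; any algebraic subgroup $A'$ with $D\in\xi(\Lie A')$ must, by the Jordan decomposition in $\Lie A'$ (Lemma~\ref{JD.lem}), contain both $T$ and $U$, whence $A\subseteq A'$. I expect the only genuine obstacle to be confirming that the purely multiplicative ingredients of the argument---the weight-additivity $\R_{\lambda}\cdot\R_{\mu}\subseteq\R_{\lambda+\mu}$, the finiteness of the generated lattice $M$, and the automorphism property of $\exp(sD_{n})$---survive the removal of associativity and unitality; as indicated above, each of these uses nothing beyond bilinearity and the derivation identity, so the transfer goes through.
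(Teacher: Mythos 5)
Your proposal is correct and follows exactly the paper's intended proof: the paper establishes this proposition by transferring the proof of Proposition~\ref{Integration-of-VF.prop} to general algebras (see Remark~\ref{JD-general-algebra.rem}), which is precisely what you do, checking that the weight-additivity $\R_{\lambda}\cdot\R_{\mu}\subseteq\R_{\lambda+\mu}$, the finite generation of the weight lattice $M=\ZZ\Lambda$, and the multiplicativity of $\exp(sD_{n})$ rely only on bilinearity and the Leibniz rule. One cosmetic remark: in the final uniqueness step the fact you need is the uniqueness of the additive Jordan decomposition of a locally finite linear endomorphism of $\R$ (recalled just before the proposition in the paper), rather than Lemma~\ref{JD.lem}, which concerns Jordan decomposition of group elements.
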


If  $\R$ is finite-dimensional, then every derivation is locally finite, hence we have the following result.

\begin{corollary} \label{Lie algebra of the automorphism group of a fd general algebra.prop}
Let $\R$ be a finite-dimensional general algebra. Then $\Aut(\R)$ is an algebraic group, and we have
\[ 
\Lie \Aut (\R)   =\Der_{\kk} (\R).
\]
\end{corollary}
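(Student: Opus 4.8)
The plan is to reduce everything to the finite-dimensionality of $\R$, which collapses all the ind-theoretic machinery back to ordinary linear algebraic group theory. First I would record that $\Aut(\R)$ is genuinely a linear algebraic group. Since $\dim_{\kk}\R<\infty$, the space $\LLL(\R)$ of linear endomorphisms is finite-dimensional and $\GL(\R)\subseteq\LLL(\R)$ is an open subset, hence a linear algebraic group. An algebra endomorphism of the general algebra $\R$ is precisely a linear map $\phi\in\LLL(\R)$ satisfying $\phi(a\cdot b)=\phi(a)\cdot\phi(b)$ for all $a,b\in\R$; fixing a basis, these are finitely many bilinear equations in the matrix entries of $\phi$, so $\End(\R)$ is closed in $\LLL(\R)$ and $\Aut(\R)=\End(\R)\cap\GL(\R)$ is a closed subgroup of $\GL(\R)$. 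I would then note that this algebraic-group structure is the one coming from Proposition~\ref{ind-var of algebra automorphisms.prop}: the universal ind-structure has a stationary filtration here, since everything already sits inside the single finite-dimensional $\LLL(\R)$, so the ind-group is in fact a variety and the two structures agree by the universal property.

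For the identity $\Lie\Aut(\R)=\Der_{\kk}(\R)$ I would invoke the embedding already supplied in the excerpt. By Proposition~\ref{Tangent-space-of-End(R)-and-Aut(R)-for-a-general-algebra-R.prop} there is an injective homomorphism of Lie algebras $\xi\colon\Lie\Aut(\R)\hookrightarrow\Der_{\kk}(\R)$, so only surjectivity remains. Here finite-dimensionality does the work twice: first, every $D\in\Der_{\kk}(\R)$ is automatically locally finite as an element of $\LLL(\R)$, because the span $\langle D^{j}(a)\mid j\in\NN\rangle$ lies in the finite-dimensional space $\R$ for each $a$; second, Proposition~\ref{locally-finite-derivations.prop} asserts that $\Lie\Aut(\R)$, viewed inside $\Der_{\kk}(\R)$ via $\xi$, contains every locally finite derivation. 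Combining the two, $\xi$ is surjective, hence an isomorphism, which is the claim.

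As a cross-check, and an alternative that avoids Proposition~\ref{locally-finite-derivations.prop} entirely, I would compute the tangent space directly. A tangent vector at $\id$ is an $X\in\LLL(\R)$ with $\id+\eps X\in\Aut(\R\otimes_{\kk}\kk[\eps]/(\eps^{2}))$; expanding the automorphism condition $(\id+\eps X)(a\cdot b)=\bigl((\id+\eps X)a\bigr)\cdot\bigl((\id+\eps X)b\bigr)$ and reading off the coefficient of $\eps$ gives exactly $X(a\cdot b)=X(a)\cdot b+a\cdot X(b)$, the Leibniz rule, so $T_{\id}\Aut(\R)=\Der_{\kk}(\R)$ on the nose. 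This dual-number computation is the classical way to identify the Lie algebra of a stabilizer group, and it confirms the value obtained above.

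The routine parts are the closedness of $\End(\R)$ and the dual-number expansion; the one genuinely substantive point, and the place where the result could fail for infinite-dimensional $\R$, is the surjectivity of $\xi$, i.e. that every derivation actually arises as the differential of a one-parameter family of automorphisms. I expect this to be the main obstacle in spirit, although finite-dimensionality removes all of its sting: it grants local finiteness for free, which feeds Proposition~\ref{locally-finite-derivations.prop}, and in the direct approach it is precisely what makes $\Aut(\R)$ smooth, so that $T_{\id}\Aut(\R)$ is genuinely computed by first-order deformations. In characteristic zero this onto-ness of $\xi$ is the reflection of $\Aut(\R)$ being reduced with Lie algebra the full derivation algebra.
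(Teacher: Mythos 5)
Your proposal is correct and its main argument is essentially the paper's: finite-dimensionality makes every derivation of $\R$ locally finite, so Proposition~\ref{locally-finite-derivations.prop} gives $\Der_{\kk}(\R)\subseteq \Lie\Aut(\R)$, while the reverse inclusion is exactly the injective embedding $\xi$ of Proposition~\ref{Tangent-space-of-End(R)-and-Aut(R)-for-a-general-algebra-R.prop} (your preliminary check that $\Aut(\R)=\End(\R)\cap\GL(\R)$ is a closed subgroup of $\GL(\R)$ is the routine point the paper leaves implicit). Your dual-number cross-check matches in spirit the paper's accompanying remark, which over $\C$ integrates $D$ via $t\mapsto\exp(tD)$; the only difference is that your version leans on characteristic-zero smoothness (Cartier) to pass from the scheme-theoretic tangent space to the reduced group, whereas the exponential produces an explicit curve.
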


\begin{remark}
If $\kk = {\mathbb C}$ we have a very simple direct proof using the exponential map.
Indeed, if $\R$ is a finite general ${\mathbb C}$-algebra and $D$ any derivation of $\R$, we can define a linear endomorphism $\exp (D) \colon \R \to \R$ by $(\exp D) (a) = \sum_n \frac{1}{n!} D^na$. Using the   \name{Leibniz} formula $D^n(ab) = \sum_{k=0}^n \binom{n}{k} (D^ka) (D^{n-k}b)$, it is clear that $\exp D$ belongs to $\Aut \R$. 
Since the derivative at $0$ of the map $\AA^1 \to \Aut \R$, $t \mapsto \exp (t D)$, equals $D$ the claim follows.
\end{remark}

\begin{question}\label{generation-by-locally-finite-derivations.ques}
Is $\Lie \Aut ( \R)$ generated by the locally finite derivations for any finitely generated general algebra $ \R$?
\end{question}

\pmed
\section{Bijective homomorphism of ind-groups}\label{bijective-hom.sec}
It is well known that any topological group admits at most one structure of a real Lie group. In fact, if a topological group is locally Euclidean, i.e. admits charts, then it admits a unique structure of real Lie group, see e.g. \cite{Ka1971Lie-algebras-and-l}.

If $\kk$ is countable, then $(\kk_{\text{discr}},+) \to\kk^{+}$ is a bijective homomorphism of ind-groups, but not an isomorphism. So at least for countable base fields the same group can have different structures as an ind-group. We will show below that there is a much more interesting example which is defined for any algebraically closed field $\kk$. Before doing so let us describe a situation where we can give a positive answer.

\ps
\subsection{The case of a strongly smooth ind-group}
If we make very strong assumptions for the target group it is possible to show that a bijective homomorphism is indeed an isomorphism. The following result is an immediate consequence of Proposition~\ref{bijective-morphism.prop}.

\begin{proposition}  \label{bijective-Hom.prop}
Let $\phi\colon \GGG \to \HHH$ be a bijective homomorphism of ind-groups. Assume that $\GGG$ is connected and $\HHH$ strongly smooth in $e$. Then $\phi$ is an isomorphism.  
\end{proposition}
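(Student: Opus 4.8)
The plan is to deduce this from the purely geometric statement Proposition~\ref{connected-and-smooth-gives-iso.prop} (of which it is in fact a verbatim restatement, namely Corollary~\ref{bijective-hom.cor}), by translating the two ind-group hypotheses into the corresponding ind-variety hypotheses. A bijective homomorphism of ind-groups is in particular a bijective ind-morphism of the underlying ind-varieties, so once the hypotheses are put into the required form the conclusion is immediate.

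First I would observe that the source is curve-connected. For an ind-group the four properties ``connected'', ``irreducible'', ``curve-connected'' and ``$\GGG=\GGG^{\circ}$'' are equivalent (Remark~\ref{irreducible-curve-connected.rem}), so the assumption that $\GGG$ is connected gives at once that $\GGG$ is curve-connected. Next I would upgrade the smoothness hypothesis from one point to all points. Strong smoothness (Definition~\ref{smooth.def}) is preserved under isomorphisms of ind-varieties, and for any $h\in\HHH$ the left translation $\lambda_{h}\colon\HHH\simto\HHH$ is such an isomorphism carrying $e$ to $h$; hence $\HHH$ strongly smooth in $e$ forces $\HHH$ to be strongly smooth in every point (this is the remark opening Section~\ref{smooth-groups.subsec}). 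Taking $\VVV:=\GGG$ (curve-connected), $\WWW:=\HHH$ (strongly smooth in every point) and $\phi$ as the given bijective ind-morphism, Proposition~\ref{connected-and-smooth-gives-iso.prop} then yields directly that $\phi$ is an isomorphism.

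To present the result instead as the announced ``immediate consequence of Proposition~\ref{bijective-morphism.prop}'', I would feed the same translated hypotheses into that proposition, using that a connected smooth variety is both irreducible and normal: strong smoothness at $e$ provides, on an open neighbourhood of $e$, an admissible filtration by connected smooth (hence irreducible and normal) varieties, exactly the filtration required by Proposition~\ref{bijective-morphism.prop}.

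The one point that requires genuine care in this second route, and which I expect to be the main obstacle, is that Proposition~\ref{bijective-morphism.prop} is stated only over an \emph{uncountable} base field, whereas here $\kk$ is arbitrary. The natural remedy is base field extension: pick an uncountable algebraically closed $\KK\supseteq\kk$, note that $\GGG_{\KK}$ is still connected and $\HHH_{\KK}$ still strongly smooth, apply Proposition~\ref{bijective-morphism.prop} over $\KK$, and descend via the fact that $\phi$ is an isomorphism if and only if $\phi_{\KK}$ is (Proposition~\ref{fieldextension.prop}(4)). The delicate step is checking that $\phi_{\KK}$ stays bijective: injectivity transfers freely (Proposition~\ref{fieldextension.prop}(5)), but surjectivity of $\phi$ is only known to ascend to $\phi_{\KK}$ when $\kk$ is already uncountable (Proposition~\ref{fieldextension.prop}(6)), so for countable $\kk$ the naive extension has a gap. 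This is precisely why I would prefer the first route through Proposition~\ref{connected-and-smooth-gives-iso.prop}, which is formulated over an arbitrary field and has the base-field reduction already absorbed into its proof; with that proposition available as a black box, the argument for Proposition~\ref{bijective-Hom.prop} is essentially a two-line translation of hypotheses.
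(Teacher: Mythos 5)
Your proposal is correct and takes essentially the same route as the paper, which states this result verbatim as Corollary~\ref{bijective-hom.cor} to Proposition~\ref{connected-and-smooth-gives-iso.prop} (equivalently, as an immediate consequence of Proposition~\ref{bijective-morphism.prop}), relying on exactly your two translations: a connected ind-group is curve-connected (Remark~\ref{irreducible-curve-connected.rem}), and strong smoothness at $e$ propagates to every point by left translation (Section~\ref{smooth-groups.subsec}). Your caveat about the uncountability hypothesis in Proposition~\ref{bijective-morphism.prop} identifies a genuine subtlety that the paper's one-line deduction leaves implicit, but it does not alter the fact that your argument coincides with the paper's.
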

Note that it would be enough to assume that $\HHH$ admits a filtration with normal varieties (see Proposition~\ref{bijective-morphism.prop}). In view of the next result about the bijective morphism $\alpha\colon \Aut(\kk \langle x,y \rangle) \to \Aut(\kk [x,y])$ which is not an isomorphism (Proposition~\ref{diff-not-bijective.prop}) we have the following consequence.

\begin{corollary}  \label{Aut(A2)-is-not-strongly-smooth.cor}
The ind-group $\Aut(\Atwo)$ is not strongly smooth in the identity $\id \in \Aut(\Atwo)$.
More generally, $\Aut(\Atwo)$ does not admit an admissible filtration consisting of affine normal varieties.
\end{corollary}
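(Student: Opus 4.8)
The plan is to argue by contraposition, playing the rigidity results for bijective morphisms from Section~\ref{small-fibers.subsec} against the bijective homomorphism $\alpha\colon \Aut(\CC\langle x,y\rangle) \to \Aut(\CC[x,y])$ furnished by Proposition~\ref{diff-not-bijective.prop}. The decisive feature of $\alpha$ is that it is a bijective homomorphism of ind-groups which is \emph{not} an isomorphism, since $d\alpha_e$ has a nontrivial kernel. As $\Aut(\CC[x,y])$ is anti-isomorphic to $\Aut(\Atwo)$ as an ind-variety, strong smoothness and the existence of an admissible filtration by affine normal varieties are properties shared by the two groups; hence it suffices to show that either hypothesis on $\Aut(\CC[x,y])$ would force $\alpha$ to be an isomorphism.

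First I would record that the source $\GGG := \Aut(\CC\langle x,y\rangle)$ is connected. This is the free-algebra analogue of the connectedness of $\Aut(\Atwo)$ (Proposition~\ref{k*-connected.prop}): the group $\GGG$ is generated by its subgroup of linear automorphisms and its subgroup of triangular automorphisms, both of which are connected and contain $e$; an increasing union of products of connected sets through $e$ is connected, so $\GGG$ is connected, hence curve-connected and irreducible (Remark~\ref{irreducible-curve-connected.rem}).

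For the assertion about strong smoothness, suppose that $\Aut(\Atwo)$, equivalently $\Aut(\CC[x,y])$, is strongly smooth in $e$. Then Proposition~\ref{bijective-Hom.prop}, applied to the bijective homomorphism $\alpha$ with connected source $\GGG$, yields that $\alpha$ is an isomorphism, contradicting Proposition~\ref{diff-not-bijective.prop}. For the more general assertion, suppose $\Aut(\CC[x,y])$ carries an admissible filtration $\bigcup_k N_k$ by affine normal varieties. Since $\Aut(\CC[x,y])$ is connected, the chain of connected components $N_1^{(e)} \subseteq N_2^{(e)} \subseteq \cdots$ through $e$ is, by the argument proving Proposition~\ref{connected.prop}, again an admissible filtration, and its members are irreducible and normal (being connected, hence irreducible, components of normal varieties). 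Proposition~\ref{bijective-morphism.prop} then applies to $\alpha$ and forces it to be an isomorphism, the same contradiction. Over a general field $\kk$ the uncountability hypothesis of Proposition~\ref{bijective-morphism.prop} is achieved by extending to an uncountable algebraically closed $\KK \supseteq \kk$: base change preserves the irreducible normal filtration and identifies $\alpha_\KK$ with the corresponding bijective, non-isomorphic homomorphism (Proposition~\ref{field-extensions-for-morphisms.prop} and Section~\ref{GeneralAlgebra.sec}), reducing to the uncountable case.

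The steps are individually short, so the corollary really is a formal consequence of Propositions~\ref{diff-not-bijective.prop}, \ref{bijective-Hom.prop} and~\ref{bijective-morphism.prop}; the only points demanding care are the connectedness of $\Aut(\CC\langle x,y\rangle)$ and the refinement of a normal filtration to an \emph{irreducible} normal one, which I expect to be the main (minor) obstacle. Both are handled by the connectedness arguments of Section~\ref{connectedness-and-curve-connectedness.subsec} together with the elementary fact that the connected components of a normal variety coincide with its irreducible components and are themselves normal.
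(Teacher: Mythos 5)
Your proof is correct and follows the paper's own route: the corollary is deduced by contraposition from the bijective non-isomorphism $\alpha\colon \Aut(\CC\langle x,y\rangle)\to\Aut(\CC[x,y])$ of Proposition~\ref{diff-not-bijective.prop} combined with Propositions~\ref{bijective-Hom.prop} and~\ref{bijective-morphism.prop}, exactly as in the remark preceding Proposition~\ref{diff-not-bijective.prop}. The details you supply beyond the paper's terse argument --- connectedness of $\Aut(\CC\langle x,y\rangle)$ via generation by connected (tame) subgroups, refining a filtration by affine normal varieties to one by irreducible normal varieties using the components through $e$, and the base-field extension to reduce to uncountable $\kk$ --- are precisely the points the paper leaves implicit, and you handle them correctly.
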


\ps
\subsection{The free associative algebra}
Denote by $\kk  \langle x,y \rangle$ the free associative algebra in two generators. The abelianization $\alpha\colon \kk\langle x,y \rangle \to \kk[x,y]$ induces a homomorphism, also denoted by $\alpha$,
$$
\alpha \colon \Aut(\kk\langle x,y \rangle) \to \Aut(\kk[x,y]), \ (f,h)\mapsto (\alpha(f),\alpha(h)).
$$ 
This homomorphism is bijective (see \cite{Ma1970The-automorphisms-}, \cite{Cz1971Automorphisms-of-a} \cite{Cz1972Automorphisms-of-a}, \cite[Th.9.3, p. 355]{Co1985Free-rings-and-the}, but not an isomorphism of ind-groups.

\begin{proposition} \label{diff-not-bijective.prop}
The homomorphism
\[  
\alpha\colon \Aut(\kk \langle x,y \rangle) \to \Aut(\kk [x,y])
\]
is a bijective homomorphism of ind-groups which is not an isomorphism. More precisely, the differential 
$d\alpha\colon \Lie\Aut(\kk \langle x,y \rangle) \to \Lie\Aut(\kk [x,y])$ is surjective with a nontrivial kernel.
\end{proposition}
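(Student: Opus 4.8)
The plan is to factor both $\alpha$ and its differential through the abelianization map and then treat surjectivity and injectivity of $d\alpha$ separately. Write $A:=\kk\langle x,y\rangle$, $B:=\kk[x,y]$, and let $\pi\colon A\to B$ be the abelianization, a linear degree-preserving surjection whose kernel $I_{c}$ is the two-sided ideal generated by the commutators. Since $\pi\circ\phi$ kills $I_{c}$ for every $\phi\in\End(A)$ (because $\phi$ sends commutators to commutators), abelianization is functorial and induces a semigroup homomorphism $\tilde\alpha\colon\End(A)\to\End(B)$; under the coordinate embeddings $\phi\mapsto(\phi(x),\phi(y))$ it is simply $\pi\times\pi\colon A^{2}\to B^{2}$, hence an ind-morphism. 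Restricting to automorphisms and using that $\alpha(\phi)^{-1}=\tilde\alpha(\phi^{-1})$ together with the closed embedding $\phi\mapsto(\phi,\phi^{-1})$ (Proposition~\ref{Aut-associative-algebra.prop}) shows $\alpha$ is a homomorphism of ind-groups; its bijectivity is the cited theorem of \name{Makar-Limanov}, \name{Czerniakiewicz} and \name{Cohn}. Under the embeddings $\Lie\Aut(A)\hookrightarrow\Der_{\kk}(A)$ and $\Lie\Aut(B)\hookrightarrow\Der_{\kk}(B)=\VEC(\Atwo)$ of Proposition~\ref{Tangent-space-of-End(R)-and-Aut(R).prop}, the functoriality square identifies $d\alpha$ with the restriction of $\pi\times\pi$, i.e. $D\mapsto\bar D$ where $\bar D\circ\pi=\pi\circ D$.

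For surjectivity I would use that the image of $d\alpha$ is a Lie subalgebra of $\Lie\Aut(B)$, whose image in $\VEC(\Atwo)$ is $\VEC^{c}(\Atwo)$ (Propositions~\ref{Tangent-space-of-End(R)-and-Aut(R).prop} and \ref{LieAlgAutAn.prop}). The affine subgroup $\GL_{2}$ and the elementary one-parameter subgroups $x\mapsto x+tp(y),\,y\mapsto y$ and $y\mapsto y+tq(x),\,x\mapsto x$ exist verbatim in $A$, so the linear derivations $\mathfrak{gl}_{2}$ and the derivations $D(x)=p(y),D(y)=0$ and $D(y)=q(x),D(x)=0$ lie in $\Lie\Aut(A)$ and map onto the corresponding directions in $\VEC^{c}(\Atwo)$. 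Since $\VEC^{c}(\Atwo)$ is generated as a Lie algebra by these directions — for instance $[x^{2}\ddy,\,y\ddx]=x^{2}\ddx-2xy\ddy$ produces divergence-zero fields that are not of triangular type — the image of $d\alpha$ is all of $\Lie\Aut(B)$. The point needing care here is the generation statement for $\VEC^{c}(\Atwo)$, which I would establish degree by degree using the $\GL_{2}$-action and iterated brackets with the elementary fields.

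The crux is non-injectivity: exhibiting a nonzero $D\in\Lie\Aut(A)$ with $\pi\circ D=0$, equivalently $D(x),D(y)\in I_{c}$, so that $\ker d\alpha=\Lie\Aut(A)\cap(I_{c}\times I_{c})$. The natural candidates are commutator-valued derivations such as $D(x)=[x,y],\ D(y)=0$. Here the main obstacle is that such a $D$ cannot be locally finite: were it locally finite it would integrate to a homomorphism $\lambda\colon\kplus\to\Aut(A)$ (Proposition~\ref{locally-finite-derivations.prop}), and then $\alpha\circ\lambda$ would have trivial differential, hence be trivial by Proposition~\ref{phi-dphi.prop}, contradicting the injectivity of $\alpha$. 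Consequently the desired $D$ must be produced by an explicit family $(\phi_{t})_{t}$ of automorphisms of $A$ of \emph{bounded} degree — so that it defines a genuine curve in the ind-group $\Aut(A)$ — with $\dot\phi_{0}=D$; this family is necessarily not a one-parameter subgroup, and its abelianization $\alpha(\phi_{t})$ is a nontrivial curve in $\Aut(B)$ that is tangent to the identity only to second order. Building such a bounded-degree family and verifying that its velocity lies in $I_{c}\times I_{c}$ and is nonzero is the technical heart of the proof; I expect this to be the single genuinely hard point, since naive brackets and conjugates of the elementary generators stay in the reversal-symmetric part of $\Der(A)$ and therefore never contribute to $I_{c}\times I_{c}$.

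Once a nonzero element of $\ker d\alpha$ is in hand, the conclusion is immediate: an isomorphism of ind-groups has bijective differential, so $\alpha$, whose differential is surjective but not injective, cannot be an isomorphism. Everything apart from the bounded-degree construction is bookkeeping with the abelianization map and the generation of $\VEC^{c}(\Atwo)$.
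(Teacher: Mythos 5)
Your reduction is sound up to the decisive step: the identification of $d\alpha$ with abelianization of derivations is correct, your surjectivity argument is essentially the paper's (lift $\mathfrak{gl}_2$ and the triangular one-parameter subgroups to $\kk\langle x,y\rangle$ and invoke the generation of $\VEC^{c}(\Atwo)$ established in Proposition~\ref{LieAlgAutAn.prop}), and your observation that a nonzero element of $\Ker d\alpha$ cannot be locally finite is correct. But the heart of the proposition — actually exhibiting a nonzero $D\in\Lie\Aut(\kk\langle x,y\rangle)$ with $D(x),D(y)\in I_{c}$ — is never carried out: you declare that it must come from a bounded-degree family $(\phi_t)$ and leave that construction as "the technical heart", so the proof is incomplete exactly where it matters. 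Worse, the heuristic you offer to justify the difficulty is false. You claim that brackets of the elementary generators stay in the reversal-symmetric part of $\Der(\kk\langle x,y\rangle)$ and "therefore never contribute to $I_{c}\times I_{c}$". The first half is true, but the conclusion does not follow: the reversal anti-automorphism $\tau$ sends $[x,y]$ to $-[x,y]$ and satisfies $\tau\,\ad_{c}\,\tau=-\ad_{\tau(c)}$, so the inner derivation $\ad_{[x,y]}$ is itself reversal-invariant, while its components, e.g. $\ad_{[x,y]}(x)=2xyx-x^{2}y-yx^{2}=-[x,[x,y]]$, are reversal-invariant elements of the commutator ideal. Reversal symmetry obstructs nothing here.

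Indeed the paper's proof is precisely the bracket computation you ruled out: one verifies
\[
\ad_{[x,y]}=\frac{3}{2}\,[\,x^{2}\ddy,\;y^{2}\ddx\,]-\frac{1}{2}\,\bigl[\,[x^{2}\ddy,\,y\ddx],\,[x\ddy,\,y^{2}\ddx]\,\bigr],
\]
where $x^{2}\ddy$, $y^{2}\ddx$, $y\ddx$, $x\ddy$ are locally nilpotent derivations, hence lie in $\Lie\Aut(\kk\langle x,y\rangle)$ by Proposition~\ref{locally-finite-derivations.prop}, and $\Lie\Aut(\kk\langle x,y\rangle)$ is a Lie subalgebra of $\Der_{\kk}(\kk\langle x,y\rangle)$ by Proposition~\ref{Tangent-space-of-End(R)-and-Aut(R).prop}; since $2xyx-x^{2}y-yx^{2}$ and $xy^{2}+y^{2}x-2yxy$ abelianize to $0$, this yields a nonzero element of $\Ker d\alpha$ with no family needed. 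Your bounded-degree-family program is not intrinsically wrong — the paper's subsequent geometric argument realizes exactly this idea with products of exponentials $\exp(t\delta)$, producing a curve $\mu(t)$ in $\Aut(\CC\langle x,y\rangle)$ isomorphic to Neile's parabola whose image under $\alpha$ is smooth, whence $\alpha$ is not an isomorphism — but as written your proposal neither executes that construction nor admits the much simpler Lie-algebra route, and its stated reason for dismissing the latter is incorrect.
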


\begin{remark} 
This result was already pointed out by \name{Yuri Berest} and \name{George Wilson} in \cite[Section~11, last paragraph]{BeWi2000Automorphisms-and-}.
\end{remark}
\begin{proof}
It is clear that $\alpha$ is an ind-morphism. In order to see that the differential $d\alpha$ is not injective consider the following (inner) derivation of $\kk \langle x,y \rangle$:
\[ 
\ad_{[x,y]}=(2 xyx -x^2y-yx^2) \ddx + (xy^2 + y^2x -2 yxy) \ddy.
\]
An easy computation shows that
\[ 
\ad_{[x,y]} =\frac{3}{2} [ x^2 \ddy, y^2 \ddx ] - \frac{1}{2} [[x^2 \ddy, y \ddx ], [ x \ddy , y^2 \ddx ]], 
\]
so that $\ad_{[x,y]}$ belongs to $\Lie  \Aut ( \kk \langle x,y \rangle )$. However, this derivation is clearly in the kernel of  $ d\alpha\colon\Lie  \Aut ( \kk \langle x,y \rangle )  \to \Lie \Aut ( \kk [ x,y ] )$. 
\end{proof}

\ps
\subsection{A geometric argument}
We want to show that the bijective homomorphism of ind-groups  $\alpha\colon \Aut(\CC \langle x,y \rangle )  \to  \Aut ( \CC [ x,y ] )$ is not an isomorphism by constructing a rational curve in $\mu(t)\colon \CC \to \Aut ( \CC \langle x,y \rangle )$, i.e. a family of automorphisms, which is not sent isomorphically onto its image in $\Aut ( \CC [ x,y ] )$.

If $\rho\colon \CC \to \End(\Atwo)$ is a curve, we can write $\rho(t) = \sum_{i\geq 0} t^{i}\rho_{i}$ with $\rho_{i}\in \End(\Atwo)$ and $\rho_{i}=0$ for large $i$. For example, if $\delta = f\ddx+h\ddy$ is locally nilpotent, $\delta^{m}x=\delta^{m}y=0$, then $\exp(t\delta)_{0}=\id = (x,y)$, $\exp(t\delta)_{1}=(f,h)$ and $\exp(t\delta)_{i}=0$ for $i\geq m$.

If  $\exp(t\delta)_{i_{0}}$ is the lowest nonzero term different from the first term $(x,y)$, then we get for the product
$$
\exp(t\delta)\cdot\exp(t\mu) = (x,y) + t\exp(t\mu)_{1}+\cdots+t^{i_{0}}(\exp(t\delta)_{i_{0}}+\exp(t\mu)_{i_{0}}) + \cdots
$$
i.e. the first $i_{0}$ terms of the product are the same than those of $\exp(t\mu)$, and the term of degree $i_{0}$ is the sum of the two terms of $\exp(t\delta)_{i_{0}}$ and $\exp(t\mu)_{i_{0}}$.

Let us calculate the commutator $\gamma(t):=(\exp(tx^{2}\ddy),\exp(t y^{2}\ddx)$ in $\Aut(\CC\langle x,y \rangle)$. One finds 
$$
\gamma(t) = (x,y) + t^{2}(-x^{2}y -yx^{2}, xy^{2}+y^{2}x) + t^{3} \gamma_{3} + \cdots + t^{15}\gamma_{15}
$$
where $\gamma_{3}= (x^{4} + x y^{3} + y x y^{2} +  y^{2}xy + y^{3}x, -x^{3}y - x^{2}y x - x y x^{2} -   yx^{3} - y^{4})$.
By what we said above, it is clear that the linear term $\gamma_{1} = 0$. The idea is now to construct another curve
$$
\sigma\colon \CC \to \Aut(\CC\langle x,y \rangle)
$$
as a product of certain $\exp(t\delta)$
such that $\sigma_{1}= (x^{2}y + yx^{2} + xyx, -xy^{2} - y^{2}x - yxy)$.
For the abelianization we then get  $\alpha(\gamma_{2}) = (-2x^{2}y, 2xy^{2})$ and $\alpha(\sigma_{1})= (3x^{2}y, -3xy^{2})$, hence $\alpha(3\gamma_{2}+ 2\sigma_{1})=0$, but $3\gamma_{2}+ 2\sigma_{1}= (-x^{2}y - y x^{2} + 2xyx, xy^{2}+ y^{2}x - 2yxy) \neq 0$. It follows that
$$
\mu(t):= \gamma(t)^{3}\sigma(t^{2})^{2} = (x,y) + t^{2} (3\gamma_{2}+2\sigma_{1}) + t^{3} \rho_{3} + \cdots
$$
defines a cuspidal curve $C:=\mu(\CC) \subseteq\Aut(\CC\langle x,y \rangle)$ isomorphic to \name{Neile}'s parabola, because $\mu_{2}=3\gamma_{2}+2\sigma_{1}\neq 0$ and $\mu_{3}=3\gamma_{3}\neq 0$. For the image in $\Aut(\CC[x,y])$ we find 
$$
\alpha(\mu(t)) = (x,y) + t^{3}(x^{4}+xy^{3},-y^{4}-3x^{3}y) + \cdots,
$$
and therefore the morphism $C \to \alpha(C)$  is not an isomorphism. 
It remains to construct the curve $\sigma\colon \CC \to \Aut( \kk \langle x,y \rangle)$.

For $b \in \kk$, set $\tau^{(b)}:= (x+by)^3 (b \ddx -  \ddy) \in \Der_{\kk}(\kk\langle x,y \rangle)$. This is a locally nilpotent derivation, 
$(\tau^{(b)})^{2}x = (\tau^{(b)})^{2}y = 0$, and so 
$$
\tau^{(b)}(t):=\exp(t\tau^{(b)})=(x + tb(x+by)^{3}, y - t(x+by)^{3})
$$ 
is a family of automorphisms of $\kk\langle x,y \rangle$. An easy calculation shows that 
\[
\begin{split}
\tau^{(b)}_{1} &= (0, - x^{3}) + b(x^{3},-x^{2}y-xyx-yx^{2}) \\
&+ b^{2}(x^{2}y + yx^{2} +xyx, - xy^{2}  - y^{2}x - yxy)\\
&+b^{3}(xy^{2} + yxy + y^{2}x, - y^{3}) + b^{4}(y^{3},0).
\end{split}
\]
If $\omega$ is  a primitive third root of unity, then we get
$$
\tau^{(1)}_{1} + \omega\tau^{(\omega)}_{1}+ \omega^{2}\tau_{1}^{(\omega^{2})} = 3(x^{2}y + yx^{2} + xyx, - xy^{2}  - y^{2}x - yxy),
$$
because $1 +\omega + \omega^{2}=0$. Thus the product
$$
\sigma(t):=\exp(\frac{t}{3}\tau^{(1)}) \cdot \exp(\frac{t}{3}\omega \tau^{(\omega)}) \cdot \exp(\frac{t}{3}\omega^{2} \tau^{(\omega^{2})})
$$
is a rational curve in $\Aut(k\langle x,y \rangle)$ with $\sigma_{1}=(x^{2}y + yx^{2} + xyx, - xy^{2}  - y^{2}x - yxy)$.

\begin{question}\label{bijective+isomorphism-on-Lie-algebras-imply-isomorphism.ques}
Let $\phi\colon\GGG \to \HHH$ be a bijective homomorphism of ind-groups, and assume that $\Lie\phi\colon \Lie\GGG \to \Lie\HHH$ is an isomorphism. Does this imply that $\phi$ is an isomorphism?
\end{question}

\newpage
\part{AUTOMORPHISMS OF AFFINE  \texorpdfstring{$n$}{n}-SPACE}

In this part we study the automorphism group of affine $n$-space $\An$. If $n=1$, then $\Aut(\Aone)$ is an algebraic group, namely the semi-direct product of $\kst$ with $\kplus$. In dimension 2, the group $\Aut(\Atwo)$ is an \itind{amalgamated product} of the affine group $\Aff(2)$ and the \name{de Jonqui\`eres} subgroup $\JJJ(2)$, an old theorem of 
\name{Jung}, \name{van der Kulk} and \name{Nagata}.
This structure has some important consequences. E.g. every algebraic subgroup of $\Aut(\Atwo)$ is conjugate to a subgroup of $\Aff(2)$ or of $\JJJ(2)$.

We will first give some general results about $\Aut(\An)$ which hold in any dimension $n\geq 2$, e.g. the connectedness, the infinite transitivity
of the natural action on $\AA^n$,
the structure of the \name{de Jonqui\`eres} subgroup $\JJJ(n)$, the approximation property by tame automorphisms
and a proof that all automorphisms of $\Aut(\An)$ are inner, the description of the Lie algebra $\Lie\Aut(\An)$, and finally a discussion of the locally finite, the semisimple and the unipotent elements of $\Aut(\An)$.

The second part is about $\Aut(\Atwo)$. For this group, thanks to the amalgamated product structure, we can classify the unipotent elements and the semisimple elements, and we show that an element is semisimple if and only if its conjugacy class is closed, a result due to \name{Furter-Maubach} \cite{FuMa2010A-characterization}.

The last section is about $\Aut(\Athree)$. It is known that this group is not tame.
For example, the \name{Nagata} automorphism is not in the subgroup generated by $\Aff(3)$ and $\JJJ(3)$ (\name{Shestakov-Umirbaev} \cite{ShUm2003The-Nagata-automor}). We will use this in the construction of a pair of closed connected subgroups $\GGG^{t} \subset \GGG \subset \Aut(\Athree)$ which have the same Lie algebras,
but are not equal. We will also describe a family of surfaces in $\Athree$ whose automorphism groups are discrete and infinite.

\pmed
\section{Generalities about \texorpdfstring{$\Aut(\AA^n)$}{Aut(An)}} 
\label{generalities-on-Aut(An).sec}

\subsection{The group of affine transformations}
An \itind{affine transformation} $\phi$ of a finite-dimensional $\kk$-vector space $V$ has the form
$$
\phi(v) = gv + w \ \text{ where } \ g \in \GL(V) \  \text{ and } \ w \in V.
$$
Equivalently, $\phi = \t_{w}\circ g$ where $\t_{w}$ denotes the translation $v \mapsto v+w$. We will write $\phi=(w,g)$.

We denote by $\Aff(V)$\idx{$\Aff(V)$} the {\it algebraic group
of affine transformations}, and by $\Tr(V) \subset \Aff(V)$\idx{$\Tr$} the closed normal subgroup of {\it translations}\idx{group of translations} which we identify with $V^{+}$.
It follows that  $\Aff(V) = \GL(V) \ltimes \Tr(V)$ is a semi-direct product where the action of $\GL(V)$ on $\Tr(V) = V^{+}$ is the obvious one:
$g\circ \t_{w} \circ g^{-1} = \t_{gw}$
Choosing a basis of $V$ we get a canonical closed immersion
$$
\Aff(n) \into \GL(n+1), \quad (a,A)\mapsto \begin{bmatrix} 1 & 0  \\ a & A \end{bmatrix}.
$$\idx{$\Aff(n)$}

The first part of the following result is easy and is left to the reader. For the second we use Corollary~\ref{kostant.cor} which says, as a consequence of \name{Kostant}'s Theorem~\ref{kostant.thm}, that  the set of unipotent elements $G^{u}$ in any reductive group $G$ forms a closed normal subvariety.

\begin{lemma} 
\be
\item The affine transformation $\phi=(w,g)$ has a fixed point if and only if $w \in (g-\id)(V)$.
\item The unipotent elements $\Aff^{u}(V) = \GL^{u}(V) \cdot \Tr(V)$ form a closed normal subvariety of $\Aff(V)$.
\ee
\end{lemma}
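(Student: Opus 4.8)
The plan is to dispose of (1) by a one-line computation and to reduce (2) to the known structure of the unipotent variety in the reductive group $\GL(V)$ via the projection onto the linear part.

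For (1), I would observe that a vector $v$ is fixed by $\phi=(w,g)$ exactly when $gv+w=v$, i.e. when $w=(\id-g)v$. Such a $v$ exists if and only if $w$ lies in the image of the linear map $\id-g$, and since $(\id-g)(V)=(g-\id)(V)$ as linear subspaces, this is the asserted condition $w\in(g-\id)(V)$.

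For (2), the first step is to record the composition law $(w_1,g_1)(w_2,g_2)=(w_1+g_1w_2,\,g_1g_2)$, which shows that the projection onto the linear part $\pi\colon\Aff(V)\to\GL(V)$, $(w,g)\mapsto g$, is a homomorphism of algebraic groups with kernel $\Tr(V)$. Identifying $\GL(V)$ with the closed subgroup $\{(0,g)\}\subset\Aff(V)$ and using that $g$ is invertible (so $gw$ ranges over all of $V$ as $w$ does), one computes $\GL^{u}(V)\cdot\Tr(V)=\{(u,g)\mid g\in\GL^{u}(V)\}=\pi^{-1}(\GL^{u}(V))$. The second step is to identify the unipotent elements of $\Aff(V)$: composing with the faithful representation $\Aff(V)\hookrightarrow\GL(V\oplus\kk)$, $(w,g)\mapsto\bsm 1 & 0 \\ w & g \esm$, the characteristic polynomial of the image is $(t-1)\,\chi_g(t)$, so the image is a unipotent linear map precisely when $g$ is unipotent. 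Since unipotency of a group element may be tested on any faithful representation, this gives $\Aff^{u}(V)=\pi^{-1}(\GL^{u}(V))=\GL^{u}(V)\cdot\Tr(V)$.

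It then remains to deduce that this set is a closed normal subvariety, and here I would invoke Corollary~\ref{kostant.cor}: as $\GL(V)$ is reductive, $\GL^{u}(V)\subseteq\GL(V)$ is a closed conjugation-stable subvariety. Closedness of $\Aff^{u}(V)$ follows because $\pi$ is a morphism and hence continuous, so $\pi^{-1}(\GL^{u}(V))$ is closed; conjugation-invariance follows because $\pi$ is a homomorphism, giving $\pi(hxh^{-1})=\pi(h)\,\pi(x)\,\pi(h)^{-1}\in\GL^{u}(V)$ for all $h\in\Aff(V)$ and $x\in\Aff^{u}(V)$, whence $hxh^{-1}\in\Aff^{u}(V)$. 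The only point needing a moment of care is the equivalence ``$(w,g)$ unipotent $\iff$ $g$ unipotent,'' but this is immediate from the block lower-triangular form of the matrix above; the argument is otherwise formal, so I do not expect a genuine obstacle.
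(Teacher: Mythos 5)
Your part (1) is fine and is exactly the computation the paper leaves to the reader, and your identification in part (2) is also correct as far as it goes: the product formula $(w_1,g_1)(w_2,g_2)=(w_1+g_1w_2,g_1g_2)$, the equality $\GL^{u}(V)\cdot\Tr(V)=\pi^{-1}(\GL^{u}(V))$, and the block-triangular test showing that $(w,g)$ is unipotent if and only if $g$ is, all check out. The genuine gap is in what you prove at the end: you read ``normal subvariety'' as \emph{conjugation-stable}, but that is not what the lemma (or the paper's Corollary~\ref{kostant.cor}) asserts. Kostant's Theorem~\ref{kostant.thm} says the nilpotent cone is a \emph{normal complete intersection}, i.e.\ normal in the algebro-geometric sense, and Corollary~\ref{kostant.cor} transfers this geometric normality to $G^{u}$ via the isomorphism $\exp_{G}\colon\gg^{\text{\it nil}}\simto G^{u}$. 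Conjugation-stability of the unipotent set is trivial (conjugation preserves unipotency) and would never require Kostant's theorem; the whole point of invoking that corollary is the normality of $\Aff^{u}(V)$ as a variety, which your argument does not address.

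The fix is short and sits right on top of what you already established: the map $(w,g)\mapsto(g,w)$ identifies $\Aff(V)$ with $\GL(V)\times V$ as varieties, under which $\Aff^{u}(V)=\pi^{-1}(\GL^{u}(V))$ corresponds to $\GL^{u}(V)\times V$. Since $\GL^{u}(V)$ is a normal variety by Corollary~\ref{kostant.cor} (here $\GL(V)$ is reductive), and $V$ is smooth, the product $\GL^{u}(V)\times V$ is normal; equivalently, $\pi$ is a trivial fibration with smooth fibers, so the preimage of a closed normal subvariety is closed and normal. Closedness you already have. With this replacement your proof matches the paper's intended argument.
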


\ps
\subsection{Structure of the \texorpdfstring{de Jonqui\`eres}{de Jonquieres} subgroup} \label{triang.subsec}
The  \name{de Jonqui\`eres} subgroup $\JJJ(n)\subseteq \AutA{n}$
\idx{$\JJJ(n)$}  consisting of {\it triangular automorphisms}\idx{triangular automorphism} was defined in Section~\ref{Aut-An.subsec}:\idx{de Jon@\name{de Jonqui\`eres} subgroup}
$$
\JJJ(n):=\{\g=(g_{1},\ldots,g_{n})\mid g_{i}\in\CC[x_{i},\ldots,x_{n}]\text{ for }i=1,\ldots, n\}.
$$
For  $\g= (g_{1},g_{2},\ldots,g_{n})\in\JJJ(n)$ we have
$g_{i}=a_{i}x_{i}+p_{i}(x_{i+1},\ldots,x_{n})$ where $a_{i}\in\kst$, for $i=1,\ldots,n$. Conversely, every endomorphism of $\An$ of this form is an automorphism. 

Let $D(n)\subseteq \GL(n) \subseteq \AutA{n}$ denote the {\it diagonal automorphisms}\idx{diagonal automorphism}, $D(n)\simeq (\Cst)^{n}$.\idx{$\End$@$D(n)$}
There is a canonical split exact sequence of ind-groups\idx{$\JJJ^{u}(n)$}
\[
1  \to \JJJ^{u}(n) \to \JJJ(n) \overset{d}{\to} D(n) \to 1 \tag{$*$}
\]
where $d(\g) := (a_{1}x_{1},\ldots,a_{n}x_{n})$, is the diagonal part of $\g$. In particular, $\JJJ(n)$ is a semidirect product 
$\JJJ(n) = D(n) \ltimes \Ju(n)$. The kernel $\JJJ^{u}(n)$ is the subgroup of unipotent elements of $\JJJ(n)$. In fact, the kernel of the projection 
$\JJJ^{u}(n) \to \JJJ^{u}(n-1)$, $(g_{1},g_{2},\ldots,g_{n})\mapsto (g_{2},\ldots,g_{n})$, is commutative and consists of unipotent elements, and the claim follows by induction.

For a subgroup $G \subseteq \AutA{n}$ and an element $\h\in\AutA{n}$ we define the {\it $G$-conjugacy class of $\h$\/} by $C_{G}(\h):=\{\g\cdot\h\cdot\g^{-1} \mid \g\in G\}$. If $G$ is algebraic, then $C_{G}(\h)$ is a locally closed algebraic subset of $\Aut(\An)$. We shortly write $C(\h)$ for $C_{\AutA{n}}(\h)$.\idx{G@$G$-conjugacy class $C_{G}(\h)$}\idx{conjugacy class $C_{G}(\h)$}

\begin{proposition}\label{triang.prop}
Let $\g = (g_{1},g_{2},\ldots,g_{n}) \in \JJJ(n)$ be a triangular automorphism where $g_{i}=a_{i}x_{i}+ p_{i}(x_{i+1},\ldots,x_{n})$.
\be
\item \label{1-semisimple}
If $\g$ is semisimple, then there is an $\h\in\JJJ(n)$ of degree $\deg\h\leq  \prod_{i=1}^{n} \deg g_{i}$ such that $\h\cdot\g\cdot\h^{-1}= d(\g) = (a_{1}x_{1},\ldots,a_{n}x_{n})$.
\item \label{2-unipotent}
$\g$ is unipotent if and only if $a_{1}=a_{2}=\cdots=a_{n}=1$.
\item \label{3-triangular}
The closure $\overline{C_{D(n)}(\g)}$ contains $(a_{1}x_{1},\ldots,a_{n}x_{n})$ which is  $\JJJ(n)$-conjugate to $\g_{s}$.
\ee
\end{proposition}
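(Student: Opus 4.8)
The plan is to prove the three statements about a triangular automorphism $\g = (g_1, \ldots, g_n) \in \JJJ(n)$ with $g_i = a_i x_i + p_i(x_{i+1}, \ldots, x_n)$, working downward in the triangular structure and using induction on $n$.

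First I would handle \eqref{2-unipotent}, the characterization of unipotent elements, since it is the most elementary and sets up the right point of view. Recall from Section~\ref{triang.subsec} that $\JJJ(n) = D(n) \ltimes \Ju(n)$ with $d(\g) = (a_1 x_1, \ldots, a_n x_n)$ the diagonal part, and that $\Ju(n)$ is exactly the subgroup of unipotent elements of $\JJJ(n)$. So the statement ``$\g$ unipotent $\iff a_1 = \cdots = a_n = 1$'' is just the assertion that $\g$ is unipotent iff $d(\g) = \id$, i.e. iff $\g \in \Ju(n)$. One direction is immediate from the cited fact. For the converse, I would observe that the action of $\g^*$ on $\OOO(\An) = \CC[x_1, \ldots, x_n]$ preserves the finite-dimensional subspace spanned by the monomials of degree $\le m$ in each variable appropriately filtered; on the one-dimensional spaces $\CC x_i \bmod (x_{i+1}, \ldots, x_n)$ the eigenvalue of $\g^*$ is $a_i$, so if $\g$ is unipotent these must all equal $1$. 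Conversely, if all $a_i = 1$, a downward induction (as in the proof that $\Ju(n)$ consists of unipotent elements) shows $(\g^* - \id)$ is locally nilpotent.

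Next I would prove \eqref{1-semisimple}, the diagonalization statement with the degree bound. The idea is to conjugate $\g$ to its diagonal part $d(\g)$ by an element $\h \in \JJJ(n)$ that is itself triangular, built variable by variable from the bottom up. Starting with the last coordinate $g_n = a_n x_n$ (which is already linear, hence diagonal), one constructs $\h$ so that $\h \cdot \g \cdot \h^{-1} = d(\g)$. The construction is the standard ``straightening'' of a triangular map: at each stage one must solve, for the $i$-th coordinate, a linear conjugation equation of the form $a_i h_i(x_{i+1}, \ldots, x_n) - h_i(a_{i+1}x_{i+1}, \ldots) = p_i$ in the ring $\CC[x_{i+1}, \ldots, x_n]$, where solvability uses that $\g$ is semisimple (the relevant weight $a_i$ does not coincide with the weights appearing in $p_i$ under the torus $d(\g)$). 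The degree bound $\deg \h \le \prod_{i=1}^n \deg g_i$ should fall out by tracking how degrees multiply as one substitutes the solved lower coordinates into the higher equations. I expect this degree bookkeeping, together with the precise statement of why semisimplicity guarantees the linear equations are solvable, to be the main obstacle; the cleanest route is probably to pass to the torus $T = \overline{\langle d(\g) \rangle}$ and exploit that $\g$ and $d(\g)$ are two semisimple elements with the same diagonalization, so they are conjugate within $\JJJ(n)$, and then argue the degree estimate separately.

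Finally, \eqref{3-triangular} follows by combining the Jordan decomposition with a limit argument. Writing $\g = \g_s \cdot \g_u$ for the Jordan decomposition in $\JJJ(n)$ (which exists and stays in $\JJJ(n)$ by Lemma~\ref{JD.lem}(2) applied to the closed subgroup $\JJJ(n) \subseteq \Aut(\An)$), part \eqref{1-semisimple} shows $\g_s$ is $\JJJ(n)$-conjugate to $d(\g_s) = (a_1 x_1, \ldots, a_n x_n)$. It remains to see that $(a_1 x_1, \ldots, a_n x_n) \in \overline{C_{D(n)}(\g)}$. For this I would conjugate $\g$ by the one-parameter family $\delta_t := (t x_1, \ldots, t x_n) \in D(n)$ and compute $\delta_t \cdot \g \cdot \delta_t^{-1}$: each coordinate becomes $a_i x_i + t^{1 - \deg p_i} p_i(x_{i+1}, \ldots, x_n)$ (more precisely, the degree-$d$ part of $p_i$ is scaled by $t^{1-d}$). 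Choosing the rescaling so that all the nonlinear terms carry a positive power of $t$ and letting $t \to 0$ (using a suitable reparametrization of the torus, e.g. $\delta_t = (t^{w_1} x_1, \ldots, t^{w_n} x_n)$ with weights $w_i$ increasing fast enough) drives $\g$ to its diagonal part $(a_1 x_1, \ldots, a_n x_n)$ inside $\Aut(\An)$. Since this limit is a specialization along a $D(n)$-conjugation orbit, it lies in $\overline{C_{D(n)}(\g)}$, completing the proof.
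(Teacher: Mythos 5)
Your proposal is correct and takes essentially the same route as the paper: part (2) is just the identification $\Ju(n)=\Ker d$ from Section~\ref{triang.subsec}; in part (1) your linear conjugation equation $q_\alpha(a_i-a^\alpha)=c_\alpha$ is precisely the paper's search for an eigenvector $x_k+q_k$ in the finite-dimensional $\g^*$-stable subspace $\kk x_k\oplus M$, where semisimplicity forces the coefficients of the \emph{resonant} monomials (those with $a^\alpha=a_i$) to vanish, rather than excluding such weights a priori as your parenthetical loosely suggests; and part (3) is the paper's weighted conjugation $\lambda(t)=(t^{m_1}x_1,\ldots,t^{m_n}x_n)$ with $m_i=\deg g_i\cdot m_{i+1}+1$, combined with $d(\g)=d(\g_s)$ (from the split exact sequence and part (2)) and part (1). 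Your remark that the uniform rescaling $(tx_1,\ldots,tx_n)$ fails once some $\deg p_i\geq 2$ and must be replaced by rapidly growing weights is exactly the paper's choice of the $m_i$, so no genuinely different argument is involved.
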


\begin{corollary}\label{ssinclosure.cor}
If $\g \in \AutA{n}$ is triangularizable\idx{triangularizable}, i.e. conjugate to an element of $\JJJ(n)$, then $\g_{s}\in \wc{C(\g)}\subseteq\overline{C(\g)}$.
\end{corollary}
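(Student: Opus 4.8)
The plan is to derive the corollary as a formal consequence of Proposition~\ref{triang.prop}(\ref{3-triangular}), once one notes that the conjugacy class $C(\g)$, the semisimple part $\g_{s}$, and the weak closure all behave equivariantly under inner automorphisms of $\AutA{n}$. First I would reduce to the case $\g\in\JJJ(n)$. Every element of $\JJJ(n)$ is locally finite, and local finiteness is preserved under conjugation, so a triangularizable $\g$ is locally finite and $\g_{s}$ is well defined. If $\h\in\AutA{n}$ satisfies $\g':=\h\g\h^{-1}\in\JJJ(n)$, then $C(\g')=C(\g)$, and Lemma~\ref{JD.lem}(1) applied to the inner automorphism $\Int(\h)$ gives $\g'_{s}=\h\g_{s}\h^{-1}$. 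Since $C(\g)$ is a conjugacy class it is stable under $\Int(\h^{-1})$, hence so is $\wc{C(\g)}$ by Remark~\ref{constr.rem}; therefore establishing $\g'_{s}\in\wc{C(\g')}=\wc{C(\g)}$ and applying $\Int(\h^{-1})$ yields $\g_{s}\in\wc{C(\g)}$. Thus it suffices to treat $\g\in\JJJ(n)$.

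So assume $\g=(g_{1},\ldots,g_{n})\in\JJJ(n)$. The next step is to exhibit an algebraic subset of $C(\g)$ whose closure reaches $\g_{s}$. I would use the $D(n)$-conjugacy class $C_{D(n)}(\g)$: since $D(n)$ is an algebraic group, $C_{D(n)}(\g)$ is a locally closed algebraic subset of $\AutA{n}$ (as recorded in the discussion preceding Proposition~\ref{triang.prop}; concretely, conjugation by a torus element only rescales monomials, so $\deg(\t\g\t^{-1})=\deg\g$ and the orbit stays in the single level $\AutA{n}_{\deg\g}$), and clearly $C_{D(n)}(\g)\subseteq C(\g)$. By the very definition of the weak closure, $\overline{C_{D(n)}(\g)}\subseteq\wc{C(\g)}$. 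Now Proposition~\ref{triang.prop}(\ref{3-triangular}) asserts precisely that $d(\g)=(a_{1}x_{1},\ldots,a_{n}x_{n})$ lies in $\overline{C_{D(n)}(\g)}$, whence $d(\g)\in\wc{C(\g)}$.

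Finally, the same proposition provides a $\c\in\JJJ(n)$ with $\g_{s}=\c\,d(\g)\,\c^{-1}$. As $C(\g)$ is stable under the inner automorphism $\Int(\c)$, so is its weak closure $\wc{C(\g)}$ (Remark~\ref{constr.rem}), and applying $\Int(\c)$ to the membership $d(\g)\in\wc{C(\g)}$ gives $\g_{s}\in\wc{C(\g)}$. The remaining inclusion $\wc{C(\g)}\subseteq\overline{C(\g)}$ is immediate from the definition, since $\overline{X}\subseteq\overline{C(\g)}$ for every algebraic subset $X\subseteq C(\g)$.

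I do not expect a genuine obstacle here, because the entire analytic content is already packaged in Proposition~\ref{triang.prop}(\ref{3-triangular}). The only points demanding care are the verification that $C_{D(n)}(\g)$ is an honest algebraic subset in the sense of Definition~\ref{affine-algebraic.def} (which rests on the invariance of the degree under conjugation by $D(n)$) and the equivariance bookkeeping, namely that conjugacy classes and their weak closures are stable under inner automorphisms.
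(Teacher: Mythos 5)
Your proof is correct and follows the paper's route exactly: the paper deduces the corollary directly from Proposition~\ref{triang.prop}(3), which already contains all the content, namely $d(\g)\in\overline{C_{D(n)}(\g)}$ and the $\JJJ(n)$-conjugacy of $d(\g)$ with $\g_{s}$. The points you spell out explicitly --- that $C_{D(n)}(\g)$ is an algebraic subset of $C(\g)$, and that conjugacy classes, their weak closures (Remark~\ref{constr.rem}), and Jordan decompositions (Lemma~\ref{JD.lem}) are equivariant under inner automorphisms --- are precisely the routine bookkeeping the paper leaves implicit.
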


\begin{proof}
(\ref{1-semisimple}) Assume that  $\g$ is of the form 
$\g=(g_{1},\ldots,g_{k},a_{k+1}x_{k+1},\ldots,a_{n}x_{n})$
where $g_{j}=a_{j}x_{j}+p_{j}(x_{j+1},\ldots,x_{n})$ for $j=1,\ldots,k$. This means that $\g^{*}x_{\ell}=a_{\ell}x_{\ell}$ for $\ell>k$. Moreover, $\Span\{(\g^{*})^{m}x_{k}\mid m\in\ZZ\} = \kk x_{k}\oplus M$ where $M$ is the span of $\{(\g^{*})^{m}p_{k}\mid m\in \ZZ\}$, hence a finite-dimensional $\g^{*}$-stable subspace of $\kk[x_{k+1},\ldots,x_{n}]$ of polynomials of degree $\leq \deg p_{k}$. Since $\g^{*}$ is semisimple, we can find an eigenvector  of the form $x_{k}+q_{k}$ with $q_{k}\in M$. Replacing $x_{k}$ by this eigenvector we get $(\g^{*})x_{k}= a_{k}x_{k}$, and the claim follows by induction.
\ps
(\ref{2-unipotent}) We have seen above that $\JJJ^{u}(n) = \Ker d$ is the subgroup of unipotent elements.
\ps
(\ref{3-triangular}) Define inductively $m_{n}=1$ and $m_{i}:=\deg g_{i} \cdot m_{i+1} + 1$ for $i=n-1,n-2,\ldots,1$, and put $\lambda(t):= (t^{m_{1}}x_{1},t^{m_{2}}x_{2},\ldots,t^{m_{n}}x_{n})\in D(n)$. Then a simple calculation shows that $\lim_{t\to 0} \lambda(t)^{-1}\cdot \g\cdot \lambda(t)= (a_{1}x_{1}, a_{2}x_{2},\ldots,a_{n}x_{n})$, proving the first claim. From the exact sequence $(*)$ and (2) we get $d(\g) = d(\g_{s})$, and by (1) we know that $\g_{s}$ is $\JJJ(n)$-conjugate to $d(\g_{s})$, proving the second claim of  (\ref{3-triangular}).
\end{proof}

\begin{remark}\label{ffunipotent.rem}
For a unipotent $\g\in\JJJ^{u}(n)$, $\g = (x_{1}+p_{1}, x_{2}+p_{2},\ldots,x_{n}+ p_{n})$, the fixed point set $(\A{n})^{\g}$ is given by $p_{1}=p_{2}= \cdots =p_{n}=0$. It was shown by \name{Snow} that for $n\leq 3$ every fixed point free triangular automorphism of $\A{n}$ is conjugate to a translation \cite{Sn1989Unipotent-actions-}. In fact, by Proposition~\ref{freeunipotent.prop} below due to \name{Kaliman} \cite{Ka2004Free-C-actions-on-} this holds for every fixed point free unipotent automorphism of $\AA^{3}$. On the other hand, \name{Winkelmann} \cite[Section~5, Lemma~8]{Wi1990On-free-holomorphi} gave an example of a fixed point free triangular unipotent automorphism of $\AA^{4}$ which is not conjugate to a translation. In fact, the orbit space $\C^{4}/\C^{+}$ is not Hausdorff.
\end{remark}

\begin{example} Let $\g \in  \JJJ(n)$ be a triangular automorphism with diagonal part $d(\g)=(a_{1}x_{1},\ldots,a_{n}x_{n})$. Assume that $a_{1},\ldots,a_{n}$ are multiplicatively independent, i.e. they do not satisfy a nontrivial relation of the form $a_{1}^{k_{1}}a_{2}^{k_{2}}\cdots a_{n}^{k_{n}}=1$. Then $\g$ is semisimple and linearizable. In fact, $\g_{s}$ is $\JJJ(n)$-conjugate to $d(\g)$ by Proposition~\ref{triang.prop}(\ref{3-triangular}), and so $\overline{\langle \g_{s}\rangle}$ is a maximal torus conjugate to $D(n)$ which commutes with $\g_{u}$. Now the claim follows, because the centralizer of $D(n)$ in $\AutA{n}$ is $D(n)$.
\end{example}

We have seen above that $\g=(g_{1},\ldots,g_{n})$ belongs to $\JJJ(n)$ if and only if $g_{i}= a_{i}x_{i}+p_{i}(x_{i+1},\ldots,x_{n})$ for all $i$ where $a_{i}\in\kst$. This shows that the \name{de Jonqui\`eres} subgroup $\JJJ(n)\subseteq\AutA{n}$ is, as an ind-variety, isomorphic to 
\[ 
(\Cst)^{n}\times (\kk\oplus\kk[x_{n}] \oplus \kk[x_{n-1},x_{n}] \oplus \cdots \oplus \kk[x_{2},\ldots,x_{n}]).
\]
In addition, $\JJJ(n)$ is a nested ind-group (Example~\ref{nested.exa}) as we see from the following result.

\begin{proposition}  \label{jonq.prop}
For all $d\geq 0$ we have $\langle \JJJ(n)_{d}\rangle \subseteq \JJJ(n)_{d^{n-1}}$. In particular, $\overline{\langle \JJJ(n)_{d}\rangle} \subseteq \AutA{n}$ is a closed algebraic subgroup and $\JJJ(n) = \bigcup_{d}\overline{\langle \JJJ(n)_{d}\rangle}$.
\end{proposition}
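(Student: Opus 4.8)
The plan is to prove the degree bound $\langle\JJJ(n)_{d}\rangle \subseteq \JJJ(n)_{d^{n-1}}$ by induction on $n$, and then read off the two ``in particular'' assertions as formal consequences. For $n=1$ we have $\JJJ(1)=\Aff(1)$, every element has degree $1$, so $\langle\JJJ(1)_{d}\rangle=\JJJ(1)=\JJJ(1)_{1}=\JJJ(1)_{d^{0}}$, which settles the base case. For the inductive step I would exploit the group homomorphism $\pi\colon\JJJ(n)\to\JJJ(n-1)$ forgetting the first coordinate, $\pi(g_{1},g_{2},\ldots,g_{n}):=(g_{2},\ldots,g_{n})$, viewed as a triangular automorphism in the variables $x_{2},\ldots,x_{n}$. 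That $\pi$ is a homomorphism follows from the triangular shape: the $i$-th component of a composition $\g\circ\h$ equals $g_{i}(h_{i},\ldots,h_{n})$, which for $i\ge 2$ depends only on $\pi(\g)$ and $\pi(\h)$. Since $\deg\pi(\g)\le\deg\g$, the induction hypothesis $\langle\JJJ(n-1)_{d}\rangle\subseteq\JJJ(n-1)_{d^{n-2}}$ gives that the components $g_{2},\ldots,g_{n}$ of any product of elements of $\JJJ(n)_{d}$ have degree at most $d^{n-2}$.

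The remaining and decisive point is to bound the degree of the first component of a product $\Phi=\g^{(1)}\circ\cdots\circ\g^{(m)}$ with each $\g^{(j)}\in\JJJ(n)_{d}$, uniformly in the length $m$. Writing $\Phi_{k}:=\g^{(k)}\circ\cdots\circ\g^{(m)}$ and using $g_{1}^{(k)}=a_{1}^{(k)}x_{1}+p_{1}^{(k)}(x_{2},\ldots,x_{n})$ with $a_{1}^{(k)}\in\kst$, one gets the recursion
\[
(\Phi_{k})_{1}=a_{1}^{(k)}\,(\Phi_{k+1})_{1}+p_{1}^{(k)}\bigl((\Phi_{k+1})_{2},\ldots,(\Phi_{k+1})_{n}\bigr).
\]
Here the first summand has the same degree as $(\Phi_{k+1})_{1}$ because $a_{1}^{(k)}$ is a nonzero scalar, while the second summand, a substitution of polynomials of degree $\le d^{n-2}$ into $p_{1}^{(k)}$ of degree $\le d$, has degree $\le d\cdot d^{n-2}=d^{n-1}$. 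Thus $\deg(\Phi_{k})_{1}\le\max\bigl(\deg(\Phi_{k+1})_{1},\,d^{n-1}\bigr)$, and since the innermost term satisfies $\deg(\Phi_{m})_{1}=\deg g_{1}^{(m)}\le d\le d^{n-1}$, a downward induction on $k$ yields $\deg(\Phi)_{1}\le d^{n-1}$. Together with the bound $d^{n-2}\le d^{n-1}$ on the other components this gives $\Phi\in\JJJ(n)_{d^{n-1}}$. The main obstacle is exactly this uniformity: the naive estimate $\deg(\g\circ\h)\le\deg\g\cdot\deg\h$ would only yield $d^{m}$, which blows up with $m$; what saves us is that $g_{1}$ is linear in $x_{1}$, so the first component re-enters the recursion through the scalar $a_{1}^{(k)}$ and contributes a \emph{maximum} rather than a \emph{product}.

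Finally I would deduce the ``in particular'' statements. The set $\JJJ(n)_{d^{n-1}}=\JJJ(n)\cap\AutA{n}_{d^{n-1}}$ is a closed algebraic subset of $\AutA{n}$, so the subgroup $\langle\JJJ(n)_{d}\rangle$ is contained in an algebraic subset; hence its Zariski closure $\overline{\langle\JJJ(n)_{d}\rangle}$ is again a closed algebraic subset contained in $\JJJ(n)_{d^{n-1}}$. Being the closure of a subgroup it is a closed subgroup, and a closed subgroup of an ind-group which is an algebraic subset is an algebraic group; hence $\overline{\langle\JJJ(n)_{d}\rangle}$ is a closed algebraic subgroup. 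For the last equality, each $\overline{\langle\JJJ(n)_{d}\rangle}\subseteq\JJJ(n)$ since $\JJJ(n)$ is closed, while conversely every $\g\in\JJJ(n)$ lies in $\JJJ(n)_{d}\subseteq\langle\JJJ(n)_{d}\rangle$ for $d=\deg\g$; therefore $\JJJ(n)=\bigcup_{d}\overline{\langle\JJJ(n)_{d}\rangle}$, exhibiting $\JJJ(n)$ as a nested ind-group.
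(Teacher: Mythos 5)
Your recursion for the first component is correct as far as it goes: combined with the projection homomorphism $\pi$ and induction on $n$, it shows that every \emph{product} $\g^{(1)}\circ\cdots\circ\g^{(m)}$ of elements of $\JJJ(n)_{d}$ lies in $\JJJ(n)_{d^{n-1}}$. But this only bounds the sub-\emph{semigroup} generated by $\JJJ(n)_{d}$, whereas $\langle\JJJ(n)_{d}\rangle$ is the generated \emph{group}, and $\JJJ(n)_{d}$ is not stable under inversion: for $\g=(x_{1}+x_{2}^{2},\,x_{2}+x_{3}^{2},\,x_{3})\in\JJJ(3)_{2}$ one has $\g^{-1}=(x_{1}-(x_{2}-x_{3}^{2})^{2},\,x_{2}-x_{3}^{2},\,x_{3})$ of degree $4$. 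Your key estimate uses $\deg p_{1}^{(k)}\leq d$ for \emph{every} factor, and this fails for inverse factors: the first component of $\g^{-1}$ is $a_{1}^{-1}x_{1}+q_{1}$ with $\deg q_{1}$ as large as $d^{n-1}$, so when such factors occur in a word your substitution step only yields $d^{n-1}\cdot d^{n-2}$, and the uniformity you rely on is lost. Consequently the containment $\langle\JJJ(n)_{d}\rangle\subseteq\JJJ(n)_{d^{n-1}}$, which your final paragraph takes as input, is not yet proved.

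There are two natural repairs. (a) Abstract: let $S$ be the semigroup generated by $\JJJ(n)_{d}$ (note $\id\in S$ for $d\geq 1$). You have shown $S\subseteq\JJJ(n)_{d^{n-1}}$, so $\overline{S}$ is a closed algebraic sub-semigroup of $\Aut(\A{n})$; for $\g\in S$, left multiplication by $\g$ is an automorphism of the ind-variety $\Aut(\A{n})$ mapping $\overline{S}$ into itself, hence $\g\,\overline{S}=\overline{S}$ by Lemma~\ref{well-known.lem} (this is exactly the mechanism of Lemma~\ref{deginvers.lem}), so $\g^{-1}\in\overline{S}$ and therefore $\langle\JJJ(n)_{d}\rangle\subseteq\overline{S}\subseteq\JJJ(n)_{d^{n-1}}$. (b) Quantitative, and this is the paper's own proof: replace the ordinary degree in your bookkeeping by the weighted degree $\deg_{d}$ with $\deg_{d}x_{i}:=d^{n-i}$, and set $M_{d}:=\{\g\in\JJJ(n)\mid \deg_{d}g_{i}\leq d^{n-i}\}$. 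Then $\JJJ(n)_{d}\subseteq M_{d}\subseteq\JJJ(n)_{d^{n-1}}$, the set $M_{d}$ is closed and satisfies $M_{d}\cdot M_{d}\subseteq M_{d}$ (the weighted degree is subadditive under substitution, which is your recursion carried out for all components simultaneously), and inverse-stability again comes from Lemma~\ref{well-known.lem}, making $M_{d}$ a closed algebraic subgroup. Note that your troublesome polynomial $q_{1}$ satisfies $\deg_{d}q_{1}\leq d^{n-1}$ even though its ordinary degree can equal $d^{n-1}$; this is precisely why the weighted bookkeeping closes up where the ordinary one does not. Your deduction of the two ``in particular'' statements is fine once the containment is in place.
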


\begin{proof}
Let $d>1$ be an integer. Define a degree function $\deg_{d}$ on $\kk [ x_1, \ldots, x_n ]$ by setting $\deg_{d} x_{i}:= d^{n-i}$. Set
\[ M_{d}:=\{\g=(g_{1},\ldots,g_{n})\in\JJJ(n) \mid \deg_{d} g_{i} \leq d^{n-i}\}.\]
Note that we have $\JJJ(n)_d \subseteq M_d \subseteq \JJJ(n)_{d^{n-1}}$ and that 
$M_{d}$  is a closed algebraic subset of $\JJJ(n)$. It is easy to see that $M_{d} \cdot M_{d}\subseteq  M_{d}$. Therefore, $M_{d}$ is a closed algebraic subgroup of $\JJJ(n)$ (use Lemma~\ref{well-known.lem}), and the claim follows.
\end{proof}

Finally, let us mention the following results about the conjugacy classes in $\Ju(n)$.
\begin{proposition}\label{conj-class-Jn.prop}
For $c \in \kk$, let $\t_{c}\in\Ju(n)$ denote the translation
$$
\t_{c}:=(x_{1},\ldots,x_{n-1},x_{n}+c)
$$
\be
\item $C_{\JJJ(n)} (\t_{1}) = \{\u=(u_{1},\ldots,u_{n})\in\Ju(n) \mid u_{n}= x_{n}+c \text{ where }c\in\kst\}$, and this conjugacy class is open and dense in $\Ju(n)$.
\item For $c\neq 0$ we get $C_{\Ju(n)} (\t_{c}) = \{\u=(u_{1},\ldots,u_{n})\in\Ju(n) \mid u_{n}= x_{n}+c\}$, and this conjugacy class is closed in $\Ju(n)$.
\ee
In particular,  the weak closure $\wc{C(\t_{1})}$ contains $\Ju(n)$.
\end{proposition}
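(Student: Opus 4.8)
The goal is to describe the conjugacy classes of the translations $\t_c = (x_1,\ldots,x_{n-1},x_n+c)$ inside $\Ju(n)$ and inside $\JJJ(n)$, to identify the topological nature of these classes (open dense, resp. closed), and to conclude that $\wc{C(\t_1)} \supseteq \Ju(n)$.

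The plan is to analyze the two conjugacy problems by looking at the effect of conjugation on the last coordinate $u_n$, which is the only coordinate that is a function of $x_n$ alone. First I would record the basic mechanism: if $\g=(g_1,\ldots,g_n)\in\JJJ(n)$ and $\u=(u_1,\ldots,u_n)\in\Ju(n)$, then conjugating $\u$ by $\g$ leaves the ``shape'' of $\Ju(n)$ intact, and the bottom entry transforms in a controlled way. Since $g_n=a_n x_n + (\text{const})$ and $u_n = x_n + p_n$ with $p_n\in\kk$ (because $\u$ is unipotent, $p_n$ is a constant, as $u_n\in\kk[x_n]$ has diagonal coefficient $1$), the last coordinate of $\g\u\g^{-1}$ has the form $x_n + a_n\, c$ where $u_n=x_n+c$. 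This already shows that conjugation by $\JJJ(n)$ can rescale the constant $c$ by any $a_n\in\kst$, while conjugation by $\Ju(n)$ (where $a_n=1$) leaves $c$ unchanged. This gives the inclusions ``$\subseteq$'' in both (1) and (2): any element of $C_{\JJJ(n)}(\t_1)$ has bottom entry $x_n+c$ with $c\in\kst$, and any element of $C_{\Ju(n)}(\t_c)$ has bottom entry exactly $x_n+c$.

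For the reverse inclusions I would show that conjugation acts transitively on the relevant fibers. Concretely, given a target $\u=(u_1,\ldots,u_n)\in\Ju(n)$ with $u_n=x_n+c$, $c\neq 0$, I would construct $\g\in\Ju(n)$ (resp. $\JJJ(n)$) with $\g\,\t_c\,\g^{-1}=\u$, building $\g$ coordinate by coordinate from the bottom up. The point is that conjugating the pure translation $\t_c$ by a triangular $\g$ produces an element whose $i$-th coordinate can be made to match $u_i$ by solving a triangular system: the equation for the $i$-th coordinate involves $g_i$ and already-determined data $g_{i+1},\ldots,g_n$, and because $c\neq 0$ the relevant difference operator (``shift by $c$ in $x_n$ minus identity'') is surjective onto the space of polynomials with no constant term in the appropriate variables. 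This is the standard fact that the orbit of a nonzero translation under the unipotent triangular group is exactly the set of unipotent triangular automorphisms with prescribed bottom shift, and I would carry it out by induction on $n$, peeling off the last variable. The openness and density of $C_{\JJJ(n)}(\t_1)$ in $\Ju(n)$ then follows since it is the preimage of $\kst\subseteq\kk$ under the (morphism) bottom-entry-constant map $\u\mapsto c$, and $\kst$ is open dense in $\kk$; closedness of $C_{\Ju(n)}(\t_c)$ follows since it is the preimage of the single point $c$.

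Finally, for the weak closure statement $\wc{C(\t_1)}\supseteq\Ju(n)$, I would combine (1) with the definition of weak closure (Definition~\ref{constr.def}). By (1), $C_{\JJJ(n)}(\t_1)\subseteq C(\t_1)$ is open and dense in the irreducible ind-variety $\Ju(n)$ (using the explicit product description of $\JJJ(n)$ as an ind-variety to see $\Ju(n)$ is curve-connected, hence irreducible), so every algebraic subset of $\Ju(n)$ lies in the closure of $C(\t_1)\cap\Ju(n)$; since $C_{\JJJ(n)}(\t_1)$ meets every $\Ju(n)_k$ in a dense subset, its weak closure sweeps out all of $\Ju(n)$. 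The main obstacle I anticipate is the reverse-inclusion step: verifying surjectivity of the relevant shift operators and organizing the inductive coordinate-by-coordinate construction of the conjugator cleanly, so that the degree bookkeeping and the nonvanishing of $c$ are used correctly. Once that transitivity-on-fibers fact is nailed down, the topological consequences (open/dense, closed, weak closure) are formal.
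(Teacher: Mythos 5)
Your proposal is correct, but it takes a genuinely different route from the paper's. For the key transitivity step you solve the conjugation equation $\g\circ\t_{c}=\u\circ\g$ directly and bottom-up: the $i$-th coordinate reduces to $q_{i}(x_{i+1},\ldots,x_{n}+c)-q_{i}(x_{i+1},\ldots,x_{n})=p_{i}(g_{i+1},\ldots,g_{n})$, which is solvable because the finite-difference operator in $x_{n}$ is surjective on polynomials (here $c\neq 0$ and $\operatorname{char}\kk=0$ are used). The paper instead integrates: it passes to the one-parameter subgroup $\mu\colon\kplus\to\Aut(\An)$ with $\mu(1)=\u$, notes that the last coordinate of $\mu(s)$ is $x_{n}+sc$, and writes down the trivializing automorphism $\phi(a_{1},\ldots,a_{n}):=\mu(a_{n})(a_{1},\ldots,a_{n-1},0)$ in closed form, which lies in $\JJJ(n)$ and satisfies $\phi^{-1}\circ\u\circ\phi=\t_{1}$ at once --- in effect the flow-box trivialization attached to the equivariant section $x_{n}/c$ in the spirit of Section~\ref{local-sections.subsec}. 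As a result the paper proves (1) first and deduces (2) by factoring $\phi=\phi_{0}\circ d$ into unipotent and diagonal parts and tracking $d_{n}$, whereas you prove (2) first (transitivity of $\Ju(n)$ on each fiber $\{u_{n}=x_{n}+c\}$) and obtain (1) by one extra diagonal rescaling; both orders work. What each buys: the paper's argument is shorter and conceptual, while yours is elementary and makes the topological assertions --- which the paper dismisses as ``clear'' --- genuinely transparent via the bottom-constant morphism $\u\mapsto c$ (preimage of $\kst$ is open and dense in each affine-space filtration piece, a fiber is closed). Two small points to tidy when writing it up: the difference operator is surjective onto \emph{all} of $\kk[x_{i+1},\ldots,x_{n}]$, not merely onto polynomials without constant term, and you need this since $p_{i}(g_{i+1},\ldots,g_{n})$ may have nonzero constant term (no obstruction: the operator sends $\lambda x_{n}/c$ to $\lambda$); and in the weak-closure step what you actually use is not irreducibility of $\Ju(n)$ as a whole but that each $\Ju(n)_{k}$ is an affine space, so the locally closed set $C_{\JJJ(n)}(\t_{1})\cap\Ju(n)_{k}$ is dense in it, giving $\wc{C(\t_{1})}\supseteq\bigcup_{k}\Ju(n)_{k}=\Ju(n)$.
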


\begin{proof} 
(1) Let $\mu=\mu_{\u}\colon \kplus \to \Aut(\An)$ be the homomorphism associated to $\u$, i.e. $\mu(1)=\u$. Then the last coordinate of $\mu(s)$ is $x_{n}+sc$. Define $\phi\colon \An \to \An$ by
$$
\phi(a_{1},\ldots,a_{n}):= \mu(a_{n})(a_{1},\ldots,a_{n-1},0).
$$ 
This is an automorphism, with inverse 
$\phi^{-1}(b_{1},\ldots,b_{n}):=\mu(\frac{-b_{n}}{c})(b_{1},\ldots,b_{n-1},0)$, and the construction shows that $\phi\in\JJJ(n)$.  

We have the formal identity (as polynomials in $x_{1},\ldots, x_{n},s,z$)
$$
\mu(x_{n}+s  )(x_{1},\ldots,x_{n-1},z) = \mu(s)(\mu(x_{n})(x_{1},\ldots,x_{n-1},z)).
$$
Setting $z=0$ we get $\phi(a_{1},\ldots,a_{n}+s) = \mu(s)(\phi(a_{1},\ldots,a_{n}))$, hence 
$$
\phi^{-1}\circ \mu(s) \circ \phi = \t_{s}\quad \text{and so}\quad \phi^{-1}\circ \u \circ \phi = \t_{1}.
$$
It is clear that the last coordinate of any $\JJJ(n)$-conjugate of $\t_{1}$ has the form $x_{n}+ c$, $c\neq 0$, and the claim follows.
\ps
(2)
We have seen above that $\phi^{-1}\circ \mu(s) \circ \phi = \t_{s}$ for some $\phi \in\JJJ(n)$. We can write $\phi = \phi_{0} \circ d$ where $\phi_{0}\in\Ju(n)$ and $d = (d_{1}x_{1},\ldots,d_{n}x_{n})$, $d_{i}\in \kst$. It then follows that $\phi_{0}^{-1}\circ \mu(s) \circ \phi_{0} = 
d\circ \t_{s}\circ d^{-1}=\t_{d_{n}s}$. On the other, conjugation with an element from $\Ju(n)$ does not change the last coordinate, hence $d_{n}=c$ and $\phi_{0}^{-1}\circ \u \circ \phi_{0} = \t_{c}$. The remaining claims are clear.
\end{proof}

\ps
\subsection{The degree formula}\label{degree-formula.subsec}
The \itind{degree} of an automorphism $\f=(f_1,\ldots,f_n)$ of $\An$ is defined as
$$
\deg \f :=\max_i \{\deg f_i\}.
$$
We want to give a short proof of the following \itind{degree formula} (see \cite[Corollary~1.4]{BaCoWr1982The-Jacobian-conje}):
\[
\deg \f^{-1} \leq (\deg \f)^{n-1}.
\]
We start with an easy lemma. Recall that the {\it degree of a closed subvariety $X \subset \An$}\idx{degree of subvariety} of dimension $d$ is defined as 
$$
\deg X := \#(X \cap A) 
$$
where $A$ is an affine subspace of $\An$ of codimension $d$ in general position, see \cite[Section~8]{DeKr1997Constructive-invar}. Note that for a hyperplane $H \subseteq \An$ we get $\deg \f^{-1}(H)\leq \deg \f$ with equality for a generic $H$.
\begin{lemma} 
Let $H_{1},\ldots,H_{r} \subseteq \An$ be hypersurfaces,  $\deg H_{i}= d_{i}$. Put $X:=\bigcap_{i} H_{i}$. If $\codim_{\An}X = r$, then 
$\deg X\leq d_{1}d_{2}\cdots d_{r}$. In particular, if $r = n$ and $X$ is finite, then $|X| \leq d_{1}\cdots d_{n}$.
\end{lemma}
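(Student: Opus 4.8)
The plan is to recognize this as the (set-theoretic) \emph{B\'ezout inequality} and to prove it by induction on the number $r$ of hypersurfaces, the key geometric input being the degree of a line bundle on a projective curve. The ``in particular'' statement is then immediate: if $r = n$ and $X$ is finite then $\dim X = 0$, so a codimension-$0$ affine subspace in general position is all of $\An$ and $\deg X = \#(X \cap \An) = |X|$.

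First I would record the bookkeeping forced by the codimension hypothesis. By Krull's principal ideal theorem each partial intersection $Y_k := H_1 \cap \cdots \cap H_k$ has every component of codimension at most $k$, while subadditivity of codimension together with $\codim_{\An} X = r$ gives $\codim Y_k \ge k$; hence $\codim Y_k = k$ for all $k$ and every cut is proper. This legitimizes an induction whose inductive hypothesis is $\deg Y_{r-1} \le d_1 \cdots d_{r-1}$ (the base case $r=1$ being $\deg H_1 = d_1$), where the degree of a possibly reducible subvariety is understood via generic linear sections, counting only top-dimensional components.

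For the inductive step I set $Y := Y_{r-1}$, so $\dim Y = n - r + 1$ and $\deg Y \le d_1 \cdots d_{r-1}$, and I must bound $\deg(Y \cap H_r)$ with $X = Y \cap H_r$ of codimension $r$. Choosing a generic affine subspace $A$ of codimension $\dim X = n - r$ (hence $\dim A = r$), the definition of degree gives $\deg X = \#(X \cap A)$, and genericity makes $C := Y \cap A$ a curve with $\deg C = \deg Y$, while $X \cap A = C \cap H_r$ is finite. Thus everything reduces to the inequality $\#(C \cap H_r) \le \deg C \cdot d_r$ for a curve $C$ and a hypersurface $H_r = V(g)$ of degree $d_r$ meeting it in finitely many points. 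I would prove this by passing to the projective closure $\overline{C} \subseteq \PP^{r}$ (of degree $\deg C$) and the homogenization $G$ of $g$ (of degree $d_r$): since $C \cap H_r$ is finite, $G$ vanishes identically on no component of $\overline{C}$, so the section of $\mathcal{O}_{\overline{C}}(d_r)$ it defines has exactly $d_r \cdot \deg \overline{C} = d_r \deg C$ zeros counted with multiplicity; as $C \cap H_r \subseteq \overline{C} \cap V(G)$, we obtain $\#(C \cap H_r) \le d_r \deg C$. Combining, $\deg X \le \deg Y \cdot d_r \le d_1 \cdots d_r$.

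The main obstacle, and the only point requiring genuine care rather than the input from \cite{DeKr1997Constructive-invar}, is controlling the intermediate intersections: ensuring that the generic section $Y \cap A$ is a curve of the expected degree and that non-equidimensionality of the $Y_k$ does not corrupt the induction. I expect that the stated properties of the geometric degree — that it is computed by a generic linear section of complementary dimension, equals the polynomial degree on a hypersurface, and is preserved by generic hyperplane sections down to a curve — suffice to dispatch this once everything is phrased in terms of top-dimensional components; the curve--hypersurface count itself is the standard ``degree of $\mathcal{O}(d_r)$ on a degree-$\deg C$ curve'' and is routine.
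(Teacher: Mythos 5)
Your route is essentially the paper's: induct on $r$, reduce the inductive step by a generic linear section of complementary dimension to the count $\#(C\cap H_r)\le \deg C\cdot d_r$ for a curve $C$. You go further than the paper in one respect: the paper dismisses the curve case as clear, while you actually prove it via the projective closure $\overline{C}\subseteq\PP^r$ and the degree of $\OOO_{\overline{C}}(d_r)$; that part of your argument is correct and is a genuine improvement in completeness.

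There is, however, a real gap in your bookkeeping, and it sits exactly at the point you yourself flagged as the main obstacle. The claim that $\codim_{\An}X=r$ together with ``subadditivity of codimension'' forces $\codim Y_k\ge k$ --- hence that every cut is proper and $Y_{r-1}$ is equidimensional of dimension $n-r+1$ --- is false in affine space, because codimension is not subadditive there: a later hypersurface may miss an excess component entirely. Concretely, in $\AA^{3}$ take $H_1=V(xy)$, $H_2=V(xz)$, $H_3=V(x-1)$. Then $X=\{(1,0,0)\}$ has codimension $3=r$, but $Y_2=V(x)\cup\{y=z=0\}$ has codimension $1$. In this situation your inductive step collapses: $\dim Y_2=2$, not $n-r+1=1$; the generic section $C=Y_2\cap A$ with $\codim A=\dim X=0$ is not a curve; and $\deg Y_2$, which by your convention counts only the top-dimensional component $V(x)$, says nothing about the line $\{y=z=0\}$ that actually produces $X$. (Your subadditivity argument would be sound in $\PP^n$, where a positive-dimensional variety always meets a hypersurface; it is precisely the affine setting of the lemma that breaks it.) To be fair, the paper's own proof elides the same issue behind the phrase ``by induction, it suffices to prove'' its one-cut statement. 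The standard repair is to strengthen the induction to all irreducible components: show $\sum_Z \deg Z\le d_1\cdots d_k$, the sum running over every irreducible component $Z$ of $Y_k$ of whatever dimension, by splitting the components of $Y_k$ into those contained in $H_{k+1}$, which pass through unchanged, and those cut properly, to which your one-cut estimate (your curve argument, applied after a generic section of the equidimensional variety $Z$) applies componentwise. This refined inequality implies the lemma, and with that modification your proof is complete.
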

\begin{proof}
By induction, it suffices to proof the following statement.

 {\it Let $X \subseteq \An$ be closed and equidimensional of dimension $s$, and let $H \subseteq \An$ be a hypersurface such that $\dim ( H \cap X ) < s$. Then $\deg (H \cap X) \leq \deg X \deg H$.}
 
 We first remark that this statement is clear if $X$ is a curve.
For a generic affine subspace $A \subseteq \An$ of codimension $\dim X - 1$ we get
 \be
 \item $\deg (A \cap H) = \deg H$;
 \item $\deg (A \cap X) = \deg X$;
 \item $A \cap X \cap H$ is finite of cardinality equal to $\deg (X \cap H)$.
 \ee
 Now $X':=X \cap A \subseteq A$ is a curve, $H':=H \cap A \subseteq A$ is a hypersurface, and so 
 $$
 \deg (X\cap  H) = |X' \cap H'| \leq \deg X' \deg H' = \deg X \deg H,
 $$
proving the statement above.
\end{proof}
Now we can prove the degree formula above. Let $E \subseteq \An$ be a hyperplane so that $\deg \f(E) = \deg \f^{-1}$.  Choose generic hyperplanes $E_{1},\ldots, E_{n-1} \subseteq \An$ such that the intersection $E_{1}\cap\cdots\cap E_{n-1}\cap \f(E)$ is finite of cardinality equal to $\deg \f(E)$. Then, by the lemma above,
$$
\deg \f^{-1} = |\f^{-1}(E_{1})\cap\cdots\cap \f^{-1}(E_{n-1})\cap E| \leq \deg \f^{-1}(E_{1})\cdots\deg \f^{-1}(E_{n-1}) \leq (\deg\f)^{n-1}.
$$

\ps
\subsection{Connectedness}
Let $C$ be an irreducible curve and $\VVV$ an ind-variety. A morphism $\phi \colon C \to \VVV$ is said to {\it connect $v_1, v_2 \in \VVV$} if $v_{1},v_{2}\in \phi(C)$.

\begin{proposition}  \label{k*-connected.prop}
For  $\g,\h\in\AutA{n}$ where $\deg \g\geq \deg \h$ there is a morphism $\phi\colon\Cst \to \AutA{n}$ connecting $\g$ and $\h$ such that $\deg\phi(t) \leq \deg \g\,(\deg\h)^{n}$ for all $t\in\Cst$. 
\end{proposition}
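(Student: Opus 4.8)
The plan is to reduce, by right multiplication with $\h$, to the problem of connecting the identity to a single automorphism, and then to connect that automorphism to $\id$ by first degenerating it to an affine map via rescaling and finally joining affine maps inside the connected group $\Aff(n)$. Set $\f:=\g\circ\h^{-1}$. By the degree formula of Section~\ref{degree-formula.subsec} we have $\deg\f\le\deg\g\,(\deg\h)^{n-1}$. It then suffices to produce a morphism $\psi\colon\Cst\to\AutA{n}$ whose image contains both $\id$ and $\f$ and which satisfies $\deg\psi(t)\le\deg\f$ for all $t\in\Cst$: indeed, $\phi(t):=\psi(t)\circ\h$ then has $\g$ and $\h$ in its image, and since $\deg(\mu\circ\nu)\le\deg\mu\cdot\deg\nu$ we get $\deg\phi(t)\le\deg\f\cdot\deg\h\le\deg\g\,(\deg\h)^{n}$, which is exactly the asserted bound.

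To connect $\id$ to $\f$ I would first degenerate $\f$ to an affine automorphism. Write $\f=\t_a\circ\f_0$ with $a=\f(0)$ and $\f_0(0)=0$, and let $s_t=(tx_1,\dots,tx_n)$ be the scaling automorphism. Because $\f_0$ has no constant term, the expression $s_t^{-1}\circ\f_0\circ s_t$ involves only nonnegative powers of $t$, so $A(t):=\t_a\circ(s_t^{-1}\circ\f_0\circ s_t)$ is a morphism $\Aone\to\AutA{n}$ which takes values in $\AutA{n}$ for \emph{every} $t$: for $t\neq0$ it is a conjugate of $\f$, and at $t=0$ it equals the affine automorphism $\alpha:=\t_a\circ\ell$, where $\ell=d(\f_0)_0\in\GL(n)$ is invertible. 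Conjugation by the linear map $s_t$ preserves degree, so $\deg A(t)\le\deg\f$ for all $t$, with $A(1)=\f$ and $A(0)=\alpha$.

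It remains to join $\alpha$ to $\id$ inside $\Aff(n)=\GL(n)\ltimes\Tr(n)$ and to merge this with $A$ into a single $\Cst$-morphism of degree $1$ on the affine part. The translation part and the $\SL(n)$-part of $\ell$ are joined to $\id$ by morphisms from $\Aone$ assembled out of one-parameter unipotent subgroups (all of degree $1$, all with constant Jacobian), and these may be reparametrised freely by polynomials. I expect the genuine difficulty to be the Jacobian: since $\jac\colon\AutA{n}\to\Cst$ is a character, $\jac\circ\psi$ is forced to be a Laurent monomial $c\,s^{k}$, which is constant on $\Aone$; hence \emph{no} family parametrised by $\Aone$ can join two affine maps with different Jacobians, and this is precisely why the source must be $\Cst$. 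I would resolve this by inserting the one-dimensional torus factor $s\mapsto\mathrm{diag}(s,1,\dots,1)$ to absorb all Jacobian discrepancy, and by keeping the two distinguished parameters $s_0,s_1\in\Cst$ free rather than fixed: one then solves $c\,s_0^{k}=\jac\id=1$ and $c\,s_1^{k}=\jac\alpha$, which is always possible. This yields a morphism $B\colon\Cst\to\Aff(n)$ of degree $1$ with $B(s_0)=\id$ and $B(s_1)=\alpha$.

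Finally I would merge $A$ and $B$ by the standard multiplicative trick: choose a polynomial reparametrisation $p$ and set $\psi(s):=A(p(s))\circ\alpha^{-1}\circ B(s)$, arranging $p(s_1)=1$ and $p(s_0)=0$ so that $\psi(s_1)=\f\circ\alpha^{-1}\circ\alpha=\f$ and $\psi(s_0)=\alpha\circ\alpha^{-1}\circ\id=\id$. For every $s\in\Cst$ all three factors lie in $\AutA{n}$ (the scaling family is everywhere invertible, and $\alpha^{-1},B(s)$ are affine), so $\psi$ is a genuine morphism to $\AutA{n}$; and since $\alpha^{-1}$ and $B(s)$ are affine, $\deg\psi(s)\le\deg A(p(s))\le\deg\f$. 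The verifications I regard as routine are the degree bookkeeping and the check that the merged family never leaves $\AutA{n}$; the one conceptual point, and the place where the hypothesis that the parameter space is $\Cst$ really matters, is threading the Jacobian character through a monomial factor while exploiting the freedom to choose $s_0,s_1$ — this is what makes the construction possible despite the fact that neither an $\Aone$-family (constant $\jac$) nor a naive torus morphism (monomials only) could connect two arbitrary automorphisms on its own.
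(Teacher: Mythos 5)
Your proof is correct and follows the same route as the paper's: reduce to connecting one automorphism with the identity by multiplying with $\h$ (the paper uses $\h^{-1}\cdot\g$, you use $\g\circ\h^{-1}$; the degree bookkeeping via $\deg\h^{-1}\le(\deg\h)^{n-1}$ is identical), degenerate by conjugating with the scaling $t\,\id$ to an automorphism of degree one (the paper scales the translation as $\t_{t^{2}a}$ and lands on the linear part, you keep $\t_a$ fixed and land on the affine part --- immaterial), connect that limit to $\id$ inside $\Aff(n)$ by a morphism from $\Cst$, and merge the two families multiplicatively after a polynomial reparametrisation, exactly as in the paper's $\phi(t)=\mu(\tfrac{t+1}{2})\cdot\nu(t)^{-1}$.

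One point of comparison is worth recording. The paper delegates the affine-group step to the ``well-known'' fact that for every $A\in\GL(n)$ there is a morphism $\nu\colon\Cst\to\GL(n)$ with $\nu(1)=E_n$ and $\nu(-1)=A$. With these \emph{fixed} endpoints this fails whenever $\det A\notin\{\pm1\}$: the composite $\det\circ\nu$ is a nowhere-vanishing Laurent polynomial, hence a monomial $c\,t^{k}$, and $\nu(1)=E_n$ forces $c=1$, so $\det\nu(-1)=(-1)^{k}\in\{\pm1\}$. Your decision to keep the two distinguished parameters $s_0,s_1\in\Cst$ free and solve $c\,s_0^{k}=1$, $c\,s_1^{k}=\jac\alpha$ (always possible, with $s_0\neq s_1$, taking $k=0$ when $\jac\alpha=1$) is precisely the repair this step needs; since ``connecting'' only requires that both automorphisms lie in the image of the morphism, the free endpoints cost nothing, and the rest of the argument (polynomial interpolation for $p$, the everywhere-invertible scaling family, the degree bound $\deg\psi(s)\le\deg\f$ from composing with affine maps) goes through as you wrote it. So your write-up is not only correct but slightly more careful than the published argument at its one delicate point, the threading of the Jacobian character through the $\Cst$-parameter.
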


\begin{proof}
Let $\g \in\AutA{n}$. Set $\g(0)=a$. Then $\g_{0}:=\t_{-a}\cdot\g=(g_{1},\ldots,g_{n})$ fixes the origin where $\t_{b}$ denotes the translation $x\mapsto x+b$. Define 
$$
\g(t):= (t \id)^{-1} \cdot(\t_{t^{2}a} \cdot \g_{0})\cdot (t \id)
$$
for $t\in\Cst$. Then $\g(1)=\g$ and $\lim_{t\to 0}\g(t)=d_{0}\g_{0}=(\ell_{1},\ldots,\ell_{n})$ where $\ell_{i}$ is the linear part of $g_{i}$. This shows that there is a morphism $\mu\colon \kk \to \AutA{n}$ such that $\mu(1)=\g$ and $\mu(0)=A\in\GL(n)$. Note that $\mu(\kk)\subseteq \AutA{n}_{k}$ where $k=\deg\g$. It is well-known that for every $A\in\GL(n)$ there is a morphism $\nu\colon \Cst \to \GL(n)$ such that $\nu(1)=E_{n}$ and $\nu(-1)=A$. Then $\phi(t):=\mu(\frac{t+1}{2})\cdot \nu(t)^{-1}$ defines a morphism $\varphi \colon \kk^* \to \Aut (\AA^n)$ of degree $\leq \deg\g$ and  we have $\varphi (1) = \g$, $\varphi (-1) = E_n$.
Now replace $\g$ with $\h^{-1}\cdot \g$ and multiply $\phi(t)$ with $\h$ to find a morphism $\phi\colon\Cst \to \AutA{n}$ of degree $\leq \deg (\h^{-1}\cdot\g)\deg\h \leq \deg\g(\deg\h)^{n}$ connecting $\g$ with $\h$.
\end{proof}

The next result is an immediate consequence of the proposition above. It can be found in \cite[Lemma~4]{Sh1981On-some-infinite-d}.
\begin{corollary}
The group $\AutA{n}$ is curve-connected.
\end{corollary}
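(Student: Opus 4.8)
The plan is to derive this directly from Proposition~\ref{k*-connected.prop}, since the curve we need is already produced there. Recall that by Definition~\ref{curve-connected.def} the ind-variety $\AutA{n}$ is curve-connected precisely when, for any two points, there is an irreducible algebraic curve $D$ together with a morphism $D \to \AutA{n}$ whose image contains both points. So the entire argument reduces to checking that the morphism supplied by the proposition meets this requirement verbatim.

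First I would fix two arbitrary automorphisms $\g,\h \in \AutA{n}$. Since Proposition~\ref{k*-connected.prop} is stated under the hypothesis $\deg\g \geq \deg\h$, I would without loss of generality assume this inequality holds, simply swapping the roles of $\g$ and $\h$ otherwise (the conclusion ``there is a curve connecting them'' is symmetric in the two points). Applying the proposition then yields a morphism $\phi\colon \Cst \to \AutA{n}$ connecting $\g$ and $\h$, which by the definition of ``connect'' means exactly that $\g,\h \in \phi(\Cst)$.

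It then remains to observe that $\Cst = \kk^{*}$ is an irreducible algebraic curve. Taking $D := \Cst$ and using the morphism $\phi$, we have an irreducible algebraic curve mapping into $\AutA{n}$ whose image contains the two prescribed points $\g$ and $\h$. As $\g,\h$ were arbitrary, this is precisely the condition of Definition~\ref{curve-connected.def}, so $\AutA{n}$ is curve-connected.

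There is essentially no obstacle at this stage: all of the substantive work—constructing the connecting morphism of controlled degree by first moving $\g$ to its linear part $A \in \GL(n)$, then joining $A$ to the identity inside $\GL(n)$, and finally translating the construction back via multiplication by $\h$—has already been carried out in the proof of Proposition~\ref{k*-connected.prop}. The degree bound $\deg\phi(t) \leq \deg\g\,(\deg\h)^{n}$ provided there is stronger than what is needed here (it guarantees the curve lands in a single algebraic piece $\AutA{n}_{k}$), but for curve-connectedness I would only use the bare existence of the morphism $\phi$ and the irreducibility of $\Cst$.
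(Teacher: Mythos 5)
Your proof is correct and is exactly the paper's argument: the corollary is stated there as an immediate consequence of Proposition~\ref{k*-connected.prop}, obtained by applying the proposition (after the harmless normalization $\deg\g \geq \deg\h$) and noting that $\Cst$ is an irreducible algebraic curve, so the connecting morphism $\phi\colon \Cst \to \AutA{n}$ witnesses Definition~\ref{curve-connected.def}. Your observation that the degree bound is not needed for this deduction is also accurate.
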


\begin{remark}
If $\phi \colon \AA^1 \to \Aut (\AA^n)$ connects $\g, \h \in \Aut (\AA^n)$, then we necessarily have $\jac (g) = \jac (h)$, because the morphism $\jac \circ \phi \colon \AA^1 \to \kk^*$ has to be constant. Therefore, Proposition~\ref{k*-connected.prop} is ``optimal'' in the sense that in general two elements of $\Aut (\AA^n)$ cannot be connected by a morphism $\phi \colon \AA^1 \to \Aut (\AA^n)$.
\end{remark}

\ps
\subsection{Infinite transitivity of \texorpdfstring{$\Aut(\An)$}{Aut(An)}}
Recall that an action of group $G$ on a space $X$ is called \itind{infinitely transitive} if it is $n$-transitive for all $n \geq 1$. Equivalently, for every finite subset $F\subseteq X$ the pointwise stabilizer $G_{F}$ of $F$ is transitive on the complement $X\setminus F$. It is known that $\AutA{n}$ acts infinitely transitively on $\A{n}$ for $n\geq 2$; this can be found in  \cite{KaZa1999Affine-modificatio} together with generalizations to other affine varieties. It is also a consequence of a more general result which was proved by \name{Arzhantsev-Flenner-Kaliman et al}, see \cite{ArFlKa2013Flexible-varieties}. 

If $\rho$ is a $\kplus$-action on $\An$ and $f \in \OOO(\An)^{\kplus}$ an invariant, then we have defined the {\it modification $\rho_{f}$ of $\rho$} in Section~\ref{modification.subsec}: $\rho_{f}(s)(x) := \rho(f(x)s)(x)$.

\begin{example}\label{translation.exa}
Let $\mu$ be the $\kplus$-action on $\A{n}$ by translation, with translation vector $v \neq 0$, i.e. $\mu(s)(a) = a+sv$. Let $A\subseteq \A{n}$ be a subset and $b\in\A{n}\setminus\overline{\kplus A}$. Then there is a modification $\mu'$ of $\mu$ with the following properties:
\be
\item $A$ is fixed by $\mu'$;
\item The orbit of $b$ under $\mu'$ is the line $b+\kk v$.
\ee
In fact, the affine quotient by the $\kplus$-action $\mu$ is given by a linear map $p\colon \A{n} \to \AA^{n-1}$ with kernel $\kk v$, and the fibers of $p$ are the orbits. By assumption, $p(b)\notin \overline{p(A)}$, and so there is a $\mu$-invariant $f$ which vanishes on $A$ and is nonzero on $b$. Then the modification $\mu':= \mu_{f}$ has the required properties.
\end{example}

\begin{proposition}  \label{inftrans.prop}
Let $n \geq 2$, and 
let $\UUU \subseteq \AutA{n}$ be the subgroup generated by the modifications of the translations. Then $\UUU$ acts infinitely transitively on $\A{n}$.
\end{proposition}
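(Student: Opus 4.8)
By the characterization of infinite transitivity recalled just before the proposition (namely that, for each finite $F\subseteq\A{n}$, the pointwise stabilizer acts transitively on $\A{n}\setminus F$), it suffices to prove the following \emph{Key Lemma}: for every finite $F\subset\A{n}$ and all $p,q\in\A{n}\setminus F$ there is $g\in\UUU$ with $g|_{F}=\id$ and $g(p)=q$. Thus the whole argument is reduced to a single ``one-point move'' that fixes a prescribed finite set.

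First I would dispose of the \emph{aligned case}, in which no point of $F$ lies on the line through $p$ and $q$ (this includes the trivial case $p=q$, handled by $g=\id$). Put $v:=q-p\neq 0$ and let $\mu$ be the translation action with vector $v$. The hypothesis says precisely that $p\notin\bigcup_{a\in F}(a+\kk v)=\overline{\kplus F}$, so Example~\ref{translation.exa} (with $A=F$ and $b=p$) yields a modification $\mu'=\mu_{f}$ of $\mu$ that fixes $F$ pointwise and whose orbit through $p$ is the whole line $p+\kk v\ni q$. Since then $f(p)\neq 0$, there is $s_{0}\in\kk$ with $\mu'(s_{0})(p)=p+s_{0}f(p)v=q$, and $g:=\mu'(s_{0})\in\UUU$ does the job. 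Concretely $f$ is an interpolating polynomial constant in the direction $v$, vanishing on $\pi_{v}(F)$ and nonzero at $\pi_{v}(p)$, where $\pi_{v}\colon\A{n}\to\A{n-1}$ is the linear projection with kernel $\kk v$; such an $f$ exists exactly because $\pi_{v}(p)\notin\pi_{v}(F)$.

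For the general case I would route $p$ to $q$ through an intermediate point $r$, taking $g=g_{2}\circ g_{1}$ where $g_{1},g_{2}\in\UUU$ are produced by the aligned case so that $g_{1}|_{F}=\id$, $g_{1}(p)=r$, $g_{2}|_{F}=\id$ and $g_{2}(r)=q$; then $g|_{F}=\id$ and $g(p)=q$. The aligned case applies to both moves as soon as $r$ avoids every line $\mathrm{line}(p,a)$ and $\mathrm{line}(q,a)$ for $a\in F$, and also $r\notin F\cup\{p,q\}$. This excluded set is a finite union of proper affine subspaces of $\A{n}$, so for $n\geq 2$ its complement is nonempty and a valid $r$ exists; this proves the Key Lemma and hence the proposition.

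The main obstacle is precisely this existence of a good detour point, and it is where the hypothesis $n\geq 2$ enters essentially: in $\A{1}$ there is a single direction, any point of $F$ on $\mathrm{line}(p,q)$ is an unavoidable obstruction, and indeed $\UUU$ fails to be infinitely transitive there. One should also double-check the two routine points underpinning the aligned case: that the interpolating $f$ is genuinely $\mu$-invariant (so that $\mu_{f}$ really is a modification of a translation and $\mu_{f}(s_{0})$ a bona fide element of $\UUU$), and that $\overline{\kplus F}=\bigcup_{a\in F}(a+\kk v)$ as claimed, so that Example~\ref{translation.exa} applies verbatim.
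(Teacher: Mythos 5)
Your proof is correct and follows essentially the same route as the paper's: your aligned case is exactly the paper's step (a), an application of Example~\ref{translation.exa} producing a modification of the translation that fixes $F$ pointwise, and your detour point $r$ is precisely the paper's third point $c$ chosen so that the two connecting lines miss $F$. The additional details you supply (the explicit invariant $f$ pulled back along the linear projection with kernel $\kk v$, the time $s_{0}=1/f(p)$, and the observation that the excluded set is a finite union of proper affine subspaces, which is where $n\geq 2$ enters) are routine verifications that the paper leaves implicit, and they all check out.
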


\begin{proof}
(a) Let $F \subseteq \A{n}$ be a finite subset and let $b,b'\in\A{n}\setminus F$. If the line $\ell:=\overline{bb'}=b+\kk v$ does not meet $F$, then Example~\ref{translation.exa} above shows that there is a modification of the translation $\t_{v}$ which  fixes $F$ and maps $b$ to $b'$. 
\ps
(b) If the line $\ell$ meets $F$, then we can choose a third point $c\in\A{n}$ such that the two lines $\overline{bc}$ and $\overline{b'c}$ do not meet $F$, and the claim follows from (a).
\end{proof}
Recall that the \itind{special} (or \itind{unimodular}) automorphism group is defined as 
$$
\SAut(\An):= \{\phi\in\Aut(\An) \mid \jac(\phi)=1\} = \Ker(\jac\colon \Aut(\An) \to \kst),
$$
see Section~\ref{Aut-An.subsec}.
\begin{remark}  \label{roots.rem}
The \itind{root groups} $\alpha_{ij}(s)=(x_{1},\ldots,x_{i}+sx_{j},\ldots,x_{n})\subseteq \GL(n)$, $i\neq j$, are modifications of the translations $\mu(s)=(x_{1},\ldots,x_{i}+s,\ldots,x_{n})$. This implies that the group $\UUU$ of the proposition above contains $\SL(n)$ and the unipotent triangular subgroup $\Ju(n)$, hence all tame automorphisms with jacobian determinant equal to 1. Thus, for $n=2$ we have $\UUU = \SAutA{2}$.\idx{$\alpha_{ij}$}
\end{remark}

\begin{question} \label{SAut(An).ques} 
Do we have $\UUU = \SAutA{n}$ for all $n\geq 2$, or at least $\overline{\UUU} = \SAutA{n}$?
\end{question}

\ps
\subsection{Approximation property}

Denote by $\mm:=(x_{1},\ldots,x_{n})\subseteq \kk[x_{1},\ldots,x_{n}]$ the homogeneous maximal ideal. 
For $\g,\h \in\EndA{n}$, $\g=(g_{1},\ldots,g_{n})$ and $\h=(h_{1},\ldots,h_{n})$, we define
$$
\g \equiv \h \pmod{\mm^{d}} \ :\iff \   g_{i}-h_{i}\in\mm^{d} \text{ for all }i,
$$ 
i.e., if the homogeneous terms of  $\g$ and $\h$ of degree $<d$ coincide. If $\vbold  \equiv \wbold \pmod{\mm^{d}}$ and $\vbold (0) = 0$, then one easily sees that $\g \cdot \vbold \equiv \h \cdot \wbold \pmod{\mm^{d}}$.
Define 
$$
\AutA{n}^{(d)}:=\{\g=(g_{1},\ldots,g_{n})\in\AutA{n} \mid g_{i}\in x_{i}+\mm^{d+1}\}.
$$
Thus $\AutA{n}^{(0)}=\{\g\in\AutA{n}\mid\g(0) = 0\}$ and $\AutA{n}^{(1)}=\{\g\in\AutA{n}\mid \g\equiv\id\pmod{\mm^{2}}\}$.
Then, for $\g\in\End(\A{n})$ and $\h \in \AutA{n}$, we have 
$$
\g \equiv \h \pmod{\mm^{d+1}} \quad \iff \quad \g\cdot\h^{-1}\in\AutA{n}^{(d)}.
$$
Recall that the \itind{tame automorphism group} of $\AA^n$ is defined as the subgroup generated by the {\it affine transformations} and the {\it triangular automorphisms} (see Section~\ref{Aut-An.subsec}):\idx{$\Tame(\An)$}
$$
\Tame(\An) := \langle\Aff(n),\JJJ(n)\rangle.  
$$ 
The following ``approximation result'' is due to \name{Anick} \cite{An1983Limits-of-tame-aut}.\idx{approximation}

\begin{proposition} \label{Anick.prop}
Let $\f \in\AutA{n}$. For any $d\in\NN$ there is a tame automorphism $\h^{(d)}\in\AutA{n}$ such that $\f \equiv \h^{(d)} \pmod{\mm^{d+1}}$.
\end{proposition}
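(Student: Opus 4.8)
The plan is to approximate $\f$ by tame automorphisms one homogeneous degree at a time, correcting the ``error'' successively. First I would reduce to the case $\f\in\AutA{n}^{(1)}$: composing $\f$ with the affine (hence tame) automorphisms $\t_{\f(0)}^{-1}$ and $(d\f_0)^{-1}\in\GL(n)$, I may assume $\f(0)=0$ and $d\f_0=\id$, i.e. $\f\equiv\id\pmod{\mm^{2}}$. Since $\jac\f$ is a nonzero constant equal to its value at the origin, namely $\det(d\f_0)=1$, such an $\f$ automatically satisfies $\jac\f=1$. The goal then becomes to construct inductively tame automorphisms $\h_k$ with $\jac\h_k=1$ and $\f\equiv\h_k\pmod{\mm^{k+1}}$; then $\h^{(d)}:=\h_d$ is what we want. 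The base case is $\h_1:=\id$, valid because $\f\equiv\id\pmod{\mm^2}$.

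For the inductive step, suppose $\h_k$ is built. By the congruence criterion recorded before the proposition, $A:=\f\cdot\h_k^{-1}\in\AutA{n}^{(k)}$, so $A=\id+Q+(\text{terms in }\mm^{k+2})$ with $Q=(Q_1,\dots,Q_n)$, each $Q_i$ homogeneous of degree $k+1$. Expanding $\jac A=1+\Div Q+(\text{higher degree})$ and using $\jac A=\jac\f/\jac\h_k=1$, the degree-$k$ part vanishes, so $\Div Q=\sum_i \partial Q_i/\partial x_i=0$; that is, $Q$ is divergence-free. The bookkeeping point is that composition of maps in $\AutA{n}^{(k)}$ is, modulo $\mm^{k+2}$, addition of their degree-$(k+1)$ parts (the cross terms have order $\geq 2(k+1)>k+1$). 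Hence if I can produce a tame $\c_k\in\AutA{n}^{(k)}$ with $\jac\c_k=1$ and degree-$(k+1)$ part exactly $Q$, then $\h_{k+1}:=\c_k\cdot\h_k$ is tame, has Jacobian $1$, and $\f\cdot\h_{k+1}^{-1}=A\cdot\c_k^{-1}\in\AutA{n}^{(k+1)}$, completing the step.

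Everything thus reduces to the key lemma: \emph{every divergence-free $Q$ with homogeneous components of degree $k+1$ is the degree-$(k+1)$ part of a tame automorphism in $\AutA{n}^{(k)}$}. I would first note that the realizable such $Q$ form a linear subspace of the divergence-free fields (closed under addition by the composition remark, and under the rescaling $\f\mapsto\lambda^{-1}\f(\lambda\,\cdot)$, which preserves tameness and scales the degree-$(k+1)$ part), and then exhibit a spanning family. By the polynomial Poincaré lemma (contract a divergence-free field with the volume form to get a closed, hence exact, $(n-1)$-form, taking the primitive homogeneous), $\ker\Div$ in this degree is spanned by the rotational fields $R^g_{ab}:=(\partial g/\partial x_b)\,e_a-(\partial g/\partial x_a)\,e_b$ with $g$ homogeneous of degree $k+2$. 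For a monomial $g=x_a^{p}x_b^{q}m'$ with $m'$ in the remaining variables, each $R^g_{ab}$ is realized as the leading correction of the commutator of two elementary (hence tame) shears $E=(x_a+\alpha,\dots)$, $F=(x_b+\beta,\dots)$, with $\alpha=c\,x_b^{q}\mu$ and $\beta=c'\,x_a^{p}\nu$ where $\mu\nu=m'$ and $cc'=-1$: the degree-$(s+t-1)$ leading term of such a commutator is the Lie bracket $[\alpha\,\partial_{x_a},\beta\,\partial_{x_b}]$, which equals $R^g_{ab}$ in degree $k+1$.

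The main obstacle I expect is exactly this last lemma. Two things need care: the clean statement that the leading term of a commutator of near-identity maps is the corresponding Lie bracket in the correct degree (with the minor case distinctions $p=0$, $q=0$, where a single shear already suffices, and $s=1$ or $t=1$, where one shear is linear and the commutator formula must be re-examined), and the verification that the rotational fields exhaust $\ker\Div$ in each degree via the homogeneous Poincaré lemma. By contrast, the reduction to $\AutA{n}^{(1)}$, the Jacobian computation forcing $\Div Q=0$, and the ``composition equals addition of leading parts'' bookkeeping are all routine.
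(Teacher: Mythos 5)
Your proposal is correct in its overall architecture and, up to the key lemma, coincides with the paper's proof (which follows Anick's original argument): the same induction on the order of contact with the identity, the same bookkeeping that composition of maps in $\AutA{n}^{(k)}$ adds their degree-$(k+1)$ parts, and the same Jacobian computation showing that the leading part of an automorphism $\equiv\id\pmod{\mm^{d}}$ is divergence-free. Where you genuinely diverge is in proving that \emph{every} divergence-free homogeneous field occurs as the leading part of a tame automorphism. The paper settles this in one stroke of representation theory: it records in addition that $(g\cdot\f\cdot g^{-1})_{d}=g\,\f_{d}$ for $g\in\GL(n)$, so the realizable leading parts of tame automorphisms form a $\GL(n)$-submodule $V_{t}$ of $\Ker\Div$ in each degree; by Pieri's formula (Lemma~\ref{ker-Div.lem}) that kernel is an irreducible $\SL_{n}$-module, and since $V_{t}\neq 0$ (triangular shears), $V_{t}=\Ker\Div$. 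Your route --- the polynomial Poincar\'e lemma to span $\Ker\Div$ by the rotational fields $R^{g}_{ab}$, then realization of each $R^{g}_{ab}$ as the leading term of a commutator of shears --- is more constructive and avoids Pieri, at the price of the case analysis you flag.

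That flagged case is a real issue, not a formality: when one shear is linear, say $\beta=c'x_{a}$ (so $t=1$), the derivation $\beta\partial_{b}$ has degree zero as a graded derivation, all iterated brackets $(\ad_{\beta\partial_{b}})^{j}(\alpha\partial_{a})$ land in the same coefficient degree $k+1$, and the degree-$(k+1)$ part of the commutator is $\bigl(e^{\ad_{\beta\partial_{b}}}-1\bigr)(\alpha\partial_{a})$, not the single bracket; the extra terms do not vanish in general. Two repairs are available. For $k\geq 2$ one can usually choose the splitting $\mu\nu=m'$ so that $s,t\geq 2$, in which case the cross terms have degree $\geq k+2$ and your bracket formula is exact as stated. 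Where a linear factor is unavoidable, exploit the linearity of the realizable subspace $W$ that you already established: the degree-$(k+1)$ part of the commutator is polynomial in the scalar $c'$, its coefficient of $c'$ in first order is exactly $\pm R^{g}_{ab}$ (the higher iterated brackets carry higher powers of $c'$), so a Vandermonde argument over several values of $c'$ places each graded piece, in particular $R^{g}_{ab}$, in $W$. Alternatively, observe that $W$ is stable under conjugation by all of $\GL(n)$ --- one line beyond your rescaling remark --- after which the irreducibility of $\Ker\Div$ finishes everything at once; but that is precisely the paper's proof.
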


\begin{proof}
For $d=0$, it is enough to take $\h^{(0)}$ equal to the affine part of $\f$. By induction we can assume that $\g\equiv\id \pmod{\mm^{d}}$, $d>1$. We have to show that there is a tame automorphism $\h$ such that 
$\g\equiv \h \pmod{\mm^{d+1}}$. If $\f\in\End(\A{n})$ and $\f\equiv\id \pmod{\mm^{d}}$ we denote by $\f_{d} \in (\kk[x_{1},\ldots,x_{n}]_{d})^{n}$ the homogeneous part of $\f$ of degree $d$, i.e. $\f\equiv\id+\f_{d}\pmod{\mm^{d+1}}$. 
We will identify $(\kk[x_{1},\ldots,x_{n}]_{d})^{n}$ with the $\GL(n)$-module $V:=\kk^{n}\otimes \kk[x_{1},\ldots,x_{n}]_{d}$, with the obvious action given by $g(v\otimes p) := gv \otimes g^* p$.  

If $\f,\tilde\f$ are both  $\equiv \id \pmod { \mm^{d} }$ and $t \in \kk^*$, we have
\[ (\f\cdot\tilde\f)_{d}= \f_{d}+\tilde\f_{d} \quad \text{and} \quad \left( (t \id)^{-1} \cdot \f \cdot (t \id) \right)_d =t^{d-1} \f_d.\]
Moreover, for $g \in\GL(n)$, we have $(g\cdot \f \cdot g^{-1})_{d} = g\,\f_{d}$. This shows that the homogeneous parts $\f_{d}$ of degree $d$ of the automorphisms $\f\equiv\id\pmod{\mm^{d}}$ form a $\GL(n)$-submodule $V'$ of $V$, and those corresponding to tame automorphisms form a submodule $V_{t}\subseteq V'$. We have to show that these two submodules coincide.

If an endomorphism $\f\equiv\id\pmod{\mm^{d}}$, has a constant jacobian determinant $\jac (\f)\neq 0$, 
then $\jac (\f)=1$, and the homogeneous part of degree $d$ of $\sum_{i} \frac{\partial f_{i}}{\partial x_{i}}$  vanishes. Therefore, 
$\f_{d}$ belongs to the kernel of the map $p_{d}\colon V \to \kk[x_{1},\ldots,x_{n}]_{d-1}$ given by $v\otimes f \mapsto \partial_{v}f$. This map is a $\GL(n)$-homomorphism, and its kernel is irreducible, by \name{Pieri}' formula (see Lemma~\ref{ker-Div.lem} below). Thus, $V_{t}=V'
=\Ker p_{d}$.
\end{proof}

This result has some interesting consequences. E.g. it was used by \name{Belov-Kanel} and \name{Yu}  to show that every automorphism of the ind-group $\AutA{n}$ is inner (see \cite{BeYu2012Lifting-of-the-aut}, cf. \cite{KrSt2012On-Automorphisms-o}). 

\begin{theorem}[\name{Belov-Kanel-Yu}]\label{Belov.thm}
Let $\phi$ be an automorphism of $\AutA{n}$ as an ind-group. Then $\phi$ is inner.
\end{theorem}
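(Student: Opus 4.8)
The plan is to prove that after composing $\phi$ with a suitable inner automorphism $\Int(\g)$ the resulting automorphism becomes the identity; since the inner automorphisms form a normal subgroup of $\Aut(\AutA{n})$, this shows that $\phi$ itself is inner. The crucial preliminary observation is that $\phi$, being an automorphism of ind-groups, preserves every feature of $\AutA{n}$ that can be phrased purely group-theoretically. By Lemma~\ref{JD.lem} it maps locally finite elements to locally finite elements and respects the Jordan decomposition, hence it carries tori to tori, unipotent one-parameter subgroups to unipotent one-parameter subgroups, and closed algebraic subgroups to closed algebraic subgroups of the same dimension. All the reductions below will repeatedly use this rigidity.

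First I would normalize the standard maximal torus $D(n)\simeq(\Cst)^{n}$. Its image $\phi(D(n))$ is again a torus of dimension $n=\dim\AA^{n}$, hence a torus of maximal dimension by Proposition~\ref{tori-in-AutX.prop}; since $\AA^{n}$ is smooth and therefore normal, the same proposition shows that all such tori are conjugate. Replacing $\phi$ by $\Int(\g)^{-1}\circ\phi$ for an appropriate $\g$, I may assume $\phi(D(n))=D(n)$. The induced automorphism of $D(n)$ lies in $\Autgr(D(n))=\GL_{n}(\ZZ)$, and by examining the weight decomposition of the $D(n)$-action on $\Lie\AutA{n}\subseteq\VEC(\AA^{n})$ — the root groups $\alpha_{ij}$ of Remark~\ref{roots.rem} together with the constant (weight-one) vector fields — I would show that this element of $\GL_{n}(\ZZ)$, and the permutation it induces on the roots, must be realized by the normalizer $\Norm(D(n))$, which contains the coordinate permutations $S_{n}$ and $\GL(n)$. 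A further inner adjustment from $\Norm(D(n))$ then lets me assume $\phi$ fixes $D(n)$ pointwise. Now $\phi$ commutes with $D(n)$, so it preserves the $D(n)$-isotypic structure; this pins the translation subgroup $\Tr(n)\simeq\Cplus{}^{\,n}$ and the subgroup $\GL(n)$ into controlled positions, and hence $\Aff(n)=\GL(n)\ltimes\Tr(n)$. The restriction $\phi|_{\GL(n)}$ is an abstract automorphism of $\GL(n)$ fixing $D(n)$ pointwise, so the only alternative to the identity is the transpose–inverse automorphism $A\mapsto(A^{t})^{-1}$ for $n\geq 3$ (Example~\ref{Aut.exa}); this I would exclude by noting that it is incompatible with the conjugation action of $\GL(n)$ on $\Tr(n)$ inside $\AutA{n}$, since the standard module and its dual are inequivalent. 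After one last inner correction I may therefore assume $\phi$ fixes $\Aff(n)$ pointwise.

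It remains to upgrade ``fixes $\Aff(n)$ pointwise'' to ``fixes everything''. Conjugation by the scalars $(t\,\id)\in\GL(n)$ is fixed by $\phi$, so $\phi$ preserves the filtration of automorphisms fixing the origin by the ideals $\mm^{d}$, i.e.\ the subgroups $\AutA{n}^{(d)}$, and it acts on each graded piece, which I identify with the $\GL(n)$-module $V_{d}=\kk^{n}\otimes\kk[x_{1},\dots,x_{n}]_{d}$. Because $\phi$ fixes $\GL(n)$ it acts on $V_{d}$ by a $\GL(n)$-equivariant map, and because it also fixes the translations $\t_{v}$ the induced operators must commute with the polarization maps appearing in the proof of Proposition~\ref{Anick.prop}; the Pieri-type irreducibility used there then forces $\phi$ to act as the identity on each $V_{d}$, hence on $\Tame(\An)$. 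Finally, every $\f\in\AutA{n}$ can be matched to a tame automorphism modulo $\mm^{d+1}$ for all $d$ by Proposition~\ref{Anick.prop}; combining this approximation with the continuity of $\phi$, the connectedness of $\AutA{n}$, and the fact that $\phi$ is determined by its differential $d\phi_{e}$ (Proposition~\ref{phi-dphi.prop} and Corollary~\ref{AutG-into-AutLie.cor}), I conclude $\phi=\id$, so the original $\phi$ is inner.

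The hard part will be the rigidity of $\phi$ on $\Aff(n)$: establishing that $\phi$ genuinely preserves the translation subgroup $\Tr(n)$ rather than some other commutative unipotent subgroup, and definitively excluding the transpose–inverse automorphism of $\GL(n)$. This step cannot be carried out from the abstract group structure alone; it requires a precise, conjugation-invariant characterization of translations among all $\Cplus$-elements and careful bookkeeping of the $D(n)$-weights, which is exactly where the special geometry of $\AA^{n}$ enters. The concluding triviality on the graded pieces $V_{d}$ is the second delicate point, since $\GL(n)$-equivariance by itself only determines the action up to scalars on each irreducible constituent, and it is the simultaneous compatibility with the fixed translations that removes this ambiguity.
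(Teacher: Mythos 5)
Your proof has the same skeleton as the paper's in its second half, but it diverges at the first and decisive step, and it is exactly there that it has a genuine gap. The paper does not prove that $\phi$ can be normalized to fix the tame automorphisms: it quotes this as the main theorem of \name{Kraft--Stampfli} \cite{KrSt2012On-Automorphisms-o}, which produces $\g$ with $\g\cdot\phi(\h)\cdot\g^{-1}=\h$ for all $\h\in\Tame(\An)$. You instead attempt to re-derive this normalization, and your sketch stops precisely where the real work begins, as you concede yourself in your final paragraph. Concretely: (i) after arranging $\phi(D(n))=D(n)$ (this part is fine, via Proposition~\ref{tori-in-AutX.prop}), you claim the induced element of $\Autgr(D(n))=\GL_{n}(\ZZ)$ is realized by $\Norm(D(n))$, "which contains the coordinate permutations $S_{n}$ and $\GL(n)$" --- but $\GL(n)$ does \emph{not} normalize the diagonal torus; in fact $\Norm(D(n))=D(n)\rtimes S_{n}$ (a normalizing automorphism preserves the open $D(n)$-orbit $(\kst)^{n}$, hence is monomial, and invertibility on $\An$ forces a scaled coordinate permutation), so one must genuinely prove, from the $D(n)$-weight set of $\Lie\AutA{n}$, that only permutation matrices occur, excluding for instance the inversion $t\mapsto t^{-1}$; this combinatorial lemma is nowhere established. (ii) The conjugation-invariant identification of $\Tr(n)$ is announced but never given; it is available --- the $\kplus$-subgroups normalized by $D(n)$ with character $e_{i}$ are spanned by vector fields $x^{a}\partial_{j}$ of weight $e_{j}-a=e_{i}$, forcing $j=i$, $a=0$, i.e.\ the coordinate translations --- but you do not isolate it. (iii) Your exclusion of the transpose-inverse automorphism of $\GL(n)$ presupposes $\phi(\Tr(n))=\Tr(n)$, i.e.\ point (ii), so as written it is circular. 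Until (i)--(iii) are supplied, your first two paragraphs are a roadmap re-deriving the content of \cite{KrSt2012On-Automorphisms-o}, not a proof of it.

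The concluding step also needs repair, though here the fix is easy. The inference "approximation modulo $\mm^{d+1}$ plus continuity of $\phi$ plus connectedness" is not valid: $\g\equiv\h^{(d)}\pmod{\mm^{d+1}}$ for all $d$ is an adic statement, not convergence in the ind-topology, so continuity gives nothing. The paper's mechanism is purely algebraic: since $\g\equiv\h\pmod{\mm^{d+1}}$ iff $\g\cdot\h^{-1}\in\AutA{n}^{(d)}$, and since $\phi$ fixes the tame approximants $\h^{(d)}$ and preserves each $\AutA{n}^{(d)}$ --- which the paper proves using the vanishing order $\sigma(\lambda_{\g})$ of $t\mapsto(t^{-1}\id)\cdot\g\cdot(t\id)$, a point you gloss over --- one gets $\phi(\g)\equiv\g\pmod{\mm^{d+1}}$ for all $d$, hence $\phi(\g)=\g$. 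Alternatively your appeal to Proposition~\ref{phi-dphi.prop} can be made rigorous and even replaces Anick entirely: once $\phi$ fixes $\Tame(\An)$ pointwise it fixes $\overline{\Tame(\An)}$ pointwise, so $d\phi_{e}=\id$ on $\Lie\overline{\Tame(\An)}=\Lie\AutA{n}$ (Proposition~\ref{LieAlgAutAn.prop}), and connectedness of $\AutA{n}$ (Proposition~\ref{k*-connected.prop}) yields $\phi=\id$. Your graded-piece Schur argument on the modules of Lemma~\ref{ker-Div.lem} is likewise salvageable (the scalars are pinned by commutators with the fixed translations), but both endings rest on the normalization step that remains unproven.
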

\begin{proof}
It is shown in \cite{KrSt2012On-Automorphisms-o} that for every automorphism $\phi$ of $\AutA{n}$ there is a $\g\in\AutA{n}$ such that $\g\cdot\phi(\h)\cdot\g^{-1}=\h$ for all tame automorphisms $\h$. Thus we can assume that $\phi$ is the identity on the tame automorphisms.

Next we claim that $\phi(\AutA{n}^{(d)}) = \AutA{n}^{(d)}$ for all $d\in\NN$. For a non-constant morphism $\mu\colon \kk \to X$ we have
$\mu^{*}(\mm_{\mu(0)}) = (t^{k})$ for some $k\geq 1$; we denote the exponent $k$ by $\sigma(\mu)$. For any $\g\in\AutA{n}$ consider the map $\lambda_{\g}\colon\Cst \to \AutA{n}$, $\lambda_{\g}(t):=(t^{-1}\id)\cdot\g\cdot (t\id)$. Then the subgroups $\AutA{n}^{(d)}$ for $d>0$ have the following description:
$$
\AutA{n}^{(d)} = \{\g\in\AutA{n} \mid  \lim_{t\to 0} \lambda_{\g}=\id \text{ and }\sigma(\lambda_{\g})\geq d\}
$$
Now the $\phi$-invariance of these groups follows. In fact, both conditions, $\lim_{t\to 0} \lambda_{\g}=\id$  and $\sigma(\lambda_{\g})\geq d$, are  invariant under $\phi$ since $\phi\circ\lambda_{\g}=\lambda_{\phi(\g)}$ and $\sigma(\lambda_{\phi(\g)})=\sigma(\lambda_{\g})$. 

To finish the proof we use Proposition~\ref{Anick.prop} which implies that for any $\g\in\AutA{n}$  we have $\g\equiv\phi(\g) \pmod {\mm^{d}}$ for all $d\in\NN$. Hence $\phi=\id$.
\end{proof}  

\ps
\subsection{The Lie algebra of \texorpdfstring{$\Aut(\An)$}{Aut(An)} and divergence} \label{AutAn-VF.subsec}

We will now determine the Lie algebras $\Lie \Aut(\An)$ and $\Lie \SAut(\An)$. Recall that $\SAut(\An)$ is the closed normal subgroup of $\Aut(\An)$ defined by 
$$
\SAut(\An):= \{\phi\in\Aut(\An) \mid \jac(\phi)=1\} = \Ker(\jac\colon \Aut(\An) \to \kst).
$$
The \itind{divergence} of a vector field $\delta = \sum_{i=1}^{n}p_{i}\frac{\partial}{\partial x_{i}}$ on $\A{n}$ is defined by $\Div \delta := \sum_{i=1}^{n}\frac{\partial p_{i}}{\partial x_{i}}$\idx{$\Div\delta$}. We have a canonical identification (see Remark~\ref{canonical-iso-VF.rem})
$$
\VEC (\AA^n) \simto \kk^{n}\otimes \kk[x_{1},\ldots,x_{n}] = \bigoplus_{m=0}^{\infty} \kk^{n}\otimes \kk[x_{1},\ldots,x_{n}]_{m}.
$$
given by $\sum_{i}f_{i}\ddxi \mapsto \sum_{i}e_{i}\otimes f_{i}$, and this isomorphism is equivariant with respect to the linear action of $\GL(n)$.
The divergence induces  surjective linear and $\GL(n)$-equivariant maps
\[ 
\Div_{m}\colon \kk^{n} \otimes \kk[x_{1},\ldots,x_{n}]_{m} \onto \kk[x_{1},\ldots,x_{n}]_{m-1}, 
\quad v\otimes f \mapsto \partial_{v}f=\sum_{i}a_{i}\frac{\partial f}{\partial x_{i}}
\]
where $v = (a_{1},\ldots,a_{n})\in \kk^{n}$.
Denoting its kernel by $M_m$, we get $\Ker \Div =\bigoplus_{m=0}^{\infty} M_{m}$, and we obtain
the following exact sequence of $\GLn$-modules:
\[
0 \to M_{m} \into  \kk^{n} \otimes \kk[x_{1},\ldots,x_{n}]_{m} \overset{\Div_{m}}{\longrightarrow}  \kk[x_{1},\ldots,x_{n}]_{m-1} \to 0.
\]
The next result is well known (see \name{Pieri}'s formula in \cite[Chap. 9, section 10.2]{Pr2007Lie-groups}).

\begin{lemma} 
\be
\item The $\SL_n$-modules $M_m$, $m \geq 0$, are simple and pairwise non-isomorphic.
\item The $\SL_n$-modules $\kk [x_1, \ldots,x_n]_{m}$, $m \geq 0$, are simple and pairwise non-isomorphic.
\ee
\end{lemma}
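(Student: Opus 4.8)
The statement to prove asserts that the $\SL_n$-modules $M_m$ are simple and pairwise non-isomorphic, and likewise that the modules $\kk[x_1,\ldots,x_n]_m$ of homogeneous polynomials of degree $m$ are simple and pairwise non-isomorphic. The plan is to invoke the standard representation theory of $\GL_n$ and $\SL_n$ via highest weight theory, which is exactly the content of \name{Pieri}'s formula cited in the reference.

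First I would handle part (2), the space $\kk[x_1,\ldots,x_n]_m$, which is the more elementary of the two. This is the $m$-th symmetric power $S^m(\kk^n)$ of the standard representation, up to the usual duality convention. I would recall that $S^m(\kk^n)$ is the irreducible $\GL_n$-module with highest weight $m\varpi_1$ (equivalently, highest weight $(m,0,\ldots,0)$), with highest weight vector $x_1^m$; simplicity follows because a $\GL_n$-stable (equivalently $\SL_n$-stable, since $\GL_n = \kst\cdot\SL_n$ and the center acts by scalars) subspace must contain a highest weight vector, and repeated application of the raising and lowering root operators $\partial_{x_i}$ and $x_i\partial_{x_j}$ generates all of $S^m$ from $x_1^m$. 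Pairwise non-isomorphism is immediate since the highest weights $m\varpi_1$ are distinct for distinct $m$ (and they remain distinct after restriction to $\SL_n$ because $\varpi_1$ is a nonzero fundamental weight, so $m\varpi_1\neq m'\varpi_1$ in the weight lattice of $\SL_n$ whenever $m\neq m'$).

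Next I would treat part (1). From the exact sequence of $\GL_n$-modules preceding the lemma,
\[
0 \to M_m \to \kk^n\otimes\kk[x_1,\ldots,x_n]_m \xrightarrow{\Div_m} \kk[x_1,\ldots,x_n]_{m-1}\to 0,
\]
I would identify the middle term $\kk^n\otimes S^m(\kk^n)$ via the Pieri rule: its decomposition into $\GL_n$-irreducibles is $S^{m+1}(\kk^n)\oplus W_m$, where $W_m$ is the irreducible of highest weight $(m,1,0,\ldots,0)$ (the ``hook'' partition). The divergence map $\Div_m$ is a nonzero $\GL_n$-equivariant surjection onto $S^{m-1}(\kk^n)$; however, one must account for the weight/determinant twist, since $\Div_m$ lowers polynomial degree by one. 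Tracking the $\kst$-action (the determinant character) shows that the $S^{m-1}$-type constituent of the middle term is precisely the summand appearing in the Pieri decomposition, and the kernel $M_m$ is therefore the remaining summand, namely the irreducible $W_m$ of highest weight $(m,1,0,\ldots,0)$. Simplicity of $M_m$ is then automatic. For pairwise non-isomorphism I would compare the $\SL_n$-highest weights: the weight of $W_m$ restricts to $(m-1)\varpi_1 + \varpi_2$ on $\SL_n$, and these are visibly distinct for distinct $m$.

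The main obstacle I anticipate is bookkeeping with conventions rather than any genuine difficulty: one must be careful with the duality between $\kk^n$ and its dual (polynomials are functions, so $\kk[x_1,\ldots,x_n]_m \cong S^m((\kk^n)^*)$ in the natural reading) and with the determinant twist introduced by the degree-lowering map $\Div_m$, to be sure that the Pieri decomposition is applied to the correct tensor product and that the constituent split off by $\Div_m$ matches $\kk[x_1,\ldots,x_n]_{m-1}$ as a genuine $\GL_n$-module. Once these identifications are pinned down, the simplicity and the distinctness of highest weights give both assertions directly, and for the purposes of the paper it suffices to cite \name{Pieri}'s formula as in \cite[Chap.~9, section 10.2]{Pr2007Lie-groups} for the decomposition of $\kk^n\otimes S^m(\kk^n)$ into two distinct simple summands, one of which is $\Ker\Div_m = M_m$.
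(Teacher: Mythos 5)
Your overall strategy coincides with the paper's, which offers no proof of this lemma at all beyond citing Pieri's formula in Procesi: decompose the middle term of the exact sequence into exactly two simple summands, note that $\Div_m$ is a nonzero equivariant surjection onto one of them, conclude that the kernel $M_m$ is the other (hence simple), and distinguish the $M_m$ and the $\kk[x_1,\ldots,x_n]_m$ by their highest weights. Part (2) is fine as you argue it (up to the harmless dual convention, $\kk[x_1,\ldots,x_n]_m\simeq S^m(V^*)$ with $V=\kk^n$, of highest weight $m\varpi_{n-1}$ rather than $m\varpi_1$; distinctness in $m$ holds either way).

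In part (1), however, the duality bookkeeping you rightly flagged does go wrong and yields a false intermediate claim. The degree-$m$ vector fields form $V\otimes S^m(V^*)$ (the $x_i$ lie in $V^*$, the $\partial_{x_i}$ in $V$), and the divergence contracts $V$ against $V^*$, so Pieri must be applied to this mixed module, not to $\kk^n\otimes S^m(\kk^n)$. As $\SL_n$-modules, $V\simeq \Lambda^{n-1}(V^*)$, and $\Lambda^{n-1}(V^*)\otimes S^m(V^*)$ has the two constituents of shapes $(m+1,1^{n-2})$ and $(m,1^{n-1})$ in $V^*$-weights, the latter being $S^{m-1}(V^*)$ up to a determinant twist. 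Hence $M_m$ has $\SL_n$-highest weight $\varpi_1+m\varpi_{n-1}$ (equivalently, its dual has $m\varpi_1+\varpi_{n-1}$), not $(m-1)\varpi_1+\varpi_2$; no determinant twist can reconcile the two, since $\det$ is trivial on $\SL_n$. A sanity check: for $m=1$, $M_1$ is the space of traceless linear vector fields, i.e. the adjoint module $\sL_n$ of highest weight $\varpi_1+\varpi_{n-1}$, whereas your $W_1$ would be $\Lambda^2\kk^n$ of highest weight $\varpi_2$. The repair is harmless: the weights $\varpi_1+m\varpi_{n-1}$ are still pairwise distinct, and together with the case $m=0$ (which your Pieri argument does not cover, but where $M_0\simeq \kk^n$ is visibly simple and of yet another weight) the stated conclusions follow. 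You should also record that everything implicitly assumes $n\geq 2$, where for $n=2$ the formula degenerates to $M_m\simeq S^{m+1}(\kk^2)$ and the lemma fails for $n=1$.
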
 \label{ker-Div.lem}
Now we can give the description of the Lie algebras $\Lie\Aut(\An)$ and $\Lie\SAut(\An)$. We use the canonical injection  $\xi\colon\Lie\Aut(X) \to \VEC(X)$, $A\mapsto \xi_{A}$, see Definition~\ref{xi_A.def} and Proposition~\ref{Liealg-VF.prop}.

\begin{proposition}  \label{LieAlgAutAn.prop}
The map $\xi$ induces the following anti-isomorphisms of Lie algebras:
\be
\item $\Lie\Aut(\An) \simto \VEC^{c}(\An):=\{\delta \in \VEC (\An) \mid \Div\delta \in \kk\}$;
\item $\Lie \SAut(\An) \simto \VEC^{0}(\An):=\{\delta \in \VEC (\An) \mid \Div \delta  = 0 \}$.
\ee
Moreover, $\Lie\overline{\Tame(\An)} = \Lie\Aut(\An)$.
\end{proposition}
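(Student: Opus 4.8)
The plan is to work entirely through the identification $\xi\colon T_{\id}\End(\An)\simto\VEC(\An)$, which for $\An$ is the isomorphism sending $H=(h_1,\ldots,h_n)$ to $\sum_i h_i\partial_{x_i}$ (Proposition~\ref{End(X)-and-Vec(X).prop}), and to recall from Proposition~\ref{Liealg-VF.prop} that $\xi$ is an injective anti-homomorphism of Lie algebras on $\Lie\Aut(\An)$. Since $\Aut(\An)$ is locally closed in $\End(\An)$ (Theorem~\ref{AutX-locally-closed-in-EndX.thm}) and $\SAut(\An)$ is a closed subgroup, the whole statement reduces to pinning down the two subspaces $\xi(T_{\id}\Aut(\An))$ and $\xi(T_{\id}\SAut(\An))$ of $\VEC(\An)$; each of (1) and (2) then splits into two inclusions. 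Once the images are identified as Lie subalgebras, the injective anti-homomorphism $\xi$ becomes an anti-isomorphism onto them.

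For the inclusions $\subseteq$, I would introduce the ind-morphism $\mathcal J\colon\End(\An)\to\OOO(\An)$, $(f_1,\ldots,f_n)\mapsto\det(\partial f_i/\partial x_j)$, which is a morphism because it is polynomial in the coefficients. The linear-algebra expansion $\det(I+tA)=1+t\,\tr A+O(t^2)$ gives $d\mathcal J_{\id}(H)=\Div(\xi_H)$ as an element of $\OOO(\An)=T_1\OOO(\An)$. The restriction of $\mathcal J$ to $\Aut(\An)$ is exactly the character $\jac$, which takes values in the constants $\kk\subseteq\OOO(\An)$; hence its differential lands in $T_1\kk=\kk$, forcing $\Div(\xi_H)\in\kk$ for every $H\in T_{\id}\Aut(\An)$, i.e. $\xi(\Lie\Aut(\An))\subseteq\VEC^{c}(\An)$. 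On $\SAut(\An)=\Ker\jac$ the map $\mathcal J$ is identically $1$, so its differential vanishes and $\Div(\xi_H)=0$, giving $\xi(\Lie\SAut(\An))\subseteq\VEC^{0}(\An)$.

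The reverse inclusions are the heart of the argument, and here I would use the $\GL(n)$-module structure. Since $\SAut(\An)$ is normal in $\Aut(\An)$ and both are normalized by $\GL(n)$ under conjugation, the images $\xi(\Lie\Aut(\An))$ and $\xi(\Lie\SAut(\An))$ are $\GL(n)$-submodules of $\VEC(\An)=\bigoplus_m \kk^n\otimes\kk[x_1,\ldots,x_n]_m$ for the standard (adjoint~$=$~geometric) action, using the identification of the two actions from Section~\ref{tangent-rep-fixed-points.subsec}. Writing $\VEC^{0}=\bigoplus_m M_m$ with $M_m:=\Ker\Div_m$ simple and pairwise non-isomorphic as $\SL_n$-modules (Lemma~\ref{ker-Div.lem}), any submodule of $\VEC^{0}$ is a sum of a subfamily of the $M_m$. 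For $n\ge 2$ and each $m$ the homogeneous field $x_2^m\partial_{x_1}$ is locally nilpotent and divergence-free, so it lies in $M_m$ and integrates to a $\kplus$-action inside $\SAut(\An)$ (Corollary~\ref{exp-for-Aut(X).cor}, together with the fact that a $\kplus$-action has trivial jacobian character); thus it is a nonzero element of $M_m\cap\xi(\Lie\SAut(\An))$, whence $M_m\subseteq\xi(\Lie\SAut(\An))$ by simplicity, and $\xi(\Lie\SAut(\An))=\VEC^{0}$, proving (2). For (1) I add the Euler field $E:=\sum_i x_i\partial_{x_i}=\xi(d\lambda_{\id}(1))$, where $\lambda(t)=(tx_1,\ldots,tx_n)\in\GL(n)$; it has constant divergence $n\neq 0$ in characteristic zero, and since $\VEC^{c}=\VEC^{0}\oplus\kk E$ this yields $\xi(\Lie\Aut(\An))=\VEC^{c}$ (the case $n=1$, where $\Aut(\Aone)=\kst\ltimes\kplus$, is checked directly).

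Finally, the equality $\Lie\overline{\Tame(\An)}=\Lie\Aut(\An)$ comes at essentially no extra cost, because the one-parameter subgroups used above are tame: $\exp(s\,x_2^m\partial_{x_1})=(x_1+sx_2^m,x_2,\ldots,x_n)$ is triangular (de Jonquières) and $\lambda$ lies in $\GL(n)\subseteq\Aff(n)$, so both are contained in $\Tame(\An)\subseteq\overline{\Tame(\An)}$, and their tangent vectors lie in $\Lie\overline{\Tame(\An)}$. As $\overline{\Tame(\An)}$ is again $\GL(n)$-stable under conjugation, $\Lie\overline{\Tame(\An)}$ is a $\GL(n)$-submodule containing every $M_m$ and $E$, hence all of $\VEC^{c}=\Lie\Aut(\An)$; the reverse inclusion is clear. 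I expect the main obstacle to be these reverse inclusions — specifically, verifying cleanly that the images $\xi(\Lie\Aut(\An))$ and $\xi(\Lie\SAut(\An))$ are honest $\GL(n)$-submodules (via the adjoint/geometric comparison) so that the simplicity of the $M_m$ can be invoked; the $\subseteq$ inclusions via $\mathcal J$ and the tame refinement are comparatively routine.
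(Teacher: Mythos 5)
Your proof is correct and follows essentially the same route as the paper's: the inclusions $\subseteq$ via the computation $d\jac_{\id}=\Div$ on $T_{\id}\End(\An)$, and the reverse inclusions via the $\GL(n)$-equivariant decomposition $\VEC^{0}(\An)=\bigoplus_m M_m$ into the simple pairwise non-isomorphic modules of Lemma~\ref{ker-Div.lem}, with tame (triangular and linear) one-parameter subgroups supplying nonzero elements in every degree and the Euler field accounting for $\VEC^{c}(\An)=\VEC^{0}(\An)\oplus\kk\bigl(\sum_i x_i\partial_{x_i}\bigr)$. The only cosmetic difference is that the paper works with the Lie subalgebra $L$ generated by $\Lie\Aff(n)$ and $\Lie\JJJ(n)$, whose $\GL(n)$-stability yields both the reverse inclusions and the statement $\Lie\overline{\Tame(\An)}=\Lie\Aut(\An)$ in one stroke, whereas you obtain $\GL(n)$-stability of the full images $\xi(\Lie\Aut(\An))$ and $\xi(\Lie\SAut(\An))$ from $\Ad$-invariance and normality and then rerun the same argument for $\overline{\Tame(\An)}$.
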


\begin{proof}
(1)
By definition, an endomorphism $\f = (f_{1},\ldots,f_{n}) \colon \An \to \An$ is {\it \'etale}\idx{et@\'etale} if its \itind{jacobian determinant} $\jac(\f ):=\det\left(\frac{\partial f_{i}}{\partial x_{j}}\right)_{i,j}$ is a nonzero constant. Denote by $\Et(\An)$ the semigroup of \'etale endomorphisms. We have the following inclusions
$$
\Aut(\An) \underset{\text{\it \tiny closed}}{\subseteq} \Et(\An) \underset{\text{\it \tiny closed}}{\subseteq} \Dom(\An) 
\underset{\text{\it \tiny open}}{\subseteq} \End(\An).
$$
In fact, $\Dom(\An) = \jac^{-1}(k[x_{1},\ldots,x_{n}]\setminus \{0\})$ and $\Et(\An)=\jac^{-1}(\kk\setminus\{0\})$, and we have already seen that $\Aut(\An)$ is closed in $\Dom(\An)$ (Theorem~\ref{AutX-locally-closed-in-EndX.thm}). We get
$$
\Lie\Aut(\An) \subseteq T_{\id}\Et(\An) \subseteq T_{\id} \Dom(\An) = T_{\id} \End(\An) = \End(\An).
$$
The jacobian map $\jac\colon \End(\An) \to \kk[x_{1},\ldots,x_{n}]$ is an ind-morphism, and we have
$$
\jac(\id+\eps g) = 1 + \eps \sum_{i}\ab{g_{i}}{x_{i}}\mod \eps^{2}.
$$
This shows that the differential of $\jac$  is the divergence:
$$
d\jac_{\id}=\Div\colon T_{id}\End(\An) = \End(\An) \to T_{1}\kk[x_{1},\ldots,x_{n}] = \kk[x_{1},\ldots,x_{n}],
$$
hence $\xi(\Lie\Aut(\An)) \subseteq \VEC^{c}(\An)$ and $\xi(\Lie\SAut(\An))\subseteq \VEC^{0}(\An)$.

For the other inclusion we consider the Lie subalgebra $L \subseteq \Lie \Aut(\An)$ generated by the Lie algebras $\Lie\Aff(n)$ and $\Lie\JJJ(n)$ of the affine and the \name{de Jonqui\`eres} group and show that $\xi(L) = \VEC^{c}(\An)$. Since $L\subseteq \overline{\Lie\Tame(\An)}$ this proves also the last claim.

In fact, $\xi(L) \subseteq \VEC^{c}(\An)$ is stable under $\GLn$ and  contains nonzero elements in every degree.  Hence,  $\xi(L) \supseteq \Ker\Div=\bigoplus_{m\geq 0}M_{m}$ by the previous Lemma~\ref{ker-Div.lem}, and the claim follows, because $\VEC^{c}(\An)=\Ker\Div\oplus \kk(\sum_{i}x_{i}\frac{\partial}{\partial x_{i}})$, and 
$\kk(\sum_{i}x_{i}\frac{\partial}{\partial x_{i}}) = \Lie(\kst\id)$.
\ps
(2)
As in (1) let $L'$ be the Lie subalgebra of $\Lie \SAut(\An)$ generated by the Lie algebras of $\Aff(n)\cap\SAut(\An)$ and of $\JJJ(n) \cap\SAut(\An)$. Then $\xi(L')$ is stable under $\SLn$ and contains nonzero elements in every degree, hence one gets as above that 
$\xi(L')$ contains $\bigoplus_{m}M_{m}=\VEC^{0}(\An)$.
\end{proof}

\begin{remark}  \name{Kraft} and \name{Regeta} recently showed that the automorphism groups of the Lie algebras $\VEC(\An)$, $\VEC^{d}(\An)$ and $\VEC^{0}(\An)$ are all canonically isomorphic to $\Aut(\An)$, see \cite{KrRe2013A-Note-on-the-auto}. Using this result, one gets a different proof of Theorem~\ref{Belov.thm} above and a generalization to $\SAut(\An)$, see \cite{Kr2017Automorphism-group}. 
\end{remark}

\begin{corollary} 
If $\delta \in \VEC (\AA^n) $ is locally nilpotent, then $\Div(\delta)=0$, i.e. $\delta \in \VEC^{0}(\An)$.
\end{corollary}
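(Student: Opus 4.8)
The plan is to integrate the locally nilpotent vector field $\delta$ to a $\kplus$-action and then use the jacobian character. The statement claims that any locally nilpotent $\delta \in \VEC(\AA^n)$ has vanishing divergence, and the cleanest route exploits the anti-isomorphism $\xi \colon \Lie\Aut(\AA^n) \simto \VEC^c(\AA^n)$ together with the fact that $\SAut(\AA^n) = \Ker(\jac)$. First I would recall from Lemma~\ref{exp-for-functions.lem} that a locally nilpotent vector field $\delta$ corresponds to a $\kplus$-action $\lambda \colon \kplus \to \Aut(\AA^n)$ with $\delta = \delta_\lambda = \xi(d\lambda_0(1))$. In particular $\delta$ lies in the image $\xi(\Lie\Aut(\AA^n)) = \VEC^c(\AA^n)$, so by Proposition~\ref{LieAlgAutAn.prop} we already know $\Div\delta \in \kk$ is a constant; the only thing left is to show this constant is zero.

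The key observation is that the image of $\lambda$ is a unipotent subgroup, hence lands in $\SAut(\AA^n)$. Concretely, $\lambda(s)$ is a unipotent automorphism of $\AA^n$ for every $s \in \kplus$, and the jacobian determinant $\jac(\lambda(s))$ must be $1$: indeed $\jac \colon \Aut(\AA^n) \to \kst$ is a character (a homomorphism of ind-groups), so its restriction to the connected group $\overline{\langle\lambda(s)\rangle} \simeq \kplus$ is a homomorphism $\kplus \to \kst$, which is necessarily trivial since $\kplus$ has no nontrivial characters. Therefore $\lambda$ factors through $\SAut(\AA^n) = \Ker\jac$, i.e. $d\lambda_0(1) \in \Lie\SAut(\AA^n)$.

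It then remains to apply the second part of Proposition~\ref{LieAlgAutAn.prop}, which identifies $\xi(\Lie\SAut(\AA^n)) = \VEC^0(\AA^n) = \{\delta \mid \Div\delta = 0\}$. Since $\delta = \xi(d\lambda_0(1))$ and $d\lambda_0(1) \in \Lie\SAut(\AA^n)$, we conclude $\delta \in \VEC^0(\AA^n)$, that is, $\Div\delta = 0$, as desired. An even more elementary alternative, avoiding the integration entirely, is to compute the divergence directly: using Lemma~\ref{exp-basics.lem}(\ref{exponential-formula}) (or Lemma~\ref{exp-for-functions.lem}) one has $\lambda(s)^* x_i = \exp(-s\delta) x_i$, and comparing the constant jacobian of $\lambda(s)$ at $s=0$ with its $s$-derivative shows $\frac{d}{ds}\jac(\lambda(s))\big|_{s=0} = \Div\delta$; since $\jac(\lambda(s)) \equiv 1$ this derivative is zero.

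The main obstacle, such as it is, is not analytic but bookkeeping: one must be careful that the $\kplus$-action really exists and that $\delta$ is identified correctly as its infinitesimal generator under $\xi$, respecting the anti-homomorphism convention of Proposition~\ref{Liealg-VF.prop}, so that the divergence computation is performed on the correct object. The genuinely substantive input — that $\Div\delta$ is at least constant, which rules out the locally nilpotent field having, say, a divergence that is a nonconstant polynomial — is already supplied by Proposition~\ref{LieAlgAutAn.prop}, so the corollary is essentially a matter of upgrading ``constant'' to ``zero'' via the triviality of characters of $\kplus$. I expect the whole argument to be short.
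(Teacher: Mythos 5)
Your proof is correct and follows essentially the same route as the paper: integrate $\delta$ to a $\kplus$-action $\lambda$, observe that $\jac\circ\lambda\colon \kplus\to\kst$ is a character and hence trivial, so $\lambda$ lands in $\SAut(\AA^n)$, and conclude via the identification $\xi(\Lie\SAut(\AA^n))=\VEC^{0}(\AA^n)$ from Proposition~\ref{LieAlgAutAn.prop}. The direct derivative-of-the-jacobian computation you sketch as an alternative is a fine shortcut, but the main argument is exactly the paper's.
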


\begin{proof}
Let $\mu\colon \kplus \to \Aut(\An)$ be the $\kplus$-action corresponding to $\delta$. Then 
the morphism $\jac\circ\mu\colon \kk^+ \to \kk^*$ is a character, hence trivial. Therefore,  $\mu\colon \kk^+ \to \Aut ( \AA^n)$, has values in $\SAut ( \AA^n)$, and so $\delta=\xi(d\mu_{0}(1)) \in \VEC^{0}(\An)$. 
\end{proof}

\begin{corollary}\label{tame-stable.cor}
If $\Aut(\An)$ acts on an ind-variety $\VVV$,
and if a closed subvariety $\WWW \subset \VVV$ is stable under $\Tame(\An)$, then $\WWW$ is stable under $\Aut(\An)$.
\end{corollary}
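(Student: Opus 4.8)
The plan is to reduce the statement to the infinitesimal criterion for invariance, Proposition~\ref{G-stable-is-LieG-invariant.prop}, exploiting that $\Tame(\An)$ already exhausts all of $\VEC^{c}(\An)=\xi(\Lie\Aut(\An))$ at the level of Lie algebras (Proposition~\ref{LieAlgAutAn.prop}), even though it is far from dense in $\Aut(\An)$.

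First I would record the easy ``infinitesimal'' consequence of $\Tame$-stability. Since $\WWW$ is stable under the connected subgroups $\Aff(n)\subseteq\Tame(\An)$ and $\JJJ(n)\subseteq\Tame(\An)$, each orbit map $\mu_w\colon \Aff(n)\to\VVV$, $g\mapsto gw$ (and likewise for $\JJJ(n)$), factors through the closed ind-subvariety $\WWW$; differentiating at $e$ gives $\xi_A(w)=(d\mu_w)_e(A)\in T_w\WWW$ for every $w\in\WWW$ and every $A\in\Lie\Aff(n)\cup\Lie\JJJ(n)$. Thus $\WWW$ is $\xi_A$-invariant for all such $A$, where $\xi_A\in\VEC(\VVV)$ is the vector field attached to $A$.

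Next I would upgrade this to all of $\Lie\Aut(\An)$. The set $\hh:=\{A\in\Lie\Aut(\An)\mid \WWW\text{ is }\xi_A\text{-invariant}\}$ is a Lie subalgebra: the vector fields on $\VVV$ parallel to the closed subvariety $\WWW$ form a Lie subalgebra of $\VEC(\VVV)$ (if $\delta,\mu$ preserve the ideal of $\WWW$ then so does $[\delta,\mu]$, cf. Definition~\ref{delta-invariance.defn}), and $\xi\colon\Lie\Aut(\An)\to\VEC(\VVV)$ is an anti-homomorphism of Lie algebras by Proposition~\ref{Liealg-VF.prop}, so $\hh=\xi^{-1}(\text{parallel fields})$ is a subalgebra. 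By the previous step $\hh$ contains $\Lie\Aff(n)$ and $\Lie\JJJ(n)$, hence the subalgebra $L$ they generate. But $L$ is exactly the subalgebra appearing in the proof of Proposition~\ref{LieAlgAutAn.prop}: there one shows that the tautological image $\xi(L)$ equals $\VEC^{c}(\An)=\xi(\Lie\Aut(\An))$, and since that $\xi$ is injective one gets $L=\Lie\Aut(\An)$. Therefore $\hh=\Lie\Aut(\An)$, i.e. $\WWW$ is $\xi_A$-invariant for \emph{every} $A\in\Lie\Aut(\An)$.

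Finally, since $\Aut(\An)$ is connected (Proposition~\ref{k*-connected.prop}), I would invoke the Lie-algebra criterion to conclude that $\WWW$ is $\Aut(\An)$-stable. The main obstacle is that Proposition~\ref{G-stable-is-LieG-invariant.prop} is stated for an action on an affine variety, whereas $\VVV$ is only an ind-variety. I expect this to be harmless and would handle it by re-running that proof directly: write $\Aut(\An)=\bigcup_k\GGG_k$ with irreducible $\GGG_k\ni e$ (Proposition~\ref{curve-connected.prop}) and reduce to $\WWW$ irreducible via the ind-variety analogue of Proposition~\ref{xi-invariant.prop}; each image $\overline{\GGG_k\WWW}$ is an irreducible algebraic subset of $\VVV$ by Lemma~\ref{Kumar.lem}, and the tangent computation $d\phi_{(e,w)}(A,v)=\xi_A(w)+v$ together with $\xi_A(w)\in T_w\WWW$ forces $T_w\overline{\GGG_k\WWW}=T_w\WWW$ at a generic smooth point $w$, whence $\dim\overline{\GGG_k\WWW}=\dim\WWW$ and $\GGG_k\WWW\subseteq\WWW$ for every $k$. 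Running over all $k$ yields $\Aut(\An)\WWW=\WWW$. The point requiring genuine care is precisely this per-$\GGG_k$ tangent-space argument, since over an ind-variety one cannot first replace $\overline{\Aut(\An)\WWW}$ by a single $\overline{\GGG_k\WWW}$ on dimensional grounds as in the affine proof.
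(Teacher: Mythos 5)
Your argument is correct, but at the infinitesimal stage it takes a longer route than the paper, and the difference is worth noting. The paper's proof is three lines: the stabilizer $\{g\in\Aut(\An)\mid g\WWW\subseteq\WWW\}$ is closed (intersect the preimages of $\WWW$ under the orbit morphisms $g\mapsto gw$), so stability under $\Tame(\An)$ gives stability under $\overline{\Tame(\An)}$; the \emph{easy} direction of the Lie criterion then yields $\xi_A(w)\in T_w\WWW$ for all $A\in\Lie\overline{\Tame(\An)}$, and the last clause of Proposition~\ref{LieAlgAutAn.prop}, namely $\Lie\overline{\Tame(\An)}=\Lie\Aut(\An)$, completes the infinitesimal step before invoking Proposition~\ref{G-stable-is-LieG-invariant.prop}. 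You instead use only the subgroups $\Aff(n)$ and $\JJJ(n)$ and then close up under brackets, in effect re-deriving inside the invariance framework the generation statement that is already packaged into the proof of Proposition~\ref{LieAlgAutAn.prop}. This works, but it costs you two extra inputs: that the fields parallel to $\WWW$ form a Lie subalgebra of $\VEC(\VVV)$, and that $\xi\colon\Lie\Aut(\An)\to\VEC(\VVV)$ is an anti-homomorphism for the \emph{given} action. The paper only develops the Lie algebra structure on $\VEC(\VVV)$ for affine ind-varieties (Definition~\ref{VF.def}, Proposition~\ref{Liealg-VF.prop}), while the $\VVV$ of the corollary is arbitrary; the closure trick avoids brackets altogether, which is precisely what makes it applicable without further ado. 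If you keep your route, you should either assume $\VVV$ affine or restrict the parallel fields to $\WWW$ (an algebraic subset, hence a genuine variety) and verify the bracket relation there.

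On the final step you are in fact more careful than the paper: Proposition~\ref{G-stable-is-LieG-invariant.prop} is stated for an action on an affine variety, and the paper cites it without comment. Your per-$\GGG_k$ rerun is sound and closes this gap: for each $k$ (with $e\in\GGG_k$ irreducible and $\WWW$ reduced to an irreducible component), generic smoothness of $\GGG_k\times\WWW\to\overline{\GGG_k\WWW}$ produces a point $(g,w)$ with $w$ smooth in $\WWW$ at which, after translating by $g^{-1}$, the formula $d\phi_{(e,w)}(A,v)=\xi_A(w)+v$ together with $\xi_A(w)\in T_w\WWW$ gives $T_w\bigl(g^{-1}\overline{\GGG_k\WWW}\bigr)=T_w\WWW$; hence $g^{-1}\overline{\GGG_k\WWW}=\WWW$, so $\overline{\GGG_k\WWW}=g\WWW=\WWW$ (both closed, irreducible, of the same dimension, with $\WWW\subseteq g\WWW$), and running over $k$ yields $\Aut(\An)\WWW=\WWW$.
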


\begin{proof}
Since $\WWW$ is stable under $\Tame(\An)$ it is also stable under $\overline{\Tame(\An)}$, hence invariant under $\Lie\overline{\Tame(\An)}=\Lie\Aut(\An)$. Now the claim follows from Proposition~\ref{G-stable-is-LieG-invariant.prop}.
\end{proof}

\ps
\subsection{Families of locally finite automorphisms}
We start with a reformulation of some previous results  in term of families. 
Then, we address the particular case of families of automorphisms $(\Phi_{x})_{x\in X}$ such that all $\Phi_x$ are conjugate on a dense open set $ \subset X$. We then conclude the section showing in Corollary~\ref{ss-weakly-closed.cor} that the conjugacy class of a diagonalizable automorphism is weakly closed.

The first example shows  that for a family $(\Phi_{x})_{x\in X}$ of locally finite automorphisms of $\An$ one cannot conclude that $\Phi$, as an automorphism of $X\times \A{n}$, is locally finite.

\begin{example}
Consider the family $\Phi=([\begin{smallmatrix} t & 1\\ 0 & 1 \end{smallmatrix}])_{t \in  \Cst }$ of linear automorphisms of $\AA^{2}$. Clearly, every member is locally finite, but $\Phi$ as an automorphism of $\Cst \times \AA^{2}$ is not locally finite.
\end{example}

We have seen in Proposition~\ref{Glf-weakly-closed.prop} that the locally finite automorphisms form a weakly closed subset $\AutlfA{n} \subseteq \AutA{n}$, and the same holds for the unipotent automorphisms $\AutUA{n} \subseteq \AutA{n}$ (Lemma~\ref{unipotent-weakly-closed.lem}). In terms of families this means the following.

\begin{lemma} \label{re-interpretation-of-some-results-of-part-2.lem}
Let $\Phi=(\Phi_{x})_{x\in X}$ be a family of automorphisms of $\A{n}$.
If $\Phi_{x}$ is locally finite, resp. unipotent, on a dense open set $U \subseteq X$, then all $\Phi_{x}$ are locally finite, resp. unipotent.
\end{lemma}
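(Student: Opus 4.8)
The plan is to re-read the family as a morphism and then feed it into the weak-closedness statements quoted just before the lemma. By Theorem~\ref{AutX.thm} the family $\Phi=(\Phi_x)_{x\in X}$ corresponds to a morphism $\varphi\colon X\to\Aut(\An)$, $x\mapsto\Phi_x$, and the hypothesis says precisely that $\varphi(U)\subseteq\AutlfA{n}$ in the locally finite case, resp. $\varphi(U)\subseteq\AutUA{n}$ in the unipotent case. What must be shown is that the whole image lands in the same subset, i.e. $\varphi(X)\subseteq\AutlfA{n}$, resp. $\varphi(X)\subseteq\AutUA{n}$.

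First I would restrict $\varphi$ to $U$, which is again a variety, and use that $\AutlfA{n}\subseteq\Aut(\An)$ is weakly closed (Proposition~\ref{Glf-weakly-closed.prop}), resp. that $\AutUA{n}\subseteq\Aut(\An)$ is weakly closed (Lemma~\ref{unipotent-weakly-closed.lem}). Writing $S$ for the relevant weakly closed subset, the morphism $\varphi|_U\colon U\to\Aut(\An)$ has image in $S$, so Remarks~\ref{constr.rem}(1) gives $\overline{\varphi(U)}\subseteq S$. Since $\varphi$ is continuous and $U$ is dense in $X$ we get $\varphi(X)=\varphi(\overline{U})\subseteq\overline{\varphi(U)}\subseteq S$, which is exactly the assertion that every $\Phi_x$ is locally finite, resp. unipotent. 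In the unipotent case this completes the argument, since Lemma~\ref{unipotent-weakly-closed.lem} is available without additional hypotheses.

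The main obstacle is the base field in the locally finite case, since Proposition~\ref{Glf-weakly-closed.prop} assumes $\kk$ uncountable; for such $\kk$ the argument above applies verbatim. For a general $\kk$ I would reduce to this case by choosing an uncountable algebraically closed extension $\KK/\kk$ and base changing: by Proposition~\ref{field-extensions-for-morphisms.prop} the morphism $\varphi$ yields $\varphi_{\KK}\colon X_{\KK}\to\Aut(\An)_{\KK}\simeq\Aut((\An)_{\KK})$, i.e. a family of automorphisms of $(\An)_{\KK}$. The two points I would then have to check are (i) that local finiteness over $\kk$ and over $\KK$ agree on $\kk$-rational automorphisms, which follows from the degree characterisation of Lemma~\ref{equivLF.lem} and gives $\AutlfA{n}=\Autlf((\An)_{\KK})\cap\Aut(\An)$, and (ii) that $(\overline{\varphi(U)})_{\KK}\subseteq\Autlf((\An)_{\KK})$, after which descent via Proposition~\ref{fieldextension.prop} returns $\overline{\varphi(U)}\subseteq\AutlfA{n}$ and hence $\varphi(X)\subseteq\AutlfA{n}$. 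Step (ii) is where I expect the real difficulty to sit: over the original field the image $\varphi(U)$ need not lie in a single closed piece $G_k$ of the weakly closed union $\bigcup_k G_k=\AutlfA{n}$, and it is exactly this failure (controlled by Lemma~\ref{constructible.lem}) that forces the countability hypothesis. Accordingly, the cleanest reading of the locally finite part is with $\kk$ uncountable, in line with Proposition~\ref{Glf-weakly-closed.prop}, the general case being obtained by the base-change reduction just sketched.
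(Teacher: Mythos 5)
Your proof is correct and is essentially the paper's own argument: the paper introduces the lemma precisely as the family reformulation of Proposition~\ref{Glf-weakly-closed.prop} and Lemma~\ref{unipotent-weakly-closed.lem}, i.e.\ one reads the family as a morphism $X \to \Aut(\An)$, applies weak closedness to the image of the dense open set $U$ (via Remark~\ref{constr.rem}(1)), and concludes by continuity and density that $\varphi(X)\subseteq\overline{\varphi(U)}$ lies in the same weakly closed set. Your extra care about the uncountability hypothesis in Proposition~\ref{Glf-weakly-closed.prop} is in fact more scrupulous than the paper, which states the lemma without any field hypothesis and silently relies on the cited results (the unipotent statement being given for arbitrary $\kk$, the locally finite one only for uncountable $\kk$), so flagging the base-change step as the genuine remaining difficulty is a fair and accurate assessment rather than a gap relative to the paper's own proof.
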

Of course, we would like to know if ``locally finite (resp. unipotent) on a Zariski-dense set'' suffices to get the result, because this would imply that $\AutlfA{n}$ resp. $\AutU(\An)$ are closed in $\Aut(\An)$.  We can prove this only for $n=2$, see Theorem~\ref{constr.thm} below. 
\ps
If we assume in addition that all $\Phi_{x}$ are conjugate on a dense open set, then we can say more. 

\begin{proposition}  \label{fam.prop}
Let $\Phi=(\Phi_{x})_{x\in X}$ be a family of automorphisms of $\A{n}$. Assume that $\Phi_{x}$ is conjugate to a fixed $\g$ on
a dense open set $U\subseteq X$.
\be
\item
If $\g$ is locally finite, then $\Phi$ is a locally finite automorphism of $X\times\A{n}$.
\item 
If $\g$ is unipotent, then $\Phi$ is a unipotent automorphism of $X\times\A{n}$.
\item 
If $\g$ is semisimple, then  $\Phi$ is a semisimple automorphism of $X\times\A{n}$, and so all $\Phi_{x}$ are semisimple.
\ee
\end{proposition}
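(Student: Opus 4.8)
The plan is to exploit the hypothesis that all $\Phi_{x}$, $x\in U$, are conjugate to the \emph{same} element $\g$, which (in contrast to mere fibrewise local finiteness, cf. the example preceding Lemma~\ref{re-interpretation-of-some-results-of-part-2.lem}) lets me control not only the fibre degrees but also the growth of the coefficients over $\OOO(X)$. First I would reduce to the case where $X$ is affine and irreducible and where $\kk$ is uncountable: the reduction to irreducible $X$ is immediate since $\Phi$ preserves each $X_{j}\times\A{n}$, and the reduction to uncountable $\kk$ is by base field extension (Proposition~\ref{field-extensions-for-morphisms.prop}), as local finiteness, unipotence and semisimplicity of $\Phi$ are all read off from $\Phi^{*}$ on $\OOO(X\times\A{n})$. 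The crucial preliminary step is a spreading-out: the conjugation morphism $c\colon \AutA{n}\to\AutA{n}$, $\psi\mapsto\psi\g\psi^{-1}$, has image the ind-constructible set $C(\g)$, and since $\Phi(U)\subseteq C(\g)$ one shows, using Lemma~\ref{constructible.lem}, that $\Phi(U)\subseteq c(\AutA{n}_{k})$ for some $k$; a generic section of $c|_{\AutA{n}_{k}}$ then yields a dense open $U_{1}\subseteq U$ and a morphism $\psi\colon U_{1}\to\AutA{n}$ with $\Phi_{x}=\psi_{x}\g\psi_{x}^{-1}$ for $x\in U_{1}$. Writing $\Psi\in\Aut_{U_{1}}(U_{1}\times\A{n})$ for the family attached to $\psi$ and $\tilde\g$ for the automorphism of $U_{1}\times\A{n}$ induced by $\g$ acting on $\A{n}$ (a \emph{constant} family), this reads $\Phi|_{U_{1}}=\Psi\cdot\tilde\g\cdot\Psi^{-1}$.

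For (1), I would first treat $\Phi|_{U_{1}}$. Since $\g$ is locally finite, $\g^{m}\in\AutA{n}_{C}$ for all $m\in\ZZ$ by Lemma~\ref{deginvers.lem}, so $\tilde\g^{m}$ has bounded degree over $U_{1}$ (constant coefficients), and $\Phi^{m}|_{U_{1}}=\Psi\,\tilde\g^{m}\,\Psi^{-1}$ has degree $\le\deg\Psi\cdot C\cdot\deg\Psi^{-1}$, a bound independent of $m$. To pass from $U_{1}$ to $X$ I observe that each $(\Phi^{m})^{*}x_{i}\in\OOO(X)[x_{1},\dots,x_{n}]$, and that its restriction to $U_{1}\times\A{n}$ is, by the formula $\Psi\tilde\g^{m}\Psi^{-1}$, a polynomial expression of bounded degree in the fixed coefficients of $\Psi,\Psi^{-1}$ and in the coefficients of $\g^{m}$; as $m$ varies, the latter range over the bounded set $\{\g^{m}\}\subseteq\AutA{n}_{C}$, hence only through finitely many \emph{constants}. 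Therefore $(\Phi^{m})^{*}x_{i}|_{U_{1}}$ lies in a fixed finite-dimensional subspace $W_{i}\subseteq\OOO(X)_{f}[x_{1},\dots,x_{n}]$ independent of $m$. Since the restriction $\OOO(X)[x_{1},\dots,x_{n}]\hookrightarrow\OOO(X)_{f}[x_{1},\dots,x_{n}]$ is injective ($X$ irreducible), the $(\Phi^{m})^{*}x_{i}$ all lie in a fixed finite-dimensional subspace of $\OOO(X)[x_{1},\dots,x_{n}]$, while $(\Phi^{m})^{*}$ fixes $\OOO(X)$. Thus $\{\Phi^{m}:m\in\ZZ\}\subseteq\Aut(X\times\A{n})_{K}$ for a fixed $K$, and $\Phi$ is locally finite by Lemma~\ref{equivLF.lem}.

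For (2) and (3) I would use the Jordan decomposition of the now locally finite $\Phi$ (Section~\ref{locally-finite-ind-groups.subsec}, Lemma~\ref{JD.lem}). Because $\tilde\g$ is, respectively, unipotent or semisimple on $U_{1}\times\A{n}$ (its comorphism is $\mathrm{id}_{\OOO(U_{1})}\otimes\g^{*}$, so it inherits the property of $\g^{*}$), the conjugate $\Phi|_{U_{1}}=\Psi\tilde\g\Psi^{-1}$ is unipotent, resp. semisimple. Writing $\Phi=\Phi_{s}\cdot\Phi_{u}$ for the Jordan decomposition in $\Aut(X\times\A{n})$, the semisimple and unipotent parts of the locally finite operator $\Phi^{*}$ are limits of polynomials in $\Phi^{*}$ and hence commute with the localization $\OOO(X)[x_{1},\dots,x_{n}]\to\OOO(X)_{f}[x_{1},\dots,x_{n}]$; so $(\Phi_{u})|_{U_{1}}=(\Phi|_{U_{1}})_{u}$ and likewise for $s$. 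In case (2) this gives $(\Phi_{s})|_{U_{1}}=\mathrm{id}$, in case (3) $(\Phi_{u})|_{U_{1}}=\mathrm{id}$; by injectivity of the restriction the corresponding part equals the identity on all of $X\times\A{n}$, so $\Phi=\Phi_{u}$ is unipotent, resp. $\Phi=\Phi_{s}$ is semisimple. Finally, in case (3), since $\Phi$ is semisimple the group $\overline{\langle\Phi\rangle}\subseteq\Aut(X\times\A{n})$ is diagonalizable and acts on $X\times\A{n}$ over $X$; restricting this action to a fibre $\{x\}\times\A{n}$ exhibits $\Phi_{x}$ inside a diagonalizable group, whence each $\Phi_{x}$ is semisimple.

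The main obstacle is the spreading-out step together with the passage from $U_{1}$ back to $X$: the whole point is that fibrewise conjugacy to a \emph{fixed} $\g$ must be upgraded to a genuine global identity $\Phi|_{U_{1}}=\Psi\tilde\g\Psi^{-1}$ over a dense open set, so that the only $m$-dependence of $\Phi^{m}$ is confined to the bounded family of constants $\g^{m}$; everything else then follows formally from finite-dimensionality and the Jordan decomposition. Care is needed to guarantee that the generic section of the conjugation map exists (this is where the ind-constructibility of $C(\g)$ and the uncountability of $\kk$ through Lemma~\ref{constructible.lem} enter) and that the localization $\OOO(X)[x_{1},\dots,x_{n}]\to\OOO(X)_{f}[x_{1},\dots,x_{n}]$ is compatible with the Jordan decomposition.
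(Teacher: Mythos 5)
Your proof follows the same overall architecture as the paper's (reduction to irreducible affine $X$ and uncountable $\kk$, Lemma~\ref{constructible.lem} to bound the degree of the conjugators, conjugation to the constant family $\tilde\g$ over a dense piece, descent of local finiteness via a $\Phi^{*}$-stable subspace, then the Jordan decomposition for (2) and (3)), but it contains one genuine gap: the assertion that the conjugation morphism $c|_{\AutA{n}_{k}}$ admits a ``generic section'', i.e.\ a morphism $\psi\colon U_{1}\to\AutA{n}$ on a dense open $U_{1}\subseteq U$ with $\Phi_{x}=\psi_{x}\g\psi_{x}^{-1}$. Lemma~\ref{constructible.lem} only gives you, pointwise, that every $\Phi_{x}$ ($x\in U$) is conjugate to $\g$ by some element of the bounded piece $\AutA{n}_{m}$; it does not produce a regular (or even rational) choice of conjugator. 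The fibre of $c$ over $\Phi_{x}$ is a coset of the centralizer of $\g$ (intersected with $\AutA{n}_{m}$), so what you need is a $\kk(X)$-rational point of a torsor-like generic fibre, and surjective morphisms of varieties need not admit rational sections — this is exactly the difficulty the paper's proof is built to avoid.

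The repair is the paper's device: form the reduced fiber product $Y:=X\times_{\Aut(\An)}\AutA{n}_{m}$ with respect to $\phi\colon X\to\Aut(\An)$, $x\mapsto\Phi_{x}$, and the conjugating morphism $\gamma$. The projection $\gamma_{X}\colon Y\to X$ is dominant (its image contains $U$), and the second projection $\mu\colon Y\to\AutA{n}_{m}$ plays the role of your would-be section \emph{after base change}: by Proposition~\ref{families-of-aut.prop} it defines $\Psi\in\Aut_{Y}(Y\times\An)$ with $\gamma_{X}^{*}\Phi=\Psi^{-1}\cdot\tilde\g\cdot\Psi$, where $\tilde\g$ is the constant family. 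Your descent then works verbatim with $Y$ in place of the localization: since $\gamma_{X}$ is dominant and $X$ is irreducible, $\OOO(X)\otimes\OOO(\An)$ embeds into $\OOO(Y)\otimes\OOO(\An)$ as a $\Phi^{*}$-stable subspace, so local finiteness (and, since Jordan parts of a locally finite operator restrict to stable subspaces, also the identities $\Phi_{s}=\id$ resp.\ $\Phi_{u}=\id$ in cases (2) and (3)) descends from $Y$ to $X$; the paper instead concludes (2) and (3) fibrewise via Lemma~\ref{JD.lem}, noting that $\Phi_{s}$ and $\Phi_{u}$ are again families whose fibres give the Jordan decompositions of the $\Phi_{x}$, and a family equal to $\id$ on the dense set $U$ is $\id$. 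One further small slip: in your bound for (1), the coefficients of $\g^{m}$ do not run through ``finitely many constants'' — the set $\{\g^{m}\}$ is in general infinite — but they lie in the fixed finite-dimensional space $\AutA{n}_{C}$, which is all your finite-dimensionality conclusion requires; note also that the paper's descent needs no degree bookkeeping at all, since a conjugate of a constant locally finite family is locally finite and stability of the subspace does the rest.
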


\begin{proof} By base change, we can assume that $\kk$ is uncountable. We can also assume that $X$ is irreducible and affine. 
\ps
(a) Denote by $\phi\colon X \to \AutA{n}$ the morphism $x\mapsto \Phi_{x}$, and let $\gamma\colon \AutA{n} \to \AutA{n}$ be the conjugating morphism $\h\mapsto \h^{-1}\circ\g\circ\h$. The image $\phi(U)$ is a constructible subset of $\AutA{n}$ which is, by assumption, contained in  $C(\g)=\gamma(\AutA{n})$. Therefore, $\phi(U)$ is covered by the images $\gamma(\AutA{n}_{k})$. Hence, by Lemma~\ref{constructible.lem}, there is a $m>0$ such that $\gamma(\AutA{n}_{m})\supseteq \phi(U)$. Denote by $Y$ the (reduced) fiber product
$$
\begin{CD}
Y & @>\mu>> & \AutA{n}_{m}\\
@V\gamma_{X}VV && @VV{\gamma}V \\
X & @>\phi>> & \AutA{n}
\end{CD}
$$
By construction, $\gamma_{X}(Y) \supseteq U$ and so $\gamma_{X}$ is dominant. The pull-back family $\gamma_{X}^{*}\Phi$ parametrized by $Y$ is conjugate to the constant family, i.e.
$$
(\gamma_{X}^{*}\Phi)_{y} = \phi_{\gamma_{X}(y)}= \mu_{y}^{-1} \circ \g \circ \mu_{y} \text{ for all } y\in Y.
$$
This implies that $\gamma_{X}^{*}\Phi$ is locally finite as an automorphism of $Y\times \A{n}$, and so $\Phi$ is locally finite, too, because $\OOO(X)\otimes\OOO(\A{n})$ is embedded into $\OOO(Y)\otimes\OOO(\A{n})$ as a $\Phi^{*}$-stable subspace. This proves (1).
\ps
(b) Now consider the Jordan decomposition $\Phi = \Phi_{s}\cdot\Phi_{u}$. It then follows from Lemma~\ref{JD.lem} that $\Phi_{s}=((\Phi_{s})_{x})_{x\in X}$,  $\Phi_{u}=((\Phi_{u})_{x})_{x\in X}$ are families of semisimple, resp. unipotent automorphisms and that   $\Phi_{x}=(\Phi_{s})_{x}\cdot (\Phi_{u})_{x}$ is the Jordan decomposition. This implies (2) and (3).
\end{proof}

Given a family $\Phi=(\Phi_{x})_{x\in X}$ of automorphisms of $\A{n}$ we denote by $(X\times\A{n})^{\Phi}$ the fixed point set of $\Phi$ considered  as an automorphism of $X\times\A{n}$. It is a closed subvariety, and it is smooth in case $\Phi$ is semisimple and $X$ is smooth (see \cite{Fo1973Fixed-point-scheme}). Here is a crucial result. We formulate it in the more general setting of families of reductive group actions, but we will need it only for semisimple automorphisms.

\begin{lemma}  \label{fixedpoints.lem}
Let $G$ be a reductive group and $\rho=(\rho_{x})_{x\in X}$ a family of $G$-actions on affine $n$-space $\A{n}$. If $X$ is smooth, then the induced morphism $p\colon (X\times \A{n})^{G} \to X$ is smooth.
\end{lemma}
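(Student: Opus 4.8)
The plan is to exploit the fact that, in characteristic zero, the fixed-point locus of a reductive group acting on a smooth variety is again smooth, and then to verify the smoothness of $p$ by the Jacobian (submersion) criterion. Recall that, by Definition~\ref{family of automorphisms.def}, a family of $G$-actions on $\A{n}$ parametrized by $X$ is a $G$-action on $X\times\A{n}$ for which the projection $\pr_{X}\colon X\times\A{n}\to X$ is $G$-invariant; thus every $g\in G$ acts as $(x,v)\mapsto(x,\rho_{x}(g)v)$, and $\pr_{X}$ is $G$-equivariant for the \emph{trivial} $G$-action on $X$. The morphism $p$ is simply the restriction of $\pr_{X}$ to the closed subvariety $(X\times\A{n})^{G}$. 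Since $X$ and $\A{n}$ are smooth, so is $X\times\A{n}$, and I would first record that $(X\times\A{n})^{G}$ is itself smooth: by \name{Fogarty}'s theorem \cite{Fo1973Fixed-point-scheme}, for a reductive group $G$ acting on a smooth variety $Y$ in characteristic zero the fixed-point scheme $Y^{G}$ is smooth, with tangent space at a point $z$ given by
\[
T_{z}(Y^{G}) = (T_{z}Y)^{G}.
\]

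With both source and target of $p$ smooth, it remains to check that $dp_{z}$ is surjective at every $z\in(X\times\A{n})^{G}$; a morphism between smooth varieties over a field of characteristic zero is smooth precisely at the points where its differential is surjective. Fix $z=(x_{0},v_{0})$, so that $v_{0}\in(\A{n})^{\rho_{x_{0}}}$. The inclusion of the fibre $\{x_{0}\}\times\A{n}\hookrightarrow X\times\A{n}$ is $G$-equivariant (the action is over $X$), and $\pr_{X}$ is $G$-equivariant with trivial target action, so passing to tangent spaces at $z$ yields a short exact sequence of $G$-modules
\[
0 \longrightarrow T_{v_{0}}\A{n} \longrightarrow T_{z}(X\times\A{n}) \xrightarrow{\ d\pr_{X}\ } T_{x_{0}}X \longrightarrow 0
\]
in which $G$ acts trivially on the quotient $T_{x_{0}}X$.

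Because $G$ is reductive, hence linearly reductive in characteristic zero, the invariants functor $(-)^{G}$ is exact. Applying it to the sequence above, and using $(T_{x_{0}}X)^{G}=T_{x_{0}}X$, I obtain a surjection $(T_{z}(X\times\A{n}))^{G}\twoheadrightarrow T_{x_{0}}X$. By \name{Fogarty}'s identification of the tangent space this surjection is exactly the map $dp_{z}\colon T_{z}((X\times\A{n})^{G})\to T_{x_{0}}X$. Thus $dp_{z}$ is surjective, and since this holds at every point of the smooth source, $p$ is smooth (the claim being vacuous where the source is empty).

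The only genuinely nontrivial ingredient is \name{Fogarty}'s smoothness theorem for fixed-point schemes, which is where characteristic zero and the reductivity of $G$ enter essentially; everything else is the exactness of the invariants functor together with the standard criterion for smoothness of a morphism of smooth varieties. I therefore expect no real obstacle beyond invoking these two facts correctly. Let me add that the argument uses nothing about $\A{n}$ except its smoothness, so the same proof applies verbatim when the fibres are an arbitrary smooth affine $G$-variety.
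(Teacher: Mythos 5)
Your proof is correct and follows essentially the same route as the paper's: both rest on \name{Fogarty}'s theorem (smoothness of the fixed-point locus with $T_{z}(Y^{G})=(T_{z}Y)^{G}$) and on the exact sequence $0 \to T_{(x,a)}F_{x} \to T_{(x,a)}F \to T_{x}X \to 0$ obtained from the tangent sequence of the projection, which gives surjectivity of $dp$ between smooth varieties. You merely spell out the linear-reductivity/exactness-of-invariants step that the paper leaves implicit, which is a harmless (indeed helpful) elaboration, not a different argument.
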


\begin{proof}
If $Y$ is a smooth $G$-variety, then $Y^{G}$ is smooth, and for any $y\in Y^{G}$ we have $T_{y}(Y^{G}) = (T_{y}Y)^{G}$ (\cite{Fo1973Fixed-point-scheme}). In our situation this implies that $F:=(X\times \A{n})^{G}$ is smooth, as well as $F_{x}: = p^{-1}(x) = (\{x\}\times \A{n})^{G}$,  and for any $(x,a)\in(X\times \A{n})^{G}$ we get an exact sequence 
$$
\begin{CD}
0 @>>> T_{(x,a)}F_{x} @>\subseteq>> T_{(x,a)} F @>dp_{(x,a)}>> T_{x}X @>>> 0.
\end{CD}
$$
Since $X$, $F$ and $F_{x}$ are all smooth the claim follows.
\end{proof}

\begin{proposition}   \label{famdiag.prop}
Let $\Phi=(\Phi_{x})_{x\in X}$ be a family of automorphisms of $\A{n}$ where $X$ is smooth and connected. Assume that $\Phi_{x}$ is conjugate to a fixed diagonal automorphism $\d \in D(n)$ on a dense open set $U\subseteq X$.
Then all $\Phi_{x}$ are semisimple, the fixed point set $F:=(X\times \A{n})^{\Phi}$ is connected, and the differentials $d_{(x,a)}\Phi_{x}\in\GL_{n}$  are conjugate to $\d$ for $(x,a)\in F$.
\end{proposition}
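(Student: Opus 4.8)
The plan is to reduce all three assertions to the inputs already available: Proposition~\ref{fam.prop}(3) for semisimplicity, Lemma~\ref{fixedpoints.lem} for smoothness of the fixed locus, and Proposition~\ref{semisimple-auto-has-fixed-point.prop} for the existence of fixed points. First I would set up the group. Since the diagonal automorphism $\d$ is semisimple and $\Phi_{x}$ is conjugate to $\d$ on the dense open set $U$, Proposition~\ref{fam.prop}(3) immediately gives that every $\Phi_{x}$ is semisimple and that $\Phi$ is a semisimple automorphism of $W:=X\times\A{n}$; hence $D:=\overline{\langle\Phi\rangle}\subseteq\Aut(W)$ is a diagonalizable algebraic group. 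Because $\Phi$ lies in the closed subgroup of automorphisms over $X$ (Lemma~\ref{Aut_Y(X).lem}, applied to $\pr_{X}\colon W\to X$), the whole of $D$ acts on $W$ over $X$, so I obtain a family of $D$-actions on $\A{n}$ parametrized by $X$ with $F=W^{\Phi}=W^{D}$. As $D$ is reductive and $X$ is smooth, Lemma~\ref{fixedpoints.lem} makes $p:=\pr_{X}|_{F}\colon F\to X$ smooth, and $F$ itself smooth (by \cite{Fo1973Fixed-point-scheme}). Each $\Phi_{x}$ being semisimple has a fixed point, so every fibre $F_{x}=(\A{n})^{\Phi_{x}}$ is nonempty and $p$ is surjective; over $U$ one has $F_{x}\cong(\A{n})^{\d}$, a linear subspace, hence an irreducible affine space of a fixed dimension $r$.

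For connectedness I would argue in two steps. Over $U$, the morphism $p^{-1}(U)\to U$ is smooth and surjective with connected base ($U$ is dense open in the irreducible variety $X$) and connected fibres $\cong\A{r}$, so the standard open-map/fibre argument shows $p^{-1}(U)$ is connected. Now $F$ is smooth, so its connected components are open, closed and pairwise disjoint, and the connected set $p^{-1}(U)$ lies in a single component $F_{0}$. If $F_{1}$ were a different component, then $F_{1}\cap p^{-1}(U)=\emptyset$, whence $p(F_{1})\cap U=\emptyset$; but $p$ is open (being smooth) and $X$ is irreducible, so the nonempty open set $p(F_{1})$ must meet the dense open $U$, a contradiction. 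Thus $F=F_{0}$ is connected. The point I expect to be the main obstacle is precisely this: the fibres $F_{x}$ for $x\notin U$ are fixed loci of arbitrary semisimple automorphisms of $\A{n}$ and need not be visibly connected, so connectedness of $F$ has to be extracted globally from smoothness of $p$ and irreducibility of $X$ rather than fibrewise.

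For the differentials I would first compute on $p^{-1}(U)$. At a fixed point $(x,a)\in F$ the automorphism $\Phi$ acts trivially in the $X$-direction, so $d_{(x,a)}\Phi=\id\oplus d_{a}\Phi_{x}$ and only $d_{a}\Phi_{x}\in\GL_{n}$ is relevant. For $x\in U$, writing $\Phi_{x}=\h^{-1}\d\,\h$ and using that $\d$ is linear (so its differential at every point is $\d$) gives $d_{a}\Phi_{x}=(d_{a}\h)^{-1}\d\,(d_{a}\h)$, which is conjugate to $\d$. To propagate this to all of $F$ I would use that the relative tangent bundle $T_{W/X}|_{F}$ is a $D$-equivariant vector bundle on $F$ with $D$ acting trivially on the base; since $D$ is linearly reductive it splits as $\bigoplus_{\chi}E_{\chi}$ into isotypic subbundles of locally constant rank, and $\Phi$ acts on $E_{\chi}$ by the scalar $\chi(\Phi)$. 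As $\Phi$ topologically generates $D$, the map $\chi\mapsto\chi(\Phi)$ is injective, so the characteristic polynomial $\prod_{\chi}(t-\chi(\Phi))^{\operatorname{rk}E_{\chi}}$ of $d_{a}\Phi_{x}$ is locally constant on $F$. Being locally constant on the connected $F$ and equal to $\prod_{i}(t-a_{i})$ on the nonempty open $p^{-1}(U)$, it equals $\prod_{i}(t-a_{i})$ throughout; and since every $d_{a}\Phi_{x}$ is semisimple (it is the value of the tangent representation of the diagonalizable group $\overline{\langle\Phi_{x}\rangle}$), it is conjugate to $\d$ for all $(x,a)\in F$. This would establish the three claims.
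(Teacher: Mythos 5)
Your proof is correct, and while the semisimplicity and connectedness steps run parallel to the paper's (the paper also deduces semisimplicity from Proposition~\ref{fam.prop}(3), and proves connectedness by noting that the image of each component of $F$ under the smooth map $p$ is open and dense, hence meets $U$ — your fibration argument for connectedness of $p^{-1}(U)$ merely spells out what the paper dismisses as ``clear over $U$''), your treatment of the differentials is a genuinely different route. The paper argues component by component: each connected component $F_{0}$ of $F$ is smooth and connected, hence irreducible, so $F_{0}\cap p^{-1}(U)$ is dense in $F_{0}$; the assignment $(x,a)\mapsto d_{a}\Phi_{x}$ is a regular family of (semisimple) tangent representations landing in the conjugacy class $C(\d)\subseteq\GL_{n}$ on that dense open set, and since semisimple conjugacy classes in $\GL_{n}$ are closed, the whole image lies in $C(\d)$. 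You instead pass to the diagonalizable group $D=\overline{\langle\Phi\rangle}$ acting on the relative tangent bundle $T_{W/X}|_{F}$, split it into character subbundles $E_{\chi}$ of locally constant rank (these are indeed subbundles: over an affine chart the section module decomposes $D$-equivariantly into weight summands, each a projective direct summand), and read off that the characteristic polynomial of $d_{a}\Phi_{x}$ is locally constant, hence constant on the connected $F$, whence conjugacy by fiberwise semisimplicity. The paper's argument is shorter and reuses the closed-orbit fact already present in Lemma~\ref{Artin.lem}; yours avoids any appeal to irreducibility of the components and to closedness of conjugacy classes, and yields the stronger structural statement that the entire weight decomposition of the tangent representation is locally constant along $F$. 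One small imprecision, which does not affect anything: at a point of $F$ the full differential $d_{(x,a)}\Phi$ is a priori only block \emph{triangular} with respect to $T_{x}X\oplus T_{a}\A{n}$ (the family direction may shear into the fiber), not a direct sum; but since $T_{W/X}|_{F}$ is $D$-stable and the induced action on the quotient $T_{x}X$ is trivial, complete reducibility of the diagonalizable $D$ restores the splitting you assert, and in any case you only ever use the action on $T_{W/X}|_{F}$.
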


\begin{proof}
We know from Proposition~\ref{fam.prop}(3) that all $\Phi_{x}$ are semisimple. Let $F_{0} \subseteq (X\times \A{n})^{G}$ be a connected component of the fixed point set $F$.  Then the differentials $d_{(x,a)}\Phi_{x}$ for $(x,a)\in F_{0}$ are conjugate in $\GL(n)$. In fact, $p(F_{0})$ is open and dense in $X$ by the Lemma above and so the claim holds on a dense open set of $F_{0}$. It follows that $TF_{0} \to F_{0}$ is a family of linear actions which are conjugate on a dense open set of $F_{0}$. Since semisimple conjugacy classes are closed in $\GL(n)$ the claim follows.

It remains to see that the fixed point set $F$ is connected. This is clear over $U$, by assumption. For every connected component $F_{i}$ of $F$ the image $p(F_{i})$ is open and dense in $X$ and thus meets $U$, hence the claim.
\end{proof}

\begin{corollary}  \label{ss-weakly-closed.cor}
Let $\g\in\AutA{n}$ be a diagonalizable automorphism. Then, its conjugacy class $C(\g)$ is weakly closed.
\end{corollary}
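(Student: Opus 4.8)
The goal is to show that for a diagonalizable $\g \in \AutA{n}$, the conjugacy class $C(\g)$ is weakly closed, i.e. for every algebraic subset $Z \subseteq \AutA{n}$ with $Z \subseteq C(\g)$ we have $\overline{Z} \subseteq C(\g)$. Since the weak closure is a union of closures of algebraic subsets, it suffices to take an irreducible algebraic subset $Z \subseteq C(\g)$ and prove $\overline{Z} \subseteq C(\g)$. I will treat $Z$ as parametrizing a family of automorphisms: the inclusion $Z \into \AutA{n}$ corresponds, via Theorem~\ref{AutX.thm}, to a family $\Phi = (\Phi_z)_{z \in Z}$ of automorphisms of $\A{n}$. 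Passing to an uncountable base field by Proposition~\ref{fieldextension.prop} (using that weak closures behave well under base field extension, as in the treatment of $\LNV(X)$ in Section~\ref{Cplus.subsec}), I may assume $\kk$ uncountable, so that closures of $Z$ are again algebraic subsets and I can freely use Lemma~\ref{constructible.lem}.

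The plan is to apply Proposition~\ref{famdiag.prop} to a suitable resolution of $\overline{Z}$. First I would reduce to the diagonal case: since $\g$ is diagonalizable, it is conjugate to some $\d \in D(n)$, and conjugating the whole family by a fixed automorphism changes neither $C(\g)$ nor its weak closure, so I may assume $\g = \d \in D(n)$. Next, because every element of $Z$ is conjugate to $\d$, the family $\Phi$ is conjugate to the constant family $\d$ on all of $Z$; in particular this holds on a dense open subset of $\overline{Z}$. To get the smoothness hypothesis of Proposition~\ref{famdiag.prop}, I would pass to a resolution, or simply to the smooth locus together with an irreducible smooth variety $X$ dominating $\overline{Z}$ by a morphism $f\colon X \to \overline{Z} \subseteq \AutA{n}$ whose image is dense. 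Pulling back $\Phi$ along $f$ gives a family $\Psi = f^{*}\Phi = (\Psi_x)_{x \in X}$ of automorphisms of $\A{n}$, parametrized by the smooth connected variety $X$, with $\Psi_x$ conjugate to $\d$ on a dense open subset of $X$.

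Now Proposition~\ref{famdiag.prop} applies directly to $\Psi$ and yields that \emph{every} $\Psi_x$ is semisimple, with differentials at fixed points conjugate to $\d$. The key point I want to extract is that each $\Psi_x$ is in fact conjugate to $\d$, not merely semisimple. For this I would use Proposition~\ref{fam.prop}(3), which tells me $\Psi$ is a semisimple automorphism of $X \times \A{n}$, together with the fixed-point analysis: the fixed point set $(X \times \A{n})^{\Psi}$ is connected by Proposition~\ref{famdiag.prop}, it surjects onto $X$ by the smoothness of $p$ in Lemma~\ref{fixedpoints.lem}, and the differentials $d_{(x,a)}\Psi_x$ are all conjugate to $\d$ in $\GL(n)$. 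Since a semisimple automorphism of $\A{n}$ with a fixed point is linearizable near that fixed point and its conjugacy class is determined by the conjugacy class of the differential at a fixed point, I would conclude that each $\Psi_x$ is conjugate in $\AutA{n}$ to $\d$. Because $f(X)$ is dense in $\overline{Z}$, every point of $\overline{Z}$ is a limit of points $f(x) = \Psi_x$, all lying in $C(\d)$; combined with the fact that the conjugacy-class membership just established holds for all $x \in X$, this gives $\overline{Z} \subseteq C(\d) = C(\g)$.

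The main obstacle is the step upgrading ``semisimple'' to ``conjugate to $\d$'': Proposition~\ref{famdiag.prop} guarantees semisimplicity and controls the differentials at fixed points, but to pin down the global conjugacy class of each $\Psi_x$ I must invoke that a semisimple automorphism of affine $n$-space having a fixed point is linearizable, and that its $\AutA{n}$-conjugacy class is detected by the $\GL(n)$-conjugacy class of the tangent action at the fixed point. This is exactly where the diagonalizability of $\g$ is essential — it makes the differential $\d$ a fixed diagonal matrix whose $\GL(n)$-conjugacy class is closed — and where I must be careful that the existence of a fixed point for each $\Psi_x$ is genuinely supplied by the surjectivity of $p\colon (X\times\A{n})^{\Psi} \to X$ from Lemma~\ref{fixedpoints.lem}. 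Once that linearization-plus-tangent-class argument is in place, the weak closedness of $C(\g)$ follows formally.
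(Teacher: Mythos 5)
Your argument tracks the paper's own proof up to and including the application of Proposition~\ref{famdiag.prop}: reduce to $\g=\d\in D(n)$, read an algebraic subset of $C(\g)$ as a family conjugate to $\d$ on a dense open subset of the parameter space, pass to a smooth connected base (the paper simply reduces to a smooth curve through a given point of the closure; note that your alternative of a merely dominant $f\colon X\to\overline{Z}$ with dense image does not reach every point of $\overline{Z}$, so at that spot you need $f$ surjective, e.g.\ a genuine resolution, or a curve through each point), and conclude that every $\Psi_x$ is semisimple, has a fixed point by the surjectivity of $p$ coming from Lemma~\ref{fixedpoints.lem}, and has differential there conjugate to $\d$. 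The genuine gap is your upgrading step. The statement you invoke --- that a semisimple automorphism of $\A{n}$ with a fixed point is linearizable and that its conjugacy class in $\AutA{n}$ is determined by the $\GL(n)$-class of its differential at a fixed point --- is not a known fact; it is the linearization problem. If it were available, then combined with Proposition~\ref{semisimple-auto-has-fixed-point.prop} it would show that \emph{every} semisimple element of $\AutA{n}$ is diagonalizable, i.e.\ it would settle the implication (2)$\Rightarrow$(1) of Question~\ref{diagonalizable-and-semisimple.ques}, which the paper explicitly leaves open; Proposition~\ref{diagonal.prop} records precisely the special cases in which this is known ($n=2$; $n=3$ and infinite order; $\dim\lgr\geq n-1$; triangularizable). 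What is true unconditionally is only a formal linearization at the fixed point, and that does not control the conjugacy class in $\AutA{n}$.

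The paper avoids this appeal: after reducing to a smooth curve it concludes with Lemma~\ref{faithful-on-Tx.lem}, the faithfulness of the tangent representation of the reductive group $\overline{\langle \Psi_x\rangle}$ at the fixed point, combined with the conclusion of Proposition~\ref{famdiag.prop} that along the \emph{connected} fixed-point set all differentials are conjugate to the fixed diagonal $\d$. In other words, diagonalizability of the special member is extracted from the family and the propagation of the tangent class along $(X\times\A{n})^{\Psi}$, not from a statement about an individual semisimple automorphism. Your write-up discards the family at the decisive moment and argues fiber by fiber, which is exactly why you are forced to invoke the open linearization statement; as written, the final step is a genuine gap rather than a fillable routine detail, and the repair is to keep the family data through the last step and close the argument via the faithful tangent representation as in Lemma~\ref{faithful-on-Tx.lem}.
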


\begin{proof}
We can assume that $\g$ is diagonal. Let $\Phi=(\Phi_{x})_{x\in X}$ be a family of automorphisms of $\A{n}$ such that $\Phi_{x}$ is conjugate to $\g$ on a dense open set $U \subseteq X$. We have to show that $\Phi_{x}$ is conjugate to $\g$ for all $x\in X$. For this we can assume that $X$ is a smooth curve. The proposition above implies that all $\Phi_{x}$ are semisimple and that the differentials $d_{(x,a)}\Phi_{x}$ in the fixed points are conjugate to $\g$. Now Lemma~\ref{faithful-on-Tx.lem} implies that $\Phi_{x}$ is diagonalizable, hence conjugate to $\g$.
\end{proof}

\ps
\subsection{Semisimple automorphisms of \texorpdfstring{$\An$}{An}}  
\label{Semisimple-automorphisms-of-An.subsec}

Our work on semisimple automorphisms is motivated by the well-known problem asking whether a semisimple automorphism of $\AA^n$ is diagonalizable, i.e. conjugate to a diagonal automorphism $(a_1 x_1, \ldots, a_n x_n)$, $a_i \in \kk^*$. More generally, 
we address the following questions.

\begin{question}  \label{diagonalizable-and-semisimple.ques}
Let $\g \in \Aut (\AA ^n)$ be an automorphism. Are the four following assertions equivalent?

\be
\item \label{g-is-diagonalizable}
$\g$ is diagonalizable.
\item \label{g-is-semisimple}
$\g$ is semisimple.
\item \label{g-has-a-closed-conjugacy-class}
The conjugacy class $C (\g)$ is closed in $\Aut (\AA ^n)$.
\item \label{g-has-a-weakly-closed-conjugacy-class}
The conjugacy class $C (\g)$ is weakly closed in $\Aut (\AA ^n)$.
\ee
\end{question}

Let us summarize what we know about this question at the present time. The implications (\ref{g-is-diagonalizable})$\Rightarrow$(\ref{g-is-semisimple}) and (\ref{g-has-a-closed-conjugacy-class})$\Rightarrow$(\ref{g-has-a-weakly-closed-conjugacy-class}) are clear, and 
the implication  (\ref{g-is-diagonalizable})$\Rightarrow$(\ref{g-has-a-weakly-closed-conjugacy-class}) was proved in Corollary~\ref{ss-weakly-closed.cor}. 
We will see in Corollary~\ref{a-criterion-for-diagonalizability.cor} below that 
(\ref{g-is-semisimple}) and (\ref{g-has-a-weakly-closed-conjugacy-class}) imply (\ref{g-is-diagonalizable}). Finally, the known results about the most interesting implication (\ref{g-is-semisimple})$\Rightarrow$(\ref{g-is-diagonalizable}) are summarized in the next statement.

\begin{proposition}\label{diagonal.prop}
Let $\g \in \AutA{n}$ be a semisimple element. Then $\g$ is diagonalizable in the following cases.
\be
\item \label{n=2}
$n=2$;
\item \label{n=3-and-g-is-of-ingfinite-order}
$n=3$ and $\g$ is of infinite order;
\item \label{g-generates-a-large-subgroup}
$\dim \lgr \geq n-1$;
\item \label{g-is-triangularizable}
$\g$ is triangularizable.
\ee
\end{proposition}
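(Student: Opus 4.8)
The plan is to reduce all four cases to a single statement --- that the diagonalizable group $D := \lgr$ can be conjugated into the diagonal torus $D(n) \subseteq \GL(n) \subseteq \AutA{n}$ --- and then to settle this linearization statement case by case. First I would record the reduction itself: since $\g$ topologically generates $D$, the automorphism $\g$ is diagonalizable (conjugate into $D(n)$) if and only if $D$ is conjugate to a subgroup of $D(n)$, and this property is invariant under inner automorphisms. Next, by Proposition~\ref{semisimple-auto-has-fixed-point.prop} the semisimple $\g$ has a fixed point $p$, and since the stabilizer $\{\h \in \AutA{n} \mid \h(p)=p\}$ is closed and contains $\langle\g\rangle$, it contains $D = \lgr$; replacing $\g$ by $\t_{-p}\circ\g\circ\t_{p}$ I may assume $D$ fixes the origin $0$. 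As $D$ is diagonalizable, hence reductive, Lemma~\ref{faithful-on-Tx.lem} shows the tangent representation $\tau_{0}\colon D \to \GL(T_{0}\A{n}) = \GL(n)$ is faithful, and its image is a diagonalizable subgroup of $\GL(n)$, so in particular $d\g_{0} = \tau_{0}(\g)$ is already a diagonalizable matrix. Thus the remaining content is precisely the existence of a $D$-equivariant isomorphism $\A{n} \simto T_{0}\A{n}$, i.e. the linearizability of the $D$-action.

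Case (4) requires no linearization theorem and I would dispatch it first: if $\g$ is triangularizable I may assume $\g \in \JJJ(n)$, and since conjugation by an inner automorphism preserves semisimplicity (Lemma~\ref{JD.lem}), $\g$ is a semisimple element of $\JJJ(n)$; Proposition~\ref{triang.prop}(\ref{1-semisimple}) then produces an $\h \in \JJJ(n)$ with $\h\,\g\,\h^{-1} = d(\g) \in D(n)$. I would also isolate an auxiliary fact used repeatedly: a semisimple affine transformation is diagonalizable. Indeed, writing $\phi = (w,A) \in \Aff(n)$ and applying Lemma~\ref{JD.lem} to the projection $\Aff(n) \to \GL(n)$, the matrix $A$ is semisimple; decomposing $V = \Ker(A-\id)\oplus\Image(A-\id)$ lets me conjugate $\phi$ by a translation into $(w_{0},A)$ with $A w_{0}=w_{0}$, and semisimplicity of $\phi$ forces $w_{0}=0$, so $\phi$ is conjugate to the diagonalizable matrix $A$.

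For the three remaining cases I would invoke the torus dimension bound together with the known linearization results. In case (1), $n=2$, the amalgamated product structure of $\AutA{2}$ (\name{Jung}, \name{van der Kulk} and \name{Nagata}) implies, via the theorem on algebraic subgroups, that $D$ is conjugate into $\Aff(2)$ or into $\JJJ(2)$; the first alternative is handled by the affine fact above and the second by case (4). In case (3), the identity component $T := D^{\circ}$ is a torus with $\dim T \geq n-1$, while Proposition~\ref{tori-in-AutX.prop} gives $\dim T \leq n$; if $\dim T = n$ then $T$ is conjugate to $D(n)$ by the uniqueness of maximal tori in Proposition~\ref{tori-in-AutX.prop}, and since $D$ is commutative and the centralizer of $D(n)$ in $\AutA{n}$ equals $D(n)$ (see the Example following Proposition~\ref{triang.prop}), we get $\g \in D \subseteq D(n)$; the case $\dim T = n-1$ rests on \name{Bialynicki-Birula}'s linearization of effective $(n-1)$-dimensional torus actions on $\A{n}$. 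Finally, in case (2) with $n=3$, semisimplicity together with $\g$ having infinite order forces $T = D^{\circ}$ to be a nontrivial torus; if $\dim T \geq 2 = n-1$ I reduce to case (3), and if $\dim T = 1$ I appeal to the linearizability of $\Cst$-actions on $\A{3}$ (\name{Koras}--\name{Russell}, \name{Kaliman}--\name{Makar-Limanov}).

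The hard part will be the passage, in cases (2) and (3), from linearizing the torus $T = D^{\circ}$ to diagonalizing $\g$ itself: the cited theorems conjugate $T$ into $D(n)$, but one must still control the finite cyclic component group $D/D^{\circ}$ and show that all of $D$ --- equivalently the semisimple $\g$ lying in the centralizer of the linearized $T$ --- can be simultaneously diagonalized. When $\dim T = n$ this is immediate from the self-centralizing property of $D(n)$, but for $\dim T = n-1$ (and for $\dim T = 1$ on $\A{3}$) it requires analyzing the centralizer of the linearized subtorus and combining it with the fact, established in the first paragraph, that $d\g_{0}$ is already diagonalizable. The deep external linearization theorems are exactly what confine the statement to these particular ranges of $n$ and of the order of $\g$, and outside them the implication from semisimple to diagonalizable remains the open linearization problem recorded in Question~\ref{diagonalizable-and-semisimple.ques}.
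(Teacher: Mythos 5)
Your reduction and your handling of cases (1) and (4) are fine and essentially the paper's: case (4) is exactly Proposition~\ref{triang.prop}(\ref{1-semisimple}), case (1) follows from the amalgamated product structure of $\AutA{2}$ together with your (correct) observation that a semisimple affine transformation is diagonalizable, and the subcase $\dim T = n$ of (3) works as you say via Proposition~\ref{tori-in-AutX.prop} and the self-centralizing property of $D(n)$. The genuine gap is in cases (2) and (3) with $\dim T = n-1$ (resp.\ $\dim T = 1$ on $\Athree$), and you have located it yourself without closing it: you linearize only the torus $T = D^{\circ}$ (via \name{Bia{\l}ynicki-Birula}, resp.\ \name{Koras}--\name{Russell} and \name{Kaliman}--\name{Makar-Limanov}) and then must still diagonalize $\g$ inside the centralizer of the linearized $T$. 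This last step does not follow from your ingredients. The centralizer of a low-dimensional subtorus of $D(n)$ in $\Aut(\An)$ is infinite-dimensional --- e.g.\ for $T = \{(tx,ty,z) \mid t \in \kst\} \subseteq D(3)$ it contains the copy of $\GL_{2}(\kk[z])$ given by $(x,y,z) \mapsto (a(z)x + b(z)y,\, c(z)x + d(z)y,\, z)$ with $ad-bc \in \kst$ --- so no finite-dimensional argument applies; and the fact that $d\g_{0}$ is diagonalizable is far from sufficient, since the implication ``semisimple with diagonalizable differential $\Rightarrow$ diagonalizable'' is precisely the open linearization problem recorded in Question~\ref{diagonalizable-and-semisimple.ques}. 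Your sketch thus begs the question exactly where the statement is restricted to the ranges of $n$ and of the order of $\g$.

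The repair, which is the paper's actual route, is to invoke the linearization theorems at the level of the full diagonalizable group $D = \lgr$ rather than of its identity component. For (3), the cited result \cite[Theorem VI.3.2(2)]{KrSc1992Reductive-group-ac} linearizes any faithful action on $\An$ of a \emph{commutative reductive} group of dimension $\geq n-1$; applied to $D$ itself (not to $T$) it exhibits $D$ as a diagonalizable subgroup of $\GL(n)$, hence conjugate into $D(n)$ by linear algebra, and no centralizer analysis is needed. For (2), the paper cites \cite{KrRu2014Families-of-group-}, where the diagonalizability of a semisimple automorphism of infinite order of $\Athree$ is proved directly --- equivalently, the possibly disconnected group $D \simeq F \times \kst$ is linearized, not merely a $\kst$-action. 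Your reduction of (2) to the case of a nontrivial torus is correct (an infinite diagonalizable group has positive dimension), but the passage from $T$ to $D$ that you defer to ``analyzing the centralizer'' is the substantive content of those references and cannot be supplied by the soft arguments you assemble.
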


\begin{proof} The group $\lgr$ is a diagonalizable group, hence reductive. Then (\ref{n=2}) follows from the amalgamated product structure of $\AutA{2}$ (see Section~\ref{amalgam.subsec}), and (\ref{n=3-and-g-is-of-ingfinite-order}) is proved in \cite{KrRu2014Families-of-group-}. As for (\ref{g-generates-a-large-subgroup}) it is known that a faithful  action of a commutative reductive group  of dimension $\geq n-1$ on $\A{n}$ is linearizable (see  \cite[Theorem VI.3.2(2)]{KrSc1992Reductive-group-ac}). (\ref{g-is-triangularizable}) is proved in \cite[\S1 Corollary~1]{KrKu1996Equivariant-affine}. It also follows from Proposition~\ref{triang.prop}(\ref{1-semisimple}).
\end{proof}

A necessary first step for proving that a semisimple automorphism is diagonalizable is the following fixed point result.

\begin{proposition}   \label{semisimple-auto-has-fixed-point.prop} 
Every semisimple automorphism of $\An$ has a fixed point. More generally, if $D$ is a diagonalizable algebraic group such that $D/D^{\circ}$ is cyclic, then any action of $D$ on $\An$ has fixed points.
\end{proposition}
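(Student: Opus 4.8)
The plan is to deduce the statement about a single semisimple automorphism from the general one, and to prove the latter by a topological fixed-point count over $\C$. First I would reduce to $\kk=\C$: the fixed-point scheme $Z:=X^{D}$ (resp. $Z:=\{x\mid gx=x\}$) is cut out in $\An$ by finitely many polynomials with coefficients in a subfield $k_{0}\subseteq\kk$ that is finitely generated over $\QQ$, and nonemptiness of a scheme of finite type is a geometric property, unchanged under extension of the algebraically closed base field. Choosing an embedding $\overline{k_{0}}\hookrightarrow\C$ (and noting $\overline{k_{0}}\subseteq\kk$), the existence of a $\kk$-point of $Z$ is equivalent to $Z_{\C}\neq\emptyset$. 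For the semisimple case, recall that $g$ semisimple means $D:=\lgr$ is diagonalizable; since $\langle g\rangle$ is dense in $D$ its image generates the finite group $D/D^{\circ}$, which is therefore cyclic, and any $D$-fixed point is a $g$-fixed point. Thus everything reduces to showing that a diagonalizable $D$ with $D/D^{\circ}$ cyclic acting on $X:=\An_{\C}$ has a fixed point.

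Write $D\cong\Gamma\times T$ with $T:=D^{\circ}$ a torus and $\Gamma:=D/D^{\circ}=\langle\sigma\rangle$ finite cyclic, so that $X^{D}=(X^{\Gamma})^{T}=(X^{\sigma})^{T}$. I would then compute the compactly supported Euler characteristic $\chi_{c}(X^{D})$ in two steps. For the finite part I use the Lefschetz trace formula for compactly supported cohomology, valid for the finite-order automorphism $\sigma$, namely $\chi_{c}(X^{\sigma})=\sum_{i}(-1)^{i}\,\tr\bigl(\sigma^{*}\mid H^{i}_{c}(X;\QQ)\bigr)$. Since $X=\An$ has $H^{i}_{c}(X;\QQ)=\QQ$ for $i=2n$ and $0$ otherwise, and since the complex-algebraic map $\sigma$ preserves the orientation class spanning $H^{2n}_{c}(X;\QQ)$, the right-hand side equals $1$. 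Hence $W:=X^{\sigma}=X^{\Gamma}$ satisfies $\chi_{c}(W)=1$; in particular $W\neq\emptyset$. For the torus part I use that $\chi_{c}$ is additive and vanishes on the orbits $(\Cst)^{k}$, $k\ge 1$, of a nontrivial torus action, so stratifying $W$ by orbit type yields the localization identity $\chi_{c}(W^{T})=\chi_{c}(W)=1$. Therefore $\chi_{c}(X^{D})=\chi_{c}(W^{T})=1\neq 0$, and $X^{D}\neq\emptyset$.

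The main obstacle is the finite part, specifically the case where $\sigma$ has composite order. A mod-$p$ Euler-characteristic argument (if $\tau$ has prime order $p$ then $\chi_{c}(X)\equiv\chi_{c}(X^{\tau})\bmod p$, because the complement of $X^{\tau}$ is fibered into free $\ZZ/p$-orbits) handles prime-power order by induction, but it does not patch across the distinct primes dividing $|\sigma|$: after taking fixed points for the first prime one loses control of $\chi_{c}$ modulo the others. Invoking the full Lefschetz trace formula for $\sigma$ bypasses this entirely, because on $\An$ the cohomology $H^{*}_{c}$ is concentrated in the single degree $2n$, where every complex automorphism acts trivially, so $\chi_{c}(X^{\sigma})=1$ independently of the order of $\sigma$. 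The two points requiring care are the justification of the trace formula in the algebraic (constructible) setting — provable by decomposing $X$ into $\sigma$-stable locally closed pieces of constant orbit type and using additivity together with the vanishing of the alternating trace on pieces with free $\sigma$-action — and the triviality of the $\sigma$-action on $H^{2n}_{c}(\An;\QQ)$, which follows from Poincaré duality $H^{2n}_{c}(\An;\QQ)\cong H_{0}(\An;\QQ)$ and the orientation-preservation of holomorphic maps.
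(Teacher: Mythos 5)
Your argument is correct, but it takes a genuinely different route from the paper's. The paper stays algebraic on the group side: it first proves Lemma~\ref{fixed-points-diag-groups.lem} --- for any diagonalizable $D$ with $D/D^{\circ}$ cyclic acting on an affine variety $X$ there is a \emph{single} element $d\in D$ of finite order with $X^{d}=X^{D}$, obtained by embedding $X$ in a $D$-module, decomposing into weight spaces, and using cyclicity to find $d$ outside all the kernels $\Ker\chi_{i}$ of the nontrivial weights --- and then simply cites Petrie--Randall for the fact that a finite-order automorphism of $\An$ has a fixed point. You instead split $D\cong \Gamma\times T$ with $\Gamma=\langle\sigma\rangle$ cyclic (this structure result is the one quoted in the paper, $D\cong F\times(\Cst)^{d}$) and prove both fixed-point statements directly over $\C$ by compactly supported Euler characteristics: the trace formula gives $\chi_{c}(\An^{\sigma})=\tr\bigl(\sigma^{*}\mid H^{2n}_{c}\bigr)=1$, which in effect \emph{reproves} the cyclic case of the Petrie--Randall input, and your observation that the trace formula is what bypasses the composite-order failure of naive mod-$p$ Smith theory is exactly right; note that the formula genuinely requires $\sigma$ of finite order (a translation of $\An$ has $L_{c}=1$ but no fixed point), so your stratification-by-orbit-type justification is not optional but is the correct and standard proof. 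The torus step $\chi_{c}(W^{T})=\chi_{c}(W)$ is likewise standard, and cyclicity enters your argument transparently: you need $X^{\Gamma}$ to be the fixed set of a single finite-order automorphism. As to what each approach buys: the paper's proof is shorter, works uniformly over any algebraically closed field of characteristic zero without your spreading-out reduction to $\C$ (which is fine but glossed --- one should remark that forming the fixed-point scheme of a diagonalizable group commutes with extension of the algebraically closed base field), and isolates a reusable algebraic lemma; yours is self-contained modulo standard topology where the paper cites the literature, and it yields the stronger quantitative conclusion $\chi_{c}(\An^{D})=1$ rather than mere nonemptiness.
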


\begin{proof}
If $\g \in \AutA{n}$ is semisimple, then the group $D:=\overline{\langle \g \rangle}$ is diagonalizable and $D/D^{\circ}$ is cyclic. Thus the first statement follows from the second.

The following Lemma~\ref{fixed-points-diag-groups.lem} shows that there is a $d\in D$ of finite order with the same fixed point set as $D$. But any finite-order automorphism of $\A{n}$ admits a fixed point, see \cite{PeRa1986Finite-order-algeb}. 
\end{proof}

\begin{lemma}\label{fixed-points-diag-groups.lem}
Let $D$ be a diagonalizable group acting on an affine variety $X$. Assume that $D/D^{\circ}$ is cyclic. Then there is an element $d \in D$ of finite order such that $X^{D} = X^{d}$.
\end{lemma}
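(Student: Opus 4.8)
The plan is to reduce everything to the weight-space decomposition of $\OOO(X)$ under $D$ and then to a counting argument for torsion elements. Since $D$ is diagonalizable it is abelian and linearly reductive, so by Proposition~\ref{locally-finite-group-actions.prop} its action on $\OOO(X)$ is locally finite and rational and gives a weight decomposition $\OOO(X) = \bigoplus_{\chi} \OOO(X)_{\chi}$ indexed by the character group $\hat D$. First I would choose a finite-dimensional $D$-stable subspace $W \subseteq \OOO(X)$ generating $\OOO(X)$ as a $\kk$-algebra, and let $S \subseteq \hat D$ be the finite set of \emph{nonzero} characters occurring in $W$. The whole problem then becomes: find an element $d \in D$ of finite order with $\chi(d) \neq 1$ for every $\chi \in S$.

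The first key step is to check that any such $d$ already satisfies $X^{d} = X^{D}$. Both fixed-point sets are zero sets of weight ideals: one has $X^{D} = V(I)$, where $I$ is the ideal generated by all $\OOO(X)_{\chi}$ with $\chi \neq 0$, and $X^{d} = V(I_{d})$, where $I_{d}$ is generated by those $\OOO(X)_{\chi}$ with $\chi(d) \neq 1$; this uses only the elementary fact that $x$ is fixed by $d$ (resp. by all of $D$) iff every weight vector with $\chi(d)\neq 1$ (resp. with $\chi \neq 0$) vanishes at $x$. Since a weight basis of $W$ generates $\OOO(X)$ by monomials, every function of nonzero weight lies in the ideal generated by $\{W_{\chi} : \chi \in S\}$, so $I$ equals that ideal; as each $W_{\chi}$ with $\chi \in S$ also lies in $I_{d}$ (because $\chi(d) \neq 1$) while conversely $I_{d} \subseteq I$, the two ideals coincide and hence $X^{d} = X^{D}$. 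This part is routine.

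The remaining, and genuinely delicate, step is to produce such a $d$ of finite order, and this is exactly where the hypothesis that $D/D^{\circ}$ is cyclic is used. Writing $D \cong T \times A$ with $T = D^{\circ}$ a torus and $A \cong D/D^{\circ}$ finite cyclic, I would split $S$ into the characters $S_{0}$ that are trivial on $T$ (i.e. characters of $A$) and the rest $S_{1}$. Choosing $a$ to be a generator of the cyclic group $A$ forces $\chi(a) \neq 1$ for every nontrivial $\chi \in S_{0}$, because a generator lies in no proper subgroup, hence in no $\ker \chi$; this is precisely the point that fails for non-cyclic $A$, as the example $A = (\ZZ/2)^{2}$ shows, where the kernels of the three nontrivial characters cover the whole group. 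Fixing this $a$ and seeking $d = t \cdot a$, I then need $t \in T$ of finite order with $\chi|_{T}(t) \neq \chi(a)^{-1}$ for all $\chi \in S_{1}$; for each such $\chi$ the character $\chi|_{T}$ is a nontrivial character of the torus $T$, so the bad locus is a coset of $\ker(\chi|_{T})$, a proper closed subset of the irreducible variety $T$. Their union is therefore proper, and since the torsion points are dense in $T$, I can pick a torsion $t$ outside it. Then $d = t a$ has finite order in the abelian group $D$ and satisfies $\chi(d) \neq 1$ for all $\chi \in S$, which by the first step gives $X^{d} = X^{D}$ and completes the proof.
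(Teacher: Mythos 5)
Your proof is correct, and it follows the same core reduction as the paper's: both arguments come down to producing a single finite-order element $d\in D$ with $\chi(d)\neq 1$ for each of finitely many nontrivial characters $\chi$, using the cyclicity of $D/D^{\circ}$ together with the density of torsion points in a diagonalizable group. The differences are in the implementation. The paper first embeds $X$ equivariantly into a $D$-module $V$ and works with the finitely many nontrivial weights of $V$, whereas you work directly with the weight decomposition of $\OOO(X)$ and a finite-dimensional $D$-stable generating subspace $W$; your ideal computation showing $X^{d}=V(I_{d})=V(I)=X^{D}$ is just the coordinate-ring translation of the same fact, so this step is equivalent, only slightly longer. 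For the avoidance step the paper argues more uniformly: if $\bigcup_{i}\Ker\chi_{i}=D$, then each irreducible component of $D$, being irreducible, lies in some $\Ker\chi_{i}$; since $D/D^{\circ}$ is cyclic, one component generates $D$ as a group, forcing some $\Ker\chi_{i}=D$, a contradiction — hence $\bigcap_{i}\bigl(D\setminus\Ker\chi_{i}\bigr)$ is a nonempty open set, and any torsion point in it works. Your version makes this concrete: you choose a splitting $D\simeq T\times A$ with $A$ finite cyclic, take a generator $a$ of $A$ to kill the characters trivial on $T$, and then avoid finitely many cosets of proper closed subgroups of the irreducible torus $T$ by a torsion point. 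The two uses of cyclicity are the same in substance — your $(\ZZ/2)^{2}$ remark is exactly the reason the paper's covering argument needs a component that generates $D$ — but the paper's formulation avoids choosing a splitting and treats $D$ in one stroke, while yours buys an explicit element $d=ta$ and makes visible precisely where the hypothesis enters.
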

\begin{proof} We can embed $X$ equivariantly into a $D$-module $V$. Hence it suffices to prove the claim for $X=V$.
We decompose $V$ in the form $V = V^{D}\oplus\bigoplus_{i=1}^{m}V_{\chi_{i}}$ where the $\chi_{i}$ are nontrivial characters of $D$, and $V_{\chi_{i}}:=\{v\in V \mid d v = \chi_{i}(d) \cdot v \text{ for all }d\in D\}$. Set $U_{i}:= D \setminus \Ker\chi_{i}$. Clearly, every element $d\in \bigcap_{i}U_{i}$ has the same fixed point set as $D$. Thus we have to show that $\bigcap_{i}U_{i} \neq \emptyset$, because every nonempty open set of $D$ contains elements of finite order. 

If $\bigcap_{i}U_{i} = \emptyset$, then $\bigcup_{i}\Ker\chi_{i} = D$. Hence, every irreducible component of $ D$ is contained in $\Ker\chi_{i}$ for some $i$. Since $D/D^{\circ}$ is cyclic, it follows that one of the irreducible components generates $D$ as a group. But this implies that $\Ker\chi_{i} = D$ for some $i$, contradicting the fact that the characters $\chi_{i}$ are nontrivial.
\end{proof}

\begin{remark}
Assume that $\g \in \AutA{n}$ admits a fixed point $a$. If $\g$ is semisimple, then the differential $d \g _a \colon T_a \A{n} \to T_a \A{n}$ is also semisimple. However, the converse does not hold. The automorphism $\g:=(x+y^2,y) \in \AutA{2}$ fixes the origin, $d \g_0= \id$ is semisimple, but $\g$ is not.
\end{remark}

The next result shows that if $\g$ has a fixed-point, then the weak closure $\wc{C(\g)}$ of its conjugacy class contains a linear automorphism. In fact, this already holds for the weak closure of its $\Aff(n)$-conjugacy class, where we recall that $\Aff(n) \subseteq \AutA{n}$ denotes the closed algebraic subgroup of affine transformations.

\begin{proposition}   \label{Anclosure.prop}
For any $\g\in \AutA{n}$ the $\Aff(n)$-conjugacy class 
$$
C_{\Aff(n)}(\g):=\{\h\cdot \g\cdot \h^{-1} \mid \h \in \Aff(n) \} \subseteq \AutA{n}
$$
is a (locally closed) algebraic subset. If $\g$ has a fixed point $a\in\A{n}$, then the closure $\overline{C_{\Aff(n)}(\g)}$ contains the differential $d_{a}\g\in\GL(n)$. In particular, it contains a diagonal automorphism.
\end{proposition}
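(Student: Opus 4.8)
The plan is to prove the three assertions in turn; the core is a degeneration argument obtained by conjugating $\g$ with scaling automorphisms.

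For the local closedness I would simply invoke the general fact recorded in Section~\ref{triang.subsec}: since $\Aff(n)$ is an algebraic subgroup of $\Aut(\An)$, the $\Aff(n)$-conjugacy class $C_{\Aff(n)}(\g)$ is a locally closed algebraic subset. Concretely, every conjugate $\h\cdot\g\cdot\h^{-1}$ with $\h\in\Aff(n)$ satisfies $\deg(\h\cdot\g\cdot\h^{-1})\leq\deg\g$, so $C_{\Aff(n)}(\g)$ lies in the algebraic variety $\AutA{n}_{\deg\g}$ and is an orbit of the algebraic group $\Aff(n)$, hence locally closed.

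Next I would reduce to the case $a=0$. Conjugating $\g$ by the translation $\t_{-a}\in\Aff(n)$ replaces it by $\g_0:=\t_{-a}\cdot\g\cdot\t_{a}$, which fixes the origin and satisfies $d_{0}\g_0=d_{a}\g$ (translations have trivial differential), while $C_{\Aff(n)}(\g_0)=C_{\Aff(n)}(\g)$. So I may assume $\g=(g_1,\ldots,g_n)$ with $g_i(0)=0$. Writing each $g_i=\sum_{d\geq 1}g_i^{(d)}$ in homogeneous components, the differential is $A:=d_{0}\g=(g_1^{(1)},\ldots,g_n^{(1)})\in\GL(n)$, which is invertible because $\g$ is an automorphism fixing a point. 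Now consider the scaling automorphisms $\sigma_t:=(tx_1,\ldots,tx_n)\in\GL(n)\subseteq\Aff(n)$ for $t\in\kst$ and set $\phi(t):=\sigma_t^{-1}\cdot\g\cdot\sigma_t$, so that $\phi(t)(x)=t^{-1}\g(tx)$ and the $i$-th coordinate polynomial of $\phi(t)$ equals $\sum_{d\geq 1}t^{d-1}g_i^{(d)}$. This formula is polynomial in $t$ of bounded degree $\leq\deg\g$, hence defines a morphism $\phi\colon\kk\to\End(\An)$ with $\phi(0)=A$. Since $\phi(\kst)\subseteq C_{\Aff(n)}(\g)\subseteq\Aut(\An)$ and $\phi(0)=A\in\GL(n)\subseteq\Aut(\An)$, the image of $\phi$ lies in the locally closed subset $\Aut(\An)\subseteq\End(\An)$ (Theorem~\ref{AutX-locally-closed-in-EndX.thm}), so $\phi$ is a morphism into $\Aut(\An)$. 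By continuity $A=\phi(0)\in\overline{\phi(\kst)}\subseteq\overline{C_{\Aff(n)}(\g)}$, which proves $d_{a}\g\in\overline{C_{\Aff(n)}(\g)}$.

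Finally, for the diagonal automorphism, I would use that the closure $\overline{C_{\Aff(n)}(\g)}$ is stable under conjugation by $\Aff(n)$, in particular by $\GL(n)$. Conjugating $A$ within $\GL(n)$ into Jordan normal form and then applying a suitable scaling one-parameter subgroup, exactly as in the proof of Proposition~\ref{triang.prop}(\ref{3-triangular}), degenerates $A$ to its diagonal part $A_s$, which therefore also lies in the closed set $\overline{C_{\Aff(n)}(\g)}$; being diagonalizable, a $\GL(n)$-conjugate of $A_s$ is a diagonal automorphism contained in $\overline{C_{\Aff(n)}(\g)}$. I expect the only genuinely delicate point to be the bookkeeping in the ind-variety: checking that $\phi$ is a morphism landing in $\Aut(\An)$ (using the local closedness of $\Aut(\An)$ in $\End(\An)$) and that the Zariski closure taken in the ind-variety indeed captures the limit value $\phi(0)$. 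The remaining steps are routine linear algebra and degree estimates.
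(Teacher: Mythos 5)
Your proposal is correct and follows essentially the same route as the paper: after conjugating by a translation so that $\g$ fixes the origin, both arguments degenerate $\g$ to its differential along $\lambda(t)=(tx_{1},\ldots,tx_{n})$, i.e. $d_{0}\g=\lim_{t\to 0}\lambda(t)^{-1}\cdot\g\cdot\lambda(t)$, and then invoke the fact that the closure of a $\GL(n)$-conjugacy class in $\GL(n)$ contains a diagonal element. The extra details you supply (the degree bound $\deg(\h\cdot\g\cdot\h^{-1})\leq\deg\g$ giving local closedness, and the Jordan-form scaling degeneration) are exactly the standard facts the paper leaves implicit.
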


\begin{proof}
All subset $\AutA{n}_{k}$ are stable under conjugation by $\Aff(n)$, and so the first statement is clear. For the second, using a conjugation with a suitable translation, we can assume that $\g(0) = 0$. Then, for $\lambda(t):=(tx_{1},\ldots,tx_{n})$, the limit  $\bar \g:= \lim_{t\to 0}\lambda(t)^{-1}\cdot  \g\cdot \lambda(t)$ exists and coincides with the differential $d_{0}\g\in\GL(n)$. Since the closure of every $\GL(n)$-conjugacy class in $\GL(n)$ contains a diagonal element, we are done.
\end{proof}

\begin{corollary}  \label{fpclosed.cor}
Assume that $\g$ has a fixed point. 
If $C_{\Aff(n)}(\g)$ is closed or if $C(\g)$ is weakly closed, then $\g$ is a diagonalizable element.
\end{corollary}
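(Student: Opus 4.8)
The plan is to reduce both hypotheses to the single geometric fact supplied by Proposition~\ref{Anclosure.prop}: since $\g$ fixes a point $a\in\A{n}$, the closure of its $\Aff(n)$-conjugacy class already contains a diagonal automorphism. Once I exhibit a diagonal automorphism lying inside the full conjugacy class $C(\g)$ itself, the definition of diagonalizability (Section~\ref{Aut-An.subsec}, namely being conjugate to an element of $D(n)$) will finish the argument at once.

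First I would record two structural facts about $C_{\Aff(n)}(\g)$. It is locally closed by Proposition~\ref{Anclosure.prop}, and it lies in a single filtration piece, since $\deg(\h\cdot\g\cdot\h^{-1})\le \deg\h\cdot\deg\g\cdot\deg\h^{-1}=\deg\g$ for every $\h\in\Aff(n)$, so that $C_{\Aff(n)}(\g)\subseteq \AutA{n}_{\deg\g}$. Hence $C_{\Aff(n)}(\g)$ is an \emph{algebraic} subset in the sense of Definition~\ref{affine-algebraic.def}. Moreover every $\Aff(n)$-conjugate of $\g$ is in particular an $\AutA{n}$-conjugate of $\g$, so $C_{\Aff(n)}(\g)\subseteq C(\g)$. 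By Proposition~\ref{Anclosure.prop} the closure $\overline{C_{\Aff(n)}(\g)}$ contains the diagonal automorphism $\d:=d_{a}\g$.

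Now I would split into the two hypotheses. If $C_{\Aff(n)}(\g)$ is closed, then $\overline{C_{\Aff(n)}(\g)}=C_{\Aff(n)}(\g)$, so $\d\in C_{\Aff(n)}(\g)\subseteq C(\g)$. If instead $C(\g)$ is weakly closed, then, because $C_{\Aff(n)}(\g)$ is an algebraic subset contained in $C(\g)$, the defining property of weak closedness (Definition~\ref{constr.def}) yields $\overline{C_{\Aff(n)}(\g)}\subseteq C(\g)$, whence again $\d\in C(\g)$. In either case the diagonal automorphism $\d$ is conjugate to $\g$, so $\g$ is diagonalizable by definition.

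The proof is essentially a packaging of Proposition~\ref{Anclosure.prop}, and I expect the only point requiring genuine care to be the verification that $C_{\Aff(n)}(\g)$ qualifies as an algebraic subset in the precise sense of Definition~\ref{affine-algebraic.def}—that it is not merely locally closed but also contained in one of the sets $\AutA{n}_{k}$—since this is exactly what licenses the application of the weak-closedness hypothesis. This is the main (though minor) obstacle, and the elementary degree bound above settles it.
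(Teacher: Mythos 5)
Your overall route is exactly the paper's: the paper's proof is two sentences, observing that the case of $C_{\Aff(n)}(\g)$ closed is immediate from Proposition~\ref{Anclosure.prop}, and that if $C(\g)$ is weakly closed, then $C_{\Aff(n)}(\g)\subseteq C(\g)$ forces $\overline{C_{\Aff(n)}(\g)}\subseteq C(\g)$. Your extra verification that $C_{\Aff(n)}(\g)$ is an \emph{algebraic} subset in the sense of Definition~\ref{affine-algebraic.def} (locally closed \emph{and} contained in $\AutA{n}_{\deg\g}$, via the degree bound for affine conjugation) is a correct and worthwhile spelling-out of what the paper leaves implicit when it invokes weak closedness.

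There is, however, one concrete slip: you set $\d:=d_{a}\g$ and call it ``the diagonal automorphism,'' but the differential of $\g$ at a fixed point is merely an element of $\GL(n)$, in general not of $D(n)$. As written, your conclusion ``$\d\in C(\g)$, hence $\g$ is diagonalizable'' only proves that $\g$ is \emph{linearizable}. The statement you actually need is the ``in particular'' clause of Proposition~\ref{Anclosure.prop}: $\overline{C_{\Aff(n)}(\g)}$ contains a genuinely diagonal automorphism, obtained by degenerating $d_{a}\g$ further inside $\GL(n)$ (the closure of every $\GL(n)$-conjugacy class in $\GL(n)$ contains a diagonal element, and $\overline{C_{\Aff(n)}(\g)}$ is closed and stable under $\GL(n)$-conjugation, so it contains $\overline{C_{\GL(n)}(d_{a}\g)}$). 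Replacing your $\d$ by that diagonal element, the rest of your argument goes through verbatim in both cases and coincides with the paper's proof.
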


\begin{proof} The statement for $C_{\Aff(n) }(\g)$ is clear by the proposition above. If $C(\g)$ is weakly closed, then $C_{\Aff(n)}(\g)\subseteq C(\g)$ implies that  $\overline{C_{\Aff(n)}(\g)} \subseteq C(\g)$.
\end{proof}

Using Proposition~\ref{semisimple-auto-has-fixed-point.prop} and Corollary~\ref{ss-weakly-closed.cor}, we get the following result.

\begin{corollary}  \label{a-criterion-for-diagonalizability.cor}
Let $\g \in \Aut (\AA^n)$ be a semisimple automorphism. Then $\g$ is diagonalizable if and only if its conjugacy class $C(\g)$ is weakly closed.
\end{corollary}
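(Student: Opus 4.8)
The plan is to establish the two implications separately, in each case reducing to a result already proved earlier in this subsection. The statement is an equivalence for a \emph{semisimple} automorphism $\g$, so the genuine content lies in relating the geometric condition (weak closedness of the conjugacy class) to the algebraic one (diagonalizability), and both directions are available as essentially immediate consequences of the preparatory results.

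For the forward direction, I would assume that $\g$ is diagonalizable. Then there is nothing to prove beyond citing Corollary~\ref{ss-weakly-closed.cor}, which asserts precisely that the conjugacy class $C(\g)$ of a diagonalizable automorphism is weakly closed. Note that this direction does not even use the semisimplicity hypothesis separately, since diagonalizable automorphisms are in particular semisimple.

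For the converse, I would assume that $\g$ is semisimple and that $C(\g)$ is weakly closed, and deduce that $\g$ is diagonalizable. The first step is to produce a fixed point: by Proposition~\ref{semisimple-auto-has-fixed-point.prop} every semisimple automorphism of $\An$ has a fixed point $a \in \An$. Once a fixed point is available, I would invoke Corollary~\ref{fpclosed.cor}, whose hypotheses are exactly ``$\g$ has a fixed point'' together with ``$C(\g)$ is weakly closed''; that corollary then concludes directly that $\g$ is a diagonalizable element. This closes the equivalence.

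The main point to emphasize is that there is no real obstacle at the level of this corollary itself: all the work has been done in the cited statements. The substantive input is twofold, namely the fixed-point theorem (Proposition~\ref{semisimple-auto-has-fixed-point.prop}, which ultimately rests on the result that finite-order automorphisms of $\An$ have fixed points) and the analysis of weak closures of conjugacy classes carried out via the family arguments leading to Corollary~\ref{ss-weakly-closed.cor} and Corollary~\ref{fpclosed.cor}. Accordingly, the proof of the final statement is simply the bookkeeping that assembles these two ingredients into the desired equivalence.
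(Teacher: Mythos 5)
Your proof is correct and follows exactly the route the paper takes: the forward implication is Corollary~\ref{ss-weakly-closed.cor}, and the converse combines the fixed-point result of Proposition~\ref{semisimple-auto-has-fixed-point.prop} with Corollary~\ref{fpclosed.cor}. The paper's proof is precisely this assembly, so there is nothing to add.
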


\begin{remark}
The  $\Aff(n)$-conjugacy classes in $\Aff(n)$ are known and have been studied by \name{Blanc} in \cite{Bl2006Conjugacy-classes-}. It follows that the closed $\Aff(n)$-conjugacy classes are the classes of the semisimple elements, and that each such closed $\Aff(n)$-conjugacy class contains a diagonal element $(a_1x_1, \ldots, a_n x_n)$ which is uniquely determined up to a permutation of the scalars $a_i$.
\end{remark}

\begin{example}
Let $\g=(x,z,xz-y)\in \AutA{3} $. The fixed point set has two irreducible components, namely the two lines $\{y=z=0\}$ and $\{x=2,y=z\}$. An easy calculation shows that the differential $d_{p}\g$ in the fixed point $p=(a,0,0)$ has trace $a+1$ and is semisimple except for $a=\pm2$. Thus the weak closure $\wc{C(\g)}$ contains uncountably many conjugacy classes of diagonal elements.
\end{example}

It is well known that the closure of the $\GL(n)$-conjugacy class of a linear endomorphism $\g$ contains the semisimple part $\g_{s}$. So we might ask the following question.

\begin{question}\label{semisimple-part-in-the-weak-closure-of-the-conjugacy-class.ques} 
Let $\g\in\AutA{n}$ be a locally finite automorphism. Does the weak closure of $C(\g)$ contain $\g_{s}$? 
And what about the closure of $C(\g)$?
\end{question}

A positive answer to the first question would imply that if the conjugacy class of a locally finite automorphism $\g$ is weakly closed, then $\g$ is semisimple (see the implication (\ref{g-has-a-weakly-closed-conjugacy-class})$\Rightarrow$(\ref{g-is-semisimple}) from Question~\ref{diagonalizable-and-semisimple.ques}).

This holds for triangularizable automorphisms, by Proposition~\ref{triang.prop}(\ref{3-triangular}), and therefore in dimension $n=2$ since in that case an automorphism is locally finite if and only if it is triangularizable (see Lemma~\ref{lengthone.lem} in Section~\ref{Aut(A2).sec}). It is also true in the following case.

\begin{proposition}   \label{unipotent-part-is-translation.prop}
Let $\g \in \AutA{n}$ be a locally finite automorphism whose unipotent part $\g_u$ is conjugate to a translation. Then the weak closure of $C(\g)$ contains $\g_{s}$.
\end{proposition}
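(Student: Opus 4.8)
The plan is to reduce to the case where the unipotent part is an honest translation in the direction of the last coordinate, read off the resulting normal form of the semisimple part forced by commutativity, and then let a one-parameter group of dilations of the last coordinate degenerate $\g$ to its semisimple part inside the conjugacy class. Throughout I use that both $C(\g)$ and $\wc{C(\g)}$ are invariant under conjugation (Remarks~\ref{constr.rem}) and that the Jordan decomposition is preserved by conjugation (Lemma~\ref{JD.lem}); hence I may freely replace $\g$ by a conjugate, since $\g_{s}\in\wc{C(\g)}$ for $\g$ is equivalent to the same statement for any conjugate. By hypothesis $\g_{u}$ is conjugate to a translation, so after conjugating I assume $\g_{u}=\t_{v}$; if $v=0$ then $\g=\g_{s}\in C(\g)\subseteq\wc{C(\g)}$ and we are done, so I take $v\neq0$ and, after a further linear conjugation, $v=e_{n}$, i.e. $\g_{u}=(x_{1},\dots,x_{n-1},x_{n}+1)$.

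Next I determine the shape of $\g_{s}$. Since $\g_{s}$ commutes with $\t_{e_{n}}$ it satisfies $\g_{s}(x',x_{n}+1)=\g_{s}(x',x_{n})+e_{n}$, where $x'=(x_{1},\dots,x_{n-1})$. Comparing degrees in $x_{n}$ (this is where $\operatorname{char}\kk=0$ enters) forces the first $n-1$ components of $\g_{s}$ to be independent of $x_{n}$ and the last to equal $x_{n}+t(x')$; thus $\g_{s}=(\bar\g(x'),x_{n}+t(x'))$ for some $\bar\g\in\AutA{n-1}$ and $t\in\kk[x']$. In particular $\kk[x']$ is a $\g_{s}^{*}$-stable subalgebra of $\OOO(\A{n})$ and $\bar\g^{*}=\g_{s}^{*}|_{\kk[x']}$ is semisimple.

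The key step, and the place where semisimplicity is genuinely used, is to straighten away the summand $t(x')$ in the last coordinate. The element $t$ lies in a finite-dimensional $\bar\g^{*}$-stable subspace $W\subseteq\kk[x']$, and $U:=\kk x_{n}\oplus W$ is a finite-dimensional $\g_{s}^{*}$-stable subspace. Writing $\sigma:=\g_{s}^{*}|_{U}$, semisimplicity gives $U=\Ker(\sigma-\id)\oplus\Image(\sigma-\id)$; since $t=(\sigma-\id)(x_{n})\in\Image(\sigma-\id)$ while $t\in W$, decomposing $t$ into its eigenvalue-$1$ and non-$1$ parts shows its invariant part vanishes, so $t\in\Image\big((\bar\g^{*}-\id)|_{W}\big)$. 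Thus there is $q\in\kk[x']$ with $(\bar\g^{*}-\id)(q)=-t$. Conjugating by $h:=(x',x_{n}+q(x'))$, which commutes with $\t_{e_{n}}$, I obtain $h\g_{s}h^{-1}=(\bar\g(x'),x_{n})=:\g_{s}'$ and $h\g h^{-1}=\g_{s}'\cdot\t_{e_{n}}=(\bar\g(x'),x_{n}+1)=:\g'$. I expect this paragraph to be the main obstacle: everything else is a formal degeneration, but solving $(\bar\g^{*}-\id)(q)=-t$ is exactly the point at which ``$\g_{s}$ semisimple'' is indispensable, in complete analogy with Proposition~\ref{triang.prop}(\ref{1-semisimple}).

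Finally I degenerate. For $c\in\kst$ set $\lambda(c):=(x_{1},\dots,x_{n-1},c\,x_{n})$. As $\g_{s}'$ neither involves nor moves $x_{n}$, it commutes with $\lambda(c)$, whereas $\lambda(c)\,\t_{e_{n}}\,\lambda(c)^{-1}=\t_{c\,e_{n}}$; hence $\lambda(c)\,\g'\,\lambda(c)^{-1}=(\bar\g(x'),x_{n}+c)$. This extends to a morphism $\phi\colon\A{1}\to\AutA{n}$, $c\mapsto(\bar\g(x'),x_{n}+c)$, of degree independent of $c$, so its image lies in some $\AutA{n}_{k}$; moreover $\phi$ is a closed immersion (the parameter $c$ is recovered as the constant term of the last coordinate), with $\phi(\kst)\subseteq C(\g')=C(\g)$ and $\phi(0)=\g_{s}'$. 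Therefore $\phi(\kst)$ is an algebraic subset contained in $C(\g)$, and $\g_{s}'=\phi(0)\in\overline{\phi(\kst)}\subseteq\wc{C(\g)}$ by the very definition of the weak closure. Since $\g_{s}'=h\g_{s}h^{-1}$ is conjugate to $\g_{s}$ and $\wc{C(\g)}$ is conjugation-invariant, I conclude $\g_{s}\in\wc{C(\g)}$.
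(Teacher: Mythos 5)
Your proof is correct, and its skeleton coincides with the paper's: both normalize $\g_{u}$ to a coordinate translation, use commutation with that translation to force $\g_{s}$ into the form (automorphism of the complementary variables, translated coordinate shifted by a polynomial $t(x')$), conjugate by a map $(x', x_{n}+q(x'))$ commuting with the translation to kill the shift, and then degenerate along the one-parameter group of dilations of the translated coordinate, exactly as in $\lim_{t\to 0}\lambda(t)\cdot\g'\cdot\lambda(t)^{-1}=\g'_{s}$. The one genuine divergence is the straightening step. The paper obtains the normal form $\s'=(\lambda x_{1},g_{2},\ldots,g_{n})$ in one stroke by applying Proposition~\ref{lifting-red-group-actions.prop} (the Kraft--Kutzschebauch linearization of reductive group actions on $\AA^{1}\times X$ over $X$, applied to the reductive group $G=\overline{\langle \g_{s}\rangle}$), whereas you solve the coboundary equation $(\bar\g^{*}-\id)(q)=-t$ by hand: since $t=(\g_{s}^{*}-\id)(x_{n})$ lies in $\Image(\sigma-\id)$ on the finite-dimensional stable subspace $U=\kk x_{n}\oplus W$, semisimplicity of $\sigma$ gives $U=\Ker(\sigma-\id)\oplus\Image(\sigma-\id)$, so the invariant component of $t$ vanishes and $(\bar\g^{*}-\id)$ is invertible on the non-unit eigenspaces of $W$; this argument is sound (and your characteristic-zero degree comparison establishing the shape of $\g_{s}$ is also fine). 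Your route is self-contained and isolates precisely where semisimplicity is indispensable --- the solvability of the coboundary equation, the same mechanism as in Proposition~\ref{triang.prop}(1), as you note --- while avoiding reductive-group machinery; the paper's route is shorter and the cited theorem would also handle a commuting reductive group not topologically generated by one semisimple element, where the character $\lambda$ can be nontrivial (in the present situation commutation with the translation forces $\lambda=1$, which your computation makes explicit while the paper leaves it implicit). Your final step, verifying that $\phi(\kst)$ is an algebraic subset of $C(\g)$ so that $\g'_{s}\in\overline{\phi(\kst)}\subseteq\wc{C(\g)}$, is the same limit argument as the paper's, spelled out slightly more carefully against the definition of the weak closure.
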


\begin{proof}
We may assume that 
$\u:=\g_u =(x_{1}+1,x_{2},\ldots,x_{n})$.
As $\s:=\g_s$ commutes with $\u$, it is of the form
\[ 
\s=(x_{1}+p(x_{2},\ldots,x_{n}), g_{2}(x_{2},\ldots,x_{n}),\ldots,g_{n}(x_{2},\ldots,x_{n}))\in\AutA{n}.
\]
Since $\s$ is semisimple, the group $G:= \overline { \langle \s \rangle }$ is reductive. The automorphism $\s$ induces an automorphism $\overline{ \s} := ( g_2(x_2, \ldots, x_n), \ldots, g_n( x_2, \ldots, x_n) )$ of $X:= \AA^{n-1}$. Therefore, the group $G$ also acts on $X$ and the second projection $\pr_2 \colon \AA^1 \times X \to X$ is $G$-equivariant. By the proposition below, there exist a $\pr_2$-automorphism $\phi$ of $\AA^1 \times X$ and a $\lambda \in \kk^*$ such that
\[
\s':=\phi\circ \s \circ \phi^{-1} = (\lambda x_{1},g_{2}(x_{2},\ldots,x_{n}),\ldots,g_{n}(x_{2},\ldots,x_{n})).
\] 
Since $\phi$ has the form $\phi=(\rho x_{1}+q(x_{2},\ldots,x_{n}),x_{2},\ldots,x_{n})$, it is clear that it commutes with $\u$. Hence, $\g':=\phi\cdot\g\cdot\phi^{-1}$ has the Jordan decomposition  $\g'=\u \cdot\s' = \s' \cdot\u$.
Now set $\lambda(t):=(tx_{1},x_{2},\ldots,x_{n})$, $t \in \kst$. An easy calculation shows that  
\[
\lim_{t \to 0}\lambda(t) \cdot \g'   \cdot \lambda(t)^{-1}= (x_{1}, g_{2},\ldots,g_{n})=\g'_{s} \in\AutA{n},
\]
and the claim follows.
\end{proof}

\begin{proposition}[\protect{\cite[Proposition~1]{KrKu1996Equivariant-affine}}]\label{lifting-red-group-actions.prop} Let $X$ be an affine variety with an action of a reductive group $G$. 
Assume that $G$ acts also on $\AA^1 \times X$ in such a way that the second projection $\pr_2 \colon \AA^1 \times X \to X$ is $G$-equivariant. Then the $G$-action on $\AA^1 \times X$ is equivalent to a diagonal action of $G$ of the form
\[ 
g (a,x) = ( \chi (g) \cdot a , gx) \quad \text{for } g \in G, \ a \in \AA^1,  \ x \in X, 
\]
where $\chi$ is a character of $G$.
\end{proposition}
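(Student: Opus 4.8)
The plan is to write the $G$-action on $\AA^{1}\times X$ explicitly as a family of fibrewise affine maps, and then to trivialize its multiplicative part and its additive part in turn. Since $\pr_{2}$ is $G$-equivariant, each $\rho(g)$ preserves the fibres of $\pr_{2}$ and maps the fibre over $x$ to the fibre over $gx$ by an affine isomorphism of $\AA^{1}$; hence the action has the form
\[
g\cdot(a,x) = \bigl(\alpha(g,x)\,a + \beta(g,x),\; gx\bigr),
\]
where $\alpha\in\OOO(G\times X)^{*}$ (the fibrewise linear coefficient is nowhere zero, being part of an automorphism) and $\beta\in\OOO(G\times X)$. Composing $g\cdot(h\cdot(a,x))$ and comparing with $(gh)\cdot(a,x)$ yields the cocycle identities
\[
\alpha(gh,x) = \alpha(g,hx)\,\alpha(h,x), \qquad \beta(gh,x) = \alpha(g,hx)\,\beta(h,x) + \beta(g,hx).
\]
I would first reduce to $X$ irreducible, since $G$ permutes the (finitely many) components through the finite group $G/G^{\circ}$.

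Next I would show that the multiplicative cocycle is a character of $G$. This is immediate when $\OOO(X)^{*}=\kk^{*}$, which is the case $X=\AA^{n-1}$ needed for the application in Proposition~\ref{unipotent-part-is-translation.prop}: there $\alpha(g,\cdot)\in\kk^{*}$ is a constant, the first identity reads $\alpha(gh)=\alpha(g)\alpha(h)$, and one sets $\chi:=\alpha$. In general, for fixed $g$ the element $\alpha(g,\cdot)$ lies in $\OOO(X)^{*}$, and by \name{Rosenlicht}'s theorem (Proposition~\ref{structure-of-R*.prop}) one has $(\OOO(X)^{*})^{\circ}=\kk^{*}$ with $\OOO(X)^{*}/\kk^{*}$ a finitely generated free abelian group. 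The morphism of ind-groups $G^{\circ}\to\OOO(X)^{*}$, $g\mapsto\alpha(g,\cdot)$, has connected source, so its image lies in $(\OOO(X)^{*})^{\circ}=\kk^{*}$; thus $\alpha(g,\cdot)\in\kk^{*}$ for $g\in G^{\circ}$, and the first identity then forces $\chi:=\alpha(\cdot)$ to restrict to a character of $G^{\circ}$. (The passage to a possibly disconnected reductive $G$ is exactly where a hypothesis on the units of $\OOO(X)$ is needed, since a nontrivial action of $G/G^{\circ}$ on $\OOO(X)^{*}/\kk^{*}$ can obstruct the normalization; this does not occur in the intended application.)

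It then remains to remove the translation part. With $\alpha=\chi$ constant, the sections $s_{q}\colon x\mapsto(q(x),x)$, $q\in\OOO(X)$, form an affine space on which $G$ acts by affine transformations, via $(g\cdot q)(x)=\chi(g)\,q(g^{-1}x)+\beta(g,g^{-1}x)$; here the linear part is the rational $G$-module $\OOO(X)$ twisted by $\chi$ and the translation part is the cocycle $\beta$. Since $G$ is reductive, hence linearly reductive in characteristic zero (Section~\ref{notation.subsec}), complete reducibility makes the first cohomology of $G$ on any rational module vanish, so this affine cocycle is a coboundary and the $G$-action on sections has a fixed section $s_{q}$. The $\pr_{2}$-automorphism $\phi\colon(a,x)\mapsto(a-q(x),x)$ then conjugates $\rho$ into the diagonal action $g\cdot(a,x)=(\chi(g)a,gx)$, as required.

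The main obstacle I expect is the passage from the affine cocycle $(\alpha,\beta)$ to the linear character action, which splits into the two points above. The elimination of $\beta$ is the conceptual heart and is where reductivity genuinely enters, through the vanishing of the first cohomology on rational modules; the multiplicative normalization is more elementary but relies on the structure of $\OOO(X)^{*}$ and on connectedness, and is the place where one must be careful about the hypotheses (and about the bookkeeping of inverses in the cocycle identities). Everything else is formal once the fibrewise affine description is set up.
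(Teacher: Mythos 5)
The paper gives no proof of this proposition; it is imported verbatim from \cite{KrKu1996Equivariant-affine}, so your argument can only be measured against that source, and it follows the same standard route. Your setup is correct: the fibrewise form $g(a,x)=(\alpha(g,x)a+\beta(g,x),gx)$ with $\alpha\in\OOO(G\times X)^{*}$, the two cocycle identities as you wrote them, the use of \name{Rosenlicht}'s structure theorem for the multiplicative part, and linear reductivity (vanishing of first cohomology on locally finite rational modules, equivalently the equivariant splitting of $0\to\OOO(X)\to\OOO(X)\oplus\OOO(X)t\to L\to 0$ via the Reynolds operator) to kill $\beta$; your conjugating map $\phi(a,x)=(a-q(x),x)$ with $q$ a fixed section is exactly right, and the local finiteness needed to find a finite-dimensional $G$-stable affine subspace of sections is available since $\beta\in\OOO(G)\otimes\OOO(X)$. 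One local slip: $g\mapsto\alpha(g,\cdot)$ is \emph{not} a homomorphism of ind-groups --- the identity $\alpha(gh,x)=\alpha(g,hx)\alpha(h,x)$ makes it only a crossed homomorphism --- but your connectedness argument needs merely that it is a morphism of ind-varieties from the connected $G^{\circ}$ into $\OOO(X)^{*}$ sending $e$ to $1$, so its image still lies in $(\OOO(X)^{*})^{\circ}=\kk^{*}$ by Proposition~\ref{structure-of-R*.prop} (for $X$ irreducible), and the conclusion $\alpha|_{G^{\circ}}=\chi$ stands.

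Your parenthetical caveat is not excess caution: the statement as printed in the survey is actually false for disconnected $G$. Let $G=\ZZ/2$ act on $X=\kk^{*}$ by $x\mapsto x^{-1}$ and set $\sigma(a,x)=(xa,\,x^{-1})$; this is an involution of $\AA^{1}\times\kk^{*}$ with equivariant $\pr_{2}$, and its fixed locus is $\AA^{1}\times\{1\}\,\dot\cup\,\{(0,-1)\}$, a line and a point, whereas any diagonal action $(a,x)\mapsto(\chi(\sigma)a,x^{-1})$ has fixed locus two lines (if $\chi(\sigma)=1$) or two points (if $\chi(\sigma)=-1$); since an equivariant isomorphism must carry fixed loci to fixed loci, no such equivalence exists. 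This realizes precisely the obstruction you predicted, the nonvanishing of $\hat H^{-1}$ of $G/G^{\circ}$ acting on $\OOO(X)^{*}/\kk^{*}$ (here $\ZZ$ with the inversion action). A similar failure occurs for disconnected $X$, where your reduction to irreducible components produces one character per connected component with no reason for them to agree, so connectedness of $X$ must also be assumed. The proposition therefore has to be read with the hypotheses of the cited source (for instance $\OOO(X)^{*}=\kk^{*}$, or $G$ connected and $X$ irreducible), and your proof covers exactly those cases --- in particular $X=\AA^{n-1}$, which is all that the application in Proposition~\ref{unipotent-part-is-translation.prop} requires.
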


\ps
\subsection{Unipotent automorphisms of \texorpdfstring{$\AA^n$}{An}}  \label{Unipotent-automorphisms-of-An.subsec}
\ps
\subsubsection*{\bfit{Generalized translations}} 
A unipotent element $\u \in \Aut(\An)$ will be called a \itind{generalized translation} if the corresponding $\kplus$-action $\mu$ has a section, i.e. there is a $\kplus$-equivariant morphism $\sigma\colon \An \to \kplus$ (see Section~\ref{local-sections.subsec}). It then follows that the morphism $\kplus\times Y \to \An$, $(s,y)\mapsto sy$, is an isomorphism where $Y := \sigma^{-1}(0)$.

Clearly, if $Y \simeq \AA^{n-1}$, then $\u$ is conjugate to a translation.  But it is an open problem if $A^{1}\times Y \simeq \AA^{n}$ always implies that $Y \simeq \AA^{n-1}$, i.e. if any generalized translation is conjugate to a translation (Cancellation Problem, see \cite{Kr1996Challenging-proble}). This is obvious for $n=2$, and it is also known for $n = 3$, due to the work of \name{Fujita}, \name{Miyanishi} and \name{Sugie}, see \cite{Fu1979On-Zariski-problem} and \cite{MiSu1980Affine-surfaces-co}.

\begin{remarks} 
\be
\item If the $\kplus$-action on $\An$ corresponds to the locally nilpotent vector field $\delta$, then $f \colon \An \to \kplus$ is a section if and only if  $\delta f = 1$ (Lemma~\ref{sections-via-vector-fields.lem}).
\item Let  $\u=(f_{1},\ldots,f_{n})\in\AutA{n}$ be a unipotent automorphism, and assume that there is a $j$ such that $f_{j}=x_{j}+c_{j}$ with a nonzero constant $c_{j}$. Then $\u$ is conjugate to a translation. 
\newline
(Conjugating $\u$ with a suitable $t\id$, $t \in\kst$, we can assume that $c_{j}=1$. For the corresponding $\kplus$-action $\u(s)$ we see that the $j$-coordinate is $x_{j}+s$. Therefore, the linear projection $p\colon \An 
\to \kk$  onto the $j$th coordinate is a section of this action, and since $p$ is linear we have $Y:=p^{-1}(0)\simeq \AA^{n-1}$.)
\ee
\end{remarks}  \label{unipotent.rems}
The next result about unipotent elements in the affine group $\Aff(\An)$ is due to \name{J\'er\'emy Blanc} \cite{Bl2006Conjugacy-classes-}.
\begin{proposition} \label{blanc.prop}
All fixed point free unipotent  $\u\in\Aff(\A{n})$ are conjugate in $\AutA{n}$.
\end{proposition}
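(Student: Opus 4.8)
The plan is to show that every fixed point free unipotent $\u \in \Aff(\A{n})$ is conjugate in $\AutA{n}$ to the single standard translation $\t := (x_{1}+1,x_{2},\ldots,x_{n})$; since they are then all conjugate to the same element, they are mutually conjugate. Write $\u = (w,g)$ with $g \in \GL(n)$ and $w \in \kk^{n}$. Unipotence of $\u$ in $\Aff(n)$ means that $g$ is unipotent, so $N := \log g$ is a well-defined nilpotent endomorphism and $g - \id = N\,\Psi(N)$, where $\Psi(N) = \id + \tfrac12 N + \cdots$ is invertible; since $\Psi(N)$ is surjective this gives $\Image(g-\id) = \Image N$. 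By the fixed-point criterion for affine transformations from the beginning of Section~\ref{generalities-on-Aut(An).sec} (an affine map $(w,g)$ has a fixed point iff $w \in (g-\id)(\kk^{n})$), the automorphism $\u$ is fixed point free if and only if $w \notin \Image N$.

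First I would pass to the associated $\kplus$-action. Being fixed point free, $\u$ is in particular nontrivial, so $\overline{\langle\u\rangle} \simeq \kplus$ and $\u$ is the time-one map of a $\kplus$-action $\mu$ on $\An$; let $\delta \in \VEC(\A{n})$ be the corresponding locally nilpotent vector field (Section~\ref{Cplus.subsec}, Corollary~\ref{exp-for-Aut(X).cor}). Because $\u \in \Aff(n)$, the field $\delta$ is affine, of the form $\delta_{x} = Nx + v$ for some $v \in \kk^{n}$, its linear part being the linear vector field attached to $N$ (cf. Section~\ref{linear-case.subsec}). Now the fixed points of $\u$ are exactly the common fixed points of $\overline{\langle\u\rangle} = \kplus$, i.e. the zeros of $\delta$; hence $\u$ is fixed point free if and only if $\{x : Nx + v = 0\} = \emptyset$, that is, if and only if $v \notin \Image N$. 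Working with the zero set of $\delta$ in this way lets me transfer the fixed-point-free hypothesis to the constant part $v$ of $\delta$ without computing the precise relation between $v$ and $w$.

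The key construction is then a \emph{linear} section. Since $v \notin \Image N$, there is a linear functional $\ell \in (\kk^{n})^{*}$ with $\ell|_{\Image N} = 0$ and $\ell(v) = 1$; the first condition says precisely $\ell \circ N = 0$. Viewing $f := \ell$ as a (linear) regular function on $\An$, I get $\delta f = \ell(Nx + v) = \ell(Nx) + \ell(v) = 0 + 1 = 1$, so $f$ is a $\kplus$-section of $\mu$ by Lemma~\ref{sections-via-vector-fields.lem}. By Proposition~\ref{G-section.prop} this section yields a $\kplus$-equivariant isomorphism $\kplus \times Y \simto \An$, $(s,y) \mapsto \mu(s)y$, with $Y := f^{-1}(0) = \ker\ell$, under which $\mu$ becomes the translation action on the first factor, so $\u = \mu(1)$ becomes translation by one unit. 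Because $f = \ell$ is linear, $Y = \ker\ell$ is a hyperplane, hence $Y \simeq \AA^{n-1}$ and $\kplus \times Y \simeq \A{n}$, so $\u$ is conjugate in $\AutA{n}$ to a genuine translation; conjugating its translation vector to $e_{1}$ by an element of $\GL(n)$ then brings it to $\t$.

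The delicate point — and what makes the statement hold in every dimension even though the Cancellation Problem is open (see the discussion of generalized translations above) — is exactly that the section can be chosen linear. For a general generalized translation one only obtains $\An \simeq \kplus \times Y$ with $Y$ a priori known merely to satisfy $\A{1}\times Y \simeq \A{n}$, and one cannot in general conclude $Y \simeq \AA^{n-1}$. Here the hypothesis that $\u$ is affine is precisely what permits building the section from a linear functional vanishing on $\Image N$, forcing $Y$ to be a linear hyperplane and hence an affine space. I expect this — securing linearity of the section, together with the bookkeeping identifying fixed-point-freeness of $\u$ with $v \notin \Image N$ — to be the only substantive step; everything else is a direct application of the results on $\kplus$-actions, sections, and locally nilpotent vector fields established earlier.
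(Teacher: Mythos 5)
Your proof is correct, and it reaches the conclusion by a route that differs from the paper's in one substantive respect. The paper first normalizes $\u$ via the Jordan normal form of its linear part, conjugating by elements of $\GL(n)$, a translation, and a diagonal automorphism until every coordinate is $x_{j}$, $x_{j}+1$, or $x_{j}+x_{j+1}$; fixed-point-freeness forces a coordinate $x_{j}+1$ to occur, and then Remark~\ref{unipotent.rems}(2) (whose proof uses the $j$th coordinate projection as a linear section) finishes the argument. You bypass the Jordan normal form entirely: taking $\delta_{x}=Nx+v$ with $N=\log g$, you translate fixed-point-freeness into $v\notin\Image N$ (correctly, via $X^{\u}=X^{\overline{\langle\u\rangle}}=\{\delta=0\}$, using that the stabilizer of a point is a closed subgroup of $\kplus$ containing $\ZZ$), and then produce the linear section intrinsically as a functional $\ell$ with $\ell|_{\Image N}=0$, $\ell(v)=1$, invoking Lemma~\ref{sections-via-vector-fields.lem} and Proposition~\ref{G-section.prop}. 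So the key mechanism — a \emph{linear} $\kplus$-section forcing $\An\simeq\kplus\times\AA^{n-1}$, which is exactly what keeps the argument clear of the Cancellation Problem — is the same in both proofs, but your coordinate-free choice of $\ell$ replaces the paper's case analysis and is arguably cleaner; the paper's normal-form computation, in exchange, gives an explicit list of representatives for all unipotent affine conjugacy classes, not just the fixed-point-free one. Your opening computation $\Image(g-\id)=\Image N$ via $g-\id=N\Psi(N)$ is correct but, as you note, ultimately redundant once you work with the zeros of $\delta$; it could be dropped without loss.
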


\begin{proof}
By the Jordan normal form we see that $\u$ is $\GL(n)$-conjugate to an element of the form $(u_{1},\ldots,u_{n})$ where each $u_{j}$ is $x_{j}+c_{j}$ or  $x_{j}+x_{j+1}+c_{j}$. Conjugating with a suitable translation, we can assume that $c_{j}=0$ in the second case, and conjugation with a suitable diagonal element we finally end up with the three possibilities $u_{j}=x_{j}$, $u_{j}=x_{j}+1$, or $u_{j}=x_{j}+x_{j+1}$. If $\u$ has no fixed points, then the second case has to show up, and the claim follows from Remark~\ref{unipotent.rems}(2).
\end{proof}

\begin{question}\label{characterization-of-unipotent-automorphisms-conjugate-to-translations} 
Is there a characterization of those unipotent $\u\in\AutA{n}$  which are conjugate to  translations? Same question for  $\u\in\JJJ(n)$.
\end{question}

The following result is due to \name{Kaliman} \cite{Ka2004Free-C-actions-on-}.
\begin{proposition}\label{freeunipotent.prop}
A unipotent element $\u\in \AutA{3}$ is conjugate to a translation if and only if  $\u$ has no fixed points.
\end{proposition}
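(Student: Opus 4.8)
The statement to prove is Proposition~\ref{freeunipotent.prop}: a unipotent element $\u\in\AutA{3}$ is conjugate to a translation if and only if it has no fixed points. This is attributed to Kaliman, so the honest plan is to explain the strategy of that proof rather than to reconstruct a wholly independent argument.

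The plan is to analyze the $\kplus$-action $\mu\colon\kplus\to\AutA{3}$ generated by $\u$, i.e. the unique $\kplus$-action with $\mu(1)=\u$ (such an action exists by Corollary~\ref{exp-for-Aut(X).cor}, since $\u$ unipotent corresponds to a locally nilpotent vector field $\delta=\xi_{\mu}$ on $\Athree$). One direction is trivial: a translation has no fixed points, and conjugation preserves the fixed point set, so a translation-conjugate automorphism is fixed-point free. For the converse, the key point is that $\u$ being fixed-point free is equivalent to the $\kplus$-action $\mu$ being \emph{free}, i.e. having trivial stabilizers everywhere; indeed the fixed points of $\u=\mu(1)$ coincide with $\Athree^{\mu}=\{x:\delta_x=0\}$, and since $\kplus$ has no nontrivial proper closed subgroups a nontrivial $\kplus$-orbit is automatically free, so no fixed points means the action is free. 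The strategy is then to show that a free $\kplus$-action on $\Athree$ is a trivial principal $\kplus$-bundle, which by Section~\ref{G-bundle.subsec} gives an isomorphism $\Athree\simeq\kplus\times Y$ carrying $\mu$ to translation in the first factor; it remains to identify $Y$ with $\AA^2$ so that the resulting automorphism is genuinely conjugate to a translation of $\Athree$.

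The main steps, in order, would be: first, pass from $\u$ to $\mu$ and reduce ``fixed-point free'' to ``free action'', as above. Second, establish that the free action admits a geometric quotient $\pi\colon\Athree\to Y$ which is a principal $\kplus$-bundle over a smooth affine surface $Y$; here $\Athree$ is factorial so $\OOO(\Athree)^{\kplus}$ is a polynomial-like invariant ring and Proposition~\ref{generalunipotent.prop} applies almost directly (it treats precisely a unipotent group of dimension $n$ acting freely on a factorial variety of dimension $n+1$, with $n=2$), giving $\Athree\simeq\kplus\times Y$ with $Y$ a smooth rational curve times... no: with $n=2$, $U=\kplus$ has dimension $1$ acting on $\Athree$ of dimension $3$, which is not the codimension-one situation of Proposition~\ref{generalunipotent.prop}. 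So instead the quotient $Y$ is a smooth affine surface, and the bundle is trivial because every principal $\kplus$-bundle over an affine variety is trivial (Proposition~\ref{principal-G-bundle.prop}(1)). Third, and this is the crux, one must prove $Y\simeq\AA^2$: the quotient of $\Athree$ by a free $\kplus$-action is a smooth contractible (or at least $\AA^1$-like) affine surface with trivial units and trivial Picard group, and one invokes the characterization of $\AA^2$ among smooth affine surfaces. This is exactly where Kaliman's deep input enters, relying on the classification of $\AA^1$-fibrations and the Fujita--Miyanishi--Sugie style results about surfaces dominated by affine space (the same circle of ideas cited in the Cancellation discussion in Section~\ref{Unipotent-automorphisms-of-An.subsec}).

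The hard part, and the genuine obstacle, is precisely this third step: showing the quotient surface $Y$ is isomorphic to $\AA^2$. The bundle-triviality and the reduction to a free action are formal consequences of the framework already developed in the excerpt, but identifying $Y\simeq\AA^2$ cannot be done by elementary ind-group manipulations — it requires the surface-theoretic classification (topological contractibility, logarithmic Kodaira dimension $-\infty$, and the characterization theorem for $\AA^2$). My plan would be to isolate the purely formal reductions cleanly, reduce to the statement ``a smooth affine surface $Y$ with $\kplus\times Y\simeq\Athree$ is $\AA^2$'', and then cite Kaliman's result \cite{Ka2004Free-C-actions-on-} for that final identification, noting that once $Y\simeq\AA^2$ the isomorphism $\Athree\simeq\kplus\times\AA^2$ conjugating $\mu$ to translation in the $\kplus$-factor exhibits $\u=\mu(1)$ as conjugate to the translation $(x_1+1,x_2,x_3)$, which completes the proof.
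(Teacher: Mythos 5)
The paper itself gives no proof of this proposition: it is stated with a bare attribution to Kaliman \cite{Ka2004Free-C-actions-on-}, so your decision to sketch the shape of Kaliman's argument and cite him for the deep input is the right move in principle. Your formal reductions are also correct: fixed-point freeness of $\u$ is equivalent to freeness of the associated $\kplus$-action (closed subgroups of $\kplus$ are trivial or everything), a principal $\kplus$-bundle over an affine base is trivial (Proposition~\ref{principal-G-bundle.prop}), and you rightly caught that Proposition~\ref{generalunipotent.prop} is inapplicable because the orbits here have codimension two.

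However, you have mislocated the crux, and as written your outline has a genuine gap. You assert, as though it were formal, that ``the free action admits a geometric quotient $\pi\colon\Athree\to Y$ which is a principal $\kplus$-bundle over a smooth affine surface $Y$,'' and you place the hard work only in identifying $Y\simeq\AA^2$. It is the other way around. Every step you label formal is dimension-independent, yet the statement fails in dimension four: by Winkelmann's example, cited in this very paper (Remark~\ref{ffunipotent.rem} and Remark~\ref{example of Winkelmann.rem}), there is a fixed-point free triangular unipotent automorphism of $\AA^{4}$ that is not conjugate to a translation, and the orbit space is not even Hausdorff, so no geometric quotient exists. Hence the existence of a well-behaved quotient cannot be a formal consequence of the framework and is precisely where the dimension-three-specific work lies. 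Concretely, in Kaliman's proof the identification of the quotient surface is the cheap part: by a theorem of Miyanishi, the invariant ring $\OOO(\Athree)^{\kplus}$ is a polynomial ring $\kk[f,g]$, so the affine quotient is $\AA^2$ from the outset. The substance of \cite{Ka2004Free-C-actions-on-} is the proof that every fiber of the quotient map $(f,g)\colon\Athree\to\AA^2$ --- including the degenerate ones --- is a single orbit, so that this map is a (then automatically trivial) principal $\kplus$-bundle. If you expand your outline into a proof, you must either establish this fiber statement or cite Kaliman for it; citing him only for ``$Y\simeq\AA^2$'' leaves the actual theorem unproved.
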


\begin{remark}\label{example of Winkelmann.rem}
The example of \name{Winkelmann} mentioned in Remark~\ref{ffunipotent.rem} gives a triangular unipotent and fixed point free automorphism of $\AA^{4}$ which is not even conjugate to a generalized translation.
\end{remark}

\begin{example}\label{Bass.exa}
In $\Aut(\Atwo)$  every unipotent automorphism is \itind{triangularizable}, i.e. conjugate to a triangular automorphism from $\Ju(2)$, see Lemma~\ref{lengthone.lem}.  \name{Bass} remarked in \cite{Ba1984A-nontriangular-ac} that this does not hold in dimension $\geq 3$. He starts with the locally nilpotent linear vector field $\delta:=-2y\ddx + z\ddy \in \VEC(\Athree)$. Then $\Delta:=xz+y^{2}$ belongs to the kernel of $\delta$, hence $\Delta\delta$ is again locally nilpotent. The corresponding unipotent automorphism $\n$ of $\Athree$ is, by construction, a modification of the triangular automorphism corresponding to $\delta$ (see Section~\ref{modification.subsec}). This unipotent automorphism $\n$  is the famous {\it\name{Nagata}-automorphism}\idx{Nag@\name{Nagata}-automorphism}. It has the following fixed point set:
$$
F:= (\Athree)^{\n} = \{\Delta=0\} \cup \{y=z=0\}
$$
In particular, $F$ has an isolated singularity in 0. On the other hand, the fixed point set of any unipotent triangular automorphism has the form $\Aone\times Z$ where $Z \subset \AA^{n-1}$, hence cannot have an isolated singularity. As a consequence, $\n$ is not conjugate to a triangular automorphism.
\end{example}

\ps
\subsubsection*{\bfit{Closures of unipotent conjugacy classes}}
The next result shows that conjugacy classes of triangular unipotent elements in $\AutA{n}$ behave in a very strange way.
\begin{proposition}\label{triangclosure.prop}
For every nontrivial $\u\in \Ju(n)$ the weak closure $\wc{C(\u)}$ of the conjugacy class $C(\u)$ contains $\Ju(n)$. In particular, all these conjugacy classes have the same weak closure and the same closure, and they are not locally closed.
\end{proposition}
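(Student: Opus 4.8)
The plan is to reduce the whole statement to the single fact, already proved in Proposition~\ref{conj-class-Jn.prop}, that $\wc{C(\t_{1})}\supseteq\Ju(n)$, together with the fact that the weak closure of a conjugacy class is weakly closed and conjugation‑stable. Since only weak closures are involved, I would first pass to an uncountable base field by a base‑field extension $\KK/\kk$, so that Proposition~\ref{indconstr.prop} applies. The class $C(\u)$ is ind‑constructible (it is the image of the orbit map $\g\mapsto\g\u\g^{-1}$, hence a countable union of constructible sets, Remark~\ref{constr.rem}), so $\wc{C(\u)}$ is weakly closed; and it is conjugation‑stable by Remark~\ref{constr.rem}. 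Granting this, the proposition follows once I show that $\t_{1}\in\wc{C(\u)}$ for every nontrivial $\u\in\Ju(n)$: conjugation‑stability then gives $C(\t_{1})\subseteq\wc{C(\u)}$, weak‑closedness gives $\wc{C(\t_{1})}\subseteq\wc{C(\u)}$, and Proposition~\ref{conj-class-Jn.prop} gives $\Ju(n)\subseteq\wc{C(\t_{1})}\subseteq\wc{C(\u)}$.

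The core, and the main obstacle, is therefore to produce a nontrivial translation inside $\overline{C(\u)}$. Write $\u=(u_{1},\dots,u_{n})$ with $u_{i}=x_{i}+p_{i}(x_{i+1},\dots,x_{n})$, and let $j:=\max\{i\mid p_{i}\neq 0\}$, which exists since $\u\neq\id$; then $\u$ fixes $x_{j+1},\dots,x_{n}$ and $u_{j}=x_{j}+p_{j}(x_{j+1},\dots,x_{n})$. First I would conjugate $\u$ by a translation $\t_{w}$ supported on the coordinates $x_{j+1},\dots,x_{n}$: a direct computation shows this replaces $p_{j}$ by $p_{j}(x_{j+1}-w_{j+1},\dots,x_{n}-w_{n})$ while leaving $u_{k}=x_{k}$ for $k>j$, so, choosing $w$ with $p_{j}(-w_{j+1},\dots,-w_{n})\neq 0$ (possible as $p_{j}\neq0$), I may assume the constant term $c:=p_{j}(0)$ is nonzero. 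Then I would conjugate by the one‑parameter subgroup scaling only the variables above $j$, namely by $\lambda(s^{-1})$ where $\lambda(t)=(x_{1},\dots,x_{j},tx_{j+1},\dots,tx_{n})$. A short calculation gives $\Phi_{s}:=\lambda(s^{-1})\u\lambda(s^{-1})^{-1}$ with $j$‑th coordinate $x_{j}+p_{j}(sx_{j+1},\dots,sx_{n})$ and all other coordinates polynomial in $s$, so $s\mapsto\Phi_{s}$ extends to a morphism $\kk\to\AutA{n}$ (using Theorem~\ref{AutX-locally-closed-in-EndX.thm} to see the limit, an evident automorphism, lies in $\AutA{n}$), with $\Phi_{s}\in C(\u)$ for $s\neq0$ and limit $\u':=\Phi_{0}$ satisfying $u'_{j}=x_{j}+c$, $c\neq0$.

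This $\u'$ is exactly what I want. It is unipotent (triangular with identity diagonal, by Proposition~\ref{triang.prop}) and has a coordinate of the form $x_{j}+c$ with $c\neq0$, so by Remark~\ref{unipotent.rems} it is conjugate to a nontrivial translation; as all nontrivial translations are conjugate (Proposition~\ref{blanc.prop}), $\u'\in C(\t_{1})$. On the other hand, applying Remark~\ref{constr.rem} to the restriction of $s\mapsto\Phi_{s}$ to $\kst$, whose image lies in $C(\u)\subseteq\wc{C(\u)}$, shows $\u'\in\overline{\Phi(\kst)}\subseteq\wc{C(\u)}$. Hence $C(\t_{1})$ meets the weakly closed, conjugation‑stable set $\wc{C(\u)}$, which completes the main step.

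Finally I would read off the ``in particular'' statements. Running the argument for all nontrivial $\u$ yields $\wc{C(\u)}=\wc{C(\t_{1})}$ (the inclusion $\subseteq$ coming from $\u\in\Ju(n)\subseteq\wc{C(\t_{1})}$), and taking ordinary closures gives $\overline{C(\u)}=\overline{C(\t_{1})}$ for all nontrivial $\u$. For non‑local‑closedness (here $n\ge2$), suppose $C(\u)$ were locally closed; then $\overline{C(\u)}\setminus C(\u)$ would be closed and conjugation‑stable. Choosing a nontrivial $\v\in\Ju(n)$ with $\v\notin C(\u)$, which is possible since at least two conjugacy classes meet $\Ju(n)$, e.g. a translation and the shear $(x_{1}+x_{2}^{2},x_{2},\dots,x_{n})$ (distinguished by the presence of fixed points), I would get $\v\in\Ju(n)\subseteq\overline{C(\u)}$ and hence $\overline{C(\v)}\subseteq\overline{C(\u)}\setminus C(\u)$; but $\overline{C(\v)}=\overline{C(\u)}$, forcing $C(\u)=\emptyset$, a contradiction. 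The genuinely hard point is the explicit two‑step degeneration producing $\u'$; everything else is formal bookkeeping with weak closures.
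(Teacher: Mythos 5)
Your proof is correct and follows essentially the same route as the paper's: your two-step degeneration (conjugating by a generic translation to make $p_{j}(0)\neq 0$, then contracting the variables $x_{j+1},\ldots,x_{n}$ by the one-parameter subgroup $\d_{t}$ so that the limit has a coordinate of the form $x_{j}+c$ and is therefore conjugate to a translation by Remark~\ref{unipotent.rems}) is exactly the paper's step (2), while your appeal to Proposition~\ref{conj-class-Jn.prop} for the translation case replaces the paper's step (1), which proves the same density statement directly. The only difference is that you make explicit the bookkeeping the paper compresses into ``the remaining claims are immediate consequences'' --- the reduction to an uncountable base field so that Proposition~\ref{indconstr.prop} yields weak-closedness of $\wc{C(\u)}$, and the fixed-point argument (valid for $n\geq 2$) showing the classes are not locally closed.
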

\begin{proof}
The automorphism $\u$ is of the form $\u=(x_{1}+p_{1},x_{2}+ p_{2}, \ldots,x_{n}+p_{n})$ where $p_{i}\in\kk[x_{i+1},\ldots,x_{n}]$. 

(1) If $p_{n}\neq 0$, then $\u$ is conjugate to a translation (Remark~\ref{unipotent.rems}). Thus the conjugacy class of a translation contains an open dense set of $\Ju(n)$. This proves the first claim for translations.

(2) In general, conjugating  $\u$ with a generic translation, we can assume that the nonzero $p_{i}$ contain a nonzero constant term. Now let $j$ be maximal such that $p_{j}\neq 0$. If $j=n$, then the claim follows from (1). If $j<n$ we set $\d_{t} :=(x_{1},\ldots,x_{j},tx_{j+1},\ldots,tx_{n})$, $t\in\Cst$. Then
\begin{multline*}
\d_{t}^{-1}\cdot \u\cdot \d_{t} =\\ (x_{1}+p_{1}(x_{2},\dots,x_{j},tx_{j+1},\ldots, tx_{n}),\ldots,x_{j}+p_{j}(tx_{j+1},\ldots,tx_{n}), x_{j+1},\ldots,x_{n}),
\end{multline*}
and so
$$
\lim_{t\to 0} \d_{t}^{-1}\cdot \u\cdot \d_{t} = (x_{1}+p_{1}(x_{2},\ldots,x_{j},0,\ldots,0),\ldots,x_{j}+p_{j}(0),x_{j+1},\ldots,x_{n})
$$
which is conjugate to a translation, again by Remark~\ref{unipotent.rems}. This finishes the proof of the first claim. The remaining claims are immediate consequences.
\end{proof}

\ps
\subsection{Shifted linearization and simplicity}\idx{shifted linearization}
\label{shifted-lin.subsec}
We have already seen in Example~\ref{Bass.exa} above that the \name{Nagata}-automorphism
\idx{Nag@\name{Nagata}-automorphism} 
\[ 
\n:=(x-2y\Delta -z\Delta^{2},y+z\Delta,z), \quad \Delta:=xz+y^{2}
\]  
is a modification of the unipotent linear automorphism $(x-2y-z,y+z,z)$ which generates the $\kplus$-action $\mu(s)=(x-2sy-s^{2}z,y+sz,z)$. It was shown by \name{Shestakov-Umirbaev} that $\n$ is not tame, i.e. it does not belong to the subgroup $\TameA{3}$ generated by $\Aff(3)$ and $\JJJ(3)$ (\cite{ShUm2004The-tame-and-the-w,ShUm2004Poisson-brackets-a}). But it is unknown if a conjugate of $\n$ is tame. On the other hand,  \name{Maubach-Poloni} proved in \cite{MaPo2009The-Nagata-automor}  that ``twice'' the \name{Nagata}-automorphism, namely the composition $2\cdot\n:=(2\id)\cdot\n$, which is again not tame, is even linearizable. This is a special case of the following general result.

\begin{lemma}  \label{shifted.lem}
Let $\u$ and $\f$ be elements of an ind-group $\GGG$ satisfying the following conditions.
\begin{enumerate}
\item  \label{1-unipotent}
The element $\u$ is unipotent.
\item \label{3-normalizing}
The element $\f$ normalizes the closed subgroup $\overline {\langle \u \rangle }$.
\item \label{2-not-commute}
We have $\f\cdot\u \neq \u\cdot\f$.
\end{enumerate}
Then $\f\cdot\u$ and $\f$ are conjugate in $\GGG$. More precisely, there exists an element $\u_0 \in \overline {\langle \u \rangle }$ such that $\u_0\cdot (\f\cdot\u) \cdot\u_0^{-1} = \f$.
\end{lemma}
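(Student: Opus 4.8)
The plan is to reduce everything to the one-parameter subgroup generated by $\u$ together with a single scalar describing how $\f$ acts on it by conjugation. First I would note that hypothesis~(\ref{2-not-commute}) forces $\u\neq\id$, so the closed subgroup $U:=\overline{\langle \u\rangle}\subseteq\GGG$ is a nontrivial unipotent algebraic group that is topologically generated by the single element $\u$. By the structure of $\overline{\langle \u\rangle}$ for a unipotent element recalled in Section~\ref{locally-finite-ind-groups.subsec} (where $\overline{\langle g\rangle}=H_s\times H_u$ with $H_u$ trivial or isomorphic to $\kplus$), such a group is isomorphic to $\kplus$. Concretely, there is a unique homomorphism of ind-groups $\lambda\colon\kplus\simto U$ with $\lambda(1)=\u$.

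Next I would exploit hypothesis~(\ref{3-normalizing}). Since $\f$ normalizes $U$, conjugation by $\f$ restricts to an automorphism $\Int(\f)|_U$ of the algebraic group $U\cong\kplus$: it is a bijective homomorphism which is a morphism, because the conjugation action of $\GGG$ on itself is a morphism and $U$ is a closed algebraic subgroup. By Example~\ref{Aut.exa}(4) we have $\Autgr(\kplus)\simeq\kst$, so there is a scalar $c\in\kst$ with $\f\cdot\lambda(s)\cdot\f^{-1}=\lambda(cs)$ for all $s\in\kplus$, equivalently $\f\cdot\lambda(s)=\lambda(cs)\cdot\f$. Hypothesis~(\ref{2-not-commute}) reads $\f\cdot\u\cdot\f^{-1}=\lambda(c)\neq\lambda(1)=\u$, and since $\lambda$ is injective this yields $c\neq1$.

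Then I would simply exhibit the conjugating element. Set $\u_0:=\lambda(t)$ with $t:=\tfrac{c}{c-1}$, which is well defined because $c\neq1$ and which clearly lies in $U=\overline{\langle\u\rangle}$. A direct computation, applying the commutation relation $\f\cdot\lambda(s)=\lambda(cs)\cdot\f$ twice and using that $\lambda$ is an additive homomorphism into the commutative group $U$, gives
\[
\u_0\cdot(\f\cdot\u)\cdot\u_0^{-1}
=\lambda(t)\cdot\f\cdot\lambda(1)\cdot\lambda(-t)
=\lambda(t)\,\lambda(c)\,\lambda(-ct)\cdot\f
=\lambda\bigl(t+c-ct\bigr)\cdot\f.
\]
By the choice of $t$ we have $t(1-c)=-c$, so the exponent $t+c-ct$ vanishes, $\lambda(t+c-ct)=\id$, and the right-hand side equals $\f$. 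This proves both the conjugacy of $\f\cdot\u$ and $\f$ and the sharper statement with $\u_0\in\overline{\langle\u\rangle}$.

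I do not expect a serious obstacle: once $U$ is identified with $\kplus$ and $\Int(\f)|_U$ with multiplication by a scalar, the argument is linear and the required $t$ is forced. The only points deserving a little care are the identification $U\cong\kplus$ with its one-parameter subgroup $\lambda$, the computation $\Autgr(\kplus)\cong\kst$, and the verification that $\Int(\f)|_U$ is a genuine morphism of algebraic groups so that it lands in $\Autgr(U)$; all three are supplied by Section~\ref{locally-finite-ind-groups.subsec} and Example~\ref{Aut.exa}, so I regard this identification step as the most delicate but still routine part of the proof.
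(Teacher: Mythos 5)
Your proof is correct and takes essentially the same route as the paper's: identify $\overline{\langle \u \rangle}$ with $\kplus$ via the one-parameter subgroup through $\u$, note that conjugation by $\f$ acts on it by a scalar which is $\neq 1$ precisely because $\f$ and $\u$ do not commute, and solve the resulting linear equation for the conjugating parameter. Your choice $t=\tfrac{c}{c-1}$ is exactly the paper's $s=\tfrac{1}{1-s_{0}}$ under the change of convention $c=s_{0}^{-1}$, the paper phrasing the scaling via conjugation by $\f^{-1}$ instead of $\f$.
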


\begin{proof}
Denote by $\mu \colon \kplus \simto \overline {\langle \u \rangle }$ the unique isomorphism of algebraic groups such that $\mu (1) = \u$. By (\ref{3-normalizing}), there exists an element $s \in \kk$ such that 
$\f^{-1} \cdot\mu (1)\cdot \f= \mu (s_{0})$, and by (\ref{2-not-commute}) we have $s_{0} \neq 1$. It follows that
\[ 
\f^{-1} \cdot \mu(s) \cdot \f = \mu (s s_{0})  \text{ for all }s \in \kplus.
\]
Writing $u_0 = \mu (s)$, we find
$$
\u_0\cdot (\f\cdot\u) \cdot\u_0^{-1} = \f\cdot(\f^{-1}\cdot\u_{0}\cdot\f)\cdot\mu(1)\cdot\mu(-s)
= \f\cdot\mu(ss_{0}+1-s),
$$
hence $\u_0\cdot (\f\cdot\u) \cdot\u_0^{-1}=\f$  for $s:=\frac{1}{1-s_{0}}$.
\end{proof}

Since the \name{Nagata} automorphism $\n$ is a modification of a linear automorphism by a homogeneous invariant of degree 2, it is clear that the subgroup $\overline{\langle\n\rangle}$  is normalized by $\kst\id$.  In fact, we get
$$
(t^{-1}\id)\cdot\n\cdot(t\id) = (x-2t^{2}y\Delta - t^{4}z\Delta^{2},y+t^{2}z\Delta,z),
$$
hence $(t\id)$ and $\n$ only commute if $t^{2}=1$. Using the lemma above
this proves the result of \name{Maubach} and \name{Poloni}.

\begin{proposition}[Shifted linearization] \label{shifted-linearization.prop}
Let $\n \in \Aut (\AA^3)$ be the \name{Nagata}-automorphism. For each $t \neq \pm 1$, the automorphism $ t\cdot \n:=(t\id)\cdot \n$ is conjugate to the linear automorphism $ t \id$.
\end{proposition}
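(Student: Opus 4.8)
The plan is to apply Lemma~\ref{shifted.lem} directly, with the ind-group $\GGG := \Aut(\Athree)$, the unipotent element $\u := \n$, and the linear element $\f := t\id$. With these choices one has $\f\cdot\u = (t\id)\cdot\n = t\cdot\n$, so the conclusion of the Lemma — that $\f\cdot\u$ is conjugate to $\f$ — is exactly the assertion that $t\cdot\n$ is conjugate to $t\id$. Thus the whole argument reduces to verifying the three hypotheses of Lemma~\ref{shifted.lem} for this pair, all of which are essentially prepared in the material preceding the statement.

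First I would recall from Example~\ref{Bass.exa} that $\n$ is a modification of the locally nilpotent vector field $\Delta\delta$, where $\delta = -2y\ddx + z\ddy$ and $\Delta = xz+y^{2}$; in particular $\n$ is unipotent, which gives hypothesis~\eqref{1-unipotent}, and $\overline{\langle \n \rangle}\subseteq\Aut(\Athree)$ is the image of the one-parameter subgroup $\mu\colon\kplus \to \Aut(\Athree)$, $\mu(s) = (x - 2sy\Delta - s^{2} z\Delta^{2},\, y + sz\Delta,\, z)$, with $\mu(1) = \n$. For hypothesis~\eqref{3-normalizing} I would invoke the computation $(t^{-1}\id)\cdot\n\cdot(t\id) = (x - 2t^{2} y\Delta - t^{4} z\Delta^{2},\, y + t^{2} z\Delta,\, z) = \mu(t^{2})$, which shows that conjugation by $t\id$ sends $\mu(s)$ to $\mu(t^{2} s)$ and hence normalizes $\overline{\langle \n \rangle}$, with normalizing constant $s_{0} = t^{2}$ in the notation of the Lemma. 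For hypothesis~\eqref{2-not-commute}, since $s_{0} = t^{2} \neq 1$ precisely when $t\neq\pm 1$, the elements $t\id$ and $\n$ fail to commute, which is exactly the required non-commutativity.

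With all three hypotheses in hand, Lemma~\ref{shifted.lem} produces an explicit conjugator: there is an element $\u_{0} = \mu\bigl(\tfrac{1}{1-t^{2}}\bigr) \in \overline{\langle \n \rangle}$ with $\u_{0}\cdot(t\cdot\n)\cdot\u_{0}^{-1} = t\id$, which completes the proof. The only point that genuinely requires care is hypothesis~\eqref{3-normalizing}, namely that conjugation by the diagonal automorphism $t\id$ carries the unipotent one-parameter subgroup $\overline{\langle \n \rangle}$ back into itself; this is where the special structure of $\n$ as a modification by the \emph{homogeneous} invariant $\Delta$ of degree $2$ is essential, since that homogeneity is exactly what forces the conjugate of $\mu(s)$ to remain in the same one-parameter family (reparametrized by $s\mapsto t^{2}s$) rather than becoming some unrelated unipotent subgroup. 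Once this scaling relation is confirmed, everything else is a formal application of the Lemma, so I do not anticipate any serious obstacle beyond this verification.
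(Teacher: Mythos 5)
Your proposal is correct and follows exactly the paper's own argument: the paper likewise verifies the three hypotheses of Lemma~\ref{shifted.lem} for $\u=\n$ and $\f=t\id$ via the computation $(t^{-1}\id)\cdot\n\cdot(t\id)=(x-2t^{2}y\Delta-t^{4}z\Delta^{2},\,y+t^{2}z\Delta,\,z)=\mu(t^{2})$, which rests on the homogeneity of degree $2$ of the invariant $\Delta$, so that $s_{0}=t^{2}\neq 1$ precisely when $t\neq\pm1$. Your explicit conjugator $\u_{0}=\mu\bigl(\tfrac{1}{1-t^{2}}\bigr)$ agrees with the one produced in the proof of Lemma~\ref{shifted.lem}.
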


\begin{remark}\label{ss-and-triang-not-wclosed.rem}
This proposition also implies that the subset $\Aut^{\text{\it tr}}(\Athree)$ of triangularizable automorphisms and the subset $\Aut^{\text{\it ss}}(\Athree)$ of semisimple automorphisms are not weakly closed. In fact, in the family $\Phi:=(t\cdot\n)_{t\in\kst}$ the automorphisms are semisimple and triangularizable for $t\neq \pm1$, but unipotent and not triangularizable for $t=\pm1$ (Example~\ref{Bass.exa}).
\end{remark}

\begin{remark} \label{gen-Nagata.rem}
More generally,  let $\u\in\Aut(\An)$ be a unipotent automorphism with corresponding locally nilpotent vector field $\delta \in \LNV(\A{n})$. 
If $\delta$ is homogeneous of degree $d\geq 1$, i.e. $\delta(x_{i})$ are homogeneous polynomials of degree $d-1$, then, as above, 
$\kst\id$ normalizes $\overline{\langle\u\rangle}$, and $( t \id)^{-1} \cdot \u \cdot ( t \id) = \u$ if and only if $t^{d}=1$. Thus $t\cdot\u$ is conjugate to $t\id$ if  $t^{k}\neq 1$ by Lemma~\ref{shifted.lem}.
\end{remark} 

\ps
\subsubsection*{\bfit{Normal subgroups}}
It was shown by \name{Danilov} in \cite{Da1974Non-simplicity-of-} that $\SAutA{2}$ is not simple as an abstract group, cf. \cite{FuLa2010Normal-subgroup-ge}.
This should also be true in higher dimension. On the other hand, it might be true that $\SAut(\An)$ does not contain a {\it closed\/} normal subgroup.

\begin{proposition}  \label{simple.prop}
Let $\NNN \subseteq \AutA{n}$ be a nontrivial normal and weakly closed subgroup. Then
\be
\item \label{1-SLn} $\SL(n)\subseteq \NNN$ and $\Ju(n)\subseteq\NNN$. In particular, $\NNN$ contains the normal subgroup generated by the tame elements with jacobian determinant equal to 1.
\item \label{2-SAut(A2)}
If $n=2$, then $\NNN$ is equal to the preimage $\jac^{-1}(H)$ of a weakly closed subgroup $H \subseteq\kk^*$. 
\item \label{3-Nagata} If $n=3$, then $\NNN$ contains the \name{Nagata}-automorphism.
\ee
\end{proposition}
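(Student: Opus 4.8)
The statement is for $n\geq 2$ (for $n=1$ it is vacuous, since $\SL(1)=\Ju(1)=\{\id\}$). The whole proof of (1) will be organized around producing a \emph{single nontrivial transvection} inside $\NNN$. Suppose I know that the elementary automorphism $\alpha_{12}(1)=(x_{1}+x_{2},x_{2},\ldots,x_{n})$ lies in $\NNN$. All nontrivial transvections are conjugate under $\GL(n)\subseteq\AutA{n}$, so by normality every $\alpha_{ij}(s)$ lies in $\NNN$; since transvections generate $\SL(n)$, this gives $\SL(n)\subseteq\NNN$. Moreover $\alpha_{12}(1)$ is a nontrivial element of $\Ju(n)$, so by Proposition~\ref{triangclosure.prop} its weak closure $\wc{C(\alpha_{12}(1))}$ contains $\Ju(n)$; as $C(\alpha_{12}(1))\subseteq\NNN$ by normality and $\NNN$ is weakly closed, I get $\Ju(n)\subseteq\NNN$. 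The "in particular" clause then follows at once, because the unimodular affine and triangular automorphisms all lie in the subgroup generated by $\SL(n)$, $\Ju(n)$, the translations $\Tr(n)\subseteq\Ju(n)$, and their $\AutA{n}$-conjugates, all of which are in $\NNN$.

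The crux is therefore to exhibit that transvection, and this is the step I expect to be the main obstacle: I must manufacture from an arbitrary nontrivial $\g\in\NNN$ an element of $\NNN$ that fixes a point and has \emph{nontrivial unipotent differential} there. The plan is an explicit commutator. Choose $p$ with $q:=\g(p)\neq p$; after conjugating $\NNN$ by an affine automorphism (which preserves $\NNN$) I may assume $p=0$ and $q=e_{2}$. Take $\h:=(x_{1}+x_{2}^{3}-x_{2}^{2},x_{2},\ldots,x_{n})\in\Ju(n)$, which fixes both $0$ and $e_{2}$ and satisfies $d_{0}\h=\id$ and $d_{e_{2}}\h=\alpha_{12}(1)$. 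Set $\psi:=\h\g\h^{-1}\g^{-1}\in\NNN$ (normality). Tracking the orbit $q\xmapsto{\g^{-1}}p\xmapsto{\h^{-1}}p\xmapsto{\g}q\xmapsto{\h}q$ shows $\psi(q)=q$, and the chain rule collapses, using $d_{p}\h=\id$ and $d_{q}\g^{-1}=(d_{p}\g)^{-1}$, to $d_{q}\psi=(d_{q}\h)(d_{p}\g)(d_{p}\h)^{-1}(d_{p}\g)^{-1}=d_{q}\h=\alpha_{12}(1)$. Thus $\psi\in\NNN$ fixes $q$ with $d_{q}\psi=\alpha_{12}(1)\neq\id$. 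Finally Proposition~\ref{Anclosure.prop} gives $d_{q}\psi\in\overline{C_{\Aff(n)}(\psi)}$; since $C_{\Aff(n)}(\psi)$ is an algebraic subset contained in $C(\psi)\subseteq\NNN$ and $\NNN$ is weakly closed, its closure lies in $\NNN$, whence $\alpha_{12}(1)\in\NNN$. (The delicate point is precisely the availability of the shear $\h$ fixing two prescribed points with identity differential at one and a transvection at the other; the explicit polynomial $x_{2}^{3}-x_{2}^{2}$, chosen with a double zero at $0$ and a simple zero at $1$, resolves it uniformly for all $n\geq2$.)

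For part (2), let $n=2$. By Jung--van der Kulk--Nagata every automorphism of $\Atwo$ is tame, so the normal subgroup generated by the unimodular tame elements is exactly $\ker\jac=\SAut(\Atwo)$, which part (1) places inside $\NNN$. Hence $\NNN$ is a union of cosets of $\ker\jac$, i.e. $\NNN=\jac^{-1}(H)$ with $H:=\jac(\NNN)\subseteq\kk^{*}$ a subgroup. Every subgroup of $\kk^{*}$ is weakly closed, because the only infinite algebraic subset of $\kk^{*}$ is $\kk^{*}$ itself while all finite subsets are already closed; so $H$ is weakly closed, as required.

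For part (3), let $n=3$ and let $\omega$ be a primitive cube root of unity, so $\omega\neq\pm1$ and $\jac(\omega\,\id)=\omega^{3}=1$; thus $\omega\,\id\in\SL(3)\subseteq\NNN$ by part (1). By the shifted linearization of Proposition~\ref{shifted-linearization.prop} (valid since $\omega\neq\pm1$), the automorphism $\omega\cdot\n=(\omega\,\id)\cdot\n$ is conjugate to the linear automorphism $\omega\,\id$, hence $\omega\cdot\n\in\NNN$ by normality. Therefore $\n=(\omega\,\id)^{-1}\cdot(\omega\cdot\n)$ is a product of two elements of $\NNN$, so the \name{Nagata} automorphism $\n$ lies in $\NNN$. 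Note that this last step needs only $\SL(3)\subseteq\NNN$ from part (1) together with shifted linearization, and in particular does not invoke weak-closedness.
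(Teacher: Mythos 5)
Your proof is correct, and for part (1) it follows the same skeleton as the paper's: a commutator of $\g\in\NNN$ with an auxiliary automorphism $\h$ fixing both $p$ and $q=\g(p)$, whose differential at the fixed point $q$ is then fed into Proposition~\ref{Anclosure.prop} together with weak closedness of $\NNN$. The difference is in the endgame. The paper takes $\h=(x_1+x_n(x_n-1)f_1,\ldots,x_{n-1}+x_n(x_n-1)f_{n-1},x_n)$ and only arranges that $d_q(\g\h\g^{-1}\h^{-1})$ is \emph{not a scalar matrix}; it must then invoke that a closed normal subgroup of $\SL(n)$ containing a non-central element equals $\SL(n)$, and it reaches $\Ju(n)$ in two steps, first deducing that the translations lie in $\NNN$ (the special affine group being generated by the conjugates of $\SL(n)$) and only then applying Proposition~\ref{triangclosure.prop} to a translation. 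Your shear with $d_p\h=\id$ and $d_q\h=\alpha_{12}(1)$ collapses the chain rule to $d_q\psi=\alpha_{12}(1)$, an honest transvection, so $\SL(n)\subseteq\NNN$ follows from the elementary generation of $\SL(n)$ by transvections, and Proposition~\ref{triangclosure.prop} applies directly to $\alpha_{12}(1)\in\Ju(n)$; the detour through the special affine group disappears. Part (3) is exactly the paper's argument, with the same choice of a primitive cube root of unity.

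Two small slips, neither fatal. In part (2) you assert that the only infinite algebraic subset of $\kst$ is $\kst$ itself; this is false, since every cofinite open subset of $\kst$ is an infinite locally closed algebraic subset. Your conclusion that every subgroup $H\subseteq\kst$ is weakly closed is still true, but needs the corrected argument: an infinite algebraic subset $U\subseteq H$ is cofinite in $\kst$, and a subgroup containing a cofinite set is all of $\kst$ (given $t\in\kst$, only finitely many $a$ fail $a\in U$ or $ta^{-1}\in U$, so $t=a\cdot(ta^{-1})\in H$), whence $\overline{U}=\kst=H$. Alternatively, the paper sidesteps the issue entirely by writing $H=\{h\in\kst\mid (hx,y)\in\NNN\}$, so that $H$ is the preimage of $\NNN$ under the closed immersion $h\mapsto(hx,y)$ and inherits weak closedness from $\NNN$. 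Finally, your opening parenthesis is wrong for $n=1$: $\Ju(1)$ is the group of translations of $\Aone$, not $\{\id\}$, so the statement there is not vacuous (though it is easy, and both your construction and the paper's require $n\geq 2$).
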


\begin{proof}
(\ref{1-SLn}) Let $\g\in\NNN$, $a\in  \AA^n$ and $b:=\g(a) \neq a$. If $\h$ is any automorphism which fixes $a$ and $b$, then the commutator 
$\c:=\g \h \g^{-1} \h ^{-1}$ fixes $b$, and the tangent representation in $T_{b} \An=\kk^{n}$ is given by $d_{b}\c = d_{a}\g \circ d_{a}\h \circ d_{b}\g^{-1}\circ d_{b}\h^{-1}$. We claim that there is an $\h$ such that $d_{b}\c$ is not a scalar multiple of the identity. For this we can assume that $a=(0,\ldots,0,0)$ and $b=(0,\ldots,0,1)$. Define
$$
\h := (x_{1}+x_{n}(x_{n}-1)f_{1}, \cdots, x_{n-1}+x_{n}(x_{n}-1)f_{n-1},x_{n})
$$
where $f_{1},\ldots,f_{n-1}\in\kk[x_{n}]$.
This automorphism fixes $a$ and $b$ and the differential is given by
$$
d_{a}\h = 
\begin{bmatrix} 1 & 0&\cdots &-f_{1}(0)\\
0 & 1 & \cdots &-f_{2}(0) \\
\vdots && \ddots & \vdots \\
0 &\cdots&\cdots &1
\end{bmatrix}
\qquad
d_{b}\h = 
\begin{bmatrix} 1 & 0&\cdots &f_{1}(1)\\
0 & 1 & \cdots &f_{2}(1) \\
\vdots && \ddots & \vdots \\
0 &\cdots&\cdots &1
\end{bmatrix}
$$
Now it is clear that for a suitable choice of the polynomials $f_{1},\ldots,f_{n-1}$ the composition 
$d_{b}\c=d_{a}\g \circ d_{a}\h \circ d_{b}\g^{-1}\circ d_{b}\h^{-1}$ is not a scalar multiple of the identity. 

By Proposition~\ref{Anclosure.prop} we have $d_{b}\c \in\NNN$, hence $\SL(n)\subseteq \NNN$, because $\SL(n)\cap\NNN$ is a closed normal subgroup of $\SL(n)$. Since the special affine group $\SA(n)$ is generated by the conjugates of $\SL_{n}$, we get $\SA(n) \subset \NNN$. In particular, $\NNN$ contains the translations, hence $\Ju(n)$ (Proposition~\ref{triangclosure.prop}). This proves the first claim.

\ps
(\ref{2-SAut(A2)}) 
The inclusion  $\SAutA{2} \subset \NNN$ follows from (\ref{1-SLn}), because $\Aut(\Atwo)$ is generated by $\Aff(n)$ and $\J{2}$ (see
Section~\ref{amalgam.subsec}), hence $\SAut(\Atwo)$ is generated by $\SL(2)$ and $\Ju(2)$. 
Setting $H:= \jac ( \NNN)$, we get $H= \{ h \in \kk^* \mid (hx,y) \in \NNN \}$, hence $H$ is weakly closed in $\kst$. 

\ps
(\ref{3-Nagata})
We have $\SL(3) \subset \NNN$ by (\ref{1-SLn}). Choosing for $t$ a primitive third root of unity and applying Proposition~\ref{shifted-linearization.prop}, it follows that the automorphism $t\cdot \n$ is conjugate to the linear automorphism $t \id \in \SL(3)$. Hence $\n \in \NNN$.
\end{proof}

Note that the proposition does not imply that   $\SAutA{2}$ is simple as an ind-group. However, there is the following result.

\begin{proposition}[\cite{Kr2017Automorphism-group}] \label{Kraft.prop}
Let $\GGG$ be an affine ind-group.
Every nontrivial homomorphism $\phi\colon \SAut(\An) \to \GGG$ is a closed immersion.
\end{proposition}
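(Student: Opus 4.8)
The plan is to deduce the statement in two stages: first that $\phi$ is injective, and then that an injective homomorphism out of $\SAut(\An)$ (with $n\ge2$) is automatically a closed immersion. Throughout I would use that $\SAut(\An)$ is connected: by Proposition~\ref{k*-connected.prop} any $\g\in\SAut(\An)$ can be joined to $\id$ by a morphism $\kst\to\Aut(\An)$, and since the jacobian is constant along such a curve (equal to $\jac(\g)=1$) this curve lies in $\SAut(\An)$, so $\SAut(\An)$ is curve-connected.

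For injectivity the key input is that the Lie algebra $\Lie\SAut(\An)$ is simple. By Proposition~\ref{LieAlgAutAn.prop} the map $\xi$ gives an anti-isomorphism $\Lie\SAut(\An)\simto\VEC^{0}(\An)$, the divergence-free polynomial vector fields, and this Lie algebra (the special Cartan-type algebra) is simple for $n\ge2$; since simplicity is invariant under anti-isomorphism, $\Lie\SAut(\An)$ is simple. If $\phi$ is nontrivial then $d\phi_{e}\neq0$ by Proposition~\ref{phi-dphi.prop}. As $\Ker d\phi_{e}$ is a proper ideal of the simple Lie algebra $\Lie\SAut(\An)$, it is zero, i.e. $d\phi_{e}$ is injective. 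Consequently $\Lie\Ker\phi\subseteq\Ker d\phi_{e}=0$ (the restriction of $\phi$ to the closed subgroup $\Ker\phi$ is trivial, so its differential $d\phi_{e}|_{\Lie\Ker\phi}$ vanishes), whence the closed normal subgroup $\Ker\phi$ has trivial Lie algebra and is therefore discrete; a discrete closed normal subgroup of a connected ind-group is central, because for fixed $n\in\Ker\phi$ the conjugation morphism $\SAut(\An)\to\Ker\phi$, $g\mapsto gng^{-1}$, has connected image in a discrete set and hence is constant $=n$. Finally $Z(\SAut(\An))=\{e\}$: by Corollary~\ref{center.cor} the center is $\Ker\Ad$, and a central element must centralize the special affine subgroup $\SA(n)=\SL(n)\ltimes\Tr(n)\subseteq\SAut(\An)$; but commuting with all translations forces an element to be a translation, and then commuting with $\SL(n)$ forces it to be $\id$. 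Hence $\Ker\phi=\{e\}$ and $\phi$ is injective.

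For the closed-immersion property I would first reduce to an uncountable base field by Proposition~\ref{fieldextension.prop} (together with Proposition~\ref{field-extensions-for-morphisms.prop}), since $\phi$ is a closed immersion iff $\phi_{\KK}$ is. Over an uncountable field it suffices, by the simplified Kumar criterion (Lemma~\ref{simplification-Kumar.lem}), to verify that (a) each restriction $\phi|_{\SAut(\An)_{k}}$ of the degree filtration is a closed immersion of varieties into a suitable $\GGG_{\ell}$, and (b) the image $\phi(\SAut(\An))$ is closed in $\GGG$. Here the differential is injective at every point: translating $\phi\circ\lambda_{\g}=\lambda_{\phi(\g)}\circ\phi$ to the identity shows $d\phi_{\g}$ is conjugate to $d\phi_{e}$, which is injective. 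Thus $\phi$ is unramified and injective, so (a) reduces to showing each $\phi(\SAut(\An)_{k})$ is a closed subvariety of $\GGG$ (an injective unramified morphism which is a homeomorphism onto a closed image is a closed immersion). Granting this, (b) follows: $\phi(\SAut(\An))=\bigcup_{k}\phi(\SAut(\An)_{k})$ becomes a countable union of closed algebraic subsets, hence weakly closed by Proposition~\ref{indconstr.prop}, and the filtration compatibility from (a) makes it weakly constructible, hence closed. As an anchor, the restriction to the algebraic subgroup $\SL(n)\subseteq\SAut(\An)$ is already a closed immersion with closed image by Proposition~\ref{Hom-G-to-ind-group.prop}.

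I expect the main obstacle to be precisely the closedness of the images of the individual filtration pieces $\phi(\SAut(\An)_{k})$. Injectivity together with injectivity of the differential only yield that each $\phi|_{\SAut(\An)_{k}}$ is a locally closed immersion; ruling out a nonempty boundary $\overline{\phi(\SAut(\An)_{k})}\setminus\phi(\SAut(\An)_{k})$ is the delicate point, since $\SAut(\An)_{k}$ is only an affine (non-proper) variety and $\GGG$ is an arbitrary affine ind-group, so no smoothness or normality of the target is available (in contrast to Proposition~\ref{bijective-Hom.prop}, whose hypotheses we cannot invoke). The plan there would be to exploit the group structure: write a putative boundary point as a limit of $\phi(\g_{i})$ and use the multiplication $\SAut(\An)_{k}\cdot\SAut(\An)_{k}\subseteq\SAut(\An)_{k'}$ together with injectivity of $\phi$ to force the limit to lie already in the image, thereby showing the boundary is empty and completing the verification of the Kumar criterion.
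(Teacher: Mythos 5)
First, a framing remark: the paper contains no proof of this proposition at all --- it is quoted from \cite{Kr2017Automorphism-group}. So your attempt cannot be matched against an in-paper argument, only assessed on its own merits.

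Your injectivity half is essentially sound: $d\phi_{e}\neq 0$ by Proposition~\ref{phi-dphi.prop}, simplicity of $\Lie\SAut(\An)\simeq\VEC^{0}(\An)$ forces $d\phi_{e}$ injective, hence $\Lie\Ker\phi=0$, so $\Ker\phi$ is discrete, hence central in the connected group $\SAut(\An)$, and the center is trivial by your translations-plus-$\SL(n)$ computation. Two caveats: your connectedness argument is wrong as stated --- along a curve $\kst\to\Aut(\An)$ the jacobian is a morphism $\kst\to\kst$, which may be $t\mapsto t^{m}$, so it is not forced to be constant (that argument only works for curves $\AA^{1}\to\Aut(\An)$); connectedness of $\SAut(\An)$ still holds because the explicit curve built in the proof of Proposition~\ref{k*-connected.prop} uses only translations and conjugation by $t\,\id$, which preserve $\jac=1$, and its linear endpoint lies in $\SL(n)$. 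Also, simplicity of $\VEC^{0}(\An)$ is nowhere proved in this survey and must be imported with a citation.

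The closed-immersion half contains a genuine gap, and it is larger than the one you flag. (i) The auxiliary principle you invoke --- ``an injective unramified morphism which is a homeomorphism onto a closed image is a closed immersion'' --- is false. Take the normalization $\AA^{1}\to C$ of a nodal plane cubic and remove one of the two preimages of the node: the resulting morphism is injective, has injective differential at every point, and is a homeomorphism onto the closed image $C$ (on curves every bijection is a Zariski homeomorphism), yet it is not an immersion, since $C$ is singular at the node while the source is smooth. So injectivity of $\phi$ together with injectivity of every $d\phi_{\g}$ does \emph{not} reduce your condition (a) to closedness of the images $\phi(\SAut(\An)_{k})$; the restriction to a filtration piece could a priori be a bijective non-isomorphism onto a singular closed image. (ii) Your derivation of (b) from (a) also fails: a countable increasing union of closed algebraic subsets of $\GGG_{\ell}$ need not be closed (e.g.\ $\ZZ\subset\AA^{1}$ over an uncountable field), and condition (a) provides no bound of the form $\phi(\SAut(\An))\cap\GGG_{\ell}\subseteq\phi(\SAut(\An)_{k})$. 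Proposition~\ref{indconstr.prop} only yields weak closedness; upgrading to closedness needs weak constructibility, i.e.\ precisely the missing degree bound, and deriving such a bound from Lemma~\ref{bijective-morphisms.lem} presupposes the image is already closed --- a circle. What the proof genuinely requires, and what your sketched ``limit argument via multiplication'' does not supply, is quantitative control of $\phi$ relative to the filtrations on source and target (that $\phi^{-1}(\GGG_{\ell})$ lies in some $\SAut(\An)_{k}$, and that $\phi$ restricted to each piece is a closed immersion of varieties); this is where the argument of \cite{Kr2017Automorphism-group} does real work with the internal structure of $\SAut(\An)$ (its unipotent root subgroups, the tori normalizing them, and the induced representations), beyond the single anchor $\phi|_{\SL(n)}$ that you correctly handle via Proposition~\ref{Hom-G-to-ind-group.prop}. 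As it stands, your proposal proves injectivity but not the proposition.
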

\begin{corollary}
A nontrivial action of $\SAut(\An)$ on a connected affine variety $X$ has no fixed points.
\end{corollary}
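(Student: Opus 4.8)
The plan is to prove the statement in contrapositive form: \emph{if the action $\rho\colon \SAut(\An)\to\Aut(X)$ has a fixed point, then it is trivial}. Recall that an action of an ind-group on an affine variety is precisely a homomorphism of ind-groups (Definition~\ref{ind-group-actions.def}), so ``trivial'' means $\rho(\SAut(\An))=\{\id\}$. I would first dispose of the case where $X$ is irreducible, and then bootstrap to the general connected case. Throughout, the decisive input is that $\SAut(\An)$ is infinite-dimensional, which is what makes the dimension argument below work; this restricts us to $n\ge 2$, which is exactly the range where the corollary is true (for $n=1$ one has $\SAut(\Aone)=\kplus$, which certainly acts with fixed points).

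Assume first $X$ is irreducible and let $x_0\in X$ be a fixed point. Then the maximal ideal $\mm_{x_0}\subseteq\OOO(X)$ and all its powers are $\SAut(\An)$-stable, so for each $m$ the induced linear action on the finite-dimensional quotient $\OOO(X)/\mm_{x_0}^m$ is, by Proposition~\ref{locally-finite-action-on-Vec(X).prop} (in the spirit of Theorem~\ref{rep-in-fixed-point.thm}), a representation into the \emph{algebraic} group $\GL(\OOO(X)/\mm_{x_0}^m)$, hence a genuine homomorphism of ind-groups
\[
\rho_m\colon \SAut(\An)\to \GL(\OOO(X)/\mm_{x_0}^m).
\]
By Proposition~\ref{Kraft.prop} each $\rho_m$ is either trivial or a closed immersion; a closed immersion would realize $\SAut(\An)$ as a closed subgroup of a finite-dimensional algebraic group, which is absurd since $\dim\SAut(\An)=\infty$. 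Hence every $\rho_m$ is trivial, i.e. $gf-f\in\mm_{x_0}^m$ for all $g\in\SAut(\An)$, $f\in\OOO(X)$ and all $m$. Since $X$ is irreducible, $\OOO(X)$ embeds in the local ring $\OOO_{X,x_0}$, in which $\bigcap_m\mm_{x_0}^m=0$ by Krull's intersection theorem; therefore $gf=f$ for all $f$, so $\rho$ is trivial. (This is also the content of the unnumbered remark preceding Lemma~\ref{faithful-on-Tx.lem}, applied to the faithful action that Proposition~\ref{Kraft.prop} provides.)

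For general connected $X$ with irreducible components $X_1,\dots,X_r$, I would argue by reduction. First, $\SAut(\An)$ is connected: the projection onto its discrete group of components is a homomorphism of ind-groups whose kernel $\SAut(\An)^{\circ}$ is nontrivial, so it cannot be a closed immersion and must be trivial by Proposition~\ref{Kraft.prop}. Being connected, $\SAut(\An)$ stabilizes each component $X_i$, so $\rho$ restricts to actions $\rho_i\colon \SAut(\An)\to\Aut(X_i)$, and $\rho$ is trivial if and only if all $\rho_i$ are (since $\OOO(X)\hookrightarrow\prod_i\OOO(X_i)$). Now suppose $\rho$ is nontrivial with a fixed point $x_0\in X_1$. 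If $\rho_1$ were nontrivial, the irreducible case would forbid the fixed point $x_0\in X_1$, a contradiction; hence $\rho_1$ is trivial. I then propagate along the intersection graph of the components, which is connected because $X$ is: if $\rho_i$ is trivial and $X_j$ meets $X_i$ in a point $y$, then $y$ is $\rho$-fixed, hence a fixed point of $\rho_j$, forcing $\rho_j$ trivial by the irreducible case. Thus all $\rho_i$ are trivial, so $\rho$ is trivial, contradicting the hypothesis.

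The main obstacle I expect is precisely this reduction from connected to irreducible: the clean local-ring computation only controls the components passing through $x_0$, so one genuinely needs both the connectivity of $\SAut(\An)$ (to keep the components individually stable) and the propagation across intersection points to reach the components lying far from $x_0$. A secondary point to check carefully is that each jet action $\rho_m$ is an honest homomorphism of ind-groups, so that Proposition~\ref{Kraft.prop} applies; this rests on Proposition~\ref{locally-finite-action-on-Vec(X).prop} together with the finite-dimensionality of $\OOO(X)/\mm_{x_0}^m$.
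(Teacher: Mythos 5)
Your proof is correct, and it takes a genuinely different route from the paper's, sharing only the key external input. The paper argues by contradiction: it applies Proposition~\ref{Kraft.prop} once, to $\rho$ itself, to conclude that a nontrivial action is faithful; then, at a fixed point $x$, the tangent representation $\SAut(\An)\to\GL(T_{x}X)$ is trivial (no closed immersion into a finite-dimensional $\GL(W)$), and the contradiction comes from Lemma~\ref{faithful-on-Tx.lem}, which asserts that a reductive subgroup (say $\SL_n\subseteq\SAut(\An)$) acting faithfully with a fixed point has a faithful tangent representation. The higher jets are reached there only through reductivity, which is used to split the sequences $0\to\mm^{2}/\mm^{j}\to\mm/\mm^{j}\to\mm/\mm^{2}\to 0$ and propagate triviality from the first jet upward. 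You short-circuit this: applying Proposition~\ref{Kraft.prop} to \emph{every} jet representation $\rho_m$ kills all jets at once, and Krull's intersection theorem finishes, with no reductive subgroup and no splitting argument; moreover you obtain the stronger contrapositive conclusion (the whole action is trivial) directly. Your construction of the $\rho_m$ as ind-group homomorphisms is the same one the paper itself performs in the remark following the tangent-representation proposition, so that step is solid. A second genuine difference is your treatment of reducible $X$: Lemma~\ref{faithful-on-Tx.lem} is stated for \emph{irreducible} $X$, and its local-ring argument only controls the components through the fixed point, so the paper's proof, read literally, does not quite cover the connected case claimed by the corollary; your propagation of triviality along the intersection graph of components (together with your verification that $\SAut(\An)$ is connected, hence stabilizes each component) closes exactly this point. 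Finally, your flagging of $n\ge 2$ is apt: for $n=1$ one has $\SAut(\Aone)\simeq\kplus$, the infinite-dimensionality argument fails, and the statement is indeed false, so the restriction is implicit in the paper's proof as well.
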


\begin{proof}
By the proposition above, the homomorphism $\SAut(\An) \to \Aut(X)$ is a closed immersion, and so the action of $\SAut(\An)$ is faithful.
Let $x\in X$ be a fixed point.
Then the representation of $\SAut(\An)$ on $T_{x}X$ is trivial, because there are no closed immersions $\SAut(\An) \into \GL(W)$ for a finite-dimensional $\kk$-vector space $W$. This implies that every reductive subgroup of $\SAut(\An)$ acts trivially on $T_{x}(X)$. This contradicts Lemma~\ref{faithful-on-Tx.lem} which shows that for a faithful action of a reductive group on a variety $X$ all tangent representations are also faithful. 
\end{proof}

\ps
\subsubsection*{\bfit{Embeddings into \texorpdfstring{$\Aut(\An)$}{Aut(An)}}}
It is an interesting question which automorphism groups can be embedded into $\Aut(\An)$. For example, setting $T_m:= (\kst)^{m}$, it is shown in \cite{DeKuWi1999Subvarieties-of-Cn} that the group $\Aut (T_2 )$ does not embed into $\Aut (\AA^n)$, for any  $n \geq 1$, even as an abstract group. 

We will use their idea to prove the following result about $\Aut(T_m)$ which implies that there is no injective homomorphism of ind-groups $\Aut(T_m ) \into \Aut(\An)$ for any $m\geq 2$. 

Recall that the automorphism group of $T_{m}$ is a semidirect product: $\Aut(T_{m}) = \GL_{m}(\ZZ) \ltimes T_{m}$ (see Example~\ref{aut-torus.exa}).

\begin{proposition} \label{faithful-action-has-fixed-point.prop} 
Assume that $\Aut(T_{m})$ acts faithfully on an irreducible variety $X$. If $m\geq 2$, then 
$T_{m} \subseteq \Aut(T_{m})$ has no fixed points in $X$.
\end{proposition}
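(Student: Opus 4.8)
The plan is to assume that $T_m$ has a fixed point $x_0\in X$ and to derive a contradiction from the tension between the faithfulness of the tangent representation of $T_m$ at $x_0$ and the fact that the finite set of $T_m$-weights occurring along the fixed locus is forced to be $\GL_m(\ZZ)$-stable. First I would observe that, since $\Aut(T_m)$ acts faithfully on $X$, so does its subgroup $T_m$, and that the restriction of the ind-group homomorphism $\Aut(T_m)\to\Aut(X)$ to the algebraic subgroup $T_m$ is an algebraic action. As $T_m$ is a torus (hence reductive) and $X$ is irreducible with fixed point $x_0$, Lemma~\ref{faithful-on-Tx.lem} shows that the tangent representation $\tau\colon T_m\to\GL(T_{x_0}X)$ is faithful. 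A faithful representation of a torus has weights generating a finite-index subgroup of the character lattice $X^*(T_m)\cong\ZZ^m$; in particular some nonzero weight $\chi_0\in\ZZ^m$ occurs in $T_{x_0}X$.

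Next I would analyse how $\GL_m(\ZZ)$ moves the fixed locus $F:=X^{T_m}$ together with these weights. Since $T_m$ is normal in $\Aut(T_m)=\GL_m(\ZZ)\ltimes T_m$ (Example~\ref{aut-torus.exa}), the set $F$ is $\GL_m(\ZZ)$-stable: for $g\in\GL_m(\ZZ)$ and $y\in F$ one has $t\cdot(gy)=g\cdot((g^{-1}tg)y)=gy$ because $g^{-1}tg\in T_m$. Now fix $g$ and let $v\in T_{x_0}X$ be a weight vector of weight $\chi$. Using that $dg_{x_0}$ intertwines the $T_m$-actions at $x_0$ and at $gx_0$, a direct computation gives, for $w:=dg_{x_0}(v)$ and $t\in T_m$, the identity $dt_{gx_0}(w)=\chi(g^{-1}tg)\,w$. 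Hence $w$ is a nonzero weight vector in $T_{gx_0}X$ of weight $g\cdot\chi$, where $g\cdot\chi$ denotes the image of $\chi$ under the standard action of $\GL_m(\ZZ)$ on $X^*(T_m)$. In particular $g\cdot\chi_0$ occurs as a $T_m$-weight in the tangent space at the point $gx_0\in F$.

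Finally I would invoke a finiteness statement together with the arithmetic of $\GL_m(\ZZ)$. Choosing a $T_m$-equivariant closed embedding of the affine variety $X$ into a $T_m$-module $V_0$, every tangent space $T_yX$ at a point $y\in F\subseteq V_0^{T_m}$ is a $T_m$-submodule of $V_0$, so the set $\Psi\subseteq X^*(T_m)$ of all $T_m$-weights occurring in the spaces $T_yX$, $y\in F$, is finite (it is contained in the finite weight set of $V_0$). By the previous paragraph the whole orbit $\{\,g\cdot\chi_0\mid g\in\GL_m(\ZZ)\,\}$ is contained in $\Psi$. But for $m\geq2$ the group $\GL_m(\ZZ)$ acts transitively on primitive vectors of $\ZZ^m$, of which there are infinitely many; writing $\chi_0=d\chi_0'$ with $\chi_0'$ primitive and $d\geq1$, the orbit of $\chi_0$ equals $d$ times this infinite set, hence is infinite. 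This contradicts the finiteness of $\Psi$, and the contradiction shows that $T_m$ has no fixed point.

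The steps I expect to require the most care are the weight-transformation identity $dt_{gx_0}(dg_{x_0}v)=\chi(g^{-1}tg)\,dg_{x_0}(v)$ (which encodes that conjugation by $g$ on $T_m$ induces the standard $\GL_m(\ZZ)$-action on characters, and uses $gx_0\in F$ crucially) and the finiteness of $\Psi$. The latter is transparent when $X$ is affine, which is the only case needed for the application to $X=\AA^n$; for a general irreducible variety one would first reduce to a $T_m$-stable affine neighbourhood of the fixed locus. The genuinely essential input — and the reason the hypothesis $m\geq2$ cannot be dropped — is that $\GL_m(\ZZ)$ is infinite and has infinite orbits on $\ZZ^m\setminus\{0\}$, whereas for $m=1$ one has $\GL_1(\ZZ)=\{\pm1\}$ and the statement indeed fails.
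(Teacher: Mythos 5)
Your proof is correct and takes essentially the same route as the paper's: faithfulness of the tangent representation at a fixed point via Lemma~\ref{faithful-on-Tx.lem}, the $\GL_m(\ZZ)$-stability of the fixed locus with the induced standard action twisting the weights, and a contradiction between the infinitude of $\GL_m(\ZZ)$-orbits in $\ZZ^m\setminus\{0\}$ for $m\geq 2$ and a finiteness statement obtained from a $T_m$-equivariant closed embedding of $X$ into a $T_m$-module. The only cosmetic difference is that you track the finite set of weights occurring along the fixed locus where the paper invokes its lemma on finitely many equivalence classes of tangent representations of a reductive group on $X^{G}$; your closing remark on reducing a non-affine irreducible $X$ to the affine case even addresses a point the paper's proof passes over silently.
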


\begin{proof} 
The natural action of $\GL_{m}(\ZZ)$ on $T_{m}$ induces an action on the character group $X(T_{m}) = \Hom(T_{m},\kst) =  \ZZ \chi_1 \oplus \cdots \oplus \ZZ \chi_m \simeq \ZZ^{m}$ which coincides with the standard representation of $\GL_{m}(\ZZ)$ on $\ZZ^{m}$. More precisely, if $\rho\colon T_{m} \to \GL(V)$ is a representation with character $\chi = \sum_{i}m_{i}\chi_{i}$ and if $\alpha\in\GL_{m}(\ZZ)$, then the representation $\rho\circ\alpha\colon T_{m} \to \GL(V)$ has character $\alpha(\chi)=\sum_{i}m_{i}\alpha (\chi_{i} )$.

Now assume that $X^{T_{m}}$ is not empty. For any fixed point $x$ denote by $\chi_{x}$ the character of the tangent representation of $T_{m}$ on $T_{x}X$ which is faithful by Lemma~\ref{faithful-on-Tx.lem}. Since $X^{T_m}$ is stable under $\GL_{m}(\ZZ)$ it follows that $\alpha(x)$ is also a fixed point for every $\alpha \in\GL_{m}(\ZZ)$, and that the representation of $T_{m}$ in $T_{\alpha(x)}X$ has character $\alpha(\chi_{x})$. This implies that there are infinitely many non-equivalent tangent representation in the fixed point set $X^{T_{m}}$, because $\GL_{m}(\ZZ)$ does not have a finite orbit in $\ZZ^{m}\setminus \{0\}$. This contradicts the following lemma and thus proves the proposition.
\end{proof}

\begin{lemma}
Consider a nontrivial action of a reductive group $G$ on an affine variety $X$. Then the number of equivalence classes of tangent representations of $G$ on $X^{G}$ is finite.
\end{lemma}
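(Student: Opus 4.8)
The plan is to reduce the statement to a purely representation-theoretic finiteness property of the submodules of a single finite-dimensional representation. Since $G$ is a linear algebraic group acting on the affine variety $X$, the induced representation of $G$ on $\OOO(X)$ is locally finite and rational by Proposition~\ref{locally-finite-group-actions.prop}. I would therefore choose a finite-dimensional $G$-stable subspace $U \subseteq \OOO(X)$ that generates $\OOO(X)$ as a $\kk$-algebra; this produces a $G$-equivariant closed immersion $X \hookrightarrow V := U^{\vee}$ into a finite-dimensional $G$-module $V$, and I would fix this $V$ for the remainder of the argument.

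Next I would analyze the tangent representation at a fixed point. For the linear action of $G$ on $V$, the map $v \mapsto g\cdot v$ is linear for each $g \in G$, so its differential at any point of $V$ is again $g$ under the canonical identification of a tangent space of a vector space with the vector space itself. Consequently, for a fixed point $x \in V^G$ this identification $T_x V = V$ is an isomorphism of $G$-modules. Now if $x \in X^G$, then $x$ is in particular a fixed point of $G$ on $V$, and since $X \subseteq V$ is $G$-stable and $x$ is fixed, the subspace $T_x X \subseteq T_x V = V$ is a $G$-submodule. Thus every tangent representation $T_x X$ with $x \in X^G$ is realized as a $G$-submodule of the fixed module $V$.

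The decisive step is then to observe that the set of isomorphism classes of $G$-submodules of $V$ is finite. Because $G$ is reductive and $\kk$ has characteristic zero, every rational $G$-module is completely reducible (see Section~\ref{notation.subsec}); writing $V \cong \bigoplus_{i=1}^{r} W_i^{\oplus n_i}$ with pairwise non-isomorphic irreducibles $W_i$, any $G$-submodule $M \subseteq V$ is semisimple and hence isomorphic to $\bigoplus_i W_i^{\oplus m_i}$ with $0 \le m_i \le n_i$. Its isomorphism class is therefore determined by the tuple $(m_1,\ldots,m_r)$, which ranges over the finite set $\prod_i \{0,1,\ldots,n_i\}$. It follows that $\{\,[T_x X] \mid x \in X^G\,\}$ injects into this finite set, and the lemma follows.

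I do not expect a serious obstacle. The only points that require care are the $G$-equivariance of the identification $T_x V = V$ at a fixed point and the reduction to submodules of one fixed module, both elementary. The essential input is reductivity, which enters solely through complete reducibility: it is precisely this that forces the number of submodule isomorphism types to be finite, a conclusion that would fail for a general (non-reductive) group.
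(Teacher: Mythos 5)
Your proof is correct and follows essentially the same route as the paper's: embed $X$ equivariantly as a $G$-stable closed subset of a $G$-module $V$, observe that the tangent representation at any fixed point is a $G$-submodule of $V$, and conclude by the finiteness of isomorphism classes of submodules, which holds because $G$ is reductive. The only difference is that you spell out the details the paper leaves implicit, namely the $G$-equivariant identification $T_xV = V$ and the multiplicity count $(m_1,\ldots,m_r)$ with $m_i \le n_i$ coming from complete reducibility.
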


\begin{proof}
We can assume that $X$ is a $G$-stable closed subset of a $G$-module $V$. For any $v \in V^{G}$ the tangent representation is isomorphic to $V$, hence the tangent representation of $G$ on $T_{x}X$ is a submodule of $V$. Since $G$ is reductive, there are only finitely many equivalence classes of submodules of $V$.
\end{proof}

\begin{corollary}
If $m\geq 2$ there is no injective homomorphism of ind-groups $\Aut(T_m) \into \Aut(\An)$.
\end{corollary}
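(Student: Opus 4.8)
The plan is to derive a contradiction from the two preceding propositions. Suppose, for contradiction, that there exists an injective homomorphism of ind-groups $\phi\colon \Aut(T_{m}) \into \Aut(\An)$ with $m \geq 2$. Since a homomorphism of ind-groups $\GGG \to \Aut(X)$ into the automorphism group of an affine variety is the same thing as an action of $\GGG$ on $X$, the map $\phi$ endows $\An$ with an action of $\Aut(T_{m})$, and this action is faithful precisely because $\phi$ is injective. As $\An$ is irreducible, the hypotheses of Proposition~\ref{faithful-action-has-fixed-point.prop} are met.

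The heart of the argument is then to confront two opposite conclusions about the fixed points of the subtorus $T_{m} \subseteq \Aut(T_{m})$. On the one hand, restricting the action along the inclusion $T_{m} \into \Aut(T_{m})$ gives an action of the torus $T_{m}$ on $\An$. Since $T_{m}$ is a connected diagonalizable group, the quotient $T_{m}/T_{m}^{\circ}$ is trivial, hence cyclic, so Proposition~\ref{semisimple-auto-has-fixed-point.prop} applies and guarantees that $T_{m}$ has a fixed point in $\An$. On the other hand, Proposition~\ref{faithful-action-has-fixed-point.prop}, applied to the faithful action of $\Aut(T_{m})$ on the irreducible variety $\An$ with $m \geq 2$, asserts that $T_{m}$ has \emph{no} fixed point in $\An$. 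These two statements are incompatible, and the contradiction proves that no such $\phi$ exists.

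I do not expect any serious obstacle, since the corollary is a direct combination of the two fixed-point results already established; the only point requiring a little care is the bookkeeping that identifies an injective ind-group homomorphism into $\Aut(\An)$ with a faithful $\Aut(T_{m})$-action that genuinely restricts to a torus of automorphisms. Here one uses that the restriction of $\phi$ to the algebraic subgroup $T_{m}$ has closed image isomorphic to a torus, by Proposition~\ref{Hom-G-to-ind-group.prop}, so that $T_{m}$ really acts as a torus on $\An$ and Proposition~\ref{semisimple-auto-has-fixed-point.prop} is applicable. If anything, the delicate input lies upstream, inside Proposition~\ref{faithful-action-has-fixed-point.prop} itself, whose proof exploits the fact that $\GL_{m}(\ZZ)$ has no finite orbit on $\ZZ^{m}\setminus\{0\}$ for $m \geq 2$; this is exactly where the hypothesis $m \geq 2$ enters and explains why the corollary has no content for $m = 1$.
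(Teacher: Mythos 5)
Your proof is correct and takes essentially the same route as the paper: the injective homomorphism is read as a faithful action of $\Aut(T_m)$ on the irreducible variety $\An$, and the existence of a fixed point for the torus $T_m$ (Proposition~\ref{semisimple-auto-has-fixed-point.prop}, since $T_m/T_m^{\circ}$ is trivial, hence cyclic) contradicts Proposition~\ref{faithful-action-has-fixed-point.prop}. Your additional appeal to Proposition~\ref{Hom-G-to-ind-group.prop} for the closedness of the image of $T_m$ is harmless but not needed, since Proposition~\ref{semisimple-auto-has-fixed-point.prop} applies to any action of $T_m$ on $\An$, closed image or not.
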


\begin{proof}
Such an injections $\Aut(T_m) \into \Aut(\An)$ defines an action of $\Aut(T_m )$ on $\An$. 
It follows from Proposition~\ref{semisimple-auto-has-fixed-point.prop} that $T_m \subset \Aut(T_m )$ has a fixed point which contradicts the proposition above.
\end{proof}

\ps
\subsubsection*{\bfit{Representations of \texorpdfstring{$\Aut(\An)$}{Aut(An)-modules}}}
We have seen that the linear action of $\Aut(\An)$ on $\OOO(\An) = \kk[x_{1},\ldots,x_{n}]$ is a representation (Proposition~\ref{locally-finite-action-on-Vec(X).prop}), i.e. $\OOO(\An)$ is an $\Aut(\An)$-module. It contains the trivial module $\kk$ as a submodule.
\begin{proposition}
For $n\geq 2$, the $\Aut(\An)$-module
$\OOO(\An) / \kk$ is simple. It is simple even as an $\SAut(\An)$-module.
\end{proposition}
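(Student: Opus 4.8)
The plan is to prove the stronger assertion that $\OOO(\An)/\kk$ is simple as a module over $\SAut(\An)$; since $\SAut(\An)\subseteq\Aut(\An)$, this immediately gives simplicity as an $\Aut(\An)$-module. Equivalently, I want to show that any $\SAut(\An)$-submodule $W\subseteq\OOO(\An)$ with $\kk\subsetneq W$ must equal $\OOO(\An)$. The first step is to pass to the Lie algebra: restricting the representation to the connected component $\SAut(\An)^{\circ}$ and applying Corollary~\ref{LieG-stable-is-G-stable.cor} together with Proposition~\ref{LieAlgAutAn.prop} and formula \eqref{formulas-for-A}, the subspace $W$ is $\SAut(\An)$-stable if and only if it is invariant under the derivation action of every divergence-free vector field $\delta\in\VEC^{0}(\An)$, i.e. $\delta(W)\subseteq W$ for all $\delta$ with $\Div\delta=0$.

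Second, I record the divergence-free fields that will do all the work: for $i\neq j$ and $k\geq 0$, the fields $\partial_{x_i}$, $x_j\partial_{x_i}$ and $x_j^{k}\partial_{x_i}$ all lie in $\VEC^{0}(\An)$, since their coefficients do not involve $x_i$, so that their divergence vanishes. These three types implement, respectively, lowering the degree, transferring an exponent from $x_i$ to $x_j$, and raising the degree.

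Third comes the explicit generation argument, carried out in characteristic zero. Starting from a nonconstant $f\in W$ with leading homogeneous part $f_d$ of degree $d\geq 1$, I first observe that some iterated partial $\partial^{\beta}f$ with $|\beta|=d-1$ is a nonzero linear form plus a constant (if all order-$(d-1)$ partials of $f_d$ vanished, then $f_d=0$); since the constant lies in $\kk\subseteq W$, this yields a nonzero linear form $\ell\in W$. Applying the transfer fields $x_j\partial_{x_i}$ to $\ell$ then produces each variable $x_i\in W$. Next, applying $x_1^{k}\partial_{x_2}$ to $x_2$ gives $x_1^{k}\in W$ for every $k\geq 1$. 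Finally, for a fixed degree $d$ the transfer operation $x^{\alpha}\mapsto x_j\partial_{x_i}(x^{\alpha})=\alpha_i\,x^{\alpha-e_i+e_j}$ moves one unit of exponent from $x_i$ to $x_j$ with a nonzero scalar, so from $x_1^{d}$ one reaches every monomial of degree $d$; together with $\kk$ this gives $W=\OOO(\An)$, proving simplicity.

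The genuinely conceptual step is the first one, the Lie-algebra reduction, which turns a statement about the huge and badly understood group $\SAut(\An)$ into a concrete problem about divergence-free vector fields; this is also where connectedness must be handled, by passing to $\SAut(\An)^{\circ}$ (whose Lie algebra is still $\Lie\SAut(\An)$). After that, the remaining work is essentially combinatorial bookkeeping with the three families of divergence-free derivations, and the only point requiring care is the non-vanishing of the scalar coefficients, which is guaranteed by $\operatorname{char}\kk=0$.
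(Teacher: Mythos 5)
Your proof is correct, but it takes a genuinely different route from the paper's. The paper argues at the level of the reductive subgroup $\SLn\subseteq\SAut(\An)$: by Lemma~\ref{ker-Div.lem} the homogeneous components $\OOO(\An)_{d}$ are simple and pairwise non-isomorphic $\SLn$-modules, so any $\SAut(\An)$-submodule is automatically a direct sum of such components; the two tame automorphisms $(x_{1}+1,x_{2},\ldots,x_{n})$ and $(x_{1}+x_{2}^{m},x_{2},\ldots,x_{n})$ then push a given component down to all lower degrees and up to arbitrarily high degrees, finishing the proof in a few lines. You instead differentiate: via Corollary~\ref{LieG-stable-is-G-stable.cor} and Proposition~\ref{LieAlgAutAn.prop} a submodule is invariant under all divergence-free vector fields, and the three families $\partial_{x_{i}}$, $x_{j}\partial_{x_{i}}$, $x_{j}^{k}\partial_{x_{i}}$ generate everything from one nonconstant element by explicit monomial bookkeeping (your degree, transfer and non-vanishing-scalar steps all check out in characteristic zero, and the hypothesis $n\geq 2$ enters exactly where you use a second variable). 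What your route buys is an entirely elementary endgame — no isotypic decomposition, just calculus on monomials — and it makes explicit which infinitesimal symmetries do the work. What it costs is the ind-group Lie machinery; note, however, that you do not need the hard surjectivity half of Proposition~\ref{LieAlgAutAn.prop}: your three families are locally nilpotent, hence already lie in $\xi(\Lie\SAut(\An))$ by Proposition~\ref{Integration-of-VF.prop}, since they integrate to the elementary automorphisms $(x_{1},\ldots,x_{i}+sx_{j}^{k},\ldots,x_{n})$ — indeed one could run your whole argument at the group level by expanding $g_{s}^{*}f$ in powers of $s$ and extracting coefficients, with no Lie theory at all. Also be aware that the proof of Proposition~\ref{LieAlgAutAn.prop} itself rests on Lemma~\ref{ker-Div.lem}, so the $\SLn$-representation theory is relocated rather than avoided if you cite it. One small overstatement: your ``if and only if'' between $\SAut(\An)$-stability and $\Lie$-invariance presupposes that $\SAut(\An)$ is connected (true — the connecting families constructed in Proposition~\ref{k*-connected.prop} preserve the jacobian and end in $\SL(n)$ — but not stated explicitly in the paper); since your argument only uses the forward implication, which holds without any connectedness of the full group, nothing is at stake.
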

\begin{proof}
As an $\SLn$-module we have the decomposition $\OOO(\An) = \bigoplus_{d}\OOO(\An)_{d}$ into homogeneous components $\OOO(\An)_{d}$ which are simple $\SLn$-modules and pairwise non-isomorphic
(Lemma~\ref{ker-Div.lem}). Therefore, an $\SAut(\An)$-submodule $M$ is a direct sum of homogeneous components. If $x_{1}^{d} \in M$, then $(x_{1}+1)^{d} \in M$ and so all homogeneous components of degree $\leq d$ belong to $M$. Since $(x_{1}+x_{2}^{m})^{d}\in M$ for all $m$ it follows that $M$ contains elements of arbitrary large degree.
\end{proof}

\begin{remark} The $\Aut(\An)$-module 
$\VEC( \An )$ contains the submodule $\VEC^{0}(\An)$. A similar argument as above, using again Lemma~\ref{ker-Div.lem}, shows that  $\VEC^{0}(\An)$ is simple even as an $\SAut(\An)$-module.
\end{remark}

\pmed
\section{Some Special Results about \texorpdfstring{$\Aut(\AA^2)$}{Aut(A2)}} 
\label{Aut(A2).sec}
\ps
\subsection{Amalgamated product structure of \texorpdfstring{$\AutA{2}$}{A2}} 
\label{amalgam.subsec}
The group $\AutA{2}$ is an \itind{amalgamated product} defined by the subgroups $\Aff:=\Aff(2)$ of affine automorphisms and $\JJJ:=\JJJ(2)$ of upper triangular automorphisms:
$$
\AutA{2} =  \Aff \star_{\BBB} \JJJ, \quad \BBB := \Aff\cap\JJJ.
$$
$\Aff$ are the automorphisms of degree 1 and $\JJJ$ are those of the form $(x,y)\mapsto (rx+p(y), sy+c)$ with $r,s,c\in\kk$,  $rs\neq 0$,  and $p\in\kk[y]$. This result goes back to \name{Jung} \cite{Ju1942Uber-ganze-biratio} and \name{van der Kulk} \cite{Ku1953On-polynomial-ring}.\idx{$\BBB$}

The {\it length of an element} $\g\in\AutA{2}$\idx{length}, denoted by $\ell( \g )$\idx{$\ell(\g)$}, is the minimal number $n$ such that $\g$ can be written as product of $n$ elements from $\Aff\cup\JJJ$. The elements of $\Aff\cup\JJJ$ are exactly those of length one, and every element $\g\notin\BBB$ has one of the following forms which are called {\it reduced expressions}\idx{reduced expression}:
\be
\item 
If $n=2m$ is even, then $\g=\a_{1}\cdot \t_{1}\cdot \a_{2}\cdot \t_{2}\cdots \a_{m}\cdot \t_{m}$ or $\g=\t_{1}\cdot \a_{1}\cdot \t_{2}\cdot \a_{2}\cdots \t_{m}\cdot \a_{m}$ 
where $\t_{i}\in\JJJ\setminus\BBB$ and $\a_{i}\in\Aff\setminus\BBB$.
\item 
If $n = 2m+1$ is odd, then $\g=\a_{1}\cdot \t_{1}\cdot \a_{2}\cdot \t_{2}\cdot \cdots \a_{m}\cdot \t_{m}\cdot \a_{m+1}$ or 
$\g=\t_{1}\cdot \a_{1}\cdot \t_{2}\cdot \a_{2}\cdots \t_{m}\cdot \a_{m}\cdot \t_{m+1}$
where $\t_{i}\in\JJJ\setminus\BBB$ and $\a_{i}\in\Aff\setminus\BBB$.
\ee
The reduced expressions are uniquely defined modulo the actions $\a\cdot \t\mapsto (\a \b^{-1}) \cdot (\b \t)$ and  $\t \cdot \a \mapsto (\t\b^{-1}) \cdot (\b \a)$ with $\b \in \BBB$. We will also use the fact that if $\f=\f_{1}\cdot \f_{2}\cdots \f_{k}$ is a reduced expression, then $\deg \f = \prod_{i}\deg \f_{i}$ (see \cite[Lemma 5.1.2]{Es2000Polynomial-automor}).

\begin{lemma}  \label{lem1}
Let $\g \in\AutA{2}$ be of even length,  and let $\g=\g_{1}\cdot \g_{2}\cdots \g_{n}$ be a reduced expression.
\be
\item If $\g'$ is conjugate to $\g$, then $\ell(\g')\geq \ell(\g)$.
\item If $\g'$ is conjugate to $\g$ and $\ell(\g') = \ell(\g)$, then $\g'$ is $\BBB$-conjugate to a cyclic permutation $\g_{i}\cdots \g_{n}\cdot \g_{1}\cdots \g_{i-1}$. Moreover, there is an $\f\in\AutA{2}$ such that $(\deg \f)^{2}\leq \deg \g$ and $\f^{-1}\cdot \g \cdot \f = \g'$.
\ee
\end{lemma}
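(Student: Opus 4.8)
The statement concerns the amalgamated product structure of $\AutA{2} = \Aff \star_{\BBB} \JJJ$, and both parts are consequences of the general theory of conjugacy in amalgamated free products. Here is my plan.

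\textbf{Approach.} The plan is to use the standard normal-form and conjugacy theory for amalgamated products (the reduction to \emph{cyclically reduced} elements), combined with the degree multiplicativity $\deg(\f_1\cdots\f_k)=\prod_i\deg\f_i$ for reduced expressions. The key structural fact is that an element $\g$ of even length $n=2m$ is automatically \emph{cyclically reduced}: in a reduced expression $\g=\g_1\cdots\g_n$ the first and last syllables lie in \emph{different} factors ($\Aff\setminus\BBB$ and $\JJJ\setminus\BBB$, in one order or the other), so no cyclic cancellation can shorten it. This is precisely why the hypothesis ``even length'' is imposed.

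\textbf{Part (1).} First I would recall the conjugacy theorem for amalgamated products: if $\g$ is cyclically reduced of length $\geq 2$, then every conjugate $\g'$ of $\g$ has length $\ell(\g')\geq \ell(\g)$, and equality holds exactly when $\g'$ is itself cyclically reduced and its reduced expression is obtained from that of $\g$ by a cyclic permutation of the syllables followed by a $\BBB$-conjugation. Since $\g$ of even length is cyclically reduced, part (1) is the first half of this theorem applied directly. (The degenerate cases where $\g$ or $\g'$ lies in $\Aff$, $\JJJ$, or $\BBB$ do not arise because $\ell(\g)=n$ is even and at least $2$.)

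\textbf{Part (2).} Assuming $\ell(\g')=\ell(\g)$, the conjugacy theorem gives that $\g'$ is $\BBB$-conjugate to a cyclic permutation $\g_i\cdot\g_{i+1}\cdots\g_n\cdot\g_1\cdots\g_{i-1}$; this is the first assertion. For the degree bound I would make the conjugating element explicit. Writing $\g = (\g_1\cdots\g_{i-1})\cdot(\g_i\cdots\g_n)$ and setting $\f_0 := \g_1\cdots\g_{i-1}$, one checks that $\f_0^{-1}\cdot\g\cdot\f_0 = (\g_i\cdots\g_n)\cdot(\g_1\cdots\g_{i-1})$ is the desired cyclic permutation; composing with a $\BBB$-element $\b$ (which has degree $1$, since $\BBB\subseteq\Aff$) then yields $\g'$. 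Thus $\f := \f_0\cdot\b^{\pm1}$ conjugates $\g$ to $\g'$. To get $(\deg\f)^2\leq\deg\g$, I would choose the cyclic permutation cutting the word at its ``midpoint'': among the $m$ cyclic rotations that preserve the alternating pattern, one can arrange that the prefix $\f_0$ and the complementary suffix each have degree at most $\sqrt{\deg\g}$. Concretely, by degree multiplicativity $\deg\g=\prod_{j}\deg\g_j$, and one picks the split point $i$ so that $\deg\f_0=\prod_{j<i}\deg\g_j \leq \sqrt{\deg\g}$ while $\prod_{j\geq i}\deg\g_j\leq\sqrt{\deg\g}$ as well (the factors can be grouped so that one partial product does not exceed the geometric-mean threshold); since $\deg\b=1$ we get $\deg\f=\deg\f_0$, hence $(\deg\f)^2\leq\deg\g$.

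\textbf{Main obstacle.} The routine part is invoking the amalgam conjugacy theorem; the delicate point I expect to require care is the \emph{degree} estimate $(\deg\f)^2\leq\deg\g$. One must verify that among the cyclic rotations realizing the conjugacy, there is always a split point for which the prefix degree is bounded by $\sqrt{\deg\g}$, and that absorbing the $\BBB$-factor (degree $1$) does not increase the degree of the conjugator. I would handle this by tracking partial products $P_i=\prod_{j<i}\deg\g_j$: since $P_n=\deg\g$ and the $P_i$ increase, there is a first index $i$ with $P_i>\sqrt{\deg\g}$, and then $\f_0=\g_1\cdots\g_{i-1}$ (or its complement, whichever is shorter) satisfies the bound, after which one confirms that the cyclically permuted word is $\BBB$-conjugate to $\g'$ via this explicit $\f$.
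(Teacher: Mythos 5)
Your treatment of part (1) and of the first assertion of part (2) is essentially correct, though by a different route than the paper: you invoke the standard conjugacy theorem for amalgamated products after observing that an element of even length is automatically cyclically reduced (its first and last syllables lie in different factors), whereas the paper proves these facts from scratch, peeling off the syllables $\f_{1},\f_{2},\ldots$ of a reduced expression of the conjugator one at a time and showing that either the length grows by $2$ at every subsequent step, or the first conjugation yields, up to absorbing an element of $\BBB$, a cyclic rotation of the same length, and then inducting. Citing the general theory is a legitimate shortcut; the paper's hands-on induction has the advantage of being self-contained and of making the conjugator explicit, which is exactly what the degree estimate requires.

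Your argument for the degree bound $(\deg \f)^{2}\leq \deg\g$, however, has a genuine gap and contains a false claim. You propose to \emph{choose} the split index $i$ near the ``midpoint'' so that the prefix $\g_{1}\cdots\g_{i-1}$ and the suffix $\g_{i}\cdots\g_{n}$ \emph{both} have degree at most $\sqrt{\deg\g}$. First, since $\deg(\g_{1}\cdots\g_{i-1})\cdot\deg(\g_{i}\cdots\g_{n})=\deg\g$ by multiplicativity on reduced expressions, both partial products can be $\leq\sqrt{\deg\g}$ only if both equal $\sqrt{\deg\g}$ exactly, which fails in general: with syllable degrees $1,2,1,4$ (so $\deg\g=8$) every split produces a factor of degree $4>\sqrt{8}$. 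Second, and more fundamentally, the index $i$ is not yours to choose: $\g'$ is $\BBB$-conjugate to one particular rotation $\g_{i}\cdots\g_{n}\cdot\g_{1}\cdots\g_{i-1}$, and your plan to fix a convenient $i$ first and ``then confirm'' that this rotation is $\BBB$-conjugate to $\g'$ will in general fail, since distinct rotations need not be $\BBB$-conjugate to one another. The correct repair --- which is the paper's argument --- keeps $i$ as given and instead exploits that the \emph{same} rotation is realized by conjugation from either side:
\[
\g_{i}\cdots\g_{n}\cdot\g_{1}\cdots\g_{i-1} \;=\; (\g_{1}\cdots\g_{i-1})^{-1}\cdot\g\cdot(\g_{1}\cdots\g_{i-1}) \;=\; (\g_{i}\cdots\g_{n})\cdot\g\cdot(\g_{i}\cdots\g_{n})^{-1}.
\]
Since the degrees of $\g_{1}\cdots\g_{i-1}$ and $\g_{i}\cdots\g_{n}$ multiply to $\deg\g$, and since neither inversion (in $\AutA{2}$ one has $\deg\h^{-1}=\deg\h$, as syllable degrees are preserved under inversion) nor composing with the degree-one element of $\BBB$ changes degrees, at least one of the two resulting conjugators $\f$ satisfies $(\deg\f)^{2}\leq\deg\g$. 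Your parenthetical ``or its complement, whichever is shorter'' gestures at this, but it only works once $i$ is taken as dictated by $\g'$ rather than optimized.
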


\begin{proof}
(1) and (2):
Let $\g' = \f^{-1}\cdot \g \cdot \f$ and let $\f=\f_{1}\cdot \f_{2}\cdots \f_{k}$ be a reduced expression. Assume that $\g_{1}\in\JJJ\setminus\BBB$ and $\g_{n}\in\Aff\setminus\BBB$; the case where $\g_{1}\in\Aff\setminus\BBB$ and $\g_{n}\in\JJJ\setminus\BBB$ can be handled similarly by exchanging $\JJJ$ and $\Aff$. 

Assume that $\ell(\f_{1}^{-1}\cdot \g\cdot \f_{1}) > \ell(\g)$. If $\f_{1}\in\JJJ\setminus\BBB$, then $\g_{1}':=\f_{1}^{-1} \g_{1}\in\JJJ\setminus\BBB$, and so $\g^{(i)}:= (\f_{1}\cdots \f_{i})^{-1}\cdot \g\cdot (\f_{1}\cdots \f_{i}) = \f_{i}^{-1}\cdots \f_{2}^{-1}\cdot \g_{1}'\cdot \g_{2}\cdots \g_{n}\cdot \f_{1}\cdots \f_{i}$, and this  is a reduced expression for all $i$. Hence $\ell(\g^{(i)})= \ell(\g)+2i-1$ for $i=1,\ldots,k$. 
If $\f_{1}\in\Aff\setminus\BBB$, then $\g_{n}':=\g_{1}\cdot \f_{1}\in\Aff\setminus\BBB$, and so $\g^{(i)}= \f_{i}^{-1}\cdots \f_{1}^{-1}\cdot \g_{1}\cdots \g_{n-1}\cdot \g_{n}'\cdot \f_{2}\cdots \f_{i}$ is a reduced expression for all $i$. Hence again $\ell(\g^{(i)})= \ell(\g)+2i-1$ for $i=1,\ldots,k$. Thus we always get $\ell(\g')>\ell(\g)$.

Now assume that $\ell(\f_{1}^{-1}\cdot \g\cdot \f_{1}) \leq \ell(\g)$.  If $\f_{1}\in\JJJ\setminus\BBB$, then $\beta:=\f_{1}^{-1}\cdot \g_{1}\in \BBB$ and $(\beta\cdot \g_{2})\cdots \g_{n}\cdot \f_{1} = \beta \cdot (\g_{2}\cdots \g_{n}\cdot \g_{1})\cdot \beta^{-1}$, and this is a reduced expression of length $\ell(\g)$. 
Similarly,  if $\f_{1}\in\Aff\setminus\BBB$, then $\beta:=\g_{n}\cdot \f_{1}\in \BBB$ and $\f_{1}^{-1}\cdot \g_{1}\cdots \g_{n-1}\cdot (\g_{n}\cdot \f_{1}) = \beta^{-1}(\g_{n}\cdot \g_{1}\cdots \g_{n-1})\cdot \beta$, and this is a reduced expression of length $\ell(\g)$. 
Thus, by induction, we get (1) and the first claim of (2). As for the second, about degrees, we simply remark that 
\begin{eqnarray*}
\g_{i}\cdots \g_{n}\cdot \g_{1}\cdots \g_{i-1} &=& (\g_{1}\cdots \g_{i-1})^{-1}\cdot \g \cdot (\g_{1}\cdots \g_{i-1}) \\
&=& (\g_{i}\cdots \g_{n})\cdot \g \cdot (\g_{i}\cdots \g_{n})^{-1} 
\end{eqnarray*}
\end{proof}

\begin{remark}\label{Cmin.rem}
This lemma has some interesting consequences. Let $C\subseteq \AutA{2}$ be a conjugacy class which does not contain elements from $\JJJ\cup\Aff$. Then the elements in $C_{\text{min}}\subseteq C$ of minimal length consist of finitely many $\BBB$-conjugacy classes. All elements of $C_{\text{min}}$ have the same degree $d$ which is also minimal, and $C\cap\AutA{2}_{d} = C_{\text{min}}$.  Moreover, if $\g\in\AutA{2}$
has even length, then $\ell(\g^{n}) = |n|\ell(\g)$ and $\deg(\g^{n})= (\deg \g)^{|n|}$ for all $n\in\ZZ\setminus\{0\}$. In particular, the subgroup $\langle\g\rangle$ is \itind{discrete}, i.e. $\langle\g\rangle\cap\AutA{2}_{k}$ is finite for all $k$.
\end{remark}

\begin{lemma}\label{lem2}
For every $\g\in\AutA{2}$ there is an element $\f$ of degree $\leq \deg \g$ such that $\g':= \f^{-1}\cdot \g \cdot \f$ has either even length  or  length 1, and  $\deg \g' \leq \deg \g$. Moreover, if $\g$ is conjugate to an element of $\BBB$ we can arrange that $\g' \in\BBB$.
\end{lemma}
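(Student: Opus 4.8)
The plan is to run the whole argument through the amalgamated product structure $\AutA{2} = \Aff\star_{\BBB}\JJJ$ together with the reduced-expression normal form, and to reduce everything to the case of \emph{odd} length at least $3$. If $\g$ already has even length or length $1$, one simply takes $\f=\id$ (so $\deg\f=1\le\deg\g$ and $\g'=\g$), so the only content of the main statement is the odd case. Recall that a reduced expression of \emph{odd} length begins and ends with factors of the same type.

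So suppose $\ell(\g)=2m+1\ge 3$. First I treat the case where both extreme factors are affine, say $\g=\a_1\t_1\a_2\cdots\t_m\a_{m+1}$ with $\a_i\in\Aff\setminus\BBB$ and $\t_i\in\JJJ\setminus\BBB$. Conjugating by the affine map $\f:=\a_{m+1}^{-1}$ (so $\deg\f=1\le\deg\g$) gives
\[
\g' = \f^{-1}\g\f = (\a_{m+1}\a_1)\,\t_1\cdots\t_m .
\]
If $\a_{m+1}\a_1\in\Aff\setminus\BBB$ this is reduced of even length $2m$, while if $\a_{m+1}\a_1\in\BBB\subseteq\JJJ$ the whole product lies in $\JJJ$ and $\g'$ has length $1$; in both cases $\deg\g'\le\deg\g$ because conjugation by an affine map does not raise the degree. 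The case of two triangular extreme factors is symmetric: writing $\g=\t_1\a_1\cdots\a_m\t_{m+1}$ and conjugating by $\f:=\t_{m+1}^{-1}$, one has $\deg\f=\deg\t_{m+1}\le\deg\g$ by the degree-product formula for reduced expressions, and $\g'=(\t_{m+1}\t_1)\a_1\cdots\a_m$ has even length $2m$, or length $1$ if $\t_{m+1}\t_1\in\BBB$, again with $\deg\g'\le\deg\g$ by submultiplicativity of the degree. This proves the main statement.

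For the ``moreover'' part I assume in addition that $\g$ is conjugate to some $\b_0\in\BBB$, and apply the reduction above to get $\f$ and $\g'=\f^{-1}\g\f$ of even length or length $1$. The even alternative is now \emph{impossible}: by Lemma~\ref{lem1}(1) every conjugate of an even-length element has at least its length, so $1=\ell(\b_0)\ge\ell(\g')$, forcing $\ell(\g')\le 1$. Thus $\g'\in\Aff\cup\JJJ$, and it remains to conjugate this length-$1$ element into $\BBB$. If $\g'\in\Aff$, a \emph{linear} conjugation puts its linear part in upper-triangular (Jordan) form, carrying $\g'$ into $\BBB$; since in the reduction producing an affine $\g'$ the conjugator was $\t_{m+1}^{-1}$ and this final step is affine of degree $1$, the degrees multiply and stay $\le\deg\g$. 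If $\g'=(rx+p(y),sy+c)\in\JJJ$, I would conjugate within $\JJJ$ by a triangular map $(x-q(y),y)$ (and a translation) to reduce $p$ to an affine polynomial; when $(r,s)\neq(1,1)$ the relevant operator $q\mapsto rq(y)-q(sy+c)$ is degree-preserving, so $\deg q\le\deg p=\deg\g'$ and the conjugator stays within the bound, and the hypothesis that $\g'$ is conjugate to the affine $\b_0$ is exactly what guarantees that the obstructing resonant higher-degree terms are absent.

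The main obstacle is the degree bookkeeping in the remaining de Jonquières case, the unipotent one $\g'=(x+p(y),y+c)$ with $c\neq0$. Here the operator $q\mapsto q(y+c)-q(y)$ \emph{lowers} degree by one, so the naive triangular conjugation removing $p$ costs a conjugator of degree $\deg p+1$, which a priori overshoots $\deg\f\le\deg\g$. Verifying that this case can nonetheless be handled within the required degree bound — whether by a sharper choice of conjugator exploiting the precise structure forced by conjugacy to an affine element, or by threading the estimate of Lemma~\ref{lem1}(2) through the even-length configurations one passes through — is where the real care lies, and it is the step on which I would spend most of the effort.
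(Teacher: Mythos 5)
Your strategy --- cyclic reduction inside the amalgam by conjugating with an end factor --- is exactly the paper's, but your single-step version has a genuine gap: one conjugation does not produce the dichotomy you claim. In the affine-extremes case, $\f^{-1}\cdot\g\cdot\f=(\a_{m+1}\a_1)\cdot\t_1\cdot\a_2\cdot\t_2\cdots\a_m\cdot\t_m$ retains all the interior factors, so when $\a_{m+1}\a_1=\b\in\BBB$ the result is $(\b\t_1)\cdot\a_2\cdot\t_2\cdots\a_m\cdot\t_m$, a \emph{reduced expression of odd length} $2m-1$; it lies in $\JJJ$ only when $m=1$ (symmetrically in the triangular case). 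Thus for $\ell(\g)\geq 5$ your ``length $1$'' alternative is false, and one must iterate: the paper sets $\g^{(i)}:=(\g_1\cdots\g_i)^{-1}\cdot\g\cdot(\g_1\cdots\g_i)$ and takes $k$ maximal with $\g^{(1)},\ldots,\g^{(k)}$ all of odd length. The iteration also requires degree bookkeeping that submultiplicativity alone does not give: each odd collapse forces the end degrees to pair up, whence $\deg\g^{(i)}\,\bigl(\deg(\g_1\cdots\g_i)\bigr)^2=\deg\g$, and it is this identity that bounds the cumulative conjugator $\f=\g_1\cdots\g_k$ (or $\g_1\cdots\g_{k+1}$) by $\deg\g$. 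Your one-step bounds $\deg\f=1$, resp.\ $\deg\f=\deg\t_{m+1}$, do not survive the needed iteration without it.

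For the ``moreover'' part, your use of Lemma~\ref{lem1}(1) to exclude the even alternative is correct and matches the paper (this is why $k=m$ there), but you then leave the crux --- conjugating the residual length-$1$ triangular element into $\BBB$ within the degree bound --- explicitly open, so this half of the lemma is not proved by your proposal. Two remarks. First, your resonance claim is inaccurate: the operator $q\mapsto rq(y)-q(sy+c)$ drops degree whenever $r=s^N$ for a relevant $N$, not only when $(r,s)=(1,1)$. Second, the obstacle you flag in the unipotent case is real: for $\g'=(x+p(y),y+c)$ with $c\neq 0$, \emph{every} triangular conjugator into $\BBB$ has degree $\deg p+1$, since the difference operator $q\mapsto q(y+c')-q(y)$ lowers degree by exactly one; for instance $(x+y^2,y+1)$ requires the degree-$3$ map $\bigl(x+\tfrac{y^3}{3}-\tfrac{y^2}{2}+\tfrac{y}{6},\,y\bigr)$, cf.\ Proposition~\ref{conj-class-Jn.prop}. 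The paper settles this step by one further conjugation $\z$ in the same factor with the asserted bound $\deg\z=\deg\g^{(k)}$ --- an assertion that fails precisely in this unipotent case, where $\deg\z=\deg\g^{(k)}+1$ is forced; the downstream use in Proposition~\ref{prop1} only needs $\deg\f$ polynomially bounded, so it tolerates this, but within your proposal the case is simply missing and the ``moreover'' statement remains unestablished.
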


\begin{proof}
Let $\g$ be an element of odd length $n=2m+1\geq 3$ with reduced expression $\g=\g_{1}\cdots \g_{n}$. Then $\g_{1}$ and $\g_{n}$ both belong to $\JJJ \setminus \BBB$ or both to $\Aff\setminus \BBB$. Conjugating with $\g_{1}$ we get 
$$
\g_{1}^{-1}\cdot \g \cdot \g_{1} = \g_{2}\cdots \g_{n-1}\cdot (\g_{n}\cdot \g_{1}).
$$
If $\g_{n}\cdot \g_{1}\in\BBB$ the right hand side has odd length $n-2$ and $\deg(\g_{1}^{-1}\cdot \g \cdot \g_{1}) (\deg \g_{1})^{2}= \deg \g$, because $\deg \g_{1}=\deg \g_{n}$. Otherwise, it is a reduced expression of even length $n-1$. 

Now define $\g^{(i)}:= (\g_{1}\cdots \g_{i})^{-1}\cdot \g \cdot  (\g_{1}\cdots \g_{i})$ for $i\leq m$. Choose $k\leq m$ maximal such that $\g^{(1)},\ldots,\g^{(k)}$ all have odd length. Then $\g^{(k)}= \g_{k+1}\cdots \g_{n-k}\cdot \b$ where $\b\in\BBB$, and so $\ell(\g^{(k)})=\ell(\g) - 2k$ and $\deg \g^{(k)} (\deg \g_{1}\cdots \g_{k})^{2}= \deg \g$. If $k=m$ the claim follows with $\f := \g_{1}\cdots \g_{k}$.
If $k<m$, then $\g^{(k+1)} = \g_{k+2}\cdots (\g_{n-k}\cdot \b\cdot \g_{k})$ is of even length $\ell(\g)-2k-1\geq 2$, $\deg (\g_{1}\cdots \g_{k+1})\leq \deg \g$ and $\deg \g^{(k+1)}\leq \deg \g$. Putting $\f := \g_{1}\cdots \g_{k+1}$ the claim follows. 

Finally, assume that $\g$ is conjugate to an element of $\BBB$. Then $k=m$ and $\g^{(m)}= \g_{m+1}\cdot\b$. If $\g_{m+1}\in\JJJ\setminus\BBB$ and $\z^{-1}\cdot \g_{m+1}\cdot \z \in \BBB$ where $\z=\z_{1}\cdots \z_{j}$ is a reduced expression, then one easily sees that $\z = \z_{1}\in\JJJ\setminus\BBB$ and $\deg \z = \deg \g^{(k)}$. Similarly, we can handle the case $\g_{m+1}\in\Aff\setminus\BBB$. Thus the second claim follows with $\f:= \g_{1}\cdots \g_{k}\cdot \z$.
\end{proof}

\begin{remark} 
We have seen in the proof above that  if $\h\in \JJJ\cup\Aff$ and if $\g$ is conjugate to $\h$, then there is an automorphism $\f\in \AutA{2}$ such that $\g = \f^{-1}\cdot \h\cdot \f$ and $\deg \g = \deg \h \, (\deg \f)^{2}$. This is also shown in \cite[Lemma 4.1]{FuMa2007Locally-finite-pol}.  A similar formula cannot hold in general as seen from the example of an element $\g = \g_{1}\cdots \g_{n}$ of even length which is conjugate to $\g_{2}\cdots \g_{n}\cdot \g_{1}$ and not conjugate to any element of smaller length (see Lemma~\ref{lem1}).
\end{remark}

\begin{remark}\label{SAutA2.rem}
It is easy to see that the special automorphism group $\SAutA{2}$ is the amalgamated product of $\SAff(2)$ and $\SJ(2)$ over their intersection, with the obvious meaning  of $\SAff(2)$ and $\SJ(2)$. 
\end{remark}

\begin{lemma}[see \cite{FuMa2010A-characterization}]  \label{lengthone.lem}
The following statements for $\g \in \AutA{2}$ are equivalent.
\be
\item[(i)] $\g$ is triangularizable, i.e. conjugate to an element of $\JJJ$.
\item[(ii)] $\ell(\g^{2})\leq \ell(\g)$.
\item[(iii)] $\ell(\g^{n}) \leq \ell(\g)$ for all $n$.
\item[(iv)] $\deg(\g^{2})\leq \deg(\g)$.
\item[(v)] $\deg(\g^{n}) \leq \deg(\g)$ for all $n$.
\item[(vi)] $\g$ is of finite length.
\item[(vii)] $\g$ is locally finite.
\ee
\end{lemma}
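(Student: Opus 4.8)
The plan is to prove a cycle of implications using the amalgamated product structure of $\AutA{2}$ developed in Section~\ref{amalgam.subsec}, exploiting the length function $\ell$ and the degree multiplicativity of reduced expressions. The logical skeleton I would set up is: (i)$\Rightarrow$(vii) via the fact that conjugation does not change local finiteness and triangular automorphisms are locally finite (they lie in $\JJJ$, which is nested by Proposition~\ref{jonq.prop}, and any element of a nested ind-group is locally finite, see Example~\ref{filter-by-alg-groups.exa}); then (vii)$\Rightarrow$(vi), (vi)$\Rightarrow$(ii), and the degree and length conditions (ii)--(v) are linked to each other through the identity $\deg \f = \prod_i \deg \f_i$ for reduced expressions; finally (ii)$\Rightarrow$(i) closes the loop. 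Several of these arrows are essentially immediate, so the real content is concentrated in relating finite length, the degree/length growth of powers, and triangularizability.

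First I would dispose of the easy implications. For (i)$\Rightarrow$(vii): if $\g = \f^{-1}\cdot\t\cdot\f$ with $\t\in\JJJ$, then $\g$ is locally finite because $\t$ is (its powers $\t^m$ lie in a fixed algebraic subgroup $\overline{\langle\JJJ(2)_d\rangle}$ by Proposition~\ref{jonq.prop}) and conjugation preserves local finiteness via Lemma~\ref{equivLF.lem}. For (vii)$\Rightarrow$(vi): a locally finite $\g$ has $\overline{\langle\g\rangle}$ an algebraic group (Lemma~\ref{equivLF.lem}), hence all powers $\g^n$ lie in a single $\AutA{2}_k$, so their lengths and degrees are uniformly bounded; in particular $\g$ cannot have even length, since Remark~\ref{Cmin.rem} shows an even-length element satisfies $\deg(\g^n)=(\deg\g)^{|n|}\to\infty$, so by Lemma~\ref{lem2} (applied to the odd-length case too) $\g$ must be conjugate into $\JJJ\cup\Aff$, giving finite length. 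This already threads (vi) and (vii) close to (i).

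The key structural step is the equivalence of the growth conditions (ii)--(v) with finite length, and I would organize it around the dichotomy from Lemma~\ref{lem2}: up to conjugation (which changes neither the truth of any of these conditions, since they are all conjugacy-invariant), every $\g$ is either of length $\leq 1$ or of even length. If $\g$ has even length with reduced expression $\g=\g_1\cdots\g_n$, then by Remark~\ref{Cmin.rem} we have $\ell(\g^m)=m\,\ell(\g)$ and $\deg(\g^m)=(\deg\g)^m$, so all four growth conditions (ii),(iii),(iv),(v) fail for $m\geq 2$ (using $\deg\g = \prod_i\deg\g_i \geq 2$ since each factor outside $\BBB$ has degree $\geq 2$ in a reduced word of even length). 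Contrapositively, any of (ii)--(v) forces $\g$ to be conjugate to an element of length $\leq 1$, i.e. into $\Aff\cup\JJJ$. To finish (ii)$\Rightarrow$(i) I then need the elements of $\Aff$ that are not already triangularizable to be excluded: an affine element $\a=(w,A)\in\Aff(2)$ with $A$ having an eigenvalue structure forcing genuine Jordan growth is itself semisimple-or-unipotent-or-triangularizable after conjugation, and in all cases conjugate into $\JJJ$ — here I would invoke that $\GL(2)$ is conjugate into the upper triangular Borel, so every affine transformation of $\Atwo$ is triangularizable, placing $\Aff\cup\JJJ$ inside the triangularizable locus.

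The main obstacle I anticipate is the careful bookkeeping in the even-length case to guarantee that the length and degree of powers genuinely grow, rather than collapsing under cancellation — this is exactly where the subtlety of reduced expressions and the alternation between $\Aff\setminus\BBB$ and $\JJJ\setminus\BBB$ factors matters, and where Lemma~\ref{lem1} and Remark~\ref{Cmin.rem} do the heavy lifting. I must verify that for an even-length reduced word $\g=\g_1\cdots\g_n$ the concatenation $\g\cdot\g = \g_1\cdots\g_n\g_1\cdots\g_n$ is still reduced (so $\ell(\g^2)=2\ell(\g)$ strictly exceeds $\ell(\g)$): this holds because $\g_n$ and $\g_1$ lie in \emph{different} factors $\Aff\setminus\BBB$ and $\JJJ\setminus\BBB$ when $n$ is even, so no $\BBB$-absorption occurs at the junction. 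Establishing this alternation cleanly, and likewise confirming the degree identity $\deg(\g^2)=(\deg\g)^2$ via $\deg\f=\prod_i\deg\f_i$, is the crux; once it is in place every implication in the cycle follows from the results already cited, and I would close the loop (i)$\Rightarrow$(vii)$\Rightarrow$(vi)$\Rightarrow$(ii)$\Rightarrow$(iv) $\Rightarrow$(i), folding in (iii) and (v) as the ``for all $n$'' strengthenings that are automatic once the even-length growth is understood.
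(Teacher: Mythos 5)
There is a genuine gap, and it sits exactly where you located ``the crux'' --- but in the wrong case. Your argument hinges on the claim that conditions (ii)--(v) are conjugacy-invariant, so that after Lemma~\ref{lem2} you may assume $\g$ has length $\leq 1$ or even length. Length and degree are \emph{not} conjugation invariants in $\AutA{2}$, and the a priori invariance of the single-step conditions (ii) and (iv) is equivalent to the lemma itself (since (i) is manifestly conjugacy-invariant), so the appeal is circular. Concretely, consider $\g$ of odd length $n=2k+1\geq 3$ that is not conjugate to a length-one element: your even-length computation via Remark~\ref{Cmin.rem} applies only to a conjugate $\g'=\f\,\g\,\f^{-1}$ of even length, and transferring the growth back gives only $\ell(\g^{m})\geq \ell(\g'^{\,m})-2\ell(\f)=m\,\ell(\g')-2\ell(\f)$ and a similar multiplicative bound on degrees. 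That kills the unbounded conditions (iii), (v), (vi), (vii) for $\g$, but says nothing about the sharp comparisons $\ell(\g^{2})\leq\ell(\g)$ and $\deg(\g^{2})\leq\deg(\g)$ for $\g$ itself. So your cycle has a hole at (ii)$\Rightarrow$(i) (and equally at (vi)$\Rightarrow$(ii), which a priori does not follow from boundedness alone). The paper's proof closes exactly this case by working with $\g$'s \emph{own} reduced expression: writing $\g=\g_{1}\cdots\g_{n}$ with $n=2k+1$, it uses the identity $\g^{2}=(\g_{1}\cdots\g_{k})\cdot\g_{k+1}\cdot w\cdot\g_{k+1}\cdot(\g_{k+2}\cdots\g_{n})$ with $w=\g_{k+2}\cdots\g_{n}\cdot\g_{1}\cdots\g_{k}$, and the observation (from the proof of Lemma~\ref{lem2}) that $\g$ is conjugate to a length-one element if and only if $w$ collapses into $\BBB$; if $w$ does not fully collapse, the expression for $\g^{2}$ is essentially reduced and strictly longer (and of strictly larger degree, since each surviving triangular letter has degree $\geq 2$), so (ii) and (iv) fail for every $\g$ violating (i), regardless of the parity of $\ell(\g)$. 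You must carry out this partial-cancellation analysis of $w$; the even-length junction check you did verify is the easy half.

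Two smaller points. Your parenthetical justification ``each factor outside $\BBB$ has degree $\geq 2$ in a reduced word of even length'' is false: factors in $\Aff\setminus\BBB$ have degree $1$; the inequality $\deg\g\geq 2$ holds because the factors in $\JJJ\setminus\BBB$ have degree $\geq 2$ and an even-length reduced word contains at least one of them. The rest of your scaffolding is sound and consistent with the paper's setup: (i)$\Rightarrow$(vii) via Proposition~\ref{jonq.prop} and Lemma~\ref{equivLF.lem}, (vii)$\Rightarrow$(vi) via bounded degrees of powers (which bound lengths, since a reduced word of length $\ell$ has degree at least $2^{\lfloor \ell/2\rfloor}$), and the identification of length-one elements with triangularizable ones (every element of $\Aff$ is conjugate into $\JJJ$ by triangularizing its linear part), which the paper uses implicitly when it phrases (i) as conjugacy into $\JJJ$ but proves conjugacy to length one.
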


\begin{proof} Let $\g=\g_{1}\cdots \g_{n}$ be a reduced expression. In the proof of Lemma~\ref{lem2} we have seen that $\g$ is conjugate to an element of length one if and only if $n=2k+1$ is odd and $(\g_{1}\cdots \g_{k})^{-1}\cdot \g\cdot (\g_{1}\cdots \g_{k})$ has length one, i.e. $\g_{k+2}\cdots \g_{n}\cdot \g_{1}\cdots \g_{k}=\b\in\BBB$. Since 
$$
\g^{2}= (\g_{1}\cdots \g_{k}) \cdot \g_{k+1}\cdot (\g_{k+2}\cdots \g_{n}\cdot \g_{1}\cdots \g_{k})\cdot \g_{k+1}\cdot (\g_{k+2}\cdots \g_{n})
$$
this is also equivalent to $\ell(\g^{2})\leq \ell(\g)$. From this, one immediately deduces the lemma.
\end{proof}

\begin{proposition}\label{prop1}
Let $\g,\h\in\AutA{2}$ be two conjugate elements. Then there is an $\f\in\AutA{2}$ of degree 
$\leq \max(\deg \g, \deg \h)^3$ such that $\g = \f^{-1}\cdot \h \cdot \f$.
\end{proposition}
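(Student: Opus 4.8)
The plan is to exploit the amalgamated product structure $\AutA{2}=\Aff\star_{\BBB}\JJJ$ together with Lemmas~\ref{lem1} and~\ref{lem2}: reduce both $\g$ and $\h$ by conjugation to ``normal forms'' of minimal length, conjugate the two normal forms to one another, and keep careful track of the degrees along the chain. Throughout I set $M:=\max(\deg\g,\deg\h)\geq 1$ and use freely that in dimension $2$ the degree formula of Section~\ref{degree-formula.subsec} gives $\deg\phi^{-1}=\deg\phi$, together with submultiplicativity $\deg(\phi\circ\psi)\leq\deg\phi\cdot\deg\psi$.

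First I would apply Lemma~\ref{lem2} to each of $\g$ and $\h$. It produces $\f_1,\f_2\in\AutA{2}$ with $\deg\f_1\leq\deg\g$ and $\deg\f_2\leq\deg\h$ such that $\g_1:=\f_1^{-1}\cdot\g\cdot\f_1$ and $\h_1:=\f_2^{-1}\cdot\h\cdot\f_2$ each have even length or length $1$, with $\deg\g_1\leq\deg\g$ and $\deg\h_1\leq\deg\h$. Since $\g$ and $\h$ are conjugate, so are $\g_1$ and $\h_1$. I would then argue that $\g_1$ and $\h_1$ fall into the same of the two cases: if $\g_1$ had even length $\geq 2$, then by Lemma~\ref{lem1}(1) every conjugate of $\g_1$ — in particular $\h_1$ — would have length $\geq 2$, so $\h_1$ could not have length $1$ and would also have even length; the symmetric argument handles the reverse implication.

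Next I would produce an $\f_3$ conjugating the two normal forms, $\h_1=\f_3^{-1}\cdot\g_1\cdot\f_3$, with $\deg\f_3\leq M^{1/2}$, treating the two cases separately. In the even-length case, Lemma~\ref{lem1}(1) shows both $\g_1$ and $\h_1$ have minimal length in their common conjugacy class, hence $\ell(\g_1)=\ell(\h_1)$, and Lemma~\ref{lem1}(2) then yields such an $\f_3$ with $(\deg\f_3)^2\leq\deg\g_1\leq M$. In the length-$1$ case both $\g_1,\h_1\in\Aff\cup\JJJ$, and the Remark following Lemma~\ref{lem2} (applied with the element of larger degree as target, and inverting the resulting conjugator if necessary, which preserves its degree in dimension $2$) yields $\f_3$ with $(\deg\f_3)^2=\deg\g_1/\deg\h_1\leq M$. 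Either way $\deg\f_3\leq M^{1/2}$.

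Finally I would set $\f:=\f_2\cdot\f_3^{-1}\cdot\f_1^{-1}$ and verify directly that $\f^{-1}\cdot\h\cdot\f=\g$, since $\f^{-1}\cdot\h\cdot\f=\f_1\cdot\f_3\cdot(\f_2^{-1}\h\f_2)\cdot\f_3^{-1}\cdot\f_1^{-1}=\f_1\cdot(\f_3\,\h_1\,\f_3^{-1})\cdot\f_1^{-1}=\f_1\cdot\g_1\cdot\f_1^{-1}=\g$. For the degree bound, submultiplicativity and $\deg\f_i^{-1}=\deg\f_i$ give $\deg\f\leq\deg\f_1\cdot\deg\f_2\cdot\deg\f_3\leq M\cdot M\cdot M^{1/2}=M^{5/2}\leq M^{3}$, as required. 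The main point here is not any single computation — the amalgamated-product lemmas do all the real work — but the bookkeeping: showing that the two reduced forms land in the same case and controlling $\deg\f_3$ uniformly by $M^{1/2}$, so that the cubic bound survives the assembly.
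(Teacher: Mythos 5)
Your proof is correct and follows essentially the same route as the paper: reduce both elements to minimal-length normal forms via Lemma~\ref{lem2}, match the cases, conjugate the normal forms (Lemma~\ref{lem1}(2) in the even-length case), and assemble with the degree bookkeeping. The only cosmetic difference is that in the length-one case you invoke the Remark following Lemma~\ref{lem2} with the exact relation $\deg\g=\deg\h\,(\deg\f)^2$, which legitimately packages the paper's explicit three-subcase analysis ($\BBB$, $\JJJ$, $\Aff$) and even yields the slightly sharper bound $M^{5/2}\leq M^{3}$.
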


\begin{proof}
By Lemma~\ref{lem2} we can find elements $\u,\vbold\in\AutA{2}$, $\deg \u\leq \deg \g$, $\deg \vbold \leq \deg \h$,  such that $\g':= \u^{-1}\cdot \g \cdot \u$ and  $\h':=\vbold^{-1}\cdot \h \cdot \vbold$ both have minimal length. If $\ell(\g')$ is even then, by Lemma~\ref{lem1}, $\ell(\h') = \ell(\g')$ and there is an $\z \in \AutA{2}$ of degree $\leq \min(\deg \g, \deg \h)$ such that $\z^{-1}\cdot \g'\cdot \z = \h'$. Putting $\f := \u \cdot \z \cdot \vbold^{-1}$ we get $\deg \f\leq \max(\deg \g, \deg \h)^3$ and $\f^{-1}\cdot \g \cdot \f = \h$.

If $\ell(\g')=1$, then $\g', \h'\in \JJJ\cup\Aff$. If $\g'$ is conjugate to an element of $\BBB$, then, by Lemma~\ref{lem2}, we can assume that $\g',\h'\in\BBB$. Then there is an element $\z\in\Aff$ such that $\z^{-1}\cdot \g' \cdot \z = \h'$, and the claim follows as above with $\f := \u \cdot \z \cdot \vbold^{-1}$. If $\g'\in\JJJ$ is not conjugate to an element of $\BBB$ and $\z^{-1}\cdot \g' \cdot \z = \h'$, then one easily sees by looking at a reduced expression of $\z$ that $\z\in\JJJ$ and $\deg \z \leq \max(\deg \g',\deg \h')$. Again, the claim follows as above. In a similar way we can handle the case where $\g'\in\Aff$.
\end{proof}

\begin{corollary}\label{ccwconst.cor}
For any $\g\in\AutA{2}$ the conjugacy class $C(\g)\subseteq\AutA{2}$  is weakly constructible. In particular, $C(\g)$ is closed in case $\g$ is semisimple.
\end{corollary}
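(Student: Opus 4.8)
The statement to prove is Corollary~\ref{ccwconst.cor}: for any $\g\in\AutA{2}$ the conjugacy class $C(\g)\subseteq\AutA{2}$ is weakly constructible, and moreover $C(\g)$ is closed when $\g$ is semisimple.

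The plan is to exploit the effective degree bound supplied by Proposition~\ref{prop1}, which is exactly the hypothesis needed in Lemma~\ref{image-wconst.lem} to upgrade ind-constructibility to weak constructibility. Recall that a subset $S\subseteq\VVV$ is \emph{weakly constructible} precisely when $S\cap\VVV_{k}$ is constructible in $\VVV_{k}$ for every $k$ (Definition~\ref{constr.def}), and that the conjugation morphism $\gamma\colon\AutA{2}\to\AutA{2}$, $\f\mapsto\f^{-1}\cdot\g\cdot\f$, has image $C(\g)$, so $C(\g)$ is automatically ind-constructible (Remarks~\ref{constr.rem}(2)). First I would fix the standard degree filtration $\AutA{2}=\bigcup_{k}\AutA{2}_{k}$ by the closed algebraic subsets of automorphisms of degree $\leq k$. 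The key point is the following ``boundedness of the conjugator'' assertion extracted from Proposition~\ref{prop1}: if $\h\in C(\g)$ has $\deg\h\leq k$, then $\h$ and $\g$ are conjugate by some $\f$ with $\deg\f\leq\max(\deg\g,k)^{3}=:m(k)$, hence $\h=\gamma(\f)$ with $\f\in\AutA{2}_{m(k)}$. This says exactly that $C(\g)\cap\AutA{2}_{k}\subseteq\gamma(\AutA{2}_{m(k)})$, and since trivially $\gamma(\AutA{2}_{m(k)})\subseteq C(\g)$, we obtain $\gamma(\AutA{2}_{m})\supseteq C(\g)\cap\AutA{2}_{k}$ for $m=m(k)$.

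With this inclusion in hand, I would invoke Lemma~\ref{image-wconst.lem} directly, applied to the ind-morphism $\gamma\colon\AutA{2}\to\AutA{2}$. That lemma states that if for every $k$ there is an $m=m(k)$ with $\gamma(\AutA{2}_{m})\supseteq\gamma(\AutA{2})\cap\AutA{2}_{k}$, then the image $\gamma(\AutA{2})=C(\g)$ is weakly constructible. The inclusion established in the previous paragraph is exactly this hypothesis, so the first assertion follows at once. (Concretely, one checks that $C(\g)\cap\AutA{2}_{k}=\gamma(\AutA{2}_{m(k)})\cap\AutA{2}_{k}$ is constructible in $\AutA{2}_{k}$ as the intersection of $\AutA{2}_{k}$ with the constructible image of an algebraic variety under a morphism of varieties.)

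For the second assertion, suppose $\g$ is semisimple. A weakly constructible set is closed if and only if it is also weakly closed (Remarks~\ref{constr.rem}(4)), so it suffices to show $C(\g)$ is weakly closed. A semisimple element has, by definition, a diagonalizable closure $\overline{\langle\g\rangle}$; in dimension $2$ Proposition~\ref{diagonal.prop}(\ref{n=2}) guarantees that $\g$ is in fact \emph{diagonalizable}. Then Corollary~\ref{ss-weakly-closed.cor}, which asserts precisely that the conjugacy class of a diagonalizable automorphism is weakly closed, gives that $C(\g)$ is weakly closed. Combining weak constructibility with weak closedness yields that $C(\g)$ is closed, completing the proof.

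I expect the only real subtlety to be bookkeeping rather than mathematics: one must make sure that the degree bound from Proposition~\ref{prop1} is uniform over all $\h\in C(\g)$ of bounded degree (it is, since the bound $\max(\deg\g,\deg\h)^{3}$ depends only on $\deg\h\leq k$ and the fixed $\g$), and that the filtration of $\AutA{2}$ used for ``weakly constructible'' agrees with the admissible degree filtration. Both are routine given the machinery already in place, so the main work has really been front-loaded into Proposition~\ref{prop1} and Corollary~\ref{ss-weakly-closed.cor}; the corollary itself is a short deduction assembling these two ingredients via Lemma~\ref{image-wconst.lem} and the closed $=$ weakly closed $+$ weakly constructible characterization.
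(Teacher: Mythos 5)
Your proof is correct and follows essentially the same route as the paper: the degree bound of Proposition~\ref{prop1} verifies the hypothesis of Lemma~\ref{image-wconst.lem} for the conjugating morphism, and the closedness in the semisimple case comes from diagonalizability in dimension~$2$, Corollary~\ref{ss-weakly-closed.cor}, and the fact that weakly closed together with weakly constructible implies closed (Remark~\ref{constr.rem}(4)). The only cosmetic difference is that you conjugate via $\f\mapsto\f^{-1}\cdot\g\cdot\f$ while the paper uses $\f\mapsto\f\cdot\g\cdot\f^{-1}$, which is harmless since $\deg\f=\deg\f^{-1}$ in $\AutA{2}$ by the degree formula.
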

\begin{proof} Consider the conjugating morphism $\phi\colon\AutA{2} \to \AutA{2}$, $\f \mapsto \f \cdot\g \cdot\f^{-1}$. The proposition above implies that for any $k\geq \deg\g$ we get $\phi(\AutA{2}_{k^{3}})\supseteq C(\g)\cap\AutA{2}_{k}$. Hence, the assumptions of Lemma~\ref{image-wconst.lem} are satisfied and thus the image $C(\g)$ is weakly constructible. If $\g$ is semisimple, then it is diagonalizable and so $C(\g)$ is weakly closed (Corollary~ \ref{ss-weakly-closed.cor}) and therefore closed (cf. point (4) of Remark~\ref{constr.rem}).
\end{proof}

\ps
\subsection{The group \texorpdfstring{$\AutA{2}$}{A2} is not linear}   
\label{Aut(A2)-is-not-linear.subsec}
The following result is due to \name{Yves de Cornulier} (\cite{Co2017Nonlinearity-of-so}).

\begin{proposition}[\name{Yves de Cornulier}, 2013]    \label{the-group-Aut(A2)-is-not-linear.prop}
The group $\Aut ( \AA^2)$ is not linear, i.e. it cannot be embedded into a linear group $\GL_d(K)$ where $K$ is a field.
\end{proposition}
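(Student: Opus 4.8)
The plan is to exploit the principle that linearity is inherited by subgroups: if $\Aut(\AA^2)$ embedded into some $\GL_d(K)$, then every finitely generated subgroup of $\Aut(\AA^2)$ would be a finitely generated linear group over $K$. By Mal'cev's theorem every finitely generated linear group is residually finite, and in particular Hopfian. Hence it suffices to produce a \emph{single} finitely generated subgroup $\Gamma \subseteq \Aut(\AA^2)$ that is not residually finite --- for instance a non-Hopfian group such as a Baumslag--Solitar group $BS(m,n)$ with $2\le m<n$, which carries a surjective non-injective endomorphism and therefore fails residual finiteness. This reduces the whole statement to one embedding problem.

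To build such a $\Gamma$ I would use the amalgamated product structure $\Aut(\AA^2)=\Aff(2)\star_{\BBB}\JJJ(2)$ recalled in Section~\ref{amalgam.subsec} together with the induced action on its Bass--Serre tree. The candidate generators are assembled from a scaling $(x,y)\mapsto(\lambda x,y)$, a translation, and genuinely two-dimensional shears, chosen so that the defining relation of the model group (e.g. $t\,a^m\,t^{-1}=a^n$) holds. One then verifies, via the normal form supplied by the amalgam and the tree action, that the induced homomorphism from the abstract model group is injective, so that $\Gamma$ is isomorphic to the non-linear model and not merely a proper quotient of it.

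The hard part will be exactly this faithfulness. The naive realization using only the affine line --- scaling and translating $x$ with $y$ frozen --- collapses into the solvable group $\kst\ltimes\kplus$, which is linear: the relation $t\,a^m\,t^{-1}=a^n$ is satisfied but infinitely many extra relations are introduced, destroying the failure of the Hopf property. Thus the construction must use the second coordinate (equivalently, a hyperbolic element for the tree) in an essential way, and the core of the argument is a ping-pong or reduced-word estimate on the tree ensuring that no unwanted relations appear; this is the content of de Cornulier's construction \cite{Co2017Nonlinearity-of-so}, after which the theorem follows from the reduction of the first paragraph. As a sanity check I would also confirm that the easier linear-group invariants are \emph{not} the ones being violated here: by the amalgam structure every solvable subgroup of $\Aut(\AA^2)$ lies in a conjugate of $\Aff(2)$ or $\JJJ(2)$ and so has uniformly bounded derived length, so the Mal'cev--Zassenhaus bound on derived length gives no obstruction, and it is residual finiteness (through a non-Hopfian subgroup) rather than solvability that must be targeted.
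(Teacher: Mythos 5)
There is a genuine, and in fact fatal, gap: the subgroup your whole plan depends on does not exist, because $\Aut(\AA^2)$ contains no finitely generated subgroup that fails residual finiteness, so the Mal'cev obstruction you target is vacuous here. Indeed, let $\Gamma\subseteq\Aut(\AA^2)$ be generated by $g_1,\ldots,g_r$, and let $R\subseteq\kk$ be the $\ZZ$-subalgebra generated by the finitely many coefficients of the $g_i$ and $g_i^{-1}$; then every element of $\Gamma$, together with its inverse, has coefficients in $R$. Given $g\neq\id$ in $\Gamma$, some coefficient of $g-\id$ is a nonzero element $c$ of the finitely generated domain $R$, so there is a maximal ideal $\mm\subset R$ with $c\notin\mm$, and $R/\mm$ is a finite field $\FF_q$. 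Reduction of coefficients is a group homomorphism $\Gamma\to\Aut(\AA^2_{\FF_q})$ under which $g$ survives, and since a nontrivial automorphism over $\FF_q$ moves a point of $\AA^2(\overline{\FF}_q)$, hence of $\AA^2(\FF_{q^n})$ for some $n$, composing with the permutation action on the finite set $\AA^2(\FF_{q^n})$ yields a finite quotient of $\Gamma$ in which $g$ is nontrivial. Thus every finitely generated subgroup of $\Aut(\AA^2)$ is residually finite; in particular $BS(m,n)$ with $2\le m<n$, being non-residually-finite, embeds in no way at all, and the collapse you observed in the naive realization is forced rather than a defect of that choice. (One can also see this intrinsically: a relation $\tau\alpha^m\tau^{-1}=\alpha^n$ with $m\neq n$ forces $\lim_k(\deg\alpha^k)^{1/k}=1$, since this limit is a conjugacy invariant; by Remark~\ref{Cmin.rem} and Lemma~\ref{lengthone.lem} the element $\alpha$ is then locally finite, so no ping-pong on the Bass--Serre tree can produce faithfulness. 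Your side remark that every solvable subgroup is conjugate into $\Aff(2)$ or $\JJJ(2)$ is also inaccurate --- the cyclic group generated by a H\'enon map is a counterexample --- though the bounded-derived-length conclusion you drew from it is harmless.)

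The attribution is wrong as well: the construction you ascribe to \name{de Cornulier} is not his; his argument, which is the one the paper reproduces, avoids trees and amalgams altogether. It exhibits inside the de Jonqui\`eres group the unipotent subgroups $U_k=\{(x+p(y),y+a)\mid \deg p\le k\}$ and $U_k'=\{(x+p(y),y)\mid \deg p\le k\}$, computes $(U_k,U_k)=U_{k-1}'$ and $(U_k,U_\ell')=U_{\ell-1}'$ for $k>\ell>0$, so that $U_k$ is nilpotent of class $k+1$, and then proves (Lemma~\ref{nilpotency-class-unipotent-group.lem}) that a unipotent algebraic group of nilpotency class $n$ embedded in $\GL_d(K)$ satisfies $n\le d^2$: the Zariski closures of the lower central series are pairwise distinct and, because the power maps of a unipotent group are surjective, connected, whence $n\le\dim\overline{U}\le d^2$. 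Since the class of $U_k$ is unbounded, no $\GL_d(K)$ can contain $\Aut(\AA^2)$. Note the instructive contrast with your sanity check: the groups $U_k$ are even metabelian, so no solvability invariant could work, and abstract nilpotency class alone would not suffice either, since dihedral $2$-groups of unbounded class embed in $\GL_2(\kk)$; it is precisely the divisible, torsion-free unipotent structure that converts nilpotency class into a dimension bound.
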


\begin{proof}
Let us show that the subgroup $J^{u}$ of $\Aut (\AA^2)$ is not linear,
\[
J^{u}:= \{ (x+ p(y), y+c)\in \Aut(\Atwo) \mid p \in \kk [y], \, c \in \kk  \}. 
\]
Define the closed unipotent subgroups
$$
U_{k}:=\{(x+p(y),y+a)\in J \mid \deg p \leq k\} \supseteq U_{k}':= \{(x+p(y),y)\in J \mid \deg p \leq k\}.
$$
Then we have $(U_{k},U_{k}) = U_{k-1}'$ and $(U_{k},U_{\ell}') = U_{\ell-1}'$ for $k>\ell>0$, which implies that $U_{k}$ is nilpotent of class $k+1$. The next lemma shows that for any embedding $U_{k}\into \GL_{d}(K)$ where $K$ is a field we have $d\geq (k+1)^{2}$. Thus there is no embedding of $J$ into some $\GL_{d}(K)$.
\end{proof}

\begin{lemma} \label{nilpotency-class-unipotent-group.lem}
Let $U$ be a unipotent algebraic group of nilpotency class $n$. 
For any embedding $U \into \GL_{d}(K)$  where $K$ is a field we have  $n \leq d^2$.
\end{lemma}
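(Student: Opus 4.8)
The plan is to extract the bound not from the abstract embedding $\iota\colon U\into \GL_d(K)$ itself — which carries almost no structure — but from the Zariski closure of its image, to which the structure theory of algebraic groups applies. Let me first stress the main difficulty, since it dictates the whole approach: as $\iota$ is merely a homomorphism of abstract groups, the image of a unipotent element of $U$ need not be a unipotent matrix. For example $\kplus$ is divisible and torsion-free, hence embeds as an abstract group into $\kst=\GL_1(\kk)$, where no nonidentity element is unipotent. So one cannot simply triangularize $\iota(U)$ and read off its nilpotency class. The honest object to study is therefore $H:=\overline{\iota(U)}$, the Zariski closure taken inside $\GL_d(\overline K)$, which is a bona fide linear algebraic group over $\overline K$. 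Throughout I write $\operatorname{cl}$ for nilpotency class.

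The next step is to show that $H$ is connected and nilpotent with $\operatorname{cl}(H)=n$. Nilpotency follows by a density argument: the $(n{+}1)$-fold commutator map $H^{n+1}\to H$ is a morphism of varieties which is constant equal to $e$ on the dense subset $\iota(U)^{n+1}$ (because $\gamma_{n+1}(U)=1$), hence is identically $e$; thus $\gamma_{n+1}(H)=1$ and $\operatorname{cl}(H)\le n$. Since $\iota(U)\subseteq H$ carries a nontrivial $n$-fold commutator, also $\operatorname{cl}(H)\ge n$, giving $\operatorname{cl}(H)=n$. Connectedness is where I would use that $U$ is unipotent, not merely nilpotent: by Theorem~\ref{unipotent-groups.thm}(\ref{power-map}) the power maps of $U$ are bijective, so $U$ is divisible; hence $\iota(U)$ is divisible, its image in the finite component group $H/H^{\circ}$ is divisible and finite and therefore trivial, so $\iota(U)\subseteq H^{\circ}$; as $\iota(U)$ is dense in $H$ this forces $H=H^{\circ}$.

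Finally I would invoke the structure theorem for connected nilpotent linear algebraic groups, valid in every characteristic (e.g. \cite[Theorem~10.6]{Bo1991Linear-algebraic-g}): one has $H=H_s\times H_u$, where the set $H_s$ of semisimple elements is a central torus and $H_u$ is the connected normal subgroup of unipotent elements. Since $H_s$ is abelian and central, $\operatorname{cl}(H)=\max\bigl(\operatorname{cl}(H_s),\operatorname{cl}(H_u)\bigr)$ with $\operatorname{cl}(H_s)\le 1$. By Kolchin's theorem the connected unipotent group $H_u$ is conjugate into the group $\mathrm{U}_d(\overline K)$ of upper unitriangular matrices, whose nilpotency class is $d-1$; hence $\operatorname{cl}(H_u)\le d-1$. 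Combining, $n=\operatorname{cl}(H)\le\max(1,d-1)\le d^{2}$, which proves the lemma — and in fact yields the sharper bound $n\le d-1$ as soon as $d\ge 2$.

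The genuinely delicate steps, where a careless argument breaks down, are exactly the two facts packaged into the second paragraph: that the closure remains nilpotent (requiring density together with the supply of commutators coming from $\operatorname{cl}(U)=n$) and that it is connected (requiring the divisibility of $U$, and it is precisely here that the hypothesis "$U$ unipotent" rather than merely "nilpotent" is essential). Once $H$ is known to be connected and nilpotent, the dimension estimate is immediate from classical structure theory. Thus the main obstacle is the passage from the structureless abstract embedding to a connected algebraic group, not the final numerical bound.
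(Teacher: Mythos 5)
Your proposal is correct, and while it shares its opening moves with the paper's proof, the decisive final step is genuinely different. Both arguments pass to the Zariski closure $H=\overline{\iota(U)}\subseteq\GL_{d}(\overline K)$, transfer nilpotency to $H$ by a density argument on commutator morphisms, and exploit the bijectivity of the power maps of $U$ (Theorem~\ref{unipotent-groups.thm}(\ref{power-map}), i.e.\ divisibility --- you correctly identify this as the precise place where unipotency, rather than mere nilpotency, is needed). But the paper then keeps track of the \emph{whole} lower central series: it takes the closures $H=\overline{U_{0}}\supseteq\overline{U_{1}}\supseteq\cdots\supseteq\overline{U_{n}}=\{e\}$, checks $(H,\overline{U_{k}})\subseteq\overline{U_{k+1}}$ and the strictness $\overline{U_{k}}\neq\overline{U_{k+1}}$, and proves each $\overline{U_{k}}$ connected (using that $U_{k}$ is closed in $U$, hence itself unipotent with surjective power maps); the strict drop $\dim\overline{U_{k}}>\dim\overline{U_{k+1}}$ then yields $n\leq\dim H\leq d^{2}$ by pure dimension counting, with no structure theory beyond connectedness. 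You instead need only $\operatorname{cl}(H)=n$ and connectedness of $H$ itself, and then invoke two classical black boxes --- the decomposition $H=H_{s}\times H_{u}$ with $H_{s}$ a central torus for connected nilpotent groups \cite[Theorem~10.6]{Bo1991Linear-algebraic-g}, and \name{Kolchin}'s theorem placing $H_{u}$ inside the unitriangular group, whose class is $d-1$. What your route buys is a strictly sharper bound, $n\leq d-1$ for $d\geq2$ (which in particular recovers $n\leq d^{2}$ and makes the application to the non-linearity of $\Aut(\Atwo)$ in Proposition~\ref{the-group-Aut(A2)-is-not-linear.prop} even more direct); what it costs is the reliance on the classification of connected nilpotent groups and Kolchin, whereas the paper's chain-of-closures argument is self-contained. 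The delicate points are all handled correctly in your write-up: density of $\iota(U)^{n+1}$ in $H^{n+1}$, the (standard) fact that $\gamma_{n+1}$ is generated by left-normed commutators of weight $n+1$ so that the vanishing of a single commutator morphism gives $\gamma_{n+1}(H)=1$, injectivity of $\iota$ for the lower bound $\operatorname{cl}(H)\geq n$, and triviality of a divisible subgroup of the finite component group $H/H^{\circ}$ --- the last being a mild variant of the paper's own connectedness argument via $u\mapsto u^{m}$ with $m=[\overline{U_{k}}:\overline{U_{k}}^{\circ}]$.
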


\begin{proof}

We may assume that the field $K$ is algebraically closed. 
Let $U:=U_{0} \supset U_{1} \supset \cdots\supset U_{n}=\{e\}$ be the lower central series of $U$, i.e. $(U,U_{k}) = U_{k+1}$ for $k=0,\ldots, n-1$. It is well known that each $U_k$ is closed in $U$, so that $U_k$ is also a unipotent group (\cite[Proposition~17.2(a)]{Hu1975Linear-algebraic-g}).
Denote by $H:=\overline{U} \subset\GL_{d}(K)$ the Zariski-closure of $U$. Then the series
$$
H = \overline{U_{0}}\supseteq \overline{U_{1}}\supseteq\cdots\supseteq  \overline{U_{n}}=\{e\}
$$
has the property that $(H,\overline{U_{k}}) \subseteq \overline{U_{k+1}}$.
In fact, $(H,\overline{U_{k}})$ is generated by the image of the morphism $\gamma\colon H \times \overline{U_{k}} \to \GL_{d}(K)$, $(h,u)\mapsto huh^{-1}u^{-1}$. Since $\gamma(U \times U_{k}) \subseteq U_{k+1}$ it follows that $\gamma(H \times \overline{U_{k}}) \subseteq \overline{U_{k+1}}$, hence $(H,\overline{U_{k}}) \subseteq \overline{U_{k+1}}$.
As a consequence, $H$ is nilpotent of class $\leq n$. However, the nilpotency class of $H$ is $\geq n$, because $H \supseteq U$. In particular, we have $\overline{U_{k}}\neq \overline{U_{k+1}}$ for $k=0,\ldots,n-1$.

We claim that all $\overline{U_{k}}$ are connected. Then the lemma follows, because $\dim \overline{U_{k}} > \dim \overline{U_{k+1}}$ for $k=0,\ldots,n-1$, and so $n\leq \dim \overline{U} \leq d^{2}$.

In order to prove the claim we use that  the power map $u\mapsto u^{m}\colon U_{k} \to U_{k}$ is surjective for any integer $m\geq 1$, see Theorem~\ref{unipotent-groups.thm}(\ref{power-map}). Therefore, if $m :=[\overline{U_{k}}:\overline{U_{k}}^{\circ}]$, we get $U_{k} = \{u^{m}\mid u\in U_{k}\} \subseteq \overline{U_{k}}^{\circ}$, hence $\overline{U_{k}} = \overline{U_{k}}^{\circ}$.
\end{proof}

\ps
\subsection{No injection of \texorpdfstring{$\JJJ$}{J} into \texorpdfstring{$\GL_{\infty}$}{GLinfty}}   
\label{Jonq-does-not-embed-into-GLinfty.subsec}
We have already mentioned that the \name{de Jonqui\`eres} subgroup $\JJJ$ is nested (Proposition~\ref{jonq.prop}). However, there is no injective homomorphism of ind-groups
 into $\GL_{\infty}$ as the next result shows.

\begin{proposition}\label{no-inbedding-of-Jn-into-GLinfty.prop}
The \name{de Jonqui\`eres} subgroup $\JJJ\subseteq \Aut(\Atwo)$ does not admit an injective homomorphism of ind-groups
$\JJJ \into \GL_{\infty}$.
\end{proposition}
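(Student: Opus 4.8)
The plan is to realize the ind-group $\AAA=\kst\ltimes(\kinfty)^{+}$ of Proposition~\ref{no-injective-hom-of-A-into-GL_infty.prop} as a \emph{closed} subgroup of $\JJJ=\JJJ(2)$ and then transport the non-embeddability. Concretely, for $t\in\kst$ and $p\in y\kk[y]$ I would set $h_{t,p}:=(x+p(y),\,ty)\in\JJJ$ (note $h_{t,p}$ is indeed a triangular automorphism), and define $\HHH:=\{h_{t,p}\mid t\in\kst,\ p\in y\kk[y]\}$. A direct composition computation gives $h_{t,p}\circ h_{s,q}=h_{ts,\,r}$ with $r(y)=q(y)+p(sy)$, and since $y\kk[y]$ is stable under $y\mapsto sy$, this stays in $\HHH$; likewise $h_{t,p}^{-1}=h_{t^{-1},\,\tilde p}$ with $\tilde p(y)=-p(t^{-1}y)$. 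Hence $\HHH$ is a subgroup of $\JJJ$, with unipotent part $N:=\{h_{1,q}\}\cong(y\kk[y])^{+}\cong(\kinfty)^{+}$ (using the basis $y,y^{2},\dots$) and torus $T:=\{h_{t,0}\}\cong\kst$, so that $\HHH=T\ltimes N$.

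Next I would identify the isomorphism type of $\HHH$. Writing $q=\sum_{i\ge1}a_iy^i$, the conjugation computation $h_{t,0}\circ h_{1,q}\circ h_{t,0}^{-1}=h_{1,\,q(t^{-1}y)}$ shows that $T$ acts on the coefficient vector $(a_i)_{i\ge1}$ of $N$ by $a_i\mapsto t^{-i}a_i$. Composing with the torus automorphism $t\mapsto t^{-1}$ yields the action $a_i\mapsto t^{i}a_i$, which is exactly the action defining $\AAA$; thus $\HHH\cong\kst\ltimes(\kinfty)^{+}=\AAA$ as abstract groups (using the standard fact that $T\ltimes_{\alpha\circ\theta}N\cong T\ltimes_{\alpha}N$ for a torus automorphism $\theta$, here $\theta(t)=t^{-1}$).

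Then I would check the ind-variety statements. Using the structure $\JJJ\cong(\kst)^{2}\times(\kk\oplus\kk[y])$ from Section~\ref{triang.subsec} (coordinates: the scalar $a$ in front of $x$, the scalars $b,c$ in $by+c$, and the polynomial $p$), the set $\HHH$ is cut out by the closed conditions $a=1$, $c=0$, and ``constant term of $p$ equals $0$''. Therefore $\HHH\subseteq\JJJ$ is closed, hence a closed ind-subgroup, and the parametrization $\kst\times\kinfty\to\HHH$, $(t,(a_i))\mapsto h_{t,\sum a_iy^i}$, is visibly an isomorphism of ind-varieties onto $\HHH$. Combined with the previous paragraph, the group isomorphism $\AAA\simto\HHH$ is then an isomorphism of ind-groups.

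Finally, the conclusion: suppose for contradiction that $\phi\colon\JJJ\into\GL_{\infty}$ were an injective homomorphism of ind-groups. Since the inclusion $\iota\colon\HHH\into\JJJ$ is a closed immersion, the composite $\phi\circ\iota\colon\HHH\to\GL_{\infty}$ is an injective homomorphism of ind-groups, and precomposing with $\AAA\simto\HHH$ gives an injective homomorphism of ind-groups $\AAA\into\GL_{\infty}$, contradicting Proposition~\ref{no-injective-hom-of-A-into-GL_infty.prop}. I do not expect a serious obstacle here, since the substantive analytic content already lives in Proposition~\ref{no-injective-hom-of-A-into-GL_infty.prop}; the only genuinely delicate points are the bookkeeping of the group law (the inverse-torus/opposite-group twist that forces the action $a_i\mapsto t^{-i}a_i$ rather than $t^{i}a_i$) and confirming that $\HHH\subseteq\JJJ$ is a closed immersion so that restriction preserves the ind-group structure.
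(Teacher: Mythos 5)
Your proof is correct and is essentially the paper's own argument: your subgroup $\HHH=\{(x+p(y),\,ty)\mid p\in y\kk[y],\ t\in\kst\}$ is exactly the closed subgroup $\AAA\subseteq\JJJ(2)$ of automorphisms $(x+yp(y),\beta y)$ that the paper uses, identified with $\kst\ltimes(\kinfty)^{+}$ and then reduced to Proposition~\ref{no-injective-hom-of-A-into-GL_infty.prop}. The only difference is that you spell out what the paper leaves implicit — the composition law, the twist $t\mapsto t^{-1}$ needed to convert the conjugation action $a_i\mapsto t^{-i}a_i$ into the defining action $a_i\mapsto t^{i}a_i$ of $\AAA$, and the closedness of $\HHH$ in $\JJJ$ — and all of these verifications are correct.
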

\begin{proof}
Denote by $\AAA \subseteq \JJJ(2)$ the group of automorphisms of $\Atwo$ of the form
$$
(x,y) \mapsto (x+yp(y) , \beta\cdot y)
$$ 
where $p(y) \in \kk [y] $ and $\beta \in \kst$. 
This is a closed nested ind-subgroup which is isomorphic to the semidirect product $\kst \rtimes (\kinfty)^{+}$ defined in Section~\ref{embeddings-into-GLinfty.subsec}. Since the latter has no such injection, by Proposition~\ref{no-injective-hom-of-A-into-GL_infty.prop}, the claim follows.
\end{proof}

\begin{question}\label{no-injection-unipotent.ques}
Does the proposition also hold for the unipotent part $\JJJ^{u}$?
\end{question}

\ps
\subsection{Unipotent automorphisms of \texorpdfstring{$\Atwo$}{A2} and \texorpdfstring{$\Cplus$}{C+}-actions}\label{unipotent.subsec}

\begin{proposition} \label{LND-are-closed.prop}
The set $\LNV(\AA^2)$ of locally nilpotent vector fields is closed in  $\VEC(\AA^2)$.
\end{proposition}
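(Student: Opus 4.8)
The plan is to reduce the statement to a uniform bound on the nilpotency index of locally nilpotent vector fields of bounded degree, and then to extract that bound from the structure theory of such fields on $\Atwo$.

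First I would use that $\VEC(\Atwo)=\kk[x,y]^2$ carries the admissible filtration by the finite-dimensional subspaces $\VEC(\Atwo)_d:=(\kk[x,y]_{\le d})\ddx\oplus(\kk[x,y]_{\le d})\ddy$, so a subset of $\VEC(\Atwo)$ is closed if and only if its trace on each $\VEC(\Atwo)_d$ is closed. As recalled before Proposition~\ref{bijections-k+actions-nilpotentVF-unipotent-autos.prop}, one has $\LNV(\Atwo)=\bigcup_k \LNV(\Atwo)_k$ with $\LNV(\Atwo)_k:=\{\delta\mid \delta^k x=\delta^k y=0\}$ a closed algebraic subset (the coefficients of $\delta^k x$ and $\delta^k y$ are polynomials in the coefficients of $\delta$). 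Since $\LNV(\Atwo)_N\subseteq\LNV(\Atwo)$ for every $N$ (a derivation annihilating the generators $x,y$ after $N$ steps is locally nilpotent, by the Leibniz rule), it suffices to prove the following. \emph{Uniform bound:} there is a function $N=N(d)$ such that every locally nilpotent $\delta\in\VEC(\Atwo)_d$ satisfies $\delta^{N}x=\delta^{N}y=0$. Granting this, $\LNV(\Atwo)\cap\VEC(\Atwo)_d=\LNV(\Atwo)_{N(d)}\cap\VEC(\Atwo)_d$ is closed, which finishes the proof; note that this argument needs neither $\kk$ uncountable nor any Noetherian stabilization of the chain $\LNV(\Atwo)_k$.

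To establish the bound I would argue as follows. Let $\delta=f\ddx+g\ddy\neq 0$ be locally nilpotent of degree $\le d$. By the corollary in Section~\ref{AutAn-VF.subsec} its divergence vanishes, so there is a polynomial $h$ with $\delta=h_x\ddy-h_y\ddx$ and $\delta h=0$; clearly $\deg h\le d+1$, and $h$ is nonconstant. By \name{Rentschler}'s structure theorem, $\ker\delta=\kk[p]$ for a coordinate $p$, so that $\kk[x,y]=\kk[p,q]$ and every fibre of $p$ is isomorphic to $\Aone$; since $h\in\ker\delta$ is nonconstant, $h=q_0(p)$ for a univariate polynomial $q_0$ and hence $\deg p\le\deg h\le d+1$. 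Now fix a generic value $\mu$: the fibre $C_\mu:=\{p=\mu\}$ is a plane curve of degree $\le\deg p$, isomorphic to $\Aone$, and stable under the flow $\lambda(s):=\exp(s\delta)$. For a generic point $P_0\in C_\mu$ the orbit map $s\mapsto\lambda(s)P_0=\bigl(\sum_n\tfrac{s^n}{n!}(\delta^n x)(P_0),\,\sum_n\tfrac{s^n}{n!}(\delta^n y)(P_0)\bigr)$ is a proper parametrization of $C_\mu$ whose component degrees in $s$ are $\deg_\delta x$ and $\deg_\delta y$, where $\deg_\delta$ denotes the largest power of $\delta$ not annihilating its argument. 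Hence $\deg C_\mu=\max(\deg_\delta x,\deg_\delta y)$, so $\max(\deg_\delta x,\deg_\delta y)\le\deg p\le d+1$, i.e.\ $\delta^{d+2}x=\delta^{d+2}y=0$. Thus $N(d):=d+2$ works.

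The main obstacle is exactly this uniform bound, and within it the two genuinely two-dimensional geometric inputs: the identification of $\ker\delta$ as $\kk[p]$ with $p$ a coordinate (\name{Rentschler}), which forces the fibres of $p$ to be lines and bounds $\deg p$ by $\deg\delta+1$; and the comparison $\deg C_\mu=\max(\deg_\delta x,\deg_\delta y)$ between the degree of a properly parametrized rational plane curve and the degree of its parametrization. Both facts are standard but have no clean analogue in higher dimension, which is consistent with the fact that whether $\LNV(\An)$ is closed for $n\ge 3$ is left open as Question~\ref{Closedness-of-the-set-of-unipotent-automorphisms.ques}.
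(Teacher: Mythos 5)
Your proof is correct, and its top-level structure --- reducing closedness to a uniform bound $N(d)$ on the nilpotency index of locally nilpotent fields of degree at most $d$, so that $\LNV(\Atwo)\cap\VEC(\Atwo)_{d}=\LNV(\Atwo)_{N(d)}\cap\VEC(\Atwo)_{d}$ is closed --- is exactly that of the paper. The difference lies in how the bound is obtained: the paper's entire proof consists of this reduction together with a citation of \cite[Theorem~1.3.52]{Es2000Polynomial-automor} (see also \cite[Proposition~8.4]{Fr2006Algebraic-theory-o}), which states that $D=a\,\ddx+b\,\ddy$ is locally nilpotent if and only if $D^{m+2}x=D^{m+2}y=0$ where $m=\max\{\deg a,\deg b\}$ --- precisely your $N(d)=d+2$, including the easy Leibniz converse you note. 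You instead reprove the hard direction of this criterion from scratch: divergence zero puts $\delta$ in Hamiltonian form $h_x\ddy-h_y\ddx$ with $\deg h\leq d+1$; Rentschler's theorem (Proposition~\ref{rentschler.prop}, whose proof in the paper indeed yields $\ker\delta=\kk[p]$ with $p$ a variable) gives $\deg p\leq\deg h$ since $h=q_0(p)$ with $q_0$ nonconstant; and the identity $\deg C_\mu=\max(\deg_\delta x,\deg_\delta y)$ for the properly parametrized orbit closes the estimate. I checked this chain: since $\kplus$-orbits are closed, the orbit of a non-fixed $P_0\in C_\mu$ is a closed one-dimensional subset of the irreducible curve $C_\mu$, hence all of it, so the orbit map is a bijective, hence proper, polynomial parametrization; the degree comparison via a generic affine line is valid; and $\deg C_\mu=\deg(p-\mu)=\deg p$ because $p-\mu$ is again a variable, hence irreducible. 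Genericity of $\mu$ is only needed to ensure the fibre is not contained in the zero sets of the top nonzero derivatives $\delta^{\deg_\delta x}x$ and $\delta^{\deg_\delta y}y$, which is automatic since a nonzero polynomial vanishes on only finitely many fibres of $p$. Your bound is even sharp, e.g.\ for $\delta=2x\,\ddy-\ddx$ with $p=y+x^2$. So what your route buys is a self-contained geometric proof, within the paper's own toolkit, of the criterion the paper imports from the literature; what the citation buys is brevity.
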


\begin{proof}
Let $D= a \, \partial _x + b \, \partial _b$ be a vector field of $\Atwo$, and put $m:= \max \{ \deg a, \deg b \}$. 
According to \cite[Theorem~1.3.52]{Es2000Polynomial-automor} (see also \cite[Proposition~8.4]{Fr2006Algebraic-theory-o}), $D$ is locally nilpotent if and only if $D^{m+2}x=D^{m+2}y=0$.
\end{proof}

The next result can be found in \cite{BhDu1997Kernel-of-locally-} (cf. \cite{BeEsMa2001Derivations-having}).

\begin{proposition}\label{Cplusfam.prop}
Let $\Phi=(\Phi_{z})_{z\in Z}$ be a family of $\kplus$-actions on $\A{2}$ where $Z$ is affine. Assume that $\Phi$ has no fixed points.
Then $\Phi$ is isomorphic to the trivial family $\Psi$ where $\Psi_{z}(s)=(x+s,y)$ for all $z\in Z$.
\end{proposition}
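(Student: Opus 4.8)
The plan is to translate the statement into the language of locally nilpotent derivations and reduce it to a relative cancellation result. Write $R := \OOO(Z)$; we may assume $Z$ is an irreducible affine variety (treating components separately, and enlarging $\kk$ to an uncountable field if convenient, as permitted by Proposition~\ref{field-extensions-for-morphisms.prop}). A family $\Phi = (\Phi_z)_{z \in Z}$ of $\kplus$-actions on $\A{2}$ is the same as a single $\kplus$-action on $W := \A{2}\times Z$ over $Z$, hence, by Lemma~\ref{exp-for-functions.lem}, the same as an $R$-trivial locally nilpotent derivation $D$ of $R[x,y] = \OOO(W)$ (that is, $DR = 0$). A fixed point of $\Phi$ is a point $(p,z)$ with $p$ fixed by $\Phi_z$, so the hypothesis that $\Phi$ has no fixed points means precisely that each induced derivation $\delta_z := D\otimes_R \kappa(z)$ on $\kk[x,y]$ is \emph{fixed-point free}, i.e. $(\delta_z x, \delta_z y) = \kk[x,y]$. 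The goal is to produce $u,v \in R[x,y]$ with $Du = 1$, $Dv = 0$ and $R[x,y] = R[u,v]$, for then in the coordinates $(u,v)$ the derivation $D$ equals $\partial/\partial u$ and the $Z$-automorphism $\Theta = (u,v)$ of $W$ conjugates $\Phi$ to the translation family $\Psi$.

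First I would construct the invariant coordinate. Each $\delta_z$ is a fixed-point free locally nilpotent derivation of $\kk[x,y]$, so its kernel is $\kk[v_z]$ with $v_z$ a \emph{coordinate} of $\A{2}$, and $\A{2} \to \A{2}\quot\kplus \cong \A{1}$ is a trivial $\kplus$-bundle (the surface case, cf. Proposition~\ref{generalunipotent.prop} and Corollary~\ref{Cplussurface.cor}). The content is to make this uniform over $Z$: I claim that $A := \ker D$ satisfies $A = R[v]$ for a single $v$ restricting fiberwise to a coordinate $v_z$. This is exactly the structure theorem for kernels of locally nilpotent $R$-derivations of $R[X,Y]$ of \name{Bhatwadekar}--\name{Dutta} \cite{BhDu1997Kernel-of-locally-}, applied to the (normal, resp.\ seminormal) domains $R$ arising here together with the fixed-point free hypothesis; it is a relative form of the cancellation statement $A[t]\cong_R R^{[2]}$ and $A\otimes\kappa(z)\cong \kk^{[1]} \Rightarrow A \cong_R R^{[1]}$. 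In particular $\Spec A \cong \A{1}\times Z$ over $Z$.

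With $A = R[v]$ in hand, the slice is obtained geometrically. The quotient map $\pi\colon W \to \Spec A \cong \A1 \times Z$ realizes $W$ as a principal $\kplus$-bundle: the action is free (fixed-point free, and $\kplus$ has no nontrivial finite subgroups), the orbits are the closed one-dimensional fibers of $\pi$ (Theorem~\ref{unipotent-groups.thm}), and fiberwise $\pi$ is the trivial bundle $\A{2}\to\A1$, so $\pi$ is a smooth affine morphism with reduced orbit fibers, hence \'etale-locally trivial. Since $\kplus$ is a \emph{special} group, every principal $\kplus$-bundle over the affine variety $\A1\times Z$ is Zariski-trivial (Proposition~\ref{principal-G-bundle.prop}(1)), so $\pi$ admits a section. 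Equivalently there is $u \in R[x,y]$ with $Du = 1$ (Lemma~\ref{sections-via-vector-fields.lem}), and then the slice theorem for locally nilpotent derivations gives $R[x,y] = A[u] = R[v][u] = R[u,v]$.

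Finally, $\Theta := (u,v)\colon W \to W$ is a morphism over $Z$ whose restriction $\Theta_z = (u_z, v_z)$ to each fiber is an automorphism of $\A{2}$, since $\kk[u_z, v_z] = \kk[x,y]$. By Proposition~\ref{fam-end-aut.prop} the family $\Theta$ is itself an automorphism of $W$ over $Z$, and by construction it conjugates $\Phi$ to $\Psi$, as required. The main obstacle is the middle step, the uniform production of the invariant coordinate $v$: this is precisely the relative $\A1$-cancellation phenomenon, and it is where both the hypotheses on the base (the normality/seminormality of $R=\OOO(Z)$ needed to invoke \cite{BhDu1997Kernel-of-locally-}) and the fixed-point freeness of $\Phi$ are essential. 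Everything else is either elementary, a consequence of the specialness of $\kplus$, or a direct application of the family criterion of Proposition~\ref{fam-end-aut.prop}.
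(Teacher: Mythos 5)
Your outer architecture parallels the paper's: pass to a locally nilpotent $R$-derivation $D$ of $R[x,y]$, produce an invariant restricting to a coordinate on each fiber, exhibit $Z\times\A{2}$ over the quotient as a principal $\kplus$-bundle, trivialize it because unipotent bundles over affine bases are trivial (Proposition~\ref{principal-G-bundle.prop}), extract the slice $u$ with $Du=1$ via Lemma~\ref{sections-via-vector-fields.lem}, and close with Proposition~\ref{fam-end-aut.prop}. But your middle step has a genuine gap. The theorem of \name{Bhatwadekar--Dutta} \cite{BhDu1997Kernel-of-locally-} that you invoke to get $\ker D = R[v]$ carries hypotheses on the base ring --- a Noetherian (semi)normal domain --- which $R=\OOO(Z)$ need not satisfy: the proposition allows an arbitrary affine variety $Z$, so $R$ can be non-normal, non-seminormal, and, if $Z$ is reducible, not even a domain. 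Your preliminary reduction ``treating components separately'' does not repair this: the irreducible components of $Z$ may intersect, and isomorphisms of families built over each component separately need not agree on overlaps, so they do not glue to an isomorphism over $Z$. As written, your argument covers only irreducible $Z$ with (semi)normal coordinate ring. (The paper does attribute the statement to \cite{BhDu1997Kernel-of-locally-}, so the citation is not unreasonable as a pointer; but as a proof under the stated hypotheses the mismatch is exactly the problem. A smaller gloss: ``fiberwise $\pi$ is the trivial bundle, so $\pi$ is smooth'' is not automatic; flatness of $\pi$ needs an argument, e.g.\ the fibrewise flatness criterion using that source and target are flat over $Z$.)

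The paper's proof is built precisely to avoid these hypotheses. Instead of a kernel structure theorem, it produces the invariant directly: the relative vector field $\delta = p\dx + q\dy$ has divergence zero (fiberwise because each $\delta_z$ is locally nilpotent, hence identically since $Z$ is reduced), and over any $\QQ$-algebra a divergence-free field on the plane is Hamiltonian, $p=\ab{f}{y}$, $q=-\ab{f}{x}$, by elementary integration in $x$ and $y$. Then $f$ is automatically an invariant; fixed-point freeness forces each $f_z$ to have nowhere-vanishing gradient, so $f_z$ is a coordinate and the fibers of $\phi=(\id,f)\colon Z\times\A{2}\to Z\times\kk$ are the orbits, reduced and isomorphic to $\kk$. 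Flatness of $\phi$ --- the very point your smoothness assertion glosses --- is then proved by a trick: extend $f$ to $\tilde f$ over an ambient smooth $U\subseteq\kk^{m}$ containing $Z$, apply miracle flatness \cite[Corollary to Theorem~23.1]{Ma1989Commutative-ring-t} over the regular base $U\times\kk$, and restrict to $Z\times\kk$ by base change. From there the two proofs coincide: $\phi$ is smooth with orbit fibers, hence a principal $\kplus$-bundle, trivial over the affine base. In short, the invariant coordinate you outsource to \cite{BhDu1997Kernel-of-locally-} under hypotheses you cannot ensure comes for free from divergence zero, with no irreducibility or normality assumption on $Z$.
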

\begin{proof}
The locally nilpotent vector field $\delta$ corresponding to the $\kplus$-action on $Z \times \A{2}$ has the form $\delta=p\dx+q\dy$ where $p,q\in\OOO(Z)[x,y]$. The example above shows that $p=\ab{f}{y}$ and $q=-\ab{f}{x}$ for a suitable $f\in\OOO(Z)[x,y]$. Then $f$ is an invariant and so   $\phi=(\id,f)\colon Z \times \A{2} \to Z\times \kk$ is a $\kplus$-invariant morphism. We claim that $\phi$ is flat. Since every fiber of $\phi$ is reduced and isomorphic to $\kk$ this implies that $\phi$ is smooth. Hence $\phi$ is a $\kplus$-bundle which must be trivial, because $Z\times \kk$ is affine.

In order to see that $\phi$ is flat, we embed $Z \subseteq \kk^{m}$ and extend $f=\sum_{i j}f_{i j}x^{i}y^{j}$ to $\tilde f\in\OOO(\A{n})[x,y]$, $\tilde f = \sum_{i j}\tilde f_{i j}x^{i}y^{j}$. Then the fibers of $\tilde\phi:=(\id,\tilde f)\colon \kk^{m}\times\A{2} \to \kk^{m}\times \kk$ are one-dimensional (or empty) over the open subset $U\times\kk$ where $U$ is the complement of the zero set of the $\tilde f_{i j}$. Since $U$ is smooth this implies that $\tilde\phi\colon U \times \A{2} \to U\times\kk$ is flat by \cite[Corollary to Theorem~23.1]{Ma1989Commutative-ring-t} and the claim follows, because $U$ contains $Z$.
\end{proof}

Proposition~\ref{Cplusfam.prop} above has the following interesting interpretation in terms of the conjugacy class $C(\t)$ of a translation $\t\in\AutA{2}$. Let us first describe the situation of an action of an algebraic group $G$ on a variety $X$. In this case, the orbits $O_{x}:=G x \subset X$ are locally closed, hence varieties, and the orbit map $\mu_x\colon G \to O_{x}$ is a principal $G_{x}$-bundle, see Section~\ref{G-bundle.subsec}. Such bundles are locally trivial in the \'etale topology. This means that there is a surjective \'etale map $\eta \colon Z \to O_{x}$ such that the pull-back bundle $Z\times_{O_{x}}G$ is trivial, i.e. there is a section $\sigma\colon Z \to G$
\[
\begin{tikzcd}
Z \times_{O_x} G 
\arrow[r]
\arrow[d,xshift=-0.5ex]
& G
\arrow[d,"\mu_x"]
\\
Z
\arrow[r, twoheadrightarrow,"\eta"]
\arrow[ur, dashrightarrow, "\sigma"]
\arrow[u, xshift=0.5ex]
& O_x
\arrow[r,hook,"\subseteq"]
& X
\end{tikzcd}
\]
If $\eta\colon Z \to O_{x}$ is any morphism, then it is an interesting question whether there is a section. A typical example is the following.
If the stabilizer $G_{x}$ is a unipotent group, then we have a section for any morphism $Z \to O_{x}$ where $Z$ is affine, because principal bundles for unipotent groups are trivial over affine varieties. Another case is when $G_{x}$ is a connected solvable group. Then one has a section for any $Z \to O_{x}$ in case $Z$ is factorial. 

In case of ind-group $\GGG$ and an orbit map $\GGG \to O_{x}$ we can ask the same question. {\it For which morphisms $Z \to O_{x}$ do we have a section $\sigma\colon Z \to \GGG$?} The only case known to us is the following which is just another formulation of  Proposition~\ref{Cplusfam.prop} above.

\begin{proposition} Let $\t \in \Aut(\Atwo)$ be a translation, and let $C(\t) \subset \Aut(\Atwo)$ be its conjugacy class.
For every morphism $\phi\colon Z \to C(\t)\subseteq \Aut(\Atwo)$ where $Z$ is an affine variety, there is a section $\sigma\colon Z \to \AutA{2}$:
\[
\begin{tikzcd}
& \Aut(\Atwo)
\arrow[d,"\mu_{\t}"]
\\
Z
\arrow[r, "\phi"]
\arrow[ur, dashrightarrow, "\sigma"]
& C(\t)
\arrow[r,hook,"\subseteq"]
& \Aut(\Atwo)
\end{tikzcd}
\]
\end{proposition}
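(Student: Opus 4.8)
The plan is to reduce the statement to Proposition~\ref{Cplusfam.prop}, exactly as the remark preceding it suggests, by turning $\phi$ into a fixed-point-free family of $\kplus$-actions. I may assume $\t\neq\id$ (otherwise $C(\t)=\{\id\}$ and $\sigma\equiv\id$ works), so that $\t$ is a nontrivial translation and hence a unipotent automorphism; all its conjugates are unipotent, so that $C(\t)\subseteq\AutU(\Atwo)$. Recall that $\mu_{\t}$ is the orbit map $\g\mapsto \g\cdot\t\cdot\g^{-1}$ for the conjugation action of $\Aut(\Atwo)$ on itself, and we seek $\sigma$ with $\mu_{\t}\circ\sigma=\phi$.

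First I would promote $\phi$ to a family of $\kplus$-actions. In dimension two the unipotent elements $\AutU(\Atwo)$ form a closed subset of $\Aut(\Atwo)$ (see Theorem~\ref{constr.thm}), so by the corollary to Proposition~\ref{bijections-k+actions-nilpotentVF-unipotent-autos.prop} the evaluation map $\eps_{\Atwo}\colon\Akplus(\Atwo)\simto\AutU(\Atwo)$, $\lambda\mapsto\lambda(1)$, is an isomorphism of ind-varieties. Composing the given morphism $\phi\colon Z\to C(\t)\subseteq\AutU(\Atwo)$ with $\eps_{\Atwo}^{-1}$ yields a morphism $Z\to\Akplus(\Atwo)$, i.e. a family $\Phi=(\Phi_z)_{z\in Z}$ of $\kplus$-actions on $\Atwo$ with $\Phi_z(1)=\phi(z)$. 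Since each $\phi(z)$ is conjugate to the nontrivial translation $\t$, the action $\Phi_z$ is conjugate to the fixed-point-free translation action generated by $\t$, hence is itself fixed-point-free; thus $\Phi$, viewed as a $\kplus$-action on $Z\times\Atwo$, has no fixed points.

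Then I would apply Proposition~\ref{Cplusfam.prop}: $\Phi$ is isomorphic to the trivial family $\Psi$ with $\Psi_z(s)=(x+s,y)$. An isomorphism of families is a $Z$-automorphism of $Z\times\Atwo$, equivalently (Proposition~\ref{families-of-aut.prop} together with Theorem~\ref{AutX.thm}) a morphism $\Theta\colon Z\to\Aut(\Atwo)$, $z\mapsto\Theta_z$, satisfying $\Theta_z\cdot\Psi_z(s)\cdot\Theta_z^{-1}=\Phi_z(s)$ for all $s\in\kplus$. Fixing once and for all a linear automorphism $\rho_0\in\Aff(2)$ with $\rho_0\cdot(x+1,y)\cdot\rho_0^{-1}=\t$ (possible since any two nontrivial translations are conjugate under $\GL(2)$), I would set $\sigma(z):=\Theta_z\cdot\rho_0^{-1}$. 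Evaluating the intertwining relation at $s=1$ gives $\Theta_z\cdot(x+1,y)\cdot\Theta_z^{-1}=\Phi_z(1)=\phi(z)$, whence $\sigma(z)\cdot\t\cdot\sigma(z)^{-1}=\Theta_z\,\rho_0^{-1}\,\t\,\rho_0\,\Theta_z^{-1}=\Theta_z\cdot(x+1,y)\cdot\Theta_z^{-1}=\phi(z)$, that is, $\mu_{\t}\circ\sigma=\phi$. As $\Theta$ is a morphism and multiplication in $\Aut(\Atwo)$ is a morphism, $\sigma$ is a morphism and thus the desired section.

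The genuinely delicate point — and essentially the only content beyond Proposition~\ref{Cplusfam.prop} — is the first step, namely upgrading the pointwise assignment $z\mapsto(\text{the unique }\kplus\text{-action with }\Phi_z(1)=\phi(z))$ to an honest family. This is precisely where the two-dimensional structural results enter: the closedness of $\AutU(\Atwo)$ and the resulting isomorphism $\eps_{\Atwo}$. Over an arbitrary field one would instead replace $\phi(Z)$ by its (algebraic) closure, which stays inside $\AutU(\Atwo)$ by the weak closedness from Lemma~\ref{unipotent-weakly-closed.lem}, and then invoke Proposition~\ref{bijections-k+actions-nilpotentVF-unipotent-autos.prop}(4) to get the morphism $Z\to\Akplus(\Atwo)$. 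Everything after this is formal bookkeeping with the conjugation action, so I expect no further obstacle.
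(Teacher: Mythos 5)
Your proof is correct and takes exactly the approach the paper intends: the paper states this proposition without separate proof, presenting it as ``just another formulation'' of Proposition~\ref{Cplusfam.prop}, and your argument supplies precisely the implicit translation, using the closedness of $\AutUA{2}$ (Theorem~\ref{constr.thm}) and Proposition~\ref{bijections-k+actions-nilpotentVF-unipotent-autos.prop} to turn $\phi$ into a fixed-point-free family of $\kplus$-actions, trivializing it, and reading off the section from the intertwining automorphism. The only delicate point, making $z\mapsto\Phi_z$ a morphism into $\Akplus(\Atwo)$, is handled correctly, including your observation that in dimension two the closedness of $\AutUA{2}$ holds without the uncountability hypothesis.
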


\ps
\subsection{Classification of \texorpdfstring{$\kplus$}{kplus}-actions on \texorpdfstring{$\Atwo$}{Atwo}}
We finish this section with the following result due to \name{Rentschler} \cite{Re1968Operations-du-grou} describing the $\kplus$-actions on $\Atwo$.

\begin{proposition}\label{rentschler.prop}
Every $\kplus$-action on $\Atwo$ is conjugate to a modification of the translation action, i.e. to an action of the form $s\colon (x,y) \mapsto (x,y +  sh(x))$ for some polynomial $h \in \kk[x]$.
\end{proposition}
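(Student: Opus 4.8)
The plan is to reformulate the statement group-theoretically and then exploit the amalgamated product structure of $\Aut(\Atwo)$. A $\kplus$-action on $\Atwo$ is the same as a homomorphism of ind-groups $\rho\colon\kplus\to\Aut(\Atwo)$; if $\rho$ is trivial we are done (this is the modification with $h=0$), so assume $\rho$ nontrivial. Then $\rho$ is injective, and by Proposition~\ref{Hom-G-to-ind-group.prop} the image $U:=\rho(\kplus)$ is a closed, connected, one-dimensional unipotent algebraic subgroup, isomorphic to $\kplus$. By the Jung--van der Kulk theorem (Section~\ref{amalgam.subsec}) and its consequence that every algebraic subgroup of $\Aut(\Atwo)$ is conjugate to a subgroup of $\Aff$ or of $\JJJ$, I would first conjugate $U$ into one of these factors. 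If $U$ is conjugated into $\Aff$, a further affine change of coordinates puts its (upper-triangular, unipotent) affine elements into $\JJJ$; so after conjugation we may assume $U\subseteq\JJJ$, and since $U$ is unipotent it lies in the unipotent part $\Ju=\{(x+p(y),y+c)\mid p\in\kk[y],\,c\in\kk\}$ (Proposition~\ref{triang.prop}(\ref{2-unipotent})).

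Next I would run the dichotomy given by the homomorphism $d_2\colon\Ju\to\kplus$, $(x+p(y),y+c)\mapsto c$. Restricted to $U\cong\kplus$ this is either trivial or an isomorphism onto $\kplus$. In the first case $U$ lies in the commutative vector group $V=\{(x+p(y),y)\mid p\in\kk[y]\}\cong(\kk[y],+)$, and a one-parameter subgroup of a vector group is a line, so $U=\{(x+s\,p_0(y),y)\mid s\in\kk\}$ for a fixed $p_0$; this is precisely a modification of the $x$-translation, i.e. after the coordinate swap $x\leftrightarrow y$ an action of the form $s\colon(x,y)\mapsto(x,y+s\,h(x))$ with $h=p_0$. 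In the second case a generator of $U$ has the shape $(x+p(y),y+c)$ with $c\neq0$; since its second coordinate is $y+c$ with nonzero constant term, Remark~\ref{unipotent.rems}(2) shows it is conjugate to a translation, hence $U$ is conjugate to the one-parameter translation subgroup $\{(x,y+s)\}$, which corresponds to the vector field $\partial_y=h(x)\partial_y$ with $h\equiv1$ — again a (constant) modification of the translation action. Combining these conjugations with those of the first paragraph yields the desired conjugacy in $\Aut(\Atwo)$.

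The main obstacle is entirely concentrated in the first step: the passage from an arbitrary unipotent one-parameter subgroup to one inside $\JJJ$ relies essentially on the amalgamated product structure of $\Aut(\Atwo)$ (equivalently, on the nontrivial theorem that every algebraic subgroup is conjugate into $\Aff$ or $\JJJ$), which is the one genuinely non-formal input. Everything after that is elementary structure theory of the triangular group $\Ju$: the exact sequence $1\to V\to\Ju\xrightarrow{d_2}\kplus\to1$ and the classification of one-parameter subgroups in a vector group. I should also record the trivial but necessary translation between the two coordinate conventions — the de Jonqui\`eres normal form naturally produces $h(y)\partial_x$, which is conjugate (via $x\leftrightarrow y$) to the form $h(x)\partial_y$ stated in the proposition. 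As an alternative I could run the whole argument in the language of locally nilpotent derivations via the dictionary of Section~\ref{Cplus.subsec} (Lemma~\ref{exp-for-functions.lem}): show that $\ker\delta=\kk[p]$ for a coordinate $p$, normalize $p=x$, and observe that local nilpotence forces $\delta=h(x)\partial_y$; but the coordinate claim for the kernel again rests on Jung--van der Kulk, so the essential difficulty is the same.
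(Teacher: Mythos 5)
Your proof is correct, but it takes a genuinely different route from the paper's. The paper works entirely on the Lie algebra side: the locally nilpotent vector field $\delta = p\dx + q\dy$ of the action satisfies $\Div\delta = 0$ (the corollary to Proposition~\ref{LieAlgAutAn.prop}), so $\delta$ is the Jacobian derivation of some $r\in\kk[x,y]$; the invariant ring is $\kk[f]$ where $f\colon \Atwo \to \Atwo\quot\kplus \simeq \Aone$ is the quotient, the Abhyankar--Moh--Suzuki theorem (Proposition~\ref{AMS.prop}) forces $f$ to be a variable, and writing $r=h(f)$ and completing $f$ to a coordinate system $(f,g)$ with $\jac(f,g)=1$ transforms $\delta$ into $h'(f)\,\ddy$ --- an explicit conjugation. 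You instead work on the group side: Proposition~\ref{Hom-G-to-ind-group.prop} makes $U=\rho(\kplus)$ an algebraic subgroup, you conjugate it into $\Ju$, and you finish by elementary structure theory of the triangular group, using Remark~\ref{unipotent.rems}(2) (whose proof, as your argument requires, conjugates the whole parametrized action and not merely the single automorphism $\rho(1)$) together with the fact that ind-group homomorphisms $\kplus\to V^+$ into a vector group are linear. The trade-off is in the deep input: the paper's is AMS, yours is the statement that every algebraic subgroup of $\Aut(\Atwo)$ is conjugate into $\Aff$ or $\JJJ$, which the paper asserts in the preamble to Part III but never proves; it follows from Jung--van der Kulk via Serre's fixed-point theorem for bounded actions on the Bass--Serre tree, the boundedness of $U$ coming from the degree--length relation of Section~\ref{amalgam.subsec}. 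You are right to flag this as the one non-formal step, and right that Lemma~\ref{lengthone.lem} would not suffice, since element-wise triangularizability does not give simultaneous triangularization of the one-parameter group. What your route buys is a clean group-theoretic picture independent of AMS; what the paper's route buys is the explicit conjugating automorphism and the identification of the generator of the invariant ring as a variable, the same divergence-zero mechanism that also drives the proof of Proposition~\ref{Cplusfam.prop}.
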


\begin{proof}
Let $\rho\colon \kplus \to \Aut(\Atwo)$ be a nontrivial $\kplus$-action on $\Atwo$, and let $\delta = p\dx + q\dy$ be the corresponding  locally nilpotent vector field. Since $\Div(\delta) = \ab{p}{x}+\ab{q}{y}=0$ by the corollary above, there exists an $r\in\kk[x,y]$ such that $p=-\ab{r}{y}$ and $q=\ab{r}{x}$. In particular, $\delta r=0$, hence $r$ is an invariant.

It is clear that the affine quotient $\Atwo\quot\kplus$ is isomorphic to $\Aone$. In fact, it is one-dimensional, normal, rational and without non-constant invertible functions. It follows that $\kk[x,y]^{\kplus}=\kk[f]$ where $f\colon \Atwo \to \Aone$ is the quotient map. The generic fibers of $f$ are  
$\kplus$-orbits, and so the famous Embedding Theorem of \name{Abhyankar-Moh-Suzuki} (see \cite{AbMo1975Embeddings-of-the-}, \cite{Su1974Proprietes-topolog}) implies that $f$ is a variable. Since $r = h(f)$ for some polynomial $h \in \kk[t]$ we get  
$$
p=-\ab{r}{y}=-h'(f)\ab{f}{y} \text{ and } q=\ab{r}{x}=h'(f)\ab{f}{x}.
$$ 
As a consequence, $\rho$ is a modification of the action $\tilde\rho$ corresponding to the vector field $\tilde\delta:=-\ab{f}{y}\dx + \ab{f}{x}\dy$. Clearly, this action $\tilde\rho$ is free, because $\tilde\delta$ has no zeroes. 

Let  $g$ be another variable such that $\kk[x,y]=\kk[f,g]$ and that $\jac(f,g)=1$. Then the automorphism $\phi:=(f,g)$ of  $\Atwo$ induces a linear automorphism of the vector fields (Section~\ref{LF-Jordan-decomposition.subsec}). We claim that the image of $\tilde\delta$ under $\phi$ is $\dy$: $\phi(\tilde\delta) = \dy$. 
In fact, let $\phi^{-1}=(f_{0},g_{0})$. Then, by Example~\ref{image-of-ddx.exa}, we get
$$
\phi^{-1}(\dy) = \ab{f_{0}}{y}(\phi(x))\dx + \ab{g_{0}}{y}(\phi(x))\dy.
$$
Using the identities $f_{0}(f,g)=x$ and $g_{0}(f,g)=y$ together with $\jac(f,g)=1$ one finds $\ab{f_{0}}{y}(\phi(x))=-\ab{f}{y}$ and 
$\ab{g_{0}}{y}(\phi(x)) = \ab{f}{x}$, hence the claim.

It follows that $\tilde\rho$ is conjugate under $\phi$ to the action $(x,y)\mapsto (x,y+s)$, and so $\rho$ is conjugate to a modification of this action. 
\end{proof}

\ps
\subsection{A characterization of semisimple elements of \texorpdfstring{$\Aut (\AA^2)$}{Aut(A2}} \label{main.subsec}

Recall that every locally finite element of $\AutA{2}$ is triangularizable (see Lemma~\ref{lengthone.lem} for a more general statement). Statements (\ref{Locally-finite-elements-are-closed}), (\ref{Unipotent-elements-are-closed}) and (\ref{An-element-is-semisimple-if-and-only-if-its-conjugacy-class-is-closed}) of the next theorem can be found in  \cite{FuMa2010A-characterization}.

\begin{theorem} 
\be
\item \label{Locally-finite-elements-are-closed} 
The subset $\AutlfA{2} \subseteq \AutA{2}$ of locally finite elements is closed.
\item \label{Unipotent-elements-are-closed} 
The unipotent elements  $\AutUA{2} \subseteq \AutA{2}$ form a closed subset.
\item \label{Conjugacy-classes-are-weakly-constructible} 
Conjugacy classes in $\AutA{2}$ are weakly constructible.
\item \label{The-semisimple-part-of-a-locally-finite-element-belongs-to-the-closure-of-its-conjugacy-class} 
If $\g \in \Aut ( \AA^2) $ is locally finite, then  $\g_{s} \in \wc{C(\g)}$.
\item  \label{An-element-is-semisimple-if-and-only-if-its-conjugacy-class-is-closed} 
If $\g \in \Aut ( \AA^2) $, then its conjugacy class $C(\g)$ is closed if and only if $\g$ is semisimple.
\item \label{The-closure-of-the-conjugacy-class-of-a-unipotent-element}  
If $\u$ is unipotent and nontrivial, then $\wc{C(\u)} = \overline{C(\u)}=\AutUA{2}$.
\item \label{Weakly-closed-normal-subgroups-of-Aut(A2)} 
Any weakly closed (resp. closed) nontrivial normal subgroup of $\AutA{2}$ is equal to the preimage of a weakly closed (resp. closed) subgroup  of $\kk^*$ by the jacobian map $\jac \colon \Aut ( \AA^2 ) \to \kk^*$.
\ee
\end{theorem}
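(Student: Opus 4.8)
The plan is to treat the seven assertions essentially independently, drawing on the amalgamated product structure and the family results already established. For \eqref{Locally-finite-elements-are-closed} and \eqref{Unipotent-elements-are-closed} I would reduce closedness of a subset $S\subseteq\AutA{2}$ to showing that each $S\cap\AutA{2}_k$ is closed in the variety $\AutA{2}_k$. For \eqref{Locally-finite-elements-are-closed}, Lemma~\ref{lengthone.lem} characterizes local finiteness by $\deg\g^2\le\deg\g$; stratifying $\AutA{2}_k$ by exact degree and, on the stratum $\{\deg\g=e\}$, intersecting with the preimage of $\AutA{2}_e$ under the squaring morphism $\g\mapsto\g^2\colon\AutA{2}_k\to\AutA{2}_{k^2}$, shows that $\AutlfA{2}\cap\AutA{2}_k$ is constructible. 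To see it is closed I take an irreducible component $Z$ of its closure, note that a dense open subset of $Z$ consists of locally finite automorphisms, and apply Lemma~\ref{re-interpretation-of-some-results-of-part-2.lem} to the tautological family $Z\hookrightarrow\AutA{2}$ to conclude that all of $Z$ is locally finite, so the set equals its closure. For \eqref{Unipotent-elements-are-closed}, a unipotent $\g$ is locally finite, whence $\deg\g^m\le\deg\g\le k$ for all $m$, so the $\g^*$-cyclic span of each coordinate lies in the finite-dimensional space $\OOO(\AA^2)_k$; thus $(\g^*-\id)$ is nilpotent on it of index $\le N:=\dim\OOO(\AA^2)_k$, uniformly in $\g$. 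This gives $\AutUA{2}\cap\AutA{2}_k=A_{k,N}$ in the notation of Lemma~\ref{unipotent-weakly-closed.lem}, which is closed.

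Assertion \eqref{Conjugacy-classes-are-weakly-constructible} is Corollary~\ref{ccwconst.cor}, and \eqref{The-semisimple-part-of-a-locally-finite-element-belongs-to-the-closure-of-its-conjugacy-class} follows at once: by Lemma~\ref{lengthone.lem} a locally finite $\g$ is triangularizable, and Corollary~\ref{ssinclosure.cor} then gives $\g_s\in\wc{C(\g)}$. For \eqref{An-element-is-semisimple-if-and-only-if-its-conjugacy-class-is-closed}, the implication ``semisimple $\Rightarrow$ closed'' goes as follows: in dimension two semisimple means diagonalizable (Proposition~\ref{diagonal.prop}), so $C(\g)$ is weakly closed by Corollary~\ref{ss-weakly-closed.cor}, and being also weakly constructible by \eqref{Conjugacy-classes-are-weakly-constructible} it is closed (Remark~\ref{constr.rem}). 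For the converse I first observe that a closed class satisfies $C(\g)=\overline{C(\g)}\supseteq\wc{C(\g)}\supseteq C(\g)$, so $C(\g)$ is weakly closed. If $\g$ is locally finite, then $\g_s\in\wc{C(\g)}=C(\g)$ by \eqref{The-semisimple-part-of-a-locally-finite-element-belongs-to-the-closure-of-its-conjugacy-class}, so $\g$ is conjugate to $\g_s$; comparing Jordan decompositions under conjugation (Lemma~\ref{JD.lem}) forces $\g_u=\id$, i.e. $\g$ is semisimple.

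The main obstacle is the remaining case of \eqref{An-element-is-semisimple-if-and-only-if-its-conjugacy-class-is-closed}: showing that a non-locally-finite $\g$ cannot have weakly closed conjugacy class. Here I would use the amalgamated product normal forms (Lemmas~\ref{lem1} and \ref{lem2}): such a $\g$ is conjugate to a reduced word of even length $\ge 2$, of minimal degree $d$ in its class, and $C(\g)\cap\AutA{2}_d$ is exactly the finite set of minimal representatives. Since local finiteness is a conjugacy invariant and $\AutlfA{2}$ is closed by \eqref{Locally-finite-elements-are-closed}, it would suffice to exhibit a locally finite point of $\overline{C(\g)}$; by minimality of $d$ such a point must leave the minimal-degree stratum, hence lie outside $C(\g)$ and obstruct closedness. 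Concretely one degenerates a minimal representative by conjugating with a suitable one-parameter subgroup so that the top-length portion of the word collapses, lowering the translation length; verifying that such a degeneration always exists is the technical heart, and is the step I expect to be hardest (it is carried out via the length function in \cite{FuMa2010A-characterization}).

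Finally, \eqref{The-closure-of-the-conjugacy-class-of-a-unipotent-element} combines \eqref{Unipotent-elements-are-closed} with Proposition~\ref{triangclosure.prop}: one has $C(\u)\subseteq\AutUA{2}$, so $\overline{C(\u)}\subseteq\AutUA{2}$ by \eqref{Unipotent-elements-are-closed}; conversely every unipotent element of $\AA^2$ is triangularizable (Lemma~\ref{lengthone.lem}), and Proposition~\ref{triangclosure.prop}, together with the conjugation-invariance of the weak closure (Remark~\ref{constr.rem}), shows that $\AutUA{2}$, being precisely the set of triangularizable unipotents, is contained in $\wc{C(\u)}$; the chain $\AutUA{2}\subseteq\wc{C(\u)}\subseteq\overline{C(\u)}\subseteq\AutUA{2}$ then collapses to equalities. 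Assertion \eqref{Weakly-closed-normal-subgroups-of-Aut(A2)} is Proposition~\ref{simple.prop}(2) in the weakly closed case, giving $\NNN=\jac^{-1}(H)$ with $H\subseteq\kst$ weakly closed; when $\NNN$ is closed, $H=\sigma^{-1}(\NNN)$ for the closed immersion $\sigma\colon\kst\to\AutA{2}$, $h\mapsto(hx,y)$, so $H$ is closed, while $\jac^{-1}(H)$ is closed since $\jac$ is a morphism.
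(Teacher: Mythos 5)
Your treatment of parts (2)--(4), (6) and (7) follows the paper's proof essentially verbatim (including the closed case of (7), which the paper leaves implicit in Proposition~\ref{simple.prop} and you spell out correctly via the section $h\mapsto(hx,y)$). For (1) you take a genuinely different route: the paper shows directly that $\AutlfA{2}\cap\AutA{2}_{k}$ is closed, being cut out by the condition $\dim\Span\{(\g^{*})^{m}x,(\g^{*})^{m}y\mid m\in\ZZ\}\leq\binom{k+1}{2}$, a countable intersection of determinantal (closed) conditions, using that $\deg\g^{m}\leq\deg\g$ for locally finite $\g$ by Lemma~\ref{lengthone.lem}. You instead get constructibility from the criterion $\deg\g^{2}\leq\deg\g$ and then upgrade to closedness by applying the family Lemma~\ref{re-interpretation-of-some-results-of-part-2.lem} to the irreducible components of the closure; this is correct (a dense constructible subset of each component consists of locally finite elements, and the family lemma propagates this to the whole component), but it buys nothing over the paper's one-step determinantal argument and quietly leans on the weak-closedness machinery (Proposition~\ref{Glf-weakly-closed.prop}), whereas the paper's condition is visibly closed over any $\kk$.

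The genuine gap is the one you flag yourself: the non-locally-finite case of (5). You never establish that a minimal representative of $C(\g)$ admits a degenerating conjugation, deferring the ``technical heart'' to \cite{FuMa2010A-characterization}, and the word-collapsing mechanism you sketch is not how the paper (or that reference) closes the case. The paper's argument is two lines from tools it has already prepared: by Theorem~\ref{henon.thm}(2) (\name{Friedland--Milnor}) every non-locally-finite $\g$ has a fixed point $a\in\AA^{2}$, and Proposition~\ref{Anclosure.prop} then shows that $\overline{C_{\Aff(2)}(\g)}\subseteq\overline{C(\g)}$ contains the differential $d_{a}\g\in\GL(2)$, a linear and hence locally finite element; since local finiteness is a conjugacy invariant and $\g$ is not locally finite, $d_{a}\g\notin C(\g)$, so $C(\g)$ is not closed --- no minimal-degree stratification and no appeal to (1) is needed. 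Note moreover that a fixed point is exactly what your envisioned one-parameter degeneration requires: the homothety conjugates $\lambda(t)^{-1}\cdot\g\cdot\lambda(t)$ only converge as $t\to0$ after $\g$ has been conjugated to fix the center of the homothety. So the missing idea is not a collapse of reduced words in the amalgam but the fixed-point theorem for H\'enon-type automorphisms, which the paper records (without proof, citing \cite{FrMi1989Dynamical-properti}) precisely for this step.
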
 \label{constr.thm}

The proof of the theorem needs some preparation. It will be given at the end of this section.

For completeness we collect without proofs some results for non-locally finite elements of $\AutA{2}$ in the following theorem. A lot of material about these maps can be found in the papers \cite{FrMi1989Dynamical-properti} by \name{Friedland-Milnor} and \cite{La2001Lalternative-de-Ti} by \name{Lamy}. In the proof of the theorem above we will only need part (\ref{a-hyperbolic-element-has-fixed-points}), the existence of fixed points.

\begin{theorem}\label{henon.thm}
Assume that $\f \in\AutA{2}$ is not locally finite.
\be
\item \label{a-hyperbolic-element-is-conjugate-to-a-product-of-Henon-maps}
\cite[Theorem 2.6]{FrMi1989Dynamical-properti} $\f$ is conjugate to a product of elements of the form $(y,-\delta x + p(y))$ 
where $\delta\in\Cst$ and $\deg p\geq 2$. Moreover, one can assume that the highest coefficient of $p$ is $1$ and the next highest coefficient is zero.
\item \label{a-hyperbolic-element-has-fixed-points}
\cite[Theorem 3.1]{FrMi1989Dynamical-properti} $\f$ has fixed points. More precisely, the fixed points form a finite scheme of length $d:=\min\{\deg \g \mid \g\in C(\f)\}\geq 2$.
\item  \label{a-hyperbolic-element-generates-a-discrete-subgroup}
The subgroup $\langle \f \rangle$ is discrete, i.e. $\langle \f \rangle\cap\AutA{2}_{k}$ is finite for all $k$ (Remark~\ref{Cmin.rem}).
\item \label{centralizer-of-a-hyperbolic-element}
\cite[Proposition~4.8]{La2001Lalternative-de-Ti} The centralizer $\AutA{2}_{\f}:=\{\g\in\Aut(\Atwo)\mid \g\cdot \f = \f \cdot \g\}$ is isomorphic to $\ZZ \rtimes \ZZ/d$.
\ee
\end{theorem}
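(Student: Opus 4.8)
The plan is to base everything on the amalgamated product structure $\AutA{2} = \Aff \star_{\BBB} \JJJ$ from Section~\ref{amalgam.subsec} and on the associated Bass--Serre tree, on which $\AutA{2}$ acts. The four assertions are the geometric reflection of the fact that a non-locally-finite automorphism acts on this tree as a hyperbolic isometry (one admitting an invariant axis), in contrast to the locally finite ones, which are elliptic, i.e. fix a vertex and hence lie in a conjugate of $\Aff$ or $\JJJ$. Assertions (1), (2) and (4) are precisely the theorems of \name{Friedland--Milnor} and of \name{Lamy}, so I would cite \cite{FrMi1989Dynamical-properti} and \cite{La2001Lalternative-de-Ti}; assertion (3) is already available internally through Remark~\ref{Cmin.rem}. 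I will indicate how each is reached.

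For (1) the first step is to invoke Lemma~\ref{lengthone.lem}: $\f$ is locally finite if and only if it has finite length, equivalently is triangularizable, so a non-locally-finite $\f$ must, after the reduction of Lemma~\ref{lem2}, be conjugate to an element of even length $2m \geq 2$. By Lemma~\ref{lem1} such an element may be taken cyclically reduced, and its minimal-length conjugates are the cyclic permutations of its reduced word up to $\BBB$-conjugacy. The content of \name{Friedland--Milnor} is then to regroup this word into a product of generalized Hénon maps $h_{p,\delta} = (y, -\delta x + p(y))$, each of length two, and to normalize the top two coefficients of each $p$ by conjugating with a suitable diagonal automorphism (to make the leading coefficient $1$) and a translation from $\Aff$ (to kill the next one).

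For (2) I would compute the fixed-point scheme of a single Hénon factor directly: a fixed point of $h_{p,\delta}$ satisfies $x = y$ and $(1+\delta)y = p(y)$, one equation of degree $\deg p$, so that scheme has length $\deg p$. For a product the degrees multiply, and by the identity $\deg \f = \prod_i \deg \f_i$ for a reduced expression together with Lemma~\ref{lem1} this product equals the minimal degree $d = \min\{\deg \g \mid \g \in C(\f)\}$; a Bézout-type intersection of the graph of $\f$ with the diagonal then shows the fixed-point scheme has total length exactly $d \geq 2$. Assertion (3) is immediate: by Remark~\ref{Cmin.rem} an even-length element $\g$ satisfies $\deg(\g^{n}) = (\deg\g)^{|n|}$, so $\langle\g\rangle \cap \AutA{2}_{k}$ is finite for every $k$; discreteness being a conjugacy invariant, the same holds for $\f$.

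The hard part will be (4), and to a lesser extent the scheme-length claim in (2): both genuinely require the full Bass--Serre machinery rather than the elementary length bookkeeping used above. For (4) one identifies the centralizer of a hyperbolic tree isometry with the stabilizer of its axis; translations along the axis contribute a copy of $\ZZ$, while the finite part $\ZZ/d$ records the symmetry coming from the length-$d$ fixed-point data, yielding $\ZZ \rtimes \ZZ/d$. Since reconstructing the arguments of \name{Lamy} and \name{Friedland--Milnor} in full would essentially reproduce those papers, the realistic proof here consists of the reduction to even length via Lemmas~\ref{lengthone.lem}, \ref{lem1} and \ref{lem2}, the short internal argument for (3), and citations to \cite{FrMi1989Dynamical-properti} and \cite{La2001Lalternative-de-Ti} for the remaining assertions.
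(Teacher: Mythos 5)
Your proposal takes essentially the same route as the paper: there the theorem is explicitly collected \emph{without proofs}, with assertions (1), (2) and (4) cited to \cite{FrMi1989Dynamical-properti} and \cite{La2001Lalternative-de-Ti}, and assertion (3) obtained exactly as you do, from Remark~\ref{Cmin.rem} after reducing to an even-length conjugate via Lemmas~\ref{lengthone.lem} and \ref{lem2} and using conjugacy invariance of discreteness. Your extra sketches (the fixed-point equation $x=y$, $p(y)=(1+\delta)y$ for a single H\'enon factor, and the Bass--Serre heuristics for (4)) go beyond what the paper records but do not replace the citations, which you correctly retain for the full strength of (1), (2) and (4).
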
 

\begin{proof}[Proof of Theorem~\ref{constr.thm}]
(\ref{Locally-finite-elements-are-closed}) If $\g$ is locally finite of degree $\leq k$, then $\deg \g^{m}\leq k$ for all $m\in\ZZ$ (Lemma~\ref{lengthone.lem}), and so $(\g^{*})^{m}x, (\g^{*})^{m}y \in \kk[x,y]_{\leq k}$ for all $m\in\ZZ$. This implies that 
$$
\AutlfA{2}\cap \AutA{2}_{k} = \{\g\in\AutA{2}_{k}\mid
\dim\Span\{(\g^{*})^{m}x,(\g^{*})^{m}y\mid m\in\ZZ\}\leq \textstyle{\binom{k+1}{2}}\}.
$$
Hence, $\AutlfA{2}\cap \AutA{2}_{k}$ is closed in $\AutA{2}_{k}$ for all $k$. This proves (\ref{Locally-finite-elements-are-closed}). 

(\ref{Unipotent-elements-are-closed}) If $\u$ is unipotent of degree $\leq k$, then a similar argument as in (\ref{Locally-finite-elements-are-closed}) shows that $(\u^{*}-\id)^{N}x = (\u^{*}-\id)^{N}y=0$ for $N\geq \binom{k+1}{2}$. Hence, $\AutUA{2} \cap \AutA{2}_{k}$ is closed in $\AutA{2}_{k}$, proving (2).

(\ref{Conjugacy-classes-are-weakly-constructible}) follows from Corollary~\ref{ccwconst.cor}, and (\ref{The-semisimple-part-of-a-locally-finite-element-belongs-to-the-closure-of-its-conjugacy-class}) from Proposition~\ref{triang.prop}(d) since every locally finite element in $\AutA{2}$ is triangularizable (Lemma~\ref{lengthone.lem}).

(\ref{An-element-is-semisimple-if-and-only-if-its-conjugacy-class-is-closed}) is a consequence of (\ref{The-semisimple-part-of-a-locally-finite-element-belongs-to-the-closure-of-its-conjugacy-class}) and Corollary~\ref{ccwconst.cor}  for locally finite elements. For the general case we use Theorem~\ref{henon.thm}(2) which shows that the conjugacy class of a non-locally finite element is not closed.

(\ref{The-closure-of-the-conjugacy-class-of-a-unipotent-element}) is a consequence of (\ref{Unipotent-elements-are-closed}) and Proposition~\ref{triangclosure.prop}, because every unipotent element is conjugate to an element of $\Ju(2)$.

(\ref{Weakly-closed-normal-subgroups-of-Aut(A2)}) is Proposition~\ref{simple.prop}.
\end{proof}

\ps
\subsection{The closure of \texorpdfstring{$\Aut(\Atwo)$ in $\End ( \Atwo)$ }{Aut(A2) in End(A2) }}
We have seen above that $\Aut(X)$ is closed in the dominant endomorphisms $\Dom(X)$ and that $\Dom(X)$ is open in all endomorphisms $\End(X)$. So an interesting problem is to determine the closure of $\Aut(X)$. In case of affine 2-space $\Atwo$ we have the following  answer. Recall that a regular function $v\in\OOO(\An)$ is called a \itind{variable} if there exist $v_{2},\ldots,v_{n} \in\OOO(\An)$ such that $\OOO(\An)=\CC[v,v_{2},\ldots,v_{n}]$.

\begin{definition}
Denote by $E \subseteq \End(\Atwo)$ the subset of endomorphisms $f=(f_{1},f_{2})$ satisfying the following two conditions:
\be
\item The image $f(\Atwo) \subseteq \Atwo$ is contained in a line, or, equivalently, $\CC[f_{1},f_{2}]$ is a polynomial ring in one variable.
\item There exist polynomials $p_{1},p_{2}\in\CC[t]$ and a variable $v\in\OOO(\Atwo)$ such that $f_{i}=p_{i}(v)$.
\ee
\end{definition}

\begin{theorem}  \label{closure-of-Aut(Atwo).thm}
We have  $\overline{\Aut(\Atwo)}=\Aut(\Atwo)\cup E$. More precisely, 
$$
\overline{\Aut(\Atwo)_{k}} = \Aut(\Atwo)_{k}\cup E_{k} \text{ \ for all } k
$$
where $E_{k}:=E\cap\End(\Atwo)_{k}$.
\end{theorem}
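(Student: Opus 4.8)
The plan is to prove the graded statement $\overline{\Aut(\Atwo)_{k}} = \Aut(\Atwo)_{k}\cup E_{k}$ for every $k$; the unrefined equality then follows by taking the union over $k$. Throughout I work inside the finite-dimensional affine space $\End(\Atwo)_{k}\subseteq \End(\Atwo)=\kk[x,y]^{2}$, in which $\Aut(\Atwo)_{k}$ is a locally closed subset (Theorem~\ref{AutX-locally-closed-in-EndX.thm}), so that its ordinary Zariski closure is what must be computed. Two reductions set the stage. First, by Proposition~\ref{field-extensions-for-morphisms.prop} together with Proposition~\ref{fieldextension.prop} I may extend the base field and assume $\kk$ uncountable, so that the closure can be probed along curves. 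Second, both $\Aut(\Atwo)$ and $E$ are stable under left and right translation by $\Aut(\Atwo)$: these are isomorphisms of $\End(\Atwo)$ by Lemma~\ref{tautological.lem}, and they preserve $E$ because composing a map $(p_{1}(v),p_{2}(v))$ with an automorphism on either side keeps it of that same shape. Hence $\overline{\Aut(\Atwo)}$ is translation-stable too, and I may normalise boundary points by automorphisms, at the cost of having to track degrees.

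For the inclusion $\overline{\Aut(\Atwo)_{k}}\subseteq \Aut(\Atwo)_{k}\cup E_{k}$ the first tool is the jacobian. The ind-morphism $\jac\colon \End(\Atwo)\to\kk[x,y]$ takes values on $\Aut(\Atwo)_{k}$ in the constants $\kk^{*}$; since the closure of $\kk^{*}$ inside the finite-dimensional target is all of $\kk$, every $f\in\overline{\Aut(\Atwo)_{k}}$ has $\jac f\in\kk$ constant. If $\jac f\neq 0$, then $f$ is \'etale, and as $\Aut(\Atwo)$ is closed in $\Dom(\Atwo)\supseteq\Et(\Atwo)$ (Theorem~\ref{AutX-locally-closed-in-EndX.thm}) we conclude $f\in\Aut(\Atwo)_{k}$. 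The real content is the case $\jac f=0$, where $f_{1},f_{2}$ are algebraically dependent and I must show $f\in E_{k}$.

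Here I would pass to a one-parameter degeneration: a smooth curve $\gamma\colon \tilde C\to\overline{\Aut(\Atwo)_{k}}$ with $\gamma(o)=f$ and $\gamma(\tilde C\setminus\{o\})\subseteq\Aut(\Atwo)_{k}$, i.e.\ an automorphism $g$ of $\Atwo$ over the local field $K=\kk(\tilde C)$ with coefficients regular at $o$ and reduction $f$. The degree formula $\deg g^{-1}\le(\deg g)^{n-1}=\deg g\le k$ (Section~\ref{degree-formula.subsec}) bounds $g^{-1}$ as well, which is the mechanism forcing rigidity of the limit. Two complementary tools then pin down $f$: each coordinate $g_{j}$ is a variable of $\kk[x,y]$, so in the limit $f_{j}$ lies in the closure of the set of variables, which by Corollary~\ref{closure-of-variables.cor} equals $\{p(v)\mid p\in\kk[t],\,v\text{ a variable}\}$, yielding the form in condition~(2); and the amalgamated product structure $\Aut(\Atwo)=\Aff\star_{\BBB}\JJJ$ (Section~\ref{amalgam.subsec}) lets me write $g$ over a generic part of $\tilde C$ as $\mu_{w}$ of factors in $\Aff$ and $\JJJ$ of bounded degree, whose limits lie in $\overline{\Aff}$ and $\overline{\JJJ}$. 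Analysing the rightmost degenerating factor (an affine map of rank $\le 1$, or a triangular map with vanishing leading coefficient) and using translation-invariance of $E$ should force the image of $f$ into a line and make a single variable $v$ serve both coordinates. I expect this identification to be the main obstacle: showing the collapsed image is a \emph{line} rather than a more general curve (ruling out, e.g., a cuspidal image such as $(v,v^{2})$, which satisfies~(2) but not~(1)), and that $f_{1},f_{2}$ factor through a common variable, will require the fine degree/length bookkeeping of the amalgamated structure.

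For the reverse inclusion $E_{k}\subseteq\overline{\Aut(\Atwo)_{k}}$ I argue explicitly. Given $f\in E_{k}$, left translation by an affine automorphism carries the line containing its image to $\{y=0\}$ without changing degree, so I may assume $f=(q(v),0)$ with $v$ a variable and $q\in\kk[t]$. Choosing a companion $w$ with $\kk[v,w]=\kk[x,y]$ of degree $\le\deg v$ (available by the Euclidean-type reduction on leading forms in the amalgamated product), consider
\[
g_{\eps}:=\bigl(q(v)+\eps\,w,\ \eps\,v\bigr)\in\End(\Atwo),\qquad \eps\in\kk.
\]
For $\eps\neq 0$ this is an automorphism: setting $u=q(v)+\eps w$ and $s=\eps v$ one recovers $v=s/\eps$ and $w=(u-q(s/\eps))/\eps$ polynomially, so $g_{\eps}$ is the composition of $(v,w)$ with the explicit automorphism $(q(x)+\eps y,\,\eps x)$. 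One checks $\deg g_{\eps}\le k$ (the companion choice creates no new degree) and $\lim_{\eps\to 0}g_{\eps}=(q(v),0)=f$, giving $f\in\overline{\Aut(\Atwo)_{k}}$; constant maps (image a point) are covered by the same recipe with $q$ constant and degree $1$. Combining the two inclusions gives the graded equality, and hence $\overline{\Aut(\Atwo)}=\Aut(\Atwo)\cup E$.
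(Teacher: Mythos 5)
Your proof of the inclusion $E_{k}\subseteq\overline{\Aut(\Atwo)_{k}}$ is correct and genuinely different from the paper's: given $f=(q(v),0)$ you exhibit the explicit one-parameter family $g_{\eps}=(q(v)+\eps w,\,\eps v)$, with the companion variable $w$, $\deg w\leq\deg v$, supplied by Proposition~\ref{AMS.prop}(4), whereas the paper instead descends by induction on the degree, using Proposition~\ref{AMS.prop}(2) to find an automorphism $g$ of degree $d>1$ with $\deg\bigl(g\circ(p_{1}(v),p_{2}(v))\bigr)=k/d$. Your degree bookkeeping checks out, so this half is a nice direct alternative. Your jacobian step is also fine: $\jac$ is constant on $\overline{\Aut(\Atwo)_{k}}$, and a limit with nonzero jacobian is \'etale, hence dominant, hence an automorphism since $\Aut(\Atwo)$ is closed in $\Dom(\Atwo)$ (Theorem~\ref{AutX-locally-closed-in-EndX.thm}); this is essentially the paper's implicit reduction to $\dim f(\Atwo)\leq 1$.

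The hard inclusion, however, is only a plan, and the two points you flag as obstacles are exactly the theorem's content -- and one of them is misstated. First, ``line'' here means an embedded copy of $\Aone$ (the zero set of a variable), not an affine-linear line; this is forced by the stated equivalence with ``$\kk[f_{1},f_{2}]$ is a polynomial ring in one variable''. Your example $(v,v^{2})$ is \emph{not} to be ruled out: $(x,x^{2})=\lim_{\eps\to 0}(x,\eps y+x^{2})$ lies in $\overline{\Aut(\Atwo)}$ and in $E$. What must be excluded are non-line polynomial images such as the cusp, the image of $(v^{2},v^{3})$, which satisfies your condition~(2) but not~(1); so the amalgamated-factor analysis you sketch would be aiming at a false target as you have set it up, and in any case it is not carried out. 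The paper settles precisely this point by parametrizing the degeneration by a factorial curve $C$ and invoking Sathaye's theorem over a DVR -- the special member of a family of variables is a line, and $\deg f_{2}$ divides $\deg f_{1}$ -- and then runs a degree-lowering induction $f_{1}\mapsto f_{1}-\alpha f_{2}^{d}$ on the whole family, with the delicate case $\tilde f_{1}=0$ handled by dividing the first component by a power of the uniformizer of $\mm_{c_{0}}$ (this is where factoriality of $C$ enters); that induction is what simultaneously produces the common variable $v$, and it has no counterpart in your sketch. Second, your appeal to Corollary~\ref{closure-of-variables.cor} is circular: in the paper that corollary is deduced \emph{from} this theorem. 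It can be repaired by the independent proof via Rentschler's theorem (Proposition~\ref{rentschler.prop}) together with the closedness of $\LNV(\Atwo)$ (Proposition~\ref{LND-are-closed.prop}), but even then it only gives $f_{1}=p_{1}(v_{1})$ and $f_{2}=p_{2}(v_{2})$ with a priori different variables, and you concede the common-variable step yourself -- so the forward inclusion remains unproved.
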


\begin{remark}  \label{E-is-closed.rem}
Since $\Aut(\Atwo)$ is open in its closure (Theorem~\ref{AutX-locally-closed-in-EndX.thm}), the theorem implies that $E$ is closed in $\End(\Atwo)$. 

The second assertion says that the closure $\overline{\Aut(\Atwo)}$ coincides with the \itind{weak closure}, see Section~\ref{top.subsec}.
\end{remark}

For the proof of the theorem above we will use some consequences of the famous Embedding Theorem of \name{Abhyankar-Moh-Suzuki} (see \cite{AbMo1975Embeddings-of-the-}, \cite{Su1974Proprietes-topolog}) and of the amalgamated product structure of $\Aut(\Atwo)$. They are given in the following proposition.

\begin{proposition}  \label{AMS.prop}
Let $p,q \in \CC[t]$ such that $\CC[t]=\CC[p,q]$, and set  $k:= \deg(p,q):=\max\{\deg p,\deg q\}$. Then
\be
\item $\deg p$ divides $\deg q$ or $\deg q$ divides $\deg p$.
\item If $k>1$, then there is an automorphism $g$ of degree $d>1$ such that $d|k$ and $\deg (g\circ(p,q)) = \frac{k}{d}$.
\item There is a variable $v\in\CC[x,y]$ of degree $\leq k$ such that $v(p,q)=0$.
\item For every variable $v\in\CC[x,y]$ of degree $\geq 2$ there is a variable $w\in\CC[x,y]$ such that $\CC[x,y]=\CC[v,w]$ and $\deg w<\deg v$.
\ee
\end{proposition}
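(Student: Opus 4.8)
The plan is to take the \name{Abhyankar--Moh--Suzuki} Embedding Theorem (together with its divisibility part) as input and to derive the four assertions as structural and effective corollaries. Recall that $\CC[t]=\CC[p,q]$ means precisely that $(p,q)\colon\Aone\to\Atwo$ is a closed immersion, with image a smooth curve $C\subseteq\Atwo$ isomorphic to $\Aone$; the content of AMS is that $C$ is rectifiable, i.e. there is $\Psi\in\Aut(\Atwo)$ with $\Psi\circ(p,q)=(t,0)$. Throughout I would use the degree lemma coming from the amalgamated product structure (Section~\ref{amalgam.subsec}): for a reduced expression $\f=\f_1\cdots\f_r$ one has $\deg\f=\prod_i\deg\f_i$, and (a standard consequence of the same structure) for any $\f=(f_1,f_2)\in\Aut(\Atwo)$ with $\deg\f>1$ the smaller of $\deg f_1,\deg f_2$ divides the larger, the larger equals $\deg\f$, and the leading form of the higher-degree component is a scalar multiple of a power of the leading form of the other.

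Assertion (1) is the divisibility part of AMS, and I would cite it directly, since it is the deep input. Assertion (3) then follows at once: let $v$ be the irreducible defining equation of $C=\overline{(p,q)(\Aone)}$. Then $v(p,q)=0$ by construction, $\deg v=\deg C\le k$ since a generic line $ax+by+c$ meets $C$ in at most $\deg(ap+bq+c)\le k$ points, and $v$ is a variable because $C$ is rectifiable by AMS.

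For assertion (4) I would argue by descent on the degree of a complement. If $v$ is a variable of degree $m:=\deg v\ge2$, choose any $w_0$ with $\CC[v,w_0]=\CC[x,y]$ and set $w:=w_0$. As long as $\deg w\ge m$, the degree lemma applied to the automorphism $(v,w)$ gives $m\mid\deg w$ and shows that the leading form of $w$ is $\lambda$ times the $(\deg w/m)$-th power of the leading form of $v$; replacing $w$ by $w-\lambda v^{\deg w/m}$ strictly lowers $\deg w$ while preserving $\CC[v,w]=\CC[x,y]$. This terminates with a complement $w$ satisfying $1\le\deg w<\deg v$.

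Finally, for assertion (2) I would make the AMS reduction effective. Assume $k>1$ and, using (1), suppose first $m:=\deg p<\deg q=k$, so $m\mid k$ and $d:=k/m>1$ with $d\mid k$. Peeling off leading terms exactly as in (4) produces a triangular automorphism $g=(x,\,y-P(x))$ with $\deg P=k/m=d$ such that $q-P(p)$ has degree $<m$; hence $\deg(g\circ(p,q))=m=k/d$, as required. If instead $\deg p=\deg q=k$, their leading forms are proportional, so an affine map of degree $1$ lowers one of the two degrees and reduces matters to the previous case without changing the degree of the subsequent reducing automorphism; composing the two yields the desired $g$. The main obstacle is the bookkeeping in this last step: one must verify that the composite $g$ really has some degree $d>1$ with $d\mid k$ and that $\deg(g\circ(p,q))$ drops to exactly $k/d$ rather than merely below $k$ — this is where the divisibility (1) and the leading-form part of the degree lemma get used repeatedly.
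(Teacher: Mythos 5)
Your proof is correct, and for parts (1), (2) and (4) it follows essentially the same route as the paper: (1) is quoted directly from \name{Abhyankar-Moh-Suzuki}; (2) is the same leading-term peeling that produces a triangular $g=(x,y-r(x))$ of degree $d=k/\deg p>1$ dividing $k$, with a preliminary linear map in the equal-degree case (where proportionality of the leading terms is automatic, since $p,q$ are univariate); and (4) is the descent on $\deg w$ that the paper compresses into the one-line remark that it follows from the amalgamated product structure together with multiplicativity of degrees in reduced expressions --- your explicit appeal to the classical leading-form lemma (for $(f_1,f_2)\in\Aut(\Atwo)$ of degree $>1$, the smaller component degree divides the larger and $\overline{f_1}=c\,\overline{f_2}^{\,\deg f_1/\deg f_2}$ when $\deg f_1>\deg f_2$) is exactly the right instantiation, and your termination argument (the complement can never become constant) is sound. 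The one genuine divergence is (3): the paper deduces it from (2) by induction, straightening $(p,q)$ step by step into an automorphism $(v,w)$ of degree $\leq k$ with $(v,w)(p,q)=(0,b)$, $b$ linear, so that only the divisibility form of AMS is ever used and rectifiability, with the degree bound $\deg v\leq k$, comes out as a byproduct. You instead take the full Embedding Theorem (rectifiability of $C=\overline{(p,q)(\Aone)}$) as an input, identify $v$ with the irreducible equation of the closed curve $C$, and bound $\deg v=\deg C\leq k$ by intersecting with a generic line; this is valid, modulo the small remark that a generic affine line meets the projective closure of $C$ transversally and avoids its finitely many points at infinity, so $\deg v = \#(C\cap L)\leq \deg(ap+bq+c)\leq k$, and that $y\circ\Psi$ generates $I(C)$, hence is a scalar multiple of $v$. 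The trade-off is that your argument for (3) is shorter and more geometric, but it makes the proposition rest on a stronger form of AMS than the paper's treatment, which is self-contained given the divisibility statement alone.
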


\begin{proof}
(1) This is one form of the main theorem of \name{Abhyankar-Moh-Suzuki}, see \cite[1.1~Main Theorem]{AbMo1975Embeddings-of-the-}. An equivalent form is given in \cite[Th\'eor\`eme~5]{Su1974Proprietes-topolog}.
\par\smallskip
(2) Assume first that $k=\deg q > \deg p$. Then $p\neq 0$, because $\CC[p,q]=\CC[t]$, and by (1), $\deg p$ divides $k$. It follows that  $\deg (q - \alpha p^{k'})<\deg q$ for $k':=\frac{k}{\deg p}$ and a suitable $\alpha \in \Cst$. Clearly, $(p,q'):= (p,q - \alpha p^{k'}) = (x, y - \alpha x^{k'})\circ(p,q)$ still satisfies the assumptions. Thus we can proceed in the same way to find a polynomial $r(x)$ of degree $k'$ such that $(x,y-r(x))\circ(p,q) = (p,q - r(p))$ has degree equal to $\deg p$. This proves the claim for $\deg p\neq \deg q$. If $\deg p = \deg q$ we can first apply a linear automorphism to get a pair $(p,q)$ such that $\deg p \neq \deg q$.
\par\smallskip
(3) It follows from (2), by induction, that there is an automorphism $(v,w)$ of degree $\leq k$ such that $(v,w)(p,q) = (0,b)$ with a linear $b = \alpha x +\beta y +\gamma$. In particular, $v(p,q)=0$.
\par\smallskip
(4) This follows immediately from the structure of $\Aut(\Atwo)$ as an amalgamated product together with the fact that the degree of a reduced expression is equal to the product of the degrees of the factors. 
\end{proof}

\begin{proof}[Proof of Theorem~\ref{closure-of-Aut(Atwo).thm}]
(a) We first show that $E_{k}$ is contained in the closure of $\Aut(\Atwo)_{k}$. This clearly holds for $k=1$. If  $f=(p_{1}(v),p_{2}(v))\in E$ has degree $k>1$, then, by Proposition~\ref{AMS.prop}(2), there is an automorphism $g$ of degree $d>1$ such that $\deg(g\circ (p(v),q(v))=k'$ where $k' :=\frac{k}{d}<k$. By induction, we get $g\circ(p,q) \in \overline{\Aut(\Atwo)_{k'}}$ which implies that  $(p,q) \in g^{-1}\circ\overline{\Aut(\Atwo)_{k'}} \subseteq \overline{\Aut(\Atwo)_{k}}$.
\par\smallskip
(b) Next we show that $\overline{\Aut(\Atwo)_{k}} \subseteq \Aut(\Atwo)_{k}\cup E_{k}$. Choose an $f\in \overline{\Aut(\Atwo)_{k}}$ which is not an automorphism. If $f$ is a constant map, we are done. Otherwise, $\dim f(\Atwo)=1$. Since $\Aut(\Atwo)$ is open in $\overline{\Aut(\Atwo)}$ we can find an irreducible affine and factorial curve $C$, a point $c_{0}\in C$ and a morphism $\phi\colon C \to \overline{\Aut(\Atwo)}$ such that $\phi(c_{0})=f$ and $\phi(c) \in \Aut(\Atwo)$ for all $c\neq c_{0}$. Thus we get a family of endomorphisms $\phi=(\phi_{c})_{c\in C}$ parametrized by the curve $C$. Now we use   a famous result of \name{Sathaye}'s to conclude that if $\dim f(\Atwo) =1$, then $ f(\Atwo)\subseteq \Atwo$ is a line, see \cite[Remark~2.1]{Sa1983Polynomial-ring-in}. Now we claim that $f =(p_{1}(v),p_{2}(v)$ with polynomials $p_{1},p_{2}\in \CC[t]$ and a variable $v$. 

If $\deg f=1$, then the claim is obvious. So let us assume that $\deg f = k \geq 2$ and that $\deg f_{1}=k \geq \deg f_{2}$. If $f_{2}\neq 0$, then, by \cite[Theorem~2]{Sa1983Polynomial-ring-in}, we see that $\deg f_{1}$ is a multiple of $\deg f_{2}$, $k = d\cdot \deg f_{2}$,  and so $\tilde f_{1}:=f_{1} - \alpha f_{2}^{d}$ has lower degree than $f_{1}$ for a suitable $\alpha\in\Cst$. Replacing the family $(\phi)_{c\in C}$ by $\tilde\phi:=(g\circ\phi_{c})_{c\in C}$ where $g = (x-\alpha y^{d},y)\in\Aut(\Atwo)$ we still have that $\tilde\phi_{c}$ is an automorphism for $c\neq c_{0}$ and that $\tilde\phi_{c_{0}}=(\tilde f_{1},f_{2})$ has a one-dimensional image. If $\tilde\phi_{c_{0}}$ belongs to $E$, then the same holds for $\phi_{c_{0}}$. In fact, if $\tilde f_{1}=p_{1}(v)$ and $f_{2}=p_{2}(v)$, then $f_{1}= \tilde f_{1}+\alpha f_{2}^{d}=p_{1}(v) +\alpha p_{2}(v)^{d}$. Thus we can proceed to reach a situation where $\deg \tilde\phi_{c_{0}}$ is strictly less than $\deg\phi_{c_{0}}$ and then conclude by induction, except in case $\deg f_{1}=\deg f_{2}$ and $\tilde f_{1}=0$. 

In this case, we use the assumption that $C$ is a factorial curve which implies that the maximal ideal $\mm_{c_{0}}$ is principal, $\mm_{c}=(s) \subseteq \OOO(C)$. The family $\tilde\phi$ has the form $\tilde\phi = (\tilde F_{1},F_{2})$ where $\tilde F_{1}, F_{2}\in\OOO(C)[x,y]$, and so $\tilde f_{1}=0$ implies that $\tilde F_{1}$ is divisible by $s$. We can write $\tilde F_{1}=s^{\ell} \cdot G_{1}$ such that $g_{1}:=\tilde G_{1}|_{c=c_{0}} \neq 0$, and we replace the family $\tilde\phi$ by the family $\psi := (G_{1},F_{2})$.

Now there are two possibilities. If $\psi_{c_{0}}$ is an automorphism, then $f_{2}$ is a variable, and we are done. If not, then the image of $\psi_{c_{0}}$ is a line and we can proceed as above. Again we note that if $\psi_{c_{0}} \in E$, then $\tilde\phi_{c_{0}}\in E$, because $f_{2}=p_{2}(v)$, hence $\phi_{c_{0}} \in E$ as we have seen above.
\end{proof}

Denote by $C \subseteq \CC[x,y]$ the set of variables. The following result can be found in \cite[Theorem~4]{Fu2002On-the-length-of-p}.

\begin{corollary}\label{closure-of-variables.cor}
The closure of the set of variables $C \subset \CC[x,y]$ is given by
$$
\overline{C}=\{p(v)\mid p\in\CC[t], v\in C\}.
$$
\end{corollary}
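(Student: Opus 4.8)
The plan is to realize the set of variables $C$ as the image of $\Aut(\Atwo)$ under the first coordinate projection of $\End(\Atwo)$, and then to transport the structure theorem for $\overline{\Aut(\Atwo)}$ through this projection. Recall that $\End(\Atwo)=\CC[x,y]^{2}$, with $f=(f_{1},f_{2})$, so that the first projection $\pr_{1}\colon \End(\Atwo)\to\CC[x,y]$, $f\mapsto f_{1}$, is a morphism of ind-varieties and in particular continuous. Since $f=(f_{1},f_{2})$ is an automorphism exactly when $\CC[f_{1},f_{2}]=\CC[x,y]$, the component $f_{1}$ of any automorphism is a variable, and conversely every variable completes by definition to an automorphism $(v,w)$; hence $\pr_{1}(\Aut(\Atwo))=C$. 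Write $D:=\{p(v)\mid p\in\CC[t],\ v\in C\}$ for the right-hand side of the corollary, so that $C\subseteq D$ (take $p=t$).

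First I would record two set-theoretic identities relating $D$ to the set $E$ of Theorem~\ref{closure-of-Aut(Atwo).thm}. On the one hand, if $f\in E$ then $f_{1}=p_{1}(v)$ for a variable $v$, so $\pr_{1}(E)\subseteq D$; on the other hand, for $f\in D$ the endomorphism $(f,0)$ has image inside the line $\{y=0\}$ and satisfies condition (2) of the definition of $E$ (with $p_{2}=0$), hence $(f,0)\in E$ and $f=\pr_{1}(f,0)\in\pr_{1}(E)$. This gives $\pr_{1}(E)=D$. The very same computation shows that the linear morphism $s\colon\CC[x,y]\to\End(\Atwo)$, $f\mapsto(f,0)$, satisfies $s^{-1}(E)=D$.

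With these identities the two inclusions follow quickly. For $D\subseteq\overline{C}$ I would use that $E\subseteq\Aut(\Atwo)\cup E=\overline{\Aut(\Atwo)}$ by Theorem~\ref{closure-of-Aut(Atwo).thm}, together with the continuity of $\pr_{1}$, to get
\[
D=\pr_{1}(E)\subseteq\pr_{1}\bigl(\overline{\Aut(\Atwo)}\bigr)\subseteq\overline{\pr_{1}(\Aut(\Atwo))}=\overline{C}.
\]
For the reverse inclusion $\overline{C}\subseteq D$ I would show that $D$ is closed: by Remark~\ref{E-is-closed.rem} the set $E$ is closed in $\End(\Atwo)$, and since $s$ is continuous, $D=s^{-1}(E)$ is closed in $\CC[x,y]$; as $C\subseteq D$ this forces $\overline{C}\subseteq D$. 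Combining the two inclusions yields $\overline{C}=D$, which is the assertion.

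The structure theorem is doing the real work here, so the only genuine care needed — and the step I would double-check most carefully — is the matching of the two defining conditions of $E$ against the description of $D$ in the identities $\pr_{1}(E)=D$ and $s^{-1}(E)=D$, in particular verifying the ``image contained in a line'' condition for the auxiliary endomorphisms $(f,0)$ and handling the degenerate case where $f$ (hence $v$) is constant. Everything else is formal manipulation with a continuous projection and its linear section.
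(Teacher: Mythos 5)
Your proof is correct, and while it shares the paper's backbone---transporting Theorem~\ref{closure-of-Aut(Atwo).thm} and the closedness of $E$ (Remark~\ref{E-is-closed.rem}) through the first projection via $\pr_{1}(E)=D$---you execute both halves by different, softer mechanisms, so a comparison is worthwhile. For $D\subseteq\overline{C}$ the paper never invokes the inclusion $E\subseteq\overline{\Aut(\Atwo)}$: it writes down the explicit curve $t\mapsto tw+p(v)$ of variables, where $(v,w)\in\Aut(\Atwo)$ and $\deg w\leq\deg v$ (Proposition~\ref{AMS.prop}), degenerating to $p(v)$ as $t\to 0$; this curve stays inside $\CC[x,y]_{\leq k}$ and so proves the stronger filtration-level statement $W_{k}\subseteq\overline{C_{k}}$, identifying $\overline{C}$ with the weak closure of $C$. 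Your formal chain $D=\pr_{1}(E)\subseteq\pr_{1}\bigl(\overline{\Aut(\Atwo)}\bigr)\subseteq\overline{\pr_{1}(\Aut(\Atwo))}=\overline{C}$ is valid and shorter, and since the theorem even gives $E_{k}\subseteq\overline{\Aut(\Atwo)_{k}}$ it would recover the same degree control, but it leans on the hard half of the theorem where the paper's step is self-contained. For closedness, the paper shows $W_{k}=\pr_{1}(E_{k})$ is closed by a small GIT argument: on $\CC[x,y]_{\leq k}^{2}$ the projection $\pr_{1}$ is the affine quotient by the $\Cst$-scaling of the second summand, and $E_{k}$ is closed and $\Cst$-stable, so its image under the quotient is closed. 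Your pullback $D=s^{-1}(E)$ along the linear section $s(f)=(f,0)$, justified by checking that $(f,0)\in E$ if and only if $f\in D$, replaces this with pure continuity and is a genuine simplification; note that it works for the same underlying reason, namely that $E$ is saturated under scaling of the second component, which is exactly what makes the zero-section detect all of $\pr_{1}(E)$. One small slip in your closing caveat: in the degenerate case it is $p$, not $v$, that is constant (a variable is never constant, since $\CC[v,w]=\CC[x,y]$ forces $v\notin\CC$); then $(c,0)\in E$ because its image is a point, hence contained in a line---the same reading of the definition that the paper uses when it asserts $(p(v),0)\in E_{k}$.
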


\begin{proof}
Define $W:= \{p(v)\mid p\in\CC[t], v\in C\}$, and set $W_{k}:=W \cap \CC[x,y]_{\leq k}$. Similarly, we put  $C_{k}:=C \cap \CC[x,y]_{\leq k}$.
\ps
(a) We first note that $W_{k}\subseteq \overline{V_{k}}$. In fact, if $p(v)\in W_{k}$, then there is a variable $w$ of degree $\leq \deg v$ such that $(v,w) \in\Aut(\Atwo)$. Hence, for all $t\neq 0$, $t w + p(v)$ is a variable of degree $\leq k$, and the claim follows.

\ps
(b) It remains to show that $W$ is closed in $\CC[x,y]$. The set $E \subseteq \End(\Atwo)=\CC[x,y]^{2}$ defined above is closed (Remark~\ref{E-is-closed.rem}). Define $E_{k}:=E \cap \End(\Atwo)_{k}$. Since the first projection $\pr_{1}\colon \CC[x,y]_{\leq k}\oplus \CC[x,y]_{\leq k} \to  \CC[x,y]_{\leq k}$ is the affine quotient under the action of $\Cst$ by scalar multiplication on the second summand, and since $E_{k}$ is closed and stable under this action we see that $\pr_{1}(E_{k}) \subseteq \CC[x,y]_{\leq k}$ is closed. By construction, $\pr_{1}(E_{k})=W_{k}$. In fact, for any $p(v)\in W_{k}$, we have $(p(v),0)\in E_{k}$. Thus, $W=\bigcup_{k}W_{k}$ is closed in $\CC[x,y]$, and the claim follows.
\end{proof}

\begin{remark}
Here is a short proof of Corollary~\ref{closure-of-variables.cor}. As above we define  $W:= \{ p(v) \mid p \in \CC [t], v \in V \}$. The inclusion $W \subseteq \overline{V}$ is clear, so let us prove that $W$ is closed. By \cite[Corollary 4.7]{Fr2006Algebraic-theory-o}, we know that a polynomial $w \in \CC [x,y]$ belongs to $W$ if and only if the \itind{jacobian derivation} $q \mapsto \jac (w,q)$ of $\CC [x,y]$ is locally nilpotent. This is a consequence of a result of \name{Rentschler}  (\cite{Re1968Operations-du-grou}) asserting that any locally nilpotent derivation of $\CC[x,y]$ is conjugate (by an automorphism of $\CC[x,y]$) to a triangular derivation $p(x) \ddy$, see Proposition~\ref{rentschler.prop}. Let $\phi  \colon \CC [x,y]  \to \VEC (\AA^2)$ be the morphism sending $p \in \CC [x,y]$ to the derivation $q \mapsto \jac (p,q)$. We have $W= \phi ^{-1}( \LNV (\AA^2) )$.  By Proposition~\ref{LND-are-closed.prop}, $\LNV (\AA^2)$ is closed in $\VEC (\AA^2)$, and so $W$ is closed in $\CC [x,y]$.
\end{remark}

\pmed
\section{Some Special Results about \texorpdfstring{$\Aut(\AA^3)$}{Aut(An)}} 
\label{section: Some special results on Aut(A^3)}
\subsection{A braid group action on \texorpdfstring{$\AA^{3}$}{A3}}\label{braid.subsec}
The following construction was indicated to us by \name{Daan Krammer}. 
Denote by $F_2:=\langle a,b \rangle$ the free group in two generators $a,b$. Then the automorphism group $\Aut(F_2)$ acts algebraically on $\Hom(F_2,\SLtwo) \simto \SLtwo\times \SLtwo$. This action is faithful, because there exist injective homomorphisms $F_{2}\to \SLtwo$. Since the action commutes with the action of $\SLtwo$ by conjugation it follows that $\Aut (F_2)$ acts on the affine quotient 
\[ 
(\SLtwo\times\SLtwo)\quot\SLtwo \simto \A{3}
\]
where the isomorphism is given by $(A,B)\mapsto (\tr A, \tr B, \tr AB)$. We want to describe the image of $\Aut(F_{2})$ in $\Aut(\AA^{3})$ which 
we denote by $\bG$.

Let us identify an automorphism $\phi \in \Aut (F_2)$ with the pair $(\phi(a),\phi(b))$. The group $\Aut (F_2)$ is generated by the following three automorphisms  \cite{Ne1933Die-Automorphismen}:
\[ 
\alpha=(ab,b),\quad\beta=(a^{-1},b),\quad\gamma=(b,a).
\]
An easy calculation shows that the corresponding automorphisms of $\A{3}$ are given by
\[ 
\balpha =(z,y,y z - x),\quad \bbeta =(x,y,x y - z),\quad \bgamma =(y,x,z),
\]
and that the function $I:=x^{2}+y^{2}+z^{2}-x y z$ is an invariant. Note that the image of $(a^{-1},ab)$ is $(x,z,y)$ and of 
$(a b,b^{-1})$ is $(z,y,x)$ which shows that $S_{3}\subseteq \GL_{3}(\ZZ)$ is included in $\bG$. 

Clearly, the inner automorphisms of $F_{2}$ act trivially on the affine quotient, so that we get an action of the outer automorphisms group $\Out (F_2)$ on $\Athree$. The abelianization map $F_2 \to \ZZ^2$ induces a homomorphism $\Aut (F_2) \to \GL _2(\ZZ)$ which is trivial on inner automorphisms, hence factors through $\Out (F_2)$, and  the induced map is an isomorphism $\Out(F_{2})\simto\GL_{2}(\ZZ)$ (\name{Nielsen} \cite{Ni1924Die-Gruppe-der-dre}, cf. \cite[Corollary N4, p. 169]{MaKaSo1976Combinatorial-grou}). 

The automorphism $(a^{-1},b^{-1}) \in \Aut (F_2)$ acts trivially on $\A{3}$ and its image in $\GL_2(\ZZ)$ is $\left[ \begin{smallmatrix} -1& \phantom{-}0 \\ \phantom{-}0& -1 \end{smallmatrix} \right]$. Thus we finally get an action of $\PGL_2(\ZZ)$ on $\A{3}$.

\begin{proposition} \label{a faithful action of PGL2 on A3.prop}
The action of $\PGL_2(\ZZ)$ on $\A{3}$ is faithful. It has two fixed points, $(0,0,0)$ and $(2,2,2)$, with non-equivalent tangent representations, and the ring of invariants is generated by $I:=x^{2}+y^{2}+z^{2}-x y z$. 
\end{proposition}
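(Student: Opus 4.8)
The three assertions are essentially independent, and the working tools are the explicit formulas $\balpha=(z,y,yz-x)$, $\bbeta=(x,y,xy-z)$, $\bgamma=(y,x,z)$ together with the fact, noted in the excerpt, that all coordinate permutations lie in $\bG$ (so $S_3\subseteq\bG$) and that $I=x^2+y^2+z^2-xyz$ is $\bG$-invariant.

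\emph{Fixed points.} Each generator fixes $(0,0,0)$ and $(2,2,2)$: for instance $\balpha(t,t,t)=(t,t,t^2-t)$, which equals $(t,t,t)$ exactly when $t\in\{0,2\}$, and $\bbeta,\bgamma$ are checked likewise. As $\balpha,\bbeta,\bgamma$ generate $\bG$, both points are fixed by all of $\bG$. Conversely, a common fixed point $(x,y,z)$ is fixed by the permutation subgroup $S_3\subseteq\bG$, which forces $x=y=z=:t$; then invariance under $\balpha$ gives $t^2-t=t$, i.e. $t\in\{0,2\}$. Hence $(0,0,0)$ and $(2,2,2)$ are the only fixed points.

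\emph{Tangent representations.} I would compute the differential of $\balpha$ at the two fixed points:
\[
d\balpha_{(0,0,0)}=\begin{pmatrix}0&0&1\\0&1&0\\-1&0&0\end{pmatrix},\qquad
d\balpha_{(2,2,2)}=\begin{pmatrix}0&0&1\\0&1&0\\-1&2&2\end{pmatrix}.
\]
Since fixing a point makes $g\mapsto dg_{p}$ a representation $\tau_{p}\colon\bG\to\GL(T_{p}\A{3})$, and equivalent representations have equal character, the two tangent representations cannot be equivalent because $\tr\tau_{(0,0,0)}(\balpha)=1\neq 3=\tr\tau_{(2,2,2)}(\balpha)$.

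\emph{Invariants.} The plan is to produce one explicit element with controlled dynamics. Put $\bbeta'=(x,xz-y,z)$, a conjugate of $\bbeta$ by the transposition $(x,z,y)\in\bG$, and set $g:=\bbeta\circ\bbeta'=(x,\,xz-y,\,x^2z-xy-z)$. Then $g$ fixes $x$ and acts $\kk(x)$-linearly on $(y,z)$ by $M(x)=\left(\begin{smallmatrix}-1&x\\-x&x^2-1\end{smallmatrix}\right)\in\SL_2(\kk[x])$, of trace $x^2-2$. Over $\kk(x)$ this is a semisimple element with eigenvalues transcendental over $\kk$ (hence not roots of unity), so the cyclic group it generates is infinite and its field of invariant rational functions is generated by the invariant quadratic form, which is $I-x^2$; thus $\kk(x,y,z)^{\langle g\rangle}=\kk(x,I)$. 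Using the standard lemma that for a dominant morphism $\phi\colon X\to Y$ of irreducible affine varieties with $Y$ normal and irreducible generic fibre one has $\OOO(X)\cap\phi^{*}\kk(Y)=\phi^{*}\OOO(Y)$ — applied to $(x,I)\colon\A{3}\to\A{2}$, whose generic fibre is the affine conic $\{x=a,\ I=c\}$ — I would conclude $\OOO(\A{3})^{\langle g\rangle}=\kk[x,I]$. Conjugating $g$ by $S_3$ gives analogous elements fixing $y$, resp. $z$, so $\OOO(\A{3})^{\bG}\subseteq\kk[x,I]\cap\kk[y,I]\cap\kk[z,I]$. Since $x,y,I$ are algebraically independent over $\kk$ (from $z^2-xyz+(x^2+y^2-I)=0$), the intersection of fields is $\kk(I)$, and the same lemma applied to $I\colon\A{3}\to\A{1}$ (generic fibre the irreducible cubic surface $\{I=c\}$) gives $\kk(I)\cap\OOO(\A{3})=\kk[I]$. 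Hence $\OOO(\A{3})^{\bG}=\kk[I]$.

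\emph{Faithfulness and the main obstacle.} This is where I expect the real difficulty. The tangent representations are of no help: their images are finite (e.g. $d\balpha_{(0,0,0)}$ has order $4$), so faithfulness is a genuinely global statement and cannot be reduced to a fixed point. Moreover the normal-subgroup structure of $\PGL_2(\ZZ)$ does not settle it either — $\PSL_2(\ZZ)\cong\ZZ/2\ast\ZZ/3$ has infinite, infinite-index normal subgroups — so one cannot argue merely that $\bG$ is infinite (which the loxodromic element $g$ already shows). The approach I would take is a ping-pong argument on the cubic surfaces $\{I=c\}$: the Vieta involutions $\bbeta,\bbeta',\bbeta''$ together with $S_3$ realize the free-product-with-amalgamation structure of $\PGL_2(\ZZ)$, and by exhibiting disjoint regions (on a suitable real or $c$-fixed locus) permuted by these generators one shows that every reduced word acts nontrivially, forcing the kernel of $\PGL_2(\ZZ)\to\Aut(\A{3})$ to be trivial. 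This is precisely the non-elementary ``Markov/Vieta'' dynamics; rather than reproving it I would invoke the relevant results (Horowitz's theorem that the kernel of $\Aut(F_2)$ acting on trace functions is $\mathrm{Inn}(F_2)\cdot\langle(a^{-1},b^{-1})\rangle$, the preimage of $\{\pm\mathrm{Id}\}$, together with El-Huti / Cantat–Loray on automorphisms of affine cubic surfaces).
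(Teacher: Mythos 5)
Your treatment of the fixed points, the tangent representations, and the invariant ring is correct and in substance very close to the paper's own proof. In particular, for the invariants you use exactly the paper's pivotal element: your $g=\bbeta\circ\bbeta'=(x,\,xz-y,\,(x^{2}-1)z-xy)$ is the paper's $\ii_{1}\cdot\ii_{2}$, acting $\kk(x)$-linearly on $(y,z)$ with matrix $\bigl(\begin{smallmatrix}-1&x\\-x&x^{2}-1\end{smallmatrix}\bigr)$. Where the paper then passes to the algebraic closures $\Cb{x}$ and $\Cb{z}$, shows the orbits of $\langle\ii_{1}\ii_{2}\rangle$ are dense in the conics $I=\mathrm{const}$, concludes $\Cb{x}[y,z]^{\langle\ii_{1}\ii_{2}\rangle}=\Cb{x}[I]$ and finishes with $\Cb{x}\cap\Cb{z}=\kk$, you instead compute the fixed field $\kk(x,y,z)^{\langle g\rangle}=\kk(x,I)$ and descend to regular invariants; both routes hinge on the same transcendental-eigenvalue observation, and yours is a legitimate variant. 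One local slip: your ``standard lemma'' is false as stated --- dominance plus normality of $Y$ plus irreducible generic fibre do not give $\OOO(X)\cap\phi^{*}\kk(Y)=\phi^{*}\OOO(Y)$. Take $X=\{(u,v,w)\in\A{3}\mid uw=1\}$, $\phi(u,v,w)=(u,v)\in\A{2}$ and $h=1/u$: then $\phi^{*}h=w$ is regular but $h$ is not. You need surjectivity (or an image missing only a set of codimension $\geq 2$); fortunately both $(x,I)\colon\A{3}\to\A{2}$ and $I\colon\A{3}\to\A{1}$ are surjective, so your applications survive. Also the detour through fields is unnecessary: since $x,y,I$ are algebraically independent, $\kk[x,I]\cap\kk[y,I]=\kk[I]$ directly.

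Your non-equivalence argument via characters, $\tr d\balpha_{(0,0,0)}=1\neq 3=\tr d\balpha_{(2,2,2)}$, is fine and slightly slicker than the paper's, which instead observes that the image of $\bG$ at the origin is the finite group $T_{\ZZ}\rtimes S_{3}$ while $d\balpha_{(2,2,2)}$ has infinite order. But this exposes a factual error in your faithfulness discussion: the tangent images are \emph{not} both finite. Your own matrix $d\balpha_{(2,2,2)}$ has trace $3$ and is not the identity, so it cannot have finite order (a finite-order element of $\GL_{3}$ is diagonalizable with root-of-unity eigenvalues, and three roots of unity sum to $3$ only if all equal $1$). So the premise of your remark that ``the tangent representations are of no help'' is wrong: $\tau_{(2,2,2)}$ has infinite image, and its possible faithfulness is a real question you dismissed without examining.

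On faithfulness itself, you correctly identify the crux, and the ping-pong route you sketch --- realize the free-product structure via the three Vieta involutions and show every reduced word acts nontrivially --- is precisely what the paper carries out: Lemma~\ref{free-product.lem} proves by a leading-term induction that $\iii$ is the free product $\langle\ii_{1}\rangle*\langle\ii_{2}\rangle*\langle\ii_{3}\rangle$, and combined with $\PGL_{2}(\ZZ)=\langle I_{1},I_{2},I_{3}\rangle\rtimes S$ and the matching $I_{k}\mapsto\ii_{k}$, $S\simto S_{3}$, this gives faithfulness self-containedly (indeed on every surface $H_{c}$, Proposition~\ref{Aut-Hc.prop}). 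As written, however, your proposal contains neither the ping-pong regions nor a worked-out normal form, so this portion is a plan rather than a proof. The fallback you offer is sound in principle: if the statement you attribute to Horowitz is accurate (the kernel of $\Aut(F_{2})$ acting on the character ring is $\mathrm{Inn}(F_{2})\cdot\langle(a^{-1},b^{-1})\rangle$, the preimage of $\{\pm\mathrm{Id}\}$ in $\GL_{2}(\ZZ)$), then faithfulness of $\PGL_{2}(\ZZ)$ follows in one line; note, though, that the paper cites Horowitz only for the surjectivity statement $\Aut_{I}(\A{3})=\tbG$ and deliberately proves injectivity by its own elementary leading-term argument, which moreover yields the stronger faithfulness on each cubic $H_{c}$.
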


\begin{proof}
(a) The faithfulness is proved in the following proposition where we even show that $\PGL_2(\ZZ)$ acts faithfully on all the hypersurfaces $H_{c}:=\VVV(I-c)\subseteq \AA^{3}$, $c\in\kk$.

\par\smallskip
(b) Consider the following quadratic involutions from $\bG$:
\begin{align*}
\ii_{1} &:= (x,y,x y - z) = \bbeta,\\
\ii_{2} &:= (x,x z - y,z) = (x,z,y)\cdot \bbeta\cdot (x,z,y), \\  
\ii_{3} &:= (y z - x,y,z) = (z,y,x)\cdot \bbeta\cdot (z,y,x). 
\end{align*}
Then $\ii_{1}\cdot\ii_{2} = (x,x z - y,(x^{2}-1)z-x y)$ is linear over $\kk(x)$ with matrix $\left[\begin{smallmatrix} -1 & x\\ -x & x^{2}-1 \end{smallmatrix}\right]$ which is a semisimple automorphism where both eigenvalues are transcendental over $\kk$. Let $\Cb{x}$ be the algebraic closure of $\kk(x)$ in $\Cb{x,y,z}$. It follows that all orbits of the group $\bG_{1}:=\langle \ii_{1}\cdot \ii_{2}\rangle$ acting on $\Cb{x}^{2}$ are infinite, except $\{0\}$. Since $I - c$ is irreducible for almost all $c\in \Cb{x}$, this implies that for almost all $p\in \Cb{x}^{2}$ the orbit $\bG_{1}p$ is dense in the zero set $\VVV(I - I(p))\subseteq \Cb{x}^{2}$. Hence $\Cb{x}[y,z]^{\bG_{1}}=\Cb{x}[I]$.

The same holds for the group $\bG_{2}$ generated by $\ii_{2}\cdot\ii_{3} = (y z-x, (z^{2}-1)y - x z, z)$ which is linear over $\kk(z)$, and we get $\Cb{z}[x,y]^{\bG_{2}} = \Cb{z}[I]$. We have $\Cb{x}\cap \Cb{z} = \kk$. In fact, if $f \in \Cb{x,y,z}\setminus \kk$ is algebraic over $\kk(x)$ and over $\kk(z)$, then $x,z$ are both algebraic over $\kk(f)$ which is a contradiction. Thus we finally get 
\[
\kk[x,y,z]^\bG \subseteq \kk[x,y,z]^{\bG_{1}} \cap \kk[x,y,z]^{\bG_{2}} 
\subseteq \Cb{x}[I] \cap \Cb{z}[I] = (\Cb{x}\cap\Cb{z})[I] = \kk[I],
\]
hence $\kk[x,y,z]^\bG = \kk[I]$.

\ps
(c) Looking at the generators $\balpha,\bbeta,\bgamma$ one easily sees that $(\AA^{3})^{\bG} = \{(0,0,0), (2,2,2)\}$. Moreover, a short calculation shows that the image of $\bG$ in $\GL(T_{(0,0,0)}\AA^{3})=\GL_{3}(\kk)$ is the finite group $T_{\ZZ}\rtimes S_{3}$, where $T_{\ZZ}\simto (\ZZ/2)^{3}$ are the diagonal matrices of $\GL_{3}(\ZZ)$. On the other hand, the image of $\balpha$ in $\GL(T_{(2,2,2)}\AA^{3})$ is nontrivial and has trace equal to $3$, hence has infinite order.
\end{proof}

\ps
\subsection{Surfaces with a discrete automorphism group}
The following results have been pointed out to us by \name{Serge Cantat} and \name{St\'ephane Lamy}. 
For $c\in \kk$ define the cubic hypersurface $H_{c}:=\VVV(I - c) \subseteq \A{3}$ which is smooth for $c\neq 0,4$, and is invariant under $\bG$.
Consider the \name{Klein} group
\[
V_4 := \Bigl\{(x,y,z), \ (x,-y,-z), \ (-x,y,-z),\ (-x,-y,z)\Bigr\} \subseteq \Aut(\AA^{3}).
\]
It stabilizes the invariant $I$, and is normalized by $\bG$. This is clear for $\bgamma=(y,x,z)$. For $\balpha$ and $\bbeta$ we get
$\balpha\cdot(-x,-y,z)\cdot\balpha^{-1}= (x,-y,-z)$, and  
$\bbeta\cdot(-x,-y,z)\cdot\bbeta^{-1}= (-x,-y,z)$,
and similarly for the other elements from $V_{4}$. It follows that the group $\tbG:=\langle \bG, V_{4} \rangle$ stabilizes $I$. Define $\Aut_{I}(\AA^{3}):=\{\g\in\Aut(\AA^{3})\mid \g^{*}(I)=I\}$ and $\Aut_{H_{c}}(\AA^{3}) = \{\g\in\Aut(\AA^{3})\mid \g(H_{c})=H_{c}\}$. 

\begin{proposition}\label{Aut-Hc.prop}
We have $\Aut_{I}(\AA^{3})=\tbG = \iii \rtimes S_{4}$, and the canonical maps 
$$
\begin{CD}
\tbG = \Aut_{I}(\AA^{3}) @>p_{c}>\simeq> \Aut_{H_{c}}(\AA^{3}) @>\res_{c}>\simeq> \Aut(H_{c})
\end{CD}
$$
are isomorphisms. In particular, $\Aut(H_{c})$ is discrete. Moreover,  $\iii$ is a free product of the subgroups $\langle \ii_{k}\rangle \simeq \ZZ/2$, $\bG=\iii\rtimes S_{3}$ and so $\tbG = V_{4}\rtimes \bG$. Finally, the action of $\PGL_{2}(\ZZ)$ on $\AA^{3}$ is faithful.
\end{proposition}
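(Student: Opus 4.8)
The plan is to prove the central identity $\Aut_{I}(\AA^{3})=\tbG$ by sandwiching it between two groups it is forced to coincide with, namely $\tbG\subseteq\Aut_{I}(\AA^{3})\subseteq\Aut_{H_{c}}(\AA^{3})\cong\Aut(H_{c})=\tbG|_{H_{c}}$, and then reading off the remaining assertions. First I would check the easy inclusion $\tbG\subseteq\Aut_{I}(\AA^{3})$ by direct computation: each Vieta involution $\ii_{k}$ fixes $I$ (writing $I-c$ as a monic quadratic in the moving coordinate, $\ii_{k}$ just swaps its two roots), the coordinate permutations in $S_{3}$ permute the monomials of $I$, and the sign changes in $V_{4}$ fix every square and, being even, fix $xyz$. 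For the group structure I would run a degree/ping-pong argument exactly parallel to the length function used for the amalgamated product $\Aut(\AA^{2})$ in Section~\ref{amalgam.subsec}: since $\ii_{1}$ replaces $z$ by $xy-z$, a reduced word $\ii_{k_{1}}\cdots\ii_{k_{n}}$ with $k_{j}\neq k_{j+1}$ has degree growing multiplicatively in $n$, so it is nontrivial; this gives $\iii=\langle\ii_{1}\rangle\ast\langle\ii_{2}\rangle\ast\langle\ii_{3}\rangle$, the free product of three copies of $\ZZ/2$. As $S_{3}$ permutes the $\ii_{k}$ by conjugation it normalizes $\iii$ with trivial intersection, whence $\bG=\iii\rtimes S_{3}$; since $V_{4}$ is normalized by $\bG$ (checked before the statement) and meets it trivially, $\tbG=V_{4}\rtimes\bG$. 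Rearranging the factors and using $V_{4}\rtimes S_{3}=S_{4}$ gives $\tbG=\iii\rtimes(V_{4}\rtimes S_{3})=\iii\rtimes S_{4}$.

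Next I would treat the two restriction maps. The map $p_{c}$ is just the inclusion $\Aut_{I}(\AA^{3})\subseteq\Aut_{H_{c}}(\AA^{3})$, which is well defined because $\g^{*}I=I$ forces $\g(H_{c})=H_{c}$; its surjectivity will fall out of the final sandwich rather than needing a separate argument. For $\res_{c}\colon\Aut_{H_{c}}(\AA^{3})\to\Aut(H_{c})$ I would prove injectivity directly: $\g|_{H_{c}}=\id$ means $g_{i}=x_{i}+h_{i}\,(I-c)$, and constancy of $\jac\g$ forces all $h_{i}=0$ for $c\neq 0,4$, so $\g=\id$. Surjectivity of $\res_{c}$ is equivalent to the statement that every abstract automorphism of the surface $H_{c}$ is the restriction of one of $\AA^{3}$, which is exactly the content of the next step.

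The heart of the matter is computing $\Aut(H_{c})$ for $c\neq 0,4$ and showing $\Aut(H_{c})=\tbG|_{H_{c}}$; this is the theorem of El Huti on the Markov cubic surface (the results here having been pointed out to the authors by Cantat and Lamy). The smooth affine surface $H_{c}$ carries three conic-bundle structures given by the projections $(x,y,z)\mapsto(x,y),(x,z),(y,z)$, whose fibrewise deck involutions are precisely $\ii_{1},\ii_{2},\ii_{3}$, and in the closure $\overline{H}_{c}\subset\PP^{3}$ there is a triangle of three lines at infinity that every automorphism must preserve. Analyzing the induced isometric action on the Picard lattice (equivalently, a reduced-word/ping-pong argument on the group generated by the three fibrations) shows that after composing a given automorphism with suitable elements of $\iii$, of $S_{3}$ and of $V_{4}$ one reaches an automorphism fixing all three fibrations; such a map is forced to be a symmetry of the triangle at infinity, hence lies in $V_{4}\rtimes S_{3}=S_{4}$. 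This yields $\Aut(H_{c})=\tbG|_{H_{c}}$, and the restriction $\tbG\to\tbG|_{H_{c}}$ is injective by the degree growth of step one. Combining injectivity of $\res_{c}$ with $\res_{c}(\tbG)=\Aut(H_{c})=\res_{c}(\Aut_{H_{c}}(\AA^{3}))$ forces $\tbG=\Aut_{H_{c}}(\AA^{3})$, and the sandwich then gives $\Aut_{I}(\AA^{3})=\tbG$ while making both $p_{c}$ and $\res_{c}$ isomorphisms.

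Finally, discreteness of $\Aut(H_{c})\cong\iii\rtimes S_{4}$ follows from the multiplicative degree growth of reduced words, which leaves only finitely many elements in each degree-bounded piece of the ind-group, so the ind-group $\Aut(H_{c})$ has trivial Lie algebra. For the faithfulness of $\PGL_{2}(\ZZ)$ I would recall that the action of $\Out(F_{2})\cong\GL_{2}(\ZZ)$ factors through $\PGL_{2}(\ZZ)$ (the class of $-I$, i.e. $(a^{-1},b^{-1})$, acts trivially) with image $\bG$; the kernel of $\GL_{2}(\ZZ)\to\bG$ is exactly $\{\pm I\}$, which one reads off from the free-product decomposition $\iii=\ZZ/2\ast\ZZ/2\ast\ZZ/2$ and the degree growth of step one, so $\PGL_{2}(\ZZ)\cong\bG$. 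Since $\bG\subseteq\tbG=\Aut_{I}(\AA^{3})$ and $\res_{c}$ is injective, $\bG$ acts faithfully on every $H_{c}$, hence on $\AA^{3}$. The main obstacle throughout is the third step: ruling out exotic automorphisms of $H_{c}$ genuinely requires the surface-geometric input (the triangle at infinity and the hyperbolic action on the Picard lattice), whereas everywhere else elementary degree bookkeeping and the semidirect-product arithmetic suffice.
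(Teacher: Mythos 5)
Your global strategy is genuinely different from the paper's: you sandwich $\tbG\subseteq\Aut_{I}(\AA^{3})\subseteq\Aut_{H_{c}}(\AA^{3})$ and try to deduce $\Aut_{I}(\AA^{3})=\tbG$ from \name{El Huti} alone, whereas the paper obtains $\Aut_{I}(\AA^{3})=\tbG$ by citing \name{Horowitz}'s theorem on Fricke characters \cite{Ho1975Induced-automorphi}, and proves surjectivity of $p_{c}$ by a separate trick: any $\g\in\Aut_{H_{c}}(\AA^{3})$ satisfies $\g^{*}(I-c)=\lambda(I-c)$, and $\lambda=1$ because the affine map $e\mapsto c+\lambda(e-c)$ must permute the only singular fibres $H_{0}$ and $H_{4}$, which have different numbers of singular points. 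Your sandwich, however, stands or falls with the injectivity of $\res_{c}$ on \emph{all} of $\Aut_{H_{c}}(\AA^{3})$, and there your argument has a genuine gap. From $\g|_{H_{c}}=\id$ you correctly get $g_{i}=x_{i}+h_{i}\,(I-c)$, but the claim that ``constancy of $\jac\g$ forces all $h_{i}=0$'' is not a valid inference: jacobian constancy alone cannot see the geometry of the fixed hypersurface. Indeed, for the hypersurface $\{x=0\}$ in place of $H_{c}$ the analogous statement is false: $\g=(x,\,y+x\,h(x,z),\,z)$ is an automorphism with $\jac\g=1$ fixing $\{x=0\}$ pointwise, for any $h\in\kk[x,z]$. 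So the conclusion for $H_{c}$, though true, can only come from special properties of $I$, and you supply none.

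Note also that you cannot simply borrow the paper's injectivity argument to fill this hole: the leading-term computation (Lemma~\ref{free-product.lem}) applies to elements of the form $\jj\cdot s$ with $\jj\in\iii$, $s\in S_{4}$, i.e.\ it proves injectivity of $\res_{c}$ only on $\tbG$. That suffices for the paper because \name{Horowitz} plus the singular-fibre argument have already identified $\Aut_{H_{c}}(\AA^{3})$ with $\tbG$; in your scheme that identification is exactly what injectivity is supposed to deliver, so the argument would be circular. Two repairs are possible. Either reinstate \name{Horowitz}'s theorem, in which case your proof essentially collapses to the paper's. Or prove triviality of the kernel directly, e.g.\ as follows: the singular-fibre trick gives $\g^{*}I=I$ for any $\g\in\Aut_{H_{c}}(\AA^{3})$, so $\g$ restricts to an automorphism of every fibre $H_{e}$; by \name{El Huti}, for each smooth $e$ there is $\t\in\tbG$ with $\g|_{H_{e}}=\t|_{H_{e}}$; since $\tbG$ is countable, after a base change to an uncountable field some fixed $\t$ works for infinitely many $e$, and then the closed set $\{\g=\t\}$ contains infinitely many fibres, hence equals $\AA^{3}$; thus $\g=\t\in\tbG$ and the leading-term argument of Lemma~\ref{free-product.lem} finishes. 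Everything else in your proposal --- the verification $\tbG\subseteq\Aut_{I}(\AA^{3})$, the free product $\iii$ via leading-term growth (which indeed also gives $\iii\cap S_{4}=\{\id\}$, since nontrivial elements of $\iii$ have degree $\geq 2$), the semidirect decompositions, the discreteness of $\Aut(H_{c})$, and the identification $\PGL_{2}(\ZZ)\simeq\bG$ --- matches the paper's proof in substance and is fine.
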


\begin{proof}
(1) We have $\bG = \langle \ii_{1},\ii_{2},\ii_{3}, S_{3}\rangle$. In fact,  $\langle \ii_{1},\ii_{2},\ii_{3}, S_{3}\rangle \subseteq \bG=\langle\balpha,\bbeta,\bgamma\rangle$, by definition, and to get the other inclusion we note that $\bbeta = \ii_{1}$, $\bgamma \in S_{3}$, and $\balpha=(z,y,x)\cdot\ii_{3}$. 
\par\smallskip
(2) By \cite[Theorem 2]{Ho1975Induced-automorphi}, we have $\Aut_{I}(\AA^{3}) = \tbG$. 
\par\smallskip
(3) Next we show that $p_{c}$ is surjective. Let $\g \in\Aut_{H_{c}}(\AA^{3})$, so that $\g^{*}(I-c) = \lambda(I-c)$ for some $\lambda\in\Cst$. Then $\g(H_{c+d}) = H_{c+\lambda d}$. But $H_{e}$ is singular only for $e=0,4$, and $H_{0}$ has a unique singular point in the origin whereas $H_{4}$ has 4 singular points. Thus the affine map $c+d \mapsto c+\lambda d$ has two fixed points, hence is the identity, and so $\lambda = 1$.
\par\smallskip
(4) Now we claim that $S_{4}\cap\iii = (\id)$. Since $S_{4}=V_{4}\rtimes S_{3}$ this implies that $\tbG = \iii \rtimes S_{4}=(\iii\times V_{4})\rtimes S_{3}= V_{4}\rtimes \bG$, and $\bG = \iii\rtimes S_{3}$. Since the image of $\iii$ in $\GL(T_{0}\AA^{3}) = \GL_{3}(\kk)$ is the group $T_{\ZZ}$ of diagonal matrices of $\GL_{3}(\ZZ)$ we get $S_{3}\cap\iii = (\id)$, and since no element from  $S_{4}\setminus S_{3}$ fixes $(2,2,2)$ we have $(S_{4}\setminus S_{3})\cap\iii=\emptyset$. The claim follows.
\par\smallskip
(5) By \cite[Theorem 2]{El1974Cubic-surfaces-of-}, $\Aut (H_c)$ is generated by  the image of $\langle \ii_1,\ii_2,\ii_3 \rangle$ and the group of affine transformations of $\A{3}$ preserving $H_c$.  It is easy to see that the latter is the linear group $S_4$. Hence the composition $\res_{c}\circ p_{c}\colon \tbG \to \Aut(H_{c})$ is surjective.
\par\smallskip
(6) Now we claim that this map is also injective. It is clear that $S_{4}\to\Aut(H_{c})$ is injective, because $S_{4}$ acts freely on the complement of a finite union of planes. Now let $\f = \jj \cdot s$ where $\jj \in \iii\setminus\{\id\}$, and $s\in S_{4}$, and assume that $\f|_{H_{c}}=\id$. This implies that the components $f_{i}$ of $\f$ have the form
$f_{1}= x+h_{1}(I-c), f_{2}= y+h_{2}(I-c), f_{3}= z+ h_{3}(I-c)$ with some polynomials $h_{1},h_{2},h_{3}$, and so  the leading terms of the $f_{i}$ are given by $\lt(f_{i}) = \lt(h_{i})x y z$. But these leading terms are the same as the leading terms of the components of $\jj$, because $S_{4}$ is acting by permuting the variables and changing some signs. Thus we end up with a contradiction to Lemma~\ref{free-product.lem} below, and the claim follows. This lemma also shows that $\iii$ is a free product of the subgroups $\langle\ii_{k}\rangle\simeq\ZZ/2$. 
\par\smallskip
(7) It remains to prove that the canonical map $\phi\colon \PGL_{2}(\ZZ) \to \bG$ is an isomorphism. The element $\gamma=(b,a)\in\Aut(F_{2})$ has image 
$(y,x,z)$ in $\bG$ and image $P_{1}:=\bsm 0&1\\ 1&0 \esm$ in $\PGL_{2}(\ZZ)$, hence $\phi(P_{1})=(y,x,z)$. 
Similarly, $(ab,b^{-1})$ has image $(z,y,x)$ in $\bG$ and image $P_{2}:=\bsm 1&\phantom{-}0\\ 1&-1 \esm$ in $\PGL_{2}(\ZZ)$, hence $\phi(P_{2})=(z,y,x)$. It is easy to see that the subgroup $S:=\langle P_{1},P_{2}\rangle \subseteq \PGL_{2}(\ZZ)$ is isomorphic to $S_{3}$, and so $\phi$ induces an isomorphism  $S\simto S_{3}$. In a similar way we find the elements  $I_{1}:=\bsm -1&0\\\phantom{-}0 &1\esm, I_{2}:=\bsm -1&2\\ \phantom{-}0 &1\esm$, $I_{3}:=\bsm 1&\phantom{-}0\\ 2 & -1\esm \in \PGL_{2}(\ZZ)$ with the property that $\phi(I_{k}) = \ii_{k}$ and $I_{k}^{2}=E$.  This implies that the subgroup $\langle I_{1},I_{2},I_{3}\rangle \subseteq \PGL_{2}(\ZZ)$ is mapped isomorphically onto $\iii \subseteq \bG$. Now one easily checks that $S$ permutes  
$\{I_{1},I_{2},I_{3}\}$, hence normalizes $\langle I_{1},I_{2},I_{3}\rangle$, and that $\PGL_{2}(\ZZ) = S \langle I_{1},I_{2},I_{3}\rangle$, because $\PGL_{2}(\ZZ)$  is generated by  $\bsm -1 & 0\\\phantom{-}0 & 1\esm = I_{1}$,  $\bsm 0 & 1\\ 1&0 \esm=P_{1}$, and $\bsm 1&1\\0&1\esm = P_{3}I_{1}$. The claim follows as well as the fact that $\PGL_{2}(\ZZ) = \langle I_{1},I_{2},I_{3}\rangle \rtimes S$.
\end{proof}

\begin{remark}
The subgroup $\langle I_{1},I_{2},I_{3}\rangle \subseteq \PGL_{2}(\ZZ)$ constructed in the proof above is the congruence subgroup
$$
\Gamma_{2}:= \{M \in \PGL_{2}(\ZZ) \mid M\equiv \bsm 1 & 0 \\ 0 & 1 \esm \mod 2 \},
$$
and the semi-direct product structure corresponds to the split exact sequence
$$
1 \to \Gamma_{2} \to \PGL_{2}(\ZZ) \to \SL_{2}(\FF_{2}) \to 1
$$
\end{remark}

\begin{lemma}\label{free-product.lem}
The group $\iii \subseteq \Aut(\AA^{3})$ is the free product $\langle\ii_{1}\rangle * \langle\ii_{2}\rangle * \langle\ii_{3}\rangle$. 
Moreover, for $(f_{1},f_{2},f_{3}) \in \iii\setminus\{\id\}$ the leading terms $\lt(f_{i})$ are monomials in only two of the variables $x,y,z$, and they satisfy $\lt(f_{i})=\lt(f_{j})\lt(f_{k})$ for some $i\in\{1,2,3\}$ where $\{i,j,k\} = \{1,2,3\}$.
\end{lemma}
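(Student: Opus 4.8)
The plan is to realize the three generators as involutions acting on coordinate tuples by Markoff-type mutations, and then to control the growth of degrees and leading terms along reduced words by a single induction. First I would record that each $\ii_k$ is an involution: the direct computations $\ii_1^2=(x,y,xy-(xy-z))=\id$ and likewise $\ii_2^2=\ii_3^2=\id$ show this. Consequently the abstract group $\langle\ii_1\rangle*\langle\ii_2\rangle*\langle\ii_3\rangle$ is $(\ZZ/2)^{*3}$, whose nontrivial elements are exactly the reduced words $\ii_{k_1}\ii_{k_2}\cdots\ii_{k_n}$ with $n\geq 1$ and $k_j\neq k_{j+1}$. Proving that $\iii$ is this free product therefore amounts to showing that every such reduced word represents a nontrivial automorphism of $\AA^3$, and I would extract the ``Moreover'' statement about leading terms from the same computation.

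Next I would translate composition into a combinatorial move. Writing the product in $\Aut(\AA^3)$ as composition of maps, left multiplication by $\ii_1,\ii_2,\ii_3$ modifies respectively the third, second, and first coordinate by a Markoff-type mutation $f_c\mapsto f_a f_b-f_c$ (with $\{a,b,c\}=\{1,2,3\}$), fixing the other two; explicitly $\ii_1\circ(f_1,f_2,f_3)=(f_1,f_2,f_1f_2-f_3)$, and analogously for $\ii_2$ and $\ii_3$. Thus $\ii_k$ effects a mutation $M_c$ on a single coordinate $c$, and since the correspondence $k\mapsto c$ is a bijection, building up a reduced word by successive left multiplications on $\id=(x,y,z)$ produces a sequence of mutations $M_{c_n},\dots,M_{c_1}$ in which no two consecutive mutations act on the same coordinate.

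The heart of the argument is the following claim, proved by induction on the number $n\geq 1$ of mutations: if a reduced mutation sequence ending with $M_c$ is applied to $(x,y,z)$ and produces $(f_1,f_2,f_3)$, then, with $\{a,b,c\}=\{1,2,3\}$, one has $\deg f_c=\deg f_a+\deg f_b$ together with $\deg f_c>\deg f_a$ and $\deg f_c>\deg f_b$, so the last-modified coordinate carries the strict unique maximal degree; moreover $\lt(f_c)=\lt(f_a)\lt(f_b)$, and each $\lt(f_i)$ is a single monomial involving only the two of the three variables $x,y,z$ not indexed by the first mutation $c_1$. The base case $n=1$ is immediate, since $M_{c}$ applied to $(x,y,z)$ makes $\lt(f_c)$ the product of the two remaining variables while leaving the others as single variables. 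For the inductive step the next mutation $M_{c'}$ has $c'\neq c$, so $c$ lies in the pair $\{a',b'\}$ with $\{a',b',c'\}=\{1,2,3\}$; the induction hypothesis that $\deg f_c$ is strictly maximal then gives $\deg f_{a'}+\deg f_{b'}>\deg f_{c'}$, so replacing $f_{c'}$ by $f_{a'}f_{b'}-f_{c'}$ causes no cancellation at the top, whence the new leading term equals $\lt(f_{a'})\lt(f_{b'})$ and the new strict maximal degree sits at $c'$; the two-variable monomial property is preserved because each new leading term is a product of two old leading terms.

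Granting the claim, every nontrivial reduced word produces a tuple whose degree vector differs from $(1,1,1)$ (its maximum is at least $2$), hence is not $\id$; this yields injectivity of $(\ZZ/2)^{*3}\to\iii$ and therefore the free product statement. The ``Moreover'' assertions are precisely the remaining parts of the claim, with $i=c$ the last-modified coordinate. The one delicate point — the main obstacle I expect — is the no-cancellation step: at the moment $M_{c'}$ is applied one must already know that $\deg(f_{a'}f_{b'})$ strictly exceeds $\deg f_{c'}$, and this is exactly what the ``strict unique maximum'' half of the induction hypothesis provides. For this reason the degree statement and the leading-term statement cannot be proved separately but must be carried together through the induction.
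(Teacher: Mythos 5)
Your proof is correct and takes essentially the same route as the paper: induction along a reduced word using the Markov-type mutation $f_c \mapsto f_a f_b - f_c$ on a single coordinate, with the leading-term identity $\lt(f_c)=\lt(f_a)\lt(f_b)$ forcing the strict degree dominance of the last-mutated coordinate, which rules out cancellation at the next step and shows nonempty reduced words are nontrivial. The only difference is cosmetic: the paper carries just the leading-term identity through the induction (the degree inequalities follow from it, since all degrees are at least $1$), whereas you carry the degree statements explicitly alongside it — and your bookkeeping of which coordinate each $\ii_k$ mutates is, if anything, more careful than the paper's.
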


\begin{proof}
Let $\jj=\jj_{m}\cdot\jj_{m-1}\cdots\jj_{1}$, $\jj_{r}\in\{\ii_{1},\ii_{2},\ii_{3}\}$, be a reduced expression, and let $\jj=(f_{1},f_{2},f_{3})$. Now we claim the following.
\par\smallskip
{\it If $\jj_{m}=\ii_{i}$, then the leading term of $f_{i}$ is equal to the product of the leading terms of the two other $f_{j},f_{k}$. Moreover, these leading terms are  monomials in the same two variables as the leading term of $\jj_{1}$.}
\par\smallskip 
This follows by induction. By symmetry, we can assume that $\jj_{k} = \ii_{1}$, so that $\lt(f_{1})=\lt(f_{2})\lt(f_{3})$. Then $\ii_{2}\cdot\jj = (f_{1},f_{1}f_{3}-f_{2},f_{3})$ and $\lt(f_{1}f_{3}-f_{2}) = \lt(f_{1}f_{3}) = \lt (f_{1})\lt(f_{3})$. The case $\ii_{3}\cdot\jj$ is similar.
\end{proof}

There is a well-known homomorphism from the \itind{braid group} $B_3$ to $\Aut(F_2)$ (see e.g. \cite[Theorem N6, p. 173]{MaKaSo1976Combinatorial-grou}). In fact, the two automorphisms
\[ 
\sigma_{1}:=(a,ab) \text{ \ and \ } \sigma_{2}:=(ab^{-1},b)
\]
satisfy the braid relation $\sigma_{1}\sigma_{2}\sigma_{1} = \sigma_{2}\sigma_{1}\sigma_{2}$. 
The image of the induced homomorphism $\psi\colon B_3 \to \PGL_2(\ZZ)$ is $\PSL(2, \ZZ)$, and its kernel is the center $\Cent(B_3)\simeq \ZZ$, generated by  $(\sigma_{1}\sigma_{2})^3= (\sigma_{1} \sigma_{2} \sigma_{1})^{2}$ (\cite[Theorem 1.24, p. 22, and Theorem A.2, p. 312]{KaTu2008Braid-groups}). Hence we get the following corollary (cf. \cite{BeBrHi2011Cluster-cylic-quiv}).

\begin{corollary}
There is an algebraic action of $B_3$ on $\A{3}$ with kernel the center of $B_3$. It has two fixed points, $(0,0,0)$ and $(2,2,2)$, with non-isomorphic tangent  representations. The invariant functions are generated by $I:=x^{2}+y^{2}+z^{2}-x y z$.
\end{corollary}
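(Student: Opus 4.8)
The plan is to obtain the corollary as a direct transport of Proposition~\ref{a faithful action of PGL2 on A3.prop} (equivalently the relevant parts of Proposition~\ref{Aut-Hc.prop}) along the homomorphism $\psi\colon B_3 \to \PGL_2(\ZZ)$ recalled just above. First I would record the composite action: compose the known homomorphism $B_3 \xrightarrow{\psi} \PGL_2(\ZZ)$ with the faithful algebraic action $\PGL_2(\ZZ) \to \Aut(\AA^3)$ constructed in Section~\ref{braid.subsec}. Since the latter is an action of $\PGL_2(\ZZ)$ on $\AA^3$ by algebraic automorphisms (indeed $\PGL_2(\ZZ) \simeq \bG \subseteq \Aut(\AA^3)$ acts so that the action map is a morphism), the composite $B_3 \to \Aut(\AA^3)$ is again an algebraic action of $B_3$ on $\AA^3$. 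This gives the first assertion with essentially no work.

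Next I would compute the kernel. By the cited facts (\cite[Theorem 1.24 and Theorem A.2]{KaTu2008Braid-groups}) the image of $\psi$ is $\PSL(2,\ZZ)$ and $\Ker\psi = \Cent(B_3) \simeq \ZZ$, generated by $(\sigma_1\sigma_2)^3$. Because the $\PGL_2(\ZZ)$-action on $\AA^3$ is \emph{faithful} (Proposition~\ref{a faithful action of PGL2 on A3.prop} or the final clause of Proposition~\ref{Aut-Hc.prop}), the kernel of the composite $B_3 \to \Aut(\AA^3)$ is exactly $\Ker\psi = \Cent(B_3)$. This is the step where I must be careful: the faithfulness of the $\PGL_2(\ZZ)$-action is precisely what guarantees that no extra elements of $B_3$ beyond the center act trivially, so the kernel is neither larger nor smaller than the center.

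Finally, the fixed-point and invariant statements transport automatically. The image of $B_3$ in $\Aut(\AA^3)$ equals the image $\psi(B_3) = \PSL(2,\ZZ) \subseteq \PGL_2(\ZZ)$ acting on $\AA^3$, so its fixed-point set contains the two $\PGL_2(\ZZ)$-fixed points $(0,0,0)$ and $(2,2,2)$. One should check that passing from the full group $\PGL_2(\ZZ)$ to the index-two subgroup $\PSL(2,\ZZ)$ does not create further fixed points or enlarge the invariant ring beyond $\kk[I]$. For the fixed points, the tangent representations at $(0,0,0)$ and $(2,2,2)$ are already non-isomorphic for $\PGL_2(\ZZ)$ (at $(2,2,2)$ the generator $\balpha$ has infinite order, while at $(0,0,0)$ the image is the finite group $T_\ZZ \rtimes S_3$), and $\balpha$ lies in $\PSL(2,\ZZ)$ as an element of $\bG$, so the same distinction persists; that $(0,0,0)$ and $(2,2,2)$ remain the only fixed points follows from inspecting the generators $\balpha,\bbeta$ (or $\sigma_1,\sigma_2$) as in the proof of Proposition~\ref{a faithful action of PGL2 on A3.prop}(c). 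For the invariants, $\kk[x,y,z]^{\PSL(2,\ZZ)} \supseteq \kk[x,y,z]^{\PGL_2(\ZZ)} = \kk[I]$, and the reverse inclusion holds because the argument in Proposition~\ref{a faithful action of PGL2 on A3.prop}(b) already used only the two hyperbolic elements $\ii_1\ii_2$ and $\ii_2\ii_3$, both of which lie in $\PSL(2,\ZZ)$; hence $\kk[x,y,z]^{B_3} = \kk[I]$.

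The main obstacle, such as it is, is purely bookkeeping: verifying that the distinguished data of Proposition~\ref{a faithful action of PGL2 on A3.prop} (the two fixed points with their distinct tangent representations, and the invariant $I$) are already visible on the subgroup $\PSL(2,\ZZ) = \psi(B_3)$ rather than requiring the full $\PGL_2(\ZZ)$. I expect this to be routine, since the key elements $\ii_1\ii_2,\ii_2\ii_3,\balpha$ used in that proof all lie in $\PSL(2,\ZZ)$; the only genuine input is the identification of $\Ker\psi$ with $\Cent(B_3)$, which is quoted, and the faithfulness of the $\PGL_2(\ZZ)$-action, which is established earlier. Thus the corollary follows by combining these two facts with the transport of structure along $\psi$.
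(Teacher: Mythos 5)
Your proposal is correct and is essentially the paper's own (implicit) argument: the paper offers no separate proof of the corollary, deriving it directly from Proposition~\ref{a faithful action of PGL2 on A3.prop} by transport along $\psi\colon B_3 \to \PGL_2(\ZZ)$, with the kernel computation exactly as you give it (faithfulness of the $\PGL_2(\ZZ)$-action plus $\Ker\psi = \Cent(B_3)$). In fact you are more careful than the paper on the one point it glosses over, namely that the fixed-point set and invariant ring must be re-verified for the index-two subgroup $\PSL(2,\ZZ)=\psi(B_3)$; your observation that the hyperbolic elements $\ii_1\ii_2$, $\ii_2\ii_3$ and the element $\balpha$ all lie in $\PSL(2,\ZZ)$ settles the invariants and the tangent representations correctly.

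One small inaccuracy in your fixed-point step: $\bbeta=\ii_1$ corresponds to a determinant~$-1$ matrix, so it does \emph{not} lie in $\psi(B_3)$, and moreover the pair $\langle\balpha,\bbeta\rangle$ fixes the entire line $\{x=z=0\}$, so ``inspecting the generators $\balpha,\bbeta$'' would not suffice. Your parenthetical alternative is the right one: the images of $\sigma_1,\sigma_2$, namely $\bar\sigma_1=(x,z,xz-y)$ and $\bar\sigma_2=(xy-z,y,x)$, have fixed loci $\{y=z=0\}\cup\{y=z,\,x=2\}$ and $\{x=z=0\}\cup\{x=z,\,y=2\}$, whose intersection is exactly $\{(0,0,0),(2,2,2)\}$, confirming that no new fixed points appear on the subgroup.
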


\ps
\subsection{A closed subgroup with the same Lie algebra}  \label{tame-is-closed.subsec}
In this section we construct a strict closed subgroup of a connected  affine ind-group which has the same Lie algebra.
\subsubsection{The construction}\label{construction.subsec}
In the paper \cite{Sh1981On-some-infinite-d} \name{Shafarevich} claims that $\SAutA{n}$ is simple in the sense that it does not contain a nontrivial closed normal subgroup. The proof is based on the fact that $\Lie \SAutA{n}$ is simple together with a theorem which states that a closed subgroup of a connected ind-group $\GGG$ with the same Lie algebra is equal to $\GGG$. In this section we construct a counterexample to this last claim.

Consider the closed subgroup $\GGG \subseteq \AutA{3}$ of those automorphisms of $\A{3}$ which leave the projection $p_{3}\colon \A{3} \to \A{1}$ invariant,
\[ 
\GGG:=\{\f=(f_{1},f_{2},f_{3})\in \AutA{3} \mid f_{3}=z\},
\]
and let $\tG:= \GGG \cap \TameA{3} \subseteq \GGG$ be the subgroup consisting of tame elements.

\begin{theorem}
\be
\item $\tG \subseteq \GGG$ is a closed subgroup and $\tG \neq \GGG$.
\item $\GGG$ is connected.
\item $\Lie \tG = \Lie \GGG$.
\ee
\end{theorem}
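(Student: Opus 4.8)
The plan is to exploit the identification of $\GGG$ with a group of $\kk[z]$-points and then prove the three assertions in the order (2), (1), (3). By Proposition~\ref{families-of-aut.prop}, applied to $X=\Atwo$ and $Y=\Aone$ (the $z$-line), the group $\GGG=\Aut_{\Aone}(\Atwo\times\Aone)$ of automorphisms of $\Athree$ leaving $\pr_{3}$ invariant is canonically isomorphic, as an ind-group, to $\Mor(\Aone,\AutA{2})=\AutA{2}(\kk[z])$; an element $\f=(f_{1},f_{2},z)$ corresponds to the family $z\mapsto(f_{1}(\cdot,\cdot,z),f_{2}(\cdot,\cdot,z))$, and its three-dimensional jacobian equals the fibrewise jacobian $\jac_{x,y}(f_{1},f_{2})\in\kk[z]^{*}=\kk^{*}$, so that $\GGG\subseteq\AutA{3}$ indeed. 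The Nagata automorphism $\n=(x-2y\Delta-z\Delta^{2},\,y+z\Delta,\,z)$, $\Delta=xz+y^{2}$, has third coordinate $z$, hence $\n\in\GGG$; since $\n\notin\TameA{3}$ by \name{Shestakov}--\name{Umirbaev}, we get $\n\in\GGG\setminus\tG$, which already yields $\tG\neq\GGG$ in (1).

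For connectedness (2) I would use a rescaling homotopy. Given $\f\in\GGG$, corresponding to $\sigma\colon\Aone\to\AutA{2}$, set $\sigma_{t}(z):=\sigma(tz)$ for $t\in\kk$. As $\sigma_{t}$ is the composition of $\sigma$ with the multiplication $\kk\times\Aone\to\Aone$, the assignment $t\mapsto\sigma_{t}$ is a morphism $\kk\to\Mor(\Aone,\AutA{2})=\GGG$ joining $\f=\sigma_{1}$ to the constant family $\sigma_{0}=\sigma(0)\in\AutA{2}\subseteq\GGG$. Since $\AutA{2}$ is connected (Proposition~\ref{k*-connected.prop}), hence curve-connected (Remark~\ref{irreducible-curve-connected.rem}), the element $\sigma(0)$ is joined to $\id$ by an irreducible curve lying in the subgroup of constant families. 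Concatenating, every $\f$ is curve-connected to $\id$, so $\GGG$ is connected.

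The closedness of $\tG$ in $\GGG$ is the main obstacle, precisely because $\Tame(\Athree)$ is not even weakly closed in $\AutA{3}$ (\name{Edo}--\name{Poloni}), so all the rigidity must come from fixing $z$. I would first prove that $\tG$ coincides with the group $\Tame(\Atwo_{\kk[z]})$ of automorphisms tame over the ring $\kk[z]$: the inclusion $\supseteq$ is immediate (each $\kk[z]$-elementary and $\kk[z]$-affine map fixes $z$ and is $\Athree$-tame), while the reverse inclusion, that an $\Athree$-tame automorphism fixing $z$ admits a tame factorization fixing $z$, is the delicate input. Granting this, I would use the fibre-degree $\deg_{x,y}$ and the amalgamated structure of $\Tame(\Atwo_{\kk[z]})$ over $\kk[z]$ (multiplicative on reduced expressions, as in Section~\ref{amalgam.subsec}): a bound $\deg\leq d$ on $\GGG_{d}:=\GGG\cap\AutA{3}_{d}$ forces a bound on the length and on the degrees of the factors. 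Consequently $\tG\cap\GGG_{d}$ is the image of a morphism from a finite-dimensional product of bounded pieces of $\Aff(2)(\kk[z])$ and $\JJJ(2)(\kk[z])$, hence constructible, so $\tG$ is weakly constructible; and the same boundedness shows that a limit of such products is again one (these generating subgroups being closed in $\GGG$), so $\tG$ is weakly closed. By Remark~\ref{constr.rem}(4) a weakly closed, weakly constructible set is closed, which completes (1). This degree--length bound, together with the identification $\tG=\Tame(\Atwo_{\kk[z]})$, is the deepest point of the whole theorem.

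For (3) the inclusion $\Lie\tG\subseteq\Lie\GGG$ is automatic. I would first compute $\Lie\GGG$: cutting $\GGG$ out of $\AutA{3}$ by $f_{3}=z$ and using $\Lie\AutA{3}=\VEC^{c}(\Athree)$ (Proposition~\ref{LieAlgAutAn.prop}), a tangent vector is $\delta=a\ddx+b\ddy$ with $a,b\in\kk[x,y,z]$ and $\Div\delta=\tfrac{\partial a}{\partial x}+\tfrac{\partial b}{\partial y}\in\kk$ (the three-dimensional divergence, so a genuine scalar, not a polynomial in $z$). Writing $a=\sum_{k}a_{k}z^{k}$, $b=\sum_{k}b_{k}z^{k}$ with $a_{k},b_{k}\in\kk[x,y]$, this gives
\[
\Lie\GGG=\VEC^{c}(\Atwo)\,z^{0}\ \oplus\ \bigoplus_{k\geq1}\VEC^{0}(\Atwo)\,z^{k}.
\]
Now $\Lie\tG$ contains all of $\VEC^{c}(\Atwo)$ in $z$-degree $0$ (from the constant families $\AutA{2}=\TameA{2}\subseteq\tG$), and every $y^{m}z^{k}\ddx$ and $x^{n}z^{k}\ddy$ (tangent to the $\kplus$-subgroups $(x+s\,y^{m}z^{k},y,z)$ and $(x,y+s\,x^{n}z^{k},z)$ of $\tG$). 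The key identity is $[z^{p}v,z^{q}w]=z^{p+q}[v,w]$ for $v,w\in\VEC(\Atwo)$, valid since $v,w$ annihilate $z$. Fixing $k$ and putting $W_{k}:=\{v\in\VEC(\Atwo)\mid vz^{k}\in\Lie\tG\}$, this identity makes $W_{k}$ stable under $\ad(e)$ for each elementary field $e\in\{y^{m}\ddx,x^{n}\ddy\}\subseteq\Lie\tG$; as $W_{k}$ contains these fields and they generate $\VEC^{0}(\Atwo)$ as a Lie algebra (the $n=2$ case of Proposition~\ref{LieAlgAutAn.prop}), we obtain $\VEC^{0}(\Atwo)z^{k}\subseteq\Lie\tG$ for all $k\geq1$. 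Together with the $z$-degree-$0$ part this gives $\Lie\tG=\Lie\GGG$.
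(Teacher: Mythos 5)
Your parts (2) and (3), and the inequality $\tG\neq\GGG$, are correct. For (2), the rescaling homotopy $\sigma_{t}(z):=\sigma(tz)$ inside $\GGG\simeq\Mor(\Aone,\AutA{2})$ (via Proposition~\ref{families-of-aut.prop}) is a valid alternative to the paper's argument, which instead notes that the connecting morphisms of Proposition~\ref{k*-connected.prop} can be chosen inside $\GGG$ because $\GGG\cap\GL_{3}$ is connected. For (3), your computation is a correct, more explicit version of the paper's one-line proof that $\Lie\GGG$ is generated by $\Lie(\GGG\cap\GL_{3})$ and $\Lie(\GGG\cap\JJJ(3))$: the decomposition $\Lie\GGG=\VEC^{c}(\Atwo)\oplus\bigoplus_{k\geq1}\VEC^{0}(\Atwo)z^{k}$, the identity $[z^{p}v,z^{q}w]=z^{p+q}[v,w]$, and the bracket generation of $\VEC^{0}(\Atwo)$ by the fields $y^{m}\ddx$, $x^{n}\ddy$ all check out.

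The gap is in the closedness of $\tG$, and it is twofold. First, you ``grant'' the identification of $\tG$ with the tame group over $\kk[z]$ and then invoke an amalgam with multiplicative degrees ``as in Section~\ref{amalgam.subsec}'' --- but that section is the field case. Over $\kk[z]$ neither fact is an available input: both are consequences of the \name{Shestakov}--\name{Umirbaev} elementary-reduction theorem, and they are precisely what the paper extracts from \cite[Corollary~8]{ShUm2004The-tame-and-the-w} in the predecessor Lemma~\ref{predecessor.lem}; the amalgam appears in Remark~\ref{group theoretical interpretation.rem} as $\tG=A*_{A\cap B}B$ with the affine factor $A$ having \emph{constant} linear part (your version with $\Aff(2)(\kk[z])$ only matches this after inserting Nagao's amalgam for $\GL_{2}(\kk[z])$). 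So most of the content of assertion (1) is presupposed rather than proved; you would also need a bound on the $z$-degrees of the factors, which multiplicativity of $\deg_{x,y}$ alone does not provide.

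Second, and more seriously, the step ``a limit of such products is again one (these generating subgroups being closed in $\GGG$)'' is a non sequitur, and it is exactly where the theorem lives. The image of the bounded product morphism $(A_{\leq d}\times B_{\leq d})^{N}\to\GGG$ is constructible but not closed, and since the factor spaces are not proper, along a curve in $\tG$ converging in $\GGG$ the (essentially unique) factorizations can degenerate --- individual factors leave the groups --- while the product still converges; closedness of the factors in $\GGG$ is irrelevant to this. That degenerations genuinely occur is internal to the paper: the Main Lemma~\ref{MainLemma.lem} shows $\overline{\Ft}$ strictly contains $\Ft$ (it contains all $p(f,z)$ with $p\in t\,\kk[t,z]$), so limits of bounded tame data really do leave the group, and the whole point is to show they then also leave $\GGG$; and \name{Edo}--\name{Poloni} \cite{EdPo2015On-the-closure-of-} show that bounded-degree curves of tame automorphisms of $\Athree$ can degenerate to wild ones, so tameness of a limit can never follow from degree and length bookkeeping alone. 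The paper supplies the missing mechanism: it reduces to first components (via $\GGG\simeq\ZZZ\times\Gz$ and Lemma~\ref{FirstComp.lem}), uses uniqueness of the predecessor to define a length function, computes closures inductively, $\overline{\Ft_{k}}=\Ft_{k}\cup R(\Ft_{k-1})$ (Lemma~\ref{closure.lem}), by a projectivization argument applied to the jacobian-pair set $J=\{(f,g)\mid\jac(f,g,z)\in\kk\}$, which deliberately includes the degenerate locus $\jac=0$, and then kills the limit points via $\jac(p(f,z),h,z)=\frac{\partial p}{\partial t}(f,z)\,\jac(f,h,z)$, which shows they are never first components of automorphisms unless $p$ is linear. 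Without an analysis of this kind your plan stalls at the decisive point.
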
\label{closed-subgroup-with-same-Liealgebra.thm}

The proof of (2) follows immediately from Proposition~\ref{k*-connected.prop}. In fact, it is easy to see that the morphism $\phi\colon \Cst \to \AutA{3}$ connecting $\f,\g\in\GGG$ constructed in the proof has its image in $\GGG$, because $\GGG\cap\GL_{3}$ is connected.

For (3) we first remark that the anti-isomorphism $\Lie\Aut(\Athree) \simto \VEC^{c}(\Athree)$ from Proposition~\ref{LieAlgAutAn.prop} identifies $\Lie\GGG$ with the following Lie subalgebra:
\[
\{\delta = f\dx + g\dy \mid f,g \in \kk[x,y,z], \ \Div(\delta)\in\kk\} \subseteq \VEC(\A{3}).
\]
From this description one easily deduces that $\Lie \GGG$ is generated by the Lie algebras of the two subgroups $\GGG \cap \GL_{3}$ and $\GGG \cap \J{3}$. Since both belong to  $\tG$ the claim follows.

Finally, the famous Nagata automorphism $\n:=(x-2y \Delta -z \Delta^2,y+z \Delta, z)$ where $\Delta :=x z+y^2$ (Section~\ref{shifted-lin.subsec}) belongs to $\GGG$, but it is not tame as shown by the fundamental work of \name{Shestakov-Umirbaev} (see \cite[Corollary 9]{ShUm2004The-tame-and-the-w}). So it remains to show that $\tG$ is closed. 

\begin{remark}
It was recently shown by \name{Edo} and \name{Poloni} that $\Tame(\Athree)$ is not even weakly closed in $\AutA{3}$, see \cite{EdPo2015On-the-closure-of-}. 
\end{remark}

\begin{remark}\label{Lie-H-cap-G.rem}
Let $\GGG$ be an ind-group and $H,K \subseteq \GGG$ closed algebraic subgroups. It is easy to see that if $H$ is connected and $\Lie H \subseteq \Lie K$, then $H\subseteq K$, because we have $\Lie (H\cap G) = \Lie H \cap \Lie G$, see Proposition~\ref{Lie-H-cap-G.prop}. However, the above example shows that this does not hold if $H \subseteq \GGG$ is a closed ind-subgroup. In fact, the closed subgroup $K  \simeq \kplus$ defined by the locally nilpotent Nagata-derivation does not belong to $\tG$, but $\Lie K \subseteq \Lie \tG$.
\end{remark}

The basic idea for the proof is the following: Consider the set $\Ft \subseteq\kk[x,y,z]$ of first components of the elements of $\tG$ and show that this set is closed in the set $F$ of first components of $\GGG$. This will implies the claim, because an automorphism from $\GGG$ with first component in $\Ft$ belongs to $\tG$ (see Lemma~\ref{FirstComp.lem} below). To prove that $\Ft$ is closed in $F$ we define a length function on $\Ft$ using the existence and uniqueness of a predecessor. The details are given in the following Sections~\ref{FirstRed.subsec} and \ref{LengthFunction.subsec}  

\ps
\subsubsection{First reductions} \label{FirstRed.subsec}  
Define
$$
\Gz:=\{(f_{1},f_{2},z)\in\GGG \mid f_{1}(0,0,z)=f_{2}(0,0,z)=0\} \subseteq \GGG,
$$ 
the closed subgroup fixing pointwise the $z$-axis,  and put $\ZZZ:=\{(x+p(z),y+q(z),z)\mid p,q\in\kk[z]\}\subseteq \tG$.
\begin{lemma}
The morphism $\mu\colon \ZZZ \times \Gz \to \GGG$, $(\z,\f)\mapsto \z\circ\f$, is an isomorphism of ind-varieties.
\end{lemma}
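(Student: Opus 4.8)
The plan is to produce an explicit set-theoretic inverse $\nu\colon \GGG \to \ZZZ \times \Gz$ of $\mu$ and then to check that both $\mu$ and $\nu$ are morphisms of ind-varieties; since $\ZZZ$ and $\Gz$ are closed in $\GGG$, a bijective morphism with a morphism inverse is an isomorphism, so this yields the claim at once.

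First I would record the decomposition. Writing $\circ$ for composition (so the $i$-th component of $\z\circ\f$ is $z_i(f_1,f_2,f_3)$), given $\g=(g_1,g_2,z)\in\GGG$ I set $p(z):=g_1(0,0,z)$ and $q(z):=g_2(0,0,z)$ in $\kk[z]$, and $\z:=(x+p(z),\,y+q(z),\,z)\in\ZZZ$. Using $g_3=z$ one computes $\z^{-1}\circ\g=(g_1-p(z),\,g_2-q(z),\,z)=:\f$, and evaluating its first two components at $x=y=0$ gives $0$, so $\f\in\Gz$ and $\g=\z\circ\f$. Conversely, if $\g=\z'\circ\f'$ with $\z'=(x+p',y+q',z)\in\ZZZ$ and $\f'\in\Gz$, then the first two components of $\g$ equal $f'_1+p'$ and $f'_2+q'$; evaluating at $x=y=0$ and using $f'_1(0,0,z)=f'_2(0,0,z)=0$ forces $p'=p$ and $q'=q$, hence $\z'=\z$ and $\f'=\f$. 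Thus $\mu$ is bijective, with inverse $\nu(\g)=(\z,\,\z^{-1}\circ\g)$.

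That $\mu$ is a morphism is immediate: it is the restriction to the closed subvariety $\ZZZ\times\Gz$ of the composition map of the ind-semigroup $\End(\A{3})$ (recall $\GGG$ is a closed subgroup of $\Aut(\A{3})$ by Lemma~\ref{Aut_Y(X).lem}), and composition is a morphism by Lemma~\ref{tautological.lem}(\ref{composition1}).

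The substantive point, and the step I expect to be the only real obstacle, is to show that $\nu$ is a morphism. It suffices to prove that $\g\mapsto\z$ is a morphism $\GGG\to\ZZZ$, since $\nu=(\z,\,\z^{-1}\circ\g)$ is then obtained by further composing with inversion in $\ZZZ$ and with the composition morphism of Lemma~\ref{tautological.lem}(\ref{composition1}), the second component landing in the closed subvariety $\Gz$. Identifying $\ZZZ\simeq\kk[z]^2$ via $(p,q)\mapsto(x+p,\,y+q,\,z)$, the map $\g\mapsto\z$ becomes $\g\mapsto\bigl(g_1(0,0,z),\,g_2(0,0,z)\bigr)$. Now $\GGG\subseteq\End(\A{3})=\OOO(\A{3})^3$ (see the proof of Proposition~\ref{End(X)-and-Vec(X).prop}), so each projection $\g\mapsto g_i$ is a linear, hence morphism, map $\GGG\to\OOO(\A{3})=\Mor(\A{3},\A{1})$; composing with the closed immersion $\iota\colon\A{1}\into\A{3}$, $t\mapsto(0,0,t)$, the assignment $h\mapsto h\circ\iota=h(0,0,z)$ is a morphism $\Mor(\A{3},\A{1})\to\Mor(\A{1},\A{1})=\kk[z]$ by Lemma~\ref{tautological.lem}(\ref{composition3}). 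Therefore $\g\mapsto\bigl(g_1(0,0,z),g_2(0,0,z)\bigr)$ is a morphism, so $\nu$ is a morphism, and $\mu$ is an isomorphism of ind-varieties.
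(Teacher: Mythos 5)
Your proposal is correct and follows essentially the same route as the paper: the paper's one-line proof rests on the identity $(x+p(z),y+q(z),z)\circ(f_{1},f_{2},z)=(f_{1}+p,f_{2}+q,z)$, which is exactly the decomposition you use, with $p=g_{1}(0,0,z)$ and $q=g_{2}(0,0,z)$ recovering the unique $\ZZZ$-factor. Your write-up merely makes explicit what the paper leaves implicit, namely the inverse $\g\mapsto(\z,\z^{-1}\circ\g)$ and the routine verification via Lemma~\ref{tautological.lem} that both directions are ind-morphisms.
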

\begin{proof}
This is clear, because $(x+p(z),y+q(z),z)\circ (f_{1},f_{2},z) = (f_{1}+p, f_{2}+q, z)$.
\end{proof}
Putting $\tGz:=\tG \cap \Gz$ the lemma implies that $\tG$ is closed in $\GGG$ if and only if $\tGz$ is closed in $\Gz$.

Denote by $\pi_{i}\colon \AutA{3} \to \kk[x,y,z]$, $i=1,2,3$, the projections onto the coordinates, and put $F:=\pi_{1}(\Gz)$, $\Ft:=\pi_{1}(\tGz)$.

\begin{lemma}\label{FirstComp.lem}
We have $\pi_{1}^{-1}(\Ft)\cap \Gz=\tGz$.
\end{lemma}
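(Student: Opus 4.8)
The plan is to establish the two inclusions of the claimed equality separately, the inclusion $\tGz \subseteq \pi_1^{-1}(\Ft)\cap\Gz$ being immediate from the definitions: any $\g \in \tGz$ lies in $\Gz$, and since $\Ft = \pi_1(\tGz)$ we have $\pi_1(\g)\in\Ft$, so $\g \in \pi_1^{-1}(\Ft)\cap\Gz$. All the content is in the reverse inclusion, and the idea is to divide out a tame automorphism sharing the same first coordinate.

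For the reverse inclusion I would take $\g = (g_1,g_2,z) \in \pi_1^{-1}(\Ft)\cap\Gz$, so $g_1 = \pi_1(\g) \in \Ft = \pi_1(\tGz)$, and use this to choose a tame element $\h = (g_1,h_2,z) \in \tGz$ with the same first coordinate. Since $\GGG$ is a group containing both $\g$ and $\h$, the automorphism $\phi := \g \circ \h^{-1}$ again lies in $\GGG$, so $\pi_3(\phi)=z$. Its first coordinate is $\pi_1(\phi) = g_1\circ\h^{-1} = \pi_1(\h)\circ\h^{-1} = \pi_1(\h\circ\h^{-1}) = \pi_1(\id) = x$, where the crucial substitution $g_1 = \pi_1(\h)$ is exactly what forces the first coordinate to become trivial. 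Thus $\phi$ is an automorphism of $\Athree$ of the form $(x,\psi,z)$.

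The next step is to recognize that such a $\phi$ is automatically tame. Writing $\phi^{-1} = (x,\chi,z)$ and comparing $y$-degrees in the identity $\psi(x,\chi,z)=y$ over the domain $\kk[x,z]$ forces $\deg_y\psi = 1$, so $\psi = a(x,z)\,y + b(x,z)$ with $a$ a unit of $\kk[x,z]$, i.e. $a\in\kst$, and $b\in\kk[x,z]$. Then the factorization $\phi = (x,ay,z)\circ(x,\,y+a^{-1}b(x,z),\,z)$ writes $\phi$ as a linear automorphism composed with an elementary one; the elementary factor is tame because it is the conjugate $\sigma\circ\theta\circ\sigma$ of the triangular element $\theta = (x+a^{-1}b(y,z),\,y,\,z)\in\J{3}$ by the coordinate swap $\sigma=(y,x,z)\in\Aff(3)$. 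Hence $\phi\in\TameA{3}$, and since $\g = \phi\circ\h$ with both factors tame, we conclude $\g\in\TameA{3}$; combined with $\g\in\Gz$ this gives $\g\in\tGz$, completing the proof.

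I do not expect a genuine obstacle in this lemma: once the reduction is set up, everything reduces to the elementary description of $\TameA{3}$. The only points needing care are purely bookkeeping — first, that it is $\g\circ\h^{-1}$ (and not $\h^{-1}\circ\g$) whose first coordinate collapses to $x$, which one sees from $\pi_1(\g\circ\h^{-1}) = g_1\circ\h^{-1}$ together with $g_1 = \pi_1(\h)$; and second, the normal form of an automorphism fixing the first and third coordinates, which follows from the degree comparison over $\kk[x,z]$.
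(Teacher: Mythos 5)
Your proof is correct and follows essentially the same route as the paper: choose a tame $\h=(g_{1},h_{2},z)\in\tGz$ with the same first coordinate, observe that $\g\circ\h^{-1}=(x,\alpha y+p(x,z),z)$, and conclude that $\g$ is tame. Your $y$-degree comparison and explicit factorization through the swap-conjugated triangular element merely spell out the normal-form and tameness steps that the paper's proof leaves implicit.
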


\begin{proof}
Let $f\in\Ft$ and choose $\f=(f,h,z)\in\tGz$. If $\tilde\f=(f,\tilde h,z)\in \Gz$, then $\tilde\f\circ\f^{-1} = (x,q,z)$ for some $q\in\kk[x,y,z]$. It follows that $q = \alpha y + p(x,z)$. Thus $\tilde\f\circ\f^{-1} \in \tGz$, and so $\tilde\f  \in \tGz$.
\end{proof}
This lemma shows that $\tGz$ is closed in $\Gz$ if $\Ft$ is closed in $F$. Thus the theorem follows from the next result.
\begin{main lemma}\label{MainLemma.lem}
The closure of $\Ft$ in $\kk[x,y,z]$ is given by
$$
\overline{\Ft} = \{p(f,z)\mid f\in \Ft \text{ and } p\in t\,\kk[t,z]\} \subseteq \kk[x,y,z].
$$
As a consequence, $\Ft$ is closed in $F$.
\end{main lemma}
The proof will be given at the end of Section~\ref{LengthFunction.subsec}. We only indicate here the easy parts of the proof.

The inclusion $\supseteq$ is clear. In fact, if $(f,h,z)\in\tGz$, $p\in t\,\kk[t,z]$ and $\alpha\in\Cst$, then 
$(\alpha y + p(x,z),x,z)$ is tame and belongs to $\tGz$, and so $(\alpha y + p(x,z),x,z)\circ(f,h,z) = (\alpha h + p(f,z),f,z) \in \tGz$. It follows that $\alpha h + p(f,z) \in \Ft$ for all $\alpha\neq 0$, hence $p(f,z)\in \overline{\Ft}$.

From this description of the closure $\overline{\Ft}$ we immediately get the last claim.  We have $\jac (p(f,z),h,z) = \frac{\partial f}{\partial t}(f,z) \, \jac (f,h,z)$. If $\frac{\partial f}{\partial t}\in\Cst$, then $p(f,z)= \alpha f\neq 0$, and thus $p(f,z)\in\Ft$. Otherwise, $p(f,z)$ is not the first coordinate of an automorphism, and thus $p(f,z)\notin F$. 

It remains to prove that $\overline{\Ft} \subseteq \{p(f,z)\mid f\in \Ft \text{ and } p\in t\,\kk[t,z]\}$.

\ps
\subsubsection{A length function} \label{LengthFunction.subsec}
If $f,g \in \Ft$, we say that {\it $g$ is a  predecessor of $f$} if $(f,g,z) \in \GGG$ and $\deg f > \deg g$. It then follows that $(f,g,z)\in\tGz$, by Lemma~\ref{FirstComp.lem}.\idx{predecessor}
The next result is crucial.

\begin{lemma}\label{predecessor.lem}
If $f \in \Ft$ has degree $\geq 2$, then $f$ admits a predecessor $g$ which is unique up to a scalar multiple.
\end{lemma}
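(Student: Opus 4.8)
The plan is to realize a predecessor of $f$ as a ``reduction partner'', extracting existence from the reduction theory of two-variable automorphisms and uniqueness from an explicit description of all possible partners together with a leading-form computation.

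First I would record the list of candidate partners. Since $f\in\Ft$, there is a tame $z$-fixing automorphism $(f,g_{0},z)\in\tGz$. If $g'$ is any polynomial with $(f,g',z)\in\GGG$, then $E:=(f,g',z)\circ(f,g_{0},z)^{-1}$ fixes the first coordinate and $z$, hence is an elementary automorphism $(x,\alpha y+\beta(x,z),z)$ with $\alpha\in\kk^{*}$ and $\beta\in\kk[x,z]$; in particular $E$ is tame, so $(f,g',z)=E\circ(f,g_{0},z)$ is tame and axis-fixing, i.e. lies in $\tGz$. Applying the swap $(y,x,z)\in\tGz$ then shows $g'\in\pi_{2}(\tGz)=\pi_{1}(\tGz)=\Ft$ automatically. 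Thus every mate of $f$ already lies in $\Ft$, the axis condition $g'(0,0,z)=0$ forces $\beta(0,z)=0$, and the complete list of mates is
\[
g'=\alpha g_{0}+\beta(f,z),\qquad \alpha\in\kk^{*},\ \beta\in x\,\kk[x,z].
\]
Consequently ``$f$ has a predecessor'' is equivalent to ``$f$ has a mate of total degree $<\deg f$''.

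Next, uniqueness. Suppose $g_{1},g_{2}$ are two predecessors, so both have degree $<d:=\deg f$ and $g_{2}=\alpha g_{1}+\beta(f,z)$ as above. Writing $\beta=\sum_{j\ge 1}\beta_{j}(z)x^{j}$ one has $\beta(f,z)=\sum_{j\ge1}\beta_{j}(z)f^{j}$, and I would compare total-degree leading forms: letting $\bar f$ be the leading form of $f$ for total degree, the leading form of a nonzero $\beta(f,z)$ is $\sum_{j}(\text{top }z\text{ part of }\beta_{j})\,\bar f^{\,j}$, which cannot vanish because $\bar f$ is not a monomial in $z$ alone (as $f$ fixes the axis and $\deg f\ge 2$), so the distinct powers $\bar f^{\,j}$ never cancel against one another. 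Hence $\deg\beta(f,z)\ge d$ whenever $\beta\neq0$, while $\deg(\alpha g_{1})<d$; since these two degrees differ, $\deg g_{2}<d$ forces $\beta=0$, i.e. $g_{2}=\alpha g_{1}$. This gives uniqueness up to a scalar in $\kk^{*}$.

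Finally, existence, which I expect to be the main obstacle. Starting from any mate $g$, I would lower its degree by subtracting elements $\beta(f,z)$. The engine is that the full jacobian $f_{x}g_{y}-f_{y}g_{x}$ equals a nonzero constant (the value of the character $\jac\colon\Aut(\A{3})\to\kst$ on $(f,g,z)$): comparing total-degree leading forms, if $\deg g\ge\deg f\ge 2$ the top part $\bar f_{x}\bar g_{y}-\bar f_{y}\bar g_{x}$ has positive degree $\deg g+\deg f-2$ and so must vanish, whence $\bar f,\bar g$ have vanishing $(x,y)$-jacobian. Working over the field $K=\kk(z)$, this algebraic dependence together with the Abhyankar--Moh--Suzuki theorem (Proposition~\ref{AMS.prop}) should yield $\deg f\mid\deg g$ and a relation $\bar g=\lambda(z)\,\bar f^{\,e/d}$, allowing me to replace $g$ by $g-\lambda(z)f^{e/d}=(x,y-\lambda(z)x^{e/d},z)\circ(f,g,z)$, a mate of strictly smaller total degree; iterating must terminate with a mate of degree $<d$, which by the first paragraph is the desired predecessor. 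The delicate point, and the step I expect to cost the most work, is to run this reduction in the presence of the parameter $z$: one must check that the scalar $\lambda$ can be taken in $\kk[z]$ (so the partner stays polynomial and axis-fixing, hence in $\tGz$), and, crucially, that each elementary step strictly decreases the total degree in $x,y,z$ rather than merely the $(x,y)$-degree over $K$ controlled by the amalgamated-product reduction of Section~\ref{amalgam.subsec}.
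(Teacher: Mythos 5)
Your first two paragraphs are sound and coincide with the paper's argument: the classification of all mates of $f$ as $g'=\alpha g_{0}+\beta(f,z)$ with $\alpha\in\kk^{*}$, $\beta\in x\,\kk[x,z]$ (this is Lemma~\ref{FirstComp.lem} in disguise), and the uniqueness proof via the degree count $\deg\beta(f,z)\geq\deg f$, which is legitimate because $f(0,0,z)=0$ forces $\bar f\notin\kk[z]$, so $\bar f$ and $z$ are algebraically independent and the top forms $\bar f^{\,j}z^{m}$ cannot cancel.

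The existence argument, however, has a genuine gap at exactly the step you flagged, and it cannot be repaired along the lines you propose. From the vanishing of the top part $\bar f_{x}\bar g_{y}-\bar f_{y}\bar g_{x}$ you only get that the \emph{total-degree} leading forms $\bar f,\bar g$ are algebraically dependent over $\kk(z)$; since they are homogeneous in $(x,y,z)$ but not in $(x,y)$ over $\kk(z)$, the dependence relation has the general shape $\bar g^{\,a}=c\,z^{m}\bar f^{\,b}$, not $\bar g=\lambda(z)\bar f^{\,e/d}$, and Proposition~\ref{AMS.prop} (which concerns parametrizations $\kk[p,q]=\kk[t]$ of the line) gives neither the divisibility $\deg f\mid\deg g$ nor a polynomial $\lambda$. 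Concretely, your reduction engine uses only that $\jac(f,g,z)\in\kk^{*}$ and never the hypothesis $f\in\Ft$, so its intermediate claims would have to hold for \emph{every} constant-jacobian pair in $\Gz$ --- and they fail for the swapped Nagata automorphism $(g_{N},f_{N},z)\in\Gz$, where $f_{N}=x-2y\Delta-z\Delta^{2}$, $g_{N}=y+z\Delta$, $\Delta=xz+y^{2}$: the first component has degree $3$, its mate degree $5$, so $3\nmid 5$, and the leading forms satisfy $\overline{f_{N}}=-\overline{g_{N}}^{\,2}/z$, so no polynomial elementary subtraction lowers the degree. (Here $g_{N}\notin\Ft$, precisely because this pair is wild; your engine cannot see the difference.) The existence of an elementary reduction \emph{in total degree} is exactly the content of the Shestakov--Umirbaev theorem for \emph{tame} automorphisms, and this is what the paper invokes: choose a mate $g$ of \emph{minimal} degree, apply \cite[Corollary~8]{ShUm2004The-tame-and-the-w} to the tame automorphism $(f,g,z)\in\tGz$, observe that neither $z$ nor (by minimality) $g$ can be reduced, so $\deg\bigl(f-p(g,z)\bigr)<\deg f$ for some $p$, and then the leading-form identity $\bar f=\sum_{i,j}\alpha_{ij}\bar g^{\,i}z^{j}$ with some $i\neq 0$ forces $\deg f\geq\deg g$, equality being excluded since $g-\alpha^{-1}f$ would be a smaller mate. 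Your alternative of running Jung--van der Kulk over $K=\kk(z)$ controls the $(x,y)$-degree over $K$ rather than the total degree and introduces denominators in $z$ at each elementary step; completing that route would amount to redoing the Shestakov--Umirbaev theory, so the citation is the missing ingredient, not a technical check.
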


\begin{proof}
(1) Let $g \in \Ft \subseteq \kk [x,y,z]$ be an element of minimal degree such that $(f,g,z) \in \GGG$. By \cite[Corollary 8]{ShUm2004The-tame-and-the-w}, the automorphism $\f:=(f,g,z)$ admits an elementary reduction. This means that for one of the components of $\f$ the degree can be lowered by subtracting a polynomial in the two other components. Clearly, the degrees of $z$ cannot be lowered. Moreover, since $f(0,0,z)=g(0,0,z)=0$, we can assume that the result of such an elementary reduction is again an element from $\Gz$. Since the degree of $g$ is minimal, we can only ``reduce'' the first component $f$, i.e. there exists a polynomial $p(s,t)$ such that $\deg(f - p(g,z) ) < \deg f$. 

For the homogeneous terms of maximal degree we get $\bar f = \overline{p(g,z)}$. Since $\bar f \notin\kk[z]$, we have a relation of the form $\bar f=\sum_{i,j} \alpha_{i j}{\bar g}^{i}z^{j}$ where the right hand side contains a nonzero term 
$\alpha_{i j} {\bar g}^{i}z^{j}$ of degree $\deg f$ with $i\neq 0$, and where $\alpha_{0,j}=0$, because $f(0,0,z)=0$.  Hence, $\deg  f \geq \deg g$, and $\deg f = \deg g$ implies that $\bar f = \alpha \bar g$. Thus $g-\alpha^{-1}f$ has lower degree, contradicting the minimality of $g$. This shows that a predecessor of $f$ exists.

(2) Now let $g_{1},g_{2}\in\Ft$ be two predecessors. Then $(f,g_{1},z) = (x,h,z)\circ (f,g_{2},z)$ where $(x,h,z)\in \tGz$. Thus $h(x,y,z) = \alpha y + x q(x,z)$, hence $g_{1}=\alpha g_{2}+ f q(x,f)$, and so $q=0$, because $\deg g_{i}<\deg f$.
\end{proof}

\begin{definition}\label{LengthFunction.def}
The \itind{length function} $\lgt\colon F^{t}\to \NN$ is inductively defined by 
$\lgt (f) =0$ if $\deg f= 1$, and $\lgt (f) = \lgt (g) + 1$ if $\deg f \geq 2$ and $g$ is a predecessor of $f$.
Note that $\deg f > \lgt(f)$.
\end{definition}
Using the length function we define  the subset  $\Ft_{k}:=\{f\in\Ft\mid \lgt(f)\leq k\} \subseteq \Ft$ for any $k\geq 0$, and $\Ft_{-1}:=\{0\}$. 
If $f\in\Ft_{k}$, then $(f,g,z)\in \tGz$ for some $g\in \Ft$, and if $p\in t\kk[t,z]$, then 
$(f,\alpha g + p(f,z),z) \in \tGz$ for every  $\alpha\in\Cst$. If $\deg p\geq 2$, then $\deg p(f,z)>\deg f$ and so $f$ is a predecessor of
$\alpha g + p(f,z)$. This implies that $\lgt(\alpha g + p(f,z))=\lgt(f) + 1 \leq k + 1$ and $p(f,z) \in \overline{\Ft_{k + 1}}$. The latter clearly holds also for 
$\deg p\leq 1$.  

Putting $R(S):= \{p(f,z) \mid p\in t\kk[t,z], f\in S\}$ for a subset $S \subseteq \kk[x,y,z]$ this shows that 
$$
R(\Ft_{k}) \subseteq \overline{\Ft_{k + 1}}.
$$ 

\begin{lemma}\label{closure.lem}
For all $k\geq 0$ we have $\overline{\Ft_{k}}=\Ft_{k}\cup R(\Ft_{k-1})$.
\end{lemma}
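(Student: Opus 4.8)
The plan is to prove the two inclusions separately, the easy one directly and the delicate one by induction on $k$.

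For the inclusion $\Ft_{k}\cup R(\Ft_{k-1})\subseteq\overline{\Ft_{k}}$ there is nothing new to do: $\Ft_{k}\subseteq\overline{\Ft_{k}}$ is trivial, and $R(\Ft_{k-1})\subseteq\overline{\Ft_{k}}$ is exactly the computation carried out just before the statement (the relation $R(\Ft_{k})\subseteq\overline{\Ft_{k+1}}$, reindexed by $k\mapsto k-1$). So this half holds for every $k\geq 0$ with no induction.

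For the reverse inclusion $\overline{\Ft_{k}}\subseteq\Ft_{k}\cup R(\Ft_{k-1})$ I would argue by induction on $k$. The base case $k=0$ is a direct computation: $\Ft_{0}$ is the set of degree-one first components fixing the $z$-axis, namely $\{ax+by\mid(a,b)\neq(0,0)\}$, whose closure in $\kk[x,y,z]$ is the plane $\kk x\oplus\kk y$; the only new point is $0$, and $0=R(\Ft_{-1})=R(\{0\})$ since every $p\in t\kk[t,z]$ vanishes at $0$. For the inductive step, take $f_{0}\in\overline{\Ft_{k}}$ and realize it as the limit of a one-parameter family: choose a morphism $\phi\colon C\to\kk[x,y,z]$ from a smooth irreducible affine curve with $\phi(c_{0})=f_{0}$ and $f_{c}:=\phi(c)\in\Ft_{k}$ for $c\neq c_{0}$, with $\deg f_{c}$ constant on $C\setminus\{c_{0}\}$. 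If the general $f_{c}$ has length $\leq k-1$, then $f_{0}\in\overline{\Ft_{k-1}}=\Ft_{k-1}\cup R(\Ft_{k-2})$ by the inductive hypothesis, and since $\Ft_{k-1}\subseteq\Ft_{k}$ and $R(\Ft_{k-2})\subseteq R(\Ft_{k-1})$ we are done. Hence we may assume $\lgt(f_{c})=k$ for general $c$. By Lemma~\ref{predecessor.lem} each $f_{c}$ then has a predecessor $h_{c}\in\Ft_{k-1}$, unique up to a scalar, so that after normalization the $h_{c}$ assemble into a family with a limit $h_{0}\in\overline{\Ft_{k-1}}$, and the reduction relation from the proof of Lemma~\ref{predecessor.lem} writes $f_{c}=p_{c}(h_{c},z)+r_{c}$ with $p_{c}\in t\kk[t,z]$, $\deg r_{c}<\deg f_{c}$ and $(r_{c},h_{c},z)\in\tGz$. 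The conclusion should then be read off by passing to the limit $c\to c_{0}$ and distinguishing whether the top-degree term $p_{c}(h_{c},z)$ persists—placing $f_{0}$ in $\Ft_{k}$—or collapses, placing $f_{0}$ in $R(\Ft_{k-1})$, all the while using the inductive description of $\overline{\Ft_{k-1}}$ to control the degeneration of $h_{0}$.

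I expect the genuine difficulty to lie entirely in this last limit analysis. The two sources of degeneration—collapse of the leading part of $f_{c}$, and degeneration of the predecessor $h_{c}$ itself into $R(\Ft_{k-2})$—interact, and one must show that together they produce no limit point outside $\Ft_{k}\cup R(\Ft_{k-1})$; in particular a drop of $\deg f_{c}$ in the limit must be handled through the leading-term identity $\overline{f_{c}}=\overline{p_{c}(h_{c},z)}$. The uniqueness of the predecessor (Lemma~\ref{predecessor.lem}) is the essential tool: it makes the family $(h_{c})$ well defined and keeps the bookkeeping of lengths consistent, which is what lets the induction close. Once Lemma~\ref{closure.lem} is established, the Main Lemma~\ref{MainLemma.lem} follows formally, since any $f\in\Ft$ of degree $\leq d$ satisfies $\lgt(f)<\deg f\leq d$, whence $\Ft\cap\kk[x,y,z]_{\leq d}\subseteq\Ft_{d-1}$ and the closures may be taken degree by degree.
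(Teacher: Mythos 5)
Your easy inclusion and the base case $k=0$ are correct, and your reduction of the Main Lemma~\ref{MainLemma.lem} to Lemma~\ref{closure.lem} via $\lgt(f)<\deg f$ matches the paper. But the inductive step is a program, not a proof: the step you defer (``the conclusion should then be read off by passing to the limit'') is precisely the content of the lemma. Concretely, three things are missing. First, to realize $f_{0}\in\overline{\Ft_{k}}$ as a limit along a curve with $f_{c}\in\Ft_{k}$ for $c\neq c_{0}$ you need the degree-bounded pieces of $\Ft_{k}$ to be constructible; this is established nowhere, and without it curve selection is unavailable. Second, even granting the curve, you must show that the predecessor assignment $c\mapsto [h_{c}]$ is a morphism to the relevant projective space on a dense open subset of the curve before you can speak of its limit $h_{0}$: uniqueness up to scalar (Lemma~\ref{predecessor.lem}) gives pointwise well-definedness, not algebraicity. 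Third, and decisively, the degeneration in which the nonzero constants $\jac(f_{c},h_{c},z)\in\kst$ tend to $0$ --- so that the automorphisms $(f_{c},h_{c},z)\in\tGz$ degenerate to non-automorphisms --- is exactly the case that produces limit points in $R(\Ft_{k-1})$ rather than $\Ft_{k}$, and your sketch does not show that this collapse, interacting with a possible drop of $\deg f_{c}$ and a simultaneous degeneration of $h_{c}$ into $R(\Ft_{k-2})$, creates nothing outside $\Ft_{k}\cup R(\Ft_{k-1})$.

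The paper avoids the limit analysis entirely. Since $C_{k}:=\Ft_{k}\cup R(\Ft_{k-1})\subseteq\overline{\Ft_{k}}$ is the easy half, the lemma amounts to $C_{k}$ being closed, and this is proved directly by induction via elimination theory: with $P=\{f\mid f(0,0,z)=0\}$ and $\dot P=P\setminus\{0\}$, the incidence set $J=\{(f,g)\in P\times\dot P\mid \jac(f,g,z)\in\kk\}$ is closed --- note that $\jac\in\kk$, unlike $\jac\in\kst$, \emph{is} a closed condition --- and stable under both scalar actions; since $\PP(P_{d})\times\PP(P_{d})\to\PP(P_{d})$ is a closed map, $\pr_{1}\bigl(J_{d}\cap\pr_{2}^{-1}(\dot C_{k})\bigr)$ is closed, and the paper verifies the identity $C_{k+1}\cap P_{d}=\pr_{1}\bigl(J_{d}\cap\pr_{2}^{-1}(\dot C_{k})\bigr)$ by exactly the two cases you were trying to separate in the limit: $\jac(f,g,z)\in\kst$ yields $f\in\Ft_{k+1}$ (here Lemma~\ref{predecessor.lem} and Lemma~\ref{FirstComp.lem} enter), while $\jac(f,g,z)=0$ forces $f\in\kk[g,z]$, because $\kk[g,z]$ is algebraically closed in $\kk[x,y,z]$ when $g$ is a component of an automorphism, and then $f(0,0,z)=0$ gives $f\in g\,\kk[g,z]\subseteq R(\Ft_{k})$. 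In other words, the closed condition $\jac\in\kk$ packages your two ``sources of degeneration'' into a single proper incidence correspondence, so the collapse $\jac\to 0$ automatically lands in $R(\Ft_{k})$; this is the idea your plan still lacks and would otherwise have to reproduce by hand in coordinates.
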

\begin{proof}
We have to show that $C_{k}:=\Ft_{k}\cup R(\Ft_{k-1})$ is closed for all $k$. By definition, we have $\Ft_{0}=\{f\in\Ft\mid \deg f = 1\}$ which is equal to the set $\{\alpha x + \beta y\mid (\alpha,\beta)\neq (0,0)\}$, and so $\overline{\Ft_{0}} = \Ft_{0}\cup\{0\} = C_{0}$.

Put $P:=\{f\in\kk[x,y,z]\mid f(0,0,z)=0\}=x\kk[x,y,z]+y\kk[x,y,z]$, $P_{d}:=\{f\in P \mid \deg f \leq d\}$, ${\dot P}:=P\setminus\{0\}$, ${\dot P}_{d}:=P_{d}\setminus\{0\}$ and  $\dot C_{k} = C_{k} \setminus \{ 0 \}$.
Clearly, $P = \bigcup_{d}P_{d}$ and ${\dot P} = \bigcup_{d}{\dot P}_{d}$ are both ind-varieties, and both are stable under scalar multiplication (with nonzero elements).
We have to show that $C_{k+1}\cap P_{d}\subseteq P_{d}$ is closed for all $d$.

Define the closed subset
\[
J:=\{(f,g)\in P \times {\dot P} \mid \jac (f,g,z)\in \kk\} \subseteq P \times {\dot P}.
\]
Clearly, $J$ is stable under both scalar multiplications, on $P$ and on $\dot P$. Since the projection $\pr_{1}\colon \PP(P_{d})\times \PP(P_{d}) \to \PP(P_{d})$ is closed we see that for every closed subset $C \subseteq P_{d} \times {\dot P}_{d}$ which is stable under both scalar multiplications, the image $\pr_{1}(C)\subseteq P_{d}$  is closed. Therefore, the lemma follows, by induction, from the following equality:
\[\tag{$*$}
C_{k+1} \cap P_d= \pr_{1}(J_{d} \cap \pr_{2}^{-1}(\dot C_{k})) \text{ where }J_{d}=J \cap (P_d \times {\dot P}_d).
\]
For the proof we distinguish several cases.
\par\noindent
(1) Let $f\in C_{k+1}\cap P_{d}$.
\be
\item[(1a)]
If $f\in\Ft_{k+1}$ and if $g\in \Ft_{k}$ is a predecessor, then $(f,g,z)\in\tG$ and $\deg g = d-1$, and so 
$f\in \pr_{1}(J_{d}\cap\pr_{2}^{-1}(g))$.
\item[(1b)] If $f\in R(\Ft_{k})$, $f=p(g,z)$ for some $g\in\Ft_{k}$, then $\jac (f,g,z)=0$ and $\deg g \leq d$, and so $f\in \pr_{1}(J_{d}\cap\pr_{2}^{-1}(g))$. 
\ee
This proves the  inclusion ``$\subseteq$''  of $(*)$.

\noindent
(2) Assume now that $(f,g)\in J_{d}$ where $g\in \dot C_{k}$. 
\be
\item[(2a)] Let $g\in\Ft_{k}$. If $\jac (f,g,z)\in\Cst$, then $f\in\Ft_{k+1}$ and so $f\in C_{k+1}\cap P_{d}$. Otherwise, $\jac (f,g,z)=0$ and then $f\in\kk[g,z]$. In fact, there is an automorphism of the form $(g,h,z)$, and so the subalgebra $\kk[g,z]$ is algebraically closed in $\kk[x,y,z]$. Since $f(0,0,z)=0$ we see that $f\in g\,\kk[g,z]\subseteq R(\Ft_{k})$, hence $f\in C_{k+1}\cap P_{d}$. 
\item[(2b)] If $g \in R(\Ft_{k-1})$, then $g = p(h,z)$ for some $h\in\Ft_{k-1}\setminus\{0\}$. Since $\jac (f, g,z) = \frac{\partial p}{\partial t}(h,z)\jac (f,h,z) \in \kk$ we get $\jac (f,h,z) \in \kk$, and so $f\in C_{k}\cap P_{d}$, by the previous case (2a).
\ee
This proves the other inclusion of $(*)$.
\end{proof}

\ps
\subsubsection{The final step}
It remains to prove the Main Lemma~\ref{MainLemma.lem}.

\begin{proof}
As we have seen at the end of Section~\ref{FirstRed.subsec}, we have  to show that $\overline{\Ft} \subseteq \{p(f,z)\mid f\in \Ft \text{ and } p\in t\,\kk[t,z]\} = R(\Ft)$. For this it suffices to prove that $R(\Ft)$ is closed, i.e. that $R(\Ft)\cap\kk[x,y,z]_{\leq d}$ is closed in $\kk[x,y,x]_{\leq d}$ for all $d$. We have $R(\Ft) = \bigcup_{k} R(\Ft_{k}) = \bigcup_{k} C_{k}$ where  $C_{k}:=\Ft_{k}\cup R(\Ft_{k-1})\subseteq \kk[x,y,z]$ is closed, by Lemma~\ref{closure.lem}. We claim that for every $d$ we have
$$
R(\Ft)\cap \kk[x,y,z]_{\leq d} = (\bigcup_{k} C_{k}) \cap \kk[x,y,z]_{\leq d} = C_{d}\cap\kk[x,y,z]_{\leq d}
$$
which shows that $R(\Ft)\cap \kk[x,y,z]_{\leq d}$ is closed. For the claim we simply remark that for $f\in F^{t}$ we have $\deg f > \ell(f)$, by the definition of the length function (Definition~\ref{LengthFunction.def}), and that for any nonzero $p\in t\kk[t,z]$ we get $\deg p(f,z) \geq \deg f$. This implies that $\deg f>d$ for $f \in C_{d+1}\setminus C_{d}$.
\end{proof}

\begin{remark}\label{group theoretical interpretation.rem}
Lemma \ref{predecessor.lem} which relies on the work of \cite{ShUm2004The-tame-and-the-w} has the following  group theoretical interpretation. Set
\begin{eqnarray*}
A & := & \{ (a x+b y + p(z), c x+d y +q(z),z)\mid a,b,c,d \in \kk,  a d - b c \neq 0,  p,q \in \kk [z] \}, \\
B & := & \{ (a x + p(y,z), b y +q(z), z), \mid a,b \in \kk^*,  p \in \kk [y,z],  q \in \kk [z] \}.
\end{eqnarray*}
Then $\tG$ is the amalgamated product of $A$ and $B$ along their intersection
\[ 
\tG = A *_{A \cap B}B.  
\]
Note that $A \simeq \GL(2) \ltimes \ZZZ$ and that $B=\GGG \cap \J{3} = \tG \cap \J{3}$.
\end{remark}

\newpage
\section{Problems and Questions}
For the convenience of the reader we collect here the questions from previous sections. 

\subsection{Special morphisms and homomorphisms}
The following questions came up in the study of bijective morphisms and homomorphisms.

\Ques{towards-a-criterion-for-a-bijective-morphism-to-be-an-isomorphism.ques}
{Is it true that a bijective ind-morphism $\phi\colon \VVV \to \WWW$
is an isomorphism if the differential $d\phi_{v}$ is an isomorphism in every point $v\in\VVV$? Maybe one has to assume in addition that $\VVV$ is connected or even curve-connected.}

\Ques{Is-the-Kernel-of-the-differential-the-Lie-algebra-of-the-Kernel.ques}
{If $\phi \colon \GGG \to G$ is a surjective homomorphism of ind-groups where 
$\GGG$ is an affine ind-group and
$G$ is an algebraic group, do we have that $\Ker d \phi_{e} = \Lie\Ker\phi$?} 

\Ques{bijective+isomorphism-on-Lie-algebras-imply-isomorphism.ques} 
{Let $\phi\colon\GGG \to \HHH$ be a bijective homomorphism of affine
ind-groups, and assume that $\Lie\phi\colon \Lie\GGG \to \Lie\HHH$ is an isomorphism. Does this imply that $\phi$ is an isomorphism?}

\ps
\subsection{Endomorphisms and automorphisms}\strut{}

\Ques{does-Z-open-affine-in-Y-affine-imply-that-Mor(X,Z)-is-locally-closed-in-Mor(X,Y).ques}
{Let $X, Y$ be affine varieties, and let $U \subset Y$ be an open affine dense subset. Is it true that $\Mor(X,U) \subset \Mor(X,Y)$ is locally closed?}

\Ques{AutR-locally-closed.que}
{Is $\Aut(R) \subseteq \End(R)$  locally closed when $R$ is an associative $\kk$-algebra?}

\ps
\subsection{Representations} Not much is known about the representation theory of ind-groups or, more specially, of automorphism groups $\Aut(X)$ of affine varieties. The second class always admits at least  one faithful representation, namely the standard representation on the coordinate ring $\OOO(X)$.

\Ques{faithful-representation.ques} 
{Does any affine ind-group admit a faithful (or at least a nontrivial) representation on a $\kk$-vector space of countable dimension?}

\Ques{Aut(X)-acts-faithfully-on-vector-fields.ques}
{Is it true that $\Aut (X)$ acts faithfully on $\VEC (X)$?}

\Ques{locally-nilpotent.ques}
{Assume that $\rho\colon \Aut(X) \to \GL(V)$ is a representation such that
$d\rho\colon \Lie \Aut (X) \to \End(V)$ is injective
(where $V$ is a vector space of countable dimension).
Is it true that if $d\rho(N) \in \End(V)$ is locally nilpotent, then $N \in \Lie\Aut(X)$ is locally nilpotent?

Assume that the adjoint representation $\ad\colon \Lie\Aut(X) \to \End(\Lie\Aut(X))$ is faithful. Is it true that $N\in\Lie\Aut(X)$ is locally nilpotent if and only if $\ad N$ is locally nilpotent?}

\Ques{no-injection-unipotent.ques}
{Does Proposition~\ref{no-inbedding-of-Jn-into-GLinfty.prop} also hold for the unipotent part $\JJJ^{u}$?
Or is there an injective homomorphism of $\JJJ^{u} \into \GL_{\infty}$?}

\subsection{Special elements}\strut{}

\Ques{locally-finite-elements-and-generation.ques} 
{If $\GGG$ is a connected ind-group, do we have $ \GGG = \langle  \GGG^{\text{\it lf}}   \rangle$, 
or at least $ \GGG = \overline{ \langle  \GGG^{\text{\it lf}}\rangle}$?}

\Ques{Closedness-of-the-set-of-unipotent-automorphisms.ques} 
{Do the unipotent elements $\AutU(X)$ form a closed subset of $\Aut(X)$? Is $\LNV(X)$ closed in $\VEC(X)$?}

\Ques{Closedness-of-the-set-of-unipotent-endomorphisms.ques}
{More generally, is $\End^{\text{\it ln}} (X)$ closed in  $\End(X)$?}

\subsection{Elements of finite order and locally finite elements}\strut{}

\Ques{additive-or-multiplicative-group-in-Aut(X).ques} 
{Is it true that a nondiscrete automorphism group $\Aut(X)$ of an affine variety $X$ always contains a copy of the additive group $\kplus$ or a copy of the multiplicative group $\kst$?}

\Ques{all-elements-of-finite-order-implies-discrete.ques}
{Is it true that every ind-group $\GGG$ consisting of elements of finite order is discrete?
More generally, is it true that a subgroup  $F \subseteq \GGG$ consisting of elements of finite order is countable?}

\Ques{locally-finite-implies-nested.question}
{If $\GGG$ is a connected ind-group whose elements are all locally finite, is it true that $\GGG$ is nested?
}

\Ques{generation-by-locally-finite-derivations.ques}
{Is $\Lie \Aut ( \R)$ generated by the locally finite derivations for any finitely generated general algebra $ \R$?}

\ps
\subsection{Vector fields}\strut{}

\Ques{Is-the-tangent-space-at-the-identity-of-End(X)-a-Lie-algebra.question}
{Is it true that the image of $T_{\id}\End(X)$ in $\VEC(X)$ is a Lie subalgebra?}

\Ques{Is-T(End(R))-a-Lie-algebra.question}
{Is  $T_{\id}\End (R)$  a Lie subalgebra of  $\Der_{\kk} (R)$
for a finitely generated associative algebra $ \R$?}

\Ques{Is-T(End(RR))-a-Lie-algebra.question}
{Is  $T_{\id}\End (\R)$  a Lie subalgebra of  $\Der_{\kk} \R$ for a finitely generated general algebra $\R$?}

\ps
\subsection{Homomorphisms}\strut{}

\Ques{Delta.ques}
{Let $G,L$ be linear algebraic groups, and let $H,K \subset G$ be closed subgroups which generate $G$.
Is it true that the canonical map 
$$
\Delta\colon\Hom(G,L) \to \Hom(H,L) \times \Hom(K,L)
$$
is a closed immersion of ind-varieties?}

\ps
\subsection{The group \texorpdfstring{$\Aut(\An$}{Aut(An)})}\strut{}

\Ques{SAut(An).ques} 
{Let $\UUU \subseteq \AutA{n}$ be the subgroup generated by the modifications of the translations
and let $\SAutA{n}= \{\phi\in\Aut(\An) \mid \jac(\phi)=1\}$ be the special automorphism group.
Do we have $\UUU = \SAutA{n}$ for all $n\geq 2$, or at least $\overline{\UUU} = \SAutA{n}$?}

\Ques{diagonalizable-and-semisimple.ques}
{Let $\g \in \Aut (\AA ^n)$ be an automorphism. Are the four following assertions equivalent?
\be
\item
$\g$ is diagonalizable.
\item
$\g$ is semisimple.
\item
The conjugacy class $C (\g)$ is closed in $\Aut (\AA ^n)$.
\item
The conjugacy class $C (\g)$ is weakly closed in $\Aut (\AA ^n)$.
\ee
}

\Ques{semisimple-part-in-the-weak-closure-of-the-conjugacy-class.ques} 
{Let $\g\in\AutA{n}$ be a locally finite automorphism. Does the weak closure of $C(\g)$ contain
the semisimple part $\g_{s}$ of $\g$?
}

\Ques{characterization-of-unipotent-automorphisms-conjugate-to-translations} 
{Is there a characterization of those unipotent automorphisms
$\u\in\AutA{n}$  which are conjugate to  translations? 
Same question for the upper-triangular automorphisms
$\u\in\JJJ(n)$.}

\ps
\subsection{Large subgroups}

Let $G$ be a linear algebraic group and $X$ an affine variety. If $\rho\colon G \to \Aut(X)$ is homomorphism of ind-groups we have defined in Section~\ref{large-subgroups.subsec} a homomorphism of ind-groups $\tilde\rho\colon G(\OOO(X)^{G}) \to \Aut(X)$ which extends $\rho$. We have also set
\begin{multline*}
\Aut_{\rho}(X):= \\ \{\phi\in\Aut(X)\mid \phi(Gx) = Gx \text{ for all } x \in X, \,\phi|_{Gx}=\rho(g_{x}) \text{ for some }g_{x}\in G\}.
\end{multline*}

\Ques{image-of-rho.ques}
{Is $\Aut_{\rho}(X)$ the image of $\tilde\rho$? And is the image of $\tilde\rho$ closed in $\Aut(X)$?}

\Ques{rho-closed-immersion.ques}
{If $\tilde\rho$ is injective, is it a closed immersion?}

\ps
\subsection{Miscellaneous}
Let $\GGG$ be an affine ind-group. In Section~\ref{exponential-for-affine-ind-groups.subsec}, we have defined two maps 
$$ 
\eps_{\GGG}\colon \Hom(\kplus,\GGG) \to \GGG \text{ \ and \ }\nu_{\GGG}\colon \Hom(\kplus,\GGG) \to \gg
$$ 
by the formulas $\eps_{\GGG}(\lambda):=\lambda(1)$ and $\nu_{\GGG}(\lambda):=d\lambda_{0}(1)$. We have also defined the logarithm $\log_{\GGG} \colon  \GGG^{u} \to \gg$.
\ps
\Ques{eps-and-nu.ques}
{\be
\item Is $\eps_{\GGG}$ a closed immersion?
\item Is $\nu_{\GGG}$ a closed immersion?
\item Is it true that for any morphism $\phi \colon Y \to \GGG$ with image in $\GGG^{u}$ the composition $\log_{\GGG} \circ \phi$ is a morphism?
\ee}

\printindex  

\par\bigskip
\renewcommand{\MR}[1]{}
\bibliography{HP-Bib-AutAn}
\bibliographystyle{amsalpha}

\end{document}